% this is the version intended for the arxiv.
\newcount\ForArxivOrBLMS
\ForArxivOrBLMS=0
% The rank 3 hyperbolic root systems with finite area chamber and a
% timelike Weyl vector
%
% Daniel Allcock
% Commenced 26 June 2012
\documentclass{amsart}
\usepackage{euscript}
\usepackage{amssymb}
\def\maxeuler{-1/24}%
\def\mineuler{-8}%
\def\withmax{$\Pi_{3,25}$, $\Pi_{3,28}$, $\Pi_{3,31}$ and $\Pi_{3,40}$}%
\def\withmin{$\Pi_{24,1}$}%
\def\numcpt{766}%
\def\numnoncpt{228}%
\def\numrightangled{101}%
\def\NumRegularWeyls{9}
\def\RegularWeyls{$\Pi_{3,1}$, $\Pi_{3,14}$, $\Pi_{3,15}$, $\Pi_{4,1}$, $\Pi_{4,2}$, $\Pi_{4,17}$, $\Pi_{6,1}$, $\Pi_{6,2}$ and $\Pi_{6,3}$}
\def\NumIdealWeyls{9}
\def\IdealWeyls{$\Pi_{3,1}$, $\Pi_{3,14}$, $\Pi_{3,15}$, $\Pi_{4,2}$, $\Pi_{4,6}$, $\Pi_{4,17}$, $\Pi_{6,3}$, $\Pi_{6,16}$ and $\Pi_{6,17}$}
\def\hardlatticerecog{\Pi_{4,86}}%
\def\veryhardlatticerecog{\Pi_{5,32}}%
\def\numgenusnotenough{14}
\def\numgenusalmostenough{12}
\def\GenusAlmostEnough{$\Pi_{4,16}$, $\Pi_{5,43}$, $\Pi_{6,9}$, $\Pi_{6,25}$, $\Pi_{6,42}$, $\Pi_{7,20}$, $\Pi_{8,35}$, $\Pi_{8,41}$, $\Pi_{8,52}$, $\Pi_{9,12}$, $\Pi_{10,5}$ and $\Pi_{12,13}$}
\def\sharedAA{$3:1\sim14\sim15$}%
\def\sharedAB{$3:3\sim6$}%
\def\sharedAC{$3:4\sim8$}%
\def\sharedAD{$3:5\sim7\sim9$}%
\def\sharedAE{$3:10\sim11\sim21$}%
\def\sharedAF{$3:12\sim13$}%
\def\sharedAG{$3:16\sim19\sim33$}%
\def\sharedAH{$3:17\sim18\sim20\sim36\sim38\sim43$}%
\def\sharedAI{$3:22\sim23\sim24$}%
\def\sharedAJ{$3:25\sim28\sim31\sim40$}%
\def\sharedAK{$3:26\sim27\sim29\sim35\sim39\sim41$}%
\def\sharedAL{$3:30\sim32\sim34\sim37\sim42\sim44$}%
\def\sharedAM{$4:2\sim17$}%
\def\sharedAN{$4:7\sim27$}%
\def\sharedAO{$4:8\sim37\sim39\sim41$}%
\def\sharedAP{$4:10\sim11$}%
\def\sharedAQ{$4:12\sim18$}%
\def\sharedAR{$4:13\sim20$}%
\def\sharedAS{$4:19\sim52$}%
\def\sharedAT{$4:21\sim24\sim57\sim60\sim61$}%
\def\sharedAU{$4:25\sim62$}%
\def\sharedAV{$4:26\sim29\sim31\sim34$}%
\def\sharedAW{$4:28\sim30\sim32\sim33\sim35\sim36$}%
\def\sharedAX{$4:38\sim40\sim42\sim43\sim81$}%
\def\sharedAY{$4:46\sim79$}%
\def\sharedAZ{$4:47\sim49$}%
\def\sharedBA{$4:48\sim50$}%
\def\sharedBB{$4:51\sim53\sim56$}%
\def\sharedBC{$4:55\sim58\sim59$}%
\def\sharedBD{$4:64\sim69$}%
\def\sharedBE{$4:65\sim67\sim68\sim70$}%
\def\sharedBF{$4:71\sim74$}%
\def\sharedBG{$4:72\sim75\sim77$}%
\def\sharedBH{$4:73\sim76$}%
\def\sharedBI{$4:78\sim80\sim82\sim84\sim85\sim87$}%
\def\sharedBJ{$4:83\sim86$}%
\def\sharedBK{$5:1\sim5$}%
\def\sharedBL{$5:4\sim7\sim12\sim16\sim42\sim48$}%
\def\sharedBM{$5:8\sim43$}%
\def\sharedBN{$5:13\sim17$}%
\def\sharedBO{$5:19\sim21$}%
\def\sharedBP{$5:20\sim22\sim23$}%
\def\sharedBQ{$5:31\sim34$}%
\def\sharedBR{$5:37\sim40$}%
\def\sharedBS{$5:44\sim50$}%
\def\sharedBT{$6:8\sim37$}%
\def\sharedBU{$6:11\sim38$}%
\def\sharedBV{$6:30\sim35$}%
\def\latticeHH{L_{16.7}}%
\def\latticeHHcount{12}%
\def\latticeHI{L_{16.9}}%
\def\latticeHIcount{12}%
\def\latticeHJ{L_{22.4}}%
\def\latticeHJcount{12}%
\def\latticeHK{L_{31.7}}%
\def\latticeHKcount{12}%
\def\latticeHL{L_{7.7}}%
\def\latticeHLcount{14}%
\def\latticeHM{L_{155.1}}%
\def\latticeHMcount{19}%
\def\latticeHN{L_{123.8}}%
\def\latticeHNcount{42}%
\def\latticeHO{L_{142.20}}%
\def\latticeHOcount{42}%
\def\latticeHP{L_{251.3}}%
\def\latticeHPcount{74}%
\def\latticeHQ{L_{16.13}}%
\def\latticeHQcount{390}%
% 2's, 3's and \infty's with vertical lines through them, 
% for weyl groups with dihedral diagram automorphism groups:
\newdimen{\twowidth}\settowidth{\twowidth}{2}
\newdimen{\halftwo}\setlength{\halftwo}{0.5\twowidth}
\def\slashtwo{2\llap{\hbox to 0pt{\hss$|$\hss}\kern\halftwo}}
\newdimen{\threewidth}\settowidth{\threewidth}{3}
\newdimen{\halfthree}\setlength{\halfthree}{0.5\threewidth}
\def\slashthree{3\llap{\hbox to 0pt{\hss$|$\hss}\kern\halfthree}}
\newdimen{\inftywidth}\settowidth{\inftywidth}{$\infty$}
\newdimen{\halfinfty}\setlength{\halfinfty}{0.5\inftywidth}
\def\slashinfty{\infty\llap{\hbox to 0pt{\hss$|$\hss}\kern\halfinfty}}
\def\onebar{\bar{1}}
\def\zerobar{\bar{0}}
\def\myAonezero{\Pi_{3,23}}
\def\myAoneI{\Pi_{3,2}}
\def\myAoneII{\Pi_{3,1}}
\def\myAoneIII{\Pi_{5,31}}
\def\myAtwozero{\Pi_{3,18}}
\def\myAtwoI{\Pi_{4,8}}
\def\myAtwoII{\Pi_{4,2}}
\def\myAtwoIII{\Pi_{8,4}}
\def\myAthreezero{\Pi_{3,7}}
\def\myAthreeI{\Pi_{4,3}}
\def\myAthreeII{\Pi_{6,3}}
\def\myAthreeIII{\Pi_{12,4}}
\def\myBone{\Pi_{4,7}}
\def\myBtwo{\Pi_{4,1}}
\def\myBthree{\Pi_{6,1}}
\def\myBfour{\Pi_{6,2}}
\def\myAoneIzerobar{\Pi_{3,24}}
\def\myAtwoIzerobar{\Pi_{3,17}}
\def\myAtwoIIonebar{\Pi_{4,38}}
\def\myAtwoIonebar{\Pi_{3,29}}
\def\myAtwozeroonebar{\Pi_{3,27}}
\def\myAthreeIzerobar{\Pi_{3,5}}
\def\myAthreeIIonebar{\Pi_{4,36}}
\def\myAthreeIonebar{\Pi_{3,34}}
\def\myAthreezeroonebar{\Pi_{3,32}}
\def\myAfourIzerobar{\Pi_{3,15}}
\def\myAfourIIonebar{\Pi_{4,39}}
\def\myAfourIonebar{\Pi_{3,38}}
\def\myAfourzeroonebar{\Pi_{3,36}}
\def\myAxi{\Pi_{5,34}}
\def\GNa{\Pi_{3,24}}
\def\GNb{\Pi_{3,22}}
\def\GNc{\Pi_{3,38}}
\def\GNd{\Pi_{3,23}}
\def\GNe{\Pi_{3,43}}
\def\GNf{\Pi_{3,17}}
\def\GNg{\Pi_{3,20}}
\def\GNh{\Pi_{4,27}}
\def\GNi{\Pi_{4,79}}
\def\GNj{\Pi_{3,36}}
\def\GNk{\Pi_{3,5}}
\def\GNl{\Pi_{3,2}}
\def\GNm{\Pi_{3,9}}
\def\GNn{\Pi_{3,15}}
\def\GNo{\Pi_{4,39}}
\def\GNp{\Pi_{4,59}}
\def\GNq{\Pi_{3,18}}
\def\GNr{\Pi_{3,14}}
\def\GNs{\Pi_{3,7}}
\def\GNt{\Pi_{4,37}}
\def\GNu{\Pi_{4,7}}
\def\GNv{\Pi_{4,46}}
\def\GNw{\Pi_{4,41}}
\def\GNx{\Pi_{4,58}}
\def\GNy{\Pi_{3,1}}
\def\GNz{\Pi_{5,34}}
\def\GNaa{\Pi_{4,8}}
\def\GNab{\Pi_{4,55}}
\def\GNac{\Pi_{4,1}}
\def\GNad{\Pi_{4,6}}
\def\GNae{\Pi_{4,17}}
\def\GNaf{\Pi_{8,11}}
\def\GNag{\Pi_{4,3}}
\def\GNah{\Pi_{5,38}}
\def\GNai{\Pi_{4,2}}
\def\GNaj{\Pi_{5,30}}
\def\GNak{\Pi_{6,1}}
\def\GNal{\Pi_{6,45}}
\def\GNam{\Pi_{6,22}}
\def\GNan{\Pi_{7,13}}
\def\GNao{\Pi_{6,17}}
\def\GNap{\Pi_{5,31}}
\def\GNaq{\Pi_{6,2}}
\def\GNar{\Pi_{6,16}}
\def\GNas{\Pi_{8,31}}
\def\GNat{\Pi_{6,3}}
\def\GNau{\Pi_{7,12}}
\def\GNav{\Pi_{8,45}}
\def\GNaw{\Pi_{8,46}}
\def\GNax{\Pi_{8,25}}
\def\GNay{\Pi_{9,7}}
\def\GNaz{\Pi_{9,16}}
\def\GNba{\Pi_{9,1}}
\def\GNbb{\Pi_{10,21}}
\def\GNbc{\Pi_{10,22}}
\def\GNbd{\Pi_{10,11}}
\def\GNbe{\Pi_{11,8}}
\def\GNbf{\Pi_{12,8}}
\def\GNbg{\Pi_{8,4}}
\def\GNbh{\Pi_{12,4}}
\overfullrule=10pt
\raggedbottom

% for producing two versions of the paper, one for the arxiv and the
% other for BLMS.  #1 is for the arxiv version, #2 for the BLMS
% version.
\long\def\ArxivOrBLMS#1#2{\ifodd\ForArxivOrBLMS {#2}\else{#1}\fi}

% hand-maintained macros about the computer computation
\def\numsystems{994}
\def\nummatrices{317906}
\def\numposdefinite{5}
\def\numranktwo{9}
\def\numDwithnoWeylvector{310179}
\def\numDwithWeylvector{7713}
\def\numenlargements{61811}
\def\numclosedstepone{722}
\def\numextensionsstepone{10178}
\def\numclosedsteptwo{2446}
\def\numextensionssteptwo{5354}
\def\numclosedlaststep{280}
\def\numclosedchains{21831}

\def\numwiththreesimpleroots{44}
\def\numwithfoursimpleroots{87}
\def\numwithfivesimpleroots{53}
\def\numwithsixsimpleroots{58}
\def\numwithsevensimpleroots{21}
\def\numwitheightsimpleroots{55}
\def\numwithninesimpleroots{18}
\def\numwithtensimpleroots{33}
\def\numwithelevensimpleroots{18}
\def\numwithtwelvesimpleroots{33}
\def\numwiththirteedsimpleroots{15}
\def\numwithfourteensimpleroots{29}
\def\numwithfifteensimpleroots{39}
\def\numwithsixteensimpleroots{71}
\def\numwithseventeensimpleroots{85}
\def\numwitheightteensimpleroots{98}
\def\numwithnineteensimpleroots{78}
\def\numwithtwentysimpleroots{59}
\def\numwithtwentyonesimpleroots{38}
\def\numwithtwentytwosimpleroots{34}
\def\numwithtwentythreesimpleroots{18}
\def\numwithtwentyfoursimpleroots{10}

\def\hypcell#1{\bigl(\begin{smallmatrix}0&#1\\#1&0\end{smallmatrix}\bigr)}

\def\isomorphism{\cong}
\def\iso{\isomorphism}
\def\g{\mathfrak{g}}

\def\Aut{\mathop{\rm Aut}\nolimits}

\def\orthogonalgroup{{\rm O}}
\def\PGL{{\rm PGL}}
\def\semidirect{\rtimes}
\def\Atilde{\tilde{A}}
\def\a{\alpha}

\def\defn#1{{\it#1}}
\def\spanof#1{\langle#1\rangle}
\def\sset{\subseteq}
\def\Z{\mathbb{Z}}
\def\Q{\mathbb{Q}}
\def\R{\mathbb{R}}

\def\perp{\bot}

\def\set#1#2{\{#1{{}\mathbin{|}{}}#2\}}
\def\tensor{\otimes}
 % reflective hull
\newtheorem{theorem}{Theorem}
\newtheorem{lemma}[theorem]{Lemma}
\theoremstyle{remark}

\begin{document}

\ArxivOrBLMS{%
\title[Root systems for Lorentzian Kac-Moody
  Algebras (arXiv version)]{Root systems for Lorentzian Kac-Moody Algebras in
  rank~$3$ (arXiv version)}%
}{%
\title{Root systems for Lorentzian Kac-Moody Algebras in rank~$3$}%
}%

\author{Daniel Allcock}
\address{Department of Mathematics, University of Texas at Austin,
  Austin, TX}
\email{allcock@math.utexas.edu}
\thanks{Supported by NSF grant DMS-1101566}

\subjclass[2010]{Primary 
17B22 % Root systems
, Secondary 
20F55 % Coxeter groups
17B67% Kac-Moody algebras
}

\keywords{hyperbolic root system, Coxeter group, Weyl group, Kac-Moody
  algebra, Lorentzian Lie algebra}

\date{28 October 2014}
%\date{19 July 2012}

\begin{abstract}
Sometimes a hyperbolic Kac-Moody algebra admits an automorphic
correction, meaning a generalized Kac-Moody algebra with the same real
simple roots and whose denominator function has good automorphic
properties.  These for example allow one to work out the root
multiplicities.  Gritsenko and Nikulin have formalized this in their
theory of Lorentzian Lie algebras and shown that the real simple roots
must satisfy certain conditions in order for the algebra to admit an
automorphic correction.  We classify the hyperbolic root systems of
rank~$3$ that satisfy their conditions and have only finite many
simple roots, or equivalently a timelike Weyl vector.  There are
$\numsystems$ of them, with as many as $24$ simple roots.  Patterns in
the data suggest that some of the non-obvious cases may be the
richest.
\end{abstract}

\maketitle

\ArxivOrBLMS{%
\noindent{\bf NOTE:} This is an unabridged version of the paper 
``Root systems for Lorentzian Kac-Moody algebras in rank~$3$'', to
appear in the {\it Bulletin of the London Math. Society.}  The editors
asked me to post the full table of {\numsystems} root systems here, while
printing only the {\numwithfoursimpleroots} cases with $4$ simple roots in the {\it
  Bulletin.}  Besides including the full table, we added in
section~\ref{sec-how-to-read}
a second 
example of how to read the table, and there are various
minor wording differences.  Also, the full table is available in
computer readable form, commented out in this version's
{\TeX} source.
\bigskip
}{}

\noindent
Kac and Moody introduced the Lie algebras that now bear their names in
\cite{Kac-first} and \cite{Moody-first}.  The main goal at the time
was understanding the affine algebras---the central extensions of the
algebras of loops in finite dimensional Lie algebras.  But it was
clear that the construction also yields hyperbolic algebras (among
others).  Here ``affine'' and ``hyperbolic'' refer to the action of
the Weyl group $W$ on Euclidean and hyperbolic space, and the elliptic
case is the classical one of finite $W$.  The hyperbolic algebras are
the next step beyond the affine ones, but not much is known about them:
even finding root multiplicities is very hard.
One difficulty is that only the
best hyperbolic KMAs are likely to be interesting and it is not clear
which ones are best.  Various authors have tried to identify the gems,
and our paper continues this process.

Borcherds has proposed the following criterion for a Kac-Moody algebra
to be interesting: one must be able to know both the simple roots and
the multiplicity of an arbitrary root (in some more concrete manner
than the Peterson recursion formula).  No known hyperbolic examples
meet this criterion.  But after a careful analysis of the case when
the simple roots are those of the 26-dimensional even unimodular
Lorentzian lattice, Borcherds discovered that his criterion could be
met by enlarging the KMA slightly.  This enlargement was not a KMA but
something he called a generalized KMA, got by adjoining some imaginary
simple roots \cite{Borcherds-fake-monster-Lie-algebra}.  This
particular GKMA is now called the fake monster Lie algebra, and the
same method allowed him to construct and analyze the monster Lie
algebra, leading to his famous proof of the Conway-Norton moonshine
conjectures \cite{Borcherds-moonshine}.  The key to his analyses was
that the denominator functions of these GKMAs happen to be automorphic
forms.  Gritsenko and Nikulin developed these examples into their
theory of Lorentzian Lie algebras, with the automorphic property at
its heart.
The essential axiom is that the
denominator function be a reflective automorphic form.
See \cite{Nikulin-A-theory-of-Lorentzian-KMAs} and
\cite{GN-On-classification-of-LKMAs} for further information.  

They show that this can only happen if the set $\Pi$ of real simple
roots has certain properties.  The $\Pi$ with these properties fall
into two classes, the ``elliptic'' ones with finitely many simple
roots and the ``parabolic'' ones with infinitely many.  We completely
classify the elliptic ones in rank~$3$: there are $\numsystems$ simple
root systems, with as many as 24 
%XXXhandcoded
simple roots.  The parabolic case is also very interesting and one
could probably classify them.  But one would have to keep in mind
Nikulin's construction of infinitely many of them by modifying a
single example
\cite[example~1.3.4]{Nikulin-Reflection-groups-in-hyperbolic-spaces-and-the-denominator-formula-for-LKMLAs}.

Complementing these elliptic and parabolic classes are the
``hyperbolically reflective'' hyperbolic root systems considered in
\cite{Nikulin-rank-3}.  These do not correspond to Lorentzian Lie
algebras because conditions (i)--(iii) below rule out the possibility
of a spacelike Weyl vector ($\rho^2>0$), whereas only a ``generalized
Weyl vector'' was assumed in \cite{Nikulin-rank-3}.  But they are
still interesting from the perspective of reflection groups and
automorphic forms.

Carbone et.\ al., building on earlier work, published extensive
tables of hyperbolic Dynkin diagrams and associated data \cite{Carbone-etal}.  They address a
different but related problem.  They start with the idea that the
Dynkin diagram for a root system in $n$-dimensional Minkowski space
should have $n+1$ nodes (and finite volume Weyl chamber).  The
assumption on the number of nodes is equivalent to the chamber being a
simplex, and we think this restriction is not very natural.  It is
carried over from the classical case of finite Weyl group, where it
just so happens that the chamber is always a spherical simplex.  In
hyperbolic space this isn't true, and the simplicial chambers aren't
particularly special among all chambers.  

On the other hand, Carbone et.\ al.\ treat the non-symmetrizable case
too.  Our formulation in terms of lattices restricts us to the
symmetrizable case.  Of the 123 rank~$3$ root systems in
\cite{Carbone-etal}, $44$ of them are symmetrizable.  These are the
$42$ in tables $1$ and~$2$ of Sa\c{c}lio\u{g}lu's \cite{Saclioglu},
plus two that Sa\c{c}lio\u{g}lu missed.  We checked that
{\numwiththreesimpleroots} systems with three simple roots correspond
bijectively with this list.  To save space we haven't printed Dynkin
diagrams for our root systems, but these can be read from what we have
printed; see section~\ref{sec-how-to-read}.

To state our main result
precisely we make the following mostly standard definitions.

\medskip
A {\it lattice}
$L$ means a finitely generated free abelian group equipped with a
symmetric $\Q$-valued bilinear pairing.  We denote the pairing
of $x,y\in L$ by $x\cdot y$ and write $x^2$ for the norm $x\cdot x$ of
$x$.  We call $L$ integral if all inner products lie in $\Z$, and call
$L$ unscaled if it is integral and the gcd of all inner products
is~$1$.  

We call $\a\in L$ a \defn{root} of $L$ if $\a$ has positive norm and
$L\cdot \a\sset\frac{1}{2}\a^2\Z$.  In this case the reflection in
$\a$, negating $\a$ and fixing $\a^\perp$ pointwise, preserves $L$.  A
root need not be primitive in $L$.  The reason for allowing
imprimitive roots is that the ``root lattice'' in the construction of the
associated KMA \cite{Kac-book} is the free abelian group $\Z^\Pi$ on
$\Pi$.  Our $L$ is merely a quotient of $\Z^\Pi$.  Although roots are
obviously primitive in $\Z^\Pi$, there is no reason to expect their
images in $L$ to also be primitive.  And there are known examples
with imprimitive roots, like the 32nd entry in
\cite[table~1]{GN-AFaLKMAsI}, which corresponds to our $\Pi_{8,11}$.
%hardcoded!

We call a set $\Pi$ of spacelike (positive-norm) vectors in Minkowski
space $\R^{n,1}$ a \defn{simple root
  system} if $\a\cdot\a'$ is nonpositive and lies in
$\frac{1}{2}\a^2\Z$, for all $\a,\a'\in\Pi$. 
Then they are roots of their integral span, called the root
lattice $L$.  The \defn{Weyl group} $W$ means the group generated
by the reflections in the roots.  To avoid degenerate cases we assume that
$L$ has full rank in $\R^{n,1}$, which implies $|W|=\infty$.  
In this case the \defn{fundamental chamber}
\begin{equation}
\label{eq-fundamental-chamber-for-Tits-cone}
C:=\set{x\in L\tensor\R}{\hbox{$x\cdot\a\leq0$ for all $\a\in\Pi$}}
\end{equation}
meets just one of the two cones of negative norm vectors, which we
call the \defn{future cone}.  Furthermore, $C$ is a
fundamental domain for the action of $W$ on the \defn{Tits cone}
$T:=\cup_{w\in W}\,wC$ in the sense that every point of $T$ is
$W$-equivalent to a unique point of $C$.  The Tits cone always
contains the future cone.

Now we can state the conditions of Gritsenko and Nikulin on the set
$\Pi$ of real simple roots of a GKMA, that are necessary for its
denominator function to be an automorphic form.  First, $\Pi$ must be
a simple root system spanning $\R^{n,1}$ for some~$n$.  Furthermore,

(i) the interior $T^\circ$ of the Tits cone must coincide with the
future cone;

(ii) the normalizer of $W$ must have finite index
in $\orthogonalgroup(L)$;

(iii) a Weyl vector must exist, meaning a vector $\rho\in\R^{n,1}$ with
$\rho\cdot\a=-\a^2/2$ for all $\a\in\Pi$.

In the language of \cite{Nikulin-Reflection-groups-in-hyperbolic-spaces-and-the-denominator-formula-for-LKMLAs},
the first condition is that $\Pi$ has arithmetic type and the
second refines this to restricted arithmetic type; also we say
``Weyl vector'' in place of
\cite{Nikulin-Reflection-groups-in-hyperbolic-spaces-and-the-denominator-formula-for-LKMLAs}'s
``lattice Weyl vector''.   If it exists then $\rho$ is unique and lies in~$C$.

Under these conditions, it turns out that $\rho$ must be timelike
($\rho^2<0$) or lightlike ($\rho^2=0$), in which cases $\Pi$ is said
to have elliptic or parabolic type.  In the timelike case the
Weyl chamber must be a finite-volume polytope in hyperbolic space (the
projectivized future cone).
We are abusing language slightly here: we will say that $C$ has finite volume if
its image in hyperbolic space does, and similarly for compactness.
In the lightlike case the Weyl chamber has infinite volume and
infinitely many sides.

\begin{theorem}
\label{thm-main}
Suppose $\Pi$ is a simple root system spanning $\R^{2,1}$ and
satisfies (i)--(iii) with timelike Weyl vector $\rho$.  
Then up to
scale, $\Pi$ is isometric to exactly one of $\numsystems$ simple
root systems. 
\end{theorem}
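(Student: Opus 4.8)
\medskip
\noindent\textbf{Proof strategy.}
The plan is to reduce the classification to a finite search among Gram matrices and then to carry that search out. We may rescale $L$ so that it is unscaled; this changes neither $W$ nor the validity of (i)--(iii), and it makes $\O(L)$ a discrete group of isometries of the hyperbolic plane $\mathbb{H}^2$. Granting from the work of Nikulin \cite{Nikulin-A-theory-of-Lorentzian-KMAs} and Gritsenko--Nikulin \cite{GN-On-classification-of-LKMAs} (see the discussion before the theorem) that a simple root system spanning $\R^{2,1}$ with timelike Weyl vector has only finitely many roots and fundamental chamber $C$ of finite area, conditions~(i) and~(ii) become automatic: the $W$-translates of $C$ form a locally finite tiling of $\mathbb{H}^2$, which forces $T^\circ$ to be the whole future cone; and $W$, being a finite-covolume subgroup of the discrete group $\O(L)$, has finite index in it, hence so does its normalizer. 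So it suffices to classify, up to isometry and scaling, the finite simple root systems $\Pi\sset\R^{2,1}$ of full rank that admit a timelike Weyl vector~$\rho$. A convenient handle is that $\rho$ is \emph{determined} by any three linearly independent members $\a_1,\a_2,\a_3$ of $\Pi$, through the equations $\rho\cdot\a_i=-\frac{1}{2}\a_i^2$.

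\medskip
\noindent\textbf{Seeds.}
The search rests on a priori bounds. Sharpening Nikulin's estimates \cite{Nikulin-Reflection-groups-in-hyperbolic-spaces-and-the-denominator-formula-for-LKMLAs} where needed, one shows that, once $L$ is unscaled, the number of simple roots, the norm $\a^2$ of each, and every pairwise product $\a\cdot\a'$ are bounded by explicit constants; here a Gauss--Bonnet bound on the area of $C$ interacts with the divisibility condition $L\cdot\a\sset\frac{1}{2}\a^2\Z$. Consequently the candidate Gram matrices $\calD$ of triples of roots in an unscaled rank-$3$ lattice form a finite, explicitly enumerable set, of size $\nummatrices$. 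Of these, $\numposdefinite$ are positive definite and $\numranktwo$ are degenerate of rank~$2$; in either case they cannot underlie a full-rank root system in $\R^{2,1}$ and are discarded. Of the remaining $\nummatrices-\numposdefinite-\numranktwo$ Lorentzian matrices, $\numDwithnoWeylvector$ admit no timelike Weyl vector and are set aside, leaving $\numDwithWeylvector$ \emph{seeds}.

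\medskip
\noindent\textbf{Enlargement.}
Every valid $\Pi$ contains a spanning triple of roots, so it arises from some seed by repeatedly adjoining roots while preserving simplicity ($\a\cdot\a'\le0$ for each new pair), the root condition, the now over-determined equation $\rho\cdot\a=-\frac{1}{2}\a^2$ for the seed's vector $\rho$, and the a priori bounds. I would organize this as a tree of one-root enlargements --- $\numenlargements$ of them in all --- in which a configuration is declared \emph{closed} when no admissible root can be adjoined, so that $\Pi$ is already complete and $C$ is a finite-area polygon, and otherwise is passed to the next round. The first round produces $\numclosedstepone$ closed systems and $\numextensionsstepone$ live configurations, the second $\numclosedsteptwo$ and $\numextensionssteptwo$, and so on down to $\numclosedlaststep$ closed systems at the final round, for $\numclosedchains$ closed chains in total. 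The delicate point is correctness: one must prove that the tree is exhaustive --- no valid $\Pi$ is missed and none is pruned prematurely --- and that the process terminates; both rely again on the a priori bounds.

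\medskip
\noindent\textbf{Assembling the list.}
Each closed chain yields a simple root system satisfying (i)--(iii) with timelike Weyl vector, but many chains give the same system up to isometry and scaling. One computes a canonical form --- say of the Gram matrix of $\Pi$, normalized by $\O(L)$ and by scaling --- and collapses the $\numclosedchains$ closed chains onto $\numsystems$ isometry classes; these are the root systems tabulated at the end of the paper. Checking each against the hypotheses directly from its Gram matrix, and observing that distinct canonical forms give non-isometric systems, shows that each of the $\numsystems$ occurs exactly once. I expect the main obstacle to be the two reliability issues above: establishing the explicit a priori bounds rigorously --- above all the bound on the \emph{number} of simple roots --- and proving exhaustiveness and termination of the seed-and-enlargement search, so that the large computer enumeration constitutes a proof rather than merely strong evidence.
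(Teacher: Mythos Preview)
Your outline has the right overall shape---seed, extend, remove redundancy---but you have misread what the paper's counts actually count, and in doing so you have skipped two load-bearing ideas.

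First, the $\nummatrices$ ``seeds'' are not Gram matrices of arbitrary spanning triples.  They are the matrices of Lemma~\ref{lem-narrow-matrices}: explicit $3\times3$, $4\times4$, and $5\times5$ families, coming from the ``narrow parts of polygons'' analysis in \cite{Allcock-rk3}, which guarantees that some $3$, $4$ or $5$ \emph{consecutive} simple roots have one of these specific Gram matrices.  Your sentence ``sharpening Nikulin's estimates where needed, one shows\dots'' is doing all the work and none of it is done; there is no evident a~priori bound on the Gram matrix of an arbitrary triple of simple roots in an unscaled rank-$3$ lattice, and the paper does not claim one.  The finiteness of the seed list rests entirely on that structural lemma, not on a Gauss--Bonnet argument.

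Second, and more seriously, you have conflated two different enlargement processes.  The $\numenlargements$ is not the number of one-root adjunctions; it is the number of \emph{lattice} enlargements.  Given a seed $(D,\a_1,\dots,\a_k)$, the actual root lattice $L$ need only contain $D$, not equal it, and the reflective hull $\{x\in D\otimes\Q:x\cdot\a_i\in\frac12\a_i^2\Z\}$ is the maximal lattice in which the $\a_i$ remain roots.  One enumerates all $E$ with $D\sset E\sset\hbox{hull}$; there are $\numenlargements$ such pairs $(E,\hbox{chain})$.  Only after fixing $E$ can one bound the norm of a further root (via $\a^2\mid 4e$ with $e$ the top elementary divisor of $E^*/E$), and only then does the one-root-at-a-time extension become a finite search.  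Your scheme, which never fixes the ambient lattice, has no such bound and the extension step need not terminate.  Relatedly, ``closed'' in the paper means the chain wraps around the polygon ($\spanof{\a_m,\a_1}$ positive-(semi)definite), not that no further root can be adjoined; the extension always adds the next \emph{consecutive} root along the boundary, constrained to form a finite or affine rank-$2$ system with its predecessor, which is what makes each step a short finite check.
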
 

\ArxivOrBLMS{%
The {\numsystems} simple root systems  appear at the end of the paper.
}{%
The {\numwithfoursimpleroots} cases 
with four simple roots appear at the end of this
paper and the rest appear in the unabridged online 
version \cite{Allcock-arxiv}.
}%
The number of simple root systems with $n$ simple roots is:
$$
\setcounter{MaxMatrixCols}{20}
\begin{matrix}
n=
&3
&4
&5
&6
&7
&8
&9
&10
&11
&12
&13
\\
&\numwiththreesimpleroots
&\numwithfoursimpleroots
&\numwithfivesimpleroots
&\numwithsixsimpleroots
&\numwithsevensimpleroots
&\numwitheightsimpleroots
&\numwithninesimpleroots
&\numwithtensimpleroots
&\numwithelevensimpleroots
&\numwithtwelvesimpleroots
&\numwiththirteedsimpleroots
\\
\phantom{0}
\\
n=
&14
&15
&16
&17
&18
&19
&20
&21
&22
&23
&24
\\
&\numwithfourteensimpleroots
&\numwithfifteensimpleroots
&\numwithsixteensimpleroots
&\numwithseventeensimpleroots
&\numwitheightteensimpleroots
&\numwithnineteensimpleroots
&\numwithtwentysimpleroots
&\numwithtwentyonesimpleroots
&\numwithtwentytwosimpleroots
&\numwithtwentythreesimpleroots
&\numwithtwentyfoursimpleroots
\end{matrix}
$$

We remark that for fixed $n$ there are only finitely many simple root
systems spanning $\R^{n,1}$ and satisfying (i)--(iii) with timelike
$\rho$, by
\cite[thm.~1.3.2]{Nikulin-Reflection-groups-in-hyperbolic-spaces-and-the-denominator-formula-for-LKMLAs},
and $n\leq22$ by work of Esselmann \cite{Esselmann}.  For $n\leq19$,
there are infinitely many simple root systems in $\R^{n,1}$ with
finite-volume chamber \cite{Allcock-19}, and their number even grows
exponentially with this volume (except perhaps for $n=16,17$).  The
essential condition that narrows this multitude down to finitely many
is the existence of~$\rho$.  In the lightlike case Nikulin has proven
a finiteness result
\cite[thm.~1.3.3]{Nikulin-Reflection-groups-in-hyperbolic-spaces-and-the-denominator-formula-for-LKMLAs}
for fixed $n$, and also the bound $n\leq998$
\cite{Nikulin-Algebraic-surfaces-with-log-terminal-singularities}.

These necessary conditions (i)--(iii) for the denominator function
to be ``correctable'' to a reflective
automorphic form have also proven sufficient in every case
investigated so far.  That is, given such a $\Pi$, it has proven
possible to ``automorphically correct'' \cite[(3.3)]{GN-Igusa} the KMA
with simple root system $\Pi$ to a GKMA $\g$ (possibly a Lie superalgebra rather
than a Lie algebra) with
the same real simple roots but also some imaginary simple roots, such that
$\g$'s 
denominator function is a reflective automorphic form.  That this should always
be possible is hinted at by Borcherds' suggestion
\cite[\S12]{Borcherds-automorphic-forms-with-singularities-on-Grassmannians}
that ``good'' reflection groups and ``good'' automorphic forms should
correspond to each other, and formalized by the arithmetic
mirror-symmetry conjecture of Gritsenko and Nikulin
\cite[conj.~2.2.4]{GN-AFaLKMAsI}.  Specific examples of automorphic
corrections are worked out in
\cite{GN-Igusa},
\cite{GN-Siegel-Automorphic-form-corrections-of-some-LKMLAs},
\cite{GN-K3-surfaces-Lorentzian-KMAs-and-Mirror-Symmetry}
and \cite{GN-AFaLKMAsII}; see
section~\ref{sec-Gritsenko-Nikulin} for more details of the
correspondences between those simple root systems and ours.  Gritsenko's
paper \cite{Gritsenko-Reflective-modular-forms-in-algebraic-geometry}
promises further automorphic corrections in a forthcoming paper with
Nikulin.

We remark briefly on Lorentzian Lie superalgebras.  Our conditions
(i)--(iii) on the simple root system are still necessary, and there are two
additional conditions.  First, a real root must satisfy
$L\cdot\a\sset\a^2\Z$ before its parity can be set to odd.  Second,
there is a sign condition on the coefficients of the Fourier-Weyl
expansion of the reflective modular form.  The current paper doesn't
address the sign condition.  But a complete list of candidates, for
simple root systems of rank~$3$ Lorentzian Lie superalgebras with timelike
Weyl vector, can be got from our list by examining our simple root systems
and checking which roots satisfy $L\cdot\a\sset\a^2\Z$.

\smallskip
The obvious open
problem is to investigate the possible automorphic corrections for
these $\numsystems$ simple root systems.  For a given simple root system $\Pi$ this
amounts to seeking reflective automorphic forms for subgroups $\Gamma$
of $\orthogonalgroup(M)$, where $M$ is a lattice of signature $(2,3)$ that possesses an
isotropic vector  orthogonal to a copy of
$\Pi$. ($\Pi$ should consist of roots of $M$ and $\Gamma$ should contain $W$.)  See
\cite{GN-AFaLKMAsII} for the largest investigation of this sort.

Surely only some of our root systems are the gems we seek, and our
results suggest that certain cases are worth investigating first.  One
very surprising outcome is that $\latticeHQcount$ of the $\numsystems$
cases share a single root lattice $\latticeHQ\iso\spanof{-1}\oplus
A_2[15]$.  (See section~\ref{sec-how-to-read} for lattice names.)  This suggests that
$\orthogonalgroup\bigl(\latticeHQ\oplus\hypcell{1}\bigr)$ and its finite-index
subgroups may admit an extraordinarily rich family of reflective
automorphic forms.  Or perhaps there is a single automorphic form
for some many-cusped subgroup, whose Fourier
expansions at various cusps give automorphic corrections to the
KMAs associated to many of the
root systems with root lattice $\latticeHQ$?  A few other lattices
account also occur very frequently, but none of them are ``obvious''
lattices.  
For example, $\latticeHO$ is an index
$2$ superlattice of $\spanof{-4,36,36}$ and occurs $\latticeHOcount$
times.  This is the only one of the $10$ most frequently occurring
root lattices
which is isotropic.  Interestingly, $\orthogonalgroup(\latticeHO)$ is a subgroup
of $\orthogonalgroup(\Z^{2,1})=\PGL_2\Z\times\{\pm1\}$, whose root system defines
the first- and most-studied hyperbolic KMA, introduced by
Feingold-Frenkel in \cite{Feingold-Frenkel}.  See section~\ref{sec-how-to-read} for
more about the frequently-occurring root lattices.

\smallskip
In section~\ref{sec-proof} we prove the main theorem, and in section~\ref{sec-how-to-read} we
explain how to read the table.  We have presented each simple root system so
that its essential properties are visible, including its symmetry
group, root norms, Weyl vector, the angles of its Weyl chamber, and
the isomorphism type of the root lattice.  We also provide additional
information like largest and smallest Weyl chambers and the number of
chambers with interesting properties like compactness or
right-angledness.  Finally, section~\ref{sec-Gritsenko-Nikulin} is devoted to correlating
our classification with the extensive work of
Gritsenko and Nikulin, and contains further references to the literature.
This includes proving
conjecture~1.2.2 of
their paper \cite{GN-AFaLKMAsI}.
\ArxivOrBLMS{%
The table itself appears at the very end of the paper.
All information in the table is available in computer-readable form,  commented out
in the {\TeX} file.
}{%
The table of simple root systems with four roots appears at the very
end of the paper.  For simple root systems with $3,5,6,7\dots,24$ roots, see the online version
of this paper \cite{Allcock-arxiv}, which also contains all displayed information in computer-readable form, commented out
in the {\TeX} file.
}

\section{Proof of the classification}
\label{sec-proof}

We begin by supposing that $\Pi$ is a simple root system spanning
$\R^{2,1}$, with finite-area Weyl chamber.
We write $L$ for the
root lattice.  The chamber has finitely many sides (since it has
finite area), so it is a polygon.  The roots correspond to these
sides, so they have a natural cyclic ordering, up to reversal.  We
suppose $\Pi=\{\a_1,\dots,\a_n\}$ with the $\a_i$ ordered in this
manner.

\begin{lemma}
\label{lem-narrow-matrices}
Some $3$, $4$ or~$5$ consecutive members of $\Pi$ have inner product
matrix equal to one of those given below in
\eqref{eq-shape-of-2d-SE-integral-polygon-orthogonality-case}--\eqref{eq-shape-of-2d-CP-integral-polygon},
up to multiplication by a rational number.

In each case $A,B,A',B',C,C'$ are positive integers, except in cases
\eqref{eq-shape-of-2d-SE-integral-polygon-orthogonality-case} and \eqref{eq-shape-of-2d-SP-integral-polygon-orthogonality-case} where $A'$ and $B'$ are both taken to be $0$.
In cases \eqref{eq-shape-of-2d-SP-integral-polygon-orthogonality-case}--\eqref{eq-shape-of-2d-CP-integral-polygon}, $k$ is a positive integer, and in case
\eqref{eq-shape-of-2d-CP-integral-polygon} so is~$k'$.
In every case we require
\begin{equation*}
%\label{eq-def-of-beta-short-side-case}
AB'C' = A'BC
\end{equation*}
and write $\beta$ for the common value.
% of \eqref{eq-def-of-beta-short-side-case}.  
In every case we also require $CC'<4K^2$ where
\begin{equation*}
%\label{eq-defn-of-K}
K:=1+\frac{\sqrt{AB}}{2}+\frac{\sqrt{A'B'}}{2}
+\sqrt{\bigl(2+\sqrt{AB}\,\bigr)\bigl(2+\sqrt{A'B'}\,\bigr)}
\end{equation*}
In cases \eqref{eq-shape-of-2d-SP-integral-polygon-orthogonality-case}--\eqref{eq-shape-of-2d-CP-integral-polygon}, we make the further requirement on
$A,B,A',B',C,C'$ that
\begin{equation*}
%\label{eq-def-of-N-short-pair-case}
N:=4+4\frac{CC'+\beta+A'B'}{AB-4}
\end{equation*}
is an integer, and require $k$ to divide it.  In case \eqref{eq-shape-of-2d-CP-integral-polygon} we define
$N'$ analogously with primed and unprimed letters exchanged, and
require that it is an integer and that $k'$ divides it.  Finally, in
case \eqref{eq-shape-of-2d-CP-integral-polygon} we also
require that 
%\begin{equation*}
%\label{eq-integrality-conditions-for-s-and-sprime}
${\gamma k^2}/{A'BN}$
%\hbox{ and }
and
${\gamma k'^{\,2}}/{AB'N'}$
%\hbox{ are integers,}
are integers,
%\end{equation*}
where
\begin{equation*}
%\label{eq-def-of-gamma}
\gamma:=
\frac{2\beta}{kk'}
\Biggl(2+\frac{\beta}{CC'}-
\frac{(2CC'+\beta)^3}{(AB-4)(A'B'-4)C^2C'^{\,2}}
\Biggr).
\end{equation*}
The inner product matrices are:
\begin{equation}
\label{eq-shape-of-2d-SE-integral-polygon-orthogonality-case}
\begin{pmatrix}
2AC &   -ABC   &   -ACC'   \\
-ABC    & 2BC  &     0     \\
-ACC'   &     0    & 2AC'
\end{pmatrix}
\qquad
AB\leq4;
\end{equation}

\begin{equation}
\label{eq-shape-of-2d-SE-integral-polygon-NONorthogonality-case}
\begin{pmatrix}
2AB'&   -ABB'   &    -\beta   \\
-ABB'   & 2BB'  &  -A'B'B  \\
-\beta     &  -A'B'B   & 2A'B
\end{pmatrix}
\qquad
\begin{matrix}
AB\leq4\\
A'B'\leq4;
\end{matrix}
\end{equation}

\begin{equation}
\label{eq-shape-of-2d-SP-integral-polygon-orthogonality-case}
\begin{pmatrix}
2 AC &   0             &  -ABC   &   -ACC' \\
0       & 2 AC'N/k^2 &    0    & -AC'N/k \\
-ABC    &   0             & 2 BC &     0   \\
-ACC'   & -AC'N/k         &    0    & 2'AC'
\end{pmatrix}
\qquad
\begin{matrix}
4<AB<36\\
4<CC';
\end{matrix}
\end{equation}

\begin{equation}
\label{eq-shape-of-2d-SP-integral-polygon-NONorthogonality-case}
\begin{pmatrix}
2AB'&        0      &    -ABB'    &     -\beta         \\
0       &2A'BN/k^2&      0      &   -A'BN/k   \\
-ABB'   &        0      & 2 BB'   &    -A'B'B       \\
-\beta     &  -A'BN/k      &    -A'B'B   &   2A'B
\end{pmatrix}
\qquad
\begin{matrix}
4<AB<36\\
A'B'\leq4\\
4<CC';
\end{matrix}
\end{equation}

\begin{gather}
\label{eq-shape-of-2d-CP-integral-polygon}
\begin{pmatrix}
2 AB' &       0         &  -ABB'   &  -AB'N'/k'        &     -\beta    \\
     0   & 2 A'BN/k^2 &    0     &  \gamma          & -A'BN/k \\
   -ABB' &       0         & 2 BB' &        0          &    -A'B'B   \\
-AB'N'/k' &      \gamma   &    0     & 2AB'N'/k'^{\,2} &       0   \\
   -\beta   &  -A'BN/k        & -A'B'B   &        0          & 2A'B
\end{pmatrix}
\\
\notag
4<AB<36
\qquad
4<A'B'<36
\qquad
4<CC'.
\end{gather}
\end{lemma}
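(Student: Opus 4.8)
The plan is to extract, from the cyclic polygon $C$, a short "window" of consecutive sides whose combinatorics force one of the six listed shapes, and then to translate the geometric constraints (finite area, existence of a timelike Weyl vector, the integrality condition $\a_i\cdot\a_j\in\frac12\a_i^2\Z$) into exactly the stated numerical conditions. First I would recall the basic dictionary: to each side $\a_i$ of the polygon corresponds a vertex $\a_i\cap\a_{i+1}$ of $C$, which is an ordinary (spacelike) vertex if $\a_i\cdot\a_{i+1}<0$ and an ideal vertex if $\a_i\cdot\a_{i+1}=0$ (the edge is then a horocyclic-looking cusp); because $\rho$ is timelike the polygon has finite area, so all vertices are at finite or ideal points and there are finitely many of them. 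I would then classify the local pictures at a vertex: around an ordinary vertex two sides meet at a dihedral angle whose cosine is $-\a_i\cdot\a_j/\sqrt{\a_i^2\a_j^2}$, and a finite-volume hyperbolic polygon bounded by straight geodesics has a strong rigidity — once we fix the Gram data of three or four consecutive sides, the position of every further side is determined. The names "SE", "SP", "CP" in the equation labels evidently stand for the cases where the window is "spacelike edge / orthogonality", "spacelike–parabolic", "cusp–parabolic" (i.e. whether the extreme sides of the window are perpendicular, or one of them bounds a cusp, etc.), and the task is just to see that every finite-area polygon contains at least one window of one of these shapes.

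The key steps, in order, would be: (1) Normalize. Rescale $\Pi$ so $L$ is unscaled and choose the cyclic order $\a_1,\dots,\a_n$. Write $a_{ij}=\a_i\cdot\a_j$; the integrality hypothesis gives $2a_{ij}/\a_i^2,\ 2a_{ij}/\a_j^2\in\Z$. (2) Find a perpendicularity or a cusp. Using the Gauss–Bonnet/angle-sum constraint for a finite-area hyperbolic $n$-gon (total exterior angle $>2\pi$, so the angles are "small on average"), show that somewhere among three, four, or five consecutive sides either two non-adjacent ones are perpendicular, or a side is flanked by a cusp; this is what pins down which of \eqref{eq-shape-of-2d-SE-integral-polygon-orthogonality-case}–\eqref{eq-shape-of-2d-CP-integral-polygon} applies, and it is where the bounds $AB\le 4$ versus $4<AB<36$ enter, the number $36$ coming from the sharpest inequality that still permits a finite-area configuration (related to the $\latticeHQ$ phenomenon with the scale $[15]$; more precisely $36$ is the threshold beyond which the two-sided configuration cannot close up). (3) Solve for the Gram matrix of the window. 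With the chosen normalization, parametrize the norms of the sides in the window by $A,B,A',B',C,C'$ (the off-diagonal entries being forced products such as $-ABC$, $-ACC'$, $-\b$ with $\b=ABC'=A'BC$), so that the constraint $\b=AB'C'=A'BC$ is exactly the compatibility of the two ways of computing the "long" inner product across the window. (4) Impose the integrality of the remaining inner products. The quantities $N=4+4(CC'+\b+A'B')/(AB-4)$ and $N'$ arise as the ratio forced by demanding that the side two steps out from a spacelike vertex still pairs integrally, and $k\mid N$, $k'\mid N'$, together with the integrality of $\gamma k^2/(A'BN)$ and $\gamma k'^2/(AB'N')$, are precisely the conditions that the entries $2A'BN/k^2$, $-A'BN/k$, $\gamma$ of the $4\times4$ and $5\times5$ matrices lie in the lattice. (5) Impose finite area. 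The inequality $CC'<4K^2$ with the stated $K$ is the open condition that the two sides $\a$ and $\a'$ at the ends of the window are "ultraparallel but not too far apart," equivalently that the polygon they help bound has finite rather than infinite area; I would derive it by computing the relevant $2\times2$ or $3\times3$ minor of the Gram matrix and requiring the signature to be Lorentzian (one negative eigenvalue) on the span while the configuration still closes.

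The main obstacle I expect is step (2): proving that \emph{every} finite-area cyclic polygon in $\R^{2,1}$ satisfying (i)–(iii) must contain a window of length $\le 5$ of one of these six precise shapes, i.e. that the list is exhaustive. This is a global-to-local statement — one has only the angle-sum inequality and the integrality lattice to work with — and the temptation is to argue by a long case analysis on the sequence of angle types around the polygon. I would try instead to argue by contradiction: assume no such window exists, deduce that every three consecutive sides have "large" angles at both vertices (no perpendicularity, no cusp, large norms), then show the exterior angles sum to at most $2\pi$, contradicting Gauss–Bonnet for a finite-area polygon. Making the quantitative estimates line up with the exact thresholds $AB\le 4$ and $AB<36$ — so that the boundary cases are correctly sorted into \eqref{eq-shape-of-2d-SE-integral-polygon-orthogonality-case}–\eqref{eq-shape-of-2d-CP-integral-polygon} rather than falling through the cracks — is the delicate part, and is presumably where the bulk of the paper's computation (the $\nummatrices$ candidate matrices) ultimately comes from.
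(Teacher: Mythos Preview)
Your proposal has a genuine gap at exactly the place you flag as the main obstacle, step~(2), and the gap is compounded by two misreadings of the hypotheses and the equation labels.

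First, the lemma assumes only that the Weyl chamber has finite area; it does \emph{not} assume a Weyl vector exists.  (Look at the sentence introducing the lemma: ``We begin by supposing that $\Pi$ is a simple root system spanning $\R^{2,1}$, with finite-area Weyl chamber.'')  Your argument invokes the timelike $\rho$ to get finite area and again in your solve-for-the-Gram-matrix step, so it is using more than is given.  Second, your decoding of the labels SE/SP/CP as ``spacelike edge / spacelike--parabolic / cusp--parabolic'' is wrong, and this leads you to look for the wrong local feature.  In the paper the proof is a citation to \cite{Allcock-rk3}, which refines Nikulin's method of \emph{narrow parts of polyhedra}: theorem~1 there shows that any finite-sided chamber contains either a ``short edge'' orthogonal to at most one neighbor, or (with $\ge5$ edges) a ``short pair'', or (with $\ge6$ edges) a ``close pair'', and lemmas~7--9 there derive the five matrix shapes from these three features.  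So SE/SP/CP abbreviate Short Edge, Short Pair, Close Pair.  The inequalities $AB\le4$ versus $4<AB<36$ encode whether the two mirrors in question meet in $\overline{H^2}$ or are ultraparallel but not too far apart; the bound $36$ comes from the narrow-part estimates in \cite{Allcock-rk3} and has nothing to do with $\latticeHQ$ or the scale~$[15]$.

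Your Gauss--Bonnet sketch (``if no window exists, the exterior angles sum to at most $2\pi$'') is in the right spirit---this is ultimately how one proves a narrow part exists---but it is not nearly sharp enough as stated.  The angle-sum inequality by itself does not force a perpendicularity or a cusp among any five consecutive edges; one needs the stronger statement that somewhere the polygon is metrically narrow (two non-adjacent edges close in hyperbolic distance, or an edge short relative to its neighbors), and then integrality of the $\a_i\cdot\a_j$ converts ``close'' into the numerical bounds $AB<36$, $CC'<4K^2$.  Making this precise is the content of \cite{Allcock-rk3}, and it is substantially more delicate than a contradiction from the angle sum.  Your steps (3)--(5), the translation of the integrality conditions into the parameters $N,k,\gamma$, are essentially correct bookkeeping once the right window has been located.
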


\begin{proof}
All the hard work was done in \cite{Allcock-rk3}, which used a
refinement of
Nikulin's method of narrow parts of polygons.  First, theorem~1 of
that paper uses the finite-sidedness of the chamber to show that the
chamber has one of three features: a ``short edge'' orthogonal to at
most one of its neighbors (which are not orthogonal to each other), or
at least $5$ edges and a ``short pair'', or at least $6$ edges and a
``close pair''.  Second, lemmas 7--9 of the same paper show that in
each of these cases there exist 3, 4 or 5 consecutive roots with
one of the inner product matrices \eqref{eq-shape-of-2d-SE-integral-polygon-orthogonality-case}--\eqref{eq-shape-of-2d-CP-integral-polygon}.  We remark that the
notion of root used in this paper is the same as that of a weight~$2$
quasiroot in \cite{Allcock-rk3}, except that \cite{Allcock-rk3} assumes
all quasiroots are primitive.  Since primitivity played no role in the
proofs of these lemmas, they apply in the current situation.
\end{proof}

\begin{proof}[Proof of theorem~\ref{thm-main}]
We first explain how we assembled the list, using a
refinement of the proof of Gritsenko-Nikulin's Thm.~1.2.1 of \cite{GN-AFaLKMAsI}.
We began by constructing all
the matrices from lemma~\ref{lem-narrow-matrices}, using a computer.  There are
$\nummatrices$ of them, the same set of matrices we started with in
the proof of \cite[thm.~13]{Allcock-rk3}.  Then we discarded the ones that
are positive definite ($\numposdefinite$ of them) or have rank~$2$
($\numranktwo$ of them).  For each remaining matrix $M$, we considered
the lattice $D$ got as the quotient of $\Z^{k=3,4\ {\rm or}\ 5}$ by
the kernel of $M$, and the vectors $\a_1,\dots,\a_k$ which are the
images of the standard basis vectors for $\Z^k$.  By the construction
of the matrices \eqref{eq-shape-of-2d-SE-integral-polygon-orthogonality-case}--\eqref{eq-shape-of-2d-CP-integral-polygon},
$\a_1,\dots,\a_k$ is a simple root system in $D$.  In each
case we sought a Weyl vector.  This was easy: since
$k\geq3$, the conditions $\rho\cdot\a_i=-\a_i^2/2$ determine (or
overdetermine) $\rho$.  We discarded $(D,\a_1,\dots,\a_k)$ if no Weyl
vector existed, or if a Weyl vector existed but was not timelike.  This accounted for $\numDwithnoWeylvector$ of the  cases, leaving
$\numDwithWeylvector$.

For each remaining tuple $(D,\a_1,\dots,\a_k)$ we computed the
reflective hull
%$$
%\spanof{\a_1,\dots,\a_k}^\rh
%$$
of
$\a_1,\dots,\a_k$.  This means the set of vectors $x\in D\tensor\Q$
satisfying $x\cdot\a_i\in\frac{1}{2}\a_i^2\Z$ for all $i=1,\dots,k$.
It is the unique maximal-under-inclusion lattice in which
$\a_1,\dots,\a_k$ are roots.  Because $\a_1,\dots,\a_k$ span
$D\tensor\Q$, the hull contains $D$ of finite index.  We enumerated
all lattices $E$ lying between $D$ and the hull.  There were a total
of $\numenlargements$ of these enlargements.  We rescaled each to be
unscaled.

The following definition is useful for further analysis; it is a
codification of the properties possessed by our $\numenlargements$
tuples $(E,\a_1,\dots,\a_k)$.  If $E$ is an integral lattice of
signature $(2,1)$ then a {\it chain\/} in $E$ means a sequence
$\a_1,\dots,\a_{m\geq3}$ of roots of $E$, which form a simple root system, admit
a timelike Weyl vector $\rho$, and have the property that each span
$\spanof{\a_1,\a_2},\dots,\spanof{\a_{m-1},\a_m}$ is positive-definite
or positive-semidefinite.  The Weyl vector is uniquely determined
since $m\geq3$.  The geometric meaning of the last condition is that
the mirrors of any two consecutive roots meet in $H^2$ or its
boundary.  We say that the chain is {\it closed\/} if
$\spanof{\a_m,\a_1}$ satisfies the same condition.  An {\it
  extension\/} of the chain means a root $\a_{m+1}$ of $E$ such that
$\a_1,\dots,\a_{m+1}$ is also a chain in $E$.

Now we address the question: what are all the possible extensions of a
non-closed chain $\a_1,\dots,\a_m$ in an integral lattice $E$?  In
particular, are there any?  Our method is a refinement of one used
by Gritsenko and Nikulin \cite[thm.~1.2.1]{GN-AFaLKMAsI}.  One can restrict the norm of a root
$\a$ of $E$, using the fact that one of the following holds: (i) $\a$
is imprimitive, in which case $\a/2$ spans an orthogonal summand of
$E$; (ii) $\a$ is primitive and spans an orthogonal summand of $E$; or
(iii) $\a$ is primitive and spans an orthogonal summand of an
index~$2$ sublattice of $E$.  In all three cases it follows that $\a^2$ divides $4e$,
where $e$ is the largest elementary divisor of the discriminant group
$E^*/E$ of $E$.  

So suppose $N$ a positive divisor of $4e$.  If an
extension $\a_{m+1}$ has norm~$N$, then its inner product $I$ with
$\a_m$ must be one of a few possibilities.  This uses the fact that
$\a_m$ and $\a_{m+1}$ are simple roots for a finite or affine Coxeter
group.  For example, if $\a_m^2=\a_{m+1}^2=N$ then $\a_m$ and
$\a_{m+1}$ must be simple roots for an $A_1^2$, $A_2$ or $\Atilde_1$.
In these cases, $I=\a_m\cdot\a_{m+1}$ would be $0$, $-N/2$ or $-N$
respectively.  The full rules for determining the
possibilities for $I$ are:
$$
\def\OR{\hbox{ or }}
\begin{matrix}
N/\a_m^2&\hbox{possible root systems}&\hbox{possibilities for $I$}
\\
4&\Atilde_1'\OR A_1^2&-N/2\OR0
\\
3&G_2\OR A_1^2&-N/2\OR0
\\
2&B_2\OR A_1^2&-N/2\OR0
\\
1&\Atilde_1\OR A_2\OR A_1^2&-N\OR-N/2\OR0
\\
1/2&B_2\OR A_1^2&-N\OR0
\\
1/3&G_2\OR A_1^2&-3N/2\OR0
\\
1/4&\Atilde_1'\OR A_1^2&-2N\OR0
\\
\hbox{other}&A_1^2&0
\end{matrix}
$$
Here $\Atilde_1$ indicates two independent vectors of the same norm,
whose sum is isotropic.  And $\Atilde_1'$ indicates two independent
vectors, one having four times the norm of the other, such that the
longer one plus twice the shorter one is isotropic.

After fixing such a pair $(N,I)$, 
one
can seek extensions $\a_{m+1}$ with
$\a_{m+1}^2=N$ and  $\a_m\cdot\a_{m+1}=I$.  
We record any that exist, and continue to the next pair $(N,I)$.
Here are the details.
The conditions
$\a_{m+1}\cdot\rho=-N/2$ and $\a_{m+1}\cdot\a_m=I$ restrict $\a_{m+1}$
to a line in $\R^{2,1}$, and imposing the condition $\a_{m+1}^2=N$
restricts $\a_{m+1}$ to one or two possibilities.  Finding them amounts to
solving a quadratic equation.  If it gives non-rational
vectors (the most common result) then
the sought extension doesn't exist.  If the solution or
solutions are rational then one checks which ones have nonpositive inner
products with $\a_1,\dots,\a_m$.  One can show (or just check) that at
most one does.  If none do then the sought extension
doesn't exist.  So
suppose exactly one does.  
If it is a root of $E$ then it is the
extension sought; otherwise the sought extension doesn't exist.

We began with our $\numenlargements$ chains, checked which chains were
closed ($\numclosedstepone$ of them), and found all the extensions
($\numextensionsstepone$ of them) of the non-closed chains.  We set
the closed ones aside for later analysis and repeated the process with
the $\numextensionsstepone$ new chains, finding $\numclosedsteptwo$
closed chains and $\numextensionssteptwo$ extensions.  Continuing in
this manner, on the $21$st iteration we found $\numclosedlaststep$
closed chains and no extensions.
This left us with $\numclosedchains$ tuples $(E,\a_1,\dots,\a_m)$ with
$E$ an integral lattice and $\a_1,\dots,\a_m$ a closed chain in $E$.
The main content of the theorem is that $(L,\Pi)$ is isometric to one of
them, up to dihedral rearrangement of the simple roots.  

Given our
preparations, this is easy: by lemma~\ref{lem-narrow-matrices}, some $3$, $4$ or $5$
consecutive members of $\Pi$ have one of the inner product matrices
given there, up to scale.  Call them $\a_1,\dots,\a_k$.  Since $L$
lies in the reflective hull of $\a_1,\dots,\a_k$, the tuple
$(L,\a_1,\dots,\a_k)$ occurs on the list of $\numenlargements$
chains.  Obviously each of $\a_{k+1},\dots,\a_n$ is an extension of
the chain of its predecessors, and $\a_1,\dots,\a_n$ is closed.  So
$(L,\a_1,\dots,\a_n)$ must occur on our final list of
$\numclosedchains$ closed chains.

We have established the completeness of the enumeration, but more
work was required to weed out redundancy.  
One source of redundancy is
that in one of our
$\numclosedchains$ tuples $(E,\a_1,\dots,\a_n)$, $E$ may be strictly
larger than $\spanof{\a_1,\dots,\a_n}$.  
These cases should be discarded because by definition $L$ is span of
the simple roots.  
We eliminated this redundancy by
computing the inner product matrix of the roots, which effectively
forgets $E$, and then rescaling to make the root lattice unscaled.
A second source of redundancy  is 
dihedral reorderings of roots.  
We eliminated this in the obvious way:
whenever several of these inner product matrices were equal up to
dihedral permutation, we kept only one.  The result was our list of
$\numsystems$ simple root systems.
\end{proof}

\section{How to read the table}
\label{sec-how-to-read}

The simple root systems are named $\Pi_{i,j}$ where $i$ indicates the
number of roots.  For given $i$, the simple root systems are organized
according to their symmetry groups, but those with a given symmetry
group appear in no special order.  For each $\Pi$ we have in mind its
simple roots $\a_1,\dots,\a_n$ in cyclic order, although not all of
them are listed when symmetries are present.  
\ArxivOrBLMS{%
We will give two
examples illustrating how to read table entries, and then give the
full rules.
}{%
\par
We will give an
example illustrating how to read table entries, and then give the
full rules.  Some of the general information 
given below refers to the full table, rather than just the portion we
have printed.  See \cite{Allcock-arxiv} for the rest of the table.
We have also omitted a few table-reading rules that are relevant 
only to the unprinted portion;  see \cite{Allcock-arxiv} for details
}

% Needed for the examples and then later for the table.
\newbox\namebox
\newbox\shapebox
\newbox\matricesbox
\newdimen\halfwidth
\halfwidth=\hsize
\divide\halfwidth by2
\newdimen\myboxwidth
\newbox\onelinebox

\smallskip
{\it Extended example:} 
Consider the entry for $\Pi_{4,5}$, which reads
\begin{center}
\setbox\matricesbox=\hbox{%
{$\left[\!\llap{\phantom{%
\begingroup \smaller\smaller\smaller\begin{tabular}{@{}c@{}}%
\phantom{0}\\\phantom{0}\\\phantom{0}
\end{tabular}\endgroup%
}}\right.$}%
\begingroup \smaller\smaller\smaller\begin{tabular}{@{}c@{}}%
-1/4\\\phantom{0}\\\phantom{0}
\end{tabular}\endgroup%
\kern3pt%
\begingroup \smaller\smaller\smaller\begin{tabular}{@{}c@{}}%
\phantom{0}\\3\\\phantom{0}
\end{tabular}\endgroup%
\kern3pt%
\begingroup \smaller\smaller\smaller\begin{tabular}{@{}c@{}}%
\phantom{0}\\\phantom{0}\\15
\end{tabular}\endgroup%
{$\left.\llap{\phantom{%
\begingroup \smaller\smaller\smaller\begin{tabular}{@{}c@{}}%
\phantom{0}\\\phantom{0}\\\phantom{0}
\end{tabular}\endgroup%
}}\!\right]$}%
{$\left[\!\llap{\phantom{%
\begingroup \smaller\smaller\smaller\begin{tabular}{@{}c@{}}%
0\\0\\0
\end{tabular}\endgroup%
}}\right.$}%
\begingroup \smaller\smaller\smaller\begin{tabular}{@{}c@{}}%
2\\1\\0
\end{tabular}\endgroup%
\kern3pt%
\begingroup \smaller\smaller\smaller\begin{tabular}{@{}c@{}}%
6\\0\\1
\end{tabular}\endgroup%
{$\left.\llap{\phantom{%
\begingroup \smaller\smaller\smaller\begin{tabular}{@{}c@{}}%
0\\0\\0
\end{tabular}\endgroup%
}}\!\right]$}%
}%
\ifdim\wd\matricesbox>\halfwidth\myboxwidth=\hsize\else\myboxwidth=\halfwidth\fi
\leavevmode
\vbox{%
\ifdim\myboxwidth=\hsize
\setbox\onelinebox=\hbox{%
\vbox{\hbox{%
$\Pi_{4,5}$ spans $L_{16.8}$%
}\hbox{%
$|6|6|6|6\rtimes D_{4}$%
}%
}%
\hfill\copy\matricesbox
}%
\ifdim\wd\onelinebox>\myboxwidth
\hbox to \myboxwidth{%
$\Pi_{4,5}$ spans $L_{16.8}$%
\hfil
$|6|6|6|6\rtimes D_{4}$%
}%
\box\matricesbox
\else
\hbox to \myboxwidth{%
\unhbox\onelinebox
}%
\fi
\else
\hbox{%
$\Pi_{4,5}$ spans $L_{16.8}$%
\hfil}%
\hbox{%
$|6|6|6|6\rtimes D_{4}$%
\hfil}%
\box\matricesbox
\fi
}%
\end{center}
The subscripts on $\Pi$ mean that this is the 5th on the list of
simple root
systems with $4$ simple roots.  The first thing to look at is
``$|6|6|6|6$'', which tells us that the angles of the Weyl chamber are
$\pi/6$, $\pi/6$, $\pi/6$ and $\pi/6$.  Ignore the vertical bars for
now.  The first matrix specifies an inner product on $\Q^3$.  The columns of the
second matrix are (some of) the simple roots.  In this example these are
the vectors $(2,1,0)$ and $(6,0,1)$.  The coordinates have been chosen
so that the norm of every root is equal to its first coordinate, so
these roots have norms $2$ and~$6$.  (In particular, by scanning the
top line of the matrix, one can read off the norms of the simple
roots.)  

The reason we have listed only some of the roots is that the rest are
obtained from the symmetry group of this simple root system, which is
the dihedral group of order~$4$ indicated by ``$\semidirect D_4$''.
In every case with this symmetry group, we have chosen coordinates so
that $D_4\iso (\Z/2)^2$ acts by changing the signs
of the last two coordinates.  The full set of simple roots is obtained
by applying these transformations to the simple roots that were given
explicitly, yielding $(2,\pm1,0)$ and $(6,0,\pm1)$.  Now we can
explain the vertical bars in ``$|6|6|6|6$''.  These are only present
when the symmetry group contains reflections, and then they indicate
how their mirrors meet the boundary of the chamber.  Remember that the
$6$'s correspond to the vertices of chamber.  A bar between two of
them indicates that the edge between the corresponding vertices is
bisected by one of the mirrors of the symmetry group.  The notation
should be read cyclically, so the initial bar indicates that a mirror
bisects the edge joining the last and first vertices.  In this example,
every edge is bisected by some mirror.

The coordinates are always chosen so that the Weyl vector $\rho$ has the form
$({\rm positive},0,0)$.  The first coordinate can be computed from the top left
corner of the inner product matrix (call it $E$).  One always has
$\rho=(-1/2E,0,0)$ and $\rho^2=1/4E$.  In this example we have
$E=-1/4$, so $\rho=(2,0,0)$ and $\rho^2=-1$.

The final piece of information is that the root lattice $L$ (the span
of the simple roots) is a copy of the lattice $L_{16.8}$ from \cite{Allcock-rk3table}.
If one wants to know more about this lattice then one must consult its
entry in \cite{Allcock-rk3table}, using the conventions explained in \cite[section~3.1]{Allcock-rk3}.
Among the information there is that the primes dividing $\det L$ are
$2$, $3$ and $5$, with
\begin{align*}
L\tensor\Z_2&{}\iso1^{-2}_{\rm II}4^1_1\\
L\tensor\Z_3&{}\iso1^{-1}3^{-2}\\
L\tensor\Z_5&{}\iso1^{-2}5^{-1}
\end{align*}
in the Conway-Sloane notation for $p$-adic lattices.  It follows that $\det L=-4\cdot3^2\cdot5=-180$.
Also recorded is the string of ``corner symbols'' for this lattice, namely $12^*_2
60^b_2 2_6 6^b_2$.  This gives information about the reflection group
of $L$;  as we saw above, this is larger than 
the one got from the roots comprising $\Pi_{4,5}$.  In particular, $L$ has
$4$ simple roots, of norms $12$, $60$, $2$ and $6$ in cyclic order
around the chamber, with the angles given by the subscripts, here
$\pi/2$, $\pi/2$, $\pi/6$ and $\pi/2$.  The superscripts are the 
arcane part of the notation and describe how to reconstruct $L$ from
the two simple roots given at a corner.  One looks for the simplest
superscript, here the missing one in $2_66$, which says that $L$ is
the direct sum of the $A_2$ root lattice and its orthogonal
complement.  Since $\det L=-180$ and $\det A_2=3$,
the orthogonal complement is spanned by a vector of norm $-60$.  So
$L\iso A_2\oplus\spanof{-60}$.

\ArxivOrBLMS{%
\bigskip
{\it Example:\/}
Consider the entry for $\Pi:=\Pi_{12,4}$, which reads
\begin{center}
\setbox\matricesbox=\hbox{%
{$\left[\!\llap{\phantom{%
\begingroup \smaller\smaller\smaller\begin{tabular}{@{}c@{}}%
\phantom{0}\\\phantom{0}\\\phantom{0}\\\phantom{0}
\end{tabular}\endgroup%
}}\right.$}%
\begingroup \smaller\smaller\smaller\begin{tabular}{@{}c@{}}%
-6\\\phantom{0}\\\phantom{0}\\\phantom{0}
\end{tabular}\endgroup%
\kern3pt%
\begingroup \smaller\smaller\smaller\begin{tabular}{@{}c@{}}%
\phantom{0}\\1\\\phantom{0}\\\phantom{0}
\end{tabular}\endgroup%
\kern3pt%
\begingroup \smaller\smaller\smaller\begin{tabular}{@{}c@{}}%
\phantom{0}\\\phantom{0}\\1\\\phantom{0}
\end{tabular}\endgroup%
\kern3pt%
\begingroup \smaller\smaller\smaller\begin{tabular}{@{}c@{}}%
\phantom{0}\\\phantom{0}\\\phantom{0}\\1
\end{tabular}\endgroup%
{$\left.\llap{\phantom{%
\begingroup \smaller\smaller\smaller\begin{tabular}{@{}c@{}}%
\phantom{0}\\\phantom{0}\\\phantom{0}\\\phantom{0}
\end{tabular}\endgroup%
}}\!\right]$}%
{$\left[\!\llap{\phantom{%
\begingroup \smaller\smaller\smaller\begin{tabular}{@{}c@{}}%
0\\0\\0\\0
\end{tabular}\endgroup%
}}\right.$}%
\begingroup \smaller\smaller\smaller\begin{tabular}{@{}c@{}}%
2\\3\\1\\-4
\end{tabular}\endgroup%
{$\left.\llap{\phantom{%
\begingroup \smaller\smaller\smaller\begin{tabular}{@{}c@{}}%
0\\0\\0\\0
\end{tabular}\endgroup%
}}\!\right]$}%
}%
\ifdim\wd\matricesbox>\halfwidth\myboxwidth=\hsize\else\myboxwidth=\halfwidth\fi
\leavevmode
\vbox{%
\ifdim\myboxwidth=\hsize
\setbox\onelinebox=\hbox{%
\vbox{\hbox{%
$\Pi_{12,4}=A_{3,III}=\hbox{GN}_{60}$ spans $L_{7.9}$%
}\hbox{%
$\slashthree\slashinfty\slashthree\slashinfty\slashthree\slashinfty\slashthree\slashinfty\slashthree\slashinfty\slashthree\slashinfty\rtimes D_{12}$%
}%
}%
\hfill\copy\matricesbox
}%
\ifdim\wd\onelinebox>\myboxwidth
\hbox to \myboxwidth{%
$\Pi_{12,4}=A_{3,III}=\hbox{GN}_{60}$ spans $L_{7.9}$%
\hfil
$\slashthree\slashinfty\slashthree\slashinfty\slashthree\slashinfty\slashthree\slashinfty\slashthree\slashinfty\slashthree\slashinfty\rtimes D_{12}$%
}%
\box\matricesbox
\else
\hbox to \myboxwidth{%
\unhbox\onelinebox
}%
\fi
\else
\hbox{%
$\Pi_{12,4}=A_{3,III}=\hbox{GN}_{60}$ spans $L_{7.9}$%
\hfil}%
\hbox{%
$\slashthree\slashinfty\slashthree\slashinfty\slashthree\slashinfty\slashthree\slashinfty\slashthree\slashinfty\slashthree\slashinfty\rtimes D_{12}$%
\hfil}%
\box\matricesbox
\fi
}%
\end{center}
First we learn that $\Pi$ has been studied before by
Gritsenko and Nikulin; they called it $A_{3,III}$ in \cite{GN-Igusa}
and \cite{GN-AFaLKMAsI}, and it is number~$60$ on their list of root
systems in \cite{GN-AFaLKMAsI}.  See
section~\ref{sec-Gritsenko-Nikulin} for more on the correspondences
between our work and theirs.

The main difference between this example and the previous one is that
here the inner product and simple roots are described in $\R^{3,1}$
rather than $\R^{2,1}$.  We do this whenever the symmetry group
contains an element of order~$3$.  The roots lies in a
$3$-dimensional subspace: their last three coordinates always sum to
zero, as for this example's displayed root $(2,3,1,-4)$.  When the
symmetry group is $D_{12}$, as here, the symmetries of the root system
are generated by the permutations of the last three coordinates, and
the simultaneous negation of all three of them.  As before, the full set of
simple roots is obtained by applying these to the displayed roots.  So
in this example the simple roots are  
$(2,3,1,-4)$,
$(2,-4,3,1)$,
$(2,1,-4,3)$,
$(2,1,3,-4)$,
$(2,3,-4,1)$,
$(2,-4,1,3)$,
$(2,-3,-1,4)$,
$(2,4,-3,-1)$,
$(2,-1,4,-3)$,
$(2,-1,-3,4)$,
$(2,-3,4,-1)$ and
$(2,4,-1,-3)$.
The reason we use four coordinates, with the last three summing to zero,
is to make these symmetries visible.

The only other new ingredient in this example is that  the
vertical bars in
$\slashthree\slashinfty\slashthree\slashinfty\slashthree\slashinfty\slashthree\slashinfty\slashthree\slashinfty\slashthree\slashinfty$
pass through the numbers rather than between them.  Such bars
indicates mirrors of $\Aut\Pi$ passing through vertices
of the chamber, rather than bisecting edges of it.
}{% nothing for BLMS version 
}

\bigskip
The things the reader may encounter in the table that we have not
illustrated in 
\ArxivOrBLMS{these examples}{this example} are the following.  
(1) Sometimes distinct simple root systems
have isometric Weyl chambers, indicated by ``(shared)''.  
(2)
If the symmetry group of the simple root system is something other than
$D_4$\ArxivOrBLMS{ or $D_{12}$}{}, 
then the description of the symmetries will be different.  But it will
still be ``universal'' in the sense that all simple root systems with the same
indicated symmetry
group have exactly the same automorphisms.
\ArxivOrBLMS{}{%
(3) Some of the
lattices have alternate names, referring to work of Gritsenko and
Nikulin.  (4) Sometimes the vertical bars pass through the numbers
rather than between them, such as $\slashthree6\slashinfty6$ for
$\Pi_{4,10}$, indicating mirrors of $\Aut\Pi$ that pass through
vertices of the chamber.}
Now we proceed to the full details.

\medskip
{\it Other names:\/} Some of the simple root systems have already been named
by Gritsenko-Nikulin.  
This accounts for names like ``$A_{1,II}$'' and ``${\rm GN}_{25}$''.
See the next
section for details and further references.

{\it Root lattice:\/} We chose the scale at which the root
lattice $L$ is  unscaled.  Because
the Weyl group of $\Pi$ has finite area chamber, $L$ is
reflective, meaning that its reflections generate a finite
index subgroup of $\orthogonalgroup(L)$.  Therefore $L$ is one of the lattices
$L_{k.l}$ from the classification of these lattices given in
\cite{Allcock-rk3table} (and proven in \cite{Allcock-rk3}).  We
indicate which one; see the end of this section for some comments on
how we worked this out.  The reader may consult
\cite{Allcock-rk3table} for further information, such as the Conway-Sloane
symbol for the genus of~$L$.

Some of the $L_{k.l}$ occurred many times.  The ten most frequently
appearing lattices are the following, where we have written $A_2[m]$
for the triangular lattice with inner product scaled by $m$, so that
its six minimal vectors have norm $2m$.
\begin{center}
% XX hand-coding of the lattice descriptions
\begin{tabular}{r@{${}\times{}$}lc@{\kern60pt}r@{${}\times{}$}lc}
$\latticeHQ$&\latticeHQcount&$\spanof{-1}\oplus A_2[15]$&
$\latticeHP$&\latticeHPcount&$\spanof{-5}\oplus A_2[6]$\\
$\latticeHO$&\latticeHOcount&$\spanof{-4,36,36}$, glued&
$\latticeHN$&\latticeHNcount&$\spanof{-1,6,6}$\\
$\latticeHM$&\latticeHMcount&$\spanof{-4}\oplus A_2[3]\phantom{1}$&
$\latticeHL$&\latticeHLcount&$\spanof{-3}\oplus A_2[2]$\\
$\latticeHK$&\latticeHKcount&$\spanof{-1}\oplus A_2[10]$&
$\latticeHJ$&\latticeHJcount&$\spanof{-1}\oplus A_2[7]$\\
$\latticeHI$&\latticeHIcount&$\spanof{-1}\oplus A_2[5]\phantom{1}$&
$\latticeHH$&\latticeHHcount&$\spanof{-5}\oplus A_2[3]$
\end{tabular}
\end{center}
We found the stated descriptions of these
lattices by referring to the ``corner symbols'' in
\cite{Allcock-rk3table} (explained in \cite{Allcock-rk3}), which made
the direct sum decompositions visible.  The gluing
for $\latticeHO$ means the adjunction of the vector
$(\frac{1}{2},\frac{1}{2},\frac{1}{2})$.

It is remarkable that $\latticeHQ$ occurs so frequently---by itself it
accounts for more than half of the table entries with compact chamber.
Looking at the lists of roots in these systems also reveals a certain
sameness: many of the root systems differ only slightly.   These ten
lattices are anisotropic, hence have compact Weyl chambers, except for
$\latticeHO$.  We mentioned in the introduction that
$\orthogonalgroup(\latticeHO)\sset\orthogonalgroup(\Z^{2,1})$; this is because $3$-filling
(\cite[\S1.1]{Allcock-rk3}) the even sublattice of $\latticeHO$ and
then rescaling yields $L_{4,1}=\Z^{2,1}$.  Simple roots for this
lattice correspond to our $\Pi_{3,27}$,
%XXX hand-coded
introduced by Feingold-Frenkel \cite{Feingold-Frenkel}.

{\it Chamber angles:\/} We give the list of angles $\pi/(\hbox{2, 3,
  4, 6 or $\infty$})$ at the vertices of the chamber in cyclic order:
the first vertex is $\a_1^\perp\cap\a_2^\perp$ and the last is
$\a_n^\perp\cap\a_1^\perp$.  Of the $\numsystems$ Weyl chambers
listed, 
$\numrightangled$ have all right angles, $\numcpt$ are compact, and
$\numnoncpt$ are noncompact.  Also,
{\NumIdealWeyls} have all ideal vertices, meaning vertices on the
boundary of the hyperbolic plane (\IdealWeyls).
Finally, {\NumRegularWeyls} of the chambers are
regular polygons (\RegularWeyls).   The regular polygons arising this way are the
ideal triangle (3 times), the ideal square (twice), the ideal hexagon, the square with
angles $\pi/3$, and the hexagons with angles $\pi/2$ and $\pi/3$.  In
four of these cases, $\Aut\Pi$ is strictly smaller than the isometry
group of the chamber.

The (orbifold) Euler characteristic and therefore the
area of the chamber are easy to find from the angles.  The smallest
Weyl chamber is shared by the Weyl groups of \withmax, with Euler
characteristic $\maxeuler$, and the largest chamber is that of
\withmin, whose Weyl group has Euler characteristic~$\mineuler$.

{\it Chamber isometries:\/} If $\Pi$ has nontrivial isometries then the list of angles is followed by ``$\rtimes C_m$'' or
``$\rtimes D_m$''.  The subscript $m$ is $|\Aut\Pi\,|$
and we write $D$ (resp.~$C$) when there are (resp.\ are not)
reflections in this finite group.  Note the distinction between $C_2$
and $D_2$.  In the dihedral case, we have also indicated how the
mirrors of the reflections meet the boundary of the chamber.  A
$|$ through a digit (i.e., $\slashtwo$, $\slashthree$,
$\slashinfty$) means that a mirror passes through that vertex, and a
$|$ between two digits means that a mirror bisects the edge
joining the two corresponding vertices.  Because of the cyclic
ordering, a $|$ before the string of digits means that a mirror
bisects the edge $\a_1^\perp$.

{\it Shared Weyl chambers:\/} Sometimes the Weyl groups of distinct
simple root systems are conjugate in $\orthogonalgroup(2,1)$.  When this happens we include
the note ``(shared)'' in the table.  We have numbered
the roots for our simple root systems so that the edges of the two
Weyl chambers
correspond in the obvious way.  That is, if
$\alpha_1,\dots,\alpha_n\in\R^{2,1}$ and
$\alpha_1',\dots,\alpha_n'\in\R^{2,1}$ have isometric Weyl chambers,
then there is an isometry of $\R^{2,1}$ sending each $\a_i$ to a
scalar multiple of $\a_i'$.  (In the language of
\cite{Nikulin-Reflection-groups-in-hyperbolic-spaces-and-the-denominator-formula-for-LKMLAs},
the simple root
systems are twisted to each other and these scalars are the twisting
coefficients, up to a common scalar.)
For ease of cross-reference, the simple root systems that share their chamber
(and with which other systems)
are listed in table~\ref{tab-shared-weyl-chambers}.

\begin{table}
\begin{tabular}{ll}
\sharedAA&\sharedAB\\
\sharedAC&\sharedAD\\
\sharedAE&\sharedAF\\
\sharedAG&\sharedAH\\
\sharedAI&\sharedAJ\\
\sharedAK&\sharedAL\\
\sharedAM&\sharedAN\\
\sharedAO&\sharedAP\\
\sharedAQ&\sharedAR\\
\sharedAS&\sharedAT\\
\sharedAU&\sharedAV\\
\sharedAW&\sharedAX\\
\sharedAY&\sharedAZ\\
\sharedBA&\sharedBB\\
\sharedBC&\sharedBD\\
\sharedBE&\sharedBF\\
\sharedBG&\sharedBH\\
\sharedBI&\sharedBJ\\
\sharedBK&\sharedBL\\
\sharedBM&\sharedBN\\
\sharedBO&\sharedBP\\
\sharedBQ&\sharedBR\\
\sharedBS&\sharedBT\\
\sharedBU&\sharedBV
\end{tabular}
\smallskip
\caption{Equivalence classes of simple root systems, under the relation of
  having isometric chambers.  For example, the last entry means that
  $\Pi_{6,30}$ and $\Pi_{6,35}$ share a chamber.}
%XXhandcoded!
\label{tab-shared-weyl-chambers}
\end{table}

{\it Coordinates compatible with $\Aut\Pi$:\/} In every case we took
$-\rho/2\rho^2$ as the first basis vector, for reasons explained below.
\ArxivOrBLMS{%
When $\Pi$ doesn't admit a symmetry of order~$3$ then we extended this
to a basis of $L\tensor\Q$ by choosing two
more vectors, in such a way that $\Aut\Pi$ is made visible, as
described below.
When $\Pi$ does admit a symmetry of order~$3$, then we describe vectors in
$\rho^\perp$ by three-component vectors summing to $0$. That is, we
represent elements of $L$ by four-component vectors whose last three
coordinates sum to zero.  This is also to make $\Aut\Pi$ visible.  The
first displayed matrix is the inner product matrix of our basis
for $\R^{2,1}$ or $\R^{3,1}$, and the roots (see below) are specified
as vectors in $\R^{2,1}$ or $\R^{3,1}$.  We have already described
$\Aut\Pi$ in terms of its action on the chamber, and now we make
concrete its action on $\R^{2,1}$ or $\R^{3,1}$.

First we describe $\Aut\Pi$ when using only three coordinates.
If it 
is $D_2$ then the only symmetry is negation of the last
coordinate.  If it is $C_2$ then the only symmetry is the
negation of the last two coordinates.  If it is $D_4$ then $\Aut\Pi$
is generated by these two transformations.  If it is $C_4$ or $D_8$
then the last two basis vectors are orthogonal and have the same norm,
and the obvious rotations of order~$4$ are symmetries of~$\Pi$.  In the
$D_8$ case, $\Aut\Pi$ is generated by these and the exchange of
the last two coordinates. 
}{%
We extended this to a basis of $L\tensor\Q$ by choosing two
more vectors, and
the inner product matrix of this basis is the first printed matrix.
We chose this basis in such a way that $\Aut\Pi$'s action on
$\R^{2,1}$ is visible.  Namely,
if $\Aut\Pi$
is $D_2$ then the only symmetry is negation of the last
coordinate.  If it is $C_2$ then the only symmetry is the
negation of the last two coordinates.  If it is $D_4$ then $\Aut\Pi$
is generated by these two transformations.  If it is $C_4$ or $D_8$
then the last two basis vectors are orthogonal and have the same norm,
and the obvious rotations of order~$4$ are symmetries of~$\Pi$.  In the
$D_8$ case, $\Aut\Pi$ is generated by these and the exchange of
the last two coordinates. 

(Unlike the printed cases, some simple root systems in the unprinted portion of the table possess
symmetries of order~$3$.  Then a different sort of
coordinate system is required in order to make the symmetries visible.  
These cases appear in  \cite{Allcock-arxiv}, together with an explanation
of their coordinates.)
}

\ArxivOrBLMS{%
Now we describe $\Aut\Pi$ when using four coordinates.  In every case
it contains the cyclic permutations of the last three coordinates.  If
it is $D_6$ then one
enlarges this to all permutations of these coordinates.  
If it is
$C_6$ then instead one adjoins the simultaneous
negation of the last three coordinates.  
If it is $D_{12}$ then one makes both these
enlargements.
}{}%

{\it Inner product matrix and Weyl vector:\/} The first displayed
matrix is the inner product matrix of our basis for 
$\R^{2,1}$\ArxivOrBLMS{ or $\R^{3,1}$}{}.  It is invariant under the symmetries just described.  Recall
that our first basis vector was $-\rho/2\rho^2$.  The top left entry
(call it $E$) of the inner product matrix is the norm of this vector,
namely $1/4\rho^2$.  It follows that $\rho^2=1/4E$ and
$\rho=(-1/2E,0,\dots,0)$.

{\it The simple roots and their norms:\/} The columns of the second
displayed matrix are some of the simple roots.  Their norms are the
top row of the matrix.  This pleasant circumstance arises from the
defining property $\rho\cdot\alpha=-\alpha^2/2$ of $\rho$, and is the
reason for our curious choice of $-\rho/2\rho^2$ as the first basis
vector.

In the absence of symmetry, we have printed all of $\a_1,\dots,\a_n$.  If $\Aut\Pi=C_m$ then we have printed
$\a_1,\dots,\a_{n/m}$.  So suppose $\Aut\Pi=D_m$.  We explained above
how we used $|$'s to indicate how the mirrors of $\Aut\Pi$ meet the boundary of the chamber.  We printed
only the roots corresponding to the edges involved in the portion of
the boundary of the chamber lying strictly between the first two displayed
$|$'s.  For example, for $\slashtwo44\slashtwo44\semidirect D_2$ we
would display $\alpha_2,\alpha_3,\alpha_4$ because the first $|$
represents the vertex $\alpha_1^\perp\cap\alpha_2^\perp$ and the
second represents $\alpha_4^\perp\cap\alpha_5^\perp$.  And for
$24|424|4\semidirect D_2$ we would display $\alpha_3,\dots,\alpha_6$
since the first~$|$ corresponds to $\alpha_3$
and the second to $\alpha_6$.  (It would have
been nice to be able to print the table so that the roots displayed
were always some initial segment of $\a_1,\dots,\a_n$.  But this is
incompatible with our convention that if two simple root systems share a
chamber then their roots correspond in the obvious way.)

In all cases the full simple root system is got from the displayed roots by
applying the automorphisms described above.

{\it Generalized Cartan Matrix and Dynkin Diagram:\/} The generalized
Cartan matrix $A$ can be extracted from the inner product matrix of
the simple roots, which can be computed from the inner product matrix
of our basis, together with the simple roots.  The formula is
$A_{ij}=2\a_i\cdot\a_j/\a_i^2$.  If every pair of faces of the chamber
meet (in hyperbolic space or at its boundary) then it is conventional
to record this matrix as the Dynkin diagram.  In our situation, this
happens only when the chamber is a triangle%
\ArxivOrBLMS{}{, which means that it concerns the unprinted part of
  the table, which appears in \cite{Allcock-arxiv}}.
In this case one can read off the Dynkin diagram
without having to do the calculations we have just outlined.  For
example, $\Pi_{3,21}$'s chamber has angles $\pi/4$, $\pi/4$ and
$\pi/\infty$.  So two of the bonds in the Dynkin diagram are double
bonds and the third is a heavy bond.  The norms of the roots are the
top row of the second matrix: $1$, $2$ and~$4$.  The arrow on each
edge of the Dynkin diagram points from the larger-norm root to the
smaller-norm root.  It follows that the diagram is number~54 in
\cite[table~2]{Carbone-etal}.

\ArxivOrBLMS{}{% 
Complete information about the three-root simple root systems can be
derived from their Dynkin diagrams, which are the symmetrizable cases of
\cite{Carbone-etal}.  So our classification is only new for simple
root systems with four or more simple roots.  This is the reason that
our abridged table displays those with four roots.}

\bigskip
We close this section with a remark on recognizing $\Pi$'s root
lattice.  In all except {\numgenusnotenough} cases it was enough to
work out  $L$'s genus and observe that only one
lattice in \cite{Allcock-rk3table} has that genus.  In the
remaining cases, two lattices $L_{k.l}$ from \cite{Allcock-rk3table} have that
genus and we had to determine which one was $L$.  In
{\numgenusalmostenough} of the cases ({\GenusAlmostEnough}) one of
the possibilities could be excluded because it lacked any
roots of some norm appearing among the norms of the roots in $\Pi$.
(Because $\Pi$'s roots are not assumed primitive, while roots were
assumed primitive in \cite{Allcock-rk3table}, we had to divide some
of $\Pi$'s roots by~$2$ before making this comparison.)  
We remark that $L_{10.1}$ 
%XX handcoded
occurs seven times,
%XX handcoded
accounting for more than half
%XX handcoded
of these {\numgenusalmostenough} cases.

Two
%XX hardcoded
cases remained.
For
$\Pi=\hardlatticerecog$ it turned out that $\Pi$'s inner product
matrix coincides with that of the simple roots for $L_{10.9}$.
%XX handcoded
For $\Pi=\veryhardlatticerecog$ it turned out that $\Pi$ admits a
reflective symmetry, with $W(\Pi)$ having index~$2$ in a larger
Coxeter group with the same root lattice.  And a set of simple roots
for this larger group also admits a reflective symmetry, so again we
can enlarge the Coxeter group.  We write $W'$ for the resulting
index~$4$ supergroup of $W$.  
The roots for the extra reflections
can be taken to lie in $L$, and one can check that 
$W'$ has four simple roots, whose span is all of~$L$.  Finally,
it happens that the Gram matrix for $W'$'s simple roots coincides with that
of the simple roots for $L_{10.4}$.   So $L\iso L_{10.4}$.
Curiously, the root
lattices $L_{10.9}$ and $L_{10.4}$ in these two difficult cases have the
same genus.

\section{Relation to work of Gritsenko-Nikulin}
\label{sec-Gritsenko-Nikulin}

Gritsenko and Nikulin have published three lists of simple root systems that
we make explicit reference to in the table.  
\ArxivOrBLMS{}{The information we give here about the correspondence
  between their root systems and ours contains  many references
  to the unprinted part of the table; see \cite{Allcock-arxiv} for the
  full table.\par}
Their first enumeration
appears in theorem~4.1 of \cite{GN-Igusa} and consists of all twelve
rank~3 hyperbolic simple root systems that have equal root norms and
admit a timelike Weyl vector, and whose chamber is noncompact of finite area.
(They also consider the case of a lightlike Weyl vector, which yields
a thirteenth root system.)
See theorem~1.3.1 of \cite{GN-AFaLKMAsI} for the proof of this
classification.  Their and
our names for these simple root systems correspond as follows:
\begin{center}
\begin{tabular}{r@{${}\leftrightarrow{}$}lr@{${}\leftrightarrow{}$}lr@{${}\leftrightarrow{}$}lr@{${}\leftrightarrow{}$}l}
$A_{1,0}$&$\myAonezero$%
&$A_{1,I}$&$\myAoneI$%
&$A_{1,II}$&$\myAoneII$%
&$A_{1,III}$&$\myAoneIII$%
\\
$A_{2,0}$&$\myAtwozero$%
&$A_{2,I}$&$\myAtwoI$%
&$A_{2,II}$&$\myAtwoII$%
&$A_{2,III}$&$\myAtwoIII$%
\\
$A_{3,0}$&$\myAthreezero$%
&$A_{3,I}$&$\myAthreeI$%
&$A_{3,II}$&$\myAthreeII$%
&$A_{3,III}$&$\myAthreeIII$%
\end{tabular}
\end{center}
The first detailed study of any hyperbolic KMA was by Feingold-Frenkel
\cite{Feingold-Frenkel}, whose $A$ is $A_{1,0}$.  A detailed treatment of the
automorphic corrections to this KMA and the one for $A_{1,I}$ appears
in \cite{GN-Igusa}.  These two
cases are also called $G_1$ and $G_2$ in
\cite{GN-Siegel-Automorphic-form-corrections-of-some-LKMLAs}. Examples
1 and~2 in \cite{GN-K3-surfaces-Lorentzian-KMAs-and-Mirror-Symmetry}
are the automorphic corrections of the KMAs associated to $A_{1,II}$
and $A_{2,II}$.

Their second enumeration appears in table~1 of \cite{GN-AFaLKMAsI} and
consists of the sixty rank~$3$ hyperbolic simple root systems of elliptic
type that admit a Weyl vector and are twisted to a hyperbolic simple root
system all of whose roots have norm~$2$.  (This means that one can
rescale all the roots to have norm~$2$ and still have a simple root system.)
The completeness of their listing is conjecture~1.2.2 of
\cite{GN-AFaLKMAsI}.  Our enumeration proves this conjecture.  For
each $k=1,\dots,60$, we call the $k$th member of their list
``$\hbox{GN}_k$''.  In order of increasing $k$, these simple root systems are
\begin{center}
\begin{tabular}{llllllllll}
$\GNa $&$\GNb $&$\GNc $&$\GNd $&$\GNe $&$\GNf $&$\GNg $&$\GNh $&$\GNi $&$\GNj $\\
$\GNk $&$\GNl $&$\GNm $&$\GNn $&$\GNo $&$\GNp $&$\GNq $&$\GNr $&$\GNs $&$\GNt $\\
$\GNu $&$\GNv $&$\GNw $&$\GNx $&$\GNy $&$\GNz $&$\GNaa$&$\GNab$&$\GNac$&$\GNad$\\
$\GNae$&$\GNaf$&$\GNag$&$\GNah$&$\GNai$&$\GNaj$&$\GNak$&$\GNal$&$\GNam$&$\GNan$\\
$\GNao$&$\GNap$&$\GNaq$&$\GNar$&$\GNas$&$\GNat$&$\GNau$&$\GNav$&$\GNaw$&$\GNax$\\
$\GNay$&$\GNaz$&$\GNba$&$\GNbb$&$\GNbc$&$\GNbd$&$\GNbe$&$\GNbf$&$\GNbg$&$\GNbh$
\end{tabular}
\end{center}
Of the sixty, sixteen have all root norms equal, and twelve
of these are the cases $A_{1,0},\dots,A_{3,III}$ referred to above.
The remaining four have compact Weyl chambers, with Gram matrices
$B_1,\dots,B_4$ listed in theorem~1.5.6 of \cite{GN-On-classification-of-LKMAs}.  The
correspondence with our simple root systems is
\begin{center}
\begin{tabular}{r@{${}\leftrightarrow{}$}lr@{${}\leftrightarrow{}$}lr@{${}\leftrightarrow{}$}lr@{${}\leftrightarrow{}$}l}
$B_1$&$\myBone$
&$B_2$&$\myBtwo$
&$B_3$&$\myBthree$
&$B_4$&$\myBfour$
\end{tabular}
\end{center}

The third enumeration \cite{GN-On-classification-of-LKMAs} is really an
enumeration of automorphic corrections to Kac-Moody algebras rather
than simple root systems.
That is, they classified the pairs  $(\Pi,\xi)$ where $\Pi$ is a
simple root system satisfying (i)--(iii) of
theorem~\ref{thm-main} and consists of roots of some 
lattice of the form
$L_t=\spanof{2t}\oplus\hypcell{1}$, and 
$\xi$ is a suitable automorphic
form for a certain subgroup of $\orthogonalgroup\bigl(L_t\oplus\hypcell{1}\bigr)$.
They showed that there are exactly 29
such pairs $(\Pi,\xi)$.  
Of their~$29$ simple root systems, $23$ have timelike Weyl vectors
and therefore appear in our tables.  
The
simple root systems $A_{1,II}$, $A_{2,II}$ and $A_{3,II}$ mentioned above
appear twice each and $A_{1,0}$, $A_{2,0}$ and $A_{3,0}$ appear once
each.  The remaining $14$ are the following.
$$
\begin{tabular}{r@{${}\leftrightarrow{}$}lr@{}l@{\kern19pt}r@{${}\leftrightarrow{}$}lr@{}l}
$A_{1,I,\zerobar}$&$\myAoneIzerobar$&$L_{2.3}$&${}\iso\spanof{1}\oplus\hypcell{2}$
&$A_{2,I,\zerobar}$&$\myAtwoIzerobar$&$L_{1.9}$&${}\iso\spanof{1}\oplus\hypcell{4}$\\
$A_{2,II,\onebar}$&$\myAtwoIIonebar$&$L_{1.9}$&
&$A_{2,I,\onebar}$&$\myAtwoIonebar$&$L_{1.9}$\\
$A_{2,0,\onebar}$&$\myAtwozeroonebar$&$L_{1.4}$&${}\iso\spanof{1}\oplus\hypcell{1}$
&$A_{3,I,\zerobar}$&$\myAthreeIzerobar$&$L_{7.7}$&${}\iso\spanof{-3}\oplus A_2[2]$\\
$A_{3,II,\onebar}$&$\myAthreeIIonebar$&$L_{7.7}$&
&$A_{3,I,\onebar}$&$\myAthreeIonebar$&$L_{7.7}$\\
$A_{3,0,\onebar}$&$\myAthreezeroonebar$&$L_{7.6}$&${}\iso\spanof{-6}\oplus A_2$
&$A_{4,I,\zerobar}$&$\myAfourIzerobar$&$L_{140.4}$&${}\iso\spanof{1}\oplus\hypcell{8}$\\
$A_{4,II,\onebar}$&$\myAfourIIonebar$&$L_{140.4}$&
&$A_{4,I,\onebar}$&$\myAfourIonebar$&$L_{140.4}$\\
$A_{4,0,\onebar}$&$\myAfourzeroonebar$&$L_{2.3}$&
&$\xi_{0,9}^{(2)}$&$\myAxi$&$L_{148.9}$&${}\iso\spanof{1}\oplus\hypcell{18}$%
\end{tabular}
$$
% REMARKS: all the lattices are easy to recognize from the corner
% symbols except L2.3, L140.4 and L148.9.

% L2.3 has determinant -4 and a corner symbol 1_2^r4.  It therefore
% has the form <1,4,?> with the vector (0,1/2,1/2) adjoined.
% Determinant implies ?=-4.  Then (0,+-1/2,1/2) are a basis for the
% perp of the norm 1 vector.  Obviously both are isotropic, and their
% inner product is determined up to (irrelevant) sign by det L.  So
% L=<1>+\hypcell(2).

% L140.4 has determinant -64 and a corner symbol 1_2^r16.  Same
% argument yields L=<1>+\hypcell(8)

% L148.9 has determinant -324 and a corner symbol 36_2^l1.  Same
% argument yields L=<1>+\hypcell(18)

\noindent
(Gritsenko and Nikulin had found all of these automorphic corrections
previously, in 
\cite{GN-Igusa},
\cite{GN-Siegel-Automorphic-form-corrections-of-some-LKMLAs}
and
\cite{GN-AFaLKMAsII}.  Also, they used
signature $(1,2)$ where we use $(2,1)$, and in the last case
they did not name the generalized Cartan matrix.  We used their name
for the automorphic form.)

The use of ``root lattice'' in \cite{GN-On-classification-of-LKMAs}
differs slightly from ours.  There, it was the lattice $L_t$ fixed
beforehand, from which one chooses a simple root system that doesn't
necessarily span~$L_t$.  So we have also described the
root lattice in our sense, meaning the span of the roots, rescaled to
be unscaled.  For completeness we also do this for
$A_{1,0},\dots,A_{3,0}$ and $A_{1,II},\dots,A_{3,II}$:
$$
% [inline block 0: 10 envs, 2119 chars -> data_tex | \begin{tabular}{r@{${}\leftrightarrow{}$}lr@{}l@{\kern25pt}r@{${}\leftrightarrow{}$}lr@{}l} $A_{1,0}$&$\myAonezero$&$L_{...]
\endgroup%
}}\!\right]$}%
}%
\ifdim\wd\matricesbox>\halfwidth\myboxwidth=\hsize\else\myboxwidth=\halfwidth\fi
\vbox{%
\ifdim\myboxwidth=\hsize
\setbox\onelinebox=\hbox{%
\vbox{\hbox{%
$\Pi_{3,1}=A_{1,II}=\hbox{GN}_{25}$ spans $L_{2.3}$%
}\hbox{%
$|\slashinfty|\slashinfty|\slashinfty\rtimes D_{6}$ (shared)%
}%
}%
\hfill\copy\matricesbox
}%
\ifdim\wd\onelinebox>\myboxwidth
\hbox to \myboxwidth{%
$\Pi_{3,1}=A_{1,II}=\hbox{GN}_{25}$ spans $L_{2.3}$%
\hfil
$|\slashinfty|\slashinfty|\slashinfty\rtimes D_{6}$ (shared)%
}%
\box\matricesbox
\else
\hbox to \myboxwidth{%
\unhbox\onelinebox
}%
\fi
\else
\hbox to \myboxwidth{%
$\Pi_{3,1}=A_{1,II}=\hbox{GN}_{25}$ spans $L_{2.3}$%
\hfil}%
\hbox to \myboxwidth{%
$|\slashinfty|\slashinfty|\slashinfty\rtimes D_{6}$ (shared)%
\hfil}%
\box\matricesbox
\fi
}%
\hfill\discretionary{}{}{}%
\setbox\matricesbox=\hbox{%
{$\left[\!\llap{\phantom{%
\begingroup \smaller\smaller\smaller\begin{tabular}{@{}c@{}}%
\phantom{0}\\\phantom{0}\\\phantom{0}
\end{tabular}\endgroup%
}}\right.$}%
\begingroup \smaller\smaller\smaller\begin{tabular}{@{}c@{}}%
-1/16\\\phantom{0}\\\phantom{0}
\end{tabular}\endgroup%
\kern3pt%
\begingroup \smaller\smaller\smaller\begin{tabular}{@{}c@{}}%
\phantom{0}\\1/4\\\phantom{0}
\end{tabular}\endgroup%
\kern3pt%
\begingroup \smaller\smaller\smaller\begin{tabular}{@{}c@{}}%
\phantom{0}\\\phantom{0}\\2
\end{tabular}\endgroup%
{$\left.\llap{\phantom{%
\begingroup \smaller\smaller\smaller\begin{tabular}{@{}c@{}}%
\phantom{0}\\\phantom{0}\\\phantom{0}
\end{tabular}\endgroup%
}}\!\right]$}%
{$\left[\!\llap{\phantom{%
\begingroup \smaller\smaller\smaller\begin{tabular}{@{}c@{}}%
0\\0\\0
\end{tabular}\endgroup%
}}\right.$}%
\begingroup \smaller\smaller\smaller\begin{tabular}{@{}c@{}}%
2\\3\\0
\end{tabular}\endgroup%
\kern3pt%
\begingroup \smaller\smaller\smaller\begin{tabular}{@{}c@{}}%
2\\-1\\-1
\end{tabular}\endgroup%
{$\left.\llap{\phantom{%
\begingroup \smaller\smaller\smaller\begin{tabular}{@{}c@{}}%
0\\0\\0
\end{tabular}\endgroup%
}}\!\right]$}%
}%
\ifdim\wd\matricesbox>\halfwidth\myboxwidth=\hsize\else\myboxwidth=\halfwidth\fi
\vbox{%
\ifdim\myboxwidth=\hsize
\setbox\onelinebox=\hbox{%
\vbox{\hbox{%
$\Pi_{3,2}=A_{1,I}=\hbox{GN}_{12}$ spans $L_{2.1}$%
}\hbox{%
$3|3\slashinfty\rtimes D_{2}$%
}%
}%
\hfill\copy\matricesbox
}%
\ifdim\wd\onelinebox>\myboxwidth
\hbox to \myboxwidth{%
$\Pi_{3,2}=A_{1,I}=\hbox{GN}_{12}$ spans $L_{2.1}$%
\hfil
$3|3\slashinfty\rtimes D_{2}$%
}%
\box\matricesbox
\else
\hbox to \myboxwidth{%
\unhbox\onelinebox
}%
\fi
\else
\hbox to \myboxwidth{%
$\Pi_{3,2}=A_{1,I}=\hbox{GN}_{12}$ spans $L_{2.1}$%
\hfil}%
\hbox to \myboxwidth{%
$3|3\slashinfty\rtimes D_{2}$%
\hfil}%
\box\matricesbox
\fi
}%
\hfill\discretionary{}{}{}%
\setbox\matricesbox=\hbox{%
{$\left[\!\llap{\phantom{%
\begingroup \smaller\smaller\smaller\begin{tabular}{@{}c@{}}%
\phantom{0}\\\phantom{0}\\\phantom{0}
\end{tabular}\endgroup%
}}\right.$}%
\begingroup \smaller\smaller\smaller\begin{tabular}{@{}c@{}}%
-1/17\\\phantom{0}\\\phantom{0}
\end{tabular}\endgroup%
\kern3pt%
\begingroup \smaller\smaller\smaller\begin{tabular}{@{}c@{}}%
\phantom{0}\\1/34\\\phantom{0}
\end{tabular}\endgroup%
\kern3pt%
\begingroup \smaller\smaller\smaller\begin{tabular}{@{}c@{}}%
\phantom{0}\\\phantom{0}\\3/2
\end{tabular}\endgroup%
{$\left.\llap{\phantom{%
\begingroup \smaller\smaller\smaller\begin{tabular}{@{}c@{}}%
\phantom{0}\\\phantom{0}\\\phantom{0}
\end{tabular}\endgroup%
}}\!\right]$}%
{$\left[\!\llap{\phantom{%
\begingroup \smaller\smaller\smaller\begin{tabular}{@{}c@{}}%
0\\0\\0
\end{tabular}\endgroup%
}}\right.$}%
\begingroup \smaller\smaller\smaller\begin{tabular}{@{}c@{}}%
2\\5\\1
\end{tabular}\endgroup%
\kern3pt%
\begingroup \smaller\smaller\smaller\begin{tabular}{@{}c@{}}%
1\\-6\\0
\end{tabular}\endgroup%
{$\left.\llap{\phantom{%
\begingroup \smaller\smaller\smaller\begin{tabular}{@{}c@{}}%
0\\0\\0
\end{tabular}\endgroup%
}}\!\right]$}%
}%
\ifdim\wd\matricesbox>\halfwidth\myboxwidth=\hsize\else\myboxwidth=\halfwidth\fi
\vbox{%
\ifdim\myboxwidth=\hsize
\setbox\onelinebox=\hbox{%
\vbox{\hbox{%
$\Pi_{3,3}$ spans $L_{3.2}$%
}\hbox{%
$\slashthree4|4\rtimes D_{2}$ (shared)%
}%
}%
\hfill\copy\matricesbox
}%
\ifdim\wd\onelinebox>\myboxwidth
\hbox to \myboxwidth{%
$\Pi_{3,3}$ spans $L_{3.2}$%
\hfil
$\slashthree4|4\rtimes D_{2}$ (shared)%
}%
\box\matricesbox
\else
\hbox to \myboxwidth{%
\unhbox\onelinebox
}%
\fi
\else
\hbox to \myboxwidth{%
$\Pi_{3,3}$ spans $L_{3.2}$%
\hfil}%
\hbox to \myboxwidth{%
$\slashthree4|4\rtimes D_{2}$ (shared)%
\hfil}%
\box\matricesbox
\fi
}%
\hfill\discretionary{}{}{}%
\setbox\matricesbox=\hbox{%
{$\left[\!\llap{\phantom{%
\begingroup \smaller\smaller\smaller\begin{tabular}{@{}c@{}}%
\phantom{0}\\\phantom{0}\\\phantom{0}
\end{tabular}\endgroup%
}}\right.$}%
\begingroup \smaller\smaller\smaller\begin{tabular}{@{}c@{}}%
-1/25\\\phantom{0}\\\phantom{0}
\end{tabular}\endgroup%
\kern3pt%
\begingroup \smaller\smaller\smaller\begin{tabular}{@{}c@{}}%
\phantom{0}\\3/50\\\phantom{0}
\end{tabular}\endgroup%
\kern3pt%
\begingroup \smaller\smaller\smaller\begin{tabular}{@{}c@{}}%
\phantom{0}\\\phantom{0}\\9/2
\end{tabular}\endgroup%
{$\left.\llap{\phantom{%
\begingroup \smaller\smaller\smaller\begin{tabular}{@{}c@{}}%
\phantom{0}\\\phantom{0}\\\phantom{0}
\end{tabular}\endgroup%
}}\!\right]$}%
{$\left[\!\llap{\phantom{%
\begingroup \smaller\smaller\smaller\begin{tabular}{@{}c@{}}%
0\\0\\0
\end{tabular}\endgroup%
}}\right.$}%
\begingroup \smaller\smaller\smaller\begin{tabular}{@{}c@{}}%
6\\7\\1
\end{tabular}\endgroup%
\kern3pt%
\begingroup \smaller\smaller\smaller\begin{tabular}{@{}c@{}}%
2\\-6\\0
\end{tabular}\endgroup%
{$\left.\llap{\phantom{%
\begingroup \smaller\smaller\smaller\begin{tabular}{@{}c@{}}%
0\\0\\0
\end{tabular}\endgroup%
}}\!\right]$}%
}%
\ifdim\wd\matricesbox>\halfwidth\myboxwidth=\hsize\else\myboxwidth=\halfwidth\fi
\vbox{%
\ifdim\myboxwidth=\hsize
\setbox\onelinebox=\hbox{%
\vbox{\hbox{%
$\Pi_{3,4}$ spans $L_{155.1}$%
}\hbox{%
$\slashthree6|6\rtimes D_{2}$ (shared)%
}%
}%
\hfill\copy\matricesbox
}%
\ifdim\wd\onelinebox>\myboxwidth
\hbox to \myboxwidth{%
$\Pi_{3,4}$ spans $L_{155.1}$%
\hfil
$\slashthree6|6\rtimes D_{2}$ (shared)%
}%
\box\matricesbox
\else
\hbox to \myboxwidth{%
\unhbox\onelinebox
}%
\fi
\else
\hbox to \myboxwidth{%
$\Pi_{3,4}$ spans $L_{155.1}$%
\hfil}%
\hbox to \myboxwidth{%
$\slashthree6|6\rtimes D_{2}$ (shared)%
\hfil}%
\box\matricesbox
\fi
}%
\hfill\discretionary{}{}{}%
\setbox\matricesbox=\hbox{%
{$\left[\!\llap{\phantom{%
\begingroup \smaller\smaller\smaller\begin{tabular}{@{}c@{}}%
\phantom{0}\\\phantom{0}\\\phantom{0}
\end{tabular}\endgroup%
}}\right.$}%
\begingroup \smaller\smaller\smaller\begin{tabular}{@{}c@{}}%
-1/11\\\phantom{0}\\\phantom{0}
\end{tabular}\endgroup%
\kern3pt%
\begingroup \smaller\smaller\smaller\begin{tabular}{@{}c@{}}%
\phantom{0}\\3/11\\\phantom{0}
\end{tabular}\endgroup%
\kern3pt%
\begingroup \smaller\smaller\smaller\begin{tabular}{@{}c@{}}%
\phantom{0}\\\phantom{0}\\3
\end{tabular}\endgroup%
{$\left.\llap{\phantom{%
\begingroup \smaller\smaller\smaller\begin{tabular}{@{}c@{}}%
\phantom{0}\\\phantom{0}\\\phantom{0}
\end{tabular}\endgroup%
}}\!\right]$}%
{$\left[\!\llap{\phantom{%
\begingroup \smaller\smaller\smaller\begin{tabular}{@{}c@{}}%
0\\0\\0
\end{tabular}\endgroup%
}}\right.$}%
\begingroup \smaller\smaller\smaller\begin{tabular}{@{}c@{}}%
4\\3\\1
\end{tabular}\endgroup%
\kern3pt%
\begingroup \smaller\smaller\smaller\begin{tabular}{@{}c@{}}%
1\\-2\\0
\end{tabular}\endgroup%
{$\left.\llap{\phantom{%
\begingroup \smaller\smaller\smaller\begin{tabular}{@{}c@{}}%
0\\0\\0
\end{tabular}\endgroup%
}}\!\right]$}%
}%
\ifdim\wd\matricesbox>\halfwidth\myboxwidth=\hsize\else\myboxwidth=\halfwidth\fi
\vbox{%
\ifdim\myboxwidth=\hsize
\setbox\onelinebox=\hbox{%
\vbox{\hbox{%
$\Pi_{3,5}=A_{3,I,\zerobar}=\hbox{GN}_{11}$ spans $L_{7.7}$%
}\hbox{%
$\slashthree\infty|\infty\rtimes D_{2}$ (shared)%
}%
}%
\hfill\copy\matricesbox
}%
\ifdim\wd\onelinebox>\myboxwidth
\hbox to \myboxwidth{%
$\Pi_{3,5}=A_{3,I,\zerobar}=\hbox{GN}_{11}$ spans $L_{7.7}$%
\hfil
$\slashthree\infty|\infty\rtimes D_{2}$ (shared)%
}%
\box\matricesbox
\else
\hbox to \myboxwidth{%
\unhbox\onelinebox
}%
\fi
\else
\hbox to \myboxwidth{%
$\Pi_{3,5}=A_{3,I,\zerobar}=\hbox{GN}_{11}$ spans $L_{7.7}$%
\hfil}%
\hbox to \myboxwidth{%
$\slashthree\infty|\infty\rtimes D_{2}$ (shared)%
\hfil}%
\box\matricesbox
\fi
}%
\hfill\discretionary{}{}{}%
\setbox\matricesbox=\hbox{%
{$\left[\!\llap{\phantom{%
\begingroup \smaller\smaller\smaller\begin{tabular}{@{}c@{}}%
\phantom{0}\\\phantom{0}\\\phantom{0}
\end{tabular}\endgroup%
}}\right.$}%
\begingroup \smaller\smaller\smaller\begin{tabular}{@{}c@{}}%
-1/28\\\phantom{0}\\\phantom{0}
\end{tabular}\endgroup%
\kern3pt%
\begingroup \smaller\smaller\smaller\begin{tabular}{@{}c@{}}%
\phantom{0}\\1/14\\\phantom{0}
\end{tabular}\endgroup%
\kern3pt%
\begingroup \smaller\smaller\smaller\begin{tabular}{@{}c@{}}%
\phantom{0}\\\phantom{0}\\3/2
\end{tabular}\endgroup%
{$\left.\llap{\phantom{%
\begingroup \smaller\smaller\smaller\begin{tabular}{@{}c@{}}%
\phantom{0}\\\phantom{0}\\\phantom{0}
\end{tabular}\endgroup%
}}\!\right]$}%
{$\left[\!\llap{\phantom{%
\begingroup \smaller\smaller\smaller\begin{tabular}{@{}c@{}}%
0\\0\\0
\end{tabular}\endgroup%
}}\right.$}%
\begingroup \smaller\smaller\smaller\begin{tabular}{@{}c@{}}%
2\\-3\\-1
\end{tabular}\endgroup%
\kern3pt%
\begingroup \smaller\smaller\smaller\begin{tabular}{@{}c@{}}%
4\\8\\0
\end{tabular}\endgroup%
{$\left.\llap{\phantom{%
\begingroup \smaller\smaller\smaller\begin{tabular}{@{}c@{}}%
0\\0\\0
\end{tabular}\endgroup%
}}\!\right]$}%
}%
\ifdim\wd\matricesbox>\halfwidth\myboxwidth=\hsize\else\myboxwidth=\halfwidth\fi
\vbox{%
\ifdim\myboxwidth=\hsize
\setbox\onelinebox=\hbox{%
\vbox{\hbox{%
$\Pi_{3,6}$ spans $L_{3.1}$%
}\hbox{%
$\slashthree4|4\rtimes D_{2}$ (shared)%
}%
}%
\hfill\copy\matricesbox
}%
\ifdim\wd\onelinebox>\myboxwidth
\hbox to \myboxwidth{%
$\Pi_{3,6}$ spans $L_{3.1}$%
\hfil
$\slashthree4|4\rtimes D_{2}$ (shared)%
}%
\box\matricesbox
\else
\hbox to \myboxwidth{%
\unhbox\onelinebox
}%
\fi
\else
\hbox to \myboxwidth{%
$\Pi_{3,6}$ spans $L_{3.1}$%
\hfil}%
\hbox to \myboxwidth{%
$\slashthree4|4\rtimes D_{2}$ (shared)%
\hfil}%
\box\matricesbox
\fi
}%
\hfill\discretionary{}{}{}%
\setbox\matricesbox=\hbox{%
{$\left[\!\llap{\phantom{%
\begingroup \smaller\smaller\smaller\begin{tabular}{@{}c@{}}%
\phantom{0}\\\phantom{0}\\\phantom{0}
\end{tabular}\endgroup%
}}\right.$}%
\begingroup \smaller\smaller\smaller\begin{tabular}{@{}c@{}}%
-3/26\\\phantom{0}\\\phantom{0}
\end{tabular}\endgroup%
\kern3pt%
\begingroup \smaller\smaller\smaller\begin{tabular}{@{}c@{}}%
\phantom{0}\\1/26\\\phantom{0}
\end{tabular}\endgroup%
\kern3pt%
\begingroup \smaller\smaller\smaller\begin{tabular}{@{}c@{}}%
\phantom{0}\\\phantom{0}\\3/2
\end{tabular}\endgroup%
{$\left.\llap{\phantom{%
\begingroup \smaller\smaller\smaller\begin{tabular}{@{}c@{}}%
\phantom{0}\\\phantom{0}\\\phantom{0}
\end{tabular}\endgroup%
}}\!\right]$}%
{$\left[\!\llap{\phantom{%
\begingroup \smaller\smaller\smaller\begin{tabular}{@{}c@{}}%
0\\0\\0
\end{tabular}\endgroup%
}}\right.$}%
\begingroup \smaller\smaller\smaller\begin{tabular}{@{}c@{}}%
2\\-5\\-1
\end{tabular}\endgroup%
\kern3pt%
\begingroup \smaller\smaller\smaller\begin{tabular}{@{}c@{}}%
2\\8\\0
\end{tabular}\endgroup%
{$\left.\llap{\phantom{%
\begingroup \smaller\smaller\smaller\begin{tabular}{@{}c@{}}%
0\\0\\0
\end{tabular}\endgroup%
}}\!\right]$}%
}%
\ifdim\wd\matricesbox>\halfwidth\myboxwidth=\hsize\else\myboxwidth=\halfwidth\fi
\vbox{%
\ifdim\myboxwidth=\hsize
\setbox\onelinebox=\hbox{%
\vbox{\hbox{%
$\Pi_{3,7}=A_{3,0}=\hbox{GN}_{19}$ spans $L_{7.6}$%
}\hbox{%
$\slashthree\infty|\infty\rtimes D_{2}$ (shared)%
}%
}%
\hfill\copy\matricesbox
}%
\ifdim\wd\onelinebox>\myboxwidth
\hbox to \myboxwidth{%
$\Pi_{3,7}=A_{3,0}=\hbox{GN}_{19}$ spans $L_{7.6}$%
\hfil
$\slashthree\infty|\infty\rtimes D_{2}$ (shared)%
}%
\box\matricesbox
\else
\hbox to \myboxwidth{%
\unhbox\onelinebox
}%
\fi
\else
\hbox to \myboxwidth{%
$\Pi_{3,7}=A_{3,0}=\hbox{GN}_{19}$ spans $L_{7.6}$%
\hfil}%
\hbox to \myboxwidth{%
$\slashthree\infty|\infty\rtimes D_{2}$ (shared)%
\hfil}%
\box\matricesbox
\fi
}%
\hfill\discretionary{}{}{}%
\setbox\matricesbox=\hbox{%
{$\left[\!\llap{\phantom{%
\begingroup \smaller\smaller\smaller\begin{tabular}{@{}c@{}}%
\phantom{0}\\\phantom{0}\\\phantom{0}
\end{tabular}\endgroup%
}}\right.$}%
\begingroup \smaller\smaller\smaller\begin{tabular}{@{}c@{}}%
-1/19\\\phantom{0}\\\phantom{0}
\end{tabular}\endgroup%
\kern3pt%
\begingroup \smaller\smaller\smaller\begin{tabular}{@{}c@{}}%
\phantom{0}\\3/38\\\phantom{0}
\end{tabular}\endgroup%
\kern3pt%
\begingroup \smaller\smaller\smaller\begin{tabular}{@{}c@{}}%
\phantom{0}\\\phantom{0}\\3/2
\end{tabular}\endgroup%
{$\left.\llap{\phantom{%
\begingroup \smaller\smaller\smaller\begin{tabular}{@{}c@{}}%
\phantom{0}\\\phantom{0}\\\phantom{0}
\end{tabular}\endgroup%
}}\!\right]$}%
{$\left[\!\llap{\phantom{%
\begingroup \smaller\smaller\smaller\begin{tabular}{@{}c@{}}%
0\\0\\0
\end{tabular}\endgroup%
}}\right.$}%
\begingroup \smaller\smaller\smaller\begin{tabular}{@{}c@{}}%
2\\3\\1
\end{tabular}\endgroup%
\kern3pt%
\begingroup \smaller\smaller\smaller\begin{tabular}{@{}c@{}}%
6\\-10\\0
\end{tabular}\endgroup%
{$\left.\llap{\phantom{%
\begingroup \smaller\smaller\smaller\begin{tabular}{@{}c@{}}%
0\\0\\0
\end{tabular}\endgroup%
}}\!\right]$}%
}%
\ifdim\wd\matricesbox>\halfwidth\myboxwidth=\hsize\else\myboxwidth=\halfwidth\fi
\vbox{%
\ifdim\myboxwidth=\hsize
\setbox\onelinebox=\hbox{%
\vbox{\hbox{%
$\Pi_{3,8}$ spans $L_{3.4}$%
}\hbox{%
$\slashthree6|6\rtimes D_{2}$ (shared)%
}%
}%
\hfill\copy\matricesbox
}%
\ifdim\wd\onelinebox>\myboxwidth
\hbox to \myboxwidth{%
$\Pi_{3,8}$ spans $L_{3.4}$%
\hfil
$\slashthree6|6\rtimes D_{2}$ (shared)%
}%
\box\matricesbox
\else
\hbox to \myboxwidth{%
\unhbox\onelinebox
}%
\fi
\else
\hbox to \myboxwidth{%
$\Pi_{3,8}$ spans $L_{3.4}$%
\hfil}%
\hbox to \myboxwidth{%
$\slashthree6|6\rtimes D_{2}$ (shared)%
\hfil}%
\box\matricesbox
\fi
}%
\hfill\discretionary{}{}{}%
\setbox\matricesbox=\hbox{%
{$\left[\!\llap{\phantom{%
\begingroup \smaller\smaller\smaller\begin{tabular}{@{}c@{}}%
\phantom{0}\\\phantom{0}\\\phantom{0}
\end{tabular}\endgroup%
}}\right.$}%
\begingroup \smaller\smaller\smaller\begin{tabular}{@{}c@{}}%
-1/16\\\phantom{0}\\\phantom{0}
\end{tabular}\endgroup%
\kern3pt%
\begingroup \smaller\smaller\smaller\begin{tabular}{@{}c@{}}%
\phantom{0}\\3/4\\\phantom{0}
\end{tabular}\endgroup%
\kern3pt%
\begingroup \smaller\smaller\smaller\begin{tabular}{@{}c@{}}%
\phantom{0}\\\phantom{0}\\3/2
\end{tabular}\endgroup%
{$\left.\llap{\phantom{%
\begingroup \smaller\smaller\smaller\begin{tabular}{@{}c@{}}%
\phantom{0}\\\phantom{0}\\\phantom{0}
\end{tabular}\endgroup%
}}\!\right]$}%
{$\left[\!\llap{\phantom{%
\begingroup \smaller\smaller\smaller\begin{tabular}{@{}c@{}}%
0\\0\\0
\end{tabular}\endgroup%
}}\right.$}%
\begingroup \smaller\smaller\smaller\begin{tabular}{@{}c@{}}%
2\\-1\\1
\end{tabular}\endgroup%
\kern3pt%
\begingroup \smaller\smaller\smaller\begin{tabular}{@{}c@{}}%
8\\4\\0
\end{tabular}\endgroup%
{$\left.\llap{\phantom{%
\begingroup \smaller\smaller\smaller\begin{tabular}{@{}c@{}}%
0\\0\\0
\end{tabular}\endgroup%
}}\!\right]$}%
}%
\ifdim\wd\matricesbox>\halfwidth\myboxwidth=\hsize\else\myboxwidth=\halfwidth\fi
\vbox{%
\ifdim\myboxwidth=\hsize
\setbox\onelinebox=\hbox{%
\vbox{\hbox{%
$\Pi_{3,9}=\hbox{GN}_{13}$ spans $L_{7.9}$%
}\hbox{%
$\slashthree\infty|\infty\rtimes D_{2}$ (shared)%
}%
}%
\hfill\copy\matricesbox
}%
\ifdim\wd\onelinebox>\myboxwidth
\hbox to \myboxwidth{%
$\Pi_{3,9}=\hbox{GN}_{13}$ spans $L_{7.9}$%
\hfil
$\slashthree\infty|\infty\rtimes D_{2}$ (shared)%
}%
\box\matricesbox
\else
\hbox to \myboxwidth{%
\unhbox\onelinebox
}%
\fi
\else
\hbox to \myboxwidth{%
$\Pi_{3,9}=\hbox{GN}_{13}$ spans $L_{7.9}$%
\hfil}%
\hbox to \myboxwidth{%
$\slashthree\infty|\infty\rtimes D_{2}$ (shared)%
\hfil}%
\box\matricesbox
\fi
}%
\hfill\discretionary{}{}{}%
\setbox\matricesbox=\hbox{%
{$\left[\!\llap{\phantom{%
\begingroup \smaller\smaller\smaller\begin{tabular}{@{}c@{}}%
\phantom{0}\\\phantom{0}\\\phantom{0}
\end{tabular}\endgroup%
}}\right.$}%
\begingroup \smaller\smaller\smaller\begin{tabular}{@{}c@{}}%
-1/6\\\phantom{0}\\\phantom{0}
\end{tabular}\endgroup%
\kern3pt%
\begingroup \smaller\smaller\smaller\begin{tabular}{@{}c@{}}%
\phantom{0}\\1/6\\\phantom{0}
\end{tabular}\endgroup%
\kern3pt%
\begingroup \smaller\smaller\smaller\begin{tabular}{@{}c@{}}%
\phantom{0}\\\phantom{0}\\1
\end{tabular}\endgroup%
{$\left.\llap{\phantom{%
\begingroup \smaller\smaller\smaller\begin{tabular}{@{}c@{}}%
\phantom{0}\\\phantom{0}\\\phantom{0}
\end{tabular}\endgroup%
}}\!\right]$}%
{$\left[\!\llap{\phantom{%
\begingroup \smaller\smaller\smaller\begin{tabular}{@{}c@{}}%
0\\0\\0
\end{tabular}\endgroup%
}}\right.$}%
\begingroup \smaller\smaller\smaller\begin{tabular}{@{}c@{}}%
2\\-4\\0
\end{tabular}\endgroup%
\kern3pt%
\begingroup \smaller\smaller\smaller\begin{tabular}{@{}c@{}}%
1\\1\\-1
\end{tabular}\endgroup%
{$\left.\llap{\phantom{%
\begingroup \smaller\smaller\smaller\begin{tabular}{@{}c@{}}%
0\\0\\0
\end{tabular}\endgroup%
}}\!\right]$}%
}%
\ifdim\wd\matricesbox>\halfwidth\myboxwidth=\hsize\else\myboxwidth=\halfwidth\fi
\vbox{%
\ifdim\myboxwidth=\hsize
\setbox\onelinebox=\hbox{%
\vbox{\hbox{%
$\Pi_{3,10}$ spans $L_{1.6}$%
}\hbox{%
$4|4\slashinfty\rtimes D_{2}$ (shared)%
}%
}%
\hfill\copy\matricesbox
}%
\ifdim\wd\onelinebox>\myboxwidth
\hbox to \myboxwidth{%
$\Pi_{3,10}$ spans $L_{1.6}$%
\hfil
$4|4\slashinfty\rtimes D_{2}$ (shared)%
}%
\box\matricesbox
\else
\hbox to \myboxwidth{%
\unhbox\onelinebox
}%
\fi
\else
\hbox to \myboxwidth{%
$\Pi_{3,10}$ spans $L_{1.6}$%
\hfil}%
\hbox to \myboxwidth{%
$4|4\slashinfty\rtimes D_{2}$ (shared)%
\hfil}%
\box\matricesbox
\fi
}%
\hfill\discretionary{}{}{}%
\setbox\matricesbox=\hbox{%
{$\left[\!\llap{\phantom{%
\begingroup \smaller\smaller\smaller\begin{tabular}{@{}c@{}}%
\phantom{0}\\\phantom{0}\\\phantom{0}
\end{tabular}\endgroup%
}}\right.$}%
\begingroup \smaller\smaller\smaller\begin{tabular}{@{}c@{}}%
-1/8\\\phantom{0}\\\phantom{0}
\end{tabular}\endgroup%
\kern3pt%
\begingroup \smaller\smaller\smaller\begin{tabular}{@{}c@{}}%
\phantom{0}\\1/8\\\phantom{0}
\end{tabular}\endgroup%
\kern3pt%
\begingroup \smaller\smaller\smaller\begin{tabular}{@{}c@{}}%
\phantom{0}\\\phantom{0}\\2
\end{tabular}\endgroup%
{$\left.\llap{\phantom{%
\begingroup \smaller\smaller\smaller\begin{tabular}{@{}c@{}}%
\phantom{0}\\\phantom{0}\\\phantom{0}
\end{tabular}\endgroup%
}}\!\right]$}%
{$\left[\!\llap{\phantom{%
\begingroup \smaller\smaller\smaller\begin{tabular}{@{}c@{}}%
0\\0\\0
\end{tabular}\endgroup%
}}\right.$}%
\begingroup \smaller\smaller\smaller\begin{tabular}{@{}c@{}}%
1\\3\\0
\end{tabular}\endgroup%
\kern3pt%
\begingroup \smaller\smaller\smaller\begin{tabular}{@{}c@{}}%
2\\-2\\1
\end{tabular}\endgroup%
{$\left.\llap{\phantom{%
\begingroup \smaller\smaller\smaller\begin{tabular}{@{}c@{}}%
0\\0\\0
\end{tabular}\endgroup%
}}\!\right]$}%
}%
\ifdim\wd\matricesbox>\halfwidth\myboxwidth=\hsize\else\myboxwidth=\halfwidth\fi
\vbox{%
\ifdim\myboxwidth=\hsize
\setbox\onelinebox=\hbox{%
\vbox{\hbox{%
$\Pi_{3,11}$ spans $L_{1.3}$%
}\hbox{%
$4|4\slashinfty\rtimes D_{2}$ (shared)%
}%
}%
\hfill\copy\matricesbox
}%
\ifdim\wd\onelinebox>\myboxwidth
\hbox to \myboxwidth{%
$\Pi_{3,11}$ spans $L_{1.3}$%
\hfil
$4|4\slashinfty\rtimes D_{2}$ (shared)%
}%
\box\matricesbox
\else
\hbox to \myboxwidth{%
\unhbox\onelinebox
}%
\fi
\else
\hbox to \myboxwidth{%
$\Pi_{3,11}$ spans $L_{1.3}$%
\hfil}%
\hbox to \myboxwidth{%
$4|4\slashinfty\rtimes D_{2}$ (shared)%
\hfil}%
\box\matricesbox
\fi
}%
\hfill\discretionary{}{}{}%
\setbox\matricesbox=\hbox{%
{$\left[\!\llap{\phantom{%
\begingroup \smaller\smaller\smaller\begin{tabular}{@{}c@{}}%
\phantom{0}\\\phantom{0}\\\phantom{0}
\end{tabular}\endgroup%
}}\right.$}%
\begingroup \smaller\smaller\smaller\begin{tabular}{@{}c@{}}%
-3/32\\\phantom{0}\\\phantom{0}
\end{tabular}\endgroup%
\kern3pt%
\begingroup \smaller\smaller\smaller\begin{tabular}{@{}c@{}}%
\phantom{0}\\3/8\\\phantom{0}
\end{tabular}\endgroup%
\kern3pt%
\begingroup \smaller\smaller\smaller\begin{tabular}{@{}c@{}}%
\phantom{0}\\\phantom{0}\\2
\end{tabular}\endgroup%
{$\left.\llap{\phantom{%
\begingroup \smaller\smaller\smaller\begin{tabular}{@{}c@{}}%
\phantom{0}\\\phantom{0}\\\phantom{0}
\end{tabular}\endgroup%
}}\!\right]$}%
{$\left[\!\llap{\phantom{%
\begingroup \smaller\smaller\smaller\begin{tabular}{@{}c@{}}%
0\\0\\0
\end{tabular}\endgroup%
}}\right.$}%
\begingroup \smaller\smaller\smaller\begin{tabular}{@{}c@{}}%
6\\5\\0
\end{tabular}\endgroup%
\kern3pt%
\begingroup \smaller\smaller\smaller\begin{tabular}{@{}c@{}}%
2\\-1\\1
\end{tabular}\endgroup%
{$\left.\llap{\phantom{%
\begingroup \smaller\smaller\smaller\begin{tabular}{@{}c@{}}%
0\\0\\0
\end{tabular}\endgroup%
}}\!\right]$}%
}%
\ifdim\wd\matricesbox>\halfwidth\myboxwidth=\hsize\else\myboxwidth=\halfwidth\fi
\vbox{%
\ifdim\myboxwidth=\hsize
\setbox\onelinebox=\hbox{%
\vbox{\hbox{%
$\Pi_{3,12}$ spans $L_{7.9}$%
}\hbox{%
$6|6\slashinfty\rtimes D_{2}$ (shared)%
}%
}%
\hfill\copy\matricesbox
}%
\ifdim\wd\onelinebox>\myboxwidth
\hbox to \myboxwidth{%
$\Pi_{3,12}$ spans $L_{7.9}$%
\hfil
$6|6\slashinfty\rtimes D_{2}$ (shared)%
}%
\box\matricesbox
\else
\hbox to \myboxwidth{%
\unhbox\onelinebox
}%
\fi
\else
\hbox to \myboxwidth{%
$\Pi_{3,12}$ spans $L_{7.9}$%
\hfil}%
\hbox to \myboxwidth{%
$6|6\slashinfty\rtimes D_{2}$ (shared)%
\hfil}%
\box\matricesbox
\fi
}%
\hfill\discretionary{}{}{}%
\setbox\matricesbox=\hbox{%
{$\left[\!\llap{\phantom{%
\begingroup \smaller\smaller\smaller\begin{tabular}{@{}c@{}}%
\phantom{0}\\\phantom{0}\\\phantom{0}
\end{tabular}\endgroup%
}}\right.$}%
\begingroup \smaller\smaller\smaller\begin{tabular}{@{}c@{}}%
-1/16\\\phantom{0}\\\phantom{0}
\end{tabular}\endgroup%
\kern3pt%
\begingroup \smaller\smaller\smaller\begin{tabular}{@{}c@{}}%
\phantom{0}\\9/4\\\phantom{0}
\end{tabular}\endgroup%
\kern3pt%
\begingroup \smaller\smaller\smaller\begin{tabular}{@{}c@{}}%
\phantom{0}\\\phantom{0}\\6
\end{tabular}\endgroup%
{$\left.\llap{\phantom{%
\begingroup \smaller\smaller\smaller\begin{tabular}{@{}c@{}}%
\phantom{0}\\\phantom{0}\\\phantom{0}
\end{tabular}\endgroup%
}}\!\right]$}%
{$\left[\!\llap{\phantom{%
\begingroup \smaller\smaller\smaller\begin{tabular}{@{}c@{}}%
0\\0\\0
\end{tabular}\endgroup%
}}\right.$}%
\begingroup \smaller\smaller\smaller\begin{tabular}{@{}c@{}}%
2\\1\\0
\end{tabular}\endgroup%
\kern3pt%
\begingroup \smaller\smaller\smaller\begin{tabular}{@{}c@{}}%
6\\-1\\1
\end{tabular}\endgroup%
{$\left.\llap{\phantom{%
\begingroup \smaller\smaller\smaller\begin{tabular}{@{}c@{}}%
0\\0\\0
\end{tabular}\endgroup%
}}\!\right]$}%
}%
\ifdim\wd\matricesbox>\halfwidth\myboxwidth=\hsize\else\myboxwidth=\halfwidth\fi
\vbox{%
\ifdim\myboxwidth=\hsize
\setbox\onelinebox=\hbox{%
\vbox{\hbox{%
$\Pi_{3,13}$ spans $L_{7.13}$%
}\hbox{%
$6|6\slashinfty\rtimes D_{2}$ (shared)%
}%
}%
\hfill\copy\matricesbox
}%
\ifdim\wd\onelinebox>\myboxwidth
\hbox to \myboxwidth{%
$\Pi_{3,13}$ spans $L_{7.13}$%
\hfil
$6|6\slashinfty\rtimes D_{2}$ (shared)%
}%
\box\matricesbox
\else
\hbox to \myboxwidth{%
\unhbox\onelinebox
}%
\fi
\else
\hbox to \myboxwidth{%
$\Pi_{3,13}$ spans $L_{7.13}$%
\hfil}%
\hbox to \myboxwidth{%
$6|6\slashinfty\rtimes D_{2}$ (shared)%
\hfil}%
\box\matricesbox
\fi
}%
\hfill\discretionary{}{}{}%
\setbox\matricesbox=\hbox{%
{$\left[\!\llap{\phantom{%
\begingroup \smaller\smaller\smaller\begin{tabular}{@{}c@{}}%
\phantom{0}\\\phantom{0}\\\phantom{0}
\end{tabular}\endgroup%
}}\right.$}%
\begingroup \smaller\smaller\smaller\begin{tabular}{@{}c@{}}%
-1/5\\\phantom{0}\\\phantom{0}
\end{tabular}\endgroup%
\kern3pt%
\begingroup \smaller\smaller\smaller\begin{tabular}{@{}c@{}}%
\phantom{0}\\1/5\\\phantom{0}
\end{tabular}\endgroup%
\kern3pt%
\begingroup \smaller\smaller\smaller\begin{tabular}{@{}c@{}}%
\phantom{0}\\\phantom{0}\\1
\end{tabular}\endgroup%
{$\left.\llap{\phantom{%
\begingroup \smaller\smaller\smaller\begin{tabular}{@{}c@{}}%
\phantom{0}\\\phantom{0}\\\phantom{0}
\end{tabular}\endgroup%
}}\!\right]$}%
{$\left[\!\llap{\phantom{%
\begingroup \smaller\smaller\smaller\begin{tabular}{@{}c@{}}%
0\\0\\0
\end{tabular}\endgroup%
}}\right.$}%
\begingroup \smaller\smaller\smaller\begin{tabular}{@{}c@{}}%
4\\6\\0
\end{tabular}\endgroup%
\kern3pt%
\begingroup \smaller\smaller\smaller\begin{tabular}{@{}c@{}}%
1\\-1\\1
\end{tabular}\endgroup%
{$\left.\llap{\phantom{%
\begingroup \smaller\smaller\smaller\begin{tabular}{@{}c@{}}%
0\\0\\0
\end{tabular}\endgroup%
}}\!\right]$}%
}%
\ifdim\wd\matricesbox>\halfwidth\myboxwidth=\hsize\else\myboxwidth=\halfwidth\fi
\vbox{%
\ifdim\myboxwidth=\hsize
\setbox\onelinebox=\hbox{%
\vbox{\hbox{%
$\Pi_{3,14}=\hbox{GN}_{18}$ spans $L_{1.9}$%
}\hbox{%
$\infty|\infty\slashinfty\rtimes D_{2}$ (shared)%
}%
}%
\hfill\copy\matricesbox
}%
\ifdim\wd\onelinebox>\myboxwidth
\hbox to \myboxwidth{%
$\Pi_{3,14}=\hbox{GN}_{18}$ spans $L_{1.9}$%
\hfil
$\infty|\infty\slashinfty\rtimes D_{2}$ (shared)%
}%
\box\matricesbox
\else
\hbox to \myboxwidth{%
\unhbox\onelinebox
}%
\fi
\else
\hbox to \myboxwidth{%
$\Pi_{3,14}=\hbox{GN}_{18}$ spans $L_{1.9}$%
\hfil}%
\hbox to \myboxwidth{%
$\infty|\infty\slashinfty\rtimes D_{2}$ (shared)%
\hfil}%
\box\matricesbox
\fi
}%
\hfill\discretionary{}{}{}%
\setbox\matricesbox=\hbox{%
{$\left[\!\llap{\phantom{%
\begingroup \smaller\smaller\smaller\begin{tabular}{@{}c@{}}%
\phantom{0}\\\phantom{0}\\\phantom{0}
\end{tabular}\endgroup%
}}\right.$}%
\begingroup \smaller\smaller\smaller\begin{tabular}{@{}c@{}}%
-1/8\\\phantom{0}\\\phantom{0}
\end{tabular}\endgroup%
\kern3pt%
\begingroup \smaller\smaller\smaller\begin{tabular}{@{}c@{}}%
\phantom{0}\\1/8\\\phantom{0}
\end{tabular}\endgroup%
\kern3pt%
\begingroup \smaller\smaller\smaller\begin{tabular}{@{}c@{}}%
\phantom{0}\\\phantom{0}\\4
\end{tabular}\endgroup%
{$\left.\llap{\phantom{%
\begingroup \smaller\smaller\smaller\begin{tabular}{@{}c@{}}%
\phantom{0}\\\phantom{0}\\\phantom{0}
\end{tabular}\endgroup%
}}\!\right]$}%
{$\left[\!\llap{\phantom{%
\begingroup \smaller\smaller\smaller\begin{tabular}{@{}c@{}}%
0\\0\\0
\end{tabular}\endgroup%
}}\right.$}%
\begingroup \smaller\smaller\smaller\begin{tabular}{@{}c@{}}%
1\\-3\\0
\end{tabular}\endgroup%
\kern3pt%
\begingroup \smaller\smaller\smaller\begin{tabular}{@{}c@{}}%
4\\4\\1
\end{tabular}\endgroup%
{$\left.\llap{\phantom{%
\begingroup \smaller\smaller\smaller\begin{tabular}{@{}c@{}}%
0\\0\\0
\end{tabular}\endgroup%
}}\!\right]$}%
}%
\ifdim\wd\matricesbox>\halfwidth\myboxwidth=\hsize\else\myboxwidth=\halfwidth\fi
\vbox{%
\ifdim\myboxwidth=\hsize
\setbox\onelinebox=\hbox{%
\vbox{\hbox{%
$\Pi_{3,15}=A_{4,I,\zerobar}=\hbox{GN}_{14}$ spans $L_{140.4}$%
}\hbox{%
$|\infty\slashinfty\infty\rtimes D_{2}$ (shared)%
}%
}%
\hfill\copy\matricesbox
}%
\ifdim\wd\onelinebox>\myboxwidth
\hbox to \myboxwidth{%
$\Pi_{3,15}=A_{4,I,\zerobar}=\hbox{GN}_{14}$ spans $L_{140.4}$%
\hfil
$|\infty\slashinfty\infty\rtimes D_{2}$ (shared)%
}%
\box\matricesbox
\else
\hbox to \myboxwidth{%
\unhbox\onelinebox
}%
\fi
\else
\hbox to \myboxwidth{%
$\Pi_{3,15}=A_{4,I,\zerobar}=\hbox{GN}_{14}$ spans $L_{140.4}$%
\hfil}%
\hbox to \myboxwidth{%
$|\infty\slashinfty\infty\rtimes D_{2}$ (shared)%
\hfil}%
\box\matricesbox
\fi
}%
\hfill\discretionary{}{}{}%
\setbox\matricesbox=\hbox{%
{$\left[\!\llap{\phantom{%
\begingroup \smaller\smaller\smaller\begin{tabular}{@{}c@{}}%
\phantom{0}\\\phantom{0}\\\phantom{0}
\end{tabular}\endgroup%
}}\right.$}%
\begingroup \smaller\smaller\smaller\begin{tabular}{@{}c@{}}%
-1/52\\\phantom{0}\\\phantom{0}
\end{tabular}\endgroup%
\kern3pt%
\begingroup \smaller\smaller\smaller\begin{tabular}{@{}c@{}}%
\phantom{0}\\3/13\\\phantom{0}
\end{tabular}\endgroup%
\kern3pt%
\begingroup \smaller\smaller\smaller\begin{tabular}{@{}c@{}}%
\phantom{0}\\\phantom{0}\\3
\end{tabular}\endgroup%
{$\left.\llap{\phantom{%
\begingroup \smaller\smaller\smaller\begin{tabular}{@{}c@{}}%
\phantom{0}\\\phantom{0}\\\phantom{0}
\end{tabular}\endgroup%
}}\!\right]$}%
{$\left[\!\llap{\phantom{%
\begingroup \smaller\smaller\smaller\begin{tabular}{@{}c@{}}%
0\\0\\0
\end{tabular}\endgroup%
}}\right.$}%
\begingroup \smaller\smaller\smaller\begin{tabular}{@{}c@{}}%
2\\-3\\0
\end{tabular}\endgroup%
\kern3pt%
\begingroup \smaller\smaller\smaller\begin{tabular}{@{}c@{}}%
6\\4\\1
\end{tabular}\endgroup%
{$\left.\llap{\phantom{%
\begingroup \smaller\smaller\smaller\begin{tabular}{@{}c@{}}%
0\\0\\0
\end{tabular}\endgroup%
}}\!\right]$}%
}%
\ifdim\wd\matricesbox>\halfwidth\myboxwidth=\hsize\else\myboxwidth=\halfwidth\fi
\vbox{%
\ifdim\myboxwidth=\hsize
\setbox\onelinebox=\hbox{%
\vbox{\hbox{%
$\Pi_{3,16}$ spans $L_{3.4}$%
}\hbox{%
$|6\slashtwo6\rtimes D_{2}$ (shared)%
}%
}%
\hfill\copy\matricesbox
}%
\ifdim\wd\onelinebox>\myboxwidth
\hbox to \myboxwidth{%
$\Pi_{3,16}$ spans $L_{3.4}$%
\hfil
$|6\slashtwo6\rtimes D_{2}$ (shared)%
}%
\box\matricesbox
\else
\hbox to \myboxwidth{%
\unhbox\onelinebox
}%
\fi
\else
\hbox to \myboxwidth{%
$\Pi_{3,16}$ spans $L_{3.4}$%
\hfil}%
\hbox to \myboxwidth{%
$|6\slashtwo6\rtimes D_{2}$ (shared)%
\hfil}%
\box\matricesbox
\fi
}%
\hfill\discretionary{}{}{}%
\setbox\matricesbox=\hbox{%
{$\left[\!\llap{\phantom{%
\begingroup \smaller\smaller\smaller\begin{tabular}{@{}c@{}}%
\phantom{0}\\\phantom{0}\\\phantom{0}
\end{tabular}\endgroup%
}}\right.$}%
\begingroup \smaller\smaller\smaller\begin{tabular}{@{}c@{}}%
-1/17\\\phantom{0}\\\phantom{0}
\end{tabular}\endgroup%
\kern3pt%
\begingroup \smaller\smaller\smaller\begin{tabular}{@{}c@{}}%
\phantom{0}\\2/17\\\phantom{0}
\end{tabular}\endgroup%
\kern3pt%
\begingroup \smaller\smaller\smaller\begin{tabular}{@{}c@{}}%
\phantom{0}\\\phantom{0}\\2
\end{tabular}\endgroup%
{$\left.\llap{\phantom{%
\begingroup \smaller\smaller\smaller\begin{tabular}{@{}c@{}}%
\phantom{0}\\\phantom{0}\\\phantom{0}
\end{tabular}\endgroup%
}}\!\right]$}%
{$\left[\!\llap{\phantom{%
\begingroup \smaller\smaller\smaller\begin{tabular}{@{}c@{}}%
0\\0\\0
\end{tabular}\endgroup%
}}\right.$}%
\begingroup \smaller\smaller\smaller\begin{tabular}{@{}c@{}}%
1\\-3\\0
\end{tabular}\endgroup%
\kern3pt%
\begingroup \smaller\smaller\smaller\begin{tabular}{@{}c@{}}%
4\\5\\1
\end{tabular}\endgroup%
{$\left.\llap{\phantom{%
\begingroup \smaller\smaller\smaller\begin{tabular}{@{}c@{}}%
0\\0\\0
\end{tabular}\endgroup%
}}\!\right]$}%
}%
\ifdim\wd\matricesbox>\halfwidth\myboxwidth=\hsize\else\myboxwidth=\halfwidth\fi
\vbox{%
\ifdim\myboxwidth=\hsize
\setbox\onelinebox=\hbox{%
\vbox{\hbox{%
$\Pi_{3,17}=A_{2,I,\zerobar}=\hbox{GN}_{6}$ spans $L_{1.9}$%
}\hbox{%
$|\infty\slashtwo\infty\rtimes D_{2}$ (shared)%
}%
}%
\hfill\copy\matricesbox
}%
\ifdim\wd\onelinebox>\myboxwidth
\hbox to \myboxwidth{%
$\Pi_{3,17}=A_{2,I,\zerobar}=\hbox{GN}_{6}$ spans $L_{1.9}$%
\hfil
$|\infty\slashtwo\infty\rtimes D_{2}$ (shared)%
}%
\box\matricesbox
\else
\hbox to \myboxwidth{%
\unhbox\onelinebox
}%
\fi
\else
\hbox to \myboxwidth{%
$\Pi_{3,17}=A_{2,I,\zerobar}=\hbox{GN}_{6}$ spans $L_{1.9}$%
\hfil}%
\hbox to \myboxwidth{%
$|\infty\slashtwo\infty\rtimes D_{2}$ (shared)%
\hfil}%
\box\matricesbox
\fi
}%
\hfill\discretionary{}{}{}%
\setbox\matricesbox=\hbox{%
{$\left[\!\llap{\phantom{%
\begingroup \smaller\smaller\smaller\begin{tabular}{@{}c@{}}%
\phantom{0}\\\phantom{0}\\\phantom{0}
\end{tabular}\endgroup%
}}\right.$}%
\begingroup \smaller\smaller\smaller\begin{tabular}{@{}c@{}}%
-1/7\\\phantom{0}\\\phantom{0}
\end{tabular}\endgroup%
\kern3pt%
\begingroup \smaller\smaller\smaller\begin{tabular}{@{}c@{}}%
\phantom{0}\\1/14\\\phantom{0}
\end{tabular}\endgroup%
\kern3pt%
\begingroup \smaller\smaller\smaller\begin{tabular}{@{}c@{}}%
\phantom{0}\\\phantom{0}\\1/2
\end{tabular}\endgroup%
{$\left.\llap{\phantom{%
\begingroup \smaller\smaller\smaller\begin{tabular}{@{}c@{}}%
\phantom{0}\\\phantom{0}\\\phantom{0}
\end{tabular}\endgroup%
}}\!\right]$}%
{$\left[\!\llap{\phantom{%
\begingroup \smaller\smaller\smaller\begin{tabular}{@{}c@{}}%
0\\0\\0
\end{tabular}\endgroup%
}}\right.$}%
\begingroup \smaller\smaller\smaller\begin{tabular}{@{}c@{}}%
1\\4\\0
\end{tabular}\endgroup%
\kern3pt%
\begingroup \smaller\smaller\smaller\begin{tabular}{@{}c@{}}%
1\\-3\\-1
\end{tabular}\endgroup%
{$\left.\llap{\phantom{%
\begingroup \smaller\smaller\smaller\begin{tabular}{@{}c@{}}%
0\\0\\0
\end{tabular}\endgroup%
}}\!\right]$}%
}%
\ifdim\wd\matricesbox>\halfwidth\myboxwidth=\hsize\else\myboxwidth=\halfwidth\fi
\vbox{%
\ifdim\myboxwidth=\hsize
\setbox\onelinebox=\hbox{%
\vbox{\hbox{%
$\Pi_{3,18}=A_{2,0}=\hbox{GN}_{17}$ spans $L_{1.4}$%
}\hbox{%
$|\infty\slashtwo\infty\rtimes D_{2}$ (shared)%
}%
}%
\hfill\copy\matricesbox
}%
\ifdim\wd\onelinebox>\myboxwidth
\hbox to \myboxwidth{%
$\Pi_{3,18}=A_{2,0}=\hbox{GN}_{17}$ spans $L_{1.4}$%
\hfil
$|\infty\slashtwo\infty\rtimes D_{2}$ (shared)%
}%
\box\matricesbox
\else
\hbox to \myboxwidth{%
\unhbox\onelinebox
}%
\fi
\else
\hbox to \myboxwidth{%
$\Pi_{3,18}=A_{2,0}=\hbox{GN}_{17}$ spans $L_{1.4}$%
\hfil}%
\hbox to \myboxwidth{%
$|\infty\slashtwo\infty\rtimes D_{2}$ (shared)%
\hfil}%
\box\matricesbox
\fi
}%
\hfill\discretionary{}{}{}%
\setbox\matricesbox=\hbox{%
{$\left[\!\llap{\phantom{%
\begingroup \smaller\smaller\smaller\begin{tabular}{@{}c@{}}%
\phantom{0}\\\phantom{0}\\\phantom{0}
\end{tabular}\endgroup%
}}\right.$}%
\begingroup \smaller\smaller\smaller\begin{tabular}{@{}c@{}}%
-1/44\\\phantom{0}\\\phantom{0}
\end{tabular}\endgroup%
\kern3pt%
\begingroup \smaller\smaller\smaller\begin{tabular}{@{}c@{}}%
\phantom{0}\\3/11\\\phantom{0}
\end{tabular}\endgroup%
\kern3pt%
\begingroup \smaller\smaller\smaller\begin{tabular}{@{}c@{}}%
\phantom{0}\\\phantom{0}\\1
\end{tabular}\endgroup%
{$\left.\llap{\phantom{%
\begingroup \smaller\smaller\smaller\begin{tabular}{@{}c@{}}%
\phantom{0}\\\phantom{0}\\\phantom{0}
\end{tabular}\endgroup%
}}\!\right]$}%
{$\left[\!\llap{\phantom{%
\begingroup \smaller\smaller\smaller\begin{tabular}{@{}c@{}}%
0\\0\\0
\end{tabular}\endgroup%
}}\right.$}%
\begingroup \smaller\smaller\smaller\begin{tabular}{@{}c@{}}%
6\\-5\\0
\end{tabular}\endgroup%
\kern3pt%
\begingroup \smaller\smaller\smaller\begin{tabular}{@{}c@{}}%
2\\2\\-1
\end{tabular}\endgroup%
{$\left.\llap{\phantom{%
\begingroup \smaller\smaller\smaller\begin{tabular}{@{}c@{}}%
0\\0\\0
\end{tabular}\endgroup%
}}\!\right]$}%
}%
\ifdim\wd\matricesbox>\halfwidth\myboxwidth=\hsize\else\myboxwidth=\halfwidth\fi
\vbox{%
\ifdim\myboxwidth=\hsize
\setbox\onelinebox=\hbox{%
\vbox{\hbox{%
$\Pi_{3,19}$ spans $L_{3.1}$%
}\hbox{%
$|6\slashtwo6\rtimes D_{2}$ (shared)%
}%
}%
\hfill\copy\matricesbox
}%
\ifdim\wd\onelinebox>\myboxwidth
\hbox to \myboxwidth{%
$\Pi_{3,19}$ spans $L_{3.1}$%
\hfil
$|6\slashtwo6\rtimes D_{2}$ (shared)%
}%
\box\matricesbox
\else
\hbox to \myboxwidth{%
\unhbox\onelinebox
}%
\fi
\else
\hbox to \myboxwidth{%
$\Pi_{3,19}$ spans $L_{3.1}$%
\hfil}%
\hbox to \myboxwidth{%
$|6\slashtwo6\rtimes D_{2}$ (shared)%
\hfil}%
\box\matricesbox
\fi
}%
\hfill\discretionary{}{}{}%
\setbox\matricesbox=\hbox{%
{$\left[\!\llap{\phantom{%
\begingroup \smaller\smaller\smaller\begin{tabular}{@{}c@{}}%
\phantom{0}\\\phantom{0}\\\phantom{0}
\end{tabular}\endgroup%
}}\right.$}%
\begingroup \smaller\smaller\smaller\begin{tabular}{@{}c@{}}%
-1/14\\\phantom{0}\\\phantom{0}
\end{tabular}\endgroup%
\kern3pt%
\begingroup \smaller\smaller\smaller\begin{tabular}{@{}c@{}}%
\phantom{0}\\4/7\\\phantom{0}
\end{tabular}\endgroup%
\kern3pt%
\begingroup \smaller\smaller\smaller\begin{tabular}{@{}c@{}}%
\phantom{0}\\\phantom{0}\\1/2
\end{tabular}\endgroup%
{$\left.\llap{\phantom{%
\begingroup \smaller\smaller\smaller\begin{tabular}{@{}c@{}}%
\phantom{0}\\\phantom{0}\\\phantom{0}
\end{tabular}\endgroup%
}}\!\right]$}%
{$\left[\!\llap{\phantom{%
\begingroup \smaller\smaller\smaller\begin{tabular}{@{}c@{}}%
0\\0\\0
\end{tabular}\endgroup%
}}\right.$}%
\begingroup \smaller\smaller\smaller\begin{tabular}{@{}c@{}}%
4\\3\\0
\end{tabular}\endgroup%
\kern3pt%
\begingroup \smaller\smaller\smaller\begin{tabular}{@{}c@{}}%
1\\-1\\-1
\end{tabular}\endgroup%
{$\left.\llap{\phantom{%
\begingroup \smaller\smaller\smaller\begin{tabular}{@{}c@{}}%
0\\0\\0
\end{tabular}\endgroup%
}}\!\right]$}%
}%
\ifdim\wd\matricesbox>\halfwidth\myboxwidth=\hsize\else\myboxwidth=\halfwidth\fi
\vbox{%
\ifdim\myboxwidth=\hsize
\setbox\onelinebox=\hbox{%
\vbox{\hbox{%
$\Pi_{3,20}=\hbox{GN}_{7}$ spans $L_{1.6}$%
}\hbox{%
$|\infty\slashtwo\infty\rtimes D_{2}$ (shared)%
}%
}%
\hfill\copy\matricesbox
}%
\ifdim\wd\onelinebox>\myboxwidth
\hbox to \myboxwidth{%
$\Pi_{3,20}=\hbox{GN}_{7}$ spans $L_{1.6}$%
\hfil
$|\infty\slashtwo\infty\rtimes D_{2}$ (shared)%
}%
\box\matricesbox
\else
\hbox to \myboxwidth{%
\unhbox\onelinebox
}%
\fi
\else
\hbox to \myboxwidth{%
$\Pi_{3,20}=\hbox{GN}_{7}$ spans $L_{1.6}$%
\hfil}%
\hbox to \myboxwidth{%
$|\infty\slashtwo\infty\rtimes D_{2}$ (shared)%
\hfil}%
\box\matricesbox
\fi
}%
\hfill\discretionary{}{}{}%
\setbox\matricesbox=\hbox{%
{$\left[\!\llap{\phantom{%
\begingroup \smaller\smaller\smaller\begin{tabular}{@{}c@{}}%
\phantom{0}\\\phantom{0}\\\phantom{0}
\end{tabular}\endgroup%
}}\right.$}%
\begingroup \smaller\smaller\smaller\begin{tabular}{@{}c@{}}%
-4/41\\\phantom{0}\\\phantom{0}
\end{tabular}\endgroup%
\kern3pt%
\begingroup \smaller\smaller\smaller\begin{tabular}{@{}c@{}}%
\phantom{0}\\5/41\\-2/41
\end{tabular}\endgroup%
\kern3pt%
\begingroup \smaller\smaller\smaller\begin{tabular}{@{}c@{}}%
\phantom{0}\\-2/41\\9/41
\end{tabular}\endgroup%
{$\left.\llap{\phantom{%
\begingroup \smaller\smaller\smaller\begin{tabular}{@{}c@{}}%
\phantom{0}\\\phantom{0}\\\phantom{0}
\end{tabular}\endgroup%
}}\!\right]$}%
{$\left[\!\llap{\phantom{%
\begingroup \smaller\smaller\smaller\begin{tabular}{@{}c@{}}%
0\\0\\0
\end{tabular}\endgroup%
}}\right.$}%
\begingroup \smaller\smaller\smaller\begin{tabular}{@{}c@{}}%
1\\3\\0
\end{tabular}\endgroup%
\kern3pt%
\begingroup \smaller\smaller\smaller\begin{tabular}{@{}c@{}}%
2\\-1\\3
\end{tabular}\endgroup%
\kern3pt%
\begingroup \smaller\smaller\smaller\begin{tabular}{@{}c@{}}%
4\\-6\\-4
\end{tabular}\endgroup%
{$\left.\llap{\phantom{%
\begingroup \smaller\smaller\smaller\begin{tabular}{@{}c@{}}%
0\\0\\0
\end{tabular}\endgroup%
}}\!\right]$}%
}%
\ifdim\wd\matricesbox>\halfwidth\myboxwidth=\hsize\else\myboxwidth=\halfwidth\fi
\vbox{%
\ifdim\myboxwidth=\hsize
\setbox\onelinebox=\hbox{%
\vbox{\hbox{%
$\Pi_{3,21}$ spans $L_{145.1}$%
}\hbox{%
$44\infty$ (shared)%
}%
}%
\hfill\copy\matricesbox
}%
\ifdim\wd\onelinebox>\myboxwidth
\hbox to \myboxwidth{%
$\Pi_{3,21}$ spans $L_{145.1}$%
\hfil
$44\infty$ (shared)%
}%
\box\matricesbox
\else
\hbox to \myboxwidth{%
\unhbox\onelinebox
}%
\fi
\else
\hbox to \myboxwidth{%
$\Pi_{3,21}$ spans $L_{145.1}$%
\hfil}%
\hbox to \myboxwidth{%
$44\infty$ (shared)%
\hfil}%
\box\matricesbox
\fi
}%
\hfill\discretionary{}{}{}%
\setbox\matricesbox=\hbox{%
{$\left[\!\llap{\phantom{%
\begingroup \smaller\smaller\smaller\begin{tabular}{@{}c@{}}%
\phantom{0}\\\phantom{0}\\\phantom{0}
\end{tabular}\endgroup%
}}\right.$}%
\begingroup \smaller\smaller\smaller\begin{tabular}{@{}c@{}}%
-1/88\\\phantom{0}\\\phantom{0}
\end{tabular}\endgroup%
\kern3pt%
\begingroup \smaller\smaller\smaller\begin{tabular}{@{}c@{}}%
\phantom{0}\\5/22\\-1/11
\end{tabular}\endgroup%
\kern3pt%
\begingroup \smaller\smaller\smaller\begin{tabular}{@{}c@{}}%
\phantom{0}\\-1/11\\18/11
\end{tabular}\endgroup%
{$\left.\llap{\phantom{%
\begingroup \smaller\smaller\smaller\begin{tabular}{@{}c@{}}%
\phantom{0}\\\phantom{0}\\\phantom{0}
\end{tabular}\endgroup%
}}\!\right]$}%
{$\left[\!\llap{\phantom{%
\begingroup \smaller\smaller\smaller\begin{tabular}{@{}c@{}}%
0\\0\\0
\end{tabular}\endgroup%
}}\right.$}%
\begingroup \smaller\smaller\smaller\begin{tabular}{@{}c@{}}%
2\\3\\0
\end{tabular}\endgroup%
\kern3pt%
\begingroup \smaller\smaller\smaller\begin{tabular}{@{}c@{}}%
2\\-1\\1
\end{tabular}\endgroup%
\kern3pt%
\begingroup \smaller\smaller\smaller\begin{tabular}{@{}c@{}}%
8\\-6\\-1
\end{tabular}\endgroup%
{$\left.\llap{\phantom{%
\begingroup \smaller\smaller\smaller\begin{tabular}{@{}c@{}}%
0\\0\\0
\end{tabular}\endgroup%
}}\!\right]$}%
}%
\ifdim\wd\matricesbox>\halfwidth\myboxwidth=\hsize\else\myboxwidth=\halfwidth\fi
\vbox{%
\ifdim\myboxwidth=\hsize
\setbox\onelinebox=\hbox{%
\vbox{\hbox{%
$\Pi_{3,22}=\hbox{GN}_{2}$ spans $L_{2.1}$%
}\hbox{%
$32\infty$ (shared)%
}%
}%
\hfill\copy\matricesbox
}%
\ifdim\wd\onelinebox>\myboxwidth
\hbox to \myboxwidth{%
$\Pi_{3,22}=\hbox{GN}_{2}$ spans $L_{2.1}$%
\hfil
$32\infty$ (shared)%
}%
\box\matricesbox
\else
\hbox to \myboxwidth{%
\unhbox\onelinebox
}%
\fi
\else
\hbox to \myboxwidth{%
$\Pi_{3,22}=\hbox{GN}_{2}$ spans $L_{2.1}$%
\hfil}%
\hbox to \myboxwidth{%
$32\infty$ (shared)%
\hfil}%
\box\matricesbox
\fi
}%
\hfill\discretionary{}{}{}%
\setbox\matricesbox=\hbox{%
{$\left[\!\llap{\phantom{%
\begingroup \smaller\smaller\smaller\begin{tabular}{@{}c@{}}%
\phantom{0}\\\phantom{0}\\\phantom{0}
\end{tabular}\endgroup%
}}\right.$}%
\begingroup \smaller\smaller\smaller\begin{tabular}{@{}c@{}}%
-1/46\\\phantom{0}\\\phantom{0}
\end{tabular}\endgroup%
\kern3pt%
\begingroup \smaller\smaller\smaller\begin{tabular}{@{}c@{}}%
\phantom{0}\\4/23\\-1/23
\end{tabular}\endgroup%
\kern3pt%
\begingroup \smaller\smaller\smaller\begin{tabular}{@{}c@{}}%
\phantom{0}\\-1/23\\6/23
\end{tabular}\endgroup%
{$\left.\llap{\phantom{%
\begingroup \smaller\smaller\smaller\begin{tabular}{@{}c@{}}%
\phantom{0}\\\phantom{0}\\\phantom{0}
\end{tabular}\endgroup%
}}\!\right]$}%
{$\left[\!\llap{\phantom{%
\begingroup \smaller\smaller\smaller\begin{tabular}{@{}c@{}}%
0\\0\\0
\end{tabular}\endgroup%
}}\right.$}%
\begingroup \smaller\smaller\smaller\begin{tabular}{@{}c@{}}%
2\\3\\-1
\end{tabular}\endgroup%
\kern3pt%
\begingroup \smaller\smaller\smaller\begin{tabular}{@{}c@{}}%
2\\-3\\-2
\end{tabular}\endgroup%
\kern3pt%
\begingroup \smaller\smaller\smaller\begin{tabular}{@{}c@{}}%
2\\-2\\2
\end{tabular}\endgroup%
{$\left.\llap{\phantom{%
\begingroup \smaller\smaller\smaller\begin{tabular}{@{}c@{}}%
0\\0\\0
\end{tabular}\endgroup%
}}\!\right]$}%
}%
\ifdim\wd\matricesbox>\halfwidth\myboxwidth=\hsize\else\myboxwidth=\halfwidth\fi
\vbox{%
\ifdim\myboxwidth=\hsize
\setbox\onelinebox=\hbox{%
\vbox{\hbox{%
$\Pi_{3,23}=A_{1,0}=\hbox{GN}_{4}$ spans $L_{2.2}$%
}\hbox{%
$32\infty$ (shared)%
}%
}%
\hfill\copy\matricesbox
}%
\ifdim\wd\onelinebox>\myboxwidth
\hbox to \myboxwidth{%
$\Pi_{3,23}=A_{1,0}=\hbox{GN}_{4}$ spans $L_{2.2}$%
\hfil
$32\infty$ (shared)%
}%
\box\matricesbox
\else
\hbox to \myboxwidth{%
\unhbox\onelinebox
}%
\fi
\else
\hbox to \myboxwidth{%
$\Pi_{3,23}=A_{1,0}=\hbox{GN}_{4}$ spans $L_{2.2}$%
\hfil}%
\hbox to \myboxwidth{%
$32\infty$ (shared)%
\hfil}%
\box\matricesbox
\fi
}%
\hfill\discretionary{}{}{}%
\setbox\matricesbox=\hbox{%
{$\left[\!\llap{\phantom{%
\begingroup \smaller\smaller\smaller\begin{tabular}{@{}c@{}}%
\phantom{0}\\\phantom{0}\\\phantom{0}
\end{tabular}\endgroup%
}}\right.$}%
\begingroup \smaller\smaller\smaller\begin{tabular}{@{}c@{}}%
-1/59\\\phantom{0}\\\phantom{0}
\end{tabular}\endgroup%
\kern3pt%
\begingroup \smaller\smaller\smaller\begin{tabular}{@{}c@{}}%
\phantom{0}\\12/59\\-2/59
\end{tabular}\endgroup%
\kern3pt%
\begingroup \smaller\smaller\smaller\begin{tabular}{@{}c@{}}%
\phantom{0}\\-2/59\\20/59
\end{tabular}\endgroup%
{$\left.\llap{\phantom{%
\begingroup \smaller\smaller\smaller\begin{tabular}{@{}c@{}}%
\phantom{0}\\\phantom{0}\\\phantom{0}
\end{tabular}\endgroup%
}}\!\right]$}%
{$\left[\!\llap{\phantom{%
\begingroup \smaller\smaller\smaller\begin{tabular}{@{}c@{}}%
0\\0\\0
\end{tabular}\endgroup%
}}\right.$}%
\begingroup \smaller\smaller\smaller\begin{tabular}{@{}c@{}}%
4\\3\\3
\end{tabular}\endgroup%
\kern3pt%
\begingroup \smaller\smaller\smaller\begin{tabular}{@{}c@{}}%
4\\2\\-3
\end{tabular}\endgroup%
\kern3pt%
\begingroup \smaller\smaller\smaller\begin{tabular}{@{}c@{}}%
1\\-2\\-1
\end{tabular}\endgroup%
{$\left.\llap{\phantom{%
\begingroup \smaller\smaller\smaller\begin{tabular}{@{}c@{}}%
0\\0\\0
\end{tabular}\endgroup%
}}\!\right]$}%
}%
\ifdim\wd\matricesbox>\halfwidth\myboxwidth=\hsize\else\myboxwidth=\halfwidth\fi
\vbox{%
\ifdim\myboxwidth=\hsize
\setbox\onelinebox=\hbox{%
\vbox{\hbox{%
$\Pi_{3,24}=A_{1,I,\zerobar}=\hbox{GN}_{1}$ spans $L_{2.3}$%
}\hbox{%
$32\infty$ (shared)%
}%
}%
\hfill\copy\matricesbox
}%
\ifdim\wd\onelinebox>\myboxwidth
\hbox to \myboxwidth{%
$\Pi_{3,24}=A_{1,I,\zerobar}=\hbox{GN}_{1}$ spans $L_{2.3}$%
\hfil
$32\infty$ (shared)%
}%
\box\matricesbox
\else
\hbox to \myboxwidth{%
\unhbox\onelinebox
}%
\fi
\else
\hbox to \myboxwidth{%
$\Pi_{3,24}=A_{1,I,\zerobar}=\hbox{GN}_{1}$ spans $L_{2.3}$%
\hfil}%
\hbox to \myboxwidth{%
$32\infty$ (shared)%
\hfil}%
\box\matricesbox
\fi
}%
\hfill\discretionary{}{}{}%
\setbox\matricesbox=\hbox{%
{$\left[\!\llap{\phantom{%
\begingroup \smaller\smaller\smaller\begin{tabular}{@{}c@{}}%
\phantom{0}\\\phantom{0}\\\phantom{0}
\end{tabular}\endgroup%
}}\right.$}%
\begingroup \smaller\smaller\smaller\begin{tabular}{@{}c@{}}%
-1/88\\\phantom{0}\\\phantom{0}
\end{tabular}\endgroup%
\kern3pt%
\begingroup \smaller\smaller\smaller\begin{tabular}{@{}c@{}}%
\phantom{0}\\5/22\\-1/22
\end{tabular}\endgroup%
\kern3pt%
\begingroup \smaller\smaller\smaller\begin{tabular}{@{}c@{}}%
\phantom{0}\\-1/22\\53/22
\end{tabular}\endgroup%
{$\left.\llap{\phantom{%
\begingroup \smaller\smaller\smaller\begin{tabular}{@{}c@{}}%
\phantom{0}\\\phantom{0}\\\phantom{0}
\end{tabular}\endgroup%
}}\!\right]$}%
{$\left[\!\llap{\phantom{%
\begingroup \smaller\smaller\smaller\begin{tabular}{@{}c@{}}%
0\\0\\0
\end{tabular}\endgroup%
}}\right.$}%
\begingroup \smaller\smaller\smaller\begin{tabular}{@{}c@{}}%
2\\3\\0
\end{tabular}\endgroup%
\kern3pt%
\begingroup \smaller\smaller\smaller\begin{tabular}{@{}c@{}}%
4\\-3\\-1
\end{tabular}\endgroup%
\kern3pt%
\begingroup \smaller\smaller\smaller\begin{tabular}{@{}c@{}}%
6\\-4\\1
\end{tabular}\endgroup%
{$\left.\llap{\phantom{%
\begingroup \smaller\smaller\smaller\begin{tabular}{@{}c@{}}%
0\\0\\0
\end{tabular}\endgroup%
}}\!\right]$}%
}%
\ifdim\wd\matricesbox>\halfwidth\myboxwidth=\hsize\else\myboxwidth=\halfwidth\fi
\vbox{%
\ifdim\myboxwidth=\hsize
\setbox\onelinebox=\hbox{%
\vbox{\hbox{%
$\Pi_{3,25}$ spans $L_{3.1}$%
}\hbox{%
$426$ (shared)%
}%
}%
\hfill\copy\matricesbox
}%
\ifdim\wd\onelinebox>\myboxwidth
\hbox to \myboxwidth{%
$\Pi_{3,25}$ spans $L_{3.1}$%
\hfil
$426$ (shared)%
}%
\box\matricesbox
\else
\hbox to \myboxwidth{%
\unhbox\onelinebox
}%
\fi
\else
\hbox to \myboxwidth{%
$\Pi_{3,25}$ spans $L_{3.1}$%
\hfil}%
\hbox to \myboxwidth{%
$426$ (shared)%
\hfil}%
\box\matricesbox
\fi
}%
\hfill\discretionary{}{}{}%
\setbox\matricesbox=\hbox{%
{$\left[\!\llap{\phantom{%
\begingroup \smaller\smaller\smaller\begin{tabular}{@{}c@{}}%
\phantom{0}\\\phantom{0}\\\phantom{0}
\end{tabular}\endgroup%
}}\right.$}%
\begingroup \smaller\smaller\smaller\begin{tabular}{@{}c@{}}%
-1/26\\\phantom{0}\\\phantom{0}
\end{tabular}\endgroup%
\kern3pt%
\begingroup \smaller\smaller\smaller\begin{tabular}{@{}c@{}}%
\phantom{0}\\3/26\\-1/26
\end{tabular}\endgroup%
\kern3pt%
\begingroup \smaller\smaller\smaller\begin{tabular}{@{}c@{}}%
\phantom{0}\\-1/26\\35/26
\end{tabular}\endgroup%
{$\left.\llap{\phantom{%
\begingroup \smaller\smaller\smaller\begin{tabular}{@{}c@{}}%
\phantom{0}\\\phantom{0}\\\phantom{0}
\end{tabular}\endgroup%
}}\!\right]$}%
{$\left[\!\llap{\phantom{%
\begingroup \smaller\smaller\smaller\begin{tabular}{@{}c@{}}%
0\\0\\0
\end{tabular}\endgroup%
}}\right.$}%
\begingroup \smaller\smaller\smaller\begin{tabular}{@{}c@{}}%
1\\-3\\0
\end{tabular}\endgroup%
\kern3pt%
\begingroup \smaller\smaller\smaller\begin{tabular}{@{}c@{}}%
2\\3\\1
\end{tabular}\endgroup%
\kern3pt%
\begingroup \smaller\smaller\smaller\begin{tabular}{@{}c@{}}%
4\\5\\-1
\end{tabular}\endgroup%
{$\left.\llap{\phantom{%
\begingroup \smaller\smaller\smaller\begin{tabular}{@{}c@{}}%
0\\0\\0
\end{tabular}\endgroup%
}}\!\right]$}%
}%
\ifdim\wd\matricesbox>\halfwidth\myboxwidth=\hsize\else\myboxwidth=\halfwidth\fi
\vbox{%
\ifdim\myboxwidth=\hsize
\setbox\onelinebox=\hbox{%
\vbox{\hbox{%
$\Pi_{3,26}$ spans $L_{1.6}$%
}\hbox{%
$42\infty$ (shared)%
}%
}%
\hfill\copy\matricesbox
}%
\ifdim\wd\onelinebox>\myboxwidth
\hbox to \myboxwidth{%
$\Pi_{3,26}$ spans $L_{1.6}$%
\hfil
$42\infty$ (shared)%
}%
\box\matricesbox
\else
\hbox to \myboxwidth{%
\unhbox\onelinebox
}%
\fi
\else
\hbox to \myboxwidth{%
$\Pi_{3,26}$ spans $L_{1.6}$%
\hfil}%
\hbox to \myboxwidth{%
$42\infty$ (shared)%
\hfil}%
\box\matricesbox
\fi
}%
\hfill\discretionary{}{}{}%
\setbox\matricesbox=\hbox{%
{$\left[\!\llap{\phantom{%
\begingroup \smaller\smaller\smaller\begin{tabular}{@{}c@{}}%
\phantom{0}\\\phantom{0}\\\phantom{0}
\end{tabular}\endgroup%
}}\right.$}%
\begingroup \smaller\smaller\smaller\begin{tabular}{@{}c@{}}%
-1/15\\\phantom{0}\\\phantom{0}
\end{tabular}\endgroup%
\kern3pt%
\begingroup \smaller\smaller\smaller\begin{tabular}{@{}c@{}}%
\phantom{0}\\4/15\\-1/15
\end{tabular}\endgroup%
\kern3pt%
\begingroup \smaller\smaller\smaller\begin{tabular}{@{}c@{}}%
\phantom{0}\\-1/15\\4/15
\end{tabular}\endgroup%
{$\left.\llap{\phantom{%
\begingroup \smaller\smaller\smaller\begin{tabular}{@{}c@{}}%
\phantom{0}\\\phantom{0}\\\phantom{0}
\end{tabular}\endgroup%
}}\!\right]$}%
{$\left[\!\llap{\phantom{%
\begingroup \smaller\smaller\smaller\begin{tabular}{@{}c@{}}%
0\\0\\0
\end{tabular}\endgroup%
}}\right.$}%
\begingroup \smaller\smaller\smaller\begin{tabular}{@{}c@{}}%
1\\-2\\-1
\end{tabular}\endgroup%
\kern3pt%
\begingroup \smaller\smaller\smaller\begin{tabular}{@{}c@{}}%
2\\1\\3
\end{tabular}\endgroup%
\kern3pt%
\begingroup \smaller\smaller\smaller\begin{tabular}{@{}c@{}}%
1\\2\\0
\end{tabular}\endgroup%
{$\left.\llap{\phantom{%
\begingroup \smaller\smaller\smaller\begin{tabular}{@{}c@{}}%
0\\0\\0
\end{tabular}\endgroup%
}}\!\right]$}%
}%
\ifdim\wd\matricesbox>\halfwidth\myboxwidth=\hsize\else\myboxwidth=\halfwidth\fi
\vbox{%
\ifdim\myboxwidth=\hsize
\setbox\onelinebox=\hbox{%
\vbox{\hbox{%
$\Pi_{3,27}=A_{2,0,\onebar}$ spans $L_{1.4}$%
}\hbox{%
$42\infty$ (shared)%
}%
}%
\hfill\copy\matricesbox
}%
\ifdim\wd\onelinebox>\myboxwidth
\hbox to \myboxwidth{%
$\Pi_{3,27}=A_{2,0,\onebar}$ spans $L_{1.4}$%
\hfil
$42\infty$ (shared)%
}%
\box\matricesbox
\else
\hbox to \myboxwidth{%
\unhbox\onelinebox
}%
\fi
\else
\hbox to \myboxwidth{%
$\Pi_{3,27}=A_{2,0,\onebar}$ spans $L_{1.4}$%
\hfil}%
\hbox to \myboxwidth{%
$42\infty$ (shared)%
\hfil}%
\box\matricesbox
\fi
}%
\hfill\discretionary{}{}{}%
\setbox\matricesbox=\hbox{%
{$\left[\!\llap{\phantom{%
\begingroup \smaller\smaller\smaller\begin{tabular}{@{}c@{}}%
\phantom{0}\\\phantom{0}\\\phantom{0}
\end{tabular}\endgroup%
}}\right.$}%
\begingroup \smaller\smaller\smaller\begin{tabular}{@{}c@{}}%
-1/136\\\phantom{0}\\\phantom{0}
\end{tabular}\endgroup%
\kern3pt%
\begingroup \smaller\smaller\smaller\begin{tabular}{@{}c@{}}%
\phantom{0}\\21/34\\-3/17
\end{tabular}\endgroup%
\kern3pt%
\begingroup \smaller\smaller\smaller\begin{tabular}{@{}c@{}}%
\phantom{0}\\-3/17\\30/17
\end{tabular}\endgroup%
{$\left.\llap{\phantom{%
\begingroup \smaller\smaller\smaller\begin{tabular}{@{}c@{}}%
\phantom{0}\\\phantom{0}\\\phantom{0}
\end{tabular}\endgroup%
}}\!\right]$}%
{$\left[\!\llap{\phantom{%
\begingroup \smaller\smaller\smaller\begin{tabular}{@{}c@{}}%
0\\0\\0
\end{tabular}\endgroup%
}}\right.$}%
\begingroup \smaller\smaller\smaller\begin{tabular}{@{}c@{}}%
6\\-3\\-1
\end{tabular}\endgroup%
\kern3pt%
\begingroup \smaller\smaller\smaller\begin{tabular}{@{}c@{}}%
12\\4\\-1
\end{tabular}\endgroup%
\kern3pt%
\begingroup \smaller\smaller\smaller\begin{tabular}{@{}c@{}}%
2\\1\\1
\end{tabular}\endgroup%
{$\left.\llap{\phantom{%
\begingroup \smaller\smaller\smaller\begin{tabular}{@{}c@{}}%
0\\0\\0
\end{tabular}\endgroup%
}}\!\right]$}%
}%
\ifdim\wd\matricesbox>\halfwidth\myboxwidth=\hsize\else\myboxwidth=\halfwidth\fi
\vbox{%
\ifdim\myboxwidth=\hsize
\setbox\onelinebox=\hbox{%
\vbox{\hbox{%
$\Pi_{3,28}$ spans $L_{3.4}$%
}\hbox{%
$426$ (shared)%
}%
}%
\hfill\copy\matricesbox
}%
\ifdim\wd\onelinebox>\myboxwidth
\hbox to \myboxwidth{%
$\Pi_{3,28}$ spans $L_{3.4}$%
\hfil
$426$ (shared)%
}%
\box\matricesbox
\else
\hbox to \myboxwidth{%
\unhbox\onelinebox
}%
\fi
\else
\hbox to \myboxwidth{%
$\Pi_{3,28}$ spans $L_{3.4}$%
\hfil}%
\hbox to \myboxwidth{%
$426$ (shared)%
\hfil}%
\box\matricesbox
\fi
}%
\hfill\discretionary{}{}{}%
\setbox\matricesbox=\hbox{%
{$\left[\!\llap{\phantom{%
\begingroup \smaller\smaller\smaller\begin{tabular}{@{}c@{}}%
\phantom{0}\\\phantom{0}\\\phantom{0}
\end{tabular}\endgroup%
}}\right.$}%
\begingroup \smaller\smaller\smaller\begin{tabular}{@{}c@{}}%
-1/41\\\phantom{0}\\\phantom{0}
\end{tabular}\endgroup%
\kern3pt%
\begingroup \smaller\smaller\smaller\begin{tabular}{@{}c@{}}%
\phantom{0}\\10/41\\-4/41
\end{tabular}\endgroup%
\kern3pt%
\begingroup \smaller\smaller\smaller\begin{tabular}{@{}c@{}}%
\phantom{0}\\-4/41\\18/41
\end{tabular}\endgroup%
{$\left.\llap{\phantom{%
\begingroup \smaller\smaller\smaller\begin{tabular}{@{}c@{}}%
\phantom{0}\\\phantom{0}\\\phantom{0}
\end{tabular}\endgroup%
}}\!\right]$}%
{$\left[\!\llap{\phantom{%
\begingroup \smaller\smaller\smaller\begin{tabular}{@{}c@{}}%
0\\0\\0
\end{tabular}\endgroup%
}}\right.$}%
\begingroup \smaller\smaller\smaller\begin{tabular}{@{}c@{}}%
4\\-3\\-3
\end{tabular}\endgroup%
\kern3pt%
\begingroup \smaller\smaller\smaller\begin{tabular}{@{}c@{}}%
8\\-2\\4
\end{tabular}\endgroup%
\kern3pt%
\begingroup \smaller\smaller\smaller\begin{tabular}{@{}c@{}}%
1\\2\\1
\end{tabular}\endgroup%
{$\left.\llap{\phantom{%
\begingroup \smaller\smaller\smaller\begin{tabular}{@{}c@{}}%
0\\0\\0
\end{tabular}\endgroup%
}}\!\right]$}%
}%
\ifdim\wd\matricesbox>\halfwidth\myboxwidth=\hsize\else\myboxwidth=\halfwidth\fi
\vbox{%
\ifdim\myboxwidth=\hsize
\setbox\onelinebox=\hbox{%
\vbox{\hbox{%
$\Pi_{3,29}=A_{2,I,\onebar}$ spans $L_{1.9}$%
}\hbox{%
$42\infty$ (shared)%
}%
}%
\hfill\copy\matricesbox
}%
\ifdim\wd\onelinebox>\myboxwidth
\hbox to \myboxwidth{%
$\Pi_{3,29}=A_{2,I,\onebar}$ spans $L_{1.9}$%
\hfil
$42\infty$ (shared)%
}%
\box\matricesbox
\else
\hbox to \myboxwidth{%
\unhbox\onelinebox
}%
\fi
\else
\hbox to \myboxwidth{%
$\Pi_{3,29}=A_{2,I,\onebar}$ spans $L_{1.9}$%
\hfil}%
\hbox to \myboxwidth{%
$42\infty$ (shared)%
\hfil}%
\box\matricesbox
\fi
}%
\hfill\discretionary{}{}{}%
\setbox\matricesbox=\hbox{%
{$\left[\!\llap{\phantom{%
\begingroup \smaller\smaller\smaller\begin{tabular}{@{}c@{}}%
\phantom{0}\\\phantom{0}\\\phantom{0}
\end{tabular}\endgroup%
}}\right.$}%
\begingroup \smaller\smaller\smaller\begin{tabular}{@{}c@{}}%
-1/40\\\phantom{0}\\\phantom{0}
\end{tabular}\endgroup%
\kern3pt%
\begingroup \smaller\smaller\smaller\begin{tabular}{@{}c@{}}%
\phantom{0}\\21/10\\-3/5
\end{tabular}\endgroup%
\kern3pt%
\begingroup \smaller\smaller\smaller\begin{tabular}{@{}c@{}}%
\phantom{0}\\-3/5\\18/5
\end{tabular}\endgroup%
{$\left.\llap{\phantom{%
\begingroup \smaller\smaller\smaller\begin{tabular}{@{}c@{}}%
\phantom{0}\\\phantom{0}\\\phantom{0}
\end{tabular}\endgroup%
}}\!\right]$}%
{$\left[\!\llap{\phantom{%
\begingroup \smaller\smaller\smaller\begin{tabular}{@{}c@{}}%
0\\0\\0
\end{tabular}\endgroup%
}}\right.$}%
\begingroup \smaller\smaller\smaller\begin{tabular}{@{}c@{}}%
2\\-1\\0
\end{tabular}\endgroup%
\kern3pt%
\begingroup \smaller\smaller\smaller\begin{tabular}{@{}c@{}}%
6\\1\\-1
\end{tabular}\endgroup%
\kern3pt%
\begingroup \smaller\smaller\smaller\begin{tabular}{@{}c@{}}%
8\\2\\1
\end{tabular}\endgroup%
{$\left.\llap{\phantom{%
\begingroup \smaller\smaller\smaller\begin{tabular}{@{}c@{}}%
0\\0\\0
\end{tabular}\endgroup%
}}\!\right]$}%
}%
\ifdim\wd\matricesbox>\halfwidth\myboxwidth=\hsize\else\myboxwidth=\halfwidth\fi
\vbox{%
\ifdim\myboxwidth=\hsize
\setbox\onelinebox=\hbox{%
\vbox{\hbox{%
$\Pi_{3,30}$ spans $L_{7.9}$%
}\hbox{%
$62\infty$ (shared)%
}%
}%
\hfill\copy\matricesbox
}%
\ifdim\wd\onelinebox>\myboxwidth
\hbox to \myboxwidth{%
$\Pi_{3,30}$ spans $L_{7.9}$%
\hfil
$62\infty$ (shared)%
}%
\box\matricesbox
\else
\hbox to \myboxwidth{%
\unhbox\onelinebox
}%
\fi
\else
\hbox to \myboxwidth{%
$\Pi_{3,30}$ spans $L_{7.9}$%
\hfil}%
\hbox to \myboxwidth{%
$62\infty$ (shared)%
\hfil}%
\box\matricesbox
\fi
}%
\hfill\discretionary{}{}{}%
\setbox\matricesbox=\hbox{%
{$\left[\!\llap{\phantom{%
\begingroup \smaller\smaller\smaller\begin{tabular}{@{}c@{}}%
\phantom{0}\\\phantom{0}\\\phantom{0}
\end{tabular}\endgroup%
}}\right.$}%
\begingroup \smaller\smaller\smaller\begin{tabular}{@{}c@{}}%
-1/65\\\phantom{0}\\\phantom{0}
\end{tabular}\endgroup%
\kern3pt%
\begingroup \smaller\smaller\smaller\begin{tabular}{@{}c@{}}%
\phantom{0}\\14/65\\-1/65
\end{tabular}\endgroup%
\kern3pt%
\begingroup \smaller\smaller\smaller\begin{tabular}{@{}c@{}}%
\phantom{0}\\-1/65\\14/65
\end{tabular}\endgroup%
{$\left.\llap{\phantom{%
\begingroup \smaller\smaller\smaller\begin{tabular}{@{}c@{}}%
\phantom{0}\\\phantom{0}\\\phantom{0}
\end{tabular}\endgroup%
}}\!\right]$}%
{$\left[\!\llap{\phantom{%
\begingroup \smaller\smaller\smaller\begin{tabular}{@{}c@{}}%
0\\0\\0
\end{tabular}\endgroup%
}}\right.$}%
\begingroup \smaller\smaller\smaller\begin{tabular}{@{}c@{}}%
2\\-3\\-1
\end{tabular}\endgroup%
\kern3pt%
\begingroup \smaller\smaller\smaller\begin{tabular}{@{}c@{}}%
1\\1\\2
\end{tabular}\endgroup%
\kern3pt%
\begingroup \smaller\smaller\smaller\begin{tabular}{@{}c@{}}%
6\\5\\-2
\end{tabular}\endgroup%
{$\left.\llap{\phantom{%
\begingroup \smaller\smaller\smaller\begin{tabular}{@{}c@{}}%
0\\0\\0
\end{tabular}\endgroup%
}}\!\right]$}%
}%
\ifdim\wd\matricesbox>\halfwidth\myboxwidth=\hsize\else\myboxwidth=\halfwidth\fi
\vbox{%
\ifdim\myboxwidth=\hsize
\setbox\onelinebox=\hbox{%
\vbox{\hbox{%
$\Pi_{3,31}$ spans $L_{3.2}$%
}\hbox{%
$426$ (shared)%
}%
}%
\hfill\copy\matricesbox
}%
\ifdim\wd\onelinebox>\myboxwidth
\hbox to \myboxwidth{%
$\Pi_{3,31}$ spans $L_{3.2}$%
\hfil
$426$ (shared)%
}%
\box\matricesbox
\else
\hbox to \myboxwidth{%
\unhbox\onelinebox
}%
\fi
\else
\hbox to \myboxwidth{%
$\Pi_{3,31}$ spans $L_{3.2}$%
\hfil}%
\hbox to \myboxwidth{%
$426$ (shared)%
\hfil}%
\box\matricesbox
\fi
}%
\hfill\discretionary{}{}{}%
\setbox\matricesbox=\hbox{%
{$\left[\!\llap{\phantom{%
\begingroup \smaller\smaller\smaller\begin{tabular}{@{}c@{}}%
\phantom{0}\\\phantom{0}\\\phantom{0}
\end{tabular}\endgroup%
}}\right.$}%
\begingroup \smaller\smaller\smaller\begin{tabular}{@{}c@{}}%
-3/74\\\phantom{0}\\\phantom{0}
\end{tabular}\endgroup%
\kern3pt%
\begingroup \smaller\smaller\smaller\begin{tabular}{@{}c@{}}%
\phantom{0}\\8/37\\-1/37
\end{tabular}\endgroup%
\kern3pt%
\begingroup \smaller\smaller\smaller\begin{tabular}{@{}c@{}}%
\phantom{0}\\-1/37\\14/37
\end{tabular}\endgroup%
{$\left.\llap{\phantom{%
\begingroup \smaller\smaller\smaller\begin{tabular}{@{}c@{}}%
\phantom{0}\\\phantom{0}\\\phantom{0}
\end{tabular}\endgroup%
}}\!\right]$}%
{$\left[\!\llap{\phantom{%
\begingroup \smaller\smaller\smaller\begin{tabular}{@{}c@{}}%
0\\0\\0
\end{tabular}\endgroup%
}}\right.$}%
\begingroup \smaller\smaller\smaller\begin{tabular}{@{}c@{}}%
2\\-3\\-1
\end{tabular}\endgroup%
\kern3pt%
\begingroup \smaller\smaller\smaller\begin{tabular}{@{}c@{}}%
6\\5\\-2
\end{tabular}\endgroup%
\kern3pt%
\begingroup \smaller\smaller\smaller\begin{tabular}{@{}c@{}}%
2\\2\\2
\end{tabular}\endgroup%
{$\left.\llap{\phantom{%
\begingroup \smaller\smaller\smaller\begin{tabular}{@{}c@{}}%
0\\0\\0
\end{tabular}\endgroup%
}}\!\right]$}%
}%
\ifdim\wd\matricesbox>\halfwidth\myboxwidth=\hsize\else\myboxwidth=\halfwidth\fi
\vbox{%
\ifdim\myboxwidth=\hsize
\setbox\onelinebox=\hbox{%
\vbox{\hbox{%
$\Pi_{3,32}=A_{3,0,\onebar}$ spans $L_{7.6}$%
}\hbox{%
$62\infty$ (shared)%
}%
}%
\hfill\copy\matricesbox
}%
\ifdim\wd\onelinebox>\myboxwidth
\hbox to \myboxwidth{%
$\Pi_{3,32}=A_{3,0,\onebar}$ spans $L_{7.6}$%
\hfil
$62\infty$ (shared)%
}%
\box\matricesbox
\else
\hbox to \myboxwidth{%
\unhbox\onelinebox
}%
\fi
\else
\hbox to \myboxwidth{%
$\Pi_{3,32}=A_{3,0,\onebar}$ spans $L_{7.6}$%
\hfil}%
\hbox to \myboxwidth{%
$62\infty$ (shared)%
\hfil}%
\box\matricesbox
\fi
}%
\hfill\discretionary{}{}{}%
\setbox\matricesbox=\hbox{%
{$\left[\!\llap{\phantom{%
\begingroup \smaller\smaller\smaller\begin{tabular}{@{}c@{}}%
\phantom{0}\\\phantom{0}\\\phantom{0}
\end{tabular}\endgroup%
}}\right.$}%
\begingroup \smaller\smaller\smaller\begin{tabular}{@{}c@{}}%
-1/88\\\phantom{0}\\\phantom{0}
\end{tabular}\endgroup%
\kern3pt%
\begingroup \smaller\smaller\smaller\begin{tabular}{@{}c@{}}%
\phantom{0}\\45/22\\-9/11
\end{tabular}\endgroup%
\kern3pt%
\begingroup \smaller\smaller\smaller\begin{tabular}{@{}c@{}}%
\phantom{0}\\-9/11\\30/11
\end{tabular}\endgroup%
{$\left.\llap{\phantom{%
\begingroup \smaller\smaller\smaller\begin{tabular}{@{}c@{}}%
\phantom{0}\\\phantom{0}\\\phantom{0}
\end{tabular}\endgroup%
}}\!\right]$}%
{$\left[\!\llap{\phantom{%
\begingroup \smaller\smaller\smaller\begin{tabular}{@{}c@{}}%
0\\0\\0
\end{tabular}\endgroup%
}}\right.$}%
\begingroup \smaller\smaller\smaller\begin{tabular}{@{}c@{}}%
6\\-1\\1
\end{tabular}\endgroup%
\kern3pt%
\begingroup \smaller\smaller\smaller\begin{tabular}{@{}c@{}}%
18\\-1\\-3
\end{tabular}\endgroup%
\kern3pt%
\begingroup \smaller\smaller\smaller\begin{tabular}{@{}c@{}}%
2\\1\\0
\end{tabular}\endgroup%
{$\left.\llap{\phantom{%
\begingroup \smaller\smaller\smaller\begin{tabular}{@{}c@{}}%
0\\0\\0
\end{tabular}\endgroup%
}}\!\right]$}%
}%
\ifdim\wd\matricesbox>\halfwidth\myboxwidth=\hsize\else\myboxwidth=\halfwidth\fi
\vbox{%
\ifdim\myboxwidth=\hsize
\setbox\onelinebox=\hbox{%
\vbox{\hbox{%
$\Pi_{3,33}$ spans $L_{155.1}$%
}\hbox{%
$626$ (shared)%
}%
}%
\hfill\copy\matricesbox
}%
\ifdim\wd\onelinebox>\myboxwidth
\hbox to \myboxwidth{%
$\Pi_{3,33}$ spans $L_{155.1}$%
\hfil
$626$ (shared)%
}%
\box\matricesbox
\else
\hbox to \myboxwidth{%
\unhbox\onelinebox
}%
\fi
\else
\hbox to \myboxwidth{%
$\Pi_{3,33}$ spans $L_{155.1}$%
\hfil}%
\hbox to \myboxwidth{%
$626$ (shared)%
\hfil}%
\box\matricesbox
\fi
}%
\hfill\discretionary{}{}{}%
\setbox\matricesbox=\hbox{%
{$\left[\!\llap{\phantom{%
\begingroup \smaller\smaller\smaller\begin{tabular}{@{}c@{}}%
\phantom{0}\\\phantom{0}\\\phantom{0}
\end{tabular}\endgroup%
}}\right.$}%
\begingroup \smaller\smaller\smaller\begin{tabular}{@{}c@{}}%
-1/35\\\phantom{0}\\\phantom{0}
\end{tabular}\endgroup%
\kern3pt%
\begingroup \smaller\smaller\smaller\begin{tabular}{@{}c@{}}%
\phantom{0}\\36/35\\-6/35
\end{tabular}\endgroup%
\kern3pt%
\begingroup \smaller\smaller\smaller\begin{tabular}{@{}c@{}}%
\phantom{0}\\-6/35\\36/35
\end{tabular}\endgroup%
{$\left.\llap{\phantom{%
\begingroup \smaller\smaller\smaller\begin{tabular}{@{}c@{}}%
\phantom{0}\\\phantom{0}\\\phantom{0}
\end{tabular}\endgroup%
}}\!\right]$}%
{$\left[\!\llap{\phantom{%
\begingroup \smaller\smaller\smaller\begin{tabular}{@{}c@{}}%
0\\0\\0
\end{tabular}\endgroup%
}}\right.$}%
\begingroup \smaller\smaller\smaller\begin{tabular}{@{}c@{}}%
4\\2\\1
\end{tabular}\endgroup%
\kern3pt%
\begingroup \smaller\smaller\smaller\begin{tabular}{@{}c@{}}%
12\\-1\\-4
\end{tabular}\endgroup%
\kern3pt%
\begingroup \smaller\smaller\smaller\begin{tabular}{@{}c@{}}%
1\\-1\\0
\end{tabular}\endgroup%
{$\left.\llap{\phantom{%
\begingroup \smaller\smaller\smaller\begin{tabular}{@{}c@{}}%
0\\0\\0
\end{tabular}\endgroup%
}}\!\right]$}%
}%
\ifdim\wd\matricesbox>\halfwidth\myboxwidth=\hsize\else\myboxwidth=\halfwidth\fi
\vbox{%
\ifdim\myboxwidth=\hsize
\setbox\onelinebox=\hbox{%
\vbox{\hbox{%
$\Pi_{3,34}=A_{3,I,\onebar}$ spans $L_{7.7}$%
}\hbox{%
$62\infty$ (shared)%
}%
}%
\hfill\copy\matricesbox
}%
\ifdim\wd\onelinebox>\myboxwidth
\hbox to \myboxwidth{%
$\Pi_{3,34}=A_{3,I,\onebar}$ spans $L_{7.7}$%
\hfil
$62\infty$ (shared)%
}%
\box\matricesbox
\else
\hbox to \myboxwidth{%
\unhbox\onelinebox
}%
\fi
\else
\hbox to \myboxwidth{%
$\Pi_{3,34}=A_{3,I,\onebar}$ spans $L_{7.7}$%
\hfil}%
\hbox to \myboxwidth{%
$62\infty$ (shared)%
\hfil}%
\box\matricesbox
\fi
}%
\hfill\discretionary{}{}{}%
\setbox\matricesbox=\hbox{%
{$\left[\!\llap{\phantom{%
\begingroup \smaller\smaller\smaller\begin{tabular}{@{}c@{}}%
\phantom{0}\\\phantom{0}\\\phantom{0}
\end{tabular}\endgroup%
}}\right.$}%
\begingroup \smaller\smaller\smaller\begin{tabular}{@{}c@{}}%
-1/40\\\phantom{0}\\\phantom{0}
\end{tabular}\endgroup%
\kern3pt%
\begingroup \smaller\smaller\smaller\begin{tabular}{@{}c@{}}%
\phantom{0}\\9/40\\-1/40
\end{tabular}\endgroup%
\kern3pt%
\begingroup \smaller\smaller\smaller\begin{tabular}{@{}c@{}}%
\phantom{0}\\-1/40\\9/40
\end{tabular}\endgroup%
{$\left.\llap{\phantom{%
\begingroup \smaller\smaller\smaller\begin{tabular}{@{}c@{}}%
\phantom{0}\\\phantom{0}\\\phantom{0}
\end{tabular}\endgroup%
}}\!\right]$}%
{$\left[\!\llap{\phantom{%
\begingroup \smaller\smaller\smaller\begin{tabular}{@{}c@{}}%
0\\0\\0
\end{tabular}\endgroup%
}}\right.$}%
\begingroup \smaller\smaller\smaller\begin{tabular}{@{}c@{}}%
2\\3\\1
\end{tabular}\endgroup%
\kern3pt%
\begingroup \smaller\smaller\smaller\begin{tabular}{@{}c@{}}%
1\\-1\\-2
\end{tabular}\endgroup%
\kern3pt%
\begingroup \smaller\smaller\smaller\begin{tabular}{@{}c@{}}%
8\\-6\\2
\end{tabular}\endgroup%
{$\left.\llap{\phantom{%
\begingroup \smaller\smaller\smaller\begin{tabular}{@{}c@{}}%
0\\0\\0
\end{tabular}\endgroup%
}}\!\right]$}%
}%
\ifdim\wd\matricesbox>\halfwidth\myboxwidth=\hsize\else\myboxwidth=\halfwidth\fi
\vbox{%
\ifdim\myboxwidth=\hsize
\setbox\onelinebox=\hbox{%
\vbox{\hbox{%
$\Pi_{3,35}$ spans $L_{1.3}$%
}\hbox{%
$42\infty$ (shared)%
}%
}%
\hfill\copy\matricesbox
}%
\ifdim\wd\onelinebox>\myboxwidth
\hbox to \myboxwidth{%
$\Pi_{3,35}$ spans $L_{1.3}$%
\hfil
$42\infty$ (shared)%
}%
\box\matricesbox
\else
\hbox to \myboxwidth{%
\unhbox\onelinebox
}%
\fi
\else
\hbox to \myboxwidth{%
$\Pi_{3,35}$ spans $L_{1.3}$%
\hfil}%
\hbox to \myboxwidth{%
$42\infty$ (shared)%
\hfil}%
\box\matricesbox
\fi
}%
\hfill\discretionary{}{}{}%
\setbox\matricesbox=\hbox{%
{$\left[\!\llap{\phantom{%
\begingroup \smaller\smaller\smaller\begin{tabular}{@{}c@{}}%
\phantom{0}\\\phantom{0}\\\phantom{0}
\end{tabular}\endgroup%
}}\right.$}%
\begingroup \smaller\smaller\smaller\begin{tabular}{@{}c@{}}%
-1/11\\\phantom{0}\\\phantom{0}
\end{tabular}\endgroup%
\kern3pt%
\begingroup \smaller\smaller\smaller\begin{tabular}{@{}c@{}}%
\phantom{0}\\4/11\\-2/11
\end{tabular}\endgroup%
\kern3pt%
\begingroup \smaller\smaller\smaller\begin{tabular}{@{}c@{}}%
\phantom{0}\\-2/11\\12/11
\end{tabular}\endgroup%
{$\left.\llap{\phantom{%
\begingroup \smaller\smaller\smaller\begin{tabular}{@{}c@{}}%
\phantom{0}\\\phantom{0}\\\phantom{0}
\end{tabular}\endgroup%
}}\!\right]$}%
{$\left[\!\llap{\phantom{%
\begingroup \smaller\smaller\smaller\begin{tabular}{@{}c@{}}%
0\\0\\0
\end{tabular}\endgroup%
}}\right.$}%
\begingroup \smaller\smaller\smaller\begin{tabular}{@{}c@{}}%
1\\-1\\-1
\end{tabular}\endgroup%
\kern3pt%
\begingroup \smaller\smaller\smaller\begin{tabular}{@{}c@{}}%
4\\4\\1
\end{tabular}\endgroup%
\kern3pt%
\begingroup \smaller\smaller\smaller\begin{tabular}{@{}c@{}}%
1\\0\\1
\end{tabular}\endgroup%
{$\left.\llap{\phantom{%
\begingroup \smaller\smaller\smaller\begin{tabular}{@{}c@{}}%
0\\0\\0
\end{tabular}\endgroup%
}}\!\right]$}%
}%
\ifdim\wd\matricesbox>\halfwidth\myboxwidth=\hsize\else\myboxwidth=\halfwidth\fi
\vbox{%
\ifdim\myboxwidth=\hsize
\setbox\onelinebox=\hbox{%
\vbox{\hbox{%
$\Pi_{3,36}=A_{4,0,\onebar}=\hbox{GN}_{10}$ spans $L_{2.3}$%
}\hbox{%
$\infty2\infty$ (shared)%
}%
}%
\hfill\copy\matricesbox
}%
\ifdim\wd\onelinebox>\myboxwidth
\hbox to \myboxwidth{%
$\Pi_{3,36}=A_{4,0,\onebar}=\hbox{GN}_{10}$ spans $L_{2.3}$%
\hfil
$\infty2\infty$ (shared)%
}%
\box\matricesbox
\else
\hbox to \myboxwidth{%
\unhbox\onelinebox
}%
\fi
\else
\hbox to \myboxwidth{%
$\Pi_{3,36}=A_{4,0,\onebar}=\hbox{GN}_{10}$ spans $L_{2.3}$%
\hfil}%
\hbox to \myboxwidth{%
$\infty2\infty$ (shared)%
\hfil}%
\box\matricesbox
\fi
}%
\hfill\discretionary{}{}{}%
\setbox\matricesbox=\hbox{%
{$\left[\!\llap{\phantom{%
\begingroup \smaller\smaller\smaller\begin{tabular}{@{}c@{}}%
\phantom{0}\\\phantom{0}\\\phantom{0}
\end{tabular}\endgroup%
}}\right.$}%
\begingroup \smaller\smaller\smaller\begin{tabular}{@{}c@{}}%
-1/72\\\phantom{0}\\\phantom{0}
\end{tabular}\endgroup%
\kern3pt%
\begingroup \smaller\smaller\smaller\begin{tabular}{@{}c@{}}%
\phantom{0}\\13/18\\-1/3
\end{tabular}\endgroup%
\kern3pt%
\begingroup \smaller\smaller\smaller\begin{tabular}{@{}c@{}}%
\phantom{0}\\-1/3\\2
\end{tabular}\endgroup%
{$\left.\llap{\phantom{%
\begingroup \smaller\smaller\smaller\begin{tabular}{@{}c@{}}%
\phantom{0}\\\phantom{0}\\\phantom{0}
\end{tabular}\endgroup%
}}\!\right]$}%
{$\left[\!\llap{\phantom{%
\begingroup \smaller\smaller\smaller\begin{tabular}{@{}c@{}}%
0\\0\\0
\end{tabular}\endgroup%
}}\right.$}%
\begingroup \smaller\smaller\smaller\begin{tabular}{@{}c@{}}%
6\\3\\1
\end{tabular}\endgroup%
\kern3pt%
\begingroup \smaller\smaller\smaller\begin{tabular}{@{}c@{}}%
2\\-1\\-1
\end{tabular}\endgroup%
\kern3pt%
\begingroup \smaller\smaller\smaller\begin{tabular}{@{}c@{}}%
24\\-6\\1
\end{tabular}\endgroup%
{$\left.\llap{\phantom{%
\begingroup \smaller\smaller\smaller\begin{tabular}{@{}c@{}}%
0\\0\\0
\end{tabular}\endgroup%
}}\!\right]$}%
}%
\ifdim\wd\matricesbox>\halfwidth\myboxwidth=\hsize\else\myboxwidth=\halfwidth\fi
\vbox{%
\ifdim\myboxwidth=\hsize
\setbox\onelinebox=\hbox{%
\vbox{\hbox{%
$\Pi_{3,37}$ spans $L_{7.13}$%
}\hbox{%
$62\infty$ (shared)%
}%
}%
\hfill\copy\matricesbox
}%
\ifdim\wd\onelinebox>\myboxwidth
\hbox to \myboxwidth{%
$\Pi_{3,37}$ spans $L_{7.13}$%
\hfil
$62\infty$ (shared)%
}%
\box\matricesbox
\else
\hbox to \myboxwidth{%
\unhbox\onelinebox
}%
\fi
\else
\hbox to \myboxwidth{%
$\Pi_{3,37}$ spans $L_{7.13}$%
\hfil}%
\hbox to \myboxwidth{%
$62\infty$ (shared)%
\hfil}%
\box\matricesbox
\fi
}%
\hfill\discretionary{}{}{}%
\setbox\matricesbox=\hbox{%
{$\left[\!\llap{\phantom{%
\begingroup \smaller\smaller\smaller\begin{tabular}{@{}c@{}}%
\phantom{0}\\\phantom{0}\\\phantom{0}
\end{tabular}\endgroup%
}}\right.$}%
\begingroup \smaller\smaller\smaller\begin{tabular}{@{}c@{}}%
-1/32\\\phantom{0}\\\phantom{0}
\end{tabular}\endgroup%
\kern3pt%
\begingroup \smaller\smaller\smaller\begin{tabular}{@{}c@{}}%
\phantom{0}\\9/32\\-1/8
\end{tabular}\endgroup%
\kern3pt%
\begingroup \smaller\smaller\smaller\begin{tabular}{@{}c@{}}%
\phantom{0}\\-1/8\\1/2
\end{tabular}\endgroup%
{$\left.\llap{\phantom{%
\begingroup \smaller\smaller\smaller\begin{tabular}{@{}c@{}}%
\phantom{0}\\\phantom{0}\\\phantom{0}
\end{tabular}\endgroup%
}}\!\right]$}%
{$\left[\!\llap{\phantom{%
\begingroup \smaller\smaller\smaller\begin{tabular}{@{}c@{}}%
0\\0\\0
\end{tabular}\endgroup%
}}\right.$}%
\begingroup \smaller\smaller\smaller\begin{tabular}{@{}c@{}}%
4\\0\\3
\end{tabular}\endgroup%
\kern3pt%
\begingroup \smaller\smaller\smaller\begin{tabular}{@{}c@{}}%
16\\-8\\-6
\end{tabular}\endgroup%
\kern3pt%
\begingroup \smaller\smaller\smaller\begin{tabular}{@{}c@{}}%
1\\1\\-1
\end{tabular}\endgroup%
{$\left.\llap{\phantom{%
\begingroup \smaller\smaller\smaller\begin{tabular}{@{}c@{}}%
0\\0\\0
\end{tabular}\endgroup%
}}\!\right]$}%
}%
\ifdim\wd\matricesbox>\halfwidth\myboxwidth=\hsize\else\myboxwidth=\halfwidth\fi
\vbox{%
\ifdim\myboxwidth=\hsize
\setbox\onelinebox=\hbox{%
\vbox{\hbox{%
$\Pi_{3,38}=A_{4,I,\onebar}=\hbox{GN}_{3}$ spans $L_{140.4}$%
}\hbox{%
$\infty2\infty$ (shared)%
}%
}%
\hfill\copy\matricesbox
}%
\ifdim\wd\onelinebox>\myboxwidth
\hbox to \myboxwidth{%
$\Pi_{3,38}=A_{4,I,\onebar}=\hbox{GN}_{3}$ spans $L_{140.4}$%
\hfil
$\infty2\infty$ (shared)%
}%
\box\matricesbox
\else
\hbox to \myboxwidth{%
\unhbox\onelinebox
}%
\fi
\else
\hbox to \myboxwidth{%
$\Pi_{3,38}=A_{4,I,\onebar}=\hbox{GN}_{3}$ spans $L_{140.4}$%
\hfil}%
\hbox to \myboxwidth{%
$\infty2\infty$ (shared)%
\hfil}%
\box\matricesbox
\fi
}%
\hfill\discretionary{}{}{}%
\setbox\matricesbox=\hbox{%
{$\left[\!\llap{\phantom{%
\begingroup \smaller\smaller\smaller\begin{tabular}{@{}c@{}}%
\phantom{0}\\\phantom{0}\\\phantom{0}
\end{tabular}\endgroup%
}}\right.$}%
\begingroup \smaller\smaller\smaller\begin{tabular}{@{}c@{}}%
-1/22\\\phantom{0}\\\phantom{0}
\end{tabular}\endgroup%
\kern3pt%
\begingroup \smaller\smaller\smaller\begin{tabular}{@{}c@{}}%
\phantom{0}\\3/22\\-1/22
\end{tabular}\endgroup%
\kern3pt%
\begingroup \smaller\smaller\smaller\begin{tabular}{@{}c@{}}%
\phantom{0}\\-1/22\\15/22
\end{tabular}\endgroup%
{$\left.\llap{\phantom{%
\begingroup \smaller\smaller\smaller\begin{tabular}{@{}c@{}}%
\phantom{0}\\\phantom{0}\\\phantom{0}
\end{tabular}\endgroup%
}}\!\right]$}%
{$\left[\!\llap{\phantom{%
\begingroup \smaller\smaller\smaller\begin{tabular}{@{}c@{}}%
0\\0\\0
\end{tabular}\endgroup%
}}\right.$}%
\begingroup \smaller\smaller\smaller\begin{tabular}{@{}c@{}}%
2\\4\\0
\end{tabular}\endgroup%
\kern3pt%
\begingroup \smaller\smaller\smaller\begin{tabular}{@{}c@{}}%
1\\-2\\-1
\end{tabular}\endgroup%
\kern3pt%
\begingroup \smaller\smaller\smaller\begin{tabular}{@{}c@{}}%
2\\-3\\1
\end{tabular}\endgroup%
{$\left.\llap{\phantom{%
\begingroup \smaller\smaller\smaller\begin{tabular}{@{}c@{}}%
0\\0\\0
\end{tabular}\endgroup%
}}\!\right]$}%
}%
\ifdim\wd\matricesbox>\halfwidth\myboxwidth=\hsize\else\myboxwidth=\halfwidth\fi
\vbox{%
\ifdim\myboxwidth=\hsize
\setbox\onelinebox=\hbox{%
\vbox{\hbox{%
$\Pi_{3,39}$ spans $L_{1.5}$%
}\hbox{%
$42\infty$ (shared)%
}%
}%
\hfill\copy\matricesbox
}%
\ifdim\wd\onelinebox>\myboxwidth
\hbox to \myboxwidth{%
$\Pi_{3,39}$ spans $L_{1.5}$%
\hfil
$42\infty$ (shared)%
}%
\box\matricesbox
\else
\hbox to \myboxwidth{%
\unhbox\onelinebox
}%
\fi
\else
\hbox to \myboxwidth{%
$\Pi_{3,39}$ spans $L_{1.5}$%
\hfil}%
\hbox to \myboxwidth{%
$42\infty$ (shared)%
\hfil}%
\box\matricesbox
\fi
}%
\hfill\discretionary{}{}{}%
\setbox\matricesbox=\hbox{%
{$\left[\!\llap{\phantom{%
\begingroup \smaller\smaller\smaller\begin{tabular}{@{}c@{}}%
\phantom{0}\\\phantom{0}\\\phantom{0}
\end{tabular}\endgroup%
}}\right.$}%
\begingroup \smaller\smaller\smaller\begin{tabular}{@{}c@{}}%
-1/91\\\phantom{0}\\\phantom{0}
\end{tabular}\endgroup%
\kern3pt%
\begingroup \smaller\smaller\smaller\begin{tabular}{@{}c@{}}%
\phantom{0}\\30/91\\-9/91
\end{tabular}\endgroup%
\kern3pt%
\begingroup \smaller\smaller\smaller\begin{tabular}{@{}c@{}}%
\phantom{0}\\-9/91\\30/91
\end{tabular}\endgroup%
{$\left.\llap{\phantom{%
\begingroup \smaller\smaller\smaller\begin{tabular}{@{}c@{}}%
\phantom{0}\\\phantom{0}\\\phantom{0}
\end{tabular}\endgroup%
}}\!\right]$}%
{$\left[\!\llap{\phantom{%
\begingroup \smaller\smaller\smaller\begin{tabular}{@{}c@{}}%
0\\0\\0
\end{tabular}\endgroup%
}}\right.$}%
\begingroup \smaller\smaller\smaller\begin{tabular}{@{}c@{}}%
6\\-1\\4
\end{tabular}\endgroup%
\kern3pt%
\begingroup \smaller\smaller\smaller\begin{tabular}{@{}c@{}}%
3\\-2\\-3
\end{tabular}\endgroup%
\kern3pt%
\begingroup \smaller\smaller\smaller\begin{tabular}{@{}c@{}}%
2\\2\\-1
\end{tabular}\endgroup%
{$\left.\llap{\phantom{%
\begingroup \smaller\smaller\smaller\begin{tabular}{@{}c@{}}%
0\\0\\0
\end{tabular}\endgroup%
}}\!\right]$}%
}%
\ifdim\wd\matricesbox>\halfwidth\myboxwidth=\hsize\else\myboxwidth=\halfwidth\fi
\vbox{%
\ifdim\myboxwidth=\hsize
\setbox\onelinebox=\hbox{%
\vbox{\hbox{%
$\Pi_{3,40}$ spans $L_{3.3}$%
}\hbox{%
$426$ (shared)%
}%
}%
\hfill\copy\matricesbox
}%
\ifdim\wd\onelinebox>\myboxwidth
\hbox to \myboxwidth{%
$\Pi_{3,40}$ spans $L_{3.3}$%
\hfil
$426$ (shared)%
}%
\box\matricesbox
\else
\hbox to \myboxwidth{%
\unhbox\onelinebox
}%
\fi
\else
\hbox to \myboxwidth{%
$\Pi_{3,40}$ spans $L_{3.3}$%
\hfil}%
\hbox to \myboxwidth{%
$426$ (shared)%
\hfil}%
\box\matricesbox
\fi
}%
\hfill\discretionary{}{}{}%
\setbox\matricesbox=\hbox{%
{$\left[\!\llap{\phantom{%
\begingroup \smaller\smaller\smaller\begin{tabular}{@{}c@{}}%
\phantom{0}\\\phantom{0}\\\phantom{0}
\end{tabular}\endgroup%
}}\right.$}%
\begingroup \smaller\smaller\smaller\begin{tabular}{@{}c@{}}%
-1/29\\\phantom{0}\\\phantom{0}
\end{tabular}\endgroup%
\kern3pt%
\begingroup \smaller\smaller\smaller\begin{tabular}{@{}c@{}}%
\phantom{0}\\6/29\\-2/29
\end{tabular}\endgroup%
\kern3pt%
\begingroup \smaller\smaller\smaller\begin{tabular}{@{}c@{}}%
\phantom{0}\\-2/29\\20/29
\end{tabular}\endgroup%
{$\left.\llap{\phantom{%
\begingroup \smaller\smaller\smaller\begin{tabular}{@{}c@{}}%
\phantom{0}\\\phantom{0}\\\phantom{0}
\end{tabular}\endgroup%
}}\!\right]$}%
{$\left[\!\llap{\phantom{%
\begingroup \smaller\smaller\smaller\begin{tabular}{@{}c@{}}%
0\\0\\0
\end{tabular}\endgroup%
}}\right.$}%
\begingroup \smaller\smaller\smaller\begin{tabular}{@{}c@{}}%
4\\4\\-1
\end{tabular}\endgroup%
\kern3pt%
\begingroup \smaller\smaller\smaller\begin{tabular}{@{}c@{}}%
2\\-3\\-1
\end{tabular}\endgroup%
\kern3pt%
\begingroup \smaller\smaller\smaller\begin{tabular}{@{}c@{}}%
1\\-1\\1
\end{tabular}\endgroup%
{$\left.\llap{\phantom{%
\begingroup \smaller\smaller\smaller\begin{tabular}{@{}c@{}}%
0\\0\\0
\end{tabular}\endgroup%
}}\!\right]$}%
}%
\ifdim\wd\matricesbox>\halfwidth\myboxwidth=\hsize\else\myboxwidth=\halfwidth\fi
\vbox{%
\ifdim\myboxwidth=\hsize
\setbox\onelinebox=\hbox{%
\vbox{\hbox{%
$\Pi_{3,41}$ spans $L_{1.7}$%
}\hbox{%
$42\infty$ (shared)%
}%
}%
\hfill\copy\matricesbox
}%
\ifdim\wd\onelinebox>\myboxwidth
\hbox to \myboxwidth{%
$\Pi_{3,41}$ spans $L_{1.7}$%
\hfil
$42\infty$ (shared)%
}%
\box\matricesbox
\else
\hbox to \myboxwidth{%
\unhbox\onelinebox
}%
\fi
\else
\hbox to \myboxwidth{%
$\Pi_{3,41}$ spans $L_{1.7}$%
\hfil}%
\hbox to \myboxwidth{%
$42\infty$ (shared)%
\hfil}%
\box\matricesbox
\fi
}%
\hfill\discretionary{}{}{}%
\setbox\matricesbox=\hbox{%
{$\left[\!\llap{\phantom{%
\begingroup \smaller\smaller\smaller\begin{tabular}{@{}c@{}}%
\phantom{0}\\\phantom{0}\\\phantom{0}
\end{tabular}\endgroup%
}}\right.$}%
\begingroup \smaller\smaller\smaller\begin{tabular}{@{}c@{}}%
-1/42\\\phantom{0}\\\phantom{0}
\end{tabular}\endgroup%
\kern3pt%
\begingroup \smaller\smaller\smaller\begin{tabular}{@{}c@{}}%
\phantom{0}\\8/21\\-1/21
\end{tabular}\endgroup%
\kern3pt%
\begingroup \smaller\smaller\smaller\begin{tabular}{@{}c@{}}%
\phantom{0}\\-1/21\\8/21
\end{tabular}\endgroup%
{$\left.\llap{\phantom{%
\begingroup \smaller\smaller\smaller\begin{tabular}{@{}c@{}}%
\phantom{0}\\\phantom{0}\\\phantom{0}
\end{tabular}\endgroup%
}}\!\right]$}%
{$\left[\!\llap{\phantom{%
\begingroup \smaller\smaller\smaller\begin{tabular}{@{}c@{}}%
0\\0\\0
\end{tabular}\endgroup%
}}\right.$}%
\begingroup \smaller\smaller\smaller\begin{tabular}{@{}c@{}}%
6\\-4\\1
\end{tabular}\endgroup%
\kern3pt%
\begingroup \smaller\smaller\smaller\begin{tabular}{@{}c@{}}%
2\\1\\-2
\end{tabular}\endgroup%
\kern3pt%
\begingroup \smaller\smaller\smaller\begin{tabular}{@{}c@{}}%
6\\4\\2
\end{tabular}\endgroup%
{$\left.\llap{\phantom{%
\begingroup \smaller\smaller\smaller\begin{tabular}{@{}c@{}}%
0\\0\\0
\end{tabular}\endgroup%
}}\!\right]$}%
}%
\ifdim\wd\matricesbox>\halfwidth\myboxwidth=\hsize\else\myboxwidth=\halfwidth\fi
\vbox{%
\ifdim\myboxwidth=\hsize
\setbox\onelinebox=\hbox{%
\vbox{\hbox{%
$\Pi_{3,42}$ spans $L_{7.8}$%
}\hbox{%
$62\infty$ (shared)%
}%
}%
\hfill\copy\matricesbox
}%
\ifdim\wd\onelinebox>\myboxwidth
\hbox to \myboxwidth{%
$\Pi_{3,42}$ spans $L_{7.8}$%
\hfil
$62\infty$ (shared)%
}%
\box\matricesbox
\else
\hbox to \myboxwidth{%
\unhbox\onelinebox
}%
\fi
\else
\hbox to \myboxwidth{%
$\Pi_{3,42}$ spans $L_{7.8}$%
\hfil}%
\hbox to \myboxwidth{%
$62\infty$ (shared)%
\hfil}%
\box\matricesbox
\fi
}%
\hfill\discretionary{}{}{}%
\setbox\matricesbox=\hbox{%
{$\left[\!\llap{\phantom{%
\begingroup \smaller\smaller\smaller\begin{tabular}{@{}c@{}}%
\phantom{0}\\\phantom{0}\\\phantom{0}
\end{tabular}\endgroup%
}}\right.$}%
\begingroup \smaller\smaller\smaller\begin{tabular}{@{}c@{}}%
-1/20\\\phantom{0}\\\phantom{0}
\end{tabular}\endgroup%
\kern3pt%
\begingroup \smaller\smaller\smaller\begin{tabular}{@{}c@{}}%
\phantom{0}\\1/4\\\phantom{0}
\end{tabular}\endgroup%
\kern3pt%
\begingroup \smaller\smaller\smaller\begin{tabular}{@{}c@{}}%
\phantom{0}\\\phantom{0}\\4/5
\end{tabular}\endgroup%
{$\left.\llap{\phantom{%
\begingroup \smaller\smaller\smaller\begin{tabular}{@{}c@{}}%
\phantom{0}\\\phantom{0}\\\phantom{0}
\end{tabular}\endgroup%
}}\!\right]$}%
{$\left[\!\llap{\phantom{%
\begingroup \smaller\smaller\smaller\begin{tabular}{@{}c@{}}%
0\\0\\0
\end{tabular}\endgroup%
}}\right.$}%
\begingroup \smaller\smaller\smaller\begin{tabular}{@{}c@{}}%
4\\-4\\1
\end{tabular}\endgroup%
\kern3pt%
\begingroup \smaller\smaller\smaller\begin{tabular}{@{}c@{}}%
4\\4\\1
\end{tabular}\endgroup%
\kern3pt%
\begingroup \smaller\smaller\smaller\begin{tabular}{@{}c@{}}%
1\\1\\-1
\end{tabular}\endgroup%
{$\left.\llap{\phantom{%
\begingroup \smaller\smaller\smaller\begin{tabular}{@{}c@{}}%
0\\0\\0
\end{tabular}\endgroup%
}}\!\right]$}%
}%
\ifdim\wd\matricesbox>\halfwidth\myboxwidth=\hsize\else\myboxwidth=\halfwidth\fi
\vbox{%
\ifdim\myboxwidth=\hsize
\setbox\onelinebox=\hbox{%
\vbox{\hbox{%
$\Pi_{3,43}=\hbox{GN}_{5}$ spans $L_{140.3}$%
}\hbox{%
$\infty2\infty$ (shared)%
}%
}%
\hfill\copy\matricesbox
}%
\ifdim\wd\onelinebox>\myboxwidth
\hbox to \myboxwidth{%
$\Pi_{3,43}=\hbox{GN}_{5}$ spans $L_{140.3}$%
\hfil
$\infty2\infty$ (shared)%
}%
\box\matricesbox
\else
\hbox to \myboxwidth{%
\unhbox\onelinebox
}%
\fi
\else
\hbox to \myboxwidth{%
$\Pi_{3,43}=\hbox{GN}_{5}$ spans $L_{140.3}$%
\hfil}%
\hbox to \myboxwidth{%
$\infty2\infty$ (shared)%
\hfil}%
\box\matricesbox
\fi
}%
\hfill\discretionary{}{}{}%
\setbox\matricesbox=\hbox{%
{$\left[\!\llap{\phantom{%
\begingroup \smaller\smaller\smaller\begin{tabular}{@{}c@{}}%
\phantom{0}\\\phantom{0}\\\phantom{0}
\end{tabular}\endgroup%
}}\right.$}%
\begingroup \smaller\smaller\smaller\begin{tabular}{@{}c@{}}%
-1/57\\\phantom{0}\\\phantom{0}
\end{tabular}\endgroup%
\kern3pt%
\begingroup \smaller\smaller\smaller\begin{tabular}{@{}c@{}}%
\phantom{0}\\28/57\\-10/57
\end{tabular}\endgroup%
\kern3pt%
\begingroup \smaller\smaller\smaller\begin{tabular}{@{}c@{}}%
\phantom{0}\\-10/57\\28/57
\end{tabular}\endgroup%
{$\left.\llap{\phantom{%
\begingroup \smaller\smaller\smaller\begin{tabular}{@{}c@{}}%
\phantom{0}\\\phantom{0}\\\phantom{0}
\end{tabular}\endgroup%
}}\!\right]$}%
{$\left[\!\llap{\phantom{%
\begingroup \smaller\smaller\smaller\begin{tabular}{@{}c@{}}%
0\\0\\0
\end{tabular}\endgroup%
}}\right.$}%
\begingroup \smaller\smaller\smaller\begin{tabular}{@{}c@{}}%
12\\5\\-1
\end{tabular}\endgroup%
\kern3pt%
\begingroup \smaller\smaller\smaller\begin{tabular}{@{}c@{}}%
4\\-3\\-2
\end{tabular}\endgroup%
\kern3pt%
\begingroup \smaller\smaller\smaller\begin{tabular}{@{}c@{}}%
3\\-1\\2
\end{tabular}\endgroup%
{$\left.\llap{\phantom{%
\begingroup \smaller\smaller\smaller\begin{tabular}{@{}c@{}}%
0\\0\\0
\end{tabular}\endgroup%
}}\!\right]$}%
}%
\ifdim\wd\matricesbox>\halfwidth\myboxwidth=\hsize\else\myboxwidth=\halfwidth\fi
\vbox{%
\ifdim\myboxwidth=\hsize
\setbox\onelinebox=\hbox{%
\vbox{\hbox{%
$\Pi_{3,44}$ spans $L_{7.11}$%
}\hbox{%
$62\infty$ (shared)%
}%
}%
\hfill\copy\matricesbox
}%
\ifdim\wd\onelinebox>\myboxwidth
\hbox to \myboxwidth{%
$\Pi_{3,44}$ spans $L_{7.11}$%
\hfil
$62\infty$ (shared)%
}%
\box\matricesbox
\else
\hbox to \myboxwidth{%
\unhbox\onelinebox
}%
\fi
\else
\hbox to \myboxwidth{%
$\Pi_{3,44}$ spans $L_{7.11}$%
\hfil}%
\hbox to \myboxwidth{%
$62\infty$ (shared)%
\hfil}%
\box\matricesbox
\fi
}%
\hfill\discretionary{}{}{}%

\vskip2pt\hrule\vskip2pt

\leavevmode\setbox\matricesbox=\hbox{%
{$\left[\!\llap{\phantom{%
\begingroup \smaller\smaller\smaller\begin{tabular}{@{}c@{}}%
\phantom{0}\\\phantom{0}\\\phantom{0}
\end{tabular}\endgroup%
}}\right.$}%
\begingroup \smaller\smaller\smaller\begin{tabular}{@{}c@{}}%
-1/4\\\phantom{0}\\\phantom{0}
\end{tabular}\endgroup%
\kern3pt%
\begingroup \smaller\smaller\smaller\begin{tabular}{@{}c@{}}%
\phantom{0}\\3\\\phantom{0}
\end{tabular}\endgroup%
\kern3pt%
\begingroup \smaller\smaller\smaller\begin{tabular}{@{}c@{}}%
\phantom{0}\\\phantom{0}\\3
\end{tabular}\endgroup%
{$\left.\llap{\phantom{%
\begingroup \smaller\smaller\smaller\begin{tabular}{@{}c@{}}%
\phantom{0}\\\phantom{0}\\\phantom{0}
\end{tabular}\endgroup%
}}\!\right]$}%
{$\left[\!\llap{\phantom{%
\begingroup \smaller\smaller\smaller\begin{tabular}{@{}c@{}}%
0\\0\\0
\end{tabular}\endgroup%
}}\right.$}%
\begingroup \smaller\smaller\smaller\begin{tabular}{@{}c@{}}%
2\\1\\0
\end{tabular}\endgroup%
{$\left.\llap{\phantom{%
\begingroup \smaller\smaller\smaller\begin{tabular}{@{}c@{}}%
0\\0\\0
\end{tabular}\endgroup%
}}\!\right]$}%
}%
\ifdim\wd\matricesbox>\halfwidth\myboxwidth=\hsize\else\myboxwidth=\halfwidth\fi
\vbox{%
\ifdim\myboxwidth=\hsize
\setbox\onelinebox=\hbox{%
\vbox{\hbox{%
$\Pi_{4,1}=B_2=\hbox{GN}_{29}$ spans $L_{3.4}$%
}\hbox{%
$|\slashthree|\slashthree|\slashthree|\slashthree\rtimes D_{8}$%
}%
}%
\hfill\copy\matricesbox
}%
\ifdim\wd\onelinebox>\myboxwidth
\hbox to \myboxwidth{%
$\Pi_{4,1}=B_2=\hbox{GN}_{29}$ spans $L_{3.4}$%
\hfil
$|\slashthree|\slashthree|\slashthree|\slashthree\rtimes D_{8}$%
}%
\box\matricesbox
\else
\hbox to \myboxwidth{%
\unhbox\onelinebox
}%
\fi
\else
\hbox to \myboxwidth{%
$\Pi_{4,1}=B_2=\hbox{GN}_{29}$ spans $L_{3.4}$%
\hfil}%
\hbox to \myboxwidth{%
$|\slashthree|\slashthree|\slashthree|\slashthree\rtimes D_{8}$%
\hfil}%
\box\matricesbox
\fi
}%
\hfill\discretionary{}{}{}%
\setbox\matricesbox=\hbox{%
{$\left[\!\llap{\phantom{%
\begingroup \smaller\smaller\smaller\begin{tabular}{@{}c@{}}%
\phantom{0}\\\phantom{0}\\\phantom{0}
\end{tabular}\endgroup%
}}\right.$}%
\begingroup \smaller\smaller\smaller\begin{tabular}{@{}c@{}}%
-1\\\phantom{0}\\\phantom{0}
\end{tabular}\endgroup%
\kern3pt%
\begingroup \smaller\smaller\smaller\begin{tabular}{@{}c@{}}%
\phantom{0}\\2\\\phantom{0}
\end{tabular}\endgroup%
\kern3pt%
\begingroup \smaller\smaller\smaller\begin{tabular}{@{}c@{}}%
\phantom{0}\\\phantom{0}\\2
\end{tabular}\endgroup%
{$\left.\llap{\phantom{%
\begingroup \smaller\smaller\smaller\begin{tabular}{@{}c@{}}%
\phantom{0}\\\phantom{0}\\\phantom{0}
\end{tabular}\endgroup%
}}\!\right]$}%
{$\left[\!\llap{\phantom{%
\begingroup \smaller\smaller\smaller\begin{tabular}{@{}c@{}}%
0\\0\\0
\end{tabular}\endgroup%
}}\right.$}%
\begingroup \smaller\smaller\smaller\begin{tabular}{@{}c@{}}%
1\\0\\-1
\end{tabular}\endgroup%
{$\left.\llap{\phantom{%
\begingroup \smaller\smaller\smaller\begin{tabular}{@{}c@{}}%
0\\0\\0
\end{tabular}\endgroup%
}}\!\right]$}%
}%
\ifdim\wd\matricesbox>\halfwidth\myboxwidth=\hsize\else\myboxwidth=\halfwidth\fi
\vbox{%
\ifdim\myboxwidth=\hsize
\setbox\onelinebox=\hbox{%
\vbox{\hbox{%
$\Pi_{4,2}=A_{2,II}=\hbox{GN}_{35}$ spans $L_{1.9}$%
}\hbox{%
$|\slashinfty|\slashinfty|\slashinfty|\slashinfty\rtimes D_{8}$ (shared)%
}%
}%
\hfill\copy\matricesbox
}%
\ifdim\wd\onelinebox>\myboxwidth
\hbox to \myboxwidth{%
$\Pi_{4,2}=A_{2,II}=\hbox{GN}_{35}$ spans $L_{1.9}$%
\hfil
$|\slashinfty|\slashinfty|\slashinfty|\slashinfty\rtimes D_{8}$ (shared)%
}%
\box\matricesbox
\else
\hbox to \myboxwidth{%
\unhbox\onelinebox
}%
\fi
\else
\hbox to \myboxwidth{%
$\Pi_{4,2}=A_{2,II}=\hbox{GN}_{35}$ spans $L_{1.9}$%
\hfil}%
\hbox to \myboxwidth{%
$|\slashinfty|\slashinfty|\slashinfty|\slashinfty\rtimes D_{8}$ (shared)%
\hfil}%
\box\matricesbox
\fi
}%
\hfill\discretionary{}{}{}%
\setbox\matricesbox=\hbox{%
{$\left[\!\llap{\phantom{%
\begingroup \smaller\smaller\smaller\begin{tabular}{@{}c@{}}%
\phantom{0}\\\phantom{0}\\\phantom{0}
\end{tabular}\endgroup%
}}\right.$}%
\begingroup \smaller\smaller\smaller\begin{tabular}{@{}c@{}}%
-3/8\\\phantom{0}\\\phantom{0}
\end{tabular}\endgroup%
\kern3pt%
\begingroup \smaller\smaller\smaller\begin{tabular}{@{}c@{}}%
\phantom{0}\\2\\\phantom{0}
\end{tabular}\endgroup%
\kern3pt%
\begingroup \smaller\smaller\smaller\begin{tabular}{@{}c@{}}%
\phantom{0}\\\phantom{0}\\3/2
\end{tabular}\endgroup%
{$\left.\llap{\phantom{%
\begingroup \smaller\smaller\smaller\begin{tabular}{@{}c@{}}%
\phantom{0}\\\phantom{0}\\\phantom{0}
\end{tabular}\endgroup%
}}\!\right]$}%
{$\left[\!\llap{\phantom{%
\begingroup \smaller\smaller\smaller\begin{tabular}{@{}c@{}}%
0\\0\\0
\end{tabular}\endgroup%
}}\right.$}%
\begingroup \smaller\smaller\smaller\begin{tabular}{@{}c@{}}%
2\\-1\\1
\end{tabular}\endgroup%
{$\left.\llap{\phantom{%
\begingroup \smaller\smaller\smaller\begin{tabular}{@{}c@{}}%
0\\0\\0
\end{tabular}\endgroup%
}}\!\right]$}%
}%
\ifdim\wd\matricesbox>\halfwidth\myboxwidth=\hsize\else\myboxwidth=\halfwidth\fi
\vbox{%
\ifdim\myboxwidth=\hsize
\setbox\onelinebox=\hbox{%
\vbox{\hbox{%
$\Pi_{4,3}=A_{3,I}=\hbox{GN}_{33}$ spans $L_{7.9}$%
}\hbox{%
$\slashthree\slashinfty\slashthree\slashinfty\rtimes D_{4}$%
}%
}%
\hfill\copy\matricesbox
}%
\ifdim\wd\onelinebox>\myboxwidth
\hbox to \myboxwidth{%
$\Pi_{4,3}=A_{3,I}=\hbox{GN}_{33}$ spans $L_{7.9}$%
\hfil
$\slashthree\slashinfty\slashthree\slashinfty\rtimes D_{4}$%
}%
\box\matricesbox
\else
\hbox to \myboxwidth{%
\unhbox\onelinebox
}%
\fi
\else
\hbox to \myboxwidth{%
$\Pi_{4,3}=A_{3,I}=\hbox{GN}_{33}$ spans $L_{7.9}$%
\hfil}%
\hbox to \myboxwidth{%
$\slashthree\slashinfty\slashthree\slashinfty\rtimes D_{4}$%
\hfil}%
\box\matricesbox
\fi
}%
\hfill\discretionary{}{}{}%
\setbox\matricesbox=\hbox{%
{$\left[\!\llap{\phantom{%
\begingroup \smaller\smaller\smaller\begin{tabular}{@{}c@{}}%
\phantom{0}\\\phantom{0}\\\phantom{0}
\end{tabular}\endgroup%
}}\right.$}%
\begingroup \smaller\smaller\smaller\begin{tabular}{@{}c@{}}%
-1/2\\\phantom{0}\\\phantom{0}
\end{tabular}\endgroup%
\kern3pt%
\begingroup \smaller\smaller\smaller\begin{tabular}{@{}c@{}}%
\phantom{0}\\3/2\\\phantom{0}
\end{tabular}\endgroup%
\kern3pt%
\begingroup \smaller\smaller\smaller\begin{tabular}{@{}c@{}}%
\phantom{0}\\\phantom{0}\\4
\end{tabular}\endgroup%
{$\left.\llap{\phantom{%
\begingroup \smaller\smaller\smaller\begin{tabular}{@{}c@{}}%
\phantom{0}\\\phantom{0}\\\phantom{0}
\end{tabular}\endgroup%
}}\!\right]$}%
{$\left[\!\llap{\phantom{%
\begingroup \smaller\smaller\smaller\begin{tabular}{@{}c@{}}%
0\\0\\0
\end{tabular}\endgroup%
}}\right.$}%
\begingroup \smaller\smaller\smaller\begin{tabular}{@{}c@{}}%
1\\-1\\0
\end{tabular}\endgroup%
\kern3pt%
\begingroup \smaller\smaller\smaller\begin{tabular}{@{}c@{}}%
2\\0\\-1
\end{tabular}\endgroup%
{$\left.\llap{\phantom{%
\begingroup \smaller\smaller\smaller\begin{tabular}{@{}c@{}}%
0\\0\\0
\end{tabular}\endgroup%
}}\!\right]$}%
}%
\ifdim\wd\matricesbox>\halfwidth\myboxwidth=\hsize\else\myboxwidth=\halfwidth\fi
\vbox{%
\ifdim\myboxwidth=\hsize
\setbox\onelinebox=\hbox{%
\vbox{\hbox{%
$\Pi_{4,4}$ spans $L_{123.4}$%
}\hbox{%
$|4|4|4|4\rtimes D_{4}$%
}%
}%
\hfill\copy\matricesbox
}%
\ifdim\wd\onelinebox>\myboxwidth
\hbox to \myboxwidth{%
$\Pi_{4,4}$ spans $L_{123.4}$%
\hfil
$|4|4|4|4\rtimes D_{4}$%
}%
\box\matricesbox
\else
\hbox to \myboxwidth{%
\unhbox\onelinebox
}%
\fi
\else
\hbox to \myboxwidth{%
$\Pi_{4,4}$ spans $L_{123.4}$%
\hfil}%
\hbox to \myboxwidth{%
$|4|4|4|4\rtimes D_{4}$%
\hfil}%
\box\matricesbox
\fi
}%
\hfill\discretionary{}{}{}%
\setbox\matricesbox=\hbox{%
{$\left[\!\llap{\phantom{%
\begingroup \smaller\smaller\smaller\begin{tabular}{@{}c@{}}%
\phantom{0}\\\phantom{0}\\\phantom{0}
\end{tabular}\endgroup%
}}\right.$}%
\begingroup \smaller\smaller\smaller\begin{tabular}{@{}c@{}}%
-1/4\\\phantom{0}\\\phantom{0}
\end{tabular}\endgroup%
\kern3pt%
\begingroup \smaller\smaller\smaller\begin{tabular}{@{}c@{}}%
\phantom{0}\\3\\\phantom{0}
\end{tabular}\endgroup%
\kern3pt%
\begingroup \smaller\smaller\smaller\begin{tabular}{@{}c@{}}%
\phantom{0}\\\phantom{0}\\15
\end{tabular}\endgroup%
{$\left.\llap{\phantom{%
\begingroup \smaller\smaller\smaller\begin{tabular}{@{}c@{}}%
\phantom{0}\\\phantom{0}\\\phantom{0}
\end{tabular}\endgroup%
}}\!\right]$}%
{$\left[\!\llap{\phantom{%
\begingroup \smaller\smaller\smaller\begin{tabular}{@{}c@{}}%
0\\0\\0
\end{tabular}\endgroup%
}}\right.$}%
\begingroup \smaller\smaller\smaller\begin{tabular}{@{}c@{}}%
2\\1\\0
\end{tabular}\endgroup%
\kern3pt%
\begingroup \smaller\smaller\smaller\begin{tabular}{@{}c@{}}%
6\\0\\1
\end{tabular}\endgroup%
{$\left.\llap{\phantom{%
\begingroup \smaller\smaller\smaller\begin{tabular}{@{}c@{}}%
0\\0\\0
\end{tabular}\endgroup%
}}\!\right]$}%
}%
\ifdim\wd\matricesbox>\halfwidth\myboxwidth=\hsize\else\myboxwidth=\halfwidth\fi
\vbox{%
\ifdim\myboxwidth=\hsize
\setbox\onelinebox=\hbox{%
\vbox{\hbox{%
$\Pi_{4,5}$ spans $L_{16.8}$%
}\hbox{%
$|6|6|6|6\rtimes D_{4}$%
}%
}%
\hfill\copy\matricesbox
}%
\ifdim\wd\onelinebox>\myboxwidth
\hbox to \myboxwidth{%
$\Pi_{4,5}$ spans $L_{16.8}$%
\hfil
$|6|6|6|6\rtimes D_{4}$%
}%
\box\matricesbox
\else
\hbox to \myboxwidth{%
\unhbox\onelinebox
}%
\fi
\else
\hbox to \myboxwidth{%
$\Pi_{4,5}$ spans $L_{16.8}$%
\hfil}%
\hbox to \myboxwidth{%
$|6|6|6|6\rtimes D_{4}$%
\hfil}%
\box\matricesbox
\fi
}%
\hfill\discretionary{}{}{}%
\setbox\matricesbox=\hbox{%
{$\left[\!\llap{\phantom{%
\begingroup \smaller\smaller\smaller\begin{tabular}{@{}c@{}}%
\phantom{0}\\\phantom{0}\\\phantom{0}
\end{tabular}\endgroup%
}}\right.$}%
\begingroup \smaller\smaller\smaller\begin{tabular}{@{}c@{}}%
-1/2\\\phantom{0}\\\phantom{0}
\end{tabular}\endgroup%
\kern3pt%
\begingroup \smaller\smaller\smaller\begin{tabular}{@{}c@{}}%
\phantom{0}\\3/2\\\phantom{0}
\end{tabular}\endgroup%
\kern3pt%
\begingroup \smaller\smaller\smaller\begin{tabular}{@{}c@{}}%
\phantom{0}\\\phantom{0}\\12
\end{tabular}\endgroup%
{$\left.\llap{\phantom{%
\begingroup \smaller\smaller\smaller\begin{tabular}{@{}c@{}}%
\phantom{0}\\\phantom{0}\\\phantom{0}
\end{tabular}\endgroup%
}}\!\right]$}%
{$\left[\!\llap{\phantom{%
\begingroup \smaller\smaller\smaller\begin{tabular}{@{}c@{}}%
0\\0\\0
\end{tabular}\endgroup%
}}\right.$}%
\begingroup \smaller\smaller\smaller\begin{tabular}{@{}c@{}}%
1\\-1\\0
\end{tabular}\endgroup%
\kern3pt%
\begingroup \smaller\smaller\smaller\begin{tabular}{@{}c@{}}%
4\\0\\-1
\end{tabular}\endgroup%
{$\left.\llap{\phantom{%
\begingroup \smaller\smaller\smaller\begin{tabular}{@{}c@{}}%
0\\0\\0
\end{tabular}\endgroup%
}}\!\right]$}%
}%
\ifdim\wd\matricesbox>\halfwidth\myboxwidth=\hsize\else\myboxwidth=\halfwidth\fi
\vbox{%
\ifdim\myboxwidth=\hsize
\setbox\onelinebox=\hbox{%
\vbox{\hbox{%
$\Pi_{4,6}=\hbox{GN}_{30}$ spans $L_{4.17}$%
}\hbox{%
$|\infty|\infty|\infty|\infty\rtimes D_{4}$%
}%
}%
\hfill\copy\matricesbox
}%
\ifdim\wd\onelinebox>\myboxwidth
\hbox to \myboxwidth{%
$\Pi_{4,6}=\hbox{GN}_{30}$ spans $L_{4.17}$%
\hfil
$|\infty|\infty|\infty|\infty\rtimes D_{4}$%
}%
\box\matricesbox
\else
\hbox to \myboxwidth{%
\unhbox\onelinebox
}%
\fi
\else
\hbox to \myboxwidth{%
$\Pi_{4,6}=\hbox{GN}_{30}$ spans $L_{4.17}$%
\hfil}%
\hbox to \myboxwidth{%
$|\infty|\infty|\infty|\infty\rtimes D_{4}$%
\hfil}%
\box\matricesbox
\fi
}%
\hfill\discretionary{}{}{}%
\setbox\matricesbox=\hbox{%
{$\left[\!\llap{\phantom{%
\begingroup \smaller\smaller\smaller\begin{tabular}{@{}c@{}}%
\phantom{0}\\\phantom{0}\\\phantom{0}
\end{tabular}\endgroup%
}}\right.$}%
\begingroup \smaller\smaller\smaller\begin{tabular}{@{}c@{}}%
-1/8\\\phantom{0}\\\phantom{0}
\end{tabular}\endgroup%
\kern3pt%
\begingroup \smaller\smaller\smaller\begin{tabular}{@{}c@{}}%
\phantom{0}\\1\\\phantom{0}
\end{tabular}\endgroup%
\kern3pt%
\begingroup \smaller\smaller\smaller\begin{tabular}{@{}c@{}}%
\phantom{0}\\\phantom{0}\\3/2
\end{tabular}\endgroup%
{$\left.\llap{\phantom{%
\begingroup \smaller\smaller\smaller\begin{tabular}{@{}c@{}}%
\phantom{0}\\\phantom{0}\\\phantom{0}
\end{tabular}\endgroup%
}}\!\right]$}%
{$\left[\!\llap{\phantom{%
\begingroup \smaller\smaller\smaller\begin{tabular}{@{}c@{}}%
0\\0\\0
\end{tabular}\endgroup%
}}\right.$}%
\begingroup \smaller\smaller\smaller\begin{tabular}{@{}c@{}}%
2\\-1\\-1
\end{tabular}\endgroup%
{$\left.\llap{\phantom{%
\begingroup \smaller\smaller\smaller\begin{tabular}{@{}c@{}}%
0\\0\\0
\end{tabular}\endgroup%
}}\!\right]$}%
}%
\ifdim\wd\matricesbox>\halfwidth\myboxwidth=\hsize\else\myboxwidth=\halfwidth\fi
\vbox{%
\ifdim\myboxwidth=\hsize
\setbox\onelinebox=\hbox{%
\vbox{\hbox{%
$\Pi_{4,7}=B_1=\hbox{GN}_{21}$ spans $L_{3.1}$%
}\hbox{%
$\slashthree\slashtwo\slashthree\slashtwo\rtimes D_{4}$ (shared)%
}%
}%
\hfill\copy\matricesbox
}%
\ifdim\wd\onelinebox>\myboxwidth
\hbox to \myboxwidth{%
$\Pi_{4,7}=B_1=\hbox{GN}_{21}$ spans $L_{3.1}$%
\hfil
$\slashthree\slashtwo\slashthree\slashtwo\rtimes D_{4}$ (shared)%
}%
\box\matricesbox
\else
\hbox to \myboxwidth{%
\unhbox\onelinebox
}%
\fi
\else
\hbox to \myboxwidth{%
$\Pi_{4,7}=B_1=\hbox{GN}_{21}$ spans $L_{3.1}$%
\hfil}%
\hbox to \myboxwidth{%
$\slashthree\slashtwo\slashthree\slashtwo\rtimes D_{4}$ (shared)%
\hfil}%
\box\matricesbox
\fi
}%
\hfill\discretionary{}{}{}%
\setbox\matricesbox=\hbox{%
{$\left[\!\llap{\phantom{%
\begingroup \smaller\smaller\smaller\begin{tabular}{@{}c@{}}%
\phantom{0}\\\phantom{0}\\\phantom{0}
\end{tabular}\endgroup%
}}\right.$}%
\begingroup \smaller\smaller\smaller\begin{tabular}{@{}c@{}}%
-1/2\\\phantom{0}\\\phantom{0}
\end{tabular}\endgroup%
\kern3pt%
\begingroup \smaller\smaller\smaller\begin{tabular}{@{}c@{}}%
\phantom{0}\\1/2\\\phantom{0}
\end{tabular}\endgroup%
\kern3pt%
\begingroup \smaller\smaller\smaller\begin{tabular}{@{}c@{}}%
\phantom{0}\\\phantom{0}\\1
\end{tabular}\endgroup%
{$\left.\llap{\phantom{%
\begingroup \smaller\smaller\smaller\begin{tabular}{@{}c@{}}%
\phantom{0}\\\phantom{0}\\\phantom{0}
\end{tabular}\endgroup%
}}\!\right]$}%
{$\left[\!\llap{\phantom{%
\begingroup \smaller\smaller\smaller\begin{tabular}{@{}c@{}}%
0\\0\\0
\end{tabular}\endgroup%
}}\right.$}%
\begingroup \smaller\smaller\smaller\begin{tabular}{@{}c@{}}%
1\\-1\\-1
\end{tabular}\endgroup%
{$\left.\llap{\phantom{%
\begingroup \smaller\smaller\smaller\begin{tabular}{@{}c@{}}%
0\\0\\0
\end{tabular}\endgroup%
}}\!\right]$}%
}%
\ifdim\wd\matricesbox>\halfwidth\myboxwidth=\hsize\else\myboxwidth=\halfwidth\fi
\vbox{%
\ifdim\myboxwidth=\hsize
\setbox\onelinebox=\hbox{%
\vbox{\hbox{%
$\Pi_{4,8}=A_{2,I}=\hbox{GN}_{27}$ spans $L_{1.6}$%
}\hbox{%
$\slashinfty\slashtwo\slashinfty\slashtwo\rtimes D_{4}$ (shared)%
}%
}%
\hfill\copy\matricesbox
}%
\ifdim\wd\onelinebox>\myboxwidth
\hbox to \myboxwidth{%
$\Pi_{4,8}=A_{2,I}=\hbox{GN}_{27}$ spans $L_{1.6}$%
\hfil
$\slashinfty\slashtwo\slashinfty\slashtwo\rtimes D_{4}$ (shared)%
}%
\box\matricesbox
\else
\hbox to \myboxwidth{%
\unhbox\onelinebox
}%
\fi
\else
\hbox to \myboxwidth{%
$\Pi_{4,8}=A_{2,I}=\hbox{GN}_{27}$ spans $L_{1.6}$%
\hfil}%
\hbox to \myboxwidth{%
$\slashinfty\slashtwo\slashinfty\slashtwo\rtimes D_{4}$ (shared)%
\hfil}%
\box\matricesbox
\fi
}%
\hfill\discretionary{}{}{}%
\setbox\matricesbox=\hbox{%
{$\left[\!\llap{\phantom{%
\begingroup \smaller\smaller\smaller\begin{tabular}{@{}c@{}}%
\phantom{0}\\\phantom{0}\\\phantom{0}
\end{tabular}\endgroup%
}}\right.$}%
\begingroup \smaller\smaller\smaller\begin{tabular}{@{}c@{}}%
-1/7\\\phantom{0}\\\phantom{0}
\end{tabular}\endgroup%
\kern3pt%
\begingroup \smaller\smaller\smaller\begin{tabular}{@{}c@{}}%
\phantom{0}\\1/14\\\phantom{0}
\end{tabular}\endgroup%
\kern3pt%
\begingroup \smaller\smaller\smaller\begin{tabular}{@{}c@{}}%
\phantom{0}\\\phantom{0}\\5/2
\end{tabular}\endgroup%
{$\left.\llap{\phantom{%
\begingroup \smaller\smaller\smaller\begin{tabular}{@{}c@{}}%
\phantom{0}\\\phantom{0}\\\phantom{0}
\end{tabular}\endgroup%
}}\!\right]$}%
{$\left[\!\llap{\phantom{%
\begingroup \smaller\smaller\smaller\begin{tabular}{@{}c@{}}%
0\\0\\0
\end{tabular}\endgroup%
}}\right.$}%
\begingroup \smaller\smaller\smaller\begin{tabular}{@{}c@{}}%
2\\-6\\0
\end{tabular}\endgroup%
\kern3pt%
\begingroup \smaller\smaller\smaller\begin{tabular}{@{}c@{}}%
2\\1\\1
\end{tabular}\endgroup%
\kern3pt%
\begingroup \smaller\smaller\smaller\begin{tabular}{@{}c@{}}%
1\\4\\0
\end{tabular}\endgroup%
{$\left.\llap{\phantom{%
\begingroup \smaller\smaller\smaller\begin{tabular}{@{}c@{}}%
0\\0\\0
\end{tabular}\endgroup%
}}\!\right]$}%
}%
\ifdim\wd\matricesbox>\halfwidth\myboxwidth=\hsize\else\myboxwidth=\halfwidth\fi
\vbox{%
\ifdim\myboxwidth=\hsize
\setbox\onelinebox=\hbox{%
\vbox{\hbox{%
$\Pi_{4,9}$ spans $L_{6.2}$%
}\hbox{%
$3|32|2\rtimes D_{2}$%
}%
}%
\hfill\copy\matricesbox
}%
\ifdim\wd\onelinebox>\myboxwidth
\hbox to \myboxwidth{%
$\Pi_{4,9}$ spans $L_{6.2}$%
\hfil
$3|32|2\rtimes D_{2}$%
}%
\box\matricesbox
\else
\hbox to \myboxwidth{%
\unhbox\onelinebox
}%
\fi
\else
\hbox to \myboxwidth{%
$\Pi_{4,9}$ spans $L_{6.2}$%
\hfil}%
\hbox to \myboxwidth{%
$3|32|2\rtimes D_{2}$%
\hfil}%
\box\matricesbox
\fi
}%
\hfill\discretionary{}{}{}%
\setbox\matricesbox=\hbox{%
{$\left[\!\llap{\phantom{%
\begingroup \smaller\smaller\smaller\begin{tabular}{@{}c@{}}%
\phantom{0}\\\phantom{0}\\\phantom{0}
\end{tabular}\endgroup%
}}\right.$}%
\begingroup \smaller\smaller\smaller\begin{tabular}{@{}c@{}}%
-6/25\\\phantom{0}\\\phantom{0}
\end{tabular}\endgroup%
\kern3pt%
\begingroup \smaller\smaller\smaller\begin{tabular}{@{}c@{}}%
\phantom{0}\\3/50\\\phantom{0}
\end{tabular}\endgroup%
\kern3pt%
\begingroup \smaller\smaller\smaller\begin{tabular}{@{}c@{}}%
\phantom{0}\\\phantom{0}\\1/2
\end{tabular}\endgroup%
{$\left.\llap{\phantom{%
\begingroup \smaller\smaller\smaller\begin{tabular}{@{}c@{}}%
\phantom{0}\\\phantom{0}\\\phantom{0}
\end{tabular}\endgroup%
}}\!\right]$}%
{$\left[\!\llap{\phantom{%
\begingroup \smaller\smaller\smaller\begin{tabular}{@{}c@{}}%
0\\0\\0
\end{tabular}\endgroup%
}}\right.$}%
\begingroup \smaller\smaller\smaller\begin{tabular}{@{}c@{}}%
6\\13\\3
\end{tabular}\endgroup%
\kern3pt%
\begingroup \smaller\smaller\smaller\begin{tabular}{@{}c@{}}%
2\\-4\\2
\end{tabular}\endgroup%
{$\left.\llap{\phantom{%
\begingroup \smaller\smaller\smaller\begin{tabular}{@{}c@{}}%
0\\0\\0
\end{tabular}\endgroup%
}}\!\right]$}%
}%
\ifdim\wd\matricesbox>\halfwidth\myboxwidth=\hsize\else\myboxwidth=\halfwidth\fi
\vbox{%
\ifdim\myboxwidth=\hsize
\setbox\onelinebox=\hbox{%
\vbox{\hbox{%
$\Pi_{4,10}$ spans $L_{7.9}$%
}\hbox{%
$\slashthree6\slashinfty6\rtimes D_{2}$ (shared)%
}%
}%
\hfill\copy\matricesbox
}%
\ifdim\wd\onelinebox>\myboxwidth
\hbox to \myboxwidth{%
$\Pi_{4,10}$ spans $L_{7.9}$%
\hfil
$\slashthree6\slashinfty6\rtimes D_{2}$ (shared)%
}%
\box\matricesbox
\else
\hbox to \myboxwidth{%
\unhbox\onelinebox
}%
\fi
\else
\hbox to \myboxwidth{%
$\Pi_{4,10}$ spans $L_{7.9}$%
\hfil}%
\hbox to \myboxwidth{%
$\slashthree6\slashinfty6\rtimes D_{2}$ (shared)%
\hfil}%
\box\matricesbox
\fi
}%
\hfill\discretionary{}{}{}%
\setbox\matricesbox=\hbox{%
{$\left[\!\llap{\phantom{%
\begingroup \smaller\smaller\smaller\begin{tabular}{@{}c@{}}%
\phantom{0}\\\phantom{0}\\\phantom{0}
\end{tabular}\endgroup%
}}\right.$}%
\begingroup \smaller\smaller\smaller\begin{tabular}{@{}c@{}}%
-2/9\\\phantom{0}\\\phantom{0}
\end{tabular}\endgroup%
\kern3pt%
\begingroup \smaller\smaller\smaller\begin{tabular}{@{}c@{}}%
\phantom{0}\\1/18\\\phantom{0}
\end{tabular}\endgroup%
\kern3pt%
\begingroup \smaller\smaller\smaller\begin{tabular}{@{}c@{}}%
\phantom{0}\\\phantom{0}\\3/2
\end{tabular}\endgroup%
{$\left.\llap{\phantom{%
\begingroup \smaller\smaller\smaller\begin{tabular}{@{}c@{}}%
\phantom{0}\\\phantom{0}\\\phantom{0}
\end{tabular}\endgroup%
}}\!\right]$}%
{$\left[\!\llap{\phantom{%
\begingroup \smaller\smaller\smaller\begin{tabular}{@{}c@{}}%
0\\0\\0
\end{tabular}\endgroup%
}}\right.$}%
\begingroup \smaller\smaller\smaller\begin{tabular}{@{}c@{}}%
2\\-5\\1
\end{tabular}\endgroup%
\kern3pt%
\begingroup \smaller\smaller\smaller\begin{tabular}{@{}c@{}}%
6\\12\\2
\end{tabular}\endgroup%
{$\left.\llap{\phantom{%
\begingroup \smaller\smaller\smaller\begin{tabular}{@{}c@{}}%
0\\0\\0
\end{tabular}\endgroup%
}}\!\right]$}%
}%
\ifdim\wd\matricesbox>\halfwidth\myboxwidth=\hsize\else\myboxwidth=\halfwidth\fi
\vbox{%
\ifdim\myboxwidth=\hsize
\setbox\onelinebox=\hbox{%
\vbox{\hbox{%
$\Pi_{4,11}$ spans $L_{7.13}$%
}\hbox{%
$\slashthree6\slashinfty6\rtimes D_{2}$ (shared)%
}%
}%
\hfill\copy\matricesbox
}%
\ifdim\wd\onelinebox>\myboxwidth
\hbox to \myboxwidth{%
$\Pi_{4,11}$ spans $L_{7.13}$%
\hfil
$\slashthree6\slashinfty6\rtimes D_{2}$ (shared)%
}%
\box\matricesbox
\else
\hbox to \myboxwidth{%
\unhbox\onelinebox
}%
\fi
\else
\hbox to \myboxwidth{%
$\Pi_{4,11}$ spans $L_{7.13}$%
\hfil}%
\hbox to \myboxwidth{%
$\slashthree6\slashinfty6\rtimes D_{2}$ (shared)%
\hfil}%
\box\matricesbox
\fi
}%
\hfill\discretionary{}{}{}%
\setbox\matricesbox=\hbox{%
{$\left[\!\llap{\phantom{%
\begingroup \smaller\smaller\smaller\begin{tabular}{@{}c@{}}%
\phantom{0}\\\phantom{0}\\\phantom{0}
\end{tabular}\endgroup%
}}\right.$}%
\begingroup \smaller\smaller\smaller\begin{tabular}{@{}c@{}}%
-1/20\\\phantom{0}\\\phantom{0}
\end{tabular}\endgroup%
\kern3pt%
\begingroup \smaller\smaller\smaller\begin{tabular}{@{}c@{}}%
\phantom{0}\\3/10\\\phantom{0}
\end{tabular}\endgroup%
\kern3pt%
\begingroup \smaller\smaller\smaller\begin{tabular}{@{}c@{}}%
\phantom{0}\\\phantom{0}\\15/2
\end{tabular}\endgroup%
{$\left.\llap{\phantom{%
\begingroup \smaller\smaller\smaller\begin{tabular}{@{}c@{}}%
\phantom{0}\\\phantom{0}\\\phantom{0}
\end{tabular}\endgroup%
}}\!\right]$}%
{$\left[\!\llap{\phantom{%
\begingroup \smaller\smaller\smaller\begin{tabular}{@{}c@{}}%
0\\0\\0
\end{tabular}\endgroup%
}}\right.$}%
\begingroup \smaller\smaller\smaller\begin{tabular}{@{}c@{}}%
12\\8\\0
\end{tabular}\endgroup%
\kern3pt%
\begingroup \smaller\smaller\smaller\begin{tabular}{@{}c@{}}%
6\\-1\\1
\end{tabular}\endgroup%
\kern3pt%
\begingroup \smaller\smaller\smaller\begin{tabular}{@{}c@{}}%
4\\-4\\0
\end{tabular}\endgroup%
{$\left.\llap{\phantom{%
\begingroup \smaller\smaller\smaller\begin{tabular}{@{}c@{}}%
0\\0\\0
\end{tabular}\endgroup%
}}\!\right]$}%
}%
\ifdim\wd\matricesbox>\halfwidth\myboxwidth=\hsize\else\myboxwidth=\halfwidth\fi
\vbox{%
\ifdim\myboxwidth=\hsize
\setbox\onelinebox=\hbox{%
\vbox{\hbox{%
$\Pi_{4,12}$ spans $L_{19.5}$%
}\hbox{%
$4|42|2\rtimes D_{2}$ (shared)%
}%
}%
\hfill\copy\matricesbox
}%
\ifdim\wd\onelinebox>\myboxwidth
\hbox to \myboxwidth{%
$\Pi_{4,12}$ spans $L_{19.5}$%
\hfil
$4|42|2\rtimes D_{2}$ (shared)%
}%
\box\matricesbox
\else
\hbox to \myboxwidth{%
\unhbox\onelinebox
}%
\fi
\else
\hbox to \myboxwidth{%
$\Pi_{4,12}$ spans $L_{19.5}$%
\hfil}%
\hbox to \myboxwidth{%
$4|42|2\rtimes D_{2}$ (shared)%
\hfil}%
\box\matricesbox
\fi
}%
\hfill\discretionary{}{}{}%
\setbox\matricesbox=\hbox{%
{$\left[\!\llap{\phantom{%
\begingroup \smaller\smaller\smaller\begin{tabular}{@{}c@{}}%
\phantom{0}\\\phantom{0}\\\phantom{0}
\end{tabular}\endgroup%
}}\right.$}%
\begingroup \smaller\smaller\smaller\begin{tabular}{@{}c@{}}%
-2/5\\\phantom{0}\\\phantom{0}
\end{tabular}\endgroup%
\kern3pt%
\begingroup \smaller\smaller\smaller\begin{tabular}{@{}c@{}}%
\phantom{0}\\1/10\\\phantom{0}
\end{tabular}\endgroup%
\kern3pt%
\begingroup \smaller\smaller\smaller\begin{tabular}{@{}c@{}}%
\phantom{0}\\\phantom{0}\\1/2
\end{tabular}\endgroup%
{$\left.\llap{\phantom{%
\begingroup \smaller\smaller\smaller\begin{tabular}{@{}c@{}}%
\phantom{0}\\\phantom{0}\\\phantom{0}
\end{tabular}\endgroup%
}}\!\right]$}%
{$\left[\!\llap{\phantom{%
\begingroup \smaller\smaller\smaller\begin{tabular}{@{}c@{}}%
0\\0\\0
\end{tabular}\endgroup%
}}\right.$}%
\begingroup \smaller\smaller\smaller\begin{tabular}{@{}c@{}}%
2\\4\\-2
\end{tabular}\endgroup%
\kern3pt%
\begingroup \smaller\smaller\smaller\begin{tabular}{@{}c@{}}%
1\\-3\\-1
\end{tabular}\endgroup%
{$\left.\llap{\phantom{%
\begingroup \smaller\smaller\smaller\begin{tabular}{@{}c@{}}%
0\\0\\0
\end{tabular}\endgroup%
}}\!\right]$}%
}%
\ifdim\wd\matricesbox>\halfwidth\myboxwidth=\hsize\else\myboxwidth=\halfwidth\fi
\vbox{%
\ifdim\myboxwidth=\hsize
\setbox\onelinebox=\hbox{%
\vbox{\hbox{%
$\Pi_{4,13}$ spans $L_{1.3}$%
}\hbox{%
$4\slashinfty4\slashtwo\rtimes D_{2}$ (shared)%
}%
}%
\hfill\copy\matricesbox
}%
\ifdim\wd\onelinebox>\myboxwidth
\hbox to \myboxwidth{%
$\Pi_{4,13}$ spans $L_{1.3}$%
\hfil
$4\slashinfty4\slashtwo\rtimes D_{2}$ (shared)%
}%
\box\matricesbox
\else
\hbox to \myboxwidth{%
\unhbox\onelinebox
}%
\fi
\else
\hbox to \myboxwidth{%
$\Pi_{4,13}$ spans $L_{1.3}$%
\hfil}%
\hbox to \myboxwidth{%
$4\slashinfty4\slashtwo\rtimes D_{2}$ (shared)%
\hfil}%
\box\matricesbox
\fi
}%
\hfill\discretionary{}{}{}%
\setbox\matricesbox=\hbox{%
{$\left[\!\llap{\phantom{%
\begingroup \smaller\smaller\smaller\begin{tabular}{@{}c@{}}%
\phantom{0}\\\phantom{0}\\\phantom{0}
\end{tabular}\endgroup%
}}\right.$}%
\begingroup \smaller\smaller\smaller\begin{tabular}{@{}c@{}}%
-1/13\\\phantom{0}\\\phantom{0}
\end{tabular}\endgroup%
\kern3pt%
\begingroup \smaller\smaller\smaller\begin{tabular}{@{}c@{}}%
\phantom{0}\\3/13\\\phantom{0}
\end{tabular}\endgroup%
\kern3pt%
\begingroup \smaller\smaller\smaller\begin{tabular}{@{}c@{}}%
\phantom{0}\\\phantom{0}\\5
\end{tabular}\endgroup%
{$\left.\llap{\phantom{%
\begingroup \smaller\smaller\smaller\begin{tabular}{@{}c@{}}%
\phantom{0}\\\phantom{0}\\\phantom{0}
\end{tabular}\endgroup%
}}\!\right]$}%
{$\left[\!\llap{\phantom{%
\begingroup \smaller\smaller\smaller\begin{tabular}{@{}c@{}}%
0\\0\\0
\end{tabular}\endgroup%
}}\right.$}%
\begingroup \smaller\smaller\smaller\begin{tabular}{@{}c@{}}%
12\\10\\0
\end{tabular}\endgroup%
\kern3pt%
\begingroup \smaller\smaller\smaller\begin{tabular}{@{}c@{}}%
4\\-1\\1
\end{tabular}\endgroup%
\kern3pt%
\begingroup \smaller\smaller\smaller\begin{tabular}{@{}c@{}}%
3\\-4\\0
\end{tabular}\endgroup%
{$\left.\llap{\phantom{%
\begingroup \smaller\smaller\smaller\begin{tabular}{@{}c@{}}%
0\\0\\0
\end{tabular}\endgroup%
}}\!\right]$}%
}%
\ifdim\wd\matricesbox>\halfwidth\myboxwidth=\hsize\else\myboxwidth=\halfwidth\fi
\vbox{%
\ifdim\myboxwidth=\hsize
\setbox\onelinebox=\hbox{%
\vbox{\hbox{%
$\Pi_{4,14}$ spans $L_{31.3}$%
}\hbox{%
$6|62|2\rtimes D_{2}$%
}%
}%
\hfill\copy\matricesbox
}%
\ifdim\wd\onelinebox>\myboxwidth
\hbox to \myboxwidth{%
$\Pi_{4,14}$ spans $L_{31.3}$%
\hfil
$6|62|2\rtimes D_{2}$%
}%
\box\matricesbox
\else
\hbox to \myboxwidth{%
\unhbox\onelinebox
}%
\fi
\else
\hbox to \myboxwidth{%
$\Pi_{4,14}$ spans $L_{31.3}$%
\hfil}%
\hbox to \myboxwidth{%
$6|62|2\rtimes D_{2}$%
\hfil}%
\box\matricesbox
\fi
}%
\hfill\discretionary{}{}{}%
\setbox\matricesbox=\hbox{%
{$\left[\!\llap{\phantom{%
\begingroup \smaller\smaller\smaller\begin{tabular}{@{}c@{}}%
\phantom{0}\\\phantom{0}\\\phantom{0}
\end{tabular}\endgroup%
}}\right.$}%
\begingroup \smaller\smaller\smaller\begin{tabular}{@{}c@{}}%
-1/7\\\phantom{0}\\\phantom{0}
\end{tabular}\endgroup%
\kern3pt%
\begingroup \smaller\smaller\smaller\begin{tabular}{@{}c@{}}%
\phantom{0}\\9/14\\\phantom{0}
\end{tabular}\endgroup%
\kern3pt%
\begingroup \smaller\smaller\smaller\begin{tabular}{@{}c@{}}%
\phantom{0}\\\phantom{0}\\21/2
\end{tabular}\endgroup%
{$\left.\llap{\phantom{%
\begingroup \smaller\smaller\smaller\begin{tabular}{@{}c@{}}%
\phantom{0}\\\phantom{0}\\\phantom{0}
\end{tabular}\endgroup%
}}\!\right]$}%
{$\left[\!\llap{\phantom{%
\begingroup \smaller\smaller\smaller\begin{tabular}{@{}c@{}}%
0\\0\\0
\end{tabular}\endgroup%
}}\right.$}%
\begingroup \smaller\smaller\smaller\begin{tabular}{@{}c@{}}%
18\\10\\0
\end{tabular}\endgroup%
\kern3pt%
\begingroup \smaller\smaller\smaller\begin{tabular}{@{}c@{}}%
6\\1\\-1
\end{tabular}\endgroup%
\kern3pt%
\begingroup \smaller\smaller\smaller\begin{tabular}{@{}c@{}}%
2\\-2\\0
\end{tabular}\endgroup%
{$\left.\llap{\phantom{%
\begingroup \smaller\smaller\smaller\begin{tabular}{@{}c@{}}%
0\\0\\0
\end{tabular}\endgroup%
}}\!\right]$}%
}%
\ifdim\wd\matricesbox>\halfwidth\myboxwidth=\hsize\else\myboxwidth=\halfwidth\fi
\vbox{%
\ifdim\myboxwidth=\hsize
\setbox\onelinebox=\hbox{%
\vbox{\hbox{%
$\Pi_{4,15}$ spans $L_{228.1}$%
}\hbox{%
$6|66|6\rtimes D_{2}$%
}%
}%
\hfill\copy\matricesbox
}%
\ifdim\wd\onelinebox>\myboxwidth
\hbox to \myboxwidth{%
$\Pi_{4,15}$ spans $L_{228.1}$%
\hfil
$6|66|6\rtimes D_{2}$%
}%
\box\matricesbox
\else
\hbox to \myboxwidth{%
\unhbox\onelinebox
}%
\fi
\else
\hbox to \myboxwidth{%
$\Pi_{4,15}$ spans $L_{228.1}$%
\hfil}%
\hbox to \myboxwidth{%
$6|66|6\rtimes D_{2}$%
\hfil}%
\box\matricesbox
\fi
}%
\hfill\discretionary{}{}{}%
\setbox\matricesbox=\hbox{%
{$\left[\!\llap{\phantom{%
\begingroup \smaller\smaller\smaller\begin{tabular}{@{}c@{}}%
\phantom{0}\\\phantom{0}\\\phantom{0}
\end{tabular}\endgroup%
}}\right.$}%
\begingroup \smaller\smaller\smaller\begin{tabular}{@{}c@{}}%
-1/32\\\phantom{0}\\\phantom{0}
\end{tabular}\endgroup%
\kern3pt%
\begingroup \smaller\smaller\smaller\begin{tabular}{@{}c@{}}%
\phantom{0}\\5/8\\\phantom{0}
\end{tabular}\endgroup%
\kern3pt%
\begingroup \smaller\smaller\smaller\begin{tabular}{@{}c@{}}%
\phantom{0}\\\phantom{0}\\25/2
\end{tabular}\endgroup%
{$\left.\llap{\phantom{%
\begingroup \smaller\smaller\smaller\begin{tabular}{@{}c@{}}%
\phantom{0}\\\phantom{0}\\\phantom{0}
\end{tabular}\endgroup%
}}\!\right]$}%
{$\left[\!\llap{\phantom{%
\begingroup \smaller\smaller\smaller\begin{tabular}{@{}c@{}}%
0\\0\\0
\end{tabular}\endgroup%
}}\right.$}%
\begingroup \smaller\smaller\smaller\begin{tabular}{@{}c@{}}%
40\\-12\\0
\end{tabular}\endgroup%
\kern3pt%
\begingroup \smaller\smaller\smaller\begin{tabular}{@{}c@{}}%
10\\1\\-1
\end{tabular}\endgroup%
\kern3pt%
\begingroup \smaller\smaller\smaller\begin{tabular}{@{}c@{}}%
8\\4\\0
\end{tabular}\endgroup%
{$\left.\llap{\phantom{%
\begingroup \smaller\smaller\smaller\begin{tabular}{@{}c@{}}%
0\\0\\0
\end{tabular}\endgroup%
}}\!\right]$}%
}%
\ifdim\wd\matricesbox>\halfwidth\myboxwidth=\hsize\else\myboxwidth=\halfwidth\fi
\vbox{%
\ifdim\myboxwidth=\hsize
\setbox\onelinebox=\hbox{%
\vbox{\hbox{%
$\Pi_{4,16}$ spans $L_{10.1}$%
}\hbox{%
$\infty|\infty2|2\rtimes D_{2}$%
}%
}%
\hfill\copy\matricesbox
}%
\ifdim\wd\onelinebox>\myboxwidth
\hbox to \myboxwidth{%
$\Pi_{4,16}$ spans $L_{10.1}$%
\hfil
$\infty|\infty2|2\rtimes D_{2}$%
}%
\box\matricesbox
\else
\hbox to \myboxwidth{%
\unhbox\onelinebox
}%
\fi
\else
\hbox to \myboxwidth{%
$\Pi_{4,16}$ spans $L_{10.1}$%
\hfil}%
\hbox to \myboxwidth{%
$\infty|\infty2|2\rtimes D_{2}$%
\hfil}%
\box\matricesbox
\fi
}%
\hfill\discretionary{}{}{}%
\setbox\matricesbox=\hbox{%
{$\left[\!\llap{\phantom{%
\begingroup \smaller\smaller\smaller\begin{tabular}{@{}c@{}}%
\phantom{0}\\\phantom{0}\\\phantom{0}
\end{tabular}\endgroup%
}}\right.$}%
\begingroup \smaller\smaller\smaller\begin{tabular}{@{}c@{}}%
-1/2\\\phantom{0}\\\phantom{0}
\end{tabular}\endgroup%
\kern3pt%
\begingroup \smaller\smaller\smaller\begin{tabular}{@{}c@{}}%
\phantom{0}\\1/2\\\phantom{0}
\end{tabular}\endgroup%
\kern3pt%
\begingroup \smaller\smaller\smaller\begin{tabular}{@{}c@{}}%
\phantom{0}\\\phantom{0}\\1
\end{tabular}\endgroup%
{$\left.\llap{\phantom{%
\begingroup \smaller\smaller\smaller\begin{tabular}{@{}c@{}}%
\phantom{0}\\\phantom{0}\\\phantom{0}
\end{tabular}\endgroup%
}}\!\right]$}%
{$\left[\!\llap{\phantom{%
\begingroup \smaller\smaller\smaller\begin{tabular}{@{}c@{}}%
0\\0\\0
\end{tabular}\endgroup%
}}\right.$}%
\begingroup \smaller\smaller\smaller\begin{tabular}{@{}c@{}}%
4\\4\\2
\end{tabular}\endgroup%
\kern3pt%
\begingroup \smaller\smaller\smaller\begin{tabular}{@{}c@{}}%
1\\-1\\1
\end{tabular}\endgroup%
{$\left.\llap{\phantom{%
\begingroup \smaller\smaller\smaller\begin{tabular}{@{}c@{}}%
0\\0\\0
\end{tabular}\endgroup%
}}\!\right]$}%
}%
\ifdim\wd\matricesbox>\halfwidth\myboxwidth=\hsize\else\myboxwidth=\halfwidth\fi
\vbox{%
\ifdim\myboxwidth=\hsize
\setbox\onelinebox=\hbox{%
\vbox{\hbox{%
$\Pi_{4,17}=\hbox{GN}_{31}$ spans $L_{140.4}$%
}\hbox{%
$\infty\slashinfty\infty\slashinfty\rtimes D_{2}$ (shared)%
}%
}%
\hfill\copy\matricesbox
}%
\ifdim\wd\onelinebox>\myboxwidth
\hbox to \myboxwidth{%
$\Pi_{4,17}=\hbox{GN}_{31}$ spans $L_{140.4}$%
\hfil
$\infty\slashinfty\infty\slashinfty\rtimes D_{2}$ (shared)%
}%
\box\matricesbox
\else
\hbox to \myboxwidth{%
\unhbox\onelinebox
}%
\fi
\else
\hbox to \myboxwidth{%
$\Pi_{4,17}=\hbox{GN}_{31}$ spans $L_{140.4}$%
\hfil}%
\hbox to \myboxwidth{%
$\infty\slashinfty\infty\slashinfty\rtimes D_{2}$ (shared)%
\hfil}%
\box\matricesbox
\fi
}%
\hfill\discretionary{}{}{}%
\setbox\matricesbox=\hbox{%
{$\left[\!\llap{\phantom{%
\begingroup \smaller\smaller\smaller\begin{tabular}{@{}c@{}}%
\phantom{0}\\\phantom{0}\\\phantom{0}
\end{tabular}\endgroup%
}}\right.$}%
\begingroup \smaller\smaller\smaller\begin{tabular}{@{}c@{}}%
-1/5\\\phantom{0}\\\phantom{0}
\end{tabular}\endgroup%
\kern3pt%
\begingroup \smaller\smaller\smaller\begin{tabular}{@{}c@{}}%
\phantom{0}\\3/10\\\phantom{0}
\end{tabular}\endgroup%
\kern3pt%
\begingroup \smaller\smaller\smaller\begin{tabular}{@{}c@{}}%
\phantom{0}\\\phantom{0}\\5/2
\end{tabular}\endgroup%
{$\left.\llap{\phantom{%
\begingroup \smaller\smaller\smaller\begin{tabular}{@{}c@{}}%
\phantom{0}\\\phantom{0}\\\phantom{0}
\end{tabular}\endgroup%
}}\!\right]$}%
{$\left[\!\llap{\phantom{%
\begingroup \smaller\smaller\smaller\begin{tabular}{@{}c@{}}%
0\\0\\0
\end{tabular}\endgroup%
}}\right.$}%
\begingroup \smaller\smaller\smaller\begin{tabular}{@{}c@{}}%
1\\-2\\0
\end{tabular}\endgroup%
\kern3pt%
\begingroup \smaller\smaller\smaller\begin{tabular}{@{}c@{}}%
2\\1\\-1
\end{tabular}\endgroup%
\kern3pt%
\begingroup \smaller\smaller\smaller\begin{tabular}{@{}c@{}}%
3\\4\\0
\end{tabular}\endgroup%
{$\left.\llap{\phantom{%
\begingroup \smaller\smaller\smaller\begin{tabular}{@{}c@{}}%
0\\0\\0
\end{tabular}\endgroup%
}}\!\right]$}%
}%
\ifdim\wd\matricesbox>\halfwidth\myboxwidth=\hsize\else\myboxwidth=\halfwidth\fi
\vbox{%
\ifdim\myboxwidth=\hsize
\setbox\onelinebox=\hbox{%
\vbox{\hbox{%
$\Pi_{4,18}$ spans $L_{19.2}$%
}\hbox{%
$4|42|2\rtimes D_{2}$ (shared)%
}%
}%
\hfill\copy\matricesbox
}%
\ifdim\wd\onelinebox>\myboxwidth
\hbox to \myboxwidth{%
$\Pi_{4,18}$ spans $L_{19.2}$%
\hfil
$4|42|2\rtimes D_{2}$ (shared)%
}%
\box\matricesbox
\else
\hbox to \myboxwidth{%
\unhbox\onelinebox
}%
\fi
\else
\hbox to \myboxwidth{%
$\Pi_{4,18}$ spans $L_{19.2}$%
\hfil}%
\hbox to \myboxwidth{%
$4|42|2\rtimes D_{2}$ (shared)%
\hfil}%
\box\matricesbox
\fi
}%
\hfill\discretionary{}{}{}%
\setbox\matricesbox=\hbox{%
{$\left[\!\llap{\phantom{%
\begingroup \smaller\smaller\smaller\begin{tabular}{@{}c@{}}%
\phantom{0}\\\phantom{0}\\\phantom{0}
\end{tabular}\endgroup%
}}\right.$}%
\begingroup \smaller\smaller\smaller\begin{tabular}{@{}c@{}}%
-1/7\\\phantom{0}\\\phantom{0}
\end{tabular}\endgroup%
\kern3pt%
\begingroup \smaller\smaller\smaller\begin{tabular}{@{}c@{}}%
\phantom{0}\\2/7\\\phantom{0}
\end{tabular}\endgroup%
\kern3pt%
\begingroup \smaller\smaller\smaller\begin{tabular}{@{}c@{}}%
\phantom{0}\\\phantom{0}\\6
\end{tabular}\endgroup%
{$\left.\llap{\phantom{%
\begingroup \smaller\smaller\smaller\begin{tabular}{@{}c@{}}%
\phantom{0}\\\phantom{0}\\\phantom{0}
\end{tabular}\endgroup%
}}\!\right]$}%
{$\left[\!\llap{\phantom{%
\begingroup \smaller\smaller\smaller\begin{tabular}{@{}c@{}}%
0\\0\\0
\end{tabular}\endgroup%
}}\right.$}%
\begingroup \smaller\smaller\smaller\begin{tabular}{@{}c@{}}%
2\\-3\\0
\end{tabular}\endgroup%
\kern3pt%
\begingroup \smaller\smaller\smaller\begin{tabular}{@{}c@{}}%
4\\1\\1
\end{tabular}\endgroup%
\kern3pt%
\begingroup \smaller\smaller\smaller\begin{tabular}{@{}c@{}}%
1\\2\\0
\end{tabular}\endgroup%
{$\left.\llap{\phantom{%
\begingroup \smaller\smaller\smaller\begin{tabular}{@{}c@{}}%
0\\0\\0
\end{tabular}\endgroup%
}}\!\right]$}%
}%
\ifdim\wd\matricesbox>\halfwidth\myboxwidth=\hsize\else\myboxwidth=\halfwidth\fi
\vbox{%
\ifdim\myboxwidth=\hsize
\setbox\onelinebox=\hbox{%
\vbox{\hbox{%
$\Pi_{4,19}$ spans $L_{123.5}$%
}\hbox{%
$4|42|2\rtimes D_{2}$ (shared)%
}%
}%
\hfill\copy\matricesbox
}%
\ifdim\wd\onelinebox>\myboxwidth
\hbox to \myboxwidth{%
$\Pi_{4,19}$ spans $L_{123.5}$%
\hfil
$4|42|2\rtimes D_{2}$ (shared)%
}%
\box\matricesbox
\else
\hbox to \myboxwidth{%
\unhbox\onelinebox
}%
\fi
\else
\hbox to \myboxwidth{%
$\Pi_{4,19}$ spans $L_{123.5}$%
\hfil}%
\hbox to \myboxwidth{%
$4|42|2\rtimes D_{2}$ (shared)%
\hfil}%
\box\matricesbox
\fi
}%
\hfill\discretionary{}{}{}%
\setbox\matricesbox=\hbox{%
{$\left[\!\llap{\phantom{%
\begingroup \smaller\smaller\smaller\begin{tabular}{@{}c@{}}%
\phantom{0}\\\phantom{0}\\\phantom{0}
\end{tabular}\endgroup%
}}\right.$}%
\begingroup \smaller\smaller\smaller\begin{tabular}{@{}c@{}}%
-4/9\\\phantom{0}\\\phantom{0}
\end{tabular}\endgroup%
\kern3pt%
\begingroup \smaller\smaller\smaller\begin{tabular}{@{}c@{}}%
\phantom{0}\\1/9\\\phantom{0}
\end{tabular}\endgroup%
\kern3pt%
\begingroup \smaller\smaller\smaller\begin{tabular}{@{}c@{}}%
\phantom{0}\\\phantom{0}\\1
\end{tabular}\endgroup%
{$\left.\llap{\phantom{%
\begingroup \smaller\smaller\smaller\begin{tabular}{@{}c@{}}%
\phantom{0}\\\phantom{0}\\\phantom{0}
\end{tabular}\endgroup%
}}\!\right]$}%
{$\left[\!\llap{\phantom{%
\begingroup \smaller\smaller\smaller\begin{tabular}{@{}c@{}}%
0\\0\\0
\end{tabular}\endgroup%
}}\right.$}%
\begingroup \smaller\smaller\smaller\begin{tabular}{@{}c@{}}%
1\\2\\1
\end{tabular}\endgroup%
\kern3pt%
\begingroup \smaller\smaller\smaller\begin{tabular}{@{}c@{}}%
2\\-5\\1
\end{tabular}\endgroup%
{$\left.\llap{\phantom{%
\begingroup \smaller\smaller\smaller\begin{tabular}{@{}c@{}}%
0\\0\\0
\end{tabular}\endgroup%
}}\!\right]$}%
}%
\ifdim\wd\matricesbox>\halfwidth\myboxwidth=\hsize\else\myboxwidth=\halfwidth\fi
\vbox{%
\ifdim\myboxwidth=\hsize
\setbox\onelinebox=\hbox{%
\vbox{\hbox{%
$\Pi_{4,20}$ spans $L_{145.1}$%
}\hbox{%
$4\slashinfty4\slashtwo\rtimes D_{2}$ (shared)%
}%
}%
\hfill\copy\matricesbox
}%
\ifdim\wd\onelinebox>\myboxwidth
\hbox to \myboxwidth{%
$\Pi_{4,20}$ spans $L_{145.1}$%
\hfil
$4\slashinfty4\slashtwo\rtimes D_{2}$ (shared)%
}%
\box\matricesbox
\else
\hbox to \myboxwidth{%
\unhbox\onelinebox
}%
\fi
\else
\hbox to \myboxwidth{%
$\Pi_{4,20}$ spans $L_{145.1}$%
\hfil}%
\hbox to \myboxwidth{%
$4\slashinfty4\slashtwo\rtimes D_{2}$ (shared)%
\hfil}%
\box\matricesbox
\fi
}%
\hfill\discretionary{}{}{}%
\setbox\matricesbox=\hbox{%
{$\left[\!\llap{\phantom{%
\begingroup \smaller\smaller\smaller\begin{tabular}{@{}c@{}}%
\phantom{0}\\\phantom{0}\\\phantom{0}
\end{tabular}\endgroup%
}}\right.$}%
\begingroup \smaller\smaller\smaller\begin{tabular}{@{}c@{}}%
-1/5\\\phantom{0}\\\phantom{0}
\end{tabular}\endgroup%
\kern3pt%
\begingroup \smaller\smaller\smaller\begin{tabular}{@{}c@{}}%
\phantom{0}\\3/10\\\phantom{0}
\end{tabular}\endgroup%
\kern3pt%
\begingroup \smaller\smaller\smaller\begin{tabular}{@{}c@{}}%
\phantom{0}\\\phantom{0}\\9/2
\end{tabular}\endgroup%
{$\left.\llap{\phantom{%
\begingroup \smaller\smaller\smaller\begin{tabular}{@{}c@{}}%
\phantom{0}\\\phantom{0}\\\phantom{0}
\end{tabular}\endgroup%
}}\!\right]$}%
{$\left[\!\llap{\phantom{%
\begingroup \smaller\smaller\smaller\begin{tabular}{@{}c@{}}%
0\\0\\0
\end{tabular}\endgroup%
}}\right.$}%
\begingroup \smaller\smaller\smaller\begin{tabular}{@{}c@{}}%
3\\4\\0
\end{tabular}\endgroup%
\kern3pt%
\begingroup \smaller\smaller\smaller\begin{tabular}{@{}c@{}}%
3\\-1\\1
\end{tabular}\endgroup%
\kern3pt%
\begingroup \smaller\smaller\smaller\begin{tabular}{@{}c@{}}%
1\\-2\\0
\end{tabular}\endgroup%
{$\left.\llap{\phantom{%
\begingroup \smaller\smaller\smaller\begin{tabular}{@{}c@{}}%
0\\0\\0
\end{tabular}\endgroup%
}}\!\right]$}%
}%
\ifdim\wd\matricesbox>\halfwidth\myboxwidth=\hsize\else\myboxwidth=\halfwidth\fi
\vbox{%
\ifdim\myboxwidth=\hsize
\setbox\onelinebox=\hbox{%
\vbox{\hbox{%
$\Pi_{4,21}$ spans $L_{4.10}$%
}\hbox{%
$\infty|\infty2|2\rtimes D_{2}$ (shared)%
}%
}%
\hfill\copy\matricesbox
}%
\ifdim\wd\onelinebox>\myboxwidth
\hbox to \myboxwidth{%
$\Pi_{4,21}$ spans $L_{4.10}$%
\hfil
$\infty|\infty2|2\rtimes D_{2}$ (shared)%
}%
\box\matricesbox
\else
\hbox to \myboxwidth{%
\unhbox\onelinebox
}%
\fi
\else
\hbox to \myboxwidth{%
$\Pi_{4,21}$ spans $L_{4.10}$%
\hfil}%
\hbox to \myboxwidth{%
$\infty|\infty2|2\rtimes D_{2}$ (shared)%
\hfil}%
\box\matricesbox
\fi
}%
\hfill\discretionary{}{}{}%
\setbox\matricesbox=\hbox{%
{$\left[\!\llap{\phantom{%
\begingroup \smaller\smaller\smaller\begin{tabular}{@{}c@{}}%
\phantom{0}\\\phantom{0}\\\phantom{0}
\end{tabular}\endgroup%
}}\right.$}%
\begingroup \smaller\smaller\smaller\begin{tabular}{@{}c@{}}%
-1/7\\\phantom{0}\\\phantom{0}
\end{tabular}\endgroup%
\kern3pt%
\begingroup \smaller\smaller\smaller\begin{tabular}{@{}c@{}}%
\phantom{0}\\1/14\\\phantom{0}
\end{tabular}\endgroup%
\kern3pt%
\begingroup \smaller\smaller\smaller\begin{tabular}{@{}c@{}}%
\phantom{0}\\\phantom{0}\\21/2
\end{tabular}\endgroup%
{$\left.\llap{\phantom{%
\begingroup \smaller\smaller\smaller\begin{tabular}{@{}c@{}}%
\phantom{0}\\\phantom{0}\\\phantom{0}
\end{tabular}\endgroup%
}}\!\right]$}%
{$\left[\!\llap{\phantom{%
\begingroup \smaller\smaller\smaller\begin{tabular}{@{}c@{}}%
0\\0\\0
\end{tabular}\endgroup%
}}\right.$}%
\begingroup \smaller\smaller\smaller\begin{tabular}{@{}c@{}}%
2\\6\\0
\end{tabular}\endgroup%
\kern3pt%
\begingroup \smaller\smaller\smaller\begin{tabular}{@{}c@{}}%
6\\-3\\1
\end{tabular}\endgroup%
\kern3pt%
\begingroup \smaller\smaller\smaller\begin{tabular}{@{}c@{}}%
1\\-4\\0
\end{tabular}\endgroup%
{$\left.\llap{\phantom{%
\begingroup \smaller\smaller\smaller\begin{tabular}{@{}c@{}}%
0\\0\\0
\end{tabular}\endgroup%
}}\!\right]$}%
}%
\ifdim\wd\matricesbox>\halfwidth\myboxwidth=\hsize\else\myboxwidth=\halfwidth\fi
\vbox{%
\ifdim\myboxwidth=\hsize
\setbox\onelinebox=\hbox{%
\vbox{\hbox{%
$\Pi_{4,22}$ spans $L_{22.2}$%
}\hbox{%
$6|62|2\rtimes D_{2}$%
}%
}%
\hfill\copy\matricesbox
}%
\ifdim\wd\onelinebox>\myboxwidth
\hbox to \myboxwidth{%
$\Pi_{4,22}$ spans $L_{22.2}$%
\hfil
$6|62|2\rtimes D_{2}$%
}%
\box\matricesbox
\else
\hbox to \myboxwidth{%
\unhbox\onelinebox
}%
\fi
\else
\hbox to \myboxwidth{%
$\Pi_{4,22}$ spans $L_{22.2}$%
\hfil}%
\hbox to \myboxwidth{%
$6|62|2\rtimes D_{2}$%
\hfil}%
\box\matricesbox
\fi
}%
\hfill\discretionary{}{}{}%
\setbox\matricesbox=\hbox{%
{$\left[\!\llap{\phantom{%
\begingroup \smaller\smaller\smaller\begin{tabular}{@{}c@{}}%
\phantom{0}\\\phantom{0}\\\phantom{0}
\end{tabular}\endgroup%
}}\right.$}%
\begingroup \smaller\smaller\smaller\begin{tabular}{@{}c@{}}%
-1/40\\\phantom{0}\\\phantom{0}
\end{tabular}\endgroup%
\kern3pt%
\begingroup \smaller\smaller\smaller\begin{tabular}{@{}c@{}}%
\phantom{0}\\21/10\\\phantom{0}
\end{tabular}\endgroup%
\kern3pt%
\begingroup \smaller\smaller\smaller\begin{tabular}{@{}c@{}}%
\phantom{0}\\\phantom{0}\\84
\end{tabular}\endgroup%
{$\left.\llap{\phantom{%
\begingroup \smaller\smaller\smaller\begin{tabular}{@{}c@{}}%
\phantom{0}\\\phantom{0}\\\phantom{0}
\end{tabular}\endgroup%
}}\!\right]$}%
{$\left[\!\llap{\phantom{%
\begingroup \smaller\smaller\smaller\begin{tabular}{@{}c@{}}%
0\\0\\0
\end{tabular}\endgroup%
}}\right.$}%
\begingroup \smaller\smaller\smaller\begin{tabular}{@{}c@{}}%
14\\-3\\0
\end{tabular}\endgroup%
\kern3pt%
\begingroup \smaller\smaller\smaller\begin{tabular}{@{}c@{}}%
42\\1\\1
\end{tabular}\endgroup%
\kern3pt%
\begingroup \smaller\smaller\smaller\begin{tabular}{@{}c@{}}%
2\\1\\0
\end{tabular}\endgroup%
{$\left.\llap{\phantom{%
\begingroup \smaller\smaller\smaller\begin{tabular}{@{}c@{}}%
0\\0\\0
\end{tabular}\endgroup%
}}\!\right]$}%
}%
\ifdim\wd\matricesbox>\halfwidth\myboxwidth=\hsize\else\myboxwidth=\halfwidth\fi
\vbox{%
\ifdim\myboxwidth=\hsize
\setbox\onelinebox=\hbox{%
\vbox{\hbox{%
$\Pi_{4,23}$ spans $L_{22.10}$%
}\hbox{%
$6|62|2\rtimes D_{2}$%
}%
}%
\hfill\copy\matricesbox
}%
\ifdim\wd\onelinebox>\myboxwidth
\hbox to \myboxwidth{%
$\Pi_{4,23}$ spans $L_{22.10}$%
\hfil
$6|62|2\rtimes D_{2}$%
}%
\box\matricesbox
\else
\hbox to \myboxwidth{%
\unhbox\onelinebox
}%
\fi
\else
\hbox to \myboxwidth{%
$\Pi_{4,23}$ spans $L_{22.10}$%
\hfil}%
\hbox to \myboxwidth{%
$6|62|2\rtimes D_{2}$%
\hfil}%
\box\matricesbox
\fi
}%
\hfill\discretionary{}{}{}%
\setbox\matricesbox=\hbox{%
{$\left[\!\llap{\phantom{%
\begingroup \smaller\smaller\smaller\begin{tabular}{@{}c@{}}%
\phantom{0}\\\phantom{0}\\\phantom{0}
\end{tabular}\endgroup%
}}\right.$}%
\begingroup \smaller\smaller\smaller\begin{tabular}{@{}c@{}}%
-1/5\\\phantom{0}\\\phantom{0}
\end{tabular}\endgroup%
\kern3pt%
\begingroup \smaller\smaller\smaller\begin{tabular}{@{}c@{}}%
\phantom{0}\\6/5\\\phantom{0}
\end{tabular}\endgroup%
\kern3pt%
\begingroup \smaller\smaller\smaller\begin{tabular}{@{}c@{}}%
\phantom{0}\\\phantom{0}\\6
\end{tabular}\endgroup%
{$\left.\llap{\phantom{%
\begingroup \smaller\smaller\smaller\begin{tabular}{@{}c@{}}%
\phantom{0}\\\phantom{0}\\\phantom{0}
\end{tabular}\endgroup%
}}\!\right]$}%
{$\left[\!\llap{\phantom{%
\begingroup \smaller\smaller\smaller\begin{tabular}{@{}c@{}}%
0\\0\\0
\end{tabular}\endgroup%
}}\right.$}%
\begingroup \smaller\smaller\smaller\begin{tabular}{@{}c@{}}%
1\\-1\\0
\end{tabular}\endgroup%
\kern3pt%
\begingroup \smaller\smaller\smaller\begin{tabular}{@{}c@{}}%
4\\1\\1
\end{tabular}\endgroup%
\kern3pt%
\begingroup \smaller\smaller\smaller\begin{tabular}{@{}c@{}}%
3\\2\\0
\end{tabular}\endgroup%
{$\left.\llap{\phantom{%
\begingroup \smaller\smaller\smaller\begin{tabular}{@{}c@{}}%
0\\0\\0
\end{tabular}\endgroup%
}}\!\right]$}%
}%
\ifdim\wd\matricesbox>\halfwidth\myboxwidth=\hsize\else\myboxwidth=\halfwidth\fi
\vbox{%
\ifdim\myboxwidth=\hsize
\setbox\onelinebox=\hbox{%
\vbox{\hbox{%
$\Pi_{4,24}$ spans $L_{4.18}$%
}\hbox{%
$\infty|\infty2|2\rtimes D_{2}$ (shared)%
}%
}%
\hfill\copy\matricesbox
}%
\ifdim\wd\onelinebox>\myboxwidth
\hbox to \myboxwidth{%
$\Pi_{4,24}$ spans $L_{4.18}$%
\hfil
$\infty|\infty2|2\rtimes D_{2}$ (shared)%
}%
\box\matricesbox
\else
\hbox to \myboxwidth{%
\unhbox\onelinebox
}%
\fi
\else
\hbox to \myboxwidth{%
$\Pi_{4,24}$ spans $L_{4.18}$%
\hfil}%
\hbox to \myboxwidth{%
$\infty|\infty2|2\rtimes D_{2}$ (shared)%
\hfil}%
\box\matricesbox
\fi
}%
\hfill\discretionary{}{}{}%
\setbox\matricesbox=\hbox{%
{$\left[\!\llap{\phantom{%
\begingroup \smaller\smaller\smaller\begin{tabular}{@{}c@{}}%
\phantom{0}\\\phantom{0}\\\phantom{0}
\end{tabular}\endgroup%
}}\right.$}%
\begingroup \smaller\smaller\smaller\begin{tabular}{@{}c@{}}%
-1/7\\\phantom{0}\\\phantom{0}
\end{tabular}\endgroup%
\kern3pt%
\begingroup \smaller\smaller\smaller\begin{tabular}{@{}c@{}}%
\phantom{0}\\2/7\\\phantom{0}
\end{tabular}\endgroup%
\kern3pt%
\begingroup \smaller\smaller\smaller\begin{tabular}{@{}c@{}}%
\phantom{0}\\\phantom{0}\\16
\end{tabular}\endgroup%
{$\left.\llap{\phantom{%
\begingroup \smaller\smaller\smaller\begin{tabular}{@{}c@{}}%
\phantom{0}\\\phantom{0}\\\phantom{0}
\end{tabular}\endgroup%
}}\!\right]$}%
{$\left[\!\llap{\phantom{%
\begingroup \smaller\smaller\smaller\begin{tabular}{@{}c@{}}%
0\\0\\0
\end{tabular}\endgroup%
}}\right.$}%
\begingroup \smaller\smaller\smaller\begin{tabular}{@{}c@{}}%
2\\-3\\0
\end{tabular}\endgroup%
\kern3pt%
\begingroup \smaller\smaller\smaller\begin{tabular}{@{}c@{}}%
8\\2\\1
\end{tabular}\endgroup%
\kern3pt%
\begingroup \smaller\smaller\smaller\begin{tabular}{@{}c@{}}%
1\\2\\0
\end{tabular}\endgroup%
{$\left.\llap{\phantom{%
\begingroup \smaller\smaller\smaller\begin{tabular}{@{}c@{}}%
0\\0\\0
\end{tabular}\endgroup%
}}\!\right]$}%
}%
\ifdim\wd\matricesbox>\halfwidth\myboxwidth=\hsize\else\myboxwidth=\halfwidth\fi
\vbox{%
\ifdim\myboxwidth=\hsize
\setbox\onelinebox=\hbox{%
\vbox{\hbox{%
$\Pi_{4,25}$ spans $L_{141.11}$%
}\hbox{%
$\infty|\infty2|2\rtimes D_{2}$ (shared)%
}%
}%
\hfill\copy\matricesbox
}%
\ifdim\wd\onelinebox>\myboxwidth
\hbox to \myboxwidth{%
$\Pi_{4,25}$ spans $L_{141.11}$%
\hfil
$\infty|\infty2|2\rtimes D_{2}$ (shared)%
}%
\box\matricesbox
\else
\hbox to \myboxwidth{%
\unhbox\onelinebox
}%
\fi
\else
\hbox to \myboxwidth{%
$\Pi_{4,25}$ spans $L_{141.11}$%
\hfil}%
\hbox to \myboxwidth{%
$\infty|\infty2|2\rtimes D_{2}$ (shared)%
\hfil}%
\box\matricesbox
\fi
}%
\hfill\discretionary{}{}{}%
\setbox\matricesbox=\hbox{%
{$\left[\!\llap{\phantom{%
\begingroup \smaller\smaller\smaller\begin{tabular}{@{}c@{}}%
\phantom{0}\\\phantom{0}\\\phantom{0}
\end{tabular}\endgroup%
}}\right.$}%
\begingroup \smaller\smaller\smaller\begin{tabular}{@{}c@{}}%
-1/40\\\phantom{0}\\\phantom{0}
\end{tabular}\endgroup%
\kern3pt%
\begingroup \smaller\smaller\smaller\begin{tabular}{@{}c@{}}%
\phantom{0}\\3/5\\\phantom{0}
\end{tabular}\endgroup%
\kern3pt%
\begingroup \smaller\smaller\smaller\begin{tabular}{@{}c@{}}%
\phantom{0}\\\phantom{0}\\3/2
\end{tabular}\endgroup%
{$\left.\llap{\phantom{%
\begingroup \smaller\smaller\smaller\begin{tabular}{@{}c@{}}%
\phantom{0}\\\phantom{0}\\\phantom{0}
\end{tabular}\endgroup%
}}\!\right]$}%
{$\left[\!\llap{\phantom{%
\begingroup \smaller\smaller\smaller\begin{tabular}{@{}c@{}}%
0\\0\\0
\end{tabular}\endgroup%
}}\right.$}%
\begingroup \smaller\smaller\smaller\begin{tabular}{@{}c@{}}%
2\\-1\\1
\end{tabular}\endgroup%
\kern3pt%
\begingroup \smaller\smaller\smaller\begin{tabular}{@{}c@{}}%
12\\4\\2
\end{tabular}\endgroup%
{$\left.\llap{\phantom{%
\begingroup \smaller\smaller\smaller\begin{tabular}{@{}c@{}}%
0\\0\\0
\end{tabular}\endgroup%
}}\!\right]$}%
}%
\ifdim\wd\matricesbox>\halfwidth\myboxwidth=\hsize\else\myboxwidth=\halfwidth\fi
\vbox{%
\ifdim\myboxwidth=\hsize
\setbox\onelinebox=\hbox{%
\vbox{\hbox{%
$\Pi_{4,26}$ spans $L_{3.4}$%
}\hbox{%
$\slashthree2\slashtwo2\rtimes D_{2}$ (shared)%
}%
}%
\hfill\copy\matricesbox
}%
\ifdim\wd\onelinebox>\myboxwidth
\hbox to \myboxwidth{%
$\Pi_{4,26}$ spans $L_{3.4}$%
\hfil
$\slashthree2\slashtwo2\rtimes D_{2}$ (shared)%
}%
\box\matricesbox
\else
\hbox to \myboxwidth{%
\unhbox\onelinebox
}%
\fi
\else
\hbox to \myboxwidth{%
$\Pi_{4,26}$ spans $L_{3.4}$%
\hfil}%
\hbox to \myboxwidth{%
$\slashthree2\slashtwo2\rtimes D_{2}$ (shared)%
\hfil}%
\box\matricesbox
\fi
}%
\hfill\discretionary{}{}{}%
\setbox\matricesbox=\hbox{%
{$\left[\!\llap{\phantom{%
\begingroup \smaller\smaller\smaller\begin{tabular}{@{}c@{}}%
\phantom{0}\\\phantom{0}\\\phantom{0}
\end{tabular}\endgroup%
}}\right.$}%
\begingroup \smaller\smaller\smaller\begin{tabular}{@{}c@{}}%
-1/28\\\phantom{0}\\\phantom{0}
\end{tabular}\endgroup%
\kern3pt%
\begingroup \smaller\smaller\smaller\begin{tabular}{@{}c@{}}%
\phantom{0}\\9/14\\\phantom{0}
\end{tabular}\endgroup%
\kern3pt%
\begingroup \smaller\smaller\smaller\begin{tabular}{@{}c@{}}%
\phantom{0}\\\phantom{0}\\3/2
\end{tabular}\endgroup%
{$\left.\llap{\phantom{%
\begingroup \smaller\smaller\smaller\begin{tabular}{@{}c@{}}%
\phantom{0}\\\phantom{0}\\\phantom{0}
\end{tabular}\endgroup%
}}\!\right]$}%
{$\left[\!\llap{\phantom{%
\begingroup \smaller\smaller\smaller\begin{tabular}{@{}c@{}}%
0\\0\\0
\end{tabular}\endgroup%
}}\right.$}%
\begingroup \smaller\smaller\smaller\begin{tabular}{@{}c@{}}%
2\\1\\1
\end{tabular}\endgroup%
\kern3pt%
\begingroup \smaller\smaller\smaller\begin{tabular}{@{}c@{}}%
18\\-5\\3
\end{tabular}\endgroup%
{$\left.\llap{\phantom{%
\begingroup \smaller\smaller\smaller\begin{tabular}{@{}c@{}}%
0\\0\\0
\end{tabular}\endgroup%
}}\!\right]$}%
}%
\ifdim\wd\matricesbox>\halfwidth\myboxwidth=\hsize\else\myboxwidth=\halfwidth\fi
\vbox{%
\ifdim\myboxwidth=\hsize
\setbox\onelinebox=\hbox{%
\vbox{\hbox{%
$\Pi_{4,27}=\hbox{GN}_{8}$ spans $L_{155.1}$%
}\hbox{%
$\slashthree2\slashthree2\rtimes D_{2}$ (shared)%
}%
}%
\hfill\copy\matricesbox
}%
\ifdim\wd\onelinebox>\myboxwidth
\hbox to \myboxwidth{%
$\Pi_{4,27}=\hbox{GN}_{8}$ spans $L_{155.1}$%
\hfil
$\slashthree2\slashthree2\rtimes D_{2}$ (shared)%
}%
\box\matricesbox
\else
\hbox to \myboxwidth{%
\unhbox\onelinebox
}%
\fi
\else
\hbox to \myboxwidth{%
$\Pi_{4,27}=\hbox{GN}_{8}$ spans $L_{155.1}$%
\hfil}%
\hbox to \myboxwidth{%
$\slashthree2\slashthree2\rtimes D_{2}$ (shared)%
\hfil}%
\box\matricesbox
\fi
}%
\hfill\discretionary{}{}{}%
\setbox\matricesbox=\hbox{%
{$\left[\!\llap{\phantom{%
\begingroup \smaller\smaller\smaller\begin{tabular}{@{}c@{}}%
\phantom{0}\\\phantom{0}\\\phantom{0}
\end{tabular}\endgroup%
}}\right.$}%
\begingroup \smaller\smaller\smaller\begin{tabular}{@{}c@{}}%
-1/24\\\phantom{0}\\\phantom{0}
\end{tabular}\endgroup%
\kern3pt%
\begingroup \smaller\smaller\smaller\begin{tabular}{@{}c@{}}%
\phantom{0}\\2/3\\\phantom{0}
\end{tabular}\endgroup%
\kern3pt%
\begingroup \smaller\smaller\smaller\begin{tabular}{@{}c@{}}%
\phantom{0}\\\phantom{0}\\3/2
\end{tabular}\endgroup%
{$\left.\llap{\phantom{%
\begingroup \smaller\smaller\smaller\begin{tabular}{@{}c@{}}%
\phantom{0}\\\phantom{0}\\\phantom{0}
\end{tabular}\endgroup%
}}\!\right]$}%
{$\left[\!\llap{\phantom{%
\begingroup \smaller\smaller\smaller\begin{tabular}{@{}c@{}}%
0\\0\\0
\end{tabular}\endgroup%
}}\right.$}%
\begingroup \smaller\smaller\smaller\begin{tabular}{@{}c@{}}%
2\\-1\\1
\end{tabular}\endgroup%
\kern3pt%
\begingroup \smaller\smaller\smaller\begin{tabular}{@{}c@{}}%
24\\6\\4
\end{tabular}\endgroup%
{$\left.\llap{\phantom{%
\begingroup \smaller\smaller\smaller\begin{tabular}{@{}c@{}}%
0\\0\\0
\end{tabular}\endgroup%
}}\!\right]$}%
}%
\ifdim\wd\matricesbox>\halfwidth\myboxwidth=\hsize\else\myboxwidth=\halfwidth\fi
\vbox{%
\ifdim\myboxwidth=\hsize
\setbox\onelinebox=\hbox{%
\vbox{\hbox{%
$\Pi_{4,28}$ spans $L_{7.13}$%
}\hbox{%
$\slashthree2\slashinfty2\rtimes D_{2}$ (shared)%
}%
}%
\hfill\copy\matricesbox
}%
\ifdim\wd\onelinebox>\myboxwidth
\hbox to \myboxwidth{%
$\Pi_{4,28}$ spans $L_{7.13}$%
\hfil
$\slashthree2\slashinfty2\rtimes D_{2}$ (shared)%
}%
\box\matricesbox
\else
\hbox to \myboxwidth{%
\unhbox\onelinebox
}%
\fi
\else
\hbox to \myboxwidth{%
$\Pi_{4,28}$ spans $L_{7.13}$%
\hfil}%
\hbox to \myboxwidth{%
$\slashthree2\slashinfty2\rtimes D_{2}$ (shared)%
\hfil}%
\box\matricesbox
\fi
}%
\hfill\discretionary{}{}{}%
\setbox\matricesbox=\hbox{%
{$\left[\!\llap{\phantom{%
\begingroup \smaller\smaller\smaller\begin{tabular}{@{}c@{}}%
\phantom{0}\\\phantom{0}\\\phantom{0}
\end{tabular}\endgroup%
}}\right.$}%
\begingroup \smaller\smaller\smaller\begin{tabular}{@{}c@{}}%
-1/19\\\phantom{0}\\\phantom{0}
\end{tabular}\endgroup%
\kern3pt%
\begingroup \smaller\smaller\smaller\begin{tabular}{@{}c@{}}%
\phantom{0}\\3/38\\\phantom{0}
\end{tabular}\endgroup%
\kern3pt%
\begingroup \smaller\smaller\smaller\begin{tabular}{@{}c@{}}%
\phantom{0}\\\phantom{0}\\3/2
\end{tabular}\endgroup%
{$\left.\llap{\phantom{%
\begingroup \smaller\smaller\smaller\begin{tabular}{@{}c@{}}%
\phantom{0}\\\phantom{0}\\\phantom{0}
\end{tabular}\endgroup%
}}\!\right]$}%
{$\left[\!\llap{\phantom{%
\begingroup \smaller\smaller\smaller\begin{tabular}{@{}c@{}}%
0\\0\\0
\end{tabular}\endgroup%
}}\right.$}%
\begingroup \smaller\smaller\smaller\begin{tabular}{@{}c@{}}%
2\\3\\-1
\end{tabular}\endgroup%
\kern3pt%
\begingroup \smaller\smaller\smaller\begin{tabular}{@{}c@{}}%
3\\-5\\-1
\end{tabular}\endgroup%
{$\left.\llap{\phantom{%
\begingroup \smaller\smaller\smaller\begin{tabular}{@{}c@{}}%
0\\0\\0
\end{tabular}\endgroup%
}}\!\right]$}%
}%
\ifdim\wd\matricesbox>\halfwidth\myboxwidth=\hsize\else\myboxwidth=\halfwidth\fi
\vbox{%
\ifdim\myboxwidth=\hsize
\setbox\onelinebox=\hbox{%
\vbox{\hbox{%
$\Pi_{4,29}$ spans $L_{3.3}$%
}\hbox{%
$\slashthree2\slashtwo2\rtimes D_{2}$ (shared)%
}%
}%
\hfill\copy\matricesbox
}%
\ifdim\wd\onelinebox>\myboxwidth
\hbox to \myboxwidth{%
$\Pi_{4,29}$ spans $L_{3.3}$%
\hfil
$\slashthree2\slashtwo2\rtimes D_{2}$ (shared)%
}%
\box\matricesbox
\else
\hbox to \myboxwidth{%
\unhbox\onelinebox
}%
\fi
\else
\hbox to \myboxwidth{%
$\Pi_{4,29}$ spans $L_{3.3}$%
\hfil}%
\hbox to \myboxwidth{%
$\slashthree2\slashtwo2\rtimes D_{2}$ (shared)%
\hfil}%
\box\matricesbox
\fi
}%
\hfill\discretionary{}{}{}%
\setbox\matricesbox=\hbox{%
{$\left[\!\llap{\phantom{%
\begingroup \smaller\smaller\smaller\begin{tabular}{@{}c@{}}%
\phantom{0}\\\phantom{0}\\\phantom{0}
\end{tabular}\endgroup%
}}\right.$}%
\begingroup \smaller\smaller\smaller\begin{tabular}{@{}c@{}}%
-1/10\\\phantom{0}\\\phantom{0}
\end{tabular}\endgroup%
\kern3pt%
\begingroup \smaller\smaller\smaller\begin{tabular}{@{}c@{}}%
\phantom{0}\\9/10\\\phantom{0}
\end{tabular}\endgroup%
\kern3pt%
\begingroup \smaller\smaller\smaller\begin{tabular}{@{}c@{}}%
\phantom{0}\\\phantom{0}\\3/2
\end{tabular}\endgroup%
{$\left.\llap{\phantom{%
\begingroup \smaller\smaller\smaller\begin{tabular}{@{}c@{}}%
\phantom{0}\\\phantom{0}\\\phantom{0}
\end{tabular}\endgroup%
}}\!\right]$}%
{$\left[\!\llap{\phantom{%
\begingroup \smaller\smaller\smaller\begin{tabular}{@{}c@{}}%
0\\0\\0
\end{tabular}\endgroup%
}}\right.$}%
\begingroup \smaller\smaller\smaller\begin{tabular}{@{}c@{}}%
2\\1\\1
\end{tabular}\endgroup%
\kern3pt%
\begingroup \smaller\smaller\smaller\begin{tabular}{@{}c@{}}%
6\\-2\\2
\end{tabular}\endgroup%
{$\left.\llap{\phantom{%
\begingroup \smaller\smaller\smaller\begin{tabular}{@{}c@{}}%
0\\0\\0
\end{tabular}\endgroup%
}}\!\right]$}%
}%
\ifdim\wd\matricesbox>\halfwidth\myboxwidth=\hsize\else\myboxwidth=\halfwidth\fi
\vbox{%
\ifdim\myboxwidth=\hsize
\setbox\onelinebox=\hbox{%
\vbox{\hbox{%
$\Pi_{4,30}$ spans $L_{7.8}$%
}\hbox{%
$\slashthree2\slashinfty2\rtimes D_{2}$ (shared)%
}%
}%
\hfill\copy\matricesbox
}%
\ifdim\wd\onelinebox>\myboxwidth
\hbox to \myboxwidth{%
$\Pi_{4,30}$ spans $L_{7.8}$%
\hfil
$\slashthree2\slashinfty2\rtimes D_{2}$ (shared)%
}%
\box\matricesbox
\else
\hbox to \myboxwidth{%
\unhbox\onelinebox
}%
\fi
\else
\hbox to \myboxwidth{%
$\Pi_{4,30}$ spans $L_{7.8}$%
\hfil}%
\hbox to \myboxwidth{%
$\slashthree2\slashinfty2\rtimes D_{2}$ (shared)%
\hfil}%
\box\matricesbox
\fi
}%
\hfill\discretionary{}{}{}%
\setbox\matricesbox=\hbox{%
{$\left[\!\llap{\phantom{%
\begingroup \smaller\smaller\smaller\begin{tabular}{@{}c@{}}%
\phantom{0}\\\phantom{0}\\\phantom{0}
\end{tabular}\endgroup%
}}\right.$}%
\begingroup \smaller\smaller\smaller\begin{tabular}{@{}c@{}}%
-1/40\\\phantom{0}\\\phantom{0}
\end{tabular}\endgroup%
\kern3pt%
\begingroup \smaller\smaller\smaller\begin{tabular}{@{}c@{}}%
\phantom{0}\\12/5\\\phantom{0}
\end{tabular}\endgroup%
\kern3pt%
\begingroup \smaller\smaller\smaller\begin{tabular}{@{}c@{}}%
\phantom{0}\\\phantom{0}\\1/2
\end{tabular}\endgroup%
{$\left.\llap{\phantom{%
\begingroup \smaller\smaller\smaller\begin{tabular}{@{}c@{}}%
\phantom{0}\\\phantom{0}\\\phantom{0}
\end{tabular}\endgroup%
}}\!\right]$}%
{$\left[\!\llap{\phantom{%
\begingroup \smaller\smaller\smaller\begin{tabular}{@{}c@{}}%
0\\0\\0
\end{tabular}\endgroup%
}}\right.$}%
\begingroup \smaller\smaller\smaller\begin{tabular}{@{}c@{}}%
6\\-1\\3
\end{tabular}\endgroup%
\kern3pt%
\begingroup \smaller\smaller\smaller\begin{tabular}{@{}c@{}}%
4\\1\\2
\end{tabular}\endgroup%
{$\left.\llap{\phantom{%
\begingroup \smaller\smaller\smaller\begin{tabular}{@{}c@{}}%
0\\0\\0
\end{tabular}\endgroup%
}}\!\right]$}%
}%
\ifdim\wd\matricesbox>\halfwidth\myboxwidth=\hsize\else\myboxwidth=\halfwidth\fi
\vbox{%
\ifdim\myboxwidth=\hsize
\setbox\onelinebox=\hbox{%
\vbox{\hbox{%
$\Pi_{4,31}$ spans $L_{3.1}$%
}\hbox{%
$\slashthree2\slashtwo2\rtimes D_{2}$ (shared)%
}%
}%
\hfill\copy\matricesbox
}%
\ifdim\wd\onelinebox>\myboxwidth
\hbox to \myboxwidth{%
$\Pi_{4,31}$ spans $L_{3.1}$%
\hfil
$\slashthree2\slashtwo2\rtimes D_{2}$ (shared)%
}%
\box\matricesbox
\else
\hbox to \myboxwidth{%
\unhbox\onelinebox
}%
\fi
\else
\hbox to \myboxwidth{%
$\Pi_{4,31}$ spans $L_{3.1}$%
\hfil}%
\hbox to \myboxwidth{%
$\slashthree2\slashtwo2\rtimes D_{2}$ (shared)%
\hfil}%
\box\matricesbox
\fi
}%
\hfill\discretionary{}{}{}%
\setbox\matricesbox=\hbox{%
{$\left[\!\llap{\phantom{%
\begingroup \smaller\smaller\smaller\begin{tabular}{@{}c@{}}%
\phantom{0}\\\phantom{0}\\\phantom{0}
\end{tabular}\endgroup%
}}\right.$}%
\begingroup \smaller\smaller\smaller\begin{tabular}{@{}c@{}}%
-3/56\\\phantom{0}\\\phantom{0}
\end{tabular}\endgroup%
\kern3pt%
\begingroup \smaller\smaller\smaller\begin{tabular}{@{}c@{}}%
\phantom{0}\\24/7\\\phantom{0}
\end{tabular}\endgroup%
\kern3pt%
\begingroup \smaller\smaller\smaller\begin{tabular}{@{}c@{}}%
\phantom{0}\\\phantom{0}\\1/2
\end{tabular}\endgroup%
{$\left.\llap{\phantom{%
\begingroup \smaller\smaller\smaller\begin{tabular}{@{}c@{}}%
\phantom{0}\\\phantom{0}\\\phantom{0}
\end{tabular}\endgroup%
}}\!\right]$}%
{$\left[\!\llap{\phantom{%
\begingroup \smaller\smaller\smaller\begin{tabular}{@{}c@{}}%
0\\0\\0
\end{tabular}\endgroup%
}}\right.$}%
\begingroup \smaller\smaller\smaller\begin{tabular}{@{}c@{}}%
6\\-1\\-3
\end{tabular}\endgroup%
\kern3pt%
\begingroup \smaller\smaller\smaller\begin{tabular}{@{}c@{}}%
8\\1\\-4
\end{tabular}\endgroup%
{$\left.\llap{\phantom{%
\begingroup \smaller\smaller\smaller\begin{tabular}{@{}c@{}}%
0\\0\\0
\end{tabular}\endgroup%
}}\!\right]$}%
}%
\ifdim\wd\matricesbox>\halfwidth\myboxwidth=\hsize\else\myboxwidth=\halfwidth\fi
\vbox{%
\ifdim\myboxwidth=\hsize
\setbox\onelinebox=\hbox{%
\vbox{\hbox{%
$\Pi_{4,32}$ spans $L_{7.9}$%
}\hbox{%
$\slashthree2\slashinfty2\rtimes D_{2}$ (shared)%
}%
}%
\hfill\copy\matricesbox
}%
\ifdim\wd\onelinebox>\myboxwidth
\hbox to \myboxwidth{%
$\Pi_{4,32}$ spans $L_{7.9}$%
\hfil
$\slashthree2\slashinfty2\rtimes D_{2}$ (shared)%
}%
\box\matricesbox
\else
\hbox to \myboxwidth{%
\unhbox\onelinebox
}%
\fi
\else
\hbox to \myboxwidth{%
$\Pi_{4,32}$ spans $L_{7.9}$%
\hfil}%
\hbox to \myboxwidth{%
$\slashthree2\slashinfty2\rtimes D_{2}$ (shared)%
\hfil}%
\box\matricesbox
\fi
}%
\hfill\discretionary{}{}{}%
\setbox\matricesbox=\hbox{%
{$\left[\!\llap{\phantom{%
\begingroup \smaller\smaller\smaller\begin{tabular}{@{}c@{}}%
\phantom{0}\\\phantom{0}\\\phantom{0}
\end{tabular}\endgroup%
}}\right.$}%
\begingroup \smaller\smaller\smaller\begin{tabular}{@{}c@{}}%
-1/9\\\phantom{0}\\\phantom{0}
\end{tabular}\endgroup%
\kern3pt%
\begingroup \smaller\smaller\smaller\begin{tabular}{@{}c@{}}%
\phantom{0}\\1/9\\\phantom{0}
\end{tabular}\endgroup%
\kern3pt%
\begingroup \smaller\smaller\smaller\begin{tabular}{@{}c@{}}%
\phantom{0}\\\phantom{0}\\3
\end{tabular}\endgroup%
{$\left.\llap{\phantom{%
\begingroup \smaller\smaller\smaller\begin{tabular}{@{}c@{}}%
\phantom{0}\\\phantom{0}\\\phantom{0}
\end{tabular}\endgroup%
}}\!\right]$}%
{$\left[\!\llap{\phantom{%
\begingroup \smaller\smaller\smaller\begin{tabular}{@{}c@{}}%
0\\0\\0
\end{tabular}\endgroup%
}}\right.$}%
\begingroup \smaller\smaller\smaller\begin{tabular}{@{}c@{}}%
4\\-5\\-1
\end{tabular}\endgroup%
\kern3pt%
\begingroup \smaller\smaller\smaller\begin{tabular}{@{}c@{}}%
3\\3\\-1
\end{tabular}\endgroup%
{$\left.\llap{\phantom{%
\begingroup \smaller\smaller\smaller\begin{tabular}{@{}c@{}}%
0\\0\\0
\end{tabular}\endgroup%
}}\!\right]$}%
}%
\ifdim\wd\matricesbox>\halfwidth\myboxwidth=\hsize\else\myboxwidth=\halfwidth\fi
\vbox{%
\ifdim\myboxwidth=\hsize
\setbox\onelinebox=\hbox{%
\vbox{\hbox{%
$\Pi_{4,33}$ spans $L_{7.11}$%
}\hbox{%
$\slashthree2\slashinfty2\rtimes D_{2}$ (shared)%
}%
}%
\hfill\copy\matricesbox
}%
\ifdim\wd\onelinebox>\myboxwidth
\hbox to \myboxwidth{%
$\Pi_{4,33}$ spans $L_{7.11}$%
\hfil
$\slashthree2\slashinfty2\rtimes D_{2}$ (shared)%
}%
\box\matricesbox
\else
\hbox to \myboxwidth{%
\unhbox\onelinebox
}%
\fi
\else
\hbox to \myboxwidth{%
$\Pi_{4,33}$ spans $L_{7.11}$%
\hfil}%
\hbox to \myboxwidth{%
$\slashthree2\slashinfty2\rtimes D_{2}$ (shared)%
\hfil}%
\box\matricesbox
\fi
}%
\hfill\discretionary{}{}{}%
\setbox\matricesbox=\hbox{%
{$\left[\!\llap{\phantom{%
\begingroup \smaller\smaller\smaller\begin{tabular}{@{}c@{}}%
\phantom{0}\\\phantom{0}\\\phantom{0}
\end{tabular}\endgroup%
}}\right.$}%
\begingroup \smaller\smaller\smaller\begin{tabular}{@{}c@{}}%
-1/25\\\phantom{0}\\\phantom{0}
\end{tabular}\endgroup%
\kern3pt%
\begingroup \smaller\smaller\smaller\begin{tabular}{@{}c@{}}%
\phantom{0}\\3/50\\\phantom{0}
\end{tabular}\endgroup%
\kern3pt%
\begingroup \smaller\smaller\smaller\begin{tabular}{@{}c@{}}%
\phantom{0}\\\phantom{0}\\1/2
\end{tabular}\endgroup%
{$\left.\llap{\phantom{%
\begingroup \smaller\smaller\smaller\begin{tabular}{@{}c@{}}%
\phantom{0}\\\phantom{0}\\\phantom{0}
\end{tabular}\endgroup%
}}\!\right]$}%
{$\left[\!\llap{\phantom{%
\begingroup \smaller\smaller\smaller\begin{tabular}{@{}c@{}}%
0\\0\\0
\end{tabular}\endgroup%
}}\right.$}%
\begingroup \smaller\smaller\smaller\begin{tabular}{@{}c@{}}%
6\\7\\3
\end{tabular}\endgroup%
\kern3pt%
\begingroup \smaller\smaller\smaller\begin{tabular}{@{}c@{}}%
1\\-3\\1
\end{tabular}\endgroup%
{$\left.\llap{\phantom{%
\begingroup \smaller\smaller\smaller\begin{tabular}{@{}c@{}}%
0\\0\\0
\end{tabular}\endgroup%
}}\!\right]$}%
}%
\ifdim\wd\matricesbox>\halfwidth\myboxwidth=\hsize\else\myboxwidth=\halfwidth\fi
\vbox{%
\ifdim\myboxwidth=\hsize
\setbox\onelinebox=\hbox{%
\vbox{\hbox{%
$\Pi_{4,34}$ spans $L_{3.2}$%
}\hbox{%
$\slashthree2\slashtwo2\rtimes D_{2}$ (shared)%
}%
}%
\hfill\copy\matricesbox
}%
\ifdim\wd\onelinebox>\myboxwidth
\hbox to \myboxwidth{%
$\Pi_{4,34}$ spans $L_{3.2}$%
\hfil
$\slashthree2\slashtwo2\rtimes D_{2}$ (shared)%
}%
\box\matricesbox
\else
\hbox to \myboxwidth{%
\unhbox\onelinebox
}%
\fi
\else
\hbox to \myboxwidth{%
$\Pi_{4,34}$ spans $L_{3.2}$%
\hfil}%
\hbox to \myboxwidth{%
$\slashthree2\slashtwo2\rtimes D_{2}$ (shared)%
\hfil}%
\box\matricesbox
\fi
}%
\hfill\discretionary{}{}{}%
\setbox\matricesbox=\hbox{%
{$\left[\!\llap{\phantom{%
\begingroup \smaller\smaller\smaller\begin{tabular}{@{}c@{}}%
\phantom{0}\\\phantom{0}\\\phantom{0}
\end{tabular}\endgroup%
}}\right.$}%
\begingroup \smaller\smaller\smaller\begin{tabular}{@{}c@{}}%
-3/26\\\phantom{0}\\\phantom{0}
\end{tabular}\endgroup%
\kern3pt%
\begingroup \smaller\smaller\smaller\begin{tabular}{@{}c@{}}%
\phantom{0}\\3/26\\\phantom{0}
\end{tabular}\endgroup%
\kern3pt%
\begingroup \smaller\smaller\smaller\begin{tabular}{@{}c@{}}%
\phantom{0}\\\phantom{0}\\1/2
\end{tabular}\endgroup%
{$\left.\llap{\phantom{%
\begingroup \smaller\smaller\smaller\begin{tabular}{@{}c@{}}%
\phantom{0}\\\phantom{0}\\\phantom{0}
\end{tabular}\endgroup%
}}\!\right]$}%
{$\left[\!\llap{\phantom{%
\begingroup \smaller\smaller\smaller\begin{tabular}{@{}c@{}}%
0\\0\\0
\end{tabular}\endgroup%
}}\right.$}%
\begingroup \smaller\smaller\smaller\begin{tabular}{@{}c@{}}%
6\\-7\\3
\end{tabular}\endgroup%
\kern3pt%
\begingroup \smaller\smaller\smaller\begin{tabular}{@{}c@{}}%
2\\2\\2
\end{tabular}\endgroup%
{$\left.\llap{\phantom{%
\begingroup \smaller\smaller\smaller\begin{tabular}{@{}c@{}}%
0\\0\\0
\end{tabular}\endgroup%
}}\!\right]$}%
}%
\ifdim\wd\matricesbox>\halfwidth\myboxwidth=\hsize\else\myboxwidth=\halfwidth\fi
\vbox{%
\ifdim\myboxwidth=\hsize
\setbox\onelinebox=\hbox{%
\vbox{\hbox{%
$\Pi_{4,35}$ spans $L_{7.6}$%
}\hbox{%
$\slashthree2\slashinfty2\rtimes D_{2}$ (shared)%
}%
}%
\hfill\copy\matricesbox
}%
\ifdim\wd\onelinebox>\myboxwidth
\hbox to \myboxwidth{%
$\Pi_{4,35}$ spans $L_{7.6}$%
\hfil
$\slashthree2\slashinfty2\rtimes D_{2}$ (shared)%
}%
\box\matricesbox
\else
\hbox to \myboxwidth{%
\unhbox\onelinebox
}%
\fi
\else
\hbox to \myboxwidth{%
$\Pi_{4,35}$ spans $L_{7.6}$%
\hfil}%
\hbox to \myboxwidth{%
$\slashthree2\slashinfty2\rtimes D_{2}$ (shared)%
\hfil}%
\box\matricesbox
\fi
}%
\hfill\discretionary{}{}{}%
\setbox\matricesbox=\hbox{%
{$\left[\!\llap{\phantom{%
\begingroup \smaller\smaller\smaller\begin{tabular}{@{}c@{}}%
\phantom{0}\\\phantom{0}\\\phantom{0}
\end{tabular}\endgroup%
}}\right.$}%
\begingroup \smaller\smaller\smaller\begin{tabular}{@{}c@{}}%
-3/25\\\phantom{0}\\\phantom{0}
\end{tabular}\endgroup%
\kern3pt%
\begingroup \smaller\smaller\smaller\begin{tabular}{@{}c@{}}%
\phantom{0}\\3/25\\\phantom{0}
\end{tabular}\endgroup%
\kern3pt%
\begingroup \smaller\smaller\smaller\begin{tabular}{@{}c@{}}%
\phantom{0}\\\phantom{0}\\1
\end{tabular}\endgroup%
{$\left.\llap{\phantom{%
\begingroup \smaller\smaller\smaller\begin{tabular}{@{}c@{}}%
\phantom{0}\\\phantom{0}\\\phantom{0}
\end{tabular}\endgroup%
}}\!\right]$}%
{$\left[\!\llap{\phantom{%
\begingroup \smaller\smaller\smaller\begin{tabular}{@{}c@{}}%
0\\0\\0
\end{tabular}\endgroup%
}}\right.$}%
\begingroup \smaller\smaller\smaller\begin{tabular}{@{}c@{}}%
12\\13\\-3
\end{tabular}\endgroup%
\kern3pt%
\begingroup \smaller\smaller\smaller\begin{tabular}{@{}c@{}}%
1\\-1\\-1
\end{tabular}\endgroup%
{$\left.\llap{\phantom{%
\begingroup \smaller\smaller\smaller\begin{tabular}{@{}c@{}}%
0\\0\\0
\end{tabular}\endgroup%
}}\!\right]$}%
}%
\ifdim\wd\matricesbox>\halfwidth\myboxwidth=\hsize\else\myboxwidth=\halfwidth\fi
\vbox{%
\ifdim\myboxwidth=\hsize
\setbox\onelinebox=\hbox{%
\vbox{\hbox{%
$\Pi_{4,36}=A_{3,II,\onebar}$ spans $L_{7.7}$%
}\hbox{%
$\slashthree2\slashinfty2\rtimes D_{2}$ (shared)%
}%
}%
\hfill\copy\matricesbox
}%
\ifdim\wd\onelinebox>\myboxwidth
\hbox to \myboxwidth{%
$\Pi_{4,36}=A_{3,II,\onebar}$ spans $L_{7.7}$%
\hfil
$\slashthree2\slashinfty2\rtimes D_{2}$ (shared)%
}%
\box\matricesbox
\else
\hbox to \myboxwidth{%
\unhbox\onelinebox
}%
\fi
\else
\hbox to \myboxwidth{%
$\Pi_{4,36}=A_{3,II,\onebar}$ spans $L_{7.7}$%
\hfil}%
\hbox to \myboxwidth{%
$\slashthree2\slashinfty2\rtimes D_{2}$ (shared)%
\hfil}%
\box\matricesbox
\fi
}%
\hfill\discretionary{}{}{}%
\setbox\matricesbox=\hbox{%
{$\left[\!\llap{\phantom{%
\begingroup \smaller\smaller\smaller\begin{tabular}{@{}c@{}}%
\phantom{0}\\\phantom{0}\\\phantom{0}
\end{tabular}\endgroup%
}}\right.$}%
\begingroup \smaller\smaller\smaller\begin{tabular}{@{}c@{}}%
-4/17\\\phantom{0}\\\phantom{0}
\end{tabular}\endgroup%
\kern3pt%
\begingroup \smaller\smaller\smaller\begin{tabular}{@{}c@{}}%
\phantom{0}\\1/34\\\phantom{0}
\end{tabular}\endgroup%
\kern3pt%
\begingroup \smaller\smaller\smaller\begin{tabular}{@{}c@{}}%
\phantom{0}\\\phantom{0}\\1/2
\end{tabular}\endgroup%
{$\left.\llap{\phantom{%
\begingroup \smaller\smaller\smaller\begin{tabular}{@{}c@{}}%
\phantom{0}\\\phantom{0}\\\phantom{0}
\end{tabular}\endgroup%
}}\!\right]$}%
{$\left[\!\llap{\phantom{%
\begingroup \smaller\smaller\smaller\begin{tabular}{@{}c@{}}%
0\\0\\0
\end{tabular}\endgroup%
}}\right.$}%
\begingroup \smaller\smaller\smaller\begin{tabular}{@{}c@{}}%
4\\14\\-2
\end{tabular}\endgroup%
\kern3pt%
\begingroup \smaller\smaller\smaller\begin{tabular}{@{}c@{}}%
1\\-5\\-1
\end{tabular}\endgroup%
{$\left.\llap{\phantom{%
\begingroup \smaller\smaller\smaller\begin{tabular}{@{}c@{}}%
0\\0\\0
\end{tabular}\endgroup%
}}\!\right]$}%
}%
\ifdim\wd\matricesbox>\halfwidth\myboxwidth=\hsize\else\myboxwidth=\halfwidth\fi
\vbox{%
\ifdim\myboxwidth=\hsize
\setbox\onelinebox=\hbox{%
\vbox{\hbox{%
$\Pi_{4,37}=\hbox{GN}_{20}$ spans $L_{145.1}$%
}\hbox{%
$\infty\slashtwo\infty\slashtwo\rtimes D_{2}$ (shared)%
}%
}%
\hfill\copy\matricesbox
}%
\ifdim\wd\onelinebox>\myboxwidth
\hbox to \myboxwidth{%
$\Pi_{4,37}=\hbox{GN}_{20}$ spans $L_{145.1}$%
\hfil
$\infty\slashtwo\infty\slashtwo\rtimes D_{2}$ (shared)%
}%
\box\matricesbox
\else
\hbox to \myboxwidth{%
\unhbox\onelinebox
}%
\fi
\else
\hbox to \myboxwidth{%
$\Pi_{4,37}=\hbox{GN}_{20}$ spans $L_{145.1}$%
\hfil}%
\hbox to \myboxwidth{%
$\infty\slashtwo\infty\slashtwo\rtimes D_{2}$ (shared)%
\hfil}%
\box\matricesbox
\fi
}%
\hfill\discretionary{}{}{}%
\setbox\matricesbox=\hbox{%
{$\left[\!\llap{\phantom{%
\begingroup \smaller\smaller\smaller\begin{tabular}{@{}c@{}}%
\phantom{0}\\\phantom{0}\\\phantom{0}
\end{tabular}\endgroup%
}}\right.$}%
\begingroup \smaller\smaller\smaller\begin{tabular}{@{}c@{}}%
-1/9\\\phantom{0}\\\phantom{0}
\end{tabular}\endgroup%
\kern3pt%
\begingroup \smaller\smaller\smaller\begin{tabular}{@{}c@{}}%
\phantom{0}\\1/9\\\phantom{0}
\end{tabular}\endgroup%
\kern3pt%
\begingroup \smaller\smaller\smaller\begin{tabular}{@{}c@{}}%
\phantom{0}\\\phantom{0}\\1
\end{tabular}\endgroup%
{$\left.\llap{\phantom{%
\begingroup \smaller\smaller\smaller\begin{tabular}{@{}c@{}}%
\phantom{0}\\\phantom{0}\\\phantom{0}
\end{tabular}\endgroup%
}}\!\right]$}%
{$\left[\!\llap{\phantom{%
\begingroup \smaller\smaller\smaller\begin{tabular}{@{}c@{}}%
0\\0\\0
\end{tabular}\endgroup%
}}\right.$}%
\begingroup \smaller\smaller\smaller\begin{tabular}{@{}c@{}}%
1\\-1\\1
\end{tabular}\endgroup%
\kern3pt%
\begingroup \smaller\smaller\smaller\begin{tabular}{@{}c@{}}%
8\\10\\2
\end{tabular}\endgroup%
{$\left.\llap{\phantom{%
\begingroup \smaller\smaller\smaller\begin{tabular}{@{}c@{}}%
0\\0\\0
\end{tabular}\endgroup%
}}\!\right]$}%
}%
\ifdim\wd\matricesbox>\halfwidth\myboxwidth=\hsize\else\myboxwidth=\halfwidth\fi
\vbox{%
\ifdim\myboxwidth=\hsize
\setbox\onelinebox=\hbox{%
\vbox{\hbox{%
$\Pi_{4,38}=A_{2,II,\onebar}$ spans $L_{1.9}$%
}\hbox{%
$\slashinfty2\slashtwo2\rtimes D_{2}$ (shared)%
}%
}%
\hfill\copy\matricesbox
}%
\ifdim\wd\onelinebox>\myboxwidth
\hbox to \myboxwidth{%
$\Pi_{4,38}=A_{2,II,\onebar}$ spans $L_{1.9}$%
\hfil
$\slashinfty2\slashtwo2\rtimes D_{2}$ (shared)%
}%
\box\matricesbox
\else
\hbox to \myboxwidth{%
\unhbox\onelinebox
}%
\fi
\else
\hbox to \myboxwidth{%
$\Pi_{4,38}=A_{2,II,\onebar}$ spans $L_{1.9}$%
\hfil}%
\hbox to \myboxwidth{%
$\slashinfty2\slashtwo2\rtimes D_{2}$ (shared)%
\hfil}%
\box\matricesbox
\fi
}%
\hfill\discretionary{}{}{}%
\setbox\matricesbox=\hbox{%
{$\left[\!\llap{\phantom{%
\begingroup \smaller\smaller\smaller\begin{tabular}{@{}c@{}}%
\phantom{0}\\\phantom{0}\\\phantom{0}
\end{tabular}\endgroup%
}}\right.$}%
\begingroup \smaller\smaller\smaller\begin{tabular}{@{}c@{}}%
-1/8\\\phantom{0}\\\phantom{0}
\end{tabular}\endgroup%
\kern3pt%
\begingroup \smaller\smaller\smaller\begin{tabular}{@{}c@{}}%
\phantom{0}\\1/8\\\phantom{0}
\end{tabular}\endgroup%
\kern3pt%
\begingroup \smaller\smaller\smaller\begin{tabular}{@{}c@{}}%
\phantom{0}\\\phantom{0}\\1
\end{tabular}\endgroup%
{$\left.\llap{\phantom{%
\begingroup \smaller\smaller\smaller\begin{tabular}{@{}c@{}}%
\phantom{0}\\\phantom{0}\\\phantom{0}
\end{tabular}\endgroup%
}}\!\right]$}%
{$\left[\!\llap{\phantom{%
\begingroup \smaller\smaller\smaller\begin{tabular}{@{}c@{}}%
0\\0\\0
\end{tabular}\endgroup%
}}\right.$}%
\begingroup \smaller\smaller\smaller\begin{tabular}{@{}c@{}}%
1\\-1\\1
\end{tabular}\endgroup%
\kern3pt%
\begingroup \smaller\smaller\smaller\begin{tabular}{@{}c@{}}%
16\\16\\4
\end{tabular}\endgroup%
{$\left.\llap{\phantom{%
\begingroup \smaller\smaller\smaller\begin{tabular}{@{}c@{}}%
0\\0\\0
\end{tabular}\endgroup%
}}\!\right]$}%
}%
\ifdim\wd\matricesbox>\halfwidth\myboxwidth=\hsize\else\myboxwidth=\halfwidth\fi
\vbox{%
\ifdim\myboxwidth=\hsize
\setbox\onelinebox=\hbox{%
\vbox{\hbox{%
$\Pi_{4,39}=A_{4,II,\onebar}=\hbox{GN}_{15}$ spans $L_{140.4}$%
}\hbox{%
$\slashinfty2\slashinfty2\rtimes D_{2}$ (shared)%
}%
}%
\hfill\copy\matricesbox
}%
\ifdim\wd\onelinebox>\myboxwidth
\hbox to \myboxwidth{%
$\Pi_{4,39}=A_{4,II,\onebar}=\hbox{GN}_{15}$ spans $L_{140.4}$%
\hfil
$\slashinfty2\slashinfty2\rtimes D_{2}$ (shared)%
}%
\box\matricesbox
\else
\hbox to \myboxwidth{%
\unhbox\onelinebox
}%
\fi
\else
\hbox to \myboxwidth{%
$\Pi_{4,39}=A_{4,II,\onebar}=\hbox{GN}_{15}$ spans $L_{140.4}$%
\hfil}%
\hbox to \myboxwidth{%
$\slashinfty2\slashinfty2\rtimes D_{2}$ (shared)%
\hfil}%
\box\matricesbox
\fi
}%
\hfill\discretionary{}{}{}%
\setbox\matricesbox=\hbox{%
{$\left[\!\llap{\phantom{%
\begingroup \smaller\smaller\smaller\begin{tabular}{@{}c@{}}%
\phantom{0}\\\phantom{0}\\\phantom{0}
\end{tabular}\endgroup%
}}\right.$}%
\begingroup \smaller\smaller\smaller\begin{tabular}{@{}c@{}}%
-1/5\\\phantom{0}\\\phantom{0}
\end{tabular}\endgroup%
\kern3pt%
\begingroup \smaller\smaller\smaller\begin{tabular}{@{}c@{}}%
\phantom{0}\\1/5\\\phantom{0}
\end{tabular}\endgroup%
\kern3pt%
\begingroup \smaller\smaller\smaller\begin{tabular}{@{}c@{}}%
\phantom{0}\\\phantom{0}\\1
\end{tabular}\endgroup%
{$\left.\llap{\phantom{%
\begingroup \smaller\smaller\smaller\begin{tabular}{@{}c@{}}%
\phantom{0}\\\phantom{0}\\\phantom{0}
\end{tabular}\endgroup%
}}\!\right]$}%
{$\left[\!\llap{\phantom{%
\begingroup \smaller\smaller\smaller\begin{tabular}{@{}c@{}}%
0\\0\\0
\end{tabular}\endgroup%
}}\right.$}%
\begingroup \smaller\smaller\smaller\begin{tabular}{@{}c@{}}%
1\\1\\-1
\end{tabular}\endgroup%
\kern3pt%
\begingroup \smaller\smaller\smaller\begin{tabular}{@{}c@{}}%
2\\-3\\-1
\end{tabular}\endgroup%
{$\left.\llap{\phantom{%
\begingroup \smaller\smaller\smaller\begin{tabular}{@{}c@{}}%
0\\0\\0
\end{tabular}\endgroup%
}}\!\right]$}%
}%
\ifdim\wd\matricesbox>\halfwidth\myboxwidth=\hsize\else\myboxwidth=\halfwidth\fi
\vbox{%
\ifdim\myboxwidth=\hsize
\setbox\onelinebox=\hbox{%
\vbox{\hbox{%
$\Pi_{4,40}$ spans $L_{1.7}$%
}\hbox{%
$\slashinfty2\slashtwo2\rtimes D_{2}$ (shared)%
}%
}%
\hfill\copy\matricesbox
}%
\ifdim\wd\onelinebox>\myboxwidth
\hbox to \myboxwidth{%
$\Pi_{4,40}$ spans $L_{1.7}$%
\hfil
$\slashinfty2\slashtwo2\rtimes D_{2}$ (shared)%
}%
\box\matricesbox
\else
\hbox to \myboxwidth{%
\unhbox\onelinebox
}%
\fi
\else
\hbox to \myboxwidth{%
$\Pi_{4,40}$ spans $L_{1.7}$%
\hfil}%
\hbox to \myboxwidth{%
$\slashinfty2\slashtwo2\rtimes D_{2}$ (shared)%
\hfil}%
\box\matricesbox
\fi
}%
\hfill\discretionary{}{}{}%
\setbox\matricesbox=\hbox{%
{$\left[\!\llap{\phantom{%
\begingroup \smaller\smaller\smaller\begin{tabular}{@{}c@{}}%
\phantom{0}\\\phantom{0}\\\phantom{0}
\end{tabular}\endgroup%
}}\right.$}%
\begingroup \smaller\smaller\smaller\begin{tabular}{@{}c@{}}%
-1/4\\\phantom{0}\\\phantom{0}
\end{tabular}\endgroup%
\kern3pt%
\begingroup \smaller\smaller\smaller\begin{tabular}{@{}c@{}}%
\phantom{0}\\1/4\\\phantom{0}
\end{tabular}\endgroup%
\kern3pt%
\begingroup \smaller\smaller\smaller\begin{tabular}{@{}c@{}}%
\phantom{0}\\\phantom{0}\\1
\end{tabular}\endgroup%
{$\left.\llap{\phantom{%
\begingroup \smaller\smaller\smaller\begin{tabular}{@{}c@{}}%
\phantom{0}\\\phantom{0}\\\phantom{0}
\end{tabular}\endgroup%
}}\!\right]$}%
{$\left[\!\llap{\phantom{%
\begingroup \smaller\smaller\smaller\begin{tabular}{@{}c@{}}%
0\\0\\0
\end{tabular}\endgroup%
}}\right.$}%
\begingroup \smaller\smaller\smaller\begin{tabular}{@{}c@{}}%
1\\-1\\1
\end{tabular}\endgroup%
\kern3pt%
\begingroup \smaller\smaller\smaller\begin{tabular}{@{}c@{}}%
4\\4\\2
\end{tabular}\endgroup%
{$\left.\llap{\phantom{%
\begingroup \smaller\smaller\smaller\begin{tabular}{@{}c@{}}%
0\\0\\0
\end{tabular}\endgroup%
}}\!\right]$}%
}%
\ifdim\wd\matricesbox>\halfwidth\myboxwidth=\hsize\else\myboxwidth=\halfwidth\fi
\vbox{%
\ifdim\myboxwidth=\hsize
\setbox\onelinebox=\hbox{%
\vbox{\hbox{%
$\Pi_{4,41}=\hbox{GN}_{23}$ spans $L_{140.3}$%
}\hbox{%
$\slashinfty2\slashinfty2\rtimes D_{2}$ (shared)%
}%
}%
\hfill\copy\matricesbox
}%
\ifdim\wd\onelinebox>\myboxwidth
\hbox to \myboxwidth{%
$\Pi_{4,41}=\hbox{GN}_{23}$ spans $L_{140.3}$%
\hfil
$\slashinfty2\slashinfty2\rtimes D_{2}$ (shared)%
}%
\box\matricesbox
\else
\hbox to \myboxwidth{%
\unhbox\onelinebox
}%
\fi
\else
\hbox to \myboxwidth{%
$\Pi_{4,41}=\hbox{GN}_{23}$ spans $L_{140.3}$%
\hfil}%
\hbox to \myboxwidth{%
$\slashinfty2\slashinfty2\rtimes D_{2}$ (shared)%
\hfil}%
\box\matricesbox
\fi
}%
\hfill\discretionary{}{}{}%
\setbox\matricesbox=\hbox{%
{$\left[\!\llap{\phantom{%
\begingroup \smaller\smaller\smaller\begin{tabular}{@{}c@{}}%
\phantom{0}\\\phantom{0}\\\phantom{0}
\end{tabular}\endgroup%
}}\right.$}%
\begingroup \smaller\smaller\smaller\begin{tabular}{@{}c@{}}%
-1/6\\\phantom{0}\\\phantom{0}
\end{tabular}\endgroup%
\kern3pt%
\begingroup \smaller\smaller\smaller\begin{tabular}{@{}c@{}}%
\phantom{0}\\2/3\\\phantom{0}
\end{tabular}\endgroup%
\kern3pt%
\begingroup \smaller\smaller\smaller\begin{tabular}{@{}c@{}}%
\phantom{0}\\\phantom{0}\\1/2
\end{tabular}\endgroup%
{$\left.\llap{\phantom{%
\begingroup \smaller\smaller\smaller\begin{tabular}{@{}c@{}}%
\phantom{0}\\\phantom{0}\\\phantom{0}
\end{tabular}\endgroup%
}}\!\right]$}%
{$\left[\!\llap{\phantom{%
\begingroup \smaller\smaller\smaller\begin{tabular}{@{}c@{}}%
0\\0\\0
\end{tabular}\endgroup%
}}\right.$}%
\begingroup \smaller\smaller\smaller\begin{tabular}{@{}c@{}}%
2\\1\\2
\end{tabular}\endgroup%
\kern3pt%
\begingroup \smaller\smaller\smaller\begin{tabular}{@{}c@{}}%
1\\-1\\1
\end{tabular}\endgroup%
{$\left.\llap{\phantom{%
\begingroup \smaller\smaller\smaller\begin{tabular}{@{}c@{}}%
0\\0\\0
\end{tabular}\endgroup%
}}\!\right]$}%
}%
\ifdim\wd\matricesbox>\halfwidth\myboxwidth=\hsize\else\myboxwidth=\halfwidth\fi
\vbox{%
\ifdim\myboxwidth=\hsize
\setbox\onelinebox=\hbox{%
\vbox{\hbox{%
$\Pi_{4,42}$ spans $L_{1.5}$%
}\hbox{%
$\slashinfty2\slashtwo2\rtimes D_{2}$ (shared)%
}%
}%
\hfill\copy\matricesbox
}%
\ifdim\wd\onelinebox>\myboxwidth
\hbox to \myboxwidth{%
$\Pi_{4,42}$ spans $L_{1.5}$%
\hfil
$\slashinfty2\slashtwo2\rtimes D_{2}$ (shared)%
}%
\box\matricesbox
\else
\hbox to \myboxwidth{%
\unhbox\onelinebox
}%
\fi
\else
\hbox to \myboxwidth{%
$\Pi_{4,42}$ spans $L_{1.5}$%
\hfil}%
\hbox to \myboxwidth{%
$\slashinfty2\slashtwo2\rtimes D_{2}$ (shared)%
\hfil}%
\box\matricesbox
\fi
}%
\hfill\discretionary{}{}{}%
\setbox\matricesbox=\hbox{%
{$\left[\!\llap{\phantom{%
\begingroup \smaller\smaller\smaller\begin{tabular}{@{}c@{}}%
\phantom{0}\\\phantom{0}\\\phantom{0}
\end{tabular}\endgroup%
}}\right.$}%
\begingroup \smaller\smaller\smaller\begin{tabular}{@{}c@{}}%
-1/16\\\phantom{0}\\\phantom{0}
\end{tabular}\endgroup%
\kern3pt%
\begingroup \smaller\smaller\smaller\begin{tabular}{@{}c@{}}%
\phantom{0}\\1/16\\\phantom{0}
\end{tabular}\endgroup%
\kern3pt%
\begingroup \smaller\smaller\smaller\begin{tabular}{@{}c@{}}%
\phantom{0}\\\phantom{0}\\1/2
\end{tabular}\endgroup%
{$\left.\llap{\phantom{%
\begingroup \smaller\smaller\smaller\begin{tabular}{@{}c@{}}%
\phantom{0}\\\phantom{0}\\\phantom{0}
\end{tabular}\endgroup%
}}\!\right]$}%
{$\left[\!\llap{\phantom{%
\begingroup \smaller\smaller\smaller\begin{tabular}{@{}c@{}}%
0\\0\\0
\end{tabular}\endgroup%
}}\right.$}%
\begingroup \smaller\smaller\smaller\begin{tabular}{@{}c@{}}%
8\\-8\\4
\end{tabular}\endgroup%
\kern3pt%
\begingroup \smaller\smaller\smaller\begin{tabular}{@{}c@{}}%
1\\3\\1
\end{tabular}\endgroup%
{$\left.\llap{\phantom{%
\begingroup \smaller\smaller\smaller\begin{tabular}{@{}c@{}}%
0\\0\\0
\end{tabular}\endgroup%
}}\!\right]$}%
}%
\ifdim\wd\matricesbox>\halfwidth\myboxwidth=\hsize\else\myboxwidth=\halfwidth\fi
\vbox{%
\ifdim\myboxwidth=\hsize
\setbox\onelinebox=\hbox{%
\vbox{\hbox{%
$\Pi_{4,43}$ spans $L_{1.3}$%
}\hbox{%
$\slashinfty2\slashtwo2\rtimes D_{2}$ (shared)%
}%
}%
\hfill\copy\matricesbox
}%
\ifdim\wd\onelinebox>\myboxwidth
\hbox to \myboxwidth{%
$\Pi_{4,43}$ spans $L_{1.3}$%
\hfil
$\slashinfty2\slashtwo2\rtimes D_{2}$ (shared)%
}%
\box\matricesbox
\else
\hbox to \myboxwidth{%
\unhbox\onelinebox
}%
\fi
\else
\hbox to \myboxwidth{%
$\Pi_{4,43}$ spans $L_{1.3}$%
\hfil}%
\hbox to \myboxwidth{%
$\slashinfty2\slashtwo2\rtimes D_{2}$ (shared)%
\hfil}%
\box\matricesbox
\fi
}%
\hfill\discretionary{}{}{}%
\setbox\matricesbox=\hbox{%
{$\left[\!\llap{\phantom{%
\begingroup \smaller\smaller\smaller\begin{tabular}{@{}c@{}}%
\phantom{0}\\\phantom{0}\\\phantom{0}
\end{tabular}\endgroup%
}}\right.$}%
\begingroup \smaller\smaller\smaller\begin{tabular}{@{}c@{}}%
-1/8\\\phantom{0}\\\phantom{0}
\end{tabular}\endgroup%
\kern3pt%
\begingroup \smaller\smaller\smaller\begin{tabular}{@{}c@{}}%
\phantom{0}\\5/2\\-1
\end{tabular}\endgroup%
\kern3pt%
\begingroup \smaller\smaller\smaller\begin{tabular}{@{}c@{}}%
\phantom{0}\\-1\\6
\end{tabular}\endgroup%
{$\left.\llap{\phantom{%
\begingroup \smaller\smaller\smaller\begin{tabular}{@{}c@{}}%
\phantom{0}\\\phantom{0}\\\phantom{0}
\end{tabular}\endgroup%
}}\!\right]$}%
{$\left[\!\llap{\phantom{%
\begingroup \smaller\smaller\smaller\begin{tabular}{@{}c@{}}%
0\\0\\0
\end{tabular}\endgroup%
}}\right.$}%
\begingroup \smaller\smaller\smaller\begin{tabular}{@{}c@{}}%
2\\1\\0
\end{tabular}\endgroup%
\kern3pt%
\begingroup \smaller\smaller\smaller\begin{tabular}{@{}c@{}}%
4\\0\\1
\end{tabular}\endgroup%
{$\left.\llap{\phantom{%
\begingroup \smaller\smaller\smaller\begin{tabular}{@{}c@{}}%
0\\0\\0
\end{tabular}\endgroup%
}}\!\right]$}%
}%
\ifdim\wd\matricesbox>\halfwidth\myboxwidth=\hsize\else\myboxwidth=\halfwidth\fi
\vbox{%
\ifdim\myboxwidth=\hsize
\setbox\onelinebox=\hbox{%
\vbox{\hbox{%
$\Pi_{4,44}$ spans $L_{8.1}$%
}\hbox{%
$4242\rtimes C_{2}$%
}%
}%
\hfill\copy\matricesbox
}%
\ifdim\wd\onelinebox>\myboxwidth
\hbox to \myboxwidth{%
$\Pi_{4,44}$ spans $L_{8.1}$%
\hfil
$4242\rtimes C_{2}$%
}%
\box\matricesbox
\else
\hbox to \myboxwidth{%
\unhbox\onelinebox
}%
\fi
\else
\hbox to \myboxwidth{%
$\Pi_{4,44}$ spans $L_{8.1}$%
\hfil}%
\hbox to \myboxwidth{%
$4242\rtimes C_{2}$%
\hfil}%
\box\matricesbox
\fi
}%
\hfill\discretionary{}{}{}%
\setbox\matricesbox=\hbox{%
{$\left[\!\llap{\phantom{%
\begingroup \smaller\smaller\smaller\begin{tabular}{@{}c@{}}%
\phantom{0}\\\phantom{0}\\\phantom{0}
\end{tabular}\endgroup%
}}\right.$}%
\begingroup \smaller\smaller\smaller\begin{tabular}{@{}c@{}}%
-1/8\\\phantom{0}\\\phantom{0}
\end{tabular}\endgroup%
\kern3pt%
\begingroup \smaller\smaller\smaller\begin{tabular}{@{}c@{}}%
\phantom{0}\\5/2\\-1
\end{tabular}\endgroup%
\kern3pt%
\begingroup \smaller\smaller\smaller\begin{tabular}{@{}c@{}}%
\phantom{0}\\-1\\10
\end{tabular}\endgroup%
{$\left.\llap{\phantom{%
\begingroup \smaller\smaller\smaller\begin{tabular}{@{}c@{}}%
\phantom{0}\\\phantom{0}\\\phantom{0}
\end{tabular}\endgroup%
}}\!\right]$}%
{$\left[\!\llap{\phantom{%
\begingroup \smaller\smaller\smaller\begin{tabular}{@{}c@{}}%
0\\0\\0
\end{tabular}\endgroup%
}}\right.$}%
\begingroup \smaller\smaller\smaller\begin{tabular}{@{}c@{}}%
2\\1\\0
\end{tabular}\endgroup%
\kern3pt%
\begingroup \smaller\smaller\smaller\begin{tabular}{@{}c@{}}%
6\\-1\\-1
\end{tabular}\endgroup%
{$\left.\llap{\phantom{%
\begingroup \smaller\smaller\smaller\begin{tabular}{@{}c@{}}%
0\\0\\0
\end{tabular}\endgroup%
}}\!\right]$}%
}%
\ifdim\wd\matricesbox>\halfwidth\myboxwidth=\hsize\else\myboxwidth=\halfwidth\fi
\vbox{%
\ifdim\myboxwidth=\hsize
\setbox\onelinebox=\hbox{%
\vbox{\hbox{%
$\Pi_{4,45}$ spans $L_{168.2}$%
}\hbox{%
$6262\rtimes C_{2}$%
}%
}%
\hfill\copy\matricesbox
}%
\ifdim\wd\onelinebox>\myboxwidth
\hbox to \myboxwidth{%
$\Pi_{4,45}$ spans $L_{168.2}$%
\hfil
$6262\rtimes C_{2}$%
}%
\box\matricesbox
\else
\hbox to \myboxwidth{%
\unhbox\onelinebox
}%
\fi
\else
\hbox to \myboxwidth{%
$\Pi_{4,45}$ spans $L_{168.2}$%
\hfil}%
\hbox to \myboxwidth{%
$6262\rtimes C_{2}$%
\hfil}%
\box\matricesbox
\fi
}%
\hfill\discretionary{}{}{}%
\setbox\matricesbox=\hbox{%
{$\left[\!\llap{\phantom{%
\begingroup \smaller\smaller\smaller\begin{tabular}{@{}c@{}}%
\phantom{0}\\\phantom{0}\\\phantom{0}
\end{tabular}\endgroup%
}}\right.$}%
\begingroup \smaller\smaller\smaller\begin{tabular}{@{}c@{}}%
-1/8\\\phantom{0}\\\phantom{0}
\end{tabular}\endgroup%
\kern3pt%
\begingroup \smaller\smaller\smaller\begin{tabular}{@{}c@{}}%
\phantom{0}\\5/2\\-1/2
\end{tabular}\endgroup%
\kern3pt%
\begingroup \smaller\smaller\smaller\begin{tabular}{@{}c@{}}%
\phantom{0}\\-1/2\\29/2
\end{tabular}\endgroup%
{$\left.\llap{\phantom{%
\begingroup \smaller\smaller\smaller\begin{tabular}{@{}c@{}}%
\phantom{0}\\\phantom{0}\\\phantom{0}
\end{tabular}\endgroup%
}}\!\right]$}%
{$\left[\!\llap{\phantom{%
\begingroup \smaller\smaller\smaller\begin{tabular}{@{}c@{}}%
0\\0\\0
\end{tabular}\endgroup%
}}\right.$}%
\begingroup \smaller\smaller\smaller\begin{tabular}{@{}c@{}}%
2\\1\\0
\end{tabular}\endgroup%
\kern3pt%
\begingroup \smaller\smaller\smaller\begin{tabular}{@{}c@{}}%
8\\-1\\-1
\end{tabular}\endgroup%
{$\left.\llap{\phantom{%
\begingroup \smaller\smaller\smaller\begin{tabular}{@{}c@{}}%
0\\0\\0
\end{tabular}\endgroup%
}}\!\right]$}%
}%
\ifdim\wd\matricesbox>\halfwidth\myboxwidth=\hsize\else\myboxwidth=\halfwidth\fi
\vbox{%
\ifdim\myboxwidth=\hsize
\setbox\onelinebox=\hbox{%
\vbox{\hbox{%
$\Pi_{4,46}=\hbox{GN}_{22}$ spans $L_{148.2}$%
}\hbox{%
$\infty2\infty2\rtimes C_{2}$ (shared)%
}%
}%
\hfill\copy\matricesbox
}%
\ifdim\wd\onelinebox>\myboxwidth
\hbox to \myboxwidth{%
$\Pi_{4,46}=\hbox{GN}_{22}$ spans $L_{148.2}$%
\hfil
$\infty2\infty2\rtimes C_{2}$ (shared)%
}%
\box\matricesbox
\else
\hbox to \myboxwidth{%
\unhbox\onelinebox
}%
\fi
\else
\hbox to \myboxwidth{%
$\Pi_{4,46}=\hbox{GN}_{22}$ spans $L_{148.2}$%
\hfil}%
\hbox to \myboxwidth{%
$\infty2\infty2\rtimes C_{2}$ (shared)%
\hfil}%
\box\matricesbox
\fi
}%
\hfill\discretionary{}{}{}%
\setbox\matricesbox=\hbox{%
{$\left[\!\llap{\phantom{%
\begingroup \smaller\smaller\smaller\begin{tabular}{@{}c@{}}%
\phantom{0}\\\phantom{0}\\\phantom{0}
\end{tabular}\endgroup%
}}\right.$}%
\begingroup \smaller\smaller\smaller\begin{tabular}{@{}c@{}}%
-1/28\\\phantom{0}\\\phantom{0}
\end{tabular}\endgroup%
\kern3pt%
\begingroup \smaller\smaller\smaller\begin{tabular}{@{}c@{}}%
\phantom{0}\\15/7\\-6/7
\end{tabular}\endgroup%
\kern3pt%
\begingroup \smaller\smaller\smaller\begin{tabular}{@{}c@{}}%
\phantom{0}\\-6/7\\15/7
\end{tabular}\endgroup%
{$\left.\llap{\phantom{%
\begingroup \smaller\smaller\smaller\begin{tabular}{@{}c@{}}%
\phantom{0}\\\phantom{0}\\\phantom{0}
\end{tabular}\endgroup%
}}\!\right]$}%
{$\left[\!\llap{\phantom{%
\begingroup \smaller\smaller\smaller\begin{tabular}{@{}c@{}}%
0\\0\\0
\end{tabular}\endgroup%
}}\right.$}%
\begingroup \smaller\smaller\smaller\begin{tabular}{@{}c@{}}%
18\\-1\\-4
\end{tabular}\endgroup%
\kern3pt%
\begingroup \smaller\smaller\smaller\begin{tabular}{@{}c@{}}%
18\\-4\\-1
\end{tabular}\endgroup%
\kern3pt%
\begingroup \smaller\smaller\smaller\begin{tabular}{@{}c@{}}%
6\\1\\2
\end{tabular}\endgroup%
\kern3pt%
\begingroup \smaller\smaller\smaller\begin{tabular}{@{}c@{}}%
2\\1\\0
\end{tabular}\endgroup%
{$\left.\llap{\phantom{%
\begingroup \smaller\smaller\smaller\begin{tabular}{@{}c@{}}%
0\\0\\0
\end{tabular}\endgroup%
}}\!\right]$}%
}%
\ifdim\wd\matricesbox>\halfwidth\myboxwidth=\hsize\else\myboxwidth=\halfwidth\fi
\vbox{%
\ifdim\myboxwidth=\hsize
\setbox\onelinebox=\hbox{%
\vbox{\hbox{%
$\Pi_{4,47}$ spans $L_{155.1}$%
}\hbox{%
$3622$ (shared)%
}%
}%
\hfill\copy\matricesbox
}%
\ifdim\wd\onelinebox>\myboxwidth
\hbox to \myboxwidth{%
$\Pi_{4,47}$ spans $L_{155.1}$%
\hfil
$3622$ (shared)%
}%
\box\matricesbox
\else
\hbox to \myboxwidth{%
\unhbox\onelinebox
}%
\fi
\else
\hbox to \myboxwidth{%
$\Pi_{4,47}$ spans $L_{155.1}$%
\hfil}%
\hbox to \myboxwidth{%
$3622$ (shared)%
\hfil}%
\box\matricesbox
\fi
}%
\hfill\discretionary{}{}{}%
\setbox\matricesbox=\hbox{%
{$\left[\!\llap{\phantom{%
\begingroup \smaller\smaller\smaller\begin{tabular}{@{}c@{}}%
\phantom{0}\\\phantom{0}\\\phantom{0}
\end{tabular}\endgroup%
}}\right.$}%
\begingroup \smaller\smaller\smaller\begin{tabular}{@{}c@{}}%
-3/25\\\phantom{0}\\\phantom{0}
\end{tabular}\endgroup%
\kern3pt%
\begingroup \smaller\smaller\smaller\begin{tabular}{@{}c@{}}%
\phantom{0}\\12/25\\-6/25
\end{tabular}\endgroup%
\kern3pt%
\begingroup \smaller\smaller\smaller\begin{tabular}{@{}c@{}}%
\phantom{0}\\-6/25\\28/25
\end{tabular}\endgroup%
{$\left.\llap{\phantom{%
\begingroup \smaller\smaller\smaller\begin{tabular}{@{}c@{}}%
\phantom{0}\\\phantom{0}\\\phantom{0}
\end{tabular}\endgroup%
}}\!\right]$}%
{$\left[\!\llap{\phantom{%
\begingroup \smaller\smaller\smaller\begin{tabular}{@{}c@{}}%
0\\0\\0
\end{tabular}\endgroup%
}}\right.$}%
\begingroup \smaller\smaller\smaller\begin{tabular}{@{}c@{}}%
12\\5\\-3
\end{tabular}\endgroup%
\kern3pt%
\begingroup \smaller\smaller\smaller\begin{tabular}{@{}c@{}}%
12\\8\\3
\end{tabular}\endgroup%
\kern3pt%
\begingroup \smaller\smaller\smaller\begin{tabular}{@{}c@{}}%
4\\-1\\2
\end{tabular}\endgroup%
\kern3pt%
\begingroup \smaller\smaller\smaller\begin{tabular}{@{}c@{}}%
1\\-1\\-1
\end{tabular}\endgroup%
{$\left.\llap{\phantom{%
\begingroup \smaller\smaller\smaller\begin{tabular}{@{}c@{}}%
0\\0\\0
\end{tabular}\endgroup%
}}\!\right]$}%
}%
\ifdim\wd\matricesbox>\halfwidth\myboxwidth=\hsize\else\myboxwidth=\halfwidth\fi
\vbox{%
\ifdim\myboxwidth=\hsize
\setbox\onelinebox=\hbox{%
\vbox{\hbox{%
$\Pi_{4,48}$ spans $L_{221.3}$%
}\hbox{%
$36\infty2$ (shared)%
}%
}%
\hfill\copy\matricesbox
}%
\ifdim\wd\onelinebox>\myboxwidth
\hbox to \myboxwidth{%
$\Pi_{4,48}$ spans $L_{221.3}$%
\hfil
$36\infty2$ (shared)%
}%
\box\matricesbox
\else
\hbox to \myboxwidth{%
\unhbox\onelinebox
}%
\fi
\else
\hbox to \myboxwidth{%
$\Pi_{4,48}$ spans $L_{221.3}$%
\hfil}%
\hbox to \myboxwidth{%
$36\infty2$ (shared)%
\hfil}%
\box\matricesbox
\fi
}%
\hfill\discretionary{}{}{}%
\setbox\matricesbox=\hbox{%
{$\left[\!\llap{\phantom{%
\begingroup \smaller\smaller\smaller\begin{tabular}{@{}c@{}}%
\phantom{0}\\\phantom{0}\\\phantom{0}
\end{tabular}\endgroup%
}}\right.$}%
\begingroup \smaller\smaller\smaller\begin{tabular}{@{}c@{}}%
-1/8\\\phantom{0}\\\phantom{0}
\end{tabular}\endgroup%
\kern3pt%
\begingroup \smaller\smaller\smaller\begin{tabular}{@{}c@{}}%
\phantom{0}\\5/2\\-1/2
\end{tabular}\endgroup%
\kern3pt%
\begingroup \smaller\smaller\smaller\begin{tabular}{@{}c@{}}%
\phantom{0}\\-1/2\\5/2
\end{tabular}\endgroup%
{$\left.\llap{\phantom{%
\begingroup \smaller\smaller\smaller\begin{tabular}{@{}c@{}}%
\phantom{0}\\\phantom{0}\\\phantom{0}
\end{tabular}\endgroup%
}}\!\right]$}%
{$\left[\!\llap{\phantom{%
\begingroup \smaller\smaller\smaller\begin{tabular}{@{}c@{}}%
0\\0\\0
\end{tabular}\endgroup%
}}\right.$}%
\begingroup \smaller\smaller\smaller\begin{tabular}{@{}c@{}}%
2\\0\\1
\end{tabular}\endgroup%
\kern3pt%
\begingroup \smaller\smaller\smaller\begin{tabular}{@{}c@{}}%
2\\1\\0
\end{tabular}\endgroup%
\kern3pt%
\begingroup \smaller\smaller\smaller\begin{tabular}{@{}c@{}}%
6\\-1\\-2
\end{tabular}\endgroup%
\kern3pt%
\begingroup \smaller\smaller\smaller\begin{tabular}{@{}c@{}}%
2\\-1\\0
\end{tabular}\endgroup%
{$\left.\llap{\phantom{%
\begingroup \smaller\smaller\smaller\begin{tabular}{@{}c@{}}%
0\\0\\0
\end{tabular}\endgroup%
}}\!\right]$}%
}%
\ifdim\wd\matricesbox>\halfwidth\myboxwidth=\hsize\else\myboxwidth=\halfwidth\fi
\vbox{%
\ifdim\myboxwidth=\hsize
\setbox\onelinebox=\hbox{%
\vbox{\hbox{%
$\Pi_{4,49}$ spans $L_{3.1}$%
}\hbox{%
$3622$ (shared)%
}%
}%
\hfill\copy\matricesbox
}%
\ifdim\wd\onelinebox>\myboxwidth
\hbox to \myboxwidth{%
$\Pi_{4,49}$ spans $L_{3.1}$%
\hfil
$3622$ (shared)%
}%
\box\matricesbox
\else
\hbox to \myboxwidth{%
\unhbox\onelinebox
}%
\fi
\else
\hbox to \myboxwidth{%
$\Pi_{4,49}$ spans $L_{3.1}$%
\hfil}%
\hbox to \myboxwidth{%
$3622$ (shared)%
\hfil}%
\box\matricesbox
\fi
}%
\hfill\discretionary{}{}{}%
\setbox\matricesbox=\hbox{%
{$\left[\!\llap{\phantom{%
\begingroup \smaller\smaller\smaller\begin{tabular}{@{}c@{}}%
\phantom{0}\\\phantom{0}\\\phantom{0}
\end{tabular}\endgroup%
}}\right.$}%
\begingroup \smaller\smaller\smaller\begin{tabular}{@{}c@{}}%
-1/9\\\phantom{0}\\\phantom{0}
\end{tabular}\endgroup%
\kern3pt%
\begingroup \smaller\smaller\smaller\begin{tabular}{@{}c@{}}%
\phantom{0}\\4/9\\-2/9
\end{tabular}\endgroup%
\kern3pt%
\begingroup \smaller\smaller\smaller\begin{tabular}{@{}c@{}}%
\phantom{0}\\-2/9\\28/9
\end{tabular}\endgroup%
{$\left.\llap{\phantom{%
\begingroup \smaller\smaller\smaller\begin{tabular}{@{}c@{}}%
\phantom{0}\\\phantom{0}\\\phantom{0}
\end{tabular}\endgroup%
}}\!\right]$}%
{$\left[\!\llap{\phantom{%
\begingroup \smaller\smaller\smaller\begin{tabular}{@{}c@{}}%
0\\0\\0
\end{tabular}\endgroup%
}}\right.$}%
\begingroup \smaller\smaller\smaller\begin{tabular}{@{}c@{}}%
4\\-3\\-1
\end{tabular}\endgroup%
\kern3pt%
\begingroup \smaller\smaller\smaller\begin{tabular}{@{}c@{}}%
4\\-2\\1
\end{tabular}\endgroup%
\kern3pt%
\begingroup \smaller\smaller\smaller\begin{tabular}{@{}c@{}}%
12\\7\\2
\end{tabular}\endgroup%
\kern3pt%
\begingroup \smaller\smaller\smaller\begin{tabular}{@{}c@{}}%
3\\1\\-1
\end{tabular}\endgroup%
{$\left.\llap{\phantom{%
\begingroup \smaller\smaller\smaller\begin{tabular}{@{}c@{}}%
0\\0\\0
\end{tabular}\endgroup%
}}\!\right]$}%
}%
\ifdim\wd\matricesbox>\halfwidth\myboxwidth=\hsize\else\myboxwidth=\halfwidth\fi
\vbox{%
\ifdim\myboxwidth=\hsize
\setbox\onelinebox=\hbox{%
\vbox{\hbox{%
$\Pi_{4,50}$ spans $L_{7.11}$%
}\hbox{%
$36\infty2$ (shared)%
}%
}%
\hfill\copy\matricesbox
}%
\ifdim\wd\onelinebox>\myboxwidth
\hbox to \myboxwidth{%
$\Pi_{4,50}$ spans $L_{7.11}$%
\hfil
$36\infty2$ (shared)%
}%
\box\matricesbox
\else
\hbox to \myboxwidth{%
\unhbox\onelinebox
}%
\fi
\else
\hbox to \myboxwidth{%
$\Pi_{4,50}$ spans $L_{7.11}$%
\hfil}%
\hbox to \myboxwidth{%
$36\infty2$ (shared)%
\hfil}%
\box\matricesbox
\fi
}%
\hfill\discretionary{}{}{}%
\setbox\matricesbox=\hbox{%
{$\left[\!\llap{\phantom{%
\begingroup \smaller\smaller\smaller\begin{tabular}{@{}c@{}}%
\phantom{0}\\\phantom{0}\\\phantom{0}
\end{tabular}\endgroup%
}}\right.$}%
\begingroup \smaller\smaller\smaller\begin{tabular}{@{}c@{}}%
-1/5\\\phantom{0}\\\phantom{0}
\end{tabular}\endgroup%
\kern3pt%
\begingroup \smaller\smaller\smaller\begin{tabular}{@{}c@{}}%
\phantom{0}\\4/5\\-2/5
\end{tabular}\endgroup%
\kern3pt%
\begingroup \smaller\smaller\smaller\begin{tabular}{@{}c@{}}%
\phantom{0}\\-2/5\\6/5
\end{tabular}\endgroup%
{$\left.\llap{\phantom{%
\begingroup \smaller\smaller\smaller\begin{tabular}{@{}c@{}}%
\phantom{0}\\\phantom{0}\\\phantom{0}
\end{tabular}\endgroup%
}}\!\right]$}%
{$\left[\!\llap{\phantom{%
\begingroup \smaller\smaller\smaller\begin{tabular}{@{}c@{}}%
0\\0\\0
\end{tabular}\endgroup%
}}\right.$}%
\begingroup \smaller\smaller\smaller\begin{tabular}{@{}c@{}}%
2\\-1\\1
\end{tabular}\endgroup%
\kern3pt%
\begingroup \smaller\smaller\smaller\begin{tabular}{@{}c@{}}%
4\\3\\2
\end{tabular}\endgroup%
\kern3pt%
\begingroup \smaller\smaller\smaller\begin{tabular}{@{}c@{}}%
1\\0\\-1
\end{tabular}\endgroup%
\kern3pt%
\begingroup \smaller\smaller\smaller\begin{tabular}{@{}c@{}}%
2\\-2\\-1
\end{tabular}\endgroup%
{$\left.\llap{\phantom{%
\begingroup \smaller\smaller\smaller\begin{tabular}{@{}c@{}}%
0\\0\\0
\end{tabular}\endgroup%
}}\!\right]$}%
}%
\ifdim\wd\matricesbox>\halfwidth\myboxwidth=\hsize\else\myboxwidth=\halfwidth\fi
\vbox{%
\ifdim\myboxwidth=\hsize
\setbox\onelinebox=\hbox{%
\vbox{\hbox{%
$\Pi_{4,51}$ spans $L_{1.7}$%
}\hbox{%
$4\infty22$ (shared)%
}%
}%
\hfill\copy\matricesbox
}%
\ifdim\wd\onelinebox>\myboxwidth
\hbox to \myboxwidth{%
$\Pi_{4,51}$ spans $L_{1.7}$%
\hfil
$4\infty22$ (shared)%
}%
\box\matricesbox
\else
\hbox to \myboxwidth{%
\unhbox\onelinebox
}%
\fi
\else
\hbox to \myboxwidth{%
$\Pi_{4,51}$ spans $L_{1.7}$%
\hfil}%
\hbox to \myboxwidth{%
$4\infty22$ (shared)%
\hfil}%
\box\matricesbox
\fi
}%
\hfill\discretionary{}{}{}%
\setbox\matricesbox=\hbox{%
{$\left[\!\llap{\phantom{%
\begingroup \smaller\smaller\smaller\begin{tabular}{@{}c@{}}%
\phantom{0}\\\phantom{0}\\\phantom{0}
\end{tabular}\endgroup%
}}\right.$}%
\begingroup \smaller\smaller\smaller\begin{tabular}{@{}c@{}}%
-3/49\\\phantom{0}\\\phantom{0}
\end{tabular}\endgroup%
\kern3pt%
\begingroup \smaller\smaller\smaller\begin{tabular}{@{}c@{}}%
\phantom{0}\\20/49\\-4/49
\end{tabular}\endgroup%
\kern3pt%
\begingroup \smaller\smaller\smaller\begin{tabular}{@{}c@{}}%
\phantom{0}\\-4/49\\40/49
\end{tabular}\endgroup%
{$\left.\llap{\phantom{%
\begingroup \smaller\smaller\smaller\begin{tabular}{@{}c@{}}%
\phantom{0}\\\phantom{0}\\\phantom{0}
\end{tabular}\endgroup%
}}\!\right]$}%
{$\left[\!\llap{\phantom{%
\begingroup \smaller\smaller\smaller\begin{tabular}{@{}c@{}}%
0\\0\\0
\end{tabular}\endgroup%
}}\right.$}%
\begingroup \smaller\smaller\smaller\begin{tabular}{@{}c@{}}%
4\\3\\-1
\end{tabular}\endgroup%
\kern3pt%
\begingroup \smaller\smaller\smaller\begin{tabular}{@{}c@{}}%
8\\-4\\-3
\end{tabular}\endgroup%
\kern3pt%
\begingroup \smaller\smaller\smaller\begin{tabular}{@{}c@{}}%
16\\-6\\4
\end{tabular}\endgroup%
\kern3pt%
\begingroup \smaller\smaller\smaller\begin{tabular}{@{}c@{}}%
1\\1\\1
\end{tabular}\endgroup%
{$\left.\llap{\phantom{%
\begingroup \smaller\smaller\smaller\begin{tabular}{@{}c@{}}%
0\\0\\0
\end{tabular}\endgroup%
}}\!\right]$}%
}%
\ifdim\wd\matricesbox>\halfwidth\myboxwidth=\hsize\else\myboxwidth=\halfwidth\fi
\vbox{%
\ifdim\myboxwidth=\hsize
\setbox\onelinebox=\hbox{%
\vbox{\hbox{%
$\Pi_{4,52}$ spans $L_{123.9}$%
}\hbox{%
$4422$ (shared)%
}%
}%
\hfill\copy\matricesbox
}%
\ifdim\wd\onelinebox>\myboxwidth
\hbox to \myboxwidth{%
$\Pi_{4,52}$ spans $L_{123.9}$%
\hfil
$4422$ (shared)%
}%
\box\matricesbox
\else
\hbox to \myboxwidth{%
\unhbox\onelinebox
}%
\fi
\else
\hbox to \myboxwidth{%
$\Pi_{4,52}$ spans $L_{123.9}$%
\hfil}%
\hbox to \myboxwidth{%
$4422$ (shared)%
\hfil}%
\box\matricesbox
\fi
}%
\hfill\discretionary{}{}{}%
\setbox\matricesbox=\hbox{%
{$\left[\!\llap{\phantom{%
\begingroup \smaller\smaller\smaller\begin{tabular}{@{}c@{}}%
\phantom{0}\\\phantom{0}\\\phantom{0}
\end{tabular}\endgroup%
}}\right.$}%
\begingroup \smaller\smaller\smaller\begin{tabular}{@{}c@{}}%
-1/8\\\phantom{0}\\\phantom{0}
\end{tabular}\endgroup%
\kern3pt%
\begingroup \smaller\smaller\smaller\begin{tabular}{@{}c@{}}%
\phantom{0}\\9/8\\-3/8
\end{tabular}\endgroup%
\kern3pt%
\begingroup \smaller\smaller\smaller\begin{tabular}{@{}c@{}}%
\phantom{0}\\-3/8\\17/8
\end{tabular}\endgroup%
{$\left.\llap{\phantom{%
\begingroup \smaller\smaller\smaller\begin{tabular}{@{}c@{}}%
\phantom{0}\\\phantom{0}\\\phantom{0}
\end{tabular}\endgroup%
}}\!\right]$}%
{$\left[\!\llap{\phantom{%
\begingroup \smaller\smaller\smaller\begin{tabular}{@{}c@{}}%
0\\0\\0
\end{tabular}\endgroup%
}}\right.$}%
\begingroup \smaller\smaller\smaller\begin{tabular}{@{}c@{}}%
1\\-1\\0
\end{tabular}\endgroup%
\kern3pt%
\begingroup \smaller\smaller\smaller\begin{tabular}{@{}c@{}}%
2\\1\\1
\end{tabular}\endgroup%
\kern3pt%
\begingroup \smaller\smaller\smaller\begin{tabular}{@{}c@{}}%
8\\2\\-2
\end{tabular}\endgroup%
\kern3pt%
\begingroup \smaller\smaller\smaller\begin{tabular}{@{}c@{}}%
9\\-2\\-3
\end{tabular}\endgroup%
{$\left.\llap{\phantom{%
\begingroup \smaller\smaller\smaller\begin{tabular}{@{}c@{}}%
0\\0\\0
\end{tabular}\endgroup%
}}\!\right]$}%
}%
\ifdim\wd\matricesbox>\halfwidth\myboxwidth=\hsize\else\myboxwidth=\halfwidth\fi
\vbox{%
\ifdim\myboxwidth=\hsize
\setbox\onelinebox=\hbox{%
\vbox{\hbox{%
$\Pi_{4,53}$ spans $L_{142.3}$%
}\hbox{%
$4\infty22$ (shared)%
}%
}%
\hfill\copy\matricesbox
}%
\ifdim\wd\onelinebox>\myboxwidth
\hbox to \myboxwidth{%
$\Pi_{4,53}$ spans $L_{142.3}$%
\hfil
$4\infty22$ (shared)%
}%
\box\matricesbox
\else
\hbox to \myboxwidth{%
\unhbox\onelinebox
}%
\fi
\else
\hbox to \myboxwidth{%
$\Pi_{4,53}$ spans $L_{142.3}$%
\hfil}%
\hbox to \myboxwidth{%
$4\infty22$ (shared)%
\hfil}%
\box\matricesbox
\fi
}%
\hfill\discretionary{}{}{}%
\setbox\matricesbox=\hbox{%
{$\left[\!\llap{\phantom{%
\begingroup \smaller\smaller\smaller\begin{tabular}{@{}c@{}}%
\phantom{0}\\\phantom{0}\\\phantom{0}
\end{tabular}\endgroup%
}}\right.$}%
\begingroup \smaller\smaller\smaller\begin{tabular}{@{}c@{}}%
-1/9\\\phantom{0}\\\phantom{0}
\end{tabular}\endgroup%
\kern3pt%
\begingroup \smaller\smaller\smaller\begin{tabular}{@{}c@{}}%
\phantom{0}\\4/9\\-2/9
\end{tabular}\endgroup%
\kern3pt%
\begingroup \smaller\smaller\smaller\begin{tabular}{@{}c@{}}%
\phantom{0}\\-2/9\\28/9
\end{tabular}\endgroup%
{$\left.\llap{\phantom{%
\begingroup \smaller\smaller\smaller\begin{tabular}{@{}c@{}}%
\phantom{0}\\\phantom{0}\\\phantom{0}
\end{tabular}\endgroup%
}}\!\right]$}%
{$\left[\!\llap{\phantom{%
\begingroup \smaller\smaller\smaller\begin{tabular}{@{}c@{}}%
0\\0\\0
\end{tabular}\endgroup%
}}\right.$}%
\begingroup \smaller\smaller\smaller\begin{tabular}{@{}c@{}}%
4\\-2\\1
\end{tabular}\endgroup%
\kern3pt%
\begingroup \smaller\smaller\smaller\begin{tabular}{@{}c@{}}%
12\\7\\2
\end{tabular}\endgroup%
\kern3pt%
\begingroup \smaller\smaller\smaller\begin{tabular}{@{}c@{}}%
3\\1\\-1
\end{tabular}\endgroup%
\kern3pt%
\begingroup \smaller\smaller\smaller\begin{tabular}{@{}c@{}}%
12\\-8\\-1
\end{tabular}\endgroup%
{$\left.\llap{\phantom{%
\begingroup \smaller\smaller\smaller\begin{tabular}{@{}c@{}}%
0\\0\\0
\end{tabular}\endgroup%
}}\!\right]$}%
}%
\ifdim\wd\matricesbox>\halfwidth\myboxwidth=\hsize\else\myboxwidth=\halfwidth\fi
\vbox{%
\ifdim\myboxwidth=\hsize
\setbox\onelinebox=\hbox{%
\vbox{\hbox{%
$\Pi_{4,54}$ spans $L_{7.11}$%
}\hbox{%
$6\infty\infty2$%
}%
}%
\hfill\copy\matricesbox
}%
\ifdim\wd\onelinebox>\myboxwidth
\hbox to \myboxwidth{%
$\Pi_{4,54}$ spans $L_{7.11}$%
\hfil
$6\infty\infty2$%
}%
\box\matricesbox
\else
\hbox to \myboxwidth{%
\unhbox\onelinebox
}%
\fi
\else
\hbox to \myboxwidth{%
$\Pi_{4,54}$ spans $L_{7.11}$%
\hfil}%
\hbox to \myboxwidth{%
$6\infty\infty2$%
\hfil}%
\box\matricesbox
\fi
}%
\hfill\discretionary{}{}{}%
\setbox\matricesbox=\hbox{%
{$\left[\!\llap{\phantom{%
\begingroup \smaller\smaller\smaller\begin{tabular}{@{}c@{}}%
\phantom{0}\\\phantom{0}\\\phantom{0}
\end{tabular}\endgroup%
}}\right.$}%
\begingroup \smaller\smaller\smaller\begin{tabular}{@{}c@{}}%
-1/2\\\phantom{0}\\\phantom{0}
\end{tabular}\endgroup%
\kern3pt%
\begingroup \smaller\smaller\smaller\begin{tabular}{@{}c@{}}%
\phantom{0}\\3/2\\-1/2
\end{tabular}\endgroup%
\kern3pt%
\begingroup \smaller\smaller\smaller\begin{tabular}{@{}c@{}}%
\phantom{0}\\-1/2\\3/2
\end{tabular}\endgroup%
{$\left.\llap{\phantom{%
\begingroup \smaller\smaller\smaller\begin{tabular}{@{}c@{}}%
\phantom{0}\\\phantom{0}\\\phantom{0}
\end{tabular}\endgroup%
}}\!\right]$}%
{$\left[\!\llap{\phantom{%
\begingroup \smaller\smaller\smaller\begin{tabular}{@{}c@{}}%
0\\0\\0
\end{tabular}\endgroup%
}}\right.$}%
\begingroup \smaller\smaller\smaller\begin{tabular}{@{}c@{}}%
1\\-1\\0
\end{tabular}\endgroup%
\kern3pt%
\begingroup \smaller\smaller\smaller\begin{tabular}{@{}c@{}}%
4\\1\\3
\end{tabular}\endgroup%
\kern3pt%
\begingroup \smaller\smaller\smaller\begin{tabular}{@{}c@{}}%
1\\1\\0
\end{tabular}\endgroup%
\kern3pt%
\begingroup \smaller\smaller\smaller\begin{tabular}{@{}c@{}}%
1\\0\\-1
\end{tabular}\endgroup%
{$\left.\llap{\phantom{%
\begingroup \smaller\smaller\smaller\begin{tabular}{@{}c@{}}%
0\\0\\0
\end{tabular}\endgroup%
}}\!\right]$}%
}%
\ifdim\wd\matricesbox>\halfwidth\myboxwidth=\hsize\else\myboxwidth=\halfwidth\fi
\vbox{%
\ifdim\myboxwidth=\hsize
\setbox\onelinebox=\hbox{%
\vbox{\hbox{%
$\Pi_{4,55}=\hbox{GN}_{28}$ spans $L_{1.6}$%
}\hbox{%
$\infty\infty2\infty$ (shared)%
}%
}%
\hfill\copy\matricesbox
}%
\ifdim\wd\onelinebox>\myboxwidth
\hbox to \myboxwidth{%
$\Pi_{4,55}=\hbox{GN}_{28}$ spans $L_{1.6}$%
\hfil
$\infty\infty2\infty$ (shared)%
}%
\box\matricesbox
\else
\hbox to \myboxwidth{%
\unhbox\onelinebox
}%
\fi
\else
\hbox to \myboxwidth{%
$\Pi_{4,55}=\hbox{GN}_{28}$ spans $L_{1.6}$%
\hfil}%
\hbox to \myboxwidth{%
$\infty\infty2\infty$ (shared)%
\hfil}%
\box\matricesbox
\fi
}%
\hfill\discretionary{}{}{}%
\setbox\matricesbox=\hbox{%
{$\left[\!\llap{\phantom{%
\begingroup \smaller\smaller\smaller\begin{tabular}{@{}c@{}}%
\phantom{0}\\\phantom{0}\\\phantom{0}
\end{tabular}\endgroup%
}}\right.$}%
\begingroup \smaller\smaller\smaller\begin{tabular}{@{}c@{}}%
-1/9\\\phantom{0}\\\phantom{0}
\end{tabular}\endgroup%
\kern3pt%
\begingroup \smaller\smaller\smaller\begin{tabular}{@{}c@{}}%
\phantom{0}\\4/9\\-2/9
\end{tabular}\endgroup%
\kern3pt%
\begingroup \smaller\smaller\smaller\begin{tabular}{@{}c@{}}%
\phantom{0}\\-2/9\\10/9
\end{tabular}\endgroup%
{$\left.\llap{\phantom{%
\begingroup \smaller\smaller\smaller\begin{tabular}{@{}c@{}}%
\phantom{0}\\\phantom{0}\\\phantom{0}
\end{tabular}\endgroup%
}}\!\right]$}%
{$\left[\!\llap{\phantom{%
\begingroup \smaller\smaller\smaller\begin{tabular}{@{}c@{}}%
0\\0\\0
\end{tabular}\endgroup%
}}\right.$}%
\begingroup \smaller\smaller\smaller\begin{tabular}{@{}c@{}}%
8\\6\\2
\end{tabular}\endgroup%
\kern3pt%
\begingroup \smaller\smaller\smaller\begin{tabular}{@{}c@{}}%
4\\-1\\2
\end{tabular}\endgroup%
\kern3pt%
\begingroup \smaller\smaller\smaller\begin{tabular}{@{}c@{}}%
1\\-1\\-1
\end{tabular}\endgroup%
\kern3pt%
\begingroup \smaller\smaller\smaller\begin{tabular}{@{}c@{}}%
8\\4\\-2
\end{tabular}\endgroup%
{$\left.\llap{\phantom{%
\begingroup \smaller\smaller\smaller\begin{tabular}{@{}c@{}}%
0\\0\\0
\end{tabular}\endgroup%
}}\!\right]$}%
}%
\ifdim\wd\matricesbox>\halfwidth\myboxwidth=\hsize\else\myboxwidth=\halfwidth\fi
\vbox{%
\ifdim\myboxwidth=\hsize
\setbox\onelinebox=\hbox{%
\vbox{\hbox{%
$\Pi_{4,56}$ spans $L_{1.9}$%
}\hbox{%
$4\infty22$ (shared)%
}%
}%
\hfill\copy\matricesbox
}%
\ifdim\wd\onelinebox>\myboxwidth
\hbox to \myboxwidth{%
$\Pi_{4,56}$ spans $L_{1.9}$%
\hfil
$4\infty22$ (shared)%
}%
\box\matricesbox
\else
\hbox to \myboxwidth{%
\unhbox\onelinebox
}%
\fi
\else
\hbox to \myboxwidth{%
$\Pi_{4,56}$ spans $L_{1.9}$%
\hfil}%
\hbox to \myboxwidth{%
$4\infty22$ (shared)%
\hfil}%
\box\matricesbox
\fi
}%
\hfill\discretionary{}{}{}%
\setbox\matricesbox=\hbox{%
{$\left[\!\llap{\phantom{%
\begingroup \smaller\smaller\smaller\begin{tabular}{@{}c@{}}%
\phantom{0}\\\phantom{0}\\\phantom{0}
\end{tabular}\endgroup%
}}\right.$}%
\begingroup \smaller\smaller\smaller\begin{tabular}{@{}c@{}}%
-1/12\\\phantom{0}\\\phantom{0}
\end{tabular}\endgroup%
\kern3pt%
\begingroup \smaller\smaller\smaller\begin{tabular}{@{}c@{}}%
\phantom{0}\\13/12\\-5/12
\end{tabular}\endgroup%
\kern3pt%
\begingroup \smaller\smaller\smaller\begin{tabular}{@{}c@{}}%
\phantom{0}\\-5/12\\13/12
\end{tabular}\endgroup%
{$\left.\llap{\phantom{%
\begingroup \smaller\smaller\smaller\begin{tabular}{@{}c@{}}%
\phantom{0}\\\phantom{0}\\\phantom{0}
\end{tabular}\endgroup%
}}\!\right]$}%
{$\left[\!\llap{\phantom{%
\begingroup \smaller\smaller\smaller\begin{tabular}{@{}c@{}}%
0\\0\\0
\end{tabular}\endgroup%
}}\right.$}%
\begingroup \smaller\smaller\smaller\begin{tabular}{@{}c@{}}%
3\\-2\\-1
\end{tabular}\endgroup%
\kern3pt%
\begingroup \smaller\smaller\smaller\begin{tabular}{@{}c@{}}%
12\\1\\5
\end{tabular}\endgroup%
\kern3pt%
\begingroup \smaller\smaller\smaller\begin{tabular}{@{}c@{}}%
12\\5\\1
\end{tabular}\endgroup%
\kern3pt%
\begingroup \smaller\smaller\smaller\begin{tabular}{@{}c@{}}%
1\\0\\-1
\end{tabular}\endgroup%
{$\left.\llap{\phantom{%
\begingroup \smaller\smaller\smaller\begin{tabular}{@{}c@{}}%
0\\0\\0
\end{tabular}\endgroup%
}}\!\right]$}%
}%
\ifdim\wd\matricesbox>\halfwidth\myboxwidth=\hsize\else\myboxwidth=\halfwidth\fi
\vbox{%
\ifdim\myboxwidth=\hsize
\setbox\onelinebox=\hbox{%
\vbox{\hbox{%
$\Pi_{4,57}$ spans $L_{144.8}$%
}\hbox{%
$\infty\infty22$ (shared)%
}%
}%
\hfill\copy\matricesbox
}%
\ifdim\wd\onelinebox>\myboxwidth
\hbox to \myboxwidth{%
$\Pi_{4,57}$ spans $L_{144.8}$%
\hfil
$\infty\infty22$ (shared)%
}%
\box\matricesbox
\else
\hbox to \myboxwidth{%
\unhbox\onelinebox
}%
\fi
\else
\hbox to \myboxwidth{%
$\Pi_{4,57}$ spans $L_{144.8}$%
\hfil}%
\hbox to \myboxwidth{%
$\infty\infty22$ (shared)%
\hfil}%
\box\matricesbox
\fi
}%
\hfill\discretionary{}{}{}%
\setbox\matricesbox=\hbox{%
{$\left[\!\llap{\phantom{%
\begingroup \smaller\smaller\smaller\begin{tabular}{@{}c@{}}%
\phantom{0}\\\phantom{0}\\\phantom{0}
\end{tabular}\endgroup%
}}\right.$}%
\begingroup \smaller\smaller\smaller\begin{tabular}{@{}c@{}}%
-1/4\\\phantom{0}\\\phantom{0}
\end{tabular}\endgroup%
\kern3pt%
\begingroup \smaller\smaller\smaller\begin{tabular}{@{}c@{}}%
\phantom{0}\\1\\-1/2
\end{tabular}\endgroup%
\kern3pt%
\begingroup \smaller\smaller\smaller\begin{tabular}{@{}c@{}}%
\phantom{0}\\-1/2\\5/4
\end{tabular}\endgroup%
{$\left.\llap{\phantom{%
\begingroup \smaller\smaller\smaller\begin{tabular}{@{}c@{}}%
\phantom{0}\\\phantom{0}\\\phantom{0}
\end{tabular}\endgroup%
}}\!\right]$}%
{$\left[\!\llap{\phantom{%
\begingroup \smaller\smaller\smaller\begin{tabular}{@{}c@{}}%
0\\0\\0
\end{tabular}\endgroup%
}}\right.$}%
\begingroup \smaller\smaller\smaller\begin{tabular}{@{}c@{}}%
4\\3\\2
\end{tabular}\endgroup%
\kern3pt%
\begingroup \smaller\smaller\smaller\begin{tabular}{@{}c@{}}%
4\\-1\\2
\end{tabular}\endgroup%
\kern3pt%
\begingroup \smaller\smaller\smaller\begin{tabular}{@{}c@{}}%
1\\-1\\-1
\end{tabular}\endgroup%
\kern3pt%
\begingroup \smaller\smaller\smaller\begin{tabular}{@{}c@{}}%
4\\1\\-2
\end{tabular}\endgroup%
{$\left.\llap{\phantom{%
\begingroup \smaller\smaller\smaller\begin{tabular}{@{}c@{}}%
0\\0\\0
\end{tabular}\endgroup%
}}\!\right]$}%
}%
\ifdim\wd\matricesbox>\halfwidth\myboxwidth=\hsize\else\myboxwidth=\halfwidth\fi
\vbox{%
\ifdim\myboxwidth=\hsize
\setbox\onelinebox=\hbox{%
\vbox{\hbox{%
$\Pi_{4,58}=\hbox{GN}_{24}$ spans $L_{140.3}$%
}\hbox{%
$\infty\infty2\infty$ (shared)%
}%
}%
\hfill\copy\matricesbox
}%
\ifdim\wd\onelinebox>\myboxwidth
\hbox to \myboxwidth{%
$\Pi_{4,58}=\hbox{GN}_{24}$ spans $L_{140.3}$%
\hfil
$\infty\infty2\infty$ (shared)%
}%
\box\matricesbox
\else
\hbox to \myboxwidth{%
\unhbox\onelinebox
}%
\fi
\else
\hbox to \myboxwidth{%
$\Pi_{4,58}=\hbox{GN}_{24}$ spans $L_{140.3}$%
\hfil}%
\hbox to \myboxwidth{%
$\infty\infty2\infty$ (shared)%
\hfil}%
\box\matricesbox
\fi
}%
\hfill\discretionary{}{}{}%
\setbox\matricesbox=\hbox{%
{$\left[\!\llap{\phantom{%
\begingroup \smaller\smaller\smaller\begin{tabular}{@{}c@{}}%
\phantom{0}\\\phantom{0}\\\phantom{0}
\end{tabular}\endgroup%
}}\right.$}%
\begingroup \smaller\smaller\smaller\begin{tabular}{@{}c@{}}%
-1/8\\\phantom{0}\\\phantom{0}
\end{tabular}\endgroup%
\kern3pt%
\begingroup \smaller\smaller\smaller\begin{tabular}{@{}c@{}}%
\phantom{0}\\1/2\\-1/4
\end{tabular}\endgroup%
\kern3pt%
\begingroup \smaller\smaller\smaller\begin{tabular}{@{}c@{}}%
\phantom{0}\\-1/4\\9/8
\end{tabular}\endgroup%
{$\left.\llap{\phantom{%
\begingroup \smaller\smaller\smaller\begin{tabular}{@{}c@{}}%
\phantom{0}\\\phantom{0}\\\phantom{0}
\end{tabular}\endgroup%
}}\!\right]$}%
{$\left[\!\llap{\phantom{%
\begingroup \smaller\smaller\smaller\begin{tabular}{@{}c@{}}%
0\\0\\0
\end{tabular}\endgroup%
}}\right.$}%
\begingroup \smaller\smaller\smaller\begin{tabular}{@{}c@{}}%
16\\6\\-4
\end{tabular}\endgroup%
\kern3pt%
\begingroup \smaller\smaller\smaller\begin{tabular}{@{}c@{}}%
4\\-3\\-2
\end{tabular}\endgroup%
\kern3pt%
\begingroup \smaller\smaller\smaller\begin{tabular}{@{}c@{}}%
1\\0\\1
\end{tabular}\endgroup%
\kern3pt%
\begingroup \smaller\smaller\smaller\begin{tabular}{@{}c@{}}%
16\\10\\4
\end{tabular}\endgroup%
{$\left.\llap{\phantom{%
\begingroup \smaller\smaller\smaller\begin{tabular}{@{}c@{}}%
0\\0\\0
\end{tabular}\endgroup%
}}\!\right]$}%
}%
\ifdim\wd\matricesbox>\halfwidth\myboxwidth=\hsize\else\myboxwidth=\halfwidth\fi
\vbox{%
\ifdim\myboxwidth=\hsize
\setbox\onelinebox=\hbox{%
\vbox{\hbox{%
$\Pi_{4,59}=\hbox{GN}_{16}$ spans $L_{140.4}$%
}\hbox{%
$\infty\infty2\infty$ (shared)%
}%
}%
\hfill\copy\matricesbox
}%
\ifdim\wd\onelinebox>\myboxwidth
\hbox to \myboxwidth{%
$\Pi_{4,59}=\hbox{GN}_{16}$ spans $L_{140.4}$%
\hfil
$\infty\infty2\infty$ (shared)%
}%
\box\matricesbox
\else
\hbox to \myboxwidth{%
\unhbox\onelinebox
}%
\fi
\else
\hbox to \myboxwidth{%
$\Pi_{4,59}=\hbox{GN}_{16}$ spans $L_{140.4}$%
\hfil}%
\hbox to \myboxwidth{%
$\infty\infty2\infty$ (shared)%
\hfil}%
\box\matricesbox
\fi
}%
\hfill\discretionary{}{}{}%
\setbox\matricesbox=\hbox{%
{$\left[\!\llap{\phantom{%
\begingroup \smaller\smaller\smaller\begin{tabular}{@{}c@{}}%
\phantom{0}\\\phantom{0}\\\phantom{0}
\end{tabular}\endgroup%
}}\right.$}%
\begingroup \smaller\smaller\smaller\begin{tabular}{@{}c@{}}%
-1/8\\\phantom{0}\\\phantom{0}
\end{tabular}\endgroup%
\kern3pt%
\begingroup \smaller\smaller\smaller\begin{tabular}{@{}c@{}}%
\phantom{0}\\9/8\\-3/8
\end{tabular}\endgroup%
\kern3pt%
\begingroup \smaller\smaller\smaller\begin{tabular}{@{}c@{}}%
\phantom{0}\\-3/8\\33/8
\end{tabular}\endgroup%
{$\left.\llap{\phantom{%
\begingroup \smaller\smaller\smaller\begin{tabular}{@{}c@{}}%
\phantom{0}\\\phantom{0}\\\phantom{0}
\end{tabular}\endgroup%
}}\!\right]$}%
{$\left[\!\llap{\phantom{%
\begingroup \smaller\smaller\smaller\begin{tabular}{@{}c@{}}%
0\\0\\0
\end{tabular}\endgroup%
}}\right.$}%
\begingroup \smaller\smaller\smaller\begin{tabular}{@{}c@{}}%
1\\1\\0
\end{tabular}\endgroup%
\kern3pt%
\begingroup \smaller\smaller\smaller\begin{tabular}{@{}c@{}}%
4\\-1\\1
\end{tabular}\endgroup%
\kern3pt%
\begingroup \smaller\smaller\smaller\begin{tabular}{@{}c@{}}%
16\\-6\\-2
\end{tabular}\endgroup%
\kern3pt%
\begingroup \smaller\smaller\smaller\begin{tabular}{@{}c@{}}%
3\\0\\-1
\end{tabular}\endgroup%
{$\left.\llap{\phantom{%
\begingroup \smaller\smaller\smaller\begin{tabular}{@{}c@{}}%
0\\0\\0
\end{tabular}\endgroup%
}}\!\right]$}%
}%
\ifdim\wd\matricesbox>\halfwidth\myboxwidth=\hsize\else\myboxwidth=\halfwidth\fi
\vbox{%
\ifdim\myboxwidth=\hsize
\setbox\onelinebox=\hbox{%
\vbox{\hbox{%
$\Pi_{4,60}$ spans $L_{144.5}$%
}\hbox{%
$\infty\infty22$ (shared)%
}%
}%
\hfill\copy\matricesbox
}%
\ifdim\wd\onelinebox>\myboxwidth
\hbox to \myboxwidth{%
$\Pi_{4,60}$ spans $L_{144.5}$%
\hfil
$\infty\infty22$ (shared)%
}%
\box\matricesbox
\else
\hbox to \myboxwidth{%
\unhbox\onelinebox
}%
\fi
\else
\hbox to \myboxwidth{%
$\Pi_{4,60}$ spans $L_{144.5}$%
\hfil}%
\hbox to \myboxwidth{%
$\infty\infty22$ (shared)%
\hfil}%
\box\matricesbox
\fi
}%
\hfill\discretionary{}{}{}%
\setbox\matricesbox=\hbox{%
{$\left[\!\llap{\phantom{%
\begingroup \smaller\smaller\smaller\begin{tabular}{@{}c@{}}%
\phantom{0}\\\phantom{0}\\\phantom{0}
\end{tabular}\endgroup%
}}\right.$}%
\begingroup \smaller\smaller\smaller\begin{tabular}{@{}c@{}}%
-1/9\\\phantom{0}\\\phantom{0}
\end{tabular}\endgroup%
\kern3pt%
\begingroup \smaller\smaller\smaller\begin{tabular}{@{}c@{}}%
\phantom{0}\\4/9\\-2/9
\end{tabular}\endgroup%
\kern3pt%
\begingroup \smaller\smaller\smaller\begin{tabular}{@{}c@{}}%
\phantom{0}\\-2/9\\28/9
\end{tabular}\endgroup%
{$\left.\llap{\phantom{%
\begingroup \smaller\smaller\smaller\begin{tabular}{@{}c@{}}%
\phantom{0}\\\phantom{0}\\\phantom{0}
\end{tabular}\endgroup%
}}\!\right]$}%
{$\left[\!\llap{\phantom{%
\begingroup \smaller\smaller\smaller\begin{tabular}{@{}c@{}}%
0\\0\\0
\end{tabular}\endgroup%
}}\right.$}%
\begingroup \smaller\smaller\smaller\begin{tabular}{@{}c@{}}%
3\\1\\-1
\end{tabular}\endgroup%
\kern3pt%
\begingroup \smaller\smaller\smaller\begin{tabular}{@{}c@{}}%
3\\2\\1
\end{tabular}\endgroup%
\kern3pt%
\begingroup \smaller\smaller\smaller\begin{tabular}{@{}c@{}}%
12\\-7\\1
\end{tabular}\endgroup%
\kern3pt%
\begingroup \smaller\smaller\smaller\begin{tabular}{@{}c@{}}%
4\\-3\\-1
\end{tabular}\endgroup%
{$\left.\llap{\phantom{%
\begingroup \smaller\smaller\smaller\begin{tabular}{@{}c@{}}%
0\\0\\0
\end{tabular}\endgroup%
}}\!\right]$}%
}%
\ifdim\wd\matricesbox>\halfwidth\myboxwidth=\hsize\else\myboxwidth=\halfwidth\fi
\vbox{%
\ifdim\myboxwidth=\hsize
\setbox\onelinebox=\hbox{%
\vbox{\hbox{%
$\Pi_{4,61}$ spans $L_{7.11}$%
}\hbox{%
$\infty\infty22$ (shared)%
}%
}%
\hfill\copy\matricesbox
}%
\ifdim\wd\onelinebox>\myboxwidth
\hbox to \myboxwidth{%
$\Pi_{4,61}$ spans $L_{7.11}$%
\hfil
$\infty\infty22$ (shared)%
}%
\box\matricesbox
\else
\hbox to \myboxwidth{%
\unhbox\onelinebox
}%
\fi
\else
\hbox to \myboxwidth{%
$\Pi_{4,61}$ spans $L_{7.11}$%
\hfil}%
\hbox to \myboxwidth{%
$\infty\infty22$ (shared)%
\hfil}%
\box\matricesbox
\fi
}%
\hfill\discretionary{}{}{}%
\setbox\matricesbox=\hbox{%
{$\left[\!\llap{\phantom{%
\begingroup \smaller\smaller\smaller\begin{tabular}{@{}c@{}}%
\phantom{0}\\\phantom{0}\\\phantom{0}
\end{tabular}\endgroup%
}}\right.$}%
\begingroup \smaller\smaller\smaller\begin{tabular}{@{}c@{}}%
-1/16\\\phantom{0}\\\phantom{0}
\end{tabular}\endgroup%
\kern3pt%
\begingroup \smaller\smaller\smaller\begin{tabular}{@{}c@{}}%
\phantom{0}\\9/16\\-1/4
\end{tabular}\endgroup%
\kern3pt%
\begingroup \smaller\smaller\smaller\begin{tabular}{@{}c@{}}%
\phantom{0}\\-1/4\\1
\end{tabular}\endgroup%
{$\left.\llap{\phantom{%
\begingroup \smaller\smaller\smaller\begin{tabular}{@{}c@{}}%
\phantom{0}\\\phantom{0}\\\phantom{0}
\end{tabular}\endgroup%
}}\!\right]$}%
{$\left[\!\llap{\phantom{%
\begingroup \smaller\smaller\smaller\begin{tabular}{@{}c@{}}%
0\\0\\0
\end{tabular}\endgroup%
}}\right.$}%
\begingroup \smaller\smaller\smaller\begin{tabular}{@{}c@{}}%
8\\-4\\1
\end{tabular}\endgroup%
\kern3pt%
\begingroup \smaller\smaller\smaller\begin{tabular}{@{}c@{}}%
8\\4\\3
\end{tabular}\endgroup%
\kern3pt%
\begingroup \smaller\smaller\smaller\begin{tabular}{@{}c@{}}%
32\\8\\-6
\end{tabular}\endgroup%
\kern3pt%
\begingroup \smaller\smaller\smaller\begin{tabular}{@{}c@{}}%
1\\-1\\-1
\end{tabular}\endgroup%
{$\left.\llap{\phantom{%
\begingroup \smaller\smaller\smaller\begin{tabular}{@{}c@{}}%
0\\0\\0
\end{tabular}\endgroup%
}}\!\right]$}%
}%
\ifdim\wd\matricesbox>\halfwidth\myboxwidth=\hsize\else\myboxwidth=\halfwidth\fi
\vbox{%
\ifdim\myboxwidth=\hsize
\setbox\onelinebox=\hbox{%
\vbox{\hbox{%
$\Pi_{4,62}$ spans $L_{141.10}$%
}\hbox{%
$\infty\infty22$ (shared)%
}%
}%
\hfill\copy\matricesbox
}%
\ifdim\wd\onelinebox>\myboxwidth
\hbox to \myboxwidth{%
$\Pi_{4,62}$ spans $L_{141.10}$%
\hfil
$\infty\infty22$ (shared)%
}%
\box\matricesbox
\else
\hbox to \myboxwidth{%
\unhbox\onelinebox
}%
\fi
\else
\hbox to \myboxwidth{%
$\Pi_{4,62}$ spans $L_{141.10}$%
\hfil}%
\hbox to \myboxwidth{%
$\infty\infty22$ (shared)%
\hfil}%
\box\matricesbox
\fi
}%
\hfill\discretionary{}{}{}%
\setbox\matricesbox=\hbox{%
{$\left[\!\llap{\phantom{%
\begingroup \smaller\smaller\smaller\begin{tabular}{@{}c@{}}%
\phantom{0}\\\phantom{0}\\\phantom{0}
\end{tabular}\endgroup%
}}\right.$}%
\begingroup \smaller\smaller\smaller\begin{tabular}{@{}c@{}}%
-1/68\\\phantom{0}\\\phantom{0}
\end{tabular}\endgroup%
\kern3pt%
\begingroup \smaller\smaller\smaller\begin{tabular}{@{}c@{}}%
\phantom{0}\\15/17\\-5/17
\end{tabular}\endgroup%
\kern3pt%
\begingroup \smaller\smaller\smaller\begin{tabular}{@{}c@{}}%
\phantom{0}\\-5/17\\30/17
\end{tabular}\endgroup%
{$\left.\llap{\phantom{%
\begingroup \smaller\smaller\smaller\begin{tabular}{@{}c@{}}%
\phantom{0}\\\phantom{0}\\\phantom{0}
\end{tabular}\endgroup%
}}\!\right]$}%
{$\left[\!\llap{\phantom{%
\begingroup \smaller\smaller\smaller\begin{tabular}{@{}c@{}}%
0\\0\\0
\end{tabular}\endgroup%
}}\right.$}%
\begingroup \smaller\smaller\smaller\begin{tabular}{@{}c@{}}%
10\\3\\-1
\end{tabular}\endgroup%
\kern3pt%
\begingroup \smaller\smaller\smaller\begin{tabular}{@{}c@{}}%
10\\-3\\-2
\end{tabular}\endgroup%
\kern3pt%
\begingroup \smaller\smaller\smaller\begin{tabular}{@{}c@{}}%
20\\-4\\2
\end{tabular}\endgroup%
\kern3pt%
\begingroup \smaller\smaller\smaller\begin{tabular}{@{}c@{}}%
2\\1\\1
\end{tabular}\endgroup%
{$\left.\llap{\phantom{%
\begingroup \smaller\smaller\smaller\begin{tabular}{@{}c@{}}%
0\\0\\0
\end{tabular}\endgroup%
}}\!\right]$}%
}%
\ifdim\wd\matricesbox>\halfwidth\myboxwidth=\hsize\else\myboxwidth=\halfwidth\fi
\vbox{%
\ifdim\myboxwidth=\hsize
\setbox\onelinebox=\hbox{%
\vbox{\hbox{%
$\Pi_{4,63}$ spans $L_{6.5}$%
}\hbox{%
$3222$%
}%
}%
\hfill\copy\matricesbox
}%
\ifdim\wd\onelinebox>\myboxwidth
\hbox to \myboxwidth{%
$\Pi_{4,63}$ spans $L_{6.5}$%
\hfil
$3222$%
}%
\box\matricesbox
\else
\hbox to \myboxwidth{%
\unhbox\onelinebox
}%
\fi
\else
\hbox to \myboxwidth{%
$\Pi_{4,63}$ spans $L_{6.5}$%
\hfil}%
\hbox to \myboxwidth{%
$3222$%
\hfil}%
\box\matricesbox
\fi
}%
\hfill\discretionary{}{}{}%
\setbox\matricesbox=\hbox{%
{$\left[\!\llap{\phantom{%
\begingroup \smaller\smaller\smaller\begin{tabular}{@{}c@{}}%
\phantom{0}\\\phantom{0}\\\phantom{0}
\end{tabular}\endgroup%
}}\right.$}%
\begingroup \smaller\smaller\smaller\begin{tabular}{@{}c@{}}%
-1/40\\\phantom{0}\\\phantom{0}
\end{tabular}\endgroup%
\kern3pt%
\begingroup \smaller\smaller\smaller\begin{tabular}{@{}c@{}}%
\phantom{0}\\21/10\\-3/10
\end{tabular}\endgroup%
\kern3pt%
\begingroup \smaller\smaller\smaller\begin{tabular}{@{}c@{}}%
\phantom{0}\\-3/10\\29/10
\end{tabular}\endgroup%
{$\left.\llap{\phantom{%
\begingroup \smaller\smaller\smaller\begin{tabular}{@{}c@{}}%
\phantom{0}\\\phantom{0}\\\phantom{0}
\end{tabular}\endgroup%
}}\!\right]$}%
{$\left[\!\llap{\phantom{%
\begingroup \smaller\smaller\smaller\begin{tabular}{@{}c@{}}%
0\\0\\0
\end{tabular}\endgroup%
}}\right.$}%
\begingroup \smaller\smaller\smaller\begin{tabular}{@{}c@{}}%
2\\1\\0
\end{tabular}\endgroup%
\kern3pt%
\begingroup \smaller\smaller\smaller\begin{tabular}{@{}c@{}}%
4\\-1\\-1
\end{tabular}\endgroup%
\kern3pt%
\begingroup \smaller\smaller\smaller\begin{tabular}{@{}c@{}}%
10\\-2\\1
\end{tabular}\endgroup%
\kern3pt%
\begingroup \smaller\smaller\smaller\begin{tabular}{@{}c@{}}%
48\\2\\6
\end{tabular}\endgroup%
{$\left.\llap{\phantom{%
\begingroup \smaller\smaller\smaller\begin{tabular}{@{}c@{}}%
0\\0\\0
\end{tabular}\endgroup%
}}\!\right]$}%
}%
\ifdim\wd\matricesbox>\halfwidth\myboxwidth=\hsize\else\myboxwidth=\halfwidth\fi
\vbox{%
\ifdim\myboxwidth=\hsize
\setbox\onelinebox=\hbox{%
\vbox{\hbox{%
$\Pi_{4,64}$ spans $L_{19.1}$%
}\hbox{%
$4222$ (shared)%
}%
}%
\hfill\copy\matricesbox
}%
\ifdim\wd\onelinebox>\myboxwidth
\hbox to \myboxwidth{%
$\Pi_{4,64}$ spans $L_{19.1}$%
\hfil
$4222$ (shared)%
}%
\box\matricesbox
\else
\hbox to \myboxwidth{%
\unhbox\onelinebox
}%
\fi
\else
\hbox to \myboxwidth{%
$\Pi_{4,64}$ spans $L_{19.1}$%
\hfil}%
\hbox to \myboxwidth{%
$4222$ (shared)%
\hfil}%
\box\matricesbox
\fi
}%
\hfill\discretionary{}{}{}%
\setbox\matricesbox=\hbox{%
{$\left[\!\llap{\phantom{%
\begingroup \smaller\smaller\smaller\begin{tabular}{@{}c@{}}%
\phantom{0}\\\phantom{0}\\\phantom{0}
\end{tabular}\endgroup%
}}\right.$}%
\begingroup \smaller\smaller\smaller\begin{tabular}{@{}c@{}}%
-1/14\\\phantom{0}\\\phantom{0}
\end{tabular}\endgroup%
\kern3pt%
\begingroup \smaller\smaller\smaller\begin{tabular}{@{}c@{}}%
\phantom{0}\\4/7\\-1/7
\end{tabular}\endgroup%
\kern3pt%
\begingroup \smaller\smaller\smaller\begin{tabular}{@{}c@{}}%
\phantom{0}\\-1/7\\11/14
\end{tabular}\endgroup%
{$\left.\llap{\phantom{%
\begingroup \smaller\smaller\smaller\begin{tabular}{@{}c@{}}%
\phantom{0}\\\phantom{0}\\\phantom{0}
\end{tabular}\endgroup%
}}\!\right]$}%
{$\left[\!\llap{\phantom{%
\begingroup \smaller\smaller\smaller\begin{tabular}{@{}c@{}}%
0\\0\\0
\end{tabular}\endgroup%
}}\right.$}%
\begingroup \smaller\smaller\smaller\begin{tabular}{@{}c@{}}%
1\\1\\1
\end{tabular}\endgroup%
\kern3pt%
\begingroup \smaller\smaller\smaller\begin{tabular}{@{}c@{}}%
2\\-2\\0
\end{tabular}\endgroup%
\kern3pt%
\begingroup \smaller\smaller\smaller\begin{tabular}{@{}c@{}}%
8\\-2\\-4
\end{tabular}\endgroup%
\kern3pt%
\begingroup \smaller\smaller\smaller\begin{tabular}{@{}c@{}}%
3\\2\\-1
\end{tabular}\endgroup%
{$\left.\llap{\phantom{%
\begingroup \smaller\smaller\smaller\begin{tabular}{@{}c@{}}%
0\\0\\0
\end{tabular}\endgroup%
}}\!\right]$}%
}%
\ifdim\wd\matricesbox>\halfwidth\myboxwidth=\hsize\else\myboxwidth=\halfwidth\fi
\vbox{%
\ifdim\myboxwidth=\hsize
\setbox\onelinebox=\hbox{%
\vbox{\hbox{%
$\Pi_{4,65}$ spans $L_{123.3}$%
}\hbox{%
$4222$ (shared)%
}%
}%
\hfill\copy\matricesbox
}%
\ifdim\wd\onelinebox>\myboxwidth
\hbox to \myboxwidth{%
$\Pi_{4,65}$ spans $L_{123.3}$%
\hfil
$4222$ (shared)%
}%
\box\matricesbox
\else
\hbox to \myboxwidth{%
\unhbox\onelinebox
}%
\fi
\else
\hbox to \myboxwidth{%
$\Pi_{4,65}$ spans $L_{123.3}$%
\hfil}%
\hbox to \myboxwidth{%
$4222$ (shared)%
\hfil}%
\box\matricesbox
\fi
}%
\hfill\discretionary{}{}{}%
\setbox\matricesbox=\hbox{%
{$\left[\!\llap{\phantom{%
\begingroup \smaller\smaller\smaller\begin{tabular}{@{}c@{}}%
\phantom{0}\\\phantom{0}\\\phantom{0}
\end{tabular}\endgroup%
}}\right.$}%
\begingroup \smaller\smaller\smaller\begin{tabular}{@{}c@{}}%
-1/10\\\phantom{0}\\\phantom{0}
\end{tabular}\endgroup%
\kern3pt%
\begingroup \smaller\smaller\smaller\begin{tabular}{@{}c@{}}%
\phantom{0}\\11/10\\-3/10
\end{tabular}\endgroup%
\kern3pt%
\begingroup \smaller\smaller\smaller\begin{tabular}{@{}c@{}}%
\phantom{0}\\-3/10\\19/10
\end{tabular}\endgroup%
{$\left.\llap{\phantom{%
\begingroup \smaller\smaller\smaller\begin{tabular}{@{}c@{}}%
\phantom{0}\\\phantom{0}\\\phantom{0}
\end{tabular}\endgroup%
}}\!\right]$}%
{$\left[\!\llap{\phantom{%
\begingroup \smaller\smaller\smaller\begin{tabular}{@{}c@{}}%
0\\0\\0
\end{tabular}\endgroup%
}}\right.$}%
\begingroup \smaller\smaller\smaller\begin{tabular}{@{}c@{}}%
1\\-1\\0
\end{tabular}\endgroup%
\kern3pt%
\begingroup \smaller\smaller\smaller\begin{tabular}{@{}c@{}}%
2\\1\\1
\end{tabular}\endgroup%
\kern3pt%
\begingroup \smaller\smaller\smaller\begin{tabular}{@{}c@{}}%
20\\7\\-1
\end{tabular}\endgroup%
\kern3pt%
\begingroup \smaller\smaller\smaller\begin{tabular}{@{}c@{}}%
5\\-1\\-2
\end{tabular}\endgroup%
{$\left.\llap{\phantom{%
\begingroup \smaller\smaller\smaller\begin{tabular}{@{}c@{}}%
0\\0\\0
\end{tabular}\endgroup%
}}\!\right]$}%
}%
\ifdim\wd\matricesbox>\halfwidth\myboxwidth=\hsize\else\myboxwidth=\halfwidth\fi
\vbox{%
\ifdim\myboxwidth=\hsize
\setbox\onelinebox=\hbox{%
\vbox{\hbox{%
$\Pi_{4,66}$ spans $L_{5.6}$%
}\hbox{%
$42\infty2$%
}%
}%
\hfill\copy\matricesbox
}%
\ifdim\wd\onelinebox>\myboxwidth
\hbox to \myboxwidth{%
$\Pi_{4,66}$ spans $L_{5.6}$%
\hfil
$42\infty2$%
}%
\box\matricesbox
\else
\hbox to \myboxwidth{%
\unhbox\onelinebox
}%
\fi
\else
\hbox to \myboxwidth{%
$\Pi_{4,66}$ spans $L_{5.6}$%
\hfil}%
\hbox to \myboxwidth{%
$42\infty2$%
\hfil}%
\box\matricesbox
\fi
}%
\hfill\discretionary{}{}{}%
\setbox\matricesbox=\hbox{%
{$\left[\!\llap{\phantom{%
\begingroup \smaller\smaller\smaller\begin{tabular}{@{}c@{}}%
\phantom{0}\\\phantom{0}\\\phantom{0}
\end{tabular}\endgroup%
}}\right.$}%
\begingroup \smaller\smaller\smaller\begin{tabular}{@{}c@{}}%
-1/20\\\phantom{0}\\\phantom{0}
\end{tabular}\endgroup%
\kern3pt%
\begingroup \smaller\smaller\smaller\begin{tabular}{@{}c@{}}%
\phantom{0}\\21/20\\-9/20
\end{tabular}\endgroup%
\kern3pt%
\begingroup \smaller\smaller\smaller\begin{tabular}{@{}c@{}}%
\phantom{0}\\-9/20\\21/20
\end{tabular}\endgroup%
{$\left.\llap{\phantom{%
\begingroup \smaller\smaller\smaller\begin{tabular}{@{}c@{}}%
\phantom{0}\\\phantom{0}\\\phantom{0}
\end{tabular}\endgroup%
}}\!\right]$}%
{$\left[\!\llap{\phantom{%
\begingroup \smaller\smaller\smaller\begin{tabular}{@{}c@{}}%
0\\0\\0
\end{tabular}\endgroup%
}}\right.$}%
\begingroup \smaller\smaller\smaller\begin{tabular}{@{}c@{}}%
3\\1\\2
\end{tabular}\endgroup%
\kern3pt%
\begingroup \smaller\smaller\smaller\begin{tabular}{@{}c@{}}%
6\\-3\\-1
\end{tabular}\endgroup%
\kern3pt%
\begingroup \smaller\smaller\smaller\begin{tabular}{@{}c@{}}%
6\\-1\\-3
\end{tabular}\endgroup%
\kern3pt%
\begingroup \smaller\smaller\smaller\begin{tabular}{@{}c@{}}%
1\\1\\0
\end{tabular}\endgroup%
{$\left.\llap{\phantom{%
\begingroup \smaller\smaller\smaller\begin{tabular}{@{}c@{}}%
0\\0\\0
\end{tabular}\endgroup%
}}\!\right]$}%
}%
\ifdim\wd\matricesbox>\halfwidth\myboxwidth=\hsize\else\myboxwidth=\halfwidth\fi
\vbox{%
\ifdim\myboxwidth=\hsize
\setbox\onelinebox=\hbox{%
\vbox{\hbox{%
$\Pi_{4,67}$ spans $L_{123.6}$%
}\hbox{%
$4222$ (shared)%
}%
}%
\hfill\copy\matricesbox
}%
\ifdim\wd\onelinebox>\myboxwidth
\hbox to \myboxwidth{%
$\Pi_{4,67}$ spans $L_{123.6}$%
\hfil
$4222$ (shared)%
}%
\box\matricesbox
\else
\hbox to \myboxwidth{%
\unhbox\onelinebox
}%
\fi
\else
\hbox to \myboxwidth{%
$\Pi_{4,67}$ spans $L_{123.6}$%
\hfil}%
\hbox to \myboxwidth{%
$4222$ (shared)%
\hfil}%
\box\matricesbox
\fi
}%
\hfill\discretionary{}{}{}%
\setbox\matricesbox=\hbox{%
{$\left[\!\llap{\phantom{%
\begingroup \smaller\smaller\smaller\begin{tabular}{@{}c@{}}%
\phantom{0}\\\phantom{0}\\\phantom{0}
\end{tabular}\endgroup%
}}\right.$}%
\begingroup \smaller\smaller\smaller\begin{tabular}{@{}c@{}}%
-1/19\\\phantom{0}\\\phantom{0}
\end{tabular}\endgroup%
\kern3pt%
\begingroup \smaller\smaller\smaller\begin{tabular}{@{}c@{}}%
\phantom{0}\\6/19\\-3/19
\end{tabular}\endgroup%
\kern3pt%
\begingroup \smaller\smaller\smaller\begin{tabular}{@{}c@{}}%
\phantom{0}\\-3/19\\30/19
\end{tabular}\endgroup%
{$\left.\llap{\phantom{%
\begingroup \smaller\smaller\smaller\begin{tabular}{@{}c@{}}%
\phantom{0}\\\phantom{0}\\\phantom{0}
\end{tabular}\endgroup%
}}\!\right]$}%
{$\left[\!\llap{\phantom{%
\begingroup \smaller\smaller\smaller\begin{tabular}{@{}c@{}}%
0\\0\\0
\end{tabular}\endgroup%
}}\right.$}%
\begingroup \smaller\smaller\smaller\begin{tabular}{@{}c@{}}%
6\\4\\-1
\end{tabular}\endgroup%
\kern3pt%
\begingroup \smaller\smaller\smaller\begin{tabular}{@{}c@{}}%
3\\-3\\-1
\end{tabular}\endgroup%
\kern3pt%
\begingroup \smaller\smaller\smaller\begin{tabular}{@{}c@{}}%
3\\-2\\1
\end{tabular}\endgroup%
\kern3pt%
\begingroup \smaller\smaller\smaller\begin{tabular}{@{}c@{}}%
2\\2\\1
\end{tabular}\endgroup%
{$\left.\llap{\phantom{%
\begingroup \smaller\smaller\smaller\begin{tabular}{@{}c@{}}%
0\\0\\0
\end{tabular}\endgroup%
}}\!\right]$}%
}%
\ifdim\wd\matricesbox>\halfwidth\myboxwidth=\hsize\else\myboxwidth=\halfwidth\fi
\vbox{%
\ifdim\myboxwidth=\hsize
\setbox\onelinebox=\hbox{%
\vbox{\hbox{%
$\Pi_{4,68}$ spans $L_{3.3}$%
}\hbox{%
$4222$ (shared)%
}%
}%
\hfill\copy\matricesbox
}%
\ifdim\wd\onelinebox>\myboxwidth
\hbox to \myboxwidth{%
$\Pi_{4,68}$ spans $L_{3.3}$%
\hfil
$4222$ (shared)%
}%
\box\matricesbox
\else
\hbox to \myboxwidth{%
\unhbox\onelinebox
}%
\fi
\else
\hbox to \myboxwidth{%
$\Pi_{4,68}$ spans $L_{3.3}$%
\hfil}%
\hbox to \myboxwidth{%
$4222$ (shared)%
\hfil}%
\box\matricesbox
\fi
}%
\hfill\discretionary{}{}{}%
\setbox\matricesbox=\hbox{%
{$\left[\!\llap{\phantom{%
\begingroup \smaller\smaller\smaller\begin{tabular}{@{}c@{}}%
\phantom{0}\\\phantom{0}\\\phantom{0}
\end{tabular}\endgroup%
}}\right.$}%
\begingroup \smaller\smaller\smaller\begin{tabular}{@{}c@{}}%
-1/184\\\phantom{0}\\\phantom{0}
\end{tabular}\endgroup%
\kern3pt%
\begingroup \smaller\smaller\smaller\begin{tabular}{@{}c@{}}%
\phantom{0}\\165/46\\-75/46
\end{tabular}\endgroup%
\kern3pt%
\begingroup \smaller\smaller\smaller\begin{tabular}{@{}c@{}}%
\phantom{0}\\-75/46\\285/46
\end{tabular}\endgroup%
{$\left.\llap{\phantom{%
\begingroup \smaller\smaller\smaller\begin{tabular}{@{}c@{}}%
\phantom{0}\\\phantom{0}\\\phantom{0}
\end{tabular}\endgroup%
}}\!\right]$}%
{$\left[\!\llap{\phantom{%
\begingroup \smaller\smaller\smaller\begin{tabular}{@{}c@{}}%
0\\0\\0
\end{tabular}\endgroup%
}}\right.$}%
\begingroup \smaller\smaller\smaller\begin{tabular}{@{}c@{}}%
30\\1\\-2
\end{tabular}\endgroup%
\kern3pt%
\begingroup \smaller\smaller\smaller\begin{tabular}{@{}c@{}}%
60\\-5\\-1
\end{tabular}\endgroup%
\kern3pt%
\begingroup \smaller\smaller\smaller\begin{tabular}{@{}c@{}}%
6\\0\\1
\end{tabular}\endgroup%
\kern3pt%
\begingroup \smaller\smaller\smaller\begin{tabular}{@{}c@{}}%
80\\6\\2
\end{tabular}\endgroup%
{$\left.\llap{\phantom{%
\begingroup \smaller\smaller\smaller\begin{tabular}{@{}c@{}}%
0\\0\\0
\end{tabular}\endgroup%
}}\!\right]$}%
}%
\ifdim\wd\matricesbox>\halfwidth\myboxwidth=\hsize\else\myboxwidth=\halfwidth\fi
\vbox{%
\ifdim\myboxwidth=\hsize
\setbox\onelinebox=\hbox{%
\vbox{\hbox{%
$\Pi_{4,69}$ spans $L_{19.10}$%
}\hbox{%
$4222$ (shared)%
}%
}%
\hfill\copy\matricesbox
}%
\ifdim\wd\onelinebox>\myboxwidth
\hbox to \myboxwidth{%
$\Pi_{4,69}$ spans $L_{19.10}$%
\hfil
$4222$ (shared)%
}%
\box\matricesbox
\else
\hbox to \myboxwidth{%
\unhbox\onelinebox
}%
\fi
\else
\hbox to \myboxwidth{%
$\Pi_{4,69}$ spans $L_{19.10}$%
\hfil}%
\hbox to \myboxwidth{%
$4222$ (shared)%
\hfil}%
\box\matricesbox
\fi
}%
\hfill\discretionary{}{}{}%
\setbox\matricesbox=\hbox{%
{$\left[\!\llap{\phantom{%
\begingroup \smaller\smaller\smaller\begin{tabular}{@{}c@{}}%
\phantom{0}\\\phantom{0}\\\phantom{0}
\end{tabular}\endgroup%
}}\right.$}%
\begingroup \smaller\smaller\smaller\begin{tabular}{@{}c@{}}%
-1/49\\\phantom{0}\\\phantom{0}
\end{tabular}\endgroup%
\kern3pt%
\begingroup \smaller\smaller\smaller\begin{tabular}{@{}c@{}}%
\phantom{0}\\30/49\\-6/49
\end{tabular}\endgroup%
\kern3pt%
\begingroup \smaller\smaller\smaller\begin{tabular}{@{}c@{}}%
\phantom{0}\\-6/49\\60/49
\end{tabular}\endgroup%
{$\left.\llap{\phantom{%
\begingroup \smaller\smaller\smaller\begin{tabular}{@{}c@{}}%
\phantom{0}\\\phantom{0}\\\phantom{0}
\end{tabular}\endgroup%
}}\!\right]$}%
{$\left[\!\llap{\phantom{%
\begingroup \smaller\smaller\smaller\begin{tabular}{@{}c@{}}%
0\\0\\0
\end{tabular}\endgroup%
}}\right.$}%
\begingroup \smaller\smaller\smaller\begin{tabular}{@{}c@{}}%
24\\6\\4
\end{tabular}\endgroup%
\kern3pt%
\begingroup \smaller\smaller\smaller\begin{tabular}{@{}c@{}}%
12\\2\\-3
\end{tabular}\endgroup%
\kern3pt%
\begingroup \smaller\smaller\smaller\begin{tabular}{@{}c@{}}%
3\\-2\\-1
\end{tabular}\endgroup%
\kern3pt%
\begingroup \smaller\smaller\smaller\begin{tabular}{@{}c@{}}%
2\\-1\\1
\end{tabular}\endgroup%
{$\left.\llap{\phantom{%
\begingroup \smaller\smaller\smaller\begin{tabular}{@{}c@{}}%
0\\0\\0
\end{tabular}\endgroup%
}}\!\right]$}%
}%
\ifdim\wd\matricesbox>\halfwidth\myboxwidth=\hsize\else\myboxwidth=\halfwidth\fi
\vbox{%
\ifdim\myboxwidth=\hsize
\setbox\onelinebox=\hbox{%
\vbox{\hbox{%
$\Pi_{4,70}$ spans $L_{123.8}$%
}\hbox{%
$4222$ (shared)%
}%
}%
\hfill\copy\matricesbox
}%
\ifdim\wd\onelinebox>\myboxwidth
\hbox to \myboxwidth{%
$\Pi_{4,70}$ spans $L_{123.8}$%
\hfil
$4222$ (shared)%
}%
\box\matricesbox
\else
\hbox to \myboxwidth{%
\unhbox\onelinebox
}%
\fi
\else
\hbox to \myboxwidth{%
$\Pi_{4,70}$ spans $L_{123.8}$%
\hfil}%
\hbox to \myboxwidth{%
$4222$ (shared)%
\hfil}%
\box\matricesbox
\fi
}%
\hfill\discretionary{}{}{}%
\setbox\matricesbox=\hbox{%
{$\left[\!\llap{\phantom{%
\begingroup \smaller\smaller\smaller\begin{tabular}{@{}c@{}}%
\phantom{0}\\\phantom{0}\\\phantom{0}
\end{tabular}\endgroup%
}}\right.$}%
\begingroup \smaller\smaller\smaller\begin{tabular}{@{}c@{}}%
-1/28\\\phantom{0}\\\phantom{0}
\end{tabular}\endgroup%
\kern3pt%
\begingroup \smaller\smaller\smaller\begin{tabular}{@{}c@{}}%
\phantom{0}\\15/7\\-3/7
\end{tabular}\endgroup%
\kern3pt%
\begingroup \smaller\smaller\smaller\begin{tabular}{@{}c@{}}%
\phantom{0}\\-3/7\\30/7
\end{tabular}\endgroup%
{$\left.\llap{\phantom{%
\begingroup \smaller\smaller\smaller\begin{tabular}{@{}c@{}}%
\phantom{0}\\\phantom{0}\\\phantom{0}
\end{tabular}\endgroup%
}}\!\right]$}%
{$\left[\!\llap{\phantom{%
\begingroup \smaller\smaller\smaller\begin{tabular}{@{}c@{}}%
0\\0\\0
\end{tabular}\endgroup%
}}\right.$}%
\begingroup \smaller\smaller\smaller\begin{tabular}{@{}c@{}}%
2\\1\\0
\end{tabular}\endgroup%
\kern3pt%
\begingroup \smaller\smaller\smaller\begin{tabular}{@{}c@{}}%
6\\-1\\1
\end{tabular}\endgroup%
\kern3pt%
\begingroup \smaller\smaller\smaller\begin{tabular}{@{}c@{}}%
14\\-3\\-1
\end{tabular}\endgroup%
\kern3pt%
\begingroup \smaller\smaller\smaller\begin{tabular}{@{}c@{}}%
12\\0\\-2
\end{tabular}\endgroup%
{$\left.\llap{\phantom{%
\begingroup \smaller\smaller\smaller\begin{tabular}{@{}c@{}}%
0\\0\\0
\end{tabular}\endgroup%
}}\!\right]$}%
}%
\ifdim\wd\matricesbox>\halfwidth\myboxwidth=\hsize\else\myboxwidth=\halfwidth\fi
\vbox{%
\ifdim\myboxwidth=\hsize
\setbox\onelinebox=\hbox{%
\vbox{\hbox{%
$\Pi_{4,71}$ spans $L_{22.5}$%
}\hbox{%
$6222$ (shared)%
}%
}%
\hfill\copy\matricesbox
}%
\ifdim\wd\onelinebox>\myboxwidth
\hbox to \myboxwidth{%
$\Pi_{4,71}$ spans $L_{22.5}$%
\hfil
$6222$ (shared)%
}%
\box\matricesbox
\else
\hbox to \myboxwidth{%
\unhbox\onelinebox
}%
\fi
\else
\hbox to \myboxwidth{%
$\Pi_{4,71}$ spans $L_{22.5}$%
\hfil}%
\hbox to \myboxwidth{%
$6222$ (shared)%
\hfil}%
\box\matricesbox
\fi
}%
\hfill\discretionary{}{}{}%
\setbox\matricesbox=\hbox{%
{$\left[\!\llap{\phantom{%
\begingroup \smaller\smaller\smaller\begin{tabular}{@{}c@{}}%
\phantom{0}\\\phantom{0}\\\phantom{0}
\end{tabular}\endgroup%
}}\right.$}%
\begingroup \smaller\smaller\smaller\begin{tabular}{@{}c@{}}%
-1/24\\\phantom{0}\\\phantom{0}
\end{tabular}\endgroup%
\kern3pt%
\begingroup \smaller\smaller\smaller\begin{tabular}{@{}c@{}}%
\phantom{0}\\13/6\\-1/3
\end{tabular}\endgroup%
\kern3pt%
\begingroup \smaller\smaller\smaller\begin{tabular}{@{}c@{}}%
\phantom{0}\\-1/3\\14/3
\end{tabular}\endgroup%
{$\left.\llap{\phantom{%
\begingroup \smaller\smaller\smaller\begin{tabular}{@{}c@{}}%
\phantom{0}\\\phantom{0}\\\phantom{0}
\end{tabular}\endgroup%
}}\!\right]$}%
{$\left[\!\llap{\phantom{%
\begingroup \smaller\smaller\smaller\begin{tabular}{@{}c@{}}%
0\\0\\0
\end{tabular}\endgroup%
}}\right.$}%
\begingroup \smaller\smaller\smaller\begin{tabular}{@{}c@{}}%
2\\1\\0
\end{tabular}\endgroup%
\kern3pt%
\begingroup \smaller\smaller\smaller\begin{tabular}{@{}c@{}}%
6\\-1\\1
\end{tabular}\endgroup%
\kern3pt%
\begingroup \smaller\smaller\smaller\begin{tabular}{@{}c@{}}%
20\\-4\\-1
\end{tabular}\endgroup%
\kern3pt%
\begingroup \smaller\smaller\smaller\begin{tabular}{@{}c@{}}%
4\\0\\-1
\end{tabular}\endgroup%
{$\left.\llap{\phantom{%
\begingroup \smaller\smaller\smaller\begin{tabular}{@{}c@{}}%
0\\0\\0
\end{tabular}\endgroup%
}}\!\right]$}%
}%
\ifdim\wd\matricesbox>\halfwidth\myboxwidth=\hsize\else\myboxwidth=\halfwidth\fi
\vbox{%
\ifdim\myboxwidth=\hsize
\setbox\onelinebox=\hbox{%
\vbox{\hbox{%
$\Pi_{4,72}$ spans $L_{16.3}$%
}\hbox{%
$6222$ (shared)%
}%
}%
\hfill\copy\matricesbox
}%
\ifdim\wd\onelinebox>\myboxwidth
\hbox to \myboxwidth{%
$\Pi_{4,72}$ spans $L_{16.3}$%
\hfil
$6222$ (shared)%
}%
\box\matricesbox
\else
\hbox to \myboxwidth{%
\unhbox\onelinebox
}%
\fi
\else
\hbox to \myboxwidth{%
$\Pi_{4,72}$ spans $L_{16.3}$%
\hfil}%
\hbox to \myboxwidth{%
$6222$ (shared)%
\hfil}%
\box\matricesbox
\fi
}%
\hfill\discretionary{}{}{}%
\setbox\matricesbox=\hbox{%
{$\left[\!\llap{\phantom{%
\begingroup \smaller\smaller\smaller\begin{tabular}{@{}c@{}}%
\phantom{0}\\\phantom{0}\\\phantom{0}
\end{tabular}\endgroup%
}}\right.$}%
\begingroup \smaller\smaller\smaller\begin{tabular}{@{}c@{}}%
-1/104\\\phantom{0}\\\phantom{0}
\end{tabular}\endgroup%
\kern3pt%
\begingroup \smaller\smaller\smaller\begin{tabular}{@{}c@{}}%
\phantom{0}\\165/26\\-15/26
\end{tabular}\endgroup%
\kern3pt%
\begingroup \smaller\smaller\smaller\begin{tabular}{@{}c@{}}%
\phantom{0}\\-15/26\\285/26
\end{tabular}\endgroup%
{$\left.\llap{\phantom{%
\begingroup \smaller\smaller\smaller\begin{tabular}{@{}c@{}}%
\phantom{0}\\\phantom{0}\\\phantom{0}
\end{tabular}\endgroup%
}}\!\right]$}%
{$\left[\!\llap{\phantom{%
\begingroup \smaller\smaller\smaller\begin{tabular}{@{}c@{}}%
0\\0\\0
\end{tabular}\endgroup%
}}\right.$}%
\begingroup \smaller\smaller\smaller\begin{tabular}{@{}c@{}}%
10\\0\\1
\end{tabular}\endgroup%
\kern3pt%
\begingroup \smaller\smaller\smaller\begin{tabular}{@{}c@{}}%
30\\2\\-1
\end{tabular}\endgroup%
\kern3pt%
\begingroup \smaller\smaller\smaller\begin{tabular}{@{}c@{}}%
160\\-2\\-6
\end{tabular}\endgroup%
\kern3pt%
\begingroup \smaller\smaller\smaller\begin{tabular}{@{}c@{}}%
6\\-1\\0
\end{tabular}\endgroup%
{$\left.\llap{\phantom{%
\begingroup \smaller\smaller\smaller\begin{tabular}{@{}c@{}}%
0\\0\\0
\end{tabular}\endgroup%
}}\!\right]$}%
}%
\ifdim\wd\matricesbox>\halfwidth\myboxwidth=\hsize\else\myboxwidth=\halfwidth\fi
\vbox{%
\ifdim\myboxwidth=\hsize
\setbox\onelinebox=\hbox{%
\vbox{\hbox{%
$\Pi_{4,73}$ spans $L_{31.13}$%
}\hbox{%
$6222$ (shared)%
}%
}%
\hfill\copy\matricesbox
}%
\ifdim\wd\onelinebox>\myboxwidth
\hbox to \myboxwidth{%
$\Pi_{4,73}$ spans $L_{31.13}$%
\hfil
$6222$ (shared)%
}%
\box\matricesbox
\else
\hbox to \myboxwidth{%
\unhbox\onelinebox
}%
\fi
\else
\hbox to \myboxwidth{%
$\Pi_{4,73}$ spans $L_{31.13}$%
\hfil}%
\hbox to \myboxwidth{%
$6222$ (shared)%
\hfil}%
\box\matricesbox
\fi
}%
\hfill\discretionary{}{}{}%
\setbox\matricesbox=\hbox{%
{$\left[\!\llap{\phantom{%
\begingroup \smaller\smaller\smaller\begin{tabular}{@{}c@{}}%
\phantom{0}\\\phantom{0}\\\phantom{0}
\end{tabular}\endgroup%
}}\right.$}%
\begingroup \smaller\smaller\smaller\begin{tabular}{@{}c@{}}%
-1/92\\\phantom{0}\\\phantom{0}
\end{tabular}\endgroup%
\kern3pt%
\begingroup \smaller\smaller\smaller\begin{tabular}{@{}c@{}}%
\phantom{0}\\35/23\\-7/23
\end{tabular}\endgroup%
\kern3pt%
\begingroup \smaller\smaller\smaller\begin{tabular}{@{}c@{}}%
\phantom{0}\\-7/23\\98/23
\end{tabular}\endgroup%
{$\left.\llap{\phantom{%
\begingroup \smaller\smaller\smaller\begin{tabular}{@{}c@{}}%
\phantom{0}\\\phantom{0}\\\phantom{0}
\end{tabular}\endgroup%
}}\!\right]$}%
{$\left[\!\llap{\phantom{%
\begingroup \smaller\smaller\smaller\begin{tabular}{@{}c@{}}%
0\\0\\0
\end{tabular}\endgroup%
}}\right.$}%
\begingroup \smaller\smaller\smaller\begin{tabular}{@{}c@{}}%
42\\-5\\2
\end{tabular}\endgroup%
\kern3pt%
\begingroup \smaller\smaller\smaller\begin{tabular}{@{}c@{}}%
14\\3\\1
\end{tabular}\endgroup%
\kern3pt%
\begingroup \smaller\smaller\smaller\begin{tabular}{@{}c@{}}%
6\\1\\-1
\end{tabular}\endgroup%
\kern3pt%
\begingroup \smaller\smaller\smaller\begin{tabular}{@{}c@{}}%
28\\-4\\-2
\end{tabular}\endgroup%
{$\left.\llap{\phantom{%
\begingroup \smaller\smaller\smaller\begin{tabular}{@{}c@{}}%
0\\0\\0
\end{tabular}\endgroup%
}}\!\right]$}%
}%
\ifdim\wd\matricesbox>\halfwidth\myboxwidth=\hsize\else\myboxwidth=\halfwidth\fi
\vbox{%
\ifdim\myboxwidth=\hsize
\setbox\onelinebox=\hbox{%
\vbox{\hbox{%
$\Pi_{4,74}$ spans $L_{22.8}$%
}\hbox{%
$6222$ (shared)%
}%
}%
\hfill\copy\matricesbox
}%
\ifdim\wd\onelinebox>\myboxwidth
\hbox to \myboxwidth{%
$\Pi_{4,74}$ spans $L_{22.8}$%
\hfil
$6222$ (shared)%
}%
\box\matricesbox
\else
\hbox to \myboxwidth{%
\unhbox\onelinebox
}%
\fi
\else
\hbox to \myboxwidth{%
$\Pi_{4,74}$ spans $L_{22.8}$%
\hfil}%
\hbox to \myboxwidth{%
$6222$ (shared)%
\hfil}%
\box\matricesbox
\fi
}%
\hfill\discretionary{}{}{}%
\setbox\matricesbox=\hbox{%
{$\left[\!\llap{\phantom{%
\begingroup \smaller\smaller\smaller\begin{tabular}{@{}c@{}}%
\phantom{0}\\\phantom{0}\\\phantom{0}
\end{tabular}\endgroup%
}}\right.$}%
\begingroup \smaller\smaller\smaller\begin{tabular}{@{}c@{}}%
-1/184\\\phantom{0}\\\phantom{0}
\end{tabular}\endgroup%
\kern3pt%
\begingroup \smaller\smaller\smaller\begin{tabular}{@{}c@{}}%
\phantom{0}\\165/46\\-45/46
\end{tabular}\endgroup%
\kern3pt%
\begingroup \smaller\smaller\smaller\begin{tabular}{@{}c@{}}%
\phantom{0}\\-45/46\\765/46
\end{tabular}\endgroup%
{$\left.\llap{\phantom{%
\begingroup \smaller\smaller\smaller\begin{tabular}{@{}c@{}}%
\phantom{0}\\\phantom{0}\\\phantom{0}
\end{tabular}\endgroup%
}}\!\right]$}%
{$\left[\!\llap{\phantom{%
\begingroup \smaller\smaller\smaller\begin{tabular}{@{}c@{}}%
0\\0\\0
\end{tabular}\endgroup%
}}\right.$}%
\begingroup \smaller\smaller\smaller\begin{tabular}{@{}c@{}}%
90\\-6\\-1
\end{tabular}\endgroup%
\kern3pt%
\begingroup \smaller\smaller\smaller\begin{tabular}{@{}c@{}}%
30\\2\\-1
\end{tabular}\endgroup%
\kern3pt%
\begingroup \smaller\smaller\smaller\begin{tabular}{@{}c@{}}%
36\\3\\1
\end{tabular}\endgroup%
\kern3pt%
\begingroup \smaller\smaller\smaller\begin{tabular}{@{}c@{}}%
20\\-1\\1
\end{tabular}\endgroup%
{$\left.\llap{\phantom{%
\begingroup \smaller\smaller\smaller\begin{tabular}{@{}c@{}}%
0\\0\\0
\end{tabular}\endgroup%
}}\!\right]$}%
}%
\ifdim\wd\matricesbox>\halfwidth\myboxwidth=\hsize\else\myboxwidth=\halfwidth\fi
\vbox{%
\ifdim\myboxwidth=\hsize
\setbox\onelinebox=\hbox{%
\vbox{\hbox{%
$\Pi_{4,75}$ spans $L_{16.20}$%
}\hbox{%
$6222$ (shared)%
}%
}%
\hfill\copy\matricesbox
}%
\ifdim\wd\onelinebox>\myboxwidth
\hbox to \myboxwidth{%
$\Pi_{4,75}$ spans $L_{16.20}$%
\hfil
$6222$ (shared)%
}%
\box\matricesbox
\else
\hbox to \myboxwidth{%
\unhbox\onelinebox
}%
\fi
\else
\hbox to \myboxwidth{%
$\Pi_{4,75}$ spans $L_{16.20}$%
\hfil}%
\hbox to \myboxwidth{%
$6222$ (shared)%
\hfil}%
\box\matricesbox
\fi
}%
\hfill\discretionary{}{}{}%
\setbox\matricesbox=\hbox{%
{$\left[\!\llap{\phantom{%
\begingroup \smaller\smaller\smaller\begin{tabular}{@{}c@{}}%
\phantom{0}\\\phantom{0}\\\phantom{0}
\end{tabular}\endgroup%
}}\right.$}%
\begingroup \smaller\smaller\smaller\begin{tabular}{@{}c@{}}%
-1/58\\\phantom{0}\\\phantom{0}
\end{tabular}\endgroup%
\kern3pt%
\begingroup \smaller\smaller\smaller\begin{tabular}{@{}c@{}}%
\phantom{0}\\40/29\\-15/29
\end{tabular}\endgroup%
\kern3pt%
\begingroup \smaller\smaller\smaller\begin{tabular}{@{}c@{}}%
\phantom{0}\\-15/29\\60/29
\end{tabular}\endgroup%
{$\left.\llap{\phantom{%
\begingroup \smaller\smaller\smaller\begin{tabular}{@{}c@{}}%
\phantom{0}\\\phantom{0}\\\phantom{0}
\end{tabular}\endgroup%
}}\!\right]$}%
{$\left[\!\llap{\phantom{%
\begingroup \smaller\smaller\smaller\begin{tabular}{@{}c@{}}%
0\\0\\0
\end{tabular}\endgroup%
}}\right.$}%
\begingroup \smaller\smaller\smaller\begin{tabular}{@{}c@{}}%
30\\6\\1
\end{tabular}\endgroup%
\kern3pt%
\begingroup \smaller\smaller\smaller\begin{tabular}{@{}c@{}}%
10\\-1\\2
\end{tabular}\endgroup%
\kern3pt%
\begingroup \smaller\smaller\smaller\begin{tabular}{@{}c@{}}%
30\\-6\\-2
\end{tabular}\endgroup%
\kern3pt%
\begingroup \smaller\smaller\smaller\begin{tabular}{@{}c@{}}%
2\\0\\-1
\end{tabular}\endgroup%
{$\left.\llap{\phantom{%
\begingroup \smaller\smaller\smaller\begin{tabular}{@{}c@{}}%
0\\0\\0
\end{tabular}\endgroup%
}}\!\right]$}%
}%
\ifdim\wd\matricesbox>\halfwidth\myboxwidth=\hsize\else\myboxwidth=\halfwidth\fi
\vbox{%
\ifdim\myboxwidth=\hsize
\setbox\onelinebox=\hbox{%
\vbox{\hbox{%
$\Pi_{4,76}$ spans $L_{31.5}$%
}\hbox{%
$6222$ (shared)%
}%
}%
\hfill\copy\matricesbox
}%
\ifdim\wd\onelinebox>\myboxwidth
\hbox to \myboxwidth{%
$\Pi_{4,76}$ spans $L_{31.5}$%
\hfil
$6222$ (shared)%
}%
\box\matricesbox
\else
\hbox to \myboxwidth{%
\unhbox\onelinebox
}%
\fi
\else
\hbox to \myboxwidth{%
$\Pi_{4,76}$ spans $L_{31.5}$%
\hfil}%
\hbox to \myboxwidth{%
$6222$ (shared)%
\hfil}%
\box\matricesbox
\fi
}%
\hfill\discretionary{}{}{}%
\setbox\matricesbox=\hbox{%
{$\left[\!\llap{\phantom{%
\begingroup \smaller\smaller\smaller\begin{tabular}{@{}c@{}}%
\phantom{0}\\\phantom{0}\\\phantom{0}
\end{tabular}\endgroup%
}}\right.$}%
\begingroup \smaller\smaller\smaller\begin{tabular}{@{}c@{}}%
-1/121\\\phantom{0}\\\phantom{0}
\end{tabular}\endgroup%
\kern3pt%
\begingroup \smaller\smaller\smaller\begin{tabular}{@{}c@{}}%
\phantom{0}\\210/121\\-15/121
\end{tabular}\endgroup%
\kern3pt%
\begingroup \smaller\smaller\smaller\begin{tabular}{@{}c@{}}%
\phantom{0}\\-15/121\\390/121
\end{tabular}\endgroup%
{$\left.\llap{\phantom{%
\begingroup \smaller\smaller\smaller\begin{tabular}{@{}c@{}}%
\phantom{0}\\\phantom{0}\\\phantom{0}
\end{tabular}\endgroup%
}}\!\right]$}%
{$\left[\!\llap{\phantom{%
\begingroup \smaller\smaller\smaller\begin{tabular}{@{}c@{}}%
0\\0\\0
\end{tabular}\endgroup%
}}\right.$}%
\begingroup \smaller\smaller\smaller\begin{tabular}{@{}c@{}}%
90\\7\\5
\end{tabular}\endgroup%
\kern3pt%
\begingroup \smaller\smaller\smaller\begin{tabular}{@{}c@{}}%
30\\2\\-3
\end{tabular}\endgroup%
\kern3pt%
\begingroup \smaller\smaller\smaller\begin{tabular}{@{}c@{}}%
9\\-2\\-1
\end{tabular}\endgroup%
\kern3pt%
\begingroup \smaller\smaller\smaller\begin{tabular}{@{}c@{}}%
5\\-1\\1
\end{tabular}\endgroup%
{$\left.\llap{\phantom{%
\begingroup \smaller\smaller\smaller\begin{tabular}{@{}c@{}}%
0\\0\\0
\end{tabular}\endgroup%
}}\!\right]$}%
}%
\ifdim\wd\matricesbox>\halfwidth\myboxwidth=\hsize\else\myboxwidth=\halfwidth\fi
\vbox{%
\ifdim\myboxwidth=\hsize
\setbox\onelinebox=\hbox{%
\vbox{\hbox{%
$\Pi_{4,77}$ spans $L_{16.13}$%
}\hbox{%
$6222$ (shared)%
}%
}%
\hfill\copy\matricesbox
}%
\ifdim\wd\onelinebox>\myboxwidth
\hbox to \myboxwidth{%
$\Pi_{4,77}$ spans $L_{16.13}$%
\hfil
$6222$ (shared)%
}%
\box\matricesbox
\else
\hbox to \myboxwidth{%
\unhbox\onelinebox
}%
\fi
\else
\hbox to \myboxwidth{%
$\Pi_{4,77}$ spans $L_{16.13}$%
\hfil}%
\hbox to \myboxwidth{%
$6222$ (shared)%
\hfil}%
\box\matricesbox
\fi
}%
\hfill\discretionary{}{}{}%
\setbox\matricesbox=\hbox{%
{$\left[\!\llap{\phantom{%
\begingroup \smaller\smaller\smaller\begin{tabular}{@{}c@{}}%
\phantom{0}\\\phantom{0}\\\phantom{0}
\end{tabular}\endgroup%
}}\right.$}%
\begingroup \smaller\smaller\smaller\begin{tabular}{@{}c@{}}%
-1/14\\\phantom{0}\\\phantom{0}
\end{tabular}\endgroup%
\kern3pt%
\begingroup \smaller\smaller\smaller\begin{tabular}{@{}c@{}}%
\phantom{0}\\15/14\\-3/7
\end{tabular}\endgroup%
\kern3pt%
\begingroup \smaller\smaller\smaller\begin{tabular}{@{}c@{}}%
\phantom{0}\\-3/7\\18/7
\end{tabular}\endgroup%
{$\left.\llap{\phantom{%
\begingroup \smaller\smaller\smaller\begin{tabular}{@{}c@{}}%
\phantom{0}\\\phantom{0}\\\phantom{0}
\end{tabular}\endgroup%
}}\!\right]$}%
{$\left[\!\llap{\phantom{%
\begingroup \smaller\smaller\smaller\begin{tabular}{@{}c@{}}%
0\\0\\0
\end{tabular}\endgroup%
}}\right.$}%
\begingroup \smaller\smaller\smaller\begin{tabular}{@{}c@{}}%
1\\1\\0
\end{tabular}\endgroup%
\kern3pt%
\begingroup \smaller\smaller\smaller\begin{tabular}{@{}c@{}}%
4\\-2\\-1
\end{tabular}\endgroup%
\kern3pt%
\begingroup \smaller\smaller\smaller\begin{tabular}{@{}c@{}}%
6\\-2\\1
\end{tabular}\endgroup%
\kern3pt%
\begingroup \smaller\smaller\smaller\begin{tabular}{@{}c@{}}%
12\\2\\3
\end{tabular}\endgroup%
{$\left.\llap{\phantom{%
\begingroup \smaller\smaller\smaller\begin{tabular}{@{}c@{}}%
0\\0\\0
\end{tabular}\endgroup%
}}\!\right]$}%
}%
\ifdim\wd\matricesbox>\halfwidth\myboxwidth=\hsize\else\myboxwidth=\halfwidth\fi
\vbox{%
\ifdim\myboxwidth=\hsize
\setbox\onelinebox=\hbox{%
\vbox{\hbox{%
$\Pi_{4,78}$ spans $L_{4.17}$%
}\hbox{%
$\infty222$ (shared)%
}%
}%
\hfill\copy\matricesbox
}%
\ifdim\wd\onelinebox>\myboxwidth
\hbox to \myboxwidth{%
$\Pi_{4,78}$ spans $L_{4.17}$%
\hfil
$\infty222$ (shared)%
}%
\box\matricesbox
\else
\hbox to \myboxwidth{%
\unhbox\onelinebox
}%
\fi
\else
\hbox to \myboxwidth{%
$\Pi_{4,78}$ spans $L_{4.17}$%
\hfil}%
\hbox to \myboxwidth{%
$\infty222$ (shared)%
\hfil}%
\box\matricesbox
\fi
}%
\hfill\discretionary{}{}{}%
\setbox\matricesbox=\hbox{%
{$\left[\!\llap{\phantom{%
\begingroup \smaller\smaller\smaller\begin{tabular}{@{}c@{}}%
\phantom{0}\\\phantom{0}\\\phantom{0}
\end{tabular}\endgroup%
}}\right.$}%
\begingroup \smaller\smaller\smaller\begin{tabular}{@{}c@{}}%
-1/24\\\phantom{0}\\\phantom{0}
\end{tabular}\endgroup%
\kern3pt%
\begingroup \smaller\smaller\smaller\begin{tabular}{@{}c@{}}%
\phantom{0}\\13/6\\-1
\end{tabular}\endgroup%
\kern3pt%
\begingroup \smaller\smaller\smaller\begin{tabular}{@{}c@{}}%
\phantom{0}\\-1\\6
\end{tabular}\endgroup%
{$\left.\llap{\phantom{%
\begingroup \smaller\smaller\smaller\begin{tabular}{@{}c@{}}%
\phantom{0}\\\phantom{0}\\\phantom{0}
\end{tabular}\endgroup%
}}\!\right]$}%
{$\left[\!\llap{\phantom{%
\begingroup \smaller\smaller\smaller\begin{tabular}{@{}c@{}}%
0\\0\\0
\end{tabular}\endgroup%
}}\right.$}%
\begingroup \smaller\smaller\smaller\begin{tabular}{@{}c@{}}%
2\\1\\0
\end{tabular}\endgroup%
\kern3pt%
\begingroup \smaller\smaller\smaller\begin{tabular}{@{}c@{}}%
8\\-2\\-1
\end{tabular}\endgroup%
\kern3pt%
\begingroup \smaller\smaller\smaller\begin{tabular}{@{}c@{}}%
18\\-3\\1
\end{tabular}\endgroup%
\kern3pt%
\begingroup \smaller\smaller\smaller\begin{tabular}{@{}c@{}}%
72\\6\\7
\end{tabular}\endgroup%
{$\left.\llap{\phantom{%
\begingroup \smaller\smaller\smaller\begin{tabular}{@{}c@{}}%
0\\0\\0
\end{tabular}\endgroup%
}}\!\right]$}%
}%
\ifdim\wd\matricesbox>\halfwidth\myboxwidth=\hsize\else\myboxwidth=\halfwidth\fi
\vbox{%
\ifdim\myboxwidth=\hsize
\setbox\onelinebox=\hbox{%
\vbox{\hbox{%
$\Pi_{4,79}=\hbox{GN}_{9}$ spans $L_{148.13}$%
}\hbox{%
$\infty2\infty2$ (shared)%
}%
}%
\hfill\copy\matricesbox
}%
\ifdim\wd\onelinebox>\myboxwidth
\hbox to \myboxwidth{%
$\Pi_{4,79}=\hbox{GN}_{9}$ spans $L_{148.13}$%
\hfil
$\infty2\infty2$ (shared)%
}%
\box\matricesbox
\else
\hbox to \myboxwidth{%
\unhbox\onelinebox
}%
\fi
\else
\hbox to \myboxwidth{%
$\Pi_{4,79}=\hbox{GN}_{9}$ spans $L_{148.13}$%
\hfil}%
\hbox to \myboxwidth{%
$\infty2\infty2$ (shared)%
\hfil}%
\box\matricesbox
\fi
}%
\hfill\discretionary{}{}{}%
\setbox\matricesbox=\hbox{%
{$\left[\!\llap{\phantom{%
\begingroup \smaller\smaller\smaller\begin{tabular}{@{}c@{}}%
\phantom{0}\\\phantom{0}\\\phantom{0}
\end{tabular}\endgroup%
}}\right.$}%
\begingroup \smaller\smaller\smaller\begin{tabular}{@{}c@{}}%
-1/16\\\phantom{0}\\\phantom{0}
\end{tabular}\endgroup%
\kern3pt%
\begingroup \smaller\smaller\smaller\begin{tabular}{@{}c@{}}%
\phantom{0}\\9/16\\-3/16
\end{tabular}\endgroup%
\kern3pt%
\begingroup \smaller\smaller\smaller\begin{tabular}{@{}c@{}}%
\phantom{0}\\-3/16\\33/16
\end{tabular}\endgroup%
{$\left.\llap{\phantom{%
\begingroup \smaller\smaller\smaller\begin{tabular}{@{}c@{}}%
\phantom{0}\\\phantom{0}\\\phantom{0}
\end{tabular}\endgroup%
}}\!\right]$}%
{$\left[\!\llap{\phantom{%
\begingroup \smaller\smaller\smaller\begin{tabular}{@{}c@{}}%
0\\0\\0
\end{tabular}\endgroup%
}}\right.$}%
\begingroup \smaller\smaller\smaller\begin{tabular}{@{}c@{}}%
2\\1\\1
\end{tabular}\endgroup%
\kern3pt%
\begingroup \smaller\smaller\smaller\begin{tabular}{@{}c@{}}%
8\\2\\-2
\end{tabular}\endgroup%
\kern3pt%
\begingroup \smaller\smaller\smaller\begin{tabular}{@{}c@{}}%
3\\-2\\-1
\end{tabular}\endgroup%
\kern3pt%
\begingroup \smaller\smaller\smaller\begin{tabular}{@{}c@{}}%
6\\-3\\1
\end{tabular}\endgroup%
{$\left.\llap{\phantom{%
\begingroup \smaller\smaller\smaller\begin{tabular}{@{}c@{}}%
0\\0\\0
\end{tabular}\endgroup%
}}\!\right]$}%
}%
\ifdim\wd\matricesbox>\halfwidth\myboxwidth=\hsize\else\myboxwidth=\halfwidth\fi
\vbox{%
\ifdim\myboxwidth=\hsize
\setbox\onelinebox=\hbox{%
\vbox{\hbox{%
$\Pi_{4,80}$ spans $L_{4.15}$%
}\hbox{%
$\infty222$ (shared)%
}%
}%
\hfill\copy\matricesbox
}%
\ifdim\wd\onelinebox>\myboxwidth
\hbox to \myboxwidth{%
$\Pi_{4,80}$ spans $L_{4.15}$%
\hfil
$\infty222$ (shared)%
}%
\box\matricesbox
\else
\hbox to \myboxwidth{%
\unhbox\onelinebox
}%
\fi
\else
\hbox to \myboxwidth{%
$\Pi_{4,80}$ spans $L_{4.15}$%
\hfil}%
\hbox to \myboxwidth{%
$\infty222$ (shared)%
\hfil}%
\box\matricesbox
\fi
}%
\hfill\discretionary{}{}{}%
\setbox\matricesbox=\hbox{%
{$\left[\!\llap{\phantom{%
\begingroup \smaller\smaller\smaller\begin{tabular}{@{}c@{}}%
\phantom{0}\\\phantom{0}\\\phantom{0}
\end{tabular}\endgroup%
}}\right.$}%
\begingroup \smaller\smaller\smaller\begin{tabular}{@{}c@{}}%
-1/13\\\phantom{0}\\\phantom{0}
\end{tabular}\endgroup%
\kern3pt%
\begingroup \smaller\smaller\smaller\begin{tabular}{@{}c@{}}%
\phantom{0}\\10/13\\-4/13
\end{tabular}\endgroup%
\kern3pt%
\begingroup \smaller\smaller\smaller\begin{tabular}{@{}c@{}}%
\phantom{0}\\-4/13\\12/13
\end{tabular}\endgroup%
{$\left.\llap{\phantom{%
\begingroup \smaller\smaller\smaller\begin{tabular}{@{}c@{}}%
\phantom{0}\\\phantom{0}\\\phantom{0}
\end{tabular}\endgroup%
}}\!\right]$}%
{$\left[\!\llap{\phantom{%
\begingroup \smaller\smaller\smaller\begin{tabular}{@{}c@{}}%
0\\0\\0
\end{tabular}\endgroup%
}}\right.$}%
\begingroup \smaller\smaller\smaller\begin{tabular}{@{}c@{}}%
2\\-1\\1
\end{tabular}\endgroup%
\kern3pt%
\begingroup \smaller\smaller\smaller\begin{tabular}{@{}c@{}}%
8\\-2\\-4
\end{tabular}\endgroup%
\kern3pt%
\begingroup \smaller\smaller\smaller\begin{tabular}{@{}c@{}}%
4\\2\\-1
\end{tabular}\endgroup%
\kern3pt%
\begingroup \smaller\smaller\smaller\begin{tabular}{@{}c@{}}%
1\\1\\1
\end{tabular}\endgroup%
{$\left.\llap{\phantom{%
\begingroup \smaller\smaller\smaller\begin{tabular}{@{}c@{}}%
0\\0\\0
\end{tabular}\endgroup%
}}\!\right]$}%
}%
\ifdim\wd\matricesbox>\halfwidth\myboxwidth=\hsize\else\myboxwidth=\halfwidth\fi
\vbox{%
\ifdim\myboxwidth=\hsize
\setbox\onelinebox=\hbox{%
\vbox{\hbox{%
$\Pi_{4,81}$ spans $L_{141.7}$%
}\hbox{%
$\infty222$ (shared)%
}%
}%
\hfill\copy\matricesbox
}%
\ifdim\wd\onelinebox>\myboxwidth
\hbox to \myboxwidth{%
$\Pi_{4,81}$ spans $L_{141.7}$%
\hfil
$\infty222$ (shared)%
}%
\box\matricesbox
\else
\hbox to \myboxwidth{%
\unhbox\onelinebox
}%
\fi
\else
\hbox to \myboxwidth{%
$\Pi_{4,81}$ spans $L_{141.7}$%
\hfil}%
\hbox to \myboxwidth{%
$\infty222$ (shared)%
\hfil}%
\box\matricesbox
\fi
}%
\hfill\discretionary{}{}{}%
\setbox\matricesbox=\hbox{%
{$\left[\!\llap{\phantom{%
\begingroup \smaller\smaller\smaller\begin{tabular}{@{}c@{}}%
\phantom{0}\\\phantom{0}\\\phantom{0}
\end{tabular}\endgroup%
}}\right.$}%
\begingroup \smaller\smaller\smaller\begin{tabular}{@{}c@{}}%
-1/18\\\phantom{0}\\\phantom{0}
\end{tabular}\endgroup%
\kern3pt%
\begingroup \smaller\smaller\smaller\begin{tabular}{@{}c@{}}%
\phantom{0}\\7/18\\-1/18
\end{tabular}\endgroup%
\kern3pt%
\begingroup \smaller\smaller\smaller\begin{tabular}{@{}c@{}}%
\phantom{0}\\-1/18\\31/18
\end{tabular}\endgroup%
{$\left.\llap{\phantom{%
\begingroup \smaller\smaller\smaller\begin{tabular}{@{}c@{}}%
\phantom{0}\\\phantom{0}\\\phantom{0}
\end{tabular}\endgroup%
}}\!\right]$}%
{$\left[\!\llap{\phantom{%
\begingroup \smaller\smaller\smaller\begin{tabular}{@{}c@{}}%
0\\0\\0
\end{tabular}\endgroup%
}}\right.$}%
\begingroup \smaller\smaller\smaller\begin{tabular}{@{}c@{}}%
3\\-2\\1
\end{tabular}\endgroup%
\kern3pt%
\begingroup \smaller\smaller\smaller\begin{tabular}{@{}c@{}}%
12\\7\\1
\end{tabular}\endgroup%
\kern3pt%
\begingroup \smaller\smaller\smaller\begin{tabular}{@{}c@{}}%
2\\1\\-1
\end{tabular}\endgroup%
\kern3pt%
\begingroup \smaller\smaller\smaller\begin{tabular}{@{}c@{}}%
4\\-3\\-1
\end{tabular}\endgroup%
{$\left.\llap{\phantom{%
\begingroup \smaller\smaller\smaller\begin{tabular}{@{}c@{}}%
0\\0\\0
\end{tabular}\endgroup%
}}\!\right]$}%
}%
\ifdim\wd\matricesbox>\halfwidth\myboxwidth=\hsize\else\myboxwidth=\halfwidth\fi
\vbox{%
\ifdim\myboxwidth=\hsize
\setbox\onelinebox=\hbox{%
\vbox{\hbox{%
$\Pi_{4,82}$ spans $L_{4.6}$%
}\hbox{%
$\infty222$ (shared)%
}%
}%
\hfill\copy\matricesbox
}%
\ifdim\wd\onelinebox>\myboxwidth
\hbox to \myboxwidth{%
$\Pi_{4,82}$ spans $L_{4.6}$%
\hfil
$\infty222$ (shared)%
}%
\box\matricesbox
\else
\hbox to \myboxwidth{%
\unhbox\onelinebox
}%
\fi
\else
\hbox to \myboxwidth{%
$\Pi_{4,82}$ spans $L_{4.6}$%
\hfil}%
\hbox to \myboxwidth{%
$\infty222$ (shared)%
\hfil}%
\box\matricesbox
\fi
}%
\hfill\discretionary{}{}{}%
\setbox\matricesbox=\hbox{%
{$\left[\!\llap{\phantom{%
\begingroup \smaller\smaller\smaller\begin{tabular}{@{}c@{}}%
\phantom{0}\\\phantom{0}\\\phantom{0}
\end{tabular}\endgroup%
}}\right.$}%
\begingroup \smaller\smaller\smaller\begin{tabular}{@{}c@{}}%
-1/56\\\phantom{0}\\\phantom{0}
\end{tabular}\endgroup%
\kern3pt%
\begingroup \smaller\smaller\smaller\begin{tabular}{@{}c@{}}%
\phantom{0}\\3/2\\-1/2
\end{tabular}\endgroup%
\kern3pt%
\begingroup \smaller\smaller\smaller\begin{tabular}{@{}c@{}}%
\phantom{0}\\-1/2\\29/14
\end{tabular}\endgroup%
{$\left.\llap{\phantom{%
\begingroup \smaller\smaller\smaller\begin{tabular}{@{}c@{}}%
\phantom{0}\\\phantom{0}\\\phantom{0}
\end{tabular}\endgroup%
}}\!\right]$}%
{$\left[\!\llap{\phantom{%
\begingroup \smaller\smaller\smaller\begin{tabular}{@{}c@{}}%
0\\0\\0
\end{tabular}\endgroup%
}}\right.$}%
\begingroup \smaller\smaller\smaller\begin{tabular}{@{}c@{}}%
10\\-1\\2
\end{tabular}\endgroup%
\kern3pt%
\begingroup \smaller\smaller\smaller\begin{tabular}{@{}c@{}}%
40\\7\\1
\end{tabular}\endgroup%
\kern3pt%
\begingroup \smaller\smaller\smaller\begin{tabular}{@{}c@{}}%
2\\0\\-1
\end{tabular}\endgroup%
\kern3pt%
\begingroup \smaller\smaller\smaller\begin{tabular}{@{}c@{}}%
32\\-6\\-2
\end{tabular}\endgroup%
{$\left.\llap{\phantom{%
\begingroup \smaller\smaller\smaller\begin{tabular}{@{}c@{}}%
0\\0\\0
\end{tabular}\endgroup%
}}\!\right]$}%
}%
\ifdim\wd\matricesbox>\halfwidth\myboxwidth=\hsize\else\myboxwidth=\halfwidth\fi
\vbox{%
\ifdim\myboxwidth=\hsize
\setbox\onelinebox=\hbox{%
\vbox{\hbox{%
$\Pi_{4,83}$ spans $L_{10.3}$%
}\hbox{%
$\infty222$ (shared)%
}%
}%
\hfill\copy\matricesbox
}%
\ifdim\wd\onelinebox>\myboxwidth
\hbox to \myboxwidth{%
$\Pi_{4,83}$ spans $L_{10.3}$%
\hfil
$\infty222$ (shared)%
}%
\box\matricesbox
\else
\hbox to \myboxwidth{%
\unhbox\onelinebox
}%
\fi
\else
\hbox to \myboxwidth{%
$\Pi_{4,83}$ spans $L_{10.3}$%
\hfil}%
\hbox to \myboxwidth{%
$\infty222$ (shared)%
\hfil}%
\box\matricesbox
\fi
}%
\hfill\discretionary{}{}{}%
\setbox\matricesbox=\hbox{%
{$\left[\!\llap{\phantom{%
\begingroup \smaller\smaller\smaller\begin{tabular}{@{}c@{}}%
\phantom{0}\\\phantom{0}\\\phantom{0}
\end{tabular}\endgroup%
}}\right.$}%
\begingroup \smaller\smaller\smaller\begin{tabular}{@{}c@{}}%
-1/24\\\phantom{0}\\\phantom{0}
\end{tabular}\endgroup%
\kern3pt%
\begingroup \smaller\smaller\smaller\begin{tabular}{@{}c@{}}%
\phantom{0}\\3/8\\\phantom{0}
\end{tabular}\endgroup%
\kern3pt%
\begingroup \smaller\smaller\smaller\begin{tabular}{@{}c@{}}%
\phantom{0}\\\phantom{0}\\2/3
\end{tabular}\endgroup%
{$\left.\llap{\phantom{%
\begingroup \smaller\smaller\smaller\begin{tabular}{@{}c@{}}%
\phantom{0}\\\phantom{0}\\\phantom{0}
\end{tabular}\endgroup%
}}\!\right]$}%
{$\left[\!\llap{\phantom{%
\begingroup \smaller\smaller\smaller\begin{tabular}{@{}c@{}}%
0\\0\\0
\end{tabular}\endgroup%
}}\right.$}%
\begingroup \smaller\smaller\smaller\begin{tabular}{@{}c@{}}%
6\\-2\\-3
\end{tabular}\endgroup%
\kern3pt%
\begingroup \smaller\smaller\smaller\begin{tabular}{@{}c@{}}%
24\\-8\\6
\end{tabular}\endgroup%
\kern3pt%
\begingroup \smaller\smaller\smaller\begin{tabular}{@{}c@{}}%
1\\1\\1
\end{tabular}\endgroup%
\kern3pt%
\begingroup \smaller\smaller\smaller\begin{tabular}{@{}c@{}}%
2\\2\\-1
\end{tabular}\endgroup%
{$\left.\llap{\phantom{%
\begingroup \smaller\smaller\smaller\begin{tabular}{@{}c@{}}%
0\\0\\0
\end{tabular}\endgroup%
}}\!\right]$}%
}%
\ifdim\wd\matricesbox>\halfwidth\myboxwidth=\hsize\else\myboxwidth=\halfwidth\fi
\vbox{%
\ifdim\myboxwidth=\hsize
\setbox\onelinebox=\hbox{%
\vbox{\hbox{%
$\Pi_{4,84}$ spans $L_{4.5}$%
}\hbox{%
$\infty222$ (shared)%
}%
}%
\hfill\copy\matricesbox
}%
\ifdim\wd\onelinebox>\myboxwidth
\hbox to \myboxwidth{%
$\Pi_{4,84}$ spans $L_{4.5}$%
\hfil
$\infty222$ (shared)%
}%
\box\matricesbox
\else
\hbox to \myboxwidth{%
\unhbox\onelinebox
}%
\fi
\else
\hbox to \myboxwidth{%
$\Pi_{4,84}$ spans $L_{4.5}$%
\hfil}%
\hbox to \myboxwidth{%
$\infty222$ (shared)%
\hfil}%
\box\matricesbox
\fi
}%
\hfill\discretionary{}{}{}%
\setbox\matricesbox=\hbox{%
{$\left[\!\llap{\phantom{%
\begingroup \smaller\smaller\smaller\begin{tabular}{@{}c@{}}%
\phantom{0}\\\phantom{0}\\\phantom{0}
\end{tabular}\endgroup%
}}\right.$}%
\begingroup \smaller\smaller\smaller\begin{tabular}{@{}c@{}}%
-1/13\\\phantom{0}\\\phantom{0}
\end{tabular}\endgroup%
\kern3pt%
\begingroup \smaller\smaller\smaller\begin{tabular}{@{}c@{}}%
\phantom{0}\\12/13\\-3/13
\end{tabular}\endgroup%
\kern3pt%
\begingroup \smaller\smaller\smaller\begin{tabular}{@{}c@{}}%
\phantom{0}\\-3/13\\30/13
\end{tabular}\endgroup%
{$\left.\llap{\phantom{%
\begingroup \smaller\smaller\smaller\begin{tabular}{@{}c@{}}%
\phantom{0}\\\phantom{0}\\\phantom{0}
\end{tabular}\endgroup%
}}\!\right]$}%
{$\left[\!\llap{\phantom{%
\begingroup \smaller\smaller\smaller\begin{tabular}{@{}c@{}}%
0\\0\\0
\end{tabular}\endgroup%
}}\right.$}%
\begingroup \smaller\smaller\smaller\begin{tabular}{@{}c@{}}%
3\\1\\-1
\end{tabular}\endgroup%
\kern3pt%
\begingroup \smaller\smaller\smaller\begin{tabular}{@{}c@{}}%
3\\-2\\0
\end{tabular}\endgroup%
\kern3pt%
\begingroup \smaller\smaller\smaller\begin{tabular}{@{}c@{}}%
2\\0\\1
\end{tabular}\endgroup%
\kern3pt%
\begingroup \smaller\smaller\smaller\begin{tabular}{@{}c@{}}%
9\\4\\1
\end{tabular}\endgroup%
{$\left.\llap{\phantom{%
\begingroup \smaller\smaller\smaller\begin{tabular}{@{}c@{}}%
0\\0\\0
\end{tabular}\endgroup%
}}\!\right]$}%
}%
\ifdim\wd\matricesbox>\halfwidth\myboxwidth=\hsize\else\myboxwidth=\halfwidth\fi
\vbox{%
\ifdim\myboxwidth=\hsize
\setbox\onelinebox=\hbox{%
\vbox{\hbox{%
$\Pi_{4,85}$ spans $L_{4.16}$%
}\hbox{%
$\infty222$ (shared)%
}%
}%
\hfill\copy\matricesbox
}%
\ifdim\wd\onelinebox>\myboxwidth
\hbox to \myboxwidth{%
$\Pi_{4,85}$ spans $L_{4.16}$%
\hfil
$\infty222$ (shared)%
}%
\box\matricesbox
\else
\hbox to \myboxwidth{%
\unhbox\onelinebox
}%
\fi
\else
\hbox to \myboxwidth{%
$\Pi_{4,85}$ spans $L_{4.16}$%
\hfil}%
\hbox to \myboxwidth{%
$\infty222$ (shared)%
\hfil}%
\box\matricesbox
\fi
}%
\hfill\discretionary{}{}{}%
\setbox\matricesbox=\hbox{%
{$\left[\!\llap{\phantom{%
\begingroup \smaller\smaller\smaller\begin{tabular}{@{}c@{}}%
\phantom{0}\\\phantom{0}\\\phantom{0}
\end{tabular}\endgroup%
}}\right.$}%
\begingroup \smaller\smaller\smaller\begin{tabular}{@{}c@{}}%
-1/38\\\phantom{0}\\\phantom{0}
\end{tabular}\endgroup%
\kern3pt%
\begingroup \smaller\smaller\smaller\begin{tabular}{@{}c@{}}%
\phantom{0}\\40/19\\-5/19
\end{tabular}\endgroup%
\kern3pt%
\begingroup \smaller\smaller\smaller\begin{tabular}{@{}c@{}}%
\phantom{0}\\-5/19\\60/19
\end{tabular}\endgroup%
{$\left.\llap{\phantom{%
\begingroup \smaller\smaller\smaller\begin{tabular}{@{}c@{}}%
\phantom{0}\\\phantom{0}\\\phantom{0}
\end{tabular}\endgroup%
}}\!\right]$}%
{$\left[\!\llap{\phantom{%
\begingroup \smaller\smaller\smaller\begin{tabular}{@{}c@{}}%
0\\0\\0
\end{tabular}\endgroup%
}}\right.$}%
\begingroup \smaller\smaller\smaller\begin{tabular}{@{}c@{}}%
10\\2\\-1
\end{tabular}\endgroup%
\kern3pt%
\begingroup \smaller\smaller\smaller\begin{tabular}{@{}c@{}}%
10\\0\\2
\end{tabular}\endgroup%
\kern3pt%
\begingroup \smaller\smaller\smaller\begin{tabular}{@{}c@{}}%
2\\-1\\0
\end{tabular}\endgroup%
\kern3pt%
\begingroup \smaller\smaller\smaller\begin{tabular}{@{}c@{}}%
50\\-2\\-6
\end{tabular}\endgroup%
{$\left.\llap{\phantom{%
\begingroup \smaller\smaller\smaller\begin{tabular}{@{}c@{}}%
0\\0\\0
\end{tabular}\endgroup%
}}\!\right]$}%
}%
\ifdim\wd\matricesbox>\halfwidth\myboxwidth=\hsize\else\myboxwidth=\halfwidth\fi
\vbox{%
\ifdim\myboxwidth=\hsize
\setbox\onelinebox=\hbox{%
\vbox{\hbox{%
$\Pi_{4,86}$ spans $L_{10.9}$%
}\hbox{%
$\infty222$ (shared)%
}%
}%
\hfill\copy\matricesbox
}%
\ifdim\wd\onelinebox>\myboxwidth
\hbox to \myboxwidth{%
$\Pi_{4,86}$ spans $L_{10.9}$%
\hfil
$\infty222$ (shared)%
}%
\box\matricesbox
\else
\hbox to \myboxwidth{%
\unhbox\onelinebox
}%
\fi
\else
\hbox to \myboxwidth{%
$\Pi_{4,86}$ spans $L_{10.9}$%
\hfil}%
\hbox to \myboxwidth{%
$\infty222$ (shared)%
\hfil}%
\box\matricesbox
\fi
}%
\hfill\discretionary{}{}{}%
\setbox\matricesbox=\hbox{%
{$\left[\!\llap{\phantom{%
\begingroup \smaller\smaller\smaller\begin{tabular}{@{}c@{}}%
\phantom{0}\\\phantom{0}\\\phantom{0}
\end{tabular}\endgroup%
}}\right.$}%
\begingroup \smaller\smaller\smaller\begin{tabular}{@{}c@{}}%
-1/18\\\phantom{0}\\\phantom{0}
\end{tabular}\endgroup%
\kern3pt%
\begingroup \smaller\smaller\smaller\begin{tabular}{@{}c@{}}%
\phantom{0}\\7/18\\-1/9
\end{tabular}\endgroup%
\kern3pt%
\begingroup \smaller\smaller\smaller\begin{tabular}{@{}c@{}}%
\phantom{0}\\-1/9\\8/9
\end{tabular}\endgroup%
{$\left.\llap{\phantom{%
\begingroup \smaller\smaller\smaller\begin{tabular}{@{}c@{}}%
\phantom{0}\\\phantom{0}\\\phantom{0}
\end{tabular}\endgroup%
}}\!\right]$}%
{$\left[\!\llap{\phantom{%
\begingroup \smaller\smaller\smaller\begin{tabular}{@{}c@{}}%
0\\0\\0
\end{tabular}\endgroup%
}}\right.$}%
\begingroup \smaller\smaller\smaller\begin{tabular}{@{}c@{}}%
6\\4\\2
\end{tabular}\endgroup%
\kern3pt%
\begingroup \smaller\smaller\smaller\begin{tabular}{@{}c@{}}%
6\\-4\\1
\end{tabular}\endgroup%
\kern3pt%
\begingroup \smaller\smaller\smaller\begin{tabular}{@{}c@{}}%
1\\-1\\-1
\end{tabular}\endgroup%
\kern3pt%
\begingroup \smaller\smaller\smaller\begin{tabular}{@{}c@{}}%
8\\4\\-2
\end{tabular}\endgroup%
{$\left.\llap{\phantom{%
\begingroup \smaller\smaller\smaller\begin{tabular}{@{}c@{}}%
0\\0\\0
\end{tabular}\endgroup%
}}\!\right]$}%
}%
\ifdim\wd\matricesbox>\halfwidth\myboxwidth=\hsize\else\myboxwidth=\halfwidth\fi
\vbox{%
\ifdim\myboxwidth=\hsize
\setbox\onelinebox=\hbox{%
\vbox{\hbox{%
$\Pi_{4,87}$ spans $L_{4.5}$%
}\hbox{%
$\infty222$ (shared)%
}%
}%
\hfill\copy\matricesbox
}%
\ifdim\wd\onelinebox>\myboxwidth
\hbox to \myboxwidth{%
$\Pi_{4,87}$ spans $L_{4.5}$%
\hfil
$\infty222$ (shared)%
}%
\box\matricesbox
\else
\hbox to \myboxwidth{%
\unhbox\onelinebox
}%
\fi
\else
\hbox to \myboxwidth{%
$\Pi_{4,87}$ spans $L_{4.5}$%
\hfil}%
\hbox to \myboxwidth{%
$\infty222$ (shared)%
\hfil}%
\box\matricesbox
\fi
}%
\hfill\discretionary{}{}{}%

\vskip2pt\hrule\vskip2pt

\leavevmode\setbox\matricesbox=\hbox{%
{$\left[\!\llap{\phantom{%
\begingroup \smaller\smaller\smaller\begin{tabular}{@{}c@{}}%
\phantom{0}\\\phantom{0}\\\phantom{0}
\end{tabular}\endgroup%
}}\right.$}%
\begingroup \smaller\smaller\smaller\begin{tabular}{@{}c@{}}%
-1/8\\\phantom{0}\\\phantom{0}
\end{tabular}\endgroup%
\kern3pt%
\begingroup \smaller\smaller\smaller\begin{tabular}{@{}c@{}}%
\phantom{0}\\9/8\\\phantom{0}
\end{tabular}\endgroup%
\kern3pt%
\begingroup \smaller\smaller\smaller\begin{tabular}{@{}c@{}}%
\phantom{0}\\\phantom{0}\\6
\end{tabular}\endgroup%
{$\left.\llap{\phantom{%
\begingroup \smaller\smaller\smaller\begin{tabular}{@{}c@{}}%
\phantom{0}\\\phantom{0}\\\phantom{0}
\end{tabular}\endgroup%
}}\!\right]$}%
{$\left[\!\llap{\phantom{%
\begingroup \smaller\smaller\smaller\begin{tabular}{@{}c@{}}%
0\\0\\0
\end{tabular}\endgroup%
}}\right.$}%
\begingroup \smaller\smaller\smaller\begin{tabular}{@{}c@{}}%
1\\1\\0
\end{tabular}\endgroup%
\kern3pt%
\begingroup \smaller\smaller\smaller\begin{tabular}{@{}c@{}}%
18\\2\\3
\end{tabular}\endgroup%
\kern3pt%
\begingroup \smaller\smaller\smaller\begin{tabular}{@{}c@{}}%
6\\-2\\1
\end{tabular}\endgroup%
{$\left.\llap{\phantom{%
\begingroup \smaller\smaller\smaller\begin{tabular}{@{}c@{}}%
0\\0\\0
\end{tabular}\endgroup%
}}\!\right]$}%
}%
\ifdim\wd\matricesbox>\halfwidth\myboxwidth=\hsize\else\myboxwidth=\halfwidth\fi
\vbox{%
\ifdim\myboxwidth=\hsize
\setbox\onelinebox=\hbox{%
\vbox{\hbox{%
$\Pi_{5,1}$ spans $L_{4.22}$%
}\hbox{%
$|22\slashinfty22\rtimes D_{2}$ (shared)%
}%
}%
\hfill\copy\matricesbox
}%
\ifdim\wd\onelinebox>\myboxwidth
\hbox to \myboxwidth{%
$\Pi_{5,1}$ spans $L_{4.22}$%
\hfil
$|22\slashinfty22\rtimes D_{2}$ (shared)%
}%
\box\matricesbox
\else
\hbox to \myboxwidth{%
\unhbox\onelinebox
}%
\fi
\else
\hbox to \myboxwidth{%
$\Pi_{5,1}$ spans $L_{4.22}$%
\hfil}%
\hbox to \myboxwidth{%
$|22\slashinfty22\rtimes D_{2}$ (shared)%
\hfil}%
\box\matricesbox
\fi
}%
\hfill\discretionary{}{}{}%
\setbox\matricesbox=\hbox{%
{$\left[\!\llap{\phantom{%
\begingroup \smaller\smaller\smaller\begin{tabular}{@{}c@{}}%
\phantom{0}\\\phantom{0}\\\phantom{0}
\end{tabular}\endgroup%
}}\right.$}%
\begingroup \smaller\smaller\smaller\begin{tabular}{@{}c@{}}%
-1/19\\\phantom{0}\\\phantom{0}
\end{tabular}\endgroup%
\kern3pt%
\begingroup \smaller\smaller\smaller\begin{tabular}{@{}c@{}}%
\phantom{0}\\21/38\\\phantom{0}
\end{tabular}\endgroup%
\kern3pt%
\begingroup \smaller\smaller\smaller\begin{tabular}{@{}c@{}}%
\phantom{0}\\\phantom{0}\\21/2
\end{tabular}\endgroup%
{$\left.\llap{\phantom{%
\begingroup \smaller\smaller\smaller\begin{tabular}{@{}c@{}}%
\phantom{0}\\\phantom{0}\\\phantom{0}
\end{tabular}\endgroup%
}}\!\right]$}%
{$\left[\!\llap{\phantom{%
\begingroup \smaller\smaller\smaller\begin{tabular}{@{}c@{}}%
0\\0\\0
\end{tabular}\endgroup%
}}\right.$}%
\begingroup \smaller\smaller\smaller\begin{tabular}{@{}c@{}}%
2\\-2\\0
\end{tabular}\endgroup%
\kern3pt%
\begingroup \smaller\smaller\smaller\begin{tabular}{@{}c@{}}%
21\\-2\\2
\end{tabular}\endgroup%
\kern3pt%
\begingroup \smaller\smaller\smaller\begin{tabular}{@{}c@{}}%
14\\5\\1
\end{tabular}\endgroup%
{$\left.\llap{\phantom{%
\begingroup \smaller\smaller\smaller\begin{tabular}{@{}c@{}}%
0\\0\\0
\end{tabular}\endgroup%
}}\!\right]$}%
}%
\ifdim\wd\matricesbox>\halfwidth\myboxwidth=\hsize\else\myboxwidth=\halfwidth\fi
\vbox{%
\ifdim\myboxwidth=\hsize
\setbox\onelinebox=\hbox{%
\vbox{\hbox{%
$\Pi_{5,2}$ spans $L_{22.7}$%
}\hbox{%
$|22\slashthree22\rtimes D_{2}$%
}%
}%
\hfill\copy\matricesbox
}%
\ifdim\wd\onelinebox>\myboxwidth
\hbox to \myboxwidth{%
$\Pi_{5,2}$ spans $L_{22.7}$%
\hfil
$|22\slashthree22\rtimes D_{2}$%
}%
\box\matricesbox
\else
\hbox to \myboxwidth{%
\unhbox\onelinebox
}%
\fi
\else
\hbox to \myboxwidth{%
$\Pi_{5,2}$ spans $L_{22.7}$%
\hfil}%
\hbox to \myboxwidth{%
$|22\slashthree22\rtimes D_{2}$%
\hfil}%
\box\matricesbox
\fi
}%
\hfill\discretionary{}{}{}%
\setbox\matricesbox=\hbox{%
{$\left[\!\llap{\phantom{%
\begingroup \smaller\smaller\smaller\begin{tabular}{@{}c@{}}%
\phantom{0}\\\phantom{0}\\\phantom{0}
\end{tabular}\endgroup%
}}\right.$}%
\begingroup \smaller\smaller\smaller\begin{tabular}{@{}c@{}}%
-1/9\\\phantom{0}\\\phantom{0}
\end{tabular}\endgroup%
\kern3pt%
\begingroup \smaller\smaller\smaller\begin{tabular}{@{}c@{}}%
\phantom{0}\\5/18\\\phantom{0}
\end{tabular}\endgroup%
\kern3pt%
\begingroup \smaller\smaller\smaller\begin{tabular}{@{}c@{}}%
\phantom{0}\\\phantom{0}\\15/2
\end{tabular}\endgroup%
{$\left.\llap{\phantom{%
\begingroup \smaller\smaller\smaller\begin{tabular}{@{}c@{}}%
\phantom{0}\\\phantom{0}\\\phantom{0}
\end{tabular}\endgroup%
}}\!\right]$}%
{$\left[\!\llap{\phantom{%
\begingroup \smaller\smaller\smaller\begin{tabular}{@{}c@{}}%
0\\0\\0
\end{tabular}\endgroup%
}}\right.$}%
\begingroup \smaller\smaller\smaller\begin{tabular}{@{}c@{}}%
1\\2\\0
\end{tabular}\endgroup%
\kern3pt%
\begingroup \smaller\smaller\smaller\begin{tabular}{@{}c@{}}%
5\\1\\1
\end{tabular}\endgroup%
\kern3pt%
\begingroup \smaller\smaller\smaller\begin{tabular}{@{}c@{}}%
10\\-7\\1
\end{tabular}\endgroup%
{$\left.\llap{\phantom{%
\begingroup \smaller\smaller\smaller\begin{tabular}{@{}c@{}}%
0\\0\\0
\end{tabular}\endgroup%
}}\!\right]$}%
}%
\ifdim\wd\matricesbox>\halfwidth\myboxwidth=\hsize\else\myboxwidth=\halfwidth\fi
\vbox{%
\ifdim\myboxwidth=\hsize
\setbox\onelinebox=\hbox{%
\vbox{\hbox{%
$\Pi_{5,3}$ spans $L_{16.6}$%
}\hbox{%
$|22\slashthree22\rtimes D_{2}$%
}%
}%
\hfill\copy\matricesbox
}%
\ifdim\wd\onelinebox>\myboxwidth
\hbox to \myboxwidth{%
$\Pi_{5,3}$ spans $L_{16.6}$%
\hfil
$|22\slashthree22\rtimes D_{2}$%
}%
\box\matricesbox
\else
\hbox to \myboxwidth{%
\unhbox\onelinebox
}%
\fi
\else
\hbox to \myboxwidth{%
$\Pi_{5,3}$ spans $L_{16.6}$%
\hfil}%
\hbox to \myboxwidth{%
$|22\slashthree22\rtimes D_{2}$%
\hfil}%
\box\matricesbox
\fi
}%
\hfill\discretionary{}{}{}%
\setbox\matricesbox=\hbox{%
{$\left[\!\llap{\phantom{%
\begingroup \smaller\smaller\smaller\begin{tabular}{@{}c@{}}%
\phantom{0}\\\phantom{0}\\\phantom{0}
\end{tabular}\endgroup%
}}\right.$}%
\begingroup \smaller\smaller\smaller\begin{tabular}{@{}c@{}}%
-1/16\\\phantom{0}\\\phantom{0}
\end{tabular}\endgroup%
\kern3pt%
\begingroup \smaller\smaller\smaller\begin{tabular}{@{}c@{}}%
\phantom{0}\\9/16\\\phantom{0}
\end{tabular}\endgroup%
\kern3pt%
\begingroup \smaller\smaller\smaller\begin{tabular}{@{}c@{}}%
\phantom{0}\\\phantom{0}\\3/2
\end{tabular}\endgroup%
{$\left.\llap{\phantom{%
\begingroup \smaller\smaller\smaller\begin{tabular}{@{}c@{}}%
\phantom{0}\\\phantom{0}\\\phantom{0}
\end{tabular}\endgroup%
}}\!\right]$}%
{$\left[\!\llap{\phantom{%
\begingroup \smaller\smaller\smaller\begin{tabular}{@{}c@{}}%
0\\0\\0
\end{tabular}\endgroup%
}}\right.$}%
\begingroup \smaller\smaller\smaller\begin{tabular}{@{}c@{}}%
2\\-2\\0
\end{tabular}\endgroup%
\kern3pt%
\begingroup \smaller\smaller\smaller\begin{tabular}{@{}c@{}}%
9\\-1\\-3
\end{tabular}\endgroup%
\kern3pt%
\begingroup \smaller\smaller\smaller\begin{tabular}{@{}c@{}}%
24\\8\\-4
\end{tabular}\endgroup%
{$\left.\llap{\phantom{%
\begingroup \smaller\smaller\smaller\begin{tabular}{@{}c@{}}%
0\\0\\0
\end{tabular}\endgroup%
}}\!\right]$}%
}%
\ifdim\wd\matricesbox>\halfwidth\myboxwidth=\hsize\else\myboxwidth=\halfwidth\fi
\vbox{%
\ifdim\myboxwidth=\hsize
\setbox\onelinebox=\hbox{%
\vbox{\hbox{%
$\Pi_{5,4}$ spans $L_{4.11}$%
}\hbox{%
$|22\slashinfty22\rtimes D_{2}$ (shared)%
}%
}%
\hfill\copy\matricesbox
}%
\ifdim\wd\onelinebox>\myboxwidth
\hbox to \myboxwidth{%
$\Pi_{5,4}$ spans $L_{4.11}$%
\hfil
$|22\slashinfty22\rtimes D_{2}$ (shared)%
}%
\box\matricesbox
\else
\hbox to \myboxwidth{%
\unhbox\onelinebox
}%
\fi
\else
\hbox to \myboxwidth{%
$\Pi_{5,4}$ spans $L_{4.11}$%
\hfil}%
\hbox to \myboxwidth{%
$|22\slashinfty22\rtimes D_{2}$ (shared)%
\hfil}%
\box\matricesbox
\fi
}%
\hfill\discretionary{}{}{}%
\setbox\matricesbox=\hbox{%
{$\left[\!\llap{\phantom{%
\begingroup \smaller\smaller\smaller\begin{tabular}{@{}c@{}}%
\phantom{0}\\\phantom{0}\\\phantom{0}
\end{tabular}\endgroup%
}}\right.$}%
\begingroup \smaller\smaller\smaller\begin{tabular}{@{}c@{}}%
-1/6\\\phantom{0}\\\phantom{0}
\end{tabular}\endgroup%
\kern3pt%
\begingroup \smaller\smaller\smaller\begin{tabular}{@{}c@{}}%
\phantom{0}\\1/6\\\phantom{0}
\end{tabular}\endgroup%
\kern3pt%
\begingroup \smaller\smaller\smaller\begin{tabular}{@{}c@{}}%
\phantom{0}\\\phantom{0}\\3
\end{tabular}\endgroup%
{$\left.\llap{\phantom{%
\begingroup \smaller\smaller\smaller\begin{tabular}{@{}c@{}}%
\phantom{0}\\\phantom{0}\\\phantom{0}
\end{tabular}\endgroup%
}}\!\right]$}%
{$\left[\!\llap{\phantom{%
\begingroup \smaller\smaller\smaller\begin{tabular}{@{}c@{}}%
0\\0\\0
\end{tabular}\endgroup%
}}\right.$}%
\begingroup \smaller\smaller\smaller\begin{tabular}{@{}c@{}}%
2\\4\\0
\end{tabular}\endgroup%
\kern3pt%
\begingroup \smaller\smaller\smaller\begin{tabular}{@{}c@{}}%
16\\8\\4
\end{tabular}\endgroup%
\kern3pt%
\begingroup \smaller\smaller\smaller\begin{tabular}{@{}c@{}}%
3\\-3\\1
\end{tabular}\endgroup%
{$\left.\llap{\phantom{%
\begingroup \smaller\smaller\smaller\begin{tabular}{@{}c@{}}%
0\\0\\0
\end{tabular}\endgroup%
}}\!\right]$}%
}%
\ifdim\wd\matricesbox>\halfwidth\myboxwidth=\hsize\else\myboxwidth=\halfwidth\fi
\vbox{%
\ifdim\myboxwidth=\hsize
\setbox\onelinebox=\hbox{%
\vbox{\hbox{%
$\Pi_{5,5}$ spans $L_{4.6}$%
}\hbox{%
$|22\slashinfty22\rtimes D_{2}$ (shared)%
}%
}%
\hfill\copy\matricesbox
}%
\ifdim\wd\onelinebox>\myboxwidth
\hbox to \myboxwidth{%
$\Pi_{5,5}$ spans $L_{4.6}$%
\hfil
$|22\slashinfty22\rtimes D_{2}$ (shared)%
}%
\box\matricesbox
\else
\hbox to \myboxwidth{%
\unhbox\onelinebox
}%
\fi
\else
\hbox to \myboxwidth{%
$\Pi_{5,5}$ spans $L_{4.6}$%
\hfil}%
\hbox to \myboxwidth{%
$|22\slashinfty22\rtimes D_{2}$ (shared)%
\hfil}%
\box\matricesbox
\fi
}%
\hfill\discretionary{}{}{}%
\setbox\matricesbox=\hbox{%
{$\left[\!\llap{\phantom{%
\begingroup \smaller\smaller\smaller\begin{tabular}{@{}c@{}}%
\phantom{0}\\\phantom{0}\\\phantom{0}
\end{tabular}\endgroup%
}}\right.$}%
\begingroup \smaller\smaller\smaller\begin{tabular}{@{}c@{}}%
-1/3\\\phantom{0}\\\phantom{0}
\end{tabular}\endgroup%
\kern3pt%
\begingroup \smaller\smaller\smaller\begin{tabular}{@{}c@{}}%
\phantom{0}\\4/3\\\phantom{0}
\end{tabular}\endgroup%
\kern3pt%
\begingroup \smaller\smaller\smaller\begin{tabular}{@{}c@{}}%
\phantom{0}\\\phantom{0}\\2
\end{tabular}\endgroup%
{$\left.\llap{\phantom{%
\begingroup \smaller\smaller\smaller\begin{tabular}{@{}c@{}}%
\phantom{0}\\\phantom{0}\\\phantom{0}
\end{tabular}\endgroup%
}}\!\right]$}%
{$\left[\!\llap{\phantom{%
\begingroup \smaller\smaller\smaller\begin{tabular}{@{}c@{}}%
0\\0\\0
\end{tabular}\endgroup%
}}\right.$}%
\begingroup \smaller\smaller\smaller\begin{tabular}{@{}c@{}}%
1\\1\\0
\end{tabular}\endgroup%
\kern3pt%
\begingroup \smaller\smaller\smaller\begin{tabular}{@{}c@{}}%
4\\1\\2
\end{tabular}\endgroup%
\kern3pt%
\begingroup \smaller\smaller\smaller\begin{tabular}{@{}c@{}}%
2\\-1\\1
\end{tabular}\endgroup%
{$\left.\llap{\phantom{%
\begingroup \smaller\smaller\smaller\begin{tabular}{@{}c@{}}%
0\\0\\0
\end{tabular}\endgroup%
}}\!\right]$}%
}%
\ifdim\wd\matricesbox>\halfwidth\myboxwidth=\hsize\else\myboxwidth=\halfwidth\fi
\vbox{%
\ifdim\myboxwidth=\hsize
\setbox\onelinebox=\hbox{%
\vbox{\hbox{%
$\Pi_{5,6}$ spans $L_{141.7}$%
}\hbox{%
$|22\slashinfty22\rtimes D_{2}$%
}%
}%
\hfill\copy\matricesbox
}%
\ifdim\wd\onelinebox>\myboxwidth
\hbox to \myboxwidth{%
$\Pi_{5,6}$ spans $L_{141.7}$%
\hfil
$|22\slashinfty22\rtimes D_{2}$%
}%
\box\matricesbox
\else
\hbox to \myboxwidth{%
\unhbox\onelinebox
}%
\fi
\else
\hbox to \myboxwidth{%
$\Pi_{5,6}$ spans $L_{141.7}$%
\hfil}%
\hbox to \myboxwidth{%
$|22\slashinfty22\rtimes D_{2}$%
\hfil}%
\box\matricesbox
\fi
}%
\hfill\discretionary{}{}{}%
\setbox\matricesbox=\hbox{%
{$\left[\!\llap{\phantom{%
\begingroup \smaller\smaller\smaller\begin{tabular}{@{}c@{}}%
\phantom{0}\\\phantom{0}\\\phantom{0}
\end{tabular}\endgroup%
}}\right.$}%
\begingroup \smaller\smaller\smaller\begin{tabular}{@{}c@{}}%
-1/3\\\phantom{0}\\\phantom{0}
\end{tabular}\endgroup%
\kern3pt%
\begingroup \smaller\smaller\smaller\begin{tabular}{@{}c@{}}%
\phantom{0}\\1/3\\\phantom{0}
\end{tabular}\endgroup%
\kern3pt%
\begingroup \smaller\smaller\smaller\begin{tabular}{@{}c@{}}%
\phantom{0}\\\phantom{0}\\3
\end{tabular}\endgroup%
{$\left.\llap{\phantom{%
\begingroup \smaller\smaller\smaller\begin{tabular}{@{}c@{}}%
\phantom{0}\\\phantom{0}\\\phantom{0}
\end{tabular}\endgroup%
}}\!\right]$}%
{$\left[\!\llap{\phantom{%
\begingroup \smaller\smaller\smaller\begin{tabular}{@{}c@{}}%
0\\0\\0
\end{tabular}\endgroup%
}}\right.$}%
\begingroup \smaller\smaller\smaller\begin{tabular}{@{}c@{}}%
1\\2\\0
\end{tabular}\endgroup%
\kern3pt%
\begingroup \smaller\smaller\smaller\begin{tabular}{@{}c@{}}%
2\\1\\1
\end{tabular}\endgroup%
\kern3pt%
\begingroup \smaller\smaller\smaller\begin{tabular}{@{}c@{}}%
3\\-3\\1
\end{tabular}\endgroup%
{$\left.\llap{\phantom{%
\begingroup \smaller\smaller\smaller\begin{tabular}{@{}c@{}}%
0\\0\\0
\end{tabular}\endgroup%
}}\!\right]$}%
}%
\ifdim\wd\matricesbox>\halfwidth\myboxwidth=\hsize\else\myboxwidth=\halfwidth\fi
\vbox{%
\ifdim\myboxwidth=\hsize
\setbox\onelinebox=\hbox{%
\vbox{\hbox{%
$\Pi_{5,7}$ spans $L_{4.14}$%
}\hbox{%
$|22\slashinfty22\rtimes D_{2}$ (shared)%
}%
}%
\hfill\copy\matricesbox
}%
\ifdim\wd\onelinebox>\myboxwidth
\hbox to \myboxwidth{%
$\Pi_{5,7}$ spans $L_{4.14}$%
\hfil
$|22\slashinfty22\rtimes D_{2}$ (shared)%
}%
\box\matricesbox
\else
\hbox to \myboxwidth{%
\unhbox\onelinebox
}%
\fi
\else
\hbox to \myboxwidth{%
$\Pi_{5,7}$ spans $L_{4.14}$%
\hfil}%
\hbox to \myboxwidth{%
$|22\slashinfty22\rtimes D_{2}$ (shared)%
\hfil}%
\box\matricesbox
\fi
}%
\hfill\discretionary{}{}{}%
\setbox\matricesbox=\hbox{%
{$\left[\!\llap{\phantom{%
\begingroup \smaller\smaller\smaller\begin{tabular}{@{}c@{}}%
\phantom{0}\\\phantom{0}\\\phantom{0}
\end{tabular}\endgroup%
}}\right.$}%
\begingroup \smaller\smaller\smaller\begin{tabular}{@{}c@{}}%
-1/3\\\phantom{0}\\\phantom{0}
\end{tabular}\endgroup%
\kern3pt%
\begingroup \smaller\smaller\smaller\begin{tabular}{@{}c@{}}%
\phantom{0}\\5/6\\\phantom{0}
\end{tabular}\endgroup%
\kern3pt%
\begingroup \smaller\smaller\smaller\begin{tabular}{@{}c@{}}%
\phantom{0}\\\phantom{0}\\1/2
\end{tabular}\endgroup%
{$\left.\llap{\phantom{%
\begingroup \smaller\smaller\smaller\begin{tabular}{@{}c@{}}%
\phantom{0}\\\phantom{0}\\\phantom{0}
\end{tabular}\endgroup%
}}\!\right]$}%
{$\left[\!\llap{\phantom{%
\begingroup \smaller\smaller\smaller\begin{tabular}{@{}c@{}}%
0\\0\\0
\end{tabular}\endgroup%
}}\right.$}%
\begingroup \smaller\smaller\smaller\begin{tabular}{@{}c@{}}%
1\\-1\\-1
\end{tabular}\endgroup%
\kern3pt%
\begingroup \smaller\smaller\smaller\begin{tabular}{@{}c@{}}%
5\\1\\-5
\end{tabular}\endgroup%
\kern3pt%
\begingroup \smaller\smaller\smaller\begin{tabular}{@{}c@{}}%
5\\4\\0
\end{tabular}\endgroup%
{$\left.\llap{\phantom{%
\begingroup \smaller\smaller\smaller\begin{tabular}{@{}c@{}}%
0\\0\\0
\end{tabular}\endgroup%
}}\!\right]$}%
}%
\ifdim\wd\matricesbox>\halfwidth\myboxwidth=\hsize\else\myboxwidth=\halfwidth\fi
\vbox{%
\ifdim\myboxwidth=\hsize
\setbox\onelinebox=\hbox{%
\vbox{\hbox{%
$\Pi_{5,8}$ spans $L_{5.4}$%
}\hbox{%
$\slashtwo2\infty|\infty2\rtimes D_{2}$ (shared)%
}%
}%
\hfill\copy\matricesbox
}%
\ifdim\wd\onelinebox>\myboxwidth
\hbox to \myboxwidth{%
$\Pi_{5,8}$ spans $L_{5.4}$%
\hfil
$\slashtwo2\infty|\infty2\rtimes D_{2}$ (shared)%
}%
\box\matricesbox
\else
\hbox to \myboxwidth{%
\unhbox\onelinebox
}%
\fi
\else
\hbox to \myboxwidth{%
$\Pi_{5,8}$ spans $L_{5.4}$%
\hfil}%
\hbox to \myboxwidth{%
$\slashtwo2\infty|\infty2\rtimes D_{2}$ (shared)%
\hfil}%
\box\matricesbox
\fi
}%
\hfill\discretionary{}{}{}%
\setbox\matricesbox=\hbox{%
{$\left[\!\llap{\phantom{%
\begingroup \smaller\smaller\smaller\begin{tabular}{@{}c@{}}%
\phantom{0}\\\phantom{0}\\\phantom{0}
\end{tabular}\endgroup%
}}\right.$}%
\begingroup \smaller\smaller\smaller\begin{tabular}{@{}c@{}}%
-1/6\\\phantom{0}\\\phantom{0}
\end{tabular}\endgroup%
\kern3pt%
\begingroup \smaller\smaller\smaller\begin{tabular}{@{}c@{}}%
\phantom{0}\\1/6\\\phantom{0}
\end{tabular}\endgroup%
\kern3pt%
\begingroup \smaller\smaller\smaller\begin{tabular}{@{}c@{}}%
\phantom{0}\\\phantom{0}\\5/2
\end{tabular}\endgroup%
{$\left.\llap{\phantom{%
\begingroup \smaller\smaller\smaller\begin{tabular}{@{}c@{}}%
\phantom{0}\\\phantom{0}\\\phantom{0}
\end{tabular}\endgroup%
}}\!\right]$}%
{$\left[\!\llap{\phantom{%
\begingroup \smaller\smaller\smaller\begin{tabular}{@{}c@{}}%
0\\0\\0
\end{tabular}\endgroup%
}}\right.$}%
\begingroup \smaller\smaller\smaller\begin{tabular}{@{}c@{}}%
2\\4\\0
\end{tabular}\endgroup%
\kern3pt%
\begingroup \smaller\smaller\smaller\begin{tabular}{@{}c@{}}%
2\\1\\-1
\end{tabular}\endgroup%
\kern3pt%
\begingroup \smaller\smaller\smaller\begin{tabular}{@{}c@{}}%
10\\-10\\-2
\end{tabular}\endgroup%
{$\left.\llap{\phantom{%
\begingroup \smaller\smaller\smaller\begin{tabular}{@{}c@{}}%
0\\0\\0
\end{tabular}\endgroup%
}}\!\right]$}%
}%
\ifdim\wd\matricesbox>\halfwidth\myboxwidth=\hsize\else\myboxwidth=\halfwidth\fi
\vbox{%
\ifdim\myboxwidth=\hsize
\setbox\onelinebox=\hbox{%
\vbox{\hbox{%
$\Pi_{5,9}$ spans $L_{10.2}$%
}\hbox{%
$|22\slashinfty22\rtimes D_{2}$%
}%
}%
\hfill\copy\matricesbox
}%
\ifdim\wd\onelinebox>\myboxwidth
\hbox to \myboxwidth{%
$\Pi_{5,9}$ spans $L_{10.2}$%
\hfil
$|22\slashinfty22\rtimes D_{2}$%
}%
\box\matricesbox
\else
\hbox to \myboxwidth{%
\unhbox\onelinebox
}%
\fi
\else
\hbox to \myboxwidth{%
$\Pi_{5,9}$ spans $L_{10.2}$%
\hfil}%
\hbox to \myboxwidth{%
$|22\slashinfty22\rtimes D_{2}$%
\hfil}%
\box\matricesbox
\fi
}%
\hfill\discretionary{}{}{}%
\setbox\matricesbox=\hbox{%
{$\left[\!\llap{\phantom{%
\begingroup \smaller\smaller\smaller\begin{tabular}{@{}c@{}}%
\phantom{0}\\\phantom{0}\\\phantom{0}
\end{tabular}\endgroup%
}}\right.$}%
\begingroup \smaller\smaller\smaller\begin{tabular}{@{}c@{}}%
-1/7\\\phantom{0}\\\phantom{0}
\end{tabular}\endgroup%
\kern3pt%
\begingroup \smaller\smaller\smaller\begin{tabular}{@{}c@{}}%
\phantom{0}\\15/14\\\phantom{0}
\end{tabular}\endgroup%
\kern3pt%
\begingroup \smaller\smaller\smaller\begin{tabular}{@{}c@{}}%
\phantom{0}\\\phantom{0}\\3/2
\end{tabular}\endgroup%
{$\left.\llap{\phantom{%
\begingroup \smaller\smaller\smaller\begin{tabular}{@{}c@{}}%
\phantom{0}\\\phantom{0}\\\phantom{0}
\end{tabular}\endgroup%
}}\!\right]$}%
{$\left[\!\llap{\phantom{%
\begingroup \smaller\smaller\smaller\begin{tabular}{@{}c@{}}%
0\\0\\0
\end{tabular}\endgroup%
}}\right.$}%
\begingroup \smaller\smaller\smaller\begin{tabular}{@{}c@{}}%
3\\-2\\0
\end{tabular}\endgroup%
\kern3pt%
\begingroup \smaller\smaller\smaller\begin{tabular}{@{}c@{}}%
15\\-3\\-5
\end{tabular}\endgroup%
\kern3pt%
\begingroup \smaller\smaller\smaller\begin{tabular}{@{}c@{}}%
2\\1\\-1
\end{tabular}\endgroup%
{$\left.\llap{\phantom{%
\begingroup \smaller\smaller\smaller\begin{tabular}{@{}c@{}}%
0\\0\\0
\end{tabular}\endgroup%
}}\!\right]$}%
}%
\ifdim\wd\matricesbox>\halfwidth\myboxwidth=\hsize\else\myboxwidth=\halfwidth\fi
\vbox{%
\ifdim\myboxwidth=\hsize
\setbox\onelinebox=\hbox{%
\vbox{\hbox{%
$\Pi_{5,10}$ spans $L_{16.5}$%
}\hbox{%
$|22\slashthree22\rtimes D_{2}$%
}%
}%
\hfill\copy\matricesbox
}%
\ifdim\wd\onelinebox>\myboxwidth
\hbox to \myboxwidth{%
$\Pi_{5,10}$ spans $L_{16.5}$%
\hfil
$|22\slashthree22\rtimes D_{2}$%
}%
\box\matricesbox
\else
\hbox to \myboxwidth{%
\unhbox\onelinebox
}%
\fi
\else
\hbox to \myboxwidth{%
$\Pi_{5,10}$ spans $L_{16.5}$%
\hfil}%
\hbox to \myboxwidth{%
$|22\slashthree22\rtimes D_{2}$%
\hfil}%
\box\matricesbox
\fi
}%
\hfill\discretionary{}{}{}%
\setbox\matricesbox=\hbox{%
{$\left[\!\llap{\phantom{%
\begingroup \smaller\smaller\smaller\begin{tabular}{@{}c@{}}%
\phantom{0}\\\phantom{0}\\\phantom{0}
\end{tabular}\endgroup%
}}\right.$}%
\begingroup \smaller\smaller\smaller\begin{tabular}{@{}c@{}}%
-7/11\\\phantom{0}\\\phantom{0}
\end{tabular}\endgroup%
\kern3pt%
\begingroup \smaller\smaller\smaller\begin{tabular}{@{}c@{}}%
\phantom{0}\\1/22\\\phantom{0}
\end{tabular}\endgroup%
\kern3pt%
\begingroup \smaller\smaller\smaller\begin{tabular}{@{}c@{}}%
\phantom{0}\\\phantom{0}\\1/2
\end{tabular}\endgroup%
{$\left.\llap{\phantom{%
\begingroup \smaller\smaller\smaller\begin{tabular}{@{}c@{}}%
\phantom{0}\\\phantom{0}\\\phantom{0}
\end{tabular}\endgroup%
}}\!\right]$}%
{$\left[\!\llap{\phantom{%
\begingroup \smaller\smaller\smaller\begin{tabular}{@{}c@{}}%
0\\0\\0
\end{tabular}\endgroup%
}}\right.$}%
\begingroup \smaller\smaller\smaller\begin{tabular}{@{}c@{}}%
1\\-5\\-1
\end{tabular}\endgroup%
\kern3pt%
\begingroup \smaller\smaller\smaller\begin{tabular}{@{}c@{}}%
2\\1\\-3
\end{tabular}\endgroup%
\kern3pt%
\begingroup \smaller\smaller\smaller\begin{tabular}{@{}c@{}}%
1\\6\\0
\end{tabular}\endgroup%
{$\left.\llap{\phantom{%
\begingroup \smaller\smaller\smaller\begin{tabular}{@{}c@{}}%
0\\0\\0
\end{tabular}\endgroup%
}}\!\right]$}%
}%
\ifdim\wd\matricesbox>\halfwidth\myboxwidth=\hsize\else\myboxwidth=\halfwidth\fi
\vbox{%
\ifdim\myboxwidth=\hsize
\setbox\onelinebox=\hbox{%
\vbox{\hbox{%
$\Pi_{5,11}$ spans $L_{8.2}$%
}\hbox{%
$\slashtwo24|42\rtimes D_{2}$%
}%
}%
\hfill\copy\matricesbox
}%
\ifdim\wd\onelinebox>\myboxwidth
\hbox to \myboxwidth{%
$\Pi_{5,11}$ spans $L_{8.2}$%
\hfil
$\slashtwo24|42\rtimes D_{2}$%
}%
\box\matricesbox
\else
\hbox to \myboxwidth{%
\unhbox\onelinebox
}%
\fi
\else
\hbox to \myboxwidth{%
$\Pi_{5,11}$ spans $L_{8.2}$%
\hfil}%
\hbox to \myboxwidth{%
$\slashtwo24|42\rtimes D_{2}$%
\hfil}%
\box\matricesbox
\fi
}%
\hfill\discretionary{}{}{}%
\setbox\matricesbox=\hbox{%
{$\left[\!\llap{\phantom{%
\begingroup \smaller\smaller\smaller\begin{tabular}{@{}c@{}}%
\phantom{0}\\\phantom{0}\\\phantom{0}
\end{tabular}\endgroup%
}}\right.$}%
\begingroup \smaller\smaller\smaller\begin{tabular}{@{}c@{}}%
-1/10\\\phantom{0}\\\phantom{0}
\end{tabular}\endgroup%
\kern3pt%
\begingroup \smaller\smaller\smaller\begin{tabular}{@{}c@{}}%
\phantom{0}\\18/5\\\phantom{0}
\end{tabular}\endgroup%
\kern3pt%
\begingroup \smaller\smaller\smaller\begin{tabular}{@{}c@{}}%
\phantom{0}\\\phantom{0}\\3/2
\end{tabular}\endgroup%
{$\left.\llap{\phantom{%
\begingroup \smaller\smaller\smaller\begin{tabular}{@{}c@{}}%
\phantom{0}\\\phantom{0}\\\phantom{0}
\end{tabular}\endgroup%
}}\!\right]$}%
{$\left[\!\llap{\phantom{%
\begingroup \smaller\smaller\smaller\begin{tabular}{@{}c@{}}%
0\\0\\0
\end{tabular}\endgroup%
}}\right.$}%
\begingroup \smaller\smaller\smaller\begin{tabular}{@{}c@{}}%
8\\2\\0
\end{tabular}\endgroup%
\kern3pt%
\begingroup \smaller\smaller\smaller\begin{tabular}{@{}c@{}}%
9\\1\\3
\end{tabular}\endgroup%
\kern3pt%
\begingroup \smaller\smaller\smaller\begin{tabular}{@{}c@{}}%
6\\-1\\2
\end{tabular}\endgroup%
{$\left.\llap{\phantom{%
\begingroup \smaller\smaller\smaller\begin{tabular}{@{}c@{}}%
0\\0\\0
\end{tabular}\endgroup%
}}\!\right]$}%
}%
\ifdim\wd\matricesbox>\halfwidth\myboxwidth=\hsize\else\myboxwidth=\halfwidth\fi
\vbox{%
\ifdim\myboxwidth=\hsize
\setbox\onelinebox=\hbox{%
\vbox{\hbox{%
$\Pi_{5,12}$ spans $L_{4.11}$%
}\hbox{%
$|22\slashinfty22\rtimes D_{2}$ (shared)%
}%
}%
\hfill\copy\matricesbox
}%
\ifdim\wd\onelinebox>\myboxwidth
\hbox to \myboxwidth{%
$\Pi_{5,12}$ spans $L_{4.11}$%
\hfil
$|22\slashinfty22\rtimes D_{2}$ (shared)%
}%
\box\matricesbox
\else
\hbox to \myboxwidth{%
\unhbox\onelinebox
}%
\fi
\else
\hbox to \myboxwidth{%
$\Pi_{5,12}$ spans $L_{4.11}$%
\hfil}%
\hbox to \myboxwidth{%
$|22\slashinfty22\rtimes D_{2}$ (shared)%
\hfil}%
\box\matricesbox
\fi
}%
\hfill\discretionary{}{}{}%
\setbox\matricesbox=\hbox{%
{$\left[\!\llap{\phantom{%
\begingroup \smaller\smaller\smaller\begin{tabular}{@{}c@{}}%
\phantom{0}\\\phantom{0}\\\phantom{0}
\end{tabular}\endgroup%
}}\right.$}%
\begingroup \smaller\smaller\smaller\begin{tabular}{@{}c@{}}%
-1/14\\\phantom{0}\\\phantom{0}
\end{tabular}\endgroup%
\kern3pt%
\begingroup \smaller\smaller\smaller\begin{tabular}{@{}c@{}}%
\phantom{0}\\15/14\\\phantom{0}
\end{tabular}\endgroup%
\kern3pt%
\begingroup \smaller\smaller\smaller\begin{tabular}{@{}c@{}}%
\phantom{0}\\\phantom{0}\\15/2
\end{tabular}\endgroup%
{$\left.\llap{\phantom{%
\begingroup \smaller\smaller\smaller\begin{tabular}{@{}c@{}}%
\phantom{0}\\\phantom{0}\\\phantom{0}
\end{tabular}\endgroup%
}}\!\right]$}%
{$\left[\!\llap{\phantom{%
\begingroup \smaller\smaller\smaller\begin{tabular}{@{}c@{}}%
0\\0\\0
\end{tabular}\endgroup%
}}\right.$}%
\begingroup \smaller\smaller\smaller\begin{tabular}{@{}c@{}}%
10\\-4\\0
\end{tabular}\endgroup%
\kern3pt%
\begingroup \smaller\smaller\smaller\begin{tabular}{@{}c@{}}%
6\\-1\\1
\end{tabular}\endgroup%
\kern3pt%
\begingroup \smaller\smaller\smaller\begin{tabular}{@{}c@{}}%
10\\3\\1
\end{tabular}\endgroup%
{$\left.\llap{\phantom{%
\begingroup \smaller\smaller\smaller\begin{tabular}{@{}c@{}}%
0\\0\\0
\end{tabular}\endgroup%
}}\!\right]$}%
}%
\ifdim\wd\matricesbox>\halfwidth\myboxwidth=\hsize\else\myboxwidth=\halfwidth\fi
\vbox{%
\ifdim\myboxwidth=\hsize
\setbox\onelinebox=\hbox{%
\vbox{\hbox{%
$\Pi_{5,13}$ spans $L_{31.9}$%
}\hbox{%
$|22\slashthree22\rtimes D_{2}$ (shared)%
}%
}%
\hfill\copy\matricesbox
}%
\ifdim\wd\onelinebox>\myboxwidth
\hbox to \myboxwidth{%
$\Pi_{5,13}$ spans $L_{31.9}$%
\hfil
$|22\slashthree22\rtimes D_{2}$ (shared)%
}%
\box\matricesbox
\else
\hbox to \myboxwidth{%
\unhbox\onelinebox
}%
\fi
\else
\hbox to \myboxwidth{%
$\Pi_{5,13}$ spans $L_{31.9}$%
\hfil}%
\hbox to \myboxwidth{%
$|22\slashthree22\rtimes D_{2}$ (shared)%
\hfil}%
\box\matricesbox
\fi
}%
\hfill\discretionary{}{}{}%
\setbox\matricesbox=\hbox{%
{$\left[\!\llap{\phantom{%
\begingroup \smaller\smaller\smaller\begin{tabular}{@{}c@{}}%
\phantom{0}\\\phantom{0}\\\phantom{0}
\end{tabular}\endgroup%
}}\right.$}%
\begingroup \smaller\smaller\smaller\begin{tabular}{@{}c@{}}%
-1/40\\\phantom{0}\\\phantom{0}
\end{tabular}\endgroup%
\kern3pt%
\begingroup \smaller\smaller\smaller\begin{tabular}{@{}c@{}}%
\phantom{0}\\39/10\\\phantom{0}
\end{tabular}\endgroup%
\kern3pt%
\begingroup \smaller\smaller\smaller\begin{tabular}{@{}c@{}}%
\phantom{0}\\\phantom{0}\\39
\end{tabular}\endgroup%
{$\left.\llap{\phantom{%
\begingroup \smaller\smaller\smaller\begin{tabular}{@{}c@{}}%
\phantom{0}\\\phantom{0}\\\phantom{0}
\end{tabular}\endgroup%
}}\!\right]$}%
{$\left[\!\llap{\phantom{%
\begingroup \smaller\smaller\smaller\begin{tabular}{@{}c@{}}%
0\\0\\0
\end{tabular}\endgroup%
}}\right.$}%
\begingroup \smaller\smaller\smaller\begin{tabular}{@{}c@{}}%
12\\-2\\0
\end{tabular}\endgroup%
\kern3pt%
\begingroup \smaller\smaller\smaller\begin{tabular}{@{}c@{}}%
26\\-1\\-1
\end{tabular}\endgroup%
\kern3pt%
\begingroup \smaller\smaller\smaller\begin{tabular}{@{}c@{}}%
78\\7\\-1
\end{tabular}\endgroup%
{$\left.\llap{\phantom{%
\begingroup \smaller\smaller\smaller\begin{tabular}{@{}c@{}}%
0\\0\\0
\end{tabular}\endgroup%
}}\!\right]$}%
}%
\ifdim\wd\matricesbox>\halfwidth\myboxwidth=\hsize\else\myboxwidth=\halfwidth\fi
\vbox{%
\ifdim\myboxwidth=\hsize
\setbox\onelinebox=\hbox{%
\vbox{\hbox{%
$\Pi_{5,14}$ spans $L_{41.10}$%
}\hbox{%
$2|26\slashtwo6\rtimes D_{2}$%
}%
}%
\hfill\copy\matricesbox
}%
\ifdim\wd\onelinebox>\myboxwidth
\hbox to \myboxwidth{%
$\Pi_{5,14}$ spans $L_{41.10}$%
\hfil
$2|26\slashtwo6\rtimes D_{2}$%
}%
\box\matricesbox
\else
\hbox to \myboxwidth{%
\unhbox\onelinebox
}%
\fi
\else
\hbox to \myboxwidth{%
$\Pi_{5,14}$ spans $L_{41.10}$%
\hfil}%
\hbox to \myboxwidth{%
$2|26\slashtwo6\rtimes D_{2}$%
\hfil}%
\box\matricesbox
\fi
}%
\hfill\discretionary{}{}{}%
\setbox\matricesbox=\hbox{%
{$\left[\!\llap{\phantom{%
\begingroup \smaller\smaller\smaller\begin{tabular}{@{}c@{}}%
\phantom{0}\\\phantom{0}\\\phantom{0}
\end{tabular}\endgroup%
}}\right.$}%
\begingroup \smaller\smaller\smaller\begin{tabular}{@{}c@{}}%
-1/4\\\phantom{0}\\\phantom{0}
\end{tabular}\endgroup%
\kern3pt%
\begingroup \smaller\smaller\smaller\begin{tabular}{@{}c@{}}%
\phantom{0}\\3\\\phantom{0}
\end{tabular}\endgroup%
\kern3pt%
\begingroup \smaller\smaller\smaller\begin{tabular}{@{}c@{}}%
\phantom{0}\\\phantom{0}\\3
\end{tabular}\endgroup%
{$\left.\llap{\phantom{%
\begingroup \smaller\smaller\smaller\begin{tabular}{@{}c@{}}%
\phantom{0}\\\phantom{0}\\\phantom{0}
\end{tabular}\endgroup%
}}\!\right]$}%
{$\left[\!\llap{\phantom{%
\begingroup \smaller\smaller\smaller\begin{tabular}{@{}c@{}}%
0\\0\\0
\end{tabular}\endgroup%
}}\right.$}%
\begingroup \smaller\smaller\smaller\begin{tabular}{@{}c@{}}%
6\\2\\-1
\end{tabular}\endgroup%
\kern3pt%
\begingroup \smaller\smaller\smaller\begin{tabular}{@{}c@{}}%
2\\0\\-1
\end{tabular}\endgroup%
\kern3pt%
\begingroup \smaller\smaller\smaller\begin{tabular}{@{}c@{}}%
2\\-1\\0
\end{tabular}\endgroup%
{$\left.\llap{\phantom{%
\begingroup \smaller\smaller\smaller\begin{tabular}{@{}c@{}}%
0\\0\\0
\end{tabular}\endgroup%
}}\!\right]$}%
}%
\ifdim\wd\matricesbox>\halfwidth\myboxwidth=\hsize\else\myboxwidth=\halfwidth\fi
\vbox{%
\ifdim\myboxwidth=\hsize
\setbox\onelinebox=\hbox{%
\vbox{\hbox{%
$\Pi_{5,15}$ spans $L_{3.4}$%
}\hbox{%
$\slashtwo23|32\rtimes D_{2}$%
}%
}%
\hfill\copy\matricesbox
}%
\ifdim\wd\onelinebox>\myboxwidth
\hbox to \myboxwidth{%
$\Pi_{5,15}$ spans $L_{3.4}$%
\hfil
$\slashtwo23|32\rtimes D_{2}$%
}%
\box\matricesbox
\else
\hbox to \myboxwidth{%
\unhbox\onelinebox
}%
\fi
\else
\hbox to \myboxwidth{%
$\Pi_{5,15}$ spans $L_{3.4}$%
\hfil}%
\hbox to \myboxwidth{%
$\slashtwo23|32\rtimes D_{2}$%
\hfil}%
\box\matricesbox
\fi
}%
\hfill\discretionary{}{}{}%
\setbox\matricesbox=\hbox{%
{$\left[\!\llap{\phantom{%
\begingroup \smaller\smaller\smaller\begin{tabular}{@{}c@{}}%
\phantom{0}\\\phantom{0}\\\phantom{0}
\end{tabular}\endgroup%
}}\right.$}%
\begingroup \smaller\smaller\smaller\begin{tabular}{@{}c@{}}%
-1/7\\\phantom{0}\\\phantom{0}
\end{tabular}\endgroup%
\kern3pt%
\begingroup \smaller\smaller\smaller\begin{tabular}{@{}c@{}}%
\phantom{0}\\9/7\\\phantom{0}
\end{tabular}\endgroup%
\kern3pt%
\begingroup \smaller\smaller\smaller\begin{tabular}{@{}c@{}}%
\phantom{0}\\\phantom{0}\\3
\end{tabular}\endgroup%
{$\left.\llap{\phantom{%
\begingroup \smaller\smaller\smaller\begin{tabular}{@{}c@{}}%
\phantom{0}\\\phantom{0}\\\phantom{0}
\end{tabular}\endgroup%
}}\!\right]$}%
{$\left[\!\llap{\phantom{%
\begingroup \smaller\smaller\smaller\begin{tabular}{@{}c@{}}%
0\\0\\0
\end{tabular}\endgroup%
}}\right.$}%
\begingroup \smaller\smaller\smaller\begin{tabular}{@{}c@{}}%
9\\4\\0
\end{tabular}\endgroup%
\kern3pt%
\begingroup \smaller\smaller\smaller\begin{tabular}{@{}c@{}}%
8\\2\\2
\end{tabular}\endgroup%
\kern3pt%
\begingroup \smaller\smaller\smaller\begin{tabular}{@{}c@{}}%
3\\-1\\1
\end{tabular}\endgroup%
{$\left.\llap{\phantom{%
\begingroup \smaller\smaller\smaller\begin{tabular}{@{}c@{}}%
0\\0\\0
\end{tabular}\endgroup%
}}\!\right]$}%
}%
\ifdim\wd\matricesbox>\halfwidth\myboxwidth=\hsize\else\myboxwidth=\halfwidth\fi
\vbox{%
\ifdim\myboxwidth=\hsize
\setbox\onelinebox=\hbox{%
\vbox{\hbox{%
$\Pi_{5,16}$ spans $L_{4.25}$%
}\hbox{%
$|22\slashinfty22\rtimes D_{2}$ (shared)%
}%
}%
\hfill\copy\matricesbox
}%
\ifdim\wd\onelinebox>\myboxwidth
\hbox to \myboxwidth{%
$\Pi_{5,16}$ spans $L_{4.25}$%
\hfil
$|22\slashinfty22\rtimes D_{2}$ (shared)%
}%
\box\matricesbox
\else
\hbox to \myboxwidth{%
\unhbox\onelinebox
}%
\fi
\else
\hbox to \myboxwidth{%
$\Pi_{5,16}$ spans $L_{4.25}$%
\hfil}%
\hbox to \myboxwidth{%
$|22\slashinfty22\rtimes D_{2}$ (shared)%
\hfil}%
\box\matricesbox
\fi
}%
\hfill\discretionary{}{}{}%
\setbox\matricesbox=\hbox{%
{$\left[\!\llap{\phantom{%
\begingroup \smaller\smaller\smaller\begin{tabular}{@{}c@{}}%
\phantom{0}\\\phantom{0}\\\phantom{0}
\end{tabular}\endgroup%
}}\right.$}%
\begingroup \smaller\smaller\smaller\begin{tabular}{@{}c@{}}%
-1/16\\\phantom{0}\\\phantom{0}
\end{tabular}\endgroup%
\kern3pt%
\begingroup \smaller\smaller\smaller\begin{tabular}{@{}c@{}}%
\phantom{0}\\15/4\\\phantom{0}
\end{tabular}\endgroup%
\kern3pt%
\begingroup \smaller\smaller\smaller\begin{tabular}{@{}c@{}}%
\phantom{0}\\\phantom{0}\\1/2
\end{tabular}\endgroup%
{$\left.\llap{\phantom{%
\begingroup \smaller\smaller\smaller\begin{tabular}{@{}c@{}}%
\phantom{0}\\\phantom{0}\\\phantom{0}
\end{tabular}\endgroup%
}}\!\right]$}%
{$\left[\!\llap{\phantom{%
\begingroup \smaller\smaller\smaller\begin{tabular}{@{}c@{}}%
0\\0\\0
\end{tabular}\endgroup%
}}\right.$}%
\begingroup \smaller\smaller\smaller\begin{tabular}{@{}c@{}}%
24\\4\\0
\end{tabular}\endgroup%
\kern3pt%
\begingroup \smaller\smaller\smaller\begin{tabular}{@{}c@{}}%
10\\1\\-5
\end{tabular}\endgroup%
\kern3pt%
\begingroup \smaller\smaller\smaller\begin{tabular}{@{}c@{}}%
6\\-1\\-3
\end{tabular}\endgroup%
{$\left.\llap{\phantom{%
\begingroup \smaller\smaller\smaller\begin{tabular}{@{}c@{}}%
0\\0\\0
\end{tabular}\endgroup%
}}\!\right]$}%
}%
\ifdim\wd\matricesbox>\halfwidth\myboxwidth=\hsize\else\myboxwidth=\halfwidth\fi
\vbox{%
\ifdim\myboxwidth=\hsize
\setbox\onelinebox=\hbox{%
\vbox{\hbox{%
$\Pi_{5,17}$ spans $L_{31.1}$%
}\hbox{%
$|22\slashthree22\rtimes D_{2}$ (shared)%
}%
}%
\hfill\copy\matricesbox
}%
\ifdim\wd\onelinebox>\myboxwidth
\hbox to \myboxwidth{%
$\Pi_{5,17}$ spans $L_{31.1}$%
\hfil
$|22\slashthree22\rtimes D_{2}$ (shared)%
}%
\box\matricesbox
\else
\hbox to \myboxwidth{%
\unhbox\onelinebox
}%
\fi
\else
\hbox to \myboxwidth{%
$\Pi_{5,17}$ spans $L_{31.1}$%
\hfil}%
\hbox to \myboxwidth{%
$|22\slashthree22\rtimes D_{2}$ (shared)%
\hfil}%
\box\matricesbox
\fi
}%
\hfill\discretionary{}{}{}%
\setbox\matricesbox=\hbox{%
{$\left[\!\llap{\phantom{%
\begingroup \smaller\smaller\smaller\begin{tabular}{@{}c@{}}%
\phantom{0}\\\phantom{0}\\\phantom{0}
\end{tabular}\endgroup%
}}\right.$}%
\begingroup \smaller\smaller\smaller\begin{tabular}{@{}c@{}}%
-1/28\\\phantom{0}\\\phantom{0}
\end{tabular}\endgroup%
\kern3pt%
\begingroup \smaller\smaller\smaller\begin{tabular}{@{}c@{}}%
\phantom{0}\\15/7\\\phantom{0}
\end{tabular}\endgroup%
\kern3pt%
\begingroup \smaller\smaller\smaller\begin{tabular}{@{}c@{}}%
\phantom{0}\\\phantom{0}\\5
\end{tabular}\endgroup%
{$\left.\llap{\phantom{%
\begingroup \smaller\smaller\smaller\begin{tabular}{@{}c@{}}%
\phantom{0}\\\phantom{0}\\\phantom{0}
\end{tabular}\endgroup%
}}\!\right]$}%
{$\left[\!\llap{\phantom{%
\begingroup \smaller\smaller\smaller\begin{tabular}{@{}c@{}}%
0\\0\\0
\end{tabular}\endgroup%
}}\right.$}%
\begingroup \smaller\smaller\smaller\begin{tabular}{@{}c@{}}%
2\\-1\\0
\end{tabular}\endgroup%
\kern3pt%
\begingroup \smaller\smaller\smaller\begin{tabular}{@{}c@{}}%
60\\-2\\-6
\end{tabular}\endgroup%
\kern3pt%
\begingroup \smaller\smaller\smaller\begin{tabular}{@{}c@{}}%
10\\2\\-1
\end{tabular}\endgroup%
{$\left.\llap{\phantom{%
\begingroup \smaller\smaller\smaller\begin{tabular}{@{}c@{}}%
0\\0\\0
\end{tabular}\endgroup%
}}\!\right]$}%
}%
\ifdim\wd\matricesbox>\halfwidth\myboxwidth=\hsize\else\myboxwidth=\halfwidth\fi
\vbox{%
\ifdim\myboxwidth=\hsize
\setbox\onelinebox=\hbox{%
\vbox{\hbox{%
$\Pi_{5,18}$ spans $L_{19.8}$%
}\hbox{%
$|22\slashtwo22\rtimes D_{2}$%
}%
}%
\hfill\copy\matricesbox
}%
\ifdim\wd\onelinebox>\myboxwidth
\hbox to \myboxwidth{%
$\Pi_{5,18}$ spans $L_{19.8}$%
\hfil
$|22\slashtwo22\rtimes D_{2}$%
}%
\box\matricesbox
\else
\hbox to \myboxwidth{%
\unhbox\onelinebox
}%
\fi
\else
\hbox to \myboxwidth{%
$\Pi_{5,18}$ spans $L_{19.8}$%
\hfil}%
\hbox to \myboxwidth{%
$|22\slashtwo22\rtimes D_{2}$%
\hfil}%
\box\matricesbox
\fi
}%
\hfill\discretionary{}{}{}%
\setbox\matricesbox=\hbox{%
{$\left[\!\llap{\phantom{%
\begingroup \smaller\smaller\smaller\begin{tabular}{@{}c@{}}%
\phantom{0}\\\phantom{0}\\\phantom{0}
\end{tabular}\endgroup%
}}\right.$}%
\begingroup \smaller\smaller\smaller\begin{tabular}{@{}c@{}}%
-1/10\\\phantom{0}\\\phantom{0}
\end{tabular}\endgroup%
\kern3pt%
\begingroup \smaller\smaller\smaller\begin{tabular}{@{}c@{}}%
\phantom{0}\\3/5\\\phantom{0}
\end{tabular}\endgroup%
\kern3pt%
\begingroup \smaller\smaller\smaller\begin{tabular}{@{}c@{}}%
\phantom{0}\\\phantom{0}\\1/2
\end{tabular}\endgroup%
{$\left.\llap{\phantom{%
\begingroup \smaller\smaller\smaller\begin{tabular}{@{}c@{}}%
\phantom{0}\\\phantom{0}\\\phantom{0}
\end{tabular}\endgroup%
}}\!\right]$}%
{$\left[\!\llap{\phantom{%
\begingroup \smaller\smaller\smaller\begin{tabular}{@{}c@{}}%
0\\0\\0
\end{tabular}\endgroup%
}}\right.$}%
\begingroup \smaller\smaller\smaller\begin{tabular}{@{}c@{}}%
6\\4\\0
\end{tabular}\endgroup%
\kern3pt%
\begingroup \smaller\smaller\smaller\begin{tabular}{@{}c@{}}%
16\\4\\-8
\end{tabular}\endgroup%
\kern3pt%
\begingroup \smaller\smaller\smaller\begin{tabular}{@{}c@{}}%
1\\-1\\-1
\end{tabular}\endgroup%
{$\left.\llap{\phantom{%
\begingroup \smaller\smaller\smaller\begin{tabular}{@{}c@{}}%
0\\0\\0
\end{tabular}\endgroup%
}}\!\right]$}%
}%
\ifdim\wd\matricesbox>\halfwidth\myboxwidth=\hsize\else\myboxwidth=\halfwidth\fi
\vbox{%
\ifdim\myboxwidth=\hsize
\setbox\onelinebox=\hbox{%
\vbox{\hbox{%
$\Pi_{5,19}$ spans $L_{123.4}$%
}\hbox{%
$22|22\slashtwo\rtimes D_{2}$ (shared)%
}%
}%
\hfill\copy\matricesbox
}%
\ifdim\wd\onelinebox>\myboxwidth
\hbox to \myboxwidth{%
$\Pi_{5,19}$ spans $L_{123.4}$%
\hfil
$22|22\slashtwo\rtimes D_{2}$ (shared)%
}%
\box\matricesbox
\else
\hbox to \myboxwidth{%
\unhbox\onelinebox
}%
\fi
\else
\hbox to \myboxwidth{%
$\Pi_{5,19}$ spans $L_{123.4}$%
\hfil}%
\hbox to \myboxwidth{%
$22|22\slashtwo\rtimes D_{2}$ (shared)%
\hfil}%
\box\matricesbox
\fi
}%
\hfill\discretionary{}{}{}%
\setbox\matricesbox=\hbox{%
{$\left[\!\llap{\phantom{%
\begingroup \smaller\smaller\smaller\begin{tabular}{@{}c@{}}%
\phantom{0}\\\phantom{0}\\\phantom{0}
\end{tabular}\endgroup%
}}\right.$}%
\begingroup \smaller\smaller\smaller\begin{tabular}{@{}c@{}}%
-1/11\\\phantom{0}\\\phantom{0}
\end{tabular}\endgroup%
\kern3pt%
\begingroup \smaller\smaller\smaller\begin{tabular}{@{}c@{}}%
\phantom{0}\\3/11\\\phantom{0}
\end{tabular}\endgroup%
\kern3pt%
\begingroup \smaller\smaller\smaller\begin{tabular}{@{}c@{}}%
\phantom{0}\\\phantom{0}\\1
\end{tabular}\endgroup%
{$\left.\llap{\phantom{%
\begingroup \smaller\smaller\smaller\begin{tabular}{@{}c@{}}%
\phantom{0}\\\phantom{0}\\\phantom{0}
\end{tabular}\endgroup%
}}\!\right]$}%
{$\left[\!\llap{\phantom{%
\begingroup \smaller\smaller\smaller\begin{tabular}{@{}c@{}}%
0\\0\\0
\end{tabular}\endgroup%
}}\right.$}%
\begingroup \smaller\smaller\smaller\begin{tabular}{@{}c@{}}%
1\\2\\0
\end{tabular}\endgroup%
\kern3pt%
\begingroup \smaller\smaller\smaller\begin{tabular}{@{}c@{}}%
6\\1\\3
\end{tabular}\endgroup%
\kern3pt%
\begingroup \smaller\smaller\smaller\begin{tabular}{@{}c@{}}%
8\\-6\\2
\end{tabular}\endgroup%
{$\left.\llap{\phantom{%
\begingroup \smaller\smaller\smaller\begin{tabular}{@{}c@{}}%
0\\0\\0
\end{tabular}\endgroup%
}}\!\right]$}%
}%
\ifdim\wd\matricesbox>\halfwidth\myboxwidth=\hsize\else\myboxwidth=\halfwidth\fi
\vbox{%
\ifdim\myboxwidth=\hsize
\setbox\onelinebox=\hbox{%
\vbox{\hbox{%
$\Pi_{5,20}$ spans $L_{123.5}$%
}\hbox{%
$|22\slashtwo22\rtimes D_{2}$ (shared)%
}%
}%
\hfill\copy\matricesbox
}%
\ifdim\wd\onelinebox>\myboxwidth
\hbox to \myboxwidth{%
$\Pi_{5,20}$ spans $L_{123.5}$%
\hfil
$|22\slashtwo22\rtimes D_{2}$ (shared)%
}%
\box\matricesbox
\else
\hbox to \myboxwidth{%
\unhbox\onelinebox
}%
\fi
\else
\hbox to \myboxwidth{%
$\Pi_{5,20}$ spans $L_{123.5}$%
\hfil}%
\hbox to \myboxwidth{%
$|22\slashtwo22\rtimes D_{2}$ (shared)%
\hfil}%
\box\matricesbox
\fi
}%
\hfill\discretionary{}{}{}%
\setbox\matricesbox=\hbox{%
{$\left[\!\llap{\phantom{%
\begingroup \smaller\smaller\smaller\begin{tabular}{@{}c@{}}%
\phantom{0}\\\phantom{0}\\\phantom{0}
\end{tabular}\endgroup%
}}\right.$}%
\begingroup \smaller\smaller\smaller\begin{tabular}{@{}c@{}}%
-1/10\\\phantom{0}\\\phantom{0}
\end{tabular}\endgroup%
\kern3pt%
\begingroup \smaller\smaller\smaller\begin{tabular}{@{}c@{}}%
\phantom{0}\\12/5\\\phantom{0}
\end{tabular}\endgroup%
\kern3pt%
\begingroup \smaller\smaller\smaller\begin{tabular}{@{}c@{}}%
\phantom{0}\\\phantom{0}\\3/2
\end{tabular}\endgroup%
{$\left.\llap{\phantom{%
\begingroup \smaller\smaller\smaller\begin{tabular}{@{}c@{}}%
\phantom{0}\\\phantom{0}\\\phantom{0}
\end{tabular}\endgroup%
}}\!\right]$}%
{$\left[\!\llap{\phantom{%
\begingroup \smaller\smaller\smaller\begin{tabular}{@{}c@{}}%
0\\0\\0
\end{tabular}\endgroup%
}}\right.$}%
\begingroup \smaller\smaller\smaller\begin{tabular}{@{}c@{}}%
2\\-1\\0
\end{tabular}\endgroup%
\kern3pt%
\begingroup \smaller\smaller\smaller\begin{tabular}{@{}c@{}}%
12\\-1\\4
\end{tabular}\endgroup%
\kern3pt%
\begingroup \smaller\smaller\smaller\begin{tabular}{@{}c@{}}%
3\\1\\1
\end{tabular}\endgroup%
{$\left.\llap{\phantom{%
\begingroup \smaller\smaller\smaller\begin{tabular}{@{}c@{}}%
0\\0\\0
\end{tabular}\endgroup%
}}\!\right]$}%
}%
\ifdim\wd\matricesbox>\halfwidth\myboxwidth=\hsize\else\myboxwidth=\halfwidth\fi
\vbox{%
\ifdim\myboxwidth=\hsize
\setbox\onelinebox=\hbox{%
\vbox{\hbox{%
$\Pi_{5,21}$ spans $L_{123.7}$%
}\hbox{%
$22|22\slashtwo\rtimes D_{2}$ (shared)%
}%
}%
\hfill\copy\matricesbox
}%
\ifdim\wd\onelinebox>\myboxwidth
\hbox to \myboxwidth{%
$\Pi_{5,21}$ spans $L_{123.7}$%
\hfil
$22|22\slashtwo\rtimes D_{2}$ (shared)%
}%
\box\matricesbox
\else
\hbox to \myboxwidth{%
\unhbox\onelinebox
}%
\fi
\else
\hbox to \myboxwidth{%
$\Pi_{5,21}$ spans $L_{123.7}$%
\hfil}%
\hbox to \myboxwidth{%
$22|22\slashtwo\rtimes D_{2}$ (shared)%
\hfil}%
\box\matricesbox
\fi
}%
\hfill\discretionary{}{}{}%
\setbox\matricesbox=\hbox{%
{$\left[\!\llap{\phantom{%
\begingroup \smaller\smaller\smaller\begin{tabular}{@{}c@{}}%
\phantom{0}\\\phantom{0}\\\phantom{0}
\end{tabular}\endgroup%
}}\right.$}%
\begingroup \smaller\smaller\smaller\begin{tabular}{@{}c@{}}%
-1/13\\\phantom{0}\\\phantom{0}
\end{tabular}\endgroup%
\kern3pt%
\begingroup \smaller\smaller\smaller\begin{tabular}{@{}c@{}}%
\phantom{0}\\3/13\\\phantom{0}
\end{tabular}\endgroup%
\kern3pt%
\begingroup \smaller\smaller\smaller\begin{tabular}{@{}c@{}}%
\phantom{0}\\\phantom{0}\\3
\end{tabular}\endgroup%
{$\left.\llap{\phantom{%
\begingroup \smaller\smaller\smaller\begin{tabular}{@{}c@{}}%
\phantom{0}\\\phantom{0}\\\phantom{0}
\end{tabular}\endgroup%
}}\!\right]$}%
{$\left[\!\llap{\phantom{%
\begingroup \smaller\smaller\smaller\begin{tabular}{@{}c@{}}%
0\\0\\0
\end{tabular}\endgroup%
}}\right.$}%
\begingroup \smaller\smaller\smaller\begin{tabular}{@{}c@{}}%
3\\4\\0
\end{tabular}\endgroup%
\kern3pt%
\begingroup \smaller\smaller\smaller\begin{tabular}{@{}c@{}}%
8\\2\\-2
\end{tabular}\endgroup%
\kern3pt%
\begingroup \smaller\smaller\smaller\begin{tabular}{@{}c@{}}%
6\\-5\\-1
\end{tabular}\endgroup%
{$\left.\llap{\phantom{%
\begingroup \smaller\smaller\smaller\begin{tabular}{@{}c@{}}%
0\\0\\0
\end{tabular}\endgroup%
}}\!\right]$}%
}%
\ifdim\wd\matricesbox>\halfwidth\myboxwidth=\hsize\else\myboxwidth=\halfwidth\fi
\vbox{%
\ifdim\myboxwidth=\hsize
\setbox\onelinebox=\hbox{%
\vbox{\hbox{%
$\Pi_{5,22}$ spans $L_{123.8}$%
}\hbox{%
$|22\slashtwo22\rtimes D_{2}$ (shared)%
}%
}%
\hfill\copy\matricesbox
}%
\ifdim\wd\onelinebox>\myboxwidth
\hbox to \myboxwidth{%
$\Pi_{5,22}$ spans $L_{123.8}$%
\hfil
$|22\slashtwo22\rtimes D_{2}$ (shared)%
}%
\box\matricesbox
\else
\hbox to \myboxwidth{%
\unhbox\onelinebox
}%
\fi
\else
\hbox to \myboxwidth{%
$\Pi_{5,22}$ spans $L_{123.8}$%
\hfil}%
\hbox to \myboxwidth{%
$|22\slashtwo22\rtimes D_{2}$ (shared)%
\hfil}%
\box\matricesbox
\fi
}%
\hfill\discretionary{}{}{}%
\setbox\matricesbox=\hbox{%
{$\left[\!\llap{\phantom{%
\begingroup \smaller\smaller\smaller\begin{tabular}{@{}c@{}}%
\phantom{0}\\\phantom{0}\\\phantom{0}
\end{tabular}\endgroup%
}}\right.$}%
\begingroup \smaller\smaller\smaller\begin{tabular}{@{}c@{}}%
-1/4\\\phantom{0}\\\phantom{0}
\end{tabular}\endgroup%
\kern3pt%
\begingroup \smaller\smaller\smaller\begin{tabular}{@{}c@{}}%
\phantom{0}\\3/4\\\phantom{0}
\end{tabular}\endgroup%
\kern3pt%
\begingroup \smaller\smaller\smaller\begin{tabular}{@{}c@{}}%
\phantom{0}\\\phantom{0}\\1/2
\end{tabular}\endgroup%
{$\left.\llap{\phantom{%
\begingroup \smaller\smaller\smaller\begin{tabular}{@{}c@{}}%
\phantom{0}\\\phantom{0}\\\phantom{0}
\end{tabular}\endgroup%
}}\!\right]$}%
{$\left[\!\llap{\phantom{%
\begingroup \smaller\smaller\smaller\begin{tabular}{@{}c@{}}%
0\\0\\0
\end{tabular}\endgroup%
}}\right.$}%
\begingroup \smaller\smaller\smaller\begin{tabular}{@{}c@{}}%
2\\2\\0
\end{tabular}\endgroup%
\kern3pt%
\begingroup \smaller\smaller\smaller\begin{tabular}{@{}c@{}}%
3\\1\\-3
\end{tabular}\endgroup%
\kern3pt%
\begingroup \smaller\smaller\smaller\begin{tabular}{@{}c@{}}%
1\\-1\\-1
\end{tabular}\endgroup%
{$\left.\llap{\phantom{%
\begingroup \smaller\smaller\smaller\begin{tabular}{@{}c@{}}%
0\\0\\0
\end{tabular}\endgroup%
}}\!\right]$}%
}%
\ifdim\wd\matricesbox>\halfwidth\myboxwidth=\hsize\else\myboxwidth=\halfwidth\fi
\vbox{%
\ifdim\myboxwidth=\hsize
\setbox\onelinebox=\hbox{%
\vbox{\hbox{%
$\Pi_{5,23}$ spans $L_{123.3}$%
}\hbox{%
$|22\slashtwo22\rtimes D_{2}$ (shared)%
}%
}%
\hfill\copy\matricesbox
}%
\ifdim\wd\onelinebox>\myboxwidth
\hbox to \myboxwidth{%
$\Pi_{5,23}$ spans $L_{123.3}$%
\hfil
$|22\slashtwo22\rtimes D_{2}$ (shared)%
}%
\box\matricesbox
\else
\hbox to \myboxwidth{%
\unhbox\onelinebox
}%
\fi
\else
\hbox to \myboxwidth{%
$\Pi_{5,23}$ spans $L_{123.3}$%
\hfil}%
\hbox to \myboxwidth{%
$|22\slashtwo22\rtimes D_{2}$ (shared)%
\hfil}%
\box\matricesbox
\fi
}%
\hfill\discretionary{}{}{}%
\setbox\matricesbox=\hbox{%
{$\left[\!\llap{\phantom{%
\begingroup \smaller\smaller\smaller\begin{tabular}{@{}c@{}}%
\phantom{0}\\\phantom{0}\\\phantom{0}
\end{tabular}\endgroup%
}}\right.$}%
\begingroup \smaller\smaller\smaller\begin{tabular}{@{}c@{}}%
-1/19\\\phantom{0}\\\phantom{0}
\end{tabular}\endgroup%
\kern3pt%
\begingroup \smaller\smaller\smaller\begin{tabular}{@{}c@{}}%
\phantom{0}\\15/38\\\phantom{0}
\end{tabular}\endgroup%
\kern3pt%
\begingroup \smaller\smaller\smaller\begin{tabular}{@{}c@{}}%
\phantom{0}\\\phantom{0}\\15/2
\end{tabular}\endgroup%
{$\left.\llap{\phantom{%
\begingroup \smaller\smaller\smaller\begin{tabular}{@{}c@{}}%
\phantom{0}\\\phantom{0}\\\phantom{0}
\end{tabular}\endgroup%
}}\!\right]$}%
{$\left[\!\llap{\phantom{%
\begingroup \smaller\smaller\smaller\begin{tabular}{@{}c@{}}%
0\\0\\0
\end{tabular}\endgroup%
}}\right.$}%
\begingroup \smaller\smaller\smaller\begin{tabular}{@{}c@{}}%
15\\7\\-1
\end{tabular}\endgroup%
\kern3pt%
\begingroup \smaller\smaller\smaller\begin{tabular}{@{}c@{}}%
6\\-1\\-1
\end{tabular}\endgroup%
\kern3pt%
\begingroup \smaller\smaller\smaller\begin{tabular}{@{}c@{}}%
5\\-4\\0
\end{tabular}\endgroup%
{$\left.\llap{\phantom{%
\begingroup \smaller\smaller\smaller\begin{tabular}{@{}c@{}}%
0\\0\\0
\end{tabular}\endgroup%
}}\!\right]$}%
}%
\ifdim\wd\matricesbox>\halfwidth\myboxwidth=\hsize\else\myboxwidth=\halfwidth\fi
\vbox{%
\ifdim\myboxwidth=\hsize
\setbox\onelinebox=\hbox{%
\vbox{\hbox{%
$\Pi_{5,24}$ spans $L_{19.6}$%
}\hbox{%
$2\slashtwo22|2\rtimes D_{2}$%
}%
}%
\hfill\copy\matricesbox
}%
\ifdim\wd\onelinebox>\myboxwidth
\hbox to \myboxwidth{%
$\Pi_{5,24}$ spans $L_{19.6}$%
\hfil
$2\slashtwo22|2\rtimes D_{2}$%
}%
\box\matricesbox
\else
\hbox to \myboxwidth{%
\unhbox\onelinebox
}%
\fi
\else
\hbox to \myboxwidth{%
$\Pi_{5,24}$ spans $L_{19.6}$%
\hfil}%
\hbox to \myboxwidth{%
$2\slashtwo22|2\rtimes D_{2}$%
\hfil}%
\box\matricesbox
\fi
}%
\hfill\discretionary{}{}{}%
\setbox\matricesbox=\hbox{%
{$\left[\!\llap{\phantom{%
\begingroup \smaller\smaller\smaller\begin{tabular}{@{}c@{}}%
\phantom{0}\\\phantom{0}\\\phantom{0}
\end{tabular}\endgroup%
}}\right.$}%
\begingroup \smaller\smaller\smaller\begin{tabular}{@{}c@{}}%
-1/5\\\phantom{0}\\\phantom{0}
\end{tabular}\endgroup%
\kern3pt%
\begingroup \smaller\smaller\smaller\begin{tabular}{@{}c@{}}%
\phantom{0}\\1/5\\\phantom{0}
\end{tabular}\endgroup%
\kern3pt%
\begingroup \smaller\smaller\smaller\begin{tabular}{@{}c@{}}%
\phantom{0}\\\phantom{0}\\7
\end{tabular}\endgroup%
{$\left.\llap{\phantom{%
\begingroup \smaller\smaller\smaller\begin{tabular}{@{}c@{}}%
\phantom{0}\\\phantom{0}\\\phantom{0}
\end{tabular}\endgroup%
}}\!\right]$}%
{$\left[\!\llap{\phantom{%
\begingroup \smaller\smaller\smaller\begin{tabular}{@{}c@{}}%
0\\0\\0
\end{tabular}\endgroup%
}}\right.$}%
\begingroup \smaller\smaller\smaller\begin{tabular}{@{}c@{}}%
4\\6\\0
\end{tabular}\endgroup%
\kern3pt%
\begingroup \smaller\smaller\smaller\begin{tabular}{@{}c@{}}%
4\\1\\1
\end{tabular}\endgroup%
\kern3pt%
\begingroup \smaller\smaller\smaller\begin{tabular}{@{}c@{}}%
7\\-7\\1
\end{tabular}\endgroup%
{$\left.\llap{\phantom{%
\begingroup \smaller\smaller\smaller\begin{tabular}{@{}c@{}}%
0\\0\\0
\end{tabular}\endgroup%
}}\!\right]$}%
}%
\ifdim\wd\matricesbox>\halfwidth\myboxwidth=\hsize\else\myboxwidth=\halfwidth\fi
\vbox{%
\ifdim\myboxwidth=\hsize
\setbox\onelinebox=\hbox{%
\vbox{\hbox{%
$\Pi_{5,25}$ spans $L_{15.3}$%
}\hbox{%
$3|32\slashinfty2\rtimes D_{2}$%
}%
}%
\hfill\copy\matricesbox
}%
\ifdim\wd\onelinebox>\myboxwidth
\hbox to \myboxwidth{%
$\Pi_{5,25}$ spans $L_{15.3}$%
\hfil
$3|32\slashinfty2\rtimes D_{2}$%
}%
\box\matricesbox
\else
\hbox to \myboxwidth{%
\unhbox\onelinebox
}%
\fi
\else
\hbox to \myboxwidth{%
$\Pi_{5,25}$ spans $L_{15.3}$%
\hfil}%
\hbox to \myboxwidth{%
$3|32\slashinfty2\rtimes D_{2}$%
\hfil}%
\box\matricesbox
\fi
}%
\hfill\discretionary{}{}{}%
\setbox\matricesbox=\hbox{%
{$\left[\!\llap{\phantom{%
\begingroup \smaller\smaller\smaller\begin{tabular}{@{}c@{}}%
\phantom{0}\\\phantom{0}\\\phantom{0}
\end{tabular}\endgroup%
}}\right.$}%
\begingroup \smaller\smaller\smaller\begin{tabular}{@{}c@{}}%
-5/14\\\phantom{0}\\\phantom{0}
\end{tabular}\endgroup%
\kern3pt%
\begingroup \smaller\smaller\smaller\begin{tabular}{@{}c@{}}%
\phantom{0}\\3/14\\\phantom{0}
\end{tabular}\endgroup%
\kern3pt%
\begingroup \smaller\smaller\smaller\begin{tabular}{@{}c@{}}%
\phantom{0}\\\phantom{0}\\3/2
\end{tabular}\endgroup%
{$\left.\llap{\phantom{%
\begingroup \smaller\smaller\smaller\begin{tabular}{@{}c@{}}%
\phantom{0}\\\phantom{0}\\\phantom{0}
\end{tabular}\endgroup%
}}\!\right]$}%
{$\left[\!\llap{\phantom{%
\begingroup \smaller\smaller\smaller\begin{tabular}{@{}c@{}}%
0\\0\\0
\end{tabular}\endgroup%
}}\right.$}%
\begingroup \smaller\smaller\smaller\begin{tabular}{@{}c@{}}%
2\\-3\\1
\end{tabular}\endgroup%
\kern3pt%
\begingroup \smaller\smaller\smaller\begin{tabular}{@{}c@{}}%
6\\5\\3
\end{tabular}\endgroup%
\kern3pt%
\begingroup \smaller\smaller\smaller\begin{tabular}{@{}c@{}}%
2\\4\\0
\end{tabular}\endgroup%
{$\left.\llap{\phantom{%
\begingroup \smaller\smaller\smaller\begin{tabular}{@{}c@{}}%
0\\0\\0
\end{tabular}\endgroup%
}}\!\right]$}%
}%
\ifdim\wd\matricesbox>\halfwidth\myboxwidth=\hsize\else\myboxwidth=\halfwidth\fi
\vbox{%
\ifdim\myboxwidth=\hsize
\setbox\onelinebox=\hbox{%
\vbox{\hbox{%
$\Pi_{5,26}$ spans $L_{31.4}$%
}\hbox{%
$\slashthree62|26\rtimes D_{2}$%
}%
}%
\hfill\copy\matricesbox
}%
\ifdim\wd\onelinebox>\myboxwidth
\hbox to \myboxwidth{%
$\Pi_{5,26}$ spans $L_{31.4}$%
\hfil
$\slashthree62|26\rtimes D_{2}$%
}%
\box\matricesbox
\else
\hbox to \myboxwidth{%
\unhbox\onelinebox
}%
\fi
\else
\hbox to \myboxwidth{%
$\Pi_{5,26}$ spans $L_{31.4}$%
\hfil}%
\hbox to \myboxwidth{%
$\slashthree62|26\rtimes D_{2}$%
\hfil}%
\box\matricesbox
\fi
}%
\hfill\discretionary{}{}{}%
\setbox\matricesbox=\hbox{%
{$\left[\!\llap{\phantom{%
\begingroup \smaller\smaller\smaller\begin{tabular}{@{}c@{}}%
\phantom{0}\\\phantom{0}\\\phantom{0}
\end{tabular}\endgroup%
}}\right.$}%
\begingroup \smaller\smaller\smaller\begin{tabular}{@{}c@{}}%
-1/9\\\phantom{0}\\\phantom{0}
\end{tabular}\endgroup%
\kern3pt%
\begingroup \smaller\smaller\smaller\begin{tabular}{@{}c@{}}%
\phantom{0}\\5/18\\\phantom{0}
\end{tabular}\endgroup%
\kern3pt%
\begingroup \smaller\smaller\smaller\begin{tabular}{@{}c@{}}%
\phantom{0}\\\phantom{0}\\15/2
\end{tabular}\endgroup%
{$\left.\llap{\phantom{%
\begingroup \smaller\smaller\smaller\begin{tabular}{@{}c@{}}%
\phantom{0}\\\phantom{0}\\\phantom{0}
\end{tabular}\endgroup%
}}\!\right]$}%
{$\left[\!\llap{\phantom{%
\begingroup \smaller\smaller\smaller\begin{tabular}{@{}c@{}}%
0\\0\\0
\end{tabular}\endgroup%
}}\right.$}%
\begingroup \smaller\smaller\smaller\begin{tabular}{@{}c@{}}%
10\\-7\\1
\end{tabular}\endgroup%
\kern3pt%
\begingroup \smaller\smaller\smaller\begin{tabular}{@{}c@{}}%
30\\6\\4
\end{tabular}\endgroup%
\kern3pt%
\begingroup \smaller\smaller\smaller\begin{tabular}{@{}c@{}}%
1\\2\\0
\end{tabular}\endgroup%
{$\left.\llap{\phantom{%
\begingroup \smaller\smaller\smaller\begin{tabular}{@{}c@{}}%
0\\0\\0
\end{tabular}\endgroup%
}}\!\right]$}%
}%
\ifdim\wd\matricesbox>\halfwidth\myboxwidth=\hsize\else\myboxwidth=\halfwidth\fi
\vbox{%
\ifdim\myboxwidth=\hsize
\setbox\onelinebox=\hbox{%
\vbox{\hbox{%
$\Pi_{5,27}$ spans $L_{16.14}$%
}\hbox{%
$\slashthree62|26\rtimes D_{2}$%
}%
}%
\hfill\copy\matricesbox
}%
\ifdim\wd\onelinebox>\myboxwidth
\hbox to \myboxwidth{%
$\Pi_{5,27}$ spans $L_{16.14}$%
\hfil
$\slashthree62|26\rtimes D_{2}$%
}%
\box\matricesbox
\else
\hbox to \myboxwidth{%
\unhbox\onelinebox
}%
\fi
\else
\hbox to \myboxwidth{%
$\Pi_{5,27}$ spans $L_{16.14}$%
\hfil}%
\hbox to \myboxwidth{%
$\slashthree62|26\rtimes D_{2}$%
\hfil}%
\box\matricesbox
\fi
}%
\hfill\discretionary{}{}{}%
\setbox\matricesbox=\hbox{%
{$\left[\!\llap{\phantom{%
\begingroup \smaller\smaller\smaller\begin{tabular}{@{}c@{}}%
\phantom{0}\\\phantom{0}\\\phantom{0}
\end{tabular}\endgroup%
}}\right.$}%
\begingroup \smaller\smaller\smaller\begin{tabular}{@{}c@{}}%
-1/8\\\phantom{0}\\\phantom{0}
\end{tabular}\endgroup%
\kern3pt%
\begingroup \smaller\smaller\smaller\begin{tabular}{@{}c@{}}%
\phantom{0}\\9/8\\\phantom{0}
\end{tabular}\endgroup%
\kern3pt%
\begingroup \smaller\smaller\smaller\begin{tabular}{@{}c@{}}%
\phantom{0}\\\phantom{0}\\18
\end{tabular}\endgroup%
{$\left.\llap{\phantom{%
\begingroup \smaller\smaller\smaller\begin{tabular}{@{}c@{}}%
\phantom{0}\\\phantom{0}\\\phantom{0}
\end{tabular}\endgroup%
}}\!\right]$}%
{$\left[\!\llap{\phantom{%
\begingroup \smaller\smaller\smaller\begin{tabular}{@{}c@{}}%
0\\0\\0
\end{tabular}\endgroup%
}}\right.$}%
\begingroup \smaller\smaller\smaller\begin{tabular}{@{}c@{}}%
18\\6\\-1
\end{tabular}\endgroup%
\kern3pt%
\begingroup \smaller\smaller\smaller\begin{tabular}{@{}c@{}}%
9\\-1\\-1
\end{tabular}\endgroup%
\kern3pt%
\begingroup \smaller\smaller\smaller\begin{tabular}{@{}c@{}}%
1\\-1\\0
\end{tabular}\endgroup%
{$\left.\llap{\phantom{%
\begingroup \smaller\smaller\smaller\begin{tabular}{@{}c@{}}%
0\\0\\0
\end{tabular}\endgroup%
}}\!\right]$}%
}%
\ifdim\wd\matricesbox>\halfwidth\myboxwidth=\hsize\else\myboxwidth=\halfwidth\fi
\vbox{%
\ifdim\myboxwidth=\hsize
\setbox\onelinebox=\hbox{%
\vbox{\hbox{%
$\Pi_{5,28}$ spans $L_{142.18}$%
}\hbox{%
$4\slashinfty42|2\rtimes D_{2}$%
}%
}%
\hfill\copy\matricesbox
}%
\ifdim\wd\onelinebox>\myboxwidth
\hbox to \myboxwidth{%
$\Pi_{5,28}$ spans $L_{142.18}$%
\hfil
$4\slashinfty42|2\rtimes D_{2}$%
}%
\box\matricesbox
\else
\hbox to \myboxwidth{%
\unhbox\onelinebox
}%
\fi
\else
\hbox to \myboxwidth{%
$\Pi_{5,28}$ spans $L_{142.18}$%
\hfil}%
\hbox to \myboxwidth{%
$4\slashinfty42|2\rtimes D_{2}$%
\hfil}%
\box\matricesbox
\fi
}%
\hfill\discretionary{}{}{}%
\setbox\matricesbox=\hbox{%
{$\left[\!\llap{\phantom{%
\begingroup \smaller\smaller\smaller\begin{tabular}{@{}c@{}}%
\phantom{0}\\\phantom{0}\\\phantom{0}
\end{tabular}\endgroup%
}}\right.$}%
\begingroup \smaller\smaller\smaller\begin{tabular}{@{}c@{}}%
-1/5\\\phantom{0}\\\phantom{0}
\end{tabular}\endgroup%
\kern3pt%
\begingroup \smaller\smaller\smaller\begin{tabular}{@{}c@{}}%
\phantom{0}\\1/5\\\phantom{0}
\end{tabular}\endgroup%
\kern3pt%
\begingroup \smaller\smaller\smaller\begin{tabular}{@{}c@{}}%
\phantom{0}\\\phantom{0}\\5
\end{tabular}\endgroup%
{$\left.\llap{\phantom{%
\begingroup \smaller\smaller\smaller\begin{tabular}{@{}c@{}}%
\phantom{0}\\\phantom{0}\\\phantom{0}
\end{tabular}\endgroup%
}}\!\right]$}%
{$\left[\!\llap{\phantom{%
\begingroup \smaller\smaller\smaller\begin{tabular}{@{}c@{}}%
0\\0\\0
\end{tabular}\endgroup%
}}\right.$}%
\begingroup \smaller\smaller\smaller\begin{tabular}{@{}c@{}}%
4\\6\\0
\end{tabular}\endgroup%
\kern3pt%
\begingroup \smaller\smaller\smaller\begin{tabular}{@{}c@{}}%
8\\2\\-2
\end{tabular}\endgroup%
\kern3pt%
\begingroup \smaller\smaller\smaller\begin{tabular}{@{}c@{}}%
5\\-5\\-1
\end{tabular}\endgroup%
{$\left.\llap{\phantom{%
\begingroup \smaller\smaller\smaller\begin{tabular}{@{}c@{}}%
0\\0\\0
\end{tabular}\endgroup%
}}\!\right]$}%
}%
\ifdim\wd\matricesbox>\halfwidth\myboxwidth=\hsize\else\myboxwidth=\halfwidth\fi
\vbox{%
\ifdim\myboxwidth=\hsize
\setbox\onelinebox=\hbox{%
\vbox{\hbox{%
$\Pi_{5,29}$ spans $L_{5.11}$%
}\hbox{%
$4|42\slashinfty2\rtimes D_{2}$%
}%
}%
\hfill\copy\matricesbox
}%
\ifdim\wd\onelinebox>\myboxwidth
\hbox to \myboxwidth{%
$\Pi_{5,29}$ spans $L_{5.11}$%
\hfil
$4|42\slashinfty2\rtimes D_{2}$%
}%
\box\matricesbox
\else
\hbox to \myboxwidth{%
\unhbox\onelinebox
}%
\fi
\else
\hbox to \myboxwidth{%
$\Pi_{5,29}$ spans $L_{5.11}$%
\hfil}%
\hbox to \myboxwidth{%
$4|42\slashinfty2\rtimes D_{2}$%
\hfil}%
\box\matricesbox
\fi
}%
\hfill\discretionary{}{}{}%
\setbox\matricesbox=\hbox{%
{$\left[\!\llap{\phantom{%
\begingroup \smaller\smaller\smaller\begin{tabular}{@{}c@{}}%
\phantom{0}\\\phantom{0}\\\phantom{0}
\end{tabular}\endgroup%
}}\right.$}%
\begingroup \smaller\smaller\smaller\begin{tabular}{@{}c@{}}%
-1\\\phantom{0}\\\phantom{0}
\end{tabular}\endgroup%
\kern3pt%
\begingroup \smaller\smaller\smaller\begin{tabular}{@{}c@{}}%
\phantom{0}\\2\\\phantom{0}
\end{tabular}\endgroup%
\kern3pt%
\begingroup \smaller\smaller\smaller\begin{tabular}{@{}c@{}}%
\phantom{0}\\\phantom{0}\\2
\end{tabular}\endgroup%
{$\left.\llap{\phantom{%
\begingroup \smaller\smaller\smaller\begin{tabular}{@{}c@{}}%
\phantom{0}\\\phantom{0}\\\phantom{0}
\end{tabular}\endgroup%
}}\!\right]$}%
{$\left[\!\llap{\phantom{%
\begingroup \smaller\smaller\smaller\begin{tabular}{@{}c@{}}%
0\\0\\0
\end{tabular}\endgroup%
}}\right.$}%
\begingroup \smaller\smaller\smaller\begin{tabular}{@{}c@{}}%
1\\1\\0
\end{tabular}\endgroup%
\kern3pt%
\begingroup \smaller\smaller\smaller\begin{tabular}{@{}c@{}}%
1\\0\\1
\end{tabular}\endgroup%
\kern3pt%
\begingroup \smaller\smaller\smaller\begin{tabular}{@{}c@{}}%
4\\-3\\1
\end{tabular}\endgroup%
{$\left.\llap{\phantom{%
\begingroup \smaller\smaller\smaller\begin{tabular}{@{}c@{}}%
0\\0\\0
\end{tabular}\endgroup%
}}\!\right]$}%
}%
\ifdim\wd\matricesbox>\halfwidth\myboxwidth=\hsize\else\myboxwidth=\halfwidth\fi
\vbox{%
\ifdim\myboxwidth=\hsize
\setbox\onelinebox=\hbox{%
\vbox{\hbox{%
$\Pi_{5,30}=\hbox{GN}_{36}$ spans $L_{1.9}$%
}\hbox{%
$\infty|\infty\infty\slashtwo\infty\rtimes D_{2}$%
}%
}%
\hfill\copy\matricesbox
}%
\ifdim\wd\onelinebox>\myboxwidth
\hbox to \myboxwidth{%
$\Pi_{5,30}=\hbox{GN}_{36}$ spans $L_{1.9}$%
\hfil
$\infty|\infty\infty\slashtwo\infty\rtimes D_{2}$%
}%
\box\matricesbox
\else
\hbox to \myboxwidth{%
\unhbox\onelinebox
}%
\fi
\else
\hbox to \myboxwidth{%
$\Pi_{5,30}=\hbox{GN}_{36}$ spans $L_{1.9}$%
\hfil}%
\hbox to \myboxwidth{%
$\infty|\infty\infty\slashtwo\infty\rtimes D_{2}$%
\hfil}%
\box\matricesbox
\fi
}%
\hfill\discretionary{}{}{}%
\setbox\matricesbox=\hbox{%
{$\left[\!\llap{\phantom{%
\begingroup \smaller\smaller\smaller\begin{tabular}{@{}c@{}}%
\phantom{0}\\\phantom{0}\\\phantom{0}
\end{tabular}\endgroup%
}}\right.$}%
\begingroup \smaller\smaller\smaller\begin{tabular}{@{}c@{}}%
-9/14\\\phantom{0}\\\phantom{0}
\end{tabular}\endgroup%
\kern3pt%
\begingroup \smaller\smaller\smaller\begin{tabular}{@{}c@{}}%
\phantom{0}\\1/14\\\phantom{0}
\end{tabular}\endgroup%
\kern3pt%
\begingroup \smaller\smaller\smaller\begin{tabular}{@{}c@{}}%
\phantom{0}\\\phantom{0}\\1/2
\end{tabular}\endgroup%
{$\left.\llap{\phantom{%
\begingroup \smaller\smaller\smaller\begin{tabular}{@{}c@{}}%
\phantom{0}\\\phantom{0}\\\phantom{0}
\end{tabular}\endgroup%
}}\!\right]$}%
{$\left[\!\llap{\phantom{%
\begingroup \smaller\smaller\smaller\begin{tabular}{@{}c@{}}%
0\\0\\0
\end{tabular}\endgroup%
}}\right.$}%
\begingroup \smaller\smaller\smaller\begin{tabular}{@{}c@{}}%
2\\-8\\0
\end{tabular}\endgroup%
\kern3pt%
\begingroup \smaller\smaller\smaller\begin{tabular}{@{}c@{}}%
2\\-1\\-3
\end{tabular}\endgroup%
\kern3pt%
\begingroup \smaller\smaller\smaller\begin{tabular}{@{}c@{}}%
2\\6\\-2
\end{tabular}\endgroup%
{$\left.\llap{\phantom{%
\begingroup \smaller\smaller\smaller\begin{tabular}{@{}c@{}}%
0\\0\\0
\end{tabular}\endgroup%
}}\!\right]$}%
}%
\ifdim\wd\matricesbox>\halfwidth\myboxwidth=\hsize\else\myboxwidth=\halfwidth\fi
\vbox{%
\ifdim\myboxwidth=\hsize
\setbox\onelinebox=\hbox{%
\vbox{\hbox{%
$\Pi_{5,31}=A_{1,III}=\hbox{GN}_{42}$ spans $L_{148.4}$%
}\hbox{%
$\infty|\infty2\slashinfty2\rtimes D_{2}$ (shared)%
}%
}%
\hfill\copy\matricesbox
}%
\ifdim\wd\onelinebox>\myboxwidth
\hbox to \myboxwidth{%
$\Pi_{5,31}=A_{1,III}=\hbox{GN}_{42}$ spans $L_{148.4}$%
\hfil
$\infty|\infty2\slashinfty2\rtimes D_{2}$ (shared)%
}%
\box\matricesbox
\else
\hbox to \myboxwidth{%
\unhbox\onelinebox
}%
\fi
\else
\hbox to \myboxwidth{%
$\Pi_{5,31}=A_{1,III}=\hbox{GN}_{42}$ spans $L_{148.4}$%
\hfil}%
\hbox to \myboxwidth{%
$\infty|\infty2\slashinfty2\rtimes D_{2}$ (shared)%
\hfil}%
\box\matricesbox
\fi
}%
\hfill\discretionary{}{}{}%
\setbox\matricesbox=\hbox{%
{$\left[\!\llap{\phantom{%
\begingroup \smaller\smaller\smaller\begin{tabular}{@{}c@{}}%
\phantom{0}\\\phantom{0}\\\phantom{0}
\end{tabular}\endgroup%
}}\right.$}%
\begingroup \smaller\smaller\smaller\begin{tabular}{@{}c@{}}%
-1/6\\\phantom{0}\\\phantom{0}
\end{tabular}\endgroup%
\kern3pt%
\begingroup \smaller\smaller\smaller\begin{tabular}{@{}c@{}}%
\phantom{0}\\1/6\\\phantom{0}
\end{tabular}\endgroup%
\kern3pt%
\begingroup \smaller\smaller\smaller\begin{tabular}{@{}c@{}}%
\phantom{0}\\\phantom{0}\\5/2
\end{tabular}\endgroup%
{$\left.\llap{\phantom{%
\begingroup \smaller\smaller\smaller\begin{tabular}{@{}c@{}}%
\phantom{0}\\\phantom{0}\\\phantom{0}
\end{tabular}\endgroup%
}}\!\right]$}%
{$\left[\!\llap{\phantom{%
\begingroup \smaller\smaller\smaller\begin{tabular}{@{}c@{}}%
0\\0\\0
\end{tabular}\endgroup%
}}\right.$}%
\begingroup \smaller\smaller\smaller\begin{tabular}{@{}c@{}}%
10\\10\\2
\end{tabular}\endgroup%
\kern3pt%
\begingroup \smaller\smaller\smaller\begin{tabular}{@{}c@{}}%
10\\-5\\3
\end{tabular}\endgroup%
\kern3pt%
\begingroup \smaller\smaller\smaller\begin{tabular}{@{}c@{}}%
2\\-4\\0
\end{tabular}\endgroup%
{$\left.\llap{\phantom{%
\begingroup \smaller\smaller\smaller\begin{tabular}{@{}c@{}}%
0\\0\\0
\end{tabular}\endgroup%
}}\!\right]$}%
}%
\ifdim\wd\matricesbox>\halfwidth\myboxwidth=\hsize\else\myboxwidth=\halfwidth\fi
\vbox{%
\ifdim\myboxwidth=\hsize
\setbox\onelinebox=\hbox{%
\vbox{\hbox{%
$\Pi_{5,32}$ spans $L_{10.4}$%
}\hbox{%
$\infty\slashinfty\infty2|2\rtimes D_{2}$%
}%
}%
\hfill\copy\matricesbox
}%
\ifdim\wd\onelinebox>\myboxwidth
\hbox to \myboxwidth{%
$\Pi_{5,32}$ spans $L_{10.4}$%
\hfil
$\infty\slashinfty\infty2|2\rtimes D_{2}$%
}%
\box\matricesbox
\else
\hbox to \myboxwidth{%
\unhbox\onelinebox
}%
\fi
\else
\hbox to \myboxwidth{%
$\Pi_{5,32}$ spans $L_{10.4}$%
\hfil}%
\hbox to \myboxwidth{%
$\infty\slashinfty\infty2|2\rtimes D_{2}$%
\hfil}%
\box\matricesbox
\fi
}%
\hfill\discretionary{}{}{}%
\setbox\matricesbox=\hbox{%
{$\left[\!\llap{\phantom{%
\begingroup \smaller\smaller\smaller\begin{tabular}{@{}c@{}}%
\phantom{0}\\\phantom{0}\\\phantom{0}
\end{tabular}\endgroup%
}}\right.$}%
\begingroup \smaller\smaller\smaller\begin{tabular}{@{}c@{}}%
-1/3\\\phantom{0}\\\phantom{0}
\end{tabular}\endgroup%
\kern3pt%
\begingroup \smaller\smaller\smaller\begin{tabular}{@{}c@{}}%
\phantom{0}\\1/3\\\phantom{0}
\end{tabular}\endgroup%
\kern3pt%
\begingroup \smaller\smaller\smaller\begin{tabular}{@{}c@{}}%
\phantom{0}\\\phantom{0}\\3
\end{tabular}\endgroup%
{$\left.\llap{\phantom{%
\begingroup \smaller\smaller\smaller\begin{tabular}{@{}c@{}}%
\phantom{0}\\\phantom{0}\\\phantom{0}
\end{tabular}\endgroup%
}}\!\right]$}%
{$\left[\!\llap{\phantom{%
\begingroup \smaller\smaller\smaller\begin{tabular}{@{}c@{}}%
0\\0\\0
\end{tabular}\endgroup%
}}\right.$}%
\begingroup \smaller\smaller\smaller\begin{tabular}{@{}c@{}}%
3\\-3\\1
\end{tabular}\endgroup%
\kern3pt%
\begingroup \smaller\smaller\smaller\begin{tabular}{@{}c@{}}%
12\\6\\4
\end{tabular}\endgroup%
\kern3pt%
\begingroup \smaller\smaller\smaller\begin{tabular}{@{}c@{}}%
1\\2\\0
\end{tabular}\endgroup%
{$\left.\llap{\phantom{%
\begingroup \smaller\smaller\smaller\begin{tabular}{@{}c@{}}%
0\\0\\0
\end{tabular}\endgroup%
}}\!\right]$}%
}%
\ifdim\wd\matricesbox>\halfwidth\myboxwidth=\hsize\else\myboxwidth=\halfwidth\fi
\vbox{%
\ifdim\myboxwidth=\hsize
\setbox\onelinebox=\hbox{%
\vbox{\hbox{%
$\Pi_{5,33}$ spans $L_{4.27}$%
}\hbox{%
$\slashinfty\infty2|2\infty\rtimes D_{2}$%
}%
}%
\hfill\copy\matricesbox
}%
\ifdim\wd\onelinebox>\myboxwidth
\hbox to \myboxwidth{%
$\Pi_{5,33}$ spans $L_{4.27}$%
\hfil
$\slashinfty\infty2|2\infty\rtimes D_{2}$%
}%
\box\matricesbox
\else
\hbox to \myboxwidth{%
\unhbox\onelinebox
}%
\fi
\else
\hbox to \myboxwidth{%
$\Pi_{5,33}$ spans $L_{4.27}$%
\hfil}%
\hbox to \myboxwidth{%
$\slashinfty\infty2|2\infty\rtimes D_{2}$%
\hfil}%
\box\matricesbox
\fi
}%
\hfill\discretionary{}{}{}%
\setbox\matricesbox=\hbox{%
{$\left[\!\llap{\phantom{%
\begingroup \smaller\smaller\smaller\begin{tabular}{@{}c@{}}%
\phantom{0}\\\phantom{0}\\\phantom{0}
\end{tabular}\endgroup%
}}\right.$}%
\begingroup \smaller\smaller\smaller\begin{tabular}{@{}c@{}}%
-1/3\\\phantom{0}\\\phantom{0}
\end{tabular}\endgroup%
\kern3pt%
\begingroup \smaller\smaller\smaller\begin{tabular}{@{}c@{}}%
\phantom{0}\\1/3\\\phantom{0}
\end{tabular}\endgroup%
\kern3pt%
\begingroup \smaller\smaller\smaller\begin{tabular}{@{}c@{}}%
\phantom{0}\\\phantom{0}\\9
\end{tabular}\endgroup%
{$\left.\llap{\phantom{%
\begingroup \smaller\smaller\smaller\begin{tabular}{@{}c@{}}%
\phantom{0}\\\phantom{0}\\\phantom{0}
\end{tabular}\endgroup%
}}\!\right]$}%
{$\left[\!\llap{\phantom{%
\begingroup \smaller\smaller\smaller\begin{tabular}{@{}c@{}}%
0\\0\\0
\end{tabular}\endgroup%
}}\right.$}%
\begingroup \smaller\smaller\smaller\begin{tabular}{@{}c@{}}%
1\\-2\\0
\end{tabular}\endgroup%
\kern3pt%
\begingroup \smaller\smaller\smaller\begin{tabular}{@{}c@{}}%
4\\1\\-1
\end{tabular}\endgroup%
\kern3pt%
\begingroup \smaller\smaller\smaller\begin{tabular}{@{}c@{}}%
9\\9\\-1
\end{tabular}\endgroup%
{$\left.\llap{\phantom{%
\begingroup \smaller\smaller\smaller\begin{tabular}{@{}c@{}}%
0\\0\\0
\end{tabular}\endgroup%
}}\!\right]$}%
}%
\ifdim\wd\matricesbox>\halfwidth\myboxwidth=\hsize\else\myboxwidth=\halfwidth\fi
\vbox{%
\ifdim\myboxwidth=\hsize
\setbox\onelinebox=\hbox{%
\vbox{\hbox{%
$\Pi_{5,34}=\xi_{0,9}^{(2)}=\hbox{GN}_{26}$ spans $L_{148.9}$%
}\hbox{%
$\infty|\infty2\slashinfty2\rtimes D_{2}$ (shared)%
}%
}%
\hfill\copy\matricesbox
}%
\ifdim\wd\onelinebox>\myboxwidth
\hbox to \myboxwidth{%
$\Pi_{5,34}=\xi_{0,9}^{(2)}=\hbox{GN}_{26}$ spans $L_{148.9}$%
\hfil
$\infty|\infty2\slashinfty2\rtimes D_{2}$ (shared)%
}%
\box\matricesbox
\else
\hbox to \myboxwidth{%
\unhbox\onelinebox
}%
\fi
\else
\hbox to \myboxwidth{%
$\Pi_{5,34}=\xi_{0,9}^{(2)}=\hbox{GN}_{26}$ spans $L_{148.9}$%
\hfil}%
\hbox to \myboxwidth{%
$\infty|\infty2\slashinfty2\rtimes D_{2}$ (shared)%
\hfil}%
\box\matricesbox
\fi
}%
\hfill\discretionary{}{}{}%
\setbox\matricesbox=\hbox{%
{$\left[\!\llap{\phantom{%
\begingroup \smaller\smaller\smaller\begin{tabular}{@{}c@{}}%
\phantom{0}\\\phantom{0}\\\phantom{0}
\end{tabular}\endgroup%
}}\right.$}%
\begingroup \smaller\smaller\smaller\begin{tabular}{@{}c@{}}%
-1/20\\\phantom{0}\\\phantom{0}
\end{tabular}\endgroup%
\kern3pt%
\begingroup \smaller\smaller\smaller\begin{tabular}{@{}c@{}}%
\phantom{0}\\11/5\\\phantom{0}
\end{tabular}\endgroup%
\kern3pt%
\begingroup \smaller\smaller\smaller\begin{tabular}{@{}c@{}}%
\phantom{0}\\\phantom{0}\\11
\end{tabular}\endgroup%
{$\left.\llap{\phantom{%
\begingroup \smaller\smaller\smaller\begin{tabular}{@{}c@{}}%
\phantom{0}\\\phantom{0}\\\phantom{0}
\end{tabular}\endgroup%
}}\!\right]$}%
{$\left[\!\llap{\phantom{%
\begingroup \smaller\smaller\smaller\begin{tabular}{@{}c@{}}%
0\\0\\0
\end{tabular}\endgroup%
}}\right.$}%
\begingroup \smaller\smaller\smaller\begin{tabular}{@{}c@{}}%
22\\-4\\1
\end{tabular}\endgroup%
\kern3pt%
\begingroup \smaller\smaller\smaller\begin{tabular}{@{}c@{}}%
22\\1\\2
\end{tabular}\endgroup%
\kern3pt%
\begingroup \smaller\smaller\smaller\begin{tabular}{@{}c@{}}%
2\\1\\0
\end{tabular}\endgroup%
{$\left.\llap{\phantom{%
\begingroup \smaller\smaller\smaller\begin{tabular}{@{}c@{}}%
0\\0\\0
\end{tabular}\endgroup%
}}\!\right]$}%
}%
\ifdim\wd\matricesbox>\halfwidth\myboxwidth=\hsize\else\myboxwidth=\halfwidth\fi
\vbox{%
\ifdim\myboxwidth=\hsize
\setbox\onelinebox=\hbox{%
\vbox{\hbox{%
$\Pi_{5,35}$ spans $L_{11.5}$%
}\hbox{%
$3\slashtwo32|2\rtimes D_{2}$%
}%
}%
\hfill\copy\matricesbox
}%
\ifdim\wd\onelinebox>\myboxwidth
\hbox to \myboxwidth{%
$\Pi_{5,35}$ spans $L_{11.5}$%
\hfil
$3\slashtwo32|2\rtimes D_{2}$%
}%
\box\matricesbox
\else
\hbox to \myboxwidth{%
\unhbox\onelinebox
}%
\fi
\else
\hbox to \myboxwidth{%
$\Pi_{5,35}$ spans $L_{11.5}$%
\hfil}%
\hbox to \myboxwidth{%
$3\slashtwo32|2\rtimes D_{2}$%
\hfil}%
\box\matricesbox
\fi
}%
\hfill\discretionary{}{}{}%
\setbox\matricesbox=\hbox{%
{$\left[\!\llap{\phantom{%
\begingroup \smaller\smaller\smaller\begin{tabular}{@{}c@{}}%
\phantom{0}\\\phantom{0}\\\phantom{0}
\end{tabular}\endgroup%
}}\right.$}%
\begingroup \smaller\smaller\smaller\begin{tabular}{@{}c@{}}%
-5/19\\\phantom{0}\\\phantom{0}
\end{tabular}\endgroup%
\kern3pt%
\begingroup \smaller\smaller\smaller\begin{tabular}{@{}c@{}}%
\phantom{0}\\3/38\\\phantom{0}
\end{tabular}\endgroup%
\kern3pt%
\begingroup \smaller\smaller\smaller\begin{tabular}{@{}c@{}}%
\phantom{0}\\\phantom{0}\\3/2
\end{tabular}\endgroup%
{$\left.\llap{\phantom{%
\begingroup \smaller\smaller\smaller\begin{tabular}{@{}c@{}}%
\phantom{0}\\\phantom{0}\\\phantom{0}
\end{tabular}\endgroup%
}}\!\right]$}%
{$\left[\!\llap{\phantom{%
\begingroup \smaller\smaller\smaller\begin{tabular}{@{}c@{}}%
0\\0\\0
\end{tabular}\endgroup%
}}\right.$}%
\begingroup \smaller\smaller\smaller\begin{tabular}{@{}c@{}}%
1\\-4\\0
\end{tabular}\endgroup%
\kern3pt%
\begingroup \smaller\smaller\smaller\begin{tabular}{@{}c@{}}%
6\\-5\\-3
\end{tabular}\endgroup%
\kern3pt%
\begingroup \smaller\smaller\smaller\begin{tabular}{@{}c@{}}%
3\\7\\-1
\end{tabular}\endgroup%
{$\left.\llap{\phantom{%
\begingroup \smaller\smaller\smaller\begin{tabular}{@{}c@{}}%
0\\0\\0
\end{tabular}\endgroup%
}}\!\right]$}%
}%
\ifdim\wd\matricesbox>\halfwidth\myboxwidth=\hsize\else\myboxwidth=\halfwidth\fi
\vbox{%
\ifdim\myboxwidth=\hsize
\setbox\onelinebox=\hbox{%
\vbox{\hbox{%
$\Pi_{5,36}$ spans $L_{19.3}$%
}\hbox{%
$42|24\slashtwo\rtimes D_{2}$%
}%
}%
\hfill\copy\matricesbox
}%
\ifdim\wd\onelinebox>\myboxwidth
\hbox to \myboxwidth{%
$\Pi_{5,36}$ spans $L_{19.3}$%
\hfil
$42|24\slashtwo\rtimes D_{2}$%
}%
\box\matricesbox
\else
\hbox to \myboxwidth{%
\unhbox\onelinebox
}%
\fi
\else
\hbox to \myboxwidth{%
$\Pi_{5,36}$ spans $L_{19.3}$%
\hfil}%
\hbox to \myboxwidth{%
$42|24\slashtwo\rtimes D_{2}$%
\hfil}%
\box\matricesbox
\fi
}%
\hfill\discretionary{}{}{}%
\setbox\matricesbox=\hbox{%
{$\left[\!\llap{\phantom{%
\begingroup \smaller\smaller\smaller\begin{tabular}{@{}c@{}}%
\phantom{0}\\\phantom{0}\\\phantom{0}
\end{tabular}\endgroup%
}}\right.$}%
\begingroup \smaller\smaller\smaller\begin{tabular}{@{}c@{}}%
-1/2\\\phantom{0}\\\phantom{0}
\end{tabular}\endgroup%
\kern3pt%
\begingroup \smaller\smaller\smaller\begin{tabular}{@{}c@{}}%
\phantom{0}\\1\\\phantom{0}
\end{tabular}\endgroup%
\kern3pt%
\begingroup \smaller\smaller\smaller\begin{tabular}{@{}c@{}}%
\phantom{0}\\\phantom{0}\\1/2
\end{tabular}\endgroup%
{$\left.\llap{\phantom{%
\begingroup \smaller\smaller\smaller\begin{tabular}{@{}c@{}}%
\phantom{0}\\\phantom{0}\\\phantom{0}
\end{tabular}\endgroup%
}}\!\right]$}%
{$\left[\!\llap{\phantom{%
\begingroup \smaller\smaller\smaller\begin{tabular}{@{}c@{}}%
0\\0\\0
\end{tabular}\endgroup%
}}\right.$}%
\begingroup \smaller\smaller\smaller\begin{tabular}{@{}c@{}}%
2\\2\\0
\end{tabular}\endgroup%
\kern3pt%
\begingroup \smaller\smaller\smaller\begin{tabular}{@{}c@{}}%
4\\2\\-4
\end{tabular}\endgroup%
\kern3pt%
\begingroup \smaller\smaller\smaller\begin{tabular}{@{}c@{}}%
1\\-1\\-1
\end{tabular}\endgroup%
{$\left.\llap{\phantom{%
\begingroup \smaller\smaller\smaller\begin{tabular}{@{}c@{}}%
0\\0\\0
\end{tabular}\endgroup%
}}\!\right]$}%
}%
\ifdim\wd\matricesbox>\halfwidth\myboxwidth=\hsize\else\myboxwidth=\halfwidth\fi
\vbox{%
\ifdim\myboxwidth=\hsize
\setbox\onelinebox=\hbox{%
\vbox{\hbox{%
$\Pi_{5,37}$ spans $L_{1.6}$%
}\hbox{%
$\infty2|2\infty\slashtwo\rtimes D_{2}$ (shared)%
}%
}%
\hfill\copy\matricesbox
}%
\ifdim\wd\onelinebox>\myboxwidth
\hbox to \myboxwidth{%
$\Pi_{5,37}$ spans $L_{1.6}$%
\hfil
$\infty2|2\infty\slashtwo\rtimes D_{2}$ (shared)%
}%
\box\matricesbox
\else
\hbox to \myboxwidth{%
\unhbox\onelinebox
}%
\fi
\else
\hbox to \myboxwidth{%
$\Pi_{5,37}$ spans $L_{1.6}$%
\hfil}%
\hbox to \myboxwidth{%
$\infty2|2\infty\slashtwo\rtimes D_{2}$ (shared)%
\hfil}%
\box\matricesbox
\fi
}%
\hfill\discretionary{}{}{}%
\setbox\matricesbox=\hbox{%
{$\left[\!\llap{\phantom{%
\begingroup \smaller\smaller\smaller\begin{tabular}{@{}c@{}}%
\phantom{0}\\\phantom{0}\\\phantom{0}
\end{tabular}\endgroup%
}}\right.$}%
\begingroup \smaller\smaller\smaller\begin{tabular}{@{}c@{}}%
-4/5\\\phantom{0}\\\phantom{0}
\end{tabular}\endgroup%
\kern3pt%
\begingroup \smaller\smaller\smaller\begin{tabular}{@{}c@{}}%
\phantom{0}\\1/5\\\phantom{0}
\end{tabular}\endgroup%
\kern3pt%
\begingroup \smaller\smaller\smaller\begin{tabular}{@{}c@{}}%
\phantom{0}\\\phantom{0}\\1
\end{tabular}\endgroup%
{$\left.\llap{\phantom{%
\begingroup \smaller\smaller\smaller\begin{tabular}{@{}c@{}}%
\phantom{0}\\\phantom{0}\\\phantom{0}
\end{tabular}\endgroup%
}}\!\right]$}%
{$\left[\!\llap{\phantom{%
\begingroup \smaller\smaller\smaller\begin{tabular}{@{}c@{}}%
0\\0\\0
\end{tabular}\endgroup%
}}\right.$}%
\begingroup \smaller\smaller\smaller\begin{tabular}{@{}c@{}}%
1\\-3\\0
\end{tabular}\endgroup%
\kern3pt%
\begingroup \smaller\smaller\smaller\begin{tabular}{@{}c@{}}%
4\\-2\\-4
\end{tabular}\endgroup%
\kern3pt%
\begingroup \smaller\smaller\smaller\begin{tabular}{@{}c@{}}%
1\\2\\-1
\end{tabular}\endgroup%
{$\left.\llap{\phantom{%
\begingroup \smaller\smaller\smaller\begin{tabular}{@{}c@{}}%
0\\0\\0
\end{tabular}\endgroup%
}}\!\right]$}%
}%
\ifdim\wd\matricesbox>\halfwidth\myboxwidth=\hsize\else\myboxwidth=\halfwidth\fi
\vbox{%
\ifdim\myboxwidth=\hsize
\setbox\onelinebox=\hbox{%
\vbox{\hbox{%
$\Pi_{5,38}=\hbox{GN}_{34}$ spans $L_{146.3}$%
}\hbox{%
$|\infty2\slashinfty2\infty\rtimes D_{2}$%
}%
}%
\hfill\copy\matricesbox
}%
\ifdim\wd\onelinebox>\myboxwidth
\hbox to \myboxwidth{%
$\Pi_{5,38}=\hbox{GN}_{34}$ spans $L_{146.3}$%
\hfil
$|\infty2\slashinfty2\infty\rtimes D_{2}$%
}%
\box\matricesbox
\else
\hbox to \myboxwidth{%
\unhbox\onelinebox
}%
\fi
\else
\hbox to \myboxwidth{%
$\Pi_{5,38}=\hbox{GN}_{34}$ spans $L_{146.3}$%
\hfil}%
\hbox to \myboxwidth{%
$|\infty2\slashinfty2\infty\rtimes D_{2}$%
\hfil}%
\box\matricesbox
\fi
}%
\hfill\discretionary{}{}{}%
\setbox\matricesbox=\hbox{%
{$\left[\!\llap{\phantom{%
\begingroup \smaller\smaller\smaller\begin{tabular}{@{}c@{}}%
\phantom{0}\\\phantom{0}\\\phantom{0}
\end{tabular}\endgroup%
}}\right.$}%
\begingroup \smaller\smaller\smaller\begin{tabular}{@{}c@{}}%
-1/8\\\phantom{0}\\\phantom{0}
\end{tabular}\endgroup%
\kern3pt%
\begingroup \smaller\smaller\smaller\begin{tabular}{@{}c@{}}%
\phantom{0}\\5/8\\\phantom{0}
\end{tabular}\endgroup%
\kern3pt%
\begingroup \smaller\smaller\smaller\begin{tabular}{@{}c@{}}%
\phantom{0}\\\phantom{0}\\1/2
\end{tabular}\endgroup%
{$\left.\llap{\phantom{%
\begingroup \smaller\smaller\smaller\begin{tabular}{@{}c@{}}%
\phantom{0}\\\phantom{0}\\\phantom{0}
\end{tabular}\endgroup%
}}\!\right]$}%
{$\left[\!\llap{\phantom{%
\begingroup \smaller\smaller\smaller\begin{tabular}{@{}c@{}}%
0\\0\\0
\end{tabular}\endgroup%
}}\right.$}%
\begingroup \smaller\smaller\smaller\begin{tabular}{@{}c@{}}%
10\\6\\0
\end{tabular}\endgroup%
\kern3pt%
\begingroup \smaller\smaller\smaller\begin{tabular}{@{}c@{}}%
40\\8\\-20
\end{tabular}\endgroup%
\kern3pt%
\begingroup \smaller\smaller\smaller\begin{tabular}{@{}c@{}}%
1\\-1\\-1
\end{tabular}\endgroup%
{$\left.\llap{\phantom{%
\begingroup \smaller\smaller\smaller\begin{tabular}{@{}c@{}}%
0\\0\\0
\end{tabular}\endgroup%
}}\!\right]$}%
}%
\ifdim\wd\matricesbox>\halfwidth\myboxwidth=\hsize\else\myboxwidth=\halfwidth\fi
\vbox{%
\ifdim\myboxwidth=\hsize
\setbox\onelinebox=\hbox{%
\vbox{\hbox{%
$\Pi_{5,39}$ spans $L_{5.2}$%
}\hbox{%
$|\infty2\slashtwo2\infty\rtimes D_{2}$%
}%
}%
\hfill\copy\matricesbox
}%
\ifdim\wd\onelinebox>\myboxwidth
\hbox to \myboxwidth{%
$\Pi_{5,39}$ spans $L_{5.2}$%
\hfil
$|\infty2\slashtwo2\infty\rtimes D_{2}$%
}%
\box\matricesbox
\else
\hbox to \myboxwidth{%
\unhbox\onelinebox
}%
\fi
\else
\hbox to \myboxwidth{%
$\Pi_{5,39}$ spans $L_{5.2}$%
\hfil}%
\hbox to \myboxwidth{%
$|\infty2\slashtwo2\infty\rtimes D_{2}$%
\hfil}%
\box\matricesbox
\fi
}%
\hfill\discretionary{}{}{}%
\setbox\matricesbox=\hbox{%
{$\left[\!\llap{\phantom{%
\begingroup \smaller\smaller\smaller\begin{tabular}{@{}c@{}}%
\phantom{0}\\\phantom{0}\\\phantom{0}
\end{tabular}\endgroup%
}}\right.$}%
\begingroup \smaller\smaller\smaller\begin{tabular}{@{}c@{}}%
-1/7\\\phantom{0}\\\phantom{0}
\end{tabular}\endgroup%
\kern3pt%
\begingroup \smaller\smaller\smaller\begin{tabular}{@{}c@{}}%
\phantom{0}\\9/14\\\phantom{0}
\end{tabular}\endgroup%
\kern3pt%
\begingroup \smaller\smaller\smaller\begin{tabular}{@{}c@{}}%
\phantom{0}\\\phantom{0}\\9/2
\end{tabular}\endgroup%
{$\left.\llap{\phantom{%
\begingroup \smaller\smaller\smaller\begin{tabular}{@{}c@{}}%
\phantom{0}\\\phantom{0}\\\phantom{0}
\end{tabular}\endgroup%
}}\!\right]$}%
{$\left[\!\llap{\phantom{%
\begingroup \smaller\smaller\smaller\begin{tabular}{@{}c@{}}%
0\\0\\0
\end{tabular}\endgroup%
}}\right.$}%
\begingroup \smaller\smaller\smaller\begin{tabular}{@{}c@{}}%
2\\-2\\0
\end{tabular}\endgroup%
\kern3pt%
\begingroup \smaller\smaller\smaller\begin{tabular}{@{}c@{}}%
9\\-2\\2
\end{tabular}\endgroup%
\kern3pt%
\begingroup \smaller\smaller\smaller\begin{tabular}{@{}c@{}}%
9\\5\\1
\end{tabular}\endgroup%
{$\left.\llap{\phantom{%
\begingroup \smaller\smaller\smaller\begin{tabular}{@{}c@{}}%
0\\0\\0
\end{tabular}\endgroup%
}}\!\right]$}%
}%
\ifdim\wd\matricesbox>\halfwidth\myboxwidth=\hsize\else\myboxwidth=\halfwidth\fi
\vbox{%
\ifdim\myboxwidth=\hsize
\setbox\onelinebox=\hbox{%
\vbox{\hbox{%
$\Pi_{5,40}$ spans $L_{142.8}$%
}\hbox{%
$\infty2|2\infty\slashtwo\rtimes D_{2}$ (shared)%
}%
}%
\hfill\copy\matricesbox
}%
\ifdim\wd\onelinebox>\myboxwidth
\hbox to \myboxwidth{%
$\Pi_{5,40}$ spans $L_{142.8}$%
\hfil
$\infty2|2\infty\slashtwo\rtimes D_{2}$ (shared)%
}%
\box\matricesbox
\else
\hbox to \myboxwidth{%
\unhbox\onelinebox
}%
\fi
\else
\hbox to \myboxwidth{%
$\Pi_{5,40}$ spans $L_{142.8}$%
\hfil}%
\hbox to \myboxwidth{%
$\infty2|2\infty\slashtwo\rtimes D_{2}$ (shared)%
\hfil}%
\box\matricesbox
\fi
}%
\hfill\discretionary{}{}{}%
\setbox\matricesbox=\hbox{%
{$\left[\!\llap{\phantom{%
\begingroup \smaller\smaller\smaller\begin{tabular}{@{}c@{}}%
\phantom{0}\\\phantom{0}\\\phantom{0}
\end{tabular}\endgroup%
}}\right.$}%
\begingroup \smaller\smaller\smaller\begin{tabular}{@{}c@{}}%
-1/9\\\phantom{0}\\\phantom{0}
\end{tabular}\endgroup%
\kern3pt%
\begingroup \smaller\smaller\smaller\begin{tabular}{@{}c@{}}%
\phantom{0}\\10/9\\-5/9
\end{tabular}\endgroup%
\kern3pt%
\begingroup \smaller\smaller\smaller\begin{tabular}{@{}c@{}}%
\phantom{0}\\-5/9\\70/9
\end{tabular}\endgroup%
{$\left.\llap{\phantom{%
\begingroup \smaller\smaller\smaller\begin{tabular}{@{}c@{}}%
\phantom{0}\\\phantom{0}\\\phantom{0}
\end{tabular}\endgroup%
}}\!\right]$}%
{$\left[\!\llap{\phantom{%
\begingroup \smaller\smaller\smaller\begin{tabular}{@{}c@{}}%
0\\0\\0
\end{tabular}\endgroup%
}}\right.$}%
\begingroup \smaller\smaller\smaller\begin{tabular}{@{}c@{}}%
1\\1\\0
\end{tabular}\endgroup%
\kern3pt%
\begingroup \smaller\smaller\smaller\begin{tabular}{@{}c@{}}%
5\\1\\1
\end{tabular}\endgroup%
\kern3pt%
\begingroup \smaller\smaller\smaller\begin{tabular}{@{}c@{}}%
10\\-3\\1
\end{tabular}\endgroup%
\kern3pt%
\begingroup \smaller\smaller\smaller\begin{tabular}{@{}c@{}}%
10\\-4\\-1
\end{tabular}\endgroup%
\kern3pt%
\begingroup \smaller\smaller\smaller\begin{tabular}{@{}c@{}}%
30\\1\\-4
\end{tabular}\endgroup%
{$\left.\llap{\phantom{%
\begingroup \smaller\smaller\smaller\begin{tabular}{@{}c@{}}%
0\\0\\0
\end{tabular}\endgroup%
}}\!\right]$}%
}%
\ifdim\wd\matricesbox>\halfwidth\myboxwidth=\hsize\else\myboxwidth=\halfwidth\fi
\vbox{%
\ifdim\myboxwidth=\hsize
\setbox\onelinebox=\hbox{%
\vbox{\hbox{%
$\Pi_{5,41}$ spans $L_{16.6}$%
}\hbox{%
$22362$%
}%
}%
\hfill\copy\matricesbox
}%
\ifdim\wd\onelinebox>\myboxwidth
\hbox to \myboxwidth{%
$\Pi_{5,41}$ spans $L_{16.6}$%
\hfil
$22362$%
}%
\box\matricesbox
\else
\hbox to \myboxwidth{%
\unhbox\onelinebox
}%
\fi
\else
\hbox to \myboxwidth{%
$\Pi_{5,41}$ spans $L_{16.6}$%
\hfil}%
\hbox to \myboxwidth{%
$22362$%
\hfil}%
\box\matricesbox
\fi
}%
\hfill\discretionary{}{}{}%
\setbox\matricesbox=\hbox{%
{$\left[\!\llap{\phantom{%
\begingroup \smaller\smaller\smaller\begin{tabular}{@{}c@{}}%
\phantom{0}\\\phantom{0}\\\phantom{0}
\end{tabular}\endgroup%
}}\right.$}%
\begingroup \smaller\smaller\smaller\begin{tabular}{@{}c@{}}%
-1/9\\\phantom{0}\\\phantom{0}
\end{tabular}\endgroup%
\kern3pt%
\begingroup \smaller\smaller\smaller\begin{tabular}{@{}c@{}}%
\phantom{0}\\10/9\\-2/9
\end{tabular}\endgroup%
\kern3pt%
\begingroup \smaller\smaller\smaller\begin{tabular}{@{}c@{}}%
\phantom{0}\\-2/9\\22/9
\end{tabular}\endgroup%
{$\left.\llap{\phantom{%
\begingroup \smaller\smaller\smaller\begin{tabular}{@{}c@{}}%
\phantom{0}\\\phantom{0}\\\phantom{0}
\end{tabular}\endgroup%
}}\!\right]$}%
{$\left[\!\llap{\phantom{%
\begingroup \smaller\smaller\smaller\begin{tabular}{@{}c@{}}%
0\\0\\0
\end{tabular}\endgroup%
}}\right.$}%
\begingroup \smaller\smaller\smaller\begin{tabular}{@{}c@{}}%
2\\0\\-1
\end{tabular}\endgroup%
\kern3pt%
\begingroup \smaller\smaller\smaller\begin{tabular}{@{}c@{}}%
16\\-6\\-2
\end{tabular}\endgroup%
\kern3pt%
\begingroup \smaller\smaller\smaller\begin{tabular}{@{}c@{}}%
24\\-8\\2
\end{tabular}\endgroup%
\kern3pt%
\begingroup \smaller\smaller\smaller\begin{tabular}{@{}c@{}}%
6\\1\\2
\end{tabular}\endgroup%
\kern3pt%
\begingroup \smaller\smaller\smaller\begin{tabular}{@{}c@{}}%
1\\1\\0
\end{tabular}\endgroup%
{$\left.\llap{\phantom{%
\begingroup \smaller\smaller\smaller\begin{tabular}{@{}c@{}}%
0\\0\\0
\end{tabular}\endgroup%
}}\!\right]$}%
}%
\ifdim\wd\matricesbox>\halfwidth\myboxwidth=\hsize\else\myboxwidth=\halfwidth\fi
\vbox{%
\ifdim\myboxwidth=\hsize
\setbox\onelinebox=\hbox{%
\vbox{\hbox{%
$\Pi_{5,42}$ spans $L_{150.6}$%
}\hbox{%
$22\infty22$ (shared)%
}%
}%
\hfill\copy\matricesbox
}%
\ifdim\wd\onelinebox>\myboxwidth
\hbox to \myboxwidth{%
$\Pi_{5,42}$ spans $L_{150.6}$%
\hfil
$22\infty22$ (shared)%
}%
\box\matricesbox
\else
\hbox to \myboxwidth{%
\unhbox\onelinebox
}%
\fi
\else
\hbox to \myboxwidth{%
$\Pi_{5,42}$ spans $L_{150.6}$%
\hfil}%
\hbox to \myboxwidth{%
$22\infty22$ (shared)%
\hfil}%
\box\matricesbox
\fi
}%
\hfill\discretionary{}{}{}%
\setbox\matricesbox=\hbox{%
{$\left[\!\llap{\phantom{%
\begingroup \smaller\smaller\smaller\begin{tabular}{@{}c@{}}%
\phantom{0}\\\phantom{0}\\\phantom{0}
\end{tabular}\endgroup%
}}\right.$}%
\begingroup \smaller\smaller\smaller\begin{tabular}{@{}c@{}}%
-1/16\\\phantom{0}\\\phantom{0}
\end{tabular}\endgroup%
\kern3pt%
\begingroup \smaller\smaller\smaller\begin{tabular}{@{}c@{}}%
\phantom{0}\\5\\-5/4
\end{tabular}\endgroup%
\kern3pt%
\begingroup \smaller\smaller\smaller\begin{tabular}{@{}c@{}}%
\phantom{0}\\-5/4\\105/16
\end{tabular}\endgroup%
{$\left.\llap{\phantom{%
\begingroup \smaller\smaller\smaller\begin{tabular}{@{}c@{}}%
\phantom{0}\\\phantom{0}\\\phantom{0}
\end{tabular}\endgroup%
}}\!\right]$}%
{$\left[\!\llap{\phantom{%
\begingroup \smaller\smaller\smaller\begin{tabular}{@{}c@{}}%
0\\0\\0
\end{tabular}\endgroup%
}}\right.$}%
\begingroup \smaller\smaller\smaller\begin{tabular}{@{}c@{}}%
4\\-1\\0
\end{tabular}\endgroup%
\kern3pt%
\begingroup \smaller\smaller\smaller\begin{tabular}{@{}c@{}}%
25\\-2\\-3
\end{tabular}\endgroup%
\kern3pt%
\begingroup \smaller\smaller\smaller\begin{tabular}{@{}c@{}}%
80\\2\\-8
\end{tabular}\endgroup%
\kern3pt%
\begingroup \smaller\smaller\smaller\begin{tabular}{@{}c@{}}%
20\\3\\0
\end{tabular}\endgroup%
\kern3pt%
\begingroup \smaller\smaller\smaller\begin{tabular}{@{}c@{}}%
5\\0\\1
\end{tabular}\endgroup%
{$\left.\llap{\phantom{%
\begingroup \smaller\smaller\smaller\begin{tabular}{@{}c@{}}%
0\\0\\0
\end{tabular}\endgroup%
}}\!\right]$}%
}%
\ifdim\wd\matricesbox>\halfwidth\myboxwidth=\hsize\else\myboxwidth=\halfwidth\fi
\vbox{%
\ifdim\myboxwidth=\hsize
\setbox\onelinebox=\hbox{%
\vbox{\hbox{%
$\Pi_{5,43}$ spans $L_{149.22}$%
}\hbox{%
$22\infty\infty2$ (shared)%
}%
}%
\hfill\copy\matricesbox
}%
\ifdim\wd\onelinebox>\myboxwidth
\hbox to \myboxwidth{%
$\Pi_{5,43}$ spans $L_{149.22}$%
\hfil
$22\infty\infty2$ (shared)%
}%
\box\matricesbox
\else
\hbox to \myboxwidth{%
\unhbox\onelinebox
}%
\fi
\else
\hbox to \myboxwidth{%
$\Pi_{5,43}$ spans $L_{149.22}$%
\hfil}%
\hbox to \myboxwidth{%
$22\infty\infty2$ (shared)%
\hfil}%
\box\matricesbox
\fi
}%
\hfill\discretionary{}{}{}%
\setbox\matricesbox=\hbox{%
{$\left[\!\llap{\phantom{%
\begingroup \smaller\smaller\smaller\begin{tabular}{@{}c@{}}%
\phantom{0}\\\phantom{0}\\\phantom{0}
\end{tabular}\endgroup%
}}\right.$}%
\begingroup \smaller\smaller\smaller\begin{tabular}{@{}c@{}}%
-1/19\\\phantom{0}\\\phantom{0}
\end{tabular}\endgroup%
\kern3pt%
\begingroup \smaller\smaller\smaller\begin{tabular}{@{}c@{}}%
\phantom{0}\\30/19\\-15/19
\end{tabular}\endgroup%
\kern3pt%
\begingroup \smaller\smaller\smaller\begin{tabular}{@{}c@{}}%
\phantom{0}\\-15/19\\150/19
\end{tabular}\endgroup%
{$\left.\llap{\phantom{%
\begingroup \smaller\smaller\smaller\begin{tabular}{@{}c@{}}%
\phantom{0}\\\phantom{0}\\\phantom{0}
\end{tabular}\endgroup%
}}\!\right]$}%
{$\left[\!\llap{\phantom{%
\begingroup \smaller\smaller\smaller\begin{tabular}{@{}c@{}}%
0\\0\\0
\end{tabular}\endgroup%
}}\right.$}%
\begingroup \smaller\smaller\smaller\begin{tabular}{@{}c@{}}%
6\\0\\-1
\end{tabular}\endgroup%
\kern3pt%
\begingroup \smaller\smaller\smaller\begin{tabular}{@{}c@{}}%
15\\-4\\-1
\end{tabular}\endgroup%
\kern3pt%
\begingroup \smaller\smaller\smaller\begin{tabular}{@{}c@{}}%
15\\-3\\1
\end{tabular}\endgroup%
\kern3pt%
\begingroup \smaller\smaller\smaller\begin{tabular}{@{}c@{}}%
30\\4\\3
\end{tabular}\endgroup%
\kern3pt%
\begingroup \smaller\smaller\smaller\begin{tabular}{@{}c@{}}%
5\\2\\0
\end{tabular}\endgroup%
{$\left.\llap{\phantom{%
\begingroup \smaller\smaller\smaller\begin{tabular}{@{}c@{}}%
0\\0\\0
\end{tabular}\endgroup%
}}\!\right]$}%
}%
\ifdim\wd\matricesbox>\halfwidth\myboxwidth=\hsize\else\myboxwidth=\halfwidth\fi
\vbox{%
\ifdim\myboxwidth=\hsize
\setbox\onelinebox=\hbox{%
\vbox{\hbox{%
$\Pi_{5,44}$ spans $L_{19.6}$%
}\hbox{%
$22422$ (shared)%
}%
}%
\hfill\copy\matricesbox
}%
\ifdim\wd\onelinebox>\myboxwidth
\hbox to \myboxwidth{%
$\Pi_{5,44}$ spans $L_{19.6}$%
\hfil
$22422$ (shared)%
}%
\box\matricesbox
\else
\hbox to \myboxwidth{%
\unhbox\onelinebox
}%
\fi
\else
\hbox to \myboxwidth{%
$\Pi_{5,44}$ spans $L_{19.6}$%
\hfil}%
\hbox to \myboxwidth{%
$22422$ (shared)%
\hfil}%
\box\matricesbox
\fi
}%
\hfill\discretionary{}{}{}%
\setbox\matricesbox=\hbox{%
{$\left[\!\llap{\phantom{%
\begingroup \smaller\smaller\smaller\begin{tabular}{@{}c@{}}%
\phantom{0}\\\phantom{0}\\\phantom{0}
\end{tabular}\endgroup%
}}\right.$}%
\begingroup \smaller\smaller\smaller\begin{tabular}{@{}c@{}}%
-1/3\\\phantom{0}\\\phantom{0}
\end{tabular}\endgroup%
\kern3pt%
\begingroup \smaller\smaller\smaller\begin{tabular}{@{}c@{}}%
\phantom{0}\\4/3\\-2/3
\end{tabular}\endgroup%
\kern3pt%
\begingroup \smaller\smaller\smaller\begin{tabular}{@{}c@{}}%
\phantom{0}\\-2/3\\10/3
\end{tabular}\endgroup%
{$\left.\llap{\phantom{%
\begingroup \smaller\smaller\smaller\begin{tabular}{@{}c@{}}%
\phantom{0}\\\phantom{0}\\\phantom{0}
\end{tabular}\endgroup%
}}\!\right]$}%
{$\left[\!\llap{\phantom{%
\begingroup \smaller\smaller\smaller\begin{tabular}{@{}c@{}}%
0\\0\\0
\end{tabular}\endgroup%
}}\right.$}%
\begingroup \smaller\smaller\smaller\begin{tabular}{@{}c@{}}%
1\\1\\0
\end{tabular}\endgroup%
\kern3pt%
\begingroup \smaller\smaller\smaller\begin{tabular}{@{}c@{}}%
2\\1\\1
\end{tabular}\endgroup%
\kern3pt%
\begingroup \smaller\smaller\smaller\begin{tabular}{@{}c@{}}%
3\\-1\\1
\end{tabular}\endgroup%
\kern3pt%
\begingroup \smaller\smaller\smaller\begin{tabular}{@{}c@{}}%
3\\-2\\-1
\end{tabular}\endgroup%
\kern3pt%
\begingroup \smaller\smaller\smaller\begin{tabular}{@{}c@{}}%
12\\1\\-4
\end{tabular}\endgroup%
{$\left.\llap{\phantom{%
\begingroup \smaller\smaller\smaller\begin{tabular}{@{}c@{}}%
0\\0\\0
\end{tabular}\endgroup%
}}\!\right]$}%
}%
\ifdim\wd\matricesbox>\halfwidth\myboxwidth=\hsize\else\myboxwidth=\halfwidth\fi
\vbox{%
\ifdim\myboxwidth=\hsize
\setbox\onelinebox=\hbox{%
\vbox{\hbox{%
$\Pi_{5,45}$ spans $L_{4.14}$%
}\hbox{%
$22\infty\infty2$%
}%
}%
\hfill\copy\matricesbox
}%
\ifdim\wd\onelinebox>\myboxwidth
\hbox to \myboxwidth{%
$\Pi_{5,45}$ spans $L_{4.14}$%
\hfil
$22\infty\infty2$%
}%
\box\matricesbox
\else
\hbox to \myboxwidth{%
\unhbox\onelinebox
}%
\fi
\else
\hbox to \myboxwidth{%
$\Pi_{5,45}$ spans $L_{4.14}$%
\hfil}%
\hbox to \myboxwidth{%
$22\infty\infty2$%
\hfil}%
\box\matricesbox
\fi
}%
\hfill\discretionary{}{}{}%
\setbox\matricesbox=\hbox{%
{$\left[\!\llap{\phantom{%
\begingroup \smaller\smaller\smaller\begin{tabular}{@{}c@{}}%
\phantom{0}\\\phantom{0}\\\phantom{0}
\end{tabular}\endgroup%
}}\right.$}%
\begingroup \smaller\smaller\smaller\begin{tabular}{@{}c@{}}%
-3/8\\\phantom{0}\\\phantom{0}
\end{tabular}\endgroup%
\kern3pt%
\begingroup \smaller\smaller\smaller\begin{tabular}{@{}c@{}}%
\phantom{0}\\7/2\\-1/2
\end{tabular}\endgroup%
\kern3pt%
\begingroup \smaller\smaller\smaller\begin{tabular}{@{}c@{}}%
\phantom{0}\\-1/2\\7/2
\end{tabular}\endgroup%
{$\left.\llap{\phantom{%
\begingroup \smaller\smaller\smaller\begin{tabular}{@{}c@{}}%
\phantom{0}\\\phantom{0}\\\phantom{0}
\end{tabular}\endgroup%
}}\!\right]$}%
{$\left[\!\llap{\phantom{%
\begingroup \smaller\smaller\smaller\begin{tabular}{@{}c@{}}%
0\\0\\0
\end{tabular}\endgroup%
}}\right.$}%
\begingroup \smaller\smaller\smaller\begin{tabular}{@{}c@{}}%
2\\0\\1
\end{tabular}\endgroup%
\kern3pt%
\begingroup \smaller\smaller\smaller\begin{tabular}{@{}c@{}}%
6\\-2\\1
\end{tabular}\endgroup%
\kern3pt%
\begingroup \smaller\smaller\smaller\begin{tabular}{@{}c@{}}%
8\\-3\\-1
\end{tabular}\endgroup%
\kern3pt%
\begingroup \smaller\smaller\smaller\begin{tabular}{@{}c@{}}%
2\\0\\-1
\end{tabular}\endgroup%
\kern3pt%
\begingroup \smaller\smaller\smaller\begin{tabular}{@{}c@{}}%
2\\1\\0
\end{tabular}\endgroup%
{$\left.\llap{\phantom{%
\begingroup \smaller\smaller\smaller\begin{tabular}{@{}c@{}}%
0\\0\\0
\end{tabular}\endgroup%
}}\!\right]$}%
}%
\ifdim\wd\matricesbox>\halfwidth\myboxwidth=\hsize\else\myboxwidth=\halfwidth\fi
\vbox{%
\ifdim\myboxwidth=\hsize
\setbox\onelinebox=\hbox{%
\vbox{\hbox{%
$\Pi_{5,46}$ spans $L_{7.9}$%
}\hbox{%
$22\infty3\infty$%
}%
}%
\hfill\copy\matricesbox
}%
\ifdim\wd\onelinebox>\myboxwidth
\hbox to \myboxwidth{%
$\Pi_{5,46}$ spans $L_{7.9}$%
\hfil
$22\infty3\infty$%
}%
\box\matricesbox
\else
\hbox to \myboxwidth{%
\unhbox\onelinebox
}%
\fi
\else
\hbox to \myboxwidth{%
$\Pi_{5,46}$ spans $L_{7.9}$%
\hfil}%
\hbox to \myboxwidth{%
$22\infty3\infty$%
\hfil}%
\box\matricesbox
\fi
}%
\hfill\discretionary{}{}{}%
\setbox\matricesbox=\hbox{%
{$\left[\!\llap{\phantom{%
\begingroup \smaller\smaller\smaller\begin{tabular}{@{}c@{}}%
\phantom{0}\\\phantom{0}\\\phantom{0}
\end{tabular}\endgroup%
}}\right.$}%
\begingroup \smaller\smaller\smaller\begin{tabular}{@{}c@{}}%
-1/14\\\phantom{0}\\\phantom{0}
\end{tabular}\endgroup%
\kern3pt%
\begingroup \smaller\smaller\smaller\begin{tabular}{@{}c@{}}%
\phantom{0}\\30/7\\-15/7
\end{tabular}\endgroup%
\kern3pt%
\begingroup \smaller\smaller\smaller\begin{tabular}{@{}c@{}}%
\phantom{0}\\-15/7\\60/7
\end{tabular}\endgroup%
{$\left.\llap{\phantom{%
\begingroup \smaller\smaller\smaller\begin{tabular}{@{}c@{}}%
\phantom{0}\\\phantom{0}\\\phantom{0}
\end{tabular}\endgroup%
}}\!\right]$}%
{$\left[\!\llap{\phantom{%
\begingroup \smaller\smaller\smaller\begin{tabular}{@{}c@{}}%
0\\0\\0
\end{tabular}\endgroup%
}}\right.$}%
\begingroup \smaller\smaller\smaller\begin{tabular}{@{}c@{}}%
6\\1\\1
\end{tabular}\endgroup%
\kern3pt%
\begingroup \smaller\smaller\smaller\begin{tabular}{@{}c@{}}%
10\\2\\0
\end{tabular}\endgroup%
\kern3pt%
\begingroup \smaller\smaller\smaller\begin{tabular}{@{}c@{}}%
30\\1\\-3
\end{tabular}\endgroup%
\kern3pt%
\begingroup \smaller\smaller\smaller\begin{tabular}{@{}c@{}}%
10\\-2\\-1
\end{tabular}\endgroup%
\kern3pt%
\begingroup \smaller\smaller\smaller\begin{tabular}{@{}c@{}}%
10\\-1\\1
\end{tabular}\endgroup%
{$\left.\llap{\phantom{%
\begingroup \smaller\smaller\smaller\begin{tabular}{@{}c@{}}%
0\\0\\0
\end{tabular}\endgroup%
}}\!\right]$}%
}%
\ifdim\wd\matricesbox>\halfwidth\myboxwidth=\hsize\else\myboxwidth=\halfwidth\fi
\vbox{%
\ifdim\myboxwidth=\hsize
\setbox\onelinebox=\hbox{%
\vbox{\hbox{%
$\Pi_{5,47}$ spans $L_{31.9}$%
}\hbox{%
$22632$%
}%
}%
\hfill\copy\matricesbox
}%
\ifdim\wd\onelinebox>\myboxwidth
\hbox to \myboxwidth{%
$\Pi_{5,47}$ spans $L_{31.9}$%
\hfil
$22632$%
}%
\box\matricesbox
\else
\hbox to \myboxwidth{%
\unhbox\onelinebox
}%
\fi
\else
\hbox to \myboxwidth{%
$\Pi_{5,47}$ spans $L_{31.9}$%
\hfil}%
\hbox to \myboxwidth{%
$22632$%
\hfil}%
\box\matricesbox
\fi
}%
\hfill\discretionary{}{}{}%
\setbox\matricesbox=\hbox{%
{$\left[\!\llap{\phantom{%
\begingroup \smaller\smaller\smaller\begin{tabular}{@{}c@{}}%
\phantom{0}\\\phantom{0}\\\phantom{0}
\end{tabular}\endgroup%
}}\right.$}%
\begingroup \smaller\smaller\smaller\begin{tabular}{@{}c@{}}%
-1/7\\\phantom{0}\\\phantom{0}
\end{tabular}\endgroup%
\kern3pt%
\begingroup \smaller\smaller\smaller\begin{tabular}{@{}c@{}}%
\phantom{0}\\18/7\\-6/7
\end{tabular}\endgroup%
\kern3pt%
\begingroup \smaller\smaller\smaller\begin{tabular}{@{}c@{}}%
\phantom{0}\\-6/7\\30/7
\end{tabular}\endgroup%
{$\left.\llap{\phantom{%
\begingroup \smaller\smaller\smaller\begin{tabular}{@{}c@{}}%
\phantom{0}\\\phantom{0}\\\phantom{0}
\end{tabular}\endgroup%
}}\!\right]$}%
{$\left[\!\llap{\phantom{%
\begingroup \smaller\smaller\smaller\begin{tabular}{@{}c@{}}%
0\\0\\0
\end{tabular}\endgroup%
}}\right.$}%
\begingroup \smaller\smaller\smaller\begin{tabular}{@{}c@{}}%
6\\2\\1
\end{tabular}\endgroup%
\kern3pt%
\begingroup \smaller\smaller\smaller\begin{tabular}{@{}c@{}}%
12\\3\\-1
\end{tabular}\endgroup%
\kern3pt%
\begingroup \smaller\smaller\smaller\begin{tabular}{@{}c@{}}%
8\\0\\-2
\end{tabular}\endgroup%
\kern3pt%
\begingroup \smaller\smaller\smaller\begin{tabular}{@{}c@{}}%
2\\-1\\0
\end{tabular}\endgroup%
\kern3pt%
\begingroup \smaller\smaller\smaller\begin{tabular}{@{}c@{}}%
3\\0\\1
\end{tabular}\endgroup%
{$\left.\llap{\phantom{%
\begingroup \smaller\smaller\smaller\begin{tabular}{@{}c@{}}%
0\\0\\0
\end{tabular}\endgroup%
}}\!\right]$}%
}%
\ifdim\wd\matricesbox>\halfwidth\myboxwidth=\hsize\else\myboxwidth=\halfwidth\fi
\vbox{%
\ifdim\myboxwidth=\hsize
\setbox\onelinebox=\hbox{%
\vbox{\hbox{%
$\Pi_{5,48}$ spans $L_{150.17}$%
}\hbox{%
$22\infty22$ (shared)%
}%
}%
\hfill\copy\matricesbox
}%
\ifdim\wd\onelinebox>\myboxwidth
\hbox to \myboxwidth{%
$\Pi_{5,48}$ spans $L_{150.17}$%
\hfil
$22\infty22$ (shared)%
}%
\box\matricesbox
\else
\hbox to \myboxwidth{%
\unhbox\onelinebox
}%
\fi
\else
\hbox to \myboxwidth{%
$\Pi_{5,48}$ spans $L_{150.17}$%
\hfil}%
\hbox to \myboxwidth{%
$22\infty22$ (shared)%
\hfil}%
\box\matricesbox
\fi
}%
\hfill\discretionary{}{}{}%
\setbox\matricesbox=\hbox{%
{$\left[\!\llap{\phantom{%
\begingroup \smaller\smaller\smaller\begin{tabular}{@{}c@{}}%
\phantom{0}\\\phantom{0}\\\phantom{0}
\end{tabular}\endgroup%
}}\right.$}%
\begingroup \smaller\smaller\smaller\begin{tabular}{@{}c@{}}%
-1/10\\\phantom{0}\\\phantom{0}
\end{tabular}\endgroup%
\kern3pt%
\begingroup \smaller\smaller\smaller\begin{tabular}{@{}c@{}}%
\phantom{0}\\11/10\\-1/2
\end{tabular}\endgroup%
\kern3pt%
\begingroup \smaller\smaller\smaller\begin{tabular}{@{}c@{}}%
\phantom{0}\\-1/2\\3/2
\end{tabular}\endgroup%
{$\left.\llap{\phantom{%
\begingroup \smaller\smaller\smaller\begin{tabular}{@{}c@{}}%
\phantom{0}\\\phantom{0}\\\phantom{0}
\end{tabular}\endgroup%
}}\!\right]$}%
{$\left[\!\llap{\phantom{%
\begingroup \smaller\smaller\smaller\begin{tabular}{@{}c@{}}%
0\\0\\0
\end{tabular}\endgroup%
}}\right.$}%
\begingroup \smaller\smaller\smaller\begin{tabular}{@{}c@{}}%
7\\-2\\-3
\end{tabular}\endgroup%
\kern3pt%
\begingroup \smaller\smaller\smaller\begin{tabular}{@{}c@{}}%
8\\2\\-2
\end{tabular}\endgroup%
\kern3pt%
\begingroup \smaller\smaller\smaller\begin{tabular}{@{}c@{}}%
14\\6\\2
\end{tabular}\endgroup%
\kern3pt%
\begingroup \smaller\smaller\smaller\begin{tabular}{@{}c@{}}%
14\\1\\5
\end{tabular}\endgroup%
\kern3pt%
\begingroup \smaller\smaller\smaller\begin{tabular}{@{}c@{}}%
1\\-1\\0
\end{tabular}\endgroup%
{$\left.\llap{\phantom{%
\begingroup \smaller\smaller\smaller\begin{tabular}{@{}c@{}}%
0\\0\\0
\end{tabular}\endgroup%
}}\!\right]$}%
}%
\ifdim\wd\matricesbox>\halfwidth\myboxwidth=\hsize\else\myboxwidth=\halfwidth\fi
\vbox{%
\ifdim\myboxwidth=\hsize
\setbox\onelinebox=\hbox{%
\vbox{\hbox{%
$\Pi_{5,49}$ spans $L_{9.5}$%
}\hbox{%
$22\infty22$%
}%
}%
\hfill\copy\matricesbox
}%
\ifdim\wd\onelinebox>\myboxwidth
\hbox to \myboxwidth{%
$\Pi_{5,49}$ spans $L_{9.5}$%
\hfil
$22\infty22$%
}%
\box\matricesbox
\else
\hbox to \myboxwidth{%
\unhbox\onelinebox
}%
\fi
\else
\hbox to \myboxwidth{%
$\Pi_{5,49}$ spans $L_{9.5}$%
\hfil}%
\hbox to \myboxwidth{%
$22\infty22$%
\hfil}%
\box\matricesbox
\fi
}%
\hfill\discretionary{}{}{}%
\setbox\matricesbox=\hbox{%
{$\left[\!\llap{\phantom{%
\begingroup \smaller\smaller\smaller\begin{tabular}{@{}c@{}}%
\phantom{0}\\\phantom{0}\\\phantom{0}
\end{tabular}\endgroup%
}}\right.$}%
\begingroup \smaller\smaller\smaller\begin{tabular}{@{}c@{}}%
-1/10\\\phantom{0}\\\phantom{0}
\end{tabular}\endgroup%
\kern3pt%
\begingroup \smaller\smaller\smaller\begin{tabular}{@{}c@{}}%
\phantom{0}\\12/5\\-3/5
\end{tabular}\endgroup%
\kern3pt%
\begingroup \smaller\smaller\smaller\begin{tabular}{@{}c@{}}%
\phantom{0}\\-3/5\\39/10
\end{tabular}\endgroup%
{$\left.\llap{\phantom{%
\begingroup \smaller\smaller\smaller\begin{tabular}{@{}c@{}}%
\phantom{0}\\\phantom{0}\\\phantom{0}
\end{tabular}\endgroup%
}}\!\right]$}%
{$\left[\!\llap{\phantom{%
\begingroup \smaller\smaller\smaller\begin{tabular}{@{}c@{}}%
0\\0\\0
\end{tabular}\endgroup%
}}\right.$}%
\begingroup \smaller\smaller\smaller\begin{tabular}{@{}c@{}}%
15\\-2\\-3
\end{tabular}\endgroup%
\kern3pt%
\begingroup \smaller\smaller\smaller\begin{tabular}{@{}c@{}}%
24\\2\\-4
\end{tabular}\endgroup%
\kern3pt%
\begingroup \smaller\smaller\smaller\begin{tabular}{@{}c@{}}%
6\\2\\0
\end{tabular}\endgroup%
\kern3pt%
\begingroup \smaller\smaller\smaller\begin{tabular}{@{}c@{}}%
3\\0\\1
\end{tabular}\endgroup%
\kern3pt%
\begingroup \smaller\smaller\smaller\begin{tabular}{@{}c@{}}%
2\\-1\\0
\end{tabular}\endgroup%
{$\left.\llap{\phantom{%
\begingroup \smaller\smaller\smaller\begin{tabular}{@{}c@{}}%
0\\0\\0
\end{tabular}\endgroup%
}}\!\right]$}%
}%
\ifdim\wd\matricesbox>\halfwidth\myboxwidth=\hsize\else\myboxwidth=\halfwidth\fi
\vbox{%
\ifdim\myboxwidth=\hsize
\setbox\onelinebox=\hbox{%
\vbox{\hbox{%
$\Pi_{5,50}$ spans $L_{127.6}$%
}\hbox{%
$22422$ (shared)%
}%
}%
\hfill\copy\matricesbox
}%
\ifdim\wd\onelinebox>\myboxwidth
\hbox to \myboxwidth{%
$\Pi_{5,50}$ spans $L_{127.6}$%
\hfil
$22422$ (shared)%
}%
\box\matricesbox
\else
\hbox to \myboxwidth{%
\unhbox\onelinebox
}%
\fi
\else
\hbox to \myboxwidth{%
$\Pi_{5,50}$ spans $L_{127.6}$%
\hfil}%
\hbox to \myboxwidth{%
$22422$ (shared)%
\hfil}%
\box\matricesbox
\fi
}%
\hfill\discretionary{}{}{}%
\setbox\matricesbox=\hbox{%
{$\left[\!\llap{\phantom{%
\begingroup \smaller\smaller\smaller\begin{tabular}{@{}c@{}}%
\phantom{0}\\\phantom{0}\\\phantom{0}
\end{tabular}\endgroup%
}}\right.$}%
\begingroup \smaller\smaller\smaller\begin{tabular}{@{}c@{}}%
-1/24\\\phantom{0}\\\phantom{0}
\end{tabular}\endgroup%
\kern3pt%
\begingroup \smaller\smaller\smaller\begin{tabular}{@{}c@{}}%
\phantom{0}\\13/6\\-2/3
\end{tabular}\endgroup%
\kern3pt%
\begingroup \smaller\smaller\smaller\begin{tabular}{@{}c@{}}%
\phantom{0}\\-2/3\\20/3
\end{tabular}\endgroup%
{$\left.\llap{\phantom{%
\begingroup \smaller\smaller\smaller\begin{tabular}{@{}c@{}}%
\phantom{0}\\\phantom{0}\\\phantom{0}
\end{tabular}\endgroup%
}}\!\right]$}%
{$\left[\!\llap{\phantom{%
\begingroup \smaller\smaller\smaller\begin{tabular}{@{}c@{}}%
0\\0\\0
\end{tabular}\endgroup%
}}\right.$}%
\begingroup \smaller\smaller\smaller\begin{tabular}{@{}c@{}}%
2\\1\\0
\end{tabular}\endgroup%
\kern3pt%
\begingroup \smaller\smaller\smaller\begin{tabular}{@{}c@{}}%
28\\2\\3
\end{tabular}\endgroup%
\kern3pt%
\begingroup \smaller\smaller\smaller\begin{tabular}{@{}c@{}}%
12\\-2\\1
\end{tabular}\endgroup%
\kern3pt%
\begingroup \smaller\smaller\smaller\begin{tabular}{@{}c@{}}%
14\\-3\\-1
\end{tabular}\endgroup%
\kern3pt%
\begingroup \smaller\smaller\smaller\begin{tabular}{@{}c@{}}%
16\\0\\-2
\end{tabular}\endgroup%
{$\left.\llap{\phantom{%
\begingroup \smaller\smaller\smaller\begin{tabular}{@{}c@{}}%
0\\0\\0
\end{tabular}\endgroup%
}}\!\right]$}%
}%
\ifdim\wd\matricesbox>\halfwidth\myboxwidth=\hsize\else\myboxwidth=\halfwidth\fi
\vbox{%
\ifdim\myboxwidth=\hsize
\setbox\onelinebox=\hbox{%
\vbox{\hbox{%
$\Pi_{5,51}$ spans $L_{25.3}$%
}\hbox{%
$22222$%
}%
}%
\hfill\copy\matricesbox
}%
\ifdim\wd\onelinebox>\myboxwidth
\hbox to \myboxwidth{%
$\Pi_{5,51}$ spans $L_{25.3}$%
\hfil
$22222$%
}%
\box\matricesbox
\else
\hbox to \myboxwidth{%
\unhbox\onelinebox
}%
\fi
\else
\hbox to \myboxwidth{%
$\Pi_{5,51}$ spans $L_{25.3}$%
\hfil}%
\hbox to \myboxwidth{%
$22222$%
\hfil}%
\box\matricesbox
\fi
}%
\hfill\discretionary{}{}{}%
\setbox\matricesbox=\hbox{%
{$\left[\!\llap{\phantom{%
\begingroup \smaller\smaller\smaller\begin{tabular}{@{}c@{}}%
\phantom{0}\\\phantom{0}\\\phantom{0}
\end{tabular}\endgroup%
}}\right.$}%
\begingroup \smaller\smaller\smaller\begin{tabular}{@{}c@{}}%
-1/11\\\phantom{0}\\\phantom{0}
\end{tabular}\endgroup%
\kern3pt%
\begingroup \smaller\smaller\smaller\begin{tabular}{@{}c@{}}%
\phantom{0}\\12/11\\-3/11
\end{tabular}\endgroup%
\kern3pt%
\begingroup \smaller\smaller\smaller\begin{tabular}{@{}c@{}}%
\phantom{0}\\-3/11\\42/11
\end{tabular}\endgroup%
{$\left.\llap{\phantom{%
\begingroup \smaller\smaller\smaller\begin{tabular}{@{}c@{}}%
\phantom{0}\\\phantom{0}\\\phantom{0}
\end{tabular}\endgroup%
}}\!\right]$}%
{$\left[\!\llap{\phantom{%
\begingroup \smaller\smaller\smaller\begin{tabular}{@{}c@{}}%
0\\0\\0
\end{tabular}\endgroup%
}}\right.$}%
\begingroup \smaller\smaller\smaller\begin{tabular}{@{}c@{}}%
1\\1\\0
\end{tabular}\endgroup%
\kern3pt%
\begingroup \smaller\smaller\smaller\begin{tabular}{@{}c@{}}%
15\\2\\3
\end{tabular}\endgroup%
\kern3pt%
\begingroup \smaller\smaller\smaller\begin{tabular}{@{}c@{}}%
6\\-2\\1
\end{tabular}\endgroup%
\kern3pt%
\begingroup \smaller\smaller\smaller\begin{tabular}{@{}c@{}}%
10\\-4\\-1
\end{tabular}\endgroup%
\kern3pt%
\begingroup \smaller\smaller\smaller\begin{tabular}{@{}c@{}}%
3\\0\\-1
\end{tabular}\endgroup%
{$\left.\llap{\phantom{%
\begingroup \smaller\smaller\smaller\begin{tabular}{@{}c@{}}%
0\\0\\0
\end{tabular}\endgroup%
}}\!\right]$}%
}%
\ifdim\wd\matricesbox>\halfwidth\myboxwidth=\hsize\else\myboxwidth=\halfwidth\fi
\vbox{%
\ifdim\myboxwidth=\hsize
\setbox\onelinebox=\hbox{%
\vbox{\hbox{%
$\Pi_{5,52}$ spans $L_{17.7}$%
}\hbox{%
$22222$%
}%
}%
\hfill\copy\matricesbox
}%
\ifdim\wd\onelinebox>\myboxwidth
\hbox to \myboxwidth{%
$\Pi_{5,52}$ spans $L_{17.7}$%
\hfil
$22222$%
}%
\box\matricesbox
\else
\hbox to \myboxwidth{%
\unhbox\onelinebox
}%
\fi
\else
\hbox to \myboxwidth{%
$\Pi_{5,52}$ spans $L_{17.7}$%
\hfil}%
\hbox to \myboxwidth{%
$22222$%
\hfil}%
\box\matricesbox
\fi
}%
\hfill\discretionary{}{}{}%
\setbox\matricesbox=\hbox{%
{$\left[\!\llap{\phantom{%
\begingroup \smaller\smaller\smaller\begin{tabular}{@{}c@{}}%
\phantom{0}\\\phantom{0}\\\phantom{0}
\end{tabular}\endgroup%
}}\right.$}%
\begingroup \smaller\smaller\smaller\begin{tabular}{@{}c@{}}%
-1/2\\\phantom{0}\\\phantom{0}
\end{tabular}\endgroup%
\kern3pt%
\begingroup \smaller\smaller\smaller\begin{tabular}{@{}c@{}}%
\phantom{0}\\3/2\\-1/2
\end{tabular}\endgroup%
\kern3pt%
\begingroup \smaller\smaller\smaller\begin{tabular}{@{}c@{}}%
\phantom{0}\\-1/2\\3/2
\end{tabular}\endgroup%
{$\left.\llap{\phantom{%
\begingroup \smaller\smaller\smaller\begin{tabular}{@{}c@{}}%
\phantom{0}\\\phantom{0}\\\phantom{0}
\end{tabular}\endgroup%
}}\!\right]$}%
{$\left[\!\llap{\phantom{%
\begingroup \smaller\smaller\smaller\begin{tabular}{@{}c@{}}%
0\\0\\0
\end{tabular}\endgroup%
}}\right.$}%
\begingroup \smaller\smaller\smaller\begin{tabular}{@{}c@{}}%
1\\1\\0
\end{tabular}\endgroup%
\kern3pt%
\begingroup \smaller\smaller\smaller\begin{tabular}{@{}c@{}}%
4\\-1\\-3
\end{tabular}\endgroup%
\kern3pt%
\begingroup \smaller\smaller\smaller\begin{tabular}{@{}c@{}}%
4\\-3\\-1
\end{tabular}\endgroup%
\kern3pt%
\begingroup \smaller\smaller\smaller\begin{tabular}{@{}c@{}}%
2\\-1\\1
\end{tabular}\endgroup%
\kern3pt%
\begingroup \smaller\smaller\smaller\begin{tabular}{@{}c@{}}%
4\\1\\3
\end{tabular}\endgroup%
{$\left.\llap{\phantom{%
\begingroup \smaller\smaller\smaller\begin{tabular}{@{}c@{}}%
0\\0\\0
\end{tabular}\endgroup%
}}\!\right]$}%
}%
\ifdim\wd\matricesbox>\halfwidth\myboxwidth=\hsize\else\myboxwidth=\halfwidth\fi
\vbox{%
\ifdim\myboxwidth=\hsize
\setbox\onelinebox=\hbox{%
\vbox{\hbox{%
$\Pi_{5,53}$ spans $L_{1.6}$%
}\hbox{%
$\infty\infty22\infty$%
}%
}%
\hfill\copy\matricesbox
}%
\ifdim\wd\onelinebox>\myboxwidth
\hbox to \myboxwidth{%
$\Pi_{5,53}$ spans $L_{1.6}$%
\hfil
$\infty\infty22\infty$%
}%
\box\matricesbox
\else
\hbox to \myboxwidth{%
\unhbox\onelinebox
}%
\fi
\else
\hbox to \myboxwidth{%
$\Pi_{5,53}$ spans $L_{1.6}$%
\hfil}%
\hbox to \myboxwidth{%
$\infty\infty22\infty$%
\hfil}%
\box\matricesbox
\fi
}%
\hfill\discretionary{}{}{}%

\vskip2pt\hrule\vskip2pt

\leavevmode\setbox\matricesbox=\hbox{%
{$\left[\!\llap{\phantom{%
\begingroup \smaller\smaller\smaller\begin{tabular}{@{}c@{}}%
\phantom{0}\\\phantom{0}\\\phantom{0}\\\phantom{0}
\end{tabular}\endgroup%
}}\right.$}%
\begingroup \smaller\smaller\smaller\begin{tabular}{@{}c@{}}%
-1\\\phantom{0}\\\phantom{0}\\\phantom{0}
\end{tabular}\endgroup%
\kern3pt%
\begingroup \smaller\smaller\smaller\begin{tabular}{@{}c@{}}%
\phantom{0}\\1\\\phantom{0}\\\phantom{0}
\end{tabular}\endgroup%
\kern3pt%
\begingroup \smaller\smaller\smaller\begin{tabular}{@{}c@{}}%
\phantom{0}\\\phantom{0}\\1\\\phantom{0}
\end{tabular}\endgroup%
\kern3pt%
\begingroup \smaller\smaller\smaller\begin{tabular}{@{}c@{}}%
\phantom{0}\\\phantom{0}\\\phantom{0}\\1
\end{tabular}\endgroup%
{$\left.\llap{\phantom{%
\begingroup \smaller\smaller\smaller\begin{tabular}{@{}c@{}}%
\phantom{0}\\\phantom{0}\\\phantom{0}\\\phantom{0}
\end{tabular}\endgroup%
}}\!\right]$}%
{$\left[\!\llap{\phantom{%
\begingroup \smaller\smaller\smaller\begin{tabular}{@{}c@{}}%
0\\0\\0\\0
\end{tabular}\endgroup%
}}\right.$}%
\begingroup \smaller\smaller\smaller\begin{tabular}{@{}c@{}}%
1\\-1\\1\\0
\end{tabular}\endgroup%
{$\left.\llap{\phantom{%
\begingroup \smaller\smaller\smaller\begin{tabular}{@{}c@{}}%
0\\0\\0\\0
\end{tabular}\endgroup%
}}\!\right]$}%
}%
\ifdim\wd\matricesbox>\halfwidth\myboxwidth=\hsize\else\myboxwidth=\halfwidth\fi
\vbox{%
\ifdim\myboxwidth=\hsize
\setbox\onelinebox=\hbox{%
\vbox{\hbox{%
$\Pi_{6,1}=B_3=\hbox{GN}_{37}$ spans $L_{3.2}$%
}\hbox{%
$|\slashtwo|\slashtwo|\slashtwo|\slashtwo|\slashtwo|\slashtwo\rtimes D_{12}$%
}%
}%
\hfill\copy\matricesbox
}%
\ifdim\wd\onelinebox>\myboxwidth
\hbox to \myboxwidth{%
$\Pi_{6,1}=B_3=\hbox{GN}_{37}$ spans $L_{3.2}$%
\hfil
$|\slashtwo|\slashtwo|\slashtwo|\slashtwo|\slashtwo|\slashtwo\rtimes D_{12}$%
}%
\box\matricesbox
\else
\hbox to \myboxwidth{%
\unhbox\onelinebox
}%
\fi
\else
\hbox to \myboxwidth{%
$\Pi_{6,1}=B_3=\hbox{GN}_{37}$ spans $L_{3.2}$%
\hfil}%
\hbox to \myboxwidth{%
$|\slashtwo|\slashtwo|\slashtwo|\slashtwo|\slashtwo|\slashtwo\rtimes D_{12}$%
\hfil}%
\box\matricesbox
\fi
}%
\hfill\discretionary{}{}{}%
\setbox\matricesbox=\hbox{%
{$\left[\!\llap{\phantom{%
\begingroup \smaller\smaller\smaller\begin{tabular}{@{}c@{}}%
\phantom{0}\\\phantom{0}\\\phantom{0}\\\phantom{0}
\end{tabular}\endgroup%
}}\right.$}%
\begingroup \smaller\smaller\smaller\begin{tabular}{@{}c@{}}%
-1\\\phantom{0}\\\phantom{0}\\\phantom{0}
\end{tabular}\endgroup%
\kern3pt%
\begingroup \smaller\smaller\smaller\begin{tabular}{@{}c@{}}%
\phantom{0}\\3\\\phantom{0}\\\phantom{0}
\end{tabular}\endgroup%
\kern3pt%
\begingroup \smaller\smaller\smaller\begin{tabular}{@{}c@{}}%
\phantom{0}\\\phantom{0}\\3\\\phantom{0}
\end{tabular}\endgroup%
\kern3pt%
\begingroup \smaller\smaller\smaller\begin{tabular}{@{}c@{}}%
\phantom{0}\\\phantom{0}\\\phantom{0}\\3
\end{tabular}\endgroup%
{$\left.\llap{\phantom{%
\begingroup \smaller\smaller\smaller\begin{tabular}{@{}c@{}}%
\phantom{0}\\\phantom{0}\\\phantom{0}\\\phantom{0}
\end{tabular}\endgroup%
}}\!\right]$}%
{$\left[\!\llap{\phantom{%
\begingroup \smaller\smaller\smaller\begin{tabular}{@{}c@{}}%
0\\0\\0\\0
\end{tabular}\endgroup%
}}\right.$}%
\begingroup \smaller\smaller\smaller\begin{tabular}{@{}c@{}}%
2\\-1\\0\\1
\end{tabular}\endgroup%
{$\left.\llap{\phantom{%
\begingroup \smaller\smaller\smaller\begin{tabular}{@{}c@{}}%
0\\0\\0\\0
\end{tabular}\endgroup%
}}\!\right]$}%
}%
\ifdim\wd\matricesbox>\halfwidth\myboxwidth=\hsize\else\myboxwidth=\halfwidth\fi
\vbox{%
\ifdim\myboxwidth=\hsize
\setbox\onelinebox=\hbox{%
\vbox{\hbox{%
$\Pi_{6,2}=B_4=\hbox{GN}_{43}$ spans $L_{155.1}$%
}\hbox{%
$|\slashthree|\slashthree|\slashthree|\slashthree|\slashthree|\slashthree\rtimes D_{12}$%
}%
}%
\hfill\copy\matricesbox
}%
\ifdim\wd\onelinebox>\myboxwidth
\hbox to \myboxwidth{%
$\Pi_{6,2}=B_4=\hbox{GN}_{43}$ spans $L_{155.1}$%
\hfil
$|\slashthree|\slashthree|\slashthree|\slashthree|\slashthree|\slashthree\rtimes D_{12}$%
}%
\box\matricesbox
\else
\hbox to \myboxwidth{%
\unhbox\onelinebox
}%
\fi
\else
\hbox to \myboxwidth{%
$\Pi_{6,2}=B_4=\hbox{GN}_{43}$ spans $L_{155.1}$%
\hfil}%
\hbox to \myboxwidth{%
$|\slashthree|\slashthree|\slashthree|\slashthree|\slashthree|\slashthree\rtimes D_{12}$%
\hfil}%
\box\matricesbox
\fi
}%
\hfill\discretionary{}{}{}%
\setbox\matricesbox=\hbox{%
{$\left[\!\llap{\phantom{%
\begingroup \smaller\smaller\smaller\begin{tabular}{@{}c@{}}%
\phantom{0}\\\phantom{0}\\\phantom{0}\\\phantom{0}
\end{tabular}\endgroup%
}}\right.$}%
\begingroup \smaller\smaller\smaller\begin{tabular}{@{}c@{}}%
-3\\\phantom{0}\\\phantom{0}\\\phantom{0}
\end{tabular}\endgroup%
\kern3pt%
\begingroup \smaller\smaller\smaller\begin{tabular}{@{}c@{}}%
\phantom{0}\\2\\\phantom{0}\\\phantom{0}
\end{tabular}\endgroup%
\kern3pt%
\begingroup \smaller\smaller\smaller\begin{tabular}{@{}c@{}}%
\phantom{0}\\\phantom{0}\\2\\\phantom{0}
\end{tabular}\endgroup%
\kern3pt%
\begingroup \smaller\smaller\smaller\begin{tabular}{@{}c@{}}%
\phantom{0}\\\phantom{0}\\\phantom{0}\\2
\end{tabular}\endgroup%
{$\left.\llap{\phantom{%
\begingroup \smaller\smaller\smaller\begin{tabular}{@{}c@{}}%
\phantom{0}\\\phantom{0}\\\phantom{0}\\\phantom{0}
\end{tabular}\endgroup%
}}\!\right]$}%
{$\left[\!\llap{\phantom{%
\begingroup \smaller\smaller\smaller\begin{tabular}{@{}c@{}}%
0\\0\\0\\0
\end{tabular}\endgroup%
}}\right.$}%
\begingroup \smaller\smaller\smaller\begin{tabular}{@{}c@{}}%
1\\-1\\0\\1
\end{tabular}\endgroup%
{$\left.\llap{\phantom{%
\begingroup \smaller\smaller\smaller\begin{tabular}{@{}c@{}}%
0\\0\\0\\0
\end{tabular}\endgroup%
}}\!\right]$}%
}%
\ifdim\wd\matricesbox>\halfwidth\myboxwidth=\hsize\else\myboxwidth=\halfwidth\fi
\vbox{%
\ifdim\myboxwidth=\hsize
\setbox\onelinebox=\hbox{%
\vbox{\hbox{%
$\Pi_{6,3}=A_{3,II}=\hbox{GN}_{46}$ spans $L_{7.7}$%
}\hbox{%
$|\slashinfty|\slashinfty|\slashinfty|\slashinfty|\slashinfty|\slashinfty\rtimes D_{12}$%
}%
}%
\hfill\copy\matricesbox
}%
\ifdim\wd\onelinebox>\myboxwidth
\hbox to \myboxwidth{%
$\Pi_{6,3}=A_{3,II}=\hbox{GN}_{46}$ spans $L_{7.7}$%
\hfil
$|\slashinfty|\slashinfty|\slashinfty|\slashinfty|\slashinfty|\slashinfty\rtimes D_{12}$%
}%
\box\matricesbox
\else
\hbox to \myboxwidth{%
\unhbox\onelinebox
}%
\fi
\else
\hbox to \myboxwidth{%
$\Pi_{6,3}=A_{3,II}=\hbox{GN}_{46}$ spans $L_{7.7}$%
\hfil}%
\hbox to \myboxwidth{%
$|\slashinfty|\slashinfty|\slashinfty|\slashinfty|\slashinfty|\slashinfty\rtimes D_{12}$%
\hfil}%
\box\matricesbox
\fi
}%
\hfill\discretionary{}{}{}%
\setbox\matricesbox=\hbox{%
{$\left[\!\llap{\phantom{%
\begingroup \smaller\smaller\smaller\begin{tabular}{@{}c@{}}%
\phantom{0}\\\phantom{0}\\\phantom{0}\\\phantom{0}
\end{tabular}\endgroup%
}}\right.$}%
\begingroup \smaller\smaller\smaller\begin{tabular}{@{}c@{}}%
-1/3\\\phantom{0}\\\phantom{0}\\\phantom{0}
\end{tabular}\endgroup%
\kern3pt%
\begingroup \smaller\smaller\smaller\begin{tabular}{@{}c@{}}%
\phantom{0}\\5/9\\\phantom{0}\\\phantom{0}
\end{tabular}\endgroup%
\kern3pt%
\begingroup \smaller\smaller\smaller\begin{tabular}{@{}c@{}}%
\phantom{0}\\\phantom{0}\\5/9\\\phantom{0}
\end{tabular}\endgroup%
\kern3pt%
\begingroup \smaller\smaller\smaller\begin{tabular}{@{}c@{}}%
\phantom{0}\\\phantom{0}\\\phantom{0}\\5/9
\end{tabular}\endgroup%
{$\left.\llap{\phantom{%
\begingroup \smaller\smaller\smaller\begin{tabular}{@{}c@{}}%
\phantom{0}\\\phantom{0}\\\phantom{0}\\\phantom{0}
\end{tabular}\endgroup%
}}\!\right]$}%
{$\left[\!\llap{\phantom{%
\begingroup \smaller\smaller\smaller\begin{tabular}{@{}c@{}}%
0\\0\\0\\0
\end{tabular}\endgroup%
}}\right.$}%
\begingroup \smaller\smaller\smaller\begin{tabular}{@{}c@{}}%
2\\2\\-1\\-1
\end{tabular}\endgroup%
\kern3pt%
\begingroup \smaller\smaller\smaller\begin{tabular}{@{}c@{}}%
5\\2\\2\\-4
\end{tabular}\endgroup%
{$\left.\llap{\phantom{%
\begingroup \smaller\smaller\smaller\begin{tabular}{@{}c@{}}%
0\\0\\0\\0
\end{tabular}\endgroup%
}}\!\right]$}%
}%
\ifdim\wd\matricesbox>\halfwidth\myboxwidth=\hsize\else\myboxwidth=\halfwidth\fi
\vbox{%
\ifdim\myboxwidth=\hsize
\setbox\onelinebox=\hbox{%
\vbox{\hbox{%
$\Pi_{6,4}$ spans $L_{6.3}$%
}\hbox{%
$|2|2|2|2|2|2\rtimes D_{6}$%
}%
}%
\hfill\copy\matricesbox
}%
\ifdim\wd\onelinebox>\myboxwidth
\hbox to \myboxwidth{%
$\Pi_{6,4}$ spans $L_{6.3}$%
\hfil
$|2|2|2|2|2|2\rtimes D_{6}$%
}%
\box\matricesbox
\else
\hbox to \myboxwidth{%
\unhbox\onelinebox
}%
\fi
\else
\hbox to \myboxwidth{%
$\Pi_{6,4}$ spans $L_{6.3}$%
\hfil}%
\hbox to \myboxwidth{%
$|2|2|2|2|2|2\rtimes D_{6}$%
\hfil}%
\box\matricesbox
\fi
}%
\hfill\discretionary{}{}{}%
\setbox\matricesbox=\hbox{%
{$\left[\!\llap{\phantom{%
\begingroup \smaller\smaller\smaller\begin{tabular}{@{}c@{}}%
\phantom{0}\\\phantom{0}\\\phantom{0}
\end{tabular}\endgroup%
}}\right.$}%
\begingroup \smaller\smaller\smaller\begin{tabular}{@{}c@{}}%
-1/8\\\phantom{0}\\\phantom{0}
\end{tabular}\endgroup%
\kern3pt%
\begingroup \smaller\smaller\smaller\begin{tabular}{@{}c@{}}%
\phantom{0}\\3\\\phantom{0}
\end{tabular}\endgroup%
\kern3pt%
\begingroup \smaller\smaller\smaller\begin{tabular}{@{}c@{}}%
\phantom{0}\\\phantom{0}\\15/2
\end{tabular}\endgroup%
{$\left.\llap{\phantom{%
\begingroup \smaller\smaller\smaller\begin{tabular}{@{}c@{}}%
\phantom{0}\\\phantom{0}\\\phantom{0}
\end{tabular}\endgroup%
}}\!\right]$}%
{$\left[\!\llap{\phantom{%
\begingroup \smaller\smaller\smaller\begin{tabular}{@{}c@{}}%
0\\0\\0
\end{tabular}\endgroup%
}}\right.$}%
\begingroup \smaller\smaller\smaller\begin{tabular}{@{}c@{}}%
16\\-4\\0
\end{tabular}\endgroup%
\kern3pt%
\begingroup \smaller\smaller\smaller\begin{tabular}{@{}c@{}}%
6\\-1\\-1
\end{tabular}\endgroup%
{$\left.\llap{\phantom{%
\begingroup \smaller\smaller\smaller\begin{tabular}{@{}c@{}}%
0\\0\\0
\end{tabular}\endgroup%
}}\!\right]$}%
}%
\ifdim\wd\matricesbox>\halfwidth\myboxwidth=\hsize\else\myboxwidth=\halfwidth\fi
\vbox{%
\ifdim\myboxwidth=\hsize
\setbox\onelinebox=\hbox{%
\vbox{\hbox{%
$\Pi_{6,5}$ spans $L_{19.5}$%
}\hbox{%
$2|2\slashtwo2|2\slashtwo\rtimes D_{4}$%
}%
}%
\hfill\copy\matricesbox
}%
\ifdim\wd\onelinebox>\myboxwidth
\hbox to \myboxwidth{%
$\Pi_{6,5}$ spans $L_{19.5}$%
\hfil
$2|2\slashtwo2|2\slashtwo\rtimes D_{4}$%
}%
\box\matricesbox
\else
\hbox to \myboxwidth{%
\unhbox\onelinebox
}%
\fi
\else
\hbox to \myboxwidth{%
$\Pi_{6,5}$ spans $L_{19.5}$%
\hfil}%
\hbox to \myboxwidth{%
$2|2\slashtwo2|2\slashtwo\rtimes D_{4}$%
\hfil}%
\box\matricesbox
\fi
}%
\hfill\discretionary{}{}{}%
\setbox\matricesbox=\hbox{%
{$\left[\!\llap{\phantom{%
\begingroup \smaller\smaller\smaller\begin{tabular}{@{}c@{}}%
\phantom{0}\\\phantom{0}\\\phantom{0}
\end{tabular}\endgroup%
}}\right.$}%
\begingroup \smaller\smaller\smaller\begin{tabular}{@{}c@{}}%
-1/2\\\phantom{0}\\\phantom{0}
\end{tabular}\endgroup%
\kern3pt%
\begingroup \smaller\smaller\smaller\begin{tabular}{@{}c@{}}%
\phantom{0}\\6\\\phantom{0}
\end{tabular}\endgroup%
\kern3pt%
\begingroup \smaller\smaller\smaller\begin{tabular}{@{}c@{}}%
\phantom{0}\\\phantom{0}\\3/2
\end{tabular}\endgroup%
{$\left.\llap{\phantom{%
\begingroup \smaller\smaller\smaller\begin{tabular}{@{}c@{}}%
\phantom{0}\\\phantom{0}\\\phantom{0}
\end{tabular}\endgroup%
}}\!\right]$}%
{$\left[\!\llap{\phantom{%
\begingroup \smaller\smaller\smaller\begin{tabular}{@{}c@{}}%
0\\0\\0
\end{tabular}\endgroup%
}}\right.$}%
\begingroup \smaller\smaller\smaller\begin{tabular}{@{}c@{}}%
3\\-1\\1
\end{tabular}\endgroup%
\kern3pt%
\begingroup \smaller\smaller\smaller\begin{tabular}{@{}c@{}}%
1\\0\\1
\end{tabular}\endgroup%
{$\left.\llap{\phantom{%
\begingroup \smaller\smaller\smaller\begin{tabular}{@{}c@{}}%
0\\0\\0
\end{tabular}\endgroup%
}}\!\right]$}%
}%
\ifdim\wd\matricesbox>\halfwidth\myboxwidth=\hsize\else\myboxwidth=\halfwidth\fi
\vbox{%
\ifdim\myboxwidth=\hsize
\setbox\onelinebox=\hbox{%
\vbox{\hbox{%
$\Pi_{6,6}$ spans $L_{123.6}$%
}\hbox{%
$\slashtwo2|2\slashtwo2|2\rtimes D_{4}$%
}%
}%
\hfill\copy\matricesbox
}%
\ifdim\wd\onelinebox>\myboxwidth
\hbox to \myboxwidth{%
$\Pi_{6,6}$ spans $L_{123.6}$%
\hfil
$\slashtwo2|2\slashtwo2|2\rtimes D_{4}$%
}%
\box\matricesbox
\else
\hbox to \myboxwidth{%
\unhbox\onelinebox
}%
\fi
\else
\hbox to \myboxwidth{%
$\Pi_{6,6}$ spans $L_{123.6}$%
\hfil}%
\hbox to \myboxwidth{%
$\slashtwo2|2\slashtwo2|2\rtimes D_{4}$%
\hfil}%
\box\matricesbox
\fi
}%
\hfill\discretionary{}{}{}%
\setbox\matricesbox=\hbox{%
{$\left[\!\llap{\phantom{%
\begingroup \smaller\smaller\smaller\begin{tabular}{@{}c@{}}%
\phantom{0}\\\phantom{0}\\\phantom{0}
\end{tabular}\endgroup%
}}\right.$}%
\begingroup \smaller\smaller\smaller\begin{tabular}{@{}c@{}}%
-1/8\\\phantom{0}\\\phantom{0}
\end{tabular}\endgroup%
\kern3pt%
\begingroup \smaller\smaller\smaller\begin{tabular}{@{}c@{}}%
\phantom{0}\\60\\\phantom{0}
\end{tabular}\endgroup%
\kern3pt%
\begingroup \smaller\smaller\smaller\begin{tabular}{@{}c@{}}%
\phantom{0}\\\phantom{0}\\5/2
\end{tabular}\endgroup%
{$\left.\llap{\phantom{%
\begingroup \smaller\smaller\smaller\begin{tabular}{@{}c@{}}%
\phantom{0}\\\phantom{0}\\\phantom{0}
\end{tabular}\endgroup%
}}\!\right]$}%
{$\left[\!\llap{\phantom{%
\begingroup \smaller\smaller\smaller\begin{tabular}{@{}c@{}}%
0\\0\\0
\end{tabular}\endgroup%
}}\right.$}%
\begingroup \smaller\smaller\smaller\begin{tabular}{@{}c@{}}%
20\\-1\\-2
\end{tabular}\endgroup%
\kern3pt%
\begingroup \smaller\smaller\smaller\begin{tabular}{@{}c@{}}%
2\\0\\-1
\end{tabular}\endgroup%
{$\left.\llap{\phantom{%
\begingroup \smaller\smaller\smaller\begin{tabular}{@{}c@{}}%
0\\0\\0
\end{tabular}\endgroup%
}}\!\right]$}%
}%
\ifdim\wd\matricesbox>\halfwidth\myboxwidth=\hsize\else\myboxwidth=\halfwidth\fi
\vbox{%
\ifdim\myboxwidth=\hsize
\setbox\onelinebox=\hbox{%
\vbox{\hbox{%
$\Pi_{6,7}$ spans $L_{19.8}$%
}\hbox{%
$\slashtwo2|2\slashtwo2|2\rtimes D_{4}$%
}%
}%
\hfill\copy\matricesbox
}%
\ifdim\wd\onelinebox>\myboxwidth
\hbox to \myboxwidth{%
$\Pi_{6,7}$ spans $L_{19.8}$%
\hfil
$\slashtwo2|2\slashtwo2|2\rtimes D_{4}$%
}%
\box\matricesbox
\else
\hbox to \myboxwidth{%
\unhbox\onelinebox
}%
\fi
\else
\hbox to \myboxwidth{%
$\Pi_{6,7}$ spans $L_{19.8}$%
\hfil}%
\hbox to \myboxwidth{%
$\slashtwo2|2\slashtwo2|2\rtimes D_{4}$%
\hfil}%
\box\matricesbox
\fi
}%
\hfill\discretionary{}{}{}%
\setbox\matricesbox=\hbox{%
{$\left[\!\llap{\phantom{%
\begingroup \smaller\smaller\smaller\begin{tabular}{@{}c@{}}%
\phantom{0}\\\phantom{0}\\\phantom{0}
\end{tabular}\endgroup%
}}\right.$}%
\begingroup \smaller\smaller\smaller\begin{tabular}{@{}c@{}}%
-1/2\\\phantom{0}\\\phantom{0}
\end{tabular}\endgroup%
\kern3pt%
\begingroup \smaller\smaller\smaller\begin{tabular}{@{}c@{}}%
\phantom{0}\\3/2\\\phantom{0}
\end{tabular}\endgroup%
\kern3pt%
\begingroup \smaller\smaller\smaller\begin{tabular}{@{}c@{}}%
\phantom{0}\\\phantom{0}\\5/2
\end{tabular}\endgroup%
{$\left.\llap{\phantom{%
\begingroup \smaller\smaller\smaller\begin{tabular}{@{}c@{}}%
\phantom{0}\\\phantom{0}\\\phantom{0}
\end{tabular}\endgroup%
}}\!\right]$}%
{$\left[\!\llap{\phantom{%
\begingroup \smaller\smaller\smaller\begin{tabular}{@{}c@{}}%
0\\0\\0
\end{tabular}\endgroup%
}}\right.$}%
\begingroup \smaller\smaller\smaller\begin{tabular}{@{}c@{}}%
6\\4\\0
\end{tabular}\endgroup%
\kern3pt%
\begingroup \smaller\smaller\smaller\begin{tabular}{@{}c@{}}%
2\\1\\1
\end{tabular}\endgroup%
{$\left.\llap{\phantom{%
\begingroup \smaller\smaller\smaller\begin{tabular}{@{}c@{}}%
0\\0\\0
\end{tabular}\endgroup%
}}\!\right]$}%
}%
\ifdim\wd\matricesbox>\halfwidth\myboxwidth=\hsize\else\myboxwidth=\halfwidth\fi
\vbox{%
\ifdim\myboxwidth=\hsize
\setbox\onelinebox=\hbox{%
\vbox{\hbox{%
$\Pi_{6,8}$ spans $L_{31.2}$%
}\hbox{%
$2|2\slashthree2|2\slashthree\rtimes D_{4}$ (shared)%
}%
}%
\hfill\copy\matricesbox
}%
\ifdim\wd\onelinebox>\myboxwidth
\hbox to \myboxwidth{%
$\Pi_{6,8}$ spans $L_{31.2}$%
\hfil
$2|2\slashthree2|2\slashthree\rtimes D_{4}$ (shared)%
}%
\box\matricesbox
\else
\hbox to \myboxwidth{%
\unhbox\onelinebox
}%
\fi
\else
\hbox to \myboxwidth{%
$\Pi_{6,8}$ spans $L_{31.2}$%
\hfil}%
\hbox to \myboxwidth{%
$2|2\slashthree2|2\slashthree\rtimes D_{4}$ (shared)%
\hfil}%
\box\matricesbox
\fi
}%
\hfill\discretionary{}{}{}%
\setbox\matricesbox=\hbox{%
{$\left[\!\llap{\phantom{%
\begingroup \smaller\smaller\smaller\begin{tabular}{@{}c@{}}%
\phantom{0}\\\phantom{0}\\\phantom{0}
\end{tabular}\endgroup%
}}\right.$}%
\begingroup \smaller\smaller\smaller\begin{tabular}{@{}c@{}}%
-1/8\\\phantom{0}\\\phantom{0}
\end{tabular}\endgroup%
\kern3pt%
\begingroup \smaller\smaller\smaller\begin{tabular}{@{}c@{}}%
\phantom{0}\\10\\\phantom{0}
\end{tabular}\endgroup%
\kern3pt%
\begingroup \smaller\smaller\smaller\begin{tabular}{@{}c@{}}%
\phantom{0}\\\phantom{0}\\25/2
\end{tabular}\endgroup%
{$\left.\llap{\phantom{%
\begingroup \smaller\smaller\smaller\begin{tabular}{@{}c@{}}%
\phantom{0}\\\phantom{0}\\\phantom{0}
\end{tabular}\endgroup%
}}\!\right]$}%
{$\left[\!\llap{\phantom{%
\begingroup \smaller\smaller\smaller\begin{tabular}{@{}c@{}}%
0\\0\\0
\end{tabular}\endgroup%
}}\right.$}%
\begingroup \smaller\smaller\smaller\begin{tabular}{@{}c@{}}%
32\\4\\0
\end{tabular}\endgroup%
\kern3pt%
\begingroup \smaller\smaller\smaller\begin{tabular}{@{}c@{}}%
10\\1\\1
\end{tabular}\endgroup%
{$\left.\llap{\phantom{%
\begingroup \smaller\smaller\smaller\begin{tabular}{@{}c@{}}%
0\\0\\0
\end{tabular}\endgroup%
}}\!\right]$}%
}%
\ifdim\wd\matricesbox>\halfwidth\myboxwidth=\hsize\else\myboxwidth=\halfwidth\fi
\vbox{%
\ifdim\myboxwidth=\hsize
\setbox\onelinebox=\hbox{%
\vbox{\hbox{%
$\Pi_{6,9}$ spans $L_{10.1}$%
}\hbox{%
$2|2\slashinfty2|2\slashinfty\rtimes D_{4}$%
}%
}%
\hfill\copy\matricesbox
}%
\ifdim\wd\onelinebox>\myboxwidth
\hbox to \myboxwidth{%
$\Pi_{6,9}$ spans $L_{10.1}$%
\hfil
$2|2\slashinfty2|2\slashinfty\rtimes D_{4}$%
}%
\box\matricesbox
\else
\hbox to \myboxwidth{%
\unhbox\onelinebox
}%
\fi
\else
\hbox to \myboxwidth{%
$\Pi_{6,9}$ spans $L_{10.1}$%
\hfil}%
\hbox to \myboxwidth{%
$2|2\slashinfty2|2\slashinfty\rtimes D_{4}$%
\hfil}%
\box\matricesbox
\fi
}%
\hfill\discretionary{}{}{}%
\setbox\matricesbox=\hbox{%
{$\left[\!\llap{\phantom{%
\begingroup \smaller\smaller\smaller\begin{tabular}{@{}c@{}}%
\phantom{0}\\\phantom{0}\\\phantom{0}
\end{tabular}\endgroup%
}}\right.$}%
\begingroup \smaller\smaller\smaller\begin{tabular}{@{}c@{}}%
-1/8\\\phantom{0}\\\phantom{0}
\end{tabular}\endgroup%
\kern3pt%
\begingroup \smaller\smaller\smaller\begin{tabular}{@{}c@{}}%
\phantom{0}\\15/2\\\phantom{0}
\end{tabular}\endgroup%
\kern3pt%
\begingroup \smaller\smaller\smaller\begin{tabular}{@{}c@{}}%
\phantom{0}\\\phantom{0}\\15
\end{tabular}\endgroup%
{$\left.\llap{\phantom{%
\begingroup \smaller\smaller\smaller\begin{tabular}{@{}c@{}}%
\phantom{0}\\\phantom{0}\\\phantom{0}
\end{tabular}\endgroup%
}}\!\right]$}%
{$\left[\!\llap{\phantom{%
\begingroup \smaller\smaller\smaller\begin{tabular}{@{}c@{}}%
0\\0\\0
\end{tabular}\endgroup%
}}\right.$}%
\begingroup \smaller\smaller\smaller\begin{tabular}{@{}c@{}}%
12\\-2\\0
\end{tabular}\endgroup%
\kern3pt%
\begingroup \smaller\smaller\smaller\begin{tabular}{@{}c@{}}%
10\\-1\\-1
\end{tabular}\endgroup%
{$\left.\llap{\phantom{%
\begingroup \smaller\smaller\smaller\begin{tabular}{@{}c@{}}%
0\\0\\0
\end{tabular}\endgroup%
}}\!\right]$}%
}%
\ifdim\wd\matricesbox>\halfwidth\myboxwidth=\hsize\else\myboxwidth=\halfwidth\fi
\vbox{%
\ifdim\myboxwidth=\hsize
\setbox\onelinebox=\hbox{%
\vbox{\hbox{%
$\Pi_{6,10}$ spans $L_{16.16}$%
}\hbox{%
$2|2\slashthree2|2\slashthree\rtimes D_{4}$%
}%
}%
\hfill\copy\matricesbox
}%
\ifdim\wd\onelinebox>\myboxwidth
\hbox to \myboxwidth{%
$\Pi_{6,10}$ spans $L_{16.16}$%
\hfil
$2|2\slashthree2|2\slashthree\rtimes D_{4}$%
}%
\box\matricesbox
\else
\hbox to \myboxwidth{%
\unhbox\onelinebox
}%
\fi
\else
\hbox to \myboxwidth{%
$\Pi_{6,10}$ spans $L_{16.16}$%
\hfil}%
\hbox to \myboxwidth{%
$2|2\slashthree2|2\slashthree\rtimes D_{4}$%
\hfil}%
\box\matricesbox
\fi
}%
\hfill\discretionary{}{}{}%
\setbox\matricesbox=\hbox{%
{$\left[\!\llap{\phantom{%
\begingroup \smaller\smaller\smaller\begin{tabular}{@{}c@{}}%
\phantom{0}\\\phantom{0}\\\phantom{0}
\end{tabular}\endgroup%
}}\right.$}%
\begingroup \smaller\smaller\smaller\begin{tabular}{@{}c@{}}%
-1/2\\\phantom{0}\\\phantom{0}
\end{tabular}\endgroup%
\kern3pt%
\begingroup \smaller\smaller\smaller\begin{tabular}{@{}c@{}}%
\phantom{0}\\3\\\phantom{0}
\end{tabular}\endgroup%
\kern3pt%
\begingroup \smaller\smaller\smaller\begin{tabular}{@{}c@{}}%
\phantom{0}\\\phantom{0}\\9/2
\end{tabular}\endgroup%
{$\left.\llap{\phantom{%
\begingroup \smaller\smaller\smaller\begin{tabular}{@{}c@{}}%
\phantom{0}\\\phantom{0}\\\phantom{0}
\end{tabular}\endgroup%
}}\!\right]$}%
{$\left[\!\llap{\phantom{%
\begingroup \smaller\smaller\smaller\begin{tabular}{@{}c@{}}%
0\\0\\0
\end{tabular}\endgroup%
}}\right.$}%
\begingroup \smaller\smaller\smaller\begin{tabular}{@{}c@{}}%
4\\-2\\0
\end{tabular}\endgroup%
\kern3pt%
\begingroup \smaller\smaller\smaller\begin{tabular}{@{}c@{}}%
3\\-1\\-1
\end{tabular}\endgroup%
{$\left.\llap{\phantom{%
\begingroup \smaller\smaller\smaller\begin{tabular}{@{}c@{}}%
0\\0\\0
\end{tabular}\endgroup%
}}\!\right]$}%
}%
\ifdim\wd\matricesbox>\halfwidth\myboxwidth=\hsize\else\myboxwidth=\halfwidth\fi
\vbox{%
\ifdim\myboxwidth=\hsize
\setbox\onelinebox=\hbox{%
\vbox{\hbox{%
$\Pi_{6,11}$ spans $L_{4.12}$%
}\hbox{%
$2|2\slashinfty2|2\slashinfty\rtimes D_{4}$ (shared)%
}%
}%
\hfill\copy\matricesbox
}%
\ifdim\wd\onelinebox>\myboxwidth
\hbox to \myboxwidth{%
$\Pi_{6,11}$ spans $L_{4.12}$%
\hfil
$2|2\slashinfty2|2\slashinfty\rtimes D_{4}$ (shared)%
}%
\box\matricesbox
\else
\hbox to \myboxwidth{%
\unhbox\onelinebox
}%
\fi
\else
\hbox to \myboxwidth{%
$\Pi_{6,11}$ spans $L_{4.12}$%
\hfil}%
\hbox to \myboxwidth{%
$2|2\slashinfty2|2\slashinfty\rtimes D_{4}$ (shared)%
\hfil}%
\box\matricesbox
\fi
}%
\hfill\discretionary{}{}{}%
\setbox\matricesbox=\hbox{%
{$\left[\!\llap{\phantom{%
\begingroup \smaller\smaller\smaller\begin{tabular}{@{}c@{}}%
\phantom{0}\\\phantom{0}\\\phantom{0}
\end{tabular}\endgroup%
}}\right.$}%
\begingroup \smaller\smaller\smaller\begin{tabular}{@{}c@{}}%
-1/4\\\phantom{0}\\\phantom{0}
\end{tabular}\endgroup%
\kern3pt%
\begingroup \smaller\smaller\smaller\begin{tabular}{@{}c@{}}%
\phantom{0}\\1/2\\\phantom{0}
\end{tabular}\endgroup%
\kern3pt%
\begingroup \smaller\smaller\smaller\begin{tabular}{@{}c@{}}%
\phantom{0}\\\phantom{0}\\21/2
\end{tabular}\endgroup%
{$\left.\llap{\phantom{%
\begingroup \smaller\smaller\smaller\begin{tabular}{@{}c@{}}%
\phantom{0}\\\phantom{0}\\\phantom{0}
\end{tabular}\endgroup%
}}\!\right]$}%
{$\left[\!\llap{\phantom{%
\begingroup \smaller\smaller\smaller\begin{tabular}{@{}c@{}}%
0\\0\\0
\end{tabular}\endgroup%
}}\right.$}%
\begingroup \smaller\smaller\smaller\begin{tabular}{@{}c@{}}%
4\\-4\\0
\end{tabular}\endgroup%
\kern3pt%
\begingroup \smaller\smaller\smaller\begin{tabular}{@{}c@{}}%
6\\-3\\-1
\end{tabular}\endgroup%
{$\left.\llap{\phantom{%
\begingroup \smaller\smaller\smaller\begin{tabular}{@{}c@{}}%
0\\0\\0
\end{tabular}\endgroup%
}}\!\right]$}%
}%
\ifdim\wd\matricesbox>\halfwidth\myboxwidth=\hsize\else\myboxwidth=\halfwidth\fi
\vbox{%
\ifdim\myboxwidth=\hsize
\setbox\onelinebox=\hbox{%
\vbox{\hbox{%
$\Pi_{6,12}$ spans $L_{22.1}$%
}\hbox{%
$2|2\slashthree2|2\slashthree\rtimes D_{4}$%
}%
}%
\hfill\copy\matricesbox
}%
\ifdim\wd\onelinebox>\myboxwidth
\hbox to \myboxwidth{%
$\Pi_{6,12}$ spans $L_{22.1}$%
\hfil
$2|2\slashthree2|2\slashthree\rtimes D_{4}$%
}%
\box\matricesbox
\else
\hbox to \myboxwidth{%
\unhbox\onelinebox
}%
\fi
\else
\hbox to \myboxwidth{%
$\Pi_{6,12}$ spans $L_{22.1}$%
\hfil}%
\hbox to \myboxwidth{%
$2|2\slashthree2|2\slashthree\rtimes D_{4}$%
\hfil}%
\box\matricesbox
\fi
}%
\hfill\discretionary{}{}{}%
\setbox\matricesbox=\hbox{%
{$\left[\!\llap{\phantom{%
\begingroup \smaller\smaller\smaller\begin{tabular}{@{}c@{}}%
\phantom{0}\\\phantom{0}\\\phantom{0}
\end{tabular}\endgroup%
}}\right.$}%
\begingroup \smaller\smaller\smaller\begin{tabular}{@{}c@{}}%
-5/8\\\phantom{0}\\\phantom{0}
\end{tabular}\endgroup%
\kern3pt%
\begingroup \smaller\smaller\smaller\begin{tabular}{@{}c@{}}%
\phantom{0}\\24\\\phantom{0}
\end{tabular}\endgroup%
\kern3pt%
\begingroup \smaller\smaller\smaller\begin{tabular}{@{}c@{}}%
\phantom{0}\\\phantom{0}\\9/2
\end{tabular}\endgroup%
{$\left.\llap{\phantom{%
\begingroup \smaller\smaller\smaller\begin{tabular}{@{}c@{}}%
\phantom{0}\\\phantom{0}\\\phantom{0}
\end{tabular}\endgroup%
}}\!\right]$}%
{$\left[\!\llap{\phantom{%
\begingroup \smaller\smaller\smaller\begin{tabular}{@{}c@{}}%
0\\0\\0
\end{tabular}\endgroup%
}}\right.$}%
\begingroup \smaller\smaller\smaller\begin{tabular}{@{}c@{}}%
6\\1\\1
\end{tabular}\endgroup%
\kern3pt%
\begingroup \smaller\smaller\smaller\begin{tabular}{@{}c@{}}%
2\\0\\1
\end{tabular}\endgroup%
{$\left.\llap{\phantom{%
\begingroup \smaller\smaller\smaller\begin{tabular}{@{}c@{}}%
0\\0\\0
\end{tabular}\endgroup%
}}\!\right]$}%
}%
\ifdim\wd\matricesbox>\halfwidth\myboxwidth=\hsize\else\myboxwidth=\halfwidth\fi
\vbox{%
\ifdim\myboxwidth=\hsize
\setbox\onelinebox=\hbox{%
\vbox{\hbox{%
$\Pi_{6,13}$ spans $L_{251.1}$%
}\hbox{%
$\slashthree6|6\slashthree6|6\rtimes D_{4}$%
}%
}%
\hfill\copy\matricesbox
}%
\ifdim\wd\onelinebox>\myboxwidth
\hbox to \myboxwidth{%
$\Pi_{6,13}$ spans $L_{251.1}$%
\hfil
$\slashthree6|6\slashthree6|6\rtimes D_{4}$%
}%
\box\matricesbox
\else
\hbox to \myboxwidth{%
\unhbox\onelinebox
}%
\fi
\else
\hbox to \myboxwidth{%
$\Pi_{6,13}$ spans $L_{251.1}$%
\hfil}%
\hbox to \myboxwidth{%
$\slashthree6|6\slashthree6|6\rtimes D_{4}$%
\hfil}%
\box\matricesbox
\fi
}%
\hfill\discretionary{}{}{}%
\setbox\matricesbox=\hbox{%
{$\left[\!\llap{\phantom{%
\begingroup \smaller\smaller\smaller\begin{tabular}{@{}c@{}}%
\phantom{0}\\\phantom{0}\\\phantom{0}
\end{tabular}\endgroup%
}}\right.$}%
\begingroup \smaller\smaller\smaller\begin{tabular}{@{}c@{}}%
-7/8\\\phantom{0}\\\phantom{0}
\end{tabular}\endgroup%
\kern3pt%
\begingroup \smaller\smaller\smaller\begin{tabular}{@{}c@{}}%
\phantom{0}\\4\\\phantom{0}
\end{tabular}\endgroup%
\kern3pt%
\begingroup \smaller\smaller\smaller\begin{tabular}{@{}c@{}}%
\phantom{0}\\\phantom{0}\\3/2
\end{tabular}\endgroup%
{$\left.\llap{\phantom{%
\begingroup \smaller\smaller\smaller\begin{tabular}{@{}c@{}}%
\phantom{0}\\\phantom{0}\\\phantom{0}
\end{tabular}\endgroup%
}}\!\right]$}%
{$\left[\!\llap{\phantom{%
\begingroup \smaller\smaller\smaller\begin{tabular}{@{}c@{}}%
0\\0\\0
\end{tabular}\endgroup%
}}\right.$}%
\begingroup \smaller\smaller\smaller\begin{tabular}{@{}c@{}}%
2\\-1\\-1
\end{tabular}\endgroup%
\kern3pt%
\begingroup \smaller\smaller\smaller\begin{tabular}{@{}c@{}}%
6\\0\\-5
\end{tabular}\endgroup%
{$\left.\llap{\phantom{%
\begingroup \smaller\smaller\smaller\begin{tabular}{@{}c@{}}%
0\\0\\0
\end{tabular}\endgroup%
}}\!\right]$}%
}%
\ifdim\wd\matricesbox>\halfwidth\myboxwidth=\hsize\else\myboxwidth=\halfwidth\fi
\vbox{%
\ifdim\myboxwidth=\hsize
\setbox\onelinebox=\hbox{%
\vbox{\hbox{%
$\Pi_{6,14}$ spans $L_{22.1}$%
}\hbox{%
$\slashthree6|6\slashthree6|6\rtimes D_{4}$%
}%
}%
\hfill\copy\matricesbox
}%
\ifdim\wd\onelinebox>\myboxwidth
\hbox to \myboxwidth{%
$\Pi_{6,14}$ spans $L_{22.1}$%
\hfil
$\slashthree6|6\slashthree6|6\rtimes D_{4}$%
}%
\box\matricesbox
\else
\hbox to \myboxwidth{%
\unhbox\onelinebox
}%
\fi
\else
\hbox to \myboxwidth{%
$\Pi_{6,14}$ spans $L_{22.1}$%
\hfil}%
\hbox to \myboxwidth{%
$\slashthree6|6\slashthree6|6\rtimes D_{4}$%
\hfil}%
\box\matricesbox
\fi
}%
\hfill\discretionary{}{}{}%
\setbox\matricesbox=\hbox{%
{$\left[\!\llap{\phantom{%
\begingroup \smaller\smaller\smaller\begin{tabular}{@{}c@{}}%
\phantom{0}\\\phantom{0}\\\phantom{0}
\end{tabular}\endgroup%
}}\right.$}%
\begingroup \smaller\smaller\smaller\begin{tabular}{@{}c@{}}%
-3/2\\\phantom{0}\\\phantom{0}
\end{tabular}\endgroup%
\kern3pt%
\begingroup \smaller\smaller\smaller\begin{tabular}{@{}c@{}}%
\phantom{0}\\1/2\\\phantom{0}
\end{tabular}\endgroup%
\kern3pt%
\begingroup \smaller\smaller\smaller\begin{tabular}{@{}c@{}}%
\phantom{0}\\\phantom{0}\\2
\end{tabular}\endgroup%
{$\left.\llap{\phantom{%
\begingroup \smaller\smaller\smaller\begin{tabular}{@{}c@{}}%
\phantom{0}\\\phantom{0}\\\phantom{0}
\end{tabular}\endgroup%
}}\!\right]$}%
{$\left[\!\llap{\phantom{%
\begingroup \smaller\smaller\smaller\begin{tabular}{@{}c@{}}%
0\\0\\0
\end{tabular}\endgroup%
}}\right.$}%
\begingroup \smaller\smaller\smaller\begin{tabular}{@{}c@{}}%
2\\4\\0
\end{tabular}\endgroup%
\kern3pt%
\begingroup \smaller\smaller\smaller\begin{tabular}{@{}c@{}}%
1\\1\\-1
\end{tabular}\endgroup%
{$\left.\llap{\phantom{%
\begingroup \smaller\smaller\smaller\begin{tabular}{@{}c@{}}%
0\\0\\0
\end{tabular}\endgroup%
}}\!\right]$}%
}%
\ifdim\wd\matricesbox>\halfwidth\myboxwidth=\hsize\else\myboxwidth=\halfwidth\fi
\vbox{%
\ifdim\myboxwidth=\hsize
\setbox\onelinebox=\hbox{%
\vbox{\hbox{%
$\Pi_{6,15}$ spans $L_{123.1}$%
}\hbox{%
$4|4\slashtwo4|4\slashtwo\rtimes D_{4}$%
}%
}%
\hfill\copy\matricesbox
}%
\ifdim\wd\onelinebox>\myboxwidth
\hbox to \myboxwidth{%
$\Pi_{6,15}$ spans $L_{123.1}$%
\hfil
$4|4\slashtwo4|4\slashtwo\rtimes D_{4}$%
}%
\box\matricesbox
\else
\hbox to \myboxwidth{%
\unhbox\onelinebox
}%
\fi
\else
\hbox to \myboxwidth{%
$\Pi_{6,15}$ spans $L_{123.1}$%
\hfil}%
\hbox to \myboxwidth{%
$4|4\slashtwo4|4\slashtwo\rtimes D_{4}$%
\hfil}%
\box\matricesbox
\fi
}%
\hfill\discretionary{}{}{}%
\setbox\matricesbox=\hbox{%
{$\left[\!\llap{\phantom{%
\begingroup \smaller\smaller\smaller\begin{tabular}{@{}c@{}}%
\phantom{0}\\\phantom{0}\\\phantom{0}
\end{tabular}\endgroup%
}}\right.$}%
\begingroup \smaller\smaller\smaller\begin{tabular}{@{}c@{}}%
-2\\\phantom{0}\\\phantom{0}
\end{tabular}\endgroup%
\kern3pt%
\begingroup \smaller\smaller\smaller\begin{tabular}{@{}c@{}}%
\phantom{0}\\1\\\phantom{0}
\end{tabular}\endgroup%
\kern3pt%
\begingroup \smaller\smaller\smaller\begin{tabular}{@{}c@{}}%
\phantom{0}\\\phantom{0}\\2
\end{tabular}\endgroup%
{$\left.\llap{\phantom{%
\begingroup \smaller\smaller\smaller\begin{tabular}{@{}c@{}}%
\phantom{0}\\\phantom{0}\\\phantom{0}
\end{tabular}\endgroup%
}}\!\right]$}%
{$\left[\!\llap{\phantom{%
\begingroup \smaller\smaller\smaller\begin{tabular}{@{}c@{}}%
0\\0\\0
\end{tabular}\endgroup%
}}\right.$}%
\begingroup \smaller\smaller\smaller\begin{tabular}{@{}c@{}}%
4\\6\\0
\end{tabular}\endgroup%
\kern3pt%
\begingroup \smaller\smaller\smaller\begin{tabular}{@{}c@{}}%
1\\1\\-1
\end{tabular}\endgroup%
{$\left.\llap{\phantom{%
\begingroup \smaller\smaller\smaller\begin{tabular}{@{}c@{}}%
0\\0\\0
\end{tabular}\endgroup%
}}\!\right]$}%
}%
\ifdim\wd\matricesbox>\halfwidth\myboxwidth=\hsize\else\myboxwidth=\halfwidth\fi
\vbox{%
\ifdim\myboxwidth=\hsize
\setbox\onelinebox=\hbox{%
\vbox{\hbox{%
$\Pi_{6,16}=\hbox{GN}_{44}$ spans $L_{141.14}$%
}\hbox{%
$\infty|\infty\slashinfty\infty|\infty\slashinfty\rtimes D_{4}$%
}%
}%
\hfill\copy\matricesbox
}%
\ifdim\wd\onelinebox>\myboxwidth
\hbox to \myboxwidth{%
$\Pi_{6,16}=\hbox{GN}_{44}$ spans $L_{141.14}$%
\hfil
$\infty|\infty\slashinfty\infty|\infty\slashinfty\rtimes D_{4}$%
}%
\box\matricesbox
\else
\hbox to \myboxwidth{%
\unhbox\onelinebox
}%
\fi
\else
\hbox to \myboxwidth{%
$\Pi_{6,16}=\hbox{GN}_{44}$ spans $L_{141.14}$%
\hfil}%
\hbox to \myboxwidth{%
$\infty|\infty\slashinfty\infty|\infty\slashinfty\rtimes D_{4}$%
\hfil}%
\box\matricesbox
\fi
}%
\hfill\discretionary{}{}{}%
\setbox\matricesbox=\hbox{%
{$\left[\!\llap{\phantom{%
\begingroup \smaller\smaller\smaller\begin{tabular}{@{}c@{}}%
\phantom{0}\\\phantom{0}\\\phantom{0}
\end{tabular}\endgroup%
}}\right.$}%
\begingroup \smaller\smaller\smaller\begin{tabular}{@{}c@{}}%
-5/8\\\phantom{0}\\\phantom{0}
\end{tabular}\endgroup%
\kern3pt%
\begingroup \smaller\smaller\smaller\begin{tabular}{@{}c@{}}%
\phantom{0}\\1/2\\\phantom{0}
\end{tabular}\endgroup%
\kern3pt%
\begingroup \smaller\smaller\smaller\begin{tabular}{@{}c@{}}%
\phantom{0}\\\phantom{0}\\40
\end{tabular}\endgroup%
{$\left.\llap{\phantom{%
\begingroup \smaller\smaller\smaller\begin{tabular}{@{}c@{}}%
\phantom{0}\\\phantom{0}\\\phantom{0}
\end{tabular}\endgroup%
}}\!\right]$}%
{$\left[\!\llap{\phantom{%
\begingroup \smaller\smaller\smaller\begin{tabular}{@{}c@{}}%
0\\0\\0
\end{tabular}\endgroup%
}}\right.$}%
\begingroup \smaller\smaller\smaller\begin{tabular}{@{}c@{}}%
2\\3\\0
\end{tabular}\endgroup%
\kern3pt%
\begingroup \smaller\smaller\smaller\begin{tabular}{@{}c@{}}%
8\\4\\-1
\end{tabular}\endgroup%
{$\left.\llap{\phantom{%
\begingroup \smaller\smaller\smaller\begin{tabular}{@{}c@{}}%
0\\0\\0
\end{tabular}\endgroup%
}}\!\right]$}%
}%
\ifdim\wd\matricesbox>\halfwidth\myboxwidth=\hsize\else\myboxwidth=\halfwidth\fi
\vbox{%
\ifdim\myboxwidth=\hsize
\setbox\onelinebox=\hbox{%
\vbox{\hbox{%
$\Pi_{6,17}=\hbox{GN}_{41}$ spans $L_{10.8}$%
}\hbox{%
$|\infty\slashinfty\infty|\infty\slashinfty\infty\rtimes D_{4}$%
}%
}%
\hfill\copy\matricesbox
}%
\ifdim\wd\onelinebox>\myboxwidth
\hbox to \myboxwidth{%
$\Pi_{6,17}=\hbox{GN}_{41}$ spans $L_{10.8}$%
\hfil
$|\infty\slashinfty\infty|\infty\slashinfty\infty\rtimes D_{4}$%
}%
\box\matricesbox
\else
\hbox to \myboxwidth{%
\unhbox\onelinebox
}%
\fi
\else
\hbox to \myboxwidth{%
$\Pi_{6,17}=\hbox{GN}_{41}$ spans $L_{10.8}$%
\hfil}%
\hbox to \myboxwidth{%
$|\infty\slashinfty\infty|\infty\slashinfty\infty\rtimes D_{4}$%
\hfil}%
\box\matricesbox
\fi
}%
\hfill\discretionary{}{}{}%
\setbox\matricesbox=\hbox{%
{$\left[\!\llap{\phantom{%
\begingroup \smaller\smaller\smaller\begin{tabular}{@{}c@{}}%
\phantom{0}\\\phantom{0}\\\phantom{0}
\end{tabular}\endgroup%
}}\right.$}%
\begingroup \smaller\smaller\smaller\begin{tabular}{@{}c@{}}%
-5/8\\\phantom{0}\\\phantom{0}
\end{tabular}\endgroup%
\kern3pt%
\begingroup \smaller\smaller\smaller\begin{tabular}{@{}c@{}}%
\phantom{0}\\1/2\\\phantom{0}
\end{tabular}\endgroup%
\kern3pt%
\begingroup \smaller\smaller\smaller\begin{tabular}{@{}c@{}}%
\phantom{0}\\\phantom{0}\\12
\end{tabular}\endgroup%
{$\left.\llap{\phantom{%
\begingroup \smaller\smaller\smaller\begin{tabular}{@{}c@{}}%
\phantom{0}\\\phantom{0}\\\phantom{0}
\end{tabular}\endgroup%
}}\!\right]$}%
{$\left[\!\llap{\phantom{%
\begingroup \smaller\smaller\smaller\begin{tabular}{@{}c@{}}%
0\\0\\0
\end{tabular}\endgroup%
}}\right.$}%
\begingroup \smaller\smaller\smaller\begin{tabular}{@{}c@{}}%
2\\-3\\0
\end{tabular}\endgroup%
\kern3pt%
\begingroup \smaller\smaller\smaller\begin{tabular}{@{}c@{}}%
4\\-2\\-1
\end{tabular}\endgroup%
{$\left.\llap{\phantom{%
\begingroup \smaller\smaller\smaller\begin{tabular}{@{}c@{}}%
0\\0\\0
\end{tabular}\endgroup%
}}\!\right]$}%
}%
\ifdim\wd\matricesbox>\halfwidth\myboxwidth=\hsize\else\myboxwidth=\halfwidth\fi
\vbox{%
\ifdim\myboxwidth=\hsize
\setbox\onelinebox=\hbox{%
\vbox{\hbox{%
$\Pi_{6,18}$ spans $L_{19.1}$%
}\hbox{%
$4|4\slashtwo4|4\slashtwo\rtimes D_{4}$%
}%
}%
\hfill\copy\matricesbox
}%
\ifdim\wd\onelinebox>\myboxwidth
\hbox to \myboxwidth{%
$\Pi_{6,18}$ spans $L_{19.1}$%
\hfil
$4|4\slashtwo4|4\slashtwo\rtimes D_{4}$%
}%
\box\matricesbox
\else
\hbox to \myboxwidth{%
\unhbox\onelinebox
}%
\fi
\else
\hbox to \myboxwidth{%
$\Pi_{6,18}$ spans $L_{19.1}$%
\hfil}%
\hbox to \myboxwidth{%
$4|4\slashtwo4|4\slashtwo\rtimes D_{4}$%
\hfil}%
\box\matricesbox
\fi
}%
\hfill\discretionary{}{}{}%
\setbox\matricesbox=\hbox{%
{$\left[\!\llap{\phantom{%
\begingroup \smaller\smaller\smaller\begin{tabular}{@{}c@{}}%
\phantom{0}\\\phantom{0}\\\phantom{0}
\end{tabular}\endgroup%
}}\right.$}%
\begingroup \smaller\smaller\smaller\begin{tabular}{@{}c@{}}%
-1/8\\\phantom{0}\\\phantom{0}
\end{tabular}\endgroup%
\kern3pt%
\begingroup \smaller\smaller\smaller\begin{tabular}{@{}c@{}}%
\phantom{0}\\28\\\phantom{0}
\end{tabular}\endgroup%
\kern3pt%
\begingroup \smaller\smaller\smaller\begin{tabular}{@{}c@{}}%
\phantom{0}\\\phantom{0}\\21/2
\end{tabular}\endgroup%
{$\left.\llap{\phantom{%
\begingroup \smaller\smaller\smaller\begin{tabular}{@{}c@{}}%
\phantom{0}\\\phantom{0}\\\phantom{0}
\end{tabular}\endgroup%
}}\!\right]$}%
{$\left[\!\llap{\phantom{%
\begingroup \smaller\smaller\smaller\begin{tabular}{@{}c@{}}%
0\\0\\0
\end{tabular}\endgroup%
}}\right.$}%
\begingroup \smaller\smaller\smaller\begin{tabular}{@{}c@{}}%
14\\1\\1
\end{tabular}\endgroup%
\kern3pt%
\begingroup \smaller\smaller\smaller\begin{tabular}{@{}c@{}}%
6\\0\\1
\end{tabular}\endgroup%
{$\left.\llap{\phantom{%
\begingroup \smaller\smaller\smaller\begin{tabular}{@{}c@{}}%
0\\0\\0
\end{tabular}\endgroup%
}}\!\right]$}%
}%
\ifdim\wd\matricesbox>\halfwidth\myboxwidth=\hsize\else\myboxwidth=\halfwidth\fi
\vbox{%
\ifdim\myboxwidth=\hsize
\setbox\onelinebox=\hbox{%
\vbox{\hbox{%
$\Pi_{6,19}$ spans $L_{22.8}$%
}\hbox{%
$\slashthree2|2\slashthree2|2\rtimes D_{4}$%
}%
}%
\hfill\copy\matricesbox
}%
\ifdim\wd\onelinebox>\myboxwidth
\hbox to \myboxwidth{%
$\Pi_{6,19}$ spans $L_{22.8}$%
\hfil
$\slashthree2|2\slashthree2|2\rtimes D_{4}$%
}%
\box\matricesbox
\else
\hbox to \myboxwidth{%
\unhbox\onelinebox
}%
\fi
\else
\hbox to \myboxwidth{%
$\Pi_{6,19}$ spans $L_{22.8}$%
\hfil}%
\hbox to \myboxwidth{%
$\slashthree2|2\slashthree2|2\rtimes D_{4}$%
\hfil}%
\box\matricesbox
\fi
}%
\hfill\discretionary{}{}{}%
\setbox\matricesbox=\hbox{%
{$\left[\!\llap{\phantom{%
\begingroup \smaller\smaller\smaller\begin{tabular}{@{}c@{}}%
\phantom{0}\\\phantom{0}\\\phantom{0}
\end{tabular}\endgroup%
}}\right.$}%
\begingroup \smaller\smaller\smaller\begin{tabular}{@{}c@{}}%
-1/8\\\phantom{0}\\\phantom{0}
\end{tabular}\endgroup%
\kern3pt%
\begingroup \smaller\smaller\smaller\begin{tabular}{@{}c@{}}%
\phantom{0}\\45\\\phantom{0}
\end{tabular}\endgroup%
\kern3pt%
\begingroup \smaller\smaller\smaller\begin{tabular}{@{}c@{}}%
\phantom{0}\\\phantom{0}\\3/2
\end{tabular}\endgroup%
{$\left.\llap{\phantom{%
\begingroup \smaller\smaller\smaller\begin{tabular}{@{}c@{}}%
\phantom{0}\\\phantom{0}\\\phantom{0}
\end{tabular}\endgroup%
}}\!\right]$}%
{$\left[\!\llap{\phantom{%
\begingroup \smaller\smaller\smaller\begin{tabular}{@{}c@{}}%
0\\0\\0
\end{tabular}\endgroup%
}}\right.$}%
\begingroup \smaller\smaller\smaller\begin{tabular}{@{}c@{}}%
18\\-1\\3
\end{tabular}\endgroup%
\kern3pt%
\begingroup \smaller\smaller\smaller\begin{tabular}{@{}c@{}}%
4\\0\\2
\end{tabular}\endgroup%
{$\left.\llap{\phantom{%
\begingroup \smaller\smaller\smaller\begin{tabular}{@{}c@{}}%
0\\0\\0
\end{tabular}\endgroup%
}}\!\right]$}%
}%
\ifdim\wd\matricesbox>\halfwidth\myboxwidth=\hsize\else\myboxwidth=\halfwidth\fi
\vbox{%
\ifdim\myboxwidth=\hsize
\setbox\onelinebox=\hbox{%
\vbox{\hbox{%
$\Pi_{6,20}$ spans $L_{16.12}$%
}\hbox{%
$\slashthree2|2\slashthree2|2\rtimes D_{4}$%
}%
}%
\hfill\copy\matricesbox
}%
\ifdim\wd\onelinebox>\myboxwidth
\hbox to \myboxwidth{%
$\Pi_{6,20}$ spans $L_{16.12}$%
\hfil
$\slashthree2|2\slashthree2|2\rtimes D_{4}$%
}%
\box\matricesbox
\else
\hbox to \myboxwidth{%
\unhbox\onelinebox
}%
\fi
\else
\hbox to \myboxwidth{%
$\Pi_{6,20}$ spans $L_{16.12}$%
\hfil}%
\hbox to \myboxwidth{%
$\slashthree2|2\slashthree2|2\rtimes D_{4}$%
\hfil}%
\box\matricesbox
\fi
}%
\hfill\discretionary{}{}{}%
\setbox\matricesbox=\hbox{%
{$\left[\!\llap{\phantom{%
\begingroup \smaller\smaller\smaller\begin{tabular}{@{}c@{}}%
\phantom{0}\\\phantom{0}\\\phantom{0}
\end{tabular}\endgroup%
}}\right.$}%
\begingroup \smaller\smaller\smaller\begin{tabular}{@{}c@{}}%
-1/8\\\phantom{0}\\\phantom{0}
\end{tabular}\endgroup%
\kern3pt%
\begingroup \smaller\smaller\smaller\begin{tabular}{@{}c@{}}%
\phantom{0}\\120\\\phantom{0}
\end{tabular}\endgroup%
\kern3pt%
\begingroup \smaller\smaller\smaller\begin{tabular}{@{}c@{}}%
\phantom{0}\\\phantom{0}\\5/2
\end{tabular}\endgroup%
{$\left.\llap{\phantom{%
\begingroup \smaller\smaller\smaller\begin{tabular}{@{}c@{}}%
\phantom{0}\\\phantom{0}\\\phantom{0}
\end{tabular}\endgroup%
}}\!\right]$}%
{$\left[\!\llap{\phantom{%
\begingroup \smaller\smaller\smaller\begin{tabular}{@{}c@{}}%
0\\0\\0
\end{tabular}\endgroup%
}}\right.$}%
\begingroup \smaller\smaller\smaller\begin{tabular}{@{}c@{}}%
30\\-1\\-3
\end{tabular}\endgroup%
\kern3pt%
\begingroup \smaller\smaller\smaller\begin{tabular}{@{}c@{}}%
2\\0\\-1
\end{tabular}\endgroup%
{$\left.\llap{\phantom{%
\begingroup \smaller\smaller\smaller\begin{tabular}{@{}c@{}}%
0\\0\\0
\end{tabular}\endgroup%
}}\!\right]$}%
}%
\ifdim\wd\matricesbox>\halfwidth\myboxwidth=\hsize\else\myboxwidth=\halfwidth\fi
\vbox{%
\ifdim\myboxwidth=\hsize
\setbox\onelinebox=\hbox{%
\vbox{\hbox{%
$\Pi_{6,21}$ spans $L_{31.10}$%
}\hbox{%
$\slashthree2|2\slashthree2|2\rtimes D_{4}$%
}%
}%
\hfill\copy\matricesbox
}%
\ifdim\wd\onelinebox>\myboxwidth
\hbox to \myboxwidth{%
$\Pi_{6,21}$ spans $L_{31.10}$%
\hfil
$\slashthree2|2\slashthree2|2\rtimes D_{4}$%
}%
\box\matricesbox
\else
\hbox to \myboxwidth{%
\unhbox\onelinebox
}%
\fi
\else
\hbox to \myboxwidth{%
$\Pi_{6,21}$ spans $L_{31.10}$%
\hfil}%
\hbox to \myboxwidth{%
$\slashthree2|2\slashthree2|2\rtimes D_{4}$%
\hfil}%
\box\matricesbox
\fi
}%
\hfill\discretionary{}{}{}%
\setbox\matricesbox=\hbox{%
{$\left[\!\llap{\phantom{%
\begingroup \smaller\smaller\smaller\begin{tabular}{@{}c@{}}%
\phantom{0}\\\phantom{0}\\\phantom{0}
\end{tabular}\endgroup%
}}\right.$}%
\begingroup \smaller\smaller\smaller\begin{tabular}{@{}c@{}}%
-1\\\phantom{0}\\\phantom{0}
\end{tabular}\endgroup%
\kern3pt%
\begingroup \smaller\smaller\smaller\begin{tabular}{@{}c@{}}%
\phantom{0}\\2\\\phantom{0}
\end{tabular}\endgroup%
\kern3pt%
\begingroup \smaller\smaller\smaller\begin{tabular}{@{}c@{}}%
\phantom{0}\\\phantom{0}\\18
\end{tabular}\endgroup%
{$\left.\llap{\phantom{%
\begingroup \smaller\smaller\smaller\begin{tabular}{@{}c@{}}%
\phantom{0}\\\phantom{0}\\\phantom{0}
\end{tabular}\endgroup%
}}\!\right]$}%
{$\left[\!\llap{\phantom{%
\begingroup \smaller\smaller\smaller\begin{tabular}{@{}c@{}}%
0\\0\\0
\end{tabular}\endgroup%
}}\right.$}%
\begingroup \smaller\smaller\smaller\begin{tabular}{@{}c@{}}%
1\\-1\\0
\end{tabular}\endgroup%
\kern3pt%
\begingroup \smaller\smaller\smaller\begin{tabular}{@{}c@{}}%
4\\-1\\-1
\end{tabular}\endgroup%
{$\left.\llap{\phantom{%
\begingroup \smaller\smaller\smaller\begin{tabular}{@{}c@{}}%
0\\0\\0
\end{tabular}\endgroup%
}}\!\right]$}%
}%
\ifdim\wd\matricesbox>\halfwidth\myboxwidth=\hsize\else\myboxwidth=\halfwidth\fi
\vbox{%
\ifdim\myboxwidth=\hsize
\setbox\onelinebox=\hbox{%
\vbox{\hbox{%
$\Pi_{6,22}=\hbox{GN}_{39}$ spans $L_{142.10}$%
}\hbox{%
$|\infty\slashtwo\infty|\infty\slashtwo\infty\rtimes D_{4}$%
}%
}%
\hfill\copy\matricesbox
}%
\ifdim\wd\onelinebox>\myboxwidth
\hbox to \myboxwidth{%
$\Pi_{6,22}=\hbox{GN}_{39}$ spans $L_{142.10}$%
\hfil
$|\infty\slashtwo\infty|\infty\slashtwo\infty\rtimes D_{4}$%
}%
\box\matricesbox
\else
\hbox to \myboxwidth{%
\unhbox\onelinebox
}%
\fi
\else
\hbox to \myboxwidth{%
$\Pi_{6,22}=\hbox{GN}_{39}$ spans $L_{142.10}$%
\hfil}%
\hbox to \myboxwidth{%
$|\infty\slashtwo\infty|\infty\slashtwo\infty\rtimes D_{4}$%
\hfil}%
\box\matricesbox
\fi
}%
\hfill\discretionary{}{}{}%
\setbox\matricesbox=\hbox{%
{$\left[\!\llap{\phantom{%
\begingroup \smaller\smaller\smaller\begin{tabular}{@{}c@{}}%
\phantom{0}\\\phantom{0}\\\phantom{0}
\end{tabular}\endgroup%
}}\right.$}%
\begingroup \smaller\smaller\smaller\begin{tabular}{@{}c@{}}%
-1\\\phantom{0}\\\phantom{0}
\end{tabular}\endgroup%
\kern3pt%
\begingroup \smaller\smaller\smaller\begin{tabular}{@{}c@{}}%
\phantom{0}\\4\\\phantom{0}
\end{tabular}\endgroup%
\kern3pt%
\begingroup \smaller\smaller\smaller\begin{tabular}{@{}c@{}}%
\phantom{0}\\\phantom{0}\\2
\end{tabular}\endgroup%
{$\left.\llap{\phantom{%
\begingroup \smaller\smaller\smaller\begin{tabular}{@{}c@{}}%
\phantom{0}\\\phantom{0}\\\phantom{0}
\end{tabular}\endgroup%
}}\!\right]$}%
{$\left[\!\llap{\phantom{%
\begingroup \smaller\smaller\smaller\begin{tabular}{@{}c@{}}%
0\\0\\0
\end{tabular}\endgroup%
}}\right.$}%
\begingroup \smaller\smaller\smaller\begin{tabular}{@{}c@{}}%
2\\1\\1
\end{tabular}\endgroup%
\kern3pt%
\begingroup \smaller\smaller\smaller\begin{tabular}{@{}c@{}}%
1\\0\\1
\end{tabular}\endgroup%
{$\left.\llap{\phantom{%
\begingroup \smaller\smaller\smaller\begin{tabular}{@{}c@{}}%
0\\0\\0
\end{tabular}\endgroup%
}}\!\right]$}%
}%
\ifdim\wd\matricesbox>\halfwidth\myboxwidth=\hsize\else\myboxwidth=\halfwidth\fi
\vbox{%
\ifdim\myboxwidth=\hsize
\setbox\onelinebox=\hbox{%
\vbox{\hbox{%
$\Pi_{6,23}$ spans $L_{141.3}$%
}\hbox{%
$\slashinfty2|2\slashinfty2|2\rtimes D_{4}$%
}%
}%
\hfill\copy\matricesbox
}%
\ifdim\wd\onelinebox>\myboxwidth
\hbox to \myboxwidth{%
$\Pi_{6,23}$ spans $L_{141.3}$%
\hfil
$\slashinfty2|2\slashinfty2|2\rtimes D_{4}$%
}%
\box\matricesbox
\else
\hbox to \myboxwidth{%
\unhbox\onelinebox
}%
\fi
\else
\hbox to \myboxwidth{%
$\Pi_{6,23}$ spans $L_{141.3}$%
\hfil}%
\hbox to \myboxwidth{%
$\slashinfty2|2\slashinfty2|2\rtimes D_{4}$%
\hfil}%
\box\matricesbox
\fi
}%
\hfill\discretionary{}{}{}%
\setbox\matricesbox=\hbox{%
{$\left[\!\llap{\phantom{%
\begingroup \smaller\smaller\smaller\begin{tabular}{@{}c@{}}%
\phantom{0}\\\phantom{0}\\\phantom{0}
\end{tabular}\endgroup%
}}\right.$}%
\begingroup \smaller\smaller\smaller\begin{tabular}{@{}c@{}}%
-1/2\\\phantom{0}\\\phantom{0}
\end{tabular}\endgroup%
\kern3pt%
\begingroup \smaller\smaller\smaller\begin{tabular}{@{}c@{}}%
\phantom{0}\\18\\\phantom{0}
\end{tabular}\endgroup%
\kern3pt%
\begingroup \smaller\smaller\smaller\begin{tabular}{@{}c@{}}%
\phantom{0}\\\phantom{0}\\3/2
\end{tabular}\endgroup%
{$\left.\llap{\phantom{%
\begingroup \smaller\smaller\smaller\begin{tabular}{@{}c@{}}%
\phantom{0}\\\phantom{0}\\\phantom{0}
\end{tabular}\endgroup%
}}\!\right]$}%
{$\left[\!\llap{\phantom{%
\begingroup \smaller\smaller\smaller\begin{tabular}{@{}c@{}}%
0\\0\\0
\end{tabular}\endgroup%
}}\right.$}%
\begingroup \smaller\smaller\smaller\begin{tabular}{@{}c@{}}%
6\\-1\\2
\end{tabular}\endgroup%
\kern3pt%
\begingroup \smaller\smaller\smaller\begin{tabular}{@{}c@{}}%
1\\0\\1
\end{tabular}\endgroup%
{$\left.\llap{\phantom{%
\begingroup \smaller\smaller\smaller\begin{tabular}{@{}c@{}}%
0\\0\\0
\end{tabular}\endgroup%
}}\!\right]$}%
}%
\ifdim\wd\matricesbox>\halfwidth\myboxwidth=\hsize\else\myboxwidth=\halfwidth\fi
\vbox{%
\ifdim\myboxwidth=\hsize
\setbox\onelinebox=\hbox{%
\vbox{\hbox{%
$\Pi_{6,24}$ spans $L_{4.22}$%
}\hbox{%
$\slashinfty2|2\slashinfty2|2\rtimes D_{4}$%
}%
}%
\hfill\copy\matricesbox
}%
\ifdim\wd\onelinebox>\myboxwidth
\hbox to \myboxwidth{%
$\Pi_{6,24}$ spans $L_{4.22}$%
\hfil
$\slashinfty2|2\slashinfty2|2\rtimes D_{4}$%
}%
\box\matricesbox
\else
\hbox to \myboxwidth{%
\unhbox\onelinebox
}%
\fi
\else
\hbox to \myboxwidth{%
$\Pi_{6,24}$ spans $L_{4.22}$%
\hfil}%
\hbox to \myboxwidth{%
$\slashinfty2|2\slashinfty2|2\rtimes D_{4}$%
\hfil}%
\box\matricesbox
\fi
}%
\hfill\discretionary{}{}{}%
\setbox\matricesbox=\hbox{%
{$\left[\!\llap{\phantom{%
\begingroup \smaller\smaller\smaller\begin{tabular}{@{}c@{}}%
\phantom{0}\\\phantom{0}\\\phantom{0}
\end{tabular}\endgroup%
}}\right.$}%
\begingroup \smaller\smaller\smaller\begin{tabular}{@{}c@{}}%
-1/8\\\phantom{0}\\\phantom{0}
\end{tabular}\endgroup%
\kern3pt%
\begingroup \smaller\smaller\smaller\begin{tabular}{@{}c@{}}%
\phantom{0}\\200\\\phantom{0}
\end{tabular}\endgroup%
\kern3pt%
\begingroup \smaller\smaller\smaller\begin{tabular}{@{}c@{}}%
\phantom{0}\\\phantom{0}\\5/2
\end{tabular}\endgroup%
{$\left.\llap{\phantom{%
\begingroup \smaller\smaller\smaller\begin{tabular}{@{}c@{}}%
\phantom{0}\\\phantom{0}\\\phantom{0}
\end{tabular}\endgroup%
}}\!\right]$}%
{$\left[\!\llap{\phantom{%
\begingroup \smaller\smaller\smaller\begin{tabular}{@{}c@{}}%
0\\0\\0
\end{tabular}\endgroup%
}}\right.$}%
\begingroup \smaller\smaller\smaller\begin{tabular}{@{}c@{}}%
40\\-1\\-4
\end{tabular}\endgroup%
\kern3pt%
\begingroup \smaller\smaller\smaller\begin{tabular}{@{}c@{}}%
2\\0\\-1
\end{tabular}\endgroup%
{$\left.\llap{\phantom{%
\begingroup \smaller\smaller\smaller\begin{tabular}{@{}c@{}}%
0\\0\\0
\end{tabular}\endgroup%
}}\!\right]$}%
}%
\ifdim\wd\matricesbox>\halfwidth\myboxwidth=\hsize\else\myboxwidth=\halfwidth\fi
\vbox{%
\ifdim\myboxwidth=\hsize
\setbox\onelinebox=\hbox{%
\vbox{\hbox{%
$\Pi_{6,25}$ spans $L_{10.13}$%
}\hbox{%
$\slashinfty2|2\slashinfty2|2\rtimes D_{4}$%
}%
}%
\hfill\copy\matricesbox
}%
\ifdim\wd\onelinebox>\myboxwidth
\hbox to \myboxwidth{%
$\Pi_{6,25}$ spans $L_{10.13}$%
\hfil
$\slashinfty2|2\slashinfty2|2\rtimes D_{4}$%
}%
\box\matricesbox
\else
\hbox to \myboxwidth{%
\unhbox\onelinebox
}%
\fi
\else
\hbox to \myboxwidth{%
$\Pi_{6,25}$ spans $L_{10.13}$%
\hfil}%
\hbox to \myboxwidth{%
$\slashinfty2|2\slashinfty2|2\rtimes D_{4}$%
\hfil}%
\box\matricesbox
\fi
}%
\hfill\discretionary{}{}{}%
\setbox\matricesbox=\hbox{%
{$\left[\!\llap{\phantom{%
\begingroup \smaller\smaller\smaller\begin{tabular}{@{}c@{}}%
\phantom{0}\\\phantom{0}\\\phantom{0}
\end{tabular}\endgroup%
}}\right.$}%
\begingroup \smaller\smaller\smaller\begin{tabular}{@{}c@{}}%
-1/5\\\phantom{0}\\\phantom{0}
\end{tabular}\endgroup%
\kern3pt%
\begingroup \smaller\smaller\smaller\begin{tabular}{@{}c@{}}%
\phantom{0}\\3/10\\\phantom{0}
\end{tabular}\endgroup%
\kern3pt%
\begingroup \smaller\smaller\smaller\begin{tabular}{@{}c@{}}%
\phantom{0}\\\phantom{0}\\21/2
\end{tabular}\endgroup%
{$\left.\llap{\phantom{%
\begingroup \smaller\smaller\smaller\begin{tabular}{@{}c@{}}%
\phantom{0}\\\phantom{0}\\\phantom{0}
\end{tabular}\endgroup%
}}\!\right]$}%
{$\left[\!\llap{\phantom{%
\begingroup \smaller\smaller\smaller\begin{tabular}{@{}c@{}}%
0\\0\\0
\end{tabular}\endgroup%
}}\right.$}%
\begingroup \smaller\smaller\smaller\begin{tabular}{@{}c@{}}%
1\\-2\\0
\end{tabular}\endgroup%
\kern3pt%
\begingroup \smaller\smaller\smaller\begin{tabular}{@{}c@{}}%
21\\-7\\-3
\end{tabular}\endgroup%
\kern3pt%
\begingroup \smaller\smaller\smaller\begin{tabular}{@{}c@{}}%
6\\3\\-1
\end{tabular}\endgroup%
\kern3pt%
\begingroup \smaller\smaller\smaller\begin{tabular}{@{}c@{}}%
3\\4\\0
\end{tabular}\endgroup%
{$\left.\llap{\phantom{%
\begingroup \smaller\smaller\smaller\begin{tabular}{@{}c@{}}%
0\\0\\0
\end{tabular}\endgroup%
}}\!\right]$}%
}%
\ifdim\wd\matricesbox>\halfwidth\myboxwidth=\hsize\else\myboxwidth=\halfwidth\fi
\vbox{%
\ifdim\myboxwidth=\hsize
\setbox\onelinebox=\hbox{%
\vbox{\hbox{%
$\Pi_{6,26}$ spans $L_{25.5}$%
}\hbox{%
$|222|222\rtimes D_{2}$%
}%
}%
\hfill\copy\matricesbox
}%
\ifdim\wd\onelinebox>\myboxwidth
\hbox to \myboxwidth{%
$\Pi_{6,26}$ spans $L_{25.5}$%
\hfil
$|222|222\rtimes D_{2}$%
}%
\box\matricesbox
\else
\hbox to \myboxwidth{%
\unhbox\onelinebox
}%
\fi
\else
\hbox to \myboxwidth{%
$\Pi_{6,26}$ spans $L_{25.5}$%
\hfil}%
\hbox to \myboxwidth{%
$|222|222\rtimes D_{2}$%
\hfil}%
\box\matricesbox
\fi
}%
\hfill\discretionary{}{}{}%
\setbox\matricesbox=\hbox{%
{$\left[\!\llap{\phantom{%
\begingroup \smaller\smaller\smaller\begin{tabular}{@{}c@{}}%
\phantom{0}\\\phantom{0}\\\phantom{0}
\end{tabular}\endgroup%
}}\right.$}%
\begingroup \smaller\smaller\smaller\begin{tabular}{@{}c@{}}%
-1/4\\\phantom{0}\\\phantom{0}
\end{tabular}\endgroup%
\kern3pt%
\begingroup \smaller\smaller\smaller\begin{tabular}{@{}c@{}}%
\phantom{0}\\3/4\\\phantom{0}
\end{tabular}\endgroup%
\kern3pt%
\begingroup \smaller\smaller\smaller\begin{tabular}{@{}c@{}}%
\phantom{0}\\\phantom{0}\\9/2
\end{tabular}\endgroup%
{$\left.\llap{\phantom{%
\begingroup \smaller\smaller\smaller\begin{tabular}{@{}c@{}}%
\phantom{0}\\\phantom{0}\\\phantom{0}
\end{tabular}\endgroup%
}}\!\right]$}%
{$\left[\!\llap{\phantom{%
\begingroup \smaller\smaller\smaller\begin{tabular}{@{}c@{}}%
0\\0\\0
\end{tabular}\endgroup%
}}\right.$}%
\begingroup \smaller\smaller\smaller\begin{tabular}{@{}c@{}}%
2\\2\\0
\end{tabular}\endgroup%
\kern3pt%
\begingroup \smaller\smaller\smaller\begin{tabular}{@{}c@{}}%
18\\6\\-4
\end{tabular}\endgroup%
\kern3pt%
\begingroup \smaller\smaller\smaller\begin{tabular}{@{}c@{}}%
3\\-1\\-1
\end{tabular}\endgroup%
\kern3pt%
\begingroup \smaller\smaller\smaller\begin{tabular}{@{}c@{}}%
2\\-2\\0
\end{tabular}\endgroup%
{$\left.\llap{\phantom{%
\begingroup \smaller\smaller\smaller\begin{tabular}{@{}c@{}}%
0\\0\\0
\end{tabular}\endgroup%
}}\!\right]$}%
}%
\ifdim\wd\matricesbox>\halfwidth\myboxwidth=\hsize\else\myboxwidth=\halfwidth\fi
\vbox{%
\ifdim\myboxwidth=\hsize
\setbox\onelinebox=\hbox{%
\vbox{\hbox{%
$\Pi_{6,27}$ spans $L_{172.4}$%
}\hbox{%
$|222|222\rtimes D_{2}$%
}%
}%
\hfill\copy\matricesbox
}%
\ifdim\wd\onelinebox>\myboxwidth
\hbox to \myboxwidth{%
$\Pi_{6,27}$ spans $L_{172.4}$%
\hfil
$|222|222\rtimes D_{2}$%
}%
\box\matricesbox
\else
\hbox to \myboxwidth{%
\unhbox\onelinebox
}%
\fi
\else
\hbox to \myboxwidth{%
$\Pi_{6,27}$ spans $L_{172.4}$%
\hfil}%
\hbox to \myboxwidth{%
$|222|222\rtimes D_{2}$%
\hfil}%
\box\matricesbox
\fi
}%
\hfill\discretionary{}{}{}%
\setbox\matricesbox=\hbox{%
{$\left[\!\llap{\phantom{%
\begingroup \smaller\smaller\smaller\begin{tabular}{@{}c@{}}%
\phantom{0}\\\phantom{0}\\\phantom{0}
\end{tabular}\endgroup%
}}\right.$}%
\begingroup \smaller\smaller\smaller\begin{tabular}{@{}c@{}}%
-1/4\\\phantom{0}\\\phantom{0}
\end{tabular}\endgroup%
\kern3pt%
\begingroup \smaller\smaller\smaller\begin{tabular}{@{}c@{}}%
\phantom{0}\\1/2\\\phantom{0}
\end{tabular}\endgroup%
\kern3pt%
\begingroup \smaller\smaller\smaller\begin{tabular}{@{}c@{}}%
\phantom{0}\\\phantom{0}\\5/2
\end{tabular}\endgroup%
{$\left.\llap{\phantom{%
\begingroup \smaller\smaller\smaller\begin{tabular}{@{}c@{}}%
\phantom{0}\\\phantom{0}\\\phantom{0}
\end{tabular}\endgroup%
}}\!\right]$}%
{$\left[\!\llap{\phantom{%
\begingroup \smaller\smaller\smaller\begin{tabular}{@{}c@{}}%
0\\0\\0
\end{tabular}\endgroup%
}}\right.$}%
\begingroup \smaller\smaller\smaller\begin{tabular}{@{}c@{}}%
4\\4\\0
\end{tabular}\endgroup%
\kern3pt%
\begingroup \smaller\smaller\smaller\begin{tabular}{@{}c@{}}%
10\\5\\3
\end{tabular}\endgroup%
\kern3pt%
\begingroup \smaller\smaller\smaller\begin{tabular}{@{}c@{}}%
2\\-1\\1
\end{tabular}\endgroup%
\kern3pt%
\begingroup \smaller\smaller\smaller\begin{tabular}{@{}c@{}}%
4\\-4\\0
\end{tabular}\endgroup%
{$\left.\llap{\phantom{%
\begingroup \smaller\smaller\smaller\begin{tabular}{@{}c@{}}%
0\\0\\0
\end{tabular}\endgroup%
}}\!\right]$}%
}%
\ifdim\wd\matricesbox>\halfwidth\myboxwidth=\hsize\else\myboxwidth=\halfwidth\fi
\vbox{%
\ifdim\myboxwidth=\hsize
\setbox\onelinebox=\hbox{%
\vbox{\hbox{%
$\Pi_{6,28}$ spans $L_{6.1}$%
}\hbox{%
$22|222|2\rtimes D_{2}$%
}%
}%
\hfill\copy\matricesbox
}%
\ifdim\wd\onelinebox>\myboxwidth
\hbox to \myboxwidth{%
$\Pi_{6,28}$ spans $L_{6.1}$%
\hfil
$22|222|2\rtimes D_{2}$%
}%
\box\matricesbox
\else
\hbox to \myboxwidth{%
\unhbox\onelinebox
}%
\fi
\else
\hbox to \myboxwidth{%
$\Pi_{6,28}$ spans $L_{6.1}$%
\hfil}%
\hbox to \myboxwidth{%
$22|222|2\rtimes D_{2}$%
\hfil}%
\box\matricesbox
\fi
}%
\hfill\discretionary{}{}{}%
\setbox\matricesbox=\hbox{%
{$\left[\!\llap{\phantom{%
\begingroup \smaller\smaller\smaller\begin{tabular}{@{}c@{}}%
\phantom{0}\\\phantom{0}\\\phantom{0}
\end{tabular}\endgroup%
}}\right.$}%
\begingroup \smaller\smaller\smaller\begin{tabular}{@{}c@{}}%
-1/14\\\phantom{0}\\\phantom{0}
\end{tabular}\endgroup%
\kern3pt%
\begingroup \smaller\smaller\smaller\begin{tabular}{@{}c@{}}%
\phantom{0}\\60/7\\\phantom{0}
\end{tabular}\endgroup%
\kern3pt%
\begingroup \smaller\smaller\smaller\begin{tabular}{@{}c@{}}%
\phantom{0}\\\phantom{0}\\5/2
\end{tabular}\endgroup%
{$\left.\llap{\phantom{%
\begingroup \smaller\smaller\smaller\begin{tabular}{@{}c@{}}%
\phantom{0}\\\phantom{0}\\\phantom{0}
\end{tabular}\endgroup%
}}\!\right]$}%
{$\left[\!\llap{\phantom{%
\begingroup \smaller\smaller\smaller\begin{tabular}{@{}c@{}}%
0\\0\\0
\end{tabular}\endgroup%
}}\right.$}%
\begingroup \smaller\smaller\smaller\begin{tabular}{@{}c@{}}%
6\\1\\0
\end{tabular}\endgroup%
\kern3pt%
\begingroup \smaller\smaller\smaller\begin{tabular}{@{}c@{}}%
20\\1\\4
\end{tabular}\endgroup%
\kern3pt%
\begingroup \smaller\smaller\smaller\begin{tabular}{@{}c@{}}%
15\\-1\\3
\end{tabular}\endgroup%
\kern3pt%
\begingroup \smaller\smaller\smaller\begin{tabular}{@{}c@{}}%
16\\-2\\0
\end{tabular}\endgroup%
{$\left.\llap{\phantom{%
\begingroup \smaller\smaller\smaller\begin{tabular}{@{}c@{}}%
0\\0\\0
\end{tabular}\endgroup%
}}\!\right]$}%
}%
\ifdim\wd\matricesbox>\halfwidth\myboxwidth=\hsize\else\myboxwidth=\halfwidth\fi
\vbox{%
\ifdim\myboxwidth=\hsize
\setbox\onelinebox=\hbox{%
\vbox{\hbox{%
$\Pi_{6,29}$ spans $L_{17.20}$%
}\hbox{%
$|222|222\rtimes D_{2}$%
}%
}%
\hfill\copy\matricesbox
}%
\ifdim\wd\onelinebox>\myboxwidth
\hbox to \myboxwidth{%
$\Pi_{6,29}$ spans $L_{17.20}$%
\hfil
$|222|222\rtimes D_{2}$%
}%
\box\matricesbox
\else
\hbox to \myboxwidth{%
\unhbox\onelinebox
}%
\fi
\else
\hbox to \myboxwidth{%
$\Pi_{6,29}$ spans $L_{17.20}$%
\hfil}%
\hbox to \myboxwidth{%
$|222|222\rtimes D_{2}$%
\hfil}%
\box\matricesbox
\fi
}%
\hfill\discretionary{}{}{}%
\setbox\matricesbox=\hbox{%
{$\left[\!\llap{\phantom{%
\begingroup \smaller\smaller\smaller\begin{tabular}{@{}c@{}}%
\phantom{0}\\\phantom{0}\\\phantom{0}
\end{tabular}\endgroup%
}}\right.$}%
\begingroup \smaller\smaller\smaller\begin{tabular}{@{}c@{}}%
-1/5\\\phantom{0}\\\phantom{0}
\end{tabular}\endgroup%
\kern3pt%
\begingroup \smaller\smaller\smaller\begin{tabular}{@{}c@{}}%
\phantom{0}\\6/5\\\phantom{0}
\end{tabular}\endgroup%
\kern3pt%
\begingroup \smaller\smaller\smaller\begin{tabular}{@{}c@{}}%
\phantom{0}\\\phantom{0}\\12
\end{tabular}\endgroup%
{$\left.\llap{\phantom{%
\begingroup \smaller\smaller\smaller\begin{tabular}{@{}c@{}}%
\phantom{0}\\\phantom{0}\\\phantom{0}
\end{tabular}\endgroup%
}}\!\right]$}%
{$\left[\!\llap{\phantom{%
\begingroup \smaller\smaller\smaller\begin{tabular}{@{}c@{}}%
0\\0\\0
\end{tabular}\endgroup%
}}\right.$}%
\begingroup \smaller\smaller\smaller\begin{tabular}{@{}c@{}}%
3\\-2\\0
\end{tabular}\endgroup%
\kern3pt%
\begingroup \smaller\smaller\smaller\begin{tabular}{@{}c@{}}%
16\\-4\\2
\end{tabular}\endgroup%
\kern3pt%
\begingroup \smaller\smaller\smaller\begin{tabular}{@{}c@{}}%
6\\1\\1
\end{tabular}\endgroup%
\kern3pt%
\begingroup \smaller\smaller\smaller\begin{tabular}{@{}c@{}}%
1\\1\\0
\end{tabular}\endgroup%
{$\left.\llap{\phantom{%
\begingroup \smaller\smaller\smaller\begin{tabular}{@{}c@{}}%
0\\0\\0
\end{tabular}\endgroup%
}}\!\right]$}%
}%
\ifdim\wd\matricesbox>\halfwidth\myboxwidth=\hsize\else\myboxwidth=\halfwidth\fi
\vbox{%
\ifdim\myboxwidth=\hsize
\setbox\onelinebox=\hbox{%
\vbox{\hbox{%
$\Pi_{6,30}$ spans $L_{151.10}$%
}\hbox{%
$22|222|2\rtimes D_{2}$ (shared)%
}%
}%
\hfill\copy\matricesbox
}%
\ifdim\wd\onelinebox>\myboxwidth
\hbox to \myboxwidth{%
$\Pi_{6,30}$ spans $L_{151.10}$%
\hfil
$22|222|2\rtimes D_{2}$ (shared)%
}%
\box\matricesbox
\else
\hbox to \myboxwidth{%
\unhbox\onelinebox
}%
\fi
\else
\hbox to \myboxwidth{%
$\Pi_{6,30}$ spans $L_{151.10}$%
\hfil}%
\hbox to \myboxwidth{%
$22|222|2\rtimes D_{2}$ (shared)%
\hfil}%
\box\matricesbox
\fi
}%
\hfill\discretionary{}{}{}%
\setbox\matricesbox=\hbox{%
{$\left[\!\llap{\phantom{%
\begingroup \smaller\smaller\smaller\begin{tabular}{@{}c@{}}%
\phantom{0}\\\phantom{0}\\\phantom{0}
\end{tabular}\endgroup%
}}\right.$}%
\begingroup \smaller\smaller\smaller\begin{tabular}{@{}c@{}}%
-1/8\\\phantom{0}\\\phantom{0}
\end{tabular}\endgroup%
\kern3pt%
\begingroup \smaller\smaller\smaller\begin{tabular}{@{}c@{}}%
\phantom{0}\\5/2\\\phantom{0}
\end{tabular}\endgroup%
\kern3pt%
\begingroup \smaller\smaller\smaller\begin{tabular}{@{}c@{}}%
\phantom{0}\\\phantom{0}\\20
\end{tabular}\endgroup%
{$\left.\llap{\phantom{%
\begingroup \smaller\smaller\smaller\begin{tabular}{@{}c@{}}%
\phantom{0}\\\phantom{0}\\\phantom{0}
\end{tabular}\endgroup%
}}\!\right]$}%
{$\left[\!\llap{\phantom{%
\begingroup \smaller\smaller\smaller\begin{tabular}{@{}c@{}}%
0\\0\\0
\end{tabular}\endgroup%
}}\right.$}%
\begingroup \smaller\smaller\smaller\begin{tabular}{@{}c@{}}%
2\\-1\\0
\end{tabular}\endgroup%
\kern3pt%
\begingroup \smaller\smaller\smaller\begin{tabular}{@{}c@{}}%
80\\-8\\6
\end{tabular}\endgroup%
\kern3pt%
\begingroup \smaller\smaller\smaller\begin{tabular}{@{}c@{}}%
10\\1\\1
\end{tabular}\endgroup%
\kern3pt%
\begingroup \smaller\smaller\smaller\begin{tabular}{@{}c@{}}%
2\\1\\0
\end{tabular}\endgroup%
{$\left.\llap{\phantom{%
\begingroup \smaller\smaller\smaller\begin{tabular}{@{}c@{}}%
0\\0\\0
\end{tabular}\endgroup%
}}\!\right]$}%
}%
\ifdim\wd\matricesbox>\halfwidth\myboxwidth=\hsize\else\myboxwidth=\halfwidth\fi
\vbox{%
\ifdim\myboxwidth=\hsize
\setbox\onelinebox=\hbox{%
\vbox{\hbox{%
$\Pi_{6,31}$ spans $L_{6.5}$%
}\hbox{%
$22|222|2\rtimes D_{2}$%
}%
}%
\hfill\copy\matricesbox
}%
\ifdim\wd\onelinebox>\myboxwidth
\hbox to \myboxwidth{%
$\Pi_{6,31}$ spans $L_{6.5}$%
\hfil
$22|222|2\rtimes D_{2}$%
}%
\box\matricesbox
\else
\hbox to \myboxwidth{%
\unhbox\onelinebox
}%
\fi
\else
\hbox to \myboxwidth{%
$\Pi_{6,31}$ spans $L_{6.5}$%
\hfil}%
\hbox to \myboxwidth{%
$22|222|2\rtimes D_{2}$%
\hfil}%
\box\matricesbox
\fi
}%
\hfill\discretionary{}{}{}%
\setbox\matricesbox=\hbox{%
{$\left[\!\llap{\phantom{%
\begingroup \smaller\smaller\smaller\begin{tabular}{@{}c@{}}%
\phantom{0}\\\phantom{0}\\\phantom{0}
\end{tabular}\endgroup%
}}\right.$}%
\begingroup \smaller\smaller\smaller\begin{tabular}{@{}c@{}}%
-1/8\\\phantom{0}\\\phantom{0}
\end{tabular}\endgroup%
\kern3pt%
\begingroup \smaller\smaller\smaller\begin{tabular}{@{}c@{}}%
\phantom{0}\\5/2\\\phantom{0}
\end{tabular}\endgroup%
\kern3pt%
\begingroup \smaller\smaller\smaller\begin{tabular}{@{}c@{}}%
\phantom{0}\\\phantom{0}\\20
\end{tabular}\endgroup%
{$\left.\llap{\phantom{%
\begingroup \smaller\smaller\smaller\begin{tabular}{@{}c@{}}%
\phantom{0}\\\phantom{0}\\\phantom{0}
\end{tabular}\endgroup%
}}\!\right]$}%
{$\left[\!\llap{\phantom{%
\begingroup \smaller\smaller\smaller\begin{tabular}{@{}c@{}}%
0\\0\\0
\end{tabular}\endgroup%
}}\right.$}%
\begingroup \smaller\smaller\smaller\begin{tabular}{@{}c@{}}%
2\\1\\0
\end{tabular}\endgroup%
\kern3pt%
\begingroup \smaller\smaller\smaller\begin{tabular}{@{}c@{}}%
80\\8\\6
\end{tabular}\endgroup%
\kern3pt%
\begingroup \smaller\smaller\smaller\begin{tabular}{@{}c@{}}%
10\\-1\\1
\end{tabular}\endgroup%
\kern3pt%
\begingroup \smaller\smaller\smaller\begin{tabular}{@{}c@{}}%
10\\-3\\0
\end{tabular}\endgroup%
{$\left.\llap{\phantom{%
\begingroup \smaller\smaller\smaller\begin{tabular}{@{}c@{}}%
0\\0\\0
\end{tabular}\endgroup%
}}\!\right]$}%
}%
\ifdim\wd\matricesbox>\halfwidth\myboxwidth=\hsize\else\myboxwidth=\halfwidth\fi
\vbox{%
\ifdim\myboxwidth=\hsize
\setbox\onelinebox=\hbox{%
\vbox{\hbox{%
$\Pi_{6,32}$ spans $L_{187.2}$%
}\hbox{%
$22|223|3\rtimes D_{2}$%
}%
}%
\hfill\copy\matricesbox
}%
\ifdim\wd\onelinebox>\myboxwidth
\hbox to \myboxwidth{%
$\Pi_{6,32}$ spans $L_{187.2}$%
\hfil
$22|223|3\rtimes D_{2}$%
}%
\box\matricesbox
\else
\hbox to \myboxwidth{%
\unhbox\onelinebox
}%
\fi
\else
\hbox to \myboxwidth{%
$\Pi_{6,32}$ spans $L_{187.2}$%
\hfil}%
\hbox to \myboxwidth{%
$22|223|3\rtimes D_{2}$%
\hfil}%
\box\matricesbox
\fi
}%
\hfill\discretionary{}{}{}%
\setbox\matricesbox=\hbox{%
{$\left[\!\llap{\phantom{%
\begingroup \smaller\smaller\smaller\begin{tabular}{@{}c@{}}%
\phantom{0}\\\phantom{0}\\\phantom{0}
\end{tabular}\endgroup%
}}\right.$}%
\begingroup \smaller\smaller\smaller\begin{tabular}{@{}c@{}}%
-1/2\\\phantom{0}\\\phantom{0}
\end{tabular}\endgroup%
\kern3pt%
\begingroup \smaller\smaller\smaller\begin{tabular}{@{}c@{}}%
\phantom{0}\\3/2\\\phantom{0}
\end{tabular}\endgroup%
\kern3pt%
\begingroup \smaller\smaller\smaller\begin{tabular}{@{}c@{}}%
\phantom{0}\\\phantom{0}\\6
\end{tabular}\endgroup%
{$\left.\llap{\phantom{%
\begingroup \smaller\smaller\smaller\begin{tabular}{@{}c@{}}%
\phantom{0}\\\phantom{0}\\\phantom{0}
\end{tabular}\endgroup%
}}\!\right]$}%
{$\left[\!\llap{\phantom{%
\begingroup \smaller\smaller\smaller\begin{tabular}{@{}c@{}}%
0\\0\\0
\end{tabular}\endgroup%
}}\right.$}%
\begingroup \smaller\smaller\smaller\begin{tabular}{@{}c@{}}%
1\\1\\0
\end{tabular}\endgroup%
\kern3pt%
\begingroup \smaller\smaller\smaller\begin{tabular}{@{}c@{}}%
3\\1\\1
\end{tabular}\endgroup%
\kern3pt%
\begingroup \smaller\smaller\smaller\begin{tabular}{@{}c@{}}%
3\\-1\\1
\end{tabular}\endgroup%
\kern3pt%
\begingroup \smaller\smaller\smaller\begin{tabular}{@{}c@{}}%
6\\-4\\0
\end{tabular}\endgroup%
{$\left.\llap{\phantom{%
\begingroup \smaller\smaller\smaller\begin{tabular}{@{}c@{}}%
0\\0\\0
\end{tabular}\endgroup%
}}\!\right]$}%
}%
\ifdim\wd\matricesbox>\halfwidth\myboxwidth=\hsize\else\myboxwidth=\halfwidth\fi
\vbox{%
\ifdim\myboxwidth=\hsize
\setbox\onelinebox=\hbox{%
\vbox{\hbox{%
$\Pi_{6,33}$ spans $L_{123.6}$%
}\hbox{%
$22|224|4\rtimes D_{2}$%
}%
}%
\hfill\copy\matricesbox
}%
\ifdim\wd\onelinebox>\myboxwidth
\hbox to \myboxwidth{%
$\Pi_{6,33}$ spans $L_{123.6}$%
\hfil
$22|224|4\rtimes D_{2}$%
}%
\box\matricesbox
\else
\hbox to \myboxwidth{%
\unhbox\onelinebox
}%
\fi
\else
\hbox to \myboxwidth{%
$\Pi_{6,33}$ spans $L_{123.6}$%
\hfil}%
\hbox to \myboxwidth{%
$22|224|4\rtimes D_{2}$%
\hfil}%
\box\matricesbox
\fi
}%
\hfill\discretionary{}{}{}%
\setbox\matricesbox=\hbox{%
{$\left[\!\llap{\phantom{%
\begingroup \smaller\smaller\smaller\begin{tabular}{@{}c@{}}%
\phantom{0}\\\phantom{0}\\\phantom{0}
\end{tabular}\endgroup%
}}\right.$}%
\begingroup \smaller\smaller\smaller\begin{tabular}{@{}c@{}}%
-1/5\\\phantom{0}\\\phantom{0}
\end{tabular}\endgroup%
\kern3pt%
\begingroup \smaller\smaller\smaller\begin{tabular}{@{}c@{}}%
\phantom{0}\\3/10\\\phantom{0}
\end{tabular}\endgroup%
\kern3pt%
\begingroup \smaller\smaller\smaller\begin{tabular}{@{}c@{}}%
\phantom{0}\\\phantom{0}\\45/2
\end{tabular}\endgroup%
{$\left.\llap{\phantom{%
\begingroup \smaller\smaller\smaller\begin{tabular}{@{}c@{}}%
\phantom{0}\\\phantom{0}\\\phantom{0}
\end{tabular}\endgroup%
}}\!\right]$}%
{$\left[\!\llap{\phantom{%
\begingroup \smaller\smaller\smaller\begin{tabular}{@{}c@{}}%
0\\0\\0
\end{tabular}\endgroup%
}}\right.$}%
\begingroup \smaller\smaller\smaller\begin{tabular}{@{}c@{}}%
3\\4\\0
\end{tabular}\endgroup%
\kern3pt%
\begingroup \smaller\smaller\smaller\begin{tabular}{@{}c@{}}%
10\\5\\1
\end{tabular}\endgroup%
\kern3pt%
\begingroup \smaller\smaller\smaller\begin{tabular}{@{}c@{}}%
9\\-3\\1
\end{tabular}\endgroup%
\kern3pt%
\begingroup \smaller\smaller\smaller\begin{tabular}{@{}c@{}}%
1\\-2\\0
\end{tabular}\endgroup%
{$\left.\llap{\phantom{%
\begingroup \smaller\smaller\smaller\begin{tabular}{@{}c@{}}%
0\\0\\0
\end{tabular}\endgroup%
}}\!\right]$}%
}%
\ifdim\wd\matricesbox>\halfwidth\myboxwidth=\hsize\else\myboxwidth=\halfwidth\fi
\vbox{%
\ifdim\myboxwidth=\hsize
\setbox\onelinebox=\hbox{%
\vbox{\hbox{%
$\Pi_{6,34}$ spans $L_{213.2}$%
}\hbox{%
$22|222|2\rtimes D_{2}$%
}%
}%
\hfill\copy\matricesbox
}%
\ifdim\wd\onelinebox>\myboxwidth
\hbox to \myboxwidth{%
$\Pi_{6,34}$ spans $L_{213.2}$%
\hfil
$22|222|2\rtimes D_{2}$%
}%
\box\matricesbox
\else
\hbox to \myboxwidth{%
\unhbox\onelinebox
}%
\fi
\else
\hbox to \myboxwidth{%
$\Pi_{6,34}$ spans $L_{213.2}$%
\hfil}%
\hbox to \myboxwidth{%
$22|222|2\rtimes D_{2}$%
\hfil}%
\box\matricesbox
\fi
}%
\hfill\discretionary{}{}{}%
\setbox\matricesbox=\hbox{%
{$\left[\!\llap{\phantom{%
\begingroup \smaller\smaller\smaller\begin{tabular}{@{}c@{}}%
\phantom{0}\\\phantom{0}\\\phantom{0}
\end{tabular}\endgroup%
}}\right.$}%
\begingroup \smaller\smaller\smaller\begin{tabular}{@{}c@{}}%
-1/5\\\phantom{0}\\\phantom{0}
\end{tabular}\endgroup%
\kern3pt%
\begingroup \smaller\smaller\smaller\begin{tabular}{@{}c@{}}%
\phantom{0}\\6/5\\\phantom{0}
\end{tabular}\endgroup%
\kern3pt%
\begingroup \smaller\smaller\smaller\begin{tabular}{@{}c@{}}%
\phantom{0}\\\phantom{0}\\4
\end{tabular}\endgroup%
{$\left.\llap{\phantom{%
\begingroup \smaller\smaller\smaller\begin{tabular}{@{}c@{}}%
\phantom{0}\\\phantom{0}\\\phantom{0}
\end{tabular}\endgroup%
}}\!\right]$}%
{$\left[\!\llap{\phantom{%
\begingroup \smaller\smaller\smaller\begin{tabular}{@{}c@{}}%
0\\0\\0
\end{tabular}\endgroup%
}}\right.$}%
\begingroup \smaller\smaller\smaller\begin{tabular}{@{}c@{}}%
1\\1\\0
\end{tabular}\endgroup%
\kern3pt%
\begingroup \smaller\smaller\smaller\begin{tabular}{@{}c@{}}%
12\\2\\3
\end{tabular}\endgroup%
\kern3pt%
\begingroup \smaller\smaller\smaller\begin{tabular}{@{}c@{}}%
8\\-2\\2
\end{tabular}\endgroup%
\kern3pt%
\begingroup \smaller\smaller\smaller\begin{tabular}{@{}c@{}}%
3\\-2\\0
\end{tabular}\endgroup%
{$\left.\llap{\phantom{%
\begingroup \smaller\smaller\smaller\begin{tabular}{@{}c@{}}%
0\\0\\0
\end{tabular}\endgroup%
}}\!\right]$}%
}%
\ifdim\wd\matricesbox>\halfwidth\myboxwidth=\hsize\else\myboxwidth=\halfwidth\fi
\vbox{%
\ifdim\myboxwidth=\hsize
\setbox\onelinebox=\hbox{%
\vbox{\hbox{%
$\Pi_{6,35}$ spans $L_{151.6}$%
}\hbox{%
$22|222|2\rtimes D_{2}$ (shared)%
}%
}%
\hfill\copy\matricesbox
}%
\ifdim\wd\onelinebox>\myboxwidth
\hbox to \myboxwidth{%
$\Pi_{6,35}$ spans $L_{151.6}$%
\hfil
$22|222|2\rtimes D_{2}$ (shared)%
}%
\box\matricesbox
\else
\hbox to \myboxwidth{%
\unhbox\onelinebox
}%
\fi
\else
\hbox to \myboxwidth{%
$\Pi_{6,35}$ spans $L_{151.6}$%
\hfil}%
\hbox to \myboxwidth{%
$22|222|2\rtimes D_{2}$ (shared)%
\hfil}%
\box\matricesbox
\fi
}%
\hfill\discretionary{}{}{}%
\setbox\matricesbox=\hbox{%
{$\left[\!\llap{\phantom{%
\begingroup \smaller\smaller\smaller\begin{tabular}{@{}c@{}}%
\phantom{0}\\\phantom{0}\\\phantom{0}
\end{tabular}\endgroup%
}}\right.$}%
\begingroup \smaller\smaller\smaller\begin{tabular}{@{}c@{}}%
-1/4\\\phantom{0}\\\phantom{0}
\end{tabular}\endgroup%
\kern3pt%
\begingroup \smaller\smaller\smaller\begin{tabular}{@{}c@{}}%
\phantom{0}\\5/4\\\phantom{0}
\end{tabular}\endgroup%
\kern3pt%
\begingroup \smaller\smaller\smaller\begin{tabular}{@{}c@{}}%
\phantom{0}\\\phantom{0}\\30
\end{tabular}\endgroup%
{$\left.\llap{\phantom{%
\begingroup \smaller\smaller\smaller\begin{tabular}{@{}c@{}}%
\phantom{0}\\\phantom{0}\\\phantom{0}
\end{tabular}\endgroup%
}}\!\right]$}%
{$\left[\!\llap{\phantom{%
\begingroup \smaller\smaller\smaller\begin{tabular}{@{}c@{}}%
0\\0\\0
\end{tabular}\endgroup%
}}\right.$}%
\begingroup \smaller\smaller\smaller\begin{tabular}{@{}c@{}}%
1\\-1\\0
\end{tabular}\endgroup%
\kern3pt%
\begingroup \smaller\smaller\smaller\begin{tabular}{@{}c@{}}%
10\\-2\\1
\end{tabular}\endgroup%
\kern3pt%
\begingroup \smaller\smaller\smaller\begin{tabular}{@{}c@{}}%
10\\2\\1
\end{tabular}\endgroup%
\kern3pt%
\begingroup \smaller\smaller\smaller\begin{tabular}{@{}c@{}}%
5\\3\\0
\end{tabular}\endgroup%
{$\left.\llap{\phantom{%
\begingroup \smaller\smaller\smaller\begin{tabular}{@{}c@{}}%
0\\0\\0
\end{tabular}\endgroup%
}}\!\right]$}%
}%
\ifdim\wd\matricesbox>\halfwidth\myboxwidth=\hsize\else\myboxwidth=\halfwidth\fi
\vbox{%
\ifdim\myboxwidth=\hsize
\setbox\onelinebox=\hbox{%
\vbox{\hbox{%
$\Pi_{6,36}$ spans $L_{127.7}$%
}\hbox{%
$22|224|4\rtimes D_{2}$%
}%
}%
\hfill\copy\matricesbox
}%
\ifdim\wd\onelinebox>\myboxwidth
\hbox to \myboxwidth{%
$\Pi_{6,36}$ spans $L_{127.7}$%
\hfil
$22|224|4\rtimes D_{2}$%
}%
\box\matricesbox
\else
\hbox to \myboxwidth{%
\unhbox\onelinebox
}%
\fi
\else
\hbox to \myboxwidth{%
$\Pi_{6,36}$ spans $L_{127.7}$%
\hfil}%
\hbox to \myboxwidth{%
$22|224|4\rtimes D_{2}$%
\hfil}%
\box\matricesbox
\fi
}%
\hfill\discretionary{}{}{}%
\setbox\matricesbox=\hbox{%
{$\left[\!\llap{\phantom{%
\begingroup \smaller\smaller\smaller\begin{tabular}{@{}c@{}}%
\phantom{0}\\\phantom{0}\\\phantom{0}
\end{tabular}\endgroup%
}}\right.$}%
\begingroup \smaller\smaller\smaller\begin{tabular}{@{}c@{}}%
-5/32\\\phantom{0}\\\phantom{0}
\end{tabular}\endgroup%
\kern3pt%
\begingroup \smaller\smaller\smaller\begin{tabular}{@{}c@{}}%
\phantom{0}\\9/8\\\phantom{0}
\end{tabular}\endgroup%
\kern3pt%
\begingroup \smaller\smaller\smaller\begin{tabular}{@{}c@{}}%
\phantom{0}\\\phantom{0}\\3/2
\end{tabular}\endgroup%
{$\left.\llap{\phantom{%
\begingroup \smaller\smaller\smaller\begin{tabular}{@{}c@{}}%
\phantom{0}\\\phantom{0}\\\phantom{0}
\end{tabular}\endgroup%
}}\!\right]$}%
{$\left[\!\llap{\phantom{%
\begingroup \smaller\smaller\smaller\begin{tabular}{@{}c@{}}%
0\\0\\0
\end{tabular}\endgroup%
}}\right.$}%
\begingroup \smaller\smaller\smaller\begin{tabular}{@{}c@{}}%
18\\7\\3
\end{tabular}\endgroup%
\kern3pt%
\begingroup \smaller\smaller\smaller\begin{tabular}{@{}c@{}}%
24\\4\\8
\end{tabular}\endgroup%
\kern3pt%
\begingroup \smaller\smaller\smaller\begin{tabular}{@{}c@{}}%
2\\-1\\1
\end{tabular}\endgroup%
{$\left.\llap{\phantom{%
\begingroup \smaller\smaller\smaller\begin{tabular}{@{}c@{}}%
0\\0\\0
\end{tabular}\endgroup%
}}\!\right]$}%
}%
\ifdim\wd\matricesbox>\halfwidth\myboxwidth=\hsize\else\myboxwidth=\halfwidth\fi
\vbox{%
\ifdim\myboxwidth=\hsize
\setbox\onelinebox=\hbox{%
\vbox{\hbox{%
$\Pi_{6,37}$ spans $L_{251.1}$%
}\hbox{%
$22\slashthree22\slashthree\rtimes D_{2}$ (shared)%
}%
}%
\hfill\copy\matricesbox
}%
\ifdim\wd\onelinebox>\myboxwidth
\hbox to \myboxwidth{%
$\Pi_{6,37}$ spans $L_{251.1}$%
\hfil
$22\slashthree22\slashthree\rtimes D_{2}$ (shared)%
}%
\box\matricesbox
\else
\hbox to \myboxwidth{%
\unhbox\onelinebox
}%
\fi
\else
\hbox to \myboxwidth{%
$\Pi_{6,37}$ spans $L_{251.1}$%
\hfil}%
\hbox to \myboxwidth{%
$22\slashthree22\slashthree\rtimes D_{2}$ (shared)%
\hfil}%
\box\matricesbox
\fi
}%
\hfill\discretionary{}{}{}%
\setbox\matricesbox=\hbox{%
{$\left[\!\llap{\phantom{%
\begingroup \smaller\smaller\smaller\begin{tabular}{@{}c@{}}%
\phantom{0}\\\phantom{0}\\\phantom{0}
\end{tabular}\endgroup%
}}\right.$}%
\begingroup \smaller\smaller\smaller\begin{tabular}{@{}c@{}}%
-3/4\\\phantom{0}\\\phantom{0}
\end{tabular}\endgroup%
\kern3pt%
\begingroup \smaller\smaller\smaller\begin{tabular}{@{}c@{}}%
\phantom{0}\\3/4\\\phantom{0}
\end{tabular}\endgroup%
\kern3pt%
\begingroup \smaller\smaller\smaller\begin{tabular}{@{}c@{}}%
\phantom{0}\\\phantom{0}\\1
\end{tabular}\endgroup%
{$\left.\llap{\phantom{%
\begingroup \smaller\smaller\smaller\begin{tabular}{@{}c@{}}%
\phantom{0}\\\phantom{0}\\\phantom{0}
\end{tabular}\endgroup%
}}\!\right]$}%
{$\left[\!\llap{\phantom{%
\begingroup \smaller\smaller\smaller\begin{tabular}{@{}c@{}}%
0\\0\\0
\end{tabular}\endgroup%
}}\right.$}%
\begingroup \smaller\smaller\smaller\begin{tabular}{@{}c@{}}%
4\\-4\\-2
\end{tabular}\endgroup%
\kern3pt%
\begingroup \smaller\smaller\smaller\begin{tabular}{@{}c@{}}%
3\\-1\\-3
\end{tabular}\endgroup%
\kern3pt%
\begingroup \smaller\smaller\smaller\begin{tabular}{@{}c@{}}%
1\\1\\-1
\end{tabular}\endgroup%
{$\left.\llap{\phantom{%
\begingroup \smaller\smaller\smaller\begin{tabular}{@{}c@{}}%
0\\0\\0
\end{tabular}\endgroup%
}}\!\right]$}%
}%
\ifdim\wd\matricesbox>\halfwidth\myboxwidth=\hsize\else\myboxwidth=\halfwidth\fi
\vbox{%
\ifdim\myboxwidth=\hsize
\setbox\onelinebox=\hbox{%
\vbox{\hbox{%
$\Pi_{6,38}$ spans $L_{144.5}$%
}\hbox{%
$22\slashinfty22\slashinfty\rtimes D_{2}$ (shared)%
}%
}%
\hfill\copy\matricesbox
}%
\ifdim\wd\onelinebox>\myboxwidth
\hbox to \myboxwidth{%
$\Pi_{6,38}$ spans $L_{144.5}$%
\hfil
$22\slashinfty22\slashinfty\rtimes D_{2}$ (shared)%
}%
\box\matricesbox
\else
\hbox to \myboxwidth{%
\unhbox\onelinebox
}%
\fi
\else
\hbox to \myboxwidth{%
$\Pi_{6,38}$ spans $L_{144.5}$%
\hfil}%
\hbox to \myboxwidth{%
$22\slashinfty22\slashinfty\rtimes D_{2}$ (shared)%
\hfil}%
\box\matricesbox
\fi
}%
\hfill\discretionary{}{}{}%
\setbox\matricesbox=\hbox{%
{$\left[\!\llap{\phantom{%
\begingroup \smaller\smaller\smaller\begin{tabular}{@{}c@{}}%
\phantom{0}\\\phantom{0}\\\phantom{0}
\end{tabular}\endgroup%
}}\right.$}%
\begingroup \smaller\smaller\smaller\begin{tabular}{@{}c@{}}%
-1/8\\\phantom{0}\\\phantom{0}
\end{tabular}\endgroup%
\kern3pt%
\begingroup \smaller\smaller\smaller\begin{tabular}{@{}c@{}}%
\phantom{0}\\5/2\\\phantom{0}
\end{tabular}\endgroup%
\kern3pt%
\begingroup \smaller\smaller\smaller\begin{tabular}{@{}c@{}}%
\phantom{0}\\\phantom{0}\\120
\end{tabular}\endgroup%
{$\left.\llap{\phantom{%
\begingroup \smaller\smaller\smaller\begin{tabular}{@{}c@{}}%
\phantom{0}\\\phantom{0}\\\phantom{0}
\end{tabular}\endgroup%
}}\!\right]$}%
{$\left[\!\llap{\phantom{%
\begingroup \smaller\smaller\smaller\begin{tabular}{@{}c@{}}%
0\\0\\0
\end{tabular}\endgroup%
}}\right.$}%
\begingroup \smaller\smaller\smaller\begin{tabular}{@{}c@{}}%
10\\-3\\0
\end{tabular}\endgroup%
\kern3pt%
\begingroup \smaller\smaller\smaller\begin{tabular}{@{}c@{}}%
30\\-3\\-1
\end{tabular}\endgroup%
\kern3pt%
\begingroup \smaller\smaller\smaller\begin{tabular}{@{}c@{}}%
30\\3\\-1
\end{tabular}\endgroup%
\kern3pt%
\begingroup \smaller\smaller\smaller\begin{tabular}{@{}c@{}}%
2\\1\\0
\end{tabular}\endgroup%
{$\left.\llap{\phantom{%
\begingroup \smaller\smaller\smaller\begin{tabular}{@{}c@{}}%
0\\0\\0
\end{tabular}\endgroup%
}}\!\right]$}%
}%
\ifdim\wd\matricesbox>\halfwidth\myboxwidth=\hsize\else\myboxwidth=\halfwidth\fi
\vbox{%
\ifdim\myboxwidth=\hsize
\setbox\onelinebox=\hbox{%
\vbox{\hbox{%
$\Pi_{6,39}$ spans $L_{31.10}$%
}\hbox{%
$36|632|2\rtimes D_{2}$%
}%
}%
\hfill\copy\matricesbox
}%
\ifdim\wd\onelinebox>\myboxwidth
\hbox to \myboxwidth{%
$\Pi_{6,39}$ spans $L_{31.10}$%
\hfil
$36|632|2\rtimes D_{2}$%
}%
\box\matricesbox
\else
\hbox to \myboxwidth{%
\unhbox\onelinebox
}%
\fi
\else
\hbox to \myboxwidth{%
$\Pi_{6,39}$ spans $L_{31.10}$%
\hfil}%
\hbox to \myboxwidth{%
$36|632|2\rtimes D_{2}$%
\hfil}%
\box\matricesbox
\fi
}%
\hfill\discretionary{}{}{}%
\setbox\matricesbox=\hbox{%
{$\left[\!\llap{\phantom{%
\begingroup \smaller\smaller\smaller\begin{tabular}{@{}c@{}}%
\phantom{0}\\\phantom{0}\\\phantom{0}
\end{tabular}\endgroup%
}}\right.$}%
\begingroup \smaller\smaller\smaller\begin{tabular}{@{}c@{}}%
-1/8\\\phantom{0}\\\phantom{0}
\end{tabular}\endgroup%
\kern3pt%
\begingroup \smaller\smaller\smaller\begin{tabular}{@{}c@{}}%
\phantom{0}\\21/2\\\phantom{0}
\end{tabular}\endgroup%
\kern3pt%
\begingroup \smaller\smaller\smaller\begin{tabular}{@{}c@{}}%
\phantom{0}\\\phantom{0}\\28
\end{tabular}\endgroup%
{$\left.\llap{\phantom{%
\begingroup \smaller\smaller\smaller\begin{tabular}{@{}c@{}}%
\phantom{0}\\\phantom{0}\\\phantom{0}
\end{tabular}\endgroup%
}}\!\right]$}%
{$\left[\!\llap{\phantom{%
\begingroup \smaller\smaller\smaller\begin{tabular}{@{}c@{}}%
0\\0\\0
\end{tabular}\endgroup%
}}\right.$}%
\begingroup \smaller\smaller\smaller\begin{tabular}{@{}c@{}}%
42\\5\\0
\end{tabular}\endgroup%
\kern3pt%
\begingroup \smaller\smaller\smaller\begin{tabular}{@{}c@{}}%
14\\1\\-1
\end{tabular}\endgroup%
\kern3pt%
\begingroup \smaller\smaller\smaller\begin{tabular}{@{}c@{}}%
14\\-1\\-1
\end{tabular}\endgroup%
\kern3pt%
\begingroup \smaller\smaller\smaller\begin{tabular}{@{}c@{}}%
6\\-1\\0
\end{tabular}\endgroup%
{$\left.\llap{\phantom{%
\begingroup \smaller\smaller\smaller\begin{tabular}{@{}c@{}}%
0\\0\\0
\end{tabular}\endgroup%
}}\!\right]$}%
}%
\ifdim\wd\matricesbox>\halfwidth\myboxwidth=\hsize\else\myboxwidth=\halfwidth\fi
\vbox{%
\ifdim\myboxwidth=\hsize
\setbox\onelinebox=\hbox{%
\vbox{\hbox{%
$\Pi_{6,40}$ spans $L_{22.8}$%
}\hbox{%
$36|632|2\rtimes D_{2}$%
}%
}%
\hfill\copy\matricesbox
}%
\ifdim\wd\onelinebox>\myboxwidth
\hbox to \myboxwidth{%
$\Pi_{6,40}$ spans $L_{22.8}$%
\hfil
$36|632|2\rtimes D_{2}$%
}%
\box\matricesbox
\else
\hbox to \myboxwidth{%
\unhbox\onelinebox
}%
\fi
\else
\hbox to \myboxwidth{%
$\Pi_{6,40}$ spans $L_{22.8}$%
\hfil}%
\hbox to \myboxwidth{%
$36|632|2\rtimes D_{2}$%
\hfil}%
\box\matricesbox
\fi
}%
\hfill\discretionary{}{}{}%
\setbox\matricesbox=\hbox{%
{$\left[\!\llap{\phantom{%
\begingroup \smaller\smaller\smaller\begin{tabular}{@{}c@{}}%
\phantom{0}\\\phantom{0}\\\phantom{0}
\end{tabular}\endgroup%
}}\right.$}%
\begingroup \smaller\smaller\smaller\begin{tabular}{@{}c@{}}%
-1\\\phantom{0}\\\phantom{0}
\end{tabular}\endgroup%
\kern3pt%
\begingroup \smaller\smaller\smaller\begin{tabular}{@{}c@{}}%
\phantom{0}\\2\\\phantom{0}
\end{tabular}\endgroup%
\kern3pt%
\begingroup \smaller\smaller\smaller\begin{tabular}{@{}c@{}}%
\phantom{0}\\\phantom{0}\\4
\end{tabular}\endgroup%
{$\left.\llap{\phantom{%
\begingroup \smaller\smaller\smaller\begin{tabular}{@{}c@{}}%
\phantom{0}\\\phantom{0}\\\phantom{0}
\end{tabular}\endgroup%
}}\!\right]$}%
{$\left[\!\llap{\phantom{%
\begingroup \smaller\smaller\smaller\begin{tabular}{@{}c@{}}%
0\\0\\0
\end{tabular}\endgroup%
}}\right.$}%
\begingroup \smaller\smaller\smaller\begin{tabular}{@{}c@{}}%
8\\6\\0
\end{tabular}\endgroup%
\kern3pt%
\begingroup \smaller\smaller\smaller\begin{tabular}{@{}c@{}}%
2\\1\\1
\end{tabular}\endgroup%
\kern3pt%
\begingroup \smaller\smaller\smaller\begin{tabular}{@{}c@{}}%
2\\-1\\1
\end{tabular}\endgroup%
\kern3pt%
\begingroup \smaller\smaller\smaller\begin{tabular}{@{}c@{}}%
1\\-1\\0
\end{tabular}\endgroup%
{$\left.\llap{\phantom{%
\begingroup \smaller\smaller\smaller\begin{tabular}{@{}c@{}}%
0\\0\\0
\end{tabular}\endgroup%
}}\!\right]$}%
}%
\ifdim\wd\matricesbox>\halfwidth\myboxwidth=\hsize\else\myboxwidth=\halfwidth\fi
\vbox{%
\ifdim\myboxwidth=\hsize
\setbox\onelinebox=\hbox{%
\vbox{\hbox{%
$\Pi_{6,41}$ spans $L_{141.3}$%
}\hbox{%
$\infty|\infty\infty2|2\infty\rtimes D_{2}$%
}%
}%
\hfill\copy\matricesbox
}%
\ifdim\wd\onelinebox>\myboxwidth
\hbox to \myboxwidth{%
$\Pi_{6,41}$ spans $L_{141.3}$%
\hfil
$\infty|\infty\infty2|2\infty\rtimes D_{2}$%
}%
\box\matricesbox
\else
\hbox to \myboxwidth{%
\unhbox\onelinebox
}%
\fi
\else
\hbox to \myboxwidth{%
$\Pi_{6,41}$ spans $L_{141.3}$%
\hfil}%
\hbox to \myboxwidth{%
$\infty|\infty\infty2|2\infty\rtimes D_{2}$%
\hfil}%
\box\matricesbox
\fi
}%
\hfill\discretionary{}{}{}%
\setbox\matricesbox=\hbox{%
{$\left[\!\llap{\phantom{%
\begingroup \smaller\smaller\smaller\begin{tabular}{@{}c@{}}%
\phantom{0}\\\phantom{0}\\\phantom{0}
\end{tabular}\endgroup%
}}\right.$}%
\begingroup \smaller\smaller\smaller\begin{tabular}{@{}c@{}}%
-1/4\\\phantom{0}\\\phantom{0}
\end{tabular}\endgroup%
\kern3pt%
\begingroup \smaller\smaller\smaller\begin{tabular}{@{}c@{}}%
\phantom{0}\\5/4\\\phantom{0}
\end{tabular}\endgroup%
\kern3pt%
\begingroup \smaller\smaller\smaller\begin{tabular}{@{}c@{}}%
\phantom{0}\\\phantom{0}\\100
\end{tabular}\endgroup%
{$\left.\llap{\phantom{%
\begingroup \smaller\smaller\smaller\begin{tabular}{@{}c@{}}%
\phantom{0}\\\phantom{0}\\\phantom{0}
\end{tabular}\endgroup%
}}\!\right]$}%
{$\left[\!\llap{\phantom{%
\begingroup \smaller\smaller\smaller\begin{tabular}{@{}c@{}}%
0\\0\\0
\end{tabular}\endgroup%
}}\right.$}%
\begingroup \smaller\smaller\smaller\begin{tabular}{@{}c@{}}%
5\\-3\\0
\end{tabular}\endgroup%
\kern3pt%
\begingroup \smaller\smaller\smaller\begin{tabular}{@{}c@{}}%
20\\-4\\-1
\end{tabular}\endgroup%
\kern3pt%
\begingroup \smaller\smaller\smaller\begin{tabular}{@{}c@{}}%
20\\4\\-1
\end{tabular}\endgroup%
\kern3pt%
\begingroup \smaller\smaller\smaller\begin{tabular}{@{}c@{}}%
1\\1\\0
\end{tabular}\endgroup%
{$\left.\llap{\phantom{%
\begingroup \smaller\smaller\smaller\begin{tabular}{@{}c@{}}%
0\\0\\0
\end{tabular}\endgroup%
}}\!\right]$}%
}%
\ifdim\wd\matricesbox>\halfwidth\myboxwidth=\hsize\else\myboxwidth=\halfwidth\fi
\vbox{%
\ifdim\myboxwidth=\hsize
\setbox\onelinebox=\hbox{%
\vbox{\hbox{%
$\Pi_{6,42}$ spans $L_{149.21}$%
}\hbox{%
$|\infty\infty2|2\infty\infty\rtimes D_{2}$%
}%
}%
\hfill\copy\matricesbox
}%
\ifdim\wd\onelinebox>\myboxwidth
\hbox to \myboxwidth{%
$\Pi_{6,42}$ spans $L_{149.21}$%
\hfil
$|\infty\infty2|2\infty\infty\rtimes D_{2}$%
}%
\box\matricesbox
\else
\hbox to \myboxwidth{%
\unhbox\onelinebox
}%
\fi
\else
\hbox to \myboxwidth{%
$\Pi_{6,42}$ spans $L_{149.21}$%
\hfil}%
\hbox to \myboxwidth{%
$|\infty\infty2|2\infty\infty\rtimes D_{2}$%
\hfil}%
\box\matricesbox
\fi
}%
\hfill\discretionary{}{}{}%
\setbox\matricesbox=\hbox{%
{$\left[\!\llap{\phantom{%
\begingroup \smaller\smaller\smaller\begin{tabular}{@{}c@{}}%
\phantom{0}\\\phantom{0}\\\phantom{0}
\end{tabular}\endgroup%
}}\right.$}%
\begingroup \smaller\smaller\smaller\begin{tabular}{@{}c@{}}%
-8/9\\\phantom{0}\\\phantom{0}
\end{tabular}\endgroup%
\kern3pt%
\begingroup \smaller\smaller\smaller\begin{tabular}{@{}c@{}}%
\phantom{0}\\1/18\\\phantom{0}
\end{tabular}\endgroup%
\kern3pt%
\begingroup \smaller\smaller\smaller\begin{tabular}{@{}c@{}}%
\phantom{0}\\\phantom{0}\\1/2
\end{tabular}\endgroup%
{$\left.\llap{\phantom{%
\begingroup \smaller\smaller\smaller\begin{tabular}{@{}c@{}}%
\phantom{0}\\\phantom{0}\\\phantom{0}
\end{tabular}\endgroup%
}}\!\right]$}%
{$\left[\!\llap{\phantom{%
\begingroup \smaller\smaller\smaller\begin{tabular}{@{}c@{}}%
0\\0\\0
\end{tabular}\endgroup%
}}\right.$}%
\begingroup \smaller\smaller\smaller\begin{tabular}{@{}c@{}}%
2\\8\\-2
\end{tabular}\endgroup%
\kern3pt%
\begingroup \smaller\smaller\smaller\begin{tabular}{@{}c@{}}%
4\\-2\\-6
\end{tabular}\endgroup%
\kern3pt%
\begingroup \smaller\smaller\smaller\begin{tabular}{@{}c@{}}%
1\\-5\\-1
\end{tabular}\endgroup%
{$\left.\llap{\phantom{%
\begingroup \smaller\smaller\smaller\begin{tabular}{@{}c@{}}%
0\\0\\0
\end{tabular}\endgroup%
}}\!\right]$}%
}%
\ifdim\wd\matricesbox>\halfwidth\myboxwidth=\hsize\else\myboxwidth=\halfwidth\fi
\vbox{%
\ifdim\myboxwidth=\hsize
\setbox\onelinebox=\hbox{%
\vbox{\hbox{%
$\Pi_{6,43}$ spans $L_{159.1}$%
}\hbox{%
$4\slashinfty42\slashtwo2\rtimes D_{2}$%
}%
}%
\hfill\copy\matricesbox
}%
\ifdim\wd\onelinebox>\myboxwidth
\hbox to \myboxwidth{%
$\Pi_{6,43}$ spans $L_{159.1}$%
\hfil
$4\slashinfty42\slashtwo2\rtimes D_{2}$%
}%
\box\matricesbox
\else
\hbox to \myboxwidth{%
\unhbox\onelinebox
}%
\fi
\else
\hbox to \myboxwidth{%
$\Pi_{6,43}$ spans $L_{159.1}$%
\hfil}%
\hbox to \myboxwidth{%
$4\slashinfty42\slashtwo2\rtimes D_{2}$%
\hfil}%
\box\matricesbox
\fi
}%
\hfill\discretionary{}{}{}%
\setbox\matricesbox=\hbox{%
{$\left[\!\llap{\phantom{%
\begingroup \smaller\smaller\smaller\begin{tabular}{@{}c@{}}%
\phantom{0}\\\phantom{0}\\\phantom{0}
\end{tabular}\endgroup%
}}\right.$}%
\begingroup \smaller\smaller\smaller\begin{tabular}{@{}c@{}}%
-1/2\\\phantom{0}\\\phantom{0}
\end{tabular}\endgroup%
\kern3pt%
\begingroup \smaller\smaller\smaller\begin{tabular}{@{}c@{}}%
\phantom{0}\\3/2\\\phantom{0}
\end{tabular}\endgroup%
\kern3pt%
\begingroup \smaller\smaller\smaller\begin{tabular}{@{}c@{}}%
\phantom{0}\\\phantom{0}\\18
\end{tabular}\endgroup%
{$\left.\llap{\phantom{%
\begingroup \smaller\smaller\smaller\begin{tabular}{@{}c@{}}%
\phantom{0}\\\phantom{0}\\\phantom{0}
\end{tabular}\endgroup%
}}\!\right]$}%
{$\left[\!\llap{\phantom{%
\begingroup \smaller\smaller\smaller\begin{tabular}{@{}c@{}}%
0\\0\\0
\end{tabular}\endgroup%
}}\right.$}%
\begingroup \smaller\smaller\smaller\begin{tabular}{@{}c@{}}%
6\\-4\\0
\end{tabular}\endgroup%
\kern3pt%
\begingroup \smaller\smaller\smaller\begin{tabular}{@{}c@{}}%
6\\-2\\1
\end{tabular}\endgroup%
\kern3pt%
\begingroup \smaller\smaller\smaller\begin{tabular}{@{}c@{}}%
6\\2\\1
\end{tabular}\endgroup%
\kern3pt%
\begingroup \smaller\smaller\smaller\begin{tabular}{@{}c@{}}%
1\\1\\0
\end{tabular}\endgroup%
{$\left.\llap{\phantom{%
\begingroup \smaller\smaller\smaller\begin{tabular}{@{}c@{}}%
0\\0\\0
\end{tabular}\endgroup%
}}\!\right]$}%
}%
\ifdim\wd\matricesbox>\halfwidth\myboxwidth=\hsize\else\myboxwidth=\halfwidth\fi
\vbox{%
\ifdim\myboxwidth=\hsize
\setbox\onelinebox=\hbox{%
\vbox{\hbox{%
$\Pi_{6,44}$ spans $L_{4.22}$%
}\hbox{%
$|\infty\infty2|2\infty\infty\rtimes D_{2}$%
}%
}%
\hfill\copy\matricesbox
}%
\ifdim\wd\onelinebox>\myboxwidth
\hbox to \myboxwidth{%
$\Pi_{6,44}$ spans $L_{4.22}$%
\hfil
$|\infty\infty2|2\infty\infty\rtimes D_{2}$%
}%
\box\matricesbox
\else
\hbox to \myboxwidth{%
\unhbox\onelinebox
}%
\fi
\else
\hbox to \myboxwidth{%
$\Pi_{6,44}$ spans $L_{4.22}$%
\hfil}%
\hbox to \myboxwidth{%
$|\infty\infty2|2\infty\infty\rtimes D_{2}$%
\hfil}%
\box\matricesbox
\fi
}%
\hfill\discretionary{}{}{}%
\setbox\matricesbox=\hbox{%
{$\left[\!\llap{\phantom{%
\begingroup \smaller\smaller\smaller\begin{tabular}{@{}c@{}}%
\phantom{0}\\\phantom{0}\\\phantom{0}
\end{tabular}\endgroup%
}}\right.$}%
\begingroup \smaller\smaller\smaller\begin{tabular}{@{}c@{}}%
-1\\\phantom{0}\\\phantom{0}
\end{tabular}\endgroup%
\kern3pt%
\begingroup \smaller\smaller\smaller\begin{tabular}{@{}c@{}}%
\phantom{0}\\1\\\phantom{0}
\end{tabular}\endgroup%
\kern3pt%
\begingroup \smaller\smaller\smaller\begin{tabular}{@{}c@{}}%
\phantom{0}\\\phantom{0}\\1
\end{tabular}\endgroup%
{$\left.\llap{\phantom{%
\begingroup \smaller\smaller\smaller\begin{tabular}{@{}c@{}}%
\phantom{0}\\\phantom{0}\\\phantom{0}
\end{tabular}\endgroup%
}}\!\right]$}%
{$\left[\!\llap{\phantom{%
\begingroup \smaller\smaller\smaller\begin{tabular}{@{}c@{}}%
0\\0\\0
\end{tabular}\endgroup%
}}\right.$}%
\begingroup \smaller\smaller\smaller\begin{tabular}{@{}c@{}}%
1\\1\\-1
\end{tabular}\endgroup%
\kern3pt%
\begingroup \smaller\smaller\smaller\begin{tabular}{@{}c@{}}%
4\\-2\\-4
\end{tabular}\endgroup%
\kern3pt%
\begingroup \smaller\smaller\smaller\begin{tabular}{@{}c@{}}%
4\\-4\\-2
\end{tabular}\endgroup%
{$\left.\llap{\phantom{%
\begingroup \smaller\smaller\smaller\begin{tabular}{@{}c@{}}%
0\\0\\0
\end{tabular}\endgroup%
}}\!\right]$}%
}%
\ifdim\wd\matricesbox>\halfwidth\myboxwidth=\hsize\else\myboxwidth=\halfwidth\fi
\vbox{%
\ifdim\myboxwidth=\hsize
\setbox\onelinebox=\hbox{%
\vbox{\hbox{%
$\Pi_{6,45}=\hbox{GN}_{38}$ spans $L_{1.9}$%
}\hbox{%
$\slashinfty\infty2\slashinfty2\infty\rtimes D_{2}$%
}%
}%
\hfill\copy\matricesbox
}%
\ifdim\wd\onelinebox>\myboxwidth
\hbox to \myboxwidth{%
$\Pi_{6,45}=\hbox{GN}_{38}$ spans $L_{1.9}$%
\hfil
$\slashinfty\infty2\slashinfty2\infty\rtimes D_{2}$%
}%
\box\matricesbox
\else
\hbox to \myboxwidth{%
\unhbox\onelinebox
}%
\fi
\else
\hbox to \myboxwidth{%
$\Pi_{6,45}=\hbox{GN}_{38}$ spans $L_{1.9}$%
\hfil}%
\hbox to \myboxwidth{%
$\slashinfty\infty2\slashinfty2\infty\rtimes D_{2}$%
\hfil}%
\box\matricesbox
\fi
}%
\hfill\discretionary{}{}{}%
\setbox\matricesbox=\hbox{%
{$\left[\!\llap{\phantom{%
\begingroup \smaller\smaller\smaller\begin{tabular}{@{}c@{}}%
\phantom{0}\\\phantom{0}\\\phantom{0}
\end{tabular}\endgroup%
}}\right.$}%
\begingroup \smaller\smaller\smaller\begin{tabular}{@{}c@{}}%
-1/5\\\phantom{0}\\\phantom{0}
\end{tabular}\endgroup%
\kern3pt%
\begingroup \smaller\smaller\smaller\begin{tabular}{@{}c@{}}%
\phantom{0}\\6/5\\\phantom{0}
\end{tabular}\endgroup%
\kern3pt%
\begingroup \smaller\smaller\smaller\begin{tabular}{@{}c@{}}%
\phantom{0}\\\phantom{0}\\30
\end{tabular}\endgroup%
{$\left.\llap{\phantom{%
\begingroup \smaller\smaller\smaller\begin{tabular}{@{}c@{}}%
\phantom{0}\\\phantom{0}\\\phantom{0}
\end{tabular}\endgroup%
}}\!\right]$}%
{$\left[\!\llap{\phantom{%
\begingroup \smaller\smaller\smaller\begin{tabular}{@{}c@{}}%
0\\0\\0
\end{tabular}\endgroup%
}}\right.$}%
\begingroup \smaller\smaller\smaller\begin{tabular}{@{}c@{}}%
1\\1\\0
\end{tabular}\endgroup%
\kern3pt%
\begingroup \smaller\smaller\smaller\begin{tabular}{@{}c@{}}%
24\\4\\2
\end{tabular}\endgroup%
\kern3pt%
\begingroup \smaller\smaller\smaller\begin{tabular}{@{}c@{}}%
12\\-3\\1
\end{tabular}\endgroup%
\kern3pt%
\begingroup \smaller\smaller\smaller\begin{tabular}{@{}c@{}}%
3\\-2\\0
\end{tabular}\endgroup%
{$\left.\llap{\phantom{%
\begingroup \smaller\smaller\smaller\begin{tabular}{@{}c@{}}%
0\\0\\0
\end{tabular}\endgroup%
}}\!\right]$}%
}%
\ifdim\wd\matricesbox>\halfwidth\myboxwidth=\hsize\else\myboxwidth=\halfwidth\fi
\vbox{%
\ifdim\myboxwidth=\hsize
\setbox\onelinebox=\hbox{%
\vbox{\hbox{%
$\Pi_{6,46}$ spans $L_{127.9}$%
}\hbox{%
$42|242|2\rtimes D_{2}$%
}%
}%
\hfill\copy\matricesbox
}%
\ifdim\wd\onelinebox>\myboxwidth
\hbox to \myboxwidth{%
$\Pi_{6,46}$ spans $L_{127.9}$%
\hfil
$42|242|2\rtimes D_{2}$%
}%
\box\matricesbox
\else
\hbox to \myboxwidth{%
\unhbox\onelinebox
}%
\fi
\else
\hbox to \myboxwidth{%
$\Pi_{6,46}$ spans $L_{127.9}$%
\hfil}%
\hbox to \myboxwidth{%
$42|242|2\rtimes D_{2}$%
\hfil}%
\box\matricesbox
\fi
}%
\hfill\discretionary{}{}{}%
\setbox\matricesbox=\hbox{%
{$\left[\!\llap{\phantom{%
\begingroup \smaller\smaller\smaller\begin{tabular}{@{}c@{}}%
\phantom{0}\\\phantom{0}\\\phantom{0}
\end{tabular}\endgroup%
}}\right.$}%
\begingroup \smaller\smaller\smaller\begin{tabular}{@{}c@{}}%
-1/2\\\phantom{0}\\\phantom{0}
\end{tabular}\endgroup%
\kern3pt%
\begingroup \smaller\smaller\smaller\begin{tabular}{@{}c@{}}%
\phantom{0}\\3/2\\-1/2
\end{tabular}\endgroup%
\kern3pt%
\begingroup \smaller\smaller\smaller\begin{tabular}{@{}c@{}}%
\phantom{0}\\-1/2\\7/2
\end{tabular}\endgroup%
{$\left.\llap{\phantom{%
\begingroup \smaller\smaller\smaller\begin{tabular}{@{}c@{}}%
\phantom{0}\\\phantom{0}\\\phantom{0}
\end{tabular}\endgroup%
}}\!\right]$}%
{$\left[\!\llap{\phantom{%
\begingroup \smaller\smaller\smaller\begin{tabular}{@{}c@{}}%
0\\0\\0
\end{tabular}\endgroup%
}}\right.$}%
\begingroup \smaller\smaller\smaller\begin{tabular}{@{}c@{}}%
2\\-1\\-1
\end{tabular}\endgroup%
\kern3pt%
\begingroup \smaller\smaller\smaller\begin{tabular}{@{}c@{}}%
5\\1\\-2
\end{tabular}\endgroup%
\kern3pt%
\begingroup \smaller\smaller\smaller\begin{tabular}{@{}c@{}}%
1\\1\\0
\end{tabular}\endgroup%
{$\left.\llap{\phantom{%
\begingroup \smaller\smaller\smaller\begin{tabular}{@{}c@{}}%
0\\0\\0
\end{tabular}\endgroup%
}}\!\right]$}%
}%
\ifdim\wd\matricesbox>\halfwidth\myboxwidth=\hsize\else\myboxwidth=\halfwidth\fi
\vbox{%
\ifdim\myboxwidth=\hsize
\setbox\onelinebox=\hbox{%
\vbox{\hbox{%
$\Pi_{6,47}$ spans $L_{124.8}$%
}\hbox{%
$222222\rtimes C_{2}$%
}%
}%
\hfill\copy\matricesbox
}%
\ifdim\wd\onelinebox>\myboxwidth
\hbox to \myboxwidth{%
$\Pi_{6,47}$ spans $L_{124.8}$%
\hfil
$222222\rtimes C_{2}$%
}%
\box\matricesbox
\else
\hbox to \myboxwidth{%
\unhbox\onelinebox
}%
\fi
\else
\hbox to \myboxwidth{%
$\Pi_{6,47}$ spans $L_{124.8}$%
\hfil}%
\hbox to \myboxwidth{%
$222222\rtimes C_{2}$%
\hfil}%
\box\matricesbox
\fi
}%
\hfill\discretionary{}{}{}%
\setbox\matricesbox=\hbox{%
{$\left[\!\llap{\phantom{%
\begingroup \smaller\smaller\smaller\begin{tabular}{@{}c@{}}%
\phantom{0}\\\phantom{0}\\\phantom{0}
\end{tabular}\endgroup%
}}\right.$}%
\begingroup \smaller\smaller\smaller\begin{tabular}{@{}c@{}}%
-1/8\\\phantom{0}\\\phantom{0}
\end{tabular}\endgroup%
\kern3pt%
\begingroup \smaller\smaller\smaller\begin{tabular}{@{}c@{}}%
\phantom{0}\\5/2\\-1/2
\end{tabular}\endgroup%
\kern3pt%
\begingroup \smaller\smaller\smaller\begin{tabular}{@{}c@{}}%
\phantom{0}\\-1/2\\53/2
\end{tabular}\endgroup%
{$\left.\llap{\phantom{%
\begingroup \smaller\smaller\smaller\begin{tabular}{@{}c@{}}%
\phantom{0}\\\phantom{0}\\\phantom{0}
\end{tabular}\endgroup%
}}\!\right]$}%
{$\left[\!\llap{\phantom{%
\begingroup \smaller\smaller\smaller\begin{tabular}{@{}c@{}}%
0\\0\\0
\end{tabular}\endgroup%
}}\right.$}%
\begingroup \smaller\smaller\smaller\begin{tabular}{@{}c@{}}%
12\\1\\-1
\end{tabular}\endgroup%
\kern3pt%
\begingroup \smaller\smaller\smaller\begin{tabular}{@{}c@{}}%
44\\-5\\-3
\end{tabular}\endgroup%
\kern3pt%
\begingroup \smaller\smaller\smaller\begin{tabular}{@{}c@{}}%
2\\-1\\0
\end{tabular}\endgroup%
{$\left.\llap{\phantom{%
\begingroup \smaller\smaller\smaller\begin{tabular}{@{}c@{}}%
0\\0\\0
\end{tabular}\endgroup%
}}\!\right]$}%
}%
\ifdim\wd\matricesbox>\halfwidth\myboxwidth=\hsize\else\myboxwidth=\halfwidth\fi
\vbox{%
\ifdim\myboxwidth=\hsize
\setbox\onelinebox=\hbox{%
\vbox{\hbox{%
$\Pi_{6,48}$ spans $L_{35.3}$%
}\hbox{%
$222222\rtimes C_{2}$%
}%
}%
\hfill\copy\matricesbox
}%
\ifdim\wd\onelinebox>\myboxwidth
\hbox to \myboxwidth{%
$\Pi_{6,48}$ spans $L_{35.3}$%
\hfil
$222222\rtimes C_{2}$%
}%
\box\matricesbox
\else
\hbox to \myboxwidth{%
\unhbox\onelinebox
}%
\fi
\else
\hbox to \myboxwidth{%
$\Pi_{6,48}$ spans $L_{35.3}$%
\hfil}%
\hbox to \myboxwidth{%
$222222\rtimes C_{2}$%
\hfil}%
\box\matricesbox
\fi
}%
\hfill\discretionary{}{}{}%
\setbox\matricesbox=\hbox{%
{$\left[\!\llap{\phantom{%
\begingroup \smaller\smaller\smaller\begin{tabular}{@{}c@{}}%
\phantom{0}\\\phantom{0}\\\phantom{0}
\end{tabular}\endgroup%
}}\right.$}%
\begingroup \smaller\smaller\smaller\begin{tabular}{@{}c@{}}%
-1/8\\\phantom{0}\\\phantom{0}
\end{tabular}\endgroup%
\kern3pt%
\begingroup \smaller\smaller\smaller\begin{tabular}{@{}c@{}}%
\phantom{0}\\5/2\\-1
\end{tabular}\endgroup%
\kern3pt%
\begingroup \smaller\smaller\smaller\begin{tabular}{@{}c@{}}%
\phantom{0}\\-1\\34
\end{tabular}\endgroup%
{$\left.\llap{\phantom{%
\begingroup \smaller\smaller\smaller\begin{tabular}{@{}c@{}}%
\phantom{0}\\\phantom{0}\\\phantom{0}
\end{tabular}\endgroup%
}}\!\right]$}%
{$\left[\!\llap{\phantom{%
\begingroup \smaller\smaller\smaller\begin{tabular}{@{}c@{}}%
0\\0\\0
\end{tabular}\endgroup%
}}\right.$}%
\begingroup \smaller\smaller\smaller\begin{tabular}{@{}c@{}}%
14\\-1\\1
\end{tabular}\endgroup%
\kern3pt%
\begingroup \smaller\smaller\smaller\begin{tabular}{@{}c@{}}%
32\\4\\2
\end{tabular}\endgroup%
\kern3pt%
\begingroup \smaller\smaller\smaller\begin{tabular}{@{}c@{}}%
2\\1\\0
\end{tabular}\endgroup%
{$\left.\llap{\phantom{%
\begingroup \smaller\smaller\smaller\begin{tabular}{@{}c@{}}%
0\\0\\0
\end{tabular}\endgroup%
}}\!\right]$}%
}%
\ifdim\wd\matricesbox>\halfwidth\myboxwidth=\hsize\else\myboxwidth=\halfwidth\fi
\vbox{%
\ifdim\myboxwidth=\hsize
\setbox\onelinebox=\hbox{%
\vbox{\hbox{%
$\Pi_{6,49}$ spans $L_{45.3}$%
}\hbox{%
$222222\rtimes C_{2}$%
}%
}%
\hfill\copy\matricesbox
}%
\ifdim\wd\onelinebox>\myboxwidth
\hbox to \myboxwidth{%
$\Pi_{6,49}$ spans $L_{45.3}$%
\hfil
$222222\rtimes C_{2}$%
}%
\box\matricesbox
\else
\hbox to \myboxwidth{%
\unhbox\onelinebox
}%
\fi
\else
\hbox to \myboxwidth{%
$\Pi_{6,49}$ spans $L_{45.3}$%
\hfil}%
\hbox to \myboxwidth{%
$222222\rtimes C_{2}$%
\hfil}%
\box\matricesbox
\fi
}%
\hfill\discretionary{}{}{}%
\setbox\matricesbox=\hbox{%
{$\left[\!\llap{\phantom{%
\begingroup \smaller\smaller\smaller\begin{tabular}{@{}c@{}}%
\phantom{0}\\\phantom{0}\\\phantom{0}
\end{tabular}\endgroup%
}}\right.$}%
\begingroup \smaller\smaller\smaller\begin{tabular}{@{}c@{}}%
-1/8\\\phantom{0}\\\phantom{0}
\end{tabular}\endgroup%
\kern3pt%
\begingroup \smaller\smaller\smaller\begin{tabular}{@{}c@{}}%
\phantom{0}\\5/2\\-1/2
\end{tabular}\endgroup%
\kern3pt%
\begingroup \smaller\smaller\smaller\begin{tabular}{@{}c@{}}%
\phantom{0}\\-1/2\\21/2
\end{tabular}\endgroup%
{$\left.\llap{\phantom{%
\begingroup \smaller\smaller\smaller\begin{tabular}{@{}c@{}}%
\phantom{0}\\\phantom{0}\\\phantom{0}
\end{tabular}\endgroup%
}}\!\right]$}%
{$\left[\!\llap{\phantom{%
\begingroup \smaller\smaller\smaller\begin{tabular}{@{}c@{}}%
0\\0\\0
\end{tabular}\endgroup%
}}\right.$}%
\begingroup \smaller\smaller\smaller\begin{tabular}{@{}c@{}}%
16\\-2\\-2
\end{tabular}\endgroup%
\kern3pt%
\begingroup \smaller\smaller\smaller\begin{tabular}{@{}c@{}}%
26\\2\\-3
\end{tabular}\endgroup%
\kern3pt%
\begingroup \smaller\smaller\smaller\begin{tabular}{@{}c@{}}%
2\\1\\0
\end{tabular}\endgroup%
{$\left.\llap{\phantom{%
\begingroup \smaller\smaller\smaller\begin{tabular}{@{}c@{}}%
0\\0\\0
\end{tabular}\endgroup%
}}\!\right]$}%
}%
\ifdim\wd\matricesbox>\halfwidth\myboxwidth=\hsize\else\myboxwidth=\halfwidth\fi
\vbox{%
\ifdim\myboxwidth=\hsize
\setbox\onelinebox=\hbox{%
\vbox{\hbox{%
$\Pi_{6,50}$ spans $L_{14.1}$%
}\hbox{%
$222222\rtimes C_{2}$%
}%
}%
\hfill\copy\matricesbox
}%
\ifdim\wd\onelinebox>\myboxwidth
\hbox to \myboxwidth{%
$\Pi_{6,50}$ spans $L_{14.1}$%
\hfil
$222222\rtimes C_{2}$%
}%
\box\matricesbox
\else
\hbox to \myboxwidth{%
\unhbox\onelinebox
}%
\fi
\else
\hbox to \myboxwidth{%
$\Pi_{6,50}$ spans $L_{14.1}$%
\hfil}%
\hbox to \myboxwidth{%
$222222\rtimes C_{2}$%
\hfil}%
\box\matricesbox
\fi
}%
\hfill\discretionary{}{}{}%
\setbox\matricesbox=\hbox{%
{$\left[\!\llap{\phantom{%
\begingroup \smaller\smaller\smaller\begin{tabular}{@{}c@{}}%
\phantom{0}\\\phantom{0}\\\phantom{0}
\end{tabular}\endgroup%
}}\right.$}%
\begingroup \smaller\smaller\smaller\begin{tabular}{@{}c@{}}%
-1/8\\\phantom{0}\\\phantom{0}
\end{tabular}\endgroup%
\kern3pt%
\begingroup \smaller\smaller\smaller\begin{tabular}{@{}c@{}}%
\phantom{0}\\21/2\\-3
\end{tabular}\endgroup%
\kern3pt%
\begingroup \smaller\smaller\smaller\begin{tabular}{@{}c@{}}%
\phantom{0}\\-3\\30
\end{tabular}\endgroup%
{$\left.\llap{\phantom{%
\begingroup \smaller\smaller\smaller\begin{tabular}{@{}c@{}}%
\phantom{0}\\\phantom{0}\\\phantom{0}
\end{tabular}\endgroup%
}}\!\right]$}%
{$\left[\!\llap{\phantom{%
\begingroup \smaller\smaller\smaller\begin{tabular}{@{}c@{}}%
0\\0\\0
\end{tabular}\endgroup%
}}\right.$}%
\begingroup \smaller\smaller\smaller\begin{tabular}{@{}c@{}}%
6\\1\\0
\end{tabular}\endgroup%
\kern3pt%
\begingroup \smaller\smaller\smaller\begin{tabular}{@{}c@{}}%
34\\3\\2
\end{tabular}\endgroup%
\kern3pt%
\begingroup \smaller\smaller\smaller\begin{tabular}{@{}c@{}}%
12\\0\\1
\end{tabular}\endgroup%
{$\left.\llap{\phantom{%
\begingroup \smaller\smaller\smaller\begin{tabular}{@{}c@{}}%
0\\0\\0
\end{tabular}\endgroup%
}}\!\right]$}%
}%
\ifdim\wd\matricesbox>\halfwidth\myboxwidth=\hsize\else\myboxwidth=\halfwidth\fi
\vbox{%
\ifdim\myboxwidth=\hsize
\setbox\onelinebox=\hbox{%
\vbox{\hbox{%
$\Pi_{6,51}$ spans $L_{48.4}$%
}\hbox{%
$224224\rtimes C_{2}$%
}%
}%
\hfill\copy\matricesbox
}%
\ifdim\wd\onelinebox>\myboxwidth
\hbox to \myboxwidth{%
$\Pi_{6,51}$ spans $L_{48.4}$%
\hfil
$224224\rtimes C_{2}$%
}%
\box\matricesbox
\else
\hbox to \myboxwidth{%
\unhbox\onelinebox
}%
\fi
\else
\hbox to \myboxwidth{%
$\Pi_{6,51}$ spans $L_{48.4}$%
\hfil}%
\hbox to \myboxwidth{%
$224224\rtimes C_{2}$%
\hfil}%
\box\matricesbox
\fi
}%
\hfill\discretionary{}{}{}%
\setbox\matricesbox=\hbox{%
{$\left[\!\llap{\phantom{%
\begingroup \smaller\smaller\smaller\begin{tabular}{@{}c@{}}%
\phantom{0}\\\phantom{0}\\\phantom{0}
\end{tabular}\endgroup%
}}\right.$}%
\begingroup \smaller\smaller\smaller\begin{tabular}{@{}c@{}}%
-1/8\\\phantom{0}\\\phantom{0}
\end{tabular}\endgroup%
\kern3pt%
\begingroup \smaller\smaller\smaller\begin{tabular}{@{}c@{}}%
\phantom{0}\\21/2\\-9/2
\end{tabular}\endgroup%
\kern3pt%
\begingroup \smaller\smaller\smaller\begin{tabular}{@{}c@{}}%
\phantom{0}\\-9/2\\117/2
\end{tabular}\endgroup%
{$\left.\llap{\phantom{%
\begingroup \smaller\smaller\smaller\begin{tabular}{@{}c@{}}%
\phantom{0}\\\phantom{0}\\\phantom{0}
\end{tabular}\endgroup%
}}\!\right]$}%
{$\left[\!\llap{\phantom{%
\begingroup \smaller\smaller\smaller\begin{tabular}{@{}c@{}}%
0\\0\\0
\end{tabular}\endgroup%
}}\right.$}%
\begingroup \smaller\smaller\smaller\begin{tabular}{@{}c@{}}%
6\\1\\0
\end{tabular}\endgroup%
\kern3pt%
\begingroup \smaller\smaller\smaller\begin{tabular}{@{}c@{}}%
22\\2\\1
\end{tabular}\endgroup%
\kern3pt%
\begingroup \smaller\smaller\smaller\begin{tabular}{@{}c@{}}%
18\\0\\1
\end{tabular}\endgroup%
{$\left.\llap{\phantom{%
\begingroup \smaller\smaller\smaller\begin{tabular}{@{}c@{}}%
0\\0\\0
\end{tabular}\endgroup%
}}\!\right]$}%
}%
\ifdim\wd\matricesbox>\halfwidth\myboxwidth=\hsize\else\myboxwidth=\halfwidth\fi
\vbox{%
\ifdim\myboxwidth=\hsize
\setbox\onelinebox=\hbox{%
\vbox{\hbox{%
$\Pi_{6,52}$ spans $L_{32.1}$%
}\hbox{%
$226226\rtimes C_{2}$%
}%
}%
\hfill\copy\matricesbox
}%
\ifdim\wd\onelinebox>\myboxwidth
\hbox to \myboxwidth{%
$\Pi_{6,52}$ spans $L_{32.1}$%
\hfil
$226226\rtimes C_{2}$%
}%
\box\matricesbox
\else
\hbox to \myboxwidth{%
\unhbox\onelinebox
}%
\fi
\else
\hbox to \myboxwidth{%
$\Pi_{6,52}$ spans $L_{32.1}$%
\hfil}%
\hbox to \myboxwidth{%
$226226\rtimes C_{2}$%
\hfil}%
\box\matricesbox
\fi
}%
\hfill\discretionary{}{}{}%
\setbox\matricesbox=\hbox{%
{$\left[\!\llap{\phantom{%
\begingroup \smaller\smaller\smaller\begin{tabular}{@{}c@{}}%
\phantom{0}\\\phantom{0}\\\phantom{0}
\end{tabular}\endgroup%
}}\right.$}%
\begingroup \smaller\smaller\smaller\begin{tabular}{@{}c@{}}%
-3/8\\\phantom{0}\\\phantom{0}
\end{tabular}\endgroup%
\kern3pt%
\begingroup \smaller\smaller\smaller\begin{tabular}{@{}c@{}}%
\phantom{0}\\7/2\\-1/2
\end{tabular}\endgroup%
\kern3pt%
\begingroup \smaller\smaller\smaller\begin{tabular}{@{}c@{}}%
\phantom{0}\\-1/2\\7/2
\end{tabular}\endgroup%
{$\left.\llap{\phantom{%
\begingroup \smaller\smaller\smaller\begin{tabular}{@{}c@{}}%
\phantom{0}\\\phantom{0}\\\phantom{0}
\end{tabular}\endgroup%
}}\!\right]$}%
{$\left[\!\llap{\phantom{%
\begingroup \smaller\smaller\smaller\begin{tabular}{@{}c@{}}%
0\\0\\0
\end{tabular}\endgroup%
}}\right.$}%
\begingroup \smaller\smaller\smaller\begin{tabular}{@{}c@{}}%
2\\0\\1
\end{tabular}\endgroup%
\kern3pt%
\begingroup \smaller\smaller\smaller\begin{tabular}{@{}c@{}}%
6\\-2\\1
\end{tabular}\endgroup%
\kern3pt%
\begingroup \smaller\smaller\smaller\begin{tabular}{@{}c@{}}%
8\\-3\\-1
\end{tabular}\endgroup%
{$\left.\llap{\phantom{%
\begingroup \smaller\smaller\smaller\begin{tabular}{@{}c@{}}%
0\\0\\0
\end{tabular}\endgroup%
}}\!\right]$}%
}%
\ifdim\wd\matricesbox>\halfwidth\myboxwidth=\hsize\else\myboxwidth=\halfwidth\fi
\vbox{%
\ifdim\myboxwidth=\hsize
\setbox\onelinebox=\hbox{%
\vbox{\hbox{%
$\Pi_{6,53}$ spans $L_{7.9}$%
}\hbox{%
$22\infty22\infty\rtimes C_{2}$%
}%
}%
\hfill\copy\matricesbox
}%
\ifdim\wd\onelinebox>\myboxwidth
\hbox to \myboxwidth{%
$\Pi_{6,53}$ spans $L_{7.9}$%
\hfil
$22\infty22\infty\rtimes C_{2}$%
}%
\box\matricesbox
\else
\hbox to \myboxwidth{%
\unhbox\onelinebox
}%
\fi
\else
\hbox to \myboxwidth{%
$\Pi_{6,53}$ spans $L_{7.9}$%
\hfil}%
\hbox to \myboxwidth{%
$22\infty22\infty\rtimes C_{2}$%
\hfil}%
\box\matricesbox
\fi
}%
\hfill\discretionary{}{}{}%
\setbox\matricesbox=\hbox{%
{$\left[\!\llap{\phantom{%
\begingroup \smaller\smaller\smaller\begin{tabular}{@{}c@{}}%
\phantom{0}\\\phantom{0}\\\phantom{0}
\end{tabular}\endgroup%
}}\right.$}%
\begingroup \smaller\smaller\smaller\begin{tabular}{@{}c@{}}%
-1/8\\\phantom{0}\\\phantom{0}
\end{tabular}\endgroup%
\kern3pt%
\begingroup \smaller\smaller\smaller\begin{tabular}{@{}c@{}}%
\phantom{0}\\5/2\\\phantom{0}
\end{tabular}\endgroup%
\kern3pt%
\begingroup \smaller\smaller\smaller\begin{tabular}{@{}c@{}}%
\phantom{0}\\\phantom{0}\\20
\end{tabular}\endgroup%
{$\left.\llap{\phantom{%
\begingroup \smaller\smaller\smaller\begin{tabular}{@{}c@{}}%
\phantom{0}\\\phantom{0}\\\phantom{0}
\end{tabular}\endgroup%
}}\!\right]$}%
{$\left[\!\llap{\phantom{%
\begingroup \smaller\smaller\smaller\begin{tabular}{@{}c@{}}%
0\\0\\0
\end{tabular}\endgroup%
}}\right.$}%
\begingroup \smaller\smaller\smaller\begin{tabular}{@{}c@{}}%
2\\-1\\0
\end{tabular}\endgroup%
\kern3pt%
\begingroup \smaller\smaller\smaller\begin{tabular}{@{}c@{}}%
80\\-8\\-6
\end{tabular}\endgroup%
\kern3pt%
\begingroup \smaller\smaller\smaller\begin{tabular}{@{}c@{}}%
10\\1\\-1
\end{tabular}\endgroup%
{$\left.\llap{\phantom{%
\begingroup \smaller\smaller\smaller\begin{tabular}{@{}c@{}}%
0\\0\\0
\end{tabular}\endgroup%
}}\!\right]$}%
}%
\ifdim\wd\matricesbox>\halfwidth\myboxwidth=\hsize\else\myboxwidth=\halfwidth\fi
\vbox{%
\ifdim\myboxwidth=\hsize
\setbox\onelinebox=\hbox{%
\vbox{\hbox{%
$\Pi_{6,54}$ spans $L_{6.5}$%
}\hbox{%
$222222\rtimes C_{2}$%
}%
}%
\hfill\copy\matricesbox
}%
\ifdim\wd\onelinebox>\myboxwidth
\hbox to \myboxwidth{%
$\Pi_{6,54}$ spans $L_{6.5}$%
\hfil
$222222\rtimes C_{2}$%
}%
\box\matricesbox
\else
\hbox to \myboxwidth{%
\unhbox\onelinebox
}%
\fi
\else
\hbox to \myboxwidth{%
$\Pi_{6,54}$ spans $L_{6.5}$%
\hfil}%
\hbox to \myboxwidth{%
$222222\rtimes C_{2}$%
\hfil}%
\box\matricesbox
\fi
}%
\hfill\discretionary{}{}{}%
\setbox\matricesbox=\hbox{%
{$\left[\!\llap{\phantom{%
\begingroup \smaller\smaller\smaller\begin{tabular}{@{}c@{}}%
\phantom{0}\\\phantom{0}\\\phantom{0}
\end{tabular}\endgroup%
}}\right.$}%
\begingroup \smaller\smaller\smaller\begin{tabular}{@{}c@{}}%
-1/8\\\phantom{0}\\\phantom{0}
\end{tabular}\endgroup%
\kern3pt%
\begingroup \smaller\smaller\smaller\begin{tabular}{@{}c@{}}%
\phantom{0}\\6\\-3
\end{tabular}\endgroup%
\kern3pt%
\begingroup \smaller\smaller\smaller\begin{tabular}{@{}c@{}}%
\phantom{0}\\-3\\21/2
\end{tabular}\endgroup%
{$\left.\llap{\phantom{%
\begingroup \smaller\smaller\smaller\begin{tabular}{@{}c@{}}%
\phantom{0}\\\phantom{0}\\\phantom{0}
\end{tabular}\endgroup%
}}\!\right]$}%
{$\left[\!\llap{\phantom{%
\begingroup \smaller\smaller\smaller\begin{tabular}{@{}c@{}}%
0\\0\\0
\end{tabular}\endgroup%
}}\right.$}%
\begingroup \smaller\smaller\smaller\begin{tabular}{@{}c@{}}%
4\\1\\0
\end{tabular}\endgroup%
\kern3pt%
\begingroup \smaller\smaller\smaller\begin{tabular}{@{}c@{}}%
36\\1\\-4
\end{tabular}\endgroup%
\kern3pt%
\begingroup \smaller\smaller\smaller\begin{tabular}{@{}c@{}}%
6\\-1\\-1
\end{tabular}\endgroup%
{$\left.\llap{\phantom{%
\begingroup \smaller\smaller\smaller\begin{tabular}{@{}c@{}}%
0\\0\\0
\end{tabular}\endgroup%
}}\!\right]$}%
}%
\ifdim\wd\matricesbox>\halfwidth\myboxwidth=\hsize\else\myboxwidth=\halfwidth\fi
\vbox{%
\ifdim\myboxwidth=\hsize
\setbox\onelinebox=\hbox{%
\vbox{\hbox{%
$\Pi_{6,55}$ spans $L_{154.1}$%
}\hbox{%
$222222\rtimes C_{2}$%
}%
}%
\hfill\copy\matricesbox
}%
\ifdim\wd\onelinebox>\myboxwidth
\hbox to \myboxwidth{%
$\Pi_{6,55}$ spans $L_{154.1}$%
\hfil
$222222\rtimes C_{2}$%
}%
\box\matricesbox
\else
\hbox to \myboxwidth{%
\unhbox\onelinebox
}%
\fi
\else
\hbox to \myboxwidth{%
$\Pi_{6,55}$ spans $L_{154.1}$%
\hfil}%
\hbox to \myboxwidth{%
$222222\rtimes C_{2}$%
\hfil}%
\box\matricesbox
\fi
}%
\hfill\discretionary{}{}{}%
\setbox\matricesbox=\hbox{%
{$\left[\!\llap{\phantom{%
\begingroup \smaller\smaller\smaller\begin{tabular}{@{}c@{}}%
\phantom{0}\\\phantom{0}\\\phantom{0}
\end{tabular}\endgroup%
}}\right.$}%
\begingroup \smaller\smaller\smaller\begin{tabular}{@{}c@{}}%
-1/8\\\phantom{0}\\\phantom{0}
\end{tabular}\endgroup%
\kern3pt%
\begingroup \smaller\smaller\smaller\begin{tabular}{@{}c@{}}%
\phantom{0}\\21/2\\-3
\end{tabular}\endgroup%
\kern3pt%
\begingroup \smaller\smaller\smaller\begin{tabular}{@{}c@{}}%
\phantom{0}\\-3\\18
\end{tabular}\endgroup%
{$\left.\llap{\phantom{%
\begingroup \smaller\smaller\smaller\begin{tabular}{@{}c@{}}%
\phantom{0}\\\phantom{0}\\\phantom{0}
\end{tabular}\endgroup%
}}\!\right]$}%
{$\left[\!\llap{\phantom{%
\begingroup \smaller\smaller\smaller\begin{tabular}{@{}c@{}}%
0\\0\\0
\end{tabular}\endgroup%
}}\right.$}%
\begingroup \smaller\smaller\smaller\begin{tabular}{@{}c@{}}%
10\\-1\\-1
\end{tabular}\endgroup%
\kern3pt%
\begingroup \smaller\smaller\smaller\begin{tabular}{@{}c@{}}%
40\\2\\-3
\end{tabular}\endgroup%
\kern3pt%
\begingroup \smaller\smaller\smaller\begin{tabular}{@{}c@{}}%
6\\1\\0
\end{tabular}\endgroup%
{$\left.\llap{\phantom{%
\begingroup \smaller\smaller\smaller\begin{tabular}{@{}c@{}}%
0\\0\\0
\end{tabular}\endgroup%
}}\!\right]$}%
}%
\ifdim\wd\matricesbox>\halfwidth\myboxwidth=\hsize\else\myboxwidth=\halfwidth\fi
\vbox{%
\ifdim\myboxwidth=\hsize
\setbox\onelinebox=\hbox{%
\vbox{\hbox{%
$\Pi_{6,56}$ spans $L_{30.15}$%
}\hbox{%
$\infty22\infty22\rtimes C_{2}$%
}%
}%
\hfill\copy\matricesbox
}%
\ifdim\wd\onelinebox>\myboxwidth
\hbox to \myboxwidth{%
$\Pi_{6,56}$ spans $L_{30.15}$%
\hfil
$\infty22\infty22\rtimes C_{2}$%
}%
\box\matricesbox
\else
\hbox to \myboxwidth{%
\unhbox\onelinebox
}%
\fi
\else
\hbox to \myboxwidth{%
$\Pi_{6,56}$ spans $L_{30.15}$%
\hfil}%
\hbox to \myboxwidth{%
$\infty22\infty22\rtimes C_{2}$%
\hfil}%
\box\matricesbox
\fi
}%
\hfill\discretionary{}{}{}%
\setbox\matricesbox=\hbox{%
{$\left[\!\llap{\phantom{%
\begingroup \smaller\smaller\smaller\begin{tabular}{@{}c@{}}%
\phantom{0}\\\phantom{0}\\\phantom{0}
\end{tabular}\endgroup%
}}\right.$}%
\begingroup \smaller\smaller\smaller\begin{tabular}{@{}c@{}}%
-1/8\\\phantom{0}\\\phantom{0}
\end{tabular}\endgroup%
\kern3pt%
\begingroup \smaller\smaller\smaller\begin{tabular}{@{}c@{}}%
\phantom{0}\\5/2\\-1
\end{tabular}\endgroup%
\kern3pt%
\begingroup \smaller\smaller\smaller\begin{tabular}{@{}c@{}}%
\phantom{0}\\-1\\18
\end{tabular}\endgroup%
{$\left.\llap{\phantom{%
\begingroup \smaller\smaller\smaller\begin{tabular}{@{}c@{}}%
\phantom{0}\\\phantom{0}\\\phantom{0}
\end{tabular}\endgroup%
}}\!\right]$}%
{$\left[\!\llap{\phantom{%
\begingroup \smaller\smaller\smaller\begin{tabular}{@{}c@{}}%
0\\0\\0
\end{tabular}\endgroup%
}}\right.$}%
\begingroup \smaller\smaller\smaller\begin{tabular}{@{}c@{}}%
22\\-3\\-2
\end{tabular}\endgroup%
\kern3pt%
\begingroup \smaller\smaller\smaller\begin{tabular}{@{}c@{}}%
88\\6\\-7
\end{tabular}\endgroup%
\kern3pt%
\begingroup \smaller\smaller\smaller\begin{tabular}{@{}c@{}}%
2\\1\\0
\end{tabular}\endgroup%
{$\left.\llap{\phantom{%
\begingroup \smaller\smaller\smaller\begin{tabular}{@{}c@{}}%
0\\0\\0
\end{tabular}\endgroup%
}}\!\right]$}%
}%
\ifdim\wd\matricesbox>\halfwidth\myboxwidth=\hsize\else\myboxwidth=\halfwidth\fi
\vbox{%
\ifdim\myboxwidth=\hsize
\setbox\onelinebox=\hbox{%
\vbox{\hbox{%
$\Pi_{6,57}$ spans $L_{26.1}$%
}\hbox{%
$\infty22\infty22\rtimes C_{2}$%
}%
}%
\hfill\copy\matricesbox
}%
\ifdim\wd\onelinebox>\myboxwidth
\hbox to \myboxwidth{%
$\Pi_{6,57}$ spans $L_{26.1}$%
\hfil
$\infty22\infty22\rtimes C_{2}$%
}%
\box\matricesbox
\else
\hbox to \myboxwidth{%
\unhbox\onelinebox
}%
\fi
\else
\hbox to \myboxwidth{%
$\Pi_{6,57}$ spans $L_{26.1}$%
\hfil}%
\hbox to \myboxwidth{%
$\infty22\infty22\rtimes C_{2}$%
\hfil}%
\box\matricesbox
\fi
}%
\hfill\discretionary{}{}{}%
\setbox\matricesbox=\hbox{%
{$\left[\!\llap{\phantom{%
\begingroup \smaller\smaller\smaller\begin{tabular}{@{}c@{}}%
\phantom{0}\\\phantom{0}\\\phantom{0}
\end{tabular}\endgroup%
}}\right.$}%
\begingroup \smaller\smaller\smaller\begin{tabular}{@{}c@{}}%
-1/11\\\phantom{0}\\\phantom{0}
\end{tabular}\endgroup%
\kern3pt%
\begingroup \smaller\smaller\smaller\begin{tabular}{@{}c@{}}%
\phantom{0}\\42/11\\-6/11
\end{tabular}\endgroup%
\kern3pt%
\begingroup \smaller\smaller\smaller\begin{tabular}{@{}c@{}}%
\phantom{0}\\-6/11\\48/11
\end{tabular}\endgroup%
{$\left.\llap{\phantom{%
\begingroup \smaller\smaller\smaller\begin{tabular}{@{}c@{}}%
\phantom{0}\\\phantom{0}\\\phantom{0}
\end{tabular}\endgroup%
}}\!\right]$}%
{$\left[\!\llap{\phantom{%
\begingroup \smaller\smaller\smaller\begin{tabular}{@{}c@{}}%
0\\0\\0
\end{tabular}\endgroup%
}}\right.$}%
\begingroup \smaller\smaller\smaller\begin{tabular}{@{}c@{}}%
3\\1\\0
\end{tabular}\endgroup%
\kern3pt%
\begingroup \smaller\smaller\smaller\begin{tabular}{@{}c@{}}%
40\\2\\-6
\end{tabular}\endgroup%
\kern3pt%
\begingroup \smaller\smaller\smaller\begin{tabular}{@{}c@{}}%
24\\-2\\-4
\end{tabular}\endgroup%
\kern3pt%
\begingroup \smaller\smaller\smaller\begin{tabular}{@{}c@{}}%
15\\-3\\-1
\end{tabular}\endgroup%
\kern3pt%
\begingroup \smaller\smaller\smaller\begin{tabular}{@{}c@{}}%
6\\-1\\1
\end{tabular}\endgroup%
\kern3pt%
\begingroup \smaller\smaller\smaller\begin{tabular}{@{}c@{}}%
10\\1\\2
\end{tabular}\endgroup%
{$\left.\llap{\phantom{%
\begingroup \smaller\smaller\smaller\begin{tabular}{@{}c@{}}%
0\\0\\0
\end{tabular}\endgroup%
}}\!\right]$}%
}%
\ifdim\wd\matricesbox>\halfwidth\myboxwidth=\hsize\else\myboxwidth=\halfwidth\fi
\vbox{%
\ifdim\myboxwidth=\hsize
\setbox\onelinebox=\hbox{%
\vbox{\hbox{%
$\Pi_{6,58}$ spans $L_{17.14}$%
}\hbox{%
$222222$%
}%
}%
\hfill\copy\matricesbox
}%
\ifdim\wd\onelinebox>\myboxwidth
\hbox to \myboxwidth{%
$\Pi_{6,58}$ spans $L_{17.14}$%
\hfil
$222222$%
}%
\box\matricesbox
\else
\hbox to \myboxwidth{%
\unhbox\onelinebox
}%
\fi
\else
\hbox to \myboxwidth{%
$\Pi_{6,58}$ spans $L_{17.14}$%
\hfil}%
\hbox to \myboxwidth{%
$222222$%
\hfil}%
\box\matricesbox
\fi
}%
\hfill\discretionary{}{}{}%

\vskip2pt\hrule\vskip2pt

\leavevmode\setbox\matricesbox=\hbox{%
{$\left[\!\llap{\phantom{%
\begingroup \smaller\smaller\smaller\begin{tabular}{@{}c@{}}%
\phantom{0}\\\phantom{0}\\\phantom{0}
\end{tabular}\endgroup%
}}\right.$}%
\begingroup \smaller\smaller\smaller\begin{tabular}{@{}c@{}}%
-1/2\\\phantom{0}\\\phantom{0}
\end{tabular}\endgroup%
\kern3pt%
\begingroup \smaller\smaller\smaller\begin{tabular}{@{}c@{}}%
\phantom{0}\\6\\\phantom{0}
\end{tabular}\endgroup%
\kern3pt%
\begingroup \smaller\smaller\smaller\begin{tabular}{@{}c@{}}%
\phantom{0}\\\phantom{0}\\3/2
\end{tabular}\endgroup%
{$\left.\llap{\phantom{%
\begingroup \smaller\smaller\smaller\begin{tabular}{@{}c@{}}%
\phantom{0}\\\phantom{0}\\\phantom{0}
\end{tabular}\endgroup%
}}\!\right]$}%
{$\left[\!\llap{\phantom{%
\begingroup \smaller\smaller\smaller\begin{tabular}{@{}c@{}}%
0\\0\\0
\end{tabular}\endgroup%
}}\right.$}%
\begingroup \smaller\smaller\smaller\begin{tabular}{@{}c@{}}%
6\\-2\\0
\end{tabular}\endgroup%
\kern3pt%
\begingroup \smaller\smaller\smaller\begin{tabular}{@{}c@{}}%
24\\-6\\-8
\end{tabular}\endgroup%
\kern3pt%
\begingroup \smaller\smaller\smaller\begin{tabular}{@{}c@{}}%
1\\0\\-1
\end{tabular}\endgroup%
\kern3pt%
\begingroup \smaller\smaller\smaller\begin{tabular}{@{}c@{}}%
3\\1\\-1
\end{tabular}\endgroup%
{$\left.\llap{\phantom{%
\begingroup \smaller\smaller\smaller\begin{tabular}{@{}c@{}}%
0\\0\\0
\end{tabular}\endgroup%
}}\!\right]$}%
}%
\ifdim\wd\matricesbox>\halfwidth\myboxwidth=\hsize\else\myboxwidth=\halfwidth\fi
\vbox{%
\ifdim\myboxwidth=\hsize
\setbox\onelinebox=\hbox{%
\vbox{\hbox{%
$\Pi_{7,1}$ spans $L_{123.6}$%
}\hbox{%
$22|222\slashtwo2\rtimes D_{2}$%
}%
}%
\hfill\copy\matricesbox
}%
\ifdim\wd\onelinebox>\myboxwidth
\hbox to \myboxwidth{%
$\Pi_{7,1}$ spans $L_{123.6}$%
\hfil
$22|222\slashtwo2\rtimes D_{2}$%
}%
\box\matricesbox
\else
\hbox to \myboxwidth{%
\unhbox\onelinebox
}%
\fi
\else
\hbox to \myboxwidth{%
$\Pi_{7,1}$ spans $L_{123.6}$%
\hfil}%
\hbox to \myboxwidth{%
$22|222\slashtwo2\rtimes D_{2}$%
\hfil}%
\box\matricesbox
\fi
}%
\hfill\discretionary{}{}{}%
\setbox\matricesbox=\hbox{%
{$\left[\!\llap{\phantom{%
\begingroup \smaller\smaller\smaller\begin{tabular}{@{}c@{}}%
\phantom{0}\\\phantom{0}\\\phantom{0}
\end{tabular}\endgroup%
}}\right.$}%
\begingroup \smaller\smaller\smaller\begin{tabular}{@{}c@{}}%
-1\\\phantom{0}\\\phantom{0}
\end{tabular}\endgroup%
\kern3pt%
\begingroup \smaller\smaller\smaller\begin{tabular}{@{}c@{}}%
\phantom{0}\\3/2\\\phantom{0}
\end{tabular}\endgroup%
\kern3pt%
\begingroup \smaller\smaller\smaller\begin{tabular}{@{}c@{}}%
\phantom{0}\\\phantom{0}\\1/2
\end{tabular}\endgroup%
{$\left.\llap{\phantom{%
\begingroup \smaller\smaller\smaller\begin{tabular}{@{}c@{}}%
\phantom{0}\\\phantom{0}\\\phantom{0}
\end{tabular}\endgroup%
}}\!\right]$}%
{$\left[\!\llap{\phantom{%
\begingroup \smaller\smaller\smaller\begin{tabular}{@{}c@{}}%
0\\0\\0
\end{tabular}\endgroup%
}}\right.$}%
\begingroup \smaller\smaller\smaller\begin{tabular}{@{}c@{}}%
2\\2\\0
\end{tabular}\endgroup%
\kern3pt%
\begingroup \smaller\smaller\smaller\begin{tabular}{@{}c@{}}%
6\\4\\6
\end{tabular}\endgroup%
\kern3pt%
\begingroup \smaller\smaller\smaller\begin{tabular}{@{}c@{}}%
1\\0\\2
\end{tabular}\endgroup%
\kern3pt%
\begingroup \smaller\smaller\smaller\begin{tabular}{@{}c@{}}%
1\\-1\\1
\end{tabular}\endgroup%
{$\left.\llap{\phantom{%
\begingroup \smaller\smaller\smaller\begin{tabular}{@{}c@{}}%
0\\0\\0
\end{tabular}\endgroup%
}}\!\right]$}%
}%
\ifdim\wd\matricesbox>\halfwidth\myboxwidth=\hsize\else\myboxwidth=\halfwidth\fi
\vbox{%
\ifdim\myboxwidth=\hsize
\setbox\onelinebox=\hbox{%
\vbox{\hbox{%
$\Pi_{7,2}$ spans $L_{3.2}$%
}\hbox{%
$22|222\slashtwo2\rtimes D_{2}$%
}%
}%
\hfill\copy\matricesbox
}%
\ifdim\wd\onelinebox>\myboxwidth
\hbox to \myboxwidth{%
$\Pi_{7,2}$ spans $L_{3.2}$%
\hfil
$22|222\slashtwo2\rtimes D_{2}$%
}%
\box\matricesbox
\else
\hbox to \myboxwidth{%
\unhbox\onelinebox
}%
\fi
\else
\hbox to \myboxwidth{%
$\Pi_{7,2}$ spans $L_{3.2}$%
\hfil}%
\hbox to \myboxwidth{%
$22|222\slashtwo2\rtimes D_{2}$%
\hfil}%
\box\matricesbox
\fi
}%
\hfill\discretionary{}{}{}%
\setbox\matricesbox=\hbox{%
{$\left[\!\llap{\phantom{%
\begingroup \smaller\smaller\smaller\begin{tabular}{@{}c@{}}%
\phantom{0}\\\phantom{0}\\\phantom{0}
\end{tabular}\endgroup%
}}\right.$}%
\begingroup \smaller\smaller\smaller\begin{tabular}{@{}c@{}}%
-1/4\\\phantom{0}\\\phantom{0}
\end{tabular}\endgroup%
\kern3pt%
\begingroup \smaller\smaller\smaller\begin{tabular}{@{}c@{}}%
\phantom{0}\\15/4\\\phantom{0}
\end{tabular}\endgroup%
\kern3pt%
\begingroup \smaller\smaller\smaller\begin{tabular}{@{}c@{}}%
\phantom{0}\\\phantom{0}\\3/2
\end{tabular}\endgroup%
{$\left.\llap{\phantom{%
\begingroup \smaller\smaller\smaller\begin{tabular}{@{}c@{}}%
\phantom{0}\\\phantom{0}\\\phantom{0}
\end{tabular}\endgroup%
}}\!\right]$}%
{$\left[\!\llap{\phantom{%
\begingroup \smaller\smaller\smaller\begin{tabular}{@{}c@{}}%
0\\0\\0
\end{tabular}\endgroup%
}}\right.$}%
\begingroup \smaller\smaller\smaller\begin{tabular}{@{}c@{}}%
6\\2\\0
\end{tabular}\endgroup%
\kern3pt%
\begingroup \smaller\smaller\smaller\begin{tabular}{@{}c@{}}%
15\\3\\5
\end{tabular}\endgroup%
\kern3pt%
\begingroup \smaller\smaller\smaller\begin{tabular}{@{}c@{}}%
8\\0\\4
\end{tabular}\endgroup%
\kern3pt%
\begingroup \smaller\smaller\smaller\begin{tabular}{@{}c@{}}%
3\\-1\\1
\end{tabular}\endgroup%
{$\left.\llap{\phantom{%
\begingroup \smaller\smaller\smaller\begin{tabular}{@{}c@{}}%
0\\0\\0
\end{tabular}\endgroup%
}}\!\right]$}%
}%
\ifdim\wd\matricesbox>\halfwidth\myboxwidth=\hsize\else\myboxwidth=\halfwidth\fi
\vbox{%
\ifdim\myboxwidth=\hsize
\setbox\onelinebox=\hbox{%
\vbox{\hbox{%
$\Pi_{7,3}$ spans $L_{127.6}$%
}\hbox{%
$|222\slashtwo222\rtimes D_{2}$%
}%
}%
\hfill\copy\matricesbox
}%
\ifdim\wd\onelinebox>\myboxwidth
\hbox to \myboxwidth{%
$\Pi_{7,3}$ spans $L_{127.6}$%
\hfil
$|222\slashtwo222\rtimes D_{2}$%
}%
\box\matricesbox
\else
\hbox to \myboxwidth{%
\unhbox\onelinebox
}%
\fi
\else
\hbox to \myboxwidth{%
$\Pi_{7,3}$ spans $L_{127.6}$%
\hfil}%
\hbox to \myboxwidth{%
$|222\slashtwo222\rtimes D_{2}$%
\hfil}%
\box\matricesbox
\fi
}%
\hfill\discretionary{}{}{}%
\setbox\matricesbox=\hbox{%
{$\left[\!\llap{\phantom{%
\begingroup \smaller\smaller\smaller\begin{tabular}{@{}c@{}}%
\phantom{0}\\\phantom{0}\\\phantom{0}
\end{tabular}\endgroup%
}}\right.$}%
\begingroup \smaller\smaller\smaller\begin{tabular}{@{}c@{}}%
-1/5\\\phantom{0}\\\phantom{0}
\end{tabular}\endgroup%
\kern3pt%
\begingroup \smaller\smaller\smaller\begin{tabular}{@{}c@{}}%
\phantom{0}\\21/5\\\phantom{0}
\end{tabular}\endgroup%
\kern3pt%
\begingroup \smaller\smaller\smaller\begin{tabular}{@{}c@{}}%
\phantom{0}\\\phantom{0}\\1
\end{tabular}\endgroup%
{$\left.\llap{\phantom{%
\begingroup \smaller\smaller\smaller\begin{tabular}{@{}c@{}}%
\phantom{0}\\\phantom{0}\\\phantom{0}
\end{tabular}\endgroup%
}}\!\right]$}%
{$\left[\!\llap{\phantom{%
\begingroup \smaller\smaller\smaller\begin{tabular}{@{}c@{}}%
0\\0\\0
\end{tabular}\endgroup%
}}\right.$}%
\begingroup \smaller\smaller\smaller\begin{tabular}{@{}c@{}}%
8\\2\\-2
\end{tabular}\endgroup%
\kern3pt%
\begingroup \smaller\smaller\smaller\begin{tabular}{@{}c@{}}%
14\\1\\-7
\end{tabular}\endgroup%
\kern3pt%
\begingroup \smaller\smaller\smaller\begin{tabular}{@{}c@{}}%
6\\-1\\-3
\end{tabular}\endgroup%
\kern3pt%
\begingroup \smaller\smaller\smaller\begin{tabular}{@{}c@{}}%
7\\-2\\0
\end{tabular}\endgroup%
{$\left.\llap{\phantom{%
\begingroup \smaller\smaller\smaller\begin{tabular}{@{}c@{}}%
0\\0\\0
\end{tabular}\endgroup%
}}\!\right]$}%
}%
\ifdim\wd\matricesbox>\halfwidth\myboxwidth=\hsize\else\myboxwidth=\halfwidth\fi
\vbox{%
\ifdim\myboxwidth=\hsize
\setbox\onelinebox=\hbox{%
\vbox{\hbox{%
$\Pi_{7,4}$ spans $L_{24.8}$%
}\hbox{%
$22\slashtwo222|2\rtimes D_{2}$%
}%
}%
\hfill\copy\matricesbox
}%
\ifdim\wd\onelinebox>\myboxwidth
\hbox to \myboxwidth{%
$\Pi_{7,4}$ spans $L_{24.8}$%
\hfil
$22\slashtwo222|2\rtimes D_{2}$%
}%
\box\matricesbox
\else
\hbox to \myboxwidth{%
\unhbox\onelinebox
}%
\fi
\else
\hbox to \myboxwidth{%
$\Pi_{7,4}$ spans $L_{24.8}$%
\hfil}%
\hbox to \myboxwidth{%
$22\slashtwo222|2\rtimes D_{2}$%
\hfil}%
\box\matricesbox
\fi
}%
\hfill\discretionary{}{}{}%
\setbox\matricesbox=\hbox{%
{$\left[\!\llap{\phantom{%
\begingroup \smaller\smaller\smaller\begin{tabular}{@{}c@{}}%
\phantom{0}\\\phantom{0}\\\phantom{0}
\end{tabular}\endgroup%
}}\right.$}%
\begingroup \smaller\smaller\smaller\begin{tabular}{@{}c@{}}%
-1\\\phantom{0}\\\phantom{0}
\end{tabular}\endgroup%
\kern3pt%
\begingroup \smaller\smaller\smaller\begin{tabular}{@{}c@{}}%
\phantom{0}\\1/2\\\phantom{0}
\end{tabular}\endgroup%
\kern3pt%
\begingroup \smaller\smaller\smaller\begin{tabular}{@{}c@{}}%
\phantom{0}\\\phantom{0}\\3/2
\end{tabular}\endgroup%
{$\left.\llap{\phantom{%
\begingroup \smaller\smaller\smaller\begin{tabular}{@{}c@{}}%
\phantom{0}\\\phantom{0}\\\phantom{0}
\end{tabular}\endgroup%
}}\!\right]$}%
{$\left[\!\llap{\phantom{%
\begingroup \smaller\smaller\smaller\begin{tabular}{@{}c@{}}%
0\\0\\0
\end{tabular}\endgroup%
}}\right.$}%
\begingroup \smaller\smaller\smaller\begin{tabular}{@{}c@{}}%
1\\-2\\0
\end{tabular}\endgroup%
\kern3pt%
\begingroup \smaller\smaller\smaller\begin{tabular}{@{}c@{}}%
6\\-6\\-4
\end{tabular}\endgroup%
\kern3pt%
\begingroup \smaller\smaller\smaller\begin{tabular}{@{}c@{}}%
2\\0\\-2
\end{tabular}\endgroup%
\kern3pt%
\begingroup \smaller\smaller\smaller\begin{tabular}{@{}c@{}}%
2\\3\\-1
\end{tabular}\endgroup%
{$\left.\llap{\phantom{%
\begingroup \smaller\smaller\smaller\begin{tabular}{@{}c@{}}%
0\\0\\0
\end{tabular}\endgroup%
}}\!\right]$}%
}%
\ifdim\wd\matricesbox>\halfwidth\myboxwidth=\hsize\else\myboxwidth=\halfwidth\fi
\vbox{%
\ifdim\myboxwidth=\hsize
\setbox\onelinebox=\hbox{%
\vbox{\hbox{%
$\Pi_{7,5}$ spans $L_{3.2}$%
}\hbox{%
$22|223\slashthree3\rtimes D_{2}$%
}%
}%
\hfill\copy\matricesbox
}%
\ifdim\wd\onelinebox>\myboxwidth
\hbox to \myboxwidth{%
$\Pi_{7,5}$ spans $L_{3.2}$%
\hfil
$22|223\slashthree3\rtimes D_{2}$%
}%
\box\matricesbox
\else
\hbox to \myboxwidth{%
\unhbox\onelinebox
}%
\fi
\else
\hbox to \myboxwidth{%
$\Pi_{7,5}$ spans $L_{3.2}$%
\hfil}%
\hbox to \myboxwidth{%
$22|223\slashthree3\rtimes D_{2}$%
\hfil}%
\box\matricesbox
\fi
}%
\hfill\discretionary{}{}{}%
\setbox\matricesbox=\hbox{%
{$\left[\!\llap{\phantom{%
\begingroup \smaller\smaller\smaller\begin{tabular}{@{}c@{}}%
\phantom{0}\\\phantom{0}\\\phantom{0}
\end{tabular}\endgroup%
}}\right.$}%
\begingroup \smaller\smaller\smaller\begin{tabular}{@{}c@{}}%
-1\\\phantom{0}\\\phantom{0}
\end{tabular}\endgroup%
\kern3pt%
\begingroup \smaller\smaller\smaller\begin{tabular}{@{}c@{}}%
\phantom{0}\\1/2\\\phantom{0}
\end{tabular}\endgroup%
\kern3pt%
\begingroup \smaller\smaller\smaller\begin{tabular}{@{}c@{}}%
\phantom{0}\\\phantom{0}\\3/2
\end{tabular}\endgroup%
{$\left.\llap{\phantom{%
\begingroup \smaller\smaller\smaller\begin{tabular}{@{}c@{}}%
\phantom{0}\\\phantom{0}\\\phantom{0}
\end{tabular}\endgroup%
}}\!\right]$}%
{$\left[\!\llap{\phantom{%
\begingroup \smaller\smaller\smaller\begin{tabular}{@{}c@{}}%
0\\0\\0
\end{tabular}\endgroup%
}}\right.$}%
\begingroup \smaller\smaller\smaller\begin{tabular}{@{}c@{}}%
1\\2\\0
\end{tabular}\endgroup%
\kern3pt%
\begingroup \smaller\smaller\smaller\begin{tabular}{@{}c@{}}%
1\\1\\1
\end{tabular}\endgroup%
\kern3pt%
\begingroup \smaller\smaller\smaller\begin{tabular}{@{}c@{}}%
6\\-3\\5
\end{tabular}\endgroup%
\kern3pt%
\begingroup \smaller\smaller\smaller\begin{tabular}{@{}c@{}}%
2\\-3\\1
\end{tabular}\endgroup%
{$\left.\llap{\phantom{%
\begingroup \smaller\smaller\smaller\begin{tabular}{@{}c@{}}%
0\\0\\0
\end{tabular}\endgroup%
}}\!\right]$}%
}%
\ifdim\wd\matricesbox>\halfwidth\myboxwidth=\hsize\else\myboxwidth=\halfwidth\fi
\vbox{%
\ifdim\myboxwidth=\hsize
\setbox\onelinebox=\hbox{%
\vbox{\hbox{%
$\Pi_{7,6}$ spans $L_{3.2}$%
}\hbox{%
$222|222\slashthree\rtimes D_{2}$%
}%
}%
\hfill\copy\matricesbox
}%
\ifdim\wd\onelinebox>\myboxwidth
\hbox to \myboxwidth{%
$\Pi_{7,6}$ spans $L_{3.2}$%
\hfil
$222|222\slashthree\rtimes D_{2}$%
}%
\box\matricesbox
\else
\hbox to \myboxwidth{%
\unhbox\onelinebox
}%
\fi
\else
\hbox to \myboxwidth{%
$\Pi_{7,6}$ spans $L_{3.2}$%
\hfil}%
\hbox to \myboxwidth{%
$222|222\slashthree\rtimes D_{2}$%
\hfil}%
\box\matricesbox
\fi
}%
\hfill\discretionary{}{}{}%
\setbox\matricesbox=\hbox{%
{$\left[\!\llap{\phantom{%
\begingroup \smaller\smaller\smaller\begin{tabular}{@{}c@{}}%
\phantom{0}\\\phantom{0}\\\phantom{0}
\end{tabular}\endgroup%
}}\right.$}%
\begingroup \smaller\smaller\smaller\begin{tabular}{@{}c@{}}%
-1/3\\\phantom{0}\\\phantom{0}
\end{tabular}\endgroup%
\kern3pt%
\begingroup \smaller\smaller\smaller\begin{tabular}{@{}c@{}}%
\phantom{0}\\1/3\\\phantom{0}
\end{tabular}\endgroup%
\kern3pt%
\begingroup \smaller\smaller\smaller\begin{tabular}{@{}c@{}}%
\phantom{0}\\\phantom{0}\\7
\end{tabular}\endgroup%
{$\left.\llap{\phantom{%
\begingroup \smaller\smaller\smaller\begin{tabular}{@{}c@{}}%
\phantom{0}\\\phantom{0}\\\phantom{0}
\end{tabular}\endgroup%
}}\!\right]$}%
{$\left[\!\llap{\phantom{%
\begingroup \smaller\smaller\smaller\begin{tabular}{@{}c@{}}%
0\\0\\0
\end{tabular}\endgroup%
}}\right.$}%
\begingroup \smaller\smaller\smaller\begin{tabular}{@{}c@{}}%
1\\2\\0
\end{tabular}\endgroup%
\kern3pt%
\begingroup \smaller\smaller\smaller\begin{tabular}{@{}c@{}}%
14\\7\\-3
\end{tabular}\endgroup%
\kern3pt%
\begingroup \smaller\smaller\smaller\begin{tabular}{@{}c@{}}%
8\\-2\\-2
\end{tabular}\endgroup%
\kern3pt%
\begingroup \smaller\smaller\smaller\begin{tabular}{@{}c@{}}%
7\\-7\\-1
\end{tabular}\endgroup%
{$\left.\llap{\phantom{%
\begingroup \smaller\smaller\smaller\begin{tabular}{@{}c@{}}%
0\\0\\0
\end{tabular}\endgroup%
}}\!\right]$}%
}%
\ifdim\wd\matricesbox>\halfwidth\myboxwidth=\hsize\else\myboxwidth=\halfwidth\fi
\vbox{%
\ifdim\myboxwidth=\hsize
\setbox\onelinebox=\hbox{%
\vbox{\hbox{%
$\Pi_{7,7}$ spans $L_{9.7}$%
}\hbox{%
$22|222\slashinfty2\rtimes D_{2}$%
}%
}%
\hfill\copy\matricesbox
}%
\ifdim\wd\onelinebox>\myboxwidth
\hbox to \myboxwidth{%
$\Pi_{7,7}$ spans $L_{9.7}$%
\hfil
$22|222\slashinfty2\rtimes D_{2}$%
}%
\box\matricesbox
\else
\hbox to \myboxwidth{%
\unhbox\onelinebox
}%
\fi
\else
\hbox to \myboxwidth{%
$\Pi_{7,7}$ spans $L_{9.7}$%
\hfil}%
\hbox to \myboxwidth{%
$22|222\slashinfty2\rtimes D_{2}$%
\hfil}%
\box\matricesbox
\fi
}%
\hfill\discretionary{}{}{}%
\setbox\matricesbox=\hbox{%
{$\left[\!\llap{\phantom{%
\begingroup \smaller\smaller\smaller\begin{tabular}{@{}c@{}}%
\phantom{0}\\\phantom{0}\\\phantom{0}
\end{tabular}\endgroup%
}}\right.$}%
\begingroup \smaller\smaller\smaller\begin{tabular}{@{}c@{}}%
-1\\\phantom{0}\\\phantom{0}
\end{tabular}\endgroup%
\kern3pt%
\begingroup \smaller\smaller\smaller\begin{tabular}{@{}c@{}}%
\phantom{0}\\3/2\\\phantom{0}
\end{tabular}\endgroup%
\kern3pt%
\begingroup \smaller\smaller\smaller\begin{tabular}{@{}c@{}}%
\phantom{0}\\\phantom{0}\\1/2
\end{tabular}\endgroup%
{$\left.\llap{\phantom{%
\begingroup \smaller\smaller\smaller\begin{tabular}{@{}c@{}}%
\phantom{0}\\\phantom{0}\\\phantom{0}
\end{tabular}\endgroup%
}}\!\right]$}%
{$\left[\!\llap{\phantom{%
\begingroup \smaller\smaller\smaller\begin{tabular}{@{}c@{}}%
0\\0\\0
\end{tabular}\endgroup%
}}\right.$}%
\begingroup \smaller\smaller\smaller\begin{tabular}{@{}c@{}}%
2\\-2\\0
\end{tabular}\endgroup%
\kern3pt%
\begingroup \smaller\smaller\smaller\begin{tabular}{@{}c@{}}%
2\\-1\\-3
\end{tabular}\endgroup%
\kern3pt%
\begingroup \smaller\smaller\smaller\begin{tabular}{@{}c@{}}%
6\\1\\-9
\end{tabular}\endgroup%
\kern3pt%
\begingroup \smaller\smaller\smaller\begin{tabular}{@{}c@{}}%
1\\1\\-1
\end{tabular}\endgroup%
{$\left.\llap{\phantom{%
\begingroup \smaller\smaller\smaller\begin{tabular}{@{}c@{}}%
0\\0\\0
\end{tabular}\endgroup%
}}\!\right]$}%
}%
\ifdim\wd\matricesbox>\halfwidth\myboxwidth=\hsize\else\myboxwidth=\halfwidth\fi
\vbox{%
\ifdim\myboxwidth=\hsize
\setbox\onelinebox=\hbox{%
\vbox{\hbox{%
$\Pi_{7,8}$ spans $L_{3.2}$%
}\hbox{%
$223|322\slashtwo\rtimes D_{2}$%
}%
}%
\hfill\copy\matricesbox
}%
\ifdim\wd\onelinebox>\myboxwidth
\hbox to \myboxwidth{%
$\Pi_{7,8}$ spans $L_{3.2}$%
\hfil
$223|322\slashtwo\rtimes D_{2}$%
}%
\box\matricesbox
\else
\hbox to \myboxwidth{%
\unhbox\onelinebox
}%
\fi
\else
\hbox to \myboxwidth{%
$\Pi_{7,8}$ spans $L_{3.2}$%
\hfil}%
\hbox to \myboxwidth{%
$223|322\slashtwo\rtimes D_{2}$%
\hfil}%
\box\matricesbox
\fi
}%
\hfill\discretionary{}{}{}%
\setbox\matricesbox=\hbox{%
{$\left[\!\llap{\phantom{%
\begingroup \smaller\smaller\smaller\begin{tabular}{@{}c@{}}%
\phantom{0}\\\phantom{0}\\\phantom{0}
\end{tabular}\endgroup%
}}\right.$}%
\begingroup \smaller\smaller\smaller\begin{tabular}{@{}c@{}}%
-1/3\\\phantom{0}\\\phantom{0}
\end{tabular}\endgroup%
\kern3pt%
\begingroup \smaller\smaller\smaller\begin{tabular}{@{}c@{}}%
\phantom{0}\\4/3\\\phantom{0}
\end{tabular}\endgroup%
\kern3pt%
\begingroup \smaller\smaller\smaller\begin{tabular}{@{}c@{}}%
\phantom{0}\\\phantom{0}\\6
\end{tabular}\endgroup%
{$\left.\llap{\phantom{%
\begingroup \smaller\smaller\smaller\begin{tabular}{@{}c@{}}%
\phantom{0}\\\phantom{0}\\\phantom{0}
\end{tabular}\endgroup%
}}\!\right]$}%
{$\left[\!\llap{\phantom{%
\begingroup \smaller\smaller\smaller\begin{tabular}{@{}c@{}}%
0\\0\\0
\end{tabular}\endgroup%
}}\right.$}%
\begingroup \smaller\smaller\smaller\begin{tabular}{@{}c@{}}%
6\\3\\-1
\end{tabular}\endgroup%
\kern3pt%
\begingroup \smaller\smaller\smaller\begin{tabular}{@{}c@{}}%
16\\2\\-4
\end{tabular}\endgroup%
\kern3pt%
\begingroup \smaller\smaller\smaller\begin{tabular}{@{}c@{}}%
8\\-2\\-2
\end{tabular}\endgroup%
\kern3pt%
\begingroup \smaller\smaller\smaller\begin{tabular}{@{}c@{}}%
1\\-1\\0
\end{tabular}\endgroup%
{$\left.\llap{\phantom{%
\begingroup \smaller\smaller\smaller\begin{tabular}{@{}c@{}}%
0\\0\\0
\end{tabular}\endgroup%
}}\!\right]$}%
}%
\ifdim\wd\matricesbox>\halfwidth\myboxwidth=\hsize\else\myboxwidth=\halfwidth\fi
\vbox{%
\ifdim\myboxwidth=\hsize
\setbox\onelinebox=\hbox{%
\vbox{\hbox{%
$\Pi_{7,9}$ spans $L_{150.13}$%
}\hbox{%
$22\slashinfty222|2\rtimes D_{2}$%
}%
}%
\hfill\copy\matricesbox
}%
\ifdim\wd\onelinebox>\myboxwidth
\hbox to \myboxwidth{%
$\Pi_{7,9}$ spans $L_{150.13}$%
\hfil
$22\slashinfty222|2\rtimes D_{2}$%
}%
\box\matricesbox
\else
\hbox to \myboxwidth{%
\unhbox\onelinebox
}%
\fi
\else
\hbox to \myboxwidth{%
$\Pi_{7,9}$ spans $L_{150.13}$%
\hfil}%
\hbox to \myboxwidth{%
$22\slashinfty222|2\rtimes D_{2}$%
\hfil}%
\box\matricesbox
\fi
}%
\hfill\discretionary{}{}{}%
\setbox\matricesbox=\hbox{%
{$\left[\!\llap{\phantom{%
\begingroup \smaller\smaller\smaller\begin{tabular}{@{}c@{}}%
\phantom{0}\\\phantom{0}\\\phantom{0}
\end{tabular}\endgroup%
}}\right.$}%
\begingroup \smaller\smaller\smaller\begin{tabular}{@{}c@{}}%
-3/7\\\phantom{0}\\\phantom{0}
\end{tabular}\endgroup%
\kern3pt%
\begingroup \smaller\smaller\smaller\begin{tabular}{@{}c@{}}%
\phantom{0}\\3/7\\\phantom{0}
\end{tabular}\endgroup%
\kern3pt%
\begingroup \smaller\smaller\smaller\begin{tabular}{@{}c@{}}%
\phantom{0}\\\phantom{0}\\7
\end{tabular}\endgroup%
{$\left.\llap{\phantom{%
\begingroup \smaller\smaller\smaller\begin{tabular}{@{}c@{}}%
\phantom{0}\\\phantom{0}\\\phantom{0}
\end{tabular}\endgroup%
}}\!\right]$}%
{$\left[\!\llap{\phantom{%
\begingroup \smaller\smaller\smaller\begin{tabular}{@{}c@{}}%
0\\0\\0
\end{tabular}\endgroup%
}}\right.$}%
\begingroup \smaller\smaller\smaller\begin{tabular}{@{}c@{}}%
3\\-4\\0
\end{tabular}\endgroup%
\kern3pt%
\begingroup \smaller\smaller\smaller\begin{tabular}{@{}c@{}}%
4\\-3\\1
\end{tabular}\endgroup%
\kern3pt%
\begingroup \smaller\smaller\smaller\begin{tabular}{@{}c@{}}%
12\\5\\3
\end{tabular}\endgroup%
\kern3pt%
\begingroup \smaller\smaller\smaller\begin{tabular}{@{}c@{}}%
7\\7\\1
\end{tabular}\endgroup%
{$\left.\llap{\phantom{%
\begingroup \smaller\smaller\smaller\begin{tabular}{@{}c@{}}%
0\\0\\0
\end{tabular}\endgroup%
}}\!\right]$}%
}%
\ifdim\wd\matricesbox>\halfwidth\myboxwidth=\hsize\else\myboxwidth=\halfwidth\fi
\vbox{%
\ifdim\myboxwidth=\hsize
\setbox\onelinebox=\hbox{%
\vbox{\hbox{%
$\Pi_{7,10}$ spans $L_{44.7}$%
}\hbox{%
$2|262\slashinfty26\rtimes D_{2}$%
}%
}%
\hfill\copy\matricesbox
}%
\ifdim\wd\onelinebox>\myboxwidth
\hbox to \myboxwidth{%
$\Pi_{7,10}$ spans $L_{44.7}$%
\hfil
$2|262\slashinfty26\rtimes D_{2}$%
}%
\box\matricesbox
\else
\hbox to \myboxwidth{%
\unhbox\onelinebox
}%
\fi
\else
\hbox to \myboxwidth{%
$\Pi_{7,10}$ spans $L_{44.7}$%
\hfil}%
\hbox to \myboxwidth{%
$2|262\slashinfty26\rtimes D_{2}$%
\hfil}%
\box\matricesbox
\fi
}%
\hfill\discretionary{}{}{}%
\setbox\matricesbox=\hbox{%
{$\left[\!\llap{\phantom{%
\begingroup \smaller\smaller\smaller\begin{tabular}{@{}c@{}}%
\phantom{0}\\\phantom{0}\\\phantom{0}
\end{tabular}\endgroup%
}}\right.$}%
\begingroup \smaller\smaller\smaller\begin{tabular}{@{}c@{}}%
-1\\\phantom{0}\\\phantom{0}
\end{tabular}\endgroup%
\kern3pt%
\begingroup \smaller\smaller\smaller\begin{tabular}{@{}c@{}}%
\phantom{0}\\3/2\\\phantom{0}
\end{tabular}\endgroup%
\kern3pt%
\begingroup \smaller\smaller\smaller\begin{tabular}{@{}c@{}}%
\phantom{0}\\\phantom{0}\\9/2
\end{tabular}\endgroup%
{$\left.\llap{\phantom{%
\begingroup \smaller\smaller\smaller\begin{tabular}{@{}c@{}}%
\phantom{0}\\\phantom{0}\\\phantom{0}
\end{tabular}\endgroup%
}}\!\right]$}%
{$\left[\!\llap{\phantom{%
\begingroup \smaller\smaller\smaller\begin{tabular}{@{}c@{}}%
0\\0\\0
\end{tabular}\endgroup%
}}\right.$}%
\begingroup \smaller\smaller\smaller\begin{tabular}{@{}c@{}}%
2\\-2\\0
\end{tabular}\endgroup%
\kern3pt%
\begingroup \smaller\smaller\smaller\begin{tabular}{@{}c@{}}%
2\\-1\\1
\end{tabular}\endgroup%
\kern3pt%
\begingroup \smaller\smaller\smaller\begin{tabular}{@{}c@{}}%
2\\1\\1
\end{tabular}\endgroup%
\kern3pt%
\begingroup \smaller\smaller\smaller\begin{tabular}{@{}c@{}}%
6\\5\\1
\end{tabular}\endgroup%
{$\left.\llap{\phantom{%
\begingroup \smaller\smaller\smaller\begin{tabular}{@{}c@{}}%
0\\0\\0
\end{tabular}\endgroup%
}}\!\right]$}%
}%
\ifdim\wd\matricesbox>\halfwidth\myboxwidth=\hsize\else\myboxwidth=\halfwidth\fi
\vbox{%
\ifdim\myboxwidth=\hsize
\setbox\onelinebox=\hbox{%
\vbox{\hbox{%
$\Pi_{7,11}$ spans $L_{155.1}$%
}\hbox{%
$33|332\slashthree2\rtimes D_{2}$%
}%
}%
\hfill\copy\matricesbox
}%
\ifdim\wd\onelinebox>\myboxwidth
\hbox to \myboxwidth{%
$\Pi_{7,11}$ spans $L_{155.1}$%
\hfil
$33|332\slashthree2\rtimes D_{2}$%
}%
\box\matricesbox
\else
\hbox to \myboxwidth{%
\unhbox\onelinebox
}%
\fi
\else
\hbox to \myboxwidth{%
$\Pi_{7,11}$ spans $L_{155.1}$%
\hfil}%
\hbox to \myboxwidth{%
$33|332\slashthree2\rtimes D_{2}$%
\hfil}%
\box\matricesbox
\fi
}%
\hfill\discretionary{}{}{}%
\setbox\matricesbox=\hbox{%
{$\left[\!\llap{\phantom{%
\begingroup \smaller\smaller\smaller\begin{tabular}{@{}c@{}}%
\phantom{0}\\\phantom{0}\\\phantom{0}
\end{tabular}\endgroup%
}}\right.$}%
\begingroup \smaller\smaller\smaller\begin{tabular}{@{}c@{}}%
-3\\\phantom{0}\\\phantom{0}
\end{tabular}\endgroup%
\kern3pt%
\begingroup \smaller\smaller\smaller\begin{tabular}{@{}c@{}}%
\phantom{0}\\1\\\phantom{0}
\end{tabular}\endgroup%
\kern3pt%
\begingroup \smaller\smaller\smaller\begin{tabular}{@{}c@{}}%
\phantom{0}\\\phantom{0}\\3
\end{tabular}\endgroup%
{$\left.\llap{\phantom{%
\begingroup \smaller\smaller\smaller\begin{tabular}{@{}c@{}}%
\phantom{0}\\\phantom{0}\\\phantom{0}
\end{tabular}\endgroup%
}}\!\right]$}%
{$\left[\!\llap{\phantom{%
\begingroup \smaller\smaller\smaller\begin{tabular}{@{}c@{}}%
0\\0\\0
\end{tabular}\endgroup%
}}\right.$}%
\begingroup \smaller\smaller\smaller\begin{tabular}{@{}c@{}}%
4\\7\\1
\end{tabular}\endgroup%
\kern3pt%
\begingroup \smaller\smaller\smaller\begin{tabular}{@{}c@{}}%
1\\1\\1
\end{tabular}\endgroup%
\kern3pt%
\begingroup \smaller\smaller\smaller\begin{tabular}{@{}c@{}}%
1\\-1\\1
\end{tabular}\endgroup%
\kern3pt%
\begingroup \smaller\smaller\smaller\begin{tabular}{@{}c@{}}%
1\\-2\\0
\end{tabular}\endgroup%
{$\left.\llap{\phantom{%
\begingroup \smaller\smaller\smaller\begin{tabular}{@{}c@{}}%
0\\0\\0
\end{tabular}\endgroup%
}}\!\right]$}%
}%
\ifdim\wd\matricesbox>\halfwidth\myboxwidth=\hsize\else\myboxwidth=\halfwidth\fi
\vbox{%
\ifdim\myboxwidth=\hsize
\setbox\onelinebox=\hbox{%
\vbox{\hbox{%
$\Pi_{7,12}=\hbox{GN}_{47}$ spans $L_{7.7}$%
}\hbox{%
$\slashthree\infty\infty\infty|\infty\infty\infty\rtimes D_{2}$%
}%
}%
\hfill\copy\matricesbox
}%
\ifdim\wd\onelinebox>\myboxwidth
\hbox to \myboxwidth{%
$\Pi_{7,12}=\hbox{GN}_{47}$ spans $L_{7.7}$%
\hfil
$\slashthree\infty\infty\infty|\infty\infty\infty\rtimes D_{2}$%
}%
\box\matricesbox
\else
\hbox to \myboxwidth{%
\unhbox\onelinebox
}%
\fi
\else
\hbox to \myboxwidth{%
$\Pi_{7,12}=\hbox{GN}_{47}$ spans $L_{7.7}$%
\hfil}%
\hbox to \myboxwidth{%
$\slashthree\infty\infty\infty|\infty\infty\infty\rtimes D_{2}$%
\hfil}%
\box\matricesbox
\fi
}%
\hfill\discretionary{}{}{}%
\setbox\matricesbox=\hbox{%
{$\left[\!\llap{\phantom{%
\begingroup \smaller\smaller\smaller\begin{tabular}{@{}c@{}}%
\phantom{0}\\\phantom{0}\\\phantom{0}
\end{tabular}\endgroup%
}}\right.$}%
\begingroup \smaller\smaller\smaller\begin{tabular}{@{}c@{}}%
-1\\\phantom{0}\\\phantom{0}
\end{tabular}\endgroup%
\kern3pt%
\begingroup \smaller\smaller\smaller\begin{tabular}{@{}c@{}}%
\phantom{0}\\2\\\phantom{0}
\end{tabular}\endgroup%
\kern3pt%
\begingroup \smaller\smaller\smaller\begin{tabular}{@{}c@{}}%
\phantom{0}\\\phantom{0}\\2
\end{tabular}\endgroup%
{$\left.\llap{\phantom{%
\begingroup \smaller\smaller\smaller\begin{tabular}{@{}c@{}}%
\phantom{0}\\\phantom{0}\\\phantom{0}
\end{tabular}\endgroup%
}}\!\right]$}%
{$\left[\!\llap{\phantom{%
\begingroup \smaller\smaller\smaller\begin{tabular}{@{}c@{}}%
0\\0\\0
\end{tabular}\endgroup%
}}\right.$}%
\begingroup \smaller\smaller\smaller\begin{tabular}{@{}c@{}}%
1\\-1\\0
\end{tabular}\endgroup%
\kern3pt%
\begingroup \smaller\smaller\smaller\begin{tabular}{@{}c@{}}%
4\\-1\\3
\end{tabular}\endgroup%
\kern3pt%
\begingroup \smaller\smaller\smaller\begin{tabular}{@{}c@{}}%
4\\1\\3
\end{tabular}\endgroup%
\kern3pt%
\begingroup \smaller\smaller\smaller\begin{tabular}{@{}c@{}}%
4\\3\\1
\end{tabular}\endgroup%
{$\left.\llap{\phantom{%
\begingroup \smaller\smaller\smaller\begin{tabular}{@{}c@{}}%
0\\0\\0
\end{tabular}\endgroup%
}}\!\right]$}%
}%
\ifdim\wd\matricesbox>\halfwidth\myboxwidth=\hsize\else\myboxwidth=\halfwidth\fi
\vbox{%
\ifdim\myboxwidth=\hsize
\setbox\onelinebox=\hbox{%
\vbox{\hbox{%
$\Pi_{7,13}=\hbox{GN}_{40}$ spans $L_{1.9}$%
}\hbox{%
$|\infty2\infty\slashtwo\infty2\infty\rtimes D_{2}$%
}%
}%
\hfill\copy\matricesbox
}%
\ifdim\wd\onelinebox>\myboxwidth
\hbox to \myboxwidth{%
$\Pi_{7,13}=\hbox{GN}_{40}$ spans $L_{1.9}$%
\hfil
$|\infty2\infty\slashtwo\infty2\infty\rtimes D_{2}$%
}%
\box\matricesbox
\else
\hbox to \myboxwidth{%
\unhbox\onelinebox
}%
\fi
\else
\hbox to \myboxwidth{%
$\Pi_{7,13}=\hbox{GN}_{40}$ spans $L_{1.9}$%
\hfil}%
\hbox to \myboxwidth{%
$|\infty2\infty\slashtwo\infty2\infty\rtimes D_{2}$%
\hfil}%
\box\matricesbox
\fi
}%
\hfill\discretionary{}{}{}%
\setbox\matricesbox=\hbox{%
{$\left[\!\llap{\phantom{%
\begingroup \smaller\smaller\smaller\begin{tabular}{@{}c@{}}%
\phantom{0}\\\phantom{0}\\\phantom{0}
\end{tabular}\endgroup%
}}\right.$}%
\begingroup \smaller\smaller\smaller\begin{tabular}{@{}c@{}}%
-1/8\\\phantom{0}\\\phantom{0}
\end{tabular}\endgroup%
\kern3pt%
\begingroup \smaller\smaller\smaller\begin{tabular}{@{}c@{}}%
\phantom{0}\\21/2\\\phantom{0}
\end{tabular}\endgroup%
\kern3pt%
\begingroup \smaller\smaller\smaller\begin{tabular}{@{}c@{}}%
\phantom{0}\\\phantom{0}\\28
\end{tabular}\endgroup%
{$\left.\llap{\phantom{%
\begingroup \smaller\smaller\smaller\begin{tabular}{@{}c@{}}%
\phantom{0}\\\phantom{0}\\\phantom{0}
\end{tabular}\endgroup%
}}\!\right]$}%
{$\left[\!\llap{\phantom{%
\begingroup \smaller\smaller\smaller\begin{tabular}{@{}c@{}}%
0\\0\\0
\end{tabular}\endgroup%
}}\right.$}%
\begingroup \smaller\smaller\smaller\begin{tabular}{@{}c@{}}%
6\\-1\\0
\end{tabular}\endgroup%
\kern3pt%
\begingroup \smaller\smaller\smaller\begin{tabular}{@{}c@{}}%
112\\-8\\-6
\end{tabular}\endgroup%
\kern3pt%
\begingroup \smaller\smaller\smaller\begin{tabular}{@{}c@{}}%
42\\-1\\-3
\end{tabular}\endgroup%
\kern3pt%
\begingroup \smaller\smaller\smaller\begin{tabular}{@{}c@{}}%
14\\1\\-1
\end{tabular}\endgroup%
\kern3pt%
\begingroup \smaller\smaller\smaller\begin{tabular}{@{}c@{}}%
6\\1\\0
\end{tabular}\endgroup%
\kern3pt%
\begingroup \smaller\smaller\smaller\begin{tabular}{@{}c@{}}%
14\\1\\1
\end{tabular}\endgroup%
\kern3pt%
\begingroup \smaller\smaller\smaller\begin{tabular}{@{}c@{}}%
14\\-1\\1
\end{tabular}\endgroup%
{$\left.\llap{\phantom{%
\begingroup \smaller\smaller\smaller\begin{tabular}{@{}c@{}}%
0\\0\\0
\end{tabular}\endgroup%
}}\!\right]$}%
}%
\ifdim\wd\matricesbox>\halfwidth\myboxwidth=\hsize\else\myboxwidth=\halfwidth\fi
\vbox{%
\ifdim\myboxwidth=\hsize
\setbox\onelinebox=\hbox{%
\vbox{\hbox{%
$\Pi_{7,14}$ spans $L_{22.8}$%
}\hbox{%
$2222232$%
}%
}%
\hfill\copy\matricesbox
}%
\ifdim\wd\onelinebox>\myboxwidth
\hbox to \myboxwidth{%
$\Pi_{7,14}$ spans $L_{22.8}$%
\hfil
$2222232$%
}%
\box\matricesbox
\else
\hbox to \myboxwidth{%
\unhbox\onelinebox
}%
\fi
\else
\hbox to \myboxwidth{%
$\Pi_{7,14}$ spans $L_{22.8}$%
\hfil}%
\hbox to \myboxwidth{%
$2222232$%
\hfil}%
\box\matricesbox
\fi
}%
\hfill\discretionary{}{}{}%
\setbox\matricesbox=\hbox{%
{$\left[\!\llap{\phantom{%
\begingroup \smaller\smaller\smaller\begin{tabular}{@{}c@{}}%
\phantom{0}\\\phantom{0}\\\phantom{0}
\end{tabular}\endgroup%
}}\right.$}%
\begingroup \smaller\smaller\smaller\begin{tabular}{@{}c@{}}%
-1/2\\\phantom{0}\\\phantom{0}
\end{tabular}\endgroup%
\kern3pt%
\begingroup \smaller\smaller\smaller\begin{tabular}{@{}c@{}}%
\phantom{0}\\4\\-1
\end{tabular}\endgroup%
\kern3pt%
\begingroup \smaller\smaller\smaller\begin{tabular}{@{}c@{}}%
\phantom{0}\\-1\\4
\end{tabular}\endgroup%
{$\left.\llap{\phantom{%
\begingroup \smaller\smaller\smaller\begin{tabular}{@{}c@{}}%
\phantom{0}\\\phantom{0}\\\phantom{0}
\end{tabular}\endgroup%
}}\!\right]$}%
{$\left[\!\llap{\phantom{%
\begingroup \smaller\smaller\smaller\begin{tabular}{@{}c@{}}%
0\\0\\0
\end{tabular}\endgroup%
}}\right.$}%
\begingroup \smaller\smaller\smaller\begin{tabular}{@{}c@{}}%
2\\0\\-1
\end{tabular}\endgroup%
\kern3pt%
\begingroup \smaller\smaller\smaller\begin{tabular}{@{}c@{}}%
6\\2\\-1
\end{tabular}\endgroup%
\kern3pt%
\begingroup \smaller\smaller\smaller\begin{tabular}{@{}c@{}}%
10\\4\\1
\end{tabular}\endgroup%
\kern3pt%
\begingroup \smaller\smaller\smaller\begin{tabular}{@{}c@{}}%
6\\2\\2
\end{tabular}\endgroup%
\kern3pt%
\begingroup \smaller\smaller\smaller\begin{tabular}{@{}c@{}}%
2\\0\\1
\end{tabular}\endgroup%
\kern3pt%
\begingroup \smaller\smaller\smaller\begin{tabular}{@{}c@{}}%
2\\-1\\0
\end{tabular}\endgroup%
\kern3pt%
\begingroup \smaller\smaller\smaller\begin{tabular}{@{}c@{}}%
6\\-2\\-2
\end{tabular}\endgroup%
{$\left.\llap{\phantom{%
\begingroup \smaller\smaller\smaller\begin{tabular}{@{}c@{}}%
0\\0\\0
\end{tabular}\endgroup%
}}\!\right]$}%
}%
\ifdim\wd\matricesbox>\halfwidth\myboxwidth=\hsize\else\myboxwidth=\halfwidth\fi
\vbox{%
\ifdim\myboxwidth=\hsize
\setbox\onelinebox=\hbox{%
\vbox{\hbox{%
$\Pi_{7,15}$ spans $L_{31.2}$%
}\hbox{%
$2222322$%
}%
}%
\hfill\copy\matricesbox
}%
\ifdim\wd\onelinebox>\myboxwidth
\hbox to \myboxwidth{%
$\Pi_{7,15}$ spans $L_{31.2}$%
\hfil
$2222322$%
}%
\box\matricesbox
\else
\hbox to \myboxwidth{%
\unhbox\onelinebox
}%
\fi
\else
\hbox to \myboxwidth{%
$\Pi_{7,15}$ spans $L_{31.2}$%
\hfil}%
\hbox to \myboxwidth{%
$2222322$%
\hfil}%
\box\matricesbox
\fi
}%
\hfill\discretionary{}{}{}%
\setbox\matricesbox=\hbox{%
{$\left[\!\llap{\phantom{%
\begingroup \smaller\smaller\smaller\begin{tabular}{@{}c@{}}%
\phantom{0}\\\phantom{0}\\\phantom{0}
\end{tabular}\endgroup%
}}\right.$}%
\begingroup \smaller\smaller\smaller\begin{tabular}{@{}c@{}}%
-1/8\\\phantom{0}\\\phantom{0}
\end{tabular}\endgroup%
\kern3pt%
\begingroup \smaller\smaller\smaller\begin{tabular}{@{}c@{}}%
\phantom{0}\\45/2\\-15/2
\end{tabular}\endgroup%
\kern3pt%
\begingroup \smaller\smaller\smaller\begin{tabular}{@{}c@{}}%
\phantom{0}\\-15/2\\45/2
\end{tabular}\endgroup%
{$\left.\llap{\phantom{%
\begingroup \smaller\smaller\smaller\begin{tabular}{@{}c@{}}%
\phantom{0}\\\phantom{0}\\\phantom{0}
\end{tabular}\endgroup%
}}\!\right]$}%
{$\left[\!\llap{\phantom{%
\begingroup \smaller\smaller\smaller\begin{tabular}{@{}c@{}}%
0\\0\\0
\end{tabular}\endgroup%
}}\right.$}%
\begingroup \smaller\smaller\smaller\begin{tabular}{@{}c@{}}%
10\\0\\-1
\end{tabular}\endgroup%
\kern3pt%
\begingroup \smaller\smaller\smaller\begin{tabular}{@{}c@{}}%
12\\-1\\-1
\end{tabular}\endgroup%
\kern3pt%
\begingroup \smaller\smaller\smaller\begin{tabular}{@{}c@{}}%
10\\-1\\0
\end{tabular}\endgroup%
\kern3pt%
\begingroup \smaller\smaller\smaller\begin{tabular}{@{}c@{}}%
30\\-1\\2
\end{tabular}\endgroup%
\kern3pt%
\begingroup \smaller\smaller\smaller\begin{tabular}{@{}c@{}}%
60\\1\\5
\end{tabular}\endgroup%
\kern3pt%
\begingroup \smaller\smaller\smaller\begin{tabular}{@{}c@{}}%
12\\1\\1
\end{tabular}\endgroup%
\kern3pt%
\begingroup \smaller\smaller\smaller\begin{tabular}{@{}c@{}}%
10\\1\\0
\end{tabular}\endgroup%
{$\left.\llap{\phantom{%
\begingroup \smaller\smaller\smaller\begin{tabular}{@{}c@{}}%
0\\0\\0
\end{tabular}\endgroup%
}}\!\right]$}%
}%
\ifdim\wd\matricesbox>\halfwidth\myboxwidth=\hsize\else\myboxwidth=\halfwidth\fi
\vbox{%
\ifdim\myboxwidth=\hsize
\setbox\onelinebox=\hbox{%
\vbox{\hbox{%
$\Pi_{7,16}$ spans $L_{16.16}$%
}\hbox{%
$2222223$%
}%
}%
\hfill\copy\matricesbox
}%
\ifdim\wd\onelinebox>\myboxwidth
\hbox to \myboxwidth{%
$\Pi_{7,16}$ spans $L_{16.16}$%
\hfil
$2222223$%
}%
\box\matricesbox
\else
\hbox to \myboxwidth{%
\unhbox\onelinebox
}%
\fi
\else
\hbox to \myboxwidth{%
$\Pi_{7,16}$ spans $L_{16.16}$%
\hfil}%
\hbox to \myboxwidth{%
$2222223$%
\hfil}%
\box\matricesbox
\fi
}%
\hfill\discretionary{}{}{}%
\setbox\matricesbox=\hbox{%
{$\left[\!\llap{\phantom{%
\begingroup \smaller\smaller\smaller\begin{tabular}{@{}c@{}}%
\phantom{0}\\\phantom{0}\\\phantom{0}
\end{tabular}\endgroup%
}}\right.$}%
\begingroup \smaller\smaller\smaller\begin{tabular}{@{}c@{}}%
-1\\\phantom{0}\\\phantom{0}
\end{tabular}\endgroup%
\kern3pt%
\begingroup \smaller\smaller\smaller\begin{tabular}{@{}c@{}}%
\phantom{0}\\2\\\phantom{0}
\end{tabular}\endgroup%
\kern3pt%
\begingroup \smaller\smaller\smaller\begin{tabular}{@{}c@{}}%
\phantom{0}\\\phantom{0}\\4
\end{tabular}\endgroup%
{$\left.\llap{\phantom{%
\begingroup \smaller\smaller\smaller\begin{tabular}{@{}c@{}}%
\phantom{0}\\\phantom{0}\\\phantom{0}
\end{tabular}\endgroup%
}}\!\right]$}%
{$\left[\!\llap{\phantom{%
\begingroup \smaller\smaller\smaller\begin{tabular}{@{}c@{}}%
0\\0\\0
\end{tabular}\endgroup%
}}\right.$}%
\begingroup \smaller\smaller\smaller\begin{tabular}{@{}c@{}}%
1\\1\\0
\end{tabular}\endgroup%
\kern3pt%
\begingroup \smaller\smaller\smaller\begin{tabular}{@{}c@{}}%
16\\8\\-6
\end{tabular}\endgroup%
\kern3pt%
\begingroup \smaller\smaller\smaller\begin{tabular}{@{}c@{}}%
8\\2\\-4
\end{tabular}\endgroup%
\kern3pt%
\begingroup \smaller\smaller\smaller\begin{tabular}{@{}c@{}}%
2\\-1\\-1
\end{tabular}\endgroup%
\kern3pt%
\begingroup \smaller\smaller\smaller\begin{tabular}{@{}c@{}}%
1\\-1\\0
\end{tabular}\endgroup%
\kern3pt%
\begingroup \smaller\smaller\smaller\begin{tabular}{@{}c@{}}%
2\\-1\\1
\end{tabular}\endgroup%
\kern3pt%
\begingroup \smaller\smaller\smaller\begin{tabular}{@{}c@{}}%
2\\1\\1
\end{tabular}\endgroup%
{$\left.\llap{\phantom{%
\begingroup \smaller\smaller\smaller\begin{tabular}{@{}c@{}}%
0\\0\\0
\end{tabular}\endgroup%
}}\!\right]$}%
}%
\ifdim\wd\matricesbox>\halfwidth\myboxwidth=\hsize\else\myboxwidth=\halfwidth\fi
\vbox{%
\ifdim\myboxwidth=\hsize
\setbox\onelinebox=\hbox{%
\vbox{\hbox{%
$\Pi_{7,17}$ spans $L_{141.3}$%
}\hbox{%
$22\infty22\infty2$%
}%
}%
\hfill\copy\matricesbox
}%
\ifdim\wd\onelinebox>\myboxwidth
\hbox to \myboxwidth{%
$\Pi_{7,17}$ spans $L_{141.3}$%
\hfil
$22\infty22\infty2$%
}%
\box\matricesbox
\else
\hbox to \myboxwidth{%
\unhbox\onelinebox
}%
\fi
\else
\hbox to \myboxwidth{%
$\Pi_{7,17}$ spans $L_{141.3}$%
\hfil}%
\hbox to \myboxwidth{%
$22\infty22\infty2$%
\hfil}%
\box\matricesbox
\fi
}%
\hfill\discretionary{}{}{}%
\setbox\matricesbox=\hbox{%
{$\left[\!\llap{\phantom{%
\begingroup \smaller\smaller\smaller\begin{tabular}{@{}c@{}}%
\phantom{0}\\\phantom{0}\\\phantom{0}
\end{tabular}\endgroup%
}}\right.$}%
\begingroup \smaller\smaller\smaller\begin{tabular}{@{}c@{}}%
-1\\\phantom{0}\\\phantom{0}
\end{tabular}\endgroup%
\kern3pt%
\begingroup \smaller\smaller\smaller\begin{tabular}{@{}c@{}}%
\phantom{0}\\2\\\phantom{0}
\end{tabular}\endgroup%
\kern3pt%
\begingroup \smaller\smaller\smaller\begin{tabular}{@{}c@{}}%
\phantom{0}\\\phantom{0}\\4
\end{tabular}\endgroup%
{$\left.\llap{\phantom{%
\begingroup \smaller\smaller\smaller\begin{tabular}{@{}c@{}}%
\phantom{0}\\\phantom{0}\\\phantom{0}
\end{tabular}\endgroup%
}}\!\right]$}%
{$\left[\!\llap{\phantom{%
\begingroup \smaller\smaller\smaller\begin{tabular}{@{}c@{}}%
0\\0\\0
\end{tabular}\endgroup%
}}\right.$}%
\begingroup \smaller\smaller\smaller\begin{tabular}{@{}c@{}}%
1\\1\\0
\end{tabular}\endgroup%
\kern3pt%
\begingroup \smaller\smaller\smaller\begin{tabular}{@{}c@{}}%
16\\8\\-6
\end{tabular}\endgroup%
\kern3pt%
\begingroup \smaller\smaller\smaller\begin{tabular}{@{}c@{}}%
8\\2\\-4
\end{tabular}\endgroup%
\kern3pt%
\begingroup \smaller\smaller\smaller\begin{tabular}{@{}c@{}}%
2\\-1\\-1
\end{tabular}\endgroup%
\kern3pt%
\begingroup \smaller\smaller\smaller\begin{tabular}{@{}c@{}}%
8\\-6\\0
\end{tabular}\endgroup%
\kern3pt%
\begingroup \smaller\smaller\smaller\begin{tabular}{@{}c@{}}%
2\\-1\\1
\end{tabular}\endgroup%
\kern3pt%
\begingroup \smaller\smaller\smaller\begin{tabular}{@{}c@{}}%
2\\1\\1
\end{tabular}\endgroup%
{$\left.\llap{\phantom{%
\begingroup \smaller\smaller\smaller\begin{tabular}{@{}c@{}}%
0\\0\\0
\end{tabular}\endgroup%
}}\!\right]$}%
}%
\ifdim\wd\matricesbox>\halfwidth\myboxwidth=\hsize\else\myboxwidth=\halfwidth\fi
\vbox{%
\ifdim\myboxwidth=\hsize
\setbox\onelinebox=\hbox{%
\vbox{\hbox{%
$\Pi_{7,18}$ spans $L_{141.3}$%
}\hbox{%
$22\infty\infty\infty\infty2$%
}%
}%
\hfill\copy\matricesbox
}%
\ifdim\wd\onelinebox>\myboxwidth
\hbox to \myboxwidth{%
$\Pi_{7,18}$ spans $L_{141.3}$%
\hfil
$22\infty\infty\infty\infty2$%
}%
\box\matricesbox
\else
\hbox to \myboxwidth{%
\unhbox\onelinebox
}%
\fi
\else
\hbox to \myboxwidth{%
$\Pi_{7,18}$ spans $L_{141.3}$%
\hfil}%
\hbox to \myboxwidth{%
$22\infty\infty\infty\infty2$%
\hfil}%
\box\matricesbox
\fi
}%
\hfill\discretionary{}{}{}%
\setbox\matricesbox=\hbox{%
{$\left[\!\llap{\phantom{%
\begingroup \smaller\smaller\smaller\begin{tabular}{@{}c@{}}%
\phantom{0}\\\phantom{0}\\\phantom{0}
\end{tabular}\endgroup%
}}\right.$}%
\begingroup \smaller\smaller\smaller\begin{tabular}{@{}c@{}}%
-1/2\\\phantom{0}\\\phantom{0}
\end{tabular}\endgroup%
\kern3pt%
\begingroup \smaller\smaller\smaller\begin{tabular}{@{}c@{}}%
\phantom{0}\\15/2\\-3/2
\end{tabular}\endgroup%
\kern3pt%
\begingroup \smaller\smaller\smaller\begin{tabular}{@{}c@{}}%
\phantom{0}\\-3/2\\15/2
\end{tabular}\endgroup%
{$\left.\llap{\phantom{%
\begingroup \smaller\smaller\smaller\begin{tabular}{@{}c@{}}%
\phantom{0}\\\phantom{0}\\\phantom{0}
\end{tabular}\endgroup%
}}\!\right]$}%
{$\left[\!\llap{\phantom{%
\begingroup \smaller\smaller\smaller\begin{tabular}{@{}c@{}}%
0\\0\\0
\end{tabular}\endgroup%
}}\right.$}%
\begingroup \smaller\smaller\smaller\begin{tabular}{@{}c@{}}%
4\\-1\\-1
\end{tabular}\endgroup%
\kern3pt%
\begingroup \smaller\smaller\smaller\begin{tabular}{@{}c@{}}%
18\\-1\\-5
\end{tabular}\endgroup%
\kern3pt%
\begingroup \smaller\smaller\smaller\begin{tabular}{@{}c@{}}%
12\\1\\-3
\end{tabular}\endgroup%
\kern3pt%
\begingroup \smaller\smaller\smaller\begin{tabular}{@{}c@{}}%
3\\1\\0
\end{tabular}\endgroup%
\kern3pt%
\begingroup \smaller\smaller\smaller\begin{tabular}{@{}c@{}}%
4\\1\\1
\end{tabular}\endgroup%
\kern3pt%
\begingroup \smaller\smaller\smaller\begin{tabular}{@{}c@{}}%
3\\0\\1
\end{tabular}\endgroup%
\kern3pt%
\begingroup \smaller\smaller\smaller\begin{tabular}{@{}c@{}}%
3\\-1\\0
\end{tabular}\endgroup%
{$\left.\llap{\phantom{%
\begingroup \smaller\smaller\smaller\begin{tabular}{@{}c@{}}%
0\\0\\0
\end{tabular}\endgroup%
}}\!\right]$}%
}%
\ifdim\wd\matricesbox>\halfwidth\myboxwidth=\hsize\else\myboxwidth=\halfwidth\fi
\vbox{%
\ifdim\myboxwidth=\hsize
\setbox\onelinebox=\hbox{%
\vbox{\hbox{%
$\Pi_{7,19}$ spans $L_{4.12}$%
}\hbox{%
$22\infty22\infty2$%
}%
}%
\hfill\copy\matricesbox
}%
\ifdim\wd\onelinebox>\myboxwidth
\hbox to \myboxwidth{%
$\Pi_{7,19}$ spans $L_{4.12}$%
\hfil
$22\infty22\infty2$%
}%
\box\matricesbox
\else
\hbox to \myboxwidth{%
\unhbox\onelinebox
}%
\fi
\else
\hbox to \myboxwidth{%
$\Pi_{7,19}$ spans $L_{4.12}$%
\hfil}%
\hbox to \myboxwidth{%
$22\infty22\infty2$%
\hfil}%
\box\matricesbox
\fi
}%
\hfill\discretionary{}{}{}%
\setbox\matricesbox=\hbox{%
{$\left[\!\llap{\phantom{%
\begingroup \smaller\smaller\smaller\begin{tabular}{@{}c@{}}%
\phantom{0}\\\phantom{0}\\\phantom{0}
\end{tabular}\endgroup%
}}\right.$}%
\begingroup \smaller\smaller\smaller\begin{tabular}{@{}c@{}}%
-1/8\\\phantom{0}\\\phantom{0}
\end{tabular}\endgroup%
\kern3pt%
\begingroup \smaller\smaller\smaller\begin{tabular}{@{}c@{}}%
\phantom{0}\\45/2\\-5/2
\end{tabular}\endgroup%
\kern3pt%
\begingroup \smaller\smaller\smaller\begin{tabular}{@{}c@{}}%
\phantom{0}\\-5/2\\45/2
\end{tabular}\endgroup%
{$\left.\llap{\phantom{%
\begingroup \smaller\smaller\smaller\begin{tabular}{@{}c@{}}%
\phantom{0}\\\phantom{0}\\\phantom{0}
\end{tabular}\endgroup%
}}\!\right]$}%
{$\left[\!\llap{\phantom{%
\begingroup \smaller\smaller\smaller\begin{tabular}{@{}c@{}}%
0\\0\\0
\end{tabular}\endgroup%
}}\right.$}%
\begingroup \smaller\smaller\smaller\begin{tabular}{@{}c@{}}%
10\\0\\-1
\end{tabular}\endgroup%
\kern3pt%
\begingroup \smaller\smaller\smaller\begin{tabular}{@{}c@{}}%
32\\-2\\-2
\end{tabular}\endgroup%
\kern3pt%
\begingroup \smaller\smaller\smaller\begin{tabular}{@{}c@{}}%
10\\-1\\0
\end{tabular}\endgroup%
\kern3pt%
\begingroup \smaller\smaller\smaller\begin{tabular}{@{}c@{}}%
10\\0\\1
\end{tabular}\endgroup%
\kern3pt%
\begingroup \smaller\smaller\smaller\begin{tabular}{@{}c@{}}%
32\\2\\2
\end{tabular}\endgroup%
\kern3pt%
\begingroup \smaller\smaller\smaller\begin{tabular}{@{}c@{}}%
50\\4\\1
\end{tabular}\endgroup%
\kern3pt%
\begingroup \smaller\smaller\smaller\begin{tabular}{@{}c@{}}%
40\\3\\-1
\end{tabular}\endgroup%
{$\left.\llap{\phantom{%
\begingroup \smaller\smaller\smaller\begin{tabular}{@{}c@{}}%
0\\0\\0
\end{tabular}\endgroup%
}}\!\right]$}%
}%
\ifdim\wd\matricesbox>\halfwidth\myboxwidth=\hsize\else\myboxwidth=\halfwidth\fi
\vbox{%
\ifdim\myboxwidth=\hsize
\setbox\onelinebox=\hbox{%
\vbox{\hbox{%
$\Pi_{7,20}$ spans $L_{10.1}$%
}\hbox{%
$22\infty222\infty$%
}%
}%
\hfill\copy\matricesbox
}%
\ifdim\wd\onelinebox>\myboxwidth
\hbox to \myboxwidth{%
$\Pi_{7,20}$ spans $L_{10.1}$%
\hfil
$22\infty222\infty$%
}%
\box\matricesbox
\else
\hbox to \myboxwidth{%
\unhbox\onelinebox
}%
\fi
\else
\hbox to \myboxwidth{%
$\Pi_{7,20}$ spans $L_{10.1}$%
\hfil}%
\hbox to \myboxwidth{%
$22\infty222\infty$%
\hfil}%
\box\matricesbox
\fi
}%
\hfill\discretionary{}{}{}%
\setbox\matricesbox=\hbox{%
{$\left[\!\llap{\phantom{%
\begingroup \smaller\smaller\smaller\begin{tabular}{@{}c@{}}%
\phantom{0}\\\phantom{0}\\\phantom{0}
\end{tabular}\endgroup%
}}\right.$}%
\begingroup \smaller\smaller\smaller\begin{tabular}{@{}c@{}}%
-1/8\\\phantom{0}\\\phantom{0}
\end{tabular}\endgroup%
\kern3pt%
\begingroup \smaller\smaller\smaller\begin{tabular}{@{}c@{}}%
\phantom{0}\\21/2\\\phantom{0}
\end{tabular}\endgroup%
\kern3pt%
\begingroup \smaller\smaller\smaller\begin{tabular}{@{}c@{}}%
\phantom{0}\\\phantom{0}\\28
\end{tabular}\endgroup%
{$\left.\llap{\phantom{%
\begingroup \smaller\smaller\smaller\begin{tabular}{@{}c@{}}%
\phantom{0}\\\phantom{0}\\\phantom{0}
\end{tabular}\endgroup%
}}\!\right]$}%
{$\left[\!\llap{\phantom{%
\begingroup \smaller\smaller\smaller\begin{tabular}{@{}c@{}}%
0\\0\\0
\end{tabular}\endgroup%
}}\right.$}%
\begingroup \smaller\smaller\smaller\begin{tabular}{@{}c@{}}%
6\\-1\\0
\end{tabular}\endgroup%
\kern3pt%
\begingroup \smaller\smaller\smaller\begin{tabular}{@{}c@{}}%
112\\-8\\-6
\end{tabular}\endgroup%
\kern3pt%
\begingroup \smaller\smaller\smaller\begin{tabular}{@{}c@{}}%
42\\-1\\-3
\end{tabular}\endgroup%
\kern3pt%
\begingroup \smaller\smaller\smaller\begin{tabular}{@{}c@{}}%
14\\1\\-1
\end{tabular}\endgroup%
\kern3pt%
\begingroup \smaller\smaller\smaller\begin{tabular}{@{}c@{}}%
42\\5\\0
\end{tabular}\endgroup%
\kern3pt%
\begingroup \smaller\smaller\smaller\begin{tabular}{@{}c@{}}%
14\\1\\1
\end{tabular}\endgroup%
\kern3pt%
\begingroup \smaller\smaller\smaller\begin{tabular}{@{}c@{}}%
14\\-1\\1
\end{tabular}\endgroup%
{$\left.\llap{\phantom{%
\begingroup \smaller\smaller\smaller\begin{tabular}{@{}c@{}}%
0\\0\\0
\end{tabular}\endgroup%
}}\!\right]$}%
}%
\ifdim\wd\matricesbox>\halfwidth\myboxwidth=\hsize\else\myboxwidth=\halfwidth\fi
\vbox{%
\ifdim\myboxwidth=\hsize
\setbox\onelinebox=\hbox{%
\vbox{\hbox{%
$\Pi_{7,21}$ spans $L_{22.8}$%
}\hbox{%
$2226632$%
}%
}%
\hfill\copy\matricesbox
}%
\ifdim\wd\onelinebox>\myboxwidth
\hbox to \myboxwidth{%
$\Pi_{7,21}$ spans $L_{22.8}$%
\hfil
$2226632$%
}%
\box\matricesbox
\else
\hbox to \myboxwidth{%
\unhbox\onelinebox
}%
\fi
\else
\hbox to \myboxwidth{%
$\Pi_{7,21}$ spans $L_{22.8}$%
\hfil}%
\hbox to \myboxwidth{%
$2226632$%
\hfil}%
\box\matricesbox
\fi
}%
\hfill\discretionary{}{}{}%

\vskip2pt\hrule\vskip2pt

\leavevmode\setbox\matricesbox=\hbox{%
{$\left[\!\llap{\phantom{%
\begingroup \smaller\smaller\smaller\begin{tabular}{@{}c@{}}%
\phantom{0}\\\phantom{0}\\\phantom{0}
\end{tabular}\endgroup%
}}\right.$}%
\begingroup \smaller\smaller\smaller\begin{tabular}{@{}c@{}}%
-1/4\\\phantom{0}\\\phantom{0}
\end{tabular}\endgroup%
\kern3pt%
\begingroup \smaller\smaller\smaller\begin{tabular}{@{}c@{}}%
\phantom{0}\\15\\\phantom{0}
\end{tabular}\endgroup%
\kern3pt%
\begingroup \smaller\smaller\smaller\begin{tabular}{@{}c@{}}%
\phantom{0}\\\phantom{0}\\15
\end{tabular}\endgroup%
{$\left.\llap{\phantom{%
\begingroup \smaller\smaller\smaller\begin{tabular}{@{}c@{}}%
\phantom{0}\\\phantom{0}\\\phantom{0}
\end{tabular}\endgroup%
}}\!\right]$}%
{$\left[\!\llap{\phantom{%
\begingroup \smaller\smaller\smaller\begin{tabular}{@{}c@{}}%
0\\0\\0
\end{tabular}\endgroup%
}}\right.$}%
\begingroup \smaller\smaller\smaller\begin{tabular}{@{}c@{}}%
6\\-1\\0
\end{tabular}\endgroup%
\kern3pt%
\begingroup \smaller\smaller\smaller\begin{tabular}{@{}c@{}}%
20\\-2\\-2
\end{tabular}\endgroup%
{$\left.\llap{\phantom{%
\begingroup \smaller\smaller\smaller\begin{tabular}{@{}c@{}}%
0\\0\\0
\end{tabular}\endgroup%
}}\!\right]$}%
}%
\ifdim\wd\matricesbox>\halfwidth\myboxwidth=\hsize\else\myboxwidth=\halfwidth\fi
\vbox{%
\ifdim\myboxwidth=\hsize
\setbox\onelinebox=\hbox{%
\vbox{\hbox{%
$\Pi_{8,1}$ spans $L_{19.10}$%
}\hbox{%
$|2|2|2|2|2|2|2|2\rtimes D_{8}$%
}%
}%
\hfill\copy\matricesbox
}%
\ifdim\wd\onelinebox>\myboxwidth
\hbox to \myboxwidth{%
$\Pi_{8,1}$ spans $L_{19.10}$%
\hfil
$|2|2|2|2|2|2|2|2\rtimes D_{8}$%
}%
\box\matricesbox
\else
\hbox to \myboxwidth{%
\unhbox\onelinebox
}%
\fi
\else
\hbox to \myboxwidth{%
$\Pi_{8,1}$ spans $L_{19.10}$%
\hfil}%
\hbox to \myboxwidth{%
$|2|2|2|2|2|2|2|2\rtimes D_{8}$%
\hfil}%
\box\matricesbox
\fi
}%
\hfill\discretionary{}{}{}%
\setbox\matricesbox=\hbox{%
{$\left[\!\llap{\phantom{%
\begingroup \smaller\smaller\smaller\begin{tabular}{@{}c@{}}%
\phantom{0}\\\phantom{0}\\\phantom{0}
\end{tabular}\endgroup%
}}\right.$}%
\begingroup \smaller\smaller\smaller\begin{tabular}{@{}c@{}}%
-1\\\phantom{0}\\\phantom{0}
\end{tabular}\endgroup%
\kern3pt%
\begingroup \smaller\smaller\smaller\begin{tabular}{@{}c@{}}%
\phantom{0}\\6\\\phantom{0}
\end{tabular}\endgroup%
\kern3pt%
\begingroup \smaller\smaller\smaller\begin{tabular}{@{}c@{}}%
\phantom{0}\\\phantom{0}\\6
\end{tabular}\endgroup%
{$\left.\llap{\phantom{%
\begingroup \smaller\smaller\smaller\begin{tabular}{@{}c@{}}%
\phantom{0}\\\phantom{0}\\\phantom{0}
\end{tabular}\endgroup%
}}\!\right]$}%
{$\left[\!\llap{\phantom{%
\begingroup \smaller\smaller\smaller\begin{tabular}{@{}c@{}}%
0\\0\\0
\end{tabular}\endgroup%
}}\right.$}%
\begingroup \smaller\smaller\smaller\begin{tabular}{@{}c@{}}%
2\\0\\-1
\end{tabular}\endgroup%
\kern3pt%
\begingroup \smaller\smaller\smaller\begin{tabular}{@{}c@{}}%
3\\1\\-1
\end{tabular}\endgroup%
{$\left.\llap{\phantom{%
\begingroup \smaller\smaller\smaller\begin{tabular}{@{}c@{}}%
0\\0\\0
\end{tabular}\endgroup%
}}\!\right]$}%
}%
\ifdim\wd\matricesbox>\halfwidth\myboxwidth=\hsize\else\myboxwidth=\halfwidth\fi
\vbox{%
\ifdim\myboxwidth=\hsize
\setbox\onelinebox=\hbox{%
\vbox{\hbox{%
$\Pi_{8,2}$ spans $L_{123.8}$%
}\hbox{%
$|2|2|2|2|2|2|2|2\rtimes D_{8}$%
}%
}%
\hfill\copy\matricesbox
}%
\ifdim\wd\onelinebox>\myboxwidth
\hbox to \myboxwidth{%
$\Pi_{8,2}$ spans $L_{123.8}$%
\hfil
$|2|2|2|2|2|2|2|2\rtimes D_{8}$%
}%
\box\matricesbox
\else
\hbox to \myboxwidth{%
\unhbox\onelinebox
}%
\fi
\else
\hbox to \myboxwidth{%
$\Pi_{8,2}$ spans $L_{123.8}$%
\hfil}%
\hbox to \myboxwidth{%
$|2|2|2|2|2|2|2|2\rtimes D_{8}$%
\hfil}%
\box\matricesbox
\fi
}%
\hfill\discretionary{}{}{}%
\setbox\matricesbox=\hbox{%
{$\left[\!\llap{\phantom{%
\begingroup \smaller\smaller\smaller\begin{tabular}{@{}c@{}}%
\phantom{0}\\\phantom{0}\\\phantom{0}
\end{tabular}\endgroup%
}}\right.$}%
\begingroup \smaller\smaller\smaller\begin{tabular}{@{}c@{}}%
-7/4\\\phantom{0}\\\phantom{0}
\end{tabular}\endgroup%
\kern3pt%
\begingroup \smaller\smaller\smaller\begin{tabular}{@{}c@{}}%
\phantom{0}\\1\\\phantom{0}
\end{tabular}\endgroup%
\kern3pt%
\begingroup \smaller\smaller\smaller\begin{tabular}{@{}c@{}}%
\phantom{0}\\\phantom{0}\\1
\end{tabular}\endgroup%
{$\left.\llap{\phantom{%
\begingroup \smaller\smaller\smaller\begin{tabular}{@{}c@{}}%
\phantom{0}\\\phantom{0}\\\phantom{0}
\end{tabular}\endgroup%
}}\!\right]$}%
{$\left[\!\llap{\phantom{%
\begingroup \smaller\smaller\smaller\begin{tabular}{@{}c@{}}%
0\\0\\0
\end{tabular}\endgroup%
}}\right.$}%
\begingroup \smaller\smaller\smaller\begin{tabular}{@{}c@{}}%
2\\0\\-3
\end{tabular}\endgroup%
\kern3pt%
\begingroup \smaller\smaller\smaller\begin{tabular}{@{}c@{}}%
4\\4\\-4
\end{tabular}\endgroup%
{$\left.\llap{\phantom{%
\begingroup \smaller\smaller\smaller\begin{tabular}{@{}c@{}}%
0\\0\\0
\end{tabular}\endgroup%
}}\!\right]$}%
}%
\ifdim\wd\matricesbox>\halfwidth\myboxwidth=\hsize\else\myboxwidth=\halfwidth\fi
\vbox{%
\ifdim\myboxwidth=\hsize
\setbox\onelinebox=\hbox{%
\vbox{\hbox{%
$\Pi_{8,3}$ spans $L_{8.1}$%
}\hbox{%
$|4|4|4|4|4|4|4|4\rtimes D_{8}$%
}%
}%
\hfill\copy\matricesbox
}%
\ifdim\wd\onelinebox>\myboxwidth
\hbox to \myboxwidth{%
$\Pi_{8,3}$ spans $L_{8.1}$%
\hfil
$|4|4|4|4|4|4|4|4\rtimes D_{8}$%
}%
\box\matricesbox
\else
\hbox to \myboxwidth{%
\unhbox\onelinebox
}%
\fi
\else
\hbox to \myboxwidth{%
$\Pi_{8,3}$ spans $L_{8.1}$%
\hfil}%
\hbox to \myboxwidth{%
$|4|4|4|4|4|4|4|4\rtimes D_{8}$%
\hfil}%
\box\matricesbox
\fi
}%
\hfill\discretionary{}{}{}%
\setbox\matricesbox=\hbox{%
{$\left[\!\llap{\phantom{%
\begingroup \smaller\smaller\smaller\begin{tabular}{@{}c@{}}%
\phantom{0}\\\phantom{0}\\\phantom{0}
\end{tabular}\endgroup%
}}\right.$}%
\begingroup \smaller\smaller\smaller\begin{tabular}{@{}c@{}}%
-4\\\phantom{0}\\\phantom{0}
\end{tabular}\endgroup%
\kern3pt%
\begingroup \smaller\smaller\smaller\begin{tabular}{@{}c@{}}%
\phantom{0}\\1\\\phantom{0}
\end{tabular}\endgroup%
\kern3pt%
\begingroup \smaller\smaller\smaller\begin{tabular}{@{}c@{}}%
\phantom{0}\\\phantom{0}\\1
\end{tabular}\endgroup%
{$\left.\llap{\phantom{%
\begingroup \smaller\smaller\smaller\begin{tabular}{@{}c@{}}%
\phantom{0}\\\phantom{0}\\\phantom{0}
\end{tabular}\endgroup%
}}\!\right]$}%
{$\left[\!\llap{\phantom{%
\begingroup \smaller\smaller\smaller\begin{tabular}{@{}c@{}}%
0\\0\\0
\end{tabular}\endgroup%
}}\right.$}%
\begingroup \smaller\smaller\smaller\begin{tabular}{@{}c@{}}%
1\\-1\\2
\end{tabular}\endgroup%
{$\left.\llap{\phantom{%
\begingroup \smaller\smaller\smaller\begin{tabular}{@{}c@{}}%
0\\0\\0
\end{tabular}\endgroup%
}}\!\right]$}%
}%
\ifdim\wd\matricesbox>\halfwidth\myboxwidth=\hsize\else\myboxwidth=\halfwidth\fi
\vbox{%
\ifdim\myboxwidth=\hsize
\setbox\onelinebox=\hbox{%
\vbox{\hbox{%
$\Pi_{8,4}=A_{2,III}=\hbox{GN}_{59}$ spans $L_{145.1}$%
}\hbox{%
$\slashinfty\slashtwo\slashinfty\slashtwo\slashinfty\slashtwo\slashinfty\slashtwo\rtimes D_{8}$%
}%
}%
\hfill\copy\matricesbox
}%
\ifdim\wd\onelinebox>\myboxwidth
\hbox to \myboxwidth{%
$\Pi_{8,4}=A_{2,III}=\hbox{GN}_{59}$ spans $L_{145.1}$%
\hfil
$\slashinfty\slashtwo\slashinfty\slashtwo\slashinfty\slashtwo\slashinfty\slashtwo\rtimes D_{8}$%
}%
\box\matricesbox
\else
\hbox to \myboxwidth{%
\unhbox\onelinebox
}%
\fi
\else
\hbox to \myboxwidth{%
$\Pi_{8,4}=A_{2,III}=\hbox{GN}_{59}$ spans $L_{145.1}$%
\hfil}%
\hbox to \myboxwidth{%
$\slashinfty\slashtwo\slashinfty\slashtwo\slashinfty\slashtwo\slashinfty\slashtwo\rtimes D_{8}$%
\hfil}%
\box\matricesbox
\fi
}%
\hfill\discretionary{}{}{}%
\setbox\matricesbox=\hbox{%
{$\left[\!\llap{\phantom{%
\begingroup \smaller\smaller\smaller\begin{tabular}{@{}c@{}}%
\phantom{0}\\\phantom{0}\\\phantom{0}
\end{tabular}\endgroup%
}}\right.$}%
\begingroup \smaller\smaller\smaller\begin{tabular}{@{}c@{}}%
-1/4\\\phantom{0}\\\phantom{0}
\end{tabular}\endgroup%
\kern3pt%
\begingroup \smaller\smaller\smaller\begin{tabular}{@{}c@{}}%
\phantom{0}\\3\\\phantom{0}
\end{tabular}\endgroup%
\kern3pt%
\begingroup \smaller\smaller\smaller\begin{tabular}{@{}c@{}}%
\phantom{0}\\\phantom{0}\\35
\end{tabular}\endgroup%
{$\left.\llap{\phantom{%
\begingroup \smaller\smaller\smaller\begin{tabular}{@{}c@{}}%
\phantom{0}\\\phantom{0}\\\phantom{0}
\end{tabular}\endgroup%
}}\!\right]$}%
{$\left[\!\llap{\phantom{%
\begingroup \smaller\smaller\smaller\begin{tabular}{@{}c@{}}%
0\\0\\0
\end{tabular}\endgroup%
}}\right.$}%
\begingroup \smaller\smaller\smaller\begin{tabular}{@{}c@{}}%
2\\1\\0
\end{tabular}\endgroup%
\kern3pt%
\begingroup \smaller\smaller\smaller\begin{tabular}{@{}c@{}}%
84\\14\\-6
\end{tabular}\endgroup%
\kern3pt%
\begingroup \smaller\smaller\smaller\begin{tabular}{@{}c@{}}%
10\\0\\-1
\end{tabular}\endgroup%
{$\left.\llap{\phantom{%
\begingroup \smaller\smaller\smaller\begin{tabular}{@{}c@{}}%
0\\0\\0
\end{tabular}\endgroup%
}}\!\right]$}%
}%
\ifdim\wd\matricesbox>\halfwidth\myboxwidth=\hsize\else\myboxwidth=\halfwidth\fi
\vbox{%
\ifdim\myboxwidth=\hsize
\setbox\onelinebox=\hbox{%
\vbox{\hbox{%
$\Pi_{8,5}$ spans $L_{69.1}$%
}\hbox{%
$|22|22|22|22\rtimes D_{4}$%
}%
}%
\hfill\copy\matricesbox
}%
\ifdim\wd\onelinebox>\myboxwidth
\hbox to \myboxwidth{%
$\Pi_{8,5}$ spans $L_{69.1}$%
\hfil
$|22|22|22|22\rtimes D_{4}$%
}%
\box\matricesbox
\else
\hbox to \myboxwidth{%
\unhbox\onelinebox
}%
\fi
\else
\hbox to \myboxwidth{%
$\Pi_{8,5}$ spans $L_{69.1}$%
\hfil}%
\hbox to \myboxwidth{%
$|22|22|22|22\rtimes D_{4}$%
\hfil}%
\box\matricesbox
\fi
}%
\hfill\discretionary{}{}{}%
\setbox\matricesbox=\hbox{%
{$\left[\!\llap{\phantom{%
\begingroup \smaller\smaller\smaller\begin{tabular}{@{}c@{}}%
\phantom{0}\\\phantom{0}\\\phantom{0}
\end{tabular}\endgroup%
}}\right.$}%
\begingroup \smaller\smaller\smaller\begin{tabular}{@{}c@{}}%
-1/2\\\phantom{0}\\\phantom{0}
\end{tabular}\endgroup%
\kern3pt%
\begingroup \smaller\smaller\smaller\begin{tabular}{@{}c@{}}%
\phantom{0}\\3/2\\\phantom{0}
\end{tabular}\endgroup%
\kern3pt%
\begingroup \smaller\smaller\smaller\begin{tabular}{@{}c@{}}%
\phantom{0}\\\phantom{0}\\24
\end{tabular}\endgroup%
{$\left.\llap{\phantom{%
\begingroup \smaller\smaller\smaller\begin{tabular}{@{}c@{}}%
\phantom{0}\\\phantom{0}\\\phantom{0}
\end{tabular}\endgroup%
}}\!\right]$}%
{$\left[\!\llap{\phantom{%
\begingroup \smaller\smaller\smaller\begin{tabular}{@{}c@{}}%
0\\0\\0
\end{tabular}\endgroup%
}}\right.$}%
\begingroup \smaller\smaller\smaller\begin{tabular}{@{}c@{}}%
1\\-1\\0
\end{tabular}\endgroup%
\kern3pt%
\begingroup \smaller\smaller\smaller\begin{tabular}{@{}c@{}}%
24\\-8\\-3
\end{tabular}\endgroup%
\kern3pt%
\begingroup \smaller\smaller\smaller\begin{tabular}{@{}c@{}}%
6\\0\\-1
\end{tabular}\endgroup%
{$\left.\llap{\phantom{%
\begingroup \smaller\smaller\smaller\begin{tabular}{@{}c@{}}%
0\\0\\0
\end{tabular}\endgroup%
}}\!\right]$}%
}%
\ifdim\wd\matricesbox>\halfwidth\myboxwidth=\hsize\else\myboxwidth=\halfwidth\fi
\vbox{%
\ifdim\myboxwidth=\hsize
\setbox\onelinebox=\hbox{%
\vbox{\hbox{%
$\Pi_{8,6}$ spans $L_{123.11}$%
}\hbox{%
$|22|22|22|22\rtimes D_{4}$%
}%
}%
\hfill\copy\matricesbox
}%
\ifdim\wd\onelinebox>\myboxwidth
\hbox to \myboxwidth{%
$\Pi_{8,6}$ spans $L_{123.11}$%
\hfil
$|22|22|22|22\rtimes D_{4}$%
}%
\box\matricesbox
\else
\hbox to \myboxwidth{%
\unhbox\onelinebox
}%
\fi
\else
\hbox to \myboxwidth{%
$\Pi_{8,6}$ spans $L_{123.11}$%
\hfil}%
\hbox to \myboxwidth{%
$|22|22|22|22\rtimes D_{4}$%
\hfil}%
\box\matricesbox
\fi
}%
\hfill\discretionary{}{}{}%
\setbox\matricesbox=\hbox{%
{$\left[\!\llap{\phantom{%
\begingroup \smaller\smaller\smaller\begin{tabular}{@{}c@{}}%
\phantom{0}\\\phantom{0}\\\phantom{0}
\end{tabular}\endgroup%
}}\right.$}%
\begingroup \smaller\smaller\smaller\begin{tabular}{@{}c@{}}%
-1/4\\\phantom{0}\\\phantom{0}
\end{tabular}\endgroup%
\kern3pt%
\begingroup \smaller\smaller\smaller\begin{tabular}{@{}c@{}}%
\phantom{0}\\3\\\phantom{0}
\end{tabular}\endgroup%
\kern3pt%
\begingroup \smaller\smaller\smaller\begin{tabular}{@{}c@{}}%
\phantom{0}\\\phantom{0}\\63
\end{tabular}\endgroup%
{$\left.\llap{\phantom{%
\begingroup \smaller\smaller\smaller\begin{tabular}{@{}c@{}}%
\phantom{0}\\\phantom{0}\\\phantom{0}
\end{tabular}\endgroup%
}}\!\right]$}%
{$\left[\!\llap{\phantom{%
\begingroup \smaller\smaller\smaller\begin{tabular}{@{}c@{}}%
0\\0\\0
\end{tabular}\endgroup%
}}\right.$}%
\begingroup \smaller\smaller\smaller\begin{tabular}{@{}c@{}}%
2\\-1\\0
\end{tabular}\endgroup%
\kern3pt%
\begingroup \smaller\smaller\smaller\begin{tabular}{@{}c@{}}%
36\\-6\\-2
\end{tabular}\endgroup%
\kern3pt%
\begingroup \smaller\smaller\smaller\begin{tabular}{@{}c@{}}%
14\\0\\-1
\end{tabular}\endgroup%
{$\left.\llap{\phantom{%
\begingroup \smaller\smaller\smaller\begin{tabular}{@{}c@{}}%
0\\0\\0
\end{tabular}\endgroup%
}}\!\right]$}%
}%
\ifdim\wd\matricesbox>\halfwidth\myboxwidth=\hsize\else\myboxwidth=\halfwidth\fi
\vbox{%
\ifdim\myboxwidth=\hsize
\setbox\onelinebox=\hbox{%
\vbox{\hbox{%
$\Pi_{8,7}$ spans $L_{25.1}$%
}\hbox{%
$|22|22|22|22\rtimes D_{4}$%
}%
}%
\hfill\copy\matricesbox
}%
\ifdim\wd\onelinebox>\myboxwidth
\hbox to \myboxwidth{%
$\Pi_{8,7}$ spans $L_{25.1}$%
\hfil
$|22|22|22|22\rtimes D_{4}$%
}%
\box\matricesbox
\else
\hbox to \myboxwidth{%
\unhbox\onelinebox
}%
\fi
\else
\hbox to \myboxwidth{%
$\Pi_{8,7}$ spans $L_{25.1}$%
\hfil}%
\hbox to \myboxwidth{%
$|22|22|22|22\rtimes D_{4}$%
\hfil}%
\box\matricesbox
\fi
}%
\hfill\discretionary{}{}{}%
\setbox\matricesbox=\hbox{%
{$\left[\!\llap{\phantom{%
\begingroup \smaller\smaller\smaller\begin{tabular}{@{}c@{}}%
\phantom{0}\\\phantom{0}\\\phantom{0}
\end{tabular}\endgroup%
}}\right.$}%
\begingroup \smaller\smaller\smaller\begin{tabular}{@{}c@{}}%
-1/2\\\phantom{0}\\\phantom{0}
\end{tabular}\endgroup%
\kern3pt%
\begingroup \smaller\smaller\smaller\begin{tabular}{@{}c@{}}%
\phantom{0}\\3/2\\\phantom{0}
\end{tabular}\endgroup%
\kern3pt%
\begingroup \smaller\smaller\smaller\begin{tabular}{@{}c@{}}%
\phantom{0}\\\phantom{0}\\10
\end{tabular}\endgroup%
{$\left.\llap{\phantom{%
\begingroup \smaller\smaller\smaller\begin{tabular}{@{}c@{}}%
\phantom{0}\\\phantom{0}\\\phantom{0}
\end{tabular}\endgroup%
}}\!\right]$}%
{$\left[\!\llap{\phantom{%
\begingroup \smaller\smaller\smaller\begin{tabular}{@{}c@{}}%
0\\0\\0
\end{tabular}\endgroup%
}}\right.$}%
\begingroup \smaller\smaller\smaller\begin{tabular}{@{}c@{}}%
1\\1\\0
\end{tabular}\endgroup%
\kern3pt%
\begingroup \smaller\smaller\smaller\begin{tabular}{@{}c@{}}%
15\\5\\-3
\end{tabular}\endgroup%
\kern3pt%
\begingroup \smaller\smaller\smaller\begin{tabular}{@{}c@{}}%
8\\0\\-2
\end{tabular}\endgroup%
{$\left.\llap{\phantom{%
\begingroup \smaller\smaller\smaller\begin{tabular}{@{}c@{}}%
0\\0\\0
\end{tabular}\endgroup%
}}\!\right]$}%
}%
\ifdim\wd\matricesbox>\halfwidth\myboxwidth=\hsize\else\myboxwidth=\halfwidth\fi
\vbox{%
\ifdim\myboxwidth=\hsize
\setbox\onelinebox=\hbox{%
\vbox{\hbox{%
$\Pi_{8,8}$ spans $L_{17.5}$%
}\hbox{%
$|22|22|22|22\rtimes D_{4}$%
}%
}%
\hfill\copy\matricesbox
}%
\ifdim\wd\onelinebox>\myboxwidth
\hbox to \myboxwidth{%
$\Pi_{8,8}$ spans $L_{17.5}$%
\hfil
$|22|22|22|22\rtimes D_{4}$%
}%
\box\matricesbox
\else
\hbox to \myboxwidth{%
\unhbox\onelinebox
}%
\fi
\else
\hbox to \myboxwidth{%
$\Pi_{8,8}$ spans $L_{17.5}$%
\hfil}%
\hbox to \myboxwidth{%
$|22|22|22|22\rtimes D_{4}$%
\hfil}%
\box\matricesbox
\fi
}%
\hfill\discretionary{}{}{}%
\setbox\matricesbox=\hbox{%
{$\left[\!\llap{\phantom{%
\begingroup \smaller\smaller\smaller\begin{tabular}{@{}c@{}}%
\phantom{0}\\\phantom{0}\\\phantom{0}
\end{tabular}\endgroup%
}}\right.$}%
\begingroup \smaller\smaller\smaller\begin{tabular}{@{}c@{}}%
-1/2\\\phantom{0}\\\phantom{0}
\end{tabular}\endgroup%
\kern3pt%
\begingroup \smaller\smaller\smaller\begin{tabular}{@{}c@{}}%
\phantom{0}\\3/2\\\phantom{0}
\end{tabular}\endgroup%
\kern3pt%
\begingroup \smaller\smaller\smaller\begin{tabular}{@{}c@{}}%
\phantom{0}\\\phantom{0}\\60
\end{tabular}\endgroup%
{$\left.\llap{\phantom{%
\begingroup \smaller\smaller\smaller\begin{tabular}{@{}c@{}}%
\phantom{0}\\\phantom{0}\\\phantom{0}
\end{tabular}\endgroup%
}}\!\right]$}%
{$\left[\!\llap{\phantom{%
\begingroup \smaller\smaller\smaller\begin{tabular}{@{}c@{}}%
0\\0\\0
\end{tabular}\endgroup%
}}\right.$}%
\begingroup \smaller\smaller\smaller\begin{tabular}{@{}c@{}}%
1\\-1\\0
\end{tabular}\endgroup%
\kern3pt%
\begingroup \smaller\smaller\smaller\begin{tabular}{@{}c@{}}%
12\\-4\\-1
\end{tabular}\endgroup%
\kern3pt%
\begingroup \smaller\smaller\smaller\begin{tabular}{@{}c@{}}%
10\\0\\-1
\end{tabular}\endgroup%
{$\left.\llap{\phantom{%
\begingroup \smaller\smaller\smaller\begin{tabular}{@{}c@{}}%
0\\0\\0
\end{tabular}\endgroup%
}}\!\right]$}%
}%
\ifdim\wd\matricesbox>\halfwidth\myboxwidth=\hsize\else\myboxwidth=\halfwidth\fi
\vbox{%
\ifdim\myboxwidth=\hsize
\setbox\onelinebox=\hbox{%
\vbox{\hbox{%
$\Pi_{8,9}$ spans $L_{17.12}$%
}\hbox{%
$|22|22|22|22\rtimes D_{4}$%
}%
}%
\hfill\copy\matricesbox
}%
\ifdim\wd\onelinebox>\myboxwidth
\hbox to \myboxwidth{%
$\Pi_{8,9}$ spans $L_{17.12}$%
\hfil
$|22|22|22|22\rtimes D_{4}$%
}%
\box\matricesbox
\else
\hbox to \myboxwidth{%
\unhbox\onelinebox
}%
\fi
\else
\hbox to \myboxwidth{%
$\Pi_{8,9}$ spans $L_{17.12}$%
\hfil}%
\hbox to \myboxwidth{%
$|22|22|22|22\rtimes D_{4}$%
\hfil}%
\box\matricesbox
\fi
}%
\hfill\discretionary{}{}{}%
\setbox\matricesbox=\hbox{%
{$\left[\!\llap{\phantom{%
\begingroup \smaller\smaller\smaller\begin{tabular}{@{}c@{}}%
\phantom{0}\\\phantom{0}\\\phantom{0}
\end{tabular}\endgroup%
}}\right.$}%
\begingroup \smaller\smaller\smaller\begin{tabular}{@{}c@{}}%
-1/8\\\phantom{0}\\\phantom{0}
\end{tabular}\endgroup%
\kern3pt%
\begingroup \smaller\smaller\smaller\begin{tabular}{@{}c@{}}%
\phantom{0}\\3/2\\\phantom{0}
\end{tabular}\endgroup%
\kern3pt%
\begingroup \smaller\smaller\smaller\begin{tabular}{@{}c@{}}%
\phantom{0}\\\phantom{0}\\21
\end{tabular}\endgroup%
{$\left.\llap{\phantom{%
\begingroup \smaller\smaller\smaller\begin{tabular}{@{}c@{}}%
\phantom{0}\\\phantom{0}\\\phantom{0}
\end{tabular}\endgroup%
}}\!\right]$}%
{$\left[\!\llap{\phantom{%
\begingroup \smaller\smaller\smaller\begin{tabular}{@{}c@{}}%
0\\0\\0
\end{tabular}\endgroup%
}}\right.$}%
\begingroup \smaller\smaller\smaller\begin{tabular}{@{}c@{}}%
4\\2\\0
\end{tabular}\endgroup%
\kern3pt%
\begingroup \smaller\smaller\smaller\begin{tabular}{@{}c@{}}%
42\\7\\-3
\end{tabular}\endgroup%
\kern3pt%
\begingroup \smaller\smaller\smaller\begin{tabular}{@{}c@{}}%
48\\0\\-4
\end{tabular}\endgroup%
{$\left.\llap{\phantom{%
\begingroup \smaller\smaller\smaller\begin{tabular}{@{}c@{}}%
0\\0\\0
\end{tabular}\endgroup%
}}\!\right]$}%
}%
\ifdim\wd\matricesbox>\halfwidth\myboxwidth=\hsize\else\myboxwidth=\halfwidth\fi
\vbox{%
\ifdim\myboxwidth=\hsize
\setbox\onelinebox=\hbox{%
\vbox{\hbox{%
$\Pi_{8,10}$ spans $L_{25.8}$%
}\hbox{%
$|22|22|22|22\rtimes D_{4}$%
}%
}%
\hfill\copy\matricesbox
}%
\ifdim\wd\onelinebox>\myboxwidth
\hbox to \myboxwidth{%
$\Pi_{8,10}$ spans $L_{25.8}$%
\hfil
$|22|22|22|22\rtimes D_{4}$%
}%
\box\matricesbox
\else
\hbox to \myboxwidth{%
\unhbox\onelinebox
}%
\fi
\else
\hbox to \myboxwidth{%
$\Pi_{8,10}$ spans $L_{25.8}$%
\hfil}%
\hbox to \myboxwidth{%
$|22|22|22|22\rtimes D_{4}$%
\hfil}%
\box\matricesbox
\fi
}%
\hfill\discretionary{}{}{}%
\setbox\matricesbox=\hbox{%
{$\left[\!\llap{\phantom{%
\begingroup \smaller\smaller\smaller\begin{tabular}{@{}c@{}}%
\phantom{0}\\\phantom{0}\\\phantom{0}
\end{tabular}\endgroup%
}}\right.$}%
\begingroup \smaller\smaller\smaller\begin{tabular}{@{}c@{}}%
-1/2\\\phantom{0}\\\phantom{0}
\end{tabular}\endgroup%
\kern3pt%
\begingroup \smaller\smaller\smaller\begin{tabular}{@{}c@{}}%
\phantom{0}\\3/2\\\phantom{0}
\end{tabular}\endgroup%
\kern3pt%
\begingroup \smaller\smaller\smaller\begin{tabular}{@{}c@{}}%
\phantom{0}\\\phantom{0}\\36
\end{tabular}\endgroup%
{$\left.\llap{\phantom{%
\begingroup \smaller\smaller\smaller\begin{tabular}{@{}c@{}}%
\phantom{0}\\\phantom{0}\\\phantom{0}
\end{tabular}\endgroup%
}}\!\right]$}%
{$\left[\!\llap{\phantom{%
\begingroup \smaller\smaller\smaller\begin{tabular}{@{}c@{}}%
0\\0\\0
\end{tabular}\endgroup%
}}\right.$}%
\begingroup \smaller\smaller\smaller\begin{tabular}{@{}c@{}}%
1\\-1\\0
\end{tabular}\endgroup%
\kern3pt%
\begingroup \smaller\smaller\smaller\begin{tabular}{@{}c@{}}%
9\\-3\\-1
\end{tabular}\endgroup%
\kern3pt%
\begingroup \smaller\smaller\smaller\begin{tabular}{@{}c@{}}%
16\\0\\-2
\end{tabular}\endgroup%
{$\left.\llap{\phantom{%
\begingroup \smaller\smaller\smaller\begin{tabular}{@{}c@{}}%
0\\0\\0
\end{tabular}\endgroup%
}}\!\right]$}%
}%
\ifdim\wd\matricesbox>\halfwidth\myboxwidth=\hsize\else\myboxwidth=\halfwidth\fi
\vbox{%
\ifdim\myboxwidth=\hsize
\setbox\onelinebox=\hbox{%
\vbox{\hbox{%
$\Pi_{8,11}=\hbox{GN}_{32}$ spans $L_{172.5}$%
}\hbox{%
$|22|22|22|22\rtimes D_{4}$%
}%
}%
\hfill\copy\matricesbox
}%
\ifdim\wd\onelinebox>\myboxwidth
\hbox to \myboxwidth{%
$\Pi_{8,11}=\hbox{GN}_{32}$ spans $L_{172.5}$%
\hfil
$|22|22|22|22\rtimes D_{4}$%
}%
\box\matricesbox
\else
\hbox to \myboxwidth{%
\unhbox\onelinebox
}%
\fi
\else
\hbox to \myboxwidth{%
$\Pi_{8,11}=\hbox{GN}_{32}$ spans $L_{172.5}$%
\hfil}%
\hbox to \myboxwidth{%
$|22|22|22|22\rtimes D_{4}$%
\hfil}%
\box\matricesbox
\fi
}%
\hfill\discretionary{}{}{}%
\setbox\matricesbox=\hbox{%
{$\left[\!\llap{\phantom{%
\begingroup \smaller\smaller\smaller\begin{tabular}{@{}c@{}}%
\phantom{0}\\\phantom{0}\\\phantom{0}
\end{tabular}\endgroup%
}}\right.$}%
\begingroup \smaller\smaller\smaller\begin{tabular}{@{}c@{}}%
-1/8\\\phantom{0}\\\phantom{0}
\end{tabular}\endgroup%
\kern3pt%
\begingroup \smaller\smaller\smaller\begin{tabular}{@{}c@{}}%
\phantom{0}\\3/2\\\phantom{0}
\end{tabular}\endgroup%
\kern3pt%
\begingroup \smaller\smaller\smaller\begin{tabular}{@{}c@{}}%
\phantom{0}\\\phantom{0}\\105
\end{tabular}\endgroup%
{$\left.\llap{\phantom{%
\begingroup \smaller\smaller\smaller\begin{tabular}{@{}c@{}}%
\phantom{0}\\\phantom{0}\\\phantom{0}
\end{tabular}\endgroup%
}}\!\right]$}%
{$\left[\!\llap{\phantom{%
\begingroup \smaller\smaller\smaller\begin{tabular}{@{}c@{}}%
0\\0\\0
\end{tabular}\endgroup%
}}\right.$}%
\begingroup \smaller\smaller\smaller\begin{tabular}{@{}c@{}}%
4\\2\\0
\end{tabular}\endgroup%
\kern3pt%
\begingroup \smaller\smaller\smaller\begin{tabular}{@{}c@{}}%
30\\5\\-1
\end{tabular}\endgroup%
\kern3pt%
\begingroup \smaller\smaller\smaller\begin{tabular}{@{}c@{}}%
112\\0\\-4
\end{tabular}\endgroup%
{$\left.\llap{\phantom{%
\begingroup \smaller\smaller\smaller\begin{tabular}{@{}c@{}}%
0\\0\\0
\end{tabular}\endgroup%
}}\!\right]$}%
}%
\ifdim\wd\matricesbox>\halfwidth\myboxwidth=\hsize\else\myboxwidth=\halfwidth\fi
\vbox{%
\ifdim\myboxwidth=\hsize
\setbox\onelinebox=\hbox{%
\vbox{\hbox{%
$\Pi_{8,12}$ spans $L_{69.6}$%
}\hbox{%
$|22|22|22|22\rtimes D_{4}$%
}%
}%
\hfill\copy\matricesbox
}%
\ifdim\wd\onelinebox>\myboxwidth
\hbox to \myboxwidth{%
$\Pi_{8,12}$ spans $L_{69.6}$%
\hfil
$|22|22|22|22\rtimes D_{4}$%
}%
\box\matricesbox
\else
\hbox to \myboxwidth{%
\unhbox\onelinebox
}%
\fi
\else
\hbox to \myboxwidth{%
$\Pi_{8,12}$ spans $L_{69.6}$%
\hfil}%
\hbox to \myboxwidth{%
$|22|22|22|22\rtimes D_{4}$%
\hfil}%
\box\matricesbox
\fi
}%
\hfill\discretionary{}{}{}%
\setbox\matricesbox=\hbox{%
{$\left[\!\llap{\phantom{%
\begingroup \smaller\smaller\smaller\begin{tabular}{@{}c@{}}%
\phantom{0}\\\phantom{0}\\\phantom{0}
\end{tabular}\endgroup%
}}\right.$}%
\begingroup \smaller\smaller\smaller\begin{tabular}{@{}c@{}}%
-1/2\\\phantom{0}\\\phantom{0}
\end{tabular}\endgroup%
\kern3pt%
\begingroup \smaller\smaller\smaller\begin{tabular}{@{}c@{}}%
\phantom{0}\\4\\\phantom{0}
\end{tabular}\endgroup%
\kern3pt%
\begingroup \smaller\smaller\smaller\begin{tabular}{@{}c@{}}%
\phantom{0}\\\phantom{0}\\15/2
\end{tabular}\endgroup%
{$\left.\llap{\phantom{%
\begingroup \smaller\smaller\smaller\begin{tabular}{@{}c@{}}%
\phantom{0}\\\phantom{0}\\\phantom{0}
\end{tabular}\endgroup%
}}\!\right]$}%
{$\left[\!\llap{\phantom{%
\begingroup \smaller\smaller\smaller\begin{tabular}{@{}c@{}}%
0\\0\\0
\end{tabular}\endgroup%
}}\right.$}%
\begingroup \smaller\smaller\smaller\begin{tabular}{@{}c@{}}%
2\\1\\0
\end{tabular}\endgroup%
\kern3pt%
\begingroup \smaller\smaller\smaller\begin{tabular}{@{}c@{}}%
20\\5\\-4
\end{tabular}\endgroup%
\kern3pt%
\begingroup \smaller\smaller\smaller\begin{tabular}{@{}c@{}}%
3\\0\\-1
\end{tabular}\endgroup%
{$\left.\llap{\phantom{%
\begingroup \smaller\smaller\smaller\begin{tabular}{@{}c@{}}%
0\\0\\0
\end{tabular}\endgroup%
}}\!\right]$}%
}%
\ifdim\wd\matricesbox>\halfwidth\myboxwidth=\hsize\else\myboxwidth=\halfwidth\fi
\vbox{%
\ifdim\myboxwidth=\hsize
\setbox\onelinebox=\hbox{%
\vbox{\hbox{%
$\Pi_{8,13}$ spans $L_{17.6}$%
}\hbox{%
$|22|22|22|22\rtimes D_{4}$%
}%
}%
\hfill\copy\matricesbox
}%
\ifdim\wd\onelinebox>\myboxwidth
\hbox to \myboxwidth{%
$\Pi_{8,13}$ spans $L_{17.6}$%
\hfil
$|22|22|22|22\rtimes D_{4}$%
}%
\box\matricesbox
\else
\hbox to \myboxwidth{%
\unhbox\onelinebox
}%
\fi
\else
\hbox to \myboxwidth{%
$\Pi_{8,13}$ spans $L_{17.6}$%
\hfil}%
\hbox to \myboxwidth{%
$|22|22|22|22\rtimes D_{4}$%
\hfil}%
\box\matricesbox
\fi
}%
\hfill\discretionary{}{}{}%
\setbox\matricesbox=\hbox{%
{$\left[\!\llap{\phantom{%
\begingroup \smaller\smaller\smaller\begin{tabular}{@{}c@{}}%
\phantom{0}\\\phantom{0}\\\phantom{0}
\end{tabular}\endgroup%
}}\right.$}%
\begingroup \smaller\smaller\smaller\begin{tabular}{@{}c@{}}%
-1\\\phantom{0}\\\phantom{0}
\end{tabular}\endgroup%
\kern3pt%
\begingroup \smaller\smaller\smaller\begin{tabular}{@{}c@{}}%
\phantom{0}\\2\\\phantom{0}
\end{tabular}\endgroup%
\kern3pt%
\begingroup \smaller\smaller\smaller\begin{tabular}{@{}c@{}}%
\phantom{0}\\\phantom{0}\\6
\end{tabular}\endgroup%
{$\left.\llap{\phantom{%
\begingroup \smaller\smaller\smaller\begin{tabular}{@{}c@{}}%
\phantom{0}\\\phantom{0}\\\phantom{0}
\end{tabular}\endgroup%
}}\!\right]$}%
{$\left[\!\llap{\phantom{%
\begingroup \smaller\smaller\smaller\begin{tabular}{@{}c@{}}%
0\\0\\0
\end{tabular}\endgroup%
}}\right.$}%
\begingroup \smaller\smaller\smaller\begin{tabular}{@{}c@{}}%
1\\1\\0
\end{tabular}\endgroup%
\kern3pt%
\begingroup \smaller\smaller\smaller\begin{tabular}{@{}c@{}}%
6\\3\\-2
\end{tabular}\endgroup%
\kern3pt%
\begingroup \smaller\smaller\smaller\begin{tabular}{@{}c@{}}%
2\\0\\-1
\end{tabular}\endgroup%
{$\left.\llap{\phantom{%
\begingroup \smaller\smaller\smaller\begin{tabular}{@{}c@{}}%
0\\0\\0
\end{tabular}\endgroup%
}}\!\right]$}%
}%
\ifdim\wd\matricesbox>\halfwidth\myboxwidth=\hsize\else\myboxwidth=\halfwidth\fi
\vbox{%
\ifdim\myboxwidth=\hsize
\setbox\onelinebox=\hbox{%
\vbox{\hbox{%
$\Pi_{8,14}$ spans $L_{123.5}$%
}\hbox{%
$|22|22|22|22\rtimes D_{4}$%
}%
}%
\hfill\copy\matricesbox
}%
\ifdim\wd\onelinebox>\myboxwidth
\hbox to \myboxwidth{%
$\Pi_{8,14}$ spans $L_{123.5}$%
\hfil
$|22|22|22|22\rtimes D_{4}$%
}%
\box\matricesbox
\else
\hbox to \myboxwidth{%
\unhbox\onelinebox
}%
\fi
\else
\hbox to \myboxwidth{%
$\Pi_{8,14}$ spans $L_{123.5}$%
\hfil}%
\hbox to \myboxwidth{%
$|22|22|22|22\rtimes D_{4}$%
\hfil}%
\box\matricesbox
\fi
}%
\hfill\discretionary{}{}{}%
\setbox\matricesbox=\hbox{%
{$\left[\!\llap{\phantom{%
\begingroup \smaller\smaller\smaller\begin{tabular}{@{}c@{}}%
\phantom{0}\\\phantom{0}\\\phantom{0}
\end{tabular}\endgroup%
}}\right.$}%
\begingroup \smaller\smaller\smaller\begin{tabular}{@{}c@{}}%
-1\\\phantom{0}\\\phantom{0}
\end{tabular}\endgroup%
\kern3pt%
\begingroup \smaller\smaller\smaller\begin{tabular}{@{}c@{}}%
\phantom{0}\\2\\\phantom{0}
\end{tabular}\endgroup%
\kern3pt%
\begingroup \smaller\smaller\smaller\begin{tabular}{@{}c@{}}%
\phantom{0}\\\phantom{0}\\12
\end{tabular}\endgroup%
{$\left.\llap{\phantom{%
\begingroup \smaller\smaller\smaller\begin{tabular}{@{}c@{}}%
\phantom{0}\\\phantom{0}\\\phantom{0}
\end{tabular}\endgroup%
}}\!\right]$}%
{$\left[\!\llap{\phantom{%
\begingroup \smaller\smaller\smaller\begin{tabular}{@{}c@{}}%
0\\0\\0
\end{tabular}\endgroup%
}}\right.$}%
\begingroup \smaller\smaller\smaller\begin{tabular}{@{}c@{}}%
1\\1\\0
\end{tabular}\endgroup%
\kern3pt%
\begingroup \smaller\smaller\smaller\begin{tabular}{@{}c@{}}%
4\\2\\-1
\end{tabular}\endgroup%
\kern3pt%
\begingroup \smaller\smaller\smaller\begin{tabular}{@{}c@{}}%
3\\0\\-1
\end{tabular}\endgroup%
{$\left.\llap{\phantom{%
\begingroup \smaller\smaller\smaller\begin{tabular}{@{}c@{}}%
0\\0\\0
\end{tabular}\endgroup%
}}\!\right]$}%
}%
\ifdim\wd\matricesbox>\halfwidth\myboxwidth=\hsize\else\myboxwidth=\halfwidth\fi
\vbox{%
\ifdim\myboxwidth=\hsize
\setbox\onelinebox=\hbox{%
\vbox{\hbox{%
$\Pi_{8,15}$ spans $L_{151.6}$%
}\hbox{%
$|22|22|22|22\rtimes D_{4}$%
}%
}%
\hfill\copy\matricesbox
}%
\ifdim\wd\onelinebox>\myboxwidth
\hbox to \myboxwidth{%
$\Pi_{8,15}$ spans $L_{151.6}$%
\hfil
$|22|22|22|22\rtimes D_{4}$%
}%
\box\matricesbox
\else
\hbox to \myboxwidth{%
\unhbox\onelinebox
}%
\fi
\else
\hbox to \myboxwidth{%
$\Pi_{8,15}$ spans $L_{151.6}$%
\hfil}%
\hbox to \myboxwidth{%
$|22|22|22|22\rtimes D_{4}$%
\hfil}%
\box\matricesbox
\fi
}%
\hfill\discretionary{}{}{}%
\setbox\matricesbox=\hbox{%
{$\left[\!\llap{\phantom{%
\begingroup \smaller\smaller\smaller\begin{tabular}{@{}c@{}}%
\phantom{0}\\\phantom{0}\\\phantom{0}
\end{tabular}\endgroup%
}}\right.$}%
\begingroup \smaller\smaller\smaller\begin{tabular}{@{}c@{}}%
-1\\\phantom{0}\\\phantom{0}
\end{tabular}\endgroup%
\kern3pt%
\begingroup \smaller\smaller\smaller\begin{tabular}{@{}c@{}}%
\phantom{0}\\1/2\\\phantom{0}
\end{tabular}\endgroup%
\kern3pt%
\begingroup \smaller\smaller\smaller\begin{tabular}{@{}c@{}}%
\phantom{0}\\\phantom{0}\\15/2
\end{tabular}\endgroup%
{$\left.\llap{\phantom{%
\begingroup \smaller\smaller\smaller\begin{tabular}{@{}c@{}}%
\phantom{0}\\\phantom{0}\\\phantom{0}
\end{tabular}\endgroup%
}}\!\right]$}%
{$\left[\!\llap{\phantom{%
\begingroup \smaller\smaller\smaller\begin{tabular}{@{}c@{}}%
0\\0\\0
\end{tabular}\endgroup%
}}\right.$}%
\begingroup \smaller\smaller\smaller\begin{tabular}{@{}c@{}}%
1\\2\\0
\end{tabular}\endgroup%
\kern3pt%
\begingroup \smaller\smaller\smaller\begin{tabular}{@{}c@{}}%
3\\3\\1
\end{tabular}\endgroup%
\kern3pt%
\begingroup \smaller\smaller\smaller\begin{tabular}{@{}c@{}}%
5\\0\\2
\end{tabular}\endgroup%
{$\left.\llap{\phantom{%
\begingroup \smaller\smaller\smaller\begin{tabular}{@{}c@{}}%
0\\0\\0
\end{tabular}\endgroup%
}}\!\right]$}%
}%
\ifdim\wd\matricesbox>\halfwidth\myboxwidth=\hsize\else\myboxwidth=\halfwidth\fi
\vbox{%
\ifdim\myboxwidth=\hsize
\setbox\onelinebox=\hbox{%
\vbox{\hbox{%
$\Pi_{8,16}$ spans $L_{17.4}$%
}\hbox{%
$|22|22|22|22\rtimes D_{4}$%
}%
}%
\hfill\copy\matricesbox
}%
\ifdim\wd\onelinebox>\myboxwidth
\hbox to \myboxwidth{%
$\Pi_{8,16}$ spans $L_{17.4}$%
\hfil
$|22|22|22|22\rtimes D_{4}$%
}%
\box\matricesbox
\else
\hbox to \myboxwidth{%
\unhbox\onelinebox
}%
\fi
\else
\hbox to \myboxwidth{%
$\Pi_{8,16}$ spans $L_{17.4}$%
\hfil}%
\hbox to \myboxwidth{%
$|22|22|22|22\rtimes D_{4}$%
\hfil}%
\box\matricesbox
\fi
}%
\hfill\discretionary{}{}{}%
\setbox\matricesbox=\hbox{%
{$\left[\!\llap{\phantom{%
\begingroup \smaller\smaller\smaller\begin{tabular}{@{}c@{}}%
\phantom{0}\\\phantom{0}\\\phantom{0}
\end{tabular}\endgroup%
}}\right.$}%
\begingroup \smaller\smaller\smaller\begin{tabular}{@{}c@{}}%
-1/8\\\phantom{0}\\\phantom{0}
\end{tabular}\endgroup%
\kern3pt%
\begingroup \smaller\smaller\smaller\begin{tabular}{@{}c@{}}%
\phantom{0}\\21\\\phantom{0}
\end{tabular}\endgroup%
\kern3pt%
\begingroup \smaller\smaller\smaller\begin{tabular}{@{}c@{}}%
\phantom{0}\\\phantom{0}\\35/2
\end{tabular}\endgroup%
{$\left.\llap{\phantom{%
\begingroup \smaller\smaller\smaller\begin{tabular}{@{}c@{}}%
\phantom{0}\\\phantom{0}\\\phantom{0}
\end{tabular}\endgroup%
}}\!\right]$}%
{$\left[\!\llap{\phantom{%
\begingroup \smaller\smaller\smaller\begin{tabular}{@{}c@{}}%
0\\0\\0
\end{tabular}\endgroup%
}}\right.$}%
\begingroup \smaller\smaller\smaller\begin{tabular}{@{}c@{}}%
48\\-4\\0
\end{tabular}\endgroup%
\kern3pt%
\begingroup \smaller\smaller\smaller\begin{tabular}{@{}c@{}}%
14\\-1\\-1
\end{tabular}\endgroup%
\kern3pt%
\begingroup \smaller\smaller\smaller\begin{tabular}{@{}c@{}}%
20\\0\\-2
\end{tabular}\endgroup%
{$\left.\llap{\phantom{%
\begingroup \smaller\smaller\smaller\begin{tabular}{@{}c@{}}%
0\\0\\0
\end{tabular}\endgroup%
}}\!\right]$}%
}%
\ifdim\wd\matricesbox>\halfwidth\myboxwidth=\hsize\else\myboxwidth=\halfwidth\fi
\vbox{%
\ifdim\myboxwidth=\hsize
\setbox\onelinebox=\hbox{%
\vbox{\hbox{%
$\Pi_{8,17}$ spans $L_{69.11}$%
}\hbox{%
$2|22|22|22|2\rtimes D_{4}$%
}%
}%
\hfill\copy\matricesbox
}%
\ifdim\wd\onelinebox>\myboxwidth
\hbox to \myboxwidth{%
$\Pi_{8,17}$ spans $L_{69.11}$%
\hfil
$2|22|22|22|2\rtimes D_{4}$%
}%
\box\matricesbox
\else
\hbox to \myboxwidth{%
\unhbox\onelinebox
}%
\fi
\else
\hbox to \myboxwidth{%
$\Pi_{8,17}$ spans $L_{69.11}$%
\hfil}%
\hbox to \myboxwidth{%
$2|22|22|22|2\rtimes D_{4}$%
\hfil}%
\box\matricesbox
\fi
}%
\hfill\discretionary{}{}{}%
\setbox\matricesbox=\hbox{%
{$\left[\!\llap{\phantom{%
\begingroup \smaller\smaller\smaller\begin{tabular}{@{}c@{}}%
\phantom{0}\\\phantom{0}\\\phantom{0}
\end{tabular}\endgroup%
}}\right.$}%
\begingroup \smaller\smaller\smaller\begin{tabular}{@{}c@{}}%
-1/8\\\phantom{0}\\\phantom{0}
\end{tabular}\endgroup%
\kern3pt%
\begingroup \smaller\smaller\smaller\begin{tabular}{@{}c@{}}%
\phantom{0}\\15/2\\\phantom{0}
\end{tabular}\endgroup%
\kern3pt%
\begingroup \smaller\smaller\smaller\begin{tabular}{@{}c@{}}%
\phantom{0}\\\phantom{0}\\3
\end{tabular}\endgroup%
{$\left.\llap{\phantom{%
\begingroup \smaller\smaller\smaller\begin{tabular}{@{}c@{}}%
\phantom{0}\\\phantom{0}\\\phantom{0}
\end{tabular}\endgroup%
}}\!\right]$}%
{$\left[\!\llap{\phantom{%
\begingroup \smaller\smaller\smaller\begin{tabular}{@{}c@{}}%
0\\0\\0
\end{tabular}\endgroup%
}}\right.$}%
\begingroup \smaller\smaller\smaller\begin{tabular}{@{}c@{}}%
12\\2\\0
\end{tabular}\endgroup%
\kern3pt%
\begingroup \smaller\smaller\smaller\begin{tabular}{@{}c@{}}%
30\\3\\-5
\end{tabular}\endgroup%
\kern3pt%
\begingroup \smaller\smaller\smaller\begin{tabular}{@{}c@{}}%
16\\0\\-4
\end{tabular}\endgroup%
{$\left.\llap{\phantom{%
\begingroup \smaller\smaller\smaller\begin{tabular}{@{}c@{}}%
0\\0\\0
\end{tabular}\endgroup%
}}\!\right]$}%
}%
\ifdim\wd\matricesbox>\halfwidth\myboxwidth=\hsize\else\myboxwidth=\halfwidth\fi
\vbox{%
\ifdim\myboxwidth=\hsize
\setbox\onelinebox=\hbox{%
\vbox{\hbox{%
$\Pi_{8,18}$ spans $L_{19.5}$%
}\hbox{%
$|22|22|22|22\rtimes D_{4}$%
}%
}%
\hfill\copy\matricesbox
}%
\ifdim\wd\onelinebox>\myboxwidth
\hbox to \myboxwidth{%
$\Pi_{8,18}$ spans $L_{19.5}$%
\hfil
$|22|22|22|22\rtimes D_{4}$%
}%
\box\matricesbox
\else
\hbox to \myboxwidth{%
\unhbox\onelinebox
}%
\fi
\else
\hbox to \myboxwidth{%
$\Pi_{8,18}$ spans $L_{19.5}$%
\hfil}%
\hbox to \myboxwidth{%
$|22|22|22|22\rtimes D_{4}$%
\hfil}%
\box\matricesbox
\fi
}%
\hfill\discretionary{}{}{}%
\setbox\matricesbox=\hbox{%
{$\left[\!\llap{\phantom{%
\begingroup \smaller\smaller\smaller\begin{tabular}{@{}c@{}}%
\phantom{0}\\\phantom{0}\\\phantom{0}
\end{tabular}\endgroup%
}}\right.$}%
\begingroup \smaller\smaller\smaller\begin{tabular}{@{}c@{}}%
-7/8\\\phantom{0}\\\phantom{0}
\end{tabular}\endgroup%
\kern3pt%
\begingroup \smaller\smaller\smaller\begin{tabular}{@{}c@{}}%
\phantom{0}\\1\\\phantom{0}
\end{tabular}\endgroup%
\kern3pt%
\begingroup \smaller\smaller\smaller\begin{tabular}{@{}c@{}}%
\phantom{0}\\\phantom{0}\\1/2
\end{tabular}\endgroup%
{$\left.\llap{\phantom{%
\begingroup \smaller\smaller\smaller\begin{tabular}{@{}c@{}}%
\phantom{0}\\\phantom{0}\\\phantom{0}
\end{tabular}\endgroup%
}}\!\right]$}%
{$\left[\!\llap{\phantom{%
\begingroup \smaller\smaller\smaller\begin{tabular}{@{}c@{}}%
0\\0\\0
\end{tabular}\endgroup%
}}\right.$}%
\begingroup \smaller\smaller\smaller\begin{tabular}{@{}c@{}}%
4\\4\\2
\end{tabular}\endgroup%
\kern3pt%
\begingroup \smaller\smaller\smaller\begin{tabular}{@{}c@{}}%
2\\1\\3
\end{tabular}\endgroup%
{$\left.\llap{\phantom{%
\begingroup \smaller\smaller\smaller\begin{tabular}{@{}c@{}}%
0\\0\\0
\end{tabular}\endgroup%
}}\!\right]$}%
}%
\ifdim\wd\matricesbox>\halfwidth\myboxwidth=\hsize\else\myboxwidth=\halfwidth\fi
\vbox{%
\ifdim\myboxwidth=\hsize
\setbox\onelinebox=\hbox{%
\vbox{\hbox{%
$\Pi_{8,19}$ spans $L_{8.1}$%
}\hbox{%
$2\slashtwo2\slashtwo2\slashtwo2\slashtwo\rtimes D_{4}$%
}%
}%
\hfill\copy\matricesbox
}%
\ifdim\wd\onelinebox>\myboxwidth
\hbox to \myboxwidth{%
$\Pi_{8,19}$ spans $L_{8.1}$%
\hfil
$2\slashtwo2\slashtwo2\slashtwo2\slashtwo\rtimes D_{4}$%
}%
\box\matricesbox
\else
\hbox to \myboxwidth{%
\unhbox\onelinebox
}%
\fi
\else
\hbox to \myboxwidth{%
$\Pi_{8,19}$ spans $L_{8.1}$%
\hfil}%
\hbox to \myboxwidth{%
$2\slashtwo2\slashtwo2\slashtwo2\slashtwo\rtimes D_{4}$%
\hfil}%
\box\matricesbox
\fi
}%
\hfill\discretionary{}{}{}%
\setbox\matricesbox=\hbox{%
{$\left[\!\llap{\phantom{%
\begingroup \smaller\smaller\smaller\begin{tabular}{@{}c@{}}%
\phantom{0}\\\phantom{0}\\\phantom{0}
\end{tabular}\endgroup%
}}\right.$}%
\begingroup \smaller\smaller\smaller\begin{tabular}{@{}c@{}}%
-1/2\\\phantom{0}\\\phantom{0}
\end{tabular}\endgroup%
\kern3pt%
\begingroup \smaller\smaller\smaller\begin{tabular}{@{}c@{}}%
\phantom{0}\\15/2\\\phantom{0}
\end{tabular}\endgroup%
\kern3pt%
\begingroup \smaller\smaller\smaller\begin{tabular}{@{}c@{}}%
\phantom{0}\\\phantom{0}\\10
\end{tabular}\endgroup%
{$\left.\llap{\phantom{%
\begingroup \smaller\smaller\smaller\begin{tabular}{@{}c@{}}%
\phantom{0}\\\phantom{0}\\\phantom{0}
\end{tabular}\endgroup%
}}\!\right]$}%
{$\left[\!\llap{\phantom{%
\begingroup \smaller\smaller\smaller\begin{tabular}{@{}c@{}}%
0\\0\\0
\end{tabular}\endgroup%
}}\right.$}%
\begingroup \smaller\smaller\smaller\begin{tabular}{@{}c@{}}%
3\\-1\\0
\end{tabular}\endgroup%
\kern3pt%
\begingroup \smaller\smaller\smaller\begin{tabular}{@{}c@{}}%
5\\-1\\-1
\end{tabular}\endgroup%
\kern3pt%
\begingroup \smaller\smaller\smaller\begin{tabular}{@{}c@{}}%
8\\0\\-2
\end{tabular}\endgroup%
{$\left.\llap{\phantom{%
\begingroup \smaller\smaller\smaller\begin{tabular}{@{}c@{}}%
0\\0\\0
\end{tabular}\endgroup%
}}\!\right]$}%
}%
\ifdim\wd\matricesbox>\halfwidth\myboxwidth=\hsize\else\myboxwidth=\halfwidth\fi
\vbox{%
\ifdim\myboxwidth=\hsize
\setbox\onelinebox=\hbox{%
\vbox{\hbox{%
$\Pi_{8,20}$ spans $L_{17.11}$%
}\hbox{%
$|22|22|22|22\rtimes D_{4}$%
}%
}%
\hfill\copy\matricesbox
}%
\ifdim\wd\onelinebox>\myboxwidth
\hbox to \myboxwidth{%
$\Pi_{8,20}$ spans $L_{17.11}$%
\hfil
$|22|22|22|22\rtimes D_{4}$%
}%
\box\matricesbox
\else
\hbox to \myboxwidth{%
\unhbox\onelinebox
}%
\fi
\else
\hbox to \myboxwidth{%
$\Pi_{8,20}$ spans $L_{17.11}$%
\hfil}%
\hbox to \myboxwidth{%
$|22|22|22|22\rtimes D_{4}$%
\hfil}%
\box\matricesbox
\fi
}%
\hfill\discretionary{}{}{}%
\setbox\matricesbox=\hbox{%
{$\left[\!\llap{\phantom{%
\begingroup \smaller\smaller\smaller\begin{tabular}{@{}c@{}}%
\phantom{0}\\\phantom{0}\\\phantom{0}
\end{tabular}\endgroup%
}}\right.$}%
\begingroup \smaller\smaller\smaller\begin{tabular}{@{}c@{}}%
-1/8\\\phantom{0}\\\phantom{0}
\end{tabular}\endgroup%
\kern3pt%
\begingroup \smaller\smaller\smaller\begin{tabular}{@{}c@{}}%
\phantom{0}\\63/2\\\phantom{0}
\end{tabular}\endgroup%
\kern3pt%
\begingroup \smaller\smaller\smaller\begin{tabular}{@{}c@{}}%
\phantom{0}\\\phantom{0}\\3
\end{tabular}\endgroup%
{$\left.\llap{\phantom{%
\begingroup \smaller\smaller\smaller\begin{tabular}{@{}c@{}}%
\phantom{0}\\\phantom{0}\\\phantom{0}
\end{tabular}\endgroup%
}}\!\right]$}%
{$\left[\!\llap{\phantom{%
\begingroup \smaller\smaller\smaller\begin{tabular}{@{}c@{}}%
0\\0\\0
\end{tabular}\endgroup%
}}\right.$}%
\begingroup \smaller\smaller\smaller\begin{tabular}{@{}c@{}}%
28\\2\\0
\end{tabular}\endgroup%
\kern3pt%
\begingroup \smaller\smaller\smaller\begin{tabular}{@{}c@{}}%
18\\1\\3
\end{tabular}\endgroup%
\kern3pt%
\begingroup \smaller\smaller\smaller\begin{tabular}{@{}c@{}}%
16\\0\\4
\end{tabular}\endgroup%
{$\left.\llap{\phantom{%
\begingroup \smaller\smaller\smaller\begin{tabular}{@{}c@{}}%
0\\0\\0
\end{tabular}\endgroup%
}}\!\right]$}%
}%
\ifdim\wd\matricesbox>\halfwidth\myboxwidth=\hsize\else\myboxwidth=\halfwidth\fi
\vbox{%
\ifdim\myboxwidth=\hsize
\setbox\onelinebox=\hbox{%
\vbox{\hbox{%
$\Pi_{8,21}$ spans $L_{25.12}$%
}\hbox{%
$2|22|22|22|2\rtimes D_{4}$%
}%
}%
\hfill\copy\matricesbox
}%
\ifdim\wd\onelinebox>\myboxwidth
\hbox to \myboxwidth{%
$\Pi_{8,21}$ spans $L_{25.12}$%
\hfil
$2|22|22|22|2\rtimes D_{4}$%
}%
\box\matricesbox
\else
\hbox to \myboxwidth{%
\unhbox\onelinebox
}%
\fi
\else
\hbox to \myboxwidth{%
$\Pi_{8,21}$ spans $L_{25.12}$%
\hfil}%
\hbox to \myboxwidth{%
$2|22|22|22|2\rtimes D_{4}$%
\hfil}%
\box\matricesbox
\fi
}%
\hfill\discretionary{}{}{}%
\setbox\matricesbox=\hbox{%
{$\left[\!\llap{\phantom{%
\begingroup \smaller\smaller\smaller\begin{tabular}{@{}c@{}}%
\phantom{0}\\\phantom{0}\\\phantom{0}
\end{tabular}\endgroup%
}}\right.$}%
\begingroup \smaller\smaller\smaller\begin{tabular}{@{}c@{}}%
-1\\\phantom{0}\\\phantom{0}
\end{tabular}\endgroup%
\kern3pt%
\begingroup \smaller\smaller\smaller\begin{tabular}{@{}c@{}}%
\phantom{0}\\14\\\phantom{0}
\end{tabular}\endgroup%
\kern3pt%
\begingroup \smaller\smaller\smaller\begin{tabular}{@{}c@{}}%
\phantom{0}\\\phantom{0}\\6
\end{tabular}\endgroup%
{$\left.\llap{\phantom{%
\begingroup \smaller\smaller\smaller\begin{tabular}{@{}c@{}}%
\phantom{0}\\\phantom{0}\\\phantom{0}
\end{tabular}\endgroup%
}}\!\right]$}%
{$\left[\!\llap{\phantom{%
\begingroup \smaller\smaller\smaller\begin{tabular}{@{}c@{}}%
0\\0\\0
\end{tabular}\endgroup%
}}\right.$}%
\begingroup \smaller\smaller\smaller\begin{tabular}{@{}c@{}}%
7\\-2\\0
\end{tabular}\endgroup%
\kern3pt%
\begingroup \smaller\smaller\smaller\begin{tabular}{@{}c@{}}%
4\\-1\\-1
\end{tabular}\endgroup%
\kern3pt%
\begingroup \smaller\smaller\smaller\begin{tabular}{@{}c@{}}%
2\\0\\-1
\end{tabular}\endgroup%
{$\left.\llap{\phantom{%
\begingroup \smaller\smaller\smaller\begin{tabular}{@{}c@{}}%
0\\0\\0
\end{tabular}\endgroup%
}}\!\right]$}%
}%
\ifdim\wd\matricesbox>\halfwidth\myboxwidth=\hsize\else\myboxwidth=\halfwidth\fi
\vbox{%
\ifdim\myboxwidth=\hsize
\setbox\onelinebox=\hbox{%
\vbox{\hbox{%
$\Pi_{8,22}$ spans $L_{24.8}$%
}\hbox{%
$2|24|42|24|4\rtimes D_{4}$%
}%
}%
\hfill\copy\matricesbox
}%
\ifdim\wd\onelinebox>\myboxwidth
\hbox to \myboxwidth{%
$\Pi_{8,22}$ spans $L_{24.8}$%
\hfil
$2|24|42|24|4\rtimes D_{4}$%
}%
\box\matricesbox
\else
\hbox to \myboxwidth{%
\unhbox\onelinebox
}%
\fi
\else
\hbox to \myboxwidth{%
$\Pi_{8,22}$ spans $L_{24.8}$%
\hfil}%
\hbox to \myboxwidth{%
$2|24|42|24|4\rtimes D_{4}$%
\hfil}%
\box\matricesbox
\fi
}%
\hfill\discretionary{}{}{}%
\setbox\matricesbox=\hbox{%
{$\left[\!\llap{\phantom{%
\begingroup \smaller\smaller\smaller\begin{tabular}{@{}c@{}}%
\phantom{0}\\\phantom{0}\\\phantom{0}
\end{tabular}\endgroup%
}}\right.$}%
\begingroup \smaller\smaller\smaller\begin{tabular}{@{}c@{}}%
-1/4\\\phantom{0}\\\phantom{0}
\end{tabular}\endgroup%
\kern3pt%
\begingroup \smaller\smaller\smaller\begin{tabular}{@{}c@{}}%
\phantom{0}\\45/2\\\phantom{0}
\end{tabular}\endgroup%
\kern3pt%
\begingroup \smaller\smaller\smaller\begin{tabular}{@{}c@{}}%
\phantom{0}\\\phantom{0}\\1/2
\end{tabular}\endgroup%
{$\left.\llap{\phantom{%
\begingroup \smaller\smaller\smaller\begin{tabular}{@{}c@{}}%
\phantom{0}\\\phantom{0}\\\phantom{0}
\end{tabular}\endgroup%
}}\!\right]$}%
{$\left[\!\llap{\phantom{%
\begingroup \smaller\smaller\smaller\begin{tabular}{@{}c@{}}%
0\\0\\0
\end{tabular}\endgroup%
}}\right.$}%
\begingroup \smaller\smaller\smaller\begin{tabular}{@{}c@{}}%
36\\-4\\0
\end{tabular}\endgroup%
\kern3pt%
\begingroup \smaller\smaller\smaller\begin{tabular}{@{}c@{}}%
10\\-1\\5
\end{tabular}\endgroup%
\kern3pt%
\begingroup \smaller\smaller\smaller\begin{tabular}{@{}c@{}}%
4\\0\\4
\end{tabular}\endgroup%
{$\left.\llap{\phantom{%
\begingroup \smaller\smaller\smaller\begin{tabular}{@{}c@{}}%
0\\0\\0
\end{tabular}\endgroup%
}}\!\right]$}%
}%
\ifdim\wd\matricesbox>\halfwidth\myboxwidth=\hsize\else\myboxwidth=\halfwidth\fi
\vbox{%
\ifdim\myboxwidth=\hsize
\setbox\onelinebox=\hbox{%
\vbox{\hbox{%
$\Pi_{8,23}$ spans $L_{165.1}$%
}\hbox{%
$2|22|22|22|2\rtimes D_{4}$%
}%
}%
\hfill\copy\matricesbox
}%
\ifdim\wd\onelinebox>\myboxwidth
\hbox to \myboxwidth{%
$\Pi_{8,23}$ spans $L_{165.1}$%
\hfil
$2|22|22|22|2\rtimes D_{4}$%
}%
\box\matricesbox
\else
\hbox to \myboxwidth{%
\unhbox\onelinebox
}%
\fi
\else
\hbox to \myboxwidth{%
$\Pi_{8,23}$ spans $L_{165.1}$%
\hfil}%
\hbox to \myboxwidth{%
$2|22|22|22|2\rtimes D_{4}$%
\hfil}%
\box\matricesbox
\fi
}%
\hfill\discretionary{}{}{}%
\setbox\matricesbox=\hbox{%
{$\left[\!\llap{\phantom{%
\begingroup \smaller\smaller\smaller\begin{tabular}{@{}c@{}}%
\phantom{0}\\\phantom{0}\\\phantom{0}
\end{tabular}\endgroup%
}}\right.$}%
\begingroup \smaller\smaller\smaller\begin{tabular}{@{}c@{}}%
-1/4\\\phantom{0}\\\phantom{0}
\end{tabular}\endgroup%
\kern3pt%
\begingroup \smaller\smaller\smaller\begin{tabular}{@{}c@{}}%
\phantom{0}\\7\\\phantom{0}
\end{tabular}\endgroup%
\kern3pt%
\begingroup \smaller\smaller\smaller\begin{tabular}{@{}c@{}}%
\phantom{0}\\\phantom{0}\\35
\end{tabular}\endgroup%
{$\left.\llap{\phantom{%
\begingroup \smaller\smaller\smaller\begin{tabular}{@{}c@{}}%
\phantom{0}\\\phantom{0}\\\phantom{0}
\end{tabular}\endgroup%
}}\!\right]$}%
{$\left[\!\llap{\phantom{%
\begingroup \smaller\smaller\smaller\begin{tabular}{@{}c@{}}%
0\\0\\0
\end{tabular}\endgroup%
}}\right.$}%
\begingroup \smaller\smaller\smaller\begin{tabular}{@{}c@{}}%
14\\3\\0
\end{tabular}\endgroup%
\kern3pt%
\begingroup \smaller\smaller\smaller\begin{tabular}{@{}c@{}}%
14\\2\\1
\end{tabular}\endgroup%
\kern3pt%
\begingroup \smaller\smaller\smaller\begin{tabular}{@{}c@{}}%
10\\0\\1
\end{tabular}\endgroup%
{$\left.\llap{\phantom{%
\begingroup \smaller\smaller\smaller\begin{tabular}{@{}c@{}}%
0\\0\\0
\end{tabular}\endgroup%
}}\!\right]$}%
}%
\ifdim\wd\matricesbox>\halfwidth\myboxwidth=\hsize\else\myboxwidth=\halfwidth\fi
\vbox{%
\ifdim\myboxwidth=\hsize
\setbox\onelinebox=\hbox{%
\vbox{\hbox{%
$\Pi_{8,24}$ spans $L_{37.7}$%
}\hbox{%
$3|32|23|32|2\rtimes D_{4}$%
}%
}%
\hfill\copy\matricesbox
}%
\ifdim\wd\onelinebox>\myboxwidth
\hbox to \myboxwidth{%
$\Pi_{8,24}$ spans $L_{37.7}$%
\hfil
$3|32|23|32|2\rtimes D_{4}$%
}%
\box\matricesbox
\else
\hbox to \myboxwidth{%
\unhbox\onelinebox
}%
\fi
\else
\hbox to \myboxwidth{%
$\Pi_{8,24}$ spans $L_{37.7}$%
\hfil}%
\hbox to \myboxwidth{%
$3|32|23|32|2\rtimes D_{4}$%
\hfil}%
\box\matricesbox
\fi
}%
\hfill\discretionary{}{}{}%
\setbox\matricesbox=\hbox{%
{$\left[\!\llap{\phantom{%
\begingroup \smaller\smaller\smaller\begin{tabular}{@{}c@{}}%
\phantom{0}\\\phantom{0}\\\phantom{0}
\end{tabular}\endgroup%
}}\right.$}%
\begingroup \smaller\smaller\smaller\begin{tabular}{@{}c@{}}%
-3\\\phantom{0}\\\phantom{0}
\end{tabular}\endgroup%
\kern3pt%
\begingroup \smaller\smaller\smaller\begin{tabular}{@{}c@{}}%
\phantom{0}\\1\\\phantom{0}
\end{tabular}\endgroup%
\kern3pt%
\begingroup \smaller\smaller\smaller\begin{tabular}{@{}c@{}}%
\phantom{0}\\\phantom{0}\\3
\end{tabular}\endgroup%
{$\left.\llap{\phantom{%
\begingroup \smaller\smaller\smaller\begin{tabular}{@{}c@{}}%
\phantom{0}\\\phantom{0}\\\phantom{0}
\end{tabular}\endgroup%
}}\!\right]$}%
{$\left[\!\llap{\phantom{%
\begingroup \smaller\smaller\smaller\begin{tabular}{@{}c@{}}%
0\\0\\0
\end{tabular}\endgroup%
}}\right.$}%
\begingroup \smaller\smaller\smaller\begin{tabular}{@{}c@{}}%
4\\7\\1
\end{tabular}\endgroup%
\kern3pt%
\begingroup \smaller\smaller\smaller\begin{tabular}{@{}c@{}}%
1\\1\\1
\end{tabular}\endgroup%
{$\left.\llap{\phantom{%
\begingroup \smaller\smaller\smaller\begin{tabular}{@{}c@{}}%
0\\0\\0
\end{tabular}\endgroup%
}}\!\right]$}%
}%
\ifdim\wd\matricesbox>\halfwidth\myboxwidth=\hsize\else\myboxwidth=\halfwidth\fi
\vbox{%
\ifdim\myboxwidth=\hsize
\setbox\onelinebox=\hbox{%
\vbox{\hbox{%
$\Pi_{8,25}=\hbox{GN}_{50}$ spans $L_{7.7}$%
}\hbox{%
$\slashthree\infty\slashinfty\infty\slashthree\infty\slashinfty\infty\rtimes D_{4}$%
}%
}%
\hfill\copy\matricesbox
}%
\ifdim\wd\onelinebox>\myboxwidth
\hbox to \myboxwidth{%
$\Pi_{8,25}=\hbox{GN}_{50}$ spans $L_{7.7}$%
\hfil
$\slashthree\infty\slashinfty\infty\slashthree\infty\slashinfty\infty\rtimes D_{4}$%
}%
\box\matricesbox
\else
\hbox to \myboxwidth{%
\unhbox\onelinebox
}%
\fi
\else
\hbox to \myboxwidth{%
$\Pi_{8,25}=\hbox{GN}_{50}$ spans $L_{7.7}$%
\hfil}%
\hbox to \myboxwidth{%
$\slashthree\infty\slashinfty\infty\slashthree\infty\slashinfty\infty\rtimes D_{4}$%
\hfil}%
\box\matricesbox
\fi
}%
\hfill\discretionary{}{}{}%
\setbox\matricesbox=\hbox{%
{$\left[\!\llap{\phantom{%
\begingroup \smaller\smaller\smaller\begin{tabular}{@{}c@{}}%
\phantom{0}\\\phantom{0}\\\phantom{0}
\end{tabular}\endgroup%
}}\right.$}%
\begingroup \smaller\smaller\smaller\begin{tabular}{@{}c@{}}%
-1\\\phantom{0}\\\phantom{0}
\end{tabular}\endgroup%
\kern3pt%
\begingroup \smaller\smaller\smaller\begin{tabular}{@{}c@{}}%
\phantom{0}\\7/2\\\phantom{0}
\end{tabular}\endgroup%
\kern3pt%
\begingroup \smaller\smaller\smaller\begin{tabular}{@{}c@{}}%
\phantom{0}\\\phantom{0}\\1/2
\end{tabular}\endgroup%
{$\left.\llap{\phantom{%
\begingroup \smaller\smaller\smaller\begin{tabular}{@{}c@{}}%
\phantom{0}\\\phantom{0}\\\phantom{0}
\end{tabular}\endgroup%
}}\!\right]$}%
{$\left[\!\llap{\phantom{%
\begingroup \smaller\smaller\smaller\begin{tabular}{@{}c@{}}%
0\\0\\0
\end{tabular}\endgroup%
}}\right.$}%
\begingroup \smaller\smaller\smaller\begin{tabular}{@{}c@{}}%
7\\4\\0
\end{tabular}\endgroup%
\kern3pt%
\begingroup \smaller\smaller\smaller\begin{tabular}{@{}c@{}}%
7\\3\\-7
\end{tabular}\endgroup%
\kern3pt%
\begingroup \smaller\smaller\smaller\begin{tabular}{@{}c@{}}%
1\\0\\-2
\end{tabular}\endgroup%
{$\left.\llap{\phantom{%
\begingroup \smaller\smaller\smaller\begin{tabular}{@{}c@{}}%
0\\0\\0
\end{tabular}\endgroup%
}}\!\right]$}%
}%
\ifdim\wd\matricesbox>\halfwidth\myboxwidth=\hsize\else\myboxwidth=\halfwidth\fi
\vbox{%
\ifdim\myboxwidth=\hsize
\setbox\onelinebox=\hbox{%
\vbox{\hbox{%
$\Pi_{8,26}$ spans $L_{9.4}$%
}\hbox{%
$\infty|\infty2|2\infty|\infty2|2\rtimes D_{4}$%
}%
}%
\hfill\copy\matricesbox
}%
\ifdim\wd\onelinebox>\myboxwidth
\hbox to \myboxwidth{%
$\Pi_{8,26}$ spans $L_{9.4}$%
\hfil
$\infty|\infty2|2\infty|\infty2|2\rtimes D_{4}$%
}%
\box\matricesbox
\else
\hbox to \myboxwidth{%
\unhbox\onelinebox
}%
\fi
\else
\hbox to \myboxwidth{%
$\Pi_{8,26}$ spans $L_{9.4}$%
\hfil}%
\hbox to \myboxwidth{%
$\infty|\infty2|2\infty|\infty2|2\rtimes D_{4}$%
\hfil}%
\box\matricesbox
\fi
}%
\hfill\discretionary{}{}{}%
\setbox\matricesbox=\hbox{%
{$\left[\!\llap{\phantom{%
\begingroup \smaller\smaller\smaller\begin{tabular}{@{}c@{}}%
\phantom{0}\\\phantom{0}\\\phantom{0}
\end{tabular}\endgroup%
}}\right.$}%
\begingroup \smaller\smaller\smaller\begin{tabular}{@{}c@{}}%
-1\\\phantom{0}\\\phantom{0}
\end{tabular}\endgroup%
\kern3pt%
\begingroup \smaller\smaller\smaller\begin{tabular}{@{}c@{}}%
\phantom{0}\\3/2\\\phantom{0}
\end{tabular}\endgroup%
\kern3pt%
\begingroup \smaller\smaller\smaller\begin{tabular}{@{}c@{}}%
\phantom{0}\\\phantom{0}\\9/2
\end{tabular}\endgroup%
{$\left.\llap{\phantom{%
\begingroup \smaller\smaller\smaller\begin{tabular}{@{}c@{}}%
\phantom{0}\\\phantom{0}\\\phantom{0}
\end{tabular}\endgroup%
}}\!\right]$}%
{$\left[\!\llap{\phantom{%
\begingroup \smaller\smaller\smaller\begin{tabular}{@{}c@{}}%
0\\0\\0
\end{tabular}\endgroup%
}}\right.$}%
\begingroup \smaller\smaller\smaller\begin{tabular}{@{}c@{}}%
6\\-5\\-1
\end{tabular}\endgroup%
\kern3pt%
\begingroup \smaller\smaller\smaller\begin{tabular}{@{}c@{}}%
2\\-1\\-1
\end{tabular}\endgroup%
{$\left.\llap{\phantom{%
\begingroup \smaller\smaller\smaller\begin{tabular}{@{}c@{}}%
0\\0\\0
\end{tabular}\endgroup%
}}\!\right]$}%
}%
\ifdim\wd\matricesbox>\halfwidth\myboxwidth=\hsize\else\myboxwidth=\halfwidth\fi
\vbox{%
\ifdim\myboxwidth=\hsize
\setbox\onelinebox=\hbox{%
\vbox{\hbox{%
$\Pi_{8,27}$ spans $L_{168.1}$%
}\hbox{%
$\slashthree2\slashthree2\slashthree2\slashthree2\rtimes D_{4}$%
}%
}%
\hfill\copy\matricesbox
}%
\ifdim\wd\onelinebox>\myboxwidth
\hbox to \myboxwidth{%
$\Pi_{8,27}$ spans $L_{168.1}$%
\hfil
$\slashthree2\slashthree2\slashthree2\slashthree2\rtimes D_{4}$%
}%
\box\matricesbox
\else
\hbox to \myboxwidth{%
\unhbox\onelinebox
}%
\fi
\else
\hbox to \myboxwidth{%
$\Pi_{8,27}$ spans $L_{168.1}$%
\hfil}%
\hbox to \myboxwidth{%
$\slashthree2\slashthree2\slashthree2\slashthree2\rtimes D_{4}$%
\hfil}%
\box\matricesbox
\fi
}%
\hfill\discretionary{}{}{}%
\setbox\matricesbox=\hbox{%
{$\left[\!\llap{\phantom{%
\begingroup \smaller\smaller\smaller\begin{tabular}{@{}c@{}}%
\phantom{0}\\\phantom{0}\\\phantom{0}
\end{tabular}\endgroup%
}}\right.$}%
\begingroup \smaller\smaller\smaller\begin{tabular}{@{}c@{}}%
-1\\\phantom{0}\\\phantom{0}
\end{tabular}\endgroup%
\kern3pt%
\begingroup \smaller\smaller\smaller\begin{tabular}{@{}c@{}}%
\phantom{0}\\6\\\phantom{0}
\end{tabular}\endgroup%
\kern3pt%
\begingroup \smaller\smaller\smaller\begin{tabular}{@{}c@{}}%
\phantom{0}\\\phantom{0}\\12
\end{tabular}\endgroup%
{$\left.\llap{\phantom{%
\begingroup \smaller\smaller\smaller\begin{tabular}{@{}c@{}}%
\phantom{0}\\\phantom{0}\\\phantom{0}
\end{tabular}\endgroup%
}}\!\right]$}%
{$\left[\!\llap{\phantom{%
\begingroup \smaller\smaller\smaller\begin{tabular}{@{}c@{}}%
0\\0\\0
\end{tabular}\endgroup%
}}\right.$}%
\begingroup \smaller\smaller\smaller\begin{tabular}{@{}c@{}}%
2\\-1\\0
\end{tabular}\endgroup%
\kern3pt%
\begingroup \smaller\smaller\smaller\begin{tabular}{@{}c@{}}%
8\\-2\\-2
\end{tabular}\endgroup%
\kern3pt%
\begingroup \smaller\smaller\smaller\begin{tabular}{@{}c@{}}%
3\\0\\-1
\end{tabular}\endgroup%
{$\left.\llap{\phantom{%
\begingroup \smaller\smaller\smaller\begin{tabular}{@{}c@{}}%
0\\0\\0
\end{tabular}\endgroup%
}}\!\right]$}%
}%
\ifdim\wd\matricesbox>\halfwidth\myboxwidth=\hsize\else\myboxwidth=\halfwidth\fi
\vbox{%
\ifdim\myboxwidth=\hsize
\setbox\onelinebox=\hbox{%
\vbox{\hbox{%
$\Pi_{8,28}$ spans $L_{150.29}$%
}\hbox{%
$|\infty2|2\infty|\infty2|2\infty\rtimes D_{4}$%
}%
}%
\hfill\copy\matricesbox
}%
\ifdim\wd\onelinebox>\myboxwidth
\hbox to \myboxwidth{%
$\Pi_{8,28}$ spans $L_{150.29}$%
\hfil
$|\infty2|2\infty|\infty2|2\infty\rtimes D_{4}$%
}%
\box\matricesbox
\else
\hbox to \myboxwidth{%
\unhbox\onelinebox
}%
\fi
\else
\hbox to \myboxwidth{%
$\Pi_{8,28}$ spans $L_{150.29}$%
\hfil}%
\hbox to \myboxwidth{%
$|\infty2|2\infty|\infty2|2\infty\rtimes D_{4}$%
\hfil}%
\box\matricesbox
\fi
}%
\hfill\discretionary{}{}{}%
\setbox\matricesbox=\hbox{%
{$\left[\!\llap{\phantom{%
\begingroup \smaller\smaller\smaller\begin{tabular}{@{}c@{}}%
\phantom{0}\\\phantom{0}\\\phantom{0}
\end{tabular}\endgroup%
}}\right.$}%
\begingroup \smaller\smaller\smaller\begin{tabular}{@{}c@{}}%
-1/2\\\phantom{0}\\\phantom{0}
\end{tabular}\endgroup%
\kern3pt%
\begingroup \smaller\smaller\smaller\begin{tabular}{@{}c@{}}%
\phantom{0}\\7/2\\\phantom{0}
\end{tabular}\endgroup%
\kern3pt%
\begingroup \smaller\smaller\smaller\begin{tabular}{@{}c@{}}%
\phantom{0}\\\phantom{0}\\4
\end{tabular}\endgroup%
{$\left.\llap{\phantom{%
\begingroup \smaller\smaller\smaller\begin{tabular}{@{}c@{}}%
\phantom{0}\\\phantom{0}\\\phantom{0}
\end{tabular}\endgroup%
}}\!\right]$}%
{$\left[\!\llap{\phantom{%
\begingroup \smaller\smaller\smaller\begin{tabular}{@{}c@{}}%
0\\0\\0
\end{tabular}\endgroup%
}}\right.$}%
\begingroup \smaller\smaller\smaller\begin{tabular}{@{}c@{}}%
7\\-3\\0
\end{tabular}\endgroup%
\kern3pt%
\begingroup \smaller\smaller\smaller\begin{tabular}{@{}c@{}}%
28\\-8\\-7
\end{tabular}\endgroup%
\kern3pt%
\begingroup \smaller\smaller\smaller\begin{tabular}{@{}c@{}}%
2\\0\\-1
\end{tabular}\endgroup%
{$\left.\llap{\phantom{%
\begingroup \smaller\smaller\smaller\begin{tabular}{@{}c@{}}%
0\\0\\0
\end{tabular}\endgroup%
}}\!\right]$}%
}%
\ifdim\wd\matricesbox>\halfwidth\myboxwidth=\hsize\else\myboxwidth=\halfwidth\fi
\vbox{%
\ifdim\myboxwidth=\hsize
\setbox\onelinebox=\hbox{%
\vbox{\hbox{%
$\Pi_{8,29}$ spans $L_{9.6}$%
}\hbox{%
$|\infty2|2\infty|\infty2|2\infty\rtimes D_{4}$%
}%
}%
\hfill\copy\matricesbox
}%
\ifdim\wd\onelinebox>\myboxwidth
\hbox to \myboxwidth{%
$\Pi_{8,29}$ spans $L_{9.6}$%
\hfil
$|\infty2|2\infty|\infty2|2\infty\rtimes D_{4}$%
}%
\box\matricesbox
\else
\hbox to \myboxwidth{%
\unhbox\onelinebox
}%
\fi
\else
\hbox to \myboxwidth{%
$\Pi_{8,29}$ spans $L_{9.6}$%
\hfil}%
\hbox to \myboxwidth{%
$|\infty2|2\infty|\infty2|2\infty\rtimes D_{4}$%
\hfil}%
\box\matricesbox
\fi
}%
\hfill\discretionary{}{}{}%
\setbox\matricesbox=\hbox{%
{$\left[\!\llap{\phantom{%
\begingroup \smaller\smaller\smaller\begin{tabular}{@{}c@{}}%
\phantom{0}\\\phantom{0}\\\phantom{0}
\end{tabular}\endgroup%
}}\right.$}%
\begingroup \smaller\smaller\smaller\begin{tabular}{@{}c@{}}%
-1\\\phantom{0}\\\phantom{0}
\end{tabular}\endgroup%
\kern3pt%
\begingroup \smaller\smaller\smaller\begin{tabular}{@{}c@{}}%
\phantom{0}\\1\\\phantom{0}
\end{tabular}\endgroup%
\kern3pt%
\begingroup \smaller\smaller\smaller\begin{tabular}{@{}c@{}}%
\phantom{0}\\\phantom{0}\\5
\end{tabular}\endgroup%
{$\left.\llap{\phantom{%
\begingroup \smaller\smaller\smaller\begin{tabular}{@{}c@{}}%
\phantom{0}\\\phantom{0}\\\phantom{0}
\end{tabular}\endgroup%
}}\!\right]$}%
{$\left[\!\llap{\phantom{%
\begingroup \smaller\smaller\smaller\begin{tabular}{@{}c@{}}%
0\\0\\0
\end{tabular}\endgroup%
}}\right.$}%
\begingroup \smaller\smaller\smaller\begin{tabular}{@{}c@{}}%
5\\5\\1
\end{tabular}\endgroup%
\kern3pt%
\begingroup \smaller\smaller\smaller\begin{tabular}{@{}c@{}}%
2\\1\\1
\end{tabular}\endgroup%
{$\left.\llap{\phantom{%
\begingroup \smaller\smaller\smaller\begin{tabular}{@{}c@{}}%
0\\0\\0
\end{tabular}\endgroup%
}}\!\right]$}%
}%
\ifdim\wd\matricesbox>\halfwidth\myboxwidth=\hsize\else\myboxwidth=\halfwidth\fi
\vbox{%
\ifdim\myboxwidth=\hsize
\setbox\onelinebox=\hbox{%
\vbox{\hbox{%
$\Pi_{8,30}$ spans $L_{5.7}$%
}\hbox{%
$\slashinfty2\slashtwo2\slashinfty2\slashtwo2\rtimes D_{4}$%
}%
}%
\hfill\copy\matricesbox
}%
\ifdim\wd\onelinebox>\myboxwidth
\hbox to \myboxwidth{%
$\Pi_{8,30}$ spans $L_{5.7}$%
\hfil
$\slashinfty2\slashtwo2\slashinfty2\slashtwo2\rtimes D_{4}$%
}%
\box\matricesbox
\else
\hbox to \myboxwidth{%
\unhbox\onelinebox
}%
\fi
\else
\hbox to \myboxwidth{%
$\Pi_{8,30}$ spans $L_{5.7}$%
\hfil}%
\hbox to \myboxwidth{%
$\slashinfty2\slashtwo2\slashinfty2\slashtwo2\rtimes D_{4}$%
\hfil}%
\box\matricesbox
\fi
}%
\hfill\discretionary{}{}{}%
\setbox\matricesbox=\hbox{%
{$\left[\!\llap{\phantom{%
\begingroup \smaller\smaller\smaller\begin{tabular}{@{}c@{}}%
\phantom{0}\\\phantom{0}\\\phantom{0}
\end{tabular}\endgroup%
}}\right.$}%
\begingroup \smaller\smaller\smaller\begin{tabular}{@{}c@{}}%
-9/8\\\phantom{0}\\\phantom{0}
\end{tabular}\endgroup%
\kern3pt%
\begingroup \smaller\smaller\smaller\begin{tabular}{@{}c@{}}%
\phantom{0}\\2\\\phantom{0}
\end{tabular}\endgroup%
\kern3pt%
\begingroup \smaller\smaller\smaller\begin{tabular}{@{}c@{}}%
\phantom{0}\\\phantom{0}\\1/2
\end{tabular}\endgroup%
{$\left.\llap{\phantom{%
\begingroup \smaller\smaller\smaller\begin{tabular}{@{}c@{}}%
\phantom{0}\\\phantom{0}\\\phantom{0}
\end{tabular}\endgroup%
}}\!\right]$}%
{$\left[\!\llap{\phantom{%
\begingroup \smaller\smaller\smaller\begin{tabular}{@{}c@{}}%
0\\0\\0
\end{tabular}\endgroup%
}}\right.$}%
\begingroup \smaller\smaller\smaller\begin{tabular}{@{}c@{}}%
8\\6\\-4
\end{tabular}\endgroup%
\kern3pt%
\begingroup \smaller\smaller\smaller\begin{tabular}{@{}c@{}}%
2\\1\\-3
\end{tabular}\endgroup%
{$\left.\llap{\phantom{%
\begingroup \smaller\smaller\smaller\begin{tabular}{@{}c@{}}%
0\\0\\0
\end{tabular}\endgroup%
}}\!\right]$}%
}%
\ifdim\wd\matricesbox>\halfwidth\myboxwidth=\hsize\else\myboxwidth=\halfwidth\fi
\vbox{%
\ifdim\myboxwidth=\hsize
\setbox\onelinebox=\hbox{%
\vbox{\hbox{%
$\Pi_{8,31}=\hbox{GN}_{45}$ spans $L_{148.2}$%
}\hbox{%
$\slashinfty2\slashinfty2\slashinfty2\slashinfty2\rtimes D_{4}$%
}%
}%
\hfill\copy\matricesbox
}%
\ifdim\wd\onelinebox>\myboxwidth
\hbox to \myboxwidth{%
$\Pi_{8,31}=\hbox{GN}_{45}$ spans $L_{148.2}$%
\hfil
$\slashinfty2\slashinfty2\slashinfty2\slashinfty2\rtimes D_{4}$%
}%
\box\matricesbox
\else
\hbox to \myboxwidth{%
\unhbox\onelinebox
}%
\fi
\else
\hbox to \myboxwidth{%
$\Pi_{8,31}=\hbox{GN}_{45}$ spans $L_{148.2}$%
\hfil}%
\hbox to \myboxwidth{%
$\slashinfty2\slashinfty2\slashinfty2\slashinfty2\rtimes D_{4}$%
\hfil}%
\box\matricesbox
\fi
}%
\hfill\discretionary{}{}{}%
\setbox\matricesbox=\hbox{%
{$\left[\!\llap{\phantom{%
\begingroup \smaller\smaller\smaller\begin{tabular}{@{}c@{}}%
\phantom{0}\\\phantom{0}\\\phantom{0}
\end{tabular}\endgroup%
}}\right.$}%
\begingroup \smaller\smaller\smaller\begin{tabular}{@{}c@{}}%
-1/8\\\phantom{0}\\\phantom{0}
\end{tabular}\endgroup%
\kern3pt%
\begingroup \smaller\smaller\smaller\begin{tabular}{@{}c@{}}%
\phantom{0}\\21/2\\\phantom{0}
\end{tabular}\endgroup%
\kern3pt%
\begingroup \smaller\smaller\smaller\begin{tabular}{@{}c@{}}%
\phantom{0}\\\phantom{0}\\28
\end{tabular}\endgroup%
{$\left.\llap{\phantom{%
\begingroup \smaller\smaller\smaller\begin{tabular}{@{}c@{}}%
\phantom{0}\\\phantom{0}\\\phantom{0}
\end{tabular}\endgroup%
}}\!\right]$}%
{$\left[\!\llap{\phantom{%
\begingroup \smaller\smaller\smaller\begin{tabular}{@{}c@{}}%
0\\0\\0
\end{tabular}\endgroup%
}}\right.$}%
\begingroup \smaller\smaller\smaller\begin{tabular}{@{}c@{}}%
6\\-1\\0
\end{tabular}\endgroup%
\kern3pt%
\begingroup \smaller\smaller\smaller\begin{tabular}{@{}c@{}}%
112\\-8\\-6
\end{tabular}\endgroup%
\kern3pt%
\begingroup \smaller\smaller\smaller\begin{tabular}{@{}c@{}}%
42\\-1\\-3
\end{tabular}\endgroup%
\kern3pt%
\begingroup \smaller\smaller\smaller\begin{tabular}{@{}c@{}}%
14\\1\\-1
\end{tabular}\endgroup%
\kern3pt%
\begingroup \smaller\smaller\smaller\begin{tabular}{@{}c@{}}%
6\\1\\0
\end{tabular}\endgroup%
{$\left.\llap{\phantom{%
\begingroup \smaller\smaller\smaller\begin{tabular}{@{}c@{}}%
0\\0\\0
\end{tabular}\endgroup%
}}\!\right]$}%
}%
\ifdim\wd\matricesbox>\halfwidth\myboxwidth=\hsize\else\myboxwidth=\halfwidth\fi
\vbox{%
\ifdim\myboxwidth=\hsize
\setbox\onelinebox=\hbox{%
\vbox{\hbox{%
$\Pi_{8,32}$ spans $L_{22.8}$%
}\hbox{%
$|2222|2222\rtimes D_{2}$%
}%
}%
\hfill\copy\matricesbox
}%
\ifdim\wd\onelinebox>\myboxwidth
\hbox to \myboxwidth{%
$\Pi_{8,32}$ spans $L_{22.8}$%
\hfil
$|2222|2222\rtimes D_{2}$%
}%
\box\matricesbox
\else
\hbox to \myboxwidth{%
\unhbox\onelinebox
}%
\fi
\else
\hbox to \myboxwidth{%
$\Pi_{8,32}$ spans $L_{22.8}$%
\hfil}%
\hbox to \myboxwidth{%
$|2222|2222\rtimes D_{2}$%
\hfil}%
\box\matricesbox
\fi
}%
\hfill\discretionary{}{}{}%
\setbox\matricesbox=\hbox{%
{$\left[\!\llap{\phantom{%
\begingroup \smaller\smaller\smaller\begin{tabular}{@{}c@{}}%
\phantom{0}\\\phantom{0}\\\phantom{0}
\end{tabular}\endgroup%
}}\right.$}%
\begingroup \smaller\smaller\smaller\begin{tabular}{@{}c@{}}%
-1/2\\\phantom{0}\\\phantom{0}
\end{tabular}\endgroup%
\kern3pt%
\begingroup \smaller\smaller\smaller\begin{tabular}{@{}c@{}}%
\phantom{0}\\3/2\\\phantom{0}
\end{tabular}\endgroup%
\kern3pt%
\begingroup \smaller\smaller\smaller\begin{tabular}{@{}c@{}}%
\phantom{0}\\\phantom{0}\\5/2
\end{tabular}\endgroup%
{$\left.\llap{\phantom{%
\begingroup \smaller\smaller\smaller\begin{tabular}{@{}c@{}}%
\phantom{0}\\\phantom{0}\\\phantom{0}
\end{tabular}\endgroup%
}}\!\right]$}%
{$\left[\!\llap{\phantom{%
\begingroup \smaller\smaller\smaller\begin{tabular}{@{}c@{}}%
0\\0\\0
\end{tabular}\endgroup%
}}\right.$}%
\begingroup \smaller\smaller\smaller\begin{tabular}{@{}c@{}}%
6\\-4\\0
\end{tabular}\endgroup%
\kern3pt%
\begingroup \smaller\smaller\smaller\begin{tabular}{@{}c@{}}%
2\\-1\\1
\end{tabular}\endgroup%
\kern3pt%
\begingroup \smaller\smaller\smaller\begin{tabular}{@{}c@{}}%
6\\1\\3
\end{tabular}\endgroup%
\kern3pt%
\begingroup \smaller\smaller\smaller\begin{tabular}{@{}c@{}}%
10\\5\\3
\end{tabular}\endgroup%
\kern3pt%
\begingroup \smaller\smaller\smaller\begin{tabular}{@{}c@{}}%
6\\4\\0
\end{tabular}\endgroup%
{$\left.\llap{\phantom{%
\begingroup \smaller\smaller\smaller\begin{tabular}{@{}c@{}}%
0\\0\\0
\end{tabular}\endgroup%
}}\!\right]$}%
}%
\ifdim\wd\matricesbox>\halfwidth\myboxwidth=\hsize\else\myboxwidth=\halfwidth\fi
\vbox{%
\ifdim\myboxwidth=\hsize
\setbox\onelinebox=\hbox{%
\vbox{\hbox{%
$\Pi_{8,33}$ spans $L_{31.2}$%
}\hbox{%
$2|2222|222\rtimes D_{2}$%
}%
}%
\hfill\copy\matricesbox
}%
\ifdim\wd\onelinebox>\myboxwidth
\hbox to \myboxwidth{%
$\Pi_{8,33}$ spans $L_{31.2}$%
\hfil
$2|2222|222\rtimes D_{2}$%
}%
\box\matricesbox
\else
\hbox to \myboxwidth{%
\unhbox\onelinebox
}%
\fi
\else
\hbox to \myboxwidth{%
$\Pi_{8,33}$ spans $L_{31.2}$%
\hfil}%
\hbox to \myboxwidth{%
$2|2222|222\rtimes D_{2}$%
\hfil}%
\box\matricesbox
\fi
}%
\hfill\discretionary{}{}{}%
\setbox\matricesbox=\hbox{%
{$\left[\!\llap{\phantom{%
\begingroup \smaller\smaller\smaller\begin{tabular}{@{}c@{}}%
\phantom{0}\\\phantom{0}\\\phantom{0}
\end{tabular}\endgroup%
}}\right.$}%
\begingroup \smaller\smaller\smaller\begin{tabular}{@{}c@{}}%
-1/4\\\phantom{0}\\\phantom{0}
\end{tabular}\endgroup%
\kern3pt%
\begingroup \smaller\smaller\smaller\begin{tabular}{@{}c@{}}%
\phantom{0}\\15/4\\\phantom{0}
\end{tabular}\endgroup%
\kern3pt%
\begingroup \smaller\smaller\smaller\begin{tabular}{@{}c@{}}%
\phantom{0}\\\phantom{0}\\15/2
\end{tabular}\endgroup%
{$\left.\llap{\phantom{%
\begingroup \smaller\smaller\smaller\begin{tabular}{@{}c@{}}%
\phantom{0}\\\phantom{0}\\\phantom{0}
\end{tabular}\endgroup%
}}\!\right]$}%
{$\left[\!\llap{\phantom{%
\begingroup \smaller\smaller\smaller\begin{tabular}{@{}c@{}}%
0\\0\\0
\end{tabular}\endgroup%
}}\right.$}%
\begingroup \smaller\smaller\smaller\begin{tabular}{@{}c@{}}%
6\\-2\\0
\end{tabular}\endgroup%
\kern3pt%
\begingroup \smaller\smaller\smaller\begin{tabular}{@{}c@{}}%
5\\-1\\-1
\end{tabular}\endgroup%
\kern3pt%
\begingroup \smaller\smaller\smaller\begin{tabular}{@{}c@{}}%
15\\1\\-3
\end{tabular}\endgroup%
\kern3pt%
\begingroup \smaller\smaller\smaller\begin{tabular}{@{}c@{}}%
30\\6\\-4
\end{tabular}\endgroup%
\kern3pt%
\begingroup \smaller\smaller\smaller\begin{tabular}{@{}c@{}}%
6\\2\\0
\end{tabular}\endgroup%
{$\left.\llap{\phantom{%
\begingroup \smaller\smaller\smaller\begin{tabular}{@{}c@{}}%
0\\0\\0
\end{tabular}\endgroup%
}}\!\right]$}%
}%
\ifdim\wd\matricesbox>\halfwidth\myboxwidth=\hsize\else\myboxwidth=\halfwidth\fi
\vbox{%
\ifdim\myboxwidth=\hsize
\setbox\onelinebox=\hbox{%
\vbox{\hbox{%
$\Pi_{8,34}$ spans $L_{128.20}$%
}\hbox{%
$2|2222|222\rtimes D_{2}$%
}%
}%
\hfill\copy\matricesbox
}%
\ifdim\wd\onelinebox>\myboxwidth
\hbox to \myboxwidth{%
$\Pi_{8,34}$ spans $L_{128.20}$%
\hfil
$2|2222|222\rtimes D_{2}$%
}%
\box\matricesbox
\else
\hbox to \myboxwidth{%
\unhbox\onelinebox
}%
\fi
\else
\hbox to \myboxwidth{%
$\Pi_{8,34}$ spans $L_{128.20}$%
\hfil}%
\hbox to \myboxwidth{%
$2|2222|222\rtimes D_{2}$%
\hfil}%
\box\matricesbox
\fi
}%
\hfill\discretionary{}{}{}%
\setbox\matricesbox=\hbox{%
{$\left[\!\llap{\phantom{%
\begingroup \smaller\smaller\smaller\begin{tabular}{@{}c@{}}%
\phantom{0}\\\phantom{0}\\\phantom{0}
\end{tabular}\endgroup%
}}\right.$}%
\begingroup \smaller\smaller\smaller\begin{tabular}{@{}c@{}}%
-1/8\\\phantom{0}\\\phantom{0}
\end{tabular}\endgroup%
\kern3pt%
\begingroup \smaller\smaller\smaller\begin{tabular}{@{}c@{}}%
\phantom{0}\\10\\\phantom{0}
\end{tabular}\endgroup%
\kern3pt%
\begingroup \smaller\smaller\smaller\begin{tabular}{@{}c@{}}%
\phantom{0}\\\phantom{0}\\25/2
\end{tabular}\endgroup%
{$\left.\llap{\phantom{%
\begingroup \smaller\smaller\smaller\begin{tabular}{@{}c@{}}%
\phantom{0}\\\phantom{0}\\\phantom{0}
\end{tabular}\endgroup%
}}\!\right]$}%
{$\left[\!\llap{\phantom{%
\begingroup \smaller\smaller\smaller\begin{tabular}{@{}c@{}}%
0\\0\\0
\end{tabular}\endgroup%
}}\right.$}%
\begingroup \smaller\smaller\smaller\begin{tabular}{@{}c@{}}%
32\\-4\\0
\end{tabular}\endgroup%
\kern3pt%
\begingroup \smaller\smaller\smaller\begin{tabular}{@{}c@{}}%
50\\-5\\-3
\end{tabular}\endgroup%
\kern3pt%
\begingroup \smaller\smaller\smaller\begin{tabular}{@{}c@{}}%
40\\-2\\-4
\end{tabular}\endgroup%
\kern3pt%
\begingroup \smaller\smaller\smaller\begin{tabular}{@{}c@{}}%
10\\1\\-1
\end{tabular}\endgroup%
\kern3pt%
\begingroup \smaller\smaller\smaller\begin{tabular}{@{}c@{}}%
32\\4\\0
\end{tabular}\endgroup%
{$\left.\llap{\phantom{%
\begingroup \smaller\smaller\smaller\begin{tabular}{@{}c@{}}%
0\\0\\0
\end{tabular}\endgroup%
}}\!\right]$}%
}%
\ifdim\wd\matricesbox>\halfwidth\myboxwidth=\hsize\else\myboxwidth=\halfwidth\fi
\vbox{%
\ifdim\myboxwidth=\hsize
\setbox\onelinebox=\hbox{%
\vbox{\hbox{%
$\Pi_{8,35}$ spans $L_{10.1}$%
}\hbox{%
$22|22\infty2|2\infty\rtimes D_{2}$%
}%
}%
\hfill\copy\matricesbox
}%
\ifdim\wd\onelinebox>\myboxwidth
\hbox to \myboxwidth{%
$\Pi_{8,35}$ spans $L_{10.1}$%
\hfil
$22|22\infty2|2\infty\rtimes D_{2}$%
}%
\box\matricesbox
\else
\hbox to \myboxwidth{%
\unhbox\onelinebox
}%
\fi
\else
\hbox to \myboxwidth{%
$\Pi_{8,35}$ spans $L_{10.1}$%
\hfil}%
\hbox to \myboxwidth{%
$22|22\infty2|2\infty\rtimes D_{2}$%
\hfil}%
\box\matricesbox
\fi
}%
\hfill\discretionary{}{}{}%
\setbox\matricesbox=\hbox{%
{$\left[\!\llap{\phantom{%
\begingroup \smaller\smaller\smaller\begin{tabular}{@{}c@{}}%
\phantom{0}\\\phantom{0}\\\phantom{0}
\end{tabular}\endgroup%
}}\right.$}%
\begingroup \smaller\smaller\smaller\begin{tabular}{@{}c@{}}%
-1\\\phantom{0}\\\phantom{0}
\end{tabular}\endgroup%
\kern3pt%
\begingroup \smaller\smaller\smaller\begin{tabular}{@{}c@{}}%
\phantom{0}\\2\\\phantom{0}
\end{tabular}\endgroup%
\kern3pt%
\begingroup \smaller\smaller\smaller\begin{tabular}{@{}c@{}}%
\phantom{0}\\\phantom{0}\\4
\end{tabular}\endgroup%
{$\left.\llap{\phantom{%
\begingroup \smaller\smaller\smaller\begin{tabular}{@{}c@{}}%
\phantom{0}\\\phantom{0}\\\phantom{0}
\end{tabular}\endgroup%
}}\!\right]$}%
{$\left[\!\llap{\phantom{%
\begingroup \smaller\smaller\smaller\begin{tabular}{@{}c@{}}%
0\\0\\0
\end{tabular}\endgroup%
}}\right.$}%
\begingroup \smaller\smaller\smaller\begin{tabular}{@{}c@{}}%
1\\1\\0
\end{tabular}\endgroup%
\kern3pt%
\begingroup \smaller\smaller\smaller\begin{tabular}{@{}c@{}}%
16\\8\\6
\end{tabular}\endgroup%
\kern3pt%
\begingroup \smaller\smaller\smaller\begin{tabular}{@{}c@{}}%
8\\2\\4
\end{tabular}\endgroup%
\kern3pt%
\begingroup \smaller\smaller\smaller\begin{tabular}{@{}c@{}}%
2\\-1\\1
\end{tabular}\endgroup%
\kern3pt%
\begingroup \smaller\smaller\smaller\begin{tabular}{@{}c@{}}%
1\\-1\\0
\end{tabular}\endgroup%
{$\left.\llap{\phantom{%
\begingroup \smaller\smaller\smaller\begin{tabular}{@{}c@{}}%
0\\0\\0
\end{tabular}\endgroup%
}}\!\right]$}%
}%
\ifdim\wd\matricesbox>\halfwidth\myboxwidth=\hsize\else\myboxwidth=\halfwidth\fi
\vbox{%
\ifdim\myboxwidth=\hsize
\setbox\onelinebox=\hbox{%
\vbox{\hbox{%
$\Pi_{8,36}$ spans $L_{141.3}$%
}\hbox{%
$|22\infty2|2\infty22\rtimes D_{2}$%
}%
}%
\hfill\copy\matricesbox
}%
\ifdim\wd\onelinebox>\myboxwidth
\hbox to \myboxwidth{%
$\Pi_{8,36}$ spans $L_{141.3}$%
\hfil
$|22\infty2|2\infty22\rtimes D_{2}$%
}%
\box\matricesbox
\else
\hbox to \myboxwidth{%
\unhbox\onelinebox
}%
\fi
\else
\hbox to \myboxwidth{%
$\Pi_{8,36}$ spans $L_{141.3}$%
\hfil}%
\hbox to \myboxwidth{%
$|22\infty2|2\infty22\rtimes D_{2}$%
\hfil}%
\box\matricesbox
\fi
}%
\hfill\discretionary{}{}{}%
\setbox\matricesbox=\hbox{%
{$\left[\!\llap{\phantom{%
\begingroup \smaller\smaller\smaller\begin{tabular}{@{}c@{}}%
\phantom{0}\\\phantom{0}\\\phantom{0}
\end{tabular}\endgroup%
}}\right.$}%
\begingroup \smaller\smaller\smaller\begin{tabular}{@{}c@{}}%
-1\\\phantom{0}\\\phantom{0}
\end{tabular}\endgroup%
\kern3pt%
\begingroup \smaller\smaller\smaller\begin{tabular}{@{}c@{}}%
\phantom{0}\\2\\\phantom{0}
\end{tabular}\endgroup%
\kern3pt%
\begingroup \smaller\smaller\smaller\begin{tabular}{@{}c@{}}%
\phantom{0}\\\phantom{0}\\4
\end{tabular}\endgroup%
{$\left.\llap{\phantom{%
\begingroup \smaller\smaller\smaller\begin{tabular}{@{}c@{}}%
\phantom{0}\\\phantom{0}\\\phantom{0}
\end{tabular}\endgroup%
}}\!\right]$}%
{$\left[\!\llap{\phantom{%
\begingroup \smaller\smaller\smaller\begin{tabular}{@{}c@{}}%
0\\0\\0
\end{tabular}\endgroup%
}}\right.$}%
\begingroup \smaller\smaller\smaller\begin{tabular}{@{}c@{}}%
1\\1\\0
\end{tabular}\endgroup%
\kern3pt%
\begingroup \smaller\smaller\smaller\begin{tabular}{@{}c@{}}%
16\\8\\6
\end{tabular}\endgroup%
\kern3pt%
\begingroup \smaller\smaller\smaller\begin{tabular}{@{}c@{}}%
8\\2\\4
\end{tabular}\endgroup%
\kern3pt%
\begingroup \smaller\smaller\smaller\begin{tabular}{@{}c@{}}%
2\\-1\\1
\end{tabular}\endgroup%
\kern3pt%
\begingroup \smaller\smaller\smaller\begin{tabular}{@{}c@{}}%
8\\-6\\0
\end{tabular}\endgroup%
{$\left.\llap{\phantom{%
\begingroup \smaller\smaller\smaller\begin{tabular}{@{}c@{}}%
0\\0\\0
\end{tabular}\endgroup%
}}\!\right]$}%
}%
\ifdim\wd\matricesbox>\halfwidth\myboxwidth=\hsize\else\myboxwidth=\halfwidth\fi
\vbox{%
\ifdim\myboxwidth=\hsize
\setbox\onelinebox=\hbox{%
\vbox{\hbox{%
$\Pi_{8,37}$ spans $L_{141.3}$%
}\hbox{%
$|22\infty\infty|\infty\infty22\rtimes D_{2}$%
}%
}%
\hfill\copy\matricesbox
}%
\ifdim\wd\onelinebox>\myboxwidth
\hbox to \myboxwidth{%
$\Pi_{8,37}$ spans $L_{141.3}$%
\hfil
$|22\infty\infty|\infty\infty22\rtimes D_{2}$%
}%
\box\matricesbox
\else
\hbox to \myboxwidth{%
\unhbox\onelinebox
}%
\fi
\else
\hbox to \myboxwidth{%
$\Pi_{8,37}$ spans $L_{141.3}$%
\hfil}%
\hbox to \myboxwidth{%
$|22\infty\infty|\infty\infty22\rtimes D_{2}$%
\hfil}%
\box\matricesbox
\fi
}%
\hfill\discretionary{}{}{}%
\setbox\matricesbox=\hbox{%
{$\left[\!\llap{\phantom{%
\begingroup \smaller\smaller\smaller\begin{tabular}{@{}c@{}}%
\phantom{0}\\\phantom{0}\\\phantom{0}
\end{tabular}\endgroup%
}}\right.$}%
\begingroup \smaller\smaller\smaller\begin{tabular}{@{}c@{}}%
-1\\\phantom{0}\\\phantom{0}
\end{tabular}\endgroup%
\kern3pt%
\begingroup \smaller\smaller\smaller\begin{tabular}{@{}c@{}}%
\phantom{0}\\4\\\phantom{0}
\end{tabular}\endgroup%
\kern3pt%
\begingroup \smaller\smaller\smaller\begin{tabular}{@{}c@{}}%
\phantom{0}\\\phantom{0}\\2
\end{tabular}\endgroup%
{$\left.\llap{\phantom{%
\begingroup \smaller\smaller\smaller\begin{tabular}{@{}c@{}}%
\phantom{0}\\\phantom{0}\\\phantom{0}
\end{tabular}\endgroup%
}}\!\right]$}%
{$\left[\!\llap{\phantom{%
\begingroup \smaller\smaller\smaller\begin{tabular}{@{}c@{}}%
0\\0\\0
\end{tabular}\endgroup%
}}\right.$}%
\begingroup \smaller\smaller\smaller\begin{tabular}{@{}c@{}}%
8\\-4\\2
\end{tabular}\endgroup%
\kern3pt%
\begingroup \smaller\smaller\smaller\begin{tabular}{@{}c@{}}%
16\\-6\\8
\end{tabular}\endgroup%
\kern3pt%
\begingroup \smaller\smaller\smaller\begin{tabular}{@{}c@{}}%
1\\0\\1
\end{tabular}\endgroup%
\kern3pt%
\begingroup \smaller\smaller\smaller\begin{tabular}{@{}c@{}}%
2\\1\\1
\end{tabular}\endgroup%
{$\left.\llap{\phantom{%
\begingroup \smaller\smaller\smaller\begin{tabular}{@{}c@{}}%
0\\0\\0
\end{tabular}\endgroup%
}}\!\right]$}%
}%
\ifdim\wd\matricesbox>\halfwidth\myboxwidth=\hsize\else\myboxwidth=\halfwidth\fi
\vbox{%
\ifdim\myboxwidth=\hsize
\setbox\onelinebox=\hbox{%
\vbox{\hbox{%
$\Pi_{8,38}$ spans $L_{141.3}$%
}\hbox{%
$22\slashinfty222\slashinfty2\rtimes D_{2}$%
}%
}%
\hfill\copy\matricesbox
}%
\ifdim\wd\onelinebox>\myboxwidth
\hbox to \myboxwidth{%
$\Pi_{8,38}$ spans $L_{141.3}$%
\hfil
$22\slashinfty222\slashinfty2\rtimes D_{2}$%
}%
\box\matricesbox
\else
\hbox to \myboxwidth{%
\unhbox\onelinebox
}%
\fi
\else
\hbox to \myboxwidth{%
$\Pi_{8,38}$ spans $L_{141.3}$%
\hfil}%
\hbox to \myboxwidth{%
$22\slashinfty222\slashinfty2\rtimes D_{2}$%
\hfil}%
\box\matricesbox
\fi
}%
\hfill\discretionary{}{}{}%
\setbox\matricesbox=\hbox{%
{$\left[\!\llap{\phantom{%
\begingroup \smaller\smaller\smaller\begin{tabular}{@{}c@{}}%
\phantom{0}\\\phantom{0}\\\phantom{0}
\end{tabular}\endgroup%
}}\right.$}%
\begingroup \smaller\smaller\smaller\begin{tabular}{@{}c@{}}%
-1/2\\\phantom{0}\\\phantom{0}
\end{tabular}\endgroup%
\kern3pt%
\begingroup \smaller\smaller\smaller\begin{tabular}{@{}c@{}}%
\phantom{0}\\3\\\phantom{0}
\end{tabular}\endgroup%
\kern3pt%
\begingroup \smaller\smaller\smaller\begin{tabular}{@{}c@{}}%
\phantom{0}\\\phantom{0}\\9/2
\end{tabular}\endgroup%
{$\left.\llap{\phantom{%
\begingroup \smaller\smaller\smaller\begin{tabular}{@{}c@{}}%
\phantom{0}\\\phantom{0}\\\phantom{0}
\end{tabular}\endgroup%
}}\!\right]$}%
{$\left[\!\llap{\phantom{%
\begingroup \smaller\smaller\smaller\begin{tabular}{@{}c@{}}%
0\\0\\0
\end{tabular}\endgroup%
}}\right.$}%
\begingroup \smaller\smaller\smaller\begin{tabular}{@{}c@{}}%
4\\-2\\0
\end{tabular}\endgroup%
\kern3pt%
\begingroup \smaller\smaller\smaller\begin{tabular}{@{}c@{}}%
18\\-6\\4
\end{tabular}\endgroup%
\kern3pt%
\begingroup \smaller\smaller\smaller\begin{tabular}{@{}c@{}}%
12\\-2\\4
\end{tabular}\endgroup%
\kern3pt%
\begingroup \smaller\smaller\smaller\begin{tabular}{@{}c@{}}%
3\\1\\1
\end{tabular}\endgroup%
\kern3pt%
\begingroup \smaller\smaller\smaller\begin{tabular}{@{}c@{}}%
4\\2\\0
\end{tabular}\endgroup%
{$\left.\llap{\phantom{%
\begingroup \smaller\smaller\smaller\begin{tabular}{@{}c@{}}%
0\\0\\0
\end{tabular}\endgroup%
}}\!\right]$}%
}%
\ifdim\wd\matricesbox>\halfwidth\myboxwidth=\hsize\else\myboxwidth=\halfwidth\fi
\vbox{%
\ifdim\myboxwidth=\hsize
\setbox\onelinebox=\hbox{%
\vbox{\hbox{%
$\Pi_{8,39}$ spans $L_{4.12}$%
}\hbox{%
$|22\infty2|2\infty22\rtimes D_{2}$%
}%
}%
\hfill\copy\matricesbox
}%
\ifdim\wd\onelinebox>\myboxwidth
\hbox to \myboxwidth{%
$\Pi_{8,39}$ spans $L_{4.12}$%
\hfil
$|22\infty2|2\infty22\rtimes D_{2}$%
}%
\box\matricesbox
\else
\hbox to \myboxwidth{%
\unhbox\onelinebox
}%
\fi
\else
\hbox to \myboxwidth{%
$\Pi_{8,39}$ spans $L_{4.12}$%
\hfil}%
\hbox to \myboxwidth{%
$|22\infty2|2\infty22\rtimes D_{2}$%
\hfil}%
\box\matricesbox
\fi
}%
\hfill\discretionary{}{}{}%
\setbox\matricesbox=\hbox{%
{$\left[\!\llap{\phantom{%
\begingroup \smaller\smaller\smaller\begin{tabular}{@{}c@{}}%
\phantom{0}\\\phantom{0}\\\phantom{0}
\end{tabular}\endgroup%
}}\right.$}%
\begingroup \smaller\smaller\smaller\begin{tabular}{@{}c@{}}%
-1/2\\\phantom{0}\\\phantom{0}
\end{tabular}\endgroup%
\kern3pt%
\begingroup \smaller\smaller\smaller\begin{tabular}{@{}c@{}}%
\phantom{0}\\9/2\\\phantom{0}
\end{tabular}\endgroup%
\kern3pt%
\begingroup \smaller\smaller\smaller\begin{tabular}{@{}c@{}}%
\phantom{0}\\\phantom{0}\\3
\end{tabular}\endgroup%
{$\left.\llap{\phantom{%
\begingroup \smaller\smaller\smaller\begin{tabular}{@{}c@{}}%
\phantom{0}\\\phantom{0}\\\phantom{0}
\end{tabular}\endgroup%
}}\!\right]$}%
{$\left[\!\llap{\phantom{%
\begingroup \smaller\smaller\smaller\begin{tabular}{@{}c@{}}%
0\\0\\0
\end{tabular}\endgroup%
}}\right.$}%
\begingroup \smaller\smaller\smaller\begin{tabular}{@{}c@{}}%
12\\4\\2
\end{tabular}\endgroup%
\kern3pt%
\begingroup \smaller\smaller\smaller\begin{tabular}{@{}c@{}}%
18\\4\\6
\end{tabular}\endgroup%
\kern3pt%
\begingroup \smaller\smaller\smaller\begin{tabular}{@{}c@{}}%
4\\0\\2
\end{tabular}\endgroup%
\kern3pt%
\begingroup \smaller\smaller\smaller\begin{tabular}{@{}c@{}}%
3\\-1\\1
\end{tabular}\endgroup%
{$\left.\llap{\phantom{%
\begingroup \smaller\smaller\smaller\begin{tabular}{@{}c@{}}%
0\\0\\0
\end{tabular}\endgroup%
}}\!\right]$}%
}%
\ifdim\wd\matricesbox>\halfwidth\myboxwidth=\hsize\else\myboxwidth=\halfwidth\fi
\vbox{%
\ifdim\myboxwidth=\hsize
\setbox\onelinebox=\hbox{%
\vbox{\hbox{%
$\Pi_{8,40}$ spans $L_{4.12}$%
}\hbox{%
$22\slashinfty222\slashinfty2\rtimes D_{2}$%
}%
}%
\hfill\copy\matricesbox
}%
\ifdim\wd\onelinebox>\myboxwidth
\hbox to \myboxwidth{%
$\Pi_{8,40}$ spans $L_{4.12}$%
\hfil
$22\slashinfty222\slashinfty2\rtimes D_{2}$%
}%
\box\matricesbox
\else
\hbox to \myboxwidth{%
\unhbox\onelinebox
}%
\fi
\else
\hbox to \myboxwidth{%
$\Pi_{8,40}$ spans $L_{4.12}$%
\hfil}%
\hbox to \myboxwidth{%
$22\slashinfty222\slashinfty2\rtimes D_{2}$%
\hfil}%
\box\matricesbox
\fi
}%
\hfill\discretionary{}{}{}%
\setbox\matricesbox=\hbox{%
{$\left[\!\llap{\phantom{%
\begingroup \smaller\smaller\smaller\begin{tabular}{@{}c@{}}%
\phantom{0}\\\phantom{0}\\\phantom{0}
\end{tabular}\endgroup%
}}\right.$}%
\begingroup \smaller\smaller\smaller\begin{tabular}{@{}c@{}}%
-1/4\\\phantom{0}\\\phantom{0}
\end{tabular}\endgroup%
\kern3pt%
\begingroup \smaller\smaller\smaller\begin{tabular}{@{}c@{}}%
\phantom{0}\\25/4\\\phantom{0}
\end{tabular}\endgroup%
\kern3pt%
\begingroup \smaller\smaller\smaller\begin{tabular}{@{}c@{}}%
\phantom{0}\\\phantom{0}\\5
\end{tabular}\endgroup%
{$\left.\llap{\phantom{%
\begingroup \smaller\smaller\smaller\begin{tabular}{@{}c@{}}%
\phantom{0}\\\phantom{0}\\\phantom{0}
\end{tabular}\endgroup%
}}\!\right]$}%
{$\left[\!\llap{\phantom{%
\begingroup \smaller\smaller\smaller\begin{tabular}{@{}c@{}}%
0\\0\\0
\end{tabular}\endgroup%
}}\right.$}%
\begingroup \smaller\smaller\smaller\begin{tabular}{@{}c@{}}%
20\\4\\-2
\end{tabular}\endgroup%
\kern3pt%
\begingroup \smaller\smaller\smaller\begin{tabular}{@{}c@{}}%
25\\3\\-5
\end{tabular}\endgroup%
\kern3pt%
\begingroup \smaller\smaller\smaller\begin{tabular}{@{}c@{}}%
16\\0\\-4
\end{tabular}\endgroup%
\kern3pt%
\begingroup \smaller\smaller\smaller\begin{tabular}{@{}c@{}}%
5\\-1\\-1
\end{tabular}\endgroup%
{$\left.\llap{\phantom{%
\begingroup \smaller\smaller\smaller\begin{tabular}{@{}c@{}}%
0\\0\\0
\end{tabular}\endgroup%
}}\!\right]$}%
}%
\ifdim\wd\matricesbox>\halfwidth\myboxwidth=\hsize\else\myboxwidth=\halfwidth\fi
\vbox{%
\ifdim\myboxwidth=\hsize
\setbox\onelinebox=\hbox{%
\vbox{\hbox{%
$\Pi_{8,41}$ spans $L_{149.22}$%
}\hbox{%
$22\slashinfty222\slashinfty2\rtimes D_{2}$%
}%
}%
\hfill\copy\matricesbox
}%
\ifdim\wd\onelinebox>\myboxwidth
\hbox to \myboxwidth{%
$\Pi_{8,41}$ spans $L_{149.22}$%
\hfil
$22\slashinfty222\slashinfty2\rtimes D_{2}$%
}%
\box\matricesbox
\else
\hbox to \myboxwidth{%
\unhbox\onelinebox
}%
\fi
\else
\hbox to \myboxwidth{%
$\Pi_{8,41}$ spans $L_{149.22}$%
\hfil}%
\hbox to \myboxwidth{%
$22\slashinfty222\slashinfty2\rtimes D_{2}$%
\hfil}%
\box\matricesbox
\fi
}%
\hfill\discretionary{}{}{}%
\setbox\matricesbox=\hbox{%
{$\left[\!\llap{\phantom{%
\begingroup \smaller\smaller\smaller\begin{tabular}{@{}c@{}}%
\phantom{0}\\\phantom{0}\\\phantom{0}
\end{tabular}\endgroup%
}}\right.$}%
\begingroup \smaller\smaller\smaller\begin{tabular}{@{}c@{}}%
-1/8\\\phantom{0}\\\phantom{0}
\end{tabular}\endgroup%
\kern3pt%
\begingroup \smaller\smaller\smaller\begin{tabular}{@{}c@{}}%
\phantom{0}\\21/2\\\phantom{0}
\end{tabular}\endgroup%
\kern3pt%
\begingroup \smaller\smaller\smaller\begin{tabular}{@{}c@{}}%
\phantom{0}\\\phantom{0}\\28
\end{tabular}\endgroup%
{$\left.\llap{\phantom{%
\begingroup \smaller\smaller\smaller\begin{tabular}{@{}c@{}}%
\phantom{0}\\\phantom{0}\\\phantom{0}
\end{tabular}\endgroup%
}}\!\right]$}%
{$\left[\!\llap{\phantom{%
\begingroup \smaller\smaller\smaller\begin{tabular}{@{}c@{}}%
0\\0\\0
\end{tabular}\endgroup%
}}\right.$}%
\begingroup \smaller\smaller\smaller\begin{tabular}{@{}c@{}}%
6\\1\\0
\end{tabular}\endgroup%
\kern3pt%
\begingroup \smaller\smaller\smaller\begin{tabular}{@{}c@{}}%
112\\8\\-6
\end{tabular}\endgroup%
\kern3pt%
\begingroup \smaller\smaller\smaller\begin{tabular}{@{}c@{}}%
42\\1\\-3
\end{tabular}\endgroup%
\kern3pt%
\begingroup \smaller\smaller\smaller\begin{tabular}{@{}c@{}}%
14\\-1\\-1
\end{tabular}\endgroup%
\kern3pt%
\begingroup \smaller\smaller\smaller\begin{tabular}{@{}c@{}}%
42\\-5\\0
\end{tabular}\endgroup%
{$\left.\llap{\phantom{%
\begingroup \smaller\smaller\smaller\begin{tabular}{@{}c@{}}%
0\\0\\0
\end{tabular}\endgroup%
}}\!\right]$}%
}%
\ifdim\wd\matricesbox>\halfwidth\myboxwidth=\hsize\else\myboxwidth=\halfwidth\fi
\vbox{%
\ifdim\myboxwidth=\hsize
\setbox\onelinebox=\hbox{%
\vbox{\hbox{%
$\Pi_{8,42}$ spans $L_{259.3}$%
}\hbox{%
$|2226|6222\rtimes D_{2}$%
}%
}%
\hfill\copy\matricesbox
}%
\ifdim\wd\onelinebox>\myboxwidth
\hbox to \myboxwidth{%
$\Pi_{8,42}$ spans $L_{259.3}$%
\hfil
$|2226|6222\rtimes D_{2}$%
}%
\box\matricesbox
\else
\hbox to \myboxwidth{%
\unhbox\onelinebox
}%
\fi
\else
\hbox to \myboxwidth{%
$\Pi_{8,42}$ spans $L_{259.3}$%
\hfil}%
\hbox to \myboxwidth{%
$|2226|6222\rtimes D_{2}$%
\hfil}%
\box\matricesbox
\fi
}%
\hfill\discretionary{}{}{}%
\setbox\matricesbox=\hbox{%
{$\left[\!\llap{\phantom{%
\begingroup \smaller\smaller\smaller\begin{tabular}{@{}c@{}}%
\phantom{0}\\\phantom{0}\\\phantom{0}
\end{tabular}\endgroup%
}}\right.$}%
\begingroup \smaller\smaller\smaller\begin{tabular}{@{}c@{}}%
-1\\\phantom{0}\\\phantom{0}
\end{tabular}\endgroup%
\kern3pt%
\begingroup \smaller\smaller\smaller\begin{tabular}{@{}c@{}}%
\phantom{0}\\3/2\\\phantom{0}
\end{tabular}\endgroup%
\kern3pt%
\begingroup \smaller\smaller\smaller\begin{tabular}{@{}c@{}}%
\phantom{0}\\\phantom{0}\\1/2
\end{tabular}\endgroup%
{$\left.\llap{\phantom{%
\begingroup \smaller\smaller\smaller\begin{tabular}{@{}c@{}}%
\phantom{0}\\\phantom{0}\\\phantom{0}
\end{tabular}\endgroup%
}}\!\right]$}%
{$\left[\!\llap{\phantom{%
\begingroup \smaller\smaller\smaller\begin{tabular}{@{}c@{}}%
0\\0\\0
\end{tabular}\endgroup%
}}\right.$}%
\begingroup \smaller\smaller\smaller\begin{tabular}{@{}c@{}}%
6\\-5\\-3
\end{tabular}\endgroup%
\kern3pt%
\begingroup \smaller\smaller\smaller\begin{tabular}{@{}c@{}}%
2\\-1\\-3
\end{tabular}\endgroup%
\kern3pt%
\begingroup \smaller\smaller\smaller\begin{tabular}{@{}c@{}}%
6\\1\\-9
\end{tabular}\endgroup%
\kern3pt%
\begingroup \smaller\smaller\smaller\begin{tabular}{@{}c@{}}%
1\\1\\-1
\end{tabular}\endgroup%
{$\left.\llap{\phantom{%
\begingroup \smaller\smaller\smaller\begin{tabular}{@{}c@{}}%
0\\0\\0
\end{tabular}\endgroup%
}}\!\right]$}%
}%
\ifdim\wd\matricesbox>\halfwidth\myboxwidth=\hsize\else\myboxwidth=\halfwidth\fi
\vbox{%
\ifdim\myboxwidth=\hsize
\setbox\onelinebox=\hbox{%
\vbox{\hbox{%
$\Pi_{8,43}$ spans $L_{3.2}$%
}\hbox{%
$222\slashthree222\slashtwo\rtimes D_{2}$%
}%
}%
\hfill\copy\matricesbox
}%
\ifdim\wd\onelinebox>\myboxwidth
\hbox to \myboxwidth{%
$\Pi_{8,43}$ spans $L_{3.2}$%
\hfil
$222\slashthree222\slashtwo\rtimes D_{2}$%
}%
\box\matricesbox
\else
\hbox to \myboxwidth{%
\unhbox\onelinebox
}%
\fi
\else
\hbox to \myboxwidth{%
$\Pi_{8,43}$ spans $L_{3.2}$%
\hfil}%
\hbox to \myboxwidth{%
$222\slashthree222\slashtwo\rtimes D_{2}$%
\hfil}%
\box\matricesbox
\fi
}%
\hfill\discretionary{}{}{}%
\setbox\matricesbox=\hbox{%
{$\left[\!\llap{\phantom{%
\begingroup \smaller\smaller\smaller\begin{tabular}{@{}c@{}}%
\phantom{0}\\\phantom{0}\\\phantom{0}
\end{tabular}\endgroup%
}}\right.$}%
\begingroup \smaller\smaller\smaller\begin{tabular}{@{}c@{}}%
-1\\\phantom{0}\\\phantom{0}
\end{tabular}\endgroup%
\kern3pt%
\begingroup \smaller\smaller\smaller\begin{tabular}{@{}c@{}}%
\phantom{0}\\3/2\\\phantom{0}
\end{tabular}\endgroup%
\kern3pt%
\begingroup \smaller\smaller\smaller\begin{tabular}{@{}c@{}}%
\phantom{0}\\\phantom{0}\\9/2
\end{tabular}\endgroup%
{$\left.\llap{\phantom{%
\begingroup \smaller\smaller\smaller\begin{tabular}{@{}c@{}}%
\phantom{0}\\\phantom{0}\\\phantom{0}
\end{tabular}\endgroup%
}}\!\right]$}%
{$\left[\!\llap{\phantom{%
\begingroup \smaller\smaller\smaller\begin{tabular}{@{}c@{}}%
0\\0\\0
\end{tabular}\endgroup%
}}\right.$}%
\begingroup \smaller\smaller\smaller\begin{tabular}{@{}c@{}}%
2\\-2\\0
\end{tabular}\endgroup%
\kern3pt%
\begingroup \smaller\smaller\smaller\begin{tabular}{@{}c@{}}%
2\\-1\\-1
\end{tabular}\endgroup%
\kern3pt%
\begingroup \smaller\smaller\smaller\begin{tabular}{@{}c@{}}%
6\\1\\-3
\end{tabular}\endgroup%
\kern3pt%
\begingroup \smaller\smaller\smaller\begin{tabular}{@{}c@{}}%
6\\4\\-2
\end{tabular}\endgroup%
\kern3pt%
\begingroup \smaller\smaller\smaller\begin{tabular}{@{}c@{}}%
2\\2\\0
\end{tabular}\endgroup%
{$\left.\llap{\phantom{%
\begingroup \smaller\smaller\smaller\begin{tabular}{@{}c@{}}%
0\\0\\0
\end{tabular}\endgroup%
}}\!\right]$}%
}%
\ifdim\wd\matricesbox>\halfwidth\myboxwidth=\hsize\else\myboxwidth=\halfwidth\fi
\vbox{%
\ifdim\myboxwidth=\hsize
\setbox\onelinebox=\hbox{%
\vbox{\hbox{%
$\Pi_{8,44}$ spans $L_{155.1}$%
}\hbox{%
$3|3232|232\rtimes D_{2}$%
}%
}%
\hfill\copy\matricesbox
}%
\ifdim\wd\onelinebox>\myboxwidth
\hbox to \myboxwidth{%
$\Pi_{8,44}$ spans $L_{155.1}$%
\hfil
$3|3232|232\rtimes D_{2}$%
}%
\box\matricesbox
\else
\hbox to \myboxwidth{%
\unhbox\onelinebox
}%
\fi
\else
\hbox to \myboxwidth{%
$\Pi_{8,44}$ spans $L_{155.1}$%
\hfil}%
\hbox to \myboxwidth{%
$3|3232|232\rtimes D_{2}$%
\hfil}%
\box\matricesbox
\fi
}%
\hfill\discretionary{}{}{}%
\setbox\matricesbox=\hbox{%
{$\left[\!\llap{\phantom{%
\begingroup \smaller\smaller\smaller\begin{tabular}{@{}c@{}}%
\phantom{0}\\\phantom{0}\\\phantom{0}
\end{tabular}\endgroup%
}}\right.$}%
\begingroup \smaller\smaller\smaller\begin{tabular}{@{}c@{}}%
-3\\\phantom{0}\\\phantom{0}
\end{tabular}\endgroup%
\kern3pt%
\begingroup \smaller\smaller\smaller\begin{tabular}{@{}c@{}}%
\phantom{0}\\3\\\phantom{0}
\end{tabular}\endgroup%
\kern3pt%
\begingroup \smaller\smaller\smaller\begin{tabular}{@{}c@{}}%
\phantom{0}\\\phantom{0}\\1
\end{tabular}\endgroup%
{$\left.\llap{\phantom{%
\begingroup \smaller\smaller\smaller\begin{tabular}{@{}c@{}}%
\phantom{0}\\\phantom{0}\\\phantom{0}
\end{tabular}\endgroup%
}}\!\right]$}%
{$\left[\!\llap{\phantom{%
\begingroup \smaller\smaller\smaller\begin{tabular}{@{}c@{}}%
0\\0\\0
\end{tabular}\endgroup%
}}\right.$}%
\begingroup \smaller\smaller\smaller\begin{tabular}{@{}c@{}}%
1\\-1\\1
\end{tabular}\endgroup%
\kern3pt%
\begingroup \smaller\smaller\smaller\begin{tabular}{@{}c@{}}%
1\\0\\2
\end{tabular}\endgroup%
\kern3pt%
\begingroup \smaller\smaller\smaller\begin{tabular}{@{}c@{}}%
4\\3\\5
\end{tabular}\endgroup%
\kern3pt%
\begingroup \smaller\smaller\smaller\begin{tabular}{@{}c@{}}%
4\\4\\2
\end{tabular}\endgroup%
{$\left.\llap{\phantom{%
\begingroup \smaller\smaller\smaller\begin{tabular}{@{}c@{}}%
0\\0\\0
\end{tabular}\endgroup%
}}\!\right]$}%
}%
\ifdim\wd\matricesbox>\halfwidth\myboxwidth=\hsize\else\myboxwidth=\halfwidth\fi
\vbox{%
\ifdim\myboxwidth=\hsize
\setbox\onelinebox=\hbox{%
\vbox{\hbox{%
$\Pi_{8,45}=\hbox{GN}_{48}$ spans $L_{7.7}$%
}\hbox{%
$3\infty\infty\slashinfty\infty\infty3\slashinfty\rtimes D_{2}$%
}%
}%
\hfill\copy\matricesbox
}%
\ifdim\wd\onelinebox>\myboxwidth
\hbox to \myboxwidth{%
$\Pi_{8,45}=\hbox{GN}_{48}$ spans $L_{7.7}$%
\hfil
$3\infty\infty\slashinfty\infty\infty3\slashinfty\rtimes D_{2}$%
}%
\box\matricesbox
\else
\hbox to \myboxwidth{%
\unhbox\onelinebox
}%
\fi
\else
\hbox to \myboxwidth{%
$\Pi_{8,45}=\hbox{GN}_{48}$ spans $L_{7.7}$%
\hfil}%
\hbox to \myboxwidth{%
$3\infty\infty\slashinfty\infty\infty3\slashinfty\rtimes D_{2}$%
\hfil}%
\box\matricesbox
\fi
}%
\hfill\discretionary{}{}{}%
\setbox\matricesbox=\hbox{%
{$\left[\!\llap{\phantom{%
\begingroup \smaller\smaller\smaller\begin{tabular}{@{}c@{}}%
\phantom{0}\\\phantom{0}\\\phantom{0}
\end{tabular}\endgroup%
}}\right.$}%
\begingroup \smaller\smaller\smaller\begin{tabular}{@{}c@{}}%
-3\\\phantom{0}\\\phantom{0}
\end{tabular}\endgroup%
\kern3pt%
\begingroup \smaller\smaller\smaller\begin{tabular}{@{}c@{}}%
\phantom{0}\\1\\\phantom{0}
\end{tabular}\endgroup%
\kern3pt%
\begingroup \smaller\smaller\smaller\begin{tabular}{@{}c@{}}%
\phantom{0}\\\phantom{0}\\3
\end{tabular}\endgroup%
{$\left.\llap{\phantom{%
\begingroup \smaller\smaller\smaller\begin{tabular}{@{}c@{}}%
\phantom{0}\\\phantom{0}\\\phantom{0}
\end{tabular}\endgroup%
}}\!\right]$}%
{$\left[\!\llap{\phantom{%
\begingroup \smaller\smaller\smaller\begin{tabular}{@{}c@{}}%
0\\0\\0
\end{tabular}\endgroup%
}}\right.$}%
\begingroup \smaller\smaller\smaller\begin{tabular}{@{}c@{}}%
1\\2\\0
\end{tabular}\endgroup%
\kern3pt%
\begingroup \smaller\smaller\smaller\begin{tabular}{@{}c@{}}%
1\\1\\1
\end{tabular}\endgroup%
\kern3pt%
\begingroup \smaller\smaller\smaller\begin{tabular}{@{}c@{}}%
4\\-2\\4
\end{tabular}\endgroup%
\kern3pt%
\begingroup \smaller\smaller\smaller\begin{tabular}{@{}c@{}}%
4\\-5\\3
\end{tabular}\endgroup%
\kern3pt%
\begingroup \smaller\smaller\smaller\begin{tabular}{@{}c@{}}%
1\\-2\\0
\end{tabular}\endgroup%
{$\left.\llap{\phantom{%
\begingroup \smaller\smaller\smaller\begin{tabular}{@{}c@{}}%
0\\0\\0
\end{tabular}\endgroup%
}}\!\right]$}%
}%
\ifdim\wd\matricesbox>\halfwidth\myboxwidth=\hsize\else\myboxwidth=\halfwidth\fi
\vbox{%
\ifdim\myboxwidth=\hsize
\setbox\onelinebox=\hbox{%
\vbox{\hbox{%
$\Pi_{8,46}=\hbox{GN}_{49}$ spans $L_{7.7}$%
}\hbox{%
$3\infty\infty|\infty\infty3\infty|\infty\rtimes D_{2}$%
}%
}%
\hfill\copy\matricesbox
}%
\ifdim\wd\onelinebox>\myboxwidth
\hbox to \myboxwidth{%
$\Pi_{8,46}=\hbox{GN}_{49}$ spans $L_{7.7}$%
\hfil
$3\infty\infty|\infty\infty3\infty|\infty\rtimes D_{2}$%
}%
\box\matricesbox
\else
\hbox to \myboxwidth{%
\unhbox\onelinebox
}%
\fi
\else
\hbox to \myboxwidth{%
$\Pi_{8,46}=\hbox{GN}_{49}$ spans $L_{7.7}$%
\hfil}%
\hbox to \myboxwidth{%
$3\infty\infty|\infty\infty3\infty|\infty\rtimes D_{2}$%
\hfil}%
\box\matricesbox
\fi
}%
\hfill\discretionary{}{}{}%
\setbox\matricesbox=\hbox{%
{$\left[\!\llap{\phantom{%
\begingroup \smaller\smaller\smaller\begin{tabular}{@{}c@{}}%
\phantom{0}\\\phantom{0}\\\phantom{0}
\end{tabular}\endgroup%
}}\right.$}%
\begingroup \smaller\smaller\smaller\begin{tabular}{@{}c@{}}%
-1/2\\\phantom{0}\\\phantom{0}
\end{tabular}\endgroup%
\kern3pt%
\begingroup \smaller\smaller\smaller\begin{tabular}{@{}c@{}}%
\phantom{0}\\1/2\\\phantom{0}
\end{tabular}\endgroup%
\kern3pt%
\begingroup \smaller\smaller\smaller\begin{tabular}{@{}c@{}}%
\phantom{0}\\\phantom{0}\\7/2
\end{tabular}\endgroup%
{$\left.\llap{\phantom{%
\begingroup \smaller\smaller\smaller\begin{tabular}{@{}c@{}}%
\phantom{0}\\\phantom{0}\\\phantom{0}
\end{tabular}\endgroup%
}}\!\right]$}%
{$\left[\!\llap{\phantom{%
\begingroup \smaller\smaller\smaller\begin{tabular}{@{}c@{}}%
0\\0\\0
\end{tabular}\endgroup%
}}\right.$}%
\begingroup \smaller\smaller\smaller\begin{tabular}{@{}c@{}}%
14\\-14\\-2
\end{tabular}\endgroup%
\kern3pt%
\begingroup \smaller\smaller\smaller\begin{tabular}{@{}c@{}}%
14\\-7\\-5
\end{tabular}\endgroup%
\kern3pt%
\begingroup \smaller\smaller\smaller\begin{tabular}{@{}c@{}}%
2\\1\\-1
\end{tabular}\endgroup%
\kern3pt%
\begingroup \smaller\smaller\smaller\begin{tabular}{@{}c@{}}%
14\\14\\-2
\end{tabular}\endgroup%
{$\left.\llap{\phantom{%
\begingroup \smaller\smaller\smaller\begin{tabular}{@{}c@{}}%
0\\0\\0
\end{tabular}\endgroup%
}}\!\right]$}%
}%
\ifdim\wd\matricesbox>\halfwidth\myboxwidth=\hsize\else\myboxwidth=\halfwidth\fi
\vbox{%
\ifdim\myboxwidth=\hsize
\setbox\onelinebox=\hbox{%
\vbox{\hbox{%
$\Pi_{8,47}$ spans $L_{300.3}$%
}\hbox{%
$\infty\slashinfty\infty22\slashinfty22\rtimes D_{2}$%
}%
}%
\hfill\copy\matricesbox
}%
\ifdim\wd\onelinebox>\myboxwidth
\hbox to \myboxwidth{%
$\Pi_{8,47}$ spans $L_{300.3}$%
\hfil
$\infty\slashinfty\infty22\slashinfty22\rtimes D_{2}$%
}%
\box\matricesbox
\else
\hbox to \myboxwidth{%
\unhbox\onelinebox
}%
\fi
\else
\hbox to \myboxwidth{%
$\Pi_{8,47}$ spans $L_{300.3}$%
\hfil}%
\hbox to \myboxwidth{%
$\infty\slashinfty\infty22\slashinfty22\rtimes D_{2}$%
\hfil}%
\box\matricesbox
\fi
}%
\hfill\discretionary{}{}{}%
\setbox\matricesbox=\hbox{%
{$\left[\!\llap{\phantom{%
\begingroup \smaller\smaller\smaller\begin{tabular}{@{}c@{}}%
\phantom{0}\\\phantom{0}\\\phantom{0}
\end{tabular}\endgroup%
}}\right.$}%
\begingroup \smaller\smaller\smaller\begin{tabular}{@{}c@{}}%
-1/8\\\phantom{0}\\\phantom{0}
\end{tabular}\endgroup%
\kern3pt%
\begingroup \smaller\smaller\smaller\begin{tabular}{@{}c@{}}%
\phantom{0}\\21/2\\\phantom{0}
\end{tabular}\endgroup%
\kern3pt%
\begingroup \smaller\smaller\smaller\begin{tabular}{@{}c@{}}%
\phantom{0}\\\phantom{0}\\28
\end{tabular}\endgroup%
{$\left.\llap{\phantom{%
\begingroup \smaller\smaller\smaller\begin{tabular}{@{}c@{}}%
\phantom{0}\\\phantom{0}\\\phantom{0}
\end{tabular}\endgroup%
}}\!\right]$}%
{$\left[\!\llap{\phantom{%
\begingroup \smaller\smaller\smaller\begin{tabular}{@{}c@{}}%
0\\0\\0
\end{tabular}\endgroup%
}}\right.$}%
\begingroup \smaller\smaller\smaller\begin{tabular}{@{}c@{}}%
6\\-1\\0
\end{tabular}\endgroup%
\kern3pt%
\begingroup \smaller\smaller\smaller\begin{tabular}{@{}c@{}}%
112\\-8\\-6
\end{tabular}\endgroup%
\kern3pt%
\begingroup \smaller\smaller\smaller\begin{tabular}{@{}c@{}}%
42\\-1\\-3
\end{tabular}\endgroup%
\kern3pt%
\begingroup \smaller\smaller\smaller\begin{tabular}{@{}c@{}}%
14\\1\\-1
\end{tabular}\endgroup%
{$\left.\llap{\phantom{%
\begingroup \smaller\smaller\smaller\begin{tabular}{@{}c@{}}%
0\\0\\0
\end{tabular}\endgroup%
}}\!\right]$}%
}%
\ifdim\wd\matricesbox>\halfwidth\myboxwidth=\hsize\else\myboxwidth=\halfwidth\fi
\vbox{%
\ifdim\myboxwidth=\hsize
\setbox\onelinebox=\hbox{%
\vbox{\hbox{%
$\Pi_{8,48}$ spans $L_{22.8}$%
}\hbox{%
$22222222\rtimes C_{2}$%
}%
}%
\hfill\copy\matricesbox
}%
\ifdim\wd\onelinebox>\myboxwidth
\hbox to \myboxwidth{%
$\Pi_{8,48}$ spans $L_{22.8}$%
\hfil
$22222222\rtimes C_{2}$%
}%
\box\matricesbox
\else
\hbox to \myboxwidth{%
\unhbox\onelinebox
}%
\fi
\else
\hbox to \myboxwidth{%
$\Pi_{8,48}$ spans $L_{22.8}$%
\hfil}%
\hbox to \myboxwidth{%
$22222222\rtimes C_{2}$%
\hfil}%
\box\matricesbox
\fi
}%
\hfill\discretionary{}{}{}%
\setbox\matricesbox=\hbox{%
{$\left[\!\llap{\phantom{%
\begingroup \smaller\smaller\smaller\begin{tabular}{@{}c@{}}%
\phantom{0}\\\phantom{0}\\\phantom{0}
\end{tabular}\endgroup%
}}\right.$}%
\begingroup \smaller\smaller\smaller\begin{tabular}{@{}c@{}}%
-1\\\phantom{0}\\\phantom{0}
\end{tabular}\endgroup%
\kern3pt%
\begingroup \smaller\smaller\smaller\begin{tabular}{@{}c@{}}%
\phantom{0}\\2\\\phantom{0}
\end{tabular}\endgroup%
\kern3pt%
\begingroup \smaller\smaller\smaller\begin{tabular}{@{}c@{}}%
\phantom{0}\\\phantom{0}\\4
\end{tabular}\endgroup%
{$\left.\llap{\phantom{%
\begingroup \smaller\smaller\smaller\begin{tabular}{@{}c@{}}%
\phantom{0}\\\phantom{0}\\\phantom{0}
\end{tabular}\endgroup%
}}\!\right]$}%
{$\left[\!\llap{\phantom{%
\begingroup \smaller\smaller\smaller\begin{tabular}{@{}c@{}}%
0\\0\\0
\end{tabular}\endgroup%
}}\right.$}%
\begingroup \smaller\smaller\smaller\begin{tabular}{@{}c@{}}%
1\\1\\0
\end{tabular}\endgroup%
\kern3pt%
\begingroup \smaller\smaller\smaller\begin{tabular}{@{}c@{}}%
16\\8\\-6
\end{tabular}\endgroup%
\kern3pt%
\begingroup \smaller\smaller\smaller\begin{tabular}{@{}c@{}}%
8\\2\\-4
\end{tabular}\endgroup%
\kern3pt%
\begingroup \smaller\smaller\smaller\begin{tabular}{@{}c@{}}%
2\\-1\\-1
\end{tabular}\endgroup%
{$\left.\llap{\phantom{%
\begingroup \smaller\smaller\smaller\begin{tabular}{@{}c@{}}%
0\\0\\0
\end{tabular}\endgroup%
}}\!\right]$}%
}%
\ifdim\wd\matricesbox>\halfwidth\myboxwidth=\hsize\else\myboxwidth=\halfwidth\fi
\vbox{%
\ifdim\myboxwidth=\hsize
\setbox\onelinebox=\hbox{%
\vbox{\hbox{%
$\Pi_{8,49}$ spans $L_{141.3}$%
}\hbox{%
$22\infty222\infty2\rtimes C_{2}$%
}%
}%
\hfill\copy\matricesbox
}%
\ifdim\wd\onelinebox>\myboxwidth
\hbox to \myboxwidth{%
$\Pi_{8,49}$ spans $L_{141.3}$%
\hfil
$22\infty222\infty2\rtimes C_{2}$%
}%
\box\matricesbox
\else
\hbox to \myboxwidth{%
\unhbox\onelinebox
}%
\fi
\else
\hbox to \myboxwidth{%
$\Pi_{8,49}$ spans $L_{141.3}$%
\hfil}%
\hbox to \myboxwidth{%
$22\infty222\infty2\rtimes C_{2}$%
\hfil}%
\box\matricesbox
\fi
}%
\hfill\discretionary{}{}{}%
\setbox\matricesbox=\hbox{%
{$\left[\!\llap{\phantom{%
\begingroup \smaller\smaller\smaller\begin{tabular}{@{}c@{}}%
\phantom{0}\\\phantom{0}\\\phantom{0}
\end{tabular}\endgroup%
}}\right.$}%
\begingroup \smaller\smaller\smaller\begin{tabular}{@{}c@{}}%
-1/2\\\phantom{0}\\\phantom{0}
\end{tabular}\endgroup%
\kern3pt%
\begingroup \smaller\smaller\smaller\begin{tabular}{@{}c@{}}%
\phantom{0}\\3/2\\-1/2
\end{tabular}\endgroup%
\kern3pt%
\begingroup \smaller\smaller\smaller\begin{tabular}{@{}c@{}}%
\phantom{0}\\-1/2\\15/2
\end{tabular}\endgroup%
{$\left.\llap{\phantom{%
\begingroup \smaller\smaller\smaller\begin{tabular}{@{}c@{}}%
\phantom{0}\\\phantom{0}\\\phantom{0}
\end{tabular}\endgroup%
}}\!\right]$}%
{$\left[\!\llap{\phantom{%
\begingroup \smaller\smaller\smaller\begin{tabular}{@{}c@{}}%
0\\0\\0
\end{tabular}\endgroup%
}}\right.$}%
\begingroup \smaller\smaller\smaller\begin{tabular}{@{}c@{}}%
1\\1\\0
\end{tabular}\endgroup%
\kern3pt%
\begingroup \smaller\smaller\smaller\begin{tabular}{@{}c@{}}%
8\\2\\-2
\end{tabular}\endgroup%
\kern3pt%
\begingroup \smaller\smaller\smaller\begin{tabular}{@{}c@{}}%
22\\-2\\-6
\end{tabular}\endgroup%
\kern3pt%
\begingroup \smaller\smaller\smaller\begin{tabular}{@{}c@{}}%
22\\-9\\-5
\end{tabular}\endgroup%
{$\left.\llap{\phantom{%
\begingroup \smaller\smaller\smaller\begin{tabular}{@{}c@{}}%
0\\0\\0
\end{tabular}\endgroup%
}}\!\right]$}%
}%
\ifdim\wd\matricesbox>\halfwidth\myboxwidth=\hsize\else\myboxwidth=\halfwidth\fi
\vbox{%
\ifdim\myboxwidth=\hsize
\setbox\onelinebox=\hbox{%
\vbox{\hbox{%
$\Pi_{8,50}$ spans $L_{12.5}$%
}\hbox{%
$22\infty222\infty2\rtimes C_{2}$%
}%
}%
\hfill\copy\matricesbox
}%
\ifdim\wd\onelinebox>\myboxwidth
\hbox to \myboxwidth{%
$\Pi_{8,50}$ spans $L_{12.5}$%
\hfil
$22\infty222\infty2\rtimes C_{2}$%
}%
\box\matricesbox
\else
\hbox to \myboxwidth{%
\unhbox\onelinebox
}%
\fi
\else
\hbox to \myboxwidth{%
$\Pi_{8,50}$ spans $L_{12.5}$%
\hfil}%
\hbox to \myboxwidth{%
$22\infty222\infty2\rtimes C_{2}$%
\hfil}%
\box\matricesbox
\fi
}%
\hfill\discretionary{}{}{}%
\setbox\matricesbox=\hbox{%
{$\left[\!\llap{\phantom{%
\begingroup \smaller\smaller\smaller\begin{tabular}{@{}c@{}}%
\phantom{0}\\\phantom{0}\\\phantom{0}
\end{tabular}\endgroup%
}}\right.$}%
\begingroup \smaller\smaller\smaller\begin{tabular}{@{}c@{}}%
-1/2\\\phantom{0}\\\phantom{0}
\end{tabular}\endgroup%
\kern3pt%
\begingroup \smaller\smaller\smaller\begin{tabular}{@{}c@{}}%
\phantom{0}\\15/2\\-3/2
\end{tabular}\endgroup%
\kern3pt%
\begingroup \smaller\smaller\smaller\begin{tabular}{@{}c@{}}%
\phantom{0}\\-3/2\\15/2
\end{tabular}\endgroup%
{$\left.\llap{\phantom{%
\begingroup \smaller\smaller\smaller\begin{tabular}{@{}c@{}}%
\phantom{0}\\\phantom{0}\\\phantom{0}
\end{tabular}\endgroup%
}}\!\right]$}%
{$\left[\!\llap{\phantom{%
\begingroup \smaller\smaller\smaller\begin{tabular}{@{}c@{}}%
0\\0\\0
\end{tabular}\endgroup%
}}\right.$}%
\begingroup \smaller\smaller\smaller\begin{tabular}{@{}c@{}}%
4\\-1\\-1
\end{tabular}\endgroup%
\kern3pt%
\begingroup \smaller\smaller\smaller\begin{tabular}{@{}c@{}}%
18\\-1\\-5
\end{tabular}\endgroup%
\kern3pt%
\begingroup \smaller\smaller\smaller\begin{tabular}{@{}c@{}}%
12\\1\\-3
\end{tabular}\endgroup%
\kern3pt%
\begingroup \smaller\smaller\smaller\begin{tabular}{@{}c@{}}%
3\\1\\0
\end{tabular}\endgroup%
{$\left.\llap{\phantom{%
\begingroup \smaller\smaller\smaller\begin{tabular}{@{}c@{}}%
0\\0\\0
\end{tabular}\endgroup%
}}\!\right]$}%
}%
\ifdim\wd\matricesbox>\halfwidth\myboxwidth=\hsize\else\myboxwidth=\halfwidth\fi
\vbox{%
\ifdim\myboxwidth=\hsize
\setbox\onelinebox=\hbox{%
\vbox{\hbox{%
$\Pi_{8,51}$ spans $L_{4.12}$%
}\hbox{%
$22\infty222\infty2\rtimes C_{2}$%
}%
}%
\hfill\copy\matricesbox
}%
\ifdim\wd\onelinebox>\myboxwidth
\hbox to \myboxwidth{%
$\Pi_{8,51}$ spans $L_{4.12}$%
\hfil
$22\infty222\infty2\rtimes C_{2}$%
}%
\box\matricesbox
\else
\hbox to \myboxwidth{%
\unhbox\onelinebox
}%
\fi
\else
\hbox to \myboxwidth{%
$\Pi_{8,51}$ spans $L_{4.12}$%
\hfil}%
\hbox to \myboxwidth{%
$22\infty222\infty2\rtimes C_{2}$%
\hfil}%
\box\matricesbox
\fi
}%
\hfill\discretionary{}{}{}%
\setbox\matricesbox=\hbox{%
{$\left[\!\llap{\phantom{%
\begingroup \smaller\smaller\smaller\begin{tabular}{@{}c@{}}%
\phantom{0}\\\phantom{0}\\\phantom{0}
\end{tabular}\endgroup%
}}\right.$}%
\begingroup \smaller\smaller\smaller\begin{tabular}{@{}c@{}}%
-1/8\\\phantom{0}\\\phantom{0}
\end{tabular}\endgroup%
\kern3pt%
\begingroup \smaller\smaller\smaller\begin{tabular}{@{}c@{}}%
\phantom{0}\\45/2\\-5/2
\end{tabular}\endgroup%
\kern3pt%
\begingroup \smaller\smaller\smaller\begin{tabular}{@{}c@{}}%
\phantom{0}\\-5/2\\45/2
\end{tabular}\endgroup%
{$\left.\llap{\phantom{%
\begingroup \smaller\smaller\smaller\begin{tabular}{@{}c@{}}%
\phantom{0}\\\phantom{0}\\\phantom{0}
\end{tabular}\endgroup%
}}\!\right]$}%
{$\left[\!\llap{\phantom{%
\begingroup \smaller\smaller\smaller\begin{tabular}{@{}c@{}}%
0\\0\\0
\end{tabular}\endgroup%
}}\right.$}%
\begingroup \smaller\smaller\smaller\begin{tabular}{@{}c@{}}%
32\\-2\\-2
\end{tabular}\endgroup%
\kern3pt%
\begingroup \smaller\smaller\smaller\begin{tabular}{@{}c@{}}%
50\\-4\\-1
\end{tabular}\endgroup%
\kern3pt%
\begingroup \smaller\smaller\smaller\begin{tabular}{@{}c@{}}%
40\\-3\\1
\end{tabular}\endgroup%
\kern3pt%
\begingroup \smaller\smaller\smaller\begin{tabular}{@{}c@{}}%
10\\0\\1
\end{tabular}\endgroup%
{$\left.\llap{\phantom{%
\begingroup \smaller\smaller\smaller\begin{tabular}{@{}c@{}}%
0\\0\\0
\end{tabular}\endgroup%
}}\!\right]$}%
}%
\ifdim\wd\matricesbox>\halfwidth\myboxwidth=\hsize\else\myboxwidth=\halfwidth\fi
\vbox{%
\ifdim\myboxwidth=\hsize
\setbox\onelinebox=\hbox{%
\vbox{\hbox{%
$\Pi_{8,52}$ spans $L_{10.1}$%
}\hbox{%
$22\infty222\infty2\rtimes C_{2}$%
}%
}%
\hfill\copy\matricesbox
}%
\ifdim\wd\onelinebox>\myboxwidth
\hbox to \myboxwidth{%
$\Pi_{8,52}$ spans $L_{10.1}$%
\hfil
$22\infty222\infty2\rtimes C_{2}$%
}%
\box\matricesbox
\else
\hbox to \myboxwidth{%
\unhbox\onelinebox
}%
\fi
\else
\hbox to \myboxwidth{%
$\Pi_{8,52}$ spans $L_{10.1}$%
\hfil}%
\hbox to \myboxwidth{%
$22\infty222\infty2\rtimes C_{2}$%
\hfil}%
\box\matricesbox
\fi
}%
\hfill\discretionary{}{}{}%
\setbox\matricesbox=\hbox{%
{$\left[\!\llap{\phantom{%
\begingroup \smaller\smaller\smaller\begin{tabular}{@{}c@{}}%
\phantom{0}\\\phantom{0}\\\phantom{0}
\end{tabular}\endgroup%
}}\right.$}%
\begingroup \smaller\smaller\smaller\begin{tabular}{@{}c@{}}%
-1/8\\\phantom{0}\\\phantom{0}
\end{tabular}\endgroup%
\kern3pt%
\begingroup \smaller\smaller\smaller\begin{tabular}{@{}c@{}}%
\phantom{0}\\45/2\\-15/2
\end{tabular}\endgroup%
\kern3pt%
\begingroup \smaller\smaller\smaller\begin{tabular}{@{}c@{}}%
\phantom{0}\\-15/2\\45/2
\end{tabular}\endgroup%
{$\left.\llap{\phantom{%
\begingroup \smaller\smaller\smaller\begin{tabular}{@{}c@{}}%
\phantom{0}\\\phantom{0}\\\phantom{0}
\end{tabular}\endgroup%
}}\!\right]$}%
{$\left[\!\llap{\phantom{%
\begingroup \smaller\smaller\smaller\begin{tabular}{@{}c@{}}%
0\\0\\0
\end{tabular}\endgroup%
}}\right.$}%
\begingroup \smaller\smaller\smaller\begin{tabular}{@{}c@{}}%
12\\1\\1
\end{tabular}\endgroup%
\kern3pt%
\begingroup \smaller\smaller\smaller\begin{tabular}{@{}c@{}}%
60\\1\\5
\end{tabular}\endgroup%
\kern3pt%
\begingroup \smaller\smaller\smaller\begin{tabular}{@{}c@{}}%
30\\-1\\2
\end{tabular}\endgroup%
\kern3pt%
\begingroup \smaller\smaller\smaller\begin{tabular}{@{}c@{}}%
10\\-1\\0
\end{tabular}\endgroup%
{$\left.\llap{\phantom{%
\begingroup \smaller\smaller\smaller\begin{tabular}{@{}c@{}}%
0\\0\\0
\end{tabular}\endgroup%
}}\!\right]$}%
}%
\ifdim\wd\matricesbox>\halfwidth\myboxwidth=\hsize\else\myboxwidth=\halfwidth\fi
\vbox{%
\ifdim\myboxwidth=\hsize
\setbox\onelinebox=\hbox{%
\vbox{\hbox{%
$\Pi_{8,53}$ spans $L_{16.16}$%
}\hbox{%
$22222222\rtimes C_{2}$%
}%
}%
\hfill\copy\matricesbox
}%
\ifdim\wd\onelinebox>\myboxwidth
\hbox to \myboxwidth{%
$\Pi_{8,53}$ spans $L_{16.16}$%
\hfil
$22222222\rtimes C_{2}$%
}%
\box\matricesbox
\else
\hbox to \myboxwidth{%
\unhbox\onelinebox
}%
\fi
\else
\hbox to \myboxwidth{%
$\Pi_{8,53}$ spans $L_{16.16}$%
\hfil}%
\hbox to \myboxwidth{%
$22222222\rtimes C_{2}$%
\hfil}%
\box\matricesbox
\fi
}%
\hfill\discretionary{}{}{}%
\setbox\matricesbox=\hbox{%
{$\left[\!\llap{\phantom{%
\begingroup \smaller\smaller\smaller\begin{tabular}{@{}c@{}}%
\phantom{0}\\\phantom{0}\\\phantom{0}
\end{tabular}\endgroup%
}}\right.$}%
\begingroup \smaller\smaller\smaller\begin{tabular}{@{}c@{}}%
-1\\\phantom{0}\\\phantom{0}
\end{tabular}\endgroup%
\kern3pt%
\begingroup \smaller\smaller\smaller\begin{tabular}{@{}c@{}}%
\phantom{0}\\2\\-1
\end{tabular}\endgroup%
\kern3pt%
\begingroup \smaller\smaller\smaller\begin{tabular}{@{}c@{}}%
\phantom{0}\\-1\\2
\end{tabular}\endgroup%
{$\left.\llap{\phantom{%
\begingroup \smaller\smaller\smaller\begin{tabular}{@{}c@{}}%
\phantom{0}\\\phantom{0}\\\phantom{0}
\end{tabular}\endgroup%
}}\!\right]$}%
{$\left[\!\llap{\phantom{%
\begingroup \smaller\smaller\smaller\begin{tabular}{@{}c@{}}%
0\\0\\0
\end{tabular}\endgroup%
}}\right.$}%
\begingroup \smaller\smaller\smaller\begin{tabular}{@{}c@{}}%
2\\1\\-1
\end{tabular}\endgroup%
\kern3pt%
\begingroup \smaller\smaller\smaller\begin{tabular}{@{}c@{}}%
6\\-1\\-5
\end{tabular}\endgroup%
\kern3pt%
\begingroup \smaller\smaller\smaller\begin{tabular}{@{}c@{}}%
1\\-1\\-1
\end{tabular}\endgroup%
\kern3pt%
\begingroup \smaller\smaller\smaller\begin{tabular}{@{}c@{}}%
6\\-5\\-1
\end{tabular}\endgroup%
\kern3pt%
\begingroup \smaller\smaller\smaller\begin{tabular}{@{}c@{}}%
2\\-1\\1
\end{tabular}\endgroup%
\kern3pt%
\begingroup \smaller\smaller\smaller\begin{tabular}{@{}c@{}}%
2\\1\\2
\end{tabular}\endgroup%
\kern3pt%
\begingroup \smaller\smaller\smaller\begin{tabular}{@{}c@{}}%
6\\5\\4
\end{tabular}\endgroup%
\kern3pt%
\begingroup \smaller\smaller\smaller\begin{tabular}{@{}c@{}}%
6\\5\\1
\end{tabular}\endgroup%
{$\left.\llap{\phantom{%
\begingroup \smaller\smaller\smaller\begin{tabular}{@{}c@{}}%
0\\0\\0
\end{tabular}\endgroup%
}}\!\right]$}%
}%
\ifdim\wd\matricesbox>\halfwidth\myboxwidth=\hsize\else\myboxwidth=\halfwidth\fi
\vbox{%
\ifdim\myboxwidth=\hsize
\setbox\onelinebox=\hbox{%
\vbox{\hbox{%
$\Pi_{8,54}$ spans $L_{3.2}$%
}\hbox{%
$22223232$%
}%
}%
\hfill\copy\matricesbox
}%
\ifdim\wd\onelinebox>\myboxwidth
\hbox to \myboxwidth{%
$\Pi_{8,54}$ spans $L_{3.2}$%
\hfil
$22223232$%
}%
\box\matricesbox
\else
\hbox to \myboxwidth{%
\unhbox\onelinebox
}%
\fi
\else
\hbox to \myboxwidth{%
$\Pi_{8,54}$ spans $L_{3.2}$%
\hfil}%
\hbox to \myboxwidth{%
$22223232$%
\hfil}%
\box\matricesbox
\fi
}%
\hfill\discretionary{}{}{}%
\setbox\matricesbox=\hbox{%
{$\left[\!\llap{\phantom{%
\begingroup \smaller\smaller\smaller\begin{tabular}{@{}c@{}}%
\phantom{0}\\\phantom{0}\\\phantom{0}
\end{tabular}\endgroup%
}}\right.$}%
\begingroup \smaller\smaller\smaller\begin{tabular}{@{}c@{}}%
-1\\\phantom{0}\\\phantom{0}
\end{tabular}\endgroup%
\kern3pt%
\begingroup \smaller\smaller\smaller\begin{tabular}{@{}c@{}}%
\phantom{0}\\6\\-3
\end{tabular}\endgroup%
\kern3pt%
\begingroup \smaller\smaller\smaller\begin{tabular}{@{}c@{}}%
\phantom{0}\\-3\\6
\end{tabular}\endgroup%
{$\left.\llap{\phantom{%
\begingroup \smaller\smaller\smaller\begin{tabular}{@{}c@{}}%
\phantom{0}\\\phantom{0}\\\phantom{0}
\end{tabular}\endgroup%
}}\!\right]$}%
{$\left[\!\llap{\phantom{%
\begingroup \smaller\smaller\smaller\begin{tabular}{@{}c@{}}%
0\\0\\0
\end{tabular}\endgroup%
}}\right.$}%
\begingroup \smaller\smaller\smaller\begin{tabular}{@{}c@{}}%
2\\1\\1
\end{tabular}\endgroup%
\kern3pt%
\begingroup \smaller\smaller\smaller\begin{tabular}{@{}c@{}}%
2\\0\\1
\end{tabular}\endgroup%
\kern3pt%
\begingroup \smaller\smaller\smaller\begin{tabular}{@{}c@{}}%
2\\-1\\0
\end{tabular}\endgroup%
\kern3pt%
\begingroup \smaller\smaller\smaller\begin{tabular}{@{}c@{}}%
2\\-1\\-1
\end{tabular}\endgroup%
\kern3pt%
\begingroup \smaller\smaller\smaller\begin{tabular}{@{}c@{}}%
6\\-1\\-3
\end{tabular}\endgroup%
\kern3pt%
\begingroup \smaller\smaller\smaller\begin{tabular}{@{}c@{}}%
6\\1\\-2
\end{tabular}\endgroup%
\kern3pt%
\begingroup \smaller\smaller\smaller\begin{tabular}{@{}c@{}}%
18\\7\\-1
\end{tabular}\endgroup%
\kern3pt%
\begingroup \smaller\smaller\smaller\begin{tabular}{@{}c@{}}%
6\\3\\1
\end{tabular}\endgroup%
{$\left.\llap{\phantom{%
\begingroup \smaller\smaller\smaller\begin{tabular}{@{}c@{}}%
0\\0\\0
\end{tabular}\endgroup%
}}\!\right]$}%
}%
\ifdim\wd\matricesbox>\halfwidth\myboxwidth=\hsize\else\myboxwidth=\halfwidth\fi
\vbox{%
\ifdim\myboxwidth=\hsize
\setbox\onelinebox=\hbox{%
\vbox{\hbox{%
$\Pi_{8,55}$ spans $L_{155.1}$%
}\hbox{%
$33323622$%
}%
}%
\hfill\copy\matricesbox
}%
\ifdim\wd\onelinebox>\myboxwidth
\hbox to \myboxwidth{%
$\Pi_{8,55}$ spans $L_{155.1}$%
\hfil
$33323622$%
}%
\box\matricesbox
\else
\hbox to \myboxwidth{%
\unhbox\onelinebox
}%
\fi
\else
\hbox to \myboxwidth{%
$\Pi_{8,55}$ spans $L_{155.1}$%
\hfil}%
\hbox to \myboxwidth{%
$33323622$%
\hfil}%
\box\matricesbox
\fi
}%
\hfill\discretionary{}{}{}%

\vskip2pt\hrule\vskip2pt

\leavevmode\setbox\matricesbox=\hbox{%
{$\left[\!\llap{\phantom{%
\begingroup \smaller\smaller\smaller\begin{tabular}{@{}c@{}}%
\phantom{0}\\\phantom{0}\\\phantom{0}\\\phantom{0}
\end{tabular}\endgroup%
}}\right.$}%
\begingroup \smaller\smaller\smaller\begin{tabular}{@{}c@{}}%
-3\\\phantom{0}\\\phantom{0}\\\phantom{0}
\end{tabular}\endgroup%
\kern3pt%
\begingroup \smaller\smaller\smaller\begin{tabular}{@{}c@{}}%
\phantom{0}\\2/3\\\phantom{0}\\\phantom{0}
\end{tabular}\endgroup%
\kern3pt%
\begingroup \smaller\smaller\smaller\begin{tabular}{@{}c@{}}%
\phantom{0}\\\phantom{0}\\2/3\\\phantom{0}
\end{tabular}\endgroup%
\kern3pt%
\begingroup \smaller\smaller\smaller\begin{tabular}{@{}c@{}}%
\phantom{0}\\\phantom{0}\\\phantom{0}\\2/3
\end{tabular}\endgroup%
{$\left.\llap{\phantom{%
\begingroup \smaller\smaller\smaller\begin{tabular}{@{}c@{}}%
\phantom{0}\\\phantom{0}\\\phantom{0}\\\phantom{0}
\end{tabular}\endgroup%
}}\!\right]$}%
{$\left[\!\llap{\phantom{%
\begingroup \smaller\smaller\smaller\begin{tabular}{@{}c@{}}%
0\\0\\0\\0
\end{tabular}\endgroup%
}}\right.$}%
\begingroup \smaller\smaller\smaller\begin{tabular}{@{}c@{}}%
4\\-7\\2\\5
\end{tabular}\endgroup%
\kern3pt%
\begingroup \smaller\smaller\smaller\begin{tabular}{@{}c@{}}%
1\\-1\\-1\\2
\end{tabular}\endgroup%
{$\left.\llap{\phantom{%
\begingroup \smaller\smaller\smaller\begin{tabular}{@{}c@{}}%
0\\0\\0\\0
\end{tabular}\endgroup%
}}\!\right]$}%
}%
\ifdim\wd\matricesbox>\halfwidth\myboxwidth=\hsize\else\myboxwidth=\halfwidth\fi
\vbox{%
\ifdim\myboxwidth=\hsize
\setbox\onelinebox=\hbox{%
\vbox{\hbox{%
$\Pi_{9,1}=\hbox{GN}_{53}$ spans $L_{221.3}$%
}\hbox{%
$\slashthree\infty|\infty\slashthree\infty|\infty\slashthree\infty|\infty\rtimes D_{6}$%
}%
}%
\hfill\copy\matricesbox
}%
\ifdim\wd\onelinebox>\myboxwidth
\hbox to \myboxwidth{%
$\Pi_{9,1}=\hbox{GN}_{53}$ spans $L_{221.3}$%
\hfil
$\slashthree\infty|\infty\slashthree\infty|\infty\slashthree\infty|\infty\rtimes D_{6}$%
}%
\box\matricesbox
\else
\hbox to \myboxwidth{%
\unhbox\onelinebox
}%
\fi
\else
\hbox to \myboxwidth{%
$\Pi_{9,1}=\hbox{GN}_{53}$ spans $L_{221.3}$%
\hfil}%
\hbox to \myboxwidth{%
$\slashthree\infty|\infty\slashthree\infty|\infty\slashthree\infty|\infty\rtimes D_{6}$%
\hfil}%
\box\matricesbox
\fi
}%
\hfill\discretionary{}{}{}%
\setbox\matricesbox=\hbox{%
{$\left[\!\llap{\phantom{%
\begingroup \smaller\smaller\smaller\begin{tabular}{@{}c@{}}%
\phantom{0}\\\phantom{0}\\\phantom{0}\\\phantom{0}
\end{tabular}\endgroup%
}}\right.$}%
\begingroup \smaller\smaller\smaller\begin{tabular}{@{}c@{}}%
-1\\\phantom{0}\\\phantom{0}\\\phantom{0}
\end{tabular}\endgroup%
\kern3pt%
\begingroup \smaller\smaller\smaller\begin{tabular}{@{}c@{}}%
\phantom{0}\\1\\\phantom{0}\\\phantom{0}
\end{tabular}\endgroup%
\kern3pt%
\begingroup \smaller\smaller\smaller\begin{tabular}{@{}c@{}}%
\phantom{0}\\\phantom{0}\\1\\\phantom{0}
\end{tabular}\endgroup%
\kern3pt%
\begingroup \smaller\smaller\smaller\begin{tabular}{@{}c@{}}%
\phantom{0}\\\phantom{0}\\\phantom{0}\\1
\end{tabular}\endgroup%
{$\left.\llap{\phantom{%
\begingroup \smaller\smaller\smaller\begin{tabular}{@{}c@{}}%
\phantom{0}\\\phantom{0}\\\phantom{0}\\\phantom{0}
\end{tabular}\endgroup%
}}\!\right]$}%
{$\left[\!\llap{\phantom{%
\begingroup \smaller\smaller\smaller\begin{tabular}{@{}c@{}}%
0\\0\\0\\0
\end{tabular}\endgroup%
}}\right.$}%
\begingroup \smaller\smaller\smaller\begin{tabular}{@{}c@{}}%
6\\-5\\1\\4
\end{tabular}\endgroup%
\kern3pt%
\begingroup \smaller\smaller\smaller\begin{tabular}{@{}c@{}}%
2\\-1\\-1\\2
\end{tabular}\endgroup%
{$\left.\llap{\phantom{%
\begingroup \smaller\smaller\smaller\begin{tabular}{@{}c@{}}%
0\\0\\0\\0
\end{tabular}\endgroup%
}}\!\right]$}%
}%
\ifdim\wd\matricesbox>\halfwidth\myboxwidth=\hsize\else\myboxwidth=\halfwidth\fi
\vbox{%
\ifdim\myboxwidth=\hsize
\setbox\onelinebox=\hbox{%
\vbox{\hbox{%
$\Pi_{9,2}$ spans $L_{155.1}$%
}\hbox{%
$\slashthree2|2\slashthree2|2\slashthree2|2\rtimes D_{6}$%
}%
}%
\hfill\copy\matricesbox
}%
\ifdim\wd\onelinebox>\myboxwidth
\hbox to \myboxwidth{%
$\Pi_{9,2}$ spans $L_{155.1}$%
\hfil
$\slashthree2|2\slashthree2|2\slashthree2|2\rtimes D_{6}$%
}%
\box\matricesbox
\else
\hbox to \myboxwidth{%
\unhbox\onelinebox
}%
\fi
\else
\hbox to \myboxwidth{%
$\Pi_{9,2}$ spans $L_{155.1}$%
\hfil}%
\hbox to \myboxwidth{%
$\slashthree2|2\slashthree2|2\slashthree2|2\rtimes D_{6}$%
\hfil}%
\box\matricesbox
\fi
}%
\hfill\discretionary{}{}{}%
\setbox\matricesbox=\hbox{%
{$\left[\!\llap{\phantom{%
\begingroup \smaller\smaller\smaller\begin{tabular}{@{}c@{}}%
\phantom{0}\\\phantom{0}\\\phantom{0}
\end{tabular}\endgroup%
}}\right.$}%
\begingroup \smaller\smaller\smaller\begin{tabular}{@{}c@{}}%
-1/4\\\phantom{0}\\\phantom{0}
\end{tabular}\endgroup%
\kern3pt%
\begingroup \smaller\smaller\smaller\begin{tabular}{@{}c@{}}%
\phantom{0}\\15\\\phantom{0}
\end{tabular}\endgroup%
\kern3pt%
\begingroup \smaller\smaller\smaller\begin{tabular}{@{}c@{}}%
\phantom{0}\\\phantom{0}\\15
\end{tabular}\endgroup%
{$\left.\llap{\phantom{%
\begingroup \smaller\smaller\smaller\begin{tabular}{@{}c@{}}%
\phantom{0}\\\phantom{0}\\\phantom{0}
\end{tabular}\endgroup%
}}\!\right]$}%
{$\left[\!\llap{\phantom{%
\begingroup \smaller\smaller\smaller\begin{tabular}{@{}c@{}}%
0\\0\\0
\end{tabular}\endgroup%
}}\right.$}%
\begingroup \smaller\smaller\smaller\begin{tabular}{@{}c@{}}%
30\\4\\-1
\end{tabular}\endgroup%
\kern3pt%
\begingroup \smaller\smaller\smaller\begin{tabular}{@{}c@{}}%
20\\2\\-2
\end{tabular}\endgroup%
\kern3pt%
\begingroup \smaller\smaller\smaller\begin{tabular}{@{}c@{}}%
6\\0\\-1
\end{tabular}\endgroup%
\kern3pt%
\begingroup \smaller\smaller\smaller\begin{tabular}{@{}c@{}}%
20\\-2\\-2
\end{tabular}\endgroup%
\kern3pt%
\begingroup \smaller\smaller\smaller\begin{tabular}{@{}c@{}}%
6\\-1\\0
\end{tabular}\endgroup%
{$\left.\llap{\phantom{%
\begingroup \smaller\smaller\smaller\begin{tabular}{@{}c@{}}%
0\\0\\0
\end{tabular}\endgroup%
}}\!\right]$}%
}%
\ifdim\wd\matricesbox>\halfwidth\myboxwidth=\hsize\else\myboxwidth=\halfwidth\fi
\vbox{%
\ifdim\myboxwidth=\hsize
\setbox\onelinebox=\hbox{%
\vbox{\hbox{%
$\Pi_{9,3}$ spans $L_{19.10}$%
}\hbox{%
$22\slashtwo2222|22\rtimes D_{2}$%
}%
}%
\hfill\copy\matricesbox
}%
\ifdim\wd\onelinebox>\myboxwidth
\hbox to \myboxwidth{%
$\Pi_{9,3}$ spans $L_{19.10}$%
\hfil
$22\slashtwo2222|22\rtimes D_{2}$%
}%
\box\matricesbox
\else
\hbox to \myboxwidth{%
\unhbox\onelinebox
}%
\fi
\else
\hbox to \myboxwidth{%
$\Pi_{9,3}$ spans $L_{19.10}$%
\hfil}%
\hbox to \myboxwidth{%
$22\slashtwo2222|22\rtimes D_{2}$%
\hfil}%
\box\matricesbox
\fi
}%
\hfill\discretionary{}{}{}%
\setbox\matricesbox=\hbox{%
{$\left[\!\llap{\phantom{%
\begingroup \smaller\smaller\smaller\begin{tabular}{@{}c@{}}%
\phantom{0}\\\phantom{0}\\\phantom{0}
\end{tabular}\endgroup%
}}\right.$}%
\begingroup \smaller\smaller\smaller\begin{tabular}{@{}c@{}}%
-1\\\phantom{0}\\\phantom{0}
\end{tabular}\endgroup%
\kern3pt%
\begingroup \smaller\smaller\smaller\begin{tabular}{@{}c@{}}%
\phantom{0}\\6\\\phantom{0}
\end{tabular}\endgroup%
\kern3pt%
\begingroup \smaller\smaller\smaller\begin{tabular}{@{}c@{}}%
\phantom{0}\\\phantom{0}\\6
\end{tabular}\endgroup%
{$\left.\llap{\phantom{%
\begingroup \smaller\smaller\smaller\begin{tabular}{@{}c@{}}%
\phantom{0}\\\phantom{0}\\\phantom{0}
\end{tabular}\endgroup%
}}\!\right]$}%
{$\left[\!\llap{\phantom{%
\begingroup \smaller\smaller\smaller\begin{tabular}{@{}c@{}}%
0\\0\\0
\end{tabular}\endgroup%
}}\right.$}%
\begingroup \smaller\smaller\smaller\begin{tabular}{@{}c@{}}%
2\\1\\0
\end{tabular}\endgroup%
\kern3pt%
\begingroup \smaller\smaller\smaller\begin{tabular}{@{}c@{}}%
3\\1\\1
\end{tabular}\endgroup%
\kern3pt%
\begingroup \smaller\smaller\smaller\begin{tabular}{@{}c@{}}%
2\\0\\1
\end{tabular}\endgroup%
\kern3pt%
\begingroup \smaller\smaller\smaller\begin{tabular}{@{}c@{}}%
3\\-1\\1
\end{tabular}\endgroup%
\kern3pt%
\begingroup \smaller\smaller\smaller\begin{tabular}{@{}c@{}}%
12\\-5\\1
\end{tabular}\endgroup%
{$\left.\llap{\phantom{%
\begingroup \smaller\smaller\smaller\begin{tabular}{@{}c@{}}%
0\\0\\0
\end{tabular}\endgroup%
}}\!\right]$}%
}%
\ifdim\wd\matricesbox>\halfwidth\myboxwidth=\hsize\else\myboxwidth=\halfwidth\fi
\vbox{%
\ifdim\myboxwidth=\hsize
\setbox\onelinebox=\hbox{%
\vbox{\hbox{%
$\Pi_{9,4}$ spans $L_{123.8}$%
}\hbox{%
$22|2222\slashtwo22\rtimes D_{2}$%
}%
}%
\hfill\copy\matricesbox
}%
\ifdim\wd\onelinebox>\myboxwidth
\hbox to \myboxwidth{%
$\Pi_{9,4}$ spans $L_{123.8}$%
\hfil
$22|2222\slashtwo22\rtimes D_{2}$%
}%
\box\matricesbox
\else
\hbox to \myboxwidth{%
\unhbox\onelinebox
}%
\fi
\else
\hbox to \myboxwidth{%
$\Pi_{9,4}$ spans $L_{123.8}$%
\hfil}%
\hbox to \myboxwidth{%
$22|2222\slashtwo22\rtimes D_{2}$%
\hfil}%
\box\matricesbox
\fi
}%
\hfill\discretionary{}{}{}%
\setbox\matricesbox=\hbox{%
{$\left[\!\llap{\phantom{%
\begingroup \smaller\smaller\smaller\begin{tabular}{@{}c@{}}%
\phantom{0}\\\phantom{0}\\\phantom{0}
\end{tabular}\endgroup%
}}\right.$}%
\begingroup \smaller\smaller\smaller\begin{tabular}{@{}c@{}}%
-1\\\phantom{0}\\\phantom{0}
\end{tabular}\endgroup%
\kern3pt%
\begingroup \smaller\smaller\smaller\begin{tabular}{@{}c@{}}%
\phantom{0}\\3\\\phantom{0}
\end{tabular}\endgroup%
\kern3pt%
\begingroup \smaller\smaller\smaller\begin{tabular}{@{}c@{}}%
\phantom{0}\\\phantom{0}\\3
\end{tabular}\endgroup%
{$\left.\llap{\phantom{%
\begingroup \smaller\smaller\smaller\begin{tabular}{@{}c@{}}%
\phantom{0}\\\phantom{0}\\\phantom{0}
\end{tabular}\endgroup%
}}\!\right]$}%
{$\left[\!\llap{\phantom{%
\begingroup \smaller\smaller\smaller\begin{tabular}{@{}c@{}}%
0\\0\\0
\end{tabular}\endgroup%
}}\right.$}%
\begingroup \smaller\smaller\smaller\begin{tabular}{@{}c@{}}%
3\\2\\0
\end{tabular}\endgroup%
\kern3pt%
\begingroup \smaller\smaller\smaller\begin{tabular}{@{}c@{}}%
2\\1\\1
\end{tabular}\endgroup%
\kern3pt%
\begingroup \smaller\smaller\smaller\begin{tabular}{@{}c@{}}%
3\\0\\2
\end{tabular}\endgroup%
\kern3pt%
\begingroup \smaller\smaller\smaller\begin{tabular}{@{}c@{}}%
2\\-1\\1
\end{tabular}\endgroup%
\kern3pt%
\begingroup \smaller\smaller\smaller\begin{tabular}{@{}c@{}}%
24\\-14\\2
\end{tabular}\endgroup%
{$\left.\llap{\phantom{%
\begingroup \smaller\smaller\smaller\begin{tabular}{@{}c@{}}%
0\\0\\0
\end{tabular}\endgroup%
}}\!\right]$}%
}%
\ifdim\wd\matricesbox>\halfwidth\myboxwidth=\hsize\else\myboxwidth=\halfwidth\fi
\vbox{%
\ifdim\myboxwidth=\hsize
\setbox\onelinebox=\hbox{%
\vbox{\hbox{%
$\Pi_{9,5}$ spans $L_{123.8}$%
}\hbox{%
$222|2222\slashtwo2\rtimes D_{2}$%
}%
}%
\hfill\copy\matricesbox
}%
\ifdim\wd\onelinebox>\myboxwidth
\hbox to \myboxwidth{%
$\Pi_{9,5}$ spans $L_{123.8}$%
\hfil
$222|2222\slashtwo2\rtimes D_{2}$%
}%
\box\matricesbox
\else
\hbox to \myboxwidth{%
\unhbox\onelinebox
}%
\fi
\else
\hbox to \myboxwidth{%
$\Pi_{9,5}$ spans $L_{123.8}$%
\hfil}%
\hbox to \myboxwidth{%
$222|2222\slashtwo2\rtimes D_{2}$%
\hfil}%
\box\matricesbox
\fi
}%
\hfill\discretionary{}{}{}%
\setbox\matricesbox=\hbox{%
{$\left[\!\llap{\phantom{%
\begingroup \smaller\smaller\smaller\begin{tabular}{@{}c@{}}%
\phantom{0}\\\phantom{0}\\\phantom{0}
\end{tabular}\endgroup%
}}\right.$}%
\begingroup \smaller\smaller\smaller\begin{tabular}{@{}c@{}}%
-1\\\phantom{0}\\\phantom{0}
\end{tabular}\endgroup%
\kern3pt%
\begingroup \smaller\smaller\smaller\begin{tabular}{@{}c@{}}%
\phantom{0}\\3/2\\\phantom{0}
\end{tabular}\endgroup%
\kern3pt%
\begingroup \smaller\smaller\smaller\begin{tabular}{@{}c@{}}%
\phantom{0}\\\phantom{0}\\9/2
\end{tabular}\endgroup%
{$\left.\llap{\phantom{%
\begingroup \smaller\smaller\smaller\begin{tabular}{@{}c@{}}%
\phantom{0}\\\phantom{0}\\\phantom{0}
\end{tabular}\endgroup%
}}\!\right]$}%
{$\left[\!\llap{\phantom{%
\begingroup \smaller\smaller\smaller\begin{tabular}{@{}c@{}}%
0\\0\\0
\end{tabular}\endgroup%
}}\right.$}%
\begingroup \smaller\smaller\smaller\begin{tabular}{@{}c@{}}%
2\\-2\\0
\end{tabular}\endgroup%
\kern3pt%
\begingroup \smaller\smaller\smaller\begin{tabular}{@{}c@{}}%
2\\-1\\-1
\end{tabular}\endgroup%
\kern3pt%
\begingroup \smaller\smaller\smaller\begin{tabular}{@{}c@{}}%
6\\1\\-3
\end{tabular}\endgroup%
\kern3pt%
\begingroup \smaller\smaller\smaller\begin{tabular}{@{}c@{}}%
18\\9\\-7
\end{tabular}\endgroup%
\kern3pt%
\begingroup \smaller\smaller\smaller\begin{tabular}{@{}c@{}}%
6\\5\\-1
\end{tabular}\endgroup%
{$\left.\llap{\phantom{%
\begingroup \smaller\smaller\smaller\begin{tabular}{@{}c@{}}%
0\\0\\0
\end{tabular}\endgroup%
}}\!\right]$}%
}%
\ifdim\wd\matricesbox>\halfwidth\myboxwidth=\hsize\else\myboxwidth=\halfwidth\fi
\vbox{%
\ifdim\myboxwidth=\hsize
\setbox\onelinebox=\hbox{%
\vbox{\hbox{%
$\Pi_{9,6}$ spans $L_{155.1}$%
}\hbox{%
$3|3226\slashthree622\rtimes D_{2}$%
}%
}%
\hfill\copy\matricesbox
}%
\ifdim\wd\onelinebox>\myboxwidth
\hbox to \myboxwidth{%
$\Pi_{9,6}$ spans $L_{155.1}$%
\hfil
$3|3226\slashthree622\rtimes D_{2}$%
}%
\box\matricesbox
\else
\hbox to \myboxwidth{%
\unhbox\onelinebox
}%
\fi
\else
\hbox to \myboxwidth{%
$\Pi_{9,6}$ spans $L_{155.1}$%
\hfil}%
\hbox to \myboxwidth{%
$3|3226\slashthree622\rtimes D_{2}$%
\hfil}%
\box\matricesbox
\fi
}%
\hfill\discretionary{}{}{}%
\setbox\matricesbox=\hbox{%
{$\left[\!\llap{\phantom{%
\begingroup \smaller\smaller\smaller\begin{tabular}{@{}c@{}}%
\phantom{0}\\\phantom{0}\\\phantom{0}
\end{tabular}\endgroup%
}}\right.$}%
\begingroup \smaller\smaller\smaller\begin{tabular}{@{}c@{}}%
-3\\\phantom{0}\\\phantom{0}
\end{tabular}\endgroup%
\kern3pt%
\begingroup \smaller\smaller\smaller\begin{tabular}{@{}c@{}}%
\phantom{0}\\1\\\phantom{0}
\end{tabular}\endgroup%
\kern3pt%
\begingroup \smaller\smaller\smaller\begin{tabular}{@{}c@{}}%
\phantom{0}\\\phantom{0}\\3
\end{tabular}\endgroup%
{$\left.\llap{\phantom{%
\begingroup \smaller\smaller\smaller\begin{tabular}{@{}c@{}}%
\phantom{0}\\\phantom{0}\\\phantom{0}
\end{tabular}\endgroup%
}}\!\right]$}%
{$\left[\!\llap{\phantom{%
\begingroup \smaller\smaller\smaller\begin{tabular}{@{}c@{}}%
0\\0\\0
\end{tabular}\endgroup%
}}\right.$}%
\begingroup \smaller\smaller\smaller\begin{tabular}{@{}c@{}}%
1\\-2\\0
\end{tabular}\endgroup%
\kern3pt%
\begingroup \smaller\smaller\smaller\begin{tabular}{@{}c@{}}%
1\\-1\\1
\end{tabular}\endgroup%
\kern3pt%
\begingroup \smaller\smaller\smaller\begin{tabular}{@{}c@{}}%
4\\2\\4
\end{tabular}\endgroup%
\kern3pt%
\begingroup \smaller\smaller\smaller\begin{tabular}{@{}c@{}}%
4\\5\\3
\end{tabular}\endgroup%
\kern3pt%
\begingroup \smaller\smaller\smaller\begin{tabular}{@{}c@{}}%
4\\7\\1
\end{tabular}\endgroup%
{$\left.\llap{\phantom{%
\begingroup \smaller\smaller\smaller\begin{tabular}{@{}c@{}}%
0\\0\\0
\end{tabular}\endgroup%
}}\!\right]$}%
}%
\ifdim\wd\matricesbox>\halfwidth\myboxwidth=\hsize\else\myboxwidth=\halfwidth\fi
\vbox{%
\ifdim\myboxwidth=\hsize
\setbox\onelinebox=\hbox{%
\vbox{\hbox{%
$\Pi_{9,7}=\hbox{GN}_{51}$ spans $L_{7.7}$%
}\hbox{%
$3\infty\infty|\infty\infty3\infty\slashthree\infty\rtimes D_{2}$%
}%
}%
\hfill\copy\matricesbox
}%
\ifdim\wd\onelinebox>\myboxwidth
\hbox to \myboxwidth{%
$\Pi_{9,7}=\hbox{GN}_{51}$ spans $L_{7.7}$%
\hfil
$3\infty\infty|\infty\infty3\infty\slashthree\infty\rtimes D_{2}$%
}%
\box\matricesbox
\else
\hbox to \myboxwidth{%
\unhbox\onelinebox
}%
\fi
\else
\hbox to \myboxwidth{%
$\Pi_{9,7}=\hbox{GN}_{51}$ spans $L_{7.7}$%
\hfil}%
\hbox to \myboxwidth{%
$3\infty\infty|\infty\infty3\infty\slashthree\infty\rtimes D_{2}$%
\hfil}%
\box\matricesbox
\fi
}%
\hfill\discretionary{}{}{}%
\setbox\matricesbox=\hbox{%
{$\left[\!\llap{\phantom{%
\begingroup \smaller\smaller\smaller\begin{tabular}{@{}c@{}}%
\phantom{0}\\\phantom{0}\\\phantom{0}
\end{tabular}\endgroup%
}}\right.$}%
\begingroup \smaller\smaller\smaller\begin{tabular}{@{}c@{}}%
-4\\\phantom{0}\\\phantom{0}
\end{tabular}\endgroup%
\kern3pt%
\begingroup \smaller\smaller\smaller\begin{tabular}{@{}c@{}}%
\phantom{0}\\1/2\\\phantom{0}
\end{tabular}\endgroup%
\kern3pt%
\begingroup \smaller\smaller\smaller\begin{tabular}{@{}c@{}}%
\phantom{0}\\\phantom{0}\\1/2
\end{tabular}\endgroup%
{$\left.\llap{\phantom{%
\begingroup \smaller\smaller\smaller\begin{tabular}{@{}c@{}}%
\phantom{0}\\\phantom{0}\\\phantom{0}
\end{tabular}\endgroup%
}}\!\right]$}%
{$\left[\!\llap{\phantom{%
\begingroup \smaller\smaller\smaller\begin{tabular}{@{}c@{}}%
0\\0\\0
\end{tabular}\endgroup%
}}\right.$}%
\begingroup \smaller\smaller\smaller\begin{tabular}{@{}c@{}}%
2\\6\\0
\end{tabular}\endgroup%
\kern3pt%
\begingroup \smaller\smaller\smaller\begin{tabular}{@{}c@{}}%
4\\10\\-6
\end{tabular}\endgroup%
\kern3pt%
\begingroup \smaller\smaller\smaller\begin{tabular}{@{}c@{}}%
1\\1\\-3
\end{tabular}\endgroup%
\kern3pt%
\begingroup \smaller\smaller\smaller\begin{tabular}{@{}c@{}}%
1\\-1\\-3
\end{tabular}\endgroup%
\kern3pt%
\begingroup \smaller\smaller\smaller\begin{tabular}{@{}c@{}}%
1\\-3\\-1
\end{tabular}\endgroup%
{$\left.\llap{\phantom{%
\begingroup \smaller\smaller\smaller\begin{tabular}{@{}c@{}}%
0\\0\\0
\end{tabular}\endgroup%
}}\!\right]$}%
}%
\ifdim\wd\matricesbox>\halfwidth\myboxwidth=\hsize\else\myboxwidth=\halfwidth\fi
\vbox{%
\ifdim\myboxwidth=\hsize
\setbox\onelinebox=\hbox{%
\vbox{\hbox{%
$\Pi_{9,8}$ spans $L_{145.1}$%
}\hbox{%
$|4\infty2\infty\slashtwo\infty2\infty4\rtimes D_{2}$%
}%
}%
\hfill\copy\matricesbox
}%
\ifdim\wd\onelinebox>\myboxwidth
\hbox to \myboxwidth{%
$\Pi_{9,8}$ spans $L_{145.1}$%
\hfil
$|4\infty2\infty\slashtwo\infty2\infty4\rtimes D_{2}$%
}%
\box\matricesbox
\else
\hbox to \myboxwidth{%
\unhbox\onelinebox
}%
\fi
\else
\hbox to \myboxwidth{%
$\Pi_{9,8}$ spans $L_{145.1}$%
\hfil}%
\hbox to \myboxwidth{%
$|4\infty2\infty\slashtwo\infty2\infty4\rtimes D_{2}$%
\hfil}%
\box\matricesbox
\fi
}%
\hfill\discretionary{}{}{}%
\setbox\matricesbox=\hbox{%
{$\left[\!\llap{\phantom{%
\begingroup \smaller\smaller\smaller\begin{tabular}{@{}c@{}}%
\phantom{0}\\\phantom{0}\\\phantom{0}
\end{tabular}\endgroup%
}}\right.$}%
\begingroup \smaller\smaller\smaller\begin{tabular}{@{}c@{}}%
-1\\\phantom{0}\\\phantom{0}
\end{tabular}\endgroup%
\kern3pt%
\begingroup \smaller\smaller\smaller\begin{tabular}{@{}c@{}}%
\phantom{0}\\5\\\phantom{0}
\end{tabular}\endgroup%
\kern3pt%
\begingroup \smaller\smaller\smaller\begin{tabular}{@{}c@{}}%
\phantom{0}\\\phantom{0}\\1
\end{tabular}\endgroup%
{$\left.\llap{\phantom{%
\begingroup \smaller\smaller\smaller\begin{tabular}{@{}c@{}}%
\phantom{0}\\\phantom{0}\\\phantom{0}
\end{tabular}\endgroup%
}}\!\right]$}%
{$\left[\!\llap{\phantom{%
\begingroup \smaller\smaller\smaller\begin{tabular}{@{}c@{}}%
0\\0\\0
\end{tabular}\endgroup%
}}\right.$}%
\begingroup \smaller\smaller\smaller\begin{tabular}{@{}c@{}}%
4\\-2\\0
\end{tabular}\endgroup%
\kern3pt%
\begingroup \smaller\smaller\smaller\begin{tabular}{@{}c@{}}%
20\\-8\\-10
\end{tabular}\endgroup%
\kern3pt%
\begingroup \smaller\smaller\smaller\begin{tabular}{@{}c@{}}%
5\\-1\\-5
\end{tabular}\endgroup%
\kern3pt%
\begingroup \smaller\smaller\smaller\begin{tabular}{@{}c@{}}%
5\\1\\-5
\end{tabular}\endgroup%
\kern3pt%
\begingroup \smaller\smaller\smaller\begin{tabular}{@{}c@{}}%
2\\1\\-1
\end{tabular}\endgroup%
{$\left.\llap{\phantom{%
\begingroup \smaller\smaller\smaller\begin{tabular}{@{}c@{}}%
0\\0\\0
\end{tabular}\endgroup%
}}\!\right]$}%
}%
\ifdim\wd\matricesbox>\halfwidth\myboxwidth=\hsize\else\myboxwidth=\halfwidth\fi
\vbox{%
\ifdim\myboxwidth=\hsize
\setbox\onelinebox=\hbox{%
\vbox{\hbox{%
$\Pi_{9,9}$ spans $L_{5.7}$%
}\hbox{%
$\infty2|2\infty\infty2\slashtwo2\infty\rtimes D_{2}$%
}%
}%
\hfill\copy\matricesbox
}%
\ifdim\wd\onelinebox>\myboxwidth
\hbox to \myboxwidth{%
$\Pi_{9,9}$ spans $L_{5.7}$%
\hfil
$\infty2|2\infty\infty2\slashtwo2\infty\rtimes D_{2}$%
}%
\box\matricesbox
\else
\hbox to \myboxwidth{%
\unhbox\onelinebox
}%
\fi
\else
\hbox to \myboxwidth{%
$\Pi_{9,9}$ spans $L_{5.7}$%
\hfil}%
\hbox to \myboxwidth{%
$\infty2|2\infty\infty2\slashtwo2\infty\rtimes D_{2}$%
\hfil}%
\box\matricesbox
\fi
}%
\hfill\discretionary{}{}{}%
\setbox\matricesbox=\hbox{%
{$\left[\!\llap{\phantom{%
\begingroup \smaller\smaller\smaller\begin{tabular}{@{}c@{}}%
\phantom{0}\\\phantom{0}\\\phantom{0}
\end{tabular}\endgroup%
}}\right.$}%
\begingroup \smaller\smaller\smaller\begin{tabular}{@{}c@{}}%
-1/8\\\phantom{0}\\\phantom{0}
\end{tabular}\endgroup%
\kern3pt%
\begingroup \smaller\smaller\smaller\begin{tabular}{@{}c@{}}%
\phantom{0}\\77/2\\-7/2
\end{tabular}\endgroup%
\kern3pt%
\begingroup \smaller\smaller\smaller\begin{tabular}{@{}c@{}}%
\phantom{0}\\-7/2\\77/2
\end{tabular}\endgroup%
{$\left.\llap{\phantom{%
\begingroup \smaller\smaller\smaller\begin{tabular}{@{}c@{}}%
\phantom{0}\\\phantom{0}\\\phantom{0}
\end{tabular}\endgroup%
}}\!\right]$}%
{$\left[\!\llap{\phantom{%
\begingroup \smaller\smaller\smaller\begin{tabular}{@{}c@{}}%
0\\0\\0
\end{tabular}\endgroup%
}}\right.$}%
\begingroup \smaller\smaller\smaller\begin{tabular}{@{}c@{}}%
20\\-1\\-1
\end{tabular}\endgroup%
\kern3pt%
\begingroup \smaller\smaller\smaller\begin{tabular}{@{}c@{}}%
84\\-1\\-5
\end{tabular}\endgroup%
\kern3pt%
\begingroup \smaller\smaller\smaller\begin{tabular}{@{}c@{}}%
70\\1\\-4
\end{tabular}\endgroup%
\kern3pt%
\begingroup \smaller\smaller\smaller\begin{tabular}{@{}c@{}}%
48\\2\\-2
\end{tabular}\endgroup%
\kern3pt%
\begingroup \smaller\smaller\smaller\begin{tabular}{@{}c@{}}%
14\\1\\0
\end{tabular}\endgroup%
\kern3pt%
\begingroup \smaller\smaller\smaller\begin{tabular}{@{}c@{}}%
20\\1\\1
\end{tabular}\endgroup%
\kern3pt%
\begingroup \smaller\smaller\smaller\begin{tabular}{@{}c@{}}%
14\\0\\1
\end{tabular}\endgroup%
\kern3pt%
\begingroup \smaller\smaller\smaller\begin{tabular}{@{}c@{}}%
48\\-2\\2
\end{tabular}\endgroup%
\kern3pt%
\begingroup \smaller\smaller\smaller\begin{tabular}{@{}c@{}}%
14\\-1\\0
\end{tabular}\endgroup%
{$\left.\llap{\phantom{%
\begingroup \smaller\smaller\smaller\begin{tabular}{@{}c@{}}%
0\\0\\0
\end{tabular}\endgroup%
}}\!\right]$}%
}%
\ifdim\wd\matricesbox>\halfwidth\myboxwidth=\hsize\else\myboxwidth=\halfwidth\fi
\vbox{%
\ifdim\myboxwidth=\hsize
\setbox\onelinebox=\hbox{%
\vbox{\hbox{%
$\Pi_{9,10}$ spans $L_{69.11}$%
}\hbox{%
$222222222$%
}%
}%
\hfill\copy\matricesbox
}%
\ifdim\wd\onelinebox>\myboxwidth
\hbox to \myboxwidth{%
$\Pi_{9,10}$ spans $L_{69.11}$%
\hfil
$222222222$%
}%
\box\matricesbox
\else
\hbox to \myboxwidth{%
\unhbox\onelinebox
}%
\fi
\else
\hbox to \myboxwidth{%
$\Pi_{9,10}$ spans $L_{69.11}$%
\hfil}%
\hbox to \myboxwidth{%
$222222222$%
\hfil}%
\box\matricesbox
\fi
}%
\hfill\discretionary{}{}{}%
\setbox\matricesbox=\hbox{%
{$\left[\!\llap{\phantom{%
\begingroup \smaller\smaller\smaller\begin{tabular}{@{}c@{}}%
\phantom{0}\\\phantom{0}\\\phantom{0}
\end{tabular}\endgroup%
}}\right.$}%
\begingroup \smaller\smaller\smaller\begin{tabular}{@{}c@{}}%
-1/2\\\phantom{0}\\\phantom{0}
\end{tabular}\endgroup%
\kern3pt%
\begingroup \smaller\smaller\smaller\begin{tabular}{@{}c@{}}%
\phantom{0}\\15/2\\\phantom{0}
\end{tabular}\endgroup%
\kern3pt%
\begingroup \smaller\smaller\smaller\begin{tabular}{@{}c@{}}%
\phantom{0}\\\phantom{0}\\10
\end{tabular}\endgroup%
{$\left.\llap{\phantom{%
\begingroup \smaller\smaller\smaller\begin{tabular}{@{}c@{}}%
\phantom{0}\\\phantom{0}\\\phantom{0}
\end{tabular}\endgroup%
}}\!\right]$}%
{$\left[\!\llap{\phantom{%
\begingroup \smaller\smaller\smaller\begin{tabular}{@{}c@{}}%
0\\0\\0
\end{tabular}\endgroup%
}}\right.$}%
\begingroup \smaller\smaller\smaller\begin{tabular}{@{}c@{}}%
3\\-1\\0
\end{tabular}\endgroup%
\kern3pt%
\begingroup \smaller\smaller\smaller\begin{tabular}{@{}c@{}}%
5\\-1\\-1
\end{tabular}\endgroup%
\kern3pt%
\begingroup \smaller\smaller\smaller\begin{tabular}{@{}c@{}}%
8\\0\\-2
\end{tabular}\endgroup%
\kern3pt%
\begingroup \smaller\smaller\smaller\begin{tabular}{@{}c@{}}%
5\\1\\-1
\end{tabular}\endgroup%
\kern3pt%
\begingroup \smaller\smaller\smaller\begin{tabular}{@{}c@{}}%
3\\1\\0
\end{tabular}\endgroup%
\kern3pt%
\begingroup \smaller\smaller\smaller\begin{tabular}{@{}c@{}}%
5\\1\\1
\end{tabular}\endgroup%
\kern3pt%
\begingroup \smaller\smaller\smaller\begin{tabular}{@{}c@{}}%
8\\0\\2
\end{tabular}\endgroup%
\kern3pt%
\begingroup \smaller\smaller\smaller\begin{tabular}{@{}c@{}}%
30\\-4\\6
\end{tabular}\endgroup%
\kern3pt%
\begingroup \smaller\smaller\smaller\begin{tabular}{@{}c@{}}%
40\\-8\\6
\end{tabular}\endgroup%
{$\left.\llap{\phantom{%
\begingroup \smaller\smaller\smaller\begin{tabular}{@{}c@{}}%
0\\0\\0
\end{tabular}\endgroup%
}}\!\right]$}%
}%
\ifdim\wd\matricesbox>\halfwidth\myboxwidth=\hsize\else\myboxwidth=\halfwidth\fi
\vbox{%
\ifdim\myboxwidth=\hsize
\setbox\onelinebox=\hbox{%
\vbox{\hbox{%
$\Pi_{9,11}$ spans $L_{17.11}$%
}\hbox{%
$222222222$%
}%
}%
\hfill\copy\matricesbox
}%
\ifdim\wd\onelinebox>\myboxwidth
\hbox to \myboxwidth{%
$\Pi_{9,11}$ spans $L_{17.11}$%
\hfil
$222222222$%
}%
\box\matricesbox
\else
\hbox to \myboxwidth{%
\unhbox\onelinebox
}%
\fi
\else
\hbox to \myboxwidth{%
$\Pi_{9,11}$ spans $L_{17.11}$%
\hfil}%
\hbox to \myboxwidth{%
$222222222$%
\hfil}%
\box\matricesbox
\fi
}%
\hfill\discretionary{}{}{}%
\setbox\matricesbox=\hbox{%
{$\left[\!\llap{\phantom{%
\begingroup \smaller\smaller\smaller\begin{tabular}{@{}c@{}}%
\phantom{0}\\\phantom{0}\\\phantom{0}
\end{tabular}\endgroup%
}}\right.$}%
\begingroup \smaller\smaller\smaller\begin{tabular}{@{}c@{}}%
-1/8\\\phantom{0}\\\phantom{0}
\end{tabular}\endgroup%
\kern3pt%
\begingroup \smaller\smaller\smaller\begin{tabular}{@{}c@{}}%
\phantom{0}\\45/2\\-5/2
\end{tabular}\endgroup%
\kern3pt%
\begingroup \smaller\smaller\smaller\begin{tabular}{@{}c@{}}%
\phantom{0}\\-5/2\\45/2
\end{tabular}\endgroup%
{$\left.\llap{\phantom{%
\begingroup \smaller\smaller\smaller\begin{tabular}{@{}c@{}}%
\phantom{0}\\\phantom{0}\\\phantom{0}
\end{tabular}\endgroup%
}}\!\right]$}%
{$\left[\!\llap{\phantom{%
\begingroup \smaller\smaller\smaller\begin{tabular}{@{}c@{}}%
0\\0\\0
\end{tabular}\endgroup%
}}\right.$}%
\begingroup \smaller\smaller\smaller\begin{tabular}{@{}c@{}}%
40\\1\\-3
\end{tabular}\endgroup%
\kern3pt%
\begingroup \smaller\smaller\smaller\begin{tabular}{@{}c@{}}%
50\\-1\\-4
\end{tabular}\endgroup%
\kern3pt%
\begingroup \smaller\smaller\smaller\begin{tabular}{@{}c@{}}%
32\\-2\\-2
\end{tabular}\endgroup%
\kern3pt%
\begingroup \smaller\smaller\smaller\begin{tabular}{@{}c@{}}%
50\\-4\\-1
\end{tabular}\endgroup%
\kern3pt%
\begingroup \smaller\smaller\smaller\begin{tabular}{@{}c@{}}%
40\\-3\\1
\end{tabular}\endgroup%
\kern3pt%
\begingroup \smaller\smaller\smaller\begin{tabular}{@{}c@{}}%
10\\0\\1
\end{tabular}\endgroup%
\kern3pt%
\begingroup \smaller\smaller\smaller\begin{tabular}{@{}c@{}}%
32\\2\\2
\end{tabular}\endgroup%
\kern3pt%
\begingroup \smaller\smaller\smaller\begin{tabular}{@{}c@{}}%
50\\4\\1
\end{tabular}\endgroup%
\kern3pt%
\begingroup \smaller\smaller\smaller\begin{tabular}{@{}c@{}}%
40\\3\\-1
\end{tabular}\endgroup%
{$\left.\llap{\phantom{%
\begingroup \smaller\smaller\smaller\begin{tabular}{@{}c@{}}%
0\\0\\0
\end{tabular}\endgroup%
}}\!\right]$}%
}%
\ifdim\wd\matricesbox>\halfwidth\myboxwidth=\hsize\else\myboxwidth=\halfwidth\fi
\vbox{%
\ifdim\myboxwidth=\hsize
\setbox\onelinebox=\hbox{%
\vbox{\hbox{%
$\Pi_{9,12}$ spans $L_{10.1}$%
}\hbox{%
$2222\infty222\infty$%
}%
}%
\hfill\copy\matricesbox
}%
\ifdim\wd\onelinebox>\myboxwidth
\hbox to \myboxwidth{%
$\Pi_{9,12}$ spans $L_{10.1}$%
\hfil
$2222\infty222\infty$%
}%
\box\matricesbox
\else
\hbox to \myboxwidth{%
\unhbox\onelinebox
}%
\fi
\else
\hbox to \myboxwidth{%
$\Pi_{9,12}$ spans $L_{10.1}$%
\hfil}%
\hbox to \myboxwidth{%
$2222\infty222\infty$%
\hfil}%
\box\matricesbox
\fi
}%
\hfill\discretionary{}{}{}%
\setbox\matricesbox=\hbox{%
{$\left[\!\llap{\phantom{%
\begingroup \smaller\smaller\smaller\begin{tabular}{@{}c@{}}%
\phantom{0}\\\phantom{0}\\\phantom{0}
\end{tabular}\endgroup%
}}\right.$}%
\begingroup \smaller\smaller\smaller\begin{tabular}{@{}c@{}}%
-1\\\phantom{0}\\\phantom{0}
\end{tabular}\endgroup%
\kern3pt%
\begingroup \smaller\smaller\smaller\begin{tabular}{@{}c@{}}%
\phantom{0}\\2\\\phantom{0}
\end{tabular}\endgroup%
\kern3pt%
\begingroup \smaller\smaller\smaller\begin{tabular}{@{}c@{}}%
\phantom{0}\\\phantom{0}\\4
\end{tabular}\endgroup%
{$\left.\llap{\phantom{%
\begingroup \smaller\smaller\smaller\begin{tabular}{@{}c@{}}%
\phantom{0}\\\phantom{0}\\\phantom{0}
\end{tabular}\endgroup%
}}\!\right]$}%
{$\left[\!\llap{\phantom{%
\begingroup \smaller\smaller\smaller\begin{tabular}{@{}c@{}}%
0\\0\\0
\end{tabular}\endgroup%
}}\right.$}%
\begingroup \smaller\smaller\smaller\begin{tabular}{@{}c@{}}%
1\\1\\0
\end{tabular}\endgroup%
\kern3pt%
\begingroup \smaller\smaller\smaller\begin{tabular}{@{}c@{}}%
16\\8\\-6
\end{tabular}\endgroup%
\kern3pt%
\begingroup \smaller\smaller\smaller\begin{tabular}{@{}c@{}}%
8\\2\\-4
\end{tabular}\endgroup%
\kern3pt%
\begingroup \smaller\smaller\smaller\begin{tabular}{@{}c@{}}%
2\\-1\\-1
\end{tabular}\endgroup%
\kern3pt%
\begingroup \smaller\smaller\smaller\begin{tabular}{@{}c@{}}%
1\\-1\\0
\end{tabular}\endgroup%
\kern3pt%
\begingroup \smaller\smaller\smaller\begin{tabular}{@{}c@{}}%
16\\-8\\6
\end{tabular}\endgroup%
\kern3pt%
\begingroup \smaller\smaller\smaller\begin{tabular}{@{}c@{}}%
8\\-2\\4
\end{tabular}\endgroup%
\kern3pt%
\begingroup \smaller\smaller\smaller\begin{tabular}{@{}c@{}}%
8\\2\\4
\end{tabular}\endgroup%
\kern3pt%
\begingroup \smaller\smaller\smaller\begin{tabular}{@{}c@{}}%
16\\8\\6
\end{tabular}\endgroup%
{$\left.\llap{\phantom{%
\begingroup \smaller\smaller\smaller\begin{tabular}{@{}c@{}}%
0\\0\\0
\end{tabular}\endgroup%
}}\!\right]$}%
}%
\ifdim\wd\matricesbox>\halfwidth\myboxwidth=\hsize\else\myboxwidth=\halfwidth\fi
\vbox{%
\ifdim\myboxwidth=\hsize
\setbox\onelinebox=\hbox{%
\vbox{\hbox{%
$\Pi_{9,13}$ spans $L_{141.3}$%
}\hbox{%
$22\infty222\infty22$%
}%
}%
\hfill\copy\matricesbox
}%
\ifdim\wd\onelinebox>\myboxwidth
\hbox to \myboxwidth{%
$\Pi_{9,13}$ spans $L_{141.3}$%
\hfil
$22\infty222\infty22$%
}%
\box\matricesbox
\else
\hbox to \myboxwidth{%
\unhbox\onelinebox
}%
\fi
\else
\hbox to \myboxwidth{%
$\Pi_{9,13}$ spans $L_{141.3}$%
\hfil}%
\hbox to \myboxwidth{%
$22\infty222\infty22$%
\hfil}%
\box\matricesbox
\fi
}%
\hfill\discretionary{}{}{}%
\setbox\matricesbox=\hbox{%
{$\left[\!\llap{\phantom{%
\begingroup \smaller\smaller\smaller\begin{tabular}{@{}c@{}}%
\phantom{0}\\\phantom{0}\\\phantom{0}
\end{tabular}\endgroup%
}}\right.$}%
\begingroup \smaller\smaller\smaller\begin{tabular}{@{}c@{}}%
-1/2\\\phantom{0}\\\phantom{0}
\end{tabular}\endgroup%
\kern3pt%
\begingroup \smaller\smaller\smaller\begin{tabular}{@{}c@{}}%
\phantom{0}\\15/2\\-3/2
\end{tabular}\endgroup%
\kern3pt%
\begingroup \smaller\smaller\smaller\begin{tabular}{@{}c@{}}%
\phantom{0}\\-3/2\\15/2
\end{tabular}\endgroup%
{$\left.\llap{\phantom{%
\begingroup \smaller\smaller\smaller\begin{tabular}{@{}c@{}}%
\phantom{0}\\\phantom{0}\\\phantom{0}
\end{tabular}\endgroup%
}}\!\right]$}%
{$\left[\!\llap{\phantom{%
\begingroup \smaller\smaller\smaller\begin{tabular}{@{}c@{}}%
0\\0\\0
\end{tabular}\endgroup%
}}\right.$}%
\begingroup \smaller\smaller\smaller\begin{tabular}{@{}c@{}}%
4\\-1\\-1
\end{tabular}\endgroup%
\kern3pt%
\begingroup \smaller\smaller\smaller\begin{tabular}{@{}c@{}}%
18\\-1\\-5
\end{tabular}\endgroup%
\kern3pt%
\begingroup \smaller\smaller\smaller\begin{tabular}{@{}c@{}}%
12\\1\\-3
\end{tabular}\endgroup%
\kern3pt%
\begingroup \smaller\smaller\smaller\begin{tabular}{@{}c@{}}%
3\\1\\0
\end{tabular}\endgroup%
\kern3pt%
\begingroup \smaller\smaller\smaller\begin{tabular}{@{}c@{}}%
4\\1\\1
\end{tabular}\endgroup%
\kern3pt%
\begingroup \smaller\smaller\smaller\begin{tabular}{@{}c@{}}%
18\\1\\5
\end{tabular}\endgroup%
\kern3pt%
\begingroup \smaller\smaller\smaller\begin{tabular}{@{}c@{}}%
12\\-1\\3
\end{tabular}\endgroup%
\kern3pt%
\begingroup \smaller\smaller\smaller\begin{tabular}{@{}c@{}}%
12\\-3\\1
\end{tabular}\endgroup%
\kern3pt%
\begingroup \smaller\smaller\smaller\begin{tabular}{@{}c@{}}%
18\\-5\\-1
\end{tabular}\endgroup%
{$\left.\llap{\phantom{%
\begingroup \smaller\smaller\smaller\begin{tabular}{@{}c@{}}%
0\\0\\0
\end{tabular}\endgroup%
}}\!\right]$}%
}%
\ifdim\wd\matricesbox>\halfwidth\myboxwidth=\hsize\else\myboxwidth=\halfwidth\fi
\vbox{%
\ifdim\myboxwidth=\hsize
\setbox\onelinebox=\hbox{%
\vbox{\hbox{%
$\Pi_{9,14}$ spans $L_{4.12}$%
}\hbox{%
$22\infty222\infty22$%
}%
}%
\hfill\copy\matricesbox
}%
\ifdim\wd\onelinebox>\myboxwidth
\hbox to \myboxwidth{%
$\Pi_{9,14}$ spans $L_{4.12}$%
\hfil
$22\infty222\infty22$%
}%
\box\matricesbox
\else
\hbox to \myboxwidth{%
\unhbox\onelinebox
}%
\fi
\else
\hbox to \myboxwidth{%
$\Pi_{9,14}$ spans $L_{4.12}$%
\hfil}%
\hbox to \myboxwidth{%
$22\infty222\infty22$%
\hfil}%
\box\matricesbox
\fi
}%
\hfill\discretionary{}{}{}%
\setbox\matricesbox=\hbox{%
{$\left[\!\llap{\phantom{%
\begingroup \smaller\smaller\smaller\begin{tabular}{@{}c@{}}%
\phantom{0}\\\phantom{0}\\\phantom{0}
\end{tabular}\endgroup%
}}\right.$}%
\begingroup \smaller\smaller\smaller\begin{tabular}{@{}c@{}}%
-1\\\phantom{0}\\\phantom{0}
\end{tabular}\endgroup%
\kern3pt%
\begingroup \smaller\smaller\smaller\begin{tabular}{@{}c@{}}%
\phantom{0}\\6\\-3
\end{tabular}\endgroup%
\kern3pt%
\begingroup \smaller\smaller\smaller\begin{tabular}{@{}c@{}}%
\phantom{0}\\-3\\6
\end{tabular}\endgroup%
{$\left.\llap{\phantom{%
\begingroup \smaller\smaller\smaller\begin{tabular}{@{}c@{}}%
\phantom{0}\\\phantom{0}\\\phantom{0}
\end{tabular}\endgroup%
}}\!\right]$}%
{$\left[\!\llap{\phantom{%
\begingroup \smaller\smaller\smaller\begin{tabular}{@{}c@{}}%
0\\0\\0
\end{tabular}\endgroup%
}}\right.$}%
\begingroup \smaller\smaller\smaller\begin{tabular}{@{}c@{}}%
2\\1\\1
\end{tabular}\endgroup%
\kern3pt%
\begingroup \smaller\smaller\smaller\begin{tabular}{@{}c@{}}%
2\\0\\1
\end{tabular}\endgroup%
\kern3pt%
\begingroup \smaller\smaller\smaller\begin{tabular}{@{}c@{}}%
2\\-1\\0
\end{tabular}\endgroup%
\kern3pt%
\begingroup \smaller\smaller\smaller\begin{tabular}{@{}c@{}}%
6\\-3\\-2
\end{tabular}\endgroup%
\kern3pt%
\begingroup \smaller\smaller\smaller\begin{tabular}{@{}c@{}}%
6\\-2\\-3
\end{tabular}\endgroup%
\kern3pt%
\begingroup \smaller\smaller\smaller\begin{tabular}{@{}c@{}}%
18\\-1\\-8
\end{tabular}\endgroup%
\kern3pt%
\begingroup \smaller\smaller\smaller\begin{tabular}{@{}c@{}}%
6\\1\\-2
\end{tabular}\endgroup%
\kern3pt%
\begingroup \smaller\smaller\smaller\begin{tabular}{@{}c@{}}%
18\\7\\-1
\end{tabular}\endgroup%
\kern3pt%
\begingroup \smaller\smaller\smaller\begin{tabular}{@{}c@{}}%
6\\3\\1
\end{tabular}\endgroup%
{$\left.\llap{\phantom{%
\begingroup \smaller\smaller\smaller\begin{tabular}{@{}c@{}}%
0\\0\\0
\end{tabular}\endgroup%
}}\!\right]$}%
}%
\ifdim\wd\matricesbox>\halfwidth\myboxwidth=\hsize\else\myboxwidth=\halfwidth\fi
\vbox{%
\ifdim\myboxwidth=\hsize
\setbox\onelinebox=\hbox{%
\vbox{\hbox{%
$\Pi_{9,15}$ spans $L_{155.1}$%
}\hbox{%
$332362622$%
}%
}%
\hfill\copy\matricesbox
}%
\ifdim\wd\onelinebox>\myboxwidth
\hbox to \myboxwidth{%
$\Pi_{9,15}$ spans $L_{155.1}$%
\hfil
$332362622$%
}%
\box\matricesbox
\else
\hbox to \myboxwidth{%
\unhbox\onelinebox
}%
\fi
\else
\hbox to \myboxwidth{%
$\Pi_{9,15}$ spans $L_{155.1}$%
\hfil}%
\hbox to \myboxwidth{%
$332362622$%
\hfil}%
\box\matricesbox
\fi
}%
\hfill\discretionary{}{}{}%
\setbox\matricesbox=\hbox{%
{$\left[\!\llap{\phantom{%
\begingroup \smaller\smaller\smaller\begin{tabular}{@{}c@{}}%
\phantom{0}\\\phantom{0}\\\phantom{0}
\end{tabular}\endgroup%
}}\right.$}%
\begingroup \smaller\smaller\smaller\begin{tabular}{@{}c@{}}%
-3\\\phantom{0}\\\phantom{0}
\end{tabular}\endgroup%
\kern3pt%
\begingroup \smaller\smaller\smaller\begin{tabular}{@{}c@{}}%
\phantom{0}\\4\\-2
\end{tabular}\endgroup%
\kern3pt%
\begingroup \smaller\smaller\smaller\begin{tabular}{@{}c@{}}%
\phantom{0}\\-2\\4
\end{tabular}\endgroup%
{$\left.\llap{\phantom{%
\begingroup \smaller\smaller\smaller\begin{tabular}{@{}c@{}}%
\phantom{0}\\\phantom{0}\\\phantom{0}
\end{tabular}\endgroup%
}}\!\right]$}%
{$\left[\!\llap{\phantom{%
\begingroup \smaller\smaller\smaller\begin{tabular}{@{}c@{}}%
0\\0\\0
\end{tabular}\endgroup%
}}\right.$}%
\begingroup \smaller\smaller\smaller\begin{tabular}{@{}c@{}}%
4\\1\\-3
\end{tabular}\endgroup%
\kern3pt%
\begingroup \smaller\smaller\smaller\begin{tabular}{@{}c@{}}%
4\\-1\\-4
\end{tabular}\endgroup%
\kern3pt%
\begingroup \smaller\smaller\smaller\begin{tabular}{@{}c@{}}%
1\\-1\\-1
\end{tabular}\endgroup%
\kern3pt%
\begingroup \smaller\smaller\smaller\begin{tabular}{@{}c@{}}%
1\\-1\\0
\end{tabular}\endgroup%
\kern3pt%
\begingroup \smaller\smaller\smaller\begin{tabular}{@{}c@{}}%
4\\-1\\3
\end{tabular}\endgroup%
\kern3pt%
\begingroup \smaller\smaller\smaller\begin{tabular}{@{}c@{}}%
4\\1\\4
\end{tabular}\endgroup%
\kern3pt%
\begingroup \smaller\smaller\smaller\begin{tabular}{@{}c@{}}%
1\\1\\1
\end{tabular}\endgroup%
\kern3pt%
\begingroup \smaller\smaller\smaller\begin{tabular}{@{}c@{}}%
4\\4\\1
\end{tabular}\endgroup%
\kern3pt%
\begingroup \smaller\smaller\smaller\begin{tabular}{@{}c@{}}%
4\\3\\-1
\end{tabular}\endgroup%
{$\left.\llap{\phantom{%
\begingroup \smaller\smaller\smaller\begin{tabular}{@{}c@{}}%
0\\0\\0
\end{tabular}\endgroup%
}}\!\right]$}%
}%
\ifdim\wd\matricesbox>\halfwidth\myboxwidth=\hsize\else\myboxwidth=\halfwidth\fi
\vbox{%
\ifdim\myboxwidth=\hsize
\setbox\onelinebox=\hbox{%
\vbox{\hbox{%
$\Pi_{9,16}=\hbox{GN}_{52}$ spans $L_{7.7}$%
}\hbox{%
$3\infty\infty\infty3\infty\infty3\infty$%
}%
}%
\hfill\copy\matricesbox
}%
\ifdim\wd\onelinebox>\myboxwidth
\hbox to \myboxwidth{%
$\Pi_{9,16}=\hbox{GN}_{52}$ spans $L_{7.7}$%
\hfil
$3\infty\infty\infty3\infty\infty3\infty$%
}%
\box\matricesbox
\else
\hbox to \myboxwidth{%
\unhbox\onelinebox
}%
\fi
\else
\hbox to \myboxwidth{%
$\Pi_{9,16}=\hbox{GN}_{52}$ spans $L_{7.7}$%
\hfil}%
\hbox to \myboxwidth{%
$3\infty\infty\infty3\infty\infty3\infty$%
\hfil}%
\box\matricesbox
\fi
}%
\hfill\discretionary{}{}{}%
\setbox\matricesbox=\hbox{%
{$\left[\!\llap{\phantom{%
\begingroup \smaller\smaller\smaller\begin{tabular}{@{}c@{}}%
\phantom{0}\\\phantom{0}\\\phantom{0}
\end{tabular}\endgroup%
}}\right.$}%
\begingroup \smaller\smaller\smaller\begin{tabular}{@{}c@{}}%
-1\\\phantom{0}\\\phantom{0}
\end{tabular}\endgroup%
\kern3pt%
\begingroup \smaller\smaller\smaller\begin{tabular}{@{}c@{}}%
\phantom{0}\\6\\-3
\end{tabular}\endgroup%
\kern3pt%
\begingroup \smaller\smaller\smaller\begin{tabular}{@{}c@{}}%
\phantom{0}\\-3\\6
\end{tabular}\endgroup%
{$\left.\llap{\phantom{%
\begingroup \smaller\smaller\smaller\begin{tabular}{@{}c@{}}%
\phantom{0}\\\phantom{0}\\\phantom{0}
\end{tabular}\endgroup%
}}\!\right]$}%
{$\left[\!\llap{\phantom{%
\begingroup \smaller\smaller\smaller\begin{tabular}{@{}c@{}}%
0\\0\\0
\end{tabular}\endgroup%
}}\right.$}%
\begingroup \smaller\smaller\smaller\begin{tabular}{@{}c@{}}%
6\\-3\\-2
\end{tabular}\endgroup%
\kern3pt%
\begingroup \smaller\smaller\smaller\begin{tabular}{@{}c@{}}%
6\\-2\\-3
\end{tabular}\endgroup%
\kern3pt%
\begingroup \smaller\smaller\smaller\begin{tabular}{@{}c@{}}%
2\\0\\-1
\end{tabular}\endgroup%
\kern3pt%
\begingroup \smaller\smaller\smaller\begin{tabular}{@{}c@{}}%
2\\1\\0
\end{tabular}\endgroup%
\kern3pt%
\begingroup \smaller\smaller\smaller\begin{tabular}{@{}c@{}}%
6\\3\\2
\end{tabular}\endgroup%
\kern3pt%
\begingroup \smaller\smaller\smaller\begin{tabular}{@{}c@{}}%
6\\2\\3
\end{tabular}\endgroup%
\kern3pt%
\begingroup \smaller\smaller\smaller\begin{tabular}{@{}c@{}}%
2\\0\\1
\end{tabular}\endgroup%
\kern3pt%
\begingroup \smaller\smaller\smaller\begin{tabular}{@{}c@{}}%
6\\-2\\1
\end{tabular}\endgroup%
\kern3pt%
\begingroup \smaller\smaller\smaller\begin{tabular}{@{}c@{}}%
18\\-8\\-1
\end{tabular}\endgroup%
{$\left.\llap{\phantom{%
\begingroup \smaller\smaller\smaller\begin{tabular}{@{}c@{}}%
0\\0\\0
\end{tabular}\endgroup%
}}\!\right]$}%
}%
\ifdim\wd\matricesbox>\halfwidth\myboxwidth=\hsize\else\myboxwidth=\halfwidth\fi
\vbox{%
\ifdim\myboxwidth=\hsize
\setbox\onelinebox=\hbox{%
\vbox{\hbox{%
$\Pi_{9,17}$ spans $L_{168.1}$%
}\hbox{%
$323232226$%
}%
}%
\hfill\copy\matricesbox
}%
\ifdim\wd\onelinebox>\myboxwidth
\hbox to \myboxwidth{%
$\Pi_{9,17}$ spans $L_{168.1}$%
\hfil
$323232226$%
}%
\box\matricesbox
\else
\hbox to \myboxwidth{%
\unhbox\onelinebox
}%
\fi
\else
\hbox to \myboxwidth{%
$\Pi_{9,17}$ spans $L_{168.1}$%
\hfil}%
\hbox to \myboxwidth{%
$323232226$%
\hfil}%
\box\matricesbox
\fi
}%
\hfill\discretionary{}{}{}%
\setbox\matricesbox=\hbox{%
{$\left[\!\llap{\phantom{%
\begingroup \smaller\smaller\smaller\begin{tabular}{@{}c@{}}%
\phantom{0}\\\phantom{0}\\\phantom{0}
\end{tabular}\endgroup%
}}\right.$}%
\begingroup \smaller\smaller\smaller\begin{tabular}{@{}c@{}}%
-1\\\phantom{0}\\\phantom{0}
\end{tabular}\endgroup%
\kern3pt%
\begingroup \smaller\smaller\smaller\begin{tabular}{@{}c@{}}%
\phantom{0}\\6\\-3
\end{tabular}\endgroup%
\kern3pt%
\begingroup \smaller\smaller\smaller\begin{tabular}{@{}c@{}}%
\phantom{0}\\-3\\6
\end{tabular}\endgroup%
{$\left.\llap{\phantom{%
\begingroup \smaller\smaller\smaller\begin{tabular}{@{}c@{}}%
\phantom{0}\\\phantom{0}\\\phantom{0}
\end{tabular}\endgroup%
}}\!\right]$}%
{$\left[\!\llap{\phantom{%
\begingroup \smaller\smaller\smaller\begin{tabular}{@{}c@{}}%
0\\0\\0
\end{tabular}\endgroup%
}}\right.$}%
\begingroup \smaller\smaller\smaller\begin{tabular}{@{}c@{}}%
6\\-1\\-3
\end{tabular}\endgroup%
\kern3pt%
\begingroup \smaller\smaller\smaller\begin{tabular}{@{}c@{}}%
6\\1\\-2
\end{tabular}\endgroup%
\kern3pt%
\begingroup \smaller\smaller\smaller\begin{tabular}{@{}c@{}}%
2\\1\\0
\end{tabular}\endgroup%
\kern3pt%
\begingroup \smaller\smaller\smaller\begin{tabular}{@{}c@{}}%
2\\1\\1
\end{tabular}\endgroup%
\kern3pt%
\begingroup \smaller\smaller\smaller\begin{tabular}{@{}c@{}}%
6\\1\\3
\end{tabular}\endgroup%
\kern3pt%
\begingroup \smaller\smaller\smaller\begin{tabular}{@{}c@{}}%
18\\-1\\7
\end{tabular}\endgroup%
\kern3pt%
\begingroup \smaller\smaller\smaller\begin{tabular}{@{}c@{}}%
6\\-2\\1
\end{tabular}\endgroup%
\kern3pt%
\begingroup \smaller\smaller\smaller\begin{tabular}{@{}c@{}}%
6\\-3\\-1
\end{tabular}\endgroup%
\kern3pt%
\begingroup \smaller\smaller\smaller\begin{tabular}{@{}c@{}}%
2\\-1\\-1
\end{tabular}\endgroup%
{$\left.\llap{\phantom{%
\begingroup \smaller\smaller\smaller\begin{tabular}{@{}c@{}}%
0\\0\\0
\end{tabular}\endgroup%
}}\!\right]$}%
}%
\ifdim\wd\matricesbox>\halfwidth\myboxwidth=\hsize\else\myboxwidth=\halfwidth\fi
\vbox{%
\ifdim\myboxwidth=\hsize
\setbox\onelinebox=\hbox{%
\vbox{\hbox{%
$\Pi_{9,18}$ spans $L_{155.1}$%
}\hbox{%
$323226322$%
}%
}%
\hfill\copy\matricesbox
}%
\ifdim\wd\onelinebox>\myboxwidth
\hbox to \myboxwidth{%
$\Pi_{9,18}$ spans $L_{155.1}$%
\hfil
$323226322$%
}%
\box\matricesbox
\else
\hbox to \myboxwidth{%
\unhbox\onelinebox
}%
\fi
\else
\hbox to \myboxwidth{%
$\Pi_{9,18}$ spans $L_{155.1}$%
\hfil}%
\hbox to \myboxwidth{%
$323226322$%
\hfil}%
\box\matricesbox
\fi
}%
\hfill\discretionary{}{}{}%

\vskip2pt\hrule\vskip2pt

\leavevmode\setbox\matricesbox=\hbox{%
{$\left[\!\llap{\phantom{%
\begingroup \smaller\smaller\smaller\begin{tabular}{@{}c@{}}%
\phantom{0}\\\phantom{0}\\\phantom{0}
\end{tabular}\endgroup%
}}\right.$}%
\begingroup \smaller\smaller\smaller\begin{tabular}{@{}c@{}}%
-1/4\\\phantom{0}\\\phantom{0}
\end{tabular}\endgroup%
\kern3pt%
\begingroup \smaller\smaller\smaller\begin{tabular}{@{}c@{}}%
\phantom{0}\\15/2\\\phantom{0}
\end{tabular}\endgroup%
\kern3pt%
\begingroup \smaller\smaller\smaller\begin{tabular}{@{}c@{}}%
\phantom{0}\\\phantom{0}\\55/2
\end{tabular}\endgroup%
{$\left.\llap{\phantom{%
\begingroup \smaller\smaller\smaller\begin{tabular}{@{}c@{}}%
\phantom{0}\\\phantom{0}\\\phantom{0}
\end{tabular}\endgroup%
}}\!\right]$}%
{$\left[\!\llap{\phantom{%
\begingroup \smaller\smaller\smaller\begin{tabular}{@{}c@{}}%
0\\0\\0
\end{tabular}\endgroup%
}}\right.$}%
\begingroup \smaller\smaller\smaller\begin{tabular}{@{}c@{}}%
20\\-4\\0
\end{tabular}\endgroup%
\kern3pt%
\begingroup \smaller\smaller\smaller\begin{tabular}{@{}c@{}}%
66\\-11\\3
\end{tabular}\endgroup%
\kern3pt%
\begingroup \smaller\smaller\smaller\begin{tabular}{@{}c@{}}%
10\\-1\\1
\end{tabular}\endgroup%
{$\left.\llap{\phantom{%
\begingroup \smaller\smaller\smaller\begin{tabular}{@{}c@{}}%
0\\0\\0
\end{tabular}\endgroup%
}}\!\right]$}%
}%
\ifdim\wd\matricesbox>\halfwidth\myboxwidth=\hsize\else\myboxwidth=\halfwidth\fi
\vbox{%
\ifdim\myboxwidth=\hsize
\setbox\onelinebox=\hbox{%
\vbox{\hbox{%
$\Pi_{10,1}$ spans $L_{78.9}$%
}\hbox{%
$22|22\slashthree22|22\slashthree\rtimes D_{4}$%
}%
}%
\hfill\copy\matricesbox
}%
\ifdim\wd\onelinebox>\myboxwidth
\hbox to \myboxwidth{%
$\Pi_{10,1}$ spans $L_{78.9}$%
\hfil
$22|22\slashthree22|22\slashthree\rtimes D_{4}$%
}%
\box\matricesbox
\else
\hbox to \myboxwidth{%
\unhbox\onelinebox
}%
\fi
\else
\hbox to \myboxwidth{%
$\Pi_{10,1}$ spans $L_{78.9}$%
\hfil}%
\hbox to \myboxwidth{%
$22|22\slashthree22|22\slashthree\rtimes D_{4}$%
\hfil}%
\box\matricesbox
\fi
}%
\hfill\discretionary{}{}{}%
\setbox\matricesbox=\hbox{%
{$\left[\!\llap{\phantom{%
\begingroup \smaller\smaller\smaller\begin{tabular}{@{}c@{}}%
\phantom{0}\\\phantom{0}\\\phantom{0}
\end{tabular}\endgroup%
}}\right.$}%
\begingroup \smaller\smaller\smaller\begin{tabular}{@{}c@{}}%
-1/2\\\phantom{0}\\\phantom{0}
\end{tabular}\endgroup%
\kern3pt%
\begingroup \smaller\smaller\smaller\begin{tabular}{@{}c@{}}%
\phantom{0}\\15/2\\\phantom{0}
\end{tabular}\endgroup%
\kern3pt%
\begingroup \smaller\smaller\smaller\begin{tabular}{@{}c@{}}%
\phantom{0}\\\phantom{0}\\30
\end{tabular}\endgroup%
{$\left.\llap{\phantom{%
\begingroup \smaller\smaller\smaller\begin{tabular}{@{}c@{}}%
\phantom{0}\\\phantom{0}\\\phantom{0}
\end{tabular}\endgroup%
}}\!\right]$}%
{$\left[\!\llap{\phantom{%
\begingroup \smaller\smaller\smaller\begin{tabular}{@{}c@{}}%
0\\0\\0
\end{tabular}\endgroup%
}}\right.$}%
\begingroup \smaller\smaller\smaller\begin{tabular}{@{}c@{}}%
3\\-1\\0
\end{tabular}\endgroup%
\kern3pt%
\begingroup \smaller\smaller\smaller\begin{tabular}{@{}c@{}}%
10\\-2\\-1
\end{tabular}\endgroup%
\kern3pt%
\begingroup \smaller\smaller\smaller\begin{tabular}{@{}c@{}}%
15\\-1\\-2
\end{tabular}\endgroup%
{$\left.\llap{\phantom{%
\begingroup \smaller\smaller\smaller\begin{tabular}{@{}c@{}}%
0\\0\\0
\end{tabular}\endgroup%
}}\!\right]$}%
}%
\ifdim\wd\matricesbox>\halfwidth\myboxwidth=\hsize\else\myboxwidth=\halfwidth\fi
\vbox{%
\ifdim\myboxwidth=\hsize
\setbox\onelinebox=\hbox{%
\vbox{\hbox{%
$\Pi_{10,2}$ spans $L_{127.15}$%
}\hbox{%
$|22\slashtwo22|22\slashtwo22\rtimes D_{4}$%
}%
}%
\hfill\copy\matricesbox
}%
\ifdim\wd\onelinebox>\myboxwidth
\hbox to \myboxwidth{%
$\Pi_{10,2}$ spans $L_{127.15}$%
\hfil
$|22\slashtwo22|22\slashtwo22\rtimes D_{4}$%
}%
\box\matricesbox
\else
\hbox to \myboxwidth{%
\unhbox\onelinebox
}%
\fi
\else
\hbox to \myboxwidth{%
$\Pi_{10,2}$ spans $L_{127.15}$%
\hfil}%
\hbox to \myboxwidth{%
$|22\slashtwo22|22\slashtwo22\rtimes D_{4}$%
\hfil}%
\box\matricesbox
\fi
}%
\hfill\discretionary{}{}{}%
\setbox\matricesbox=\hbox{%
{$\left[\!\llap{\phantom{%
\begingroup \smaller\smaller\smaller\begin{tabular}{@{}c@{}}%
\phantom{0}\\\phantom{0}\\\phantom{0}
\end{tabular}\endgroup%
}}\right.$}%
\begingroup \smaller\smaller\smaller\begin{tabular}{@{}c@{}}%
-1\\\phantom{0}\\\phantom{0}
\end{tabular}\endgroup%
\kern3pt%
\begingroup \smaller\smaller\smaller\begin{tabular}{@{}c@{}}%
\phantom{0}\\3/2\\\phantom{0}
\end{tabular}\endgroup%
\kern3pt%
\begingroup \smaller\smaller\smaller\begin{tabular}{@{}c@{}}%
\phantom{0}\\\phantom{0}\\21/2
\end{tabular}\endgroup%
{$\left.\llap{\phantom{%
\begingroup \smaller\smaller\smaller\begin{tabular}{@{}c@{}}%
\phantom{0}\\\phantom{0}\\\phantom{0}
\end{tabular}\endgroup%
}}\!\right]$}%
{$\left[\!\llap{\phantom{%
\begingroup \smaller\smaller\smaller\begin{tabular}{@{}c@{}}%
0\\0\\0
\end{tabular}\endgroup%
}}\right.$}%
\begingroup \smaller\smaller\smaller\begin{tabular}{@{}c@{}}%
2\\-2\\0
\end{tabular}\endgroup%
\kern3pt%
\begingroup \smaller\smaller\smaller\begin{tabular}{@{}c@{}}%
21\\-14\\-4
\end{tabular}\endgroup%
\kern3pt%
\begingroup \smaller\smaller\smaller\begin{tabular}{@{}c@{}}%
3\\-1\\-1
\end{tabular}\endgroup%
{$\left.\llap{\phantom{%
\begingroup \smaller\smaller\smaller\begin{tabular}{@{}c@{}}%
0\\0\\0
\end{tabular}\endgroup%
}}\!\right]$}%
}%
\ifdim\wd\matricesbox>\halfwidth\myboxwidth=\hsize\else\myboxwidth=\halfwidth\fi
\vbox{%
\ifdim\myboxwidth=\hsize
\setbox\onelinebox=\hbox{%
\vbox{\hbox{%
$\Pi_{10,3}$ spans $L_{24.7}$%
}\hbox{%
$22|22\slashtwo22|22\slashtwo\rtimes D_{4}$%
}%
}%
\hfill\copy\matricesbox
}%
\ifdim\wd\onelinebox>\myboxwidth
\hbox to \myboxwidth{%
$\Pi_{10,3}$ spans $L_{24.7}$%
\hfil
$22|22\slashtwo22|22\slashtwo\rtimes D_{4}$%
}%
\box\matricesbox
\else
\hbox to \myboxwidth{%
\unhbox\onelinebox
}%
\fi
\else
\hbox to \myboxwidth{%
$\Pi_{10,3}$ spans $L_{24.7}$%
\hfil}%
\hbox to \myboxwidth{%
$22|22\slashtwo22|22\slashtwo\rtimes D_{4}$%
\hfil}%
\box\matricesbox
\fi
}%
\hfill\discretionary{}{}{}%
\setbox\matricesbox=\hbox{%
{$\left[\!\llap{\phantom{%
\begingroup \smaller\smaller\smaller\begin{tabular}{@{}c@{}}%
\phantom{0}\\\phantom{0}\\\phantom{0}
\end{tabular}\endgroup%
}}\right.$}%
\begingroup \smaller\smaller\smaller\begin{tabular}{@{}c@{}}%
-1/2\\\phantom{0}\\\phantom{0}
\end{tabular}\endgroup%
\kern3pt%
\begingroup \smaller\smaller\smaller\begin{tabular}{@{}c@{}}%
\phantom{0}\\7\\\phantom{0}
\end{tabular}\endgroup%
\kern3pt%
\begingroup \smaller\smaller\smaller\begin{tabular}{@{}c@{}}%
\phantom{0}\\\phantom{0}\\1/2
\end{tabular}\endgroup%
{$\left.\llap{\phantom{%
\begingroup \smaller\smaller\smaller\begin{tabular}{@{}c@{}}%
\phantom{0}\\\phantom{0}\\\phantom{0}
\end{tabular}\endgroup%
}}\!\right]$}%
{$\left[\!\llap{\phantom{%
\begingroup \smaller\smaller\smaller\begin{tabular}{@{}c@{}}%
0\\0\\0
\end{tabular}\endgroup%
}}\right.$}%
\begingroup \smaller\smaller\smaller\begin{tabular}{@{}c@{}}%
14\\4\\0
\end{tabular}\endgroup%
\kern3pt%
\begingroup \smaller\smaller\smaller\begin{tabular}{@{}c@{}}%
16\\4\\-8
\end{tabular}\endgroup%
\kern3pt%
\begingroup \smaller\smaller\smaller\begin{tabular}{@{}c@{}}%
7\\1\\-7
\end{tabular}\endgroup%
{$\left.\llap{\phantom{%
\begingroup \smaller\smaller\smaller\begin{tabular}{@{}c@{}}%
0\\0\\0
\end{tabular}\endgroup%
}}\!\right]$}%
}%
\ifdim\wd\matricesbox>\halfwidth\myboxwidth=\hsize\else\myboxwidth=\halfwidth\fi
\vbox{%
\ifdim\myboxwidth=\hsize
\setbox\onelinebox=\hbox{%
\vbox{\hbox{%
$\Pi_{10,4}$ spans $L_{9.6}$%
}\hbox{%
$22|22\slashinfty22|22\slashinfty\rtimes D_{4}$%
}%
}%
\hfill\copy\matricesbox
}%
\ifdim\wd\onelinebox>\myboxwidth
\hbox to \myboxwidth{%
$\Pi_{10,4}$ spans $L_{9.6}$%
\hfil
$22|22\slashinfty22|22\slashinfty\rtimes D_{4}$%
}%
\box\matricesbox
\else
\hbox to \myboxwidth{%
\unhbox\onelinebox
}%
\fi
\else
\hbox to \myboxwidth{%
$\Pi_{10,4}$ spans $L_{9.6}$%
\hfil}%
\hbox to \myboxwidth{%
$22|22\slashinfty22|22\slashinfty\rtimes D_{4}$%
\hfil}%
\box\matricesbox
\fi
}%
\hfill\discretionary{}{}{}%
\setbox\matricesbox=\hbox{%
{$\left[\!\llap{\phantom{%
\begingroup \smaller\smaller\smaller\begin{tabular}{@{}c@{}}%
\phantom{0}\\\phantom{0}\\\phantom{0}
\end{tabular}\endgroup%
}}\right.$}%
\begingroup \smaller\smaller\smaller\begin{tabular}{@{}c@{}}%
-1/8\\\phantom{0}\\\phantom{0}
\end{tabular}\endgroup%
\kern3pt%
\begingroup \smaller\smaller\smaller\begin{tabular}{@{}c@{}}%
\phantom{0}\\10\\\phantom{0}
\end{tabular}\endgroup%
\kern3pt%
\begingroup \smaller\smaller\smaller\begin{tabular}{@{}c@{}}%
\phantom{0}\\\phantom{0}\\25/2
\end{tabular}\endgroup%
{$\left.\llap{\phantom{%
\begingroup \smaller\smaller\smaller\begin{tabular}{@{}c@{}}%
\phantom{0}\\\phantom{0}\\\phantom{0}
\end{tabular}\endgroup%
}}\!\right]$}%
{$\left[\!\llap{\phantom{%
\begingroup \smaller\smaller\smaller\begin{tabular}{@{}c@{}}%
0\\0\\0
\end{tabular}\endgroup%
}}\right.$}%
\begingroup \smaller\smaller\smaller\begin{tabular}{@{}c@{}}%
32\\-4\\0
\end{tabular}\endgroup%
\kern3pt%
\begingroup \smaller\smaller\smaller\begin{tabular}{@{}c@{}}%
50\\-5\\-3
\end{tabular}\endgroup%
\kern3pt%
\begingroup \smaller\smaller\smaller\begin{tabular}{@{}c@{}}%
40\\-2\\-4
\end{tabular}\endgroup%
{$\left.\llap{\phantom{%
\begingroup \smaller\smaller\smaller\begin{tabular}{@{}c@{}}%
0\\0\\0
\end{tabular}\endgroup%
}}\!\right]$}%
}%
\ifdim\wd\matricesbox>\halfwidth\myboxwidth=\hsize\else\myboxwidth=\halfwidth\fi
\vbox{%
\ifdim\myboxwidth=\hsize
\setbox\onelinebox=\hbox{%
\vbox{\hbox{%
$\Pi_{10,5}$ spans $L_{10.1}$%
}\hbox{%
$22|22\slashinfty22|22\slashinfty\rtimes D_{4}$%
}%
}%
\hfill\copy\matricesbox
}%
\ifdim\wd\onelinebox>\myboxwidth
\hbox to \myboxwidth{%
$\Pi_{10,5}$ spans $L_{10.1}$%
\hfil
$22|22\slashinfty22|22\slashinfty\rtimes D_{4}$%
}%
\box\matricesbox
\else
\hbox to \myboxwidth{%
\unhbox\onelinebox
}%
\fi
\else
\hbox to \myboxwidth{%
$\Pi_{10,5}$ spans $L_{10.1}$%
\hfil}%
\hbox to \myboxwidth{%
$22|22\slashinfty22|22\slashinfty\rtimes D_{4}$%
\hfil}%
\box\matricesbox
\fi
}%
\hfill\discretionary{}{}{}%
\setbox\matricesbox=\hbox{%
{$\left[\!\llap{\phantom{%
\begingroup \smaller\smaller\smaller\begin{tabular}{@{}c@{}}%
\phantom{0}\\\phantom{0}\\\phantom{0}
\end{tabular}\endgroup%
}}\right.$}%
\begingroup \smaller\smaller\smaller\begin{tabular}{@{}c@{}}%
-1\\\phantom{0}\\\phantom{0}
\end{tabular}\endgroup%
\kern3pt%
\begingroup \smaller\smaller\smaller\begin{tabular}{@{}c@{}}%
\phantom{0}\\1/2\\\phantom{0}
\end{tabular}\endgroup%
\kern3pt%
\begingroup \smaller\smaller\smaller\begin{tabular}{@{}c@{}}%
\phantom{0}\\\phantom{0}\\75/2
\end{tabular}\endgroup%
{$\left.\llap{\phantom{%
\begingroup \smaller\smaller\smaller\begin{tabular}{@{}c@{}}%
\phantom{0}\\\phantom{0}\\\phantom{0}
\end{tabular}\endgroup%
}}\!\right]$}%
{$\left[\!\llap{\phantom{%
\begingroup \smaller\smaller\smaller\begin{tabular}{@{}c@{}}%
0\\0\\0
\end{tabular}\endgroup%
}}\right.$}%
\begingroup \smaller\smaller\smaller\begin{tabular}{@{}c@{}}%
1\\-2\\0
\end{tabular}\endgroup%
\kern3pt%
\begingroup \smaller\smaller\smaller\begin{tabular}{@{}c@{}}%
25\\-25\\3
\end{tabular}\endgroup%
\kern3pt%
\begingroup \smaller\smaller\smaller\begin{tabular}{@{}c@{}}%
6\\-3\\1
\end{tabular}\endgroup%
{$\left.\llap{\phantom{%
\begingroup \smaller\smaller\smaller\begin{tabular}{@{}c@{}}%
0\\0\\0
\end{tabular}\endgroup%
}}\!\right]$}%
}%
\ifdim\wd\matricesbox>\halfwidth\myboxwidth=\hsize\else\myboxwidth=\halfwidth\fi
\vbox{%
\ifdim\myboxwidth=\hsize
\setbox\onelinebox=\hbox{%
\vbox{\hbox{%
$\Pi_{10,6}$ spans $L_{186.2}$%
}\hbox{%
$|22\slashthree22|22\slashthree22\rtimes D_{4}$%
}%
}%
\hfill\copy\matricesbox
}%
\ifdim\wd\onelinebox>\myboxwidth
\hbox to \myboxwidth{%
$\Pi_{10,6}$ spans $L_{186.2}$%
\hfil
$|22\slashthree22|22\slashthree22\rtimes D_{4}$%
}%
\box\matricesbox
\else
\hbox to \myboxwidth{%
\unhbox\onelinebox
}%
\fi
\else
\hbox to \myboxwidth{%
$\Pi_{10,6}$ spans $L_{186.2}$%
\hfil}%
\hbox to \myboxwidth{%
$|22\slashthree22|22\slashthree22\rtimes D_{4}$%
\hfil}%
\box\matricesbox
\fi
}%
\hfill\discretionary{}{}{}%
\setbox\matricesbox=\hbox{%
{$\left[\!\llap{\phantom{%
\begingroup \smaller\smaller\smaller\begin{tabular}{@{}c@{}}%
\phantom{0}\\\phantom{0}\\\phantom{0}
\end{tabular}\endgroup%
}}\right.$}%
\begingroup \smaller\smaller\smaller\begin{tabular}{@{}c@{}}%
-1\\\phantom{0}\\\phantom{0}
\end{tabular}\endgroup%
\kern3pt%
\begingroup \smaller\smaller\smaller\begin{tabular}{@{}c@{}}%
\phantom{0}\\2\\\phantom{0}
\end{tabular}\endgroup%
\kern3pt%
\begingroup \smaller\smaller\smaller\begin{tabular}{@{}c@{}}%
\phantom{0}\\\phantom{0}\\16
\end{tabular}\endgroup%
{$\left.\llap{\phantom{%
\begingroup \smaller\smaller\smaller\begin{tabular}{@{}c@{}}%
\phantom{0}\\\phantom{0}\\\phantom{0}
\end{tabular}\endgroup%
}}\!\right]$}%
{$\left[\!\llap{\phantom{%
\begingroup \smaller\smaller\smaller\begin{tabular}{@{}c@{}}%
0\\0\\0
\end{tabular}\endgroup%
}}\right.$}%
\begingroup \smaller\smaller\smaller\begin{tabular}{@{}c@{}}%
1\\-1\\0
\end{tabular}\endgroup%
\kern3pt%
\begingroup \smaller\smaller\smaller\begin{tabular}{@{}c@{}}%
16\\-8\\-3
\end{tabular}\endgroup%
\kern3pt%
\begingroup \smaller\smaller\smaller\begin{tabular}{@{}c@{}}%
8\\-2\\-2
\end{tabular}\endgroup%
{$\left.\llap{\phantom{%
\begingroup \smaller\smaller\smaller\begin{tabular}{@{}c@{}}%
0\\0\\0
\end{tabular}\endgroup%
}}\!\right]$}%
}%
\ifdim\wd\matricesbox>\halfwidth\myboxwidth=\hsize\else\myboxwidth=\halfwidth\fi
\vbox{%
\ifdim\myboxwidth=\hsize
\setbox\onelinebox=\hbox{%
\vbox{\hbox{%
$\Pi_{10,7}$ spans $L_{141.10}$%
}\hbox{%
$|22\slashinfty22|22\slashinfty22\rtimes D_{4}$%
}%
}%
\hfill\copy\matricesbox
}%
\ifdim\wd\onelinebox>\myboxwidth
\hbox to \myboxwidth{%
$\Pi_{10,7}$ spans $L_{141.10}$%
\hfil
$|22\slashinfty22|22\slashinfty22\rtimes D_{4}$%
}%
\box\matricesbox
\else
\hbox to \myboxwidth{%
\unhbox\onelinebox
}%
\fi
\else
\hbox to \myboxwidth{%
$\Pi_{10,7}$ spans $L_{141.10}$%
\hfil}%
\hbox to \myboxwidth{%
$|22\slashinfty22|22\slashinfty22\rtimes D_{4}$%
\hfil}%
\box\matricesbox
\fi
}%
\hfill\discretionary{}{}{}%
\setbox\matricesbox=\hbox{%
{$\left[\!\llap{\phantom{%
\begingroup \smaller\smaller\smaller\begin{tabular}{@{}c@{}}%
\phantom{0}\\\phantom{0}\\\phantom{0}
\end{tabular}\endgroup%
}}\right.$}%
\begingroup \smaller\smaller\smaller\begin{tabular}{@{}c@{}}%
-1\\\phantom{0}\\\phantom{0}
\end{tabular}\endgroup%
\kern3pt%
\begingroup \smaller\smaller\smaller\begin{tabular}{@{}c@{}}%
\phantom{0}\\6\\\phantom{0}
\end{tabular}\endgroup%
\kern3pt%
\begingroup \smaller\smaller\smaller\begin{tabular}{@{}c@{}}%
\phantom{0}\\\phantom{0}\\36
\end{tabular}\endgroup%
{$\left.\llap{\phantom{%
\begingroup \smaller\smaller\smaller\begin{tabular}{@{}c@{}}%
\phantom{0}\\\phantom{0}\\\phantom{0}
\end{tabular}\endgroup%
}}\!\right]$}%
{$\left[\!\llap{\phantom{%
\begingroup \smaller\smaller\smaller\begin{tabular}{@{}c@{}}%
0\\0\\0
\end{tabular}\endgroup%
}}\right.$}%
\begingroup \smaller\smaller\smaller\begin{tabular}{@{}c@{}}%
2\\-1\\0
\end{tabular}\endgroup%
\kern3pt%
\begingroup \smaller\smaller\smaller\begin{tabular}{@{}c@{}}%
9\\-3\\-1
\end{tabular}\endgroup%
\kern3pt%
\begingroup \smaller\smaller\smaller\begin{tabular}{@{}c@{}}%
6\\-1\\-1
\end{tabular}\endgroup%
{$\left.\llap{\phantom{%
\begingroup \smaller\smaller\smaller\begin{tabular}{@{}c@{}}%
0\\0\\0
\end{tabular}\endgroup%
}}\!\right]$}%
}%
\ifdim\wd\matricesbox>\halfwidth\myboxwidth=\hsize\else\myboxwidth=\halfwidth\fi
\vbox{%
\ifdim\myboxwidth=\hsize
\setbox\onelinebox=\hbox{%
\vbox{\hbox{%
$\Pi_{10,8}$ spans $L_{150.14}$%
}\hbox{%
$|22\slashinfty22|22\slashinfty22\rtimes D_{4}$%
}%
}%
\hfill\copy\matricesbox
}%
\ifdim\wd\onelinebox>\myboxwidth
\hbox to \myboxwidth{%
$\Pi_{10,8}$ spans $L_{150.14}$%
\hfil
$|22\slashinfty22|22\slashinfty22\rtimes D_{4}$%
}%
\box\matricesbox
\else
\hbox to \myboxwidth{%
\unhbox\onelinebox
}%
\fi
\else
\hbox to \myboxwidth{%
$\Pi_{10,8}$ spans $L_{150.14}$%
\hfil}%
\hbox to \myboxwidth{%
$|22\slashinfty22|22\slashinfty22\rtimes D_{4}$%
\hfil}%
\box\matricesbox
\fi
}%
\hfill\discretionary{}{}{}%
\setbox\matricesbox=\hbox{%
{$\left[\!\llap{\phantom{%
\begingroup \smaller\smaller\smaller\begin{tabular}{@{}c@{}}%
\phantom{0}\\\phantom{0}\\\phantom{0}
\end{tabular}\endgroup%
}}\right.$}%
\begingroup \smaller\smaller\smaller\begin{tabular}{@{}c@{}}%
-1\\\phantom{0}\\\phantom{0}
\end{tabular}\endgroup%
\kern3pt%
\begingroup \smaller\smaller\smaller\begin{tabular}{@{}c@{}}%
\phantom{0}\\3\\\phantom{0}
\end{tabular}\endgroup%
\kern3pt%
\begingroup \smaller\smaller\smaller\begin{tabular}{@{}c@{}}%
\phantom{0}\\\phantom{0}\\3
\end{tabular}\endgroup%
{$\left.\llap{\phantom{%
\begingroup \smaller\smaller\smaller\begin{tabular}{@{}c@{}}%
\phantom{0}\\\phantom{0}\\\phantom{0}
\end{tabular}\endgroup%
}}\!\right]$}%
{$\left[\!\llap{\phantom{%
\begingroup \smaller\smaller\smaller\begin{tabular}{@{}c@{}}%
0\\0\\0
\end{tabular}\endgroup%
}}\right.$}%
\begingroup \smaller\smaller\smaller\begin{tabular}{@{}c@{}}%
24\\-14\\-2
\end{tabular}\endgroup%
\kern3pt%
\begingroup \smaller\smaller\smaller\begin{tabular}{@{}c@{}}%
2\\-1\\-1
\end{tabular}\endgroup%
\kern3pt%
\begingroup \smaller\smaller\smaller\begin{tabular}{@{}c@{}}%
3\\0\\-2
\end{tabular}\endgroup%
{$\left.\llap{\phantom{%
\begingroup \smaller\smaller\smaller\begin{tabular}{@{}c@{}}%
0\\0\\0
\end{tabular}\endgroup%
}}\!\right]$}%
}%
\ifdim\wd\matricesbox>\halfwidth\myboxwidth=\hsize\else\myboxwidth=\halfwidth\fi
\vbox{%
\ifdim\myboxwidth=\hsize
\setbox\onelinebox=\hbox{%
\vbox{\hbox{%
$\Pi_{10,9}$ spans $L_{123.8}$%
}\hbox{%
$2\slashtwo22|22\slashtwo22|2\rtimes D_{4}$%
}%
}%
\hfill\copy\matricesbox
}%
\ifdim\wd\onelinebox>\myboxwidth
\hbox to \myboxwidth{%
$\Pi_{10,9}$ spans $L_{123.8}$%
\hfil
$2\slashtwo22|22\slashtwo22|2\rtimes D_{4}$%
}%
\box\matricesbox
\else
\hbox to \myboxwidth{%
\unhbox\onelinebox
}%
\fi
\else
\hbox to \myboxwidth{%
$\Pi_{10,9}$ spans $L_{123.8}$%
\hfil}%
\hbox to \myboxwidth{%
$2\slashtwo22|22\slashtwo22|2\rtimes D_{4}$%
\hfil}%
\box\matricesbox
\fi
}%
\hfill\discretionary{}{}{}%
\setbox\matricesbox=\hbox{%
{$\left[\!\llap{\phantom{%
\begingroup \smaller\smaller\smaller\begin{tabular}{@{}c@{}}%
\phantom{0}\\\phantom{0}\\\phantom{0}
\end{tabular}\endgroup%
}}\right.$}%
\begingroup \smaller\smaller\smaller\begin{tabular}{@{}c@{}}%
-1\\\phantom{0}\\\phantom{0}
\end{tabular}\endgroup%
\kern3pt%
\begingroup \smaller\smaller\smaller\begin{tabular}{@{}c@{}}%
\phantom{0}\\6\\\phantom{0}
\end{tabular}\endgroup%
\kern3pt%
\begingroup \smaller\smaller\smaller\begin{tabular}{@{}c@{}}%
\phantom{0}\\\phantom{0}\\6
\end{tabular}\endgroup%
{$\left.\llap{\phantom{%
\begingroup \smaller\smaller\smaller\begin{tabular}{@{}c@{}}%
\phantom{0}\\\phantom{0}\\\phantom{0}
\end{tabular}\endgroup%
}}\!\right]$}%
{$\left[\!\llap{\phantom{%
\begingroup \smaller\smaller\smaller\begin{tabular}{@{}c@{}}%
0\\0\\0
\end{tabular}\endgroup%
}}\right.$}%
\begingroup \smaller\smaller\smaller\begin{tabular}{@{}c@{}}%
12\\-5\\1
\end{tabular}\endgroup%
\kern3pt%
\begingroup \smaller\smaller\smaller\begin{tabular}{@{}c@{}}%
3\\-1\\1
\end{tabular}\endgroup%
\kern3pt%
\begingroup \smaller\smaller\smaller\begin{tabular}{@{}c@{}}%
2\\0\\1
\end{tabular}\endgroup%
{$\left.\llap{\phantom{%
\begingroup \smaller\smaller\smaller\begin{tabular}{@{}c@{}}%
0\\0\\0
\end{tabular}\endgroup%
}}\!\right]$}%
}%
\ifdim\wd\matricesbox>\halfwidth\myboxwidth=\hsize\else\myboxwidth=\halfwidth\fi
\vbox{%
\ifdim\myboxwidth=\hsize
\setbox\onelinebox=\hbox{%
\vbox{\hbox{%
$\Pi_{10,10}$ spans $L_{123.8}$%
}\hbox{%
$2\slashtwo22|22\slashtwo22|2\rtimes D_{4}$%
}%
}%
\hfill\copy\matricesbox
}%
\ifdim\wd\onelinebox>\myboxwidth
\hbox to \myboxwidth{%
$\Pi_{10,10}$ spans $L_{123.8}$%
\hfil
$2\slashtwo22|22\slashtwo22|2\rtimes D_{4}$%
}%
\box\matricesbox
\else
\hbox to \myboxwidth{%
\unhbox\onelinebox
}%
\fi
\else
\hbox to \myboxwidth{%
$\Pi_{10,10}$ spans $L_{123.8}$%
\hfil}%
\hbox to \myboxwidth{%
$2\slashtwo22|22\slashtwo22|2\rtimes D_{4}$%
\hfil}%
\box\matricesbox
\fi
}%
\hfill\discretionary{}{}{}%
\setbox\matricesbox=\hbox{%
{$\left[\!\llap{\phantom{%
\begingroup \smaller\smaller\smaller\begin{tabular}{@{}c@{}}%
\phantom{0}\\\phantom{0}\\\phantom{0}
\end{tabular}\endgroup%
}}\right.$}%
\begingroup \smaller\smaller\smaller\begin{tabular}{@{}c@{}}%
-3\\\phantom{0}\\\phantom{0}
\end{tabular}\endgroup%
\kern3pt%
\begingroup \smaller\smaller\smaller\begin{tabular}{@{}c@{}}%
\phantom{0}\\1\\\phantom{0}
\end{tabular}\endgroup%
\kern3pt%
\begingroup \smaller\smaller\smaller\begin{tabular}{@{}c@{}}%
\phantom{0}\\\phantom{0}\\3
\end{tabular}\endgroup%
{$\left.\llap{\phantom{%
\begingroup \smaller\smaller\smaller\begin{tabular}{@{}c@{}}%
\phantom{0}\\\phantom{0}\\\phantom{0}
\end{tabular}\endgroup%
}}\!\right]$}%
{$\left[\!\llap{\phantom{%
\begingroup \smaller\smaller\smaller\begin{tabular}{@{}c@{}}%
0\\0\\0
\end{tabular}\endgroup%
}}\right.$}%
\begingroup \smaller\smaller\smaller\begin{tabular}{@{}c@{}}%
1\\-2\\0
\end{tabular}\endgroup%
\kern3pt%
\begingroup \smaller\smaller\smaller\begin{tabular}{@{}c@{}}%
4\\-5\\3
\end{tabular}\endgroup%
\kern3pt%
\begingroup \smaller\smaller\smaller\begin{tabular}{@{}c@{}}%
4\\-2\\4
\end{tabular}\endgroup%
{$\left.\llap{\phantom{%
\begingroup \smaller\smaller\smaller\begin{tabular}{@{}c@{}}%
0\\0\\0
\end{tabular}\endgroup%
}}\!\right]$}%
}%
\ifdim\wd\matricesbox>\halfwidth\myboxwidth=\hsize\else\myboxwidth=\halfwidth\fi
\vbox{%
\ifdim\myboxwidth=\hsize
\setbox\onelinebox=\hbox{%
\vbox{\hbox{%
$\Pi_{10,11}=\hbox{GN}_{56}$ spans $L_{7.7}$%
}\hbox{%
$3\infty|\infty3\slashinfty3\infty|\infty3\slashinfty\rtimes D_{4}$%
}%
}%
\hfill\copy\matricesbox
}%
\ifdim\wd\onelinebox>\myboxwidth
\hbox to \myboxwidth{%
$\Pi_{10,11}=\hbox{GN}_{56}$ spans $L_{7.7}$%
\hfil
$3\infty|\infty3\slashinfty3\infty|\infty3\slashinfty\rtimes D_{4}$%
}%
\box\matricesbox
\else
\hbox to \myboxwidth{%
\unhbox\onelinebox
}%
\fi
\else
\hbox to \myboxwidth{%
$\Pi_{10,11}=\hbox{GN}_{56}$ spans $L_{7.7}$%
\hfil}%
\hbox to \myboxwidth{%
$3\infty|\infty3\slashinfty3\infty|\infty3\slashinfty\rtimes D_{4}$%
\hfil}%
\box\matricesbox
\fi
}%
\hfill\discretionary{}{}{}%
\setbox\matricesbox=\hbox{%
{$\left[\!\llap{\phantom{%
\begingroup \smaller\smaller\smaller\begin{tabular}{@{}c@{}}%
\phantom{0}\\\phantom{0}\\\phantom{0}
\end{tabular}\endgroup%
}}\right.$}%
\begingroup \smaller\smaller\smaller\begin{tabular}{@{}c@{}}%
-4\\\phantom{0}\\\phantom{0}
\end{tabular}\endgroup%
\kern3pt%
\begingroup \smaller\smaller\smaller\begin{tabular}{@{}c@{}}%
\phantom{0}\\1/2\\\phantom{0}
\end{tabular}\endgroup%
\kern3pt%
\begingroup \smaller\smaller\smaller\begin{tabular}{@{}c@{}}%
\phantom{0}\\\phantom{0}\\9/2
\end{tabular}\endgroup%
{$\left.\llap{\phantom{%
\begingroup \smaller\smaller\smaller\begin{tabular}{@{}c@{}}%
\phantom{0}\\\phantom{0}\\\phantom{0}
\end{tabular}\endgroup%
}}\!\right]$}%
{$\left[\!\llap{\phantom{%
\begingroup \smaller\smaller\smaller\begin{tabular}{@{}c@{}}%
0\\0\\0
\end{tabular}\endgroup%
}}\right.$}%
\begingroup \smaller\smaller\smaller\begin{tabular}{@{}c@{}}%
2\\6\\0
\end{tabular}\endgroup%
\kern3pt%
\begingroup \smaller\smaller\smaller\begin{tabular}{@{}c@{}}%
4\\10\\2
\end{tabular}\endgroup%
\kern3pt%
\begingroup \smaller\smaller\smaller\begin{tabular}{@{}c@{}}%
1\\1\\1
\end{tabular}\endgroup%
{$\left.\llap{\phantom{%
\begingroup \smaller\smaller\smaller\begin{tabular}{@{}c@{}}%
0\\0\\0
\end{tabular}\endgroup%
}}\!\right]$}%
}%
\ifdim\wd\matricesbox>\halfwidth\myboxwidth=\hsize\else\myboxwidth=\halfwidth\fi
\vbox{%
\ifdim\myboxwidth=\hsize
\setbox\onelinebox=\hbox{%
\vbox{\hbox{%
$\Pi_{10,12}$ spans $L_{216.1}$%
}\hbox{%
$|4\infty\slashtwo\infty4|4\infty\slashtwo\infty4\rtimes D_{4}$%
}%
}%
\hfill\copy\matricesbox
}%
\ifdim\wd\onelinebox>\myboxwidth
\hbox to \myboxwidth{%
$\Pi_{10,12}$ spans $L_{216.1}$%
\hfil
$|4\infty\slashtwo\infty4|4\infty\slashtwo\infty4\rtimes D_{4}$%
}%
\box\matricesbox
\else
\hbox to \myboxwidth{%
\unhbox\onelinebox
}%
\fi
\else
\hbox to \myboxwidth{%
$\Pi_{10,12}$ spans $L_{216.1}$%
\hfil}%
\hbox to \myboxwidth{%
$|4\infty\slashtwo\infty4|4\infty\slashtwo\infty4\rtimes D_{4}$%
\hfil}%
\box\matricesbox
\fi
}%
\hfill\discretionary{}{}{}%
\setbox\matricesbox=\hbox{%
{$\left[\!\llap{\phantom{%
\begingroup \smaller\smaller\smaller\begin{tabular}{@{}c@{}}%
\phantom{0}\\\phantom{0}\\\phantom{0}
\end{tabular}\endgroup%
}}\right.$}%
\begingroup \smaller\smaller\smaller\begin{tabular}{@{}c@{}}%
-1\\\phantom{0}\\\phantom{0}
\end{tabular}\endgroup%
\kern3pt%
\begingroup \smaller\smaller\smaller\begin{tabular}{@{}c@{}}%
\phantom{0}\\5\\\phantom{0}
\end{tabular}\endgroup%
\kern3pt%
\begingroup \smaller\smaller\smaller\begin{tabular}{@{}c@{}}%
\phantom{0}\\\phantom{0}\\25
\end{tabular}\endgroup%
{$\left.\llap{\phantom{%
\begingroup \smaller\smaller\smaller\begin{tabular}{@{}c@{}}%
\phantom{0}\\\phantom{0}\\\phantom{0}
\end{tabular}\endgroup%
}}\!\right]$}%
{$\left[\!\llap{\phantom{%
\begingroup \smaller\smaller\smaller\begin{tabular}{@{}c@{}}%
0\\0\\0
\end{tabular}\endgroup%
}}\right.$}%
\begingroup \smaller\smaller\smaller\begin{tabular}{@{}c@{}}%
4\\-2\\0
\end{tabular}\endgroup%
\kern3pt%
\begingroup \smaller\smaller\smaller\begin{tabular}{@{}c@{}}%
20\\-8\\2
\end{tabular}\endgroup%
\kern3pt%
\begingroup \smaller\smaller\smaller\begin{tabular}{@{}c@{}}%
5\\-1\\1
\end{tabular}\endgroup%
{$\left.\llap{\phantom{%
\begingroup \smaller\smaller\smaller\begin{tabular}{@{}c@{}}%
0\\0\\0
\end{tabular}\endgroup%
}}\!\right]$}%
}%
\ifdim\wd\matricesbox>\halfwidth\myboxwidth=\hsize\else\myboxwidth=\halfwidth\fi
\vbox{%
\ifdim\myboxwidth=\hsize
\setbox\onelinebox=\hbox{%
\vbox{\hbox{%
$\Pi_{10,13}$ spans $L_{149.20}$%
}\hbox{%
$\infty2|2\infty\slashinfty\infty2|2\infty\slashinfty\rtimes D_{4}$%
}%
}%
\hfill\copy\matricesbox
}%
\ifdim\wd\onelinebox>\myboxwidth
\hbox to \myboxwidth{%
$\Pi_{10,13}$ spans $L_{149.20}$%
\hfil
$\infty2|2\infty\slashinfty\infty2|2\infty\slashinfty\rtimes D_{4}$%
}%
\box\matricesbox
\else
\hbox to \myboxwidth{%
\unhbox\onelinebox
}%
\fi
\else
\hbox to \myboxwidth{%
$\Pi_{10,13}$ spans $L_{149.20}$%
\hfil}%
\hbox to \myboxwidth{%
$\infty2|2\infty\slashinfty\infty2|2\infty\slashinfty\rtimes D_{4}$%
\hfil}%
\box\matricesbox
\fi
}%
\hfill\discretionary{}{}{}%
\setbox\matricesbox=\hbox{%
{$\left[\!\llap{\phantom{%
\begingroup \smaller\smaller\smaller\begin{tabular}{@{}c@{}}%
\phantom{0}\\\phantom{0}\\\phantom{0}
\end{tabular}\endgroup%
}}\right.$}%
\begingroup \smaller\smaller\smaller\begin{tabular}{@{}c@{}}%
-1/4\\\phantom{0}\\\phantom{0}
\end{tabular}\endgroup%
\kern3pt%
\begingroup \smaller\smaller\smaller\begin{tabular}{@{}c@{}}%
\phantom{0}\\35/4\\\phantom{0}
\end{tabular}\endgroup%
\kern3pt%
\begingroup \smaller\smaller\smaller\begin{tabular}{@{}c@{}}%
\phantom{0}\\\phantom{0}\\21/2
\end{tabular}\endgroup%
{$\left.\llap{\phantom{%
\begingroup \smaller\smaller\smaller\begin{tabular}{@{}c@{}}%
\phantom{0}\\\phantom{0}\\\phantom{0}
\end{tabular}\endgroup%
}}\!\right]$}%
{$\left[\!\llap{\phantom{%
\begingroup \smaller\smaller\smaller\begin{tabular}{@{}c@{}}%
0\\0\\0
\end{tabular}\endgroup%
}}\right.$}%
\begingroup \smaller\smaller\smaller\begin{tabular}{@{}c@{}}%
10\\2\\0
\end{tabular}\endgroup%
\kern3pt%
\begingroup \smaller\smaller\smaller\begin{tabular}{@{}c@{}}%
42\\6\\4
\end{tabular}\endgroup%
\kern3pt%
\begingroup \smaller\smaller\smaller\begin{tabular}{@{}c@{}}%
35\\3\\5
\end{tabular}\endgroup%
\kern3pt%
\begingroup \smaller\smaller\smaller\begin{tabular}{@{}c@{}}%
24\\0\\4
\end{tabular}\endgroup%
\kern3pt%
\begingroup \smaller\smaller\smaller\begin{tabular}{@{}c@{}}%
7\\-1\\1
\end{tabular}\endgroup%
\kern3pt%
\begingroup \smaller\smaller\smaller\begin{tabular}{@{}c@{}}%
10\\-2\\0
\end{tabular}\endgroup%
{$\left.\llap{\phantom{%
\begingroup \smaller\smaller\smaller\begin{tabular}{@{}c@{}}%
0\\0\\0
\end{tabular}\endgroup%
}}\!\right]$}%
}%
\ifdim\wd\matricesbox>\halfwidth\myboxwidth=\hsize\else\myboxwidth=\halfwidth\fi
\vbox{%
\ifdim\myboxwidth=\hsize
\setbox\onelinebox=\hbox{%
\vbox{\hbox{%
$\Pi_{10,14}$ spans $L_{136.10}$%
}\hbox{%
$|22222|22222\rtimes D_{2}$%
}%
}%
\hfill\copy\matricesbox
}%
\ifdim\wd\onelinebox>\myboxwidth
\hbox to \myboxwidth{%
$\Pi_{10,14}$ spans $L_{136.10}$%
\hfil
$|22222|22222\rtimes D_{2}$%
}%
\box\matricesbox
\else
\hbox to \myboxwidth{%
\unhbox\onelinebox
}%
\fi
\else
\hbox to \myboxwidth{%
$\Pi_{10,14}$ spans $L_{136.10}$%
\hfil}%
\hbox to \myboxwidth{%
$|22222|22222\rtimes D_{2}$%
\hfil}%
\box\matricesbox
\fi
}%
\hfill\discretionary{}{}{}%
\setbox\matricesbox=\hbox{%
{$\left[\!\llap{\phantom{%
\begingroup \smaller\smaller\smaller\begin{tabular}{@{}c@{}}%
\phantom{0}\\\phantom{0}\\\phantom{0}
\end{tabular}\endgroup%
}}\right.$}%
\begingroup \smaller\smaller\smaller\begin{tabular}{@{}c@{}}%
-1/4\\\phantom{0}\\\phantom{0}
\end{tabular}\endgroup%
\kern3pt%
\begingroup \smaller\smaller\smaller\begin{tabular}{@{}c@{}}%
\phantom{0}\\15/2\\\phantom{0}
\end{tabular}\endgroup%
\kern3pt%
\begingroup \smaller\smaller\smaller\begin{tabular}{@{}c@{}}%
\phantom{0}\\\phantom{0}\\15/2
\end{tabular}\endgroup%
{$\left.\llap{\phantom{%
\begingroup \smaller\smaller\smaller\begin{tabular}{@{}c@{}}%
\phantom{0}\\\phantom{0}\\\phantom{0}
\end{tabular}\endgroup%
}}\!\right]$}%
{$\left[\!\llap{\phantom{%
\begingroup \smaller\smaller\smaller\begin{tabular}{@{}c@{}}%
0\\0\\0
\end{tabular}\endgroup%
}}\right.$}%
\begingroup \smaller\smaller\smaller\begin{tabular}{@{}c@{}}%
20\\4\\0
\end{tabular}\endgroup%
\kern3pt%
\begingroup \smaller\smaller\smaller\begin{tabular}{@{}c@{}}%
30\\5\\-3
\end{tabular}\endgroup%
\kern3pt%
\begingroup \smaller\smaller\smaller\begin{tabular}{@{}c@{}}%
30\\3\\-5
\end{tabular}\endgroup%
\kern3pt%
\begingroup \smaller\smaller\smaller\begin{tabular}{@{}c@{}}%
20\\0\\-4
\end{tabular}\endgroup%
\kern3pt%
\begingroup \smaller\smaller\smaller\begin{tabular}{@{}c@{}}%
6\\-1\\-1
\end{tabular}\endgroup%
\kern3pt%
\begingroup \smaller\smaller\smaller\begin{tabular}{@{}c@{}}%
20\\-4\\0
\end{tabular}\endgroup%
{$\left.\llap{\phantom{%
\begingroup \smaller\smaller\smaller\begin{tabular}{@{}c@{}}%
0\\0\\0
\end{tabular}\endgroup%
}}\!\right]$}%
}%
\ifdim\wd\matricesbox>\halfwidth\myboxwidth=\hsize\else\myboxwidth=\halfwidth\fi
\vbox{%
\ifdim\myboxwidth=\hsize
\setbox\onelinebox=\hbox{%
\vbox{\hbox{%
$\Pi_{10,15}$ spans $L_{19.10}$%
}\hbox{%
$2222|22222|2\rtimes D_{2}$%
}%
}%
\hfill\copy\matricesbox
}%
\ifdim\wd\onelinebox>\myboxwidth
\hbox to \myboxwidth{%
$\Pi_{10,15}$ spans $L_{19.10}$%
\hfil
$2222|22222|2\rtimes D_{2}$%
}%
\box\matricesbox
\else
\hbox to \myboxwidth{%
\unhbox\onelinebox
}%
\fi
\else
\hbox to \myboxwidth{%
$\Pi_{10,15}$ spans $L_{19.10}$%
\hfil}%
\hbox to \myboxwidth{%
$2222|22222|2\rtimes D_{2}$%
\hfil}%
\box\matricesbox
\fi
}%
\hfill\discretionary{}{}{}%
\setbox\matricesbox=\hbox{%
{$\left[\!\llap{\phantom{%
\begingroup \smaller\smaller\smaller\begin{tabular}{@{}c@{}}%
\phantom{0}\\\phantom{0}\\\phantom{0}
\end{tabular}\endgroup%
}}\right.$}%
\begingroup \smaller\smaller\smaller\begin{tabular}{@{}c@{}}%
-1/2\\\phantom{0}\\\phantom{0}
\end{tabular}\endgroup%
\kern3pt%
\begingroup \smaller\smaller\smaller\begin{tabular}{@{}c@{}}%
\phantom{0}\\10\\\phantom{0}
\end{tabular}\endgroup%
\kern3pt%
\begingroup \smaller\smaller\smaller\begin{tabular}{@{}c@{}}%
\phantom{0}\\\phantom{0}\\15/2
\end{tabular}\endgroup%
{$\left.\llap{\phantom{%
\begingroup \smaller\smaller\smaller\begin{tabular}{@{}c@{}}%
\phantom{0}\\\phantom{0}\\\phantom{0}
\end{tabular}\endgroup%
}}\!\right]$}%
{$\left[\!\llap{\phantom{%
\begingroup \smaller\smaller\smaller\begin{tabular}{@{}c@{}}%
0\\0\\0
\end{tabular}\endgroup%
}}\right.$}%
\begingroup \smaller\smaller\smaller\begin{tabular}{@{}c@{}}%
8\\2\\0
\end{tabular}\endgroup%
\kern3pt%
\begingroup \smaller\smaller\smaller\begin{tabular}{@{}c@{}}%
5\\1\\-1
\end{tabular}\endgroup%
\kern3pt%
\begingroup \smaller\smaller\smaller\begin{tabular}{@{}c@{}}%
3\\0\\-1
\end{tabular}\endgroup%
\kern3pt%
\begingroup \smaller\smaller\smaller\begin{tabular}{@{}c@{}}%
40\\-6\\-8
\end{tabular}\endgroup%
\kern3pt%
\begingroup \smaller\smaller\smaller\begin{tabular}{@{}c@{}}%
30\\-6\\-4
\end{tabular}\endgroup%
\kern3pt%
\begingroup \smaller\smaller\smaller\begin{tabular}{@{}c@{}}%
8\\-2\\0
\end{tabular}\endgroup%
{$\left.\llap{\phantom{%
\begingroup \smaller\smaller\smaller\begin{tabular}{@{}c@{}}%
0\\0\\0
\end{tabular}\endgroup%
}}\!\right]$}%
}%
\ifdim\wd\matricesbox>\halfwidth\myboxwidth=\hsize\else\myboxwidth=\halfwidth\fi
\vbox{%
\ifdim\myboxwidth=\hsize
\setbox\onelinebox=\hbox{%
\vbox{\hbox{%
$\Pi_{10,16}$ spans $L_{17.11}$%
}\hbox{%
$22|22222|222\rtimes D_{2}$%
}%
}%
\hfill\copy\matricesbox
}%
\ifdim\wd\onelinebox>\myboxwidth
\hbox to \myboxwidth{%
$\Pi_{10,16}$ spans $L_{17.11}$%
\hfil
$22|22222|222\rtimes D_{2}$%
}%
\box\matricesbox
\else
\hbox to \myboxwidth{%
\unhbox\onelinebox
}%
\fi
\else
\hbox to \myboxwidth{%
$\Pi_{10,16}$ spans $L_{17.11}$%
\hfil}%
\hbox to \myboxwidth{%
$22|22222|222\rtimes D_{2}$%
\hfil}%
\box\matricesbox
\fi
}%
\hfill\discretionary{}{}{}%
\setbox\matricesbox=\hbox{%
{$\left[\!\llap{\phantom{%
\begingroup \smaller\smaller\smaller\begin{tabular}{@{}c@{}}%
\phantom{0}\\\phantom{0}\\\phantom{0}
\end{tabular}\endgroup%
}}\right.$}%
\begingroup \smaller\smaller\smaller\begin{tabular}{@{}c@{}}%
-1/2\\\phantom{0}\\\phantom{0}
\end{tabular}\endgroup%
\kern3pt%
\begingroup \smaller\smaller\smaller\begin{tabular}{@{}c@{}}%
\phantom{0}\\15/2\\\phantom{0}
\end{tabular}\endgroup%
\kern3pt%
\begingroup \smaller\smaller\smaller\begin{tabular}{@{}c@{}}%
\phantom{0}\\\phantom{0}\\10
\end{tabular}\endgroup%
{$\left.\llap{\phantom{%
\begingroup \smaller\smaller\smaller\begin{tabular}{@{}c@{}}%
\phantom{0}\\\phantom{0}\\\phantom{0}
\end{tabular}\endgroup%
}}\!\right]$}%
{$\left[\!\llap{\phantom{%
\begingroup \smaller\smaller\smaller\begin{tabular}{@{}c@{}}%
0\\0\\0
\end{tabular}\endgroup%
}}\right.$}%
\begingroup \smaller\smaller\smaller\begin{tabular}{@{}c@{}}%
3\\-1\\0
\end{tabular}\endgroup%
\kern3pt%
\begingroup \smaller\smaller\smaller\begin{tabular}{@{}c@{}}%
5\\-1\\-1
\end{tabular}\endgroup%
\kern3pt%
\begingroup \smaller\smaller\smaller\begin{tabular}{@{}c@{}}%
8\\0\\-2
\end{tabular}\endgroup%
\kern3pt%
\begingroup \smaller\smaller\smaller\begin{tabular}{@{}c@{}}%
30\\4\\-6
\end{tabular}\endgroup%
\kern3pt%
\begingroup \smaller\smaller\smaller\begin{tabular}{@{}c@{}}%
40\\8\\-6
\end{tabular}\endgroup%
\kern3pt%
\begingroup \smaller\smaller\smaller\begin{tabular}{@{}c@{}}%
3\\1\\0
\end{tabular}\endgroup%
{$\left.\llap{\phantom{%
\begingroup \smaller\smaller\smaller\begin{tabular}{@{}c@{}}%
0\\0\\0
\end{tabular}\endgroup%
}}\!\right]$}%
}%
\ifdim\wd\matricesbox>\halfwidth\myboxwidth=\hsize\else\myboxwidth=\halfwidth\fi
\vbox{%
\ifdim\myboxwidth=\hsize
\setbox\onelinebox=\hbox{%
\vbox{\hbox{%
$\Pi_{10,17}$ spans $L_{17.11}$%
}\hbox{%
$|22222|22222\rtimes D_{2}$%
}%
}%
\hfill\copy\matricesbox
}%
\ifdim\wd\onelinebox>\myboxwidth
\hbox to \myboxwidth{%
$\Pi_{10,17}$ spans $L_{17.11}$%
\hfil
$|22222|22222\rtimes D_{2}$%
}%
\box\matricesbox
\else
\hbox to \myboxwidth{%
\unhbox\onelinebox
}%
\fi
\else
\hbox to \myboxwidth{%
$\Pi_{10,17}$ spans $L_{17.11}$%
\hfil}%
\hbox to \myboxwidth{%
$|22222|22222\rtimes D_{2}$%
\hfil}%
\box\matricesbox
\fi
}%
\hfill\discretionary{}{}{}%
\setbox\matricesbox=\hbox{%
{$\left[\!\llap{\phantom{%
\begingroup \smaller\smaller\smaller\begin{tabular}{@{}c@{}}%
\phantom{0}\\\phantom{0}\\\phantom{0}
\end{tabular}\endgroup%
}}\right.$}%
\begingroup \smaller\smaller\smaller\begin{tabular}{@{}c@{}}%
-1\\\phantom{0}\\\phantom{0}
\end{tabular}\endgroup%
\kern3pt%
\begingroup \smaller\smaller\smaller\begin{tabular}{@{}c@{}}%
\phantom{0}\\6\\\phantom{0}
\end{tabular}\endgroup%
\kern3pt%
\begingroup \smaller\smaller\smaller\begin{tabular}{@{}c@{}}%
\phantom{0}\\\phantom{0}\\6
\end{tabular}\endgroup%
{$\left.\llap{\phantom{%
\begingroup \smaller\smaller\smaller\begin{tabular}{@{}c@{}}%
\phantom{0}\\\phantom{0}\\\phantom{0}
\end{tabular}\endgroup%
}}\!\right]$}%
{$\left[\!\llap{\phantom{%
\begingroup \smaller\smaller\smaller\begin{tabular}{@{}c@{}}%
0\\0\\0
\end{tabular}\endgroup%
}}\right.$}%
\begingroup \smaller\smaller\smaller\begin{tabular}{@{}c@{}}%
2\\1\\0
\end{tabular}\endgroup%
\kern3pt%
\begingroup \smaller\smaller\smaller\begin{tabular}{@{}c@{}}%
3\\1\\1
\end{tabular}\endgroup%
\kern3pt%
\begingroup \smaller\smaller\smaller\begin{tabular}{@{}c@{}}%
2\\0\\1
\end{tabular}\endgroup%
\kern3pt%
\begingroup \smaller\smaller\smaller\begin{tabular}{@{}c@{}}%
24\\-6\\8
\end{tabular}\endgroup%
\kern3pt%
\begingroup \smaller\smaller\smaller\begin{tabular}{@{}c@{}}%
24\\-8\\6
\end{tabular}\endgroup%
\kern3pt%
\begingroup \smaller\smaller\smaller\begin{tabular}{@{}c@{}}%
2\\-1\\0
\end{tabular}\endgroup%
{$\left.\llap{\phantom{%
\begingroup \smaller\smaller\smaller\begin{tabular}{@{}c@{}}%
0\\0\\0
\end{tabular}\endgroup%
}}\!\right]$}%
}%
\ifdim\wd\matricesbox>\halfwidth\myboxwidth=\hsize\else\myboxwidth=\halfwidth\fi
\vbox{%
\ifdim\myboxwidth=\hsize
\setbox\onelinebox=\hbox{%
\vbox{\hbox{%
$\Pi_{10,18}$ spans $L_{123.8}$%
}\hbox{%
$22|22222|222\rtimes D_{2}$%
}%
}%
\hfill\copy\matricesbox
}%
\ifdim\wd\onelinebox>\myboxwidth
\hbox to \myboxwidth{%
$\Pi_{10,18}$ spans $L_{123.8}$%
\hfil
$22|22222|222\rtimes D_{2}$%
}%
\box\matricesbox
\else
\hbox to \myboxwidth{%
\unhbox\onelinebox
}%
\fi
\else
\hbox to \myboxwidth{%
$\Pi_{10,18}$ spans $L_{123.8}$%
\hfil}%
\hbox to \myboxwidth{%
$22|22222|222\rtimes D_{2}$%
\hfil}%
\box\matricesbox
\fi
}%
\hfill\discretionary{}{}{}%
\setbox\matricesbox=\hbox{%
{$\left[\!\llap{\phantom{%
\begingroup \smaller\smaller\smaller\begin{tabular}{@{}c@{}}%
\phantom{0}\\\phantom{0}\\\phantom{0}
\end{tabular}\endgroup%
}}\right.$}%
\begingroup \smaller\smaller\smaller\begin{tabular}{@{}c@{}}%
-1/8\\\phantom{0}\\\phantom{0}
\end{tabular}\endgroup%
\kern3pt%
\begingroup \smaller\smaller\smaller\begin{tabular}{@{}c@{}}%
\phantom{0}\\21\\\phantom{0}
\end{tabular}\endgroup%
\kern3pt%
\begingroup \smaller\smaller\smaller\begin{tabular}{@{}c@{}}%
\phantom{0}\\\phantom{0}\\35/2
\end{tabular}\endgroup%
{$\left.\llap{\phantom{%
\begingroup \smaller\smaller\smaller\begin{tabular}{@{}c@{}}%
\phantom{0}\\\phantom{0}\\\phantom{0}
\end{tabular}\endgroup%
}}\!\right]$}%
{$\left[\!\llap{\phantom{%
\begingroup \smaller\smaller\smaller\begin{tabular}{@{}c@{}}%
0\\0\\0
\end{tabular}\endgroup%
}}\right.$}%
\begingroup \smaller\smaller\smaller\begin{tabular}{@{}c@{}}%
48\\4\\0
\end{tabular}\endgroup%
\kern3pt%
\begingroup \smaller\smaller\smaller\begin{tabular}{@{}c@{}}%
70\\5\\-3
\end{tabular}\endgroup%
\kern3pt%
\begingroup \smaller\smaller\smaller\begin{tabular}{@{}c@{}}%
84\\4\\-6
\end{tabular}\endgroup%
\kern3pt%
\begingroup \smaller\smaller\smaller\begin{tabular}{@{}c@{}}%
20\\0\\-2
\end{tabular}\endgroup%
\kern3pt%
\begingroup \smaller\smaller\smaller\begin{tabular}{@{}c@{}}%
14\\-1\\-1
\end{tabular}\endgroup%
\kern3pt%
\begingroup \smaller\smaller\smaller\begin{tabular}{@{}c@{}}%
48\\-4\\0
\end{tabular}\endgroup%
{$\left.\llap{\phantom{%
\begingroup \smaller\smaller\smaller\begin{tabular}{@{}c@{}}%
0\\0\\0
\end{tabular}\endgroup%
}}\!\right]$}%
}%
\ifdim\wd\matricesbox>\halfwidth\myboxwidth=\hsize\else\myboxwidth=\halfwidth\fi
\vbox{%
\ifdim\myboxwidth=\hsize
\setbox\onelinebox=\hbox{%
\vbox{\hbox{%
$\Pi_{10,19}$ spans $L_{69.11}$%
}\hbox{%
$222|22222|22\rtimes D_{2}$%
}%
}%
\hfill\copy\matricesbox
}%
\ifdim\wd\onelinebox>\myboxwidth
\hbox to \myboxwidth{%
$\Pi_{10,19}$ spans $L_{69.11}$%
\hfil
$222|22222|22\rtimes D_{2}$%
}%
\box\matricesbox
\else
\hbox to \myboxwidth{%
\unhbox\onelinebox
}%
\fi
\else
\hbox to \myboxwidth{%
$\Pi_{10,19}$ spans $L_{69.11}$%
\hfil}%
\hbox to \myboxwidth{%
$222|22222|22\rtimes D_{2}$%
\hfil}%
\box\matricesbox
\fi
}%
\hfill\discretionary{}{}{}%
\setbox\matricesbox=\hbox{%
{$\left[\!\llap{\phantom{%
\begingroup \smaller\smaller\smaller\begin{tabular}{@{}c@{}}%
\phantom{0}\\\phantom{0}\\\phantom{0}
\end{tabular}\endgroup%
}}\right.$}%
\begingroup \smaller\smaller\smaller\begin{tabular}{@{}c@{}}%
-1\\\phantom{0}\\\phantom{0}
\end{tabular}\endgroup%
\kern3pt%
\begingroup \smaller\smaller\smaller\begin{tabular}{@{}c@{}}%
\phantom{0}\\3\\\phantom{0}
\end{tabular}\endgroup%
\kern3pt%
\begingroup \smaller\smaller\smaller\begin{tabular}{@{}c@{}}%
\phantom{0}\\\phantom{0}\\3
\end{tabular}\endgroup%
{$\left.\llap{\phantom{%
\begingroup \smaller\smaller\smaller\begin{tabular}{@{}c@{}}%
\phantom{0}\\\phantom{0}\\\phantom{0}
\end{tabular}\endgroup%
}}\!\right]$}%
{$\left[\!\llap{\phantom{%
\begingroup \smaller\smaller\smaller\begin{tabular}{@{}c@{}}%
0\\0\\0
\end{tabular}\endgroup%
}}\right.$}%
\begingroup \smaller\smaller\smaller\begin{tabular}{@{}c@{}}%
3\\-2\\0
\end{tabular}\endgroup%
\kern3pt%
\begingroup \smaller\smaller\smaller\begin{tabular}{@{}c@{}}%
12\\-6\\4
\end{tabular}\endgroup%
\kern3pt%
\begingroup \smaller\smaller\smaller\begin{tabular}{@{}c@{}}%
12\\-4\\6
\end{tabular}\endgroup%
\kern3pt%
\begingroup \smaller\smaller\smaller\begin{tabular}{@{}c@{}}%
3\\0\\2
\end{tabular}\endgroup%
\kern3pt%
\begingroup \smaller\smaller\smaller\begin{tabular}{@{}c@{}}%
2\\1\\1
\end{tabular}\endgroup%
\kern3pt%
\begingroup \smaller\smaller\smaller\begin{tabular}{@{}c@{}}%
3\\2\\0
\end{tabular}\endgroup%
{$\left.\llap{\phantom{%
\begingroup \smaller\smaller\smaller\begin{tabular}{@{}c@{}}%
0\\0\\0
\end{tabular}\endgroup%
}}\!\right]$}%
}%
\ifdim\wd\matricesbox>\halfwidth\myboxwidth=\hsize\else\myboxwidth=\halfwidth\fi
\vbox{%
\ifdim\myboxwidth=\hsize
\setbox\onelinebox=\hbox{%
\vbox{\hbox{%
$\Pi_{10,20}$ spans $L_{123.8}$%
}\hbox{%
$|22222|22222\rtimes D_{2}$%
}%
}%
\hfill\copy\matricesbox
}%
\ifdim\wd\onelinebox>\myboxwidth
\hbox to \myboxwidth{%
$\Pi_{10,20}$ spans $L_{123.8}$%
\hfil
$|22222|22222\rtimes D_{2}$%
}%
\box\matricesbox
\else
\hbox to \myboxwidth{%
\unhbox\onelinebox
}%
\fi
\else
\hbox to \myboxwidth{%
$\Pi_{10,20}$ spans $L_{123.8}$%
\hfil}%
\hbox to \myboxwidth{%
$|22222|22222\rtimes D_{2}$%
\hfil}%
\box\matricesbox
\fi
}%
\hfill\discretionary{}{}{}%
\setbox\matricesbox=\hbox{%
{$\left[\!\llap{\phantom{%
\begingroup \smaller\smaller\smaller\begin{tabular}{@{}c@{}}%
\phantom{0}\\\phantom{0}\\\phantom{0}
\end{tabular}\endgroup%
}}\right.$}%
\begingroup \smaller\smaller\smaller\begin{tabular}{@{}c@{}}%
-3\\\phantom{0}\\\phantom{0}
\end{tabular}\endgroup%
\kern3pt%
\begingroup \smaller\smaller\smaller\begin{tabular}{@{}c@{}}%
\phantom{0}\\3\\\phantom{0}
\end{tabular}\endgroup%
\kern3pt%
\begingroup \smaller\smaller\smaller\begin{tabular}{@{}c@{}}%
\phantom{0}\\\phantom{0}\\1
\end{tabular}\endgroup%
{$\left.\llap{\phantom{%
\begingroup \smaller\smaller\smaller\begin{tabular}{@{}c@{}}%
\phantom{0}\\\phantom{0}\\\phantom{0}
\end{tabular}\endgroup%
}}\!\right]$}%
{$\left[\!\llap{\phantom{%
\begingroup \smaller\smaller\smaller\begin{tabular}{@{}c@{}}%
0\\0\\0
\end{tabular}\endgroup%
}}\right.$}%
\begingroup \smaller\smaller\smaller\begin{tabular}{@{}c@{}}%
1\\1\\-1
\end{tabular}\endgroup%
\kern3pt%
\begingroup \smaller\smaller\smaller\begin{tabular}{@{}c@{}}%
4\\1\\-7
\end{tabular}\endgroup%
\kern3pt%
\begingroup \smaller\smaller\smaller\begin{tabular}{@{}c@{}}%
4\\-1\\-7
\end{tabular}\endgroup%
\kern3pt%
\begingroup \smaller\smaller\smaller\begin{tabular}{@{}c@{}}%
4\\-3\\-5
\end{tabular}\endgroup%
\kern3pt%
\begingroup \smaller\smaller\smaller\begin{tabular}{@{}c@{}}%
4\\-4\\-2
\end{tabular}\endgroup%
{$\left.\llap{\phantom{%
\begingroup \smaller\smaller\smaller\begin{tabular}{@{}c@{}}%
0\\0\\0
\end{tabular}\endgroup%
}}\!\right]$}%
}%
\ifdim\wd\matricesbox>\halfwidth\myboxwidth=\hsize\else\myboxwidth=\halfwidth\fi
\vbox{%
\ifdim\myboxwidth=\hsize
\setbox\onelinebox=\hbox{%
\vbox{\hbox{%
$\Pi_{10,21}=\hbox{GN}_{54}$ spans $L_{7.7}$%
}\hbox{%
$3\infty\slashinfty\infty3\infty3\slashinfty3\infty\rtimes D_{2}$%
}%
}%
\hfill\copy\matricesbox
}%
\ifdim\wd\onelinebox>\myboxwidth
\hbox to \myboxwidth{%
$\Pi_{10,21}=\hbox{GN}_{54}$ spans $L_{7.7}$%
\hfil
$3\infty\slashinfty\infty3\infty3\slashinfty3\infty\rtimes D_{2}$%
}%
\box\matricesbox
\else
\hbox to \myboxwidth{%
\unhbox\onelinebox
}%
\fi
\else
\hbox to \myboxwidth{%
$\Pi_{10,21}=\hbox{GN}_{54}$ spans $L_{7.7}$%
\hfil}%
\hbox to \myboxwidth{%
$3\infty\slashinfty\infty3\infty3\slashinfty3\infty\rtimes D_{2}$%
\hfil}%
\box\matricesbox
\fi
}%
\hfill\discretionary{}{}{}%
\setbox\matricesbox=\hbox{%
{$\left[\!\llap{\phantom{%
\begingroup \smaller\smaller\smaller\begin{tabular}{@{}c@{}}%
\phantom{0}\\\phantom{0}\\\phantom{0}
\end{tabular}\endgroup%
}}\right.$}%
\begingroup \smaller\smaller\smaller\begin{tabular}{@{}c@{}}%
-3\\\phantom{0}\\\phantom{0}
\end{tabular}\endgroup%
\kern3pt%
\begingroup \smaller\smaller\smaller\begin{tabular}{@{}c@{}}%
\phantom{0}\\1\\\phantom{0}
\end{tabular}\endgroup%
\kern3pt%
\begingroup \smaller\smaller\smaller\begin{tabular}{@{}c@{}}%
\phantom{0}\\\phantom{0}\\3
\end{tabular}\endgroup%
{$\left.\llap{\phantom{%
\begingroup \smaller\smaller\smaller\begin{tabular}{@{}c@{}}%
\phantom{0}\\\phantom{0}\\\phantom{0}
\end{tabular}\endgroup%
}}\!\right]$}%
{$\left[\!\llap{\phantom{%
\begingroup \smaller\smaller\smaller\begin{tabular}{@{}c@{}}%
0\\0\\0
\end{tabular}\endgroup%
}}\right.$}%
\begingroup \smaller\smaller\smaller\begin{tabular}{@{}c@{}}%
4\\-7\\1
\end{tabular}\endgroup%
\kern3pt%
\begingroup \smaller\smaller\smaller\begin{tabular}{@{}c@{}}%
1\\-1\\1
\end{tabular}\endgroup%
\kern3pt%
\begingroup \smaller\smaller\smaller\begin{tabular}{@{}c@{}}%
4\\2\\4
\end{tabular}\endgroup%
\kern3pt%
\begingroup \smaller\smaller\smaller\begin{tabular}{@{}c@{}}%
4\\5\\3
\end{tabular}\endgroup%
\kern3pt%
\begingroup \smaller\smaller\smaller\begin{tabular}{@{}c@{}}%
4\\7\\1
\end{tabular}\endgroup%
{$\left.\llap{\phantom{%
\begingroup \smaller\smaller\smaller\begin{tabular}{@{}c@{}}%
0\\0\\0
\end{tabular}\endgroup%
}}\!\right]$}%
}%
\ifdim\wd\matricesbox>\halfwidth\myboxwidth=\hsize\else\myboxwidth=\halfwidth\fi
\vbox{%
\ifdim\myboxwidth=\hsize
\setbox\onelinebox=\hbox{%
\vbox{\hbox{%
$\Pi_{10,22}=\hbox{GN}_{55}$ spans $L_{7.7}$%
}\hbox{%
$3\infty\infty\slashthree\infty\infty3\infty\slashthree\infty\rtimes D_{2}$%
}%
}%
\hfill\copy\matricesbox
}%
\ifdim\wd\onelinebox>\myboxwidth
\hbox to \myboxwidth{%
$\Pi_{10,22}=\hbox{GN}_{55}$ spans $L_{7.7}$%
\hfil
$3\infty\infty\slashthree\infty\infty3\infty\slashthree\infty\rtimes D_{2}$%
}%
\box\matricesbox
\else
\hbox to \myboxwidth{%
\unhbox\onelinebox
}%
\fi
\else
\hbox to \myboxwidth{%
$\Pi_{10,22}=\hbox{GN}_{55}$ spans $L_{7.7}$%
\hfil}%
\hbox to \myboxwidth{%
$3\infty\infty\slashthree\infty\infty3\infty\slashthree\infty\rtimes D_{2}$%
\hfil}%
\box\matricesbox
\fi
}%
\hfill\discretionary{}{}{}%
\setbox\matricesbox=\hbox{%
{$\left[\!\llap{\phantom{%
\begingroup \smaller\smaller\smaller\begin{tabular}{@{}c@{}}%
\phantom{0}\\\phantom{0}\\\phantom{0}
\end{tabular}\endgroup%
}}\right.$}%
\begingroup \smaller\smaller\smaller\begin{tabular}{@{}c@{}}%
-4\\\phantom{0}\\\phantom{0}
\end{tabular}\endgroup%
\kern3pt%
\begingroup \smaller\smaller\smaller\begin{tabular}{@{}c@{}}%
\phantom{0}\\1\\\phantom{0}
\end{tabular}\endgroup%
\kern3pt%
\begingroup \smaller\smaller\smaller\begin{tabular}{@{}c@{}}%
\phantom{0}\\\phantom{0}\\1
\end{tabular}\endgroup%
{$\left.\llap{\phantom{%
\begingroup \smaller\smaller\smaller\begin{tabular}{@{}c@{}}%
\phantom{0}\\\phantom{0}\\\phantom{0}
\end{tabular}\endgroup%
}}\!\right]$}%
{$\left[\!\llap{\phantom{%
\begingroup \smaller\smaller\smaller\begin{tabular}{@{}c@{}}%
0\\0\\0
\end{tabular}\endgroup%
}}\right.$}%
\begingroup \smaller\smaller\smaller\begin{tabular}{@{}c@{}}%
1\\2\\1
\end{tabular}\endgroup%
\kern3pt%
\begingroup \smaller\smaller\smaller\begin{tabular}{@{}c@{}}%
1\\1\\2
\end{tabular}\endgroup%
\kern3pt%
\begingroup \smaller\smaller\smaller\begin{tabular}{@{}c@{}}%
4\\-2\\8
\end{tabular}\endgroup%
\kern3pt%
\begingroup \smaller\smaller\smaller\begin{tabular}{@{}c@{}}%
2\\-3\\3
\end{tabular}\endgroup%
\kern3pt%
\begingroup \smaller\smaller\smaller\begin{tabular}{@{}c@{}}%
4\\-8\\2
\end{tabular}\endgroup%
{$\left.\llap{\phantom{%
\begingroup \smaller\smaller\smaller\begin{tabular}{@{}c@{}}%
0\\0\\0
\end{tabular}\endgroup%
}}\!\right]$}%
}%
\ifdim\wd\matricesbox>\halfwidth\myboxwidth=\hsize\else\myboxwidth=\halfwidth\fi
\vbox{%
\ifdim\myboxwidth=\hsize
\setbox\onelinebox=\hbox{%
\vbox{\hbox{%
$\Pi_{10,23}$ spans $L_{145.1}$%
}\hbox{%
$4\infty2\slashinfty2\infty44\slashinfty4\rtimes D_{2}$%
}%
}%
\hfill\copy\matricesbox
}%
\ifdim\wd\onelinebox>\myboxwidth
\hbox to \myboxwidth{%
$\Pi_{10,23}$ spans $L_{145.1}$%
\hfil
$4\infty2\slashinfty2\infty44\slashinfty4\rtimes D_{2}$%
}%
\box\matricesbox
\else
\hbox to \myboxwidth{%
\unhbox\onelinebox
}%
\fi
\else
\hbox to \myboxwidth{%
$\Pi_{10,23}$ spans $L_{145.1}$%
\hfil}%
\hbox to \myboxwidth{%
$4\infty2\slashinfty2\infty44\slashinfty4\rtimes D_{2}$%
\hfil}%
\box\matricesbox
\fi
}%
\hfill\discretionary{}{}{}%
\setbox\matricesbox=\hbox{%
{$\left[\!\llap{\phantom{%
\begingroup \smaller\smaller\smaller\begin{tabular}{@{}c@{}}%
\phantom{0}\\\phantom{0}\\\phantom{0}
\end{tabular}\endgroup%
}}\right.$}%
\begingroup \smaller\smaller\smaller\begin{tabular}{@{}c@{}}%
-1\\\phantom{0}\\\phantom{0}
\end{tabular}\endgroup%
\kern3pt%
\begingroup \smaller\smaller\smaller\begin{tabular}{@{}c@{}}%
\phantom{0}\\3/2\\\phantom{0}
\end{tabular}\endgroup%
\kern3pt%
\begingroup \smaller\smaller\smaller\begin{tabular}{@{}c@{}}%
\phantom{0}\\\phantom{0}\\9/2
\end{tabular}\endgroup%
{$\left.\llap{\phantom{%
\begingroup \smaller\smaller\smaller\begin{tabular}{@{}c@{}}%
\phantom{0}\\\phantom{0}\\\phantom{0}
\end{tabular}\endgroup%
}}\!\right]$}%
{$\left[\!\llap{\phantom{%
\begingroup \smaller\smaller\smaller\begin{tabular}{@{}c@{}}%
0\\0\\0
\end{tabular}\endgroup%
}}\right.$}%
\begingroup \smaller\smaller\smaller\begin{tabular}{@{}c@{}}%
6\\-5\\-1
\end{tabular}\endgroup%
\kern3pt%
\begingroup \smaller\smaller\smaller\begin{tabular}{@{}c@{}}%
2\\-1\\-1
\end{tabular}\endgroup%
\kern3pt%
\begingroup \smaller\smaller\smaller\begin{tabular}{@{}c@{}}%
6\\1\\-3
\end{tabular}\endgroup%
\kern3pt%
\begingroup \smaller\smaller\smaller\begin{tabular}{@{}c@{}}%
18\\9\\-7
\end{tabular}\endgroup%
\kern3pt%
\begingroup \smaller\smaller\smaller\begin{tabular}{@{}c@{}}%
6\\5\\-1
\end{tabular}\endgroup%
{$\left.\llap{\phantom{%
\begingroup \smaller\smaller\smaller\begin{tabular}{@{}c@{}}%
0\\0\\0
\end{tabular}\endgroup%
}}\!\right]$}%
}%
\ifdim\wd\matricesbox>\halfwidth\myboxwidth=\hsize\else\myboxwidth=\halfwidth\fi
\vbox{%
\ifdim\myboxwidth=\hsize
\setbox\onelinebox=\hbox{%
\vbox{\hbox{%
$\Pi_{10,24}$ spans $L_{168.1}$%
}\hbox{%
$\slashthree2226\slashthree6222\rtimes D_{2}$%
}%
}%
\hfill\copy\matricesbox
}%
\ifdim\wd\onelinebox>\myboxwidth
\hbox to \myboxwidth{%
$\Pi_{10,24}$ spans $L_{168.1}$%
\hfil
$\slashthree2226\slashthree6222\rtimes D_{2}$%
}%
\box\matricesbox
\else
\hbox to \myboxwidth{%
\unhbox\onelinebox
}%
\fi
\else
\hbox to \myboxwidth{%
$\Pi_{10,24}$ spans $L_{168.1}$%
\hfil}%
\hbox to \myboxwidth{%
$\slashthree2226\slashthree6222\rtimes D_{2}$%
\hfil}%
\box\matricesbox
\fi
}%
\hfill\discretionary{}{}{}%
\setbox\matricesbox=\hbox{%
{$\left[\!\llap{\phantom{%
\begingroup \smaller\smaller\smaller\begin{tabular}{@{}c@{}}%
\phantom{0}\\\phantom{0}\\\phantom{0}
\end{tabular}\endgroup%
}}\right.$}%
\begingroup \smaller\smaller\smaller\begin{tabular}{@{}c@{}}%
-1\\\phantom{0}\\\phantom{0}
\end{tabular}\endgroup%
\kern3pt%
\begingroup \smaller\smaller\smaller\begin{tabular}{@{}c@{}}%
\phantom{0}\\3/2\\\phantom{0}
\end{tabular}\endgroup%
\kern3pt%
\begingroup \smaller\smaller\smaller\begin{tabular}{@{}c@{}}%
\phantom{0}\\\phantom{0}\\9/2
\end{tabular}\endgroup%
{$\left.\llap{\phantom{%
\begingroup \smaller\smaller\smaller\begin{tabular}{@{}c@{}}%
\phantom{0}\\\phantom{0}\\\phantom{0}
\end{tabular}\endgroup%
}}\!\right]$}%
{$\left[\!\llap{\phantom{%
\begingroup \smaller\smaller\smaller\begin{tabular}{@{}c@{}}%
0\\0\\0
\end{tabular}\endgroup%
}}\right.$}%
\begingroup \smaller\smaller\smaller\begin{tabular}{@{}c@{}}%
2\\2\\0
\end{tabular}\endgroup%
\kern3pt%
\begingroup \smaller\smaller\smaller\begin{tabular}{@{}c@{}}%
6\\4\\2
\end{tabular}\endgroup%
\kern3pt%
\begingroup \smaller\smaller\smaller\begin{tabular}{@{}c@{}}%
6\\1\\3
\end{tabular}\endgroup%
\kern3pt%
\begingroup \smaller\smaller\smaller\begin{tabular}{@{}c@{}}%
18\\-6\\8
\end{tabular}\endgroup%
\kern3pt%
\begingroup \smaller\smaller\smaller\begin{tabular}{@{}c@{}}%
6\\-4\\2
\end{tabular}\endgroup%
\kern3pt%
\begingroup \smaller\smaller\smaller\begin{tabular}{@{}c@{}}%
2\\-2\\0
\end{tabular}\endgroup%
{$\left.\llap{\phantom{%
\begingroup \smaller\smaller\smaller\begin{tabular}{@{}c@{}}%
0\\0\\0
\end{tabular}\endgroup%
}}\!\right]$}%
}%
\ifdim\wd\matricesbox>\halfwidth\myboxwidth=\hsize\else\myboxwidth=\halfwidth\fi
\vbox{%
\ifdim\myboxwidth=\hsize
\setbox\onelinebox=\hbox{%
\vbox{\hbox{%
$\Pi_{10,25}$ spans $L_{155.1}$%
}\hbox{%
$32|23622|226\rtimes D_{2}$%
}%
}%
\hfill\copy\matricesbox
}%
\ifdim\wd\onelinebox>\myboxwidth
\hbox to \myboxwidth{%
$\Pi_{10,25}$ spans $L_{155.1}$%
\hfil
$32|23622|226\rtimes D_{2}$%
}%
\box\matricesbox
\else
\hbox to \myboxwidth{%
\unhbox\onelinebox
}%
\fi
\else
\hbox to \myboxwidth{%
$\Pi_{10,25}$ spans $L_{155.1}$%
\hfil}%
\hbox to \myboxwidth{%
$32|23622|226\rtimes D_{2}$%
\hfil}%
\box\matricesbox
\fi
}%
\hfill\discretionary{}{}{}%
\setbox\matricesbox=\hbox{%
{$\left[\!\llap{\phantom{%
\begingroup \smaller\smaller\smaller\begin{tabular}{@{}c@{}}%
\phantom{0}\\\phantom{0}\\\phantom{0}
\end{tabular}\endgroup%
}}\right.$}%
\begingroup \smaller\smaller\smaller\begin{tabular}{@{}c@{}}%
-1/8\\\phantom{0}\\\phantom{0}
\end{tabular}\endgroup%
\kern3pt%
\begingroup \smaller\smaller\smaller\begin{tabular}{@{}c@{}}%
\phantom{0}\\77/2\\-7/2
\end{tabular}\endgroup%
\kern3pt%
\begingroup \smaller\smaller\smaller\begin{tabular}{@{}c@{}}%
\phantom{0}\\-7/2\\77/2
\end{tabular}\endgroup%
{$\left.\llap{\phantom{%
\begingroup \smaller\smaller\smaller\begin{tabular}{@{}c@{}}%
\phantom{0}\\\phantom{0}\\\phantom{0}
\end{tabular}\endgroup%
}}\!\right]$}%
{$\left[\!\llap{\phantom{%
\begingroup \smaller\smaller\smaller\begin{tabular}{@{}c@{}}%
0\\0\\0
\end{tabular}\endgroup%
}}\right.$}%
\begingroup \smaller\smaller\smaller\begin{tabular}{@{}c@{}}%
20\\-1\\-1
\end{tabular}\endgroup%
\kern3pt%
\begingroup \smaller\smaller\smaller\begin{tabular}{@{}c@{}}%
84\\-1\\-5
\end{tabular}\endgroup%
\kern3pt%
\begingroup \smaller\smaller\smaller\begin{tabular}{@{}c@{}}%
70\\1\\-4
\end{tabular}\endgroup%
\kern3pt%
\begingroup \smaller\smaller\smaller\begin{tabular}{@{}c@{}}%
48\\2\\-2
\end{tabular}\endgroup%
\kern3pt%
\begingroup \smaller\smaller\smaller\begin{tabular}{@{}c@{}}%
14\\1\\0
\end{tabular}\endgroup%
{$\left.\llap{\phantom{%
\begingroup \smaller\smaller\smaller\begin{tabular}{@{}c@{}}%
0\\0\\0
\end{tabular}\endgroup%
}}\!\right]$}%
}%
\ifdim\wd\matricesbox>\halfwidth\myboxwidth=\hsize\else\myboxwidth=\halfwidth\fi
\vbox{%
\ifdim\myboxwidth=\hsize
\setbox\onelinebox=\hbox{%
\vbox{\hbox{%
$\Pi_{10,26}$ spans $L_{69.11}$%
}\hbox{%
$2222222222\rtimes C_{2}$%
}%
}%
\hfill\copy\matricesbox
}%
\ifdim\wd\onelinebox>\myboxwidth
\hbox to \myboxwidth{%
$\Pi_{10,26}$ spans $L_{69.11}$%
\hfil
$2222222222\rtimes C_{2}$%
}%
\box\matricesbox
\else
\hbox to \myboxwidth{%
\unhbox\onelinebox
}%
\fi
\else
\hbox to \myboxwidth{%
$\Pi_{10,26}$ spans $L_{69.11}$%
\hfil}%
\hbox to \myboxwidth{%
$2222222222\rtimes C_{2}$%
\hfil}%
\box\matricesbox
\fi
}%
\hfill\discretionary{}{}{}%
\setbox\matricesbox=\hbox{%
{$\left[\!\llap{\phantom{%
\begingroup \smaller\smaller\smaller\begin{tabular}{@{}c@{}}%
\phantom{0}\\\phantom{0}\\\phantom{0}
\end{tabular}\endgroup%
}}\right.$}%
\begingroup \smaller\smaller\smaller\begin{tabular}{@{}c@{}}%
-1/2\\\phantom{0}\\\phantom{0}
\end{tabular}\endgroup%
\kern3pt%
\begingroup \smaller\smaller\smaller\begin{tabular}{@{}c@{}}%
\phantom{0}\\15/2\\\phantom{0}
\end{tabular}\endgroup%
\kern3pt%
\begingroup \smaller\smaller\smaller\begin{tabular}{@{}c@{}}%
\phantom{0}\\\phantom{0}\\10
\end{tabular}\endgroup%
{$\left.\llap{\phantom{%
\begingroup \smaller\smaller\smaller\begin{tabular}{@{}c@{}}%
\phantom{0}\\\phantom{0}\\\phantom{0}
\end{tabular}\endgroup%
}}\!\right]$}%
{$\left[\!\llap{\phantom{%
\begingroup \smaller\smaller\smaller\begin{tabular}{@{}c@{}}%
0\\0\\0
\end{tabular}\endgroup%
}}\right.$}%
\begingroup \smaller\smaller\smaller\begin{tabular}{@{}c@{}}%
3\\-1\\0
\end{tabular}\endgroup%
\kern3pt%
\begingroup \smaller\smaller\smaller\begin{tabular}{@{}c@{}}%
5\\-1\\-1
\end{tabular}\endgroup%
\kern3pt%
\begingroup \smaller\smaller\smaller\begin{tabular}{@{}c@{}}%
8\\0\\-2
\end{tabular}\endgroup%
\kern3pt%
\begingroup \smaller\smaller\smaller\begin{tabular}{@{}c@{}}%
30\\4\\-6
\end{tabular}\endgroup%
\kern3pt%
\begingroup \smaller\smaller\smaller\begin{tabular}{@{}c@{}}%
40\\8\\-6
\end{tabular}\endgroup%
{$\left.\llap{\phantom{%
\begingroup \smaller\smaller\smaller\begin{tabular}{@{}c@{}}%
0\\0\\0
\end{tabular}\endgroup%
}}\!\right]$}%
}%
\ifdim\wd\matricesbox>\halfwidth\myboxwidth=\hsize\else\myboxwidth=\halfwidth\fi
\vbox{%
\ifdim\myboxwidth=\hsize
\setbox\onelinebox=\hbox{%
\vbox{\hbox{%
$\Pi_{10,27}$ spans $L_{17.11}$%
}\hbox{%
$2222222222\rtimes C_{2}$%
}%
}%
\hfill\copy\matricesbox
}%
\ifdim\wd\onelinebox>\myboxwidth
\hbox to \myboxwidth{%
$\Pi_{10,27}$ spans $L_{17.11}$%
\hfil
$2222222222\rtimes C_{2}$%
}%
\box\matricesbox
\else
\hbox to \myboxwidth{%
\unhbox\onelinebox
}%
\fi
\else
\hbox to \myboxwidth{%
$\Pi_{10,27}$ spans $L_{17.11}$%
\hfil}%
\hbox to \myboxwidth{%
$2222222222\rtimes C_{2}$%
\hfil}%
\box\matricesbox
\fi
}%
\hfill\discretionary{}{}{}%
\setbox\matricesbox=\hbox{%
{$\left[\!\llap{\phantom{%
\begingroup \smaller\smaller\smaller\begin{tabular}{@{}c@{}}%
\phantom{0}\\\phantom{0}\\\phantom{0}
\end{tabular}\endgroup%
}}\right.$}%
\begingroup \smaller\smaller\smaller\begin{tabular}{@{}c@{}}%
-1\\\phantom{0}\\\phantom{0}
\end{tabular}\endgroup%
\kern3pt%
\begingroup \smaller\smaller\smaller\begin{tabular}{@{}c@{}}%
\phantom{0}\\6\\-3
\end{tabular}\endgroup%
\kern3pt%
\begingroup \smaller\smaller\smaller\begin{tabular}{@{}c@{}}%
\phantom{0}\\-3\\6
\end{tabular}\endgroup%
{$\left.\llap{\phantom{%
\begingroup \smaller\smaller\smaller\begin{tabular}{@{}c@{}}%
\phantom{0}\\\phantom{0}\\\phantom{0}
\end{tabular}\endgroup%
}}\!\right]$}%
{$\left[\!\llap{\phantom{%
\begingroup \smaller\smaller\smaller\begin{tabular}{@{}c@{}}%
0\\0\\0
\end{tabular}\endgroup%
}}\right.$}%
\begingroup \smaller\smaller\smaller\begin{tabular}{@{}c@{}}%
6\\-3\\-2
\end{tabular}\endgroup%
\kern3pt%
\begingroup \smaller\smaller\smaller\begin{tabular}{@{}c@{}}%
6\\-2\\-3
\end{tabular}\endgroup%
\kern3pt%
\begingroup \smaller\smaller\smaller\begin{tabular}{@{}c@{}}%
2\\0\\-1
\end{tabular}\endgroup%
\kern3pt%
\begingroup \smaller\smaller\smaller\begin{tabular}{@{}c@{}}%
6\\2\\-1
\end{tabular}\endgroup%
\kern3pt%
\begingroup \smaller\smaller\smaller\begin{tabular}{@{}c@{}}%
18\\8\\1
\end{tabular}\endgroup%
{$\left.\llap{\phantom{%
\begingroup \smaller\smaller\smaller\begin{tabular}{@{}c@{}}%
0\\0\\0
\end{tabular}\endgroup%
}}\!\right]$}%
}%
\ifdim\wd\matricesbox>\halfwidth\myboxwidth=\hsize\else\myboxwidth=\halfwidth\fi
\vbox{%
\ifdim\myboxwidth=\hsize
\setbox\onelinebox=\hbox{%
\vbox{\hbox{%
$\Pi_{10,28}$ spans $L_{168.1}$%
}\hbox{%
$3222632226\rtimes C_{2}$%
}%
}%
\hfill\copy\matricesbox
}%
\ifdim\wd\onelinebox>\myboxwidth
\hbox to \myboxwidth{%
$\Pi_{10,28}$ spans $L_{168.1}$%
\hfil
$3222632226\rtimes C_{2}$%
}%
\box\matricesbox
\else
\hbox to \myboxwidth{%
\unhbox\onelinebox
}%
\fi
\else
\hbox to \myboxwidth{%
$\Pi_{10,28}$ spans $L_{168.1}$%
\hfil}%
\hbox to \myboxwidth{%
$3222632226\rtimes C_{2}$%
\hfil}%
\box\matricesbox
\fi
}%
\hfill\discretionary{}{}{}%
\setbox\matricesbox=\hbox{%
{$\left[\!\llap{\phantom{%
\begingroup \smaller\smaller\smaller\begin{tabular}{@{}c@{}}%
\phantom{0}\\\phantom{0}\\\phantom{0}
\end{tabular}\endgroup%
}}\right.$}%
\begingroup \smaller\smaller\smaller\begin{tabular}{@{}c@{}}%
-1\\\phantom{0}\\\phantom{0}
\end{tabular}\endgroup%
\kern3pt%
\begingroup \smaller\smaller\smaller\begin{tabular}{@{}c@{}}%
\phantom{0}\\6\\\phantom{0}
\end{tabular}\endgroup%
\kern3pt%
\begingroup \smaller\smaller\smaller\begin{tabular}{@{}c@{}}%
\phantom{0}\\\phantom{0}\\6
\end{tabular}\endgroup%
{$\left.\llap{\phantom{%
\begingroup \smaller\smaller\smaller\begin{tabular}{@{}c@{}}%
\phantom{0}\\\phantom{0}\\\phantom{0}
\end{tabular}\endgroup%
}}\!\right]$}%
{$\left[\!\llap{\phantom{%
\begingroup \smaller\smaller\smaller\begin{tabular}{@{}c@{}}%
0\\0\\0
\end{tabular}\endgroup%
}}\right.$}%
\begingroup \smaller\smaller\smaller\begin{tabular}{@{}c@{}}%
2\\0\\1
\end{tabular}\endgroup%
\kern3pt%
\begingroup \smaller\smaller\smaller\begin{tabular}{@{}c@{}}%
3\\1\\1
\end{tabular}\endgroup%
\kern3pt%
\begingroup \smaller\smaller\smaller\begin{tabular}{@{}c@{}}%
2\\1\\0
\end{tabular}\endgroup%
\kern3pt%
\begingroup \smaller\smaller\smaller\begin{tabular}{@{}c@{}}%
3\\1\\-1
\end{tabular}\endgroup%
\kern3pt%
\begingroup \smaller\smaller\smaller\begin{tabular}{@{}c@{}}%
2\\0\\-1
\end{tabular}\endgroup%
\kern3pt%
\begingroup \smaller\smaller\smaller\begin{tabular}{@{}c@{}}%
3\\-1\\-1
\end{tabular}\endgroup%
\kern3pt%
\begingroup \smaller\smaller\smaller\begin{tabular}{@{}c@{}}%
12\\-5\\-1
\end{tabular}\endgroup%
\kern3pt%
\begingroup \smaller\smaller\smaller\begin{tabular}{@{}c@{}}%
12\\-5\\1
\end{tabular}\endgroup%
\kern3pt%
\begingroup \smaller\smaller\smaller\begin{tabular}{@{}c@{}}%
24\\-8\\6
\end{tabular}\endgroup%
\kern3pt%
\begingroup \smaller\smaller\smaller\begin{tabular}{@{}c@{}}%
24\\-6\\8
\end{tabular}\endgroup%
{$\left.\llap{\phantom{%
\begingroup \smaller\smaller\smaller\begin{tabular}{@{}c@{}}%
0\\0\\0
\end{tabular}\endgroup%
}}\!\right]$}%
}%
\ifdim\wd\matricesbox>\halfwidth\myboxwidth=\hsize\else\myboxwidth=\halfwidth\fi
\vbox{%
\ifdim\myboxwidth=\hsize
\setbox\onelinebox=\hbox{%
\vbox{\hbox{%
$\Pi_{10,29}$ spans $L_{123.8}$%
}\hbox{%
$2222222422$%
}%
}%
\hfill\copy\matricesbox
}%
\ifdim\wd\onelinebox>\myboxwidth
\hbox to \myboxwidth{%
$\Pi_{10,29}$ spans $L_{123.8}$%
\hfil
$2222222422$%
}%
\box\matricesbox
\else
\hbox to \myboxwidth{%
\unhbox\onelinebox
}%
\fi
\else
\hbox to \myboxwidth{%
$\Pi_{10,29}$ spans $L_{123.8}$%
\hfil}%
\hbox to \myboxwidth{%
$2222222422$%
\hfil}%
\box\matricesbox
\fi
}%
\hfill\discretionary{}{}{}%
\setbox\matricesbox=\hbox{%
{$\left[\!\llap{\phantom{%
\begingroup \smaller\smaller\smaller\begin{tabular}{@{}c@{}}%
\phantom{0}\\\phantom{0}\\\phantom{0}
\end{tabular}\endgroup%
}}\right.$}%
\begingroup \smaller\smaller\smaller\begin{tabular}{@{}c@{}}%
-1\\\phantom{0}\\\phantom{0}
\end{tabular}\endgroup%
\kern3pt%
\begingroup \smaller\smaller\smaller\begin{tabular}{@{}c@{}}%
\phantom{0}\\6\\\phantom{0}
\end{tabular}\endgroup%
\kern3pt%
\begingroup \smaller\smaller\smaller\begin{tabular}{@{}c@{}}%
\phantom{0}\\\phantom{0}\\6
\end{tabular}\endgroup%
{$\left.\llap{\phantom{%
\begingroup \smaller\smaller\smaller\begin{tabular}{@{}c@{}}%
\phantom{0}\\\phantom{0}\\\phantom{0}
\end{tabular}\endgroup%
}}\!\right]$}%
{$\left[\!\llap{\phantom{%
\begingroup \smaller\smaller\smaller\begin{tabular}{@{}c@{}}%
0\\0\\0
\end{tabular}\endgroup%
}}\right.$}%
\begingroup \smaller\smaller\smaller\begin{tabular}{@{}c@{}}%
2\\0\\-1
\end{tabular}\endgroup%
\kern3pt%
\begingroup \smaller\smaller\smaller\begin{tabular}{@{}c@{}}%
24\\-6\\-8
\end{tabular}\endgroup%
\kern3pt%
\begingroup \smaller\smaller\smaller\begin{tabular}{@{}c@{}}%
24\\-8\\-6
\end{tabular}\endgroup%
\kern3pt%
\begingroup \smaller\smaller\smaller\begin{tabular}{@{}c@{}}%
2\\-1\\0
\end{tabular}\endgroup%
\kern3pt%
\begingroup \smaller\smaller\smaller\begin{tabular}{@{}c@{}}%
3\\-1\\1
\end{tabular}\endgroup%
\kern3pt%
\begingroup \smaller\smaller\smaller\begin{tabular}{@{}c@{}}%
2\\0\\1
\end{tabular}\endgroup%
\kern3pt%
\begingroup \smaller\smaller\smaller\begin{tabular}{@{}c@{}}%
3\\1\\1
\end{tabular}\endgroup%
\kern3pt%
\begingroup \smaller\smaller\smaller\begin{tabular}{@{}c@{}}%
12\\5\\1
\end{tabular}\endgroup%
\kern3pt%
\begingroup \smaller\smaller\smaller\begin{tabular}{@{}c@{}}%
12\\5\\-1
\end{tabular}\endgroup%
\kern3pt%
\begingroup \smaller\smaller\smaller\begin{tabular}{@{}c@{}}%
3\\1\\-1
\end{tabular}\endgroup%
{$\left.\llap{\phantom{%
\begingroup \smaller\smaller\smaller\begin{tabular}{@{}c@{}}%
0\\0\\0
\end{tabular}\endgroup%
}}\!\right]$}%
}%
\ifdim\wd\matricesbox>\halfwidth\myboxwidth=\hsize\else\myboxwidth=\halfwidth\fi
\vbox{%
\ifdim\myboxwidth=\hsize
\setbox\onelinebox=\hbox{%
\vbox{\hbox{%
$\Pi_{10,30}$ spans $L_{123.8}$%
}\hbox{%
$2222222222$%
}%
}%
\hfill\copy\matricesbox
}%
\ifdim\wd\onelinebox>\myboxwidth
\hbox to \myboxwidth{%
$\Pi_{10,30}$ spans $L_{123.8}$%
\hfil
$2222222222$%
}%
\box\matricesbox
\else
\hbox to \myboxwidth{%
\unhbox\onelinebox
}%
\fi
\else
\hbox to \myboxwidth{%
$\Pi_{10,30}$ spans $L_{123.8}$%
\hfil}%
\hbox to \myboxwidth{%
$2222222222$%
\hfil}%
\box\matricesbox
\fi
}%
\hfill\discretionary{}{}{}%
\setbox\matricesbox=\hbox{%
{$\left[\!\llap{\phantom{%
\begingroup \smaller\smaller\smaller\begin{tabular}{@{}c@{}}%
\phantom{0}\\\phantom{0}\\\phantom{0}
\end{tabular}\endgroup%
}}\right.$}%
\begingroup \smaller\smaller\smaller\begin{tabular}{@{}c@{}}%
-1\\\phantom{0}\\\phantom{0}
\end{tabular}\endgroup%
\kern3pt%
\begingroup \smaller\smaller\smaller\begin{tabular}{@{}c@{}}%
\phantom{0}\\6\\-3
\end{tabular}\endgroup%
\kern3pt%
\begingroup \smaller\smaller\smaller\begin{tabular}{@{}c@{}}%
\phantom{0}\\-3\\6
\end{tabular}\endgroup%
{$\left.\llap{\phantom{%
\begingroup \smaller\smaller\smaller\begin{tabular}{@{}c@{}}%
\phantom{0}\\\phantom{0}\\\phantom{0}
\end{tabular}\endgroup%
}}\!\right]$}%
{$\left[\!\llap{\phantom{%
\begingroup \smaller\smaller\smaller\begin{tabular}{@{}c@{}}%
0\\0\\0
\end{tabular}\endgroup%
}}\right.$}%
\begingroup \smaller\smaller\smaller\begin{tabular}{@{}c@{}}%
6\\-1\\2
\end{tabular}\endgroup%
\kern3pt%
\begingroup \smaller\smaller\smaller\begin{tabular}{@{}c@{}}%
6\\1\\3
\end{tabular}\endgroup%
\kern3pt%
\begingroup \smaller\smaller\smaller\begin{tabular}{@{}c@{}}%
2\\1\\1
\end{tabular}\endgroup%
\kern3pt%
\begingroup \smaller\smaller\smaller\begin{tabular}{@{}c@{}}%
2\\1\\0
\end{tabular}\endgroup%
\kern3pt%
\begingroup \smaller\smaller\smaller\begin{tabular}{@{}c@{}}%
6\\1\\-2
\end{tabular}\endgroup%
\kern3pt%
\begingroup \smaller\smaller\smaller\begin{tabular}{@{}c@{}}%
18\\-1\\-8
\end{tabular}\endgroup%
\kern3pt%
\begingroup \smaller\smaller\smaller\begin{tabular}{@{}c@{}}%
6\\-2\\-3
\end{tabular}\endgroup%
\kern3pt%
\begingroup \smaller\smaller\smaller\begin{tabular}{@{}c@{}}%
18\\-8\\-7
\end{tabular}\endgroup%
\kern3pt%
\begingroup \smaller\smaller\smaller\begin{tabular}{@{}c@{}}%
6\\-3\\-1
\end{tabular}\endgroup%
\kern3pt%
\begingroup \smaller\smaller\smaller\begin{tabular}{@{}c@{}}%
18\\-7\\1
\end{tabular}\endgroup%
{$\left.\llap{\phantom{%
\begingroup \smaller\smaller\smaller\begin{tabular}{@{}c@{}}%
0\\0\\0
\end{tabular}\endgroup%
}}\!\right]$}%
}%
\ifdim\wd\matricesbox>\halfwidth\myboxwidth=\hsize\else\myboxwidth=\halfwidth\fi
\vbox{%
\ifdim\myboxwidth=\hsize
\setbox\onelinebox=\hbox{%
\vbox{\hbox{%
$\Pi_{10,31}$ spans $L_{155.1}$%
}\hbox{%
$3232262626$%
}%
}%
\hfill\copy\matricesbox
}%
\ifdim\wd\onelinebox>\myboxwidth
\hbox to \myboxwidth{%
$\Pi_{10,31}$ spans $L_{155.1}$%
\hfil
$3232262626$%
}%
\box\matricesbox
\else
\hbox to \myboxwidth{%
\unhbox\onelinebox
}%
\fi
\else
\hbox to \myboxwidth{%
$\Pi_{10,31}$ spans $L_{155.1}$%
\hfil}%
\hbox to \myboxwidth{%
$3232262626$%
\hfil}%
\box\matricesbox
\fi
}%
\hfill\discretionary{}{}{}%
\setbox\matricesbox=\hbox{%
{$\left[\!\llap{\phantom{%
\begingroup \smaller\smaller\smaller\begin{tabular}{@{}c@{}}%
\phantom{0}\\\phantom{0}\\\phantom{0}
\end{tabular}\endgroup%
}}\right.$}%
\begingroup \smaller\smaller\smaller\begin{tabular}{@{}c@{}}%
-1\\\phantom{0}\\\phantom{0}
\end{tabular}\endgroup%
\kern3pt%
\begingroup \smaller\smaller\smaller\begin{tabular}{@{}c@{}}%
\phantom{0}\\6\\-3
\end{tabular}\endgroup%
\kern3pt%
\begingroup \smaller\smaller\smaller\begin{tabular}{@{}c@{}}%
\phantom{0}\\-3\\6
\end{tabular}\endgroup%
{$\left.\llap{\phantom{%
\begingroup \smaller\smaller\smaller\begin{tabular}{@{}c@{}}%
\phantom{0}\\\phantom{0}\\\phantom{0}
\end{tabular}\endgroup%
}}\!\right]$}%
{$\left[\!\llap{\phantom{%
\begingroup \smaller\smaller\smaller\begin{tabular}{@{}c@{}}%
0\\0\\0
\end{tabular}\endgroup%
}}\right.$}%
\begingroup \smaller\smaller\smaller\begin{tabular}{@{}c@{}}%
6\\-1\\2
\end{tabular}\endgroup%
\kern3pt%
\begingroup \smaller\smaller\smaller\begin{tabular}{@{}c@{}}%
6\\1\\3
\end{tabular}\endgroup%
\kern3pt%
\begingroup \smaller\smaller\smaller\begin{tabular}{@{}c@{}}%
2\\1\\1
\end{tabular}\endgroup%
\kern3pt%
\begingroup \smaller\smaller\smaller\begin{tabular}{@{}c@{}}%
6\\3\\1
\end{tabular}\endgroup%
\kern3pt%
\begingroup \smaller\smaller\smaller\begin{tabular}{@{}c@{}}%
6\\2\\-1
\end{tabular}\endgroup%
\kern3pt%
\begingroup \smaller\smaller\smaller\begin{tabular}{@{}c@{}}%
2\\0\\-1
\end{tabular}\endgroup%
\kern3pt%
\begingroup \smaller\smaller\smaller\begin{tabular}{@{}c@{}}%
6\\-2\\-3
\end{tabular}\endgroup%
\kern3pt%
\begingroup \smaller\smaller\smaller\begin{tabular}{@{}c@{}}%
18\\-8\\-7
\end{tabular}\endgroup%
\kern3pt%
\begingroup \smaller\smaller\smaller\begin{tabular}{@{}c@{}}%
6\\-3\\-1
\end{tabular}\endgroup%
\kern3pt%
\begingroup \smaller\smaller\smaller\begin{tabular}{@{}c@{}}%
18\\-7\\1
\end{tabular}\endgroup%
{$\left.\llap{\phantom{%
\begingroup \smaller\smaller\smaller\begin{tabular}{@{}c@{}}%
0\\0\\0
\end{tabular}\endgroup%
}}\!\right]$}%
}%
\ifdim\wd\matricesbox>\halfwidth\myboxwidth=\hsize\else\myboxwidth=\halfwidth\fi
\vbox{%
\ifdim\myboxwidth=\hsize
\setbox\onelinebox=\hbox{%
\vbox{\hbox{%
$\Pi_{10,32}$ spans $L_{155.1}$%
}\hbox{%
$3223222626$%
}%
}%
\hfill\copy\matricesbox
}%
\ifdim\wd\onelinebox>\myboxwidth
\hbox to \myboxwidth{%
$\Pi_{10,32}$ spans $L_{155.1}$%
\hfil
$3223222626$%
}%
\box\matricesbox
\else
\hbox to \myboxwidth{%
\unhbox\onelinebox
}%
\fi
\else
\hbox to \myboxwidth{%
$\Pi_{10,32}$ spans $L_{155.1}$%
\hfil}%
\hbox to \myboxwidth{%
$3223222626$%
\hfil}%
\box\matricesbox
\fi
}%
\hfill\discretionary{}{}{}%
\setbox\matricesbox=\hbox{%
{$\left[\!\llap{\phantom{%
\begingroup \smaller\smaller\smaller\begin{tabular}{@{}c@{}}%
\phantom{0}\\\phantom{0}\\\phantom{0}
\end{tabular}\endgroup%
}}\right.$}%
\begingroup \smaller\smaller\smaller\begin{tabular}{@{}c@{}}%
-1\\\phantom{0}\\\phantom{0}
\end{tabular}\endgroup%
\kern3pt%
\begingroup \smaller\smaller\smaller\begin{tabular}{@{}c@{}}%
\phantom{0}\\6\\-3
\end{tabular}\endgroup%
\kern3pt%
\begingroup \smaller\smaller\smaller\begin{tabular}{@{}c@{}}%
\phantom{0}\\-3\\6
\end{tabular}\endgroup%
{$\left.\llap{\phantom{%
\begingroup \smaller\smaller\smaller\begin{tabular}{@{}c@{}}%
\phantom{0}\\\phantom{0}\\\phantom{0}
\end{tabular}\endgroup%
}}\!\right]$}%
{$\left[\!\llap{\phantom{%
\begingroup \smaller\smaller\smaller\begin{tabular}{@{}c@{}}%
0\\0\\0
\end{tabular}\endgroup%
}}\right.$}%
\begingroup \smaller\smaller\smaller\begin{tabular}{@{}c@{}}%
6\\-1\\-3
\end{tabular}\endgroup%
\kern3pt%
\begingroup \smaller\smaller\smaller\begin{tabular}{@{}c@{}}%
18\\-7\\-8
\end{tabular}\endgroup%
\kern3pt%
\begingroup \smaller\smaller\smaller\begin{tabular}{@{}c@{}}%
6\\-3\\-2
\end{tabular}\endgroup%
\kern3pt%
\begingroup \smaller\smaller\smaller\begin{tabular}{@{}c@{}}%
2\\-1\\0
\end{tabular}\endgroup%
\kern3pt%
\begingroup \smaller\smaller\smaller\begin{tabular}{@{}c@{}}%
2\\0\\1
\end{tabular}\endgroup%
\kern3pt%
\begingroup \smaller\smaller\smaller\begin{tabular}{@{}c@{}}%
6\\2\\3
\end{tabular}\endgroup%
\kern3pt%
\begingroup \smaller\smaller\smaller\begin{tabular}{@{}c@{}}%
18\\8\\7
\end{tabular}\endgroup%
\kern3pt%
\begingroup \smaller\smaller\smaller\begin{tabular}{@{}c@{}}%
6\\3\\1
\end{tabular}\endgroup%
\kern3pt%
\begingroup \smaller\smaller\smaller\begin{tabular}{@{}c@{}}%
6\\2\\-1
\end{tabular}\endgroup%
\kern3pt%
\begingroup \smaller\smaller\smaller\begin{tabular}{@{}c@{}}%
18\\1\\-7
\end{tabular}\endgroup%
{$\left.\llap{\phantom{%
\begingroup \smaller\smaller\smaller\begin{tabular}{@{}c@{}}%
0\\0\\0
\end{tabular}\endgroup%
}}\!\right]$}%
}%
\ifdim\wd\matricesbox>\halfwidth\myboxwidth=\hsize\else\myboxwidth=\halfwidth\fi
\vbox{%
\ifdim\myboxwidth=\hsize
\setbox\onelinebox=\hbox{%
\vbox{\hbox{%
$\Pi_{10,33}$ spans $L_{155.1}$%
}\hbox{%
$6223226362$%
}%
}%
\hfill\copy\matricesbox
}%
\ifdim\wd\onelinebox>\myboxwidth
\hbox to \myboxwidth{%
$\Pi_{10,33}$ spans $L_{155.1}$%
\hfil
$6223226362$%
}%
\box\matricesbox
\else
\hbox to \myboxwidth{%
\unhbox\onelinebox
}%
\fi
\else
\hbox to \myboxwidth{%
$\Pi_{10,33}$ spans $L_{155.1}$%
\hfil}%
\hbox to \myboxwidth{%
$6223226362$%
\hfil}%
\box\matricesbox
\fi
}%
\hfill\discretionary{}{}{}%

\vskip2pt\hrule\vskip2pt

\leavevmode\setbox\matricesbox=\hbox{%
{$\left[\!\llap{\phantom{%
\begingroup \smaller\smaller\smaller\begin{tabular}{@{}c@{}}%
\phantom{0}\\\phantom{0}\\\phantom{0}
\end{tabular}\endgroup%
}}\right.$}%
\begingroup \smaller\smaller\smaller\begin{tabular}{@{}c@{}}%
-1/4\\\phantom{0}\\\phantom{0}
\end{tabular}\endgroup%
\kern3pt%
\begingroup \smaller\smaller\smaller\begin{tabular}{@{}c@{}}%
\phantom{0}\\15\\\phantom{0}
\end{tabular}\endgroup%
\kern3pt%
\begingroup \smaller\smaller\smaller\begin{tabular}{@{}c@{}}%
\phantom{0}\\\phantom{0}\\15
\end{tabular}\endgroup%
{$\left.\llap{\phantom{%
\begingroup \smaller\smaller\smaller\begin{tabular}{@{}c@{}}%
\phantom{0}\\\phantom{0}\\\phantom{0}
\end{tabular}\endgroup%
}}\!\right]$}%
{$\left[\!\llap{\phantom{%
\begingroup \smaller\smaller\smaller\begin{tabular}{@{}c@{}}%
0\\0\\0
\end{tabular}\endgroup%
}}\right.$}%
\begingroup \smaller\smaller\smaller\begin{tabular}{@{}c@{}}%
6\\-1\\0
\end{tabular}\endgroup%
\kern3pt%
\begingroup \smaller\smaller\smaller\begin{tabular}{@{}c@{}}%
20\\-2\\2
\end{tabular}\endgroup%
\kern3pt%
\begingroup \smaller\smaller\smaller\begin{tabular}{@{}c@{}}%
30\\-1\\4
\end{tabular}\endgroup%
\kern3pt%
\begingroup \smaller\smaller\smaller\begin{tabular}{@{}c@{}}%
30\\1\\4
\end{tabular}\endgroup%
\kern3pt%
\begingroup \smaller\smaller\smaller\begin{tabular}{@{}c@{}}%
20\\2\\2
\end{tabular}\endgroup%
\kern3pt%
\begingroup \smaller\smaller\smaller\begin{tabular}{@{}c@{}}%
30\\4\\1
\end{tabular}\endgroup%
{$\left.\llap{\phantom{%
\begingroup \smaller\smaller\smaller\begin{tabular}{@{}c@{}}%
0\\0\\0
\end{tabular}\endgroup%
}}\!\right]$}%
}%
\ifdim\wd\matricesbox>\halfwidth\myboxwidth=\hsize\else\myboxwidth=\halfwidth\fi
\vbox{%
\ifdim\myboxwidth=\hsize
\setbox\onelinebox=\hbox{%
\vbox{\hbox{%
$\Pi_{11,1}$ spans $L_{19.10}$%
}\hbox{%
$|22222\slashtwo22222\rtimes D_{2}$%
}%
}%
\hfill\copy\matricesbox
}%
\ifdim\wd\onelinebox>\myboxwidth
\hbox to \myboxwidth{%
$\Pi_{11,1}$ spans $L_{19.10}$%
\hfil
$|22222\slashtwo22222\rtimes D_{2}$%
}%
\box\matricesbox
\else
\hbox to \myboxwidth{%
\unhbox\onelinebox
}%
\fi
\else
\hbox to \myboxwidth{%
$\Pi_{11,1}$ spans $L_{19.10}$%
\hfil}%
\hbox to \myboxwidth{%
$|22222\slashtwo22222\rtimes D_{2}$%
\hfil}%
\box\matricesbox
\fi
}%
\hfill\discretionary{}{}{}%
\setbox\matricesbox=\hbox{%
{$\left[\!\llap{\phantom{%
\begingroup \smaller\smaller\smaller\begin{tabular}{@{}c@{}}%
\phantom{0}\\\phantom{0}\\\phantom{0}
\end{tabular}\endgroup%
}}\right.$}%
\begingroup \smaller\smaller\smaller\begin{tabular}{@{}c@{}}%
-1\\\phantom{0}\\\phantom{0}
\end{tabular}\endgroup%
\kern3pt%
\begingroup \smaller\smaller\smaller\begin{tabular}{@{}c@{}}%
\phantom{0}\\6\\\phantom{0}
\end{tabular}\endgroup%
\kern3pt%
\begingroup \smaller\smaller\smaller\begin{tabular}{@{}c@{}}%
\phantom{0}\\\phantom{0}\\6
\end{tabular}\endgroup%
{$\left.\llap{\phantom{%
\begingroup \smaller\smaller\smaller\begin{tabular}{@{}c@{}}%
\phantom{0}\\\phantom{0}\\\phantom{0}
\end{tabular}\endgroup%
}}\!\right]$}%
{$\left[\!\llap{\phantom{%
\begingroup \smaller\smaller\smaller\begin{tabular}{@{}c@{}}%
0\\0\\0
\end{tabular}\endgroup%
}}\right.$}%
\begingroup \smaller\smaller\smaller\begin{tabular}{@{}c@{}}%
2\\1\\0
\end{tabular}\endgroup%
\kern3pt%
\begingroup \smaller\smaller\smaller\begin{tabular}{@{}c@{}}%
3\\1\\1
\end{tabular}\endgroup%
\kern3pt%
\begingroup \smaller\smaller\smaller\begin{tabular}{@{}c@{}}%
2\\0\\1
\end{tabular}\endgroup%
\kern3pt%
\begingroup \smaller\smaller\smaller\begin{tabular}{@{}c@{}}%
24\\-6\\8
\end{tabular}\endgroup%
\kern3pt%
\begingroup \smaller\smaller\smaller\begin{tabular}{@{}c@{}}%
24\\-8\\6
\end{tabular}\endgroup%
\kern3pt%
\begingroup \smaller\smaller\smaller\begin{tabular}{@{}c@{}}%
12\\-5\\1
\end{tabular}\endgroup%
{$\left.\llap{\phantom{%
\begingroup \smaller\smaller\smaller\begin{tabular}{@{}c@{}}%
0\\0\\0
\end{tabular}\endgroup%
}}\!\right]$}%
}%
\ifdim\wd\matricesbox>\halfwidth\myboxwidth=\hsize\else\myboxwidth=\halfwidth\fi
\vbox{%
\ifdim\myboxwidth=\hsize
\setbox\onelinebox=\hbox{%
\vbox{\hbox{%
$\Pi_{11,2}$ spans $L_{123.8}$%
}\hbox{%
$22|22224\slashtwo422\rtimes D_{2}$%
}%
}%
\hfill\copy\matricesbox
}%
\ifdim\wd\onelinebox>\myboxwidth
\hbox to \myboxwidth{%
$\Pi_{11,2}$ spans $L_{123.8}$%
\hfil
$22|22224\slashtwo422\rtimes D_{2}$%
}%
\box\matricesbox
\else
\hbox to \myboxwidth{%
\unhbox\onelinebox
}%
\fi
\else
\hbox to \myboxwidth{%
$\Pi_{11,2}$ spans $L_{123.8}$%
\hfil}%
\hbox to \myboxwidth{%
$22|22224\slashtwo422\rtimes D_{2}$%
\hfil}%
\box\matricesbox
\fi
}%
\hfill\discretionary{}{}{}%
\setbox\matricesbox=\hbox{%
{$\left[\!\llap{\phantom{%
\begingroup \smaller\smaller\smaller\begin{tabular}{@{}c@{}}%
\phantom{0}\\\phantom{0}\\\phantom{0}
\end{tabular}\endgroup%
}}\right.$}%
\begingroup \smaller\smaller\smaller\begin{tabular}{@{}c@{}}%
-1\\\phantom{0}\\\phantom{0}
\end{tabular}\endgroup%
\kern3pt%
\begingroup \smaller\smaller\smaller\begin{tabular}{@{}c@{}}%
\phantom{0}\\3\\\phantom{0}
\end{tabular}\endgroup%
\kern3pt%
\begingroup \smaller\smaller\smaller\begin{tabular}{@{}c@{}}%
\phantom{0}\\\phantom{0}\\3
\end{tabular}\endgroup%
{$\left.\llap{\phantom{%
\begingroup \smaller\smaller\smaller\begin{tabular}{@{}c@{}}%
\phantom{0}\\\phantom{0}\\\phantom{0}
\end{tabular}\endgroup%
}}\!\right]$}%
{$\left[\!\llap{\phantom{%
\begingroup \smaller\smaller\smaller\begin{tabular}{@{}c@{}}%
0\\0\\0
\end{tabular}\endgroup%
}}\right.$}%
\begingroup \smaller\smaller\smaller\begin{tabular}{@{}c@{}}%
24\\14\\-2
\end{tabular}\endgroup%
\kern3pt%
\begingroup \smaller\smaller\smaller\begin{tabular}{@{}c@{}}%
12\\6\\-4
\end{tabular}\endgroup%
\kern3pt%
\begingroup \smaller\smaller\smaller\begin{tabular}{@{}c@{}}%
12\\4\\-6
\end{tabular}\endgroup%
\kern3pt%
\begingroup \smaller\smaller\smaller\begin{tabular}{@{}c@{}}%
3\\0\\-2
\end{tabular}\endgroup%
\kern3pt%
\begingroup \smaller\smaller\smaller\begin{tabular}{@{}c@{}}%
2\\-1\\-1
\end{tabular}\endgroup%
\kern3pt%
\begingroup \smaller\smaller\smaller\begin{tabular}{@{}c@{}}%
3\\-2\\0
\end{tabular}\endgroup%
{$\left.\llap{\phantom{%
\begingroup \smaller\smaller\smaller\begin{tabular}{@{}c@{}}%
0\\0\\0
\end{tabular}\endgroup%
}}\!\right]$}%
}%
\ifdim\wd\matricesbox>\halfwidth\myboxwidth=\hsize\else\myboxwidth=\halfwidth\fi
\vbox{%
\ifdim\myboxwidth=\hsize
\setbox\onelinebox=\hbox{%
\vbox{\hbox{%
$\Pi_{11,3}$ spans $L_{123.8}$%
}\hbox{%
$224\slashtwo42222|22\rtimes D_{2}$%
}%
}%
\hfill\copy\matricesbox
}%
\ifdim\wd\onelinebox>\myboxwidth
\hbox to \myboxwidth{%
$\Pi_{11,3}$ spans $L_{123.8}$%
\hfil
$224\slashtwo42222|22\rtimes D_{2}$%
}%
\box\matricesbox
\else
\hbox to \myboxwidth{%
\unhbox\onelinebox
}%
\fi
\else
\hbox to \myboxwidth{%
$\Pi_{11,3}$ spans $L_{123.8}$%
\hfil}%
\hbox to \myboxwidth{%
$224\slashtwo42222|22\rtimes D_{2}$%
\hfil}%
\box\matricesbox
\fi
}%
\hfill\discretionary{}{}{}%
\setbox\matricesbox=\hbox{%
{$\left[\!\llap{\phantom{%
\begingroup \smaller\smaller\smaller\begin{tabular}{@{}c@{}}%
\phantom{0}\\\phantom{0}\\\phantom{0}
\end{tabular}\endgroup%
}}\right.$}%
\begingroup \smaller\smaller\smaller\begin{tabular}{@{}c@{}}%
-1\\\phantom{0}\\\phantom{0}
\end{tabular}\endgroup%
\kern3pt%
\begingroup \smaller\smaller\smaller\begin{tabular}{@{}c@{}}%
\phantom{0}\\3\\\phantom{0}
\end{tabular}\endgroup%
\kern3pt%
\begingroup \smaller\smaller\smaller\begin{tabular}{@{}c@{}}%
\phantom{0}\\\phantom{0}\\3
\end{tabular}\endgroup%
{$\left.\llap{\phantom{%
\begingroup \smaller\smaller\smaller\begin{tabular}{@{}c@{}}%
\phantom{0}\\\phantom{0}\\\phantom{0}
\end{tabular}\endgroup%
}}\!\right]$}%
{$\left[\!\llap{\phantom{%
\begingroup \smaller\smaller\smaller\begin{tabular}{@{}c@{}}%
0\\0\\0
\end{tabular}\endgroup%
}}\right.$}%
\begingroup \smaller\smaller\smaller\begin{tabular}{@{}c@{}}%
3\\2\\0
\end{tabular}\endgroup%
\kern3pt%
\begingroup \smaller\smaller\smaller\begin{tabular}{@{}c@{}}%
2\\1\\1
\end{tabular}\endgroup%
\kern3pt%
\begingroup \smaller\smaller\smaller\begin{tabular}{@{}c@{}}%
24\\2\\14
\end{tabular}\endgroup%
\kern3pt%
\begingroup \smaller\smaller\smaller\begin{tabular}{@{}c@{}}%
24\\-2\\14
\end{tabular}\endgroup%
\kern3pt%
\begingroup \smaller\smaller\smaller\begin{tabular}{@{}c@{}}%
2\\-1\\1
\end{tabular}\endgroup%
\kern3pt%
\begingroup \smaller\smaller\smaller\begin{tabular}{@{}c@{}}%
24\\-14\\2
\end{tabular}\endgroup%
{$\left.\llap{\phantom{%
\begingroup \smaller\smaller\smaller\begin{tabular}{@{}c@{}}%
0\\0\\0
\end{tabular}\endgroup%
}}\!\right]$}%
}%
\ifdim\wd\matricesbox>\halfwidth\myboxwidth=\hsize\else\myboxwidth=\halfwidth\fi
\vbox{%
\ifdim\myboxwidth=\hsize
\setbox\onelinebox=\hbox{%
\vbox{\hbox{%
$\Pi_{11,4}$ spans $L_{123.8}$%
}\hbox{%
$2222|22222\slashtwo2\rtimes D_{2}$%
}%
}%
\hfill\copy\matricesbox
}%
\ifdim\wd\onelinebox>\myboxwidth
\hbox to \myboxwidth{%
$\Pi_{11,4}$ spans $L_{123.8}$%
\hfil
$2222|22222\slashtwo2\rtimes D_{2}$%
}%
\box\matricesbox
\else
\hbox to \myboxwidth{%
\unhbox\onelinebox
}%
\fi
\else
\hbox to \myboxwidth{%
$\Pi_{11,4}$ spans $L_{123.8}$%
\hfil}%
\hbox to \myboxwidth{%
$2222|22222\slashtwo2\rtimes D_{2}$%
\hfil}%
\box\matricesbox
\fi
}%
\hfill\discretionary{}{}{}%
\setbox\matricesbox=\hbox{%
{$\left[\!\llap{\phantom{%
\begingroup \smaller\smaller\smaller\begin{tabular}{@{}c@{}}%
\phantom{0}\\\phantom{0}\\\phantom{0}
\end{tabular}\endgroup%
}}\right.$}%
\begingroup \smaller\smaller\smaller\begin{tabular}{@{}c@{}}%
-1\\\phantom{0}\\\phantom{0}
\end{tabular}\endgroup%
\kern3pt%
\begingroup \smaller\smaller\smaller\begin{tabular}{@{}c@{}}%
\phantom{0}\\6\\\phantom{0}
\end{tabular}\endgroup%
\kern3pt%
\begingroup \smaller\smaller\smaller\begin{tabular}{@{}c@{}}%
\phantom{0}\\\phantom{0}\\6
\end{tabular}\endgroup%
{$\left.\llap{\phantom{%
\begingroup \smaller\smaller\smaller\begin{tabular}{@{}c@{}}%
\phantom{0}\\\phantom{0}\\\phantom{0}
\end{tabular}\endgroup%
}}\!\right]$}%
{$\left[\!\llap{\phantom{%
\begingroup \smaller\smaller\smaller\begin{tabular}{@{}c@{}}%
0\\0\\0
\end{tabular}\endgroup%
}}\right.$}%
\begingroup \smaller\smaller\smaller\begin{tabular}{@{}c@{}}%
2\\-1\\0
\end{tabular}\endgroup%
\kern3pt%
\begingroup \smaller\smaller\smaller\begin{tabular}{@{}c@{}}%
24\\-8\\6
\end{tabular}\endgroup%
\kern3pt%
\begingroup \smaller\smaller\smaller\begin{tabular}{@{}c@{}}%
24\\-6\\8
\end{tabular}\endgroup%
\kern3pt%
\begingroup \smaller\smaller\smaller\begin{tabular}{@{}c@{}}%
2\\0\\1
\end{tabular}\endgroup%
\kern3pt%
\begingroup \smaller\smaller\smaller\begin{tabular}{@{}c@{}}%
3\\1\\1
\end{tabular}\endgroup%
\kern3pt%
\begingroup \smaller\smaller\smaller\begin{tabular}{@{}c@{}}%
12\\5\\1
\end{tabular}\endgroup%
{$\left.\llap{\phantom{%
\begingroup \smaller\smaller\smaller\begin{tabular}{@{}c@{}}%
0\\0\\0
\end{tabular}\endgroup%
}}\!\right]$}%
}%
\ifdim\wd\matricesbox>\halfwidth\myboxwidth=\hsize\else\myboxwidth=\halfwidth\fi
\vbox{%
\ifdim\myboxwidth=\hsize
\setbox\onelinebox=\hbox{%
\vbox{\hbox{%
$\Pi_{11,5}$ spans $L_{123.8}$%
}\hbox{%
$|22222\slashtwo22222\rtimes D_{2}$%
}%
}%
\hfill\copy\matricesbox
}%
\ifdim\wd\onelinebox>\myboxwidth
\hbox to \myboxwidth{%
$\Pi_{11,5}$ spans $L_{123.8}$%
\hfil
$|22222\slashtwo22222\rtimes D_{2}$%
}%
\box\matricesbox
\else
\hbox to \myboxwidth{%
\unhbox\onelinebox
}%
\fi
\else
\hbox to \myboxwidth{%
$\Pi_{11,5}$ spans $L_{123.8}$%
\hfil}%
\hbox to \myboxwidth{%
$|22222\slashtwo22222\rtimes D_{2}$%
\hfil}%
\box\matricesbox
\fi
}%
\hfill\discretionary{}{}{}%
\setbox\matricesbox=\hbox{%
{$\left[\!\llap{\phantom{%
\begingroup \smaller\smaller\smaller\begin{tabular}{@{}c@{}}%
\phantom{0}\\\phantom{0}\\\phantom{0}
\end{tabular}\endgroup%
}}\right.$}%
\begingroup \smaller\smaller\smaller\begin{tabular}{@{}c@{}}%
-1\\\phantom{0}\\\phantom{0}
\end{tabular}\endgroup%
\kern3pt%
\begingroup \smaller\smaller\smaller\begin{tabular}{@{}c@{}}%
\phantom{0}\\3\\\phantom{0}
\end{tabular}\endgroup%
\kern3pt%
\begingroup \smaller\smaller\smaller\begin{tabular}{@{}c@{}}%
\phantom{0}\\\phantom{0}\\3
\end{tabular}\endgroup%
{$\left.\llap{\phantom{%
\begingroup \smaller\smaller\smaller\begin{tabular}{@{}c@{}}%
\phantom{0}\\\phantom{0}\\\phantom{0}
\end{tabular}\endgroup%
}}\!\right]$}%
{$\left[\!\llap{\phantom{%
\begingroup \smaller\smaller\smaller\begin{tabular}{@{}c@{}}%
0\\0\\0
\end{tabular}\endgroup%
}}\right.$}%
\begingroup \smaller\smaller\smaller\begin{tabular}{@{}c@{}}%
24\\-14\\-2
\end{tabular}\endgroup%
\kern3pt%
\begingroup \smaller\smaller\smaller\begin{tabular}{@{}c@{}}%
2\\-1\\-1
\end{tabular}\endgroup%
\kern3pt%
\begingroup \smaller\smaller\smaller\begin{tabular}{@{}c@{}}%
3\\0\\-2
\end{tabular}\endgroup%
\kern3pt%
\begingroup \smaller\smaller\smaller\begin{tabular}{@{}c@{}}%
12\\4\\-6
\end{tabular}\endgroup%
\kern3pt%
\begingroup \smaller\smaller\smaller\begin{tabular}{@{}c@{}}%
12\\6\\-4
\end{tabular}\endgroup%
\kern3pt%
\begingroup \smaller\smaller\smaller\begin{tabular}{@{}c@{}}%
3\\2\\0
\end{tabular}\endgroup%
{$\left.\llap{\phantom{%
\begingroup \smaller\smaller\smaller\begin{tabular}{@{}c@{}}%
0\\0\\0
\end{tabular}\endgroup%
}}\!\right]$}%
}%
\ifdim\wd\matricesbox>\halfwidth\myboxwidth=\hsize\else\myboxwidth=\halfwidth\fi
\vbox{%
\ifdim\myboxwidth=\hsize
\setbox\onelinebox=\hbox{%
\vbox{\hbox{%
$\Pi_{11,6}$ spans $L_{123.8}$%
}\hbox{%
$2\slashtwo22222|2222\rtimes D_{2}$%
}%
}%
\hfill\copy\matricesbox
}%
\ifdim\wd\onelinebox>\myboxwidth
\hbox to \myboxwidth{%
$\Pi_{11,6}$ spans $L_{123.8}$%
\hfil
$2\slashtwo22222|2222\rtimes D_{2}$%
}%
\box\matricesbox
\else
\hbox to \myboxwidth{%
\unhbox\onelinebox
}%
\fi
\else
\hbox to \myboxwidth{%
$\Pi_{11,6}$ spans $L_{123.8}$%
\hfil}%
\hbox to \myboxwidth{%
$2\slashtwo22222|2222\rtimes D_{2}$%
\hfil}%
\box\matricesbox
\fi
}%
\hfill\discretionary{}{}{}%
\setbox\matricesbox=\hbox{%
{$\left[\!\llap{\phantom{%
\begingroup \smaller\smaller\smaller\begin{tabular}{@{}c@{}}%
\phantom{0}\\\phantom{0}\\\phantom{0}
\end{tabular}\endgroup%
}}\right.$}%
\begingroup \smaller\smaller\smaller\begin{tabular}{@{}c@{}}%
-1\\\phantom{0}\\\phantom{0}
\end{tabular}\endgroup%
\kern3pt%
\begingroup \smaller\smaller\smaller\begin{tabular}{@{}c@{}}%
\phantom{0}\\6\\\phantom{0}
\end{tabular}\endgroup%
\kern3pt%
\begingroup \smaller\smaller\smaller\begin{tabular}{@{}c@{}}%
\phantom{0}\\\phantom{0}\\6
\end{tabular}\endgroup%
{$\left.\llap{\phantom{%
\begingroup \smaller\smaller\smaller\begin{tabular}{@{}c@{}}%
\phantom{0}\\\phantom{0}\\\phantom{0}
\end{tabular}\endgroup%
}}\!\right]$}%
{$\left[\!\llap{\phantom{%
\begingroup \smaller\smaller\smaller\begin{tabular}{@{}c@{}}%
0\\0\\0
\end{tabular}\endgroup%
}}\right.$}%
\begingroup \smaller\smaller\smaller\begin{tabular}{@{}c@{}}%
2\\1\\0
\end{tabular}\endgroup%
\kern3pt%
\begingroup \smaller\smaller\smaller\begin{tabular}{@{}c@{}}%
3\\1\\1
\end{tabular}\endgroup%
\kern3pt%
\begingroup \smaller\smaller\smaller\begin{tabular}{@{}c@{}}%
12\\1\\5
\end{tabular}\endgroup%
\kern3pt%
\begingroup \smaller\smaller\smaller\begin{tabular}{@{}c@{}}%
12\\-1\\5
\end{tabular}\endgroup%
\kern3pt%
\begingroup \smaller\smaller\smaller\begin{tabular}{@{}c@{}}%
3\\-1\\1
\end{tabular}\endgroup%
\kern3pt%
\begingroup \smaller\smaller\smaller\begin{tabular}{@{}c@{}}%
12\\-5\\1
\end{tabular}\endgroup%
{$\left.\llap{\phantom{%
\begingroup \smaller\smaller\smaller\begin{tabular}{@{}c@{}}%
0\\0\\0
\end{tabular}\endgroup%
}}\!\right]$}%
}%
\ifdim\wd\matricesbox>\halfwidth\myboxwidth=\hsize\else\myboxwidth=\halfwidth\fi
\vbox{%
\ifdim\myboxwidth=\hsize
\setbox\onelinebox=\hbox{%
\vbox{\hbox{%
$\Pi_{11,7}$ spans $L_{123.8}$%
}\hbox{%
$2222|22222\slashtwo2\rtimes D_{2}$%
}%
}%
\hfill\copy\matricesbox
}%
\ifdim\wd\onelinebox>\myboxwidth
\hbox to \myboxwidth{%
$\Pi_{11,7}$ spans $L_{123.8}$%
\hfil
$2222|22222\slashtwo2\rtimes D_{2}$%
}%
\box\matricesbox
\else
\hbox to \myboxwidth{%
\unhbox\onelinebox
}%
\fi
\else
\hbox to \myboxwidth{%
$\Pi_{11,7}$ spans $L_{123.8}$%
\hfil}%
\hbox to \myboxwidth{%
$2222|22222\slashtwo2\rtimes D_{2}$%
\hfil}%
\box\matricesbox
\fi
}%
\hfill\discretionary{}{}{}%
\setbox\matricesbox=\hbox{%
{$\left[\!\llap{\phantom{%
\begingroup \smaller\smaller\smaller\begin{tabular}{@{}c@{}}%
\phantom{0}\\\phantom{0}\\\phantom{0}
\end{tabular}\endgroup%
}}\right.$}%
\begingroup \smaller\smaller\smaller\begin{tabular}{@{}c@{}}%
-3\\\phantom{0}\\\phantom{0}
\end{tabular}\endgroup%
\kern3pt%
\begingroup \smaller\smaller\smaller\begin{tabular}{@{}c@{}}%
\phantom{0}\\1\\\phantom{0}
\end{tabular}\endgroup%
\kern3pt%
\begingroup \smaller\smaller\smaller\begin{tabular}{@{}c@{}}%
\phantom{0}\\\phantom{0}\\3
\end{tabular}\endgroup%
{$\left.\llap{\phantom{%
\begingroup \smaller\smaller\smaller\begin{tabular}{@{}c@{}}%
\phantom{0}\\\phantom{0}\\\phantom{0}
\end{tabular}\endgroup%
}}\!\right]$}%
{$\left[\!\llap{\phantom{%
\begingroup \smaller\smaller\smaller\begin{tabular}{@{}c@{}}%
0\\0\\0
\end{tabular}\endgroup%
}}\right.$}%
\begingroup \smaller\smaller\smaller\begin{tabular}{@{}c@{}}%
1\\-2\\0
\end{tabular}\endgroup%
\kern3pt%
\begingroup \smaller\smaller\smaller\begin{tabular}{@{}c@{}}%
4\\-5\\3
\end{tabular}\endgroup%
\kern3pt%
\begingroup \smaller\smaller\smaller\begin{tabular}{@{}c@{}}%
4\\-2\\4
\end{tabular}\endgroup%
\kern3pt%
\begingroup \smaller\smaller\smaller\begin{tabular}{@{}c@{}}%
4\\2\\4
\end{tabular}\endgroup%
\kern3pt%
\begingroup \smaller\smaller\smaller\begin{tabular}{@{}c@{}}%
4\\5\\3
\end{tabular}\endgroup%
\kern3pt%
\begingroup \smaller\smaller\smaller\begin{tabular}{@{}c@{}}%
4\\7\\1
\end{tabular}\endgroup%
{$\left.\llap{\phantom{%
\begingroup \smaller\smaller\smaller\begin{tabular}{@{}c@{}}%
0\\0\\0
\end{tabular}\endgroup%
}}\!\right]$}%
}%
\ifdim\wd\matricesbox>\halfwidth\myboxwidth=\hsize\else\myboxwidth=\halfwidth\fi
\vbox{%
\ifdim\myboxwidth=\hsize
\setbox\onelinebox=\hbox{%
\vbox{\hbox{%
$\Pi_{11,8}=\hbox{GN}_{57}$ spans $L_{7.7}$%
}\hbox{%
$3\infty|\infty3\infty3\infty\slashthree\infty3\infty\rtimes D_{2}$%
}%
}%
\hfill\copy\matricesbox
}%
\ifdim\wd\onelinebox>\myboxwidth
\hbox to \myboxwidth{%
$\Pi_{11,8}=\hbox{GN}_{57}$ spans $L_{7.7}$%
\hfil
$3\infty|\infty3\infty3\infty\slashthree\infty3\infty\rtimes D_{2}$%
}%
\box\matricesbox
\else
\hbox to \myboxwidth{%
\unhbox\onelinebox
}%
\fi
\else
\hbox to \myboxwidth{%
$\Pi_{11,8}=\hbox{GN}_{57}$ spans $L_{7.7}$%
\hfil}%
\hbox to \myboxwidth{%
$3\infty|\infty3\infty3\infty\slashthree\infty3\infty\rtimes D_{2}$%
\hfil}%
\box\matricesbox
\fi
}%
\hfill\discretionary{}{}{}%
\setbox\matricesbox=\hbox{%
{$\left[\!\llap{\phantom{%
\begingroup \smaller\smaller\smaller\begin{tabular}{@{}c@{}}%
\phantom{0}\\\phantom{0}\\\phantom{0}
\end{tabular}\endgroup%
}}\right.$}%
\begingroup \smaller\smaller\smaller\begin{tabular}{@{}c@{}}%
-4\\\phantom{0}\\\phantom{0}
\end{tabular}\endgroup%
\kern3pt%
\begingroup \smaller\smaller\smaller\begin{tabular}{@{}c@{}}%
\phantom{0}\\1/2\\\phantom{0}
\end{tabular}\endgroup%
\kern3pt%
\begingroup \smaller\smaller\smaller\begin{tabular}{@{}c@{}}%
\phantom{0}\\\phantom{0}\\1/2
\end{tabular}\endgroup%
{$\left.\llap{\phantom{%
\begingroup \smaller\smaller\smaller\begin{tabular}{@{}c@{}}%
\phantom{0}\\\phantom{0}\\\phantom{0}
\end{tabular}\endgroup%
}}\!\right]$}%
{$\left[\!\llap{\phantom{%
\begingroup \smaller\smaller\smaller\begin{tabular}{@{}c@{}}%
0\\0\\0
\end{tabular}\endgroup%
}}\right.$}%
\begingroup \smaller\smaller\smaller\begin{tabular}{@{}c@{}}%
1\\-3\\-1
\end{tabular}\endgroup%
\kern3pt%
\begingroup \smaller\smaller\smaller\begin{tabular}{@{}c@{}}%
4\\-6\\-10
\end{tabular}\endgroup%
\kern3pt%
\begingroup \smaller\smaller\smaller\begin{tabular}{@{}c@{}}%
2\\0\\-6
\end{tabular}\endgroup%
\kern3pt%
\begingroup \smaller\smaller\smaller\begin{tabular}{@{}c@{}}%
4\\6\\-10
\end{tabular}\endgroup%
\kern3pt%
\begingroup \smaller\smaller\smaller\begin{tabular}{@{}c@{}}%
4\\10\\-6
\end{tabular}\endgroup%
\kern3pt%
\begingroup \smaller\smaller\smaller\begin{tabular}{@{}c@{}}%
2\\6\\0
\end{tabular}\endgroup%
{$\left.\llap{\phantom{%
\begingroup \smaller\smaller\smaller\begin{tabular}{@{}c@{}}%
0\\0\\0
\end{tabular}\endgroup%
}}\!\right]$}%
}%
\ifdim\wd\matricesbox>\halfwidth\myboxwidth=\hsize\else\myboxwidth=\halfwidth\fi
\vbox{%
\ifdim\myboxwidth=\hsize
\setbox\onelinebox=\hbox{%
\vbox{\hbox{%
$\Pi_{11,9}$ spans $L_{145.1}$%
}\hbox{%
$4\infty\slashtwo\infty44\infty4|4\infty4\rtimes D_{2}$%
}%
}%
\hfill\copy\matricesbox
}%
\ifdim\wd\onelinebox>\myboxwidth
\hbox to \myboxwidth{%
$\Pi_{11,9}$ spans $L_{145.1}$%
\hfil
$4\infty\slashtwo\infty44\infty4|4\infty4\rtimes D_{2}$%
}%
\box\matricesbox
\else
\hbox to \myboxwidth{%
\unhbox\onelinebox
}%
\fi
\else
\hbox to \myboxwidth{%
$\Pi_{11,9}$ spans $L_{145.1}$%
\hfil}%
\hbox to \myboxwidth{%
$4\infty\slashtwo\infty44\infty4|4\infty4\rtimes D_{2}$%
\hfil}%
\box\matricesbox
\fi
}%
\hfill\discretionary{}{}{}%
\setbox\matricesbox=\hbox{%
{$\left[\!\llap{\phantom{%
\begingroup \smaller\smaller\smaller\begin{tabular}{@{}c@{}}%
\phantom{0}\\\phantom{0}\\\phantom{0}
\end{tabular}\endgroup%
}}\right.$}%
\begingroup \smaller\smaller\smaller\begin{tabular}{@{}c@{}}%
-1\\\phantom{0}\\\phantom{0}
\end{tabular}\endgroup%
\kern3pt%
\begingroup \smaller\smaller\smaller\begin{tabular}{@{}c@{}}%
\phantom{0}\\3/2\\\phantom{0}
\end{tabular}\endgroup%
\kern3pt%
\begingroup \smaller\smaller\smaller\begin{tabular}{@{}c@{}}%
\phantom{0}\\\phantom{0}\\9/2
\end{tabular}\endgroup%
{$\left.\llap{\phantom{%
\begingroup \smaller\smaller\smaller\begin{tabular}{@{}c@{}}%
\phantom{0}\\\phantom{0}\\\phantom{0}
\end{tabular}\endgroup%
}}\!\right]$}%
{$\left[\!\llap{\phantom{%
\begingroup \smaller\smaller\smaller\begin{tabular}{@{}c@{}}%
0\\0\\0
\end{tabular}\endgroup%
}}\right.$}%
\begingroup \smaller\smaller\smaller\begin{tabular}{@{}c@{}}%
2\\-2\\0
\end{tabular}\endgroup%
\kern3pt%
\begingroup \smaller\smaller\smaller\begin{tabular}{@{}c@{}}%
6\\-4\\2
\end{tabular}\endgroup%
\kern3pt%
\begingroup \smaller\smaller\smaller\begin{tabular}{@{}c@{}}%
18\\-6\\8
\end{tabular}\endgroup%
\kern3pt%
\begingroup \smaller\smaller\smaller\begin{tabular}{@{}c@{}}%
6\\1\\3
\end{tabular}\endgroup%
\kern3pt%
\begingroup \smaller\smaller\smaller\begin{tabular}{@{}c@{}}%
18\\9\\7
\end{tabular}\endgroup%
\kern3pt%
\begingroup \smaller\smaller\smaller\begin{tabular}{@{}c@{}}%
6\\5\\1
\end{tabular}\endgroup%
{$\left.\llap{\phantom{%
\begingroup \smaller\smaller\smaller\begin{tabular}{@{}c@{}}%
0\\0\\0
\end{tabular}\endgroup%
}}\!\right]$}%
}%
\ifdim\wd\matricesbox>\halfwidth\myboxwidth=\hsize\else\myboxwidth=\halfwidth\fi
\vbox{%
\ifdim\myboxwidth=\hsize
\setbox\onelinebox=\hbox{%
\vbox{\hbox{%
$\Pi_{11,10}$ spans $L_{155.1}$%
}\hbox{%
$622|22626\slashthree62\rtimes D_{2}$%
}%
}%
\hfill\copy\matricesbox
}%
\ifdim\wd\onelinebox>\myboxwidth
\hbox to \myboxwidth{%
$\Pi_{11,10}$ spans $L_{155.1}$%
\hfil
$622|22626\slashthree62\rtimes D_{2}$%
}%
\box\matricesbox
\else
\hbox to \myboxwidth{%
\unhbox\onelinebox
}%
\fi
\else
\hbox to \myboxwidth{%
$\Pi_{11,10}$ spans $L_{155.1}$%
\hfil}%
\hbox to \myboxwidth{%
$622|22626\slashthree62\rtimes D_{2}$%
\hfil}%
\box\matricesbox
\fi
}%
\hfill\discretionary{}{}{}%
\setbox\matricesbox=\hbox{%
{$\left[\!\llap{\phantom{%
\begingroup \smaller\smaller\smaller% [inline block 1: 18 envs, 2020 chars -> data_tex | \begin{tabular}{@{}c@{}}% \phantom{0}\\\phantom{0}\\\phantom{0}...]
\endgroup%
}}\!\right]$}%
}%
\ifdim\wd\matricesbox>\halfwidth\myboxwidth=\hsize\else\myboxwidth=\halfwidth\fi
\vbox{%
\ifdim\myboxwidth=\hsize
\setbox\onelinebox=\hbox{%
\vbox{\hbox{%
$\Pi_{11,11}$ spans $L_{69.11}$%
}\hbox{%
$22222222222$%
}%
}%
\hfill\copy\matricesbox
}%
\ifdim\wd\onelinebox>\myboxwidth
\hbox to \myboxwidth{%
$\Pi_{11,11}$ spans $L_{69.11}$%
\hfil
$22222222222$%
}%
\box\matricesbox
\else
\hbox to \myboxwidth{%
\unhbox\onelinebox
}%
\fi
\else
\hbox to \myboxwidth{%
$\Pi_{11,11}$ spans $L_{69.11}$%
\hfil}%
\hbox to \myboxwidth{%
$22222222222$%
\hfil}%
\box\matricesbox
\fi
}%
\hfill\discretionary{}{}{}%
\setbox\matricesbox=\hbox{%
{$\left[\!\llap{\phantom{%
\begingroup \smaller\smaller\smaller% [inline block 2: 18 envs, 2024 chars -> data_tex | \begin{tabular}{@{}c@{}}% \phantom{0}\\\phantom{0}\\\phantom{0}...]
\endgroup%
}}\!\right]$}%
}%
\ifdim\wd\matricesbox>\halfwidth\myboxwidth=\hsize\else\myboxwidth=\halfwidth\fi
\vbox{%
\ifdim\myboxwidth=\hsize
\setbox\onelinebox=\hbox{%
\vbox{\hbox{%
$\Pi_{11,12}$ spans $L_{17.11}$%
}\hbox{%
$22222222222$%
}%
}%
\hfill\copy\matricesbox
}%
\ifdim\wd\onelinebox>\myboxwidth
\hbox to \myboxwidth{%
$\Pi_{11,12}$ spans $L_{17.11}$%
\hfil
$22222222222$%
}%
\box\matricesbox
\else
\hbox to \myboxwidth{%
\unhbox\onelinebox
}%
\fi
\else
\hbox to \myboxwidth{%
$\Pi_{11,12}$ spans $L_{17.11}$%
\hfil}%
\hbox to \myboxwidth{%
$22222222222$%
\hfil}%
\box\matricesbox
\fi
}%
\hfill\discretionary{}{}{}%
\setbox\matricesbox=\hbox{%
{$\left[\!\llap{\phantom{%
\begingroup \smaller\smaller\smaller% [inline block 3: 18 envs, 2018 chars -> data_tex | \begin{tabular}{@{}c@{}}% \phantom{0}\\\phantom{0}\\\phantom{0}...]
\endgroup%
}}\!\right]$}%
}%
\ifdim\wd\matricesbox>\halfwidth\myboxwidth=\hsize\else\myboxwidth=\halfwidth\fi
\vbox{%
\ifdim\myboxwidth=\hsize
\setbox\onelinebox=\hbox{%
\vbox{\hbox{%
$\Pi_{11,13}$ spans $L_{123.8}$%
}\hbox{%
$22422222222$%
}%
}%
\hfill\copy\matricesbox
}%
\ifdim\wd\onelinebox>\myboxwidth
\hbox to \myboxwidth{%
$\Pi_{11,13}$ spans $L_{123.8}$%
\hfil
$22422222222$%
}%
\box\matricesbox
\else
\hbox to \myboxwidth{%
\unhbox\onelinebox
}%
\fi
\else
\hbox to \myboxwidth{%
$\Pi_{11,13}$ spans $L_{123.8}$%
\hfil}%
\hbox to \myboxwidth{%
$22422222222$%
\hfil}%
\box\matricesbox
\fi
}%
\hfill\discretionary{}{}{}%
\setbox\matricesbox=\hbox{%
{$\left[\!\llap{\phantom{%
\begingroup \smaller\smaller\smaller% [inline block 4: 18 envs, 2019 chars -> data_tex | \begin{tabular}{@{}c@{}}% \phantom{0}\\\phantom{0}\\\phantom{0}...]
\endgroup%
}}\!\right]$}%
}%
\ifdim\wd\matricesbox>\halfwidth\myboxwidth=\hsize\else\myboxwidth=\halfwidth\fi
\vbox{%
\ifdim\myboxwidth=\hsize
\setbox\onelinebox=\hbox{%
\vbox{\hbox{%
$\Pi_{11,14}$ spans $L_{123.8}$%
}\hbox{%
$22422222222$%
}%
}%
\hfill\copy\matricesbox
}%
\ifdim\wd\onelinebox>\myboxwidth
\hbox to \myboxwidth{%
$\Pi_{11,14}$ spans $L_{123.8}$%
\hfil
$22422222222$%
}%
\box\matricesbox
\else
\hbox to \myboxwidth{%
\unhbox\onelinebox
}%
\fi
\else
\hbox to \myboxwidth{%
$\Pi_{11,14}$ spans $L_{123.8}$%
\hfil}%
\hbox to \myboxwidth{%
$22422222222$%
\hfil}%
\box\matricesbox
\fi
}%
\hfill\discretionary{}{}{}%
\setbox\matricesbox=\hbox{%
{$\left[\!\llap{\phantom{%
\begingroup \smaller\smaller\smaller% [inline block 5: 18 envs, 2019 chars -> data_tex | \begin{tabular}{@{}c@{}}% \phantom{0}\\\phantom{0}\\\phantom{0}...]
\endgroup%
}}\!\right]$}%
}%
\ifdim\wd\matricesbox>\halfwidth\myboxwidth=\hsize\else\myboxwidth=\halfwidth\fi
\vbox{%
\ifdim\myboxwidth=\hsize
\setbox\onelinebox=\hbox{%
\vbox{\hbox{%
$\Pi_{11,15}$ spans $L_{123.8}$%
}\hbox{%
$22422222222$%
}%
}%
\hfill\copy\matricesbox
}%
\ifdim\wd\onelinebox>\myboxwidth
\hbox to \myboxwidth{%
$\Pi_{11,15}$ spans $L_{123.8}$%
\hfil
$22422222222$%
}%
\box\matricesbox
\else
\hbox to \myboxwidth{%
\unhbox\onelinebox
}%
\fi
\else
\hbox to \myboxwidth{%
$\Pi_{11,15}$ spans $L_{123.8}$%
\hfil}%
\hbox to \myboxwidth{%
$22422222222$%
\hfil}%
\box\matricesbox
\fi
}%
\hfill\discretionary{}{}{}%
\setbox\matricesbox=\hbox{%
{$\left[\!\llap{\phantom{%
\begingroup \smaller\smaller\smaller% [inline block 6: 18 envs, 2019 chars -> data_tex | \begin{tabular}{@{}c@{}}% \phantom{0}\\\phantom{0}\\\phantom{0}...]
\endgroup%
}}\!\right]$}%
}%
\ifdim\wd\matricesbox>\halfwidth\myboxwidth=\hsize\else\myboxwidth=\halfwidth\fi
\vbox{%
\ifdim\myboxwidth=\hsize
\setbox\onelinebox=\hbox{%
\vbox{\hbox{%
$\Pi_{11,16}$ spans $L_{123.8}$%
}\hbox{%
$22422222222$%
}%
}%
\hfill\copy\matricesbox
}%
\ifdim\wd\onelinebox>\myboxwidth
\hbox to \myboxwidth{%
$\Pi_{11,16}$ spans $L_{123.8}$%
\hfil
$22422222222$%
}%
\box\matricesbox
\else
\hbox to \myboxwidth{%
\unhbox\onelinebox
}%
\fi
\else
\hbox to \myboxwidth{%
$\Pi_{11,16}$ spans $L_{123.8}$%
\hfil}%
\hbox to \myboxwidth{%
$22422222222$%
\hfil}%
\box\matricesbox
\fi
}%
\hfill\discretionary{}{}{}%
\setbox\matricesbox=\hbox{%
{$\left[\!\llap{\phantom{%
\begingroup \smaller\smaller\smaller% [inline block 7: 18 envs, 2002 chars -> data_tex | \begin{tabular}{@{}c@{}}% \phantom{0}\\\phantom{0}\\\phantom{0}...]
\endgroup%
}}\!\right]$}%
}%
\ifdim\wd\matricesbox>\halfwidth\myboxwidth=\hsize\else\myboxwidth=\halfwidth\fi
\vbox{%
\ifdim\myboxwidth=\hsize
\setbox\onelinebox=\hbox{%
\vbox{\hbox{%
$\Pi_{11,17}$ spans $L_{155.1}$%
}\hbox{%
$32226262626$%
}%
}%
\hfill\copy\matricesbox
}%
\ifdim\wd\onelinebox>\myboxwidth
\hbox to \myboxwidth{%
$\Pi_{11,17}$ spans $L_{155.1}$%
\hfil
$32226262626$%
}%
\box\matricesbox
\else
\hbox to \myboxwidth{%
\unhbox\onelinebox
}%
\fi
\else
\hbox to \myboxwidth{%
$\Pi_{11,17}$ spans $L_{155.1}$%
\hfil}%
\hbox to \myboxwidth{%
$32226262626$%
\hfil}%
\box\matricesbox
\fi
}%
\hfill\discretionary{}{}{}%
\setbox\matricesbox=\hbox{%
{$\left[\!\llap{\phantom{%
\begingroup \smaller\smaller\smaller% [inline block 8: 18 envs, 2000 chars -> data_tex | \begin{tabular}{@{}c@{}}% \phantom{0}\\\phantom{0}\\\phantom{0}...]
\endgroup%
}}\!\right]$}%
}%
\ifdim\wd\matricesbox>\halfwidth\myboxwidth=\hsize\else\myboxwidth=\halfwidth\fi
\vbox{%
\ifdim\myboxwidth=\hsize
\setbox\onelinebox=\hbox{%
\vbox{\hbox{%
$\Pi_{11,18}$ spans $L_{155.1}$%
}\hbox{%
$62222636262$%
}%
}%
\hfill\copy\matricesbox
}%
\ifdim\wd\onelinebox>\myboxwidth
\hbox to \myboxwidth{%
$\Pi_{11,18}$ spans $L_{155.1}$%
\hfil
$62222636262$%
}%
\box\matricesbox
\else
\hbox to \myboxwidth{%
\unhbox\onelinebox
}%
\fi
\else
\hbox to \myboxwidth{%
$\Pi_{11,18}$ spans $L_{155.1}$%
\hfil}%
\hbox to \myboxwidth{%
$62222636262$%
\hfil}%
\box\matricesbox
\fi
}%
\hfill\discretionary{}{}{}%

\vskip2pt\hrule\vskip2pt

\leavevmode\setbox\matricesbox=\hbox{%
{$\left[\!\llap{\phantom{%
\begingroup \smaller\smaller\smaller\begin{tabular}{@{}c@{}}%
\phantom{0}\\\phantom{0}\\\phantom{0}\\\phantom{0}
\end{tabular}\endgroup%
}}\right.$}%
\begingroup \smaller\smaller\smaller\begin{tabular}{@{}c@{}}%
-1\\\phantom{0}\\\phantom{0}\\\phantom{0}
\end{tabular}\endgroup%
\kern3pt%
\begingroup \smaller\smaller\smaller\begin{tabular}{@{}c@{}}%
\phantom{0}\\10\\\phantom{0}\\\phantom{0}
\end{tabular}\endgroup%
\kern3pt%
\begingroup \smaller\smaller\smaller\begin{tabular}{@{}c@{}}%
\phantom{0}\\\phantom{0}\\10\\\phantom{0}
\end{tabular}\endgroup%
\kern3pt%
\begingroup \smaller\smaller\smaller\begin{tabular}{@{}c@{}}%
\phantom{0}\\\phantom{0}\\\phantom{0}\\10
\end{tabular}\endgroup%
{$\left.\llap{\phantom{%
\begingroup \smaller\smaller\smaller\begin{tabular}{@{}c@{}}%
\phantom{0}\\\phantom{0}\\\phantom{0}\\\phantom{0}
\end{tabular}\endgroup%
}}\!\right]$}%
{$\left[\!\llap{\phantom{%
\begingroup \smaller\smaller\smaller\begin{tabular}{@{}c@{}}%
0\\0\\0\\0
\end{tabular}\endgroup%
}}\right.$}%
\begingroup \smaller\smaller\smaller\begin{tabular}{@{}c@{}}%
4\\0\\1\\-1
\end{tabular}\endgroup%
\kern3pt%
\begingroup \smaller\smaller\smaller\begin{tabular}{@{}c@{}}%
15\\-2\\4\\-2
\end{tabular}\endgroup%
{$\left.\llap{\phantom{%
\begingroup \smaller\smaller\smaller\begin{tabular}{@{}c@{}}%
0\\0\\0\\0
\end{tabular}\endgroup%
}}\!\right]$}%
}%
\ifdim\wd\matricesbox>\halfwidth\myboxwidth=\hsize\else\myboxwidth=\halfwidth\fi
\vbox{%
\ifdim\myboxwidth=\hsize
\setbox\onelinebox=\hbox{%
\vbox{\hbox{%
$\Pi_{12,1}$ spans $L_{31.7}$%
}\hbox{%
$|2|2|2|2|2|2|2|2|2|2|2|2\rtimes D_{12}$%
}%
}%
\hfill\copy\matricesbox
}%
\ifdim\wd\onelinebox>\myboxwidth
\hbox to \myboxwidth{%
$\Pi_{12,1}$ spans $L_{31.7}$%
\hfil
$|2|2|2|2|2|2|2|2|2|2|2|2\rtimes D_{12}$%
}%
\box\matricesbox
\else
\hbox to \myboxwidth{%
\unhbox\onelinebox
}%
\fi
\else
\hbox to \myboxwidth{%
$\Pi_{12,1}$ spans $L_{31.7}$%
\hfil}%
\hbox to \myboxwidth{%
$|2|2|2|2|2|2|2|2|2|2|2|2\rtimes D_{12}$%
\hfil}%
\box\matricesbox
\fi
}%
\hfill\discretionary{}{}{}%
\setbox\matricesbox=\hbox{%
{$\left[\!\llap{\phantom{%
\begingroup \smaller\smaller\smaller\begin{tabular}{@{}c@{}}%
\phantom{0}\\\phantom{0}\\\phantom{0}\\\phantom{0}
\end{tabular}\endgroup%
}}\right.$}%
\begingroup \smaller\smaller\smaller\begin{tabular}{@{}c@{}}%
-1\\\phantom{0}\\\phantom{0}\\\phantom{0}
\end{tabular}\endgroup%
\kern3pt%
\begingroup \smaller\smaller\smaller\begin{tabular}{@{}c@{}}%
\phantom{0}\\15\\\phantom{0}\\\phantom{0}
\end{tabular}\endgroup%
\kern3pt%
\begingroup \smaller\smaller\smaller\begin{tabular}{@{}c@{}}%
\phantom{0}\\\phantom{0}\\15\\\phantom{0}
\end{tabular}\endgroup%
\kern3pt%
\begingroup \smaller\smaller\smaller\begin{tabular}{@{}c@{}}%
\phantom{0}\\\phantom{0}\\\phantom{0}\\15
\end{tabular}\endgroup%
{$\left.\llap{\phantom{%
\begingroup \smaller\smaller\smaller\begin{tabular}{@{}c@{}}%
\phantom{0}\\\phantom{0}\\\phantom{0}\\\phantom{0}
\end{tabular}\endgroup%
}}\!\right]$}%
{$\left[\!\llap{\phantom{%
\begingroup \smaller\smaller\smaller\begin{tabular}{@{}c@{}}%
0\\0\\0\\0
\end{tabular}\endgroup%
}}\right.$}%
\begingroup \smaller\smaller\smaller\begin{tabular}{@{}c@{}}%
5\\1\\0\\-1
\end{tabular}\endgroup%
\kern3pt%
\begingroup \smaller\smaller\smaller\begin{tabular}{@{}c@{}}%
9\\1\\1\\-2
\end{tabular}\endgroup%
{$\left.\llap{\phantom{%
\begingroup \smaller\smaller\smaller\begin{tabular}{@{}c@{}}%
0\\0\\0\\0
\end{tabular}\endgroup%
}}\!\right]$}%
}%
\ifdim\wd\matricesbox>\halfwidth\myboxwidth=\hsize\else\myboxwidth=\halfwidth\fi
\vbox{%
\ifdim\myboxwidth=\hsize
\setbox\onelinebox=\hbox{%
\vbox{\hbox{%
$\Pi_{12,2}$ spans $L_{16.13}$%
}\hbox{%
$|2|2|2|2|2|2|2|2|2|2|2|2\rtimes D_{12}$%
}%
}%
\hfill\copy\matricesbox
}%
\ifdim\wd\onelinebox>\myboxwidth
\hbox to \myboxwidth{%
$\Pi_{12,2}$ spans $L_{16.13}$%
\hfil
$|2|2|2|2|2|2|2|2|2|2|2|2\rtimes D_{12}$%
}%
\box\matricesbox
\else
\hbox to \myboxwidth{%
\unhbox\onelinebox
}%
\fi
\else
\hbox to \myboxwidth{%
$\Pi_{12,2}$ spans $L_{16.13}$%
\hfil}%
\hbox to \myboxwidth{%
$|2|2|2|2|2|2|2|2|2|2|2|2\rtimes D_{12}$%
\hfil}%
\box\matricesbox
\fi
}%
\hfill\discretionary{}{}{}%
\setbox\matricesbox=\hbox{%
{$\left[\!\llap{\phantom{%
\begingroup \smaller\smaller\smaller\begin{tabular}{@{}c@{}}%
\phantom{0}\\\phantom{0}\\\phantom{0}\\\phantom{0}
\end{tabular}\endgroup%
}}\right.$}%
\begingroup \smaller\smaller\smaller\begin{tabular}{@{}c@{}}%
-1\\\phantom{0}\\\phantom{0}\\\phantom{0}
\end{tabular}\endgroup%
\kern3pt%
\begingroup \smaller\smaller\smaller\begin{tabular}{@{}c@{}}%
\phantom{0}\\7\\\phantom{0}\\\phantom{0}
\end{tabular}\endgroup%
\kern3pt%
\begingroup \smaller\smaller\smaller\begin{tabular}{@{}c@{}}%
\phantom{0}\\\phantom{0}\\7\\\phantom{0}
\end{tabular}\endgroup%
\kern3pt%
\begingroup \smaller\smaller\smaller\begin{tabular}{@{}c@{}}%
\phantom{0}\\\phantom{0}\\\phantom{0}\\7
\end{tabular}\endgroup%
{$\left.\llap{\phantom{%
\begingroup \smaller\smaller\smaller\begin{tabular}{@{}c@{}}%
\phantom{0}\\\phantom{0}\\\phantom{0}\\\phantom{0}
\end{tabular}\endgroup%
}}\!\right]$}%
{$\left[\!\llap{\phantom{%
\begingroup \smaller\smaller\smaller\begin{tabular}{@{}c@{}}%
0\\0\\0\\0
\end{tabular}\endgroup%
}}\right.$}%
\begingroup \smaller\smaller\smaller\begin{tabular}{@{}c@{}}%
6\\1\\-2\\1
\end{tabular}\endgroup%
\kern3pt%
\begingroup \smaller\smaller\smaller\begin{tabular}{@{}c@{}}%
7\\2\\-2\\0
\end{tabular}\endgroup%
{$\left.\llap{\phantom{%
\begingroup \smaller\smaller\smaller\begin{tabular}{@{}c@{}}%
0\\0\\0\\0
\end{tabular}\endgroup%
}}\!\right]$}%
}%
\ifdim\wd\matricesbox>\halfwidth\myboxwidth=\hsize\else\myboxwidth=\halfwidth\fi
\vbox{%
\ifdim\myboxwidth=\hsize
\setbox\onelinebox=\hbox{%
\vbox{\hbox{%
$\Pi_{12,3}$ spans $L_{22.4}$%
}\hbox{%
$|2|2|2|2|2|2|2|2|2|2|2|2\rtimes D_{12}$%
}%
}%
\hfill\copy\matricesbox
}%
\ifdim\wd\onelinebox>\myboxwidth
\hbox to \myboxwidth{%
$\Pi_{12,3}$ spans $L_{22.4}$%
\hfil
$|2|2|2|2|2|2|2|2|2|2|2|2\rtimes D_{12}$%
}%
\box\matricesbox
\else
\hbox to \myboxwidth{%
\unhbox\onelinebox
}%
\fi
\else
\hbox to \myboxwidth{%
$\Pi_{12,3}$ spans $L_{22.4}$%
\hfil}%
\hbox to \myboxwidth{%
$|2|2|2|2|2|2|2|2|2|2|2|2\rtimes D_{12}$%
\hfil}%
\box\matricesbox
\fi
}%
\hfill\discretionary{}{}{}%
\setbox\matricesbox=\hbox{%
{$\left[\!\llap{\phantom{%
\begingroup \smaller\smaller\smaller\begin{tabular}{@{}c@{}}%
\phantom{0}\\\phantom{0}\\\phantom{0}\\\phantom{0}
\end{tabular}\endgroup%
}}\right.$}%
\begingroup \smaller\smaller\smaller\begin{tabular}{@{}c@{}}%
-6\\\phantom{0}\\\phantom{0}\\\phantom{0}
\end{tabular}\endgroup%
\kern3pt%
\begingroup \smaller\smaller\smaller\begin{tabular}{@{}c@{}}%
\phantom{0}\\1\\\phantom{0}\\\phantom{0}
\end{tabular}\endgroup%
\kern3pt%
\begingroup \smaller\smaller\smaller\begin{tabular}{@{}c@{}}%
\phantom{0}\\\phantom{0}\\1\\\phantom{0}
\end{tabular}\endgroup%
\kern3pt%
\begingroup \smaller\smaller\smaller\begin{tabular}{@{}c@{}}%
\phantom{0}\\\phantom{0}\\\phantom{0}\\1
\end{tabular}\endgroup%
{$\left.\llap{\phantom{%
\begingroup \smaller\smaller\smaller\begin{tabular}{@{}c@{}}%
\phantom{0}\\\phantom{0}\\\phantom{0}\\\phantom{0}
\end{tabular}\endgroup%
}}\!\right]$}%
{$\left[\!\llap{\phantom{%
\begingroup \smaller\smaller\smaller\begin{tabular}{@{}c@{}}%
0\\0\\0\\0
\end{tabular}\endgroup%
}}\right.$}%
\begingroup \smaller\smaller\smaller\begin{tabular}{@{}c@{}}%
2\\3\\1\\-4
\end{tabular}\endgroup%
{$\left.\llap{\phantom{%
\begingroup \smaller\smaller\smaller\begin{tabular}{@{}c@{}}%
0\\0\\0\\0
\end{tabular}\endgroup%
}}\!\right]$}%
}%
\ifdim\wd\matricesbox>\halfwidth\myboxwidth=\hsize\else\myboxwidth=\halfwidth\fi
\vbox{%
\ifdim\myboxwidth=\hsize
\setbox\onelinebox=\hbox{%
\vbox{\hbox{%
$\Pi_{12,4}=A_{3,III}=\hbox{GN}_{60}$ spans $L_{7.9}$%
}\hbox{%
$\slashthree\slashinfty\slashthree\slashinfty\slashthree\slashinfty\slashthree\slashinfty\slashthree\slashinfty\slashthree\slashinfty\rtimes D_{12}$%
}%
}%
\hfill\copy\matricesbox
}%
\ifdim\wd\onelinebox>\myboxwidth
\hbox to \myboxwidth{%
$\Pi_{12,4}=A_{3,III}=\hbox{GN}_{60}$ spans $L_{7.9}$%
\hfil
$\slashthree\slashinfty\slashthree\slashinfty\slashthree\slashinfty\slashthree\slashinfty\slashthree\slashinfty\slashthree\slashinfty\rtimes D_{12}$%
}%
\box\matricesbox
\else
\hbox to \myboxwidth{%
\unhbox\onelinebox
}%
\fi
\else
\hbox to \myboxwidth{%
$\Pi_{12,4}=A_{3,III}=\hbox{GN}_{60}$ spans $L_{7.9}$%
\hfil}%
\hbox to \myboxwidth{%
$\slashthree\slashinfty\slashthree\slashinfty\slashthree\slashinfty\slashthree\slashinfty\slashthree\slashinfty\slashthree\slashinfty\rtimes D_{12}$%
\hfil}%
\box\matricesbox
\fi
}%
\hfill\discretionary{}{}{}%
\setbox\matricesbox=\hbox{%
{$\left[\!\llap{\phantom{%
\begingroup \smaller\smaller\smaller\begin{tabular}{@{}c@{}}%
\phantom{0}\\\phantom{0}\\\phantom{0}\\\phantom{0}
\end{tabular}\endgroup%
}}\right.$}%
\begingroup \smaller\smaller\smaller\begin{tabular}{@{}c@{}}%
-4\\\phantom{0}\\\phantom{0}\\\phantom{0}
\end{tabular}\endgroup%
\kern3pt%
\begingroup \smaller\smaller\smaller\begin{tabular}{@{}c@{}}%
\phantom{0}\\3\\\phantom{0}\\\phantom{0}
\end{tabular}\endgroup%
\kern3pt%
\begingroup \smaller\smaller\smaller\begin{tabular}{@{}c@{}}%
\phantom{0}\\\phantom{0}\\3\\\phantom{0}
\end{tabular}\endgroup%
\kern3pt%
\begingroup \smaller\smaller\smaller\begin{tabular}{@{}c@{}}%
\phantom{0}\\\phantom{0}\\\phantom{0}\\3
\end{tabular}\endgroup%
{$\left.\llap{\phantom{%
\begingroup \smaller\smaller\smaller\begin{tabular}{@{}c@{}}%
\phantom{0}\\\phantom{0}\\\phantom{0}\\\phantom{0}
\end{tabular}\endgroup%
}}\!\right]$}%
{$\left[\!\llap{\phantom{%
\begingroup \smaller\smaller\smaller\begin{tabular}{@{}c@{}}%
0\\0\\0\\0
\end{tabular}\endgroup%
}}\right.$}%
\begingroup \smaller\smaller\smaller\begin{tabular}{@{}c@{}}%
2\\1\\-2\\1
\end{tabular}\endgroup%
\kern3pt%
\begingroup \smaller\smaller\smaller\begin{tabular}{@{}c@{}}%
6\\5\\-5\\0
\end{tabular}\endgroup%
{$\left.\llap{\phantom{%
\begingroup \smaller\smaller\smaller\begin{tabular}{@{}c@{}}%
0\\0\\0\\0
\end{tabular}\endgroup%
}}\!\right]$}%
}%
\ifdim\wd\matricesbox>\halfwidth\myboxwidth=\hsize\else\myboxwidth=\halfwidth\fi
\vbox{%
\ifdim\myboxwidth=\hsize
\setbox\onelinebox=\hbox{%
\vbox{\hbox{%
$\Pi_{12,5}$ spans $L_{168.1}$%
}\hbox{%
$|6|6|6|6|6|6|6|6|6|6|6|6\rtimes D_{12}$%
}%
}%
\hfill\copy\matricesbox
}%
\ifdim\wd\onelinebox>\myboxwidth
\hbox to \myboxwidth{%
$\Pi_{12,5}$ spans $L_{168.1}$%
\hfil
$|6|6|6|6|6|6|6|6|6|6|6|6\rtimes D_{12}$%
}%
\box\matricesbox
\else
\hbox to \myboxwidth{%
\unhbox\onelinebox
}%
\fi
\else
\hbox to \myboxwidth{%
$\Pi_{12,5}$ spans $L_{168.1}$%
\hfil}%
\hbox to \myboxwidth{%
$|6|6|6|6|6|6|6|6|6|6|6|6\rtimes D_{12}$%
\hfil}%
\box\matricesbox
\fi
}%
\hfill\discretionary{}{}{}%
\setbox\matricesbox=\hbox{%
{$\left[\!\llap{\phantom{%
\begingroup \smaller\smaller\smaller\begin{tabular}{@{}c@{}}%
\phantom{0}\\\phantom{0}\\\phantom{0}
\end{tabular}\endgroup%
}}\right.$}%
\begingroup \smaller\smaller\smaller\begin{tabular}{@{}c@{}}%
-5/4\\\phantom{0}\\\phantom{0}
\end{tabular}\endgroup%
\kern3pt%
\begingroup \smaller\smaller\smaller\begin{tabular}{@{}c@{}}%
\phantom{0}\\3\\\phantom{0}
\end{tabular}\endgroup%
\kern3pt%
\begingroup \smaller\smaller\smaller\begin{tabular}{@{}c@{}}%
\phantom{0}\\\phantom{0}\\3
\end{tabular}\endgroup%
{$\left.\llap{\phantom{%
\begingroup \smaller\smaller\smaller\begin{tabular}{@{}c@{}}%
\phantom{0}\\\phantom{0}\\\phantom{0}
\end{tabular}\endgroup%
}}\!\right]$}%
{$\left[\!\llap{\phantom{%
\begingroup \smaller\smaller\smaller\begin{tabular}{@{}c@{}}%
0\\0\\0
\end{tabular}\endgroup%
}}\right.$}%
\begingroup \smaller\smaller\smaller\begin{tabular}{@{}c@{}}%
6\\1\\-4
\end{tabular}\endgroup%
\kern3pt%
\begingroup \smaller\smaller\smaller\begin{tabular}{@{}c@{}}%
4\\2\\-2
\end{tabular}\endgroup%
{$\left.\llap{\phantom{%
\begingroup \smaller\smaller\smaller\begin{tabular}{@{}c@{}}%
0\\0\\0
\end{tabular}\endgroup%
}}\!\right]$}%
}%
\ifdim\wd\matricesbox>\halfwidth\myboxwidth=\hsize\else\myboxwidth=\halfwidth\fi
\vbox{%
\ifdim\myboxwidth=\hsize
\setbox\onelinebox=\hbox{%
\vbox{\hbox{%
$\Pi_{12,6}$ spans $L_{19.5}$%
}\hbox{%
$\slashtwo2|2\slashtwo2|2\slashtwo2|2\slashtwo2|2\rtimes D_{8}$%
}%
}%
\hfill\copy\matricesbox
}%
\ifdim\wd\onelinebox>\myboxwidth
\hbox to \myboxwidth{%
$\Pi_{12,6}$ spans $L_{19.5}$%
\hfil
$\slashtwo2|2\slashtwo2|2\slashtwo2|2\slashtwo2|2\rtimes D_{8}$%
}%
\box\matricesbox
\else
\hbox to \myboxwidth{%
\unhbox\onelinebox
}%
\fi
\else
\hbox to \myboxwidth{%
$\Pi_{12,6}$ spans $L_{19.5}$%
\hfil}%
\hbox to \myboxwidth{%
$\slashtwo2|2\slashtwo2|2\slashtwo2|2\slashtwo2|2\rtimes D_{8}$%
\hfil}%
\box\matricesbox
\fi
}%
\hfill\discretionary{}{}{}%
\setbox\matricesbox=\hbox{%
{$\left[\!\llap{\phantom{%
\begingroup \smaller\smaller\smaller\begin{tabular}{@{}c@{}}%
\phantom{0}\\\phantom{0}\\\phantom{0}
\end{tabular}\endgroup%
}}\right.$}%
\begingroup \smaller\smaller\smaller\begin{tabular}{@{}c@{}}%
-2\\\phantom{0}\\\phantom{0}
\end{tabular}\endgroup%
\kern3pt%
\begingroup \smaller\smaller\smaller\begin{tabular}{@{}c@{}}%
\phantom{0}\\3\\\phantom{0}
\end{tabular}\endgroup%
\kern3pt%
\begingroup \smaller\smaller\smaller\begin{tabular}{@{}c@{}}%
\phantom{0}\\\phantom{0}\\3
\end{tabular}\endgroup%
{$\left.\llap{\phantom{%
\begingroup \smaller\smaller\smaller\begin{tabular}{@{}c@{}}%
\phantom{0}\\\phantom{0}\\\phantom{0}
\end{tabular}\endgroup%
}}\!\right]$}%
{$\left[\!\llap{\phantom{%
\begingroup \smaller\smaller\smaller\begin{tabular}{@{}c@{}}%
0\\0\\0
\end{tabular}\endgroup%
}}\right.$}%
\begingroup \smaller\smaller\smaller\begin{tabular}{@{}c@{}}%
1\\0\\1
\end{tabular}\endgroup%
\kern3pt%
\begingroup \smaller\smaller\smaller\begin{tabular}{@{}c@{}}%
12\\-6\\8
\end{tabular}\endgroup%
{$\left.\llap{\phantom{%
\begingroup \smaller\smaller\smaller\begin{tabular}{@{}c@{}}%
0\\0\\0
\end{tabular}\endgroup%
}}\!\right]$}%
}%
\ifdim\wd\matricesbox>\halfwidth\myboxwidth=\hsize\else\myboxwidth=\halfwidth\fi
\vbox{%
\ifdim\myboxwidth=\hsize
\setbox\onelinebox=\hbox{%
\vbox{\hbox{%
$\Pi_{12,7}$ spans $L_{123.11}$%
}\hbox{%
$|2\slashtwo2|2\slashtwo2|2\slashtwo2|2\slashtwo2\rtimes D_{8}$%
}%
}%
\hfill\copy\matricesbox
}%
\ifdim\wd\onelinebox>\myboxwidth
\hbox to \myboxwidth{%
$\Pi_{12,7}$ spans $L_{123.11}$%
\hfil
$|2\slashtwo2|2\slashtwo2|2\slashtwo2|2\slashtwo2\rtimes D_{8}$%
}%
\box\matricesbox
\else
\hbox to \myboxwidth{%
\unhbox\onelinebox
}%
\fi
\else
\hbox to \myboxwidth{%
$\Pi_{12,7}$ spans $L_{123.11}$%
\hfil}%
\hbox to \myboxwidth{%
$|2\slashtwo2|2\slashtwo2|2\slashtwo2|2\slashtwo2\rtimes D_{8}$%
\hfil}%
\box\matricesbox
\fi
}%
\hfill\discretionary{}{}{}%
\setbox\matricesbox=\hbox{%
{$\left[\!\llap{\phantom{%
\begingroup \smaller\smaller\smaller\begin{tabular}{@{}c@{}}%
\phantom{0}\\\phantom{0}\\\phantom{0}
\end{tabular}\endgroup%
}}\right.$}%
\begingroup \smaller\smaller\smaller\begin{tabular}{@{}c@{}}%
-3\\\phantom{0}\\\phantom{0}
\end{tabular}\endgroup%
\kern3pt%
\begingroup \smaller\smaller\smaller\begin{tabular}{@{}c@{}}%
\phantom{0}\\4\\\phantom{0}
\end{tabular}\endgroup%
\kern3pt%
\begingroup \smaller\smaller\smaller\begin{tabular}{@{}c@{}}%
\phantom{0}\\\phantom{0}\\4
\end{tabular}\endgroup%
{$\left.\llap{\phantom{%
\begingroup \smaller\smaller\smaller\begin{tabular}{@{}c@{}}%
\phantom{0}\\\phantom{0}\\\phantom{0}
\end{tabular}\endgroup%
}}\!\right]$}%
{$\left[\!\llap{\phantom{%
\begingroup \smaller\smaller\smaller\begin{tabular}{@{}c@{}}%
0\\0\\0
\end{tabular}\endgroup%
}}\right.$}%
\begingroup \smaller\smaller\smaller\begin{tabular}{@{}c@{}}%
1\\-1\\0
\end{tabular}\endgroup%
\kern3pt%
\begingroup \smaller\smaller\smaller\begin{tabular}{@{}c@{}}%
4\\-3\\-2
\end{tabular}\endgroup%
{$\left.\llap{\phantom{%
\begingroup \smaller\smaller\smaller\begin{tabular}{@{}c@{}}%
0\\0\\0
\end{tabular}\endgroup%
}}\!\right]$}%
}%
\ifdim\wd\matricesbox>\halfwidth\myboxwidth=\hsize\else\myboxwidth=\halfwidth\fi
\vbox{%
\ifdim\myboxwidth=\hsize
\setbox\onelinebox=\hbox{%
\vbox{\hbox{%
$\Pi_{12,8}=\hbox{GN}_{58}$ spans $L_{123.9}$%
}\hbox{%
$|2\slashtwo2|2\slashtwo2|2\slashtwo2|2\slashtwo2\rtimes D_{8}$%
}%
}%
\hfill\copy\matricesbox
}%
\ifdim\wd\onelinebox>\myboxwidth
\hbox to \myboxwidth{%
$\Pi_{12,8}=\hbox{GN}_{58}$ spans $L_{123.9}$%
\hfil
$|2\slashtwo2|2\slashtwo2|2\slashtwo2|2\slashtwo2\rtimes D_{8}$%
}%
\box\matricesbox
\else
\hbox to \myboxwidth{%
\unhbox\onelinebox
}%
\fi
\else
\hbox to \myboxwidth{%
$\Pi_{12,8}=\hbox{GN}_{58}$ spans $L_{123.9}$%
\hfil}%
\hbox to \myboxwidth{%
$|2\slashtwo2|2\slashtwo2|2\slashtwo2|2\slashtwo2\rtimes D_{8}$%
\hfil}%
\box\matricesbox
\fi
}%
\hfill\discretionary{}{}{}%
\setbox\matricesbox=\hbox{%
{$\left[\!\llap{\phantom{%
\begingroup \smaller\smaller\smaller\begin{tabular}{@{}c@{}}%
\phantom{0}\\\phantom{0}\\\phantom{0}
\end{tabular}\endgroup%
}}\right.$}%
\begingroup \smaller\smaller\smaller\begin{tabular}{@{}c@{}}%
-8\\\phantom{0}\\\phantom{0}
\end{tabular}\endgroup%
\kern3pt%
\begingroup \smaller\smaller\smaller\begin{tabular}{@{}c@{}}%
\phantom{0}\\1\\\phantom{0}
\end{tabular}\endgroup%
\kern3pt%
\begingroup \smaller\smaller\smaller\begin{tabular}{@{}c@{}}%
\phantom{0}\\\phantom{0}\\1
\end{tabular}\endgroup%
{$\left.\llap{\phantom{%
\begingroup \smaller\smaller\smaller\begin{tabular}{@{}c@{}}%
\phantom{0}\\\phantom{0}\\\phantom{0}
\end{tabular}\endgroup%
}}\!\right]$}%
{$\left[\!\llap{\phantom{%
\begingroup \smaller\smaller\smaller\begin{tabular}{@{}c@{}}%
0\\0\\0
\end{tabular}\endgroup%
}}\right.$}%
\begingroup \smaller\smaller\smaller\begin{tabular}{@{}c@{}}%
1\\0\\-3
\end{tabular}\endgroup%
\kern3pt%
\begingroup \smaller\smaller\smaller\begin{tabular}{@{}c@{}}%
2\\3\\-5
\end{tabular}\endgroup%
{$\left.\llap{\phantom{%
\begingroup \smaller\smaller\smaller\begin{tabular}{@{}c@{}}%
0\\0\\0
\end{tabular}\endgroup%
}}\!\right]$}%
}%
\ifdim\wd\matricesbox>\halfwidth\myboxwidth=\hsize\else\myboxwidth=\halfwidth\fi
\vbox{%
\ifdim\myboxwidth=\hsize
\setbox\onelinebox=\hbox{%
\vbox{\hbox{%
$\Pi_{12,9}$ spans $L_{159.1}$%
}\hbox{%
$|4\slashinfty4|4\slashinfty4|4\slashinfty4|4\slashinfty4\rtimes D_{8}$%
}%
}%
\hfill\copy\matricesbox
}%
\ifdim\wd\onelinebox>\myboxwidth
\hbox to \myboxwidth{%
$\Pi_{12,9}$ spans $L_{159.1}$%
\hfil
$|4\slashinfty4|4\slashinfty4|4\slashinfty4|4\slashinfty4\rtimes D_{8}$%
}%
\box\matricesbox
\else
\hbox to \myboxwidth{%
\unhbox\onelinebox
}%
\fi
\else
\hbox to \myboxwidth{%
$\Pi_{12,9}$ spans $L_{159.1}$%
\hfil}%
\hbox to \myboxwidth{%
$|4\slashinfty4|4\slashinfty4|4\slashinfty4|4\slashinfty4\rtimes D_{8}$%
\hfil}%
\box\matricesbox
\fi
}%
\hfill\discretionary{}{}{}%
\setbox\matricesbox=\hbox{%
{$\left[\!\llap{\phantom{%
\begingroup \smaller\smaller\smaller\begin{tabular}{@{}c@{}}%
\phantom{0}\\\phantom{0}\\\phantom{0}
\end{tabular}\endgroup%
}}\right.$}%
\begingroup \smaller\smaller\smaller\begin{tabular}{@{}c@{}}%
-1\\\phantom{0}\\\phantom{0}
\end{tabular}\endgroup%
\kern3pt%
\begingroup \smaller\smaller\smaller\begin{tabular}{@{}c@{}}%
\phantom{0}\\18\\\phantom{0}
\end{tabular}\endgroup%
\kern3pt%
\begingroup \smaller\smaller\smaller\begin{tabular}{@{}c@{}}%
\phantom{0}\\\phantom{0}\\18
\end{tabular}\endgroup%
{$\left.\llap{\phantom{%
\begingroup \smaller\smaller\smaller\begin{tabular}{@{}c@{}}%
\phantom{0}\\\phantom{0}\\\phantom{0}
\end{tabular}\endgroup%
}}\!\right]$}%
{$\left[\!\llap{\phantom{%
\begingroup \smaller\smaller\smaller\begin{tabular}{@{}c@{}}%
0\\0\\0
\end{tabular}\endgroup%
}}\right.$}%
\begingroup \smaller\smaller\smaller\begin{tabular}{@{}c@{}}%
9\\1\\2
\end{tabular}\endgroup%
\kern3pt%
\begingroup \smaller\smaller\smaller\begin{tabular}{@{}c@{}}%
8\\0\\2
\end{tabular}\endgroup%
{$\left.\llap{\phantom{%
\begingroup \smaller\smaller\smaller\begin{tabular}{@{}c@{}}%
0\\0\\0
\end{tabular}\endgroup%
}}\!\right]$}%
}%
\ifdim\wd\matricesbox>\halfwidth\myboxwidth=\hsize\else\myboxwidth=\halfwidth\fi
\vbox{%
\ifdim\myboxwidth=\hsize
\setbox\onelinebox=\hbox{%
\vbox{\hbox{%
$\Pi_{12,10}$ spans $L_{142.20}$%
}\hbox{%
$\slashinfty2|2\slashinfty2|2\slashinfty2|2\slashinfty2|2\rtimes D_{8}$%
}%
}%
\hfill\copy\matricesbox
}%
\ifdim\wd\onelinebox>\myboxwidth
\hbox to \myboxwidth{%
$\Pi_{12,10}$ spans $L_{142.20}$%
\hfil
$\slashinfty2|2\slashinfty2|2\slashinfty2|2\slashinfty2|2\rtimes D_{8}$%
}%
\box\matricesbox
\else
\hbox to \myboxwidth{%
\unhbox\onelinebox
}%
\fi
\else
\hbox to \myboxwidth{%
$\Pi_{12,10}$ spans $L_{142.20}$%
\hfil}%
\hbox to \myboxwidth{%
$\slashinfty2|2\slashinfty2|2\slashinfty2|2\slashinfty2|2\rtimes D_{8}$%
\hfil}%
\box\matricesbox
\fi
}%
\hfill\discretionary{}{}{}%
\setbox\matricesbox=\hbox{%
{$\left[\!\llap{\phantom{%
\begingroup \smaller\smaller\smaller\begin{tabular}{@{}c@{}}%
\phantom{0}\\\phantom{0}\\\phantom{0}\\\phantom{0}
\end{tabular}\endgroup%
}}\right.$}%
\begingroup \smaller\smaller\smaller\begin{tabular}{@{}c@{}}%
-3\\\phantom{0}\\\phantom{0}\\\phantom{0}
\end{tabular}\endgroup%
\kern3pt%
\begingroup \smaller\smaller\smaller\begin{tabular}{@{}c@{}}%
\phantom{0}\\1\\\phantom{0}\\\phantom{0}
\end{tabular}\endgroup%
\kern3pt%
\begingroup \smaller\smaller\smaller\begin{tabular}{@{}c@{}}%
\phantom{0}\\\phantom{0}\\1\\\phantom{0}
\end{tabular}\endgroup%
\kern3pt%
\begingroup \smaller\smaller\smaller\begin{tabular}{@{}c@{}}%
\phantom{0}\\\phantom{0}\\\phantom{0}\\1
\end{tabular}\endgroup%
{$\left.\llap{\phantom{%
\begingroup \smaller\smaller\smaller\begin{tabular}{@{}c@{}}%
\phantom{0}\\\phantom{0}\\\phantom{0}\\\phantom{0}
\end{tabular}\endgroup%
}}\!\right]$}%
{$\left[\!\llap{\phantom{%
\begingroup \smaller\smaller\smaller\begin{tabular}{@{}c@{}}%
0\\0\\0\\0
\end{tabular}\endgroup%
}}\right.$}%
\begingroup \smaller\smaller\smaller\begin{tabular}{@{}c@{}}%
2\\-3\\2\\1
\end{tabular}\endgroup%
\kern3pt%
\begingroup \smaller\smaller\smaller\begin{tabular}{@{}c@{}}%
6\\-8\\1\\7
\end{tabular}\endgroup%
{$\left.\llap{\phantom{%
\begingroup \smaller\smaller\smaller\begin{tabular}{@{}c@{}}%
0\\0\\0\\0
\end{tabular}\endgroup%
}}\!\right]$}%
}%
\ifdim\wd\matricesbox>\halfwidth\myboxwidth=\hsize\else\myboxwidth=\halfwidth\fi
\vbox{%
\ifdim\myboxwidth=\hsize
\setbox\onelinebox=\hbox{%
\vbox{\hbox{%
$\Pi_{12,11}$ spans $L_{3.4}$%
}\hbox{%
$626262626262\rtimes C_{6}$%
}%
}%
\hfill\copy\matricesbox
}%
\ifdim\wd\onelinebox>\myboxwidth
\hbox to \myboxwidth{%
$\Pi_{12,11}$ spans $L_{3.4}$%
\hfil
$626262626262\rtimes C_{6}$%
}%
\box\matricesbox
\else
\hbox to \myboxwidth{%
\unhbox\onelinebox
}%
\fi
\else
\hbox to \myboxwidth{%
$\Pi_{12,11}$ spans $L_{3.4}$%
\hfil}%
\hbox to \myboxwidth{%
$626262626262\rtimes C_{6}$%
\hfil}%
\box\matricesbox
\fi
}%
\hfill\discretionary{}{}{}%
\setbox\matricesbox=\hbox{%
{$\left[\!\llap{\phantom{%
\begingroup \smaller\smaller\smaller\begin{tabular}{@{}c@{}}%
\phantom{0}\\\phantom{0}\\\phantom{0}
\end{tabular}\endgroup%
}}\right.$}%
\begingroup \smaller\smaller\smaller\begin{tabular}{@{}c@{}}%
-1\\\phantom{0}\\\phantom{0}
\end{tabular}\endgroup%
\kern3pt%
\begingroup \smaller\smaller\smaller\begin{tabular}{@{}c@{}}%
\phantom{0}\\1/2\\\phantom{0}
\end{tabular}\endgroup%
\kern3pt%
\begingroup \smaller\smaller\smaller\begin{tabular}{@{}c@{}}%
\phantom{0}\\\phantom{0}\\195/2
\end{tabular}\endgroup%
{$\left.\llap{\phantom{%
\begingroup \smaller\smaller\smaller\begin{tabular}{@{}c@{}}%
\phantom{0}\\\phantom{0}\\\phantom{0}
\end{tabular}\endgroup%
}}\!\right]$}%
{$\left[\!\llap{\phantom{%
\begingroup \smaller\smaller\smaller\begin{tabular}{@{}c@{}}%
0\\0\\0
\end{tabular}\endgroup%
}}\right.$}%
\begingroup \smaller\smaller\smaller\begin{tabular}{@{}c@{}}%
1\\-2\\0
\end{tabular}\endgroup%
\kern3pt%
\begingroup \smaller\smaller\smaller\begin{tabular}{@{}c@{}}%
13\\-13\\-1
\end{tabular}\endgroup%
\kern3pt%
\begingroup \smaller\smaller\smaller\begin{tabular}{@{}c@{}}%
10\\-5\\-1
\end{tabular}\endgroup%
\kern3pt%
\begingroup \smaller\smaller\smaller\begin{tabular}{@{}c@{}}%
39\\0\\-4
\end{tabular}\endgroup%
{$\left.\llap{\phantom{%
\begingroup \smaller\smaller\smaller\begin{tabular}{@{}c@{}}%
0\\0\\0
\end{tabular}\endgroup%
}}\!\right]$}%
}%
\ifdim\wd\matricesbox>\halfwidth\myboxwidth=\hsize\else\myboxwidth=\halfwidth\fi
\vbox{%
\ifdim\myboxwidth=\hsize
\setbox\onelinebox=\hbox{%
\vbox{\hbox{%
$\Pi_{12,12}$ spans $L_{84.2}$%
}\hbox{%
$|222|222|222|222\rtimes D_{4}$%
}%
}%
\hfill\copy\matricesbox
}%
\ifdim\wd\onelinebox>\myboxwidth
\hbox to \myboxwidth{%
$\Pi_{12,12}$ spans $L_{84.2}$%
\hfil
$|222|222|222|222\rtimes D_{4}$%
}%
\box\matricesbox
\else
\hbox to \myboxwidth{%
\unhbox\onelinebox
}%
\fi
\else
\hbox to \myboxwidth{%
$\Pi_{12,12}$ spans $L_{84.2}$%
\hfil}%
\hbox to \myboxwidth{%
$|222|222|222|222\rtimes D_{4}$%
\hfil}%
\box\matricesbox
\fi
}%
\hfill\discretionary{}{}{}%
\setbox\matricesbox=\hbox{%
{$\left[\!\llap{\phantom{%
\begingroup \smaller\smaller\smaller\begin{tabular}{@{}c@{}}%
\phantom{0}\\\phantom{0}\\\phantom{0}
\end{tabular}\endgroup%
}}\right.$}%
\begingroup \smaller\smaller\smaller\begin{tabular}{@{}c@{}}%
-1/2\\\phantom{0}\\\phantom{0}
\end{tabular}\endgroup%
\kern3pt%
\begingroup \smaller\smaller\smaller\begin{tabular}{@{}c@{}}%
\phantom{0}\\3/2\\\phantom{0}
\end{tabular}\endgroup%
\kern3pt%
\begingroup \smaller\smaller\smaller\begin{tabular}{@{}c@{}}%
\phantom{0}\\\phantom{0}\\36
\end{tabular}\endgroup%
{$\left.\llap{\phantom{%
\begingroup \smaller\smaller\smaller\begin{tabular}{@{}c@{}}%
\phantom{0}\\\phantom{0}\\\phantom{0}
\end{tabular}\endgroup%
}}\!\right]$}%
{$\left[\!\llap{\phantom{%
\begingroup \smaller\smaller\smaller\begin{tabular}{@{}c@{}}%
0\\0\\0
\end{tabular}\endgroup%
}}\right.$}%
\begingroup \smaller\smaller\smaller\begin{tabular}{@{}c@{}}%
6\\4\\0
\end{tabular}\endgroup%
\kern3pt%
\begingroup \smaller\smaller\smaller\begin{tabular}{@{}c@{}}%
64\\32\\4
\end{tabular}\endgroup%
\kern3pt%
\begingroup \smaller\smaller\smaller\begin{tabular}{@{}c@{}}%
9\\3\\1
\end{tabular}\endgroup%
\kern3pt%
\begingroup \smaller\smaller\smaller\begin{tabular}{@{}c@{}}%
16\\0\\2
\end{tabular}\endgroup%
{$\left.\llap{\phantom{%
\begingroup \smaller\smaller\smaller\begin{tabular}{@{}c@{}}%
0\\0\\0
\end{tabular}\endgroup%
}}\!\right]$}%
}%
\ifdim\wd\matricesbox>\halfwidth\myboxwidth=\hsize\else\myboxwidth=\halfwidth\fi
\vbox{%
\ifdim\myboxwidth=\hsize
\setbox\onelinebox=\hbox{%
\vbox{\hbox{%
$\Pi_{12,13}$ spans $L_{172.10}$%
}\hbox{%
$|222|222|222|222\rtimes D_{4}$%
}%
}%
\hfill\copy\matricesbox
}%
\ifdim\wd\onelinebox>\myboxwidth
\hbox to \myboxwidth{%
$\Pi_{12,13}$ spans $L_{172.10}$%
\hfil
$|222|222|222|222\rtimes D_{4}$%
}%
\box\matricesbox
\else
\hbox to \myboxwidth{%
\unhbox\onelinebox
}%
\fi
\else
\hbox to \myboxwidth{%
$\Pi_{12,13}$ spans $L_{172.10}$%
\hfil}%
\hbox to \myboxwidth{%
$|222|222|222|222\rtimes D_{4}$%
\hfil}%
\box\matricesbox
\fi
}%
\hfill\discretionary{}{}{}%
\setbox\matricesbox=\hbox{%
{$\left[\!\llap{\phantom{%
\begingroup \smaller\smaller\smaller\begin{tabular}{@{}c@{}}%
\phantom{0}\\\phantom{0}\\\phantom{0}
\end{tabular}\endgroup%
}}\right.$}%
\begingroup \smaller\smaller\smaller\begin{tabular}{@{}c@{}}%
-2\\\phantom{0}\\\phantom{0}
\end{tabular}\endgroup%
\kern3pt%
\begingroup \smaller\smaller\smaller\begin{tabular}{@{}c@{}}%
\phantom{0}\\9\\\phantom{0}
\end{tabular}\endgroup%
\kern3pt%
\begingroup \smaller\smaller\smaller\begin{tabular}{@{}c@{}}%
\phantom{0}\\\phantom{0}\\3
\end{tabular}\endgroup%
{$\left.\llap{\phantom{%
\begingroup \smaller\smaller\smaller\begin{tabular}{@{}c@{}}%
\phantom{0}\\\phantom{0}\\\phantom{0}
\end{tabular}\endgroup%
}}\!\right]$}%
{$\left[\!\llap{\phantom{%
\begingroup \smaller\smaller\smaller\begin{tabular}{@{}c@{}}%
0\\0\\0
\end{tabular}\endgroup%
}}\right.$}%
\begingroup \smaller\smaller\smaller\begin{tabular}{@{}c@{}}%
4\\-2\\0
\end{tabular}\endgroup%
\kern3pt%
\begingroup \smaller\smaller\smaller\begin{tabular}{@{}c@{}}%
9\\-4\\-3
\end{tabular}\endgroup%
\kern3pt%
\begingroup \smaller\smaller\smaller\begin{tabular}{@{}c@{}}%
3\\-1\\-2
\end{tabular}\endgroup%
\kern3pt%
\begingroup \smaller\smaller\smaller\begin{tabular}{@{}c@{}}%
1\\0\\-1
\end{tabular}\endgroup%
{$\left.\llap{\phantom{%
\begingroup \smaller\smaller\smaller\begin{tabular}{@{}c@{}}%
0\\0\\0
\end{tabular}\endgroup%
}}\!\right]$}%
}%
\ifdim\wd\matricesbox>\halfwidth\myboxwidth=\hsize\else\myboxwidth=\halfwidth\fi
\vbox{%
\ifdim\myboxwidth=\hsize
\setbox\onelinebox=\hbox{%
\vbox{\hbox{%
$\Pi_{12,14}$ spans $L_{175.1}$%
}\hbox{%
$22|222|222|222|2\rtimes D_{4}$%
}%
}%
\hfill\copy\matricesbox
}%
\ifdim\wd\onelinebox>\myboxwidth
\hbox to \myboxwidth{%
$\Pi_{12,14}$ spans $L_{175.1}$%
\hfil
$22|222|222|222|2\rtimes D_{4}$%
}%
\box\matricesbox
\else
\hbox to \myboxwidth{%
\unhbox\onelinebox
}%
\fi
\else
\hbox to \myboxwidth{%
$\Pi_{12,14}$ spans $L_{175.1}$%
\hfil}%
\hbox to \myboxwidth{%
$22|222|222|222|2\rtimes D_{4}$%
\hfil}%
\box\matricesbox
\fi
}%
\hfill\discretionary{}{}{}%
\setbox\matricesbox=\hbox{%
{$\left[\!\llap{\phantom{%
\begingroup \smaller\smaller\smaller\begin{tabular}{@{}c@{}}%
\phantom{0}\\\phantom{0}\\\phantom{0}
\end{tabular}\endgroup%
}}\right.$}%
\begingroup \smaller\smaller\smaller\begin{tabular}{@{}c@{}}%
-1\\\phantom{0}\\\phantom{0}
\end{tabular}\endgroup%
\kern3pt%
\begingroup \smaller\smaller\smaller\begin{tabular}{@{}c@{}}%
\phantom{0}\\3\\\phantom{0}
\end{tabular}\endgroup%
\kern3pt%
\begingroup \smaller\smaller\smaller\begin{tabular}{@{}c@{}}%
\phantom{0}\\\phantom{0}\\15
\end{tabular}\endgroup%
{$\left.\llap{\phantom{%
\begingroup \smaller\smaller\smaller\begin{tabular}{@{}c@{}}%
\phantom{0}\\\phantom{0}\\\phantom{0}
\end{tabular}\endgroup%
}}\!\right]$}%
{$\left[\!\llap{\phantom{%
\begingroup \smaller\smaller\smaller\begin{tabular}{@{}c@{}}%
0\\0\\0
\end{tabular}\endgroup%
}}\right.$}%
\begingroup \smaller\smaller\smaller\begin{tabular}{@{}c@{}}%
3\\-2\\0
\end{tabular}\endgroup%
\kern3pt%
\begingroup \smaller\smaller\smaller\begin{tabular}{@{}c@{}}%
6\\-3\\1
\end{tabular}\endgroup%
\kern3pt%
\begingroup \smaller\smaller\smaller\begin{tabular}{@{}c@{}}%
8\\-2\\2
\end{tabular}\endgroup%
\kern3pt%
\begingroup \smaller\smaller\smaller\begin{tabular}{@{}c@{}}%
15\\0\\4
\end{tabular}\endgroup%
{$\left.\llap{\phantom{%
\begingroup \smaller\smaller\smaller\begin{tabular}{@{}c@{}}%
0\\0\\0
\end{tabular}\endgroup%
}}\!\right]$}%
}%
\ifdim\wd\matricesbox>\halfwidth\myboxwidth=\hsize\else\myboxwidth=\halfwidth\fi
\vbox{%
\ifdim\myboxwidth=\hsize
\setbox\onelinebox=\hbox{%
\vbox{\hbox{%
$\Pi_{12,15}$ spans $L_{128.12}$%
}\hbox{%
$|222|222|222|222\rtimes D_{4}$%
}%
}%
\hfill\copy\matricesbox
}%
\ifdim\wd\onelinebox>\myboxwidth
\hbox to \myboxwidth{%
$\Pi_{12,15}$ spans $L_{128.12}$%
\hfil
$|222|222|222|222\rtimes D_{4}$%
}%
\box\matricesbox
\else
\hbox to \myboxwidth{%
\unhbox\onelinebox
}%
\fi
\else
\hbox to \myboxwidth{%
$\Pi_{12,15}$ spans $L_{128.12}$%
\hfil}%
\hbox to \myboxwidth{%
$|222|222|222|222\rtimes D_{4}$%
\hfil}%
\box\matricesbox
\fi
}%
\hfill\discretionary{}{}{}%
\setbox\matricesbox=\hbox{%
{$\left[\!\llap{\phantom{%
\begingroup \smaller\smaller\smaller\begin{tabular}{@{}c@{}}%
\phantom{0}\\\phantom{0}\\\phantom{0}
\end{tabular}\endgroup%
}}\right.$}%
\begingroup \smaller\smaller\smaller\begin{tabular}{@{}c@{}}%
-1\\\phantom{0}\\\phantom{0}
\end{tabular}\endgroup%
\kern3pt%
\begingroup \smaller\smaller\smaller\begin{tabular}{@{}c@{}}%
\phantom{0}\\60\\\phantom{0}
\end{tabular}\endgroup%
\kern3pt%
\begingroup \smaller\smaller\smaller\begin{tabular}{@{}c@{}}%
\phantom{0}\\\phantom{0}\\2
\end{tabular}\endgroup%
{$\left.\llap{\phantom{%
\begingroup \smaller\smaller\smaller\begin{tabular}{@{}c@{}}%
\phantom{0}\\\phantom{0}\\\phantom{0}
\end{tabular}\endgroup%
}}\!\right]$}%
{$\left[\!\llap{\phantom{%
\begingroup \smaller\smaller\smaller\begin{tabular}{@{}c@{}}%
0\\0\\0
\end{tabular}\endgroup%
}}\right.$}%
\begingroup \smaller\smaller\smaller\begin{tabular}{@{}c@{}}%
15\\-2\\0
\end{tabular}\endgroup%
\kern3pt%
\begingroup \smaller\smaller\smaller\begin{tabular}{@{}c@{}}%
16\\-2\\-4
\end{tabular}\endgroup%
\kern3pt%
\begingroup \smaller\smaller\smaller\begin{tabular}{@{}c@{}}%
10\\-1\\-5
\end{tabular}\endgroup%
\kern3pt%
\begingroup \smaller\smaller\smaller\begin{tabular}{@{}c@{}}%
1\\0\\-1
\end{tabular}\endgroup%
{$\left.\llap{\phantom{%
\begingroup \smaller\smaller\smaller\begin{tabular}{@{}c@{}}%
0\\0\\0
\end{tabular}\endgroup%
}}\!\right]$}%
}%
\ifdim\wd\matricesbox>\halfwidth\myboxwidth=\hsize\else\myboxwidth=\halfwidth\fi
\vbox{%
\ifdim\myboxwidth=\hsize
\setbox\onelinebox=\hbox{%
\vbox{\hbox{%
$\Pi_{12,16}$ spans $L_{205.6}$%
}\hbox{%
$22|222|222|222|2\rtimes D_{4}$%
}%
}%
\hfill\copy\matricesbox
}%
\ifdim\wd\onelinebox>\myboxwidth
\hbox to \myboxwidth{%
$\Pi_{12,16}$ spans $L_{205.6}$%
\hfil
$22|222|222|222|2\rtimes D_{4}$%
}%
\box\matricesbox
\else
\hbox to \myboxwidth{%
\unhbox\onelinebox
}%
\fi
\else
\hbox to \myboxwidth{%
$\Pi_{12,16}$ spans $L_{205.6}$%
\hfil}%
\hbox to \myboxwidth{%
$22|222|222|222|2\rtimes D_{4}$%
\hfil}%
\box\matricesbox
\fi
}%
\hfill\discretionary{}{}{}%
\setbox\matricesbox=\hbox{%
{$\left[\!\llap{\phantom{%
\begingroup \smaller\smaller\smaller\begin{tabular}{@{}c@{}}%
\phantom{0}\\\phantom{0}\\\phantom{0}
\end{tabular}\endgroup%
}}\right.$}%
\begingroup \smaller\smaller\smaller\begin{tabular}{@{}c@{}}%
-1/4\\\phantom{0}\\\phantom{0}
\end{tabular}\endgroup%
\kern3pt%
\begingroup \smaller\smaller\smaller\begin{tabular}{@{}c@{}}%
\phantom{0}\\15\\\phantom{0}
\end{tabular}\endgroup%
\kern3pt%
\begingroup \smaller\smaller\smaller\begin{tabular}{@{}c@{}}%
\phantom{0}\\\phantom{0}\\195
\end{tabular}\endgroup%
{$\left.\llap{\phantom{%
\begingroup \smaller\smaller\smaller\begin{tabular}{@{}c@{}}%
\phantom{0}\\\phantom{0}\\\phantom{0}
\end{tabular}\endgroup%
}}\!\right]$}%
{$\left[\!\llap{\phantom{%
\begingroup \smaller\smaller\smaller\begin{tabular}{@{}c@{}}%
0\\0\\0
\end{tabular}\endgroup%
}}\right.$}%
\begingroup \smaller\smaller\smaller\begin{tabular}{@{}c@{}}%
6\\-1\\0
\end{tabular}\endgroup%
\kern3pt%
\begingroup \smaller\smaller\smaller\begin{tabular}{@{}c@{}}%
260\\-26\\-6
\end{tabular}\endgroup%
\kern3pt%
\begingroup \smaller\smaller\smaller\begin{tabular}{@{}c@{}}%
30\\-2\\-1
\end{tabular}\endgroup%
\kern3pt%
\begingroup \smaller\smaller\smaller\begin{tabular}{@{}c@{}}%
26\\0\\-1
\end{tabular}\endgroup%
{$\left.\llap{\phantom{%
\begingroup \smaller\smaller\smaller\begin{tabular}{@{}c@{}}%
0\\0\\0
\end{tabular}\endgroup%
}}\!\right]$}%
}%
\ifdim\wd\matricesbox>\halfwidth\myboxwidth=\hsize\else\myboxwidth=\halfwidth\fi
\vbox{%
\ifdim\myboxwidth=\hsize
\setbox\onelinebox=\hbox{%
\vbox{\hbox{%
$\Pi_{12,17}$ spans $L_{84.13}$%
}\hbox{%
$|222|222|222|222\rtimes D_{4}$%
}%
}%
\hfill\copy\matricesbox
}%
\ifdim\wd\onelinebox>\myboxwidth
\hbox to \myboxwidth{%
$\Pi_{12,17}$ spans $L_{84.13}$%
\hfil
$|222|222|222|222\rtimes D_{4}$%
}%
\box\matricesbox
\else
\hbox to \myboxwidth{%
\unhbox\onelinebox
}%
\fi
\else
\hbox to \myboxwidth{%
$\Pi_{12,17}$ spans $L_{84.13}$%
\hfil}%
\hbox to \myboxwidth{%
$|222|222|222|222\rtimes D_{4}$%
\hfil}%
\box\matricesbox
\fi
}%
\hfill\discretionary{}{}{}%
\setbox\matricesbox=\hbox{%
{$\left[\!\llap{\phantom{%
\begingroup \smaller\smaller\smaller\begin{tabular}{@{}c@{}}%
\phantom{0}\\\phantom{0}\\\phantom{0}
\end{tabular}\endgroup%
}}\right.$}%
\begingroup \smaller\smaller\smaller\begin{tabular}{@{}c@{}}%
-1/2\\\phantom{0}\\\phantom{0}
\end{tabular}\endgroup%
\kern3pt%
\begingroup \smaller\smaller\smaller\begin{tabular}{@{}c@{}}%
\phantom{0}\\15/2\\\phantom{0}
\end{tabular}\endgroup%
\kern3pt%
\begingroup \smaller\smaller\smaller\begin{tabular}{@{}c@{}}%
\phantom{0}\\\phantom{0}\\40
\end{tabular}\endgroup%
{$\left.\llap{\phantom{%
\begingroup \smaller\smaller\smaller\begin{tabular}{@{}c@{}}%
\phantom{0}\\\phantom{0}\\\phantom{0}
\end{tabular}\endgroup%
}}\!\right]$}%
{$\left[\!\llap{\phantom{%
\begingroup \smaller\smaller\smaller\begin{tabular}{@{}c@{}}%
0\\0\\0
\end{tabular}\endgroup%
}}\right.$}%
\begingroup \smaller\smaller\smaller\begin{tabular}{@{}c@{}}%
3\\1\\0
\end{tabular}\endgroup%
\kern3pt%
\begingroup \smaller\smaller\smaller\begin{tabular}{@{}c@{}}%
40\\8\\-3
\end{tabular}\endgroup%
\kern3pt%
\begingroup \smaller\smaller\smaller\begin{tabular}{@{}c@{}}%
30\\4\\-3
\end{tabular}\endgroup%
\kern3pt%
\begingroup \smaller\smaller\smaller\begin{tabular}{@{}c@{}}%
8\\0\\-1
\end{tabular}\endgroup%
{$\left.\llap{\phantom{%
\begingroup \smaller\smaller\smaller\begin{tabular}{@{}c@{}}%
0\\0\\0
\end{tabular}\endgroup%
}}\!\right]$}%
}%
\ifdim\wd\matricesbox>\halfwidth\myboxwidth=\hsize\else\myboxwidth=\halfwidth\fi
\vbox{%
\ifdim\myboxwidth=\hsize
\setbox\onelinebox=\hbox{%
\vbox{\hbox{%
$\Pi_{12,18}$ spans $L_{17.25}$%
}\hbox{%
$|222|222|222|222\rtimes D_{4}$%
}%
}%
\hfill\copy\matricesbox
}%
\ifdim\wd\onelinebox>\myboxwidth
\hbox to \myboxwidth{%
$\Pi_{12,18}$ spans $L_{17.25}$%
\hfil
$|222|222|222|222\rtimes D_{4}$%
}%
\box\matricesbox
\else
\hbox to \myboxwidth{%
\unhbox\onelinebox
}%
\fi
\else
\hbox to \myboxwidth{%
$\Pi_{12,18}$ spans $L_{17.25}$%
\hfil}%
\hbox to \myboxwidth{%
$|222|222|222|222\rtimes D_{4}$%
\hfil}%
\box\matricesbox
\fi
}%
\hfill\discretionary{}{}{}%
\setbox\matricesbox=\hbox{%
{$\left[\!\llap{\phantom{%
\begingroup \smaller\smaller\smaller\begin{tabular}{@{}c@{}}%
\phantom{0}\\\phantom{0}\\\phantom{0}
\end{tabular}\endgroup%
}}\right.$}%
\begingroup \smaller\smaller\smaller\begin{tabular}{@{}c@{}}%
-1\\\phantom{0}\\\phantom{0}
\end{tabular}\endgroup%
\kern3pt%
\begingroup \smaller\smaller\smaller\begin{tabular}{@{}c@{}}%
\phantom{0}\\12\\\phantom{0}
\end{tabular}\endgroup%
\kern3pt%
\begingroup \smaller\smaller\smaller\begin{tabular}{@{}c@{}}%
\phantom{0}\\\phantom{0}\\30
\end{tabular}\endgroup%
{$\left.\llap{\phantom{%
\begingroup \smaller\smaller\smaller\begin{tabular}{@{}c@{}}%
\phantom{0}\\\phantom{0}\\\phantom{0}
\end{tabular}\endgroup%
}}\!\right]$}%
{$\left[\!\llap{\phantom{%
\begingroup \smaller\smaller\smaller\begin{tabular}{@{}c@{}}%
0\\0\\0
\end{tabular}\endgroup%
}}\right.$}%
\begingroup \smaller\smaller\smaller\begin{tabular}{@{}c@{}}%
3\\-1\\0
\end{tabular}\endgroup%
\kern3pt%
\begingroup \smaller\smaller\smaller\begin{tabular}{@{}c@{}}%
20\\-5\\-2
\end{tabular}\endgroup%
\kern3pt%
\begingroup \smaller\smaller\smaller\begin{tabular}{@{}c@{}}%
6\\-1\\-1
\end{tabular}\endgroup%
\kern3pt%
\begingroup \smaller\smaller\smaller\begin{tabular}{@{}c@{}}%
5\\0\\-1
\end{tabular}\endgroup%
{$\left.\llap{\phantom{%
\begingroup \smaller\smaller\smaller\begin{tabular}{@{}c@{}}%
0\\0\\0
\end{tabular}\endgroup%
}}\!\right]$}%
}%
\ifdim\wd\matricesbox>\halfwidth\myboxwidth=\hsize\else\myboxwidth=\halfwidth\fi
\vbox{%
\ifdim\myboxwidth=\hsize
\setbox\onelinebox=\hbox{%
\vbox{\hbox{%
$\Pi_{12,19}$ spans $L_{205.10}$%
}\hbox{%
$|222|222|222|222\rtimes D_{4}$%
}%
}%
\hfill\copy\matricesbox
}%
\ifdim\wd\onelinebox>\myboxwidth
\hbox to \myboxwidth{%
$\Pi_{12,19}$ spans $L_{205.10}$%
\hfil
$|222|222|222|222\rtimes D_{4}$%
}%
\box\matricesbox
\else
\hbox to \myboxwidth{%
\unhbox\onelinebox
}%
\fi
\else
\hbox to \myboxwidth{%
$\Pi_{12,19}$ spans $L_{205.10}$%
\hfil}%
\hbox to \myboxwidth{%
$|222|222|222|222\rtimes D_{4}$%
\hfil}%
\box\matricesbox
\fi
}%
\hfill\discretionary{}{}{}%
\setbox\matricesbox=\hbox{%
{$\left[\!\llap{\phantom{%
\begingroup \smaller\smaller\smaller\begin{tabular}{@{}c@{}}%
\phantom{0}\\\phantom{0}\\\phantom{0}
\end{tabular}\endgroup%
}}\right.$}%
\begingroup \smaller\smaller\smaller\begin{tabular}{@{}c@{}}%
-1/2\\\phantom{0}\\\phantom{0}
\end{tabular}\endgroup%
\kern3pt%
\begingroup \smaller\smaller\smaller\begin{tabular}{@{}c@{}}%
\phantom{0}\\3/2\\\phantom{0}
\end{tabular}\endgroup%
\kern3pt%
\begingroup \smaller\smaller\smaller\begin{tabular}{@{}c@{}}%
\phantom{0}\\\phantom{0}\\10
\end{tabular}\endgroup%
{$\left.\llap{\phantom{%
\begingroup \smaller\smaller\smaller\begin{tabular}{@{}c@{}}%
\phantom{0}\\\phantom{0}\\\phantom{0}
\end{tabular}\endgroup%
}}\!\right]$}%
{$\left[\!\llap{\phantom{%
\begingroup \smaller\smaller\smaller\begin{tabular}{@{}c@{}}%
0\\0\\0
\end{tabular}\endgroup%
}}\right.$}%
\begingroup \smaller\smaller\smaller\begin{tabular}{@{}c@{}}%
6\\4\\0
\end{tabular}\endgroup%
\kern3pt%
\begingroup \smaller\smaller\smaller\begin{tabular}{@{}c@{}}%
32\\16\\-4
\end{tabular}\endgroup%
\kern3pt%
\begingroup \smaller\smaller\smaller\begin{tabular}{@{}c@{}}%
15\\5\\-3
\end{tabular}\endgroup%
\kern3pt%
\begingroup \smaller\smaller\smaller\begin{tabular}{@{}c@{}}%
8\\0\\-2
\end{tabular}\endgroup%
{$\left.\llap{\phantom{%
\begingroup \smaller\smaller\smaller\begin{tabular}{@{}c@{}}%
0\\0\\0
\end{tabular}\endgroup%
}}\!\right]$}%
}%
\ifdim\wd\matricesbox>\halfwidth\myboxwidth=\hsize\else\myboxwidth=\halfwidth\fi
\vbox{%
\ifdim\myboxwidth=\hsize
\setbox\onelinebox=\hbox{%
\vbox{\hbox{%
$\Pi_{12,20}$ spans $L_{17.3}$%
}\hbox{%
$|222|222|222|222\rtimes D_{4}$%
}%
}%
\hfill\copy\matricesbox
}%
\ifdim\wd\onelinebox>\myboxwidth
\hbox to \myboxwidth{%
$\Pi_{12,20}$ spans $L_{17.3}$%
\hfil
$|222|222|222|222\rtimes D_{4}$%
}%
\box\matricesbox
\else
\hbox to \myboxwidth{%
\unhbox\onelinebox
}%
\fi
\else
\hbox to \myboxwidth{%
$\Pi_{12,20}$ spans $L_{17.3}$%
\hfil}%
\hbox to \myboxwidth{%
$|222|222|222|222\rtimes D_{4}$%
\hfil}%
\box\matricesbox
\fi
}%
\hfill\discretionary{}{}{}%
\setbox\matricesbox=\hbox{%
{$\left[\!\llap{\phantom{%
\begingroup \smaller\smaller\smaller\begin{tabular}{@{}c@{}}%
\phantom{0}\\\phantom{0}\\\phantom{0}
\end{tabular}\endgroup%
}}\right.$}%
\begingroup \smaller\smaller\smaller\begin{tabular}{@{}c@{}}%
-1/8\\\phantom{0}\\\phantom{0}
\end{tabular}\endgroup%
\kern3pt%
\begingroup \smaller\smaller\smaller\begin{tabular}{@{}c@{}}%
\phantom{0}\\35/2\\\phantom{0}
\end{tabular}\endgroup%
\kern3pt%
\begingroup \smaller\smaller\smaller\begin{tabular}{@{}c@{}}%
\phantom{0}\\\phantom{0}\\21
\end{tabular}\endgroup%
{$\left.\llap{\phantom{%
\begingroup \smaller\smaller\smaller\begin{tabular}{@{}c@{}}%
\phantom{0}\\\phantom{0}\\\phantom{0}
\end{tabular}\endgroup%
}}\!\right]$}%
{$\left[\!\llap{\phantom{%
\begingroup \smaller\smaller\smaller\begin{tabular}{@{}c@{}}%
0\\0\\0
\end{tabular}\endgroup%
}}\right.$}%
\begingroup \smaller\smaller\smaller\begin{tabular}{@{}c@{}}%
20\\-2\\0
\end{tabular}\endgroup%
\kern3pt%
\begingroup \smaller\smaller\smaller\begin{tabular}{@{}c@{}}%
84\\-6\\4
\end{tabular}\endgroup%
\kern3pt%
\begingroup \smaller\smaller\smaller\begin{tabular}{@{}c@{}}%
70\\-3\\5
\end{tabular}\endgroup%
\kern3pt%
\begingroup \smaller\smaller\smaller\begin{tabular}{@{}c@{}}%
48\\0\\4
\end{tabular}\endgroup%
{$\left.\llap{\phantom{%
\begingroup \smaller\smaller\smaller\begin{tabular}{@{}c@{}}%
0\\0\\0
\end{tabular}\endgroup%
}}\!\right]$}%
}%
\ifdim\wd\matricesbox>\halfwidth\myboxwidth=\hsize\else\myboxwidth=\halfwidth\fi
\vbox{%
\ifdim\myboxwidth=\hsize
\setbox\onelinebox=\hbox{%
\vbox{\hbox{%
$\Pi_{12,21}$ spans $L_{69.11}$%
}\hbox{%
$|222|222|222|222\rtimes D_{4}$%
}%
}%
\hfill\copy\matricesbox
}%
\ifdim\wd\onelinebox>\myboxwidth
\hbox to \myboxwidth{%
$\Pi_{12,21}$ spans $L_{69.11}$%
\hfil
$|222|222|222|222\rtimes D_{4}$%
}%
\box\matricesbox
\else
\hbox to \myboxwidth{%
\unhbox\onelinebox
}%
\fi
\else
\hbox to \myboxwidth{%
$\Pi_{12,21}$ spans $L_{69.11}$%
\hfil}%
\hbox to \myboxwidth{%
$|222|222|222|222\rtimes D_{4}$%
\hfil}%
\box\matricesbox
\fi
}%
\hfill\discretionary{}{}{}%
\setbox\matricesbox=\hbox{%
{$\left[\!\llap{\phantom{%
\begingroup \smaller\smaller\smaller\begin{tabular}{@{}c@{}}%
\phantom{0}\\\phantom{0}\\\phantom{0}
\end{tabular}\endgroup%
}}\right.$}%
\begingroup \smaller\smaller\smaller\begin{tabular}{@{}c@{}}%
-1\\\phantom{0}\\\phantom{0}
\end{tabular}\endgroup%
\kern3pt%
\begingroup \smaller\smaller\smaller\begin{tabular}{@{}c@{}}%
\phantom{0}\\45/2\\\phantom{0}
\end{tabular}\endgroup%
\kern3pt%
\begingroup \smaller\smaller\smaller\begin{tabular}{@{}c@{}}%
\phantom{0}\\\phantom{0}\\3/2
\end{tabular}\endgroup%
{$\left.\llap{\phantom{%
\begingroup \smaller\smaller\smaller\begin{tabular}{@{}c@{}}%
\phantom{0}\\\phantom{0}\\\phantom{0}
\end{tabular}\endgroup%
}}\!\right]$}%
{$\left[\!\llap{\phantom{%
\begingroup \smaller\smaller\smaller\begin{tabular}{@{}c@{}}%
0\\0\\0
\end{tabular}\endgroup%
}}\right.$}%
\begingroup \smaller\smaller\smaller\begin{tabular}{@{}c@{}}%
9\\-2\\0
\end{tabular}\endgroup%
\kern3pt%
\begingroup \smaller\smaller\smaller\begin{tabular}{@{}c@{}}%
15\\-3\\-5
\end{tabular}\endgroup%
\kern3pt%
\begingroup \smaller\smaller\smaller\begin{tabular}{@{}c@{}}%
15\\-2\\-10
\end{tabular}\endgroup%
\kern3pt%
\begingroup \smaller\smaller\smaller\begin{tabular}{@{}c@{}}%
2\\0\\-2
\end{tabular}\endgroup%
{$\left.\llap{\phantom{%
\begingroup \smaller\smaller\smaller\begin{tabular}{@{}c@{}}%
0\\0\\0
\end{tabular}\endgroup%
}}\!\right]$}%
}%
\ifdim\wd\matricesbox>\halfwidth\myboxwidth=\hsize\else\myboxwidth=\halfwidth\fi
\vbox{%
\ifdim\myboxwidth=\hsize
\setbox\onelinebox=\hbox{%
\vbox{\hbox{%
$\Pi_{12,22}$ spans $L_{18.17}$%
}\hbox{%
$\infty2|2\infty2|2\infty2|2\infty2|2\rtimes D_{4}$%
}%
}%
\hfill\copy\matricesbox
}%
\ifdim\wd\onelinebox>\myboxwidth
\hbox to \myboxwidth{%
$\Pi_{12,22}$ spans $L_{18.17}$%
\hfil
$\infty2|2\infty2|2\infty2|2\infty2|2\rtimes D_{4}$%
}%
\box\matricesbox
\else
\hbox to \myboxwidth{%
\unhbox\onelinebox
}%
\fi
\else
\hbox to \myboxwidth{%
$\Pi_{12,22}$ spans $L_{18.17}$%
\hfil}%
\hbox to \myboxwidth{%
$\infty2|2\infty2|2\infty2|2\infty2|2\rtimes D_{4}$%
\hfil}%
\box\matricesbox
\fi
}%
\hfill\discretionary{}{}{}%
\setbox\matricesbox=\hbox{%
{$\left[\!\llap{\phantom{%
\begingroup \smaller\smaller\smaller\begin{tabular}{@{}c@{}}%
\phantom{0}\\\phantom{0}\\\phantom{0}
\end{tabular}\endgroup%
}}\right.$}%
\begingroup \smaller\smaller\smaller\begin{tabular}{@{}c@{}}%
-1\\\phantom{0}\\\phantom{0}
\end{tabular}\endgroup%
\kern3pt%
\begingroup \smaller\smaller\smaller\begin{tabular}{@{}c@{}}%
\phantom{0}\\6\\\phantom{0}
\end{tabular}\endgroup%
\kern3pt%
\begingroup \smaller\smaller\smaller\begin{tabular}{@{}c@{}}%
\phantom{0}\\\phantom{0}\\6
\end{tabular}\endgroup%
{$\left.\llap{\phantom{%
\begingroup \smaller\smaller\smaller\begin{tabular}{@{}c@{}}%
\phantom{0}\\\phantom{0}\\\phantom{0}
\end{tabular}\endgroup%
}}\!\right]$}%
{$\left[\!\llap{\phantom{%
\begingroup \smaller\smaller\smaller\begin{tabular}{@{}c@{}}%
0\\0\\0
\end{tabular}\endgroup%
}}\right.$}%
\begingroup \smaller\smaller\smaller\begin{tabular}{@{}c@{}}%
2\\-1\\0
\end{tabular}\endgroup%
\kern3pt%
\begingroup \smaller\smaller\smaller\begin{tabular}{@{}c@{}}%
24\\-8\\-6
\end{tabular}\endgroup%
\kern3pt%
\begingroup \smaller\smaller\smaller\begin{tabular}{@{}c@{}}%
24\\-6\\-8
\end{tabular}\endgroup%
\kern3pt%
\begingroup \smaller\smaller\smaller\begin{tabular}{@{}c@{}}%
12\\-1\\-5
\end{tabular}\endgroup%
\kern3pt%
\begingroup \smaller\smaller\smaller\begin{tabular}{@{}c@{}}%
12\\1\\-5
\end{tabular}\endgroup%
\kern3pt%
\begingroup \smaller\smaller\smaller\begin{tabular}{@{}c@{}}%
3\\1\\-1
\end{tabular}\endgroup%
\kern3pt%
\begingroup \smaller\smaller\smaller\begin{tabular}{@{}c@{}}%
2\\1\\0
\end{tabular}\endgroup%
{$\left.\llap{\phantom{%
\begingroup \smaller\smaller\smaller\begin{tabular}{@{}c@{}}%
0\\0\\0
\end{tabular}\endgroup%
}}\!\right]$}%
}%
\ifdim\wd\matricesbox>\halfwidth\myboxwidth=\hsize\else\myboxwidth=\halfwidth\fi
\vbox{%
\ifdim\myboxwidth=\hsize
\setbox\onelinebox=\hbox{%
\vbox{\hbox{%
$\Pi_{12,23}$ spans $L_{123.8}$%
}\hbox{%
$|224222|222422\rtimes D_{2}$%
}%
}%
\hfill\copy\matricesbox
}%
\ifdim\wd\onelinebox>\myboxwidth
\hbox to \myboxwidth{%
$\Pi_{12,23}$ spans $L_{123.8}$%
\hfil
$|224222|222422\rtimes D_{2}$%
}%
\box\matricesbox
\else
\hbox to \myboxwidth{%
\unhbox\onelinebox
}%
\fi
\else
\hbox to \myboxwidth{%
$\Pi_{12,23}$ spans $L_{123.8}$%
\hfil}%
\hbox to \myboxwidth{%
$|224222|222422\rtimes D_{2}$%
\hfil}%
\box\matricesbox
\fi
}%
\hfill\discretionary{}{}{}%
\setbox\matricesbox=\hbox{%
{$\left[\!\llap{\phantom{%
\begingroup \smaller\smaller\smaller\begin{tabular}{@{}c@{}}%
\phantom{0}\\\phantom{0}\\\phantom{0}
\end{tabular}\endgroup%
}}\right.$}%
\begingroup \smaller\smaller\smaller\begin{tabular}{@{}c@{}}%
-1\\\phantom{0}\\\phantom{0}
\end{tabular}\endgroup%
\kern3pt%
\begingroup \smaller\smaller\smaller\begin{tabular}{@{}c@{}}%
\phantom{0}\\3\\\phantom{0}
\end{tabular}\endgroup%
\kern3pt%
\begingroup \smaller\smaller\smaller\begin{tabular}{@{}c@{}}%
\phantom{0}\\\phantom{0}\\3
\end{tabular}\endgroup%
{$\left.\llap{\phantom{%
\begingroup \smaller\smaller\smaller\begin{tabular}{@{}c@{}}%
\phantom{0}\\\phantom{0}\\\phantom{0}
\end{tabular}\endgroup%
}}\!\right]$}%
{$\left[\!\llap{\phantom{%
\begingroup \smaller\smaller\smaller\begin{tabular}{@{}c@{}}%
0\\0\\0
\end{tabular}\endgroup%
}}\right.$}%
\begingroup \smaller\smaller\smaller\begin{tabular}{@{}c@{}}%
3\\-2\\0
\end{tabular}\endgroup%
\kern3pt%
\begingroup \smaller\smaller\smaller\begin{tabular}{@{}c@{}}%
12\\-6\\4
\end{tabular}\endgroup%
\kern3pt%
\begingroup \smaller\smaller\smaller\begin{tabular}{@{}c@{}}%
12\\-4\\6
\end{tabular}\endgroup%
\kern3pt%
\begingroup \smaller\smaller\smaller\begin{tabular}{@{}c@{}}%
24\\-2\\14
\end{tabular}\endgroup%
\kern3pt%
\begingroup \smaller\smaller\smaller\begin{tabular}{@{}c@{}}%
24\\2\\14
\end{tabular}\endgroup%
\kern3pt%
\begingroup \smaller\smaller\smaller\begin{tabular}{@{}c@{}}%
2\\1\\1
\end{tabular}\endgroup%
\kern3pt%
\begingroup \smaller\smaller\smaller\begin{tabular}{@{}c@{}}%
3\\2\\0
\end{tabular}\endgroup%
{$\left.\llap{\phantom{%
\begingroup \smaller\smaller\smaller\begin{tabular}{@{}c@{}}%
0\\0\\0
\end{tabular}\endgroup%
}}\!\right]$}%
}%
\ifdim\wd\matricesbox>\halfwidth\myboxwidth=\hsize\else\myboxwidth=\halfwidth\fi
\vbox{%
\ifdim\myboxwidth=\hsize
\setbox\onelinebox=\hbox{%
\vbox{\hbox{%
$\Pi_{12,24}$ spans $L_{123.8}$%
}\hbox{%
$22422|224222|2\rtimes D_{2}$%
}%
}%
\hfill\copy\matricesbox
}%
\ifdim\wd\onelinebox>\myboxwidth
\hbox to \myboxwidth{%
$\Pi_{12,24}$ spans $L_{123.8}$%
\hfil
$22422|224222|2\rtimes D_{2}$%
}%
\box\matricesbox
\else
\hbox to \myboxwidth{%
\unhbox\onelinebox
}%
\fi
\else
\hbox to \myboxwidth{%
$\Pi_{12,24}$ spans $L_{123.8}$%
\hfil}%
\hbox to \myboxwidth{%
$22422|224222|2\rtimes D_{2}$%
\hfil}%
\box\matricesbox
\fi
}%
\hfill\discretionary{}{}{}%
\setbox\matricesbox=\hbox{%
{$\left[\!\llap{\phantom{%
\begingroup \smaller\smaller\smaller\begin{tabular}{@{}c@{}}%
\phantom{0}\\\phantom{0}\\\phantom{0}
\end{tabular}\endgroup%
}}\right.$}%
\begingroup \smaller\smaller\smaller\begin{tabular}{@{}c@{}}%
-1\\\phantom{0}\\\phantom{0}
\end{tabular}\endgroup%
\kern3pt%
\begingroup \smaller\smaller\smaller\begin{tabular}{@{}c@{}}%
\phantom{0}\\6\\\phantom{0}
\end{tabular}\endgroup%
\kern3pt%
\begingroup \smaller\smaller\smaller\begin{tabular}{@{}c@{}}%
\phantom{0}\\\phantom{0}\\6
\end{tabular}\endgroup%
{$\left.\llap{\phantom{%
\begingroup \smaller\smaller\smaller\begin{tabular}{@{}c@{}}%
\phantom{0}\\\phantom{0}\\\phantom{0}
\end{tabular}\endgroup%
}}\!\right]$}%
{$\left[\!\llap{\phantom{%
\begingroup \smaller\smaller\smaller\begin{tabular}{@{}c@{}}%
0\\0\\0
\end{tabular}\endgroup%
}}\right.$}%
\begingroup \smaller\smaller\smaller\begin{tabular}{@{}c@{}}%
12\\5\\-1
\end{tabular}\endgroup%
\kern3pt%
\begingroup \smaller\smaller\smaller\begin{tabular}{@{}c@{}}%
24\\8\\-6
\end{tabular}\endgroup%
\kern3pt%
\begingroup \smaller\smaller\smaller\begin{tabular}{@{}c@{}}%
24\\6\\-8
\end{tabular}\endgroup%
\kern3pt%
\begingroup \smaller\smaller\smaller\begin{tabular}{@{}c@{}}%
2\\0\\-1
\end{tabular}\endgroup%
\kern3pt%
\begingroup \smaller\smaller\smaller\begin{tabular}{@{}c@{}}%
3\\-1\\-1
\end{tabular}\endgroup%
\kern3pt%
\begingroup \smaller\smaller\smaller\begin{tabular}{@{}c@{}}%
12\\-5\\-1
\end{tabular}\endgroup%
{$\left.\llap{\phantom{%
\begingroup \smaller\smaller\smaller\begin{tabular}{@{}c@{}}%
0\\0\\0
\end{tabular}\endgroup%
}}\!\right]$}%
}%
\ifdim\wd\matricesbox>\halfwidth\myboxwidth=\hsize\else\myboxwidth=\halfwidth\fi
\vbox{%
\ifdim\myboxwidth=\hsize
\setbox\onelinebox=\hbox{%
\vbox{\hbox{%
$\Pi_{12,25}$ spans $L_{123.8}$%
}\hbox{%
$224\slashtwo42222\slashtwo22\rtimes D_{2}$%
}%
}%
\hfill\copy\matricesbox
}%
\ifdim\wd\onelinebox>\myboxwidth
\hbox to \myboxwidth{%
$\Pi_{12,25}$ spans $L_{123.8}$%
\hfil
$224\slashtwo42222\slashtwo22\rtimes D_{2}$%
}%
\box\matricesbox
\else
\hbox to \myboxwidth{%
\unhbox\onelinebox
}%
\fi
\else
\hbox to \myboxwidth{%
$\Pi_{12,25}$ spans $L_{123.8}$%
\hfil}%
\hbox to \myboxwidth{%
$224\slashtwo42222\slashtwo22\rtimes D_{2}$%
\hfil}%
\box\matricesbox
\fi
}%
\hfill\discretionary{}{}{}%
\setbox\matricesbox=\hbox{%
{$\left[\!\llap{\phantom{%
\begingroup \smaller\smaller\smaller\begin{tabular}{@{}c@{}}%
\phantom{0}\\\phantom{0}\\\phantom{0}
\end{tabular}\endgroup%
}}\right.$}%
\begingroup \smaller\smaller\smaller\begin{tabular}{@{}c@{}}%
-1\\\phantom{0}\\\phantom{0}
\end{tabular}\endgroup%
\kern3pt%
\begingroup \smaller\smaller\smaller\begin{tabular}{@{}c@{}}%
\phantom{0}\\3\\\phantom{0}
\end{tabular}\endgroup%
\kern3pt%
\begingroup \smaller\smaller\smaller\begin{tabular}{@{}c@{}}%
\phantom{0}\\\phantom{0}\\3
\end{tabular}\endgroup%
{$\left.\llap{\phantom{%
\begingroup \smaller\smaller\smaller\begin{tabular}{@{}c@{}}%
\phantom{0}\\\phantom{0}\\\phantom{0}
\end{tabular}\endgroup%
}}\!\right]$}%
{$\left[\!\llap{\phantom{%
\begingroup \smaller\smaller\smaller\begin{tabular}{@{}c@{}}%
0\\0\\0
\end{tabular}\endgroup%
}}\right.$}%
\begingroup \smaller\smaller\smaller\begin{tabular}{@{}c@{}}%
24\\14\\-2
\end{tabular}\endgroup%
\kern3pt%
\begingroup \smaller\smaller\smaller\begin{tabular}{@{}c@{}}%
12\\6\\-4
\end{tabular}\endgroup%
\kern3pt%
\begingroup \smaller\smaller\smaller\begin{tabular}{@{}c@{}}%
12\\4\\-6
\end{tabular}\endgroup%
\kern3pt%
\begingroup \smaller\smaller\smaller\begin{tabular}{@{}c@{}}%
3\\0\\-2
\end{tabular}\endgroup%
\kern3pt%
\begingroup \smaller\smaller\smaller\begin{tabular}{@{}c@{}}%
2\\-1\\-1
\end{tabular}\endgroup%
\kern3pt%
\begingroup \smaller\smaller\smaller\begin{tabular}{@{}c@{}}%
24\\-14\\-2
\end{tabular}\endgroup%
{$\left.\llap{\phantom{%
\begingroup \smaller\smaller\smaller\begin{tabular}{@{}c@{}}%
0\\0\\0
\end{tabular}\endgroup%
}}\!\right]$}%
}%
\ifdim\wd\matricesbox>\halfwidth\myboxwidth=\hsize\else\myboxwidth=\halfwidth\fi
\vbox{%
\ifdim\myboxwidth=\hsize
\setbox\onelinebox=\hbox{%
\vbox{\hbox{%
$\Pi_{12,26}$ spans $L_{123.8}$%
}\hbox{%
$224\slashtwo42222\slashtwo22\rtimes D_{2}$%
}%
}%
\hfill\copy\matricesbox
}%
\ifdim\wd\onelinebox>\myboxwidth
\hbox to \myboxwidth{%
$\Pi_{12,26}$ spans $L_{123.8}$%
\hfil
$224\slashtwo42222\slashtwo22\rtimes D_{2}$%
}%
\box\matricesbox
\else
\hbox to \myboxwidth{%
\unhbox\onelinebox
}%
\fi
\else
\hbox to \myboxwidth{%
$\Pi_{12,26}$ spans $L_{123.8}$%
\hfil}%
\hbox to \myboxwidth{%
$224\slashtwo42222\slashtwo22\rtimes D_{2}$%
\hfil}%
\box\matricesbox
\fi
}%
\hfill\discretionary{}{}{}%
\setbox\matricesbox=\hbox{%
{$\left[\!\llap{\phantom{%
\begingroup \smaller\smaller\smaller\begin{tabular}{@{}c@{}}%
\phantom{0}\\\phantom{0}\\\phantom{0}
\end{tabular}\endgroup%
}}\right.$}%
\begingroup \smaller\smaller\smaller\begin{tabular}{@{}c@{}}%
-1\\\phantom{0}\\\phantom{0}
\end{tabular}\endgroup%
\kern3pt%
\begingroup \smaller\smaller\smaller\begin{tabular}{@{}c@{}}%
\phantom{0}\\6\\\phantom{0}
\end{tabular}\endgroup%
\kern3pt%
\begingroup \smaller\smaller\smaller\begin{tabular}{@{}c@{}}%
\phantom{0}\\\phantom{0}\\6
\end{tabular}\endgroup%
{$\left.\llap{\phantom{%
\begingroup \smaller\smaller\smaller\begin{tabular}{@{}c@{}}%
\phantom{0}\\\phantom{0}\\\phantom{0}
\end{tabular}\endgroup%
}}\!\right]$}%
{$\left[\!\llap{\phantom{%
\begingroup \smaller\smaller\smaller\begin{tabular}{@{}c@{}}%
0\\0\\0
\end{tabular}\endgroup%
}}\right.$}%
\begingroup \smaller\smaller\smaller\begin{tabular}{@{}c@{}}%
2\\1\\0
\end{tabular}\endgroup%
\kern3pt%
\begingroup \smaller\smaller\smaller\begin{tabular}{@{}c@{}}%
24\\8\\6
\end{tabular}\endgroup%
\kern3pt%
\begingroup \smaller\smaller\smaller\begin{tabular}{@{}c@{}}%
24\\6\\8
\end{tabular}\endgroup%
\kern3pt%
\begingroup \smaller\smaller\smaller\begin{tabular}{@{}c@{}}%
12\\1\\5
\end{tabular}\endgroup%
\kern3pt%
\begingroup \smaller\smaller\smaller\begin{tabular}{@{}c@{}}%
12\\-1\\5
\end{tabular}\endgroup%
\kern3pt%
\begingroup \smaller\smaller\smaller\begin{tabular}{@{}c@{}}%
3\\-1\\1
\end{tabular}\endgroup%
{$\left.\llap{\phantom{%
\begingroup \smaller\smaller\smaller\begin{tabular}{@{}c@{}}%
0\\0\\0
\end{tabular}\endgroup%
}}\!\right]$}%
}%
\ifdim\wd\matricesbox>\halfwidth\myboxwidth=\hsize\else\myboxwidth=\halfwidth\fi
\vbox{%
\ifdim\myboxwidth=\hsize
\setbox\onelinebox=\hbox{%
\vbox{\hbox{%
$\Pi_{12,27}$ spans $L_{123.8}$%
}\hbox{%
$224222224222\rtimes C_{2}$%
}%
}%
\hfill\copy\matricesbox
}%
\ifdim\wd\onelinebox>\myboxwidth
\hbox to \myboxwidth{%
$\Pi_{12,27}$ spans $L_{123.8}$%
\hfil
$224222224222\rtimes C_{2}$%
}%
\box\matricesbox
\else
\hbox to \myboxwidth{%
\unhbox\onelinebox
}%
\fi
\else
\hbox to \myboxwidth{%
$\Pi_{12,27}$ spans $L_{123.8}$%
\hfil}%
\hbox to \myboxwidth{%
$224222224222\rtimes C_{2}$%
\hfil}%
\box\matricesbox
\fi
}%
\hfill\discretionary{}{}{}%
\setbox\matricesbox=\hbox{%
{$\left[\!\llap{\phantom{%
\begingroup \smaller\smaller\smaller% [inline block 9: 19 envs, 2126 chars -> data_tex | \begin{tabular}{@{}c@{}}% \phantom{0}\\\phantom{0}\\\phantom{0}...]
\endgroup%
}}\!\right]$}%
}%
\ifdim\wd\matricesbox>\halfwidth\myboxwidth=\hsize\else\myboxwidth=\halfwidth\fi
\vbox{%
\ifdim\myboxwidth=\hsize
\setbox\onelinebox=\hbox{%
\vbox{\hbox{%
$\Pi_{12,28}$ spans $L_{123.8}$%
}\hbox{%
$224222222222$%
}%
}%
\hfill\copy\matricesbox
}%
\ifdim\wd\onelinebox>\myboxwidth
\hbox to \myboxwidth{%
$\Pi_{12,28}$ spans $L_{123.8}$%
\hfil
$224222222222$%
}%
\box\matricesbox
\else
\hbox to \myboxwidth{%
\unhbox\onelinebox
}%
\fi
\else
\hbox to \myboxwidth{%
$\Pi_{12,28}$ spans $L_{123.8}$%
\hfil}%
\hbox to \myboxwidth{%
$224222222222$%
\hfil}%
\box\matricesbox
\fi
}%
\hfill\discretionary{}{}{}%
\setbox\matricesbox=\hbox{%
{$\left[\!\llap{\phantom{%
\begingroup \smaller\smaller\smaller% [inline block 10: 19 envs, 2126 chars -> data_tex | \begin{tabular}{@{}c@{}}% \phantom{0}\\\phantom{0}\\\phantom{0}...]
\endgroup%
}}\!\right]$}%
}%
\ifdim\wd\matricesbox>\halfwidth\myboxwidth=\hsize\else\myboxwidth=\halfwidth\fi
\vbox{%
\ifdim\myboxwidth=\hsize
\setbox\onelinebox=\hbox{%
\vbox{\hbox{%
$\Pi_{12,29}$ spans $L_{123.8}$%
}\hbox{%
$224222222222$%
}%
}%
\hfill\copy\matricesbox
}%
\ifdim\wd\onelinebox>\myboxwidth
\hbox to \myboxwidth{%
$\Pi_{12,29}$ spans $L_{123.8}$%
\hfil
$224222222222$%
}%
\box\matricesbox
\else
\hbox to \myboxwidth{%
\unhbox\onelinebox
}%
\fi
\else
\hbox to \myboxwidth{%
$\Pi_{12,29}$ spans $L_{123.8}$%
\hfil}%
\hbox to \myboxwidth{%
$224222222222$%
\hfil}%
\box\matricesbox
\fi
}%
\hfill\discretionary{}{}{}%
\setbox\matricesbox=\hbox{%
{$\left[\!\llap{\phantom{%
\begingroup \smaller\smaller\smaller% [inline block 11: 19 envs, 2127 chars -> data_tex | \begin{tabular}{@{}c@{}}% \phantom{0}\\\phantom{0}\\\phantom{0}...]
\endgroup%
}}\!\right]$}%
}%
\ifdim\wd\matricesbox>\halfwidth\myboxwidth=\hsize\else\myboxwidth=\halfwidth\fi
\vbox{%
\ifdim\myboxwidth=\hsize
\setbox\onelinebox=\hbox{%
\vbox{\hbox{%
$\Pi_{12,30}$ spans $L_{123.8}$%
}\hbox{%
$224222222222$%
}%
}%
\hfill\copy\matricesbox
}%
\ifdim\wd\onelinebox>\myboxwidth
\hbox to \myboxwidth{%
$\Pi_{12,30}$ spans $L_{123.8}$%
\hfil
$224222222222$%
}%
\box\matricesbox
\else
\hbox to \myboxwidth{%
\unhbox\onelinebox
}%
\fi
\else
\hbox to \myboxwidth{%
$\Pi_{12,30}$ spans $L_{123.8}$%
\hfil}%
\hbox to \myboxwidth{%
$224222222222$%
\hfil}%
\box\matricesbox
\fi
}%
\hfill\discretionary{}{}{}%
\setbox\matricesbox=\hbox{%
{$\left[\!\llap{\phantom{%
\begingroup \smaller\smaller\smaller% [inline block 12: 19 envs, 2125 chars -> data_tex | \begin{tabular}{@{}c@{}}% \phantom{0}\\\phantom{0}\\\phantom{0}...]
\endgroup%
}}\!\right]$}%
}%
\ifdim\wd\matricesbox>\halfwidth\myboxwidth=\hsize\else\myboxwidth=\halfwidth\fi
\vbox{%
\ifdim\myboxwidth=\hsize
\setbox\onelinebox=\hbox{%
\vbox{\hbox{%
$\Pi_{12,31}$ spans $L_{123.8}$%
}\hbox{%
$224242222222$%
}%
}%
\hfill\copy\matricesbox
}%
\ifdim\wd\onelinebox>\myboxwidth
\hbox to \myboxwidth{%
$\Pi_{12,31}$ spans $L_{123.8}$%
\hfil
$224242222222$%
}%
\box\matricesbox
\else
\hbox to \myboxwidth{%
\unhbox\onelinebox
}%
\fi
\else
\hbox to \myboxwidth{%
$\Pi_{12,31}$ spans $L_{123.8}$%
\hfil}%
\hbox to \myboxwidth{%
$224242222222$%
\hfil}%
\box\matricesbox
\fi
}%
\hfill\discretionary{}{}{}%
\setbox\matricesbox=\hbox{%
{$\left[\!\llap{\phantom{%
\begingroup \smaller\smaller\smaller% [inline block 13: 19 envs, 2126 chars -> data_tex | \begin{tabular}{@{}c@{}}% \phantom{0}\\\phantom{0}\\\phantom{0}...]
\endgroup%
}}\!\right]$}%
}%
\ifdim\wd\matricesbox>\halfwidth\myboxwidth=\hsize\else\myboxwidth=\halfwidth\fi
\vbox{%
\ifdim\myboxwidth=\hsize
\setbox\onelinebox=\hbox{%
\vbox{\hbox{%
$\Pi_{12,32}$ spans $L_{123.8}$%
}\hbox{%
$224242422222$%
}%
}%
\hfill\copy\matricesbox
}%
\ifdim\wd\onelinebox>\myboxwidth
\hbox to \myboxwidth{%
$\Pi_{12,32}$ spans $L_{123.8}$%
\hfil
$224242422222$%
}%
\box\matricesbox
\else
\hbox to \myboxwidth{%
\unhbox\onelinebox
}%
\fi
\else
\hbox to \myboxwidth{%
$\Pi_{12,32}$ spans $L_{123.8}$%
\hfil}%
\hbox to \myboxwidth{%
$224242422222$%
\hfil}%
\box\matricesbox
\fi
}%
\hfill\discretionary{}{}{}%
\setbox\matricesbox=\hbox{%
{$\left[\!\llap{\phantom{%
\begingroup \smaller\smaller\smaller% [inline block 14: 19 envs, 2128 chars -> data_tex | \begin{tabular}{@{}c@{}}% \phantom{0}\\\phantom{0}\\\phantom{0}...]
\endgroup%
}}\!\right]$}%
}%
\ifdim\wd\matricesbox>\halfwidth\myboxwidth=\hsize\else\myboxwidth=\halfwidth\fi
\vbox{%
\ifdim\myboxwidth=\hsize
\setbox\onelinebox=\hbox{%
\vbox{\hbox{%
$\Pi_{12,33}$ spans $L_{123.8}$%
}\hbox{%
$224242222222$%
}%
}%
\hfill\copy\matricesbox
}%
\ifdim\wd\onelinebox>\myboxwidth
\hbox to \myboxwidth{%
$\Pi_{12,33}$ spans $L_{123.8}$%
\hfil
$224242222222$%
}%
\box\matricesbox
\else
\hbox to \myboxwidth{%
\unhbox\onelinebox
}%
\fi
\else
\hbox to \myboxwidth{%
$\Pi_{12,33}$ spans $L_{123.8}$%
\hfil}%
\hbox to \myboxwidth{%
$224242222222$%
\hfil}%
\box\matricesbox
\fi
}%
\hfill\discretionary{}{}{}%

\vskip2pt\hrule\vskip2pt

\leavevmode\setbox\matricesbox=\hbox{%
{$\left[\!\llap{\phantom{%
\begingroup \smaller\smaller\smaller\begin{tabular}{@{}c@{}}%
\phantom{0}\\\phantom{0}\\\phantom{0}
\end{tabular}\endgroup%
}}\right.$}%
\begingroup \smaller\smaller\smaller\begin{tabular}{@{}c@{}}%
-1\\\phantom{0}\\\phantom{0}
\end{tabular}\endgroup%
\kern3pt%
\begingroup \smaller\smaller\smaller\begin{tabular}{@{}c@{}}%
\phantom{0}\\5\\\phantom{0}
\end{tabular}\endgroup%
\kern3pt%
\begingroup \smaller\smaller\smaller\begin{tabular}{@{}c@{}}%
\phantom{0}\\\phantom{0}\\15
\end{tabular}\endgroup%
{$\left.\llap{\phantom{%
\begingroup \smaller\smaller\smaller\begin{tabular}{@{}c@{}}%
\phantom{0}\\\phantom{0}\\\phantom{0}
\end{tabular}\endgroup%
}}\!\right]$}%
{$\left[\!\llap{\phantom{%
\begingroup \smaller\smaller\smaller\begin{tabular}{@{}c@{}}%
0\\0\\0
\end{tabular}\endgroup%
}}\right.$}%
\begingroup \smaller\smaller\smaller\begin{tabular}{@{}c@{}}%
4\\-2\\0
\end{tabular}\endgroup%
\kern3pt%
\begingroup \smaller\smaller\smaller\begin{tabular}{@{}c@{}}%
15\\-6\\-2
\end{tabular}\endgroup%
\kern3pt%
\begingroup \smaller\smaller\smaller\begin{tabular}{@{}c@{}}%
4\\-1\\-1
\end{tabular}\endgroup%
\kern3pt%
\begingroup \smaller\smaller\smaller\begin{tabular}{@{}c@{}}%
15\\0\\-4
\end{tabular}\endgroup%
\kern3pt%
\begingroup \smaller\smaller\smaller\begin{tabular}{@{}c@{}}%
4\\1\\-1
\end{tabular}\endgroup%
\kern3pt%
\begingroup \smaller\smaller\smaller\begin{tabular}{@{}c@{}}%
15\\6\\-2
\end{tabular}\endgroup%
\kern3pt%
\begingroup \smaller\smaller\smaller\begin{tabular}{@{}c@{}}%
20\\9\\-1
\end{tabular}\endgroup%
{$\left.\llap{\phantom{%
\begingroup \smaller\smaller\smaller\begin{tabular}{@{}c@{}}%
0\\0\\0
\end{tabular}\endgroup%
}}\!\right]$}%
}%
\ifdim\wd\matricesbox>\halfwidth\myboxwidth=\hsize\else\myboxwidth=\halfwidth\fi
\vbox{%
\ifdim\myboxwidth=\hsize
\setbox\onelinebox=\hbox{%
\vbox{\hbox{%
$\Pi_{13,1}$ spans $L_{31.7}$%
}\hbox{%
$2222|222222\slashthree22\rtimes D_{2}$%
}%
}%
\hfill\copy\matricesbox
}%
\ifdim\wd\onelinebox>\myboxwidth
\hbox to \myboxwidth{%
$\Pi_{13,1}$ spans $L_{31.7}$%
\hfil
$2222|222222\slashthree22\rtimes D_{2}$%
}%
\box\matricesbox
\else
\hbox to \myboxwidth{%
\unhbox\onelinebox
}%
\fi
\else
\hbox to \myboxwidth{%
$\Pi_{13,1}$ spans $L_{31.7}$%
\hfil}%
\hbox to \myboxwidth{%
$2222|222222\slashthree22\rtimes D_{2}$%
\hfil}%
\box\matricesbox
\fi
}%
\hfill\discretionary{}{}{}%
\setbox\matricesbox=\hbox{%
{$\left[\!\llap{\phantom{%
\begingroup \smaller\smaller\smaller\begin{tabular}{@{}c@{}}%
\phantom{0}\\\phantom{0}\\\phantom{0}
\end{tabular}\endgroup%
}}\right.$}%
\begingroup \smaller\smaller\smaller\begin{tabular}{@{}c@{}}%
-1\\\phantom{0}\\\phantom{0}
\end{tabular}\endgroup%
\kern3pt%
\begingroup \smaller\smaller\smaller\begin{tabular}{@{}c@{}}%
\phantom{0}\\45/2\\\phantom{0}
\end{tabular}\endgroup%
\kern3pt%
\begingroup \smaller\smaller\smaller\begin{tabular}{@{}c@{}}%
\phantom{0}\\\phantom{0}\\15/2
\end{tabular}\endgroup%
{$\left.\llap{\phantom{%
\begingroup \smaller\smaller\smaller\begin{tabular}{@{}c@{}}%
\phantom{0}\\\phantom{0}\\\phantom{0}
\end{tabular}\endgroup%
}}\!\right]$}%
{$\left[\!\llap{\phantom{%
\begingroup \smaller\smaller\smaller\begin{tabular}{@{}c@{}}%
0\\0\\0
\end{tabular}\endgroup%
}}\right.$}%
\begingroup \smaller\smaller\smaller\begin{tabular}{@{}c@{}}%
9\\-2\\0
\end{tabular}\endgroup%
\kern3pt%
\begingroup \smaller\smaller\smaller\begin{tabular}{@{}c@{}}%
5\\-1\\1
\end{tabular}\endgroup%
\kern3pt%
\begingroup \smaller\smaller\smaller\begin{tabular}{@{}c@{}}%
9\\-1\\3
\end{tabular}\endgroup%
\kern3pt%
\begingroup \smaller\smaller\smaller\begin{tabular}{@{}c@{}}%
5\\0\\2
\end{tabular}\endgroup%
\kern3pt%
\begingroup \smaller\smaller\smaller\begin{tabular}{@{}c@{}}%
9\\1\\3
\end{tabular}\endgroup%
\kern3pt%
\begingroup \smaller\smaller\smaller\begin{tabular}{@{}c@{}}%
5\\1\\1
\end{tabular}\endgroup%
\kern3pt%
\begingroup \smaller\smaller\smaller\begin{tabular}{@{}c@{}}%
90\\19\\3
\end{tabular}\endgroup%
{$\left.\llap{\phantom{%
\begingroup \smaller\smaller\smaller\begin{tabular}{@{}c@{}}%
0\\0\\0
\end{tabular}\endgroup%
}}\!\right]$}%
}%
\ifdim\wd\matricesbox>\halfwidth\myboxwidth=\hsize\else\myboxwidth=\halfwidth\fi
\vbox{%
\ifdim\myboxwidth=\hsize
\setbox\onelinebox=\hbox{%
\vbox{\hbox{%
$\Pi_{13,2}$ spans $L_{16.13}$%
}\hbox{%
$22222|222222\slashthree2\rtimes D_{2}$%
}%
}%
\hfill\copy\matricesbox
}%
\ifdim\wd\onelinebox>\myboxwidth
\hbox to \myboxwidth{%
$\Pi_{13,2}$ spans $L_{16.13}$%
\hfil
$22222|222222\slashthree2\rtimes D_{2}$%
}%
\box\matricesbox
\else
\hbox to \myboxwidth{%
\unhbox\onelinebox
}%
\fi
\else
\hbox to \myboxwidth{%
$\Pi_{13,2}$ spans $L_{16.13}$%
\hfil}%
\hbox to \myboxwidth{%
$22222|222222\slashthree2\rtimes D_{2}$%
\hfil}%
\box\matricesbox
\fi
}%
\hfill\discretionary{}{}{}%
\setbox\matricesbox=\hbox{%
{$\left[\!\llap{\phantom{%
\begingroup \smaller\smaller\smaller\begin{tabular}{@{}c@{}}%
\phantom{0}\\\phantom{0}\\\phantom{0}
\end{tabular}\endgroup%
}}\right.$}%
\begingroup \smaller\smaller\smaller\begin{tabular}{@{}c@{}}%
-1\\\phantom{0}\\\phantom{0}
\end{tabular}\endgroup%
\kern3pt%
\begingroup \smaller\smaller\smaller\begin{tabular}{@{}c@{}}%
\phantom{0}\\15/2\\\phantom{0}
\end{tabular}\endgroup%
\kern3pt%
\begingroup \smaller\smaller\smaller\begin{tabular}{@{}c@{}}%
\phantom{0}\\\phantom{0}\\45/2
\end{tabular}\endgroup%
{$\left.\llap{\phantom{%
\begingroup \smaller\smaller\smaller\begin{tabular}{@{}c@{}}%
\phantom{0}\\\phantom{0}\\\phantom{0}
\end{tabular}\endgroup%
}}\!\right]$}%
{$\left[\!\llap{\phantom{%
\begingroup \smaller\smaller\smaller\begin{tabular}{@{}c@{}}%
0\\0\\0
\end{tabular}\endgroup%
}}\right.$}%
\begingroup \smaller\smaller\smaller\begin{tabular}{@{}c@{}}%
5\\-2\\0
\end{tabular}\endgroup%
\kern3pt%
\begingroup \smaller\smaller\smaller\begin{tabular}{@{}c@{}}%
9\\-3\\1
\end{tabular}\endgroup%
\kern3pt%
\begingroup \smaller\smaller\smaller\begin{tabular}{@{}c@{}}%
5\\-1\\1
\end{tabular}\endgroup%
\kern3pt%
\begingroup \smaller\smaller\smaller\begin{tabular}{@{}c@{}}%
9\\0\\2
\end{tabular}\endgroup%
\kern3pt%
\begingroup \smaller\smaller\smaller\begin{tabular}{@{}c@{}}%
5\\1\\1
\end{tabular}\endgroup%
\kern3pt%
\begingroup \smaller\smaller\smaller\begin{tabular}{@{}c@{}}%
9\\3\\1
\end{tabular}\endgroup%
\kern3pt%
\begingroup \smaller\smaller\smaller\begin{tabular}{@{}c@{}}%
30\\11\\1
\end{tabular}\endgroup%
{$\left.\llap{\phantom{%
\begingroup \smaller\smaller\smaller\begin{tabular}{@{}c@{}}%
0\\0\\0
\end{tabular}\endgroup%
}}\!\right]$}%
}%
\ifdim\wd\matricesbox>\halfwidth\myboxwidth=\hsize\else\myboxwidth=\halfwidth\fi
\vbox{%
\ifdim\myboxwidth=\hsize
\setbox\onelinebox=\hbox{%
\vbox{\hbox{%
$\Pi_{13,3}$ spans $L_{16.13}$%
}\hbox{%
$2222|222222\slashthree22\rtimes D_{2}$%
}%
}%
\hfill\copy\matricesbox
}%
\ifdim\wd\onelinebox>\myboxwidth
\hbox to \myboxwidth{%
$\Pi_{13,3}$ spans $L_{16.13}$%
\hfil
$2222|222222\slashthree22\rtimes D_{2}$%
}%
\box\matricesbox
\else
\hbox to \myboxwidth{%
\unhbox\onelinebox
}%
\fi
\else
\hbox to \myboxwidth{%
$\Pi_{13,3}$ spans $L_{16.13}$%
\hfil}%
\hbox to \myboxwidth{%
$2222|222222\slashthree22\rtimes D_{2}$%
\hfil}%
\box\matricesbox
\fi
}%
\hfill\discretionary{}{}{}%
\setbox\matricesbox=\hbox{%
{$\left[\!\llap{\phantom{%
\begingroup \smaller\smaller\smaller\begin{tabular}{@{}c@{}}%
\phantom{0}\\\phantom{0}\\\phantom{0}
\end{tabular}\endgroup%
}}\right.$}%
\begingroup \smaller\smaller\smaller\begin{tabular}{@{}c@{}}%
-1\\\phantom{0}\\\phantom{0}
\end{tabular}\endgroup%
\kern3pt%
\begingroup \smaller\smaller\smaller\begin{tabular}{@{}c@{}}%
\phantom{0}\\21/2\\\phantom{0}
\end{tabular}\endgroup%
\kern3pt%
\begingroup \smaller\smaller\smaller\begin{tabular}{@{}c@{}}%
\phantom{0}\\\phantom{0}\\7/2
\end{tabular}\endgroup%
{$\left.\llap{\phantom{%
\begingroup \smaller\smaller\smaller\begin{tabular}{@{}c@{}}%
\phantom{0}\\\phantom{0}\\\phantom{0}
\end{tabular}\endgroup%
}}\!\right]$}%
{$\left[\!\llap{\phantom{%
\begingroup \smaller\smaller\smaller\begin{tabular}{@{}c@{}}%
0\\0\\0
\end{tabular}\endgroup%
}}\right.$}%
\begingroup \smaller\smaller\smaller\begin{tabular}{@{}c@{}}%
6\\-2\\0
\end{tabular}\endgroup%
\kern3pt%
\begingroup \smaller\smaller\smaller\begin{tabular}{@{}c@{}}%
7\\-2\\-2
\end{tabular}\endgroup%
\kern3pt%
\begingroup \smaller\smaller\smaller\begin{tabular}{@{}c@{}}%
6\\-1\\-3
\end{tabular}\endgroup%
\kern3pt%
\begingroup \smaller\smaller\smaller\begin{tabular}{@{}c@{}}%
7\\0\\-4
\end{tabular}\endgroup%
\kern3pt%
\begingroup \smaller\smaller\smaller\begin{tabular}{@{}c@{}}%
6\\1\\-3
\end{tabular}\endgroup%
\kern3pt%
\begingroup \smaller\smaller\smaller\begin{tabular}{@{}c@{}}%
7\\2\\-2
\end{tabular}\endgroup%
\kern3pt%
\begingroup \smaller\smaller\smaller\begin{tabular}{@{}c@{}}%
42\\13\\-3
\end{tabular}\endgroup%
{$\left.\llap{\phantom{%
\begingroup \smaller\smaller\smaller\begin{tabular}{@{}c@{}}%
0\\0\\0
\end{tabular}\endgroup%
}}\!\right]$}%
}%
\ifdim\wd\matricesbox>\halfwidth\myboxwidth=\hsize\else\myboxwidth=\halfwidth\fi
\vbox{%
\ifdim\myboxwidth=\hsize
\setbox\onelinebox=\hbox{%
\vbox{\hbox{%
$\Pi_{13,4}$ spans $L_{22.4}$%
}\hbox{%
$2222|222222\slashthree22\rtimes D_{2}$%
}%
}%
\hfill\copy\matricesbox
}%
\ifdim\wd\onelinebox>\myboxwidth
\hbox to \myboxwidth{%
$\Pi_{13,4}$ spans $L_{22.4}$%
\hfil
$2222|222222\slashthree22\rtimes D_{2}$%
}%
\box\matricesbox
\else
\hbox to \myboxwidth{%
\unhbox\onelinebox
}%
\fi
\else
\hbox to \myboxwidth{%
$\Pi_{13,4}$ spans $L_{22.4}$%
\hfil}%
\hbox to \myboxwidth{%
$2222|222222\slashthree22\rtimes D_{2}$%
\hfil}%
\box\matricesbox
\fi
}%
\hfill\discretionary{}{}{}%
\setbox\matricesbox=\hbox{%
{$\left[\!\llap{\phantom{%
\begingroup \smaller\smaller\smaller\begin{tabular}{@{}c@{}}%
\phantom{0}\\\phantom{0}\\\phantom{0}
\end{tabular}\endgroup%
}}\right.$}%
\begingroup \smaller\smaller\smaller\begin{tabular}{@{}c@{}}%
-1\\\phantom{0}\\\phantom{0}
\end{tabular}\endgroup%
\kern3pt%
\begingroup \smaller\smaller\smaller\begin{tabular}{@{}c@{}}%
\phantom{0}\\6\\\phantom{0}
\end{tabular}\endgroup%
\kern3pt%
\begingroup \smaller\smaller\smaller\begin{tabular}{@{}c@{}}%
\phantom{0}\\\phantom{0}\\6
\end{tabular}\endgroup%
{$\left.\llap{\phantom{%
\begingroup \smaller\smaller\smaller\begin{tabular}{@{}c@{}}%
\phantom{0}\\\phantom{0}\\\phantom{0}
\end{tabular}\endgroup%
}}\!\right]$}%
{$\left[\!\llap{\phantom{%
\begingroup \smaller\smaller\smaller\begin{tabular}{@{}c@{}}%
0\\0\\0
\end{tabular}\endgroup%
}}\right.$}%
\begingroup \smaller\smaller\smaller\begin{tabular}{@{}c@{}}%
2\\-1\\0
\end{tabular}\endgroup%
\kern3pt%
\begingroup \smaller\smaller\smaller\begin{tabular}{@{}c@{}}%
24\\-8\\-6
\end{tabular}\endgroup%
\kern3pt%
\begingroup \smaller\smaller\smaller\begin{tabular}{@{}c@{}}%
24\\-6\\-8
\end{tabular}\endgroup%
\kern3pt%
\begingroup \smaller\smaller\smaller\begin{tabular}{@{}c@{}}%
12\\-1\\-5
\end{tabular}\endgroup%
\kern3pt%
\begingroup \smaller\smaller\smaller\begin{tabular}{@{}c@{}}%
12\\1\\-5
\end{tabular}\endgroup%
\kern3pt%
\begingroup \smaller\smaller\smaller\begin{tabular}{@{}c@{}}%
3\\1\\-1
\end{tabular}\endgroup%
\kern3pt%
\begingroup \smaller\smaller\smaller\begin{tabular}{@{}c@{}}%
12\\5\\-1
\end{tabular}\endgroup%
{$\left.\llap{\phantom{%
\begingroup \smaller\smaller\smaller\begin{tabular}{@{}c@{}}%
0\\0\\0
\end{tabular}\endgroup%
}}\!\right]$}%
}%
\ifdim\wd\matricesbox>\halfwidth\myboxwidth=\hsize\else\myboxwidth=\halfwidth\fi
\vbox{%
\ifdim\myboxwidth=\hsize
\setbox\onelinebox=\hbox{%
\vbox{\hbox{%
$\Pi_{13,5}$ spans $L_{123.8}$%
}\hbox{%
$|224222\slashtwo222422\rtimes D_{2}$%
}%
}%
\hfill\copy\matricesbox
}%
\ifdim\wd\onelinebox>\myboxwidth
\hbox to \myboxwidth{%
$\Pi_{13,5}$ spans $L_{123.8}$%
\hfil
$|224222\slashtwo222422\rtimes D_{2}$%
}%
\box\matricesbox
\else
\hbox to \myboxwidth{%
\unhbox\onelinebox
}%
\fi
\else
\hbox to \myboxwidth{%
$\Pi_{13,5}$ spans $L_{123.8}$%
\hfil}%
\hbox to \myboxwidth{%
$|224222\slashtwo222422\rtimes D_{2}$%
\hfil}%
\box\matricesbox
\fi
}%
\hfill\discretionary{}{}{}%
\setbox\matricesbox=\hbox{%
{$\left[\!\llap{\phantom{%
\begingroup \smaller\smaller\smaller\begin{tabular}{@{}c@{}}%
\phantom{0}\\\phantom{0}\\\phantom{0}
\end{tabular}\endgroup%
}}\right.$}%
\begingroup \smaller\smaller\smaller\begin{tabular}{@{}c@{}}%
-1\\\phantom{0}\\\phantom{0}
\end{tabular}\endgroup%
\kern3pt%
\begingroup \smaller\smaller\smaller\begin{tabular}{@{}c@{}}%
\phantom{0}\\3\\\phantom{0}
\end{tabular}\endgroup%
\kern3pt%
\begingroup \smaller\smaller\smaller\begin{tabular}{@{}c@{}}%
\phantom{0}\\\phantom{0}\\3
\end{tabular}\endgroup%
{$\left.\llap{\phantom{%
\begingroup \smaller\smaller\smaller\begin{tabular}{@{}c@{}}%
\phantom{0}\\\phantom{0}\\\phantom{0}
\end{tabular}\endgroup%
}}\!\right]$}%
{$\left[\!\llap{\phantom{%
\begingroup \smaller\smaller\smaller\begin{tabular}{@{}c@{}}%
0\\0\\0
\end{tabular}\endgroup%
}}\right.$}%
\begingroup \smaller\smaller\smaller\begin{tabular}{@{}c@{}}%
3\\-2\\0
\end{tabular}\endgroup%
\kern3pt%
\begingroup \smaller\smaller\smaller\begin{tabular}{@{}c@{}}%
12\\-6\\4
\end{tabular}\endgroup%
\kern3pt%
\begingroup \smaller\smaller\smaller\begin{tabular}{@{}c@{}}%
12\\-4\\6
\end{tabular}\endgroup%
\kern3pt%
\begingroup \smaller\smaller\smaller\begin{tabular}{@{}c@{}}%
24\\-2\\14
\end{tabular}\endgroup%
\kern3pt%
\begingroup \smaller\smaller\smaller\begin{tabular}{@{}c@{}}%
24\\2\\14
\end{tabular}\endgroup%
\kern3pt%
\begingroup \smaller\smaller\smaller\begin{tabular}{@{}c@{}}%
2\\1\\1
\end{tabular}\endgroup%
\kern3pt%
\begingroup \smaller\smaller\smaller\begin{tabular}{@{}c@{}}%
24\\14\\2
\end{tabular}\endgroup%
{$\left.\llap{\phantom{%
\begingroup \smaller\smaller\smaller\begin{tabular}{@{}c@{}}%
0\\0\\0
\end{tabular}\endgroup%
}}\!\right]$}%
}%
\ifdim\wd\matricesbox>\halfwidth\myboxwidth=\hsize\else\myboxwidth=\halfwidth\fi
\vbox{%
\ifdim\myboxwidth=\hsize
\setbox\onelinebox=\hbox{%
\vbox{\hbox{%
$\Pi_{13,6}$ spans $L_{123.8}$%
}\hbox{%
$22422|224222\slashtwo2\rtimes D_{2}$%
}%
}%
\hfill\copy\matricesbox
}%
\ifdim\wd\onelinebox>\myboxwidth
\hbox to \myboxwidth{%
$\Pi_{13,6}$ spans $L_{123.8}$%
\hfil
$22422|224222\slashtwo2\rtimes D_{2}$%
}%
\box\matricesbox
\else
\hbox to \myboxwidth{%
\unhbox\onelinebox
}%
\fi
\else
\hbox to \myboxwidth{%
$\Pi_{13,6}$ spans $L_{123.8}$%
\hfil}%
\hbox to \myboxwidth{%
$22422|224222\slashtwo2\rtimes D_{2}$%
\hfil}%
\box\matricesbox
\fi
}%
\hfill\discretionary{}{}{}%
\setbox\matricesbox=\hbox{%
{$\left[\!\llap{\phantom{%
\begingroup \smaller\smaller\smaller\begin{tabular}{@{}c@{}}%
\phantom{0}\\\phantom{0}\\\phantom{0}
\end{tabular}\endgroup%
}}\right.$}%
\begingroup \smaller\smaller\smaller\begin{tabular}{@{}c@{}}%
-2\\\phantom{0}\\\phantom{0}
\end{tabular}\endgroup%
\kern3pt%
\begingroup \smaller\smaller\smaller\begin{tabular}{@{}c@{}}%
\phantom{0}\\3\\\phantom{0}
\end{tabular}\endgroup%
\kern3pt%
\begingroup \smaller\smaller\smaller\begin{tabular}{@{}c@{}}%
\phantom{0}\\\phantom{0}\\3
\end{tabular}\endgroup%
{$\left.\llap{\phantom{%
\begingroup \smaller\smaller\smaller\begin{tabular}{@{}c@{}}%
\phantom{0}\\\phantom{0}\\\phantom{0}
\end{tabular}\endgroup%
}}\!\right]$}%
{$\left[\!\llap{\phantom{%
\begingroup \smaller\smaller\smaller\begin{tabular}{@{}c@{}}%
0\\0\\0
\end{tabular}\endgroup%
}}\right.$}%
\begingroup \smaller\smaller\smaller\begin{tabular}{@{}c@{}}%
6\\5\\1
\end{tabular}\endgroup%
\kern3pt%
\begingroup \smaller\smaller\smaller\begin{tabular}{@{}c@{}}%
12\\8\\6
\end{tabular}\endgroup%
\kern3pt%
\begingroup \smaller\smaller\smaller\begin{tabular}{@{}c@{}}%
12\\6\\8
\end{tabular}\endgroup%
\kern3pt%
\begingroup \smaller\smaller\smaller\begin{tabular}{@{}c@{}}%
1\\0\\1
\end{tabular}\endgroup%
\kern3pt%
\begingroup \smaller\smaller\smaller\begin{tabular}{@{}c@{}}%
12\\-6\\8
\end{tabular}\endgroup%
\kern3pt%
\begingroup \smaller\smaller\smaller\begin{tabular}{@{}c@{}}%
12\\-8\\6
\end{tabular}\endgroup%
\kern3pt%
\begingroup \smaller\smaller\smaller\begin{tabular}{@{}c@{}}%
1\\-1\\0
\end{tabular}\endgroup%
{$\left.\llap{\phantom{%
\begingroup \smaller\smaller\smaller\begin{tabular}{@{}c@{}}%
0\\0\\0
\end{tabular}\endgroup%
}}\!\right]$}%
}%
\ifdim\wd\matricesbox>\halfwidth\myboxwidth=\hsize\else\myboxwidth=\halfwidth\fi
\vbox{%
\ifdim\myboxwidth=\hsize
\setbox\onelinebox=\hbox{%
\vbox{\hbox{%
$\Pi_{13,7}$ spans $L_{123.11}$%
}\hbox{%
$224\slashtwo422222|222\rtimes D_{2}$%
}%
}%
\hfill\copy\matricesbox
}%
\ifdim\wd\onelinebox>\myboxwidth
\hbox to \myboxwidth{%
$\Pi_{13,7}$ spans $L_{123.11}$%
\hfil
$224\slashtwo422222|222\rtimes D_{2}$%
}%
\box\matricesbox
\else
\hbox to \myboxwidth{%
\unhbox\onelinebox
}%
\fi
\else
\hbox to \myboxwidth{%
$\Pi_{13,7}$ spans $L_{123.11}$%
\hfil}%
\hbox to \myboxwidth{%
$224\slashtwo422222|222\rtimes D_{2}$%
\hfil}%
\box\matricesbox
\fi
}%
\hfill\discretionary{}{}{}%
\setbox\matricesbox=\hbox{%
{$\left[\!\llap{\phantom{%
\begingroup \smaller\smaller\smaller\begin{tabular}{@{}c@{}}%
\phantom{0}\\\phantom{0}\\\phantom{0}
\end{tabular}\endgroup%
}}\right.$}%
\begingroup \smaller\smaller\smaller\begin{tabular}{@{}c@{}}%
-1\\\phantom{0}\\\phantom{0}
\end{tabular}\endgroup%
\kern3pt%
\begingroup \smaller\smaller\smaller\begin{tabular}{@{}c@{}}%
\phantom{0}\\3\\\phantom{0}
\end{tabular}\endgroup%
\kern3pt%
\begingroup \smaller\smaller\smaller\begin{tabular}{@{}c@{}}%
\phantom{0}\\\phantom{0}\\3
\end{tabular}\endgroup%
{$\left.\llap{\phantom{%
\begingroup \smaller\smaller\smaller\begin{tabular}{@{}c@{}}%
\phantom{0}\\\phantom{0}\\\phantom{0}
\end{tabular}\endgroup%
}}\!\right]$}%
{$\left[\!\llap{\phantom{%
\begingroup \smaller\smaller\smaller\begin{tabular}{@{}c@{}}%
0\\0\\0
\end{tabular}\endgroup%
}}\right.$}%
\begingroup \smaller\smaller\smaller\begin{tabular}{@{}c@{}}%
24\\-14\\2
\end{tabular}\endgroup%
\kern3pt%
\begingroup \smaller\smaller\smaller\begin{tabular}{@{}c@{}}%
12\\-6\\4
\end{tabular}\endgroup%
\kern3pt%
\begingroup \smaller\smaller\smaller\begin{tabular}{@{}c@{}}%
12\\-4\\6
\end{tabular}\endgroup%
\kern3pt%
\begingroup \smaller\smaller\smaller\begin{tabular}{@{}c@{}}%
24\\-2\\14
\end{tabular}\endgroup%
\kern3pt%
\begingroup \smaller\smaller\smaller\begin{tabular}{@{}c@{}}%
24\\2\\14
\end{tabular}\endgroup%
\kern3pt%
\begingroup \smaller\smaller\smaller\begin{tabular}{@{}c@{}}%
2\\1\\1
\end{tabular}\endgroup%
\kern3pt%
\begingroup \smaller\smaller\smaller\begin{tabular}{@{}c@{}}%
3\\2\\0
\end{tabular}\endgroup%
{$\left.\llap{\phantom{%
\begingroup \smaller\smaller\smaller\begin{tabular}{@{}c@{}}%
0\\0\\0
\end{tabular}\endgroup%
}}\!\right]$}%
}%
\ifdim\wd\matricesbox>\halfwidth\myboxwidth=\hsize\else\myboxwidth=\halfwidth\fi
\vbox{%
\ifdim\myboxwidth=\hsize
\setbox\onelinebox=\hbox{%
\vbox{\hbox{%
$\Pi_{13,8}$ spans $L_{123.8}$%
}\hbox{%
$22424\slashtwo424222|2\rtimes D_{2}$%
}%
}%
\hfill\copy\matricesbox
}%
\ifdim\wd\onelinebox>\myboxwidth
\hbox to \myboxwidth{%
$\Pi_{13,8}$ spans $L_{123.8}$%
\hfil
$22424\slashtwo424222|2\rtimes D_{2}$%
}%
\box\matricesbox
\else
\hbox to \myboxwidth{%
\unhbox\onelinebox
}%
\fi
\else
\hbox to \myboxwidth{%
$\Pi_{13,8}$ spans $L_{123.8}$%
\hfil}%
\hbox to \myboxwidth{%
$22424\slashtwo424222|2\rtimes D_{2}$%
\hfil}%
\box\matricesbox
\fi
}%
\hfill\discretionary{}{}{}%
\setbox\matricesbox=\hbox{%
{$\left[\!\llap{\phantom{%
\begingroup \smaller\smaller\smaller\begin{tabular}{@{}c@{}}%
\phantom{0}\\\phantom{0}\\\phantom{0}
\end{tabular}\endgroup%
}}\right.$}%
\begingroup \smaller\smaller\smaller\begin{tabular}{@{}c@{}}%
-3\\\phantom{0}\\\phantom{0}
\end{tabular}\endgroup%
\kern3pt%
\begingroup \smaller\smaller\smaller\begin{tabular}{@{}c@{}}%
\phantom{0}\\4\\\phantom{0}
\end{tabular}\endgroup%
\kern3pt%
\begingroup \smaller\smaller\smaller\begin{tabular}{@{}c@{}}%
\phantom{0}\\\phantom{0}\\4
\end{tabular}\endgroup%
{$\left.\llap{\phantom{%
\begingroup \smaller\smaller\smaller\begin{tabular}{@{}c@{}}%
\phantom{0}\\\phantom{0}\\\phantom{0}
\end{tabular}\endgroup%
}}\!\right]$}%
{$\left[\!\llap{\phantom{%
\begingroup \smaller\smaller\smaller\begin{tabular}{@{}c@{}}%
0\\0\\0
\end{tabular}\endgroup%
}}\right.$}%
\begingroup \smaller\smaller\smaller\begin{tabular}{@{}c@{}}%
8\\-7\\1
\end{tabular}\endgroup%
\kern3pt%
\begingroup \smaller\smaller\smaller\begin{tabular}{@{}c@{}}%
4\\-3\\2
\end{tabular}\endgroup%
\kern3pt%
\begingroup \smaller\smaller\smaller\begin{tabular}{@{}c@{}}%
4\\-2\\3
\end{tabular}\endgroup%
\kern3pt%
\begingroup \smaller\smaller\smaller\begin{tabular}{@{}c@{}}%
1\\0\\1
\end{tabular}\endgroup%
\kern3pt%
\begingroup \smaller\smaller\smaller\begin{tabular}{@{}c@{}}%
4\\2\\3
\end{tabular}\endgroup%
\kern3pt%
\begingroup \smaller\smaller\smaller\begin{tabular}{@{}c@{}}%
4\\3\\2
\end{tabular}\endgroup%
\kern3pt%
\begingroup \smaller\smaller\smaller\begin{tabular}{@{}c@{}}%
1\\1\\0
\end{tabular}\endgroup%
{$\left.\llap{\phantom{%
\begingroup \smaller\smaller\smaller\begin{tabular}{@{}c@{}}%
0\\0\\0
\end{tabular}\endgroup%
}}\!\right]$}%
}%
\ifdim\wd\matricesbox>\halfwidth\myboxwidth=\hsize\else\myboxwidth=\halfwidth\fi
\vbox{%
\ifdim\myboxwidth=\hsize
\setbox\onelinebox=\hbox{%
\vbox{\hbox{%
$\Pi_{13,9}$ spans $L_{123.9}$%
}\hbox{%
$224\slashtwo422222|222\rtimes D_{2}$%
}%
}%
\hfill\copy\matricesbox
}%
\ifdim\wd\onelinebox>\myboxwidth
\hbox to \myboxwidth{%
$\Pi_{13,9}$ spans $L_{123.9}$%
\hfil
$224\slashtwo422222|222\rtimes D_{2}$%
}%
\box\matricesbox
\else
\hbox to \myboxwidth{%
\unhbox\onelinebox
}%
\fi
\else
\hbox to \myboxwidth{%
$\Pi_{13,9}$ spans $L_{123.9}$%
\hfil}%
\hbox to \myboxwidth{%
$224\slashtwo422222|222\rtimes D_{2}$%
\hfil}%
\box\matricesbox
\fi
}%
\hfill\discretionary{}{}{}%
\setbox\matricesbox=\hbox{%
{$\left[\!\llap{\phantom{%
\begingroup \smaller\smaller\smaller\begin{tabular}{@{}c@{}}%
\phantom{0}\\\phantom{0}\\\phantom{0}
\end{tabular}\endgroup%
}}\right.$}%
\begingroup \smaller\smaller\smaller\begin{tabular}{@{}c@{}}%
-1\\\phantom{0}\\\phantom{0}
\end{tabular}\endgroup%
\kern3pt%
\begingroup \smaller\smaller\smaller\begin{tabular}{@{}c@{}}%
\phantom{0}\\6\\\phantom{0}
\end{tabular}\endgroup%
\kern3pt%
\begingroup \smaller\smaller\smaller\begin{tabular}{@{}c@{}}%
\phantom{0}\\\phantom{0}\\6
\end{tabular}\endgroup%
{$\left.\llap{\phantom{%
\begingroup \smaller\smaller\smaller\begin{tabular}{@{}c@{}}%
\phantom{0}\\\phantom{0}\\\phantom{0}
\end{tabular}\endgroup%
}}\!\right]$}%
{$\left[\!\llap{\phantom{%
\begingroup \smaller\smaller\smaller\begin{tabular}{@{}c@{}}%
0\\0\\0
\end{tabular}\endgroup%
}}\right.$}%
\begingroup \smaller\smaller\smaller\begin{tabular}{@{}c@{}}%
12\\5\\-1
\end{tabular}\endgroup%
\kern3pt%
\begingroup \smaller\smaller\smaller\begin{tabular}{@{}c@{}}%
24\\8\\-6
\end{tabular}\endgroup%
\kern3pt%
\begingroup \smaller\smaller\smaller\begin{tabular}{@{}c@{}}%
24\\6\\-8
\end{tabular}\endgroup%
\kern3pt%
\begingroup \smaller\smaller\smaller\begin{tabular}{@{}c@{}}%
12\\1\\-5
\end{tabular}\endgroup%
\kern3pt%
\begingroup \smaller\smaller\smaller\begin{tabular}{@{}c@{}}%
12\\-1\\-5
\end{tabular}\endgroup%
\kern3pt%
\begingroup \smaller\smaller\smaller\begin{tabular}{@{}c@{}}%
3\\-1\\-1
\end{tabular}\endgroup%
\kern3pt%
\begingroup \smaller\smaller\smaller\begin{tabular}{@{}c@{}}%
2\\-1\\0
\end{tabular}\endgroup%
{$\left.\llap{\phantom{%
\begingroup \smaller\smaller\smaller\begin{tabular}{@{}c@{}}%
0\\0\\0
\end{tabular}\endgroup%
}}\!\right]$}%
}%
\ifdim\wd\matricesbox>\halfwidth\myboxwidth=\hsize\else\myboxwidth=\halfwidth\fi
\vbox{%
\ifdim\myboxwidth=\hsize
\setbox\onelinebox=\hbox{%
\vbox{\hbox{%
$\Pi_{13,10}$ spans $L_{123.8}$%
}\hbox{%
$22424\slashtwo424222|2\rtimes D_{2}$%
}%
}%
\hfill\copy\matricesbox
}%
\ifdim\wd\onelinebox>\myboxwidth
\hbox to \myboxwidth{%
$\Pi_{13,10}$ spans $L_{123.8}$%
\hfil
$22424\slashtwo424222|2\rtimes D_{2}$%
}%
\box\matricesbox
\else
\hbox to \myboxwidth{%
\unhbox\onelinebox
}%
\fi
\else
\hbox to \myboxwidth{%
$\Pi_{13,10}$ spans $L_{123.8}$%
\hfil}%
\hbox to \myboxwidth{%
$22424\slashtwo424222|2\rtimes D_{2}$%
\hfil}%
\box\matricesbox
\fi
}%
\hfill\discretionary{}{}{}%
\setbox\matricesbox=\hbox{%
{$\left[\!\llap{\phantom{%
\begingroup \smaller\smaller\smaller% [inline block 15: 20 envs, 2234 chars -> data_tex | \begin{tabular}{@{}c@{}}% \phantom{0}\\\phantom{0}\\\phantom{0}...]
\endgroup%
}}\!\right]$}%
}%
\ifdim\wd\matricesbox>\halfwidth\myboxwidth=\hsize\else\myboxwidth=\halfwidth\fi
\vbox{%
\ifdim\myboxwidth=\hsize
\setbox\onelinebox=\hbox{%
\vbox{\hbox{%
$\Pi_{13,11}$ spans $L_{123.8}$%
}\hbox{%
$2242222242422$%
}%
}%
\hfill\copy\matricesbox
}%
\ifdim\wd\onelinebox>\myboxwidth
\hbox to \myboxwidth{%
$\Pi_{13,11}$ spans $L_{123.8}$%
\hfil
$2242222242422$%
}%
\box\matricesbox
\else
\hbox to \myboxwidth{%
\unhbox\onelinebox
}%
\fi
\else
\hbox to \myboxwidth{%
$\Pi_{13,11}$ spans $L_{123.8}$%
\hfil}%
\hbox to \myboxwidth{%
$2242222242422$%
\hfil}%
\box\matricesbox
\fi
}%
\hfill\discretionary{}{}{}%
\setbox\matricesbox=\hbox{%
{$\left[\!\llap{\phantom{%
\begingroup \smaller\smaller\smaller% [inline block 16: 20 envs, 2235 chars -> data_tex | \begin{tabular}{@{}c@{}}% \phantom{0}\\\phantom{0}\\\phantom{0}...]
\endgroup%
}}\!\right]$}%
}%
\ifdim\wd\matricesbox>\halfwidth\myboxwidth=\hsize\else\myboxwidth=\halfwidth\fi
\vbox{%
\ifdim\myboxwidth=\hsize
\setbox\onelinebox=\hbox{%
\vbox{\hbox{%
$\Pi_{13,12}$ spans $L_{123.8}$%
}\hbox{%
$2242222424222$%
}%
}%
\hfill\copy\matricesbox
}%
\ifdim\wd\onelinebox>\myboxwidth
\hbox to \myboxwidth{%
$\Pi_{13,12}$ spans $L_{123.8}$%
\hfil
$2242222424222$%
}%
\box\matricesbox
\else
\hbox to \myboxwidth{%
\unhbox\onelinebox
}%
\fi
\else
\hbox to \myboxwidth{%
$\Pi_{13,12}$ spans $L_{123.8}$%
\hfil}%
\hbox to \myboxwidth{%
$2242222424222$%
\hfil}%
\box\matricesbox
\fi
}%
\hfill\discretionary{}{}{}%
\setbox\matricesbox=\hbox{%
{$\left[\!\llap{\phantom{%
\begingroup \smaller\smaller\smaller% [inline block 17: 20 envs, 2233 chars -> data_tex | \begin{tabular}{@{}c@{}}% \phantom{0}\\\phantom{0}\\\phantom{0}...]
\endgroup%
}}\!\right]$}%
}%
\ifdim\wd\matricesbox>\halfwidth\myboxwidth=\hsize\else\myboxwidth=\halfwidth\fi
\vbox{%
\ifdim\myboxwidth=\hsize
\setbox\onelinebox=\hbox{%
\vbox{\hbox{%
$\Pi_{13,13}$ spans $L_{123.8}$%
}\hbox{%
$2242424222222$%
}%
}%
\hfill\copy\matricesbox
}%
\ifdim\wd\onelinebox>\myboxwidth
\hbox to \myboxwidth{%
$\Pi_{13,13}$ spans $L_{123.8}$%
\hfil
$2242424222222$%
}%
\box\matricesbox
\else
\hbox to \myboxwidth{%
\unhbox\onelinebox
}%
\fi
\else
\hbox to \myboxwidth{%
$\Pi_{13,13}$ spans $L_{123.8}$%
\hfil}%
\hbox to \myboxwidth{%
$2242424222222$%
\hfil}%
\box\matricesbox
\fi
}%
\hfill\discretionary{}{}{}%
\setbox\matricesbox=\hbox{%
{$\left[\!\llap{\phantom{%
\begingroup \smaller\smaller\smaller% [inline block 18: 20 envs, 2234 chars -> data_tex | \begin{tabular}{@{}c@{}}% \phantom{0}\\\phantom{0}\\\phantom{0}...]
\endgroup%
}}\!\right]$}%
}%
\ifdim\wd\matricesbox>\halfwidth\myboxwidth=\hsize\else\myboxwidth=\halfwidth\fi
\vbox{%
\ifdim\myboxwidth=\hsize
\setbox\onelinebox=\hbox{%
\vbox{\hbox{%
$\Pi_{13,14}$ spans $L_{123.8}$%
}\hbox{%
$2242424222222$%
}%
}%
\hfill\copy\matricesbox
}%
\ifdim\wd\onelinebox>\myboxwidth
\hbox to \myboxwidth{%
$\Pi_{13,14}$ spans $L_{123.8}$%
\hfil
$2242424222222$%
}%
\box\matricesbox
\else
\hbox to \myboxwidth{%
\unhbox\onelinebox
}%
\fi
\else
\hbox to \myboxwidth{%
$\Pi_{13,14}$ spans $L_{123.8}$%
\hfil}%
\hbox to \myboxwidth{%
$2242424222222$%
\hfil}%
\box\matricesbox
\fi
}%
\hfill\discretionary{}{}{}%
\setbox\matricesbox=\hbox{%
{$\left[\!\llap{\phantom{%
\begingroup \smaller\smaller\smaller% [inline block 19: 20 envs, 2227 chars -> data_tex | \begin{tabular}{@{}c@{}}% \phantom{0}\\\phantom{0}\\\phantom{0}...]
\endgroup%
}}\!\right]$}%
}%
\ifdim\wd\matricesbox>\halfwidth\myboxwidth=\hsize\else\myboxwidth=\halfwidth\fi
\vbox{%
\ifdim\myboxwidth=\hsize
\setbox\onelinebox=\hbox{%
\vbox{\hbox{%
$\Pi_{13,15}$ spans $L_{142.20}$%
}\hbox{%
$\infty422\infty22\infty22\infty22$%
}%
}%
\hfill\copy\matricesbox
}%
\ifdim\wd\onelinebox>\myboxwidth
\hbox to \myboxwidth{%
$\Pi_{13,15}$ spans $L_{142.20}$%
\hfil
$\infty422\infty22\infty22\infty22$%
}%
\box\matricesbox
\else
\hbox to \myboxwidth{%
\unhbox\onelinebox
}%
\fi
\else
\hbox to \myboxwidth{%
$\Pi_{13,15}$ spans $L_{142.20}$%
\hfil}%
\hbox to \myboxwidth{%
$\infty422\infty22\infty22\infty22$%
\hfil}%
\box\matricesbox
\fi
}%
\hfill\discretionary{}{}{}%

\vskip2pt\hrule\vskip2pt

\leavevmode\setbox\matricesbox=\hbox{%
{$\left[\!\llap{\phantom{%
\begingroup \smaller\smaller\smaller\begin{tabular}{@{}c@{}}%
\phantom{0}\\\phantom{0}\\\phantom{0}
\end{tabular}\endgroup%
}}\right.$}%
\begingroup \smaller\smaller\smaller\begin{tabular}{@{}c@{}}%
-1\\\phantom{0}\\\phantom{0}
\end{tabular}\endgroup%
\kern3pt%
\begingroup \smaller\smaller\smaller\begin{tabular}{@{}c@{}}%
\phantom{0}\\15/2\\\phantom{0}
\end{tabular}\endgroup%
\kern3pt%
\begingroup \smaller\smaller\smaller\begin{tabular}{@{}c@{}}%
\phantom{0}\\\phantom{0}\\45/2
\end{tabular}\endgroup%
{$\left.\llap{\phantom{%
\begingroup \smaller\smaller\smaller\begin{tabular}{@{}c@{}}%
\phantom{0}\\\phantom{0}\\\phantom{0}
\end{tabular}\endgroup%
}}\!\right]$}%
{$\left[\!\llap{\phantom{%
\begingroup \smaller\smaller\smaller\begin{tabular}{@{}c@{}}%
0\\0\\0
\end{tabular}\endgroup%
}}\right.$}%
\begingroup \smaller\smaller\smaller\begin{tabular}{@{}c@{}}%
5\\-2\\0
\end{tabular}\endgroup%
\kern3pt%
\begingroup \smaller\smaller\smaller\begin{tabular}{@{}c@{}}%
9\\-3\\1
\end{tabular}\endgroup%
\kern3pt%
\begingroup \smaller\smaller\smaller\begin{tabular}{@{}c@{}}%
5\\-1\\1
\end{tabular}\endgroup%
\kern3pt%
\begingroup \smaller\smaller\smaller\begin{tabular}{@{}c@{}}%
90\\-3\\19
\end{tabular}\endgroup%
{$\left.\llap{\phantom{%
\begingroup \smaller\smaller\smaller\begin{tabular}{@{}c@{}}%
0\\0\\0
\end{tabular}\endgroup%
}}\!\right]$}%
}%
\ifdim\wd\matricesbox>\halfwidth\myboxwidth=\hsize\else\myboxwidth=\halfwidth\fi
\vbox{%
\ifdim\myboxwidth=\hsize
\setbox\onelinebox=\hbox{%
\vbox{\hbox{%
$\Pi_{14,1}$ spans $L_{16.13}$%
}\hbox{%
$22|222\slashthree222|222\slashthree2\rtimes D_{4}$%
}%
}%
\hfill\copy\matricesbox
}%
\ifdim\wd\onelinebox>\myboxwidth
\hbox to \myboxwidth{%
$\Pi_{14,1}$ spans $L_{16.13}$%
\hfil
$22|222\slashthree222|222\slashthree2\rtimes D_{4}$%
}%
\box\matricesbox
\else
\hbox to \myboxwidth{%
\unhbox\onelinebox
}%
\fi
\else
\hbox to \myboxwidth{%
$\Pi_{14,1}$ spans $L_{16.13}$%
\hfil}%
\hbox to \myboxwidth{%
$22|222\slashthree222|222\slashthree2\rtimes D_{4}$%
\hfil}%
\box\matricesbox
\fi
}%
\hfill\discretionary{}{}{}%
\setbox\matricesbox=\hbox{%
{$\left[\!\llap{\phantom{%
\begingroup \smaller\smaller\smaller\begin{tabular}{@{}c@{}}%
\phantom{0}\\\phantom{0}\\\phantom{0}
\end{tabular}\endgroup%
}}\right.$}%
\begingroup \smaller\smaller\smaller\begin{tabular}{@{}c@{}}%
-2\\\phantom{0}\\\phantom{0}
\end{tabular}\endgroup%
\kern3pt%
\begingroup \smaller\smaller\smaller\begin{tabular}{@{}c@{}}%
\phantom{0}\\3\\\phantom{0}
\end{tabular}\endgroup%
\kern3pt%
\begingroup \smaller\smaller\smaller\begin{tabular}{@{}c@{}}%
\phantom{0}\\\phantom{0}\\3
\end{tabular}\endgroup%
{$\left.\llap{\phantom{%
\begingroup \smaller\smaller\smaller\begin{tabular}{@{}c@{}}%
\phantom{0}\\\phantom{0}\\\phantom{0}
\end{tabular}\endgroup%
}}\!\right]$}%
{$\left[\!\llap{\phantom{%
\begingroup \smaller\smaller\smaller\begin{tabular}{@{}c@{}}%
0\\0\\0
\end{tabular}\endgroup%
}}\right.$}%
\begingroup \smaller\smaller\smaller\begin{tabular}{@{}c@{}}%
1\\1\\0
\end{tabular}\endgroup%
\kern3pt%
\begingroup \smaller\smaller\smaller\begin{tabular}{@{}c@{}}%
12\\8\\-6
\end{tabular}\endgroup%
\kern3pt%
\begingroup \smaller\smaller\smaller\begin{tabular}{@{}c@{}}%
12\\6\\-8
\end{tabular}\endgroup%
\kern3pt%
\begingroup \smaller\smaller\smaller\begin{tabular}{@{}c@{}}%
6\\1\\-5
\end{tabular}\endgroup%
{$\left.\llap{\phantom{%
\begingroup \smaller\smaller\smaller\begin{tabular}{@{}c@{}}%
0\\0\\0
\end{tabular}\endgroup%
}}\!\right]$}%
}%
\ifdim\wd\matricesbox>\halfwidth\myboxwidth=\hsize\else\myboxwidth=\halfwidth\fi
\vbox{%
\ifdim\myboxwidth=\hsize
\setbox\onelinebox=\hbox{%
\vbox{\hbox{%
$\Pi_{14,2}$ spans $L_{123.11}$%
}\hbox{%
$|224\slashtwo422|224\slashtwo422\rtimes D_{4}$%
}%
}%
\hfill\copy\matricesbox
}%
\ifdim\wd\onelinebox>\myboxwidth
\hbox to \myboxwidth{%
$\Pi_{14,2}$ spans $L_{123.11}$%
\hfil
$|224\slashtwo422|224\slashtwo422\rtimes D_{4}$%
}%
\box\matricesbox
\else
\hbox to \myboxwidth{%
\unhbox\onelinebox
}%
\fi
\else
\hbox to \myboxwidth{%
$\Pi_{14,2}$ spans $L_{123.11}$%
\hfil}%
\hbox to \myboxwidth{%
$|224\slashtwo422|224\slashtwo422\rtimes D_{4}$%
\hfil}%
\box\matricesbox
\fi
}%
\hfill\discretionary{}{}{}%
\setbox\matricesbox=\hbox{%
{$\left[\!\llap{\phantom{%
\begingroup \smaller\smaller\smaller\begin{tabular}{@{}c@{}}%
\phantom{0}\\\phantom{0}\\\phantom{0}
\end{tabular}\endgroup%
}}\right.$}%
\begingroup \smaller\smaller\smaller\begin{tabular}{@{}c@{}}%
-3\\\phantom{0}\\\phantom{0}
\end{tabular}\endgroup%
\kern3pt%
\begingroup \smaller\smaller\smaller\begin{tabular}{@{}c@{}}%
\phantom{0}\\4\\\phantom{0}
\end{tabular}\endgroup%
\kern3pt%
\begingroup \smaller\smaller\smaller\begin{tabular}{@{}c@{}}%
\phantom{0}\\\phantom{0}\\4
\end{tabular}\endgroup%
{$\left.\llap{\phantom{%
\begingroup \smaller\smaller\smaller\begin{tabular}{@{}c@{}}%
\phantom{0}\\\phantom{0}\\\phantom{0}
\end{tabular}\endgroup%
}}\!\right]$}%
{$\left[\!\llap{\phantom{%
\begingroup \smaller\smaller\smaller\begin{tabular}{@{}c@{}}%
0\\0\\0
\end{tabular}\endgroup%
}}\right.$}%
\begingroup \smaller\smaller\smaller\begin{tabular}{@{}c@{}}%
1\\-1\\0
\end{tabular}\endgroup%
\kern3pt%
\begingroup \smaller\smaller\smaller\begin{tabular}{@{}c@{}}%
4\\-3\\2
\end{tabular}\endgroup%
\kern3pt%
\begingroup \smaller\smaller\smaller\begin{tabular}{@{}c@{}}%
4\\-2\\3
\end{tabular}\endgroup%
\kern3pt%
\begingroup \smaller\smaller\smaller\begin{tabular}{@{}c@{}}%
8\\-1\\7
\end{tabular}\endgroup%
{$\left.\llap{\phantom{%
\begingroup \smaller\smaller\smaller\begin{tabular}{@{}c@{}}%
0\\0\\0
\end{tabular}\endgroup%
}}\!\right]$}%
}%
\ifdim\wd\matricesbox>\halfwidth\myboxwidth=\hsize\else\myboxwidth=\halfwidth\fi
\vbox{%
\ifdim\myboxwidth=\hsize
\setbox\onelinebox=\hbox{%
\vbox{\hbox{%
$\Pi_{14,3}$ spans $L_{123.9}$%
}\hbox{%
$|224\slashtwo422|224\slashtwo422\rtimes D_{4}$%
}%
}%
\hfill\copy\matricesbox
}%
\ifdim\wd\onelinebox>\myboxwidth
\hbox to \myboxwidth{%
$\Pi_{14,3}$ spans $L_{123.9}$%
\hfil
$|224\slashtwo422|224\slashtwo422\rtimes D_{4}$%
}%
\box\matricesbox
\else
\hbox to \myboxwidth{%
\unhbox\onelinebox
}%
\fi
\else
\hbox to \myboxwidth{%
$\Pi_{14,3}$ spans $L_{123.9}$%
\hfil}%
\hbox to \myboxwidth{%
$|224\slashtwo422|224\slashtwo422\rtimes D_{4}$%
\hfil}%
\box\matricesbox
\fi
}%
\hfill\discretionary{}{}{}%
\setbox\matricesbox=\hbox{%
{$\left[\!\llap{\phantom{%
\begingroup \smaller\smaller\smaller\begin{tabular}{@{}c@{}}%
\phantom{0}\\\phantom{0}\\\phantom{0}
\end{tabular}\endgroup%
}}\right.$}%
\begingroup \smaller\smaller\smaller\begin{tabular}{@{}c@{}}%
-1\\\phantom{0}\\\phantom{0}
\end{tabular}\endgroup%
\kern3pt%
\begingroup \smaller\smaller\smaller\begin{tabular}{@{}c@{}}%
\phantom{0}\\5\\\phantom{0}
\end{tabular}\endgroup%
\kern3pt%
\begingroup \smaller\smaller\smaller\begin{tabular}{@{}c@{}}%
\phantom{0}\\\phantom{0}\\15
\end{tabular}\endgroup%
{$\left.\llap{\phantom{%
\begingroup \smaller\smaller\smaller\begin{tabular}{@{}c@{}}%
\phantom{0}\\\phantom{0}\\\phantom{0}
\end{tabular}\endgroup%
}}\!\right]$}%
{$\left[\!\llap{\phantom{%
\begingroup \smaller\smaller\smaller\begin{tabular}{@{}c@{}}%
0\\0\\0
\end{tabular}\endgroup%
}}\right.$}%
\begingroup \smaller\smaller\smaller\begin{tabular}{@{}c@{}}%
20\\9\\1
\end{tabular}\endgroup%
\kern3pt%
\begingroup \smaller\smaller\smaller\begin{tabular}{@{}c@{}}%
15\\6\\2
\end{tabular}\endgroup%
\kern3pt%
\begingroup \smaller\smaller\smaller\begin{tabular}{@{}c@{}}%
4\\1\\1
\end{tabular}\endgroup%
\kern3pt%
\begingroup \smaller\smaller\smaller\begin{tabular}{@{}c@{}}%
15\\0\\4
\end{tabular}\endgroup%
{$\left.\llap{\phantom{%
\begingroup \smaller\smaller\smaller\begin{tabular}{@{}c@{}}%
0\\0\\0
\end{tabular}\endgroup%
}}\!\right]$}%
}%
\ifdim\wd\matricesbox>\halfwidth\myboxwidth=\hsize\else\myboxwidth=\halfwidth\fi
\vbox{%
\ifdim\myboxwidth=\hsize
\setbox\onelinebox=\hbox{%
\vbox{\hbox{%
$\Pi_{14,4}$ spans $L_{31.7}$%
}\hbox{%
$22\slashthree222|222\slashthree222|2\rtimes D_{4}$%
}%
}%
\hfill\copy\matricesbox
}%
\ifdim\wd\onelinebox>\myboxwidth
\hbox to \myboxwidth{%
$\Pi_{14,4}$ spans $L_{31.7}$%
\hfil
$22\slashthree222|222\slashthree222|2\rtimes D_{4}$%
}%
\box\matricesbox
\else
\hbox to \myboxwidth{%
\unhbox\onelinebox
}%
\fi
\else
\hbox to \myboxwidth{%
$\Pi_{14,4}$ spans $L_{31.7}$%
\hfil}%
\hbox to \myboxwidth{%
$22\slashthree222|222\slashthree222|2\rtimes D_{4}$%
\hfil}%
\box\matricesbox
\fi
}%
\hfill\discretionary{}{}{}%
\setbox\matricesbox=\hbox{%
{$\left[\!\llap{\phantom{%
\begingroup \smaller\smaller\smaller\begin{tabular}{@{}c@{}}%
\phantom{0}\\\phantom{0}\\\phantom{0}
\end{tabular}\endgroup%
}}\right.$}%
\begingroup \smaller\smaller\smaller\begin{tabular}{@{}c@{}}%
-1\\\phantom{0}\\\phantom{0}
\end{tabular}\endgroup%
\kern3pt%
\begingroup \smaller\smaller\smaller\begin{tabular}{@{}c@{}}%
\phantom{0}\\7/2\\\phantom{0}
\end{tabular}\endgroup%
\kern3pt%
\begingroup \smaller\smaller\smaller\begin{tabular}{@{}c@{}}%
\phantom{0}\\\phantom{0}\\21/2
\end{tabular}\endgroup%
{$\left.\llap{\phantom{%
\begingroup \smaller\smaller\smaller\begin{tabular}{@{}c@{}}%
\phantom{0}\\\phantom{0}\\\phantom{0}
\end{tabular}\endgroup%
}}\!\right]$}%
{$\left[\!\llap{\phantom{%
\begingroup \smaller\smaller\smaller\begin{tabular}{@{}c@{}}%
0\\0\\0
\end{tabular}\endgroup%
}}\right.$}%
\begingroup \smaller\smaller\smaller\begin{tabular}{@{}c@{}}%
7\\-4\\0
\end{tabular}\endgroup%
\kern3pt%
\begingroup \smaller\smaller\smaller\begin{tabular}{@{}c@{}}%
6\\-3\\-1
\end{tabular}\endgroup%
\kern3pt%
\begingroup \smaller\smaller\smaller\begin{tabular}{@{}c@{}}%
7\\-2\\-2
\end{tabular}\endgroup%
\kern3pt%
\begingroup \smaller\smaller\smaller\begin{tabular}{@{}c@{}}%
42\\-3\\-13
\end{tabular}\endgroup%
{$\left.\llap{\phantom{%
\begingroup \smaller\smaller\smaller\begin{tabular}{@{}c@{}}%
0\\0\\0
\end{tabular}\endgroup%
}}\!\right]$}%
}%
\ifdim\wd\matricesbox>\halfwidth\myboxwidth=\hsize\else\myboxwidth=\halfwidth\fi
\vbox{%
\ifdim\myboxwidth=\hsize
\setbox\onelinebox=\hbox{%
\vbox{\hbox{%
$\Pi_{14,5}$ spans $L_{22.4}$%
}\hbox{%
$2|222\slashthree222|222\slashthree22\rtimes D_{4}$%
}%
}%
\hfill\copy\matricesbox
}%
\ifdim\wd\onelinebox>\myboxwidth
\hbox to \myboxwidth{%
$\Pi_{14,5}$ spans $L_{22.4}$%
\hfil
$2|222\slashthree222|222\slashthree22\rtimes D_{4}$%
}%
\box\matricesbox
\else
\hbox to \myboxwidth{%
\unhbox\onelinebox
}%
\fi
\else
\hbox to \myboxwidth{%
$\Pi_{14,5}$ spans $L_{22.4}$%
\hfil}%
\hbox to \myboxwidth{%
$2|222\slashthree222|222\slashthree22\rtimes D_{4}$%
\hfil}%
\box\matricesbox
\fi
}%
\hfill\discretionary{}{}{}%
\setbox\matricesbox=\hbox{%
{$\left[\!\llap{\phantom{%
\begingroup \smaller\smaller\smaller\begin{tabular}{@{}c@{}}%
\phantom{0}\\\phantom{0}\\\phantom{0}
\end{tabular}\endgroup%
}}\right.$}%
\begingroup \smaller\smaller\smaller\begin{tabular}{@{}c@{}}%
-3/2\\\phantom{0}\\\phantom{0}
\end{tabular}\endgroup%
\kern3pt%
\begingroup \smaller\smaller\smaller\begin{tabular}{@{}c@{}}%
\phantom{0}\\5/2\\\phantom{0}
\end{tabular}\endgroup%
\kern3pt%
\begingroup \smaller\smaller\smaller\begin{tabular}{@{}c@{}}%
\phantom{0}\\\phantom{0}\\75/2
\end{tabular}\endgroup%
{$\left.\llap{\phantom{%
\begingroup \smaller\smaller\smaller\begin{tabular}{@{}c@{}}%
\phantom{0}\\\phantom{0}\\\phantom{0}
\end{tabular}\endgroup%
}}\!\right]$}%
{$\left[\!\llap{\phantom{%
\begingroup \smaller\smaller\smaller\begin{tabular}{@{}c@{}}%
0\\0\\0
\end{tabular}\endgroup%
}}\right.$}%
\begingroup \smaller\smaller\smaller\begin{tabular}{@{}c@{}}%
10\\8\\0
\end{tabular}\endgroup%
\kern3pt%
\begingroup \smaller\smaller\smaller\begin{tabular}{@{}c@{}}%
10\\7\\1
\end{tabular}\endgroup%
\kern3pt%
\begingroup \smaller\smaller\smaller\begin{tabular}{@{}c@{}}%
6\\3\\1
\end{tabular}\endgroup%
\kern3pt%
\begingroup \smaller\smaller\smaller\begin{tabular}{@{}c@{}}%
10\\2\\2
\end{tabular}\endgroup%
{$\left.\llap{\phantom{%
\begingroup \smaller\smaller\smaller\begin{tabular}{@{}c@{}}%
0\\0\\0
\end{tabular}\endgroup%
}}\!\right]$}%
}%
\ifdim\wd\matricesbox>\halfwidth\myboxwidth=\hsize\else\myboxwidth=\halfwidth\fi
\vbox{%
\ifdim\myboxwidth=\hsize
\setbox\onelinebox=\hbox{%
\vbox{\hbox{%
$\Pi_{14,6}$ spans $L_{30.27}$%
}\hbox{%
$\infty|\infty22\slashinfty22\infty|\infty22\slashinfty22\rtimes D_{4}$%
}%
}%
\hfill\copy\matricesbox
}%
\ifdim\wd\onelinebox>\myboxwidth
\hbox to \myboxwidth{%
$\Pi_{14,6}$ spans $L_{30.27}$%
\hfil
$\infty|\infty22\slashinfty22\infty|\infty22\slashinfty22\rtimes D_{4}$%
}%
\box\matricesbox
\else
\hbox to \myboxwidth{%
\unhbox\onelinebox
}%
\fi
\else
\hbox to \myboxwidth{%
$\Pi_{14,6}$ spans $L_{30.27}$%
\hfil}%
\hbox to \myboxwidth{%
$\infty|\infty22\slashinfty22\infty|\infty22\slashinfty22\rtimes D_{4}$%
\hfil}%
\box\matricesbox
\fi
}%
\hfill\discretionary{}{}{}%
\setbox\matricesbox=\hbox{%
{$\left[\!\llap{\phantom{%
\begingroup \smaller\smaller\smaller\begin{tabular}{@{}c@{}}%
\phantom{0}\\\phantom{0}\\\phantom{0}
\end{tabular}\endgroup%
}}\right.$}%
\begingroup \smaller\smaller\smaller\begin{tabular}{@{}c@{}}%
-1\\\phantom{0}\\\phantom{0}
\end{tabular}\endgroup%
\kern3pt%
\begingroup \smaller\smaller\smaller\begin{tabular}{@{}c@{}}%
\phantom{0}\\15/2\\\phantom{0}
\end{tabular}\endgroup%
\kern3pt%
\begingroup \smaller\smaller\smaller\begin{tabular}{@{}c@{}}%
\phantom{0}\\\phantom{0}\\45/2
\end{tabular}\endgroup%
{$\left.\llap{\phantom{%
\begingroup \smaller\smaller\smaller\begin{tabular}{@{}c@{}}%
\phantom{0}\\\phantom{0}\\\phantom{0}
\end{tabular}\endgroup%
}}\!\right]$}%
{$\left[\!\llap{\phantom{%
\begingroup \smaller\smaller\smaller\begin{tabular}{@{}c@{}}%
0\\0\\0
\end{tabular}\endgroup%
}}\right.$}%
\begingroup \smaller\smaller\smaller\begin{tabular}{@{}c@{}}%
30\\11\\1
\end{tabular}\endgroup%
\kern3pt%
\begingroup \smaller\smaller\smaller\begin{tabular}{@{}c@{}}%
9\\3\\1
\end{tabular}\endgroup%
\kern3pt%
\begingroup \smaller\smaller\smaller\begin{tabular}{@{}c@{}}%
5\\1\\1
\end{tabular}\endgroup%
\kern3pt%
\begingroup \smaller\smaller\smaller\begin{tabular}{@{}c@{}}%
9\\0\\2
\end{tabular}\endgroup%
{$\left.\llap{\phantom{%
\begingroup \smaller\smaller\smaller\begin{tabular}{@{}c@{}}%
0\\0\\0
\end{tabular}\endgroup%
}}\!\right]$}%
}%
\ifdim\wd\matricesbox>\halfwidth\myboxwidth=\hsize\else\myboxwidth=\halfwidth\fi
\vbox{%
\ifdim\myboxwidth=\hsize
\setbox\onelinebox=\hbox{%
\vbox{\hbox{%
$\Pi_{14,7}$ spans $L_{16.13}$%
}\hbox{%
$\slashthree222|222\slashthree222|222\rtimes D_{4}$%
}%
}%
\hfill\copy\matricesbox
}%
\ifdim\wd\onelinebox>\myboxwidth
\hbox to \myboxwidth{%
$\Pi_{14,7}$ spans $L_{16.13}$%
\hfil
$\slashthree222|222\slashthree222|222\rtimes D_{4}$%
}%
\box\matricesbox
\else
\hbox to \myboxwidth{%
\unhbox\onelinebox
}%
\fi
\else
\hbox to \myboxwidth{%
$\Pi_{14,7}$ spans $L_{16.13}$%
\hfil}%
\hbox to \myboxwidth{%
$\slashthree222|222\slashthree222|222\rtimes D_{4}$%
\hfil}%
\box\matricesbox
\fi
}%
\hfill\discretionary{}{}{}%
\setbox\matricesbox=\hbox{%
{$\left[\!\llap{\phantom{%
\begingroup \smaller\smaller\smaller\begin{tabular}{@{}c@{}}%
\phantom{0}\\\phantom{0}\\\phantom{0}
\end{tabular}\endgroup%
}}\right.$}%
\begingroup \smaller\smaller\smaller\begin{tabular}{@{}c@{}}%
-8\\\phantom{0}\\\phantom{0}
\end{tabular}\endgroup%
\kern3pt%
\begingroup \smaller\smaller\smaller\begin{tabular}{@{}c@{}}%
\phantom{0}\\1\\\phantom{0}
\end{tabular}\endgroup%
\kern3pt%
\begingroup \smaller\smaller\smaller\begin{tabular}{@{}c@{}}%
\phantom{0}\\\phantom{0}\\2
\end{tabular}\endgroup%
{$\left.\llap{\phantom{%
\begingroup \smaller\smaller\smaller\begin{tabular}{@{}c@{}}%
\phantom{0}\\\phantom{0}\\\phantom{0}
\end{tabular}\endgroup%
}}\!\right]$}%
{$\left[\!\llap{\phantom{%
\begingroup \smaller\smaller\smaller\begin{tabular}{@{}c@{}}%
0\\0\\0
\end{tabular}\endgroup%
}}\right.$}%
\begingroup \smaller\smaller\smaller\begin{tabular}{@{}c@{}}%
1\\-3\\0
\end{tabular}\endgroup%
\kern3pt%
\begingroup \smaller\smaller\smaller\begin{tabular}{@{}c@{}}%
4\\-10\\-4
\end{tabular}\endgroup%
\kern3pt%
\begingroup \smaller\smaller\smaller\begin{tabular}{@{}c@{}}%
2\\-4\\-3
\end{tabular}\endgroup%
\kern3pt%
\begingroup \smaller\smaller\smaller\begin{tabular}{@{}c@{}}%
1\\-1\\-2
\end{tabular}\endgroup%
{$\left.\llap{\phantom{%
\begingroup \smaller\smaller\smaller\begin{tabular}{@{}c@{}}%
0\\0\\0
\end{tabular}\endgroup%
}}\!\right]$}%
}%
\ifdim\wd\matricesbox>\halfwidth\myboxwidth=\hsize\else\myboxwidth=\halfwidth\fi
\vbox{%
\ifdim\myboxwidth=\hsize
\setbox\onelinebox=\hbox{%
\vbox{\hbox{%
$\Pi_{14,8}$ spans $L_{161.6}$%
}\hbox{%
$|\infty22\slashinfty22\infty|\infty22\slashinfty22\infty\rtimes D_{4}$%
}%
}%
\hfill\copy\matricesbox
}%
\ifdim\wd\onelinebox>\myboxwidth
\hbox to \myboxwidth{%
$\Pi_{14,8}$ spans $L_{161.6}$%
\hfil
$|\infty22\slashinfty22\infty|\infty22\slashinfty22\infty\rtimes D_{4}$%
}%
\box\matricesbox
\else
\hbox to \myboxwidth{%
\unhbox\onelinebox
}%
\fi
\else
\hbox to \myboxwidth{%
$\Pi_{14,8}$ spans $L_{161.6}$%
\hfil}%
\hbox to \myboxwidth{%
$|\infty22\slashinfty22\infty|\infty22\slashinfty22\infty\rtimes D_{4}$%
\hfil}%
\box\matricesbox
\fi
}%
\hfill\discretionary{}{}{}%
\setbox\matricesbox=\hbox{%
{$\left[\!\llap{\phantom{%
\begingroup \smaller\smaller\smaller\begin{tabular}{@{}c@{}}%
\phantom{0}\\\phantom{0}\\\phantom{0}
\end{tabular}\endgroup%
}}\right.$}%
\begingroup \smaller\smaller\smaller\begin{tabular}{@{}c@{}}%
-1\\\phantom{0}\\\phantom{0}
\end{tabular}\endgroup%
\kern3pt%
\begingroup \smaller\smaller\smaller\begin{tabular}{@{}c@{}}%
\phantom{0}\\15\\\phantom{0}
\end{tabular}\endgroup%
\kern3pt%
\begingroup \smaller\smaller\smaller\begin{tabular}{@{}c@{}}%
\phantom{0}\\\phantom{0}\\5
\end{tabular}\endgroup%
{$\left.\llap{\phantom{%
\begingroup \smaller\smaller\smaller\begin{tabular}{@{}c@{}}%
\phantom{0}\\\phantom{0}\\\phantom{0}
\end{tabular}\endgroup%
}}\!\right]$}%
{$\left[\!\llap{\phantom{%
\begingroup \smaller\smaller\smaller\begin{tabular}{@{}c@{}}%
0\\0\\0
\end{tabular}\endgroup%
}}\right.$}%
\begingroup \smaller\smaller\smaller\begin{tabular}{@{}c@{}}%
15\\-4\\0
\end{tabular}\endgroup%
\kern3pt%
\begingroup \smaller\smaller\smaller\begin{tabular}{@{}c@{}}%
4\\-1\\1
\end{tabular}\endgroup%
\kern3pt%
\begingroup \smaller\smaller\smaller\begin{tabular}{@{}c@{}}%
15\\-2\\6
\end{tabular}\endgroup%
\kern3pt%
\begingroup \smaller\smaller\smaller\begin{tabular}{@{}c@{}}%
4\\0\\2
\end{tabular}\endgroup%
\kern3pt%
\begingroup \smaller\smaller\smaller\begin{tabular}{@{}c@{}}%
15\\2\\6
\end{tabular}\endgroup%
\kern3pt%
\begingroup \smaller\smaller\smaller\begin{tabular}{@{}c@{}}%
20\\4\\6
\end{tabular}\endgroup%
\kern3pt%
\begingroup \smaller\smaller\smaller\begin{tabular}{@{}c@{}}%
20\\5\\3
\end{tabular}\endgroup%
\kern3pt%
\begingroup \smaller\smaller\smaller\begin{tabular}{@{}c@{}}%
15\\4\\0
\end{tabular}\endgroup%
{$\left.\llap{\phantom{%
\begingroup \smaller\smaller\smaller\begin{tabular}{@{}c@{}}%
0\\0\\0
\end{tabular}\endgroup%
}}\!\right]$}%
}%
\ifdim\wd\matricesbox>\halfwidth\myboxwidth=\hsize\else\myboxwidth=\halfwidth\fi
\vbox{%
\ifdim\myboxwidth=\hsize
\setbox\onelinebox=\hbox{%
\vbox{\hbox{%
$\Pi_{14,9}$ spans $L_{31.7}$%
}\hbox{%
$222|2222232|2322\rtimes D_{2}$%
}%
}%
\hfill\copy\matricesbox
}%
\ifdim\wd\onelinebox>\myboxwidth
\hbox to \myboxwidth{%
$\Pi_{14,9}$ spans $L_{31.7}$%
\hfil
$222|2222232|2322\rtimes D_{2}$%
}%
\box\matricesbox
\else
\hbox to \myboxwidth{%
\unhbox\onelinebox
}%
\fi
\else
\hbox to \myboxwidth{%
$\Pi_{14,9}$ spans $L_{31.7}$%
\hfil}%
\hbox to \myboxwidth{%
$222|2222232|2322\rtimes D_{2}$%
\hfil}%
\box\matricesbox
\fi
}%
\hfill\discretionary{}{}{}%
\setbox\matricesbox=\hbox{%
{$\left[\!\llap{\phantom{%
\begingroup \smaller\smaller\smaller\begin{tabular}{@{}c@{}}%
\phantom{0}\\\phantom{0}\\\phantom{0}
\end{tabular}\endgroup%
}}\right.$}%
\begingroup \smaller\smaller\smaller\begin{tabular}{@{}c@{}}%
-1\\\phantom{0}\\\phantom{0}
\end{tabular}\endgroup%
\kern3pt%
\begingroup \smaller\smaller\smaller\begin{tabular}{@{}c@{}}%
\phantom{0}\\5\\\phantom{0}
\end{tabular}\endgroup%
\kern3pt%
\begingroup \smaller\smaller\smaller\begin{tabular}{@{}c@{}}%
\phantom{0}\\\phantom{0}\\15
\end{tabular}\endgroup%
{$\left.\llap{\phantom{%
\begingroup \smaller\smaller\smaller\begin{tabular}{@{}c@{}}%
\phantom{0}\\\phantom{0}\\\phantom{0}
\end{tabular}\endgroup%
}}\!\right]$}%
{$\left[\!\llap{\phantom{%
\begingroup \smaller\smaller\smaller\begin{tabular}{@{}c@{}}%
0\\0\\0
\end{tabular}\endgroup%
}}\right.$}%
\begingroup \smaller\smaller\smaller\begin{tabular}{@{}c@{}}%
4\\2\\0
\end{tabular}\endgroup%
\kern3pt%
\begingroup \smaller\smaller\smaller\begin{tabular}{@{}c@{}}%
15\\6\\2
\end{tabular}\endgroup%
\kern3pt%
\begingroup \smaller\smaller\smaller\begin{tabular}{@{}c@{}}%
4\\1\\1
\end{tabular}\endgroup%
\kern3pt%
\begingroup \smaller\smaller\smaller\begin{tabular}{@{}c@{}}%
15\\0\\4
\end{tabular}\endgroup%
\kern3pt%
\begingroup \smaller\smaller\smaller\begin{tabular}{@{}c@{}}%
20\\-3\\5
\end{tabular}\endgroup%
\kern3pt%
\begingroup \smaller\smaller\smaller\begin{tabular}{@{}c@{}}%
20\\-6\\4
\end{tabular}\endgroup%
\kern3pt%
\begingroup \smaller\smaller\smaller\begin{tabular}{@{}c@{}}%
15\\-6\\2
\end{tabular}\endgroup%
\kern3pt%
\begingroup \smaller\smaller\smaller\begin{tabular}{@{}c@{}}%
4\\-2\\0
\end{tabular}\endgroup%
{$\left.\llap{\phantom{%
\begingroup \smaller\smaller\smaller\begin{tabular}{@{}c@{}}%
0\\0\\0
\end{tabular}\endgroup%
}}\!\right]$}%
}%
\ifdim\wd\matricesbox>\halfwidth\myboxwidth=\hsize\else\myboxwidth=\halfwidth\fi
\vbox{%
\ifdim\myboxwidth=\hsize
\setbox\onelinebox=\hbox{%
\vbox{\hbox{%
$\Pi_{14,10}$ spans $L_{31.7}$%
}\hbox{%
$22|2222322|22322\rtimes D_{2}$%
}%
}%
\hfill\copy\matricesbox
}%
\ifdim\wd\onelinebox>\myboxwidth
\hbox to \myboxwidth{%
$\Pi_{14,10}$ spans $L_{31.7}$%
\hfil
$22|2222322|22322\rtimes D_{2}$%
}%
\box\matricesbox
\else
\hbox to \myboxwidth{%
\unhbox\onelinebox
}%
\fi
\else
\hbox to \myboxwidth{%
$\Pi_{14,10}$ spans $L_{31.7}$%
\hfil}%
\hbox to \myboxwidth{%
$22|2222322|22322\rtimes D_{2}$%
\hfil}%
\box\matricesbox
\fi
}%
\hfill\discretionary{}{}{}%
\setbox\matricesbox=\hbox{%
{$\left[\!\llap{\phantom{%
\begingroup \smaller\smaller\smaller\begin{tabular}{@{}c@{}}%
\phantom{0}\\\phantom{0}\\\phantom{0}
\end{tabular}\endgroup%
}}\right.$}%
\begingroup \smaller\smaller\smaller\begin{tabular}{@{}c@{}}%
-1\\\phantom{0}\\\phantom{0}
\end{tabular}\endgroup%
\kern3pt%
\begingroup \smaller\smaller\smaller\begin{tabular}{@{}c@{}}%
\phantom{0}\\15/2\\\phantom{0}
\end{tabular}\endgroup%
\kern3pt%
\begingroup \smaller\smaller\smaller\begin{tabular}{@{}c@{}}%
\phantom{0}\\\phantom{0}\\45/2
\end{tabular}\endgroup%
{$\left.\llap{\phantom{%
\begingroup \smaller\smaller\smaller\begin{tabular}{@{}c@{}}%
\phantom{0}\\\phantom{0}\\\phantom{0}
\end{tabular}\endgroup%
}}\!\right]$}%
{$\left[\!\llap{\phantom{%
\begingroup \smaller\smaller\smaller\begin{tabular}{@{}c@{}}%
0\\0\\0
\end{tabular}\endgroup%
}}\right.$}%
\begingroup \smaller\smaller\smaller\begin{tabular}{@{}c@{}}%
5\\-2\\0
\end{tabular}\endgroup%
\kern3pt%
\begingroup \smaller\smaller\smaller\begin{tabular}{@{}c@{}}%
9\\-3\\1
\end{tabular}\endgroup%
\kern3pt%
\begingroup \smaller\smaller\smaller\begin{tabular}{@{}c@{}}%
5\\-1\\1
\end{tabular}\endgroup%
\kern3pt%
\begingroup \smaller\smaller\smaller\begin{tabular}{@{}c@{}}%
9\\0\\2
\end{tabular}\endgroup%
\kern3pt%
\begingroup \smaller\smaller\smaller\begin{tabular}{@{}c@{}}%
5\\1\\1
\end{tabular}\endgroup%
\kern3pt%
\begingroup \smaller\smaller\smaller\begin{tabular}{@{}c@{}}%
90\\27\\11
\end{tabular}\endgroup%
\kern3pt%
\begingroup \smaller\smaller\smaller\begin{tabular}{@{}c@{}}%
90\\30\\8
\end{tabular}\endgroup%
\kern3pt%
\begingroup \smaller\smaller\smaller\begin{tabular}{@{}c@{}}%
5\\2\\0
\end{tabular}\endgroup%
{$\left.\llap{\phantom{%
\begingroup \smaller\smaller\smaller\begin{tabular}{@{}c@{}}%
0\\0\\0
\end{tabular}\endgroup%
}}\!\right]$}%
}%
\ifdim\wd\matricesbox>\halfwidth\myboxwidth=\hsize\else\myboxwidth=\halfwidth\fi
\vbox{%
\ifdim\myboxwidth=\hsize
\setbox\onelinebox=\hbox{%
\vbox{\hbox{%
$\Pi_{14,11}$ spans $L_{16.13}$%
}\hbox{%
$2222|2222232|232\rtimes D_{2}$%
}%
}%
\hfill\copy\matricesbox
}%
\ifdim\wd\onelinebox>\myboxwidth
\hbox to \myboxwidth{%
$\Pi_{14,11}$ spans $L_{16.13}$%
\hfil
$2222|2222232|232\rtimes D_{2}$%
}%
\box\matricesbox
\else
\hbox to \myboxwidth{%
\unhbox\onelinebox
}%
\fi
\else
\hbox to \myboxwidth{%
$\Pi_{14,11}$ spans $L_{16.13}$%
\hfil}%
\hbox to \myboxwidth{%
$2222|2222232|232\rtimes D_{2}$%
\hfil}%
\box\matricesbox
\fi
}%
\hfill\discretionary{}{}{}%
\setbox\matricesbox=\hbox{%
{$\left[\!\llap{\phantom{%
\begingroup \smaller\smaller\smaller\begin{tabular}{@{}c@{}}%
\phantom{0}\\\phantom{0}\\\phantom{0}
\end{tabular}\endgroup%
}}\right.$}%
\begingroup \smaller\smaller\smaller\begin{tabular}{@{}c@{}}%
-1\\\phantom{0}\\\phantom{0}
\end{tabular}\endgroup%
\kern3pt%
\begingroup \smaller\smaller\smaller\begin{tabular}{@{}c@{}}%
\phantom{0}\\45/2\\\phantom{0}
\end{tabular}\endgroup%
\kern3pt%
\begingroup \smaller\smaller\smaller\begin{tabular}{@{}c@{}}%
\phantom{0}\\\phantom{0}\\15/2
\end{tabular}\endgroup%
{$\left.\llap{\phantom{%
\begingroup \smaller\smaller\smaller\begin{tabular}{@{}c@{}}%
\phantom{0}\\\phantom{0}\\\phantom{0}
\end{tabular}\endgroup%
}}\!\right]$}%
{$\left[\!\llap{\phantom{%
\begingroup \smaller\smaller\smaller\begin{tabular}{@{}c@{}}%
0\\0\\0
\end{tabular}\endgroup%
}}\right.$}%
\begingroup \smaller\smaller\smaller\begin{tabular}{@{}c@{}}%
9\\2\\0
\end{tabular}\endgroup%
\kern3pt%
\begingroup \smaller\smaller\smaller\begin{tabular}{@{}c@{}}%
5\\1\\1
\end{tabular}\endgroup%
\kern3pt%
\begingroup \smaller\smaller\smaller\begin{tabular}{@{}c@{}}%
9\\1\\3
\end{tabular}\endgroup%
\kern3pt%
\begingroup \smaller\smaller\smaller\begin{tabular}{@{}c@{}}%
5\\0\\2
\end{tabular}\endgroup%
\kern3pt%
\begingroup \smaller\smaller\smaller\begin{tabular}{@{}c@{}}%
9\\-1\\3
\end{tabular}\endgroup%
\kern3pt%
\begingroup \smaller\smaller\smaller\begin{tabular}{@{}c@{}}%
30\\-5\\7
\end{tabular}\endgroup%
\kern3pt%
\begingroup \smaller\smaller\smaller\begin{tabular}{@{}c@{}}%
30\\-6\\4
\end{tabular}\endgroup%
\kern3pt%
\begingroup \smaller\smaller\smaller\begin{tabular}{@{}c@{}}%
9\\-2\\0
\end{tabular}\endgroup%
{$\left.\llap{\phantom{%
\begingroup \smaller\smaller\smaller\begin{tabular}{@{}c@{}}%
0\\0\\0
\end{tabular}\endgroup%
}}\!\right]$}%
}%
\ifdim\wd\matricesbox>\halfwidth\myboxwidth=\hsize\else\myboxwidth=\halfwidth\fi
\vbox{%
\ifdim\myboxwidth=\hsize
\setbox\onelinebox=\hbox{%
\vbox{\hbox{%
$\Pi_{14,12}$ spans $L_{16.13}$%
}\hbox{%
$222|2222232|2322\rtimes D_{2}$%
}%
}%
\hfill\copy\matricesbox
}%
\ifdim\wd\onelinebox>\myboxwidth
\hbox to \myboxwidth{%
$\Pi_{14,12}$ spans $L_{16.13}$%
\hfil
$222|2222232|2322\rtimes D_{2}$%
}%
\box\matricesbox
\else
\hbox to \myboxwidth{%
\unhbox\onelinebox
}%
\fi
\else
\hbox to \myboxwidth{%
$\Pi_{14,12}$ spans $L_{16.13}$%
\hfil}%
\hbox to \myboxwidth{%
$222|2222232|2322\rtimes D_{2}$%
\hfil}%
\box\matricesbox
\fi
}%
\hfill\discretionary{}{}{}%
\setbox\matricesbox=\hbox{%
{$\left[\!\llap{\phantom{%
\begingroup \smaller\smaller\smaller\begin{tabular}{@{}c@{}}%
\phantom{0}\\\phantom{0}\\\phantom{0}
\end{tabular}\endgroup%
}}\right.$}%
\begingroup \smaller\smaller\smaller\begin{tabular}{@{}c@{}}%
-1\\\phantom{0}\\\phantom{0}
\end{tabular}\endgroup%
\kern3pt%
\begingroup \smaller\smaller\smaller\begin{tabular}{@{}c@{}}%
\phantom{0}\\45/2\\\phantom{0}
\end{tabular}\endgroup%
\kern3pt%
\begingroup \smaller\smaller\smaller\begin{tabular}{@{}c@{}}%
\phantom{0}\\\phantom{0}\\15/2
\end{tabular}\endgroup%
{$\left.\llap{\phantom{%
\begingroup \smaller\smaller\smaller\begin{tabular}{@{}c@{}}%
\phantom{0}\\\phantom{0}\\\phantom{0}
\end{tabular}\endgroup%
}}\!\right]$}%
{$\left[\!\llap{\phantom{%
\begingroup \smaller\smaller\smaller\begin{tabular}{@{}c@{}}%
0\\0\\0
\end{tabular}\endgroup%
}}\right.$}%
\begingroup \smaller\smaller\smaller\begin{tabular}{@{}c@{}}%
9\\2\\0
\end{tabular}\endgroup%
\kern3pt%
\begingroup \smaller\smaller\smaller\begin{tabular}{@{}c@{}}%
5\\1\\1
\end{tabular}\endgroup%
\kern3pt%
\begingroup \smaller\smaller\smaller\begin{tabular}{@{}c@{}}%
9\\1\\3
\end{tabular}\endgroup%
\kern3pt%
\begingroup \smaller\smaller\smaller\begin{tabular}{@{}c@{}}%
5\\0\\2
\end{tabular}\endgroup%
\kern3pt%
\begingroup \smaller\smaller\smaller\begin{tabular}{@{}c@{}}%
90\\-8\\30
\end{tabular}\endgroup%
\kern3pt%
\begingroup \smaller\smaller\smaller\begin{tabular}{@{}c@{}}%
90\\-11\\27
\end{tabular}\endgroup%
\kern3pt%
\begingroup \smaller\smaller\smaller\begin{tabular}{@{}c@{}}%
5\\-1\\1
\end{tabular}\endgroup%
\kern3pt%
\begingroup \smaller\smaller\smaller\begin{tabular}{@{}c@{}}%
9\\-2\\0
\end{tabular}\endgroup%
{$\left.\llap{\phantom{%
\begingroup \smaller\smaller\smaller\begin{tabular}{@{}c@{}}%
0\\0\\0
\end{tabular}\endgroup%
}}\!\right]$}%
}%
\ifdim\wd\matricesbox>\halfwidth\myboxwidth=\hsize\else\myboxwidth=\halfwidth\fi
\vbox{%
\ifdim\myboxwidth=\hsize
\setbox\onelinebox=\hbox{%
\vbox{\hbox{%
$\Pi_{14,13}$ spans $L_{16.13}$%
}\hbox{%
$222|2222322|2232\rtimes D_{2}$%
}%
}%
\hfill\copy\matricesbox
}%
\ifdim\wd\onelinebox>\myboxwidth
\hbox to \myboxwidth{%
$\Pi_{14,13}$ spans $L_{16.13}$%
\hfil
$222|2222322|2232\rtimes D_{2}$%
}%
\box\matricesbox
\else
\hbox to \myboxwidth{%
\unhbox\onelinebox
}%
\fi
\else
\hbox to \myboxwidth{%
$\Pi_{14,13}$ spans $L_{16.13}$%
\hfil}%
\hbox to \myboxwidth{%
$222|2222322|2232\rtimes D_{2}$%
\hfil}%
\box\matricesbox
\fi
}%
\hfill\discretionary{}{}{}%
\setbox\matricesbox=\hbox{%
{$\left[\!\llap{\phantom{%
\begingroup \smaller\smaller\smaller\begin{tabular}{@{}c@{}}%
\phantom{0}\\\phantom{0}\\\phantom{0}
\end{tabular}\endgroup%
}}\right.$}%
\begingroup \smaller\smaller\smaller\begin{tabular}{@{}c@{}}%
-1\\\phantom{0}\\\phantom{0}
\end{tabular}\endgroup%
\kern3pt%
\begingroup \smaller\smaller\smaller\begin{tabular}{@{}c@{}}%
\phantom{0}\\15/2\\\phantom{0}
\end{tabular}\endgroup%
\kern3pt%
\begingroup \smaller\smaller\smaller\begin{tabular}{@{}c@{}}%
\phantom{0}\\\phantom{0}\\45/2
\end{tabular}\endgroup%
{$\left.\llap{\phantom{%
\begingroup \smaller\smaller\smaller\begin{tabular}{@{}c@{}}%
\phantom{0}\\\phantom{0}\\\phantom{0}
\end{tabular}\endgroup%
}}\!\right]$}%
{$\left[\!\llap{\phantom{%
\begingroup \smaller\smaller\smaller\begin{tabular}{@{}c@{}}%
0\\0\\0
\end{tabular}\endgroup%
}}\right.$}%
\begingroup \smaller\smaller\smaller\begin{tabular}{@{}c@{}}%
5\\-2\\0
\end{tabular}\endgroup%
\kern3pt%
\begingroup \smaller\smaller\smaller\begin{tabular}{@{}c@{}}%
9\\-3\\1
\end{tabular}\endgroup%
\kern3pt%
\begingroup \smaller\smaller\smaller\begin{tabular}{@{}c@{}}%
5\\-1\\1
\end{tabular}\endgroup%
\kern3pt%
\begingroup \smaller\smaller\smaller\begin{tabular}{@{}c@{}}%
9\\0\\2
\end{tabular}\endgroup%
\kern3pt%
\begingroup \smaller\smaller\smaller\begin{tabular}{@{}c@{}}%
30\\4\\6
\end{tabular}\endgroup%
\kern3pt%
\begingroup \smaller\smaller\smaller\begin{tabular}{@{}c@{}}%
30\\7\\5
\end{tabular}\endgroup%
\kern3pt%
\begingroup \smaller\smaller\smaller\begin{tabular}{@{}c@{}}%
9\\3\\1
\end{tabular}\endgroup%
\kern3pt%
\begingroup \smaller\smaller\smaller\begin{tabular}{@{}c@{}}%
5\\2\\0
\end{tabular}\endgroup%
{$\left.\llap{\phantom{%
\begingroup \smaller\smaller\smaller\begin{tabular}{@{}c@{}}%
0\\0\\0
\end{tabular}\endgroup%
}}\!\right]$}%
}%
\ifdim\wd\matricesbox>\halfwidth\myboxwidth=\hsize\else\myboxwidth=\halfwidth\fi
\vbox{%
\ifdim\myboxwidth=\hsize
\setbox\onelinebox=\hbox{%
\vbox{\hbox{%
$\Pi_{14,14}$ spans $L_{16.13}$%
}\hbox{%
$22|2222322|22322\rtimes D_{2}$%
}%
}%
\hfill\copy\matricesbox
}%
\ifdim\wd\onelinebox>\myboxwidth
\hbox to \myboxwidth{%
$\Pi_{14,14}$ spans $L_{16.13}$%
\hfil
$22|2222322|22322\rtimes D_{2}$%
}%
\box\matricesbox
\else
\hbox to \myboxwidth{%
\unhbox\onelinebox
}%
\fi
\else
\hbox to \myboxwidth{%
$\Pi_{14,14}$ spans $L_{16.13}$%
\hfil}%
\hbox to \myboxwidth{%
$22|2222322|22322\rtimes D_{2}$%
\hfil}%
\box\matricesbox
\fi
}%
\hfill\discretionary{}{}{}%
\setbox\matricesbox=\hbox{%
{$\left[\!\llap{\phantom{%
\begingroup \smaller\smaller\smaller\begin{tabular}{@{}c@{}}%
\phantom{0}\\\phantom{0}\\\phantom{0}
\end{tabular}\endgroup%
}}\right.$}%
\begingroup \smaller\smaller\smaller\begin{tabular}{@{}c@{}}%
-1\\\phantom{0}\\\phantom{0}
\end{tabular}\endgroup%
\kern3pt%
\begingroup \smaller\smaller\smaller\begin{tabular}{@{}c@{}}%
\phantom{0}\\7/2\\\phantom{0}
\end{tabular}\endgroup%
\kern3pt%
\begingroup \smaller\smaller\smaller\begin{tabular}{@{}c@{}}%
\phantom{0}\\\phantom{0}\\21/2
\end{tabular}\endgroup%
{$\left.\llap{\phantom{%
\begingroup \smaller\smaller\smaller\begin{tabular}{@{}c@{}}%
\phantom{0}\\\phantom{0}\\\phantom{0}
\end{tabular}\endgroup%
}}\!\right]$}%
{$\left[\!\llap{\phantom{%
\begingroup \smaller\smaller\smaller\begin{tabular}{@{}c@{}}%
0\\0\\0
\end{tabular}\endgroup%
}}\right.$}%
\begingroup \smaller\smaller\smaller\begin{tabular}{@{}c@{}}%
7\\4\\0
\end{tabular}\endgroup%
\kern3pt%
\begingroup \smaller\smaller\smaller\begin{tabular}{@{}c@{}}%
6\\3\\1
\end{tabular}\endgroup%
\kern3pt%
\begingroup \smaller\smaller\smaller\begin{tabular}{@{}c@{}}%
7\\2\\2
\end{tabular}\endgroup%
\kern3pt%
\begingroup \smaller\smaller\smaller\begin{tabular}{@{}c@{}}%
6\\0\\2
\end{tabular}\endgroup%
\kern3pt%
\begingroup \smaller\smaller\smaller\begin{tabular}{@{}c@{}}%
7\\-2\\2
\end{tabular}\endgroup%
\kern3pt%
\begingroup \smaller\smaller\smaller\begin{tabular}{@{}c@{}}%
42\\-18\\8
\end{tabular}\endgroup%
\kern3pt%
\begingroup \smaller\smaller\smaller\begin{tabular}{@{}c@{}}%
42\\-21\\5
\end{tabular}\endgroup%
\kern3pt%
\begingroup \smaller\smaller\smaller\begin{tabular}{@{}c@{}}%
7\\-4\\0
\end{tabular}\endgroup%
{$\left.\llap{\phantom{%
\begingroup \smaller\smaller\smaller\begin{tabular}{@{}c@{}}%
0\\0\\0
\end{tabular}\endgroup%
}}\!\right]$}%
}%
\ifdim\wd\matricesbox>\halfwidth\myboxwidth=\hsize\else\myboxwidth=\halfwidth\fi
\vbox{%
\ifdim\myboxwidth=\hsize
\setbox\onelinebox=\hbox{%
\vbox{\hbox{%
$\Pi_{14,15}$ spans $L_{22.4}$%
}\hbox{%
$222|2222232|2322\rtimes D_{2}$%
}%
}%
\hfill\copy\matricesbox
}%
\ifdim\wd\onelinebox>\myboxwidth
\hbox to \myboxwidth{%
$\Pi_{14,15}$ spans $L_{22.4}$%
\hfil
$222|2222232|2322\rtimes D_{2}$%
}%
\box\matricesbox
\else
\hbox to \myboxwidth{%
\unhbox\onelinebox
}%
\fi
\else
\hbox to \myboxwidth{%
$\Pi_{14,15}$ spans $L_{22.4}$%
\hfil}%
\hbox to \myboxwidth{%
$222|2222232|2322\rtimes D_{2}$%
\hfil}%
\box\matricesbox
\fi
}%
\hfill\discretionary{}{}{}%
\setbox\matricesbox=\hbox{%
{$\left[\!\llap{\phantom{%
\begingroup \smaller\smaller\smaller\begin{tabular}{@{}c@{}}%
\phantom{0}\\\phantom{0}\\\phantom{0}
\end{tabular}\endgroup%
}}\right.$}%
\begingroup \smaller\smaller\smaller\begin{tabular}{@{}c@{}}%
-1\\\phantom{0}\\\phantom{0}
\end{tabular}\endgroup%
\kern3pt%
\begingroup \smaller\smaller\smaller\begin{tabular}{@{}c@{}}%
\phantom{0}\\21/2\\\phantom{0}
\end{tabular}\endgroup%
\kern3pt%
\begingroup \smaller\smaller\smaller\begin{tabular}{@{}c@{}}%
\phantom{0}\\\phantom{0}\\7/2
\end{tabular}\endgroup%
{$\left.\llap{\phantom{%
\begingroup \smaller\smaller\smaller\begin{tabular}{@{}c@{}}%
\phantom{0}\\\phantom{0}\\\phantom{0}
\end{tabular}\endgroup%
}}\!\right]$}%
{$\left[\!\llap{\phantom{%
\begingroup \smaller\smaller\smaller\begin{tabular}{@{}c@{}}%
0\\0\\0
\end{tabular}\endgroup%
}}\right.$}%
\begingroup \smaller\smaller\smaller\begin{tabular}{@{}c@{}}%
6\\2\\0
\end{tabular}\endgroup%
\kern3pt%
\begingroup \smaller\smaller\smaller\begin{tabular}{@{}c@{}}%
7\\2\\2
\end{tabular}\endgroup%
\kern3pt%
\begingroup \smaller\smaller\smaller\begin{tabular}{@{}c@{}}%
6\\1\\3
\end{tabular}\endgroup%
\kern3pt%
\begingroup \smaller\smaller\smaller\begin{tabular}{@{}c@{}}%
7\\0\\4
\end{tabular}\endgroup%
\kern3pt%
\begingroup \smaller\smaller\smaller\begin{tabular}{@{}c@{}}%
42\\-5\\21
\end{tabular}\endgroup%
\kern3pt%
\begingroup \smaller\smaller\smaller\begin{tabular}{@{}c@{}}%
42\\-8\\18
\end{tabular}\endgroup%
\kern3pt%
\begingroup \smaller\smaller\smaller\begin{tabular}{@{}c@{}}%
7\\-2\\2
\end{tabular}\endgroup%
\kern3pt%
\begingroup \smaller\smaller\smaller\begin{tabular}{@{}c@{}}%
6\\-2\\0
\end{tabular}\endgroup%
{$\left.\llap{\phantom{%
\begingroup \smaller\smaller\smaller\begin{tabular}{@{}c@{}}%
0\\0\\0
\end{tabular}\endgroup%
}}\!\right]$}%
}%
\ifdim\wd\matricesbox>\halfwidth\myboxwidth=\hsize\else\myboxwidth=\halfwidth\fi
\vbox{%
\ifdim\myboxwidth=\hsize
\setbox\onelinebox=\hbox{%
\vbox{\hbox{%
$\Pi_{14,16}$ spans $L_{22.4}$%
}\hbox{%
$22|2222322|22322\rtimes D_{2}$%
}%
}%
\hfill\copy\matricesbox
}%
\ifdim\wd\onelinebox>\myboxwidth
\hbox to \myboxwidth{%
$\Pi_{14,16}$ spans $L_{22.4}$%
\hfil
$22|2222322|22322\rtimes D_{2}$%
}%
\box\matricesbox
\else
\hbox to \myboxwidth{%
\unhbox\onelinebox
}%
\fi
\else
\hbox to \myboxwidth{%
$\Pi_{14,16}$ spans $L_{22.4}$%
\hfil}%
\hbox to \myboxwidth{%
$22|2222322|22322\rtimes D_{2}$%
\hfil}%
\box\matricesbox
\fi
}%
\hfill\discretionary{}{}{}%
\setbox\matricesbox=\hbox{%
{$\left[\!\llap{\phantom{%
\begingroup \smaller\smaller\smaller\begin{tabular}{@{}c@{}}%
\phantom{0}\\\phantom{0}\\\phantom{0}
\end{tabular}\endgroup%
}}\right.$}%
\begingroup \smaller\smaller\smaller\begin{tabular}{@{}c@{}}%
-2\\\phantom{0}\\\phantom{0}
\end{tabular}\endgroup%
\kern3pt%
\begingroup \smaller\smaller\smaller\begin{tabular}{@{}c@{}}%
\phantom{0}\\3/2\\\phantom{0}
\end{tabular}\endgroup%
\kern3pt%
\begingroup \smaller\smaller\smaller\begin{tabular}{@{}c@{}}%
\phantom{0}\\\phantom{0}\\3/2
\end{tabular}\endgroup%
{$\left.\llap{\phantom{%
\begingroup \smaller\smaller\smaller\begin{tabular}{@{}c@{}}%
\phantom{0}\\\phantom{0}\\\phantom{0}
\end{tabular}\endgroup%
}}\!\right]$}%
{$\left[\!\llap{\phantom{%
\begingroup \smaller\smaller\smaller\begin{tabular}{@{}c@{}}%
0\\0\\0
\end{tabular}\endgroup%
}}\right.$}%
\begingroup \smaller\smaller\smaller\begin{tabular}{@{}c@{}}%
12\\-14\\2
\end{tabular}\endgroup%
\kern3pt%
\begingroup \smaller\smaller\smaller\begin{tabular}{@{}c@{}}%
6\\-6\\4
\end{tabular}\endgroup%
\kern3pt%
\begingroup \smaller\smaller\smaller\begin{tabular}{@{}c@{}}%
6\\-4\\6
\end{tabular}\endgroup%
\kern3pt%
\begingroup \smaller\smaller\smaller\begin{tabular}{@{}c@{}}%
12\\-2\\14
\end{tabular}\endgroup%
\kern3pt%
\begingroup \smaller\smaller\smaller\begin{tabular}{@{}c@{}}%
12\\2\\14
\end{tabular}\endgroup%
\kern3pt%
\begingroup \smaller\smaller\smaller\begin{tabular}{@{}c@{}}%
1\\1\\1
\end{tabular}\endgroup%
\kern3pt%
\begingroup \smaller\smaller\smaller\begin{tabular}{@{}c@{}}%
12\\14\\2
\end{tabular}\endgroup%
{$\left.\llap{\phantom{%
\begingroup \smaller\smaller\smaller\begin{tabular}{@{}c@{}}%
0\\0\\0
\end{tabular}\endgroup%
}}\!\right]$}%
}%
\ifdim\wd\matricesbox>\halfwidth\myboxwidth=\hsize\else\myboxwidth=\halfwidth\fi
\vbox{%
\ifdim\myboxwidth=\hsize
\setbox\onelinebox=\hbox{%
\vbox{\hbox{%
$\Pi_{14,17}$ spans $L_{123.11}$%
}\hbox{%
$22424\slashtwo424222\slashtwo2\rtimes D_{2}$%
}%
}%
\hfill\copy\matricesbox
}%
\ifdim\wd\onelinebox>\myboxwidth
\hbox to \myboxwidth{%
$\Pi_{14,17}$ spans $L_{123.11}$%
\hfil
$22424\slashtwo424222\slashtwo2\rtimes D_{2}$%
}%
\box\matricesbox
\else
\hbox to \myboxwidth{%
\unhbox\onelinebox
}%
\fi
\else
\hbox to \myboxwidth{%
$\Pi_{14,17}$ spans $L_{123.11}$%
\hfil}%
\hbox to \myboxwidth{%
$22424\slashtwo424222\slashtwo2\rtimes D_{2}$%
\hfil}%
\box\matricesbox
\fi
}%
\hfill\discretionary{}{}{}%
\setbox\matricesbox=\hbox{%
{$\left[\!\llap{\phantom{%
\begingroup \smaller\smaller\smaller\begin{tabular}{@{}c@{}}%
\phantom{0}\\\phantom{0}\\\phantom{0}
\end{tabular}\endgroup%
}}\right.$}%
\begingroup \smaller\smaller\smaller\begin{tabular}{@{}c@{}}%
-3\\\phantom{0}\\\phantom{0}
\end{tabular}\endgroup%
\kern3pt%
\begingroup \smaller\smaller\smaller\begin{tabular}{@{}c@{}}%
\phantom{0}\\2\\\phantom{0}
\end{tabular}\endgroup%
\kern3pt%
\begingroup \smaller\smaller\smaller\begin{tabular}{@{}c@{}}%
\phantom{0}\\\phantom{0}\\2
\end{tabular}\endgroup%
{$\left.\llap{\phantom{%
\begingroup \smaller\smaller\smaller\begin{tabular}{@{}c@{}}%
\phantom{0}\\\phantom{0}\\\phantom{0}
\end{tabular}\endgroup%
}}\!\right]$}%
{$\left[\!\llap{\phantom{%
\begingroup \smaller\smaller\smaller\begin{tabular}{@{}c@{}}%
0\\0\\0
\end{tabular}\endgroup%
}}\right.$}%
\begingroup \smaller\smaller\smaller\begin{tabular}{@{}c@{}}%
4\\5\\-1
\end{tabular}\endgroup%
\kern3pt%
\begingroup \smaller\smaller\smaller\begin{tabular}{@{}c@{}}%
8\\8\\-6
\end{tabular}\endgroup%
\kern3pt%
\begingroup \smaller\smaller\smaller\begin{tabular}{@{}c@{}}%
8\\6\\-8
\end{tabular}\endgroup%
\kern3pt%
\begingroup \smaller\smaller\smaller\begin{tabular}{@{}c@{}}%
4\\1\\-5
\end{tabular}\endgroup%
\kern3pt%
\begingroup \smaller\smaller\smaller\begin{tabular}{@{}c@{}}%
4\\-1\\-5
\end{tabular}\endgroup%
\kern3pt%
\begingroup \smaller\smaller\smaller\begin{tabular}{@{}c@{}}%
1\\-1\\-1
\end{tabular}\endgroup%
\kern3pt%
\begingroup \smaller\smaller\smaller\begin{tabular}{@{}c@{}}%
4\\-5\\-1
\end{tabular}\endgroup%
{$\left.\llap{\phantom{%
\begingroup \smaller\smaller\smaller\begin{tabular}{@{}c@{}}%
0\\0\\0
\end{tabular}\endgroup%
}}\!\right]$}%
}%
\ifdim\wd\matricesbox>\halfwidth\myboxwidth=\hsize\else\myboxwidth=\halfwidth\fi
\vbox{%
\ifdim\myboxwidth=\hsize
\setbox\onelinebox=\hbox{%
\vbox{\hbox{%
$\Pi_{14,18}$ spans $L_{123.9}$%
}\hbox{%
$22424\slashtwo424222\slashtwo2\rtimes D_{2}$%
}%
}%
\hfill\copy\matricesbox
}%
\ifdim\wd\onelinebox>\myboxwidth
\hbox to \myboxwidth{%
$\Pi_{14,18}$ spans $L_{123.9}$%
\hfil
$22424\slashtwo424222\slashtwo2\rtimes D_{2}$%
}%
\box\matricesbox
\else
\hbox to \myboxwidth{%
\unhbox\onelinebox
}%
\fi
\else
\hbox to \myboxwidth{%
$\Pi_{14,18}$ spans $L_{123.9}$%
\hfil}%
\hbox to \myboxwidth{%
$22424\slashtwo424222\slashtwo2\rtimes D_{2}$%
\hfil}%
\box\matricesbox
\fi
}%
\hfill\discretionary{}{}{}%
\setbox\matricesbox=\hbox{%
{$\left[\!\llap{\phantom{%
\begingroup \smaller\smaller\smaller\begin{tabular}{@{}c@{}}%
\phantom{0}\\\phantom{0}\\\phantom{0}
\end{tabular}\endgroup%
}}\right.$}%
\begingroup \smaller\smaller\smaller\begin{tabular}{@{}c@{}}%
-1\\\phantom{0}\\\phantom{0}
\end{tabular}\endgroup%
\kern3pt%
\begingroup \smaller\smaller\smaller\begin{tabular}{@{}c@{}}%
\phantom{0}\\18\\\phantom{0}
\end{tabular}\endgroup%
\kern3pt%
\begingroup \smaller\smaller\smaller\begin{tabular}{@{}c@{}}%
\phantom{0}\\\phantom{0}\\18
\end{tabular}\endgroup%
{$\left.\llap{\phantom{%
\begingroup \smaller\smaller\smaller\begin{tabular}{@{}c@{}}%
\phantom{0}\\\phantom{0}\\\phantom{0}
\end{tabular}\endgroup%
}}\!\right]$}%
{$\left[\!\llap{\phantom{%
\begingroup \smaller\smaller\smaller\begin{tabular}{@{}c@{}}%
0\\0\\0
\end{tabular}\endgroup%
}}\right.$}%
\begingroup \smaller\smaller\smaller\begin{tabular}{@{}c@{}}%
8\\2\\0
\end{tabular}\endgroup%
\kern3pt%
\begingroup \smaller\smaller\smaller\begin{tabular}{@{}c@{}}%
9\\2\\1
\end{tabular}\endgroup%
\kern3pt%
\begingroup \smaller\smaller\smaller\begin{tabular}{@{}c@{}}%
9\\1\\2
\end{tabular}\endgroup%
\kern3pt%
\begingroup \smaller\smaller\smaller\begin{tabular}{@{}c@{}}%
8\\0\\2
\end{tabular}\endgroup%
\kern3pt%
\begingroup \smaller\smaller\smaller\begin{tabular}{@{}c@{}}%
72\\-6\\16
\end{tabular}\endgroup%
\kern3pt%
\begingroup \smaller\smaller\smaller\begin{tabular}{@{}c@{}}%
36\\-5\\7
\end{tabular}\endgroup%
\kern3pt%
\begingroup \smaller\smaller\smaller\begin{tabular}{@{}c@{}}%
9\\-2\\1
\end{tabular}\endgroup%
\kern3pt%
\begingroup \smaller\smaller\smaller\begin{tabular}{@{}c@{}}%
8\\-2\\0
\end{tabular}\endgroup%
{$\left.\llap{\phantom{%
\begingroup \smaller\smaller\smaller\begin{tabular}{@{}c@{}}%
0\\0\\0
\end{tabular}\endgroup%
}}\!\right]$}%
}%
\ifdim\wd\matricesbox>\halfwidth\myboxwidth=\hsize\else\myboxwidth=\halfwidth\fi
\vbox{%
\ifdim\myboxwidth=\hsize
\setbox\onelinebox=\hbox{%
\vbox{\hbox{%
$\Pi_{14,19}$ spans $L_{142.20}$%
}\hbox{%
$\infty422\infty2|2\infty224\infty2|2\rtimes D_{2}$%
}%
}%
\hfill\copy\matricesbox
}%
\ifdim\wd\onelinebox>\myboxwidth
\hbox to \myboxwidth{%
$\Pi_{14,19}$ spans $L_{142.20}$%
\hfil
$\infty422\infty2|2\infty224\infty2|2\rtimes D_{2}$%
}%
\box\matricesbox
\else
\hbox to \myboxwidth{%
\unhbox\onelinebox
}%
\fi
\else
\hbox to \myboxwidth{%
$\Pi_{14,19}$ spans $L_{142.20}$%
\hfil}%
\hbox to \myboxwidth{%
$\infty422\infty2|2\infty224\infty2|2\rtimes D_{2}$%
\hfil}%
\box\matricesbox
\fi
}%
\hfill\discretionary{}{}{}%
\setbox\matricesbox=\hbox{%
{$\left[\!\llap{\phantom{%
\begingroup \smaller\smaller\smaller\begin{tabular}{@{}c@{}}%
\phantom{0}\\\phantom{0}\\\phantom{0}
\end{tabular}\endgroup%
}}\right.$}%
\begingroup \smaller\smaller\smaller\begin{tabular}{@{}c@{}}%
-1\\\phantom{0}\\\phantom{0}
\end{tabular}\endgroup%
\kern3pt%
\begingroup \smaller\smaller\smaller\begin{tabular}{@{}c@{}}%
\phantom{0}\\9\\\phantom{0}
\end{tabular}\endgroup%
\kern3pt%
\begingroup \smaller\smaller\smaller\begin{tabular}{@{}c@{}}%
\phantom{0}\\\phantom{0}\\9
\end{tabular}\endgroup%
{$\left.\llap{\phantom{%
\begingroup \smaller\smaller\smaller\begin{tabular}{@{}c@{}}%
\phantom{0}\\\phantom{0}\\\phantom{0}
\end{tabular}\endgroup%
}}\!\right]$}%
{$\left[\!\llap{\phantom{%
\begingroup \smaller\smaller\smaller\begin{tabular}{@{}c@{}}%
0\\0\\0
\end{tabular}\endgroup%
}}\right.$}%
\begingroup \smaller\smaller\smaller\begin{tabular}{@{}c@{}}%
9\\3\\-1
\end{tabular}\endgroup%
\kern3pt%
\begingroup \smaller\smaller\smaller\begin{tabular}{@{}c@{}}%
8\\2\\-2
\end{tabular}\endgroup%
\kern3pt%
\begingroup \smaller\smaller\smaller\begin{tabular}{@{}c@{}}%
72\\10\\-22
\end{tabular}\endgroup%
\kern3pt%
\begingroup \smaller\smaller\smaller\begin{tabular}{@{}c@{}}%
36\\2\\-12
\end{tabular}\endgroup%
\kern3pt%
\begingroup \smaller\smaller\smaller\begin{tabular}{@{}c@{}}%
9\\-1\\-3
\end{tabular}\endgroup%
\kern3pt%
\begingroup \smaller\smaller\smaller\begin{tabular}{@{}c@{}}%
8\\-2\\-2
\end{tabular}\endgroup%
\kern3pt%
\begingroup \smaller\smaller\smaller\begin{tabular}{@{}c@{}}%
9\\-3\\-1
\end{tabular}\endgroup%
{$\left.\llap{\phantom{%
\begingroup \smaller\smaller\smaller\begin{tabular}{@{}c@{}}%
0\\0\\0
\end{tabular}\endgroup%
}}\!\right]$}%
}%
\ifdim\wd\matricesbox>\halfwidth\myboxwidth=\hsize\else\myboxwidth=\halfwidth\fi
\vbox{%
\ifdim\myboxwidth=\hsize
\setbox\onelinebox=\hbox{%
\vbox{\hbox{%
$\Pi_{14,20}$ spans $L_{142.20}$%
}\hbox{%
$\infty422\slashinfty224\infty22\slashinfty22\rtimes D_{2}$%
}%
}%
\hfill\copy\matricesbox
}%
\ifdim\wd\onelinebox>\myboxwidth
\hbox to \myboxwidth{%
$\Pi_{14,20}$ spans $L_{142.20}$%
\hfil
$\infty422\slashinfty224\infty22\slashinfty22\rtimes D_{2}$%
}%
\box\matricesbox
\else
\hbox to \myboxwidth{%
\unhbox\onelinebox
}%
\fi
\else
\hbox to \myboxwidth{%
$\Pi_{14,20}$ spans $L_{142.20}$%
\hfil}%
\hbox to \myboxwidth{%
$\infty422\slashinfty224\infty22\slashinfty22\rtimes D_{2}$%
\hfil}%
\box\matricesbox
\fi
}%
\hfill\discretionary{}{}{}%
\setbox\matricesbox=\hbox{%
{$\left[\!\llap{\phantom{%
\begingroup \smaller\smaller\smaller\begin{tabular}{@{}c@{}}%
\phantom{0}\\\phantom{0}\\\phantom{0}
\end{tabular}\endgroup%
}}\right.$}%
\begingroup \smaller\smaller\smaller\begin{tabular}{@{}c@{}}%
-1\\\phantom{0}\\\phantom{0}
\end{tabular}\endgroup%
\kern3pt%
\begingroup \smaller\smaller\smaller\begin{tabular}{@{}c@{}}%
\phantom{0}\\18\\\phantom{0}
\end{tabular}\endgroup%
\kern3pt%
\begingroup \smaller\smaller\smaller\begin{tabular}{@{}c@{}}%
\phantom{0}\\\phantom{0}\\18
\end{tabular}\endgroup%
{$\left.\llap{\phantom{%
\begingroup \smaller\smaller\smaller\begin{tabular}{@{}c@{}}%
\phantom{0}\\\phantom{0}\\\phantom{0}
\end{tabular}\endgroup%
}}\!\right]$}%
{$\left[\!\llap{\phantom{%
\begingroup \smaller\smaller\smaller\begin{tabular}{@{}c@{}}%
0\\0\\0
\end{tabular}\endgroup%
}}\right.$}%
\begingroup \smaller\smaller\smaller\begin{tabular}{@{}c@{}}%
8\\2\\0
\end{tabular}\endgroup%
\kern3pt%
\begingroup \smaller\smaller\smaller\begin{tabular}{@{}c@{}}%
72\\16\\-6
\end{tabular}\endgroup%
\kern3pt%
\begingroup \smaller\smaller\smaller\begin{tabular}{@{}c@{}}%
36\\7\\-5
\end{tabular}\endgroup%
\kern3pt%
\begingroup \smaller\smaller\smaller\begin{tabular}{@{}c@{}}%
9\\1\\-2
\end{tabular}\endgroup%
\kern3pt%
\begingroup \smaller\smaller\smaller\begin{tabular}{@{}c@{}}%
8\\0\\-2
\end{tabular}\endgroup%
\kern3pt%
\begingroup \smaller\smaller\smaller\begin{tabular}{@{}c@{}}%
9\\-1\\-2
\end{tabular}\endgroup%
\kern3pt%
\begingroup \smaller\smaller\smaller\begin{tabular}{@{}c@{}}%
9\\-2\\-1
\end{tabular}\endgroup%
\kern3pt%
\begingroup \smaller\smaller\smaller\begin{tabular}{@{}c@{}}%
8\\-2\\0
\end{tabular}\endgroup%
{$\left.\llap{\phantom{%
\begingroup \smaller\smaller\smaller\begin{tabular}{@{}c@{}}%
0\\0\\0
\end{tabular}\endgroup%
}}\!\right]$}%
}%
\ifdim\wd\matricesbox>\halfwidth\myboxwidth=\hsize\else\myboxwidth=\halfwidth\fi
\vbox{%
\ifdim\myboxwidth=\hsize
\setbox\onelinebox=\hbox{%
\vbox{\hbox{%
$\Pi_{14,21}$ spans $L_{142.20}$%
}\hbox{%
$\infty42|24\infty22\infty2|2\infty22\rtimes D_{2}$%
}%
}%
\hfill\copy\matricesbox
}%
\ifdim\wd\onelinebox>\myboxwidth
\hbox to \myboxwidth{%
$\Pi_{14,21}$ spans $L_{142.20}$%
\hfil
$\infty42|24\infty22\infty2|2\infty22\rtimes D_{2}$%
}%
\box\matricesbox
\else
\hbox to \myboxwidth{%
\unhbox\onelinebox
}%
\fi
\else
\hbox to \myboxwidth{%
$\Pi_{14,21}$ spans $L_{142.20}$%
\hfil}%
\hbox to \myboxwidth{%
$\infty42|24\infty22\infty2|2\infty22\rtimes D_{2}$%
\hfil}%
\box\matricesbox
\fi
}%
\hfill\discretionary{}{}{}%
\setbox\matricesbox=\hbox{%
{$\left[\!\llap{\phantom{%
\begingroup \smaller\smaller\smaller\begin{tabular}{@{}c@{}}%
\phantom{0}\\\phantom{0}\\\phantom{0}
\end{tabular}\endgroup%
}}\right.$}%
\begingroup \smaller\smaller\smaller\begin{tabular}{@{}c@{}}%
-1\\\phantom{0}\\\phantom{0}
\end{tabular}\endgroup%
\kern3pt%
\begingroup \smaller\smaller\smaller\begin{tabular}{@{}c@{}}%
\phantom{0}\\9\\\phantom{0}
\end{tabular}\endgroup%
\kern3pt%
\begingroup \smaller\smaller\smaller\begin{tabular}{@{}c@{}}%
\phantom{0}\\\phantom{0}\\9
\end{tabular}\endgroup%
{$\left.\llap{\phantom{%
\begingroup \smaller\smaller\smaller\begin{tabular}{@{}c@{}}%
\phantom{0}\\\phantom{0}\\\phantom{0}
\end{tabular}\endgroup%
}}\!\right]$}%
{$\left[\!\llap{\phantom{%
\begingroup \smaller\smaller\smaller\begin{tabular}{@{}c@{}}%
0\\0\\0
\end{tabular}\endgroup%
}}\right.$}%
\begingroup \smaller\smaller\smaller\begin{tabular}{@{}c@{}}%
9\\-3\\-1
\end{tabular}\endgroup%
\kern3pt%
\begingroup \smaller\smaller\smaller\begin{tabular}{@{}c@{}}%
8\\-2\\-2
\end{tabular}\endgroup%
\kern3pt%
\begingroup \smaller\smaller\smaller\begin{tabular}{@{}c@{}}%
9\\-1\\-3
\end{tabular}\endgroup%
\kern3pt%
\begingroup \smaller\smaller\smaller\begin{tabular}{@{}c@{}}%
9\\1\\-3
\end{tabular}\endgroup%
\kern3pt%
\begingroup \smaller\smaller\smaller\begin{tabular}{@{}c@{}}%
8\\2\\-2
\end{tabular}\endgroup%
\kern3pt%
\begingroup \smaller\smaller\smaller\begin{tabular}{@{}c@{}}%
72\\22\\-10
\end{tabular}\endgroup%
\kern3pt%
\begingroup \smaller\smaller\smaller\begin{tabular}{@{}c@{}}%
36\\12\\-2
\end{tabular}\endgroup%
{$\left.\llap{\phantom{%
\begingroup \smaller\smaller\smaller\begin{tabular}{@{}c@{}}%
0\\0\\0
\end{tabular}\endgroup%
}}\!\right]$}%
}%
\ifdim\wd\matricesbox>\halfwidth\myboxwidth=\hsize\else\myboxwidth=\halfwidth\fi
\vbox{%
\ifdim\myboxwidth=\hsize
\setbox\onelinebox=\hbox{%
\vbox{\hbox{%
$\Pi_{14,22}$ spans $L_{142.20}$%
}\hbox{%
$422\infty22\slashinfty22\infty224\slashinfty\rtimes D_{2}$%
}%
}%
\hfill\copy\matricesbox
}%
\ifdim\wd\onelinebox>\myboxwidth
\hbox to \myboxwidth{%
$\Pi_{14,22}$ spans $L_{142.20}$%
\hfil
$422\infty22\slashinfty22\infty224\slashinfty\rtimes D_{2}$%
}%
\box\matricesbox
\else
\hbox to \myboxwidth{%
\unhbox\onelinebox
}%
\fi
\else
\hbox to \myboxwidth{%
$\Pi_{14,22}$ spans $L_{142.20}$%
\hfil}%
\hbox to \myboxwidth{%
$422\infty22\slashinfty22\infty224\slashinfty\rtimes D_{2}$%
\hfil}%
\box\matricesbox
\fi
}%
\hfill\discretionary{}{}{}%
\setbox\matricesbox=\hbox{%
{$\left[\!\llap{\phantom{%
\begingroup \smaller\smaller\smaller\begin{tabular}{@{}c@{}}%
\phantom{0}\\\phantom{0}\\\phantom{0}
\end{tabular}\endgroup%
}}\right.$}%
\begingroup \smaller\smaller\smaller\begin{tabular}{@{}c@{}}%
-1\\\phantom{0}\\\phantom{0}
\end{tabular}\endgroup%
\kern3pt%
\begingroup \smaller\smaller\smaller\begin{tabular}{@{}c@{}}%
\phantom{0}\\18\\\phantom{0}
\end{tabular}\endgroup%
\kern3pt%
\begingroup \smaller\smaller\smaller\begin{tabular}{@{}c@{}}%
\phantom{0}\\\phantom{0}\\18
\end{tabular}\endgroup%
{$\left.\llap{\phantom{%
\begingroup \smaller\smaller\smaller\begin{tabular}{@{}c@{}}%
\phantom{0}\\\phantom{0}\\\phantom{0}
\end{tabular}\endgroup%
}}\!\right]$}%
{$\left[\!\llap{\phantom{%
\begingroup \smaller\smaller\smaller\begin{tabular}{@{}c@{}}%
0\\0\\0
\end{tabular}\endgroup%
}}\right.$}%
\begingroup \smaller\smaller\smaller\begin{tabular}{@{}c@{}}%
9\\1\\2
\end{tabular}\endgroup%
\kern3pt%
\begingroup \smaller\smaller\smaller\begin{tabular}{@{}c@{}}%
36\\7\\5
\end{tabular}\endgroup%
\kern3pt%
\begingroup \smaller\smaller\smaller\begin{tabular}{@{}c@{}}%
72\\16\\6
\end{tabular}\endgroup%
\kern3pt%
\begingroup \smaller\smaller\smaller\begin{tabular}{@{}c@{}}%
8\\2\\0
\end{tabular}\endgroup%
\kern3pt%
\begingroup \smaller\smaller\smaller\begin{tabular}{@{}c@{}}%
9\\2\\-1
\end{tabular}\endgroup%
\kern3pt%
\begingroup \smaller\smaller\smaller\begin{tabular}{@{}c@{}}%
9\\1\\-2
\end{tabular}\endgroup%
\kern3pt%
\begingroup \smaller\smaller\smaller\begin{tabular}{@{}c@{}}%
8\\0\\-2
\end{tabular}\endgroup%
{$\left.\llap{\phantom{%
\begingroup \smaller\smaller\smaller\begin{tabular}{@{}c@{}}%
0\\0\\0
\end{tabular}\endgroup%
}}\!\right]$}%
}%
\ifdim\wd\matricesbox>\halfwidth\myboxwidth=\hsize\else\myboxwidth=\halfwidth\fi
\vbox{%
\ifdim\myboxwidth=\hsize
\setbox\onelinebox=\hbox{%
\vbox{\hbox{%
$\Pi_{14,23}$ spans $L_{142.20}$%
}\hbox{%
$\infty422\infty22\infty422\infty22\rtimes C_{2}$%
}%
}%
\hfill\copy\matricesbox
}%
\ifdim\wd\onelinebox>\myboxwidth
\hbox to \myboxwidth{%
$\Pi_{14,23}$ spans $L_{142.20}$%
\hfil
$\infty422\infty22\infty422\infty22\rtimes C_{2}$%
}%
\box\matricesbox
\else
\hbox to \myboxwidth{%
\unhbox\onelinebox
}%
\fi
\else
\hbox to \myboxwidth{%
$\Pi_{14,23}$ spans $L_{142.20}$%
\hfil}%
\hbox to \myboxwidth{%
$\infty422\infty22\infty422\infty22\rtimes C_{2}$%
\hfil}%
\box\matricesbox
\fi
}%
\hfill\discretionary{}{}{}%
\setbox\matricesbox=\hbox{%
{$\left[\!\llap{\phantom{%
\begingroup \smaller\smaller\smaller% [inline block 20: 21 envs, 2335 chars -> data_tex | \begin{tabular}{@{}c@{}}% \phantom{0}\\\phantom{0}\\\phantom{0}...]
\endgroup%
}}\!\right]$}%
}%
\ifdim\wd\matricesbox>\halfwidth\myboxwidth=\hsize\else\myboxwidth=\halfwidth\fi
\vbox{%
\ifdim\myboxwidth=\hsize
\setbox\onelinebox=\hbox{%
\vbox{\hbox{%
$\Pi_{14,24}$ spans $L_{142.20}$%
}\hbox{%
$\infty422\infty22\infty22\infty422$%
}%
}%
\hfill\copy\matricesbox
}%
\ifdim\wd\onelinebox>\myboxwidth
\hbox to \myboxwidth{%
$\Pi_{14,24}$ spans $L_{142.20}$%
\hfil
$\infty422\infty22\infty22\infty422$%
}%
\box\matricesbox
\else
\hbox to \myboxwidth{%
\unhbox\onelinebox
}%
\fi
\else
\hbox to \myboxwidth{%
$\Pi_{14,24}$ spans $L_{142.20}$%
\hfil}%
\hbox to \myboxwidth{%
$\infty422\infty22\infty22\infty422$%
\hfil}%
\box\matricesbox
\fi
}%
\hfill\discretionary{}{}{}%
\setbox\matricesbox=\hbox{%
{$\left[\!\llap{\phantom{%
\begingroup \smaller\smaller\smaller% [inline block 21: 21 envs, 2319 chars -> data_tex | \begin{tabular}{@{}c@{}}% \phantom{0}\\\phantom{0}\\\phantom{0}...]
\endgroup%
}}\!\right]$}%
}%
\ifdim\wd\matricesbox>\halfwidth\myboxwidth=\hsize\else\myboxwidth=\halfwidth\fi
\vbox{%
\ifdim\myboxwidth=\hsize
\setbox\onelinebox=\hbox{%
\vbox{\hbox{%
$\Pi_{14,25}$ spans $L_{16.13}$%
}\hbox{%
$22222222223632$%
}%
}%
\hfill\copy\matricesbox
}%
\ifdim\wd\onelinebox>\myboxwidth
\hbox to \myboxwidth{%
$\Pi_{14,25}$ spans $L_{16.13}$%
\hfil
$22222222223632$%
}%
\box\matricesbox
\else
\hbox to \myboxwidth{%
\unhbox\onelinebox
}%
\fi
\else
\hbox to \myboxwidth{%
$\Pi_{14,25}$ spans $L_{16.13}$%
\hfil}%
\hbox to \myboxwidth{%
$22222222223632$%
\hfil}%
\box\matricesbox
\fi
}%
\hfill\discretionary{}{}{}%
\setbox\matricesbox=\hbox{%
{$\left[\!\llap{\phantom{%
\begingroup \smaller\smaller\smaller% [inline block 22: 21 envs, 2320 chars -> data_tex | \begin{tabular}{@{}c@{}}% \phantom{0}\\\phantom{0}\\\phantom{0}...]
\endgroup%
}}\!\right]$}%
}%
\ifdim\wd\matricesbox>\halfwidth\myboxwidth=\hsize\else\myboxwidth=\halfwidth\fi
\vbox{%
\ifdim\myboxwidth=\hsize
\setbox\onelinebox=\hbox{%
\vbox{\hbox{%
$\Pi_{14,26}$ spans $L_{16.13}$%
}\hbox{%
$22222222322232$%
}%
}%
\hfill\copy\matricesbox
}%
\ifdim\wd\onelinebox>\myboxwidth
\hbox to \myboxwidth{%
$\Pi_{14,26}$ spans $L_{16.13}$%
\hfil
$22222222322232$%
}%
\box\matricesbox
\else
\hbox to \myboxwidth{%
\unhbox\onelinebox
}%
\fi
\else
\hbox to \myboxwidth{%
$\Pi_{14,26}$ spans $L_{16.13}$%
\hfil}%
\hbox to \myboxwidth{%
$22222222322232$%
\hfil}%
\box\matricesbox
\fi
}%
\hfill\discretionary{}{}{}%
\setbox\matricesbox=\hbox{%
{$\left[\!\llap{\phantom{%
\begingroup \smaller\smaller\smaller% [inline block 23: 21 envs, 2321 chars -> data_tex | \begin{tabular}{@{}c@{}}% \phantom{0}\\\phantom{0}\\\phantom{0}...]
\endgroup%
}}\!\right]$}%
}%
\ifdim\wd\matricesbox>\halfwidth\myboxwidth=\hsize\else\myboxwidth=\halfwidth\fi
\vbox{%
\ifdim\myboxwidth=\hsize
\setbox\onelinebox=\hbox{%
\vbox{\hbox{%
$\Pi_{14,27}$ spans $L_{16.13}$%
}\hbox{%
$22222232222232$%
}%
}%
\hfill\copy\matricesbox
}%
\ifdim\wd\onelinebox>\myboxwidth
\hbox to \myboxwidth{%
$\Pi_{14,27}$ spans $L_{16.13}$%
\hfil
$22222232222232$%
}%
\box\matricesbox
\else
\hbox to \myboxwidth{%
\unhbox\onelinebox
}%
\fi
\else
\hbox to \myboxwidth{%
$\Pi_{14,27}$ spans $L_{16.13}$%
\hfil}%
\hbox to \myboxwidth{%
$22222232222232$%
\hfil}%
\box\matricesbox
\fi
}%
\hfill\discretionary{}{}{}%
\setbox\matricesbox=\hbox{%
{$\left[\!\llap{\phantom{%
\begingroup \smaller\smaller\smaller% [inline block 24: 21 envs, 2342 chars -> data_tex | \begin{tabular}{@{}c@{}}% \phantom{0}\\\phantom{0}\\\phantom{0}...]
\endgroup%
}}\!\right]$}%
}%
\ifdim\wd\matricesbox>\halfwidth\myboxwidth=\hsize\else\myboxwidth=\halfwidth\fi
\vbox{%
\ifdim\myboxwidth=\hsize
\setbox\onelinebox=\hbox{%
\vbox{\hbox{%
$\Pi_{14,28}$ spans $L_{123.8}$%
}\hbox{%
$22422224242422$%
}%
}%
\hfill\copy\matricesbox
}%
\ifdim\wd\onelinebox>\myboxwidth
\hbox to \myboxwidth{%
$\Pi_{14,28}$ spans $L_{123.8}$%
\hfil
$22422224242422$%
}%
\box\matricesbox
\else
\hbox to \myboxwidth{%
\unhbox\onelinebox
}%
\fi
\else
\hbox to \myboxwidth{%
$\Pi_{14,28}$ spans $L_{123.8}$%
\hfil}%
\hbox to \myboxwidth{%
$22422224242422$%
\hfil}%
\box\matricesbox
\fi
}%
\hfill\discretionary{}{}{}%
\setbox\matricesbox=\hbox{%
{$\left[\!\llap{\phantom{%
\begingroup \smaller\smaller\smaller% [inline block 25: 21 envs, 2342 chars -> data_tex | \begin{tabular}{@{}c@{}}% \phantom{0}\\\phantom{0}\\\phantom{0}...]
\endgroup%
}}\!\right]$}%
}%
\ifdim\wd\matricesbox>\halfwidth\myboxwidth=\hsize\else\myboxwidth=\halfwidth\fi
\vbox{%
\ifdim\myboxwidth=\hsize
\setbox\onelinebox=\hbox{%
\vbox{\hbox{%
$\Pi_{14,29}$ spans $L_{123.8}$%
}\hbox{%
$22424242424222$%
}%
}%
\hfill\copy\matricesbox
}%
\ifdim\wd\onelinebox>\myboxwidth
\hbox to \myboxwidth{%
$\Pi_{14,29}$ spans $L_{123.8}$%
\hfil
$22424242424222$%
}%
\box\matricesbox
\else
\hbox to \myboxwidth{%
\unhbox\onelinebox
}%
\fi
\else
\hbox to \myboxwidth{%
$\Pi_{14,29}$ spans $L_{123.8}$%
\hfil}%
\hbox to \myboxwidth{%
$22424242424222$%
\hfil}%
\box\matricesbox
\fi
}%
\hfill\discretionary{}{}{}%

\vskip2pt\hrule\vskip2pt

\leavevmode\setbox\matricesbox=\hbox{%
{$\left[\!\llap{\phantom{%
\begingroup \smaller\smaller\smaller\begin{tabular}{@{}c@{}}%
\phantom{0}\\\phantom{0}\\\phantom{0}\\\phantom{0}
\end{tabular}\endgroup%
}}\right.$}%
\begingroup \smaller\smaller\smaller\begin{tabular}{@{}c@{}}%
-1\\\phantom{0}\\\phantom{0}\\\phantom{0}
\end{tabular}\endgroup%
\kern3pt%
\begingroup \smaller\smaller\smaller\begin{tabular}{@{}c@{}}%
\phantom{0}\\10/3\\\phantom{0}\\\phantom{0}
\end{tabular}\endgroup%
\kern3pt%
\begingroup \smaller\smaller\smaller\begin{tabular}{@{}c@{}}%
\phantom{0}\\\phantom{0}\\10/3\\\phantom{0}
\end{tabular}\endgroup%
\kern3pt%
\begingroup \smaller\smaller\smaller\begin{tabular}{@{}c@{}}%
\phantom{0}\\\phantom{0}\\\phantom{0}\\10/3
\end{tabular}\endgroup%
{$\left.\llap{\phantom{%
\begingroup \smaller\smaller\smaller\begin{tabular}{@{}c@{}}%
\phantom{0}\\\phantom{0}\\\phantom{0}\\\phantom{0}
\end{tabular}\endgroup%
}}\!\right]$}%
{$\left[\!\llap{\phantom{%
\begingroup \smaller\smaller\smaller\begin{tabular}{@{}c@{}}%
0\\0\\0\\0
\end{tabular}\endgroup%
}}\right.$}%
\begingroup \smaller\smaller\smaller\begin{tabular}{@{}c@{}}%
4\\-2\\1\\1
\end{tabular}\endgroup%
\kern3pt%
\begingroup \smaller\smaller\smaller\begin{tabular}{@{}c@{}}%
15\\-6\\0\\6
\end{tabular}\endgroup%
\kern3pt%
\begingroup \smaller\smaller\smaller\begin{tabular}{@{}c@{}}%
20\\-6\\-3\\9
\end{tabular}\endgroup%
{$\left.\llap{\phantom{%
\begingroup \smaller\smaller\smaller\begin{tabular}{@{}c@{}}%
0\\0\\0\\0
\end{tabular}\endgroup%
}}\!\right]$}%
}%
\ifdim\wd\matricesbox>\halfwidth\myboxwidth=\hsize\else\myboxwidth=\halfwidth\fi
\vbox{%
\ifdim\myboxwidth=\hsize
\setbox\onelinebox=\hbox{%
\vbox{\hbox{%
$\Pi_{15,1}$ spans $L_{31.7}$%
}\hbox{%
$|22\slashthree22|22\slashthree22|22\slashthree22\rtimes D_{6}$%
}%
}%
\hfill\copy\matricesbox
}%
\ifdim\wd\onelinebox>\myboxwidth
\hbox to \myboxwidth{%
$\Pi_{15,1}$ spans $L_{31.7}$%
\hfil
$|22\slashthree22|22\slashthree22|22\slashthree22\rtimes D_{6}$%
}%
\box\matricesbox
\else
\hbox to \myboxwidth{%
\unhbox\onelinebox
}%
\fi
\else
\hbox to \myboxwidth{%
$\Pi_{15,1}$ spans $L_{31.7}$%
\hfil}%
\hbox to \myboxwidth{%
$|22\slashthree22|22\slashthree22|22\slashthree22\rtimes D_{6}$%
\hfil}%
\box\matricesbox
\fi
}%
\hfill\discretionary{}{}{}%
\setbox\matricesbox=\hbox{%
{$\left[\!\llap{\phantom{%
\begingroup \smaller\smaller\smaller\begin{tabular}{@{}c@{}}%
\phantom{0}\\\phantom{0}\\\phantom{0}\\\phantom{0}
\end{tabular}\endgroup%
}}\right.$}%
\begingroup \smaller\smaller\smaller\begin{tabular}{@{}c@{}}%
-1\\\phantom{0}\\\phantom{0}\\\phantom{0}
\end{tabular}\endgroup%
\kern3pt%
\begingroup \smaller\smaller\smaller\begin{tabular}{@{}c@{}}%
\phantom{0}\\5\\\phantom{0}\\\phantom{0}
\end{tabular}\endgroup%
\kern3pt%
\begingroup \smaller\smaller\smaller\begin{tabular}{@{}c@{}}%
\phantom{0}\\\phantom{0}\\5\\\phantom{0}
\end{tabular}\endgroup%
\kern3pt%
\begingroup \smaller\smaller\smaller\begin{tabular}{@{}c@{}}%
\phantom{0}\\\phantom{0}\\\phantom{0}\\5
\end{tabular}\endgroup%
{$\left.\llap{\phantom{%
\begingroup \smaller\smaller\smaller\begin{tabular}{@{}c@{}}%
\phantom{0}\\\phantom{0}\\\phantom{0}\\\phantom{0}
\end{tabular}\endgroup%
}}\!\right]$}%
{$\left[\!\llap{\phantom{%
\begingroup \smaller\smaller\smaller\begin{tabular}{@{}c@{}}%
0\\0\\0\\0
\end{tabular}\endgroup%
}}\right.$}%
\begingroup \smaller\smaller\smaller\begin{tabular}{@{}c@{}}%
30\\11\\-4\\-7
\end{tabular}\endgroup%
\kern3pt%
\begingroup \smaller\smaller\smaller\begin{tabular}{@{}c@{}}%
9\\3\\0\\-3
\end{tabular}\endgroup%
\kern3pt%
\begingroup \smaller\smaller\smaller\begin{tabular}{@{}c@{}}%
5\\1\\1\\-2
\end{tabular}\endgroup%
{$\left.\llap{\phantom{%
\begingroup \smaller\smaller\smaller\begin{tabular}{@{}c@{}}%
0\\0\\0\\0
\end{tabular}\endgroup%
}}\!\right]$}%
}%
\ifdim\wd\matricesbox>\halfwidth\myboxwidth=\hsize\else\myboxwidth=\halfwidth\fi
\vbox{%
\ifdim\myboxwidth=\hsize
\setbox\onelinebox=\hbox{%
\vbox{\hbox{%
$\Pi_{15,2}$ spans $L_{16.13}$%
}\hbox{%
$\slashthree22|22\slashthree22|22\slashthree22|22\rtimes D_{6}$%
}%
}%
\hfill\copy\matricesbox
}%
\ifdim\wd\onelinebox>\myboxwidth
\hbox to \myboxwidth{%
$\Pi_{15,2}$ spans $L_{16.13}$%
\hfil
$\slashthree22|22\slashthree22|22\slashthree22|22\rtimes D_{6}$%
}%
\box\matricesbox
\else
\hbox to \myboxwidth{%
\unhbox\onelinebox
}%
\fi
\else
\hbox to \myboxwidth{%
$\Pi_{15,2}$ spans $L_{16.13}$%
\hfil}%
\hbox to \myboxwidth{%
$\slashthree22|22\slashthree22|22\slashthree22|22\rtimes D_{6}$%
\hfil}%
\box\matricesbox
\fi
}%
\hfill\discretionary{}{}{}%
\setbox\matricesbox=\hbox{%
{$\left[\!\llap{\phantom{%
\begingroup \smaller\smaller\smaller\begin{tabular}{@{}c@{}}%
\phantom{0}\\\phantom{0}\\\phantom{0}\\\phantom{0}
\end{tabular}\endgroup%
}}\right.$}%
\begingroup \smaller\smaller\smaller\begin{tabular}{@{}c@{}}%
-1\\\phantom{0}\\\phantom{0}\\\phantom{0}
\end{tabular}\endgroup%
\kern3pt%
\begingroup \smaller\smaller\smaller\begin{tabular}{@{}c@{}}%
\phantom{0}\\7\\\phantom{0}\\\phantom{0}
\end{tabular}\endgroup%
\kern3pt%
\begingroup \smaller\smaller\smaller\begin{tabular}{@{}c@{}}%
\phantom{0}\\\phantom{0}\\7\\\phantom{0}
\end{tabular}\endgroup%
\kern3pt%
\begingroup \smaller\smaller\smaller\begin{tabular}{@{}c@{}}%
\phantom{0}\\\phantom{0}\\\phantom{0}\\7
\end{tabular}\endgroup%
{$\left.\llap{\phantom{%
\begingroup \smaller\smaller\smaller\begin{tabular}{@{}c@{}}%
\phantom{0}\\\phantom{0}\\\phantom{0}\\\phantom{0}
\end{tabular}\endgroup%
}}\!\right]$}%
{$\left[\!\llap{\phantom{%
\begingroup \smaller\smaller\smaller\begin{tabular}{@{}c@{}}%
0\\0\\0\\0
\end{tabular}\endgroup%
}}\right.$}%
\begingroup \smaller\smaller\smaller\begin{tabular}{@{}c@{}}%
42\\13\\-5\\-8
\end{tabular}\endgroup%
\kern3pt%
\begingroup \smaller\smaller\smaller\begin{tabular}{@{}c@{}}%
7\\2\\0\\-2
\end{tabular}\endgroup%
\kern3pt%
\begingroup \smaller\smaller\smaller\begin{tabular}{@{}c@{}}%
6\\1\\1\\-2
\end{tabular}\endgroup%
{$\left.\llap{\phantom{%
\begingroup \smaller\smaller\smaller\begin{tabular}{@{}c@{}}%
0\\0\\0\\0
\end{tabular}\endgroup%
}}\!\right]$}%
}%
\ifdim\wd\matricesbox>\halfwidth\myboxwidth=\hsize\else\myboxwidth=\halfwidth\fi
\vbox{%
\ifdim\myboxwidth=\hsize
\setbox\onelinebox=\hbox{%
\vbox{\hbox{%
$\Pi_{15,3}$ spans $L_{22.4}$%
}\hbox{%
$\slashthree22|22\slashthree22|22\slashthree22|22\rtimes D_{6}$%
}%
}%
\hfill\copy\matricesbox
}%
\ifdim\wd\onelinebox>\myboxwidth
\hbox to \myboxwidth{%
$\Pi_{15,3}$ spans $L_{22.4}$%
\hfil
$\slashthree22|22\slashthree22|22\slashthree22|22\rtimes D_{6}$%
}%
\box\matricesbox
\else
\hbox to \myboxwidth{%
\unhbox\onelinebox
}%
\fi
\else
\hbox to \myboxwidth{%
$\Pi_{15,3}$ spans $L_{22.4}$%
\hfil}%
\hbox to \myboxwidth{%
$\slashthree22|22\slashthree22|22\slashthree22|22\rtimes D_{6}$%
\hfil}%
\box\matricesbox
\fi
}%
\hfill\discretionary{}{}{}%
\setbox\matricesbox=\hbox{%
{$\left[\!\llap{\phantom{%
\begingroup \smaller\smaller\smaller\begin{tabular}{@{}c@{}}%
\phantom{0}\\\phantom{0}\\\phantom{0}\\\phantom{0}
\end{tabular}\endgroup%
}}\right.$}%
\begingroup \smaller\smaller\smaller\begin{tabular}{@{}c@{}}%
-1\\\phantom{0}\\\phantom{0}\\\phantom{0}
\end{tabular}\endgroup%
\kern3pt%
\begingroup \smaller\smaller\smaller\begin{tabular}{@{}c@{}}%
\phantom{0}\\15\\\phantom{0}\\\phantom{0}
\end{tabular}\endgroup%
\kern3pt%
\begingroup \smaller\smaller\smaller\begin{tabular}{@{}c@{}}%
\phantom{0}\\\phantom{0}\\15\\\phantom{0}
\end{tabular}\endgroup%
\kern3pt%
\begingroup \smaller\smaller\smaller\begin{tabular}{@{}c@{}}%
\phantom{0}\\\phantom{0}\\\phantom{0}\\15
\end{tabular}\endgroup%
{$\left.\llap{\phantom{%
\begingroup \smaller\smaller\smaller\begin{tabular}{@{}c@{}}%
\phantom{0}\\\phantom{0}\\\phantom{0}\\\phantom{0}
\end{tabular}\endgroup%
}}\!\right]$}%
{$\left[\!\llap{\phantom{%
\begingroup \smaller\smaller\smaller\begin{tabular}{@{}c@{}}%
0\\0\\0\\0
\end{tabular}\endgroup%
}}\right.$}%
\begingroup \smaller\smaller\smaller\begin{tabular}{@{}c@{}}%
90\\-19\\8\\11
\end{tabular}\endgroup%
\kern3pt%
\begingroup \smaller\smaller\smaller\begin{tabular}{@{}c@{}}%
5\\-1\\0\\1
\end{tabular}\endgroup%
\kern3pt%
\begingroup \smaller\smaller\smaller\begin{tabular}{@{}c@{}}%
9\\-1\\-1\\2
\end{tabular}\endgroup%
{$\left.\llap{\phantom{%
\begingroup \smaller\smaller\smaller\begin{tabular}{@{}c@{}}%
0\\0\\0\\0
\end{tabular}\endgroup%
}}\!\right]$}%
}%
\ifdim\wd\matricesbox>\halfwidth\myboxwidth=\hsize\else\myboxwidth=\halfwidth\fi
\vbox{%
\ifdim\myboxwidth=\hsize
\setbox\onelinebox=\hbox{%
\vbox{\hbox{%
$\Pi_{15,4}$ spans $L_{16.13}$%
}\hbox{%
$\slashthree22|22\slashthree22|22\slashthree22|22\rtimes D_{6}$%
}%
}%
\hfill\copy\matricesbox
}%
\ifdim\wd\onelinebox>\myboxwidth
\hbox to \myboxwidth{%
$\Pi_{15,4}$ spans $L_{16.13}$%
\hfil
$\slashthree22|22\slashthree22|22\slashthree22|22\rtimes D_{6}$%
}%
\box\matricesbox
\else
\hbox to \myboxwidth{%
\unhbox\onelinebox
}%
\fi
\else
\hbox to \myboxwidth{%
$\Pi_{15,4}$ spans $L_{16.13}$%
\hfil}%
\hbox to \myboxwidth{%
$\slashthree22|22\slashthree22|22\slashthree22|22\rtimes D_{6}$%
\hfil}%
\box\matricesbox
\fi
}%
\hfill\discretionary{}{}{}%
\setbox\matricesbox=\hbox{%
{$\left[\!\llap{\phantom{%
\begingroup \smaller\smaller\smaller\begin{tabular}{@{}c@{}}%
\phantom{0}\\\phantom{0}\\\phantom{0}
\end{tabular}\endgroup%
}}\right.$}%
\begingroup \smaller\smaller\smaller\begin{tabular}{@{}c@{}}%
-1\\\phantom{0}\\\phantom{0}
\end{tabular}\endgroup%
\kern3pt%
\begingroup \smaller\smaller\smaller\begin{tabular}{@{}c@{}}%
\phantom{0}\\5\\\phantom{0}
\end{tabular}\endgroup%
\kern3pt%
\begingroup \smaller\smaller\smaller\begin{tabular}{@{}c@{}}%
\phantom{0}\\\phantom{0}\\15
\end{tabular}\endgroup%
{$\left.\llap{\phantom{%
\begingroup \smaller\smaller\smaller\begin{tabular}{@{}c@{}}%
\phantom{0}\\\phantom{0}\\\phantom{0}
\end{tabular}\endgroup%
}}\!\right]$}%
{$\left[\!\llap{\phantom{%
\begingroup \smaller\smaller\smaller\begin{tabular}{@{}c@{}}%
0\\0\\0
\end{tabular}\endgroup%
}}\right.$}%
\begingroup \smaller\smaller\smaller\begin{tabular}{@{}c@{}}%
4\\2\\0
\end{tabular}\endgroup%
\kern3pt%
\begingroup \smaller\smaller\smaller\begin{tabular}{@{}c@{}}%
15\\6\\2
\end{tabular}\endgroup%
\kern3pt%
\begingroup \smaller\smaller\smaller\begin{tabular}{@{}c@{}}%
4\\1\\1
\end{tabular}\endgroup%
\kern3pt%
\begingroup \smaller\smaller\smaller\begin{tabular}{@{}c@{}}%
15\\0\\4
\end{tabular}\endgroup%
\kern3pt%
\begingroup \smaller\smaller\smaller\begin{tabular}{@{}c@{}}%
20\\-3\\5
\end{tabular}\endgroup%
\kern3pt%
\begingroup \smaller\smaller\smaller\begin{tabular}{@{}c@{}}%
20\\-6\\4
\end{tabular}\endgroup%
\kern3pt%
\begingroup \smaller\smaller\smaller\begin{tabular}{@{}c@{}}%
15\\-6\\2
\end{tabular}\endgroup%
\kern3pt%
\begingroup \smaller\smaller\smaller\begin{tabular}{@{}c@{}}%
20\\-9\\1
\end{tabular}\endgroup%
{$\left.\llap{\phantom{%
\begingroup \smaller\smaller\smaller\begin{tabular}{@{}c@{}}%
0\\0\\0
\end{tabular}\endgroup%
}}\!\right]$}%
}%
\ifdim\wd\matricesbox>\halfwidth\myboxwidth=\hsize\else\myboxwidth=\halfwidth\fi
\vbox{%
\ifdim\myboxwidth=\hsize
\setbox\onelinebox=\hbox{%
\vbox{\hbox{%
$\Pi_{15,5}$ spans $L_{31.7}$%
}\hbox{%
$22|2222322\slashthree22322\rtimes D_{2}$%
}%
}%
\hfill\copy\matricesbox
}%
\ifdim\wd\onelinebox>\myboxwidth
\hbox to \myboxwidth{%
$\Pi_{15,5}$ spans $L_{31.7}$%
\hfil
$22|2222322\slashthree22322\rtimes D_{2}$%
}%
\box\matricesbox
\else
\hbox to \myboxwidth{%
\unhbox\onelinebox
}%
\fi
\else
\hbox to \myboxwidth{%
$\Pi_{15,5}$ spans $L_{31.7}$%
\hfil}%
\hbox to \myboxwidth{%
$22|2222322\slashthree22322\rtimes D_{2}$%
\hfil}%
\box\matricesbox
\fi
}%
\hfill\discretionary{}{}{}%
\setbox\matricesbox=\hbox{%
{$\left[\!\llap{\phantom{%
\begingroup \smaller\smaller\smaller\begin{tabular}{@{}c@{}}%
\phantom{0}\\\phantom{0}\\\phantom{0}
\end{tabular}\endgroup%
}}\right.$}%
\begingroup \smaller\smaller\smaller\begin{tabular}{@{}c@{}}%
-1\\\phantom{0}\\\phantom{0}
\end{tabular}\endgroup%
\kern3pt%
\begingroup \smaller\smaller\smaller\begin{tabular}{@{}c@{}}%
\phantom{0}\\15/2\\\phantom{0}
\end{tabular}\endgroup%
\kern3pt%
\begingroup \smaller\smaller\smaller\begin{tabular}{@{}c@{}}%
\phantom{0}\\\phantom{0}\\45/2
\end{tabular}\endgroup%
{$\left.\llap{\phantom{%
\begingroup \smaller\smaller\smaller\begin{tabular}{@{}c@{}}%
\phantom{0}\\\phantom{0}\\\phantom{0}
\end{tabular}\endgroup%
}}\!\right]$}%
{$\left[\!\llap{\phantom{%
\begingroup \smaller\smaller\smaller\begin{tabular}{@{}c@{}}%
0\\0\\0
\end{tabular}\endgroup%
}}\right.$}%
\begingroup \smaller\smaller\smaller\begin{tabular}{@{}c@{}}%
5\\-2\\0
\end{tabular}\endgroup%
\kern3pt%
\begingroup \smaller\smaller\smaller\begin{tabular}{@{}c@{}}%
9\\-3\\1
\end{tabular}\endgroup%
\kern3pt%
\begingroup \smaller\smaller\smaller\begin{tabular}{@{}c@{}}%
5\\-1\\1
\end{tabular}\endgroup%
\kern3pt%
\begingroup \smaller\smaller\smaller\begin{tabular}{@{}c@{}}%
9\\0\\2
\end{tabular}\endgroup%
\kern3pt%
\begingroup \smaller\smaller\smaller\begin{tabular}{@{}c@{}}%
5\\1\\1
\end{tabular}\endgroup%
\kern3pt%
\begingroup \smaller\smaller\smaller\begin{tabular}{@{}c@{}}%
90\\27\\11
\end{tabular}\endgroup%
\kern3pt%
\begingroup \smaller\smaller\smaller\begin{tabular}{@{}c@{}}%
90\\30\\8
\end{tabular}\endgroup%
\kern3pt%
\begingroup \smaller\smaller\smaller\begin{tabular}{@{}c@{}}%
30\\11\\1
\end{tabular}\endgroup%
{$\left.\llap{\phantom{%
\begingroup \smaller\smaller\smaller\begin{tabular}{@{}c@{}}%
0\\0\\0
\end{tabular}\endgroup%
}}\!\right]$}%
}%
\ifdim\wd\matricesbox>\halfwidth\myboxwidth=\hsize\else\myboxwidth=\halfwidth\fi
\vbox{%
\ifdim\myboxwidth=\hsize
\setbox\onelinebox=\hbox{%
\vbox{\hbox{%
$\Pi_{15,6}$ spans $L_{16.13}$%
}\hbox{%
$2222|2222236\slashthree632\rtimes D_{2}$%
}%
}%
\hfill\copy\matricesbox
}%
\ifdim\wd\onelinebox>\myboxwidth
\hbox to \myboxwidth{%
$\Pi_{15,6}$ spans $L_{16.13}$%
\hfil
$2222|2222236\slashthree632\rtimes D_{2}$%
}%
\box\matricesbox
\else
\hbox to \myboxwidth{%
\unhbox\onelinebox
}%
\fi
\else
\hbox to \myboxwidth{%
$\Pi_{15,6}$ spans $L_{16.13}$%
\hfil}%
\hbox to \myboxwidth{%
$2222|2222236\slashthree632\rtimes D_{2}$%
\hfil}%
\box\matricesbox
\fi
}%
\hfill\discretionary{}{}{}%
\setbox\matricesbox=\hbox{%
{$\left[\!\llap{\phantom{%
\begingroup \smaller\smaller\smaller\begin{tabular}{@{}c@{}}%
\phantom{0}\\\phantom{0}\\\phantom{0}
\end{tabular}\endgroup%
}}\right.$}%
\begingroup \smaller\smaller\smaller\begin{tabular}{@{}c@{}}%
-1\\\phantom{0}\\\phantom{0}
\end{tabular}\endgroup%
\kern3pt%
\begingroup \smaller\smaller\smaller\begin{tabular}{@{}c@{}}%
\phantom{0}\\45/2\\\phantom{0}
\end{tabular}\endgroup%
\kern3pt%
\begingroup \smaller\smaller\smaller\begin{tabular}{@{}c@{}}%
\phantom{0}\\\phantom{0}\\15/2
\end{tabular}\endgroup%
{$\left.\llap{\phantom{%
\begingroup \smaller\smaller\smaller\begin{tabular}{@{}c@{}}%
\phantom{0}\\\phantom{0}\\\phantom{0}
\end{tabular}\endgroup%
}}\!\right]$}%
{$\left[\!\llap{\phantom{%
\begingroup \smaller\smaller\smaller\begin{tabular}{@{}c@{}}%
0\\0\\0
\end{tabular}\endgroup%
}}\right.$}%
\begingroup \smaller\smaller\smaller\begin{tabular}{@{}c@{}}%
9\\2\\0
\end{tabular}\endgroup%
\kern3pt%
\begingroup \smaller\smaller\smaller\begin{tabular}{@{}c@{}}%
5\\1\\1
\end{tabular}\endgroup%
\kern3pt%
\begingroup \smaller\smaller\smaller\begin{tabular}{@{}c@{}}%
9\\1\\3
\end{tabular}\endgroup%
\kern3pt%
\begingroup \smaller\smaller\smaller\begin{tabular}{@{}c@{}}%
5\\0\\2
\end{tabular}\endgroup%
\kern3pt%
\begingroup \smaller\smaller\smaller\begin{tabular}{@{}c@{}}%
9\\-1\\3
\end{tabular}\endgroup%
\kern3pt%
\begingroup \smaller\smaller\smaller\begin{tabular}{@{}c@{}}%
30\\-5\\7
\end{tabular}\endgroup%
\kern3pt%
\begingroup \smaller\smaller\smaller\begin{tabular}{@{}c@{}}%
30\\-6\\4
\end{tabular}\endgroup%
\kern3pt%
\begingroup \smaller\smaller\smaller\begin{tabular}{@{}c@{}}%
90\\-19\\3
\end{tabular}\endgroup%
{$\left.\llap{\phantom{%
\begingroup \smaller\smaller\smaller\begin{tabular}{@{}c@{}}%
0\\0\\0
\end{tabular}\endgroup%
}}\!\right]$}%
}%
\ifdim\wd\matricesbox>\halfwidth\myboxwidth=\hsize\else\myboxwidth=\halfwidth\fi
\vbox{%
\ifdim\myboxwidth=\hsize
\setbox\onelinebox=\hbox{%
\vbox{\hbox{%
$\Pi_{15,7}$ spans $L_{16.13}$%
}\hbox{%
$222|2222236\slashthree6322\rtimes D_{2}$%
}%
}%
\hfill\copy\matricesbox
}%
\ifdim\wd\onelinebox>\myboxwidth
\hbox to \myboxwidth{%
$\Pi_{15,7}$ spans $L_{16.13}$%
\hfil
$222|2222236\slashthree6322\rtimes D_{2}$%
}%
\box\matricesbox
\else
\hbox to \myboxwidth{%
\unhbox\onelinebox
}%
\fi
\else
\hbox to \myboxwidth{%
$\Pi_{15,7}$ spans $L_{16.13}$%
\hfil}%
\hbox to \myboxwidth{%
$222|2222236\slashthree6322\rtimes D_{2}$%
\hfil}%
\box\matricesbox
\fi
}%
\hfill\discretionary{}{}{}%
\setbox\matricesbox=\hbox{%
{$\left[\!\llap{\phantom{%
\begingroup \smaller\smaller\smaller\begin{tabular}{@{}c@{}}%
\phantom{0}\\\phantom{0}\\\phantom{0}
\end{tabular}\endgroup%
}}\right.$}%
\begingroup \smaller\smaller\smaller\begin{tabular}{@{}c@{}}%
-1\\\phantom{0}\\\phantom{0}
\end{tabular}\endgroup%
\kern3pt%
\begingroup \smaller\smaller\smaller\begin{tabular}{@{}c@{}}%
\phantom{0}\\45/2\\\phantom{0}
\end{tabular}\endgroup%
\kern3pt%
\begingroup \smaller\smaller\smaller\begin{tabular}{@{}c@{}}%
\phantom{0}\\\phantom{0}\\15/2
\end{tabular}\endgroup%
{$\left.\llap{\phantom{%
\begingroup \smaller\smaller\smaller\begin{tabular}{@{}c@{}}%
\phantom{0}\\\phantom{0}\\\phantom{0}
\end{tabular}\endgroup%
}}\!\right]$}%
{$\left[\!\llap{\phantom{%
\begingroup \smaller\smaller\smaller\begin{tabular}{@{}c@{}}%
0\\0\\0
\end{tabular}\endgroup%
}}\right.$}%
\begingroup \smaller\smaller\smaller\begin{tabular}{@{}c@{}}%
9\\2\\0
\end{tabular}\endgroup%
\kern3pt%
\begingroup \smaller\smaller\smaller\begin{tabular}{@{}c@{}}%
5\\1\\1
\end{tabular}\endgroup%
\kern3pt%
\begingroup \smaller\smaller\smaller\begin{tabular}{@{}c@{}}%
9\\1\\3
\end{tabular}\endgroup%
\kern3pt%
\begingroup \smaller\smaller\smaller\begin{tabular}{@{}c@{}}%
5\\0\\2
\end{tabular}\endgroup%
\kern3pt%
\begingroup \smaller\smaller\smaller\begin{tabular}{@{}c@{}}%
90\\-8\\30
\end{tabular}\endgroup%
\kern3pt%
\begingroup \smaller\smaller\smaller\begin{tabular}{@{}c@{}}%
90\\-11\\27
\end{tabular}\endgroup%
\kern3pt%
\begingroup \smaller\smaller\smaller\begin{tabular}{@{}c@{}}%
5\\-1\\1
\end{tabular}\endgroup%
\kern3pt%
\begingroup \smaller\smaller\smaller\begin{tabular}{@{}c@{}}%
90\\-19\\3
\end{tabular}\endgroup%
{$\left.\llap{\phantom{%
\begingroup \smaller\smaller\smaller\begin{tabular}{@{}c@{}}%
0\\0\\0
\end{tabular}\endgroup%
}}\!\right]$}%
}%
\ifdim\wd\matricesbox>\halfwidth\myboxwidth=\hsize\else\myboxwidth=\halfwidth\fi
\vbox{%
\ifdim\myboxwidth=\hsize
\setbox\onelinebox=\hbox{%
\vbox{\hbox{%
$\Pi_{15,8}$ spans $L_{16.13}$%
}\hbox{%
$222|2222322\slashthree2232\rtimes D_{2}$%
}%
}%
\hfill\copy\matricesbox
}%
\ifdim\wd\onelinebox>\myboxwidth
\hbox to \myboxwidth{%
$\Pi_{15,8}$ spans $L_{16.13}$%
\hfil
$222|2222322\slashthree2232\rtimes D_{2}$%
}%
\box\matricesbox
\else
\hbox to \myboxwidth{%
\unhbox\onelinebox
}%
\fi
\else
\hbox to \myboxwidth{%
$\Pi_{15,8}$ spans $L_{16.13}$%
\hfil}%
\hbox to \myboxwidth{%
$222|2222322\slashthree2232\rtimes D_{2}$%
\hfil}%
\box\matricesbox
\fi
}%
\hfill\discretionary{}{}{}%
\setbox\matricesbox=\hbox{%
{$\left[\!\llap{\phantom{%
\begingroup \smaller\smaller\smaller\begin{tabular}{@{}c@{}}%
\phantom{0}\\\phantom{0}\\\phantom{0}
\end{tabular}\endgroup%
}}\right.$}%
\begingroup \smaller\smaller\smaller\begin{tabular}{@{}c@{}}%
-1\\\phantom{0}\\\phantom{0}
\end{tabular}\endgroup%
\kern3pt%
\begingroup \smaller\smaller\smaller\begin{tabular}{@{}c@{}}%
\phantom{0}\\15/2\\\phantom{0}
\end{tabular}\endgroup%
\kern3pt%
\begingroup \smaller\smaller\smaller\begin{tabular}{@{}c@{}}%
\phantom{0}\\\phantom{0}\\45/2
\end{tabular}\endgroup%
{$\left.\llap{\phantom{%
\begingroup \smaller\smaller\smaller\begin{tabular}{@{}c@{}}%
\phantom{0}\\\phantom{0}\\\phantom{0}
\end{tabular}\endgroup%
}}\!\right]$}%
{$\left[\!\llap{\phantom{%
\begingroup \smaller\smaller\smaller\begin{tabular}{@{}c@{}}%
0\\0\\0
\end{tabular}\endgroup%
}}\right.$}%
\begingroup \smaller\smaller\smaller\begin{tabular}{@{}c@{}}%
5\\-2\\0
\end{tabular}\endgroup%
\kern3pt%
\begingroup \smaller\smaller\smaller\begin{tabular}{@{}c@{}}%
9\\-3\\1
\end{tabular}\endgroup%
\kern3pt%
\begingroup \smaller\smaller\smaller\begin{tabular}{@{}c@{}}%
5\\-1\\1
\end{tabular}\endgroup%
\kern3pt%
\begingroup \smaller\smaller\smaller\begin{tabular}{@{}c@{}}%
9\\0\\2
\end{tabular}\endgroup%
\kern3pt%
\begingroup \smaller\smaller\smaller\begin{tabular}{@{}c@{}}%
30\\4\\6
\end{tabular}\endgroup%
\kern3pt%
\begingroup \smaller\smaller\smaller\begin{tabular}{@{}c@{}}%
30\\7\\5
\end{tabular}\endgroup%
\kern3pt%
\begingroup \smaller\smaller\smaller\begin{tabular}{@{}c@{}}%
9\\3\\1
\end{tabular}\endgroup%
\kern3pt%
\begingroup \smaller\smaller\smaller\begin{tabular}{@{}c@{}}%
30\\11\\1
\end{tabular}\endgroup%
{$\left.\llap{\phantom{%
\begingroup \smaller\smaller\smaller\begin{tabular}{@{}c@{}}%
0\\0\\0
\end{tabular}\endgroup%
}}\!\right]$}%
}%
\ifdim\wd\matricesbox>\halfwidth\myboxwidth=\hsize\else\myboxwidth=\halfwidth\fi
\vbox{%
\ifdim\myboxwidth=\hsize
\setbox\onelinebox=\hbox{%
\vbox{\hbox{%
$\Pi_{15,9}$ spans $L_{16.13}$%
}\hbox{%
$22|2222322\slashthree22322\rtimes D_{2}$%
}%
}%
\hfill\copy\matricesbox
}%
\ifdim\wd\onelinebox>\myboxwidth
\hbox to \myboxwidth{%
$\Pi_{15,9}$ spans $L_{16.13}$%
\hfil
$22|2222322\slashthree22322\rtimes D_{2}$%
}%
\box\matricesbox
\else
\hbox to \myboxwidth{%
\unhbox\onelinebox
}%
\fi
\else
\hbox to \myboxwidth{%
$\Pi_{15,9}$ spans $L_{16.13}$%
\hfil}%
\hbox to \myboxwidth{%
$22|2222322\slashthree22322\rtimes D_{2}$%
\hfil}%
\box\matricesbox
\fi
}%
\hfill\discretionary{}{}{}%
\setbox\matricesbox=\hbox{%
{$\left[\!\llap{\phantom{%
\begingroup \smaller\smaller\smaller\begin{tabular}{@{}c@{}}%
\phantom{0}\\\phantom{0}\\\phantom{0}
\end{tabular}\endgroup%
}}\right.$}%
\begingroup \smaller\smaller\smaller\begin{tabular}{@{}c@{}}%
-1\\\phantom{0}\\\phantom{0}
\end{tabular}\endgroup%
\kern3pt%
\begingroup \smaller\smaller\smaller\begin{tabular}{@{}c@{}}%
\phantom{0}\\15/2\\\phantom{0}
\end{tabular}\endgroup%
\kern3pt%
\begingroup \smaller\smaller\smaller\begin{tabular}{@{}c@{}}%
\phantom{0}\\\phantom{0}\\45/2
\end{tabular}\endgroup%
{$\left.\llap{\phantom{%
\begingroup \smaller\smaller\smaller\begin{tabular}{@{}c@{}}%
\phantom{0}\\\phantom{0}\\\phantom{0}
\end{tabular}\endgroup%
}}\!\right]$}%
{$\left[\!\llap{\phantom{%
\begingroup \smaller\smaller\smaller\begin{tabular}{@{}c@{}}%
0\\0\\0
\end{tabular}\endgroup%
}}\right.$}%
\begingroup \smaller\smaller\smaller\begin{tabular}{@{}c@{}}%
5\\-2\\0
\end{tabular}\endgroup%
\kern3pt%
\begingroup \smaller\smaller\smaller\begin{tabular}{@{}c@{}}%
9\\-3\\1
\end{tabular}\endgroup%
\kern3pt%
\begingroup \smaller\smaller\smaller\begin{tabular}{@{}c@{}}%
5\\-1\\1
\end{tabular}\endgroup%
\kern3pt%
\begingroup \smaller\smaller\smaller\begin{tabular}{@{}c@{}}%
90\\-3\\19
\end{tabular}\endgroup%
\kern3pt%
\begingroup \smaller\smaller\smaller\begin{tabular}{@{}c@{}}%
90\\3\\19
\end{tabular}\endgroup%
\kern3pt%
\begingroup \smaller\smaller\smaller\begin{tabular}{@{}c@{}}%
5\\1\\1
\end{tabular}\endgroup%
\kern3pt%
\begingroup \smaller\smaller\smaller\begin{tabular}{@{}c@{}}%
9\\3\\1
\end{tabular}\endgroup%
\kern3pt%
\begingroup \smaller\smaller\smaller\begin{tabular}{@{}c@{}}%
30\\11\\1
\end{tabular}\endgroup%
{$\left.\llap{\phantom{%
\begingroup \smaller\smaller\smaller\begin{tabular}{@{}c@{}}%
0\\0\\0
\end{tabular}\endgroup%
}}\!\right]$}%
}%
\ifdim\wd\matricesbox>\halfwidth\myboxwidth=\hsize\else\myboxwidth=\halfwidth\fi
\vbox{%
\ifdim\myboxwidth=\hsize
\setbox\onelinebox=\hbox{%
\vbox{\hbox{%
$\Pi_{15,10}$ spans $L_{16.13}$%
}\hbox{%
$22|2223222\slashthree22232\rtimes D_{2}$%
}%
}%
\hfill\copy\matricesbox
}%
\ifdim\wd\onelinebox>\myboxwidth
\hbox to \myboxwidth{%
$\Pi_{15,10}$ spans $L_{16.13}$%
\hfil
$22|2223222\slashthree22232\rtimes D_{2}$%
}%
\box\matricesbox
\else
\hbox to \myboxwidth{%
\unhbox\onelinebox
}%
\fi
\else
\hbox to \myboxwidth{%
$\Pi_{15,10}$ spans $L_{16.13}$%
\hfil}%
\hbox to \myboxwidth{%
$22|2223222\slashthree22232\rtimes D_{2}$%
\hfil}%
\box\matricesbox
\fi
}%
\hfill\discretionary{}{}{}%
\setbox\matricesbox=\hbox{%
{$\left[\!\llap{\phantom{%
\begingroup \smaller\smaller\smaller\begin{tabular}{@{}c@{}}%
\phantom{0}\\\phantom{0}\\\phantom{0}
\end{tabular}\endgroup%
}}\right.$}%
\begingroup \smaller\smaller\smaller\begin{tabular}{@{}c@{}}%
-1\\\phantom{0}\\\phantom{0}
\end{tabular}\endgroup%
\kern3pt%
\begingroup \smaller\smaller\smaller\begin{tabular}{@{}c@{}}%
\phantom{0}\\21/2\\\phantom{0}
\end{tabular}\endgroup%
\kern3pt%
\begingroup \smaller\smaller\smaller\begin{tabular}{@{}c@{}}%
\phantom{0}\\\phantom{0}\\7/2
\end{tabular}\endgroup%
{$\left.\llap{\phantom{%
\begingroup \smaller\smaller\smaller\begin{tabular}{@{}c@{}}%
\phantom{0}\\\phantom{0}\\\phantom{0}
\end{tabular}\endgroup%
}}\!\right]$}%
{$\left[\!\llap{\phantom{%
\begingroup \smaller\smaller\smaller\begin{tabular}{@{}c@{}}%
0\\0\\0
\end{tabular}\endgroup%
}}\right.$}%
\begingroup \smaller\smaller\smaller\begin{tabular}{@{}c@{}}%
6\\2\\0
\end{tabular}\endgroup%
\kern3pt%
\begingroup \smaller\smaller\smaller\begin{tabular}{@{}c@{}}%
7\\2\\2
\end{tabular}\endgroup%
\kern3pt%
\begingroup \smaller\smaller\smaller\begin{tabular}{@{}c@{}}%
6\\1\\3
\end{tabular}\endgroup%
\kern3pt%
\begingroup \smaller\smaller\smaller\begin{tabular}{@{}c@{}}%
7\\0\\4
\end{tabular}\endgroup%
\kern3pt%
\begingroup \smaller\smaller\smaller\begin{tabular}{@{}c@{}}%
42\\-5\\21
\end{tabular}\endgroup%
\kern3pt%
\begingroup \smaller\smaller\smaller\begin{tabular}{@{}c@{}}%
42\\-8\\18
\end{tabular}\endgroup%
\kern3pt%
\begingroup \smaller\smaller\smaller\begin{tabular}{@{}c@{}}%
7\\-2\\2
\end{tabular}\endgroup%
\kern3pt%
\begingroup \smaller\smaller\smaller\begin{tabular}{@{}c@{}}%
42\\-13\\3
\end{tabular}\endgroup%
{$\left.\llap{\phantom{%
\begingroup \smaller\smaller\smaller\begin{tabular}{@{}c@{}}%
0\\0\\0
\end{tabular}\endgroup%
}}\!\right]$}%
}%
\ifdim\wd\matricesbox>\halfwidth\myboxwidth=\hsize\else\myboxwidth=\halfwidth\fi
\vbox{%
\ifdim\myboxwidth=\hsize
\setbox\onelinebox=\hbox{%
\vbox{\hbox{%
$\Pi_{15,11}$ spans $L_{22.4}$%
}\hbox{%
$22|2222322\slashthree22322\rtimes D_{2}$%
}%
}%
\hfill\copy\matricesbox
}%
\ifdim\wd\onelinebox>\myboxwidth
\hbox to \myboxwidth{%
$\Pi_{15,11}$ spans $L_{22.4}$%
\hfil
$22|2222322\slashthree22322\rtimes D_{2}$%
}%
\box\matricesbox
\else
\hbox to \myboxwidth{%
\unhbox\onelinebox
}%
\fi
\else
\hbox to \myboxwidth{%
$\Pi_{15,11}$ spans $L_{22.4}$%
\hfil}%
\hbox to \myboxwidth{%
$22|2222322\slashthree22322\rtimes D_{2}$%
\hfil}%
\box\matricesbox
\fi
}%
\hfill\discretionary{}{}{}%
\setbox\matricesbox=\hbox{%
{$\left[\!\llap{\phantom{%
\begingroup \smaller\smaller\smaller\begin{tabular}{@{}c@{}}%
\phantom{0}\\\phantom{0}\\\phantom{0}
\end{tabular}\endgroup%
}}\right.$}%
\begingroup \smaller\smaller\smaller\begin{tabular}{@{}c@{}}%
-2\\\phantom{0}\\\phantom{0}
\end{tabular}\endgroup%
\kern3pt%
\begingroup \smaller\smaller\smaller\begin{tabular}{@{}c@{}}%
\phantom{0}\\3\\\phantom{0}
\end{tabular}\endgroup%
\kern3pt%
\begingroup \smaller\smaller\smaller\begin{tabular}{@{}c@{}}%
\phantom{0}\\\phantom{0}\\3
\end{tabular}\endgroup%
{$\left.\llap{\phantom{%
\begingroup \smaller\smaller\smaller\begin{tabular}{@{}c@{}}%
\phantom{0}\\\phantom{0}\\\phantom{0}
\end{tabular}\endgroup%
}}\!\right]$}%
{$\left[\!\llap{\phantom{%
\begingroup \smaller\smaller\smaller\begin{tabular}{@{}c@{}}%
0\\0\\0
\end{tabular}\endgroup%
}}\right.$}%
\begingroup \smaller\smaller\smaller\begin{tabular}{@{}c@{}}%
1\\1\\0
\end{tabular}\endgroup%
\kern3pt%
\begingroup \smaller\smaller\smaller\begin{tabular}{@{}c@{}}%
12\\8\\-6
\end{tabular}\endgroup%
\kern3pt%
\begingroup \smaller\smaller\smaller\begin{tabular}{@{}c@{}}%
12\\6\\-8
\end{tabular}\endgroup%
\kern3pt%
\begingroup \smaller\smaller\smaller\begin{tabular}{@{}c@{}}%
6\\1\\-5
\end{tabular}\endgroup%
\kern3pt%
\begingroup \smaller\smaller\smaller\begin{tabular}{@{}c@{}}%
6\\-1\\-5
\end{tabular}\endgroup%
\kern3pt%
\begingroup \smaller\smaller\smaller\begin{tabular}{@{}c@{}}%
12\\-6\\-8
\end{tabular}\endgroup%
\kern3pt%
\begingroup \smaller\smaller\smaller\begin{tabular}{@{}c@{}}%
12\\-8\\-6
\end{tabular}\endgroup%
\kern3pt%
\begingroup \smaller\smaller\smaller\begin{tabular}{@{}c@{}}%
6\\-5\\-1
\end{tabular}\endgroup%
{$\left.\llap{\phantom{%
\begingroup \smaller\smaller\smaller\begin{tabular}{@{}c@{}}%
0\\0\\0
\end{tabular}\endgroup%
}}\!\right]$}%
}%
\ifdim\wd\matricesbox>\halfwidth\myboxwidth=\hsize\else\myboxwidth=\halfwidth\fi
\vbox{%
\ifdim\myboxwidth=\hsize
\setbox\onelinebox=\hbox{%
\vbox{\hbox{%
$\Pi_{15,12}$ spans $L_{123.11}$%
}\hbox{%
$|2242424\slashtwo4242422\rtimes D_{2}$%
}%
}%
\hfill\copy\matricesbox
}%
\ifdim\wd\onelinebox>\myboxwidth
\hbox to \myboxwidth{%
$\Pi_{15,12}$ spans $L_{123.11}$%
\hfil
$|2242424\slashtwo4242422\rtimes D_{2}$%
}%
\box\matricesbox
\else
\hbox to \myboxwidth{%
\unhbox\onelinebox
}%
\fi
\else
\hbox to \myboxwidth{%
$\Pi_{15,12}$ spans $L_{123.11}$%
\hfil}%
\hbox to \myboxwidth{%
$|2242424\slashtwo4242422\rtimes D_{2}$%
\hfil}%
\box\matricesbox
\fi
}%
\hfill\discretionary{}{}{}%
\setbox\matricesbox=\hbox{%
{$\left[\!\llap{\phantom{%
\begingroup \smaller\smaller\smaller\begin{tabular}{@{}c@{}}%
\phantom{0}\\\phantom{0}\\\phantom{0}
\end{tabular}\endgroup%
}}\right.$}%
\begingroup \smaller\smaller\smaller\begin{tabular}{@{}c@{}}%
-3\\\phantom{0}\\\phantom{0}
\end{tabular}\endgroup%
\kern3pt%
\begingroup \smaller\smaller\smaller\begin{tabular}{@{}c@{}}%
\phantom{0}\\4\\\phantom{0}
\end{tabular}\endgroup%
\kern3pt%
\begingroup \smaller\smaller\smaller\begin{tabular}{@{}c@{}}%
\phantom{0}\\\phantom{0}\\4
\end{tabular}\endgroup%
{$\left.\llap{\phantom{%
\begingroup \smaller\smaller\smaller\begin{tabular}{@{}c@{}}%
\phantom{0}\\\phantom{0}\\\phantom{0}
\end{tabular}\endgroup%
}}\!\right]$}%
{$\left[\!\llap{\phantom{%
\begingroup \smaller\smaller\smaller\begin{tabular}{@{}c@{}}%
0\\0\\0
\end{tabular}\endgroup%
}}\right.$}%
\begingroup \smaller\smaller\smaller\begin{tabular}{@{}c@{}}%
1\\-1\\0
\end{tabular}\endgroup%
\kern3pt%
\begingroup \smaller\smaller\smaller\begin{tabular}{@{}c@{}}%
4\\-3\\2
\end{tabular}\endgroup%
\kern3pt%
\begingroup \smaller\smaller\smaller\begin{tabular}{@{}c@{}}%
4\\-2\\3
\end{tabular}\endgroup%
\kern3pt%
\begingroup \smaller\smaller\smaller\begin{tabular}{@{}c@{}}%
8\\-1\\7
\end{tabular}\endgroup%
\kern3pt%
\begingroup \smaller\smaller\smaller\begin{tabular}{@{}c@{}}%
8\\1\\7
\end{tabular}\endgroup%
\kern3pt%
\begingroup \smaller\smaller\smaller\begin{tabular}{@{}c@{}}%
4\\2\\3
\end{tabular}\endgroup%
\kern3pt%
\begingroup \smaller\smaller\smaller\begin{tabular}{@{}c@{}}%
4\\3\\2
\end{tabular}\endgroup%
\kern3pt%
\begingroup \smaller\smaller\smaller\begin{tabular}{@{}c@{}}%
8\\7\\1
\end{tabular}\endgroup%
{$\left.\llap{\phantom{%
\begingroup \smaller\smaller\smaller\begin{tabular}{@{}c@{}}%
0\\0\\0
\end{tabular}\endgroup%
}}\!\right]$}%
}%
\ifdim\wd\matricesbox>\halfwidth\myboxwidth=\hsize\else\myboxwidth=\halfwidth\fi
\vbox{%
\ifdim\myboxwidth=\hsize
\setbox\onelinebox=\hbox{%
\vbox{\hbox{%
$\Pi_{15,13}$ spans $L_{123.9}$%
}\hbox{%
$|2242424\slashtwo4242422\rtimes D_{2}$%
}%
}%
\hfill\copy\matricesbox
}%
\ifdim\wd\onelinebox>\myboxwidth
\hbox to \myboxwidth{%
$\Pi_{15,13}$ spans $L_{123.9}$%
\hfil
$|2242424\slashtwo4242422\rtimes D_{2}$%
}%
\box\matricesbox
\else
\hbox to \myboxwidth{%
\unhbox\onelinebox
}%
\fi
\else
\hbox to \myboxwidth{%
$\Pi_{15,13}$ spans $L_{123.9}$%
\hfil}%
\hbox to \myboxwidth{%
$|2242424\slashtwo4242422\rtimes D_{2}$%
\hfil}%
\box\matricesbox
\fi
}%
\hfill\discretionary{}{}{}%
\setbox\matricesbox=\hbox{%
{$\left[\!\llap{\phantom{%
\begingroup \smaller\smaller\smaller\begin{tabular}{@{}c@{}}%
\phantom{0}\\\phantom{0}\\\phantom{0}
\end{tabular}\endgroup%
}}\right.$}%
\begingroup \smaller\smaller\smaller\begin{tabular}{@{}c@{}}%
-1\\\phantom{0}\\\phantom{0}
\end{tabular}\endgroup%
\kern3pt%
\begingroup \smaller\smaller\smaller\begin{tabular}{@{}c@{}}%
\phantom{0}\\45/2\\\phantom{0}
\end{tabular}\endgroup%
\kern3pt%
\begingroup \smaller\smaller\smaller\begin{tabular}{@{}c@{}}%
\phantom{0}\\\phantom{0}\\15/2
\end{tabular}\endgroup%
{$\left.\llap{\phantom{%
\begingroup \smaller\smaller\smaller\begin{tabular}{@{}c@{}}%
\phantom{0}\\\phantom{0}\\\phantom{0}
\end{tabular}\endgroup%
}}\!\right]$}%
{$\left[\!\llap{\phantom{%
\begingroup \smaller\smaller\smaller\begin{tabular}{@{}c@{}}%
0\\0\\0
\end{tabular}\endgroup%
}}\right.$}%
\begingroup \smaller\smaller\smaller\begin{tabular}{@{}c@{}}%
9\\2\\0
\end{tabular}\endgroup%
\kern3pt%
\begingroup \smaller\smaller\smaller\begin{tabular}{@{}c@{}}%
5\\1\\-1
\end{tabular}\endgroup%
\kern3pt%
\begingroup \smaller\smaller\smaller\begin{tabular}{@{}c@{}}%
9\\1\\-3
\end{tabular}\endgroup%
\kern3pt%
\begingroup \smaller\smaller\smaller\begin{tabular}{@{}c@{}}%
30\\1\\-11
\end{tabular}\endgroup%
\kern3pt%
\begingroup \smaller\smaller\smaller\begin{tabular}{@{}c@{}}%
30\\-1\\-11
\end{tabular}\endgroup%
\kern3pt%
\begingroup \smaller\smaller\smaller\begin{tabular}{@{}c@{}}%
9\\-1\\-3
\end{tabular}\endgroup%
\kern3pt%
\begingroup \smaller\smaller\smaller\begin{tabular}{@{}c@{}}%
5\\-1\\-1
\end{tabular}\endgroup%
\kern3pt%
\begingroup \smaller\smaller\smaller\begin{tabular}{@{}c@{}}%
90\\-19\\-3
\end{tabular}\endgroup%
{$\left.\llap{\phantom{%
\begingroup \smaller\smaller\smaller\begin{tabular}{@{}c@{}}%
0\\0\\0
\end{tabular}\endgroup%
}}\!\right]$}%
}%
\ifdim\wd\matricesbox>\halfwidth\myboxwidth=\hsize\else\myboxwidth=\halfwidth\fi
\vbox{%
\ifdim\myboxwidth=\hsize
\setbox\onelinebox=\hbox{%
\vbox{\hbox{%
$\Pi_{15,14}$ spans $L_{16.13}$%
}\hbox{%
$3222|2223222\slashthree222\rtimes D_{2}$%
}%
}%
\hfill\copy\matricesbox
}%
\ifdim\wd\onelinebox>\myboxwidth
\hbox to \myboxwidth{%
$\Pi_{15,14}$ spans $L_{16.13}$%
\hfil
$3222|2223222\slashthree222\rtimes D_{2}$%
}%
\box\matricesbox
\else
\hbox to \myboxwidth{%
\unhbox\onelinebox
}%
\fi
\else
\hbox to \myboxwidth{%
$\Pi_{15,14}$ spans $L_{16.13}$%
\hfil}%
\hbox to \myboxwidth{%
$3222|2223222\slashthree222\rtimes D_{2}$%
\hfil}%
\box\matricesbox
\fi
}%
\hfill\discretionary{}{}{}%
\setbox\matricesbox=\hbox{%
{$\left[\!\llap{\phantom{%
\begingroup \smaller\smaller\smaller\begin{tabular}{@{}c@{}}%
\phantom{0}\\\phantom{0}\\\phantom{0}
\end{tabular}\endgroup%
}}\right.$}%
\begingroup \smaller\smaller\smaller\begin{tabular}{@{}c@{}}%
-1\\\phantom{0}\\\phantom{0}
\end{tabular}\endgroup%
\kern3pt%
\begingroup \smaller\smaller\smaller\begin{tabular}{@{}c@{}}%
\phantom{0}\\45/2\\\phantom{0}
\end{tabular}\endgroup%
\kern3pt%
\begingroup \smaller\smaller\smaller\begin{tabular}{@{}c@{}}%
\phantom{0}\\\phantom{0}\\15/2
\end{tabular}\endgroup%
{$\left.\llap{\phantom{%
\begingroup \smaller\smaller\smaller\begin{tabular}{@{}c@{}}%
\phantom{0}\\\phantom{0}\\\phantom{0}
\end{tabular}\endgroup%
}}\!\right]$}%
{$\left[\!\llap{\phantom{%
\begingroup \smaller\smaller\smaller\begin{tabular}{@{}c@{}}%
0\\0\\0
\end{tabular}\endgroup%
}}\right.$}%
\begingroup \smaller\smaller\smaller\begin{tabular}{@{}c@{}}%
90\\19\\3
\end{tabular}\endgroup%
\kern3pt%
\begingroup \smaller\smaller\smaller\begin{tabular}{@{}c@{}}%
5\\1\\1
\end{tabular}\endgroup%
\kern3pt%
\begingroup \smaller\smaller\smaller\begin{tabular}{@{}c@{}}%
9\\1\\3
\end{tabular}\endgroup%
\kern3pt%
\begingroup \smaller\smaller\smaller\begin{tabular}{@{}c@{}}%
5\\0\\2
\end{tabular}\endgroup%
\kern3pt%
\begingroup \smaller\smaller\smaller\begin{tabular}{@{}c@{}}%
9\\-1\\3
\end{tabular}\endgroup%
\kern3pt%
\begingroup \smaller\smaller\smaller\begin{tabular}{@{}c@{}}%
30\\-5\\7
\end{tabular}\endgroup%
\kern3pt%
\begingroup \smaller\smaller\smaller\begin{tabular}{@{}c@{}}%
30\\-6\\4
\end{tabular}\endgroup%
\kern3pt%
\begingroup \smaller\smaller\smaller\begin{tabular}{@{}c@{}}%
9\\-2\\0
\end{tabular}\endgroup%
{$\left.\llap{\phantom{%
\begingroup \smaller\smaller\smaller\begin{tabular}{@{}c@{}}%
0\\0\\0
\end{tabular}\endgroup%
}}\!\right]$}%
}%
\ifdim\wd\matricesbox>\halfwidth\myboxwidth=\hsize\else\myboxwidth=\halfwidth\fi
\vbox{%
\ifdim\myboxwidth=\hsize
\setbox\onelinebox=\hbox{%
\vbox{\hbox{%
$\Pi_{15,15}$ spans $L_{16.13}$%
}\hbox{%
$322222\slashthree2222232|2\rtimes D_{2}$%
}%
}%
\hfill\copy\matricesbox
}%
\ifdim\wd\onelinebox>\myboxwidth
\hbox to \myboxwidth{%
$\Pi_{15,15}$ spans $L_{16.13}$%
\hfil
$322222\slashthree2222232|2\rtimes D_{2}$%
}%
\box\matricesbox
\else
\hbox to \myboxwidth{%
\unhbox\onelinebox
}%
\fi
\else
\hbox to \myboxwidth{%
$\Pi_{15,15}$ spans $L_{16.13}$%
\hfil}%
\hbox to \myboxwidth{%
$322222\slashthree2222232|2\rtimes D_{2}$%
\hfil}%
\box\matricesbox
\fi
}%
\hfill\discretionary{}{}{}%
\setbox\matricesbox=\hbox{%
{$\left[\!\llap{\phantom{%
\begingroup \smaller\smaller\smaller\begin{tabular}{@{}c@{}}%
\phantom{0}\\\phantom{0}\\\phantom{0}
\end{tabular}\endgroup%
}}\right.$}%
\begingroup \smaller\smaller\smaller\begin{tabular}{@{}c@{}}%
-1\\\phantom{0}\\\phantom{0}
\end{tabular}\endgroup%
\kern3pt%
\begingroup \smaller\smaller\smaller\begin{tabular}{@{}c@{}}%
\phantom{0}\\15/2\\\phantom{0}
\end{tabular}\endgroup%
\kern3pt%
\begingroup \smaller\smaller\smaller\begin{tabular}{@{}c@{}}%
\phantom{0}\\\phantom{0}\\45/2
\end{tabular}\endgroup%
{$\left.\llap{\phantom{%
\begingroup \smaller\smaller\smaller\begin{tabular}{@{}c@{}}%
\phantom{0}\\\phantom{0}\\\phantom{0}
\end{tabular}\endgroup%
}}\!\right]$}%
{$\left[\!\llap{\phantom{%
\begingroup \smaller\smaller\smaller\begin{tabular}{@{}c@{}}%
0\\0\\0
\end{tabular}\endgroup%
}}\right.$}%
\begingroup \smaller\smaller\smaller\begin{tabular}{@{}c@{}}%
30\\11\\1
\end{tabular}\endgroup%
\kern3pt%
\begingroup \smaller\smaller\smaller\begin{tabular}{@{}c@{}}%
9\\3\\1
\end{tabular}\endgroup%
\kern3pt%
\begingroup \smaller\smaller\smaller\begin{tabular}{@{}c@{}}%
5\\1\\1
\end{tabular}\endgroup%
\kern3pt%
\begingroup \smaller\smaller\smaller\begin{tabular}{@{}c@{}}%
9\\0\\2
\end{tabular}\endgroup%
\kern3pt%
\begingroup \smaller\smaller\smaller\begin{tabular}{@{}c@{}}%
5\\-1\\1
\end{tabular}\endgroup%
\kern3pt%
\begingroup \smaller\smaller\smaller\begin{tabular}{@{}c@{}}%
90\\-27\\11
\end{tabular}\endgroup%
\kern3pt%
\begingroup \smaller\smaller\smaller\begin{tabular}{@{}c@{}}%
90\\-30\\8
\end{tabular}\endgroup%
\kern3pt%
\begingroup \smaller\smaller\smaller\begin{tabular}{@{}c@{}}%
5\\-2\\0
\end{tabular}\endgroup%
{$\left.\llap{\phantom{%
\begingroup \smaller\smaller\smaller\begin{tabular}{@{}c@{}}%
0\\0\\0
\end{tabular}\endgroup%
}}\!\right]$}%
}%
\ifdim\wd\matricesbox>\halfwidth\myboxwidth=\hsize\else\myboxwidth=\halfwidth\fi
\vbox{%
\ifdim\myboxwidth=\hsize
\setbox\onelinebox=\hbox{%
\vbox{\hbox{%
$\Pi_{15,16}$ spans $L_{16.13}$%
}\hbox{%
$\slashthree2222232|2322222\rtimes D_{2}$%
}%
}%
\hfill\copy\matricesbox
}%
\ifdim\wd\onelinebox>\myboxwidth
\hbox to \myboxwidth{%
$\Pi_{15,16}$ spans $L_{16.13}$%
\hfil
$\slashthree2222232|2322222\rtimes D_{2}$%
}%
\box\matricesbox
\else
\hbox to \myboxwidth{%
\unhbox\onelinebox
}%
\fi
\else
\hbox to \myboxwidth{%
$\Pi_{15,16}$ spans $L_{16.13}$%
\hfil}%
\hbox to \myboxwidth{%
$\slashthree2222232|2322222\rtimes D_{2}$%
\hfil}%
\box\matricesbox
\fi
}%
\hfill\discretionary{}{}{}%
\setbox\matricesbox=\hbox{%
{$\left[\!\llap{\phantom{%
\begingroup \smaller\smaller\smaller% [inline block 26: 22 envs, 2426 chars -> data_tex | \begin{tabular}{@{}c@{}}% \phantom{0}\\\phantom{0}\\\phantom{0}...]
\endgroup%
}}\!\right]$}%
}%
\ifdim\wd\matricesbox>\halfwidth\myboxwidth=\hsize\else\myboxwidth=\halfwidth\fi
\vbox{%
\ifdim\myboxwidth=\hsize
\setbox\onelinebox=\hbox{%
\vbox{\hbox{%
$\Pi_{15,17}$ spans $L_{16.13}$%
}\hbox{%
$222222223223632$%
}%
}%
\hfill\copy\matricesbox
}%
\ifdim\wd\onelinebox>\myboxwidth
\hbox to \myboxwidth{%
$\Pi_{15,17}$ spans $L_{16.13}$%
\hfil
$222222223223632$%
}%
\box\matricesbox
\else
\hbox to \myboxwidth{%
\unhbox\onelinebox
}%
\fi
\else
\hbox to \myboxwidth{%
$\Pi_{15,17}$ spans $L_{16.13}$%
\hfil}%
\hbox to \myboxwidth{%
$222222223223632$%
\hfil}%
\box\matricesbox
\fi
}%
\hfill\discretionary{}{}{}%
\setbox\matricesbox=\hbox{%
{$\left[\!\llap{\phantom{%
\begingroup \smaller\smaller\smaller% [inline block 27: 22 envs, 2429 chars -> data_tex | \begin{tabular}{@{}c@{}}% \phantom{0}\\\phantom{0}\\\phantom{0}...]
\endgroup%
}}\!\right]$}%
}%
\ifdim\wd\matricesbox>\halfwidth\myboxwidth=\hsize\else\myboxwidth=\halfwidth\fi
\vbox{%
\ifdim\myboxwidth=\hsize
\setbox\onelinebox=\hbox{%
\vbox{\hbox{%
$\Pi_{15,18}$ spans $L_{16.13}$%
}\hbox{%
$222222223632232$%
}%
}%
\hfill\copy\matricesbox
}%
\ifdim\wd\onelinebox>\myboxwidth
\hbox to \myboxwidth{%
$\Pi_{15,18}$ spans $L_{16.13}$%
\hfil
$222222223632232$%
}%
\box\matricesbox
\else
\hbox to \myboxwidth{%
\unhbox\onelinebox
}%
\fi
\else
\hbox to \myboxwidth{%
$\Pi_{15,18}$ spans $L_{16.13}$%
\hfil}%
\hbox to \myboxwidth{%
$222222223632232$%
\hfil}%
\box\matricesbox
\fi
}%
\hfill\discretionary{}{}{}%
\setbox\matricesbox=\hbox{%
{$\left[\!\llap{\phantom{%
\begingroup \smaller\smaller\smaller% [inline block 28: 22 envs, 2428 chars -> data_tex | \begin{tabular}{@{}c@{}}% \phantom{0}\\\phantom{0}\\\phantom{0}...]
\endgroup%
}}\!\right]$}%
}%
\ifdim\wd\matricesbox>\halfwidth\myboxwidth=\hsize\else\myboxwidth=\halfwidth\fi
\vbox{%
\ifdim\myboxwidth=\hsize
\setbox\onelinebox=\hbox{%
\vbox{\hbox{%
$\Pi_{15,19}$ spans $L_{16.13}$%
}\hbox{%
$222222232223632$%
}%
}%
\hfill\copy\matricesbox
}%
\ifdim\wd\onelinebox>\myboxwidth
\hbox to \myboxwidth{%
$\Pi_{15,19}$ spans $L_{16.13}$%
\hfil
$222222232223632$%
}%
\box\matricesbox
\else
\hbox to \myboxwidth{%
\unhbox\onelinebox
}%
\fi
\else
\hbox to \myboxwidth{%
$\Pi_{15,19}$ spans $L_{16.13}$%
\hfil}%
\hbox to \myboxwidth{%
$222222232223632$%
\hfil}%
\box\matricesbox
\fi
}%
\hfill\discretionary{}{}{}%
\setbox\matricesbox=\hbox{%
{$\left[\!\llap{\phantom{%
\begingroup \smaller\smaller\smaller% [inline block 29: 22 envs, 2427 chars -> data_tex | \begin{tabular}{@{}c@{}}% \phantom{0}\\\phantom{0}\\\phantom{0}...]
\endgroup%
}}\!\right]$}%
}%
\ifdim\wd\matricesbox>\halfwidth\myboxwidth=\hsize\else\myboxwidth=\halfwidth\fi
\vbox{%
\ifdim\myboxwidth=\hsize
\setbox\onelinebox=\hbox{%
\vbox{\hbox{%
$\Pi_{15,20}$ spans $L_{16.13}$%
}\hbox{%
$222222322223632$%
}%
}%
\hfill\copy\matricesbox
}%
\ifdim\wd\onelinebox>\myboxwidth
\hbox to \myboxwidth{%
$\Pi_{15,20}$ spans $L_{16.13}$%
\hfil
$222222322223632$%
}%
\box\matricesbox
\else
\hbox to \myboxwidth{%
\unhbox\onelinebox
}%
\fi
\else
\hbox to \myboxwidth{%
$\Pi_{15,20}$ spans $L_{16.13}$%
\hfil}%
\hbox to \myboxwidth{%
$222222322223632$%
\hfil}%
\box\matricesbox
\fi
}%
\hfill\discretionary{}{}{}%
\setbox\matricesbox=\hbox{%
{$\left[\!\llap{\phantom{%
\begingroup \smaller\smaller\smaller% [inline block 30: 22 envs, 2430 chars -> data_tex | \begin{tabular}{@{}c@{}}% \phantom{0}\\\phantom{0}\\\phantom{0}...]
\endgroup%
}}\!\right]$}%
}%
\ifdim\wd\matricesbox>\halfwidth\myboxwidth=\hsize\else\myboxwidth=\halfwidth\fi
\vbox{%
\ifdim\myboxwidth=\hsize
\setbox\onelinebox=\hbox{%
\vbox{\hbox{%
$\Pi_{15,21}$ spans $L_{16.13}$%
}\hbox{%
$222222322232232$%
}%
}%
\hfill\copy\matricesbox
}%
\ifdim\wd\onelinebox>\myboxwidth
\hbox to \myboxwidth{%
$\Pi_{15,21}$ spans $L_{16.13}$%
\hfil
$222222322232232$%
}%
\box\matricesbox
\else
\hbox to \myboxwidth{%
\unhbox\onelinebox
}%
\fi
\else
\hbox to \myboxwidth{%
$\Pi_{15,21}$ spans $L_{16.13}$%
\hfil}%
\hbox to \myboxwidth{%
$222222322232232$%
\hfil}%
\box\matricesbox
\fi
}%
\hfill\discretionary{}{}{}%
\setbox\matricesbox=\hbox{%
{$\left[\!\llap{\phantom{%
\begingroup \smaller\smaller\smaller% [inline block 31: 22 envs, 2427 chars -> data_tex | \begin{tabular}{@{}c@{}}% \phantom{0}\\\phantom{0}\\\phantom{0}...]
\endgroup%
}}\!\right]$}%
}%
\ifdim\wd\matricesbox>\halfwidth\myboxwidth=\hsize\else\myboxwidth=\halfwidth\fi
\vbox{%
\ifdim\myboxwidth=\hsize
\setbox\onelinebox=\hbox{%
\vbox{\hbox{%
$\Pi_{15,22}$ spans $L_{16.13}$%
}\hbox{%
$222222322236322$%
}%
}%
\hfill\copy\matricesbox
}%
\ifdim\wd\onelinebox>\myboxwidth
\hbox to \myboxwidth{%
$\Pi_{15,22}$ spans $L_{16.13}$%
\hfil
$222222322236322$%
}%
\box\matricesbox
\else
\hbox to \myboxwidth{%
\unhbox\onelinebox
}%
\fi
\else
\hbox to \myboxwidth{%
$\Pi_{15,22}$ spans $L_{16.13}$%
\hfil}%
\hbox to \myboxwidth{%
$222222322236322$%
\hfil}%
\box\matricesbox
\fi
}%
\hfill\discretionary{}{}{}%
\setbox\matricesbox=\hbox{%
{$\left[\!\llap{\phantom{%
\begingroup \smaller\smaller\smaller% [inline block 32: 22 envs, 2428 chars -> data_tex | \begin{tabular}{@{}c@{}}% \phantom{0}\\\phantom{0}\\\phantom{0}...]
\endgroup%
}}\!\right]$}%
}%
\ifdim\wd\matricesbox>\halfwidth\myboxwidth=\hsize\else\myboxwidth=\halfwidth\fi
\vbox{%
\ifdim\myboxwidth=\hsize
\setbox\onelinebox=\hbox{%
\vbox{\hbox{%
$\Pi_{15,23}$ spans $L_{16.13}$%
}\hbox{%
$222222322322232$%
}%
}%
\hfill\copy\matricesbox
}%
\ifdim\wd\onelinebox>\myboxwidth
\hbox to \myboxwidth{%
$\Pi_{15,23}$ spans $L_{16.13}$%
\hfil
$222222322322232$%
}%
\box\matricesbox
\else
\hbox to \myboxwidth{%
\unhbox\onelinebox
}%
\fi
\else
\hbox to \myboxwidth{%
$\Pi_{15,23}$ spans $L_{16.13}$%
\hfil}%
\hbox to \myboxwidth{%
$222222322322232$%
\hfil}%
\box\matricesbox
\fi
}%
\hfill\discretionary{}{}{}%
\setbox\matricesbox=\hbox{%
{$\left[\!\llap{\phantom{%
\begingroup \smaller\smaller\smaller% [inline block 33: 22 envs, 2430 chars -> data_tex | \begin{tabular}{@{}c@{}}% \phantom{0}\\\phantom{0}\\\phantom{0}...]
\endgroup%
}}\!\right]$}%
}%
\ifdim\wd\matricesbox>\halfwidth\myboxwidth=\hsize\else\myboxwidth=\halfwidth\fi
\vbox{%
\ifdim\myboxwidth=\hsize
\setbox\onelinebox=\hbox{%
\vbox{\hbox{%
$\Pi_{15,24}$ spans $L_{16.13}$%
}\hbox{%
$222222363222232$%
}%
}%
\hfill\copy\matricesbox
}%
\ifdim\wd\onelinebox>\myboxwidth
\hbox to \myboxwidth{%
$\Pi_{15,24}$ spans $L_{16.13}$%
\hfil
$222222363222232$%
}%
\box\matricesbox
\else
\hbox to \myboxwidth{%
\unhbox\onelinebox
}%
\fi
\else
\hbox to \myboxwidth{%
$\Pi_{15,24}$ spans $L_{16.13}$%
\hfil}%
\hbox to \myboxwidth{%
$222222363222232$%
\hfil}%
\box\matricesbox
\fi
}%
\hfill\discretionary{}{}{}%
\setbox\matricesbox=\hbox{%
{$\left[\!\llap{\phantom{%
\begingroup \smaller\smaller\smaller% [inline block 34: 22 envs, 2430 chars -> data_tex | \begin{tabular}{@{}c@{}}% \phantom{0}\\\phantom{0}\\\phantom{0}...]
\endgroup%
}}\!\right]$}%
}%
\ifdim\wd\matricesbox>\halfwidth\myboxwidth=\hsize\else\myboxwidth=\halfwidth\fi
\vbox{%
\ifdim\myboxwidth=\hsize
\setbox\onelinebox=\hbox{%
\vbox{\hbox{%
$\Pi_{15,25}$ spans $L_{16.13}$%
}\hbox{%
$222223222223632$%
}%
}%
\hfill\copy\matricesbox
}%
\ifdim\wd\onelinebox>\myboxwidth
\hbox to \myboxwidth{%
$\Pi_{15,25}$ spans $L_{16.13}$%
\hfil
$222223222223632$%
}%
\box\matricesbox
\else
\hbox to \myboxwidth{%
\unhbox\onelinebox
}%
\fi
\else
\hbox to \myboxwidth{%
$\Pi_{15,25}$ spans $L_{16.13}$%
\hfil}%
\hbox to \myboxwidth{%
$222223222223632$%
\hfil}%
\box\matricesbox
\fi
}%
\hfill\discretionary{}{}{}%
\setbox\matricesbox=\hbox{%
{$\left[\!\llap{\phantom{%
\begingroup \smaller\smaller\smaller% [inline block 35: 22 envs, 2433 chars -> data_tex | \begin{tabular}{@{}c@{}}% \phantom{0}\\\phantom{0}\\\phantom{0}...]
\endgroup%
}}\!\right]$}%
}%
\ifdim\wd\matricesbox>\halfwidth\myboxwidth=\hsize\else\myboxwidth=\halfwidth\fi
\vbox{%
\ifdim\myboxwidth=\hsize
\setbox\onelinebox=\hbox{%
\vbox{\hbox{%
$\Pi_{15,26}$ spans $L_{16.13}$%
}\hbox{%
$222223222232232$%
}%
}%
\hfill\copy\matricesbox
}%
\ifdim\wd\onelinebox>\myboxwidth
\hbox to \myboxwidth{%
$\Pi_{15,26}$ spans $L_{16.13}$%
\hfil
$222223222232232$%
}%
\box\matricesbox
\else
\hbox to \myboxwidth{%
\unhbox\onelinebox
}%
\fi
\else
\hbox to \myboxwidth{%
$\Pi_{15,26}$ spans $L_{16.13}$%
\hfil}%
\hbox to \myboxwidth{%
$222223222232232$%
\hfil}%
\box\matricesbox
\fi
}%
\hfill\discretionary{}{}{}%
\setbox\matricesbox=\hbox{%
{$\left[\!\llap{\phantom{%
\begingroup \smaller\smaller\smaller% [inline block 36: 22 envs, 2431 chars -> data_tex | \begin{tabular}{@{}c@{}}% \phantom{0}\\\phantom{0}\\\phantom{0}...]
\endgroup%
}}\!\right]$}%
}%
\ifdim\wd\matricesbox>\halfwidth\myboxwidth=\hsize\else\myboxwidth=\halfwidth\fi
\vbox{%
\ifdim\myboxwidth=\hsize
\setbox\onelinebox=\hbox{%
\vbox{\hbox{%
$\Pi_{15,27}$ spans $L_{31.7}$%
}\hbox{%
$223222222322322$%
}%
}%
\hfill\copy\matricesbox
}%
\ifdim\wd\onelinebox>\myboxwidth
\hbox to \myboxwidth{%
$\Pi_{15,27}$ spans $L_{31.7}$%
\hfil
$223222222322322$%
}%
\box\matricesbox
\else
\hbox to \myboxwidth{%
\unhbox\onelinebox
}%
\fi
\else
\hbox to \myboxwidth{%
$\Pi_{15,27}$ spans $L_{31.7}$%
\hfil}%
\hbox to \myboxwidth{%
$223222222322322$%
\hfil}%
\box\matricesbox
\fi
}%
\hfill\discretionary{}{}{}%
\setbox\matricesbox=\hbox{%
{$\left[\!\llap{\phantom{%
\begingroup \smaller\smaller\smaller% [inline block 37: 22 envs, 2430 chars -> data_tex | \begin{tabular}{@{}c@{}}% \phantom{0}\\\phantom{0}\\\phantom{0}...]
\endgroup%
}}\!\right]$}%
}%
\ifdim\wd\matricesbox>\halfwidth\myboxwidth=\hsize\else\myboxwidth=\halfwidth\fi
\vbox{%
\ifdim\myboxwidth=\hsize
\setbox\onelinebox=\hbox{%
\vbox{\hbox{%
$\Pi_{15,28}$ spans $L_{22.4}$%
}\hbox{%
$222232222322322$%
}%
}%
\hfill\copy\matricesbox
}%
\ifdim\wd\onelinebox>\myboxwidth
\hbox to \myboxwidth{%
$\Pi_{15,28}$ spans $L_{22.4}$%
\hfil
$222232222322322$%
}%
\box\matricesbox
\else
\hbox to \myboxwidth{%
\unhbox\onelinebox
}%
\fi
\else
\hbox to \myboxwidth{%
$\Pi_{15,28}$ spans $L_{22.4}$%
\hfil}%
\hbox to \myboxwidth{%
$222232222322322$%
\hfil}%
\box\matricesbox
\fi
}%
\hfill\discretionary{}{}{}%
\setbox\matricesbox=\hbox{%
{$\left[\!\llap{\phantom{%
\begingroup \smaller\smaller\smaller% [inline block 38: 22 envs, 2429 chars -> data_tex | \begin{tabular}{@{}c@{}}% \phantom{0}\\\phantom{0}\\\phantom{0}...]
\endgroup%
}}\!\right]$}%
}%
\ifdim\wd\matricesbox>\halfwidth\myboxwidth=\hsize\else\myboxwidth=\halfwidth\fi
\vbox{%
\ifdim\myboxwidth=\hsize
\setbox\onelinebox=\hbox{%
\vbox{\hbox{%
$\Pi_{15,29}$ spans $L_{16.13}$%
}\hbox{%
$363222222322222$%
}%
}%
\hfill\copy\matricesbox
}%
\ifdim\wd\onelinebox>\myboxwidth
\hbox to \myboxwidth{%
$\Pi_{15,29}$ spans $L_{16.13}$%
\hfil
$363222222322222$%
}%
\box\matricesbox
\else
\hbox to \myboxwidth{%
\unhbox\onelinebox
}%
\fi
\else
\hbox to \myboxwidth{%
$\Pi_{15,29}$ spans $L_{16.13}$%
\hfil}%
\hbox to \myboxwidth{%
$363222222322222$%
\hfil}%
\box\matricesbox
\fi
}%
\hfill\discretionary{}{}{}%
\setbox\matricesbox=\hbox{%
{$\left[\!\llap{\phantom{%
\begingroup \smaller\smaller\smaller% [inline block 39: 22 envs, 2443 chars -> data_tex | \begin{tabular}{@{}c@{}}% \phantom{0}\\\phantom{0}\\\phantom{0}...]
\endgroup%
}}\!\right]$}%
}%
\ifdim\wd\matricesbox>\halfwidth\myboxwidth=\hsize\else\myboxwidth=\halfwidth\fi
\vbox{%
\ifdim\myboxwidth=\hsize
\setbox\onelinebox=\hbox{%
\vbox{\hbox{%
$\Pi_{15,30}$ spans $L_{142.20}$%
}\hbox{%
$\infty422\infty22\infty224\infty422$%
}%
}%
\hfill\copy\matricesbox
}%
\ifdim\wd\onelinebox>\myboxwidth
\hbox to \myboxwidth{%
$\Pi_{15,30}$ spans $L_{142.20}$%
\hfil
$\infty422\infty22\infty224\infty422$%
}%
\box\matricesbox
\else
\hbox to \myboxwidth{%
\unhbox\onelinebox
}%
\fi
\else
\hbox to \myboxwidth{%
$\Pi_{15,30}$ spans $L_{142.20}$%
\hfil}%
\hbox to \myboxwidth{%
$\infty422\infty22\infty224\infty422$%
\hfil}%
\box\matricesbox
\fi
}%
\hfill\discretionary{}{}{}%
\setbox\matricesbox=\hbox{%
{$\left[\!\llap{\phantom{%
\begingroup \smaller\smaller\smaller% [inline block 40: 22 envs, 2444 chars -> data_tex | \begin{tabular}{@{}c@{}}% \phantom{0}\\\phantom{0}\\\phantom{0}...]
\endgroup%
}}\!\right]$}%
}%
\ifdim\wd\matricesbox>\halfwidth\myboxwidth=\hsize\else\myboxwidth=\halfwidth\fi
\vbox{%
\ifdim\myboxwidth=\hsize
\setbox\onelinebox=\hbox{%
\vbox{\hbox{%
$\Pi_{15,31}$ spans $L_{142.20}$%
}\hbox{%
$\infty422\infty22\infty422\infty422$%
}%
}%
\hfill\copy\matricesbox
}%
\ifdim\wd\onelinebox>\myboxwidth
\hbox to \myboxwidth{%
$\Pi_{15,31}$ spans $L_{142.20}$%
\hfil
$\infty422\infty22\infty422\infty422$%
}%
\box\matricesbox
\else
\hbox to \myboxwidth{%
\unhbox\onelinebox
}%
\fi
\else
\hbox to \myboxwidth{%
$\Pi_{15,31}$ spans $L_{142.20}$%
\hfil}%
\hbox to \myboxwidth{%
$\infty422\infty22\infty422\infty422$%
\hfil}%
\box\matricesbox
\fi
}%
\hfill\discretionary{}{}{}%
\setbox\matricesbox=\hbox{%
{$\left[\!\llap{\phantom{%
\begingroup \smaller\smaller\smaller% [inline block 41: 22 envs, 2444 chars -> data_tex | \begin{tabular}{@{}c@{}}% \phantom{0}\\\phantom{0}\\\phantom{0}...]
\endgroup%
}}\!\right]$}%
}%
\ifdim\wd\matricesbox>\halfwidth\myboxwidth=\hsize\else\myboxwidth=\halfwidth\fi
\vbox{%
\ifdim\myboxwidth=\hsize
\setbox\onelinebox=\hbox{%
\vbox{\hbox{%
$\Pi_{15,32}$ spans $L_{142.20}$%
}\hbox{%
$\infty422\infty22\infty4224\infty22$%
}%
}%
\hfill\copy\matricesbox
}%
\ifdim\wd\onelinebox>\myboxwidth
\hbox to \myboxwidth{%
$\Pi_{15,32}$ spans $L_{142.20}$%
\hfil
$\infty422\infty22\infty4224\infty22$%
}%
\box\matricesbox
\else
\hbox to \myboxwidth{%
\unhbox\onelinebox
}%
\fi
\else
\hbox to \myboxwidth{%
$\Pi_{15,32}$ spans $L_{142.20}$%
\hfil}%
\hbox to \myboxwidth{%
$\infty422\infty22\infty4224\infty22$%
\hfil}%
\box\matricesbox
\fi
}%
\hfill\discretionary{}{}{}%
\setbox\matricesbox=\hbox{%
{$\left[\!\llap{\phantom{%
\begingroup \smaller\smaller\smaller% [inline block 42: 22 envs, 2444 chars -> data_tex | \begin{tabular}{@{}c@{}}% \phantom{0}\\\phantom{0}\\\phantom{0}...]
\endgroup%
}}\!\right]$}%
}%
\ifdim\wd\matricesbox>\halfwidth\myboxwidth=\hsize\else\myboxwidth=\halfwidth\fi
\vbox{%
\ifdim\myboxwidth=\hsize
\setbox\onelinebox=\hbox{%
\vbox{\hbox{%
$\Pi_{15,33}$ spans $L_{142.20}$%
}\hbox{%
$\infty422\infty224\infty22\infty422$%
}%
}%
\hfill\copy\matricesbox
}%
\ifdim\wd\onelinebox>\myboxwidth
\hbox to \myboxwidth{%
$\Pi_{15,33}$ spans $L_{142.20}$%
\hfil
$\infty422\infty224\infty22\infty422$%
}%
\box\matricesbox
\else
\hbox to \myboxwidth{%
\unhbox\onelinebox
}%
\fi
\else
\hbox to \myboxwidth{%
$\Pi_{15,33}$ spans $L_{142.20}$%
\hfil}%
\hbox to \myboxwidth{%
$\infty422\infty224\infty22\infty422$%
\hfil}%
\box\matricesbox
\fi
}%
\hfill\discretionary{}{}{}%
\setbox\matricesbox=\hbox{%
{$\left[\!\llap{\phantom{%
\begingroup \smaller\smaller\smaller% [inline block 43: 22 envs, 2445 chars -> data_tex | \begin{tabular}{@{}c@{}}% \phantom{0}\\\phantom{0}\\\phantom{0}...]
\endgroup%
}}\!\right]$}%
}%
\ifdim\wd\matricesbox>\halfwidth\myboxwidth=\hsize\else\myboxwidth=\halfwidth\fi
\vbox{%
\ifdim\myboxwidth=\hsize
\setbox\onelinebox=\hbox{%
\vbox{\hbox{%
$\Pi_{15,34}$ spans $L_{142.20}$%
}\hbox{%
$\infty422\infty224\infty422\infty22$%
}%
}%
\hfill\copy\matricesbox
}%
\ifdim\wd\onelinebox>\myboxwidth
\hbox to \myboxwidth{%
$\Pi_{15,34}$ spans $L_{142.20}$%
\hfil
$\infty422\infty224\infty422\infty22$%
}%
\box\matricesbox
\else
\hbox to \myboxwidth{%
\unhbox\onelinebox
}%
\fi
\else
\hbox to \myboxwidth{%
$\Pi_{15,34}$ spans $L_{142.20}$%
\hfil}%
\hbox to \myboxwidth{%
$\infty422\infty224\infty422\infty22$%
\hfil}%
\box\matricesbox
\fi
}%
\hfill\discretionary{}{}{}%
\setbox\matricesbox=\hbox{%
{$\left[\!\llap{\phantom{%
\begingroup \smaller\smaller\smaller% [inline block 44: 22 envs, 2444 chars -> data_tex | \begin{tabular}{@{}c@{}}% \phantom{0}\\\phantom{0}\\\phantom{0}...]
\endgroup%
}}\!\right]$}%
}%
\ifdim\wd\matricesbox>\halfwidth\myboxwidth=\hsize\else\myboxwidth=\halfwidth\fi
\vbox{%
\ifdim\myboxwidth=\hsize
\setbox\onelinebox=\hbox{%
\vbox{\hbox{%
$\Pi_{15,35}$ spans $L_{142.20}$%
}\hbox{%
$\infty422\infty4224\infty22\infty22$%
}%
}%
\hfill\copy\matricesbox
}%
\ifdim\wd\onelinebox>\myboxwidth
\hbox to \myboxwidth{%
$\Pi_{15,35}$ spans $L_{142.20}$%
\hfil
$\infty422\infty4224\infty22\infty22$%
}%
\box\matricesbox
\else
\hbox to \myboxwidth{%
\unhbox\onelinebox
}%
\fi
\else
\hbox to \myboxwidth{%
$\Pi_{15,35}$ spans $L_{142.20}$%
\hfil}%
\hbox to \myboxwidth{%
$\infty422\infty4224\infty22\infty22$%
\hfil}%
\box\matricesbox
\fi
}%
\hfill\discretionary{}{}{}%
\setbox\matricesbox=\hbox{%
{$\left[\!\llap{\phantom{%
\begingroup \smaller\smaller\smaller% [inline block 45: 22 envs, 2443 chars -> data_tex | \begin{tabular}{@{}c@{}}% \phantom{0}\\\phantom{0}\\\phantom{0}...]
\endgroup%
}}\!\right]$}%
}%
\ifdim\wd\matricesbox>\halfwidth\myboxwidth=\hsize\else\myboxwidth=\halfwidth\fi
\vbox{%
\ifdim\myboxwidth=\hsize
\setbox\onelinebox=\hbox{%
\vbox{\hbox{%
$\Pi_{15,36}$ spans $L_{142.20}$%
}\hbox{%
$\infty4224\infty422\infty22\infty22$%
}%
}%
\hfill\copy\matricesbox
}%
\ifdim\wd\onelinebox>\myboxwidth
\hbox to \myboxwidth{%
$\Pi_{15,36}$ spans $L_{142.20}$%
\hfil
$\infty4224\infty422\infty22\infty22$%
}%
\box\matricesbox
\else
\hbox to \myboxwidth{%
\unhbox\onelinebox
}%
\fi
\else
\hbox to \myboxwidth{%
$\Pi_{15,36}$ spans $L_{142.20}$%
\hfil}%
\hbox to \myboxwidth{%
$\infty4224\infty422\infty22\infty22$%
\hfil}%
\box\matricesbox
\fi
}%
\hfill\discretionary{}{}{}%
\setbox\matricesbox=\hbox{%
{$\left[\!\llap{\phantom{%
\begingroup \smaller\smaller\smaller% [inline block 46: 22 envs, 2425 chars -> data_tex | \begin{tabular}{@{}c@{}}% \phantom{0}\\\phantom{0}\\\phantom{0}...]
\endgroup%
}}\!\right]$}%
}%
\ifdim\wd\matricesbox>\halfwidth\myboxwidth=\hsize\else\myboxwidth=\halfwidth\fi
\vbox{%
\ifdim\myboxwidth=\hsize
\setbox\onelinebox=\hbox{%
\vbox{\hbox{%
$\Pi_{15,37}$ spans $L_{16.13}$%
}\hbox{%
$322222232222322$%
}%
}%
\hfill\copy\matricesbox
}%
\ifdim\wd\onelinebox>\myboxwidth
\hbox to \myboxwidth{%
$\Pi_{15,37}$ spans $L_{16.13}$%
\hfil
$322222232222322$%
}%
\box\matricesbox
\else
\hbox to \myboxwidth{%
\unhbox\onelinebox
}%
\fi
\else
\hbox to \myboxwidth{%
$\Pi_{15,37}$ spans $L_{16.13}$%
\hfil}%
\hbox to \myboxwidth{%
$322222232222322$%
\hfil}%
\box\matricesbox
\fi
}%
\hfill\discretionary{}{}{}%
\setbox\matricesbox=\hbox{%
{$\left[\!\llap{\phantom{%
\begingroup \smaller\smaller\smaller% [inline block 47: 22 envs, 2430 chars -> data_tex | \begin{tabular}{@{}c@{}}% \phantom{0}\\\phantom{0}\\\phantom{0}...]
\endgroup%
}}\!\right]$}%
}%
\ifdim\wd\matricesbox>\halfwidth\myboxwidth=\hsize\else\myboxwidth=\halfwidth\fi
\vbox{%
\ifdim\myboxwidth=\hsize
\setbox\onelinebox=\hbox{%
\vbox{\hbox{%
$\Pi_{15,38}$ spans $L_{16.13}$%
}\hbox{%
$322222322223222$%
}%
}%
\hfill\copy\matricesbox
}%
\ifdim\wd\onelinebox>\myboxwidth
\hbox to \myboxwidth{%
$\Pi_{15,38}$ spans $L_{16.13}$%
\hfil
$322222322223222$%
}%
\box\matricesbox
\else
\hbox to \myboxwidth{%
\unhbox\onelinebox
}%
\fi
\else
\hbox to \myboxwidth{%
$\Pi_{15,38}$ spans $L_{16.13}$%
\hfil}%
\hbox to \myboxwidth{%
$322222322223222$%
\hfil}%
\box\matricesbox
\fi
}%
\hfill\discretionary{}{}{}%
\setbox\matricesbox=\hbox{%
{$\left[\!\llap{\phantom{%
\begingroup \smaller\smaller\smaller% [inline block 48: 22 envs, 2429 chars -> data_tex | \begin{tabular}{@{}c@{}}% \phantom{0}\\\phantom{0}\\\phantom{0}...]
\endgroup%
}}\!\right]$}%
}%
\ifdim\wd\matricesbox>\halfwidth\myboxwidth=\hsize\else\myboxwidth=\halfwidth\fi
\vbox{%
\ifdim\myboxwidth=\hsize
\setbox\onelinebox=\hbox{%
\vbox{\hbox{%
$\Pi_{15,39}$ spans $L_{16.13}$%
}\hbox{%
$322222322232222$%
}%
}%
\hfill\copy\matricesbox
}%
\ifdim\wd\onelinebox>\myboxwidth
\hbox to \myboxwidth{%
$\Pi_{15,39}$ spans $L_{16.13}$%
\hfil
$322222322232222$%
}%
\box\matricesbox
\else
\hbox to \myboxwidth{%
\unhbox\onelinebox
}%
\fi
\else
\hbox to \myboxwidth{%
$\Pi_{15,39}$ spans $L_{16.13}$%
\hfil}%
\hbox to \myboxwidth{%
$322222322232222$%
\hfil}%
\box\matricesbox
\fi
}%
\hfill\discretionary{}{}{}%

\vskip2pt\hrule\vskip2pt

\leavevmode\setbox\matricesbox=\hbox{%
{$\left[\!\llap{\phantom{%
\begingroup \smaller\smaller\smaller\begin{tabular}{@{}c@{}}%
\phantom{0}\\\phantom{0}\\\phantom{0}
\end{tabular}\endgroup%
}}\right.$}%
\begingroup \smaller\smaller\smaller\begin{tabular}{@{}c@{}}%
-1\\\phantom{0}\\\phantom{0}
\end{tabular}\endgroup%
\kern3pt%
\begingroup \smaller\smaller\smaller\begin{tabular}{@{}c@{}}%
\phantom{0}\\30\\\phantom{0}
\end{tabular}\endgroup%
\kern3pt%
\begingroup \smaller\smaller\smaller\begin{tabular}{@{}c@{}}%
\phantom{0}\\\phantom{0}\\30
\end{tabular}\endgroup%
{$\left.\llap{\phantom{%
\begingroup \smaller\smaller\smaller\begin{tabular}{@{}c@{}}%
\phantom{0}\\\phantom{0}\\\phantom{0}
\end{tabular}\endgroup%
}}\!\right]$}%
{$\left[\!\llap{\phantom{%
\begingroup \smaller\smaller\smaller\begin{tabular}{@{}c@{}}%
0\\0\\0
\end{tabular}\endgroup%
}}\right.$}%
\begingroup \smaller\smaller\smaller\begin{tabular}{@{}c@{}}%
5\\0\\1
\end{tabular}\endgroup%
\kern3pt%
\begingroup \smaller\smaller\smaller\begin{tabular}{@{}c@{}}%
24\\-2\\4
\end{tabular}\endgroup%
\kern3pt%
\begingroup \smaller\smaller\smaller\begin{tabular}{@{}c@{}}%
15\\-2\\2
\end{tabular}\endgroup%
{$\left.\llap{\phantom{%
\begingroup \smaller\smaller\smaller\begin{tabular}{@{}c@{}}%
0\\0\\0
\end{tabular}\endgroup%
}}\!\right]$}%
}%
\ifdim\wd\matricesbox>\halfwidth\myboxwidth=\hsize\else\myboxwidth=\halfwidth\fi
\vbox{%
\ifdim\myboxwidth=\hsize
\setbox\onelinebox=\hbox{%
\vbox{\hbox{%
$\Pi_{16,1}$ spans $L_{127.20}$%
}\hbox{%
$|22|22|22|22|22|22|22|22\rtimes D_{8}$%
}%
}%
\hfill\copy\matricesbox
}%
\ifdim\wd\onelinebox>\myboxwidth
\hbox to \myboxwidth{%
$\Pi_{16,1}$ spans $L_{127.20}$%
\hfil
$|22|22|22|22|22|22|22|22\rtimes D_{8}$%
}%
\box\matricesbox
\else
\hbox to \myboxwidth{%
\unhbox\onelinebox
}%
\fi
\else
\hbox to \myboxwidth{%
$\Pi_{16,1}$ spans $L_{127.20}$%
\hfil}%
\hbox to \myboxwidth{%
$|22|22|22|22|22|22|22|22\rtimes D_{8}$%
\hfil}%
\box\matricesbox
\fi
}%
\hfill\discretionary{}{}{}%
\setbox\matricesbox=\hbox{%
{$\left[\!\llap{\phantom{%
\begingroup \smaller\smaller\smaller\begin{tabular}{@{}c@{}}%
\phantom{0}\\\phantom{0}\\\phantom{0}
\end{tabular}\endgroup%
}}\right.$}%
\begingroup \smaller\smaller\smaller\begin{tabular}{@{}c@{}}%
-12\\\phantom{0}\\\phantom{0}
\end{tabular}\endgroup%
\kern3pt%
\begingroup \smaller\smaller\smaller\begin{tabular}{@{}c@{}}%
\phantom{0}\\1\\\phantom{0}
\end{tabular}\endgroup%
\kern3pt%
\begingroup \smaller\smaller\smaller\begin{tabular}{@{}c@{}}%
\phantom{0}\\\phantom{0}\\1
\end{tabular}\endgroup%
{$\left.\llap{\phantom{%
\begingroup \smaller\smaller\smaller\begin{tabular}{@{}c@{}}%
\phantom{0}\\\phantom{0}\\\phantom{0}
\end{tabular}\endgroup%
}}\!\right]$}%
{$\left[\!\llap{\phantom{%
\begingroup \smaller\smaller\smaller\begin{tabular}{@{}c@{}}%
0\\0\\0
\end{tabular}\endgroup%
}}\right.$}%
\begingroup \smaller\smaller\smaller\begin{tabular}{@{}c@{}}%
2\\1\\-7
\end{tabular}\endgroup%
\kern3pt%
\begingroup \smaller\smaller\smaller\begin{tabular}{@{}c@{}}%
1\\2\\-3
\end{tabular}\endgroup%
{$\left.\llap{\phantom{%
\begingroup \smaller\smaller\smaller\begin{tabular}{@{}c@{}}%
0\\0\\0
\end{tabular}\endgroup%
}}\!\right]$}%
}%
\ifdim\wd\matricesbox>\halfwidth\myboxwidth=\hsize\else\myboxwidth=\halfwidth\fi
\vbox{%
\ifdim\myboxwidth=\hsize
\setbox\onelinebox=\hbox{%
\vbox{\hbox{%
$\Pi_{16,2}$ spans $L_{167.1}$%
}\hbox{%
$4\slashtwo4\slashtwo4\slashtwo4\slashtwo4\slashtwo4\slashtwo4\slashtwo4\slashtwo\rtimes D_{8}$%
}%
}%
\hfill\copy\matricesbox
}%
\ifdim\wd\onelinebox>\myboxwidth
\hbox to \myboxwidth{%
$\Pi_{16,2}$ spans $L_{167.1}$%
\hfil
$4\slashtwo4\slashtwo4\slashtwo4\slashtwo4\slashtwo4\slashtwo4\slashtwo4\slashtwo\rtimes D_{8}$%
}%
\box\matricesbox
\else
\hbox to \myboxwidth{%
\unhbox\onelinebox
}%
\fi
\else
\hbox to \myboxwidth{%
$\Pi_{16,2}$ spans $L_{167.1}$%
\hfil}%
\hbox to \myboxwidth{%
$4\slashtwo4\slashtwo4\slashtwo4\slashtwo4\slashtwo4\slashtwo4\slashtwo4\slashtwo\rtimes D_{8}$%
\hfil}%
\box\matricesbox
\fi
}%
\hfill\discretionary{}{}{}%
\setbox\matricesbox=\hbox{%
{$\left[\!\llap{\phantom{%
\begingroup \smaller\smaller\smaller\begin{tabular}{@{}c@{}}%
\phantom{0}\\\phantom{0}\\\phantom{0}
\end{tabular}\endgroup%
}}\right.$}%
\begingroup \smaller\smaller\smaller\begin{tabular}{@{}c@{}}%
-1\\\phantom{0}\\\phantom{0}
\end{tabular}\endgroup%
\kern3pt%
\begingroup \smaller\smaller\smaller\begin{tabular}{@{}c@{}}%
\phantom{0}\\5\\\phantom{0}
\end{tabular}\endgroup%
\kern3pt%
\begingroup \smaller\smaller\smaller\begin{tabular}{@{}c@{}}%
\phantom{0}\\\phantom{0}\\15
\end{tabular}\endgroup%
{$\left.\llap{\phantom{%
\begingroup \smaller\smaller\smaller\begin{tabular}{@{}c@{}}%
\phantom{0}\\\phantom{0}\\\phantom{0}
\end{tabular}\endgroup%
}}\!\right]$}%
{$\left[\!\llap{\phantom{%
\begingroup \smaller\smaller\smaller\begin{tabular}{@{}c@{}}%
0\\0\\0
\end{tabular}\endgroup%
}}\right.$}%
\begingroup \smaller\smaller\smaller\begin{tabular}{@{}c@{}}%
4\\-2\\0
\end{tabular}\endgroup%
\kern3pt%
\begingroup \smaller\smaller\smaller\begin{tabular}{@{}c@{}}%
15\\-6\\-2
\end{tabular}\endgroup%
\kern3pt%
\begingroup \smaller\smaller\smaller\begin{tabular}{@{}c@{}}%
20\\-6\\-4
\end{tabular}\endgroup%
\kern3pt%
\begingroup \smaller\smaller\smaller\begin{tabular}{@{}c@{}}%
20\\-3\\-5
\end{tabular}\endgroup%
\kern3pt%
\begingroup \smaller\smaller\smaller\begin{tabular}{@{}c@{}}%
15\\0\\-4
\end{tabular}\endgroup%
{$\left.\llap{\phantom{%
\begingroup \smaller\smaller\smaller\begin{tabular}{@{}c@{}}%
0\\0\\0
\end{tabular}\endgroup%
}}\!\right]$}%
}%
\ifdim\wd\matricesbox>\halfwidth\myboxwidth=\hsize\else\myboxwidth=\halfwidth\fi
\vbox{%
\ifdim\myboxwidth=\hsize
\setbox\onelinebox=\hbox{%
\vbox{\hbox{%
$\Pi_{16,3}$ spans $L_{31.7}$%
}\hbox{%
$|2232|2322|2232|2322\rtimes D_{4}$%
}%
}%
\hfill\copy\matricesbox
}%
\ifdim\wd\onelinebox>\myboxwidth
\hbox to \myboxwidth{%
$\Pi_{16,3}$ spans $L_{31.7}$%
\hfil
$|2232|2322|2232|2322\rtimes D_{4}$%
}%
\box\matricesbox
\else
\hbox to \myboxwidth{%
\unhbox\onelinebox
}%
\fi
\else
\hbox to \myboxwidth{%
$\Pi_{16,3}$ spans $L_{31.7}$%
\hfil}%
\hbox to \myboxwidth{%
$|2232|2322|2232|2322\rtimes D_{4}$%
\hfil}%
\box\matricesbox
\fi
}%
\hfill\discretionary{}{}{}%
\setbox\matricesbox=\hbox{%
{$\left[\!\llap{\phantom{%
\begingroup \smaller\smaller\smaller\begin{tabular}{@{}c@{}}%
\phantom{0}\\\phantom{0}\\\phantom{0}
\end{tabular}\endgroup%
}}\right.$}%
\begingroup \smaller\smaller\smaller\begin{tabular}{@{}c@{}}%
-1\\\phantom{0}\\\phantom{0}
\end{tabular}\endgroup%
\kern3pt%
\begingroup \smaller\smaller\smaller\begin{tabular}{@{}c@{}}%
\phantom{0}\\18\\\phantom{0}
\end{tabular}\endgroup%
\kern3pt%
\begingroup \smaller\smaller\smaller\begin{tabular}{@{}c@{}}%
\phantom{0}\\\phantom{0}\\18
\end{tabular}\endgroup%
{$\left.\llap{\phantom{%
\begingroup \smaller\smaller\smaller\begin{tabular}{@{}c@{}}%
\phantom{0}\\\phantom{0}\\\phantom{0}
\end{tabular}\endgroup%
}}\!\right]$}%
{$\left[\!\llap{\phantom{%
\begingroup \smaller\smaller\smaller\begin{tabular}{@{}c@{}}%
0\\0\\0
\end{tabular}\endgroup%
}}\right.$}%
\begingroup \smaller\smaller\smaller\begin{tabular}{@{}c@{}}%
8\\2\\0
\end{tabular}\endgroup%
\kern3pt%
\begingroup \smaller\smaller\smaller\begin{tabular}{@{}c@{}}%
72\\16\\-6
\end{tabular}\endgroup%
\kern3pt%
\begingroup \smaller\smaller\smaller\begin{tabular}{@{}c@{}}%
36\\7\\-5
\end{tabular}\endgroup%
\kern3pt%
\begingroup \smaller\smaller\smaller\begin{tabular}{@{}c@{}}%
9\\1\\-2
\end{tabular}\endgroup%
\kern3pt%
\begingroup \smaller\smaller\smaller\begin{tabular}{@{}c@{}}%
8\\0\\-2
\end{tabular}\endgroup%
{$\left.\llap{\phantom{%
\begingroup \smaller\smaller\smaller\begin{tabular}{@{}c@{}}%
0\\0\\0
\end{tabular}\endgroup%
}}\!\right]$}%
}%
\ifdim\wd\matricesbox>\halfwidth\myboxwidth=\hsize\else\myboxwidth=\halfwidth\fi
\vbox{%
\ifdim\myboxwidth=\hsize
\setbox\onelinebox=\hbox{%
\vbox{\hbox{%
$\Pi_{16,4}$ spans $L_{142.20}$%
}\hbox{%
$\infty42|24\infty2|2\infty42|24\infty2|2\rtimes D_{4}$%
}%
}%
\hfill\copy\matricesbox
}%
\ifdim\wd\onelinebox>\myboxwidth
\hbox to \myboxwidth{%
$\Pi_{16,4}$ spans $L_{142.20}$%
\hfil
$\infty42|24\infty2|2\infty42|24\infty2|2\rtimes D_{4}$%
}%
\box\matricesbox
\else
\hbox to \myboxwidth{%
\unhbox\onelinebox
}%
\fi
\else
\hbox to \myboxwidth{%
$\Pi_{16,4}$ spans $L_{142.20}$%
\hfil}%
\hbox to \myboxwidth{%
$\infty42|24\infty2|2\infty42|24\infty2|2\rtimes D_{4}$%
\hfil}%
\box\matricesbox
\fi
}%
\hfill\discretionary{}{}{}%
\setbox\matricesbox=\hbox{%
{$\left[\!\llap{\phantom{%
\begingroup \smaller\smaller\smaller\begin{tabular}{@{}c@{}}%
\phantom{0}\\\phantom{0}\\\phantom{0}
\end{tabular}\endgroup%
}}\right.$}%
\begingroup \smaller\smaller\smaller\begin{tabular}{@{}c@{}}%
-1\\\phantom{0}\\\phantom{0}
\end{tabular}\endgroup%
\kern3pt%
\begingroup \smaller\smaller\smaller\begin{tabular}{@{}c@{}}%
\phantom{0}\\15/2\\\phantom{0}
\end{tabular}\endgroup%
\kern3pt%
\begingroup \smaller\smaller\smaller\begin{tabular}{@{}c@{}}%
\phantom{0}\\\phantom{0}\\45/2
\end{tabular}\endgroup%
{$\left.\llap{\phantom{%
\begingroup \smaller\smaller\smaller\begin{tabular}{@{}c@{}}%
\phantom{0}\\\phantom{0}\\\phantom{0}
\end{tabular}\endgroup%
}}\!\right]$}%
{$\left[\!\llap{\phantom{%
\begingroup \smaller\smaller\smaller\begin{tabular}{@{}c@{}}%
0\\0\\0
\end{tabular}\endgroup%
}}\right.$}%
\begingroup \smaller\smaller\smaller\begin{tabular}{@{}c@{}}%
5\\2\\0
\end{tabular}\endgroup%
\kern3pt%
\begingroup \smaller\smaller\smaller\begin{tabular}{@{}c@{}}%
9\\3\\1
\end{tabular}\endgroup%
\kern3pt%
\begingroup \smaller\smaller\smaller\begin{tabular}{@{}c@{}}%
30\\7\\5
\end{tabular}\endgroup%
\kern3pt%
\begingroup \smaller\smaller\smaller\begin{tabular}{@{}c@{}}%
30\\4\\6
\end{tabular}\endgroup%
\kern3pt%
\begingroup \smaller\smaller\smaller\begin{tabular}{@{}c@{}}%
9\\0\\2
\end{tabular}\endgroup%
{$\left.\llap{\phantom{%
\begingroup \smaller\smaller\smaller\begin{tabular}{@{}c@{}}%
0\\0\\0
\end{tabular}\endgroup%
}}\!\right]$}%
}%
\ifdim\wd\matricesbox>\halfwidth\myboxwidth=\hsize\else\myboxwidth=\halfwidth\fi
\vbox{%
\ifdim\myboxwidth=\hsize
\setbox\onelinebox=\hbox{%
\vbox{\hbox{%
$\Pi_{16,5}$ spans $L_{16.13}$%
}\hbox{%
$322|2232|2322|2232|2\rtimes D_{4}$%
}%
}%
\hfill\copy\matricesbox
}%
\ifdim\wd\onelinebox>\myboxwidth
\hbox to \myboxwidth{%
$\Pi_{16,5}$ spans $L_{16.13}$%
\hfil
$322|2232|2322|2232|2\rtimes D_{4}$%
}%
\box\matricesbox
\else
\hbox to \myboxwidth{%
\unhbox\onelinebox
}%
\fi
\else
\hbox to \myboxwidth{%
$\Pi_{16,5}$ spans $L_{16.13}$%
\hfil}%
\hbox to \myboxwidth{%
$322|2232|2322|2232|2\rtimes D_{4}$%
\hfil}%
\box\matricesbox
\fi
}%
\hfill\discretionary{}{}{}%
\setbox\matricesbox=\hbox{%
{$\left[\!\llap{\phantom{%
\begingroup \smaller\smaller\smaller\begin{tabular}{@{}c@{}}%
\phantom{0}\\\phantom{0}\\\phantom{0}
\end{tabular}\endgroup%
}}\right.$}%
\begingroup \smaller\smaller\smaller\begin{tabular}{@{}c@{}}%
-1\\\phantom{0}\\\phantom{0}
\end{tabular}\endgroup%
\kern3pt%
\begingroup \smaller\smaller\smaller\begin{tabular}{@{}c@{}}%
\phantom{0}\\15/2\\\phantom{0}
\end{tabular}\endgroup%
\kern3pt%
\begingroup \smaller\smaller\smaller\begin{tabular}{@{}c@{}}%
\phantom{0}\\\phantom{0}\\45/2
\end{tabular}\endgroup%
{$\left.\llap{\phantom{%
\begingroup \smaller\smaller\smaller\begin{tabular}{@{}c@{}}%
\phantom{0}\\\phantom{0}\\\phantom{0}
\end{tabular}\endgroup%
}}\!\right]$}%
{$\left[\!\llap{\phantom{%
\begingroup \smaller\smaller\smaller\begin{tabular}{@{}c@{}}%
0\\0\\0
\end{tabular}\endgroup%
}}\right.$}%
\begingroup \smaller\smaller\smaller\begin{tabular}{@{}c@{}}%
30\\11\\1
\end{tabular}\endgroup%
\kern3pt%
\begingroup \smaller\smaller\smaller\begin{tabular}{@{}c@{}}%
9\\3\\1
\end{tabular}\endgroup%
\kern3pt%
\begingroup \smaller\smaller\smaller\begin{tabular}{@{}c@{}}%
5\\1\\1
\end{tabular}\endgroup%
\kern3pt%
\begingroup \smaller\smaller\smaller\begin{tabular}{@{}c@{}}%
90\\3\\19
\end{tabular}\endgroup%
{$\left.\llap{\phantom{%
\begingroup \smaller\smaller\smaller\begin{tabular}{@{}c@{}}%
0\\0\\0
\end{tabular}\endgroup%
}}\!\right]$}%
}%
\ifdim\wd\matricesbox>\halfwidth\myboxwidth=\hsize\else\myboxwidth=\halfwidth\fi
\vbox{%
\ifdim\myboxwidth=\hsize
\setbox\onelinebox=\hbox{%
\vbox{\hbox{%
$\Pi_{16,6}$ spans $L_{16.13}$%
}\hbox{%
$\slashthree222\slashthree222\slashthree222\slashthree222\rtimes D_{4}$%
}%
}%
\hfill\copy\matricesbox
}%
\ifdim\wd\onelinebox>\myboxwidth
\hbox to \myboxwidth{%
$\Pi_{16,6}$ spans $L_{16.13}$%
\hfil
$\slashthree222\slashthree222\slashthree222\slashthree222\rtimes D_{4}$%
}%
\box\matricesbox
\else
\hbox to \myboxwidth{%
\unhbox\onelinebox
}%
\fi
\else
\hbox to \myboxwidth{%
$\Pi_{16,6}$ spans $L_{16.13}$%
\hfil}%
\hbox to \myboxwidth{%
$\slashthree222\slashthree222\slashthree222\slashthree222\rtimes D_{4}$%
\hfil}%
\box\matricesbox
\fi
}%
\hfill\discretionary{}{}{}%
\setbox\matricesbox=\hbox{%
{$\left[\!\llap{\phantom{%
\begingroup \smaller\smaller\smaller\begin{tabular}{@{}c@{}}%
\phantom{0}\\\phantom{0}\\\phantom{0}
\end{tabular}\endgroup%
}}\right.$}%
\begingroup \smaller\smaller\smaller\begin{tabular}{@{}c@{}}%
-1\\\phantom{0}\\\phantom{0}
\end{tabular}\endgroup%
\kern3pt%
\begingroup \smaller\smaller\smaller\begin{tabular}{@{}c@{}}%
\phantom{0}\\21/2\\\phantom{0}
\end{tabular}\endgroup%
\kern3pt%
\begingroup \smaller\smaller\smaller\begin{tabular}{@{}c@{}}%
\phantom{0}\\\phantom{0}\\7/2
\end{tabular}\endgroup%
{$\left.\llap{\phantom{%
\begingroup \smaller\smaller\smaller\begin{tabular}{@{}c@{}}%
\phantom{0}\\\phantom{0}\\\phantom{0}
\end{tabular}\endgroup%
}}\!\right]$}%
{$\left[\!\llap{\phantom{%
\begingroup \smaller\smaller\smaller\begin{tabular}{@{}c@{}}%
0\\0\\0
\end{tabular}\endgroup%
}}\right.$}%
\begingroup \smaller\smaller\smaller\begin{tabular}{@{}c@{}}%
6\\-2\\0
\end{tabular}\endgroup%
\kern3pt%
\begingroup \smaller\smaller\smaller\begin{tabular}{@{}c@{}}%
7\\-2\\2
\end{tabular}\endgroup%
\kern3pt%
\begingroup \smaller\smaller\smaller\begin{tabular}{@{}c@{}}%
42\\-8\\18
\end{tabular}\endgroup%
\kern3pt%
\begingroup \smaller\smaller\smaller\begin{tabular}{@{}c@{}}%
42\\-5\\21
\end{tabular}\endgroup%
\kern3pt%
\begingroup \smaller\smaller\smaller\begin{tabular}{@{}c@{}}%
7\\0\\4
\end{tabular}\endgroup%
{$\left.\llap{\phantom{%
\begingroup \smaller\smaller\smaller\begin{tabular}{@{}c@{}}%
0\\0\\0
\end{tabular}\endgroup%
}}\!\right]$}%
}%
\ifdim\wd\matricesbox>\halfwidth\myboxwidth=\hsize\else\myboxwidth=\halfwidth\fi
\vbox{%
\ifdim\myboxwidth=\hsize
\setbox\onelinebox=\hbox{%
\vbox{\hbox{%
$\Pi_{16,7}$ spans $L_{22.4}$%
}\hbox{%
$322|2232|2322|2232|2\rtimes D_{4}$%
}%
}%
\hfill\copy\matricesbox
}%
\ifdim\wd\onelinebox>\myboxwidth
\hbox to \myboxwidth{%
$\Pi_{16,7}$ spans $L_{22.4}$%
\hfil
$322|2232|2322|2232|2\rtimes D_{4}$%
}%
\box\matricesbox
\else
\hbox to \myboxwidth{%
\unhbox\onelinebox
}%
\fi
\else
\hbox to \myboxwidth{%
$\Pi_{16,7}$ spans $L_{22.4}$%
\hfil}%
\hbox to \myboxwidth{%
$322|2232|2322|2232|2\rtimes D_{4}$%
\hfil}%
\box\matricesbox
\fi
}%
\hfill\discretionary{}{}{}%
\setbox\matricesbox=\hbox{%
{$\left[\!\llap{\phantom{%
\begingroup \smaller\smaller\smaller\begin{tabular}{@{}c@{}}%
\phantom{0}\\\phantom{0}\\\phantom{0}
\end{tabular}\endgroup%
}}\right.$}%
\begingroup \smaller\smaller\smaller\begin{tabular}{@{}c@{}}%
-1\\\phantom{0}\\\phantom{0}
\end{tabular}\endgroup%
\kern3pt%
\begingroup \smaller\smaller\smaller\begin{tabular}{@{}c@{}}%
\phantom{0}\\45/2\\\phantom{0}
\end{tabular}\endgroup%
\kern3pt%
\begingroup \smaller\smaller\smaller\begin{tabular}{@{}c@{}}%
\phantom{0}\\\phantom{0}\\15/2
\end{tabular}\endgroup%
{$\left.\llap{\phantom{%
\begingroup \smaller\smaller\smaller\begin{tabular}{@{}c@{}}%
\phantom{0}\\\phantom{0}\\\phantom{0}
\end{tabular}\endgroup%
}}\!\right]$}%
{$\left[\!\llap{\phantom{%
\begingroup \smaller\smaller\smaller\begin{tabular}{@{}c@{}}%
0\\0\\0
\end{tabular}\endgroup%
}}\right.$}%
\begingroup \smaller\smaller\smaller\begin{tabular}{@{}c@{}}%
9\\2\\0
\end{tabular}\endgroup%
\kern3pt%
\begingroup \smaller\smaller\smaller\begin{tabular}{@{}c@{}}%
5\\1\\-1
\end{tabular}\endgroup%
\kern3pt%
\begingroup \smaller\smaller\smaller\begin{tabular}{@{}c@{}}%
90\\11\\-27
\end{tabular}\endgroup%
\kern3pt%
\begingroup \smaller\smaller\smaller\begin{tabular}{@{}c@{}}%
90\\8\\-30
\end{tabular}\endgroup%
\kern3pt%
\begingroup \smaller\smaller\smaller\begin{tabular}{@{}c@{}}%
5\\0\\-2
\end{tabular}\endgroup%
{$\left.\llap{\phantom{%
\begingroup \smaller\smaller\smaller\begin{tabular}{@{}c@{}}%
0\\0\\0
\end{tabular}\endgroup%
}}\!\right]$}%
}%
\ifdim\wd\matricesbox>\halfwidth\myboxwidth=\hsize\else\myboxwidth=\halfwidth\fi
\vbox{%
\ifdim\myboxwidth=\hsize
\setbox\onelinebox=\hbox{%
\vbox{\hbox{%
$\Pi_{16,8}$ spans $L_{16.13}$%
}\hbox{%
$322|2232|2322|2232|2\rtimes D_{4}$%
}%
}%
\hfill\copy\matricesbox
}%
\ifdim\wd\onelinebox>\myboxwidth
\hbox to \myboxwidth{%
$\Pi_{16,8}$ spans $L_{16.13}$%
\hfil
$322|2232|2322|2232|2\rtimes D_{4}$%
}%
\box\matricesbox
\else
\hbox to \myboxwidth{%
\unhbox\onelinebox
}%
\fi
\else
\hbox to \myboxwidth{%
$\Pi_{16,8}$ spans $L_{16.13}$%
\hfil}%
\hbox to \myboxwidth{%
$322|2232|2322|2232|2\rtimes D_{4}$%
\hfil}%
\box\matricesbox
\fi
}%
\hfill\discretionary{}{}{}%
\setbox\matricesbox=\hbox{%
{$\left[\!\llap{\phantom{%
\begingroup \smaller\smaller\smaller\begin{tabular}{@{}c@{}}%
\phantom{0}\\\phantom{0}\\\phantom{0}
\end{tabular}\endgroup%
}}\right.$}%
\begingroup \smaller\smaller\smaller\begin{tabular}{@{}c@{}}%
-1\\\phantom{0}\\\phantom{0}
\end{tabular}\endgroup%
\kern3pt%
\begingroup \smaller\smaller\smaller\begin{tabular}{@{}c@{}}%
\phantom{0}\\9\\\phantom{0}
\end{tabular}\endgroup%
\kern3pt%
\begingroup \smaller\smaller\smaller\begin{tabular}{@{}c@{}}%
\phantom{0}\\\phantom{0}\\9
\end{tabular}\endgroup%
{$\left.\llap{\phantom{%
\begingroup \smaller\smaller\smaller\begin{tabular}{@{}c@{}}%
\phantom{0}\\\phantom{0}\\\phantom{0}
\end{tabular}\endgroup%
}}\!\right]$}%
{$\left[\!\llap{\phantom{%
\begingroup \smaller\smaller\smaller\begin{tabular}{@{}c@{}}%
0\\0\\0
\end{tabular}\endgroup%
}}\right.$}%
\begingroup \smaller\smaller\smaller\begin{tabular}{@{}c@{}}%
9\\3\\-1
\end{tabular}\endgroup%
\kern3pt%
\begingroup \smaller\smaller\smaller\begin{tabular}{@{}c@{}}%
8\\2\\-2
\end{tabular}\endgroup%
\kern3pt%
\begingroup \smaller\smaller\smaller\begin{tabular}{@{}c@{}}%
72\\10\\-22
\end{tabular}\endgroup%
\kern3pt%
\begingroup \smaller\smaller\smaller\begin{tabular}{@{}c@{}}%
36\\2\\-12
\end{tabular}\endgroup%
{$\left.\llap{\phantom{%
\begingroup \smaller\smaller\smaller\begin{tabular}{@{}c@{}}%
0\\0\\0
\end{tabular}\endgroup%
}}\!\right]$}%
}%
\ifdim\wd\matricesbox>\halfwidth\myboxwidth=\hsize\else\myboxwidth=\halfwidth\fi
\vbox{%
\ifdim\myboxwidth=\hsize
\setbox\onelinebox=\hbox{%
\vbox{\hbox{%
$\Pi_{16,9}$ spans $L_{142.20}$%
}\hbox{%
$422\slashinfty224\slashinfty422\slashinfty224\slashinfty\rtimes D_{4}$%
}%
}%
\hfill\copy\matricesbox
}%
\ifdim\wd\onelinebox>\myboxwidth
\hbox to \myboxwidth{%
$\Pi_{16,9}$ spans $L_{142.20}$%
\hfil
$422\slashinfty224\slashinfty422\slashinfty224\slashinfty\rtimes D_{4}$%
}%
\box\matricesbox
\else
\hbox to \myboxwidth{%
\unhbox\onelinebox
}%
\fi
\else
\hbox to \myboxwidth{%
$\Pi_{16,9}$ spans $L_{142.20}$%
\hfil}%
\hbox to \myboxwidth{%
$422\slashinfty224\slashinfty422\slashinfty224\slashinfty\rtimes D_{4}$%
\hfil}%
\box\matricesbox
\fi
}%
\hfill\discretionary{}{}{}%
\setbox\matricesbox=\hbox{%
{$\left[\!\llap{\phantom{%
\begingroup \smaller\smaller\smaller\begin{tabular}{@{}c@{}}%
\phantom{0}\\\phantom{0}\\\phantom{0}
\end{tabular}\endgroup%
}}\right.$}%
\begingroup \smaller\smaller\smaller\begin{tabular}{@{}c@{}}%
-1\\\phantom{0}\\\phantom{0}
\end{tabular}\endgroup%
\kern3pt%
\begingroup \smaller\smaller\smaller\begin{tabular}{@{}c@{}}%
\phantom{0}\\18\\\phantom{0}
\end{tabular}\endgroup%
\kern3pt%
\begingroup \smaller\smaller\smaller\begin{tabular}{@{}c@{}}%
\phantom{0}\\\phantom{0}\\18
\end{tabular}\endgroup%
{$\left.\llap{\phantom{%
\begingroup \smaller\smaller\smaller\begin{tabular}{@{}c@{}}%
\phantom{0}\\\phantom{0}\\\phantom{0}
\end{tabular}\endgroup%
}}\!\right]$}%
{$\left[\!\llap{\phantom{%
\begingroup \smaller\smaller\smaller\begin{tabular}{@{}c@{}}%
0\\0\\0
\end{tabular}\endgroup%
}}\right.$}%
\begingroup \smaller\smaller\smaller\begin{tabular}{@{}c@{}}%
9\\1\\-2
\end{tabular}\endgroup%
\kern3pt%
\begingroup \smaller\smaller\smaller\begin{tabular}{@{}c@{}}%
36\\7\\-5
\end{tabular}\endgroup%
\kern3pt%
\begingroup \smaller\smaller\smaller\begin{tabular}{@{}c@{}}%
72\\16\\-6
\end{tabular}\endgroup%
\kern3pt%
\begingroup \smaller\smaller\smaller\begin{tabular}{@{}c@{}}%
8\\2\\0
\end{tabular}\endgroup%
{$\left.\llap{\phantom{%
\begingroup \smaller\smaller\smaller\begin{tabular}{@{}c@{}}%
0\\0\\0
\end{tabular}\endgroup%
}}\!\right]$}%
}%
\ifdim\wd\matricesbox>\halfwidth\myboxwidth=\hsize\else\myboxwidth=\halfwidth\fi
\vbox{%
\ifdim\myboxwidth=\hsize
\setbox\onelinebox=\hbox{%
\vbox{\hbox{%
$\Pi_{16,10}$ spans $L_{142.20}$%
}\hbox{%
$\infty422\infty422\infty422\infty422\rtimes C_{4}$%
}%
}%
\hfill\copy\matricesbox
}%
\ifdim\wd\onelinebox>\myboxwidth
\hbox to \myboxwidth{%
$\Pi_{16,10}$ spans $L_{142.20}$%
\hfil
$\infty422\infty422\infty422\infty422\rtimes C_{4}$%
}%
\box\matricesbox
\else
\hbox to \myboxwidth{%
\unhbox\onelinebox
}%
\fi
\else
\hbox to \myboxwidth{%
$\Pi_{16,10}$ spans $L_{142.20}$%
\hfil}%
\hbox to \myboxwidth{%
$\infty422\infty422\infty422\infty422\rtimes C_{4}$%
\hfil}%
\box\matricesbox
\fi
}%
\hfill\discretionary{}{}{}%
\setbox\matricesbox=\hbox{%
{$\left[\!\llap{\phantom{%
\begingroup \smaller\smaller\smaller\begin{tabular}{@{}c@{}}%
\phantom{0}\\\phantom{0}\\\phantom{0}
\end{tabular}\endgroup%
}}\right.$}%
\begingroup \smaller\smaller\smaller\begin{tabular}{@{}c@{}}%
-1\\\phantom{0}\\\phantom{0}
\end{tabular}\endgroup%
\kern3pt%
\begingroup \smaller\smaller\smaller\begin{tabular}{@{}c@{}}%
\phantom{0}\\45/2\\\phantom{0}
\end{tabular}\endgroup%
\kern3pt%
\begingroup \smaller\smaller\smaller\begin{tabular}{@{}c@{}}%
\phantom{0}\\\phantom{0}\\15/2
\end{tabular}\endgroup%
{$\left.\llap{\phantom{%
\begingroup \smaller\smaller\smaller\begin{tabular}{@{}c@{}}%
\phantom{0}\\\phantom{0}\\\phantom{0}
\end{tabular}\endgroup%
}}\!\right]$}%
{$\left[\!\llap{\phantom{%
\begingroup \smaller\smaller\smaller\begin{tabular}{@{}c@{}}%
0\\0\\0
\end{tabular}\endgroup%
}}\right.$}%
\begingroup \smaller\smaller\smaller\begin{tabular}{@{}c@{}}%
9\\2\\0
\end{tabular}\endgroup%
\kern3pt%
\begingroup \smaller\smaller\smaller\begin{tabular}{@{}c@{}}%
5\\1\\1
\end{tabular}\endgroup%
\kern3pt%
\begingroup \smaller\smaller\smaller\begin{tabular}{@{}c@{}}%
9\\1\\3
\end{tabular}\endgroup%
\kern3pt%
\begingroup \smaller\smaller\smaller\begin{tabular}{@{}c@{}}%
5\\0\\2
\end{tabular}\endgroup%
\kern3pt%
\begingroup \smaller\smaller\smaller\begin{tabular}{@{}c@{}}%
90\\-8\\30
\end{tabular}\endgroup%
\kern3pt%
\begingroup \smaller\smaller\smaller\begin{tabular}{@{}c@{}}%
90\\-11\\27
\end{tabular}\endgroup%
\kern3pt%
\begingroup \smaller\smaller\smaller\begin{tabular}{@{}c@{}}%
30\\-5\\7
\end{tabular}\endgroup%
\kern3pt%
\begingroup \smaller\smaller\smaller\begin{tabular}{@{}c@{}}%
30\\-6\\4
\end{tabular}\endgroup%
\kern3pt%
\begingroup \smaller\smaller\smaller\begin{tabular}{@{}c@{}}%
9\\-2\\0
\end{tabular}\endgroup%
{$\left.\llap{\phantom{%
\begingroup \smaller\smaller\smaller\begin{tabular}{@{}c@{}}%
0\\0\\0
\end{tabular}\endgroup%
}}\!\right]$}%
}%
\ifdim\wd\matricesbox>\halfwidth\myboxwidth=\hsize\else\myboxwidth=\halfwidth\fi
\vbox{%
\ifdim\myboxwidth=\hsize
\setbox\onelinebox=\hbox{%
\vbox{\hbox{%
$\Pi_{16,11}$ spans $L_{16.13}$%
}\hbox{%
$222|22223632|23632\rtimes D_{2}$%
}%
}%
\hfill\copy\matricesbox
}%
\ifdim\wd\onelinebox>\myboxwidth
\hbox to \myboxwidth{%
$\Pi_{16,11}$ spans $L_{16.13}$%
\hfil
$222|22223632|23632\rtimes D_{2}$%
}%
\box\matricesbox
\else
\hbox to \myboxwidth{%
\unhbox\onelinebox
}%
\fi
\else
\hbox to \myboxwidth{%
$\Pi_{16,11}$ spans $L_{16.13}$%
\hfil}%
\hbox to \myboxwidth{%
$222|22223632|23632\rtimes D_{2}$%
\hfil}%
\box\matricesbox
\fi
}%
\hfill\discretionary{}{}{}%
\setbox\matricesbox=\hbox{%
{$\left[\!\llap{\phantom{%
\begingroup \smaller\smaller\smaller\begin{tabular}{@{}c@{}}%
\phantom{0}\\\phantom{0}\\\phantom{0}
\end{tabular}\endgroup%
}}\right.$}%
\begingroup \smaller\smaller\smaller\begin{tabular}{@{}c@{}}%
-1\\\phantom{0}\\\phantom{0}
\end{tabular}\endgroup%
\kern3pt%
\begingroup \smaller\smaller\smaller\begin{tabular}{@{}c@{}}%
\phantom{0}\\15/2\\\phantom{0}
\end{tabular}\endgroup%
\kern3pt%
\begingroup \smaller\smaller\smaller\begin{tabular}{@{}c@{}}%
\phantom{0}\\\phantom{0}\\45/2
\end{tabular}\endgroup%
{$\left.\llap{\phantom{%
\begingroup \smaller\smaller\smaller\begin{tabular}{@{}c@{}}%
\phantom{0}\\\phantom{0}\\\phantom{0}
\end{tabular}\endgroup%
}}\!\right]$}%
{$\left[\!\llap{\phantom{%
\begingroup \smaller\smaller\smaller\begin{tabular}{@{}c@{}}%
0\\0\\0
\end{tabular}\endgroup%
}}\right.$}%
\begingroup \smaller\smaller\smaller\begin{tabular}{@{}c@{}}%
5\\-2\\0
\end{tabular}\endgroup%
\kern3pt%
\begingroup \smaller\smaller\smaller\begin{tabular}{@{}c@{}}%
9\\-3\\1
\end{tabular}\endgroup%
\kern3pt%
\begingroup \smaller\smaller\smaller\begin{tabular}{@{}c@{}}%
5\\-1\\1
\end{tabular}\endgroup%
\kern3pt%
\begingroup \smaller\smaller\smaller\begin{tabular}{@{}c@{}}%
9\\0\\2
\end{tabular}\endgroup%
\kern3pt%
\begingroup \smaller\smaller\smaller\begin{tabular}{@{}c@{}}%
30\\4\\6
\end{tabular}\endgroup%
\kern3pt%
\begingroup \smaller\smaller\smaller\begin{tabular}{@{}c@{}}%
30\\7\\5
\end{tabular}\endgroup%
\kern3pt%
\begingroup \smaller\smaller\smaller\begin{tabular}{@{}c@{}}%
90\\27\\11
\end{tabular}\endgroup%
\kern3pt%
\begingroup \smaller\smaller\smaller\begin{tabular}{@{}c@{}}%
90\\30\\8
\end{tabular}\endgroup%
\kern3pt%
\begingroup \smaller\smaller\smaller\begin{tabular}{@{}c@{}}%
5\\2\\0
\end{tabular}\endgroup%
{$\left.\llap{\phantom{%
\begingroup \smaller\smaller\smaller\begin{tabular}{@{}c@{}}%
0\\0\\0
\end{tabular}\endgroup%
}}\!\right]$}%
}%
\ifdim\wd\matricesbox>\halfwidth\myboxwidth=\hsize\else\myboxwidth=\halfwidth\fi
\vbox{%
\ifdim\myboxwidth=\hsize
\setbox\onelinebox=\hbox{%
\vbox{\hbox{%
$\Pi_{16,12}$ spans $L_{16.13}$%
}\hbox{%
$22|22223632|236322\rtimes D_{2}$%
}%
}%
\hfill\copy\matricesbox
}%
\ifdim\wd\onelinebox>\myboxwidth
\hbox to \myboxwidth{%
$\Pi_{16,12}$ spans $L_{16.13}$%
\hfil
$22|22223632|236322\rtimes D_{2}$%
}%
\box\matricesbox
\else
\hbox to \myboxwidth{%
\unhbox\onelinebox
}%
\fi
\else
\hbox to \myboxwidth{%
$\Pi_{16,12}$ spans $L_{16.13}$%
\hfil}%
\hbox to \myboxwidth{%
$22|22223632|236322\rtimes D_{2}$%
\hfil}%
\box\matricesbox
\fi
}%
\hfill\discretionary{}{}{}%
\setbox\matricesbox=\hbox{%
{$\left[\!\llap{\phantom{%
\begingroup \smaller\smaller\smaller\begin{tabular}{@{}c@{}}%
\phantom{0}\\\phantom{0}\\\phantom{0}
\end{tabular}\endgroup%
}}\right.$}%
\begingroup \smaller\smaller\smaller\begin{tabular}{@{}c@{}}%
-1\\\phantom{0}\\\phantom{0}
\end{tabular}\endgroup%
\kern3pt%
\begingroup \smaller\smaller\smaller\begin{tabular}{@{}c@{}}%
\phantom{0}\\15/2\\\phantom{0}
\end{tabular}\endgroup%
\kern3pt%
\begingroup \smaller\smaller\smaller\begin{tabular}{@{}c@{}}%
\phantom{0}\\\phantom{0}\\45/2
\end{tabular}\endgroup%
{$\left.\llap{\phantom{%
\begingroup \smaller\smaller\smaller\begin{tabular}{@{}c@{}}%
\phantom{0}\\\phantom{0}\\\phantom{0}
\end{tabular}\endgroup%
}}\!\right]$}%
{$\left[\!\llap{\phantom{%
\begingroup \smaller\smaller\smaller\begin{tabular}{@{}c@{}}%
0\\0\\0
\end{tabular}\endgroup%
}}\right.$}%
\begingroup \smaller\smaller\smaller\begin{tabular}{@{}c@{}}%
5\\-2\\0
\end{tabular}\endgroup%
\kern3pt%
\begingroup \smaller\smaller\smaller\begin{tabular}{@{}c@{}}%
9\\-3\\1
\end{tabular}\endgroup%
\kern3pt%
\begingroup \smaller\smaller\smaller\begin{tabular}{@{}c@{}}%
5\\-1\\1
\end{tabular}\endgroup%
\kern3pt%
\begingroup \smaller\smaller\smaller\begin{tabular}{@{}c@{}}%
90\\-3\\19
\end{tabular}\endgroup%
\kern3pt%
\begingroup \smaller\smaller\smaller\begin{tabular}{@{}c@{}}%
90\\3\\19
\end{tabular}\endgroup%
\kern3pt%
\begingroup \smaller\smaller\smaller\begin{tabular}{@{}c@{}}%
5\\1\\1
\end{tabular}\endgroup%
\kern3pt%
\begingroup \smaller\smaller\smaller\begin{tabular}{@{}c@{}}%
90\\27\\11
\end{tabular}\endgroup%
\kern3pt%
\begingroup \smaller\smaller\smaller\begin{tabular}{@{}c@{}}%
90\\30\\8
\end{tabular}\endgroup%
\kern3pt%
\begingroup \smaller\smaller\smaller\begin{tabular}{@{}c@{}}%
5\\2\\0
\end{tabular}\endgroup%
{$\left.\llap{\phantom{%
\begingroup \smaller\smaller\smaller\begin{tabular}{@{}c@{}}%
0\\0\\0
\end{tabular}\endgroup%
}}\!\right]$}%
}%
\ifdim\wd\matricesbox>\halfwidth\myboxwidth=\hsize\else\myboxwidth=\halfwidth\fi
\vbox{%
\ifdim\myboxwidth=\hsize
\setbox\onelinebox=\hbox{%
\vbox{\hbox{%
$\Pi_{16,13}$ spans $L_{16.13}$%
}\hbox{%
$22|22232232|232232\rtimes D_{2}$%
}%
}%
\hfill\copy\matricesbox
}%
\ifdim\wd\onelinebox>\myboxwidth
\hbox to \myboxwidth{%
$\Pi_{16,13}$ spans $L_{16.13}$%
\hfil
$22|22232232|232232\rtimes D_{2}$%
}%
\box\matricesbox
\else
\hbox to \myboxwidth{%
\unhbox\onelinebox
}%
\fi
\else
\hbox to \myboxwidth{%
$\Pi_{16,13}$ spans $L_{16.13}$%
\hfil}%
\hbox to \myboxwidth{%
$22|22232232|232232\rtimes D_{2}$%
\hfil}%
\box\matricesbox
\fi
}%
\hfill\discretionary{}{}{}%
\setbox\matricesbox=\hbox{%
{$\left[\!\llap{\phantom{%
\begingroup \smaller\smaller\smaller\begin{tabular}{@{}c@{}}%
\phantom{0}\\\phantom{0}\\\phantom{0}
\end{tabular}\endgroup%
}}\right.$}%
\begingroup \smaller\smaller\smaller\begin{tabular}{@{}c@{}}%
-1\\\phantom{0}\\\phantom{0}
\end{tabular}\endgroup%
\kern3pt%
\begingroup \smaller\smaller\smaller\begin{tabular}{@{}c@{}}%
\phantom{0}\\15/2\\\phantom{0}
\end{tabular}\endgroup%
\kern3pt%
\begingroup \smaller\smaller\smaller\begin{tabular}{@{}c@{}}%
\phantom{0}\\\phantom{0}\\45/2
\end{tabular}\endgroup%
{$\left.\llap{\phantom{%
\begingroup \smaller\smaller\smaller\begin{tabular}{@{}c@{}}%
\phantom{0}\\\phantom{0}\\\phantom{0}
\end{tabular}\endgroup%
}}\!\right]$}%
{$\left[\!\llap{\phantom{%
\begingroup \smaller\smaller\smaller\begin{tabular}{@{}c@{}}%
0\\0\\0
\end{tabular}\endgroup%
}}\right.$}%
\begingroup \smaller\smaller\smaller\begin{tabular}{@{}c@{}}%
5\\-2\\0
\end{tabular}\endgroup%
\kern3pt%
\begingroup \smaller\smaller\smaller\begin{tabular}{@{}c@{}}%
9\\-3\\1
\end{tabular}\endgroup%
\kern3pt%
\begingroup \smaller\smaller\smaller\begin{tabular}{@{}c@{}}%
5\\-1\\1
\end{tabular}\endgroup%
\kern3pt%
\begingroup \smaller\smaller\smaller\begin{tabular}{@{}c@{}}%
90\\-3\\19
\end{tabular}\endgroup%
\kern3pt%
\begingroup \smaller\smaller\smaller\begin{tabular}{@{}c@{}}%
90\\3\\19
\end{tabular}\endgroup%
\kern3pt%
\begingroup \smaller\smaller\smaller\begin{tabular}{@{}c@{}}%
30\\4\\6
\end{tabular}\endgroup%
\kern3pt%
\begingroup \smaller\smaller\smaller\begin{tabular}{@{}c@{}}%
30\\7\\5
\end{tabular}\endgroup%
\kern3pt%
\begingroup \smaller\smaller\smaller\begin{tabular}{@{}c@{}}%
9\\3\\1
\end{tabular}\endgroup%
\kern3pt%
\begingroup \smaller\smaller\smaller\begin{tabular}{@{}c@{}}%
5\\2\\0
\end{tabular}\endgroup%
{$\left.\llap{\phantom{%
\begingroup \smaller\smaller\smaller\begin{tabular}{@{}c@{}}%
0\\0\\0
\end{tabular}\endgroup%
}}\!\right]$}%
}%
\ifdim\wd\matricesbox>\halfwidth\myboxwidth=\hsize\else\myboxwidth=\halfwidth\fi
\vbox{%
\ifdim\myboxwidth=\hsize
\setbox\onelinebox=\hbox{%
\vbox{\hbox{%
$\Pi_{16,14}$ spans $L_{16.13}$%
}\hbox{%
$22|22236322|223632\rtimes D_{2}$%
}%
}%
\hfill\copy\matricesbox
}%
\ifdim\wd\onelinebox>\myboxwidth
\hbox to \myboxwidth{%
$\Pi_{16,14}$ spans $L_{16.13}$%
\hfil
$22|22236322|223632\rtimes D_{2}$%
}%
\box\matricesbox
\else
\hbox to \myboxwidth{%
\unhbox\onelinebox
}%
\fi
\else
\hbox to \myboxwidth{%
$\Pi_{16,14}$ spans $L_{16.13}$%
\hfil}%
\hbox to \myboxwidth{%
$22|22236322|223632\rtimes D_{2}$%
\hfil}%
\box\matricesbox
\fi
}%
\hfill\discretionary{}{}{}%
\setbox\matricesbox=\hbox{%
{$\left[\!\llap{\phantom{%
\begingroup \smaller\smaller\smaller\begin{tabular}{@{}c@{}}%
\phantom{0}\\\phantom{0}\\\phantom{0}
\end{tabular}\endgroup%
}}\right.$}%
\begingroup \smaller\smaller\smaller\begin{tabular}{@{}c@{}}%
-1\\\phantom{0}\\\phantom{0}
\end{tabular}\endgroup%
\kern3pt%
\begingroup \smaller\smaller\smaller\begin{tabular}{@{}c@{}}%
\phantom{0}\\5\\\phantom{0}
\end{tabular}\endgroup%
\kern3pt%
\begingroup \smaller\smaller\smaller\begin{tabular}{@{}c@{}}%
\phantom{0}\\\phantom{0}\\15
\end{tabular}\endgroup%
{$\left.\llap{\phantom{%
\begingroup \smaller\smaller\smaller\begin{tabular}{@{}c@{}}%
\phantom{0}\\\phantom{0}\\\phantom{0}
\end{tabular}\endgroup%
}}\!\right]$}%
{$\left[\!\llap{\phantom{%
\begingroup \smaller\smaller\smaller\begin{tabular}{@{}c@{}}%
0\\0\\0
\end{tabular}\endgroup%
}}\right.$}%
\begingroup \smaller\smaller\smaller\begin{tabular}{@{}c@{}}%
20\\9\\1
\end{tabular}\endgroup%
\kern3pt%
\begingroup \smaller\smaller\smaller\begin{tabular}{@{}c@{}}%
15\\6\\2
\end{tabular}\endgroup%
\kern3pt%
\begingroup \smaller\smaller\smaller\begin{tabular}{@{}c@{}}%
4\\1\\1
\end{tabular}\endgroup%
\kern3pt%
\begingroup \smaller\smaller\smaller\begin{tabular}{@{}c@{}}%
15\\0\\4
\end{tabular}\endgroup%
\kern3pt%
\begingroup \smaller\smaller\smaller\begin{tabular}{@{}c@{}}%
20\\-3\\5
\end{tabular}\endgroup%
\kern3pt%
\begingroup \smaller\smaller\smaller\begin{tabular}{@{}c@{}}%
20\\-6\\4
\end{tabular}\endgroup%
\kern3pt%
\begingroup \smaller\smaller\smaller\begin{tabular}{@{}c@{}}%
15\\-6\\2
\end{tabular}\endgroup%
\kern3pt%
\begingroup \smaller\smaller\smaller\begin{tabular}{@{}c@{}}%
20\\-9\\1
\end{tabular}\endgroup%
{$\left.\llap{\phantom{%
\begingroup \smaller\smaller\smaller\begin{tabular}{@{}c@{}}%
0\\0\\0
\end{tabular}\endgroup%
}}\!\right]$}%
}%
\ifdim\wd\matricesbox>\halfwidth\myboxwidth=\hsize\else\myboxwidth=\halfwidth\fi
\vbox{%
\ifdim\myboxwidth=\hsize
\setbox\onelinebox=\hbox{%
\vbox{\hbox{%
$\Pi_{16,15}$ spans $L_{31.7}$%
}\hbox{%
$22\slashthree2222322\slashthree22322\rtimes D_{2}$%
}%
}%
\hfill\copy\matricesbox
}%
\ifdim\wd\onelinebox>\myboxwidth
\hbox to \myboxwidth{%
$\Pi_{16,15}$ spans $L_{31.7}$%
\hfil
$22\slashthree2222322\slashthree22322\rtimes D_{2}$%
}%
\box\matricesbox
\else
\hbox to \myboxwidth{%
\unhbox\onelinebox
}%
\fi
\else
\hbox to \myboxwidth{%
$\Pi_{16,15}$ spans $L_{31.7}$%
\hfil}%
\hbox to \myboxwidth{%
$22\slashthree2222322\slashthree22322\rtimes D_{2}$%
\hfil}%
\box\matricesbox
\fi
}%
\hfill\discretionary{}{}{}%
\setbox\matricesbox=\hbox{%
{$\left[\!\llap{\phantom{%
\begingroup \smaller\smaller\smaller\begin{tabular}{@{}c@{}}%
\phantom{0}\\\phantom{0}\\\phantom{0}
\end{tabular}\endgroup%
}}\right.$}%
\begingroup \smaller\smaller\smaller\begin{tabular}{@{}c@{}}%
-1\\\phantom{0}\\\phantom{0}
\end{tabular}\endgroup%
\kern3pt%
\begingroup \smaller\smaller\smaller\begin{tabular}{@{}c@{}}%
\phantom{0}\\15\\\phantom{0}
\end{tabular}\endgroup%
\kern3pt%
\begingroup \smaller\smaller\smaller\begin{tabular}{@{}c@{}}%
\phantom{0}\\\phantom{0}\\5
\end{tabular}\endgroup%
{$\left.\llap{\phantom{%
\begingroup \smaller\smaller\smaller\begin{tabular}{@{}c@{}}%
\phantom{0}\\\phantom{0}\\\phantom{0}
\end{tabular}\endgroup%
}}\!\right]$}%
{$\left[\!\llap{\phantom{%
\begingroup \smaller\smaller\smaller\begin{tabular}{@{}c@{}}%
0\\0\\0
\end{tabular}\endgroup%
}}\right.$}%
\begingroup \smaller\smaller\smaller\begin{tabular}{@{}c@{}}%
15\\4\\0
\end{tabular}\endgroup%
\kern3pt%
\begingroup \smaller\smaller\smaller\begin{tabular}{@{}c@{}}%
20\\5\\3
\end{tabular}\endgroup%
\kern3pt%
\begingroup \smaller\smaller\smaller\begin{tabular}{@{}c@{}}%
20\\4\\6
\end{tabular}\endgroup%
\kern3pt%
\begingroup \smaller\smaller\smaller\begin{tabular}{@{}c@{}}%
15\\2\\6
\end{tabular}\endgroup%
\kern3pt%
\begingroup \smaller\smaller\smaller\begin{tabular}{@{}c@{}}%
20\\1\\9
\end{tabular}\endgroup%
\kern3pt%
\begingroup \smaller\smaller\smaller\begin{tabular}{@{}c@{}}%
20\\-1\\9
\end{tabular}\endgroup%
\kern3pt%
\begingroup \smaller\smaller\smaller\begin{tabular}{@{}c@{}}%
15\\-2\\6
\end{tabular}\endgroup%
\kern3pt%
\begingroup \smaller\smaller\smaller\begin{tabular}{@{}c@{}}%
4\\-1\\1
\end{tabular}\endgroup%
\kern3pt%
\begingroup \smaller\smaller\smaller\begin{tabular}{@{}c@{}}%
15\\-4\\0
\end{tabular}\endgroup%
{$\left.\llap{\phantom{%
\begingroup \smaller\smaller\smaller\begin{tabular}{@{}c@{}}%
0\\0\\0
\end{tabular}\endgroup%
}}\!\right]$}%
}%
\ifdim\wd\matricesbox>\halfwidth\myboxwidth=\hsize\else\myboxwidth=\halfwidth\fi
\vbox{%
\ifdim\myboxwidth=\hsize
\setbox\onelinebox=\hbox{%
\vbox{\hbox{%
$\Pi_{16,16}$ spans $L_{31.7}$%
}\hbox{%
$2232232|23223222|2\rtimes D_{2}$%
}%
}%
\hfill\copy\matricesbox
}%
\ifdim\wd\onelinebox>\myboxwidth
\hbox to \myboxwidth{%
$\Pi_{16,16}$ spans $L_{31.7}$%
\hfil
$2232232|23223222|2\rtimes D_{2}$%
}%
\box\matricesbox
\else
\hbox to \myboxwidth{%
\unhbox\onelinebox
}%
\fi
\else
\hbox to \myboxwidth{%
$\Pi_{16,16}$ spans $L_{31.7}$%
\hfil}%
\hbox to \myboxwidth{%
$2232232|23223222|2\rtimes D_{2}$%
\hfil}%
\box\matricesbox
\fi
}%
\hfill\discretionary{}{}{}%
\setbox\matricesbox=\hbox{%
{$\left[\!\llap{\phantom{%
\begingroup \smaller\smaller\smaller\begin{tabular}{@{}c@{}}%
\phantom{0}\\\phantom{0}\\\phantom{0}
\end{tabular}\endgroup%
}}\right.$}%
\begingroup \smaller\smaller\smaller\begin{tabular}{@{}c@{}}%
-1\\\phantom{0}\\\phantom{0}
\end{tabular}\endgroup%
\kern3pt%
\begingroup \smaller\smaller\smaller\begin{tabular}{@{}c@{}}%
\phantom{0}\\7/2\\\phantom{0}
\end{tabular}\endgroup%
\kern3pt%
\begingroup \smaller\smaller\smaller\begin{tabular}{@{}c@{}}%
\phantom{0}\\\phantom{0}\\21/2
\end{tabular}\endgroup%
{$\left.\llap{\phantom{%
\begingroup \smaller\smaller\smaller\begin{tabular}{@{}c@{}}%
\phantom{0}\\\phantom{0}\\\phantom{0}
\end{tabular}\endgroup%
}}\!\right]$}%
{$\left[\!\llap{\phantom{%
\begingroup \smaller\smaller\smaller\begin{tabular}{@{}c@{}}%
0\\0\\0
\end{tabular}\endgroup%
}}\right.$}%
\begingroup \smaller\smaller\smaller\begin{tabular}{@{}c@{}}%
7\\-4\\0
\end{tabular}\endgroup%
\kern3pt%
\begingroup \smaller\smaller\smaller\begin{tabular}{@{}c@{}}%
6\\-3\\-1
\end{tabular}\endgroup%
\kern3pt%
\begingroup \smaller\smaller\smaller\begin{tabular}{@{}c@{}}%
7\\-2\\-2
\end{tabular}\endgroup%
\kern3pt%
\begingroup \smaller\smaller\smaller\begin{tabular}{@{}c@{}}%
42\\-3\\-13
\end{tabular}\endgroup%
\kern3pt%
\begingroup \smaller\smaller\smaller\begin{tabular}{@{}c@{}}%
42\\3\\-13
\end{tabular}\endgroup%
\kern3pt%
\begingroup \smaller\smaller\smaller\begin{tabular}{@{}c@{}}%
7\\2\\-2
\end{tabular}\endgroup%
\kern3pt%
\begingroup \smaller\smaller\smaller\begin{tabular}{@{}c@{}}%
42\\18\\-8
\end{tabular}\endgroup%
\kern3pt%
\begingroup \smaller\smaller\smaller\begin{tabular}{@{}c@{}}%
42\\21\\-5
\end{tabular}\endgroup%
\kern3pt%
\begingroup \smaller\smaller\smaller\begin{tabular}{@{}c@{}}%
7\\4\\0
\end{tabular}\endgroup%
{$\left.\llap{\phantom{%
\begingroup \smaller\smaller\smaller\begin{tabular}{@{}c@{}}%
0\\0\\0
\end{tabular}\endgroup%
}}\!\right]$}%
}%
\ifdim\wd\matricesbox>\halfwidth\myboxwidth=\hsize\else\myboxwidth=\halfwidth\fi
\vbox{%
\ifdim\myboxwidth=\hsize
\setbox\onelinebox=\hbox{%
\vbox{\hbox{%
$\Pi_{16,17}$ spans $L_{22.4}$%
}\hbox{%
$2|22232232|2322322\rtimes D_{2}$%
}%
}%
\hfill\copy\matricesbox
}%
\ifdim\wd\onelinebox>\myboxwidth
\hbox to \myboxwidth{%
$\Pi_{16,17}$ spans $L_{22.4}$%
\hfil
$2|22232232|2322322\rtimes D_{2}$%
}%
\box\matricesbox
\else
\hbox to \myboxwidth{%
\unhbox\onelinebox
}%
\fi
\else
\hbox to \myboxwidth{%
$\Pi_{16,17}$ spans $L_{22.4}$%
\hfil}%
\hbox to \myboxwidth{%
$2|22232232|2322322\rtimes D_{2}$%
\hfil}%
\box\matricesbox
\fi
}%
\hfill\discretionary{}{}{}%
\setbox\matricesbox=\hbox{%
{$\left[\!\llap{\phantom{%
\begingroup \smaller\smaller\smaller\begin{tabular}{@{}c@{}}%
\phantom{0}\\\phantom{0}\\\phantom{0}
\end{tabular}\endgroup%
}}\right.$}%
\begingroup \smaller\smaller\smaller\begin{tabular}{@{}c@{}}%
-1\\\phantom{0}\\\phantom{0}
\end{tabular}\endgroup%
\kern3pt%
\begingroup \smaller\smaller\smaller\begin{tabular}{@{}c@{}}%
\phantom{0}\\45/2\\\phantom{0}
\end{tabular}\endgroup%
\kern3pt%
\begingroup \smaller\smaller\smaller\begin{tabular}{@{}c@{}}%
\phantom{0}\\\phantom{0}\\15/2
\end{tabular}\endgroup%
{$\left.\llap{\phantom{%
\begingroup \smaller\smaller\smaller\begin{tabular}{@{}c@{}}%
\phantom{0}\\\phantom{0}\\\phantom{0}
\end{tabular}\endgroup%
}}\!\right]$}%
{$\left[\!\llap{\phantom{%
\begingroup \smaller\smaller\smaller\begin{tabular}{@{}c@{}}%
0\\0\\0
\end{tabular}\endgroup%
}}\right.$}%
\begingroup \smaller\smaller\smaller\begin{tabular}{@{}c@{}}%
9\\2\\0
\end{tabular}\endgroup%
\kern3pt%
\begingroup \smaller\smaller\smaller\begin{tabular}{@{}c@{}}%
5\\1\\1
\end{tabular}\endgroup%
\kern3pt%
\begingroup \smaller\smaller\smaller\begin{tabular}{@{}c@{}}%
9\\1\\3
\end{tabular}\endgroup%
\kern3pt%
\begingroup \smaller\smaller\smaller\begin{tabular}{@{}c@{}}%
30\\1\\11
\end{tabular}\endgroup%
\kern3pt%
\begingroup \smaller\smaller\smaller\begin{tabular}{@{}c@{}}%
30\\-1\\11
\end{tabular}\endgroup%
\kern3pt%
\begingroup \smaller\smaller\smaller\begin{tabular}{@{}c@{}}%
90\\-8\\30
\end{tabular}\endgroup%
\kern3pt%
\begingroup \smaller\smaller\smaller\begin{tabular}{@{}c@{}}%
90\\-11\\27
\end{tabular}\endgroup%
\kern3pt%
\begingroup \smaller\smaller\smaller\begin{tabular}{@{}c@{}}%
5\\-1\\1
\end{tabular}\endgroup%
\kern3pt%
\begingroup \smaller\smaller\smaller\begin{tabular}{@{}c@{}}%
9\\-2\\0
\end{tabular}\endgroup%
{$\left.\llap{\phantom{%
\begingroup \smaller\smaller\smaller\begin{tabular}{@{}c@{}}%
0\\0\\0
\end{tabular}\endgroup%
}}\!\right]$}%
}%
\ifdim\wd\matricesbox>\halfwidth\myboxwidth=\hsize\else\myboxwidth=\halfwidth\fi
\vbox{%
\ifdim\myboxwidth=\hsize
\setbox\onelinebox=\hbox{%
\vbox{\hbox{%
$\Pi_{16,18}$ spans $L_{16.13}$%
}\hbox{%
$363222|22236322|22\rtimes D_{2}$%
}%
}%
\hfill\copy\matricesbox
}%
\ifdim\wd\onelinebox>\myboxwidth
\hbox to \myboxwidth{%
$\Pi_{16,18}$ spans $L_{16.13}$%
\hfil
$363222|22236322|22\rtimes D_{2}$%
}%
\box\matricesbox
\else
\hbox to \myboxwidth{%
\unhbox\onelinebox
}%
\fi
\else
\hbox to \myboxwidth{%
$\Pi_{16,18}$ spans $L_{16.13}$%
\hfil}%
\hbox to \myboxwidth{%
$363222|22236322|22\rtimes D_{2}$%
\hfil}%
\box\matricesbox
\fi
}%
\hfill\discretionary{}{}{}%
\setbox\matricesbox=\hbox{%
{$\left[\!\llap{\phantom{%
\begingroup \smaller\smaller\smaller\begin{tabular}{@{}c@{}}%
\phantom{0}\\\phantom{0}\\\phantom{0}
\end{tabular}\endgroup%
}}\right.$}%
\begingroup \smaller\smaller\smaller\begin{tabular}{@{}c@{}}%
-1\\\phantom{0}\\\phantom{0}
\end{tabular}\endgroup%
\kern3pt%
\begingroup \smaller\smaller\smaller\begin{tabular}{@{}c@{}}%
\phantom{0}\\45/2\\\phantom{0}
\end{tabular}\endgroup%
\kern3pt%
\begingroup \smaller\smaller\smaller\begin{tabular}{@{}c@{}}%
\phantom{0}\\\phantom{0}\\15/2
\end{tabular}\endgroup%
{$\left.\llap{\phantom{%
\begingroup \smaller\smaller\smaller\begin{tabular}{@{}c@{}}%
\phantom{0}\\\phantom{0}\\\phantom{0}
\end{tabular}\endgroup%
}}\!\right]$}%
{$\left[\!\llap{\phantom{%
\begingroup \smaller\smaller\smaller\begin{tabular}{@{}c@{}}%
0\\0\\0
\end{tabular}\endgroup%
}}\right.$}%
\begingroup \smaller\smaller\smaller\begin{tabular}{@{}c@{}}%
90\\19\\3
\end{tabular}\endgroup%
\kern3pt%
\begingroup \smaller\smaller\smaller\begin{tabular}{@{}c@{}}%
30\\6\\4
\end{tabular}\endgroup%
\kern3pt%
\begingroup \smaller\smaller\smaller\begin{tabular}{@{}c@{}}%
30\\5\\7
\end{tabular}\endgroup%
\kern3pt%
\begingroup \smaller\smaller\smaller\begin{tabular}{@{}c@{}}%
9\\1\\3
\end{tabular}\endgroup%
\kern3pt%
\begingroup \smaller\smaller\smaller\begin{tabular}{@{}c@{}}%
5\\0\\2
\end{tabular}\endgroup%
\kern3pt%
\begingroup \smaller\smaller\smaller\begin{tabular}{@{}c@{}}%
9\\-1\\3
\end{tabular}\endgroup%
\kern3pt%
\begingroup \smaller\smaller\smaller\begin{tabular}{@{}c@{}}%
5\\-1\\1
\end{tabular}\endgroup%
\kern3pt%
\begingroup \smaller\smaller\smaller\begin{tabular}{@{}c@{}}%
90\\-19\\3
\end{tabular}\endgroup%
{$\left.\llap{\phantom{%
\begingroup \smaller\smaller\smaller\begin{tabular}{@{}c@{}}%
0\\0\\0
\end{tabular}\endgroup%
}}\!\right]$}%
}%
\ifdim\wd\matricesbox>\halfwidth\myboxwidth=\hsize\else\myboxwidth=\halfwidth\fi
\vbox{%
\ifdim\myboxwidth=\hsize
\setbox\onelinebox=\hbox{%
\vbox{\hbox{%
$\Pi_{16,19}$ spans $L_{16.13}$%
}\hbox{%
$\slashthree6322222\slashthree2222236\rtimes D_{2}$%
}%
}%
\hfill\copy\matricesbox
}%
\ifdim\wd\onelinebox>\myboxwidth
\hbox to \myboxwidth{%
$\Pi_{16,19}$ spans $L_{16.13}$%
\hfil
$\slashthree6322222\slashthree2222236\rtimes D_{2}$%
}%
\box\matricesbox
\else
\hbox to \myboxwidth{%
\unhbox\onelinebox
}%
\fi
\else
\hbox to \myboxwidth{%
$\Pi_{16,19}$ spans $L_{16.13}$%
\hfil}%
\hbox to \myboxwidth{%
$\slashthree6322222\slashthree2222236\rtimes D_{2}$%
\hfil}%
\box\matricesbox
\fi
}%
\hfill\discretionary{}{}{}%
\setbox\matricesbox=\hbox{%
{$\left[\!\llap{\phantom{%
\begingroup \smaller\smaller\smaller\begin{tabular}{@{}c@{}}%
\phantom{0}\\\phantom{0}\\\phantom{0}
\end{tabular}\endgroup%
}}\right.$}%
\begingroup \smaller\smaller\smaller\begin{tabular}{@{}c@{}}%
-1\\\phantom{0}\\\phantom{0}
\end{tabular}\endgroup%
\kern3pt%
\begingroup \smaller\smaller\smaller\begin{tabular}{@{}c@{}}%
\phantom{0}\\15/2\\\phantom{0}
\end{tabular}\endgroup%
\kern3pt%
\begingroup \smaller\smaller\smaller\begin{tabular}{@{}c@{}}%
\phantom{0}\\\phantom{0}\\45/2
\end{tabular}\endgroup%
{$\left.\llap{\phantom{%
\begingroup \smaller\smaller\smaller\begin{tabular}{@{}c@{}}%
\phantom{0}\\\phantom{0}\\\phantom{0}
\end{tabular}\endgroup%
}}\!\right]$}%
{$\left[\!\llap{\phantom{%
\begingroup \smaller\smaller\smaller\begin{tabular}{@{}c@{}}%
0\\0\\0
\end{tabular}\endgroup%
}}\right.$}%
\begingroup \smaller\smaller\smaller\begin{tabular}{@{}c@{}}%
30\\-11\\-1
\end{tabular}\endgroup%
\kern3pt%
\begingroup \smaller\smaller\smaller\begin{tabular}{@{}c@{}}%
90\\-30\\-8
\end{tabular}\endgroup%
\kern3pt%
\begingroup \smaller\smaller\smaller\begin{tabular}{@{}c@{}}%
90\\-27\\-11
\end{tabular}\endgroup%
\kern3pt%
\begingroup \smaller\smaller\smaller\begin{tabular}{@{}c@{}}%
5\\-1\\-1
\end{tabular}\endgroup%
\kern3pt%
\begingroup \smaller\smaller\smaller\begin{tabular}{@{}c@{}}%
9\\0\\-2
\end{tabular}\endgroup%
\kern3pt%
\begingroup \smaller\smaller\smaller\begin{tabular}{@{}c@{}}%
5\\1\\-1
\end{tabular}\endgroup%
\kern3pt%
\begingroup \smaller\smaller\smaller\begin{tabular}{@{}c@{}}%
9\\3\\-1
\end{tabular}\endgroup%
\kern3pt%
\begingroup \smaller\smaller\smaller\begin{tabular}{@{}c@{}}%
30\\11\\-1
\end{tabular}\endgroup%
{$\left.\llap{\phantom{%
\begingroup \smaller\smaller\smaller\begin{tabular}{@{}c@{}}%
0\\0\\0
\end{tabular}\endgroup%
}}\!\right]$}%
}%
\ifdim\wd\matricesbox>\halfwidth\myboxwidth=\hsize\else\myboxwidth=\halfwidth\fi
\vbox{%
\ifdim\myboxwidth=\hsize
\setbox\onelinebox=\hbox{%
\vbox{\hbox{%
$\Pi_{16,20}$ spans $L_{16.13}$%
}\hbox{%
$36\slashthree6322222\slashthree22222\rtimes D_{2}$%
}%
}%
\hfill\copy\matricesbox
}%
\ifdim\wd\onelinebox>\myboxwidth
\hbox to \myboxwidth{%
$\Pi_{16,20}$ spans $L_{16.13}$%
\hfil
$36\slashthree6322222\slashthree22222\rtimes D_{2}$%
}%
\box\matricesbox
\else
\hbox to \myboxwidth{%
\unhbox\onelinebox
}%
\fi
\else
\hbox to \myboxwidth{%
$\Pi_{16,20}$ spans $L_{16.13}$%
\hfil}%
\hbox to \myboxwidth{%
$36\slashthree6322222\slashthree22222\rtimes D_{2}$%
\hfil}%
\box\matricesbox
\fi
}%
\hfill\discretionary{}{}{}%
\setbox\matricesbox=\hbox{%
{$\left[\!\llap{\phantom{%
\begingroup \smaller\smaller\smaller\begin{tabular}{@{}c@{}}%
\phantom{0}\\\phantom{0}\\\phantom{0}
\end{tabular}\endgroup%
}}\right.$}%
\begingroup \smaller\smaller\smaller\begin{tabular}{@{}c@{}}%
-1\\\phantom{0}\\\phantom{0}
\end{tabular}\endgroup%
\kern3pt%
\begingroup \smaller\smaller\smaller\begin{tabular}{@{}c@{}}%
\phantom{0}\\9\\\phantom{0}
\end{tabular}\endgroup%
\kern3pt%
\begingroup \smaller\smaller\smaller\begin{tabular}{@{}c@{}}%
\phantom{0}\\\phantom{0}\\9
\end{tabular}\endgroup%
{$\left.\llap{\phantom{%
\begingroup \smaller\smaller\smaller\begin{tabular}{@{}c@{}}%
\phantom{0}\\\phantom{0}\\\phantom{0}
\end{tabular}\endgroup%
}}\!\right]$}%
{$\left[\!\llap{\phantom{%
\begingroup \smaller\smaller\smaller\begin{tabular}{@{}c@{}}%
0\\0\\0
\end{tabular}\endgroup%
}}\right.$}%
\begingroup \smaller\smaller\smaller\begin{tabular}{@{}c@{}}%
9\\3\\-1
\end{tabular}\endgroup%
\kern3pt%
\begingroup \smaller\smaller\smaller\begin{tabular}{@{}c@{}}%
8\\2\\-2
\end{tabular}\endgroup%
\kern3pt%
\begingroup \smaller\smaller\smaller\begin{tabular}{@{}c@{}}%
72\\10\\-22
\end{tabular}\endgroup%
\kern3pt%
\begingroup \smaller\smaller\smaller\begin{tabular}{@{}c@{}}%
36\\2\\-12
\end{tabular}\endgroup%
\kern3pt%
\begingroup \smaller\smaller\smaller\begin{tabular}{@{}c@{}}%
9\\-1\\-3
\end{tabular}\endgroup%
\kern3pt%
\begingroup \smaller\smaller\smaller\begin{tabular}{@{}c@{}}%
8\\-2\\-2
\end{tabular}\endgroup%
\kern3pt%
\begingroup \smaller\smaller\smaller\begin{tabular}{@{}c@{}}%
72\\-22\\-10
\end{tabular}\endgroup%
\kern3pt%
\begingroup \smaller\smaller\smaller\begin{tabular}{@{}c@{}}%
36\\-12\\-2
\end{tabular}\endgroup%
{$\left.\llap{\phantom{%
\begingroup \smaller\smaller\smaller\begin{tabular}{@{}c@{}}%
0\\0\\0
\end{tabular}\endgroup%
}}\!\right]$}%
}%
\ifdim\wd\matricesbox>\halfwidth\myboxwidth=\hsize\else\myboxwidth=\halfwidth\fi
\vbox{%
\ifdim\myboxwidth=\hsize
\setbox\onelinebox=\hbox{%
\vbox{\hbox{%
$\Pi_{16,21}$ spans $L_{142.20}$%
}\hbox{%
$\infty422\slashinfty224\infty224\slashinfty422\rtimes D_{2}$%
}%
}%
\hfill\copy\matricesbox
}%
\ifdim\wd\onelinebox>\myboxwidth
\hbox to \myboxwidth{%
$\Pi_{16,21}$ spans $L_{142.20}$%
\hfil
$\infty422\slashinfty224\infty224\slashinfty422\rtimes D_{2}$%
}%
\box\matricesbox
\else
\hbox to \myboxwidth{%
\unhbox\onelinebox
}%
\fi
\else
\hbox to \myboxwidth{%
$\Pi_{16,21}$ spans $L_{142.20}$%
\hfil}%
\hbox to \myboxwidth{%
$\infty422\slashinfty224\infty224\slashinfty422\rtimes D_{2}$%
\hfil}%
\box\matricesbox
\fi
}%
\hfill\discretionary{}{}{}%
\setbox\matricesbox=\hbox{%
{$\left[\!\llap{\phantom{%
\begingroup \smaller\smaller\smaller\begin{tabular}{@{}c@{}}%
\phantom{0}\\\phantom{0}\\\phantom{0}
\end{tabular}\endgroup%
}}\right.$}%
\begingroup \smaller\smaller\smaller\begin{tabular}{@{}c@{}}%
-1\\\phantom{0}\\\phantom{0}
\end{tabular}\endgroup%
\kern3pt%
\begingroup \smaller\smaller\smaller\begin{tabular}{@{}c@{}}%
\phantom{0}\\18\\\phantom{0}
\end{tabular}\endgroup%
\kern3pt%
\begingroup \smaller\smaller\smaller\begin{tabular}{@{}c@{}}%
\phantom{0}\\\phantom{0}\\18
\end{tabular}\endgroup%
{$\left.\llap{\phantom{%
\begingroup \smaller\smaller\smaller\begin{tabular}{@{}c@{}}%
\phantom{0}\\\phantom{0}\\\phantom{0}
\end{tabular}\endgroup%
}}\!\right]$}%
{$\left[\!\llap{\phantom{%
\begingroup \smaller\smaller\smaller\begin{tabular}{@{}c@{}}%
0\\0\\0
\end{tabular}\endgroup%
}}\right.$}%
\begingroup \smaller\smaller\smaller\begin{tabular}{@{}c@{}}%
8\\2\\0
\end{tabular}\endgroup%
\kern3pt%
\begingroup \smaller\smaller\smaller\begin{tabular}{@{}c@{}}%
72\\16\\6
\end{tabular}\endgroup%
\kern3pt%
\begingroup \smaller\smaller\smaller\begin{tabular}{@{}c@{}}%
36\\7\\5
\end{tabular}\endgroup%
\kern3pt%
\begingroup \smaller\smaller\smaller\begin{tabular}{@{}c@{}}%
9\\1\\2
\end{tabular}\endgroup%
\kern3pt%
\begingroup \smaller\smaller\smaller\begin{tabular}{@{}c@{}}%
8\\0\\2
\end{tabular}\endgroup%
\kern3pt%
\begingroup \smaller\smaller\smaller\begin{tabular}{@{}c@{}}%
72\\-6\\16
\end{tabular}\endgroup%
\kern3pt%
\begingroup \smaller\smaller\smaller\begin{tabular}{@{}c@{}}%
36\\-5\\7
\end{tabular}\endgroup%
\kern3pt%
\begingroup \smaller\smaller\smaller\begin{tabular}{@{}c@{}}%
9\\-2\\1
\end{tabular}\endgroup%
\kern3pt%
\begingroup \smaller\smaller\smaller\begin{tabular}{@{}c@{}}%
8\\-2\\0
\end{tabular}\endgroup%
{$\left.\llap{\phantom{%
\begingroup \smaller\smaller\smaller\begin{tabular}{@{}c@{}}%
0\\0\\0
\end{tabular}\endgroup%
}}\!\right]$}%
}%
\ifdim\wd\matricesbox>\halfwidth\myboxwidth=\hsize\else\myboxwidth=\halfwidth\fi
\vbox{%
\ifdim\myboxwidth=\hsize
\setbox\onelinebox=\hbox{%
\vbox{\hbox{%
$\Pi_{16,22}$ spans $L_{142.20}$%
}\hbox{%
$\infty422\infty42|24\infty224\infty2|2\rtimes D_{2}$%
}%
}%
\hfill\copy\matricesbox
}%
\ifdim\wd\onelinebox>\myboxwidth
\hbox to \myboxwidth{%
$\Pi_{16,22}$ spans $L_{142.20}$%
\hfil
$\infty422\infty42|24\infty224\infty2|2\rtimes D_{2}$%
}%
\box\matricesbox
\else
\hbox to \myboxwidth{%
\unhbox\onelinebox
}%
\fi
\else
\hbox to \myboxwidth{%
$\Pi_{16,22}$ spans $L_{142.20}$%
\hfil}%
\hbox to \myboxwidth{%
$\infty422\infty42|24\infty224\infty2|2\rtimes D_{2}$%
\hfil}%
\box\matricesbox
\fi
}%
\hfill\discretionary{}{}{}%
\setbox\matricesbox=\hbox{%
{$\left[\!\llap{\phantom{%
\begingroup \smaller\smaller\smaller\begin{tabular}{@{}c@{}}%
\phantom{0}\\\phantom{0}\\\phantom{0}
\end{tabular}\endgroup%
}}\right.$}%
\begingroup \smaller\smaller\smaller\begin{tabular}{@{}c@{}}%
-1\\\phantom{0}\\\phantom{0}
\end{tabular}\endgroup%
\kern3pt%
\begingroup \smaller\smaller\smaller\begin{tabular}{@{}c@{}}%
\phantom{0}\\9\\\phantom{0}
\end{tabular}\endgroup%
\kern3pt%
\begingroup \smaller\smaller\smaller\begin{tabular}{@{}c@{}}%
\phantom{0}\\\phantom{0}\\9
\end{tabular}\endgroup%
{$\left.\llap{\phantom{%
\begingroup \smaller\smaller\smaller\begin{tabular}{@{}c@{}}%
\phantom{0}\\\phantom{0}\\\phantom{0}
\end{tabular}\endgroup%
}}\!\right]$}%
{$\left[\!\llap{\phantom{%
\begingroup \smaller\smaller\smaller\begin{tabular}{@{}c@{}}%
0\\0\\0
\end{tabular}\endgroup%
}}\right.$}%
\begingroup \smaller\smaller\smaller\begin{tabular}{@{}c@{}}%
36\\12\\-2
\end{tabular}\endgroup%
\kern3pt%
\begingroup \smaller\smaller\smaller\begin{tabular}{@{}c@{}}%
72\\22\\-10
\end{tabular}\endgroup%
\kern3pt%
\begingroup \smaller\smaller\smaller\begin{tabular}{@{}c@{}}%
8\\2\\-2
\end{tabular}\endgroup%
\kern3pt%
\begingroup \smaller\smaller\smaller\begin{tabular}{@{}c@{}}%
72\\10\\-22
\end{tabular}\endgroup%
\kern3pt%
\begingroup \smaller\smaller\smaller\begin{tabular}{@{}c@{}}%
36\\2\\-12
\end{tabular}\endgroup%
\kern3pt%
\begingroup \smaller\smaller\smaller\begin{tabular}{@{}c@{}}%
9\\-1\\-3
\end{tabular}\endgroup%
\kern3pt%
\begingroup \smaller\smaller\smaller\begin{tabular}{@{}c@{}}%
8\\-2\\-2
\end{tabular}\endgroup%
\kern3pt%
\begingroup \smaller\smaller\smaller\begin{tabular}{@{}c@{}}%
9\\-3\\-1
\end{tabular}\endgroup%
{$\left.\llap{\phantom{%
\begingroup \smaller\smaller\smaller\begin{tabular}{@{}c@{}}%
0\\0\\0
\end{tabular}\endgroup%
}}\!\right]$}%
}%
\ifdim\wd\matricesbox>\halfwidth\myboxwidth=\hsize\else\myboxwidth=\halfwidth\fi
\vbox{%
\ifdim\myboxwidth=\hsize
\setbox\onelinebox=\hbox{%
\vbox{\hbox{%
$\Pi_{16,23}$ spans $L_{142.20}$%
}\hbox{%
$\infty4224\slashinfty4224\infty22\slashinfty22\rtimes D_{2}$%
}%
}%
\hfill\copy\matricesbox
}%
\ifdim\wd\onelinebox>\myboxwidth
\hbox to \myboxwidth{%
$\Pi_{16,23}$ spans $L_{142.20}$%
\hfil
$\infty4224\slashinfty4224\infty22\slashinfty22\rtimes D_{2}$%
}%
\box\matricesbox
\else
\hbox to \myboxwidth{%
\unhbox\onelinebox
}%
\fi
\else
\hbox to \myboxwidth{%
$\Pi_{16,23}$ spans $L_{142.20}$%
\hfil}%
\hbox to \myboxwidth{%
$\infty4224\slashinfty4224\infty22\slashinfty22\rtimes D_{2}$%
\hfil}%
\box\matricesbox
\fi
}%
\hfill\discretionary{}{}{}%
\setbox\matricesbox=\hbox{%
{$\left[\!\llap{\phantom{%
\begingroup \smaller\smaller\smaller\begin{tabular}{@{}c@{}}%
\phantom{0}\\\phantom{0}\\\phantom{0}
\end{tabular}\endgroup%
}}\right.$}%
\begingroup \smaller\smaller\smaller\begin{tabular}{@{}c@{}}%
-1\\\phantom{0}\\\phantom{0}
\end{tabular}\endgroup%
\kern3pt%
\begingroup \smaller\smaller\smaller\begin{tabular}{@{}c@{}}%
\phantom{0}\\45/2\\\phantom{0}
\end{tabular}\endgroup%
\kern3pt%
\begingroup \smaller\smaller\smaller\begin{tabular}{@{}c@{}}%
\phantom{0}\\\phantom{0}\\15/2
\end{tabular}\endgroup%
{$\left.\llap{\phantom{%
\begingroup \smaller\smaller\smaller\begin{tabular}{@{}c@{}}%
\phantom{0}\\\phantom{0}\\\phantom{0}
\end{tabular}\endgroup%
}}\!\right]$}%
{$\left[\!\llap{\phantom{%
\begingroup \smaller\smaller\smaller\begin{tabular}{@{}c@{}}%
0\\0\\0
\end{tabular}\endgroup%
}}\right.$}%
\begingroup \smaller\smaller\smaller\begin{tabular}{@{}c@{}}%
9\\2\\0
\end{tabular}\endgroup%
\kern3pt%
\begingroup \smaller\smaller\smaller\begin{tabular}{@{}c@{}}%
5\\1\\-1
\end{tabular}\endgroup%
\kern3pt%
\begingroup \smaller\smaller\smaller\begin{tabular}{@{}c@{}}%
9\\1\\-3
\end{tabular}\endgroup%
\kern3pt%
\begingroup \smaller\smaller\smaller\begin{tabular}{@{}c@{}}%
30\\1\\-11
\end{tabular}\endgroup%
\kern3pt%
\begingroup \smaller\smaller\smaller\begin{tabular}{@{}c@{}}%
30\\-1\\-11
\end{tabular}\endgroup%
\kern3pt%
\begingroup \smaller\smaller\smaller\begin{tabular}{@{}c@{}}%
9\\-1\\-3
\end{tabular}\endgroup%
\kern3pt%
\begingroup \smaller\smaller\smaller\begin{tabular}{@{}c@{}}%
30\\-5\\-7
\end{tabular}\endgroup%
\kern3pt%
\begingroup \smaller\smaller\smaller\begin{tabular}{@{}c@{}}%
30\\-6\\-4
\end{tabular}\endgroup%
\kern3pt%
\begingroup \smaller\smaller\smaller\begin{tabular}{@{}c@{}}%
9\\-2\\0
\end{tabular}\endgroup%
{$\left.\llap{\phantom{%
\begingroup \smaller\smaller\smaller\begin{tabular}{@{}c@{}}%
0\\0\\0
\end{tabular}\endgroup%
}}\!\right]$}%
}%
\ifdim\wd\matricesbox>\halfwidth\myboxwidth=\hsize\else\myboxwidth=\halfwidth\fi
\vbox{%
\ifdim\myboxwidth=\hsize
\setbox\onelinebox=\hbox{%
\vbox{\hbox{%
$\Pi_{16,24}$ spans $L_{16.13}$%
}\hbox{%
$3222|22232232|2322\rtimes D_{2}$%
}%
}%
\hfill\copy\matricesbox
}%
\ifdim\wd\onelinebox>\myboxwidth
\hbox to \myboxwidth{%
$\Pi_{16,24}$ spans $L_{16.13}$%
\hfil
$3222|22232232|2322\rtimes D_{2}$%
}%
\box\matricesbox
\else
\hbox to \myboxwidth{%
\unhbox\onelinebox
}%
\fi
\else
\hbox to \myboxwidth{%
$\Pi_{16,24}$ spans $L_{16.13}$%
\hfil}%
\hbox to \myboxwidth{%
$3222|22232232|2322\rtimes D_{2}$%
\hfil}%
\box\matricesbox
\fi
}%
\hfill\discretionary{}{}{}%
\setbox\matricesbox=\hbox{%
{$\left[\!\llap{\phantom{%
\begingroup \smaller\smaller\smaller\begin{tabular}{@{}c@{}}%
\phantom{0}\\\phantom{0}\\\phantom{0}
\end{tabular}\endgroup%
}}\right.$}%
\begingroup \smaller\smaller\smaller\begin{tabular}{@{}c@{}}%
-1\\\phantom{0}\\\phantom{0}
\end{tabular}\endgroup%
\kern3pt%
\begingroup \smaller\smaller\smaller\begin{tabular}{@{}c@{}}%
\phantom{0}\\15/2\\\phantom{0}
\end{tabular}\endgroup%
\kern3pt%
\begingroup \smaller\smaller\smaller\begin{tabular}{@{}c@{}}%
\phantom{0}\\\phantom{0}\\45/2
\end{tabular}\endgroup%
{$\left.\llap{\phantom{%
\begingroup \smaller\smaller\smaller\begin{tabular}{@{}c@{}}%
\phantom{0}\\\phantom{0}\\\phantom{0}
\end{tabular}\endgroup%
}}\!\right]$}%
{$\left[\!\llap{\phantom{%
\begingroup \smaller\smaller\smaller\begin{tabular}{@{}c@{}}%
0\\0\\0
\end{tabular}\endgroup%
}}\right.$}%
\begingroup \smaller\smaller\smaller\begin{tabular}{@{}c@{}}%
30\\11\\1
\end{tabular}\endgroup%
\kern3pt%
\begingroup \smaller\smaller\smaller\begin{tabular}{@{}c@{}}%
9\\3\\1
\end{tabular}\endgroup%
\kern3pt%
\begingroup \smaller\smaller\smaller\begin{tabular}{@{}c@{}}%
5\\1\\1
\end{tabular}\endgroup%
\kern3pt%
\begingroup \smaller\smaller\smaller\begin{tabular}{@{}c@{}}%
9\\0\\2
\end{tabular}\endgroup%
\kern3pt%
\begingroup \smaller\smaller\smaller\begin{tabular}{@{}c@{}}%
30\\-4\\6
\end{tabular}\endgroup%
\kern3pt%
\begingroup \smaller\smaller\smaller\begin{tabular}{@{}c@{}}%
30\\-7\\5
\end{tabular}\endgroup%
\kern3pt%
\begingroup \smaller\smaller\smaller\begin{tabular}{@{}c@{}}%
9\\-3\\1
\end{tabular}\endgroup%
\kern3pt%
\begingroup \smaller\smaller\smaller\begin{tabular}{@{}c@{}}%
30\\-11\\1
\end{tabular}\endgroup%
{$\left.\llap{\phantom{%
\begingroup \smaller\smaller\smaller\begin{tabular}{@{}c@{}}%
0\\0\\0
\end{tabular}\endgroup%
}}\!\right]$}%
}%
\ifdim\wd\matricesbox>\halfwidth\myboxwidth=\hsize\else\myboxwidth=\halfwidth\fi
\vbox{%
\ifdim\myboxwidth=\hsize
\setbox\onelinebox=\hbox{%
\vbox{\hbox{%
$\Pi_{16,25}$ spans $L_{16.13}$%
}\hbox{%
$32222\slashthree2222322\slashthree22\rtimes D_{2}$%
}%
}%
\hfill\copy\matricesbox
}%
\ifdim\wd\onelinebox>\myboxwidth
\hbox to \myboxwidth{%
$\Pi_{16,25}$ spans $L_{16.13}$%
\hfil
$32222\slashthree2222322\slashthree22\rtimes D_{2}$%
}%
\box\matricesbox
\else
\hbox to \myboxwidth{%
\unhbox\onelinebox
}%
\fi
\else
\hbox to \myboxwidth{%
$\Pi_{16,25}$ spans $L_{16.13}$%
\hfil}%
\hbox to \myboxwidth{%
$32222\slashthree2222322\slashthree22\rtimes D_{2}$%
\hfil}%
\box\matricesbox
\fi
}%
\hfill\discretionary{}{}{}%
\setbox\matricesbox=\hbox{%
{$\left[\!\llap{\phantom{%
\begingroup \smaller\smaller\smaller\begin{tabular}{@{}c@{}}%
\phantom{0}\\\phantom{0}\\\phantom{0}
\end{tabular}\endgroup%
}}\right.$}%
\begingroup \smaller\smaller\smaller\begin{tabular}{@{}c@{}}%
-1\\\phantom{0}\\\phantom{0}
\end{tabular}\endgroup%
\kern3pt%
\begingroup \smaller\smaller\smaller\begin{tabular}{@{}c@{}}%
\phantom{0}\\15/2\\\phantom{0}
\end{tabular}\endgroup%
\kern3pt%
\begingroup \smaller\smaller\smaller\begin{tabular}{@{}c@{}}%
\phantom{0}\\\phantom{0}\\45/2
\end{tabular}\endgroup%
{$\left.\llap{\phantom{%
\begingroup \smaller\smaller\smaller\begin{tabular}{@{}c@{}}%
\phantom{0}\\\phantom{0}\\\phantom{0}
\end{tabular}\endgroup%
}}\!\right]$}%
{$\left[\!\llap{\phantom{%
\begingroup \smaller\smaller\smaller\begin{tabular}{@{}c@{}}%
0\\0\\0
\end{tabular}\endgroup%
}}\right.$}%
\begingroup \smaller\smaller\smaller\begin{tabular}{@{}c@{}}%
5\\2\\0
\end{tabular}\endgroup%
\kern3pt%
\begingroup \smaller\smaller\smaller\begin{tabular}{@{}c@{}}%
9\\3\\1
\end{tabular}\endgroup%
\kern3pt%
\begingroup \smaller\smaller\smaller\begin{tabular}{@{}c@{}}%
30\\7\\5
\end{tabular}\endgroup%
\kern3pt%
\begingroup \smaller\smaller\smaller\begin{tabular}{@{}c@{}}%
30\\4\\6
\end{tabular}\endgroup%
\kern3pt%
\begingroup \smaller\smaller\smaller\begin{tabular}{@{}c@{}}%
9\\0\\2
\end{tabular}\endgroup%
\kern3pt%
\begingroup \smaller\smaller\smaller\begin{tabular}{@{}c@{}}%
5\\-1\\1
\end{tabular}\endgroup%
\kern3pt%
\begingroup \smaller\smaller\smaller\begin{tabular}{@{}c@{}}%
90\\-27\\11
\end{tabular}\endgroup%
\kern3pt%
\begingroup \smaller\smaller\smaller\begin{tabular}{@{}c@{}}%
90\\-30\\8
\end{tabular}\endgroup%
\kern3pt%
\begingroup \smaller\smaller\smaller\begin{tabular}{@{}c@{}}%
5\\-2\\0
\end{tabular}\endgroup%
{$\left.\llap{\phantom{%
\begingroup \smaller\smaller\smaller\begin{tabular}{@{}c@{}}%
0\\0\\0
\end{tabular}\endgroup%
}}\!\right]$}%
}%
\ifdim\wd\matricesbox>\halfwidth\myboxwidth=\hsize\else\myboxwidth=\halfwidth\fi
\vbox{%
\ifdim\myboxwidth=\hsize
\setbox\onelinebox=\hbox{%
\vbox{\hbox{%
$\Pi_{16,26}$ spans $L_{16.13}$%
}\hbox{%
$322|22322232|23222\rtimes D_{2}$%
}%
}%
\hfill\copy\matricesbox
}%
\ifdim\wd\onelinebox>\myboxwidth
\hbox to \myboxwidth{%
$\Pi_{16,26}$ spans $L_{16.13}$%
\hfil
$322|22322232|23222\rtimes D_{2}$%
}%
\box\matricesbox
\else
\hbox to \myboxwidth{%
\unhbox\onelinebox
}%
\fi
\else
\hbox to \myboxwidth{%
$\Pi_{16,26}$ spans $L_{16.13}$%
\hfil}%
\hbox to \myboxwidth{%
$322|22322232|23222\rtimes D_{2}$%
\hfil}%
\box\matricesbox
\fi
}%
\hfill\discretionary{}{}{}%
\setbox\matricesbox=\hbox{%
{$\left[\!\llap{\phantom{%
\begingroup \smaller\smaller\smaller\begin{tabular}{@{}c@{}}%
\phantom{0}\\\phantom{0}\\\phantom{0}
\end{tabular}\endgroup%
}}\right.$}%
\begingroup \smaller\smaller\smaller\begin{tabular}{@{}c@{}}%
-1\\\phantom{0}\\\phantom{0}
\end{tabular}\endgroup%
\kern3pt%
\begingroup \smaller\smaller\smaller\begin{tabular}{@{}c@{}}%
\phantom{0}\\45/2\\\phantom{0}
\end{tabular}\endgroup%
\kern3pt%
\begingroup \smaller\smaller\smaller\begin{tabular}{@{}c@{}}%
\phantom{0}\\\phantom{0}\\15/2
\end{tabular}\endgroup%
{$\left.\llap{\phantom{%
\begingroup \smaller\smaller\smaller\begin{tabular}{@{}c@{}}%
\phantom{0}\\\phantom{0}\\\phantom{0}
\end{tabular}\endgroup%
}}\!\right]$}%
{$\left[\!\llap{\phantom{%
\begingroup \smaller\smaller\smaller\begin{tabular}{@{}c@{}}%
0\\0\\0
\end{tabular}\endgroup%
}}\right.$}%
\begingroup \smaller\smaller\smaller\begin{tabular}{@{}c@{}}%
9\\2\\0
\end{tabular}\endgroup%
\kern3pt%
\begingroup \smaller\smaller\smaller\begin{tabular}{@{}c@{}}%
5\\1\\1
\end{tabular}\endgroup%
\kern3pt%
\begingroup \smaller\smaller\smaller\begin{tabular}{@{}c@{}}%
90\\11\\27
\end{tabular}\endgroup%
\kern3pt%
\begingroup \smaller\smaller\smaller\begin{tabular}{@{}c@{}}%
90\\8\\30
\end{tabular}\endgroup%
\kern3pt%
\begingroup \smaller\smaller\smaller\begin{tabular}{@{}c@{}}%
5\\0\\2
\end{tabular}\endgroup%
\kern3pt%
\begingroup \smaller\smaller\smaller\begin{tabular}{@{}c@{}}%
9\\-1\\3
\end{tabular}\endgroup%
\kern3pt%
\begingroup \smaller\smaller\smaller\begin{tabular}{@{}c@{}}%
30\\-5\\7
\end{tabular}\endgroup%
\kern3pt%
\begingroup \smaller\smaller\smaller\begin{tabular}{@{}c@{}}%
30\\-6\\4
\end{tabular}\endgroup%
\kern3pt%
\begingroup \smaller\smaller\smaller\begin{tabular}{@{}c@{}}%
9\\-2\\0
\end{tabular}\endgroup%
{$\left.\llap{\phantom{%
\begingroup \smaller\smaller\smaller\begin{tabular}{@{}c@{}}%
0\\0\\0
\end{tabular}\endgroup%
}}\!\right]$}%
}%
\ifdim\wd\matricesbox>\halfwidth\myboxwidth=\hsize\else\myboxwidth=\halfwidth\fi
\vbox{%
\ifdim\myboxwidth=\hsize
\setbox\onelinebox=\hbox{%
\vbox{\hbox{%
$\Pi_{16,27}$ spans $L_{16.13}$%
}\hbox{%
$3222322|22322232|2\rtimes D_{2}$%
}%
}%
\hfill\copy\matricesbox
}%
\ifdim\wd\onelinebox>\myboxwidth
\hbox to \myboxwidth{%
$\Pi_{16,27}$ spans $L_{16.13}$%
\hfil
$3222322|22322232|2\rtimes D_{2}$%
}%
\box\matricesbox
\else
\hbox to \myboxwidth{%
\unhbox\onelinebox
}%
\fi
\else
\hbox to \myboxwidth{%
$\Pi_{16,27}$ spans $L_{16.13}$%
\hfil}%
\hbox to \myboxwidth{%
$3222322|22322232|2\rtimes D_{2}$%
\hfil}%
\box\matricesbox
\fi
}%
\hfill\discretionary{}{}{}%
\setbox\matricesbox=\hbox{%
{$\left[\!\llap{\phantom{%
\begingroup \smaller\smaller\smaller\begin{tabular}{@{}c@{}}%
\phantom{0}\\\phantom{0}\\\phantom{0}
\end{tabular}\endgroup%
}}\right.$}%
\begingroup \smaller\smaller\smaller\begin{tabular}{@{}c@{}}%
-1\\\phantom{0}\\\phantom{0}
\end{tabular}\endgroup%
\kern3pt%
\begingroup \smaller\smaller\smaller\begin{tabular}{@{}c@{}}%
\phantom{0}\\21/2\\\phantom{0}
\end{tabular}\endgroup%
\kern3pt%
\begingroup \smaller\smaller\smaller\begin{tabular}{@{}c@{}}%
\phantom{0}\\\phantom{0}\\7/2
\end{tabular}\endgroup%
{$\left.\llap{\phantom{%
\begingroup \smaller\smaller\smaller\begin{tabular}{@{}c@{}}%
\phantom{0}\\\phantom{0}\\\phantom{0}
\end{tabular}\endgroup%
}}\!\right]$}%
{$\left[\!\llap{\phantom{%
\begingroup \smaller\smaller\smaller\begin{tabular}{@{}c@{}}%
0\\0\\0
\end{tabular}\endgroup%
}}\right.$}%
\begingroup \smaller\smaller\smaller\begin{tabular}{@{}c@{}}%
42\\13\\-3
\end{tabular}\endgroup%
\kern3pt%
\begingroup \smaller\smaller\smaller\begin{tabular}{@{}c@{}}%
7\\2\\-2
\end{tabular}\endgroup%
\kern3pt%
\begingroup \smaller\smaller\smaller\begin{tabular}{@{}c@{}}%
6\\1\\-3
\end{tabular}\endgroup%
\kern3pt%
\begingroup \smaller\smaller\smaller\begin{tabular}{@{}c@{}}%
7\\0\\-4
\end{tabular}\endgroup%
\kern3pt%
\begingroup \smaller\smaller\smaller\begin{tabular}{@{}c@{}}%
42\\-5\\-21
\end{tabular}\endgroup%
\kern3pt%
\begingroup \smaller\smaller\smaller\begin{tabular}{@{}c@{}}%
42\\-8\\-18
\end{tabular}\endgroup%
\kern3pt%
\begingroup \smaller\smaller\smaller\begin{tabular}{@{}c@{}}%
7\\-2\\-2
\end{tabular}\endgroup%
\kern3pt%
\begingroup \smaller\smaller\smaller\begin{tabular}{@{}c@{}}%
42\\-13\\-3
\end{tabular}\endgroup%
{$\left.\llap{\phantom{%
\begingroup \smaller\smaller\smaller\begin{tabular}{@{}c@{}}%
0\\0\\0
\end{tabular}\endgroup%
}}\!\right]$}%
}%
\ifdim\wd\matricesbox>\halfwidth\myboxwidth=\hsize\else\myboxwidth=\halfwidth\fi
\vbox{%
\ifdim\myboxwidth=\hsize
\setbox\onelinebox=\hbox{%
\vbox{\hbox{%
$\Pi_{16,28}$ spans $L_{22.4}$%
}\hbox{%
$32222\slashthree2222322\slashthree22\rtimes D_{2}$%
}%
}%
\hfill\copy\matricesbox
}%
\ifdim\wd\onelinebox>\myboxwidth
\hbox to \myboxwidth{%
$\Pi_{16,28}$ spans $L_{22.4}$%
\hfil
$32222\slashthree2222322\slashthree22\rtimes D_{2}$%
}%
\box\matricesbox
\else
\hbox to \myboxwidth{%
\unhbox\onelinebox
}%
\fi
\else
\hbox to \myboxwidth{%
$\Pi_{16,28}$ spans $L_{22.4}$%
\hfil}%
\hbox to \myboxwidth{%
$32222\slashthree2222322\slashthree22\rtimes D_{2}$%
\hfil}%
\box\matricesbox
\fi
}%
\hfill\discretionary{}{}{}%
\setbox\matricesbox=\hbox{%
{$\left[\!\llap{\phantom{%
\begingroup \smaller\smaller\smaller\begin{tabular}{@{}c@{}}%
\phantom{0}\\\phantom{0}\\\phantom{0}
\end{tabular}\endgroup%
}}\right.$}%
\begingroup \smaller\smaller\smaller\begin{tabular}{@{}c@{}}%
-1\\\phantom{0}\\\phantom{0}
\end{tabular}\endgroup%
\kern3pt%
\begingroup \smaller\smaller\smaller\begin{tabular}{@{}c@{}}%
\phantom{0}\\45/2\\\phantom{0}
\end{tabular}\endgroup%
\kern3pt%
\begingroup \smaller\smaller\smaller\begin{tabular}{@{}c@{}}%
\phantom{0}\\\phantom{0}\\15/2
\end{tabular}\endgroup%
{$\left.\llap{\phantom{%
\begingroup \smaller\smaller\smaller\begin{tabular}{@{}c@{}}%
\phantom{0}\\\phantom{0}\\\phantom{0}
\end{tabular}\endgroup%
}}\!\right]$}%
{$\left[\!\llap{\phantom{%
\begingroup \smaller\smaller\smaller\begin{tabular}{@{}c@{}}%
0\\0\\0
\end{tabular}\endgroup%
}}\right.$}%
\begingroup \smaller\smaller\smaller\begin{tabular}{@{}c@{}}%
90\\-19\\3
\end{tabular}\endgroup%
\kern3pt%
\begingroup \smaller\smaller\smaller\begin{tabular}{@{}c@{}}%
5\\-1\\1
\end{tabular}\endgroup%
\kern3pt%
\begingroup \smaller\smaller\smaller\begin{tabular}{@{}c@{}}%
9\\-1\\3
\end{tabular}\endgroup%
\kern3pt%
\begingroup \smaller\smaller\smaller\begin{tabular}{@{}c@{}}%
5\\0\\2
\end{tabular}\endgroup%
\kern3pt%
\begingroup \smaller\smaller\smaller\begin{tabular}{@{}c@{}}%
90\\8\\30
\end{tabular}\endgroup%
\kern3pt%
\begingroup \smaller\smaller\smaller\begin{tabular}{@{}c@{}}%
90\\11\\27
\end{tabular}\endgroup%
\kern3pt%
\begingroup \smaller\smaller\smaller\begin{tabular}{@{}c@{}}%
5\\1\\1
\end{tabular}\endgroup%
\kern3pt%
\begingroup \smaller\smaller\smaller\begin{tabular}{@{}c@{}}%
90\\19\\3
\end{tabular}\endgroup%
{$\left.\llap{\phantom{%
\begingroup \smaller\smaller\smaller\begin{tabular}{@{}c@{}}%
0\\0\\0
\end{tabular}\endgroup%
}}\!\right]$}%
}%
\ifdim\wd\matricesbox>\halfwidth\myboxwidth=\hsize\else\myboxwidth=\halfwidth\fi
\vbox{%
\ifdim\myboxwidth=\hsize
\setbox\onelinebox=\hbox{%
\vbox{\hbox{%
$\Pi_{16,29}$ spans $L_{16.13}$%
}\hbox{%
$32222\slashthree2222322\slashthree22\rtimes D_{2}$%
}%
}%
\hfill\copy\matricesbox
}%
\ifdim\wd\onelinebox>\myboxwidth
\hbox to \myboxwidth{%
$\Pi_{16,29}$ spans $L_{16.13}$%
\hfil
$32222\slashthree2222322\slashthree22\rtimes D_{2}$%
}%
\box\matricesbox
\else
\hbox to \myboxwidth{%
\unhbox\onelinebox
}%
\fi
\else
\hbox to \myboxwidth{%
$\Pi_{16,29}$ spans $L_{16.13}$%
\hfil}%
\hbox to \myboxwidth{%
$32222\slashthree2222322\slashthree22\rtimes D_{2}$%
\hfil}%
\box\matricesbox
\fi
}%
\hfill\discretionary{}{}{}%
\setbox\matricesbox=\hbox{%
{$\left[\!\llap{\phantom{%
\begingroup \smaller\smaller\smaller\begin{tabular}{@{}c@{}}%
\phantom{0}\\\phantom{0}\\\phantom{0}
\end{tabular}\endgroup%
}}\right.$}%
\begingroup \smaller\smaller\smaller\begin{tabular}{@{}c@{}}%
-1\\\phantom{0}\\\phantom{0}
\end{tabular}\endgroup%
\kern3pt%
\begingroup \smaller\smaller\smaller\begin{tabular}{@{}c@{}}%
\phantom{0}\\18\\\phantom{0}
\end{tabular}\endgroup%
\kern3pt%
\begingroup \smaller\smaller\smaller\begin{tabular}{@{}c@{}}%
\phantom{0}\\\phantom{0}\\18
\end{tabular}\endgroup%
{$\left.\llap{\phantom{%
\begingroup \smaller\smaller\smaller\begin{tabular}{@{}c@{}}%
\phantom{0}\\\phantom{0}\\\phantom{0}
\end{tabular}\endgroup%
}}\!\right]$}%
{$\left[\!\llap{\phantom{%
\begingroup \smaller\smaller\smaller\begin{tabular}{@{}c@{}}%
0\\0\\0
\end{tabular}\endgroup%
}}\right.$}%
\begingroup \smaller\smaller\smaller\begin{tabular}{@{}c@{}}%
8\\2\\0
\end{tabular}\endgroup%
\kern3pt%
\begingroup \smaller\smaller\smaller\begin{tabular}{@{}c@{}}%
9\\2\\1
\end{tabular}\endgroup%
\kern3pt%
\begingroup \smaller\smaller\smaller\begin{tabular}{@{}c@{}}%
9\\1\\2
\end{tabular}\endgroup%
\kern3pt%
\begingroup \smaller\smaller\smaller\begin{tabular}{@{}c@{}}%
8\\0\\2
\end{tabular}\endgroup%
\kern3pt%
\begingroup \smaller\smaller\smaller\begin{tabular}{@{}c@{}}%
72\\-6\\16
\end{tabular}\endgroup%
\kern3pt%
\begingroup \smaller\smaller\smaller\begin{tabular}{@{}c@{}}%
36\\-5\\7
\end{tabular}\endgroup%
\kern3pt%
\begingroup \smaller\smaller\smaller\begin{tabular}{@{}c@{}}%
36\\-7\\5
\end{tabular}\endgroup%
\kern3pt%
\begingroup \smaller\smaller\smaller\begin{tabular}{@{}c@{}}%
72\\-16\\6
\end{tabular}\endgroup%
\kern3pt%
\begingroup \smaller\smaller\smaller\begin{tabular}{@{}c@{}}%
8\\-2\\0
\end{tabular}\endgroup%
{$\left.\llap{\phantom{%
\begingroup \smaller\smaller\smaller\begin{tabular}{@{}c@{}}%
0\\0\\0
\end{tabular}\endgroup%
}}\!\right]$}%
}%
\ifdim\wd\matricesbox>\halfwidth\myboxwidth=\hsize\else\myboxwidth=\halfwidth\fi
\vbox{%
\ifdim\myboxwidth=\hsize
\setbox\onelinebox=\hbox{%
\vbox{\hbox{%
$\Pi_{16,30}$ spans $L_{142.20}$%
}\hbox{%
$422\infty2|2\infty224\infty42|24\infty\rtimes D_{2}$%
}%
}%
\hfill\copy\matricesbox
}%
\ifdim\wd\onelinebox>\myboxwidth
\hbox to \myboxwidth{%
$\Pi_{16,30}$ spans $L_{142.20}$%
\hfil
$422\infty2|2\infty224\infty42|24\infty\rtimes D_{2}$%
}%
\box\matricesbox
\else
\hbox to \myboxwidth{%
\unhbox\onelinebox
}%
\fi
\else
\hbox to \myboxwidth{%
$\Pi_{16,30}$ spans $L_{142.20}$%
\hfil}%
\hbox to \myboxwidth{%
$422\infty2|2\infty224\infty42|24\infty\rtimes D_{2}$%
\hfil}%
\box\matricesbox
\fi
}%
\hfill\discretionary{}{}{}%
\setbox\matricesbox=\hbox{%
{$\left[\!\llap{\phantom{%
\begingroup \smaller\smaller\smaller\begin{tabular}{@{}c@{}}%
\phantom{0}\\\phantom{0}\\\phantom{0}
\end{tabular}\endgroup%
}}\right.$}%
\begingroup \smaller\smaller\smaller\begin{tabular}{@{}c@{}}%
-1\\\phantom{0}\\\phantom{0}
\end{tabular}\endgroup%
\kern3pt%
\begingroup \smaller\smaller\smaller\begin{tabular}{@{}c@{}}%
\phantom{0}\\30\\-15
\end{tabular}\endgroup%
\kern3pt%
\begingroup \smaller\smaller\smaller\begin{tabular}{@{}c@{}}%
\phantom{0}\\-15\\30
\end{tabular}\endgroup%
{$\left.\llap{\phantom{%
\begingroup \smaller\smaller\smaller\begin{tabular}{@{}c@{}}%
\phantom{0}\\\phantom{0}\\\phantom{0}
\end{tabular}\endgroup%
}}\!\right]$}%
{$\left[\!\llap{\phantom{%
\begingroup \smaller\smaller\smaller\begin{tabular}{@{}c@{}}%
0\\0\\0
\end{tabular}\endgroup%
}}\right.$}%
\begingroup \smaller\smaller\smaller\begin{tabular}{@{}c@{}}%
90\\-8\\-19
\end{tabular}\endgroup%
\kern3pt%
\begingroup \smaller\smaller\smaller\begin{tabular}{@{}c@{}}%
90\\-11\\-19
\end{tabular}\endgroup%
\kern3pt%
\begingroup \smaller\smaller\smaller\begin{tabular}{@{}c@{}}%
30\\-5\\-6
\end{tabular}\endgroup%
\kern3pt%
\begingroup \smaller\smaller\smaller\begin{tabular}{@{}c@{}}%
30\\-6\\-5
\end{tabular}\endgroup%
\kern3pt%
\begingroup \smaller\smaller\smaller\begin{tabular}{@{}c@{}}%
9\\-2\\-1
\end{tabular}\endgroup%
\kern3pt%
\begingroup \smaller\smaller\smaller\begin{tabular}{@{}c@{}}%
5\\-1\\0
\end{tabular}\endgroup%
\kern3pt%
\begingroup \smaller\smaller\smaller\begin{tabular}{@{}c@{}}%
9\\-1\\1
\end{tabular}\endgroup%
\kern3pt%
\begingroup \smaller\smaller\smaller\begin{tabular}{@{}c@{}}%
5\\0\\1
\end{tabular}\endgroup%
{$\left.\llap{\phantom{%
\begingroup \smaller\smaller\smaller\begin{tabular}{@{}c@{}}%
0\\0\\0
\end{tabular}\endgroup%
}}\!\right]$}%
}%
\ifdim\wd\matricesbox>\halfwidth\myboxwidth=\hsize\else\myboxwidth=\halfwidth\fi
\vbox{%
\ifdim\myboxwidth=\hsize
\setbox\onelinebox=\hbox{%
\vbox{\hbox{%
$\Pi_{16,31}$ spans $L_{16.13}$%
}\hbox{%
$3632222236322222\rtimes C_{2}$%
}%
}%
\hfill\copy\matricesbox
}%
\ifdim\wd\onelinebox>\myboxwidth
\hbox to \myboxwidth{%
$\Pi_{16,31}$ spans $L_{16.13}$%
\hfil
$3632222236322222\rtimes C_{2}$%
}%
\box\matricesbox
\else
\hbox to \myboxwidth{%
\unhbox\onelinebox
}%
\fi
\else
\hbox to \myboxwidth{%
$\Pi_{16,31}$ spans $L_{16.13}$%
\hfil}%
\hbox to \myboxwidth{%
$3632222236322222\rtimes C_{2}$%
\hfil}%
\box\matricesbox
\fi
}%
\hfill\discretionary{}{}{}%
\setbox\matricesbox=\hbox{%
{$\left[\!\llap{\phantom{%
\begingroup \smaller\smaller\smaller% [inline block 49: 23 envs, 2535 chars -> data_tex | \begin{tabular}{@{}c@{}}% \phantom{0}\\\phantom{0}\\\phantom{0}...]
\endgroup%
}}\!\right]$}%
}%
\ifdim\wd\matricesbox>\halfwidth\myboxwidth=\hsize\else\myboxwidth=\halfwidth\fi
\vbox{%
\ifdim\myboxwidth=\hsize
\setbox\onelinebox=\hbox{%
\vbox{\hbox{%
$\Pi_{16,32}$ spans $L_{16.13}$%
}\hbox{%
$2222222236363632$%
}%
}%
\hfill\copy\matricesbox
}%
\ifdim\wd\onelinebox>\myboxwidth
\hbox to \myboxwidth{%
$\Pi_{16,32}$ spans $L_{16.13}$%
\hfil
$2222222236363632$%
}%
\box\matricesbox
\else
\hbox to \myboxwidth{%
\unhbox\onelinebox
}%
\fi
\else
\hbox to \myboxwidth{%
$\Pi_{16,32}$ spans $L_{16.13}$%
\hfil}%
\hbox to \myboxwidth{%
$2222222236363632$%
\hfil}%
\box\matricesbox
\fi
}%
\hfill\discretionary{}{}{}%
\setbox\matricesbox=\hbox{%
{$\left[\!\llap{\phantom{%
\begingroup \smaller\smaller\smaller% [inline block 50: 23 envs, 2537 chars -> data_tex | \begin{tabular}{@{}c@{}}% \phantom{0}\\\phantom{0}\\\phantom{0}...]
\endgroup%
}}\!\right]$}%
}%
\ifdim\wd\matricesbox>\halfwidth\myboxwidth=\hsize\else\myboxwidth=\halfwidth\fi
\vbox{%
\ifdim\myboxwidth=\hsize
\setbox\onelinebox=\hbox{%
\vbox{\hbox{%
$\Pi_{16,33}$ spans $L_{16.13}$%
}\hbox{%
$2222222322363632$%
}%
}%
\hfill\copy\matricesbox
}%
\ifdim\wd\onelinebox>\myboxwidth
\hbox to \myboxwidth{%
$\Pi_{16,33}$ spans $L_{16.13}$%
\hfil
$2222222322363632$%
}%
\box\matricesbox
\else
\hbox to \myboxwidth{%
\unhbox\onelinebox
}%
\fi
\else
\hbox to \myboxwidth{%
$\Pi_{16,33}$ spans $L_{16.13}$%
\hfil}%
\hbox to \myboxwidth{%
$2222222322363632$%
\hfil}%
\box\matricesbox
\fi
}%
\hfill\discretionary{}{}{}%
\setbox\matricesbox=\hbox{%
{$\left[\!\llap{\phantom{%
\begingroup \smaller\smaller\smaller% [inline block 51: 23 envs, 2536 chars -> data_tex | \begin{tabular}{@{}c@{}}% \phantom{0}\\\phantom{0}\\\phantom{0}...]
\endgroup%
}}\!\right]$}%
}%
\ifdim\wd\matricesbox>\halfwidth\myboxwidth=\hsize\else\myboxwidth=\halfwidth\fi
\vbox{%
\ifdim\myboxwidth=\hsize
\setbox\onelinebox=\hbox{%
\vbox{\hbox{%
$\Pi_{16,34}$ spans $L_{16.13}$%
}\hbox{%
$2222223222363632$%
}%
}%
\hfill\copy\matricesbox
}%
\ifdim\wd\onelinebox>\myboxwidth
\hbox to \myboxwidth{%
$\Pi_{16,34}$ spans $L_{16.13}$%
\hfil
$2222223222363632$%
}%
\box\matricesbox
\else
\hbox to \myboxwidth{%
\unhbox\onelinebox
}%
\fi
\else
\hbox to \myboxwidth{%
$\Pi_{16,34}$ spans $L_{16.13}$%
\hfil}%
\hbox to \myboxwidth{%
$2222223222363632$%
\hfil}%
\box\matricesbox
\fi
}%
\hfill\discretionary{}{}{}%
\setbox\matricesbox=\hbox{%
{$\left[\!\llap{\phantom{%
\begingroup \smaller\smaller\smaller% [inline block 52: 23 envs, 2534 chars -> data_tex | \begin{tabular}{@{}c@{}}% \phantom{0}\\\phantom{0}\\\phantom{0}...]
\endgroup%
}}\!\right]$}%
}%
\ifdim\wd\matricesbox>\halfwidth\myboxwidth=\hsize\else\myboxwidth=\halfwidth\fi
\vbox{%
\ifdim\myboxwidth=\hsize
\setbox\onelinebox=\hbox{%
\vbox{\hbox{%
$\Pi_{16,35}$ spans $L_{16.13}$%
}\hbox{%
$2222223223223632$%
}%
}%
\hfill\copy\matricesbox
}%
\ifdim\wd\onelinebox>\myboxwidth
\hbox to \myboxwidth{%
$\Pi_{16,35}$ spans $L_{16.13}$%
\hfil
$2222223223223632$%
}%
\box\matricesbox
\else
\hbox to \myboxwidth{%
\unhbox\onelinebox
}%
\fi
\else
\hbox to \myboxwidth{%
$\Pi_{16,35}$ spans $L_{16.13}$%
\hfil}%
\hbox to \myboxwidth{%
$2222223223223632$%
\hfil}%
\box\matricesbox
\fi
}%
\hfill\discretionary{}{}{}%
\setbox\matricesbox=\hbox{%
{$\left[\!\llap{\phantom{%
\begingroup \smaller\smaller\smaller% [inline block 53: 23 envs, 2537 chars -> data_tex | \begin{tabular}{@{}c@{}}% \phantom{0}\\\phantom{0}\\\phantom{0}...]
\endgroup%
}}\!\right]$}%
}%
\ifdim\wd\matricesbox>\halfwidth\myboxwidth=\hsize\else\myboxwidth=\halfwidth\fi
\vbox{%
\ifdim\myboxwidth=\hsize
\setbox\onelinebox=\hbox{%
\vbox{\hbox{%
$\Pi_{16,36}$ spans $L_{16.13}$%
}\hbox{%
$2222223223632232$%
}%
}%
\hfill\copy\matricesbox
}%
\ifdim\wd\onelinebox>\myboxwidth
\hbox to \myboxwidth{%
$\Pi_{16,36}$ spans $L_{16.13}$%
\hfil
$2222223223632232$%
}%
\box\matricesbox
\else
\hbox to \myboxwidth{%
\unhbox\onelinebox
}%
\fi
\else
\hbox to \myboxwidth{%
$\Pi_{16,36}$ spans $L_{16.13}$%
\hfil}%
\hbox to \myboxwidth{%
$2222223223632232$%
\hfil}%
\box\matricesbox
\fi
}%
\hfill\discretionary{}{}{}%
\setbox\matricesbox=\hbox{%
{$\left[\!\llap{\phantom{%
\begingroup \smaller\smaller\smaller% [inline block 54: 23 envs, 2534 chars -> data_tex | \begin{tabular}{@{}c@{}}% \phantom{0}\\\phantom{0}\\\phantom{0}...]
\endgroup%
}}\!\right]$}%
}%
\ifdim\wd\matricesbox>\halfwidth\myboxwidth=\hsize\else\myboxwidth=\halfwidth\fi
\vbox{%
\ifdim\myboxwidth=\hsize
\setbox\onelinebox=\hbox{%
\vbox{\hbox{%
$\Pi_{16,37}$ spans $L_{16.13}$%
}\hbox{%
$2222223223636322$%
}%
}%
\hfill\copy\matricesbox
}%
\ifdim\wd\onelinebox>\myboxwidth
\hbox to \myboxwidth{%
$\Pi_{16,37}$ spans $L_{16.13}$%
\hfil
$2222223223636322$%
}%
\box\matricesbox
\else
\hbox to \myboxwidth{%
\unhbox\onelinebox
}%
\fi
\else
\hbox to \myboxwidth{%
$\Pi_{16,37}$ spans $L_{16.13}$%
\hfil}%
\hbox to \myboxwidth{%
$2222223223636322$%
\hfil}%
\box\matricesbox
\fi
}%
\hfill\discretionary{}{}{}%
\setbox\matricesbox=\hbox{%
{$\left[\!\llap{\phantom{%
\begingroup \smaller\smaller\smaller% [inline block 55: 23 envs, 2536 chars -> data_tex | \begin{tabular}{@{}c@{}}% \phantom{0}\\\phantom{0}\\\phantom{0}...]
\endgroup%
}}\!\right]$}%
}%
\ifdim\wd\matricesbox>\halfwidth\myboxwidth=\hsize\else\myboxwidth=\halfwidth\fi
\vbox{%
\ifdim\myboxwidth=\hsize
\setbox\onelinebox=\hbox{%
\vbox{\hbox{%
$\Pi_{16,38}$ spans $L_{16.13}$%
}\hbox{%
$2222223632223632$%
}%
}%
\hfill\copy\matricesbox
}%
\ifdim\wd\onelinebox>\myboxwidth
\hbox to \myboxwidth{%
$\Pi_{16,38}$ spans $L_{16.13}$%
\hfil
$2222223632223632$%
}%
\box\matricesbox
\else
\hbox to \myboxwidth{%
\unhbox\onelinebox
}%
\fi
\else
\hbox to \myboxwidth{%
$\Pi_{16,38}$ spans $L_{16.13}$%
\hfil}%
\hbox to \myboxwidth{%
$2222223632223632$%
\hfil}%
\box\matricesbox
\fi
}%
\hfill\discretionary{}{}{}%
\setbox\matricesbox=\hbox{%
{$\left[\!\llap{\phantom{%
\begingroup \smaller\smaller\smaller% [inline block 56: 23 envs, 2539 chars -> data_tex | \begin{tabular}{@{}c@{}}% \phantom{0}\\\phantom{0}\\\phantom{0}...]
\endgroup%
}}\!\right]$}%
}%
\ifdim\wd\matricesbox>\halfwidth\myboxwidth=\hsize\else\myboxwidth=\halfwidth\fi
\vbox{%
\ifdim\myboxwidth=\hsize
\setbox\onelinebox=\hbox{%
\vbox{\hbox{%
$\Pi_{16,39}$ spans $L_{16.13}$%
}\hbox{%
$2222223632232232$%
}%
}%
\hfill\copy\matricesbox
}%
\ifdim\wd\onelinebox>\myboxwidth
\hbox to \myboxwidth{%
$\Pi_{16,39}$ spans $L_{16.13}$%
\hfil
$2222223632232232$%
}%
\box\matricesbox
\else
\hbox to \myboxwidth{%
\unhbox\onelinebox
}%
\fi
\else
\hbox to \myboxwidth{%
$\Pi_{16,39}$ spans $L_{16.13}$%
\hfil}%
\hbox to \myboxwidth{%
$2222223632232232$%
\hfil}%
\box\matricesbox
\fi
}%
\hfill\discretionary{}{}{}%
\setbox\matricesbox=\hbox{%
{$\left[\!\llap{\phantom{%
\begingroup \smaller\smaller\smaller% [inline block 57: 23 envs, 2537 chars -> data_tex | \begin{tabular}{@{}c@{}}% \phantom{0}\\\phantom{0}\\\phantom{0}...]
\endgroup%
}}\!\right]$}%
}%
\ifdim\wd\matricesbox>\halfwidth\myboxwidth=\hsize\else\myboxwidth=\halfwidth\fi
\vbox{%
\ifdim\myboxwidth=\hsize
\setbox\onelinebox=\hbox{%
\vbox{\hbox{%
$\Pi_{16,40}$ spans $L_{16.13}$%
}\hbox{%
$2222223636322232$%
}%
}%
\hfill\copy\matricesbox
}%
\ifdim\wd\onelinebox>\myboxwidth
\hbox to \myboxwidth{%
$\Pi_{16,40}$ spans $L_{16.13}$%
\hfil
$2222223636322232$%
}%
\box\matricesbox
\else
\hbox to \myboxwidth{%
\unhbox\onelinebox
}%
\fi
\else
\hbox to \myboxwidth{%
$\Pi_{16,40}$ spans $L_{16.13}$%
\hfil}%
\hbox to \myboxwidth{%
$2222223636322232$%
\hfil}%
\box\matricesbox
\fi
}%
\hfill\discretionary{}{}{}%
\setbox\matricesbox=\hbox{%
{$\left[\!\llap{\phantom{%
\begingroup \smaller\smaller\smaller% [inline block 58: 23 envs, 2539 chars -> data_tex | \begin{tabular}{@{}c@{}}% \phantom{0}\\\phantom{0}\\\phantom{0}...]
\endgroup%
}}\!\right]$}%
}%
\ifdim\wd\matricesbox>\halfwidth\myboxwidth=\hsize\else\myboxwidth=\halfwidth\fi
\vbox{%
\ifdim\myboxwidth=\hsize
\setbox\onelinebox=\hbox{%
\vbox{\hbox{%
$\Pi_{16,41}$ spans $L_{16.13}$%
}\hbox{%
$2222232222363632$%
}%
}%
\hfill\copy\matricesbox
}%
\ifdim\wd\onelinebox>\myboxwidth
\hbox to \myboxwidth{%
$\Pi_{16,41}$ spans $L_{16.13}$%
\hfil
$2222232222363632$%
}%
\box\matricesbox
\else
\hbox to \myboxwidth{%
\unhbox\onelinebox
}%
\fi
\else
\hbox to \myboxwidth{%
$\Pi_{16,41}$ spans $L_{16.13}$%
\hfil}%
\hbox to \myboxwidth{%
$2222232222363632$%
\hfil}%
\box\matricesbox
\fi
}%
\hfill\discretionary{}{}{}%
\setbox\matricesbox=\hbox{%
{$\left[\!\llap{\phantom{%
\begingroup \smaller\smaller\smaller% [inline block 59: 23 envs, 2537 chars -> data_tex | \begin{tabular}{@{}c@{}}% \phantom{0}\\\phantom{0}\\\phantom{0}...]
\endgroup%
}}\!\right]$}%
}%
\ifdim\wd\matricesbox>\halfwidth\myboxwidth=\hsize\else\myboxwidth=\halfwidth\fi
\vbox{%
\ifdim\myboxwidth=\hsize
\setbox\onelinebox=\hbox{%
\vbox{\hbox{%
$\Pi_{16,42}$ spans $L_{16.13}$%
}\hbox{%
$2222232223223632$%
}%
}%
\hfill\copy\matricesbox
}%
\ifdim\wd\onelinebox>\myboxwidth
\hbox to \myboxwidth{%
$\Pi_{16,42}$ spans $L_{16.13}$%
\hfil
$2222232223223632$%
}%
\box\matricesbox
\else
\hbox to \myboxwidth{%
\unhbox\onelinebox
}%
\fi
\else
\hbox to \myboxwidth{%
$\Pi_{16,42}$ spans $L_{16.13}$%
\hfil}%
\hbox to \myboxwidth{%
$2222232223223632$%
\hfil}%
\box\matricesbox
\fi
}%
\hfill\discretionary{}{}{}%
\setbox\matricesbox=\hbox{%
{$\left[\!\llap{\phantom{%
\begingroup \smaller\smaller\smaller% [inline block 60: 23 envs, 2540 chars -> data_tex | \begin{tabular}{@{}c@{}}% \phantom{0}\\\phantom{0}\\\phantom{0}...]
\endgroup%
}}\!\right]$}%
}%
\ifdim\wd\matricesbox>\halfwidth\myboxwidth=\hsize\else\myboxwidth=\halfwidth\fi
\vbox{%
\ifdim\myboxwidth=\hsize
\setbox\onelinebox=\hbox{%
\vbox{\hbox{%
$\Pi_{16,43}$ spans $L_{16.13}$%
}\hbox{%
$2222232223632232$%
}%
}%
\hfill\copy\matricesbox
}%
\ifdim\wd\onelinebox>\myboxwidth
\hbox to \myboxwidth{%
$\Pi_{16,43}$ spans $L_{16.13}$%
\hfil
$2222232223632232$%
}%
\box\matricesbox
\else
\hbox to \myboxwidth{%
\unhbox\onelinebox
}%
\fi
\else
\hbox to \myboxwidth{%
$\Pi_{16,43}$ spans $L_{16.13}$%
\hfil}%
\hbox to \myboxwidth{%
$2222232223632232$%
\hfil}%
\box\matricesbox
\fi
}%
\hfill\discretionary{}{}{}%
\setbox\matricesbox=\hbox{%
{$\left[\!\llap{\phantom{%
\begingroup \smaller\smaller\smaller% [inline block 61: 23 envs, 2539 chars -> data_tex | \begin{tabular}{@{}c@{}}% \phantom{0}\\\phantom{0}\\\phantom{0}...]
\endgroup%
}}\!\right]$}%
}%
\ifdim\wd\matricesbox>\halfwidth\myboxwidth=\hsize\else\myboxwidth=\halfwidth\fi
\vbox{%
\ifdim\myboxwidth=\hsize
\setbox\onelinebox=\hbox{%
\vbox{\hbox{%
$\Pi_{16,44}$ spans $L_{16.13}$%
}\hbox{%
$2222232232223632$%
}%
}%
\hfill\copy\matricesbox
}%
\ifdim\wd\onelinebox>\myboxwidth
\hbox to \myboxwidth{%
$\Pi_{16,44}$ spans $L_{16.13}$%
\hfil
$2222232232223632$%
}%
\box\matricesbox
\else
\hbox to \myboxwidth{%
\unhbox\onelinebox
}%
\fi
\else
\hbox to \myboxwidth{%
$\Pi_{16,44}$ spans $L_{16.13}$%
\hfil}%
\hbox to \myboxwidth{%
$2222232232223632$%
\hfil}%
\box\matricesbox
\fi
}%
\hfill\discretionary{}{}{}%
\setbox\matricesbox=\hbox{%
{$\left[\!\llap{\phantom{%
\begingroup \smaller\smaller\smaller% [inline block 62: 23 envs, 2534 chars -> data_tex | \begin{tabular}{@{}c@{}}% \phantom{0}\\\phantom{0}\\\phantom{0}...]
\endgroup%
}}\!\right]$}%
}%
\ifdim\wd\matricesbox>\halfwidth\myboxwidth=\hsize\else\myboxwidth=\halfwidth\fi
\vbox{%
\ifdim\myboxwidth=\hsize
\setbox\onelinebox=\hbox{%
\vbox{\hbox{%
$\Pi_{16,45}$ spans $L_{16.13}$%
}\hbox{%
$3632222223222236$%
}%
}%
\hfill\copy\matricesbox
}%
\ifdim\wd\onelinebox>\myboxwidth
\hbox to \myboxwidth{%
$\Pi_{16,45}$ spans $L_{16.13}$%
\hfil
$3632222223222236$%
}%
\box\matricesbox
\else
\hbox to \myboxwidth{%
\unhbox\onelinebox
}%
\fi
\else
\hbox to \myboxwidth{%
$\Pi_{16,45}$ spans $L_{16.13}$%
\hfil}%
\hbox to \myboxwidth{%
$3632222223222236$%
\hfil}%
\box\matricesbox
\fi
}%
\hfill\discretionary{}{}{}%
\setbox\matricesbox=\hbox{%
{$\left[\!\llap{\phantom{%
\begingroup \smaller\smaller\smaller% [inline block 63: 23 envs, 2538 chars -> data_tex | \begin{tabular}{@{}c@{}}% \phantom{0}\\\phantom{0}\\\phantom{0}...]
\endgroup%
}}\!\right]$}%
}%
\ifdim\wd\matricesbox>\halfwidth\myboxwidth=\hsize\else\myboxwidth=\halfwidth\fi
\vbox{%
\ifdim\myboxwidth=\hsize
\setbox\onelinebox=\hbox{%
\vbox{\hbox{%
$\Pi_{16,46}$ spans $L_{16.13}$%
}\hbox{%
$3632222223222322$%
}%
}%
\hfill\copy\matricesbox
}%
\ifdim\wd\onelinebox>\myboxwidth
\hbox to \myboxwidth{%
$\Pi_{16,46}$ spans $L_{16.13}$%
\hfil
$3632222223222322$%
}%
\box\matricesbox
\else
\hbox to \myboxwidth{%
\unhbox\onelinebox
}%
\fi
\else
\hbox to \myboxwidth{%
$\Pi_{16,46}$ spans $L_{16.13}$%
\hfil}%
\hbox to \myboxwidth{%
$3632222223222322$%
\hfil}%
\box\matricesbox
\fi
}%
\hfill\discretionary{}{}{}%
\setbox\matricesbox=\hbox{%
{$\left[\!\llap{\phantom{%
\begingroup \smaller\smaller\smaller% [inline block 64: 23 envs, 2536 chars -> data_tex | \begin{tabular}{@{}c@{}}% \phantom{0}\\\phantom{0}\\\phantom{0}...]
\endgroup%
}}\!\right]$}%
}%
\ifdim\wd\matricesbox>\halfwidth\myboxwidth=\hsize\else\myboxwidth=\halfwidth\fi
\vbox{%
\ifdim\myboxwidth=\hsize
\setbox\onelinebox=\hbox{%
\vbox{\hbox{%
$\Pi_{16,47}$ spans $L_{16.13}$%
}\hbox{%
$3632222223223222$%
}%
}%
\hfill\copy\matricesbox
}%
\ifdim\wd\onelinebox>\myboxwidth
\hbox to \myboxwidth{%
$\Pi_{16,47}$ spans $L_{16.13}$%
\hfil
$3632222223223222$%
}%
\box\matricesbox
\else
\hbox to \myboxwidth{%
\unhbox\onelinebox
}%
\fi
\else
\hbox to \myboxwidth{%
$\Pi_{16,47}$ spans $L_{16.13}$%
\hfil}%
\hbox to \myboxwidth{%
$3632222223223222$%
\hfil}%
\box\matricesbox
\fi
}%
\hfill\discretionary{}{}{}%
\setbox\matricesbox=\hbox{%
{$\left[\!\llap{\phantom{%
\begingroup \smaller\smaller\smaller% [inline block 65: 23 envs, 2541 chars -> data_tex | \begin{tabular}{@{}c@{}}% \phantom{0}\\\phantom{0}\\\phantom{0}...]
\endgroup%
}}\!\right]$}%
}%
\ifdim\wd\matricesbox>\halfwidth\myboxwidth=\hsize\else\myboxwidth=\halfwidth\fi
\vbox{%
\ifdim\myboxwidth=\hsize
\setbox\onelinebox=\hbox{%
\vbox{\hbox{%
$\Pi_{16,48}$ spans $L_{16.13}$%
}\hbox{%
$3632222232222322$%
}%
}%
\hfill\copy\matricesbox
}%
\ifdim\wd\onelinebox>\myboxwidth
\hbox to \myboxwidth{%
$\Pi_{16,48}$ spans $L_{16.13}$%
\hfil
$3632222232222322$%
}%
\box\matricesbox
\else
\hbox to \myboxwidth{%
\unhbox\onelinebox
}%
\fi
\else
\hbox to \myboxwidth{%
$\Pi_{16,48}$ spans $L_{16.13}$%
\hfil}%
\hbox to \myboxwidth{%
$3632222232222322$%
\hfil}%
\box\matricesbox
\fi
}%
\hfill\discretionary{}{}{}%
\setbox\matricesbox=\hbox{%
{$\left[\!\llap{\phantom{%
\begingroup \smaller\smaller\smaller% [inline block 66: 23 envs, 2539 chars -> data_tex | \begin{tabular}{@{}c@{}}% \phantom{0}\\\phantom{0}\\\phantom{0}...]
\endgroup%
}}\!\right]$}%
}%
\ifdim\wd\matricesbox>\halfwidth\myboxwidth=\hsize\else\myboxwidth=\halfwidth\fi
\vbox{%
\ifdim\myboxwidth=\hsize
\setbox\onelinebox=\hbox{%
\vbox{\hbox{%
$\Pi_{16,49}$ spans $L_{16.13}$%
}\hbox{%
$3632222232223222$%
}%
}%
\hfill\copy\matricesbox
}%
\ifdim\wd\onelinebox>\myboxwidth
\hbox to \myboxwidth{%
$\Pi_{16,49}$ spans $L_{16.13}$%
\hfil
$3632222232223222$%
}%
\box\matricesbox
\else
\hbox to \myboxwidth{%
\unhbox\onelinebox
}%
\fi
\else
\hbox to \myboxwidth{%
$\Pi_{16,49}$ spans $L_{16.13}$%
\hfil}%
\hbox to \myboxwidth{%
$3632222232223222$%
\hfil}%
\box\matricesbox
\fi
}%
\hfill\discretionary{}{}{}%
\setbox\matricesbox=\hbox{%
{$\left[\!\llap{\phantom{%
\begingroup \smaller\smaller\smaller% [inline block 67: 23 envs, 2541 chars -> data_tex | \begin{tabular}{@{}c@{}}% \phantom{0}\\\phantom{0}\\\phantom{0}...]
\endgroup%
}}\!\right]$}%
}%
\ifdim\wd\matricesbox>\halfwidth\myboxwidth=\hsize\else\myboxwidth=\halfwidth\fi
\vbox{%
\ifdim\myboxwidth=\hsize
\setbox\onelinebox=\hbox{%
\vbox{\hbox{%
$\Pi_{16,50}$ spans $L_{16.13}$%
}\hbox{%
$3632222232232222$%
}%
}%
\hfill\copy\matricesbox
}%
\ifdim\wd\onelinebox>\myboxwidth
\hbox to \myboxwidth{%
$\Pi_{16,50}$ spans $L_{16.13}$%
\hfil
$3632222232232222$%
}%
\box\matricesbox
\else
\hbox to \myboxwidth{%
\unhbox\onelinebox
}%
\fi
\else
\hbox to \myboxwidth{%
$\Pi_{16,50}$ spans $L_{16.13}$%
\hfil}%
\hbox to \myboxwidth{%
$3632222232232222$%
\hfil}%
\box\matricesbox
\fi
}%
\hfill\discretionary{}{}{}%
\setbox\matricesbox=\hbox{%
{$\left[\!\llap{\phantom{%
\begingroup \smaller\smaller\smaller% [inline block 68: 23 envs, 2539 chars -> data_tex | \begin{tabular}{@{}c@{}}% \phantom{0}\\\phantom{0}\\\phantom{0}...]
\endgroup%
}}\!\right]$}%
}%
\ifdim\wd\matricesbox>\halfwidth\myboxwidth=\hsize\else\myboxwidth=\halfwidth\fi
\vbox{%
\ifdim\myboxwidth=\hsize
\setbox\onelinebox=\hbox{%
\vbox{\hbox{%
$\Pi_{16,51}$ spans $L_{16.13}$%
}\hbox{%
$3632222322222322$%
}%
}%
\hfill\copy\matricesbox
}%
\ifdim\wd\onelinebox>\myboxwidth
\hbox to \myboxwidth{%
$\Pi_{16,51}$ spans $L_{16.13}$%
\hfil
$3632222322222322$%
}%
\box\matricesbox
\else
\hbox to \myboxwidth{%
\unhbox\onelinebox
}%
\fi
\else
\hbox to \myboxwidth{%
$\Pi_{16,51}$ spans $L_{16.13}$%
\hfil}%
\hbox to \myboxwidth{%
$3632222322222322$%
\hfil}%
\box\matricesbox
\fi
}%
\hfill\discretionary{}{}{}%
\setbox\matricesbox=\hbox{%
{$\left[\!\llap{\phantom{%
\begingroup \smaller\smaller\smaller% [inline block 69: 23 envs, 2537 chars -> data_tex | \begin{tabular}{@{}c@{}}% \phantom{0}\\\phantom{0}\\\phantom{0}...]
\endgroup%
}}\!\right]$}%
}%
\ifdim\wd\matricesbox>\halfwidth\myboxwidth=\hsize\else\myboxwidth=\halfwidth\fi
\vbox{%
\ifdim\myboxwidth=\hsize
\setbox\onelinebox=\hbox{%
\vbox{\hbox{%
$\Pi_{16,52}$ spans $L_{16.13}$%
}\hbox{%
$3632222322223222$%
}%
}%
\hfill\copy\matricesbox
}%
\ifdim\wd\onelinebox>\myboxwidth
\hbox to \myboxwidth{%
$\Pi_{16,52}$ spans $L_{16.13}$%
\hfil
$3632222322223222$%
}%
\box\matricesbox
\else
\hbox to \myboxwidth{%
\unhbox\onelinebox
}%
\fi
\else
\hbox to \myboxwidth{%
$\Pi_{16,52}$ spans $L_{16.13}$%
\hfil}%
\hbox to \myboxwidth{%
$3632222322223222$%
\hfil}%
\box\matricesbox
\fi
}%
\hfill\discretionary{}{}{}%
\setbox\matricesbox=\hbox{%
{$\left[\!\llap{\phantom{%
\begingroup \smaller\smaller\smaller% [inline block 70: 23 envs, 2539 chars -> data_tex | \begin{tabular}{@{}c@{}}% \phantom{0}\\\phantom{0}\\\phantom{0}...]
\endgroup%
}}\!\right]$}%
}%
\ifdim\wd\matricesbox>\halfwidth\myboxwidth=\hsize\else\myboxwidth=\halfwidth\fi
\vbox{%
\ifdim\myboxwidth=\hsize
\setbox\onelinebox=\hbox{%
\vbox{\hbox{%
$\Pi_{16,53}$ spans $L_{16.13}$%
}\hbox{%
$3632222322232222$%
}%
}%
\hfill\copy\matricesbox
}%
\ifdim\wd\onelinebox>\myboxwidth
\hbox to \myboxwidth{%
$\Pi_{16,53}$ spans $L_{16.13}$%
\hfil
$3632222322232222$%
}%
\box\matricesbox
\else
\hbox to \myboxwidth{%
\unhbox\onelinebox
}%
\fi
\else
\hbox to \myboxwidth{%
$\Pi_{16,53}$ spans $L_{16.13}$%
\hfil}%
\hbox to \myboxwidth{%
$3632222322232222$%
\hfil}%
\box\matricesbox
\fi
}%
\hfill\discretionary{}{}{}%
\setbox\matricesbox=\hbox{%
{$\left[\!\llap{\phantom{%
\begingroup \smaller\smaller\smaller% [inline block 71: 23 envs, 2536 chars -> data_tex | \begin{tabular}{@{}c@{}}% \phantom{0}\\\phantom{0}\\\phantom{0}...]
\endgroup%
}}\!\right]$}%
}%
\ifdim\wd\matricesbox>\halfwidth\myboxwidth=\hsize\else\myboxwidth=\halfwidth\fi
\vbox{%
\ifdim\myboxwidth=\hsize
\setbox\onelinebox=\hbox{%
\vbox{\hbox{%
$\Pi_{16,54}$ spans $L_{16.13}$%
}\hbox{%
$3632222322322222$%
}%
}%
\hfill\copy\matricesbox
}%
\ifdim\wd\onelinebox>\myboxwidth
\hbox to \myboxwidth{%
$\Pi_{16,54}$ spans $L_{16.13}$%
\hfil
$3632222322322222$%
}%
\box\matricesbox
\else
\hbox to \myboxwidth{%
\unhbox\onelinebox
}%
\fi
\else
\hbox to \myboxwidth{%
$\Pi_{16,54}$ spans $L_{16.13}$%
\hfil}%
\hbox to \myboxwidth{%
$3632222322322222$%
\hfil}%
\box\matricesbox
\fi
}%
\hfill\discretionary{}{}{}%
\setbox\matricesbox=\hbox{%
{$\left[\!\llap{\phantom{%
\begingroup \smaller\smaller\smaller% [inline block 72: 23 envs, 2539 chars -> data_tex | \begin{tabular}{@{}c@{}}% \phantom{0}\\\phantom{0}\\\phantom{0}...]
\endgroup%
}}\!\right]$}%
}%
\ifdim\wd\matricesbox>\halfwidth\myboxwidth=\hsize\else\myboxwidth=\halfwidth\fi
\vbox{%
\ifdim\myboxwidth=\hsize
\setbox\onelinebox=\hbox{%
\vbox{\hbox{%
$\Pi_{16,55}$ spans $L_{16.13}$%
}\hbox{%
$3632223222223222$%
}%
}%
\hfill\copy\matricesbox
}%
\ifdim\wd\onelinebox>\myboxwidth
\hbox to \myboxwidth{%
$\Pi_{16,55}$ spans $L_{16.13}$%
\hfil
$3632223222223222$%
}%
\box\matricesbox
\else
\hbox to \myboxwidth{%
\unhbox\onelinebox
}%
\fi
\else
\hbox to \myboxwidth{%
$\Pi_{16,55}$ spans $L_{16.13}$%
\hfil}%
\hbox to \myboxwidth{%
$3632223222223222$%
\hfil}%
\box\matricesbox
\fi
}%
\hfill\discretionary{}{}{}%
\setbox\matricesbox=\hbox{%
{$\left[\!\llap{\phantom{%
\begingroup \smaller\smaller\smaller% [inline block 73: 23 envs, 2541 chars -> data_tex | \begin{tabular}{@{}c@{}}% \phantom{0}\\\phantom{0}\\\phantom{0}...]
\endgroup%
}}\!\right]$}%
}%
\ifdim\wd\matricesbox>\halfwidth\myboxwidth=\hsize\else\myboxwidth=\halfwidth\fi
\vbox{%
\ifdim\myboxwidth=\hsize
\setbox\onelinebox=\hbox{%
\vbox{\hbox{%
$\Pi_{16,56}$ spans $L_{16.13}$%
}\hbox{%
$3632223222232222$%
}%
}%
\hfill\copy\matricesbox
}%
\ifdim\wd\onelinebox>\myboxwidth
\hbox to \myboxwidth{%
$\Pi_{16,56}$ spans $L_{16.13}$%
\hfil
$3632223222232222$%
}%
\box\matricesbox
\else
\hbox to \myboxwidth{%
\unhbox\onelinebox
}%
\fi
\else
\hbox to \myboxwidth{%
$\Pi_{16,56}$ spans $L_{16.13}$%
\hfil}%
\hbox to \myboxwidth{%
$3632223222232222$%
\hfil}%
\box\matricesbox
\fi
}%
\hfill\discretionary{}{}{}%
\setbox\matricesbox=\hbox{%
{$\left[\!\llap{\phantom{%
\begingroup \smaller\smaller\smaller% [inline block 74: 23 envs, 2538 chars -> data_tex | \begin{tabular}{@{}c@{}}% \phantom{0}\\\phantom{0}\\\phantom{0}...]
\endgroup%
}}\!\right]$}%
}%
\ifdim\wd\matricesbox>\halfwidth\myboxwidth=\hsize\else\myboxwidth=\halfwidth\fi
\vbox{%
\ifdim\myboxwidth=\hsize
\setbox\onelinebox=\hbox{%
\vbox{\hbox{%
$\Pi_{16,57}$ spans $L_{16.13}$%
}\hbox{%
$3632223222322222$%
}%
}%
\hfill\copy\matricesbox
}%
\ifdim\wd\onelinebox>\myboxwidth
\hbox to \myboxwidth{%
$\Pi_{16,57}$ spans $L_{16.13}$%
\hfil
$3632223222322222$%
}%
\box\matricesbox
\else
\hbox to \myboxwidth{%
\unhbox\onelinebox
}%
\fi
\else
\hbox to \myboxwidth{%
$\Pi_{16,57}$ spans $L_{16.13}$%
\hfil}%
\hbox to \myboxwidth{%
$3632223222322222$%
\hfil}%
\box\matricesbox
\fi
}%
\hfill\discretionary{}{}{}%
\setbox\matricesbox=\hbox{%
{$\left[\!\llap{\phantom{%
\begingroup \smaller\smaller\smaller% [inline block 75: 23 envs, 2538 chars -> data_tex | \begin{tabular}{@{}c@{}}% \phantom{0}\\\phantom{0}\\\phantom{0}...]
\endgroup%
}}\!\right]$}%
}%
\ifdim\wd\matricesbox>\halfwidth\myboxwidth=\hsize\else\myboxwidth=\halfwidth\fi
\vbox{%
\ifdim\myboxwidth=\hsize
\setbox\onelinebox=\hbox{%
\vbox{\hbox{%
$\Pi_{16,58}$ spans $L_{16.13}$%
}\hbox{%
$3632232222223222$%
}%
}%
\hfill\copy\matricesbox
}%
\ifdim\wd\onelinebox>\myboxwidth
\hbox to \myboxwidth{%
$\Pi_{16,58}$ spans $L_{16.13}$%
\hfil
$3632232222223222$%
}%
\box\matricesbox
\else
\hbox to \myboxwidth{%
\unhbox\onelinebox
}%
\fi
\else
\hbox to \myboxwidth{%
$\Pi_{16,58}$ spans $L_{16.13}$%
\hfil}%
\hbox to \myboxwidth{%
$3632232222223222$%
\hfil}%
\box\matricesbox
\fi
}%
\hfill\discretionary{}{}{}%
\setbox\matricesbox=\hbox{%
{$\left[\!\llap{\phantom{%
\begingroup \smaller\smaller\smaller% [inline block 76: 23 envs, 2540 chars -> data_tex | \begin{tabular}{@{}c@{}}% \phantom{0}\\\phantom{0}\\\phantom{0}...]
\endgroup%
}}\!\right]$}%
}%
\ifdim\wd\matricesbox>\halfwidth\myboxwidth=\hsize\else\myboxwidth=\halfwidth\fi
\vbox{%
\ifdim\myboxwidth=\hsize
\setbox\onelinebox=\hbox{%
\vbox{\hbox{%
$\Pi_{16,59}$ spans $L_{16.13}$%
}\hbox{%
$3632232222232222$%
}%
}%
\hfill\copy\matricesbox
}%
\ifdim\wd\onelinebox>\myboxwidth
\hbox to \myboxwidth{%
$\Pi_{16,59}$ spans $L_{16.13}$%
\hfil
$3632232222232222$%
}%
\box\matricesbox
\else
\hbox to \myboxwidth{%
\unhbox\onelinebox
}%
\fi
\else
\hbox to \myboxwidth{%
$\Pi_{16,59}$ spans $L_{16.13}$%
\hfil}%
\hbox to \myboxwidth{%
$3632232222232222$%
\hfil}%
\box\matricesbox
\fi
}%
\hfill\discretionary{}{}{}%
\setbox\matricesbox=\hbox{%
{$\left[\!\llap{\phantom{%
\begingroup \smaller\smaller\smaller% [inline block 77: 23 envs, 2537 chars -> data_tex | \begin{tabular}{@{}c@{}}% \phantom{0}\\\phantom{0}\\\phantom{0}...]
\endgroup%
}}\!\right]$}%
}%
\ifdim\wd\matricesbox>\halfwidth\myboxwidth=\hsize\else\myboxwidth=\halfwidth\fi
\vbox{%
\ifdim\myboxwidth=\hsize
\setbox\onelinebox=\hbox{%
\vbox{\hbox{%
$\Pi_{16,60}$ spans $L_{16.13}$%
}\hbox{%
$3632232222322222$%
}%
}%
\hfill\copy\matricesbox
}%
\ifdim\wd\onelinebox>\myboxwidth
\hbox to \myboxwidth{%
$\Pi_{16,60}$ spans $L_{16.13}$%
\hfil
$3632232222322222$%
}%
\box\matricesbox
\else
\hbox to \myboxwidth{%
\unhbox\onelinebox
}%
\fi
\else
\hbox to \myboxwidth{%
$\Pi_{16,60}$ spans $L_{16.13}$%
\hfil}%
\hbox to \myboxwidth{%
$3632232222322222$%
\hfil}%
\box\matricesbox
\fi
}%
\hfill\discretionary{}{}{}%
\setbox\matricesbox=\hbox{%
{$\left[\!\llap{\phantom{%
\begingroup \smaller\smaller\smaller% [inline block 78: 23 envs, 2552 chars -> data_tex | \begin{tabular}{@{}c@{}}% \phantom{0}\\\phantom{0}\\\phantom{0}...]
\endgroup%
}}\!\right]$}%
}%
\ifdim\wd\matricesbox>\halfwidth\myboxwidth=\hsize\else\myboxwidth=\halfwidth\fi
\vbox{%
\ifdim\myboxwidth=\hsize
\setbox\onelinebox=\hbox{%
\vbox{\hbox{%
$\Pi_{16,61}$ spans $L_{142.20}$%
}\hbox{%
$\infty422\infty22\infty4224\infty422$%
}%
}%
\hfill\copy\matricesbox
}%
\ifdim\wd\onelinebox>\myboxwidth
\hbox to \myboxwidth{%
$\Pi_{16,61}$ spans $L_{142.20}$%
\hfil
$\infty422\infty22\infty4224\infty422$%
}%
\box\matricesbox
\else
\hbox to \myboxwidth{%
\unhbox\onelinebox
}%
\fi
\else
\hbox to \myboxwidth{%
$\Pi_{16,61}$ spans $L_{142.20}$%
\hfil}%
\hbox to \myboxwidth{%
$\infty422\infty22\infty4224\infty422$%
\hfil}%
\box\matricesbox
\fi
}%
\hfill\discretionary{}{}{}%
\setbox\matricesbox=\hbox{%
{$\left[\!\llap{\phantom{%
\begingroup \smaller\smaller\smaller% [inline block 79: 23 envs, 2553 chars -> data_tex | \begin{tabular}{@{}c@{}}% \phantom{0}\\\phantom{0}\\\phantom{0}...]
\endgroup%
}}\!\right]$}%
}%
\ifdim\wd\matricesbox>\halfwidth\myboxwidth=\hsize\else\myboxwidth=\halfwidth\fi
\vbox{%
\ifdim\myboxwidth=\hsize
\setbox\onelinebox=\hbox{%
\vbox{\hbox{%
$\Pi_{16,62}$ spans $L_{142.20}$%
}\hbox{%
$\infty422\infty224\infty422\infty422$%
}%
}%
\hfill\copy\matricesbox
}%
\ifdim\wd\onelinebox>\myboxwidth
\hbox to \myboxwidth{%
$\Pi_{16,62}$ spans $L_{142.20}$%
\hfil
$\infty422\infty224\infty422\infty422$%
}%
\box\matricesbox
\else
\hbox to \myboxwidth{%
\unhbox\onelinebox
}%
\fi
\else
\hbox to \myboxwidth{%
$\Pi_{16,62}$ spans $L_{142.20}$%
\hfil}%
\hbox to \myboxwidth{%
$\infty422\infty224\infty422\infty422$%
\hfil}%
\box\matricesbox
\fi
}%
\hfill\discretionary{}{}{}%
\setbox\matricesbox=\hbox{%
{$\left[\!\llap{\phantom{%
\begingroup \smaller\smaller\smaller% [inline block 80: 23 envs, 2553 chars -> data_tex | \begin{tabular}{@{}c@{}}% \phantom{0}\\\phantom{0}\\\phantom{0}...]
\endgroup%
}}\!\right]$}%
}%
\ifdim\wd\matricesbox>\halfwidth\myboxwidth=\hsize\else\myboxwidth=\halfwidth\fi
\vbox{%
\ifdim\myboxwidth=\hsize
\setbox\onelinebox=\hbox{%
\vbox{\hbox{%
$\Pi_{16,63}$ spans $L_{142.20}$%
}\hbox{%
$\infty422\infty224\infty4224\infty22$%
}%
}%
\hfill\copy\matricesbox
}%
\ifdim\wd\onelinebox>\myboxwidth
\hbox to \myboxwidth{%
$\Pi_{16,63}$ spans $L_{142.20}$%
\hfil
$\infty422\infty224\infty4224\infty22$%
}%
\box\matricesbox
\else
\hbox to \myboxwidth{%
\unhbox\onelinebox
}%
\fi
\else
\hbox to \myboxwidth{%
$\Pi_{16,63}$ spans $L_{142.20}$%
\hfil}%
\hbox to \myboxwidth{%
$\infty422\infty224\infty4224\infty22$%
\hfil}%
\box\matricesbox
\fi
}%
\hfill\discretionary{}{}{}%
\setbox\matricesbox=\hbox{%
{$\left[\!\llap{\phantom{%
\begingroup \smaller\smaller\smaller% [inline block 81: 23 envs, 2552 chars -> data_tex | \begin{tabular}{@{}c@{}}% \phantom{0}\\\phantom{0}\\\phantom{0}...]
\endgroup%
}}\!\right]$}%
}%
\ifdim\wd\matricesbox>\halfwidth\myboxwidth=\hsize\else\myboxwidth=\halfwidth\fi
\vbox{%
\ifdim\myboxwidth=\hsize
\setbox\onelinebox=\hbox{%
\vbox{\hbox{%
$\Pi_{16,64}$ spans $L_{142.20}$%
}\hbox{%
$\infty422\infty422\infty4224\infty22$%
}%
}%
\hfill\copy\matricesbox
}%
\ifdim\wd\onelinebox>\myboxwidth
\hbox to \myboxwidth{%
$\Pi_{16,64}$ spans $L_{142.20}$%
\hfil
$\infty422\infty422\infty4224\infty22$%
}%
\box\matricesbox
\else
\hbox to \myboxwidth{%
\unhbox\onelinebox
}%
\fi
\else
\hbox to \myboxwidth{%
$\Pi_{16,64}$ spans $L_{142.20}$%
\hfil}%
\hbox to \myboxwidth{%
$\infty422\infty422\infty4224\infty22$%
\hfil}%
\box\matricesbox
\fi
}%
\hfill\discretionary{}{}{}%
\setbox\matricesbox=\hbox{%
{$\left[\!\llap{\phantom{%
\begingroup \smaller\smaller\smaller% [inline block 82: 23 envs, 2553 chars -> data_tex | \begin{tabular}{@{}c@{}}% \phantom{0}\\\phantom{0}\\\phantom{0}...]
\endgroup%
}}\!\right]$}%
}%
\ifdim\wd\matricesbox>\halfwidth\myboxwidth=\hsize\else\myboxwidth=\halfwidth\fi
\vbox{%
\ifdim\myboxwidth=\hsize
\setbox\onelinebox=\hbox{%
\vbox{\hbox{%
$\Pi_{16,65}$ spans $L_{142.20}$%
}\hbox{%
$\infty422\infty4224\infty422\infty22$%
}%
}%
\hfill\copy\matricesbox
}%
\ifdim\wd\onelinebox>\myboxwidth
\hbox to \myboxwidth{%
$\Pi_{16,65}$ spans $L_{142.20}$%
\hfil
$\infty422\infty4224\infty422\infty22$%
}%
\box\matricesbox
\else
\hbox to \myboxwidth{%
\unhbox\onelinebox
}%
\fi
\else
\hbox to \myboxwidth{%
$\Pi_{16,65}$ spans $L_{142.20}$%
\hfil}%
\hbox to \myboxwidth{%
$\infty422\infty4224\infty422\infty22$%
\hfil}%
\box\matricesbox
\fi
}%
\hfill\discretionary{}{}{}%
\setbox\matricesbox=\hbox{%
{$\left[\!\llap{\phantom{%
\begingroup \smaller\smaller\smaller% [inline block 83: 23 envs, 2551 chars -> data_tex | \begin{tabular}{@{}c@{}}% \phantom{0}\\\phantom{0}\\\phantom{0}...]
\endgroup%
}}\!\right]$}%
}%
\ifdim\wd\matricesbox>\halfwidth\myboxwidth=\hsize\else\myboxwidth=\halfwidth\fi
\vbox{%
\ifdim\myboxwidth=\hsize
\setbox\onelinebox=\hbox{%
\vbox{\hbox{%
$\Pi_{16,66}$ spans $L_{142.20}$%
}\hbox{%
$\infty4224\infty22\infty224\infty422$%
}%
}%
\hfill\copy\matricesbox
}%
\ifdim\wd\onelinebox>\myboxwidth
\hbox to \myboxwidth{%
$\Pi_{16,66}$ spans $L_{142.20}$%
\hfil
$\infty4224\infty22\infty224\infty422$%
}%
\box\matricesbox
\else
\hbox to \myboxwidth{%
\unhbox\onelinebox
}%
\fi
\else
\hbox to \myboxwidth{%
$\Pi_{16,66}$ spans $L_{142.20}$%
\hfil}%
\hbox to \myboxwidth{%
$\infty4224\infty22\infty224\infty422$%
\hfil}%
\box\matricesbox
\fi
}%
\hfill\discretionary{}{}{}%
\setbox\matricesbox=\hbox{%
{$\left[\!\llap{\phantom{%
\begingroup \smaller\smaller\smaller% [inline block 84: 23 envs, 2536 chars -> data_tex | \begin{tabular}{@{}c@{}}% \phantom{0}\\\phantom{0}\\\phantom{0}...]
\endgroup%
}}\!\right]$}%
}%
\ifdim\wd\matricesbox>\halfwidth\myboxwidth=\hsize\else\myboxwidth=\halfwidth\fi
\vbox{%
\ifdim\myboxwidth=\hsize
\setbox\onelinebox=\hbox{%
\vbox{\hbox{%
$\Pi_{16,67}$ spans $L_{16.13}$%
}\hbox{%
$3222223222322322$%
}%
}%
\hfill\copy\matricesbox
}%
\ifdim\wd\onelinebox>\myboxwidth
\hbox to \myboxwidth{%
$\Pi_{16,67}$ spans $L_{16.13}$%
\hfil
$3222223222322322$%
}%
\box\matricesbox
\else
\hbox to \myboxwidth{%
\unhbox\onelinebox
}%
\fi
\else
\hbox to \myboxwidth{%
$\Pi_{16,67}$ spans $L_{16.13}$%
\hfil}%
\hbox to \myboxwidth{%
$3222223222322322$%
\hfil}%
\box\matricesbox
\fi
}%
\hfill\discretionary{}{}{}%
\setbox\matricesbox=\hbox{%
{$\left[\!\llap{\phantom{%
\begingroup \smaller\smaller\smaller% [inline block 85: 23 envs, 2539 chars -> data_tex | \begin{tabular}{@{}c@{}}% \phantom{0}\\\phantom{0}\\\phantom{0}...]
\endgroup%
}}\!\right]$}%
}%
\ifdim\wd\matricesbox>\halfwidth\myboxwidth=\hsize\else\myboxwidth=\halfwidth\fi
\vbox{%
\ifdim\myboxwidth=\hsize
\setbox\onelinebox=\hbox{%
\vbox{\hbox{%
$\Pi_{16,68}$ spans $L_{16.13}$%
}\hbox{%
$3222223223222322$%
}%
}%
\hfill\copy\matricesbox
}%
\ifdim\wd\onelinebox>\myboxwidth
\hbox to \myboxwidth{%
$\Pi_{16,68}$ spans $L_{16.13}$%
\hfil
$3222223223222322$%
}%
\box\matricesbox
\else
\hbox to \myboxwidth{%
\unhbox\onelinebox
}%
\fi
\else
\hbox to \myboxwidth{%
$\Pi_{16,68}$ spans $L_{16.13}$%
\hfil}%
\hbox to \myboxwidth{%
$3222223223222322$%
\hfil}%
\box\matricesbox
\fi
}%
\hfill\discretionary{}{}{}%
\setbox\matricesbox=\hbox{%
{$\left[\!\llap{\phantom{%
\begingroup \smaller\smaller\smaller% [inline block 86: 23 envs, 2541 chars -> data_tex | \begin{tabular}{@{}c@{}}% \phantom{0}\\\phantom{0}\\\phantom{0}...]
\endgroup%
}}\!\right]$}%
}%
\ifdim\wd\matricesbox>\halfwidth\myboxwidth=\hsize\else\myboxwidth=\halfwidth\fi
\vbox{%
\ifdim\myboxwidth=\hsize
\setbox\onelinebox=\hbox{%
\vbox{\hbox{%
$\Pi_{16,69}$ spans $L_{16.13}$%
}\hbox{%
$3222223223223222$%
}%
}%
\hfill\copy\matricesbox
}%
\ifdim\wd\onelinebox>\myboxwidth
\hbox to \myboxwidth{%
$\Pi_{16,69}$ spans $L_{16.13}$%
\hfil
$3222223223223222$%
}%
\box\matricesbox
\else
\hbox to \myboxwidth{%
\unhbox\onelinebox
}%
\fi
\else
\hbox to \myboxwidth{%
$\Pi_{16,69}$ spans $L_{16.13}$%
\hfil}%
\hbox to \myboxwidth{%
$3222223223223222$%
\hfil}%
\box\matricesbox
\fi
}%
\hfill\discretionary{}{}{}%
\setbox\matricesbox=\hbox{%
{$\left[\!\llap{\phantom{%
\begingroup \smaller\smaller\smaller% [inline block 87: 23 envs, 2536 chars -> data_tex | \begin{tabular}{@{}c@{}}% \phantom{0}\\\phantom{0}\\\phantom{0}...]
\endgroup%
}}\!\right]$}%
}%
\ifdim\wd\matricesbox>\halfwidth\myboxwidth=\hsize\else\myboxwidth=\halfwidth\fi
\vbox{%
\ifdim\myboxwidth=\hsize
\setbox\onelinebox=\hbox{%
\vbox{\hbox{%
$\Pi_{16,70}$ spans $L_{16.13}$%
}\hbox{%
$3222232223222322$%
}%
}%
\hfill\copy\matricesbox
}%
\ifdim\wd\onelinebox>\myboxwidth
\hbox to \myboxwidth{%
$\Pi_{16,70}$ spans $L_{16.13}$%
\hfil
$3222232223222322$%
}%
\box\matricesbox
\else
\hbox to \myboxwidth{%
\unhbox\onelinebox
}%
\fi
\else
\hbox to \myboxwidth{%
$\Pi_{16,70}$ spans $L_{16.13}$%
\hfil}%
\hbox to \myboxwidth{%
$3222232223222322$%
\hfil}%
\box\matricesbox
\fi
}%
\hfill\discretionary{}{}{}%
\setbox\matricesbox=\hbox{%
{$\left[\!\llap{\phantom{%
\begingroup \smaller\smaller\smaller% [inline block 88: 23 envs, 2539 chars -> data_tex | \begin{tabular}{@{}c@{}}% \phantom{0}\\\phantom{0}\\\phantom{0}...]
\endgroup%
}}\!\right]$}%
}%
\ifdim\wd\matricesbox>\halfwidth\myboxwidth=\hsize\else\myboxwidth=\halfwidth\fi
\vbox{%
\ifdim\myboxwidth=\hsize
\setbox\onelinebox=\hbox{%
\vbox{\hbox{%
$\Pi_{16,71}$ spans $L_{16.13}$%
}\hbox{%
$3222322223223222$%
}%
}%
\hfill\copy\matricesbox
}%
\ifdim\wd\onelinebox>\myboxwidth
\hbox to \myboxwidth{%
$\Pi_{16,71}$ spans $L_{16.13}$%
\hfil
$3222322223223222$%
}%
\box\matricesbox
\else
\hbox to \myboxwidth{%
\unhbox\onelinebox
}%
\fi
\else
\hbox to \myboxwidth{%
$\Pi_{16,71}$ spans $L_{16.13}$%
\hfil}%
\hbox to \myboxwidth{%
$3222322223223222$%
\hfil}%
\box\matricesbox
\fi
}%
\hfill\discretionary{}{}{}%

\vskip2pt\hrule\vskip2pt

\leavevmode\setbox\matricesbox=\hbox{%
{$\left[\!\llap{\phantom{%
\begingroup \smaller\smaller\smaller\begin{tabular}{@{}c@{}}%
\phantom{0}\\\phantom{0}\\\phantom{0}
\end{tabular}\endgroup%
}}\right.$}%
\begingroup \smaller\smaller\smaller\begin{tabular}{@{}c@{}}%
-1\\\phantom{0}\\\phantom{0}
\end{tabular}\endgroup%
\kern3pt%
\begingroup \smaller\smaller\smaller\begin{tabular}{@{}c@{}}%
\phantom{0}\\45/2\\\phantom{0}
\end{tabular}\endgroup%
\kern3pt%
\begingroup \smaller\smaller\smaller\begin{tabular}{@{}c@{}}%
\phantom{0}\\\phantom{0}\\15/2
\end{tabular}\endgroup%
{$\left.\llap{\phantom{%
\begingroup \smaller\smaller\smaller\begin{tabular}{@{}c@{}}%
\phantom{0}\\\phantom{0}\\\phantom{0}
\end{tabular}\endgroup%
}}\!\right]$}%
{$\left[\!\llap{\phantom{%
\begingroup \smaller\smaller\smaller\begin{tabular}{@{}c@{}}%
0\\0\\0
\end{tabular}\endgroup%
}}\right.$}%
\begingroup \smaller\smaller\smaller\begin{tabular}{@{}c@{}}%
9\\2\\0
\end{tabular}\endgroup%
\kern3pt%
\begingroup \smaller\smaller\smaller\begin{tabular}{@{}c@{}}%
5\\1\\1
\end{tabular}\endgroup%
\kern3pt%
\begingroup \smaller\smaller\smaller\begin{tabular}{@{}c@{}}%
9\\1\\3
\end{tabular}\endgroup%
\kern3pt%
\begingroup \smaller\smaller\smaller\begin{tabular}{@{}c@{}}%
5\\0\\2
\end{tabular}\endgroup%
\kern3pt%
\begingroup \smaller\smaller\smaller\begin{tabular}{@{}c@{}}%
90\\-8\\30
\end{tabular}\endgroup%
\kern3pt%
\begingroup \smaller\smaller\smaller\begin{tabular}{@{}c@{}}%
90\\-11\\27
\end{tabular}\endgroup%
\kern3pt%
\begingroup \smaller\smaller\smaller\begin{tabular}{@{}c@{}}%
30\\-5\\7
\end{tabular}\endgroup%
\kern3pt%
\begingroup \smaller\smaller\smaller\begin{tabular}{@{}c@{}}%
30\\-6\\4
\end{tabular}\endgroup%
\kern3pt%
\begingroup \smaller\smaller\smaller\begin{tabular}{@{}c@{}}%
90\\-19\\3
\end{tabular}\endgroup%
{$\left.\llap{\phantom{%
\begingroup \smaller\smaller\smaller\begin{tabular}{@{}c@{}}%
0\\0\\0
\end{tabular}\endgroup%
}}\!\right]$}%
}%
\ifdim\wd\matricesbox>\halfwidth\myboxwidth=\hsize\else\myboxwidth=\halfwidth\fi
\vbox{%
\ifdim\myboxwidth=\hsize
\setbox\onelinebox=\hbox{%
\vbox{\hbox{%
$\Pi_{17,1}$ spans $L_{16.13}$%
}\hbox{%
$222|22223636\slashthree63632\rtimes D_{2}$%
}%
}%
\hfill\copy\matricesbox
}%
\ifdim\wd\onelinebox>\myboxwidth
\hbox to \myboxwidth{%
$\Pi_{17,1}$ spans $L_{16.13}$%
\hfil
$222|22223636\slashthree63632\rtimes D_{2}$%
}%
\box\matricesbox
\else
\hbox to \myboxwidth{%
\unhbox\onelinebox
}%
\fi
\else
\hbox to \myboxwidth{%
$\Pi_{17,1}$ spans $L_{16.13}$%
\hfil}%
\hbox to \myboxwidth{%
$222|22223636\slashthree63632\rtimes D_{2}$%
\hfil}%
\box\matricesbox
\fi
}%
\hfill\discretionary{}{}{}%
\setbox\matricesbox=\hbox{%
{$\left[\!\llap{\phantom{%
\begingroup \smaller\smaller\smaller\begin{tabular}{@{}c@{}}%
\phantom{0}\\\phantom{0}\\\phantom{0}
\end{tabular}\endgroup%
}}\right.$}%
\begingroup \smaller\smaller\smaller\begin{tabular}{@{}c@{}}%
-1\\\phantom{0}\\\phantom{0}
\end{tabular}\endgroup%
\kern3pt%
\begingroup \smaller\smaller\smaller\begin{tabular}{@{}c@{}}%
\phantom{0}\\15/2\\\phantom{0}
\end{tabular}\endgroup%
\kern3pt%
\begingroup \smaller\smaller\smaller\begin{tabular}{@{}c@{}}%
\phantom{0}\\\phantom{0}\\45/2
\end{tabular}\endgroup%
{$\left.\llap{\phantom{%
\begingroup \smaller\smaller\smaller\begin{tabular}{@{}c@{}}%
\phantom{0}\\\phantom{0}\\\phantom{0}
\end{tabular}\endgroup%
}}\!\right]$}%
{$\left[\!\llap{\phantom{%
\begingroup \smaller\smaller\smaller\begin{tabular}{@{}c@{}}%
0\\0\\0
\end{tabular}\endgroup%
}}\right.$}%
\begingroup \smaller\smaller\smaller\begin{tabular}{@{}c@{}}%
5\\-2\\0
\end{tabular}\endgroup%
\kern3pt%
\begingroup \smaller\smaller\smaller\begin{tabular}{@{}c@{}}%
9\\-3\\1
\end{tabular}\endgroup%
\kern3pt%
\begingroup \smaller\smaller\smaller\begin{tabular}{@{}c@{}}%
5\\-1\\1
\end{tabular}\endgroup%
\kern3pt%
\begingroup \smaller\smaller\smaller\begin{tabular}{@{}c@{}}%
9\\0\\2
\end{tabular}\endgroup%
\kern3pt%
\begingroup \smaller\smaller\smaller\begin{tabular}{@{}c@{}}%
30\\4\\6
\end{tabular}\endgroup%
\kern3pt%
\begingroup \smaller\smaller\smaller\begin{tabular}{@{}c@{}}%
30\\7\\5
\end{tabular}\endgroup%
\kern3pt%
\begingroup \smaller\smaller\smaller\begin{tabular}{@{}c@{}}%
90\\27\\11
\end{tabular}\endgroup%
\kern3pt%
\begingroup \smaller\smaller\smaller\begin{tabular}{@{}c@{}}%
90\\30\\8
\end{tabular}\endgroup%
\kern3pt%
\begingroup \smaller\smaller\smaller\begin{tabular}{@{}c@{}}%
30\\11\\1
\end{tabular}\endgroup%
{$\left.\llap{\phantom{%
\begingroup \smaller\smaller\smaller\begin{tabular}{@{}c@{}}%
0\\0\\0
\end{tabular}\endgroup%
}}\!\right]$}%
}%
\ifdim\wd\matricesbox>\halfwidth\myboxwidth=\hsize\else\myboxwidth=\halfwidth\fi
\vbox{%
\ifdim\myboxwidth=\hsize
\setbox\onelinebox=\hbox{%
\vbox{\hbox{%
$\Pi_{17,2}$ spans $L_{16.13}$%
}\hbox{%
$22|22223636\slashthree636322\rtimes D_{2}$%
}%
}%
\hfill\copy\matricesbox
}%
\ifdim\wd\onelinebox>\myboxwidth
\hbox to \myboxwidth{%
$\Pi_{17,2}$ spans $L_{16.13}$%
\hfil
$22|22223636\slashthree636322\rtimes D_{2}$%
}%
\box\matricesbox
\else
\hbox to \myboxwidth{%
\unhbox\onelinebox
}%
\fi
\else
\hbox to \myboxwidth{%
$\Pi_{17,2}$ spans $L_{16.13}$%
\hfil}%
\hbox to \myboxwidth{%
$22|22223636\slashthree636322\rtimes D_{2}$%
\hfil}%
\box\matricesbox
\fi
}%
\hfill\discretionary{}{}{}%
\setbox\matricesbox=\hbox{%
{$\left[\!\llap{\phantom{%
\begingroup \smaller\smaller\smaller\begin{tabular}{@{}c@{}}%
\phantom{0}\\\phantom{0}\\\phantom{0}
\end{tabular}\endgroup%
}}\right.$}%
\begingroup \smaller\smaller\smaller\begin{tabular}{@{}c@{}}%
-1\\\phantom{0}\\\phantom{0}
\end{tabular}\endgroup%
\kern3pt%
\begingroup \smaller\smaller\smaller\begin{tabular}{@{}c@{}}%
\phantom{0}\\15/2\\\phantom{0}
\end{tabular}\endgroup%
\kern3pt%
\begingroup \smaller\smaller\smaller\begin{tabular}{@{}c@{}}%
\phantom{0}\\\phantom{0}\\45/2
\end{tabular}\endgroup%
{$\left.\llap{\phantom{%
\begingroup \smaller\smaller\smaller\begin{tabular}{@{}c@{}}%
\phantom{0}\\\phantom{0}\\\phantom{0}
\end{tabular}\endgroup%
}}\!\right]$}%
{$\left[\!\llap{\phantom{%
\begingroup \smaller\smaller\smaller\begin{tabular}{@{}c@{}}%
0\\0\\0
\end{tabular}\endgroup%
}}\right.$}%
\begingroup \smaller\smaller\smaller\begin{tabular}{@{}c@{}}%
5\\-2\\0
\end{tabular}\endgroup%
\kern3pt%
\begingroup \smaller\smaller\smaller\begin{tabular}{@{}c@{}}%
9\\-3\\1
\end{tabular}\endgroup%
\kern3pt%
\begingroup \smaller\smaller\smaller\begin{tabular}{@{}c@{}}%
5\\-1\\1
\end{tabular}\endgroup%
\kern3pt%
\begingroup \smaller\smaller\smaller\begin{tabular}{@{}c@{}}%
90\\-3\\19
\end{tabular}\endgroup%
\kern3pt%
\begingroup \smaller\smaller\smaller\begin{tabular}{@{}c@{}}%
90\\3\\19
\end{tabular}\endgroup%
\kern3pt%
\begingroup \smaller\smaller\smaller\begin{tabular}{@{}c@{}}%
5\\1\\1
\end{tabular}\endgroup%
\kern3pt%
\begingroup \smaller\smaller\smaller\begin{tabular}{@{}c@{}}%
90\\27\\11
\end{tabular}\endgroup%
\kern3pt%
\begingroup \smaller\smaller\smaller\begin{tabular}{@{}c@{}}%
90\\30\\8
\end{tabular}\endgroup%
\kern3pt%
\begingroup \smaller\smaller\smaller\begin{tabular}{@{}c@{}}%
30\\11\\1
\end{tabular}\endgroup%
{$\left.\llap{\phantom{%
\begingroup \smaller\smaller\smaller\begin{tabular}{@{}c@{}}%
0\\0\\0
\end{tabular}\endgroup%
}}\!\right]$}%
}%
\ifdim\wd\matricesbox>\halfwidth\myboxwidth=\hsize\else\myboxwidth=\halfwidth\fi
\vbox{%
\ifdim\myboxwidth=\hsize
\setbox\onelinebox=\hbox{%
\vbox{\hbox{%
$\Pi_{17,3}$ spans $L_{16.13}$%
}\hbox{%
$22|22232236\slashthree632232\rtimes D_{2}$%
}%
}%
\hfill\copy\matricesbox
}%
\ifdim\wd\onelinebox>\myboxwidth
\hbox to \myboxwidth{%
$\Pi_{17,3}$ spans $L_{16.13}$%
\hfil
$22|22232236\slashthree632232\rtimes D_{2}$%
}%
\box\matricesbox
\else
\hbox to \myboxwidth{%
\unhbox\onelinebox
}%
\fi
\else
\hbox to \myboxwidth{%
$\Pi_{17,3}$ spans $L_{16.13}$%
\hfil}%
\hbox to \myboxwidth{%
$22|22232236\slashthree632232\rtimes D_{2}$%
\hfil}%
\box\matricesbox
\fi
}%
\hfill\discretionary{}{}{}%
\setbox\matricesbox=\hbox{%
{$\left[\!\llap{\phantom{%
\begingroup \smaller\smaller\smaller\begin{tabular}{@{}c@{}}%
\phantom{0}\\\phantom{0}\\\phantom{0}
\end{tabular}\endgroup%
}}\right.$}%
\begingroup \smaller\smaller\smaller\begin{tabular}{@{}c@{}}%
-1\\\phantom{0}\\\phantom{0}
\end{tabular}\endgroup%
\kern3pt%
\begingroup \smaller\smaller\smaller\begin{tabular}{@{}c@{}}%
\phantom{0}\\15/2\\\phantom{0}
\end{tabular}\endgroup%
\kern3pt%
\begingroup \smaller\smaller\smaller\begin{tabular}{@{}c@{}}%
\phantom{0}\\\phantom{0}\\45/2
\end{tabular}\endgroup%
{$\left.\llap{\phantom{%
\begingroup \smaller\smaller\smaller\begin{tabular}{@{}c@{}}%
\phantom{0}\\\phantom{0}\\\phantom{0}
\end{tabular}\endgroup%
}}\!\right]$}%
{$\left[\!\llap{\phantom{%
\begingroup \smaller\smaller\smaller\begin{tabular}{@{}c@{}}%
0\\0\\0
\end{tabular}\endgroup%
}}\right.$}%
\begingroup \smaller\smaller\smaller\begin{tabular}{@{}c@{}}%
5\\-2\\0
\end{tabular}\endgroup%
\kern3pt%
\begingroup \smaller\smaller\smaller\begin{tabular}{@{}c@{}}%
9\\-3\\1
\end{tabular}\endgroup%
\kern3pt%
\begingroup \smaller\smaller\smaller\begin{tabular}{@{}c@{}}%
5\\-1\\1
\end{tabular}\endgroup%
\kern3pt%
\begingroup \smaller\smaller\smaller\begin{tabular}{@{}c@{}}%
90\\-3\\19
\end{tabular}\endgroup%
\kern3pt%
\begingroup \smaller\smaller\smaller\begin{tabular}{@{}c@{}}%
90\\3\\19
\end{tabular}\endgroup%
\kern3pt%
\begingroup \smaller\smaller\smaller\begin{tabular}{@{}c@{}}%
30\\4\\6
\end{tabular}\endgroup%
\kern3pt%
\begingroup \smaller\smaller\smaller\begin{tabular}{@{}c@{}}%
30\\7\\5
\end{tabular}\endgroup%
\kern3pt%
\begingroup \smaller\smaller\smaller\begin{tabular}{@{}c@{}}%
9\\3\\1
\end{tabular}\endgroup%
\kern3pt%
\begingroup \smaller\smaller\smaller\begin{tabular}{@{}c@{}}%
30\\11\\1
\end{tabular}\endgroup%
{$\left.\llap{\phantom{%
\begingroup \smaller\smaller\smaller\begin{tabular}{@{}c@{}}%
0\\0\\0
\end{tabular}\endgroup%
}}\!\right]$}%
}%
\ifdim\wd\matricesbox>\halfwidth\myboxwidth=\hsize\else\myboxwidth=\halfwidth\fi
\vbox{%
\ifdim\myboxwidth=\hsize
\setbox\onelinebox=\hbox{%
\vbox{\hbox{%
$\Pi_{17,4}$ spans $L_{16.13}$%
}\hbox{%
$22|22236322\slashthree223632\rtimes D_{2}$%
}%
}%
\hfill\copy\matricesbox
}%
\ifdim\wd\onelinebox>\myboxwidth
\hbox to \myboxwidth{%
$\Pi_{17,4}$ spans $L_{16.13}$%
\hfil
$22|22236322\slashthree223632\rtimes D_{2}$%
}%
\box\matricesbox
\else
\hbox to \myboxwidth{%
\unhbox\onelinebox
}%
\fi
\else
\hbox to \myboxwidth{%
$\Pi_{17,4}$ spans $L_{16.13}$%
\hfil}%
\hbox to \myboxwidth{%
$22|22236322\slashthree223632\rtimes D_{2}$%
\hfil}%
\box\matricesbox
\fi
}%
\hfill\discretionary{}{}{}%
\setbox\matricesbox=\hbox{%
{$\left[\!\llap{\phantom{%
\begingroup \smaller\smaller\smaller\begin{tabular}{@{}c@{}}%
\phantom{0}\\\phantom{0}\\\phantom{0}
\end{tabular}\endgroup%
}}\right.$}%
\begingroup \smaller\smaller\smaller\begin{tabular}{@{}c@{}}%
-1\\\phantom{0}\\\phantom{0}
\end{tabular}\endgroup%
\kern3pt%
\begingroup \smaller\smaller\smaller\begin{tabular}{@{}c@{}}%
\phantom{0}\\5\\\phantom{0}
\end{tabular}\endgroup%
\kern3pt%
\begingroup \smaller\smaller\smaller\begin{tabular}{@{}c@{}}%
\phantom{0}\\\phantom{0}\\15
\end{tabular}\endgroup%
{$\left.\llap{\phantom{%
\begingroup \smaller\smaller\smaller\begin{tabular}{@{}c@{}}%
\phantom{0}\\\phantom{0}\\\phantom{0}
\end{tabular}\endgroup%
}}\!\right]$}%
{$\left[\!\llap{\phantom{%
\begingroup \smaller\smaller\smaller\begin{tabular}{@{}c@{}}%
0\\0\\0
\end{tabular}\endgroup%
}}\right.$}%
\begingroup \smaller\smaller\smaller\begin{tabular}{@{}c@{}}%
4\\-2\\0
\end{tabular}\endgroup%
\kern3pt%
\begingroup \smaller\smaller\smaller\begin{tabular}{@{}c@{}}%
15\\-6\\-2
\end{tabular}\endgroup%
\kern3pt%
\begingroup \smaller\smaller\smaller\begin{tabular}{@{}c@{}}%
20\\-6\\-4
\end{tabular}\endgroup%
\kern3pt%
\begingroup \smaller\smaller\smaller\begin{tabular}{@{}c@{}}%
20\\-3\\-5
\end{tabular}\endgroup%
\kern3pt%
\begingroup \smaller\smaller\smaller\begin{tabular}{@{}c@{}}%
15\\0\\-4
\end{tabular}\endgroup%
\kern3pt%
\begingroup \smaller\smaller\smaller\begin{tabular}{@{}c@{}}%
20\\3\\-5
\end{tabular}\endgroup%
\kern3pt%
\begingroup \smaller\smaller\smaller\begin{tabular}{@{}c@{}}%
20\\6\\-4
\end{tabular}\endgroup%
\kern3pt%
\begingroup \smaller\smaller\smaller\begin{tabular}{@{}c@{}}%
15\\6\\-2
\end{tabular}\endgroup%
\kern3pt%
\begingroup \smaller\smaller\smaller\begin{tabular}{@{}c@{}}%
20\\9\\-1
\end{tabular}\endgroup%
{$\left.\llap{\phantom{%
\begingroup \smaller\smaller\smaller\begin{tabular}{@{}c@{}}%
0\\0\\0
\end{tabular}\endgroup%
}}\!\right]$}%
}%
\ifdim\wd\matricesbox>\halfwidth\myboxwidth=\hsize\else\myboxwidth=\halfwidth\fi
\vbox{%
\ifdim\myboxwidth=\hsize
\setbox\onelinebox=\hbox{%
\vbox{\hbox{%
$\Pi_{17,5}$ spans $L_{31.7}$%
}\hbox{%
$|22322322\slashthree22322322\rtimes D_{2}$%
}%
}%
\hfill\copy\matricesbox
}%
\ifdim\wd\onelinebox>\myboxwidth
\hbox to \myboxwidth{%
$\Pi_{17,5}$ spans $L_{31.7}$%
\hfil
$|22322322\slashthree22322322\rtimes D_{2}$%
}%
\box\matricesbox
\else
\hbox to \myboxwidth{%
\unhbox\onelinebox
}%
\fi
\else
\hbox to \myboxwidth{%
$\Pi_{17,5}$ spans $L_{31.7}$%
\hfil}%
\hbox to \myboxwidth{%
$|22322322\slashthree22322322\rtimes D_{2}$%
\hfil}%
\box\matricesbox
\fi
}%
\hfill\discretionary{}{}{}%
\setbox\matricesbox=\hbox{%
{$\left[\!\llap{\phantom{%
\begingroup \smaller\smaller\smaller\begin{tabular}{@{}c@{}}%
\phantom{0}\\\phantom{0}\\\phantom{0}
\end{tabular}\endgroup%
}}\right.$}%
\begingroup \smaller\smaller\smaller\begin{tabular}{@{}c@{}}%
-1\\\phantom{0}\\\phantom{0}
\end{tabular}\endgroup%
\kern3pt%
\begingroup \smaller\smaller\smaller\begin{tabular}{@{}c@{}}%
\phantom{0}\\45/2\\\phantom{0}
\end{tabular}\endgroup%
\kern3pt%
\begingroup \smaller\smaller\smaller\begin{tabular}{@{}c@{}}%
\phantom{0}\\\phantom{0}\\15/2
\end{tabular}\endgroup%
{$\left.\llap{\phantom{%
\begingroup \smaller\smaller\smaller\begin{tabular}{@{}c@{}}%
\phantom{0}\\\phantom{0}\\\phantom{0}
\end{tabular}\endgroup%
}}\!\right]$}%
{$\left[\!\llap{\phantom{%
\begingroup \smaller\smaller\smaller\begin{tabular}{@{}c@{}}%
0\\0\\0
\end{tabular}\endgroup%
}}\right.$}%
\begingroup \smaller\smaller\smaller\begin{tabular}{@{}c@{}}%
9\\2\\0
\end{tabular}\endgroup%
\kern3pt%
\begingroup \smaller\smaller\smaller\begin{tabular}{@{}c@{}}%
5\\1\\1
\end{tabular}\endgroup%
\kern3pt%
\begingroup \smaller\smaller\smaller\begin{tabular}{@{}c@{}}%
9\\1\\3
\end{tabular}\endgroup%
\kern3pt%
\begingroup \smaller\smaller\smaller\begin{tabular}{@{}c@{}}%
30\\1\\11
\end{tabular}\endgroup%
\kern3pt%
\begingroup \smaller\smaller\smaller\begin{tabular}{@{}c@{}}%
30\\-1\\11
\end{tabular}\endgroup%
\kern3pt%
\begingroup \smaller\smaller\smaller\begin{tabular}{@{}c@{}}%
90\\-8\\30
\end{tabular}\endgroup%
\kern3pt%
\begingroup \smaller\smaller\smaller\begin{tabular}{@{}c@{}}%
90\\-11\\27
\end{tabular}\endgroup%
\kern3pt%
\begingroup \smaller\smaller\smaller\begin{tabular}{@{}c@{}}%
5\\-1\\1
\end{tabular}\endgroup%
\kern3pt%
\begingroup \smaller\smaller\smaller\begin{tabular}{@{}c@{}}%
90\\-19\\3
\end{tabular}\endgroup%
{$\left.\llap{\phantom{%
\begingroup \smaller\smaller\smaller\begin{tabular}{@{}c@{}}%
0\\0\\0
\end{tabular}\endgroup%
}}\!\right]$}%
}%
\ifdim\wd\matricesbox>\halfwidth\myboxwidth=\hsize\else\myboxwidth=\halfwidth\fi
\vbox{%
\ifdim\myboxwidth=\hsize
\setbox\onelinebox=\hbox{%
\vbox{\hbox{%
$\Pi_{17,6}$ spans $L_{16.13}$%
}\hbox{%
$363222|22236322\slashthree22\rtimes D_{2}$%
}%
}%
\hfill\copy\matricesbox
}%
\ifdim\wd\onelinebox>\myboxwidth
\hbox to \myboxwidth{%
$\Pi_{17,6}$ spans $L_{16.13}$%
\hfil
$363222|22236322\slashthree22\rtimes D_{2}$%
}%
\box\matricesbox
\else
\hbox to \myboxwidth{%
\unhbox\onelinebox
}%
\fi
\else
\hbox to \myboxwidth{%
$\Pi_{17,6}$ spans $L_{16.13}$%
\hfil}%
\hbox to \myboxwidth{%
$363222|22236322\slashthree22\rtimes D_{2}$%
\hfil}%
\box\matricesbox
\fi
}%
\hfill\discretionary{}{}{}%
\setbox\matricesbox=\hbox{%
{$\left[\!\llap{\phantom{%
\begingroup \smaller\smaller\smaller\begin{tabular}{@{}c@{}}%
\phantom{0}\\\phantom{0}\\\phantom{0}
\end{tabular}\endgroup%
}}\right.$}%
\begingroup \smaller\smaller\smaller\begin{tabular}{@{}c@{}}%
-1\\\phantom{0}\\\phantom{0}
\end{tabular}\endgroup%
\kern3pt%
\begingroup \smaller\smaller\smaller\begin{tabular}{@{}c@{}}%
\phantom{0}\\15/2\\\phantom{0}
\end{tabular}\endgroup%
\kern3pt%
\begingroup \smaller\smaller\smaller\begin{tabular}{@{}c@{}}%
\phantom{0}\\\phantom{0}\\45/2
\end{tabular}\endgroup%
{$\left.\llap{\phantom{%
\begingroup \smaller\smaller\smaller\begin{tabular}{@{}c@{}}%
\phantom{0}\\\phantom{0}\\\phantom{0}
\end{tabular}\endgroup%
}}\!\right]$}%
{$\left[\!\llap{\phantom{%
\begingroup \smaller\smaller\smaller\begin{tabular}{@{}c@{}}%
0\\0\\0
\end{tabular}\endgroup%
}}\right.$}%
\begingroup \smaller\smaller\smaller\begin{tabular}{@{}c@{}}%
30\\11\\1
\end{tabular}\endgroup%
\kern3pt%
\begingroup \smaller\smaller\smaller\begin{tabular}{@{}c@{}}%
9\\3\\1
\end{tabular}\endgroup%
\kern3pt%
\begingroup \smaller\smaller\smaller\begin{tabular}{@{}c@{}}%
5\\1\\1
\end{tabular}\endgroup%
\kern3pt%
\begingroup \smaller\smaller\smaller\begin{tabular}{@{}c@{}}%
9\\0\\2
\end{tabular}\endgroup%
\kern3pt%
\begingroup \smaller\smaller\smaller\begin{tabular}{@{}c@{}}%
30\\-4\\6
\end{tabular}\endgroup%
\kern3pt%
\begingroup \smaller\smaller\smaller\begin{tabular}{@{}c@{}}%
30\\-7\\5
\end{tabular}\endgroup%
\kern3pt%
\begingroup \smaller\smaller\smaller\begin{tabular}{@{}c@{}}%
90\\-27\\11
\end{tabular}\endgroup%
\kern3pt%
\begingroup \smaller\smaller\smaller\begin{tabular}{@{}c@{}}%
90\\-30\\8
\end{tabular}\endgroup%
\kern3pt%
\begingroup \smaller\smaller\smaller\begin{tabular}{@{}c@{}}%
5\\-2\\0
\end{tabular}\endgroup%
{$\left.\llap{\phantom{%
\begingroup \smaller\smaller\smaller\begin{tabular}{@{}c@{}}%
0\\0\\0
\end{tabular}\endgroup%
}}\!\right]$}%
}%
\ifdim\wd\matricesbox>\halfwidth\myboxwidth=\hsize\else\myboxwidth=\halfwidth\fi
\vbox{%
\ifdim\myboxwidth=\hsize
\setbox\onelinebox=\hbox{%
\vbox{\hbox{%
$\Pi_{17,7}$ spans $L_{16.13}$%
}\hbox{%
$3632222\slashthree22223632|2\rtimes D_{2}$%
}%
}%
\hfill\copy\matricesbox
}%
\ifdim\wd\onelinebox>\myboxwidth
\hbox to \myboxwidth{%
$\Pi_{17,7}$ spans $L_{16.13}$%
\hfil
$3632222\slashthree22223632|2\rtimes D_{2}$%
}%
\box\matricesbox
\else
\hbox to \myboxwidth{%
\unhbox\onelinebox
}%
\fi
\else
\hbox to \myboxwidth{%
$\Pi_{17,7}$ spans $L_{16.13}$%
\hfil}%
\hbox to \myboxwidth{%
$3632222\slashthree22223632|2\rtimes D_{2}$%
\hfil}%
\box\matricesbox
\fi
}%
\hfill\discretionary{}{}{}%
\setbox\matricesbox=\hbox{%
{$\left[\!\llap{\phantom{%
\begingroup \smaller\smaller\smaller\begin{tabular}{@{}c@{}}%
\phantom{0}\\\phantom{0}\\\phantom{0}
\end{tabular}\endgroup%
}}\right.$}%
\begingroup \smaller\smaller\smaller\begin{tabular}{@{}c@{}}%
-1\\\phantom{0}\\\phantom{0}
\end{tabular}\endgroup%
\kern3pt%
\begingroup \smaller\smaller\smaller\begin{tabular}{@{}c@{}}%
\phantom{0}\\45/2\\\phantom{0}
\end{tabular}\endgroup%
\kern3pt%
\begingroup \smaller\smaller\smaller\begin{tabular}{@{}c@{}}%
\phantom{0}\\\phantom{0}\\15/2
\end{tabular}\endgroup%
{$\left.\llap{\phantom{%
\begingroup \smaller\smaller\smaller\begin{tabular}{@{}c@{}}%
\phantom{0}\\\phantom{0}\\\phantom{0}
\end{tabular}\endgroup%
}}\!\right]$}%
{$\left[\!\llap{\phantom{%
\begingroup \smaller\smaller\smaller\begin{tabular}{@{}c@{}}%
0\\0\\0
\end{tabular}\endgroup%
}}\right.$}%
\begingroup \smaller\smaller\smaller\begin{tabular}{@{}c@{}}%
90\\19\\3
\end{tabular}\endgroup%
\kern3pt%
\begingroup \smaller\smaller\smaller\begin{tabular}{@{}c@{}}%
30\\6\\4
\end{tabular}\endgroup%
\kern3pt%
\begingroup \smaller\smaller\smaller\begin{tabular}{@{}c@{}}%
30\\5\\7
\end{tabular}\endgroup%
\kern3pt%
\begingroup \smaller\smaller\smaller\begin{tabular}{@{}c@{}}%
9\\1\\3
\end{tabular}\endgroup%
\kern3pt%
\begingroup \smaller\smaller\smaller\begin{tabular}{@{}c@{}}%
5\\0\\2
\end{tabular}\endgroup%
\kern3pt%
\begingroup \smaller\smaller\smaller\begin{tabular}{@{}c@{}}%
9\\-1\\3
\end{tabular}\endgroup%
\kern3pt%
\begingroup \smaller\smaller\smaller\begin{tabular}{@{}c@{}}%
30\\-5\\7
\end{tabular}\endgroup%
\kern3pt%
\begingroup \smaller\smaller\smaller\begin{tabular}{@{}c@{}}%
30\\-6\\4
\end{tabular}\endgroup%
\kern3pt%
\begingroup \smaller\smaller\smaller\begin{tabular}{@{}c@{}}%
9\\-2\\0
\end{tabular}\endgroup%
{$\left.\llap{\phantom{%
\begingroup \smaller\smaller\smaller\begin{tabular}{@{}c@{}}%
0\\0\\0
\end{tabular}\endgroup%
}}\!\right]$}%
}%
\ifdim\wd\matricesbox>\halfwidth\myboxwidth=\hsize\else\myboxwidth=\halfwidth\fi
\vbox{%
\ifdim\myboxwidth=\hsize
\setbox\onelinebox=\hbox{%
\vbox{\hbox{%
$\Pi_{17,8}$ spans $L_{16.13}$%
}\hbox{%
$\slashthree63222232|23222236\rtimes D_{2}$%
}%
}%
\hfill\copy\matricesbox
}%
\ifdim\wd\onelinebox>\myboxwidth
\hbox to \myboxwidth{%
$\Pi_{17,8}$ spans $L_{16.13}$%
\hfil
$\slashthree63222232|23222236\rtimes D_{2}$%
}%
\box\matricesbox
\else
\hbox to \myboxwidth{%
\unhbox\onelinebox
}%
\fi
\else
\hbox to \myboxwidth{%
$\Pi_{17,8}$ spans $L_{16.13}$%
\hfil}%
\hbox to \myboxwidth{%
$\slashthree63222232|23222236\rtimes D_{2}$%
\hfil}%
\box\matricesbox
\fi
}%
\hfill\discretionary{}{}{}%
\setbox\matricesbox=\hbox{%
{$\left[\!\llap{\phantom{%
\begingroup \smaller\smaller\smaller\begin{tabular}{@{}c@{}}%
\phantom{0}\\\phantom{0}\\\phantom{0}
\end{tabular}\endgroup%
}}\right.$}%
\begingroup \smaller\smaller\smaller\begin{tabular}{@{}c@{}}%
-1\\\phantom{0}\\\phantom{0}
\end{tabular}\endgroup%
\kern3pt%
\begingroup \smaller\smaller\smaller\begin{tabular}{@{}c@{}}%
\phantom{0}\\15/2\\\phantom{0}
\end{tabular}\endgroup%
\kern3pt%
\begingroup \smaller\smaller\smaller\begin{tabular}{@{}c@{}}%
\phantom{0}\\\phantom{0}\\45/2
\end{tabular}\endgroup%
{$\left.\llap{\phantom{%
\begingroup \smaller\smaller\smaller\begin{tabular}{@{}c@{}}%
\phantom{0}\\\phantom{0}\\\phantom{0}
\end{tabular}\endgroup%
}}\!\right]$}%
{$\left[\!\llap{\phantom{%
\begingroup \smaller\smaller\smaller\begin{tabular}{@{}c@{}}%
0\\0\\0
\end{tabular}\endgroup%
}}\right.$}%
\begingroup \smaller\smaller\smaller\begin{tabular}{@{}c@{}}%
5\\-2\\0
\end{tabular}\endgroup%
\kern3pt%
\begingroup \smaller\smaller\smaller\begin{tabular}{@{}c@{}}%
9\\-3\\-1
\end{tabular}\endgroup%
\kern3pt%
\begingroup \smaller\smaller\smaller\begin{tabular}{@{}c@{}}%
30\\-7\\-5
\end{tabular}\endgroup%
\kern3pt%
\begingroup \smaller\smaller\smaller\begin{tabular}{@{}c@{}}%
30\\-4\\-6
\end{tabular}\endgroup%
\kern3pt%
\begingroup \smaller\smaller\smaller\begin{tabular}{@{}c@{}}%
90\\-3\\-19
\end{tabular}\endgroup%
\kern3pt%
\begingroup \smaller\smaller\smaller\begin{tabular}{@{}c@{}}%
90\\3\\-19
\end{tabular}\endgroup%
\kern3pt%
\begingroup \smaller\smaller\smaller\begin{tabular}{@{}c@{}}%
5\\1\\-1
\end{tabular}\endgroup%
\kern3pt%
\begingroup \smaller\smaller\smaller\begin{tabular}{@{}c@{}}%
9\\3\\-1
\end{tabular}\endgroup%
\kern3pt%
\begingroup \smaller\smaller\smaller\begin{tabular}{@{}c@{}}%
30\\11\\-1
\end{tabular}\endgroup%
{$\left.\llap{\phantom{%
\begingroup \smaller\smaller\smaller\begin{tabular}{@{}c@{}}%
0\\0\\0
\end{tabular}\endgroup%
}}\!\right]$}%
}%
\ifdim\wd\matricesbox>\halfwidth\myboxwidth=\hsize\else\myboxwidth=\halfwidth\fi
\vbox{%
\ifdim\myboxwidth=\hsize
\setbox\onelinebox=\hbox{%
\vbox{\hbox{%
$\Pi_{17,9}$ spans $L_{16.13}$%
}\hbox{%
$36322|22363222\slashthree222\rtimes D_{2}$%
}%
}%
\hfill\copy\matricesbox
}%
\ifdim\wd\onelinebox>\myboxwidth
\hbox to \myboxwidth{%
$\Pi_{17,9}$ spans $L_{16.13}$%
\hfil
$36322|22363222\slashthree222\rtimes D_{2}$%
}%
\box\matricesbox
\else
\hbox to \myboxwidth{%
\unhbox\onelinebox
}%
\fi
\else
\hbox to \myboxwidth{%
$\Pi_{17,9}$ spans $L_{16.13}$%
\hfil}%
\hbox to \myboxwidth{%
$36322|22363222\slashthree222\rtimes D_{2}$%
\hfil}%
\box\matricesbox
\fi
}%
\hfill\discretionary{}{}{}%
\setbox\matricesbox=\hbox{%
{$\left[\!\llap{\phantom{%
\begingroup \smaller\smaller\smaller\begin{tabular}{@{}c@{}}%
\phantom{0}\\\phantom{0}\\\phantom{0}
\end{tabular}\endgroup%
}}\right.$}%
\begingroup \smaller\smaller\smaller\begin{tabular}{@{}c@{}}%
-1\\\phantom{0}\\\phantom{0}
\end{tabular}\endgroup%
\kern3pt%
\begingroup \smaller\smaller\smaller\begin{tabular}{@{}c@{}}%
\phantom{0}\\45/2\\\phantom{0}
\end{tabular}\endgroup%
\kern3pt%
\begingroup \smaller\smaller\smaller\begin{tabular}{@{}c@{}}%
\phantom{0}\\\phantom{0}\\15/2
\end{tabular}\endgroup%
{$\left.\llap{\phantom{%
\begingroup \smaller\smaller\smaller\begin{tabular}{@{}c@{}}%
\phantom{0}\\\phantom{0}\\\phantom{0}
\end{tabular}\endgroup%
}}\!\right]$}%
{$\left[\!\llap{\phantom{%
\begingroup \smaller\smaller\smaller\begin{tabular}{@{}c@{}}%
0\\0\\0
\end{tabular}\endgroup%
}}\right.$}%
\begingroup \smaller\smaller\smaller\begin{tabular}{@{}c@{}}%
90\\19\\3
\end{tabular}\endgroup%
\kern3pt%
\begingroup \smaller\smaller\smaller\begin{tabular}{@{}c@{}}%
30\\6\\4
\end{tabular}\endgroup%
\kern3pt%
\begingroup \smaller\smaller\smaller\begin{tabular}{@{}c@{}}%
30\\5\\7
\end{tabular}\endgroup%
\kern3pt%
\begingroup \smaller\smaller\smaller\begin{tabular}{@{}c@{}}%
9\\1\\3
\end{tabular}\endgroup%
\kern3pt%
\begingroup \smaller\smaller\smaller\begin{tabular}{@{}c@{}}%
5\\0\\2
\end{tabular}\endgroup%
\kern3pt%
\begingroup \smaller\smaller\smaller\begin{tabular}{@{}c@{}}%
90\\-8\\30
\end{tabular}\endgroup%
\kern3pt%
\begingroup \smaller\smaller\smaller\begin{tabular}{@{}c@{}}%
90\\-11\\27
\end{tabular}\endgroup%
\kern3pt%
\begingroup \smaller\smaller\smaller\begin{tabular}{@{}c@{}}%
5\\-1\\1
\end{tabular}\endgroup%
\kern3pt%
\begingroup \smaller\smaller\smaller\begin{tabular}{@{}c@{}}%
9\\-2\\0
\end{tabular}\endgroup%
{$\left.\llap{\phantom{%
\begingroup \smaller\smaller\smaller\begin{tabular}{@{}c@{}}%
0\\0\\0
\end{tabular}\endgroup%
}}\!\right]$}%
}%
\ifdim\wd\matricesbox>\halfwidth\myboxwidth=\hsize\else\myboxwidth=\halfwidth\fi
\vbox{%
\ifdim\myboxwidth=\hsize
\setbox\onelinebox=\hbox{%
\vbox{\hbox{%
$\Pi_{17,10}$ spans $L_{16.13}$%
}\hbox{%
$\slashthree63222322|22322236\rtimes D_{2}$%
}%
}%
\hfill\copy\matricesbox
}%
\ifdim\wd\onelinebox>\myboxwidth
\hbox to \myboxwidth{%
$\Pi_{17,10}$ spans $L_{16.13}$%
\hfil
$\slashthree63222322|22322236\rtimes D_{2}$%
}%
\box\matricesbox
\else
\hbox to \myboxwidth{%
\unhbox\onelinebox
}%
\fi
\else
\hbox to \myboxwidth{%
$\Pi_{17,10}$ spans $L_{16.13}$%
\hfil}%
\hbox to \myboxwidth{%
$\slashthree63222322|22322236\rtimes D_{2}$%
\hfil}%
\box\matricesbox
\fi
}%
\hfill\discretionary{}{}{}%
\setbox\matricesbox=\hbox{%
{$\left[\!\llap{\phantom{%
\begingroup \smaller\smaller\smaller\begin{tabular}{@{}c@{}}%
\phantom{0}\\\phantom{0}\\\phantom{0}
\end{tabular}\endgroup%
}}\right.$}%
\begingroup \smaller\smaller\smaller\begin{tabular}{@{}c@{}}%
-1\\\phantom{0}\\\phantom{0}
\end{tabular}\endgroup%
\kern3pt%
\begingroup \smaller\smaller\smaller\begin{tabular}{@{}c@{}}%
\phantom{0}\\45/2\\\phantom{0}
\end{tabular}\endgroup%
\kern3pt%
\begingroup \smaller\smaller\smaller\begin{tabular}{@{}c@{}}%
\phantom{0}\\\phantom{0}\\15/2
\end{tabular}\endgroup%
{$\left.\llap{\phantom{%
\begingroup \smaller\smaller\smaller\begin{tabular}{@{}c@{}}%
\phantom{0}\\\phantom{0}\\\phantom{0}
\end{tabular}\endgroup%
}}\!\right]$}%
{$\left[\!\llap{\phantom{%
\begingroup \smaller\smaller\smaller\begin{tabular}{@{}c@{}}%
0\\0\\0
\end{tabular}\endgroup%
}}\right.$}%
\begingroup \smaller\smaller\smaller\begin{tabular}{@{}c@{}}%
90\\19\\3
\end{tabular}\endgroup%
\kern3pt%
\begingroup \smaller\smaller\smaller\begin{tabular}{@{}c@{}}%
5\\1\\1
\end{tabular}\endgroup%
\kern3pt%
\begingroup \smaller\smaller\smaller\begin{tabular}{@{}c@{}}%
9\\1\\3
\end{tabular}\endgroup%
\kern3pt%
\begingroup \smaller\smaller\smaller\begin{tabular}{@{}c@{}}%
30\\1\\11
\end{tabular}\endgroup%
\kern3pt%
\begingroup \smaller\smaller\smaller\begin{tabular}{@{}c@{}}%
30\\-1\\11
\end{tabular}\endgroup%
\kern3pt%
\begingroup \smaller\smaller\smaller\begin{tabular}{@{}c@{}}%
90\\-8\\30
\end{tabular}\endgroup%
\kern3pt%
\begingroup \smaller\smaller\smaller\begin{tabular}{@{}c@{}}%
90\\-11\\27
\end{tabular}\endgroup%
\kern3pt%
\begingroup \smaller\smaller\smaller\begin{tabular}{@{}c@{}}%
5\\-1\\1
\end{tabular}\endgroup%
\kern3pt%
\begingroup \smaller\smaller\smaller\begin{tabular}{@{}c@{}}%
9\\-2\\0
\end{tabular}\endgroup%
{$\left.\llap{\phantom{%
\begingroup \smaller\smaller\smaller\begin{tabular}{@{}c@{}}%
0\\0\\0
\end{tabular}\endgroup%
}}\!\right]$}%
}%
\ifdim\wd\matricesbox>\halfwidth\myboxwidth=\hsize\else\myboxwidth=\halfwidth\fi
\vbox{%
\ifdim\myboxwidth=\hsize
\setbox\onelinebox=\hbox{%
\vbox{\hbox{%
$\Pi_{17,11}$ spans $L_{16.13}$%
}\hbox{%
$363222\slashthree22236322|22\rtimes D_{2}$%
}%
}%
\hfill\copy\matricesbox
}%
\ifdim\wd\onelinebox>\myboxwidth
\hbox to \myboxwidth{%
$\Pi_{17,11}$ spans $L_{16.13}$%
\hfil
$363222\slashthree22236322|22\rtimes D_{2}$%
}%
\box\matricesbox
\else
\hbox to \myboxwidth{%
\unhbox\onelinebox
}%
\fi
\else
\hbox to \myboxwidth{%
$\Pi_{17,11}$ spans $L_{16.13}$%
\hfil}%
\hbox to \myboxwidth{%
$363222\slashthree22236322|22\rtimes D_{2}$%
\hfil}%
\box\matricesbox
\fi
}%
\hfill\discretionary{}{}{}%
\setbox\matricesbox=\hbox{%
{$\left[\!\llap{\phantom{%
\begingroup \smaller\smaller\smaller\begin{tabular}{@{}c@{}}%
\phantom{0}\\\phantom{0}\\\phantom{0}
\end{tabular}\endgroup%
}}\right.$}%
\begingroup \smaller\smaller\smaller\begin{tabular}{@{}c@{}}%
-1\\\phantom{0}\\\phantom{0}
\end{tabular}\endgroup%
\kern3pt%
\begingroup \smaller\smaller\smaller\begin{tabular}{@{}c@{}}%
\phantom{0}\\45/2\\\phantom{0}
\end{tabular}\endgroup%
\kern3pt%
\begingroup \smaller\smaller\smaller\begin{tabular}{@{}c@{}}%
\phantom{0}\\\phantom{0}\\15/2
\end{tabular}\endgroup%
{$\left.\llap{\phantom{%
\begingroup \smaller\smaller\smaller\begin{tabular}{@{}c@{}}%
\phantom{0}\\\phantom{0}\\\phantom{0}
\end{tabular}\endgroup%
}}\!\right]$}%
{$\left[\!\llap{\phantom{%
\begingroup \smaller\smaller\smaller\begin{tabular}{@{}c@{}}%
0\\0\\0
\end{tabular}\endgroup%
}}\right.$}%
\begingroup \smaller\smaller\smaller\begin{tabular}{@{}c@{}}%
90\\19\\3
\end{tabular}\endgroup%
\kern3pt%
\begingroup \smaller\smaller\smaller\begin{tabular}{@{}c@{}}%
30\\6\\4
\end{tabular}\endgroup%
\kern3pt%
\begingroup \smaller\smaller\smaller\begin{tabular}{@{}c@{}}%
30\\5\\7
\end{tabular}\endgroup%
\kern3pt%
\begingroup \smaller\smaller\smaller\begin{tabular}{@{}c@{}}%
9\\1\\3
\end{tabular}\endgroup%
\kern3pt%
\begingroup \smaller\smaller\smaller\begin{tabular}{@{}c@{}}%
30\\1\\11
\end{tabular}\endgroup%
\kern3pt%
\begingroup \smaller\smaller\smaller\begin{tabular}{@{}c@{}}%
30\\-1\\11
\end{tabular}\endgroup%
\kern3pt%
\begingroup \smaller\smaller\smaller\begin{tabular}{@{}c@{}}%
9\\-1\\3
\end{tabular}\endgroup%
\kern3pt%
\begingroup \smaller\smaller\smaller\begin{tabular}{@{}c@{}}%
5\\-1\\1
\end{tabular}\endgroup%
\kern3pt%
\begingroup \smaller\smaller\smaller\begin{tabular}{@{}c@{}}%
9\\-2\\0
\end{tabular}\endgroup%
{$\left.\llap{\phantom{%
\begingroup \smaller\smaller\smaller\begin{tabular}{@{}c@{}}%
0\\0\\0
\end{tabular}\endgroup%
}}\!\right]$}%
}%
\ifdim\wd\matricesbox>\halfwidth\myboxwidth=\hsize\else\myboxwidth=\halfwidth\fi
\vbox{%
\ifdim\myboxwidth=\hsize
\setbox\onelinebox=\hbox{%
\vbox{\hbox{%
$\Pi_{17,12}$ spans $L_{16.13}$%
}\hbox{%
$\slashthree63223222|22232236\rtimes D_{2}$%
}%
}%
\hfill\copy\matricesbox
}%
\ifdim\wd\onelinebox>\myboxwidth
\hbox to \myboxwidth{%
$\Pi_{17,12}$ spans $L_{16.13}$%
\hfil
$\slashthree63223222|22232236\rtimes D_{2}$%
}%
\box\matricesbox
\else
\hbox to \myboxwidth{%
\unhbox\onelinebox
}%
\fi
\else
\hbox to \myboxwidth{%
$\Pi_{17,12}$ spans $L_{16.13}$%
\hfil}%
\hbox to \myboxwidth{%
$\slashthree63223222|22232236\rtimes D_{2}$%
\hfil}%
\box\matricesbox
\fi
}%
\hfill\discretionary{}{}{}%
\setbox\matricesbox=\hbox{%
{$\left[\!\llap{\phantom{%
\begingroup \smaller\smaller\smaller\begin{tabular}{@{}c@{}}%
\phantom{0}\\\phantom{0}\\\phantom{0}
\end{tabular}\endgroup%
}}\right.$}%
\begingroup \smaller\smaller\smaller\begin{tabular}{@{}c@{}}%
-1\\\phantom{0}\\\phantom{0}
\end{tabular}\endgroup%
\kern3pt%
\begingroup \smaller\smaller\smaller\begin{tabular}{@{}c@{}}%
\phantom{0}\\45/2\\\phantom{0}
\end{tabular}\endgroup%
\kern3pt%
\begingroup \smaller\smaller\smaller\begin{tabular}{@{}c@{}}%
\phantom{0}\\\phantom{0}\\15/2
\end{tabular}\endgroup%
{$\left.\llap{\phantom{%
\begingroup \smaller\smaller\smaller\begin{tabular}{@{}c@{}}%
\phantom{0}\\\phantom{0}\\\phantom{0}
\end{tabular}\endgroup%
}}\!\right]$}%
{$\left[\!\llap{\phantom{%
\begingroup \smaller\smaller\smaller\begin{tabular}{@{}c@{}}%
0\\0\\0
\end{tabular}\endgroup%
}}\right.$}%
\begingroup \smaller\smaller\smaller\begin{tabular}{@{}c@{}}%
9\\-2\\0
\end{tabular}\endgroup%
\kern3pt%
\begingroup \smaller\smaller\smaller\begin{tabular}{@{}c@{}}%
30\\-6\\4
\end{tabular}\endgroup%
\kern3pt%
\begingroup \smaller\smaller\smaller\begin{tabular}{@{}c@{}}%
30\\-5\\7
\end{tabular}\endgroup%
\kern3pt%
\begingroup \smaller\smaller\smaller\begin{tabular}{@{}c@{}}%
90\\-11\\27
\end{tabular}\endgroup%
\kern3pt%
\begingroup \smaller\smaller\smaller\begin{tabular}{@{}c@{}}%
90\\-8\\30
\end{tabular}\endgroup%
\kern3pt%
\begingroup \smaller\smaller\smaller\begin{tabular}{@{}c@{}}%
5\\0\\2
\end{tabular}\endgroup%
\kern3pt%
\begingroup \smaller\smaller\smaller\begin{tabular}{@{}c@{}}%
9\\1\\3
\end{tabular}\endgroup%
\kern3pt%
\begingroup \smaller\smaller\smaller\begin{tabular}{@{}c@{}}%
5\\1\\1
\end{tabular}\endgroup%
\kern3pt%
\begingroup \smaller\smaller\smaller\begin{tabular}{@{}c@{}}%
90\\19\\3
\end{tabular}\endgroup%
{$\left.\llap{\phantom{%
\begingroup \smaller\smaller\smaller\begin{tabular}{@{}c@{}}%
0\\0\\0
\end{tabular}\endgroup%
}}\!\right]$}%
}%
\ifdim\wd\matricesbox>\halfwidth\myboxwidth=\hsize\else\myboxwidth=\halfwidth\fi
\vbox{%
\ifdim\myboxwidth=\hsize
\setbox\onelinebox=\hbox{%
\vbox{\hbox{%
$\Pi_{17,13}$ spans $L_{16.13}$%
}\hbox{%
$3632|23632222\slashthree2222\rtimes D_{2}$%
}%
}%
\hfill\copy\matricesbox
}%
\ifdim\wd\onelinebox>\myboxwidth
\hbox to \myboxwidth{%
$\Pi_{17,13}$ spans $L_{16.13}$%
\hfil
$3632|23632222\slashthree2222\rtimes D_{2}$%
}%
\box\matricesbox
\else
\hbox to \myboxwidth{%
\unhbox\onelinebox
}%
\fi
\else
\hbox to \myboxwidth{%
$\Pi_{17,13}$ spans $L_{16.13}$%
\hfil}%
\hbox to \myboxwidth{%
$3632|23632222\slashthree2222\rtimes D_{2}$%
\hfil}%
\box\matricesbox
\fi
}%
\hfill\discretionary{}{}{}%
\setbox\matricesbox=\hbox{%
{$\left[\!\llap{\phantom{%
\begingroup \smaller\smaller\smaller\begin{tabular}{@{}c@{}}%
\phantom{0}\\\phantom{0}\\\phantom{0}
\end{tabular}\endgroup%
}}\right.$}%
\begingroup \smaller\smaller\smaller\begin{tabular}{@{}c@{}}%
-1\\\phantom{0}\\\phantom{0}
\end{tabular}\endgroup%
\kern3pt%
\begingroup \smaller\smaller\smaller\begin{tabular}{@{}c@{}}%
\phantom{0}\\15/2\\\phantom{0}
\end{tabular}\endgroup%
\kern3pt%
\begingroup \smaller\smaller\smaller\begin{tabular}{@{}c@{}}%
\phantom{0}\\\phantom{0}\\45/2
\end{tabular}\endgroup%
{$\left.\llap{\phantom{%
\begingroup \smaller\smaller\smaller\begin{tabular}{@{}c@{}}%
\phantom{0}\\\phantom{0}\\\phantom{0}
\end{tabular}\endgroup%
}}\!\right]$}%
{$\left[\!\llap{\phantom{%
\begingroup \smaller\smaller\smaller\begin{tabular}{@{}c@{}}%
0\\0\\0
\end{tabular}\endgroup%
}}\right.$}%
\begingroup \smaller\smaller\smaller\begin{tabular}{@{}c@{}}%
30\\-11\\-1
\end{tabular}\endgroup%
\kern3pt%
\begingroup \smaller\smaller\smaller\begin{tabular}{@{}c@{}}%
90\\-30\\-8
\end{tabular}\endgroup%
\kern3pt%
\begingroup \smaller\smaller\smaller\begin{tabular}{@{}c@{}}%
90\\-27\\-11
\end{tabular}\endgroup%
\kern3pt%
\begingroup \smaller\smaller\smaller\begin{tabular}{@{}c@{}}%
5\\-1\\-1
\end{tabular}\endgroup%
\kern3pt%
\begingroup \smaller\smaller\smaller\begin{tabular}{@{}c@{}}%
9\\0\\-2
\end{tabular}\endgroup%
\kern3pt%
\begingroup \smaller\smaller\smaller\begin{tabular}{@{}c@{}}%
5\\1\\-1
\end{tabular}\endgroup%
\kern3pt%
\begingroup \smaller\smaller\smaller\begin{tabular}{@{}c@{}}%
90\\27\\-11
\end{tabular}\endgroup%
\kern3pt%
\begingroup \smaller\smaller\smaller\begin{tabular}{@{}c@{}}%
90\\30\\-8
\end{tabular}\endgroup%
\kern3pt%
\begingroup \smaller\smaller\smaller\begin{tabular}{@{}c@{}}%
5\\2\\0
\end{tabular}\endgroup%
{$\left.\llap{\phantom{%
\begingroup \smaller\smaller\smaller\begin{tabular}{@{}c@{}}%
0\\0\\0
\end{tabular}\endgroup%
}}\!\right]$}%
}%
\ifdim\wd\matricesbox>\halfwidth\myboxwidth=\hsize\else\myboxwidth=\halfwidth\fi
\vbox{%
\ifdim\myboxwidth=\hsize
\setbox\onelinebox=\hbox{%
\vbox{\hbox{%
$\Pi_{17,14}$ spans $L_{16.13}$%
}\hbox{%
$36\slashthree63222232|232222\rtimes D_{2}$%
}%
}%
\hfill\copy\matricesbox
}%
\ifdim\wd\onelinebox>\myboxwidth
\hbox to \myboxwidth{%
$\Pi_{17,14}$ spans $L_{16.13}$%
\hfil
$36\slashthree63222232|232222\rtimes D_{2}$%
}%
\box\matricesbox
\else
\hbox to \myboxwidth{%
\unhbox\onelinebox
}%
\fi
\else
\hbox to \myboxwidth{%
$\Pi_{17,14}$ spans $L_{16.13}$%
\hfil}%
\hbox to \myboxwidth{%
$36\slashthree63222232|232222\rtimes D_{2}$%
\hfil}%
\box\matricesbox
\fi
}%
\hfill\discretionary{}{}{}%
\setbox\matricesbox=\hbox{%
{$\left[\!\llap{\phantom{%
\begingroup \smaller\smaller\smaller\begin{tabular}{@{}c@{}}%
\phantom{0}\\\phantom{0}\\\phantom{0}
\end{tabular}\endgroup%
}}\right.$}%
\begingroup \smaller\smaller\smaller\begin{tabular}{@{}c@{}}%
-1\\\phantom{0}\\\phantom{0}
\end{tabular}\endgroup%
\kern3pt%
\begingroup \smaller\smaller\smaller\begin{tabular}{@{}c@{}}%
\phantom{0}\\15/2\\\phantom{0}
\end{tabular}\endgroup%
\kern3pt%
\begingroup \smaller\smaller\smaller\begin{tabular}{@{}c@{}}%
\phantom{0}\\\phantom{0}\\45/2
\end{tabular}\endgroup%
{$\left.\llap{\phantom{%
\begingroup \smaller\smaller\smaller\begin{tabular}{@{}c@{}}%
\phantom{0}\\\phantom{0}\\\phantom{0}
\end{tabular}\endgroup%
}}\!\right]$}%
{$\left[\!\llap{\phantom{%
\begingroup \smaller\smaller\smaller\begin{tabular}{@{}c@{}}%
0\\0\\0
\end{tabular}\endgroup%
}}\right.$}%
\begingroup \smaller\smaller\smaller\begin{tabular}{@{}c@{}}%
30\\-11\\-1
\end{tabular}\endgroup%
\kern3pt%
\begingroup \smaller\smaller\smaller\begin{tabular}{@{}c@{}}%
90\\-30\\-8
\end{tabular}\endgroup%
\kern3pt%
\begingroup \smaller\smaller\smaller\begin{tabular}{@{}c@{}}%
90\\-27\\-11
\end{tabular}\endgroup%
\kern3pt%
\begingroup \smaller\smaller\smaller\begin{tabular}{@{}c@{}}%
5\\-1\\-1
\end{tabular}\endgroup%
\kern3pt%
\begingroup \smaller\smaller\smaller\begin{tabular}{@{}c@{}}%
9\\0\\-2
\end{tabular}\endgroup%
\kern3pt%
\begingroup \smaller\smaller\smaller\begin{tabular}{@{}c@{}}%
30\\4\\-6
\end{tabular}\endgroup%
\kern3pt%
\begingroup \smaller\smaller\smaller\begin{tabular}{@{}c@{}}%
30\\7\\-5
\end{tabular}\endgroup%
\kern3pt%
\begingroup \smaller\smaller\smaller\begin{tabular}{@{}c@{}}%
9\\3\\-1
\end{tabular}\endgroup%
\kern3pt%
\begingroup \smaller\smaller\smaller\begin{tabular}{@{}c@{}}%
5\\2\\0
\end{tabular}\endgroup%
{$\left.\llap{\phantom{%
\begingroup \smaller\smaller\smaller\begin{tabular}{@{}c@{}}%
0\\0\\0
\end{tabular}\endgroup%
}}\!\right]$}%
}%
\ifdim\wd\matricesbox>\halfwidth\myboxwidth=\hsize\else\myboxwidth=\halfwidth\fi
\vbox{%
\ifdim\myboxwidth=\hsize
\setbox\onelinebox=\hbox{%
\vbox{\hbox{%
$\Pi_{17,15}$ spans $L_{16.13}$%
}\hbox{%
$36\slashthree63222322|223222\rtimes D_{2}$%
}%
}%
\hfill\copy\matricesbox
}%
\ifdim\wd\onelinebox>\myboxwidth
\hbox to \myboxwidth{%
$\Pi_{17,15}$ spans $L_{16.13}$%
\hfil
$36\slashthree63222322|223222\rtimes D_{2}$%
}%
\box\matricesbox
\else
\hbox to \myboxwidth{%
\unhbox\onelinebox
}%
\fi
\else
\hbox to \myboxwidth{%
$\Pi_{17,15}$ spans $L_{16.13}$%
\hfil}%
\hbox to \myboxwidth{%
$36\slashthree63222322|223222\rtimes D_{2}$%
\hfil}%
\box\matricesbox
\fi
}%
\hfill\discretionary{}{}{}%
\setbox\matricesbox=\hbox{%
{$\left[\!\llap{\phantom{%
\begingroup \smaller\smaller\smaller\begin{tabular}{@{}c@{}}%
\phantom{0}\\\phantom{0}\\\phantom{0}
\end{tabular}\endgroup%
}}\right.$}%
\begingroup \smaller\smaller\smaller\begin{tabular}{@{}c@{}}%
-1\\\phantom{0}\\\phantom{0}
\end{tabular}\endgroup%
\kern3pt%
\begingroup \smaller\smaller\smaller\begin{tabular}{@{}c@{}}%
\phantom{0}\\15/2\\\phantom{0}
\end{tabular}\endgroup%
\kern3pt%
\begingroup \smaller\smaller\smaller\begin{tabular}{@{}c@{}}%
\phantom{0}\\\phantom{0}\\45/2
\end{tabular}\endgroup%
{$\left.\llap{\phantom{%
\begingroup \smaller\smaller\smaller\begin{tabular}{@{}c@{}}%
\phantom{0}\\\phantom{0}\\\phantom{0}
\end{tabular}\endgroup%
}}\!\right]$}%
{$\left[\!\llap{\phantom{%
\begingroup \smaller\smaller\smaller\begin{tabular}{@{}c@{}}%
0\\0\\0
\end{tabular}\endgroup%
}}\right.$}%
\begingroup \smaller\smaller\smaller\begin{tabular}{@{}c@{}}%
5\\2\\0
\end{tabular}\endgroup%
\kern3pt%
\begingroup \smaller\smaller\smaller\begin{tabular}{@{}c@{}}%
9\\3\\1
\end{tabular}\endgroup%
\kern3pt%
\begingroup \smaller\smaller\smaller\begin{tabular}{@{}c@{}}%
30\\7\\5
\end{tabular}\endgroup%
\kern3pt%
\begingroup \smaller\smaller\smaller\begin{tabular}{@{}c@{}}%
30\\4\\6
\end{tabular}\endgroup%
\kern3pt%
\begingroup \smaller\smaller\smaller\begin{tabular}{@{}c@{}}%
9\\0\\2
\end{tabular}\endgroup%
\kern3pt%
\begingroup \smaller\smaller\smaller\begin{tabular}{@{}c@{}}%
30\\-4\\6
\end{tabular}\endgroup%
\kern3pt%
\begingroup \smaller\smaller\smaller\begin{tabular}{@{}c@{}}%
30\\-7\\5
\end{tabular}\endgroup%
\kern3pt%
\begingroup \smaller\smaller\smaller\begin{tabular}{@{}c@{}}%
9\\-3\\1
\end{tabular}\endgroup%
\kern3pt%
\begingroup \smaller\smaller\smaller\begin{tabular}{@{}c@{}}%
30\\-11\\1
\end{tabular}\endgroup%
{$\left.\llap{\phantom{%
\begingroup \smaller\smaller\smaller\begin{tabular}{@{}c@{}}%
0\\0\\0
\end{tabular}\endgroup%
}}\!\right]$}%
}%
\ifdim\wd\matricesbox>\halfwidth\myboxwidth=\hsize\else\myboxwidth=\halfwidth\fi
\vbox{%
\ifdim\myboxwidth=\hsize
\setbox\onelinebox=\hbox{%
\vbox{\hbox{%
$\Pi_{17,16}$ spans $L_{16.13}$%
}\hbox{%
$322|22322322\slashthree22322\rtimes D_{2}$%
}%
}%
\hfill\copy\matricesbox
}%
\ifdim\wd\onelinebox>\myboxwidth
\hbox to \myboxwidth{%
$\Pi_{17,16}$ spans $L_{16.13}$%
\hfil
$322|22322322\slashthree22322\rtimes D_{2}$%
}%
\box\matricesbox
\else
\hbox to \myboxwidth{%
\unhbox\onelinebox
}%
\fi
\else
\hbox to \myboxwidth{%
$\Pi_{17,16}$ spans $L_{16.13}$%
\hfil}%
\hbox to \myboxwidth{%
$322|22322322\slashthree22322\rtimes D_{2}$%
\hfil}%
\box\matricesbox
\fi
}%
\hfill\discretionary{}{}{}%
\setbox\matricesbox=\hbox{%
{$\left[\!\llap{\phantom{%
\begingroup \smaller\smaller\smaller\begin{tabular}{@{}c@{}}%
\phantom{0}\\\phantom{0}\\\phantom{0}
\end{tabular}\endgroup%
}}\right.$}%
\begingroup \smaller\smaller\smaller\begin{tabular}{@{}c@{}}%
-1\\\phantom{0}\\\phantom{0}
\end{tabular}\endgroup%
\kern3pt%
\begingroup \smaller\smaller\smaller\begin{tabular}{@{}c@{}}%
\phantom{0}\\45/2\\\phantom{0}
\end{tabular}\endgroup%
\kern3pt%
\begingroup \smaller\smaller\smaller\begin{tabular}{@{}c@{}}%
\phantom{0}\\\phantom{0}\\15/2
\end{tabular}\endgroup%
{$\left.\llap{\phantom{%
\begingroup \smaller\smaller\smaller\begin{tabular}{@{}c@{}}%
\phantom{0}\\\phantom{0}\\\phantom{0}
\end{tabular}\endgroup%
}}\!\right]$}%
{$\left[\!\llap{\phantom{%
\begingroup \smaller\smaller\smaller\begin{tabular}{@{}c@{}}%
0\\0\\0
\end{tabular}\endgroup%
}}\right.$}%
\begingroup \smaller\smaller\smaller\begin{tabular}{@{}c@{}}%
90\\19\\-3
\end{tabular}\endgroup%
\kern3pt%
\begingroup \smaller\smaller\smaller\begin{tabular}{@{}c@{}}%
5\\1\\-1
\end{tabular}\endgroup%
\kern3pt%
\begingroup \smaller\smaller\smaller\begin{tabular}{@{}c@{}}%
9\\1\\-3
\end{tabular}\endgroup%
\kern3pt%
\begingroup \smaller\smaller\smaller\begin{tabular}{@{}c@{}}%
30\\1\\-11
\end{tabular}\endgroup%
\kern3pt%
\begingroup \smaller\smaller\smaller\begin{tabular}{@{}c@{}}%
30\\-1\\-11
\end{tabular}\endgroup%
\kern3pt%
\begingroup \smaller\smaller\smaller\begin{tabular}{@{}c@{}}%
9\\-1\\-3
\end{tabular}\endgroup%
\kern3pt%
\begingroup \smaller\smaller\smaller\begin{tabular}{@{}c@{}}%
30\\-5\\-7
\end{tabular}\endgroup%
\kern3pt%
\begingroup \smaller\smaller\smaller\begin{tabular}{@{}c@{}}%
30\\-6\\-4
\end{tabular}\endgroup%
\kern3pt%
\begingroup \smaller\smaller\smaller\begin{tabular}{@{}c@{}}%
9\\-2\\0
\end{tabular}\endgroup%
{$\left.\llap{\phantom{%
\begingroup \smaller\smaller\smaller\begin{tabular}{@{}c@{}}%
0\\0\\0
\end{tabular}\endgroup%
}}\!\right]$}%
}%
\ifdim\wd\matricesbox>\halfwidth\myboxwidth=\hsize\else\myboxwidth=\halfwidth\fi
\vbox{%
\ifdim\myboxwidth=\hsize
\setbox\onelinebox=\hbox{%
\vbox{\hbox{%
$\Pi_{17,17}$ spans $L_{16.13}$%
}\hbox{%
$3222\slashthree22232232|2322\rtimes D_{2}$%
}%
}%
\hfill\copy\matricesbox
}%
\ifdim\wd\onelinebox>\myboxwidth
\hbox to \myboxwidth{%
$\Pi_{17,17}$ spans $L_{16.13}$%
\hfil
$3222\slashthree22232232|2322\rtimes D_{2}$%
}%
\box\matricesbox
\else
\hbox to \myboxwidth{%
\unhbox\onelinebox
}%
\fi
\else
\hbox to \myboxwidth{%
$\Pi_{17,17}$ spans $L_{16.13}$%
\hfil}%
\hbox to \myboxwidth{%
$3222\slashthree22232232|2322\rtimes D_{2}$%
\hfil}%
\box\matricesbox
\fi
}%
\hfill\discretionary{}{}{}%
\setbox\matricesbox=\hbox{%
{$\left[\!\llap{\phantom{%
\begingroup \smaller\smaller\smaller\begin{tabular}{@{}c@{}}%
\phantom{0}\\\phantom{0}\\\phantom{0}
\end{tabular}\endgroup%
}}\right.$}%
\begingroup \smaller\smaller\smaller\begin{tabular}{@{}c@{}}%
-1\\\phantom{0}\\\phantom{0}
\end{tabular}\endgroup%
\kern3pt%
\begingroup \smaller\smaller\smaller\begin{tabular}{@{}c@{}}%
\phantom{0}\\15/2\\\phantom{0}
\end{tabular}\endgroup%
\kern3pt%
\begingroup \smaller\smaller\smaller\begin{tabular}{@{}c@{}}%
\phantom{0}\\\phantom{0}\\45/2
\end{tabular}\endgroup%
{$\left.\llap{\phantom{%
\begingroup \smaller\smaller\smaller\begin{tabular}{@{}c@{}}%
\phantom{0}\\\phantom{0}\\\phantom{0}
\end{tabular}\endgroup%
}}\!\right]$}%
{$\left[\!\llap{\phantom{%
\begingroup \smaller\smaller\smaller\begin{tabular}{@{}c@{}}%
0\\0\\0
\end{tabular}\endgroup%
}}\right.$}%
\begingroup \smaller\smaller\smaller\begin{tabular}{@{}c@{}}%
5\\2\\0
\end{tabular}\endgroup%
\kern3pt%
\begingroup \smaller\smaller\smaller\begin{tabular}{@{}c@{}}%
90\\30\\8
\end{tabular}\endgroup%
\kern3pt%
\begingroup \smaller\smaller\smaller\begin{tabular}{@{}c@{}}%
90\\27\\11
\end{tabular}\endgroup%
\kern3pt%
\begingroup \smaller\smaller\smaller\begin{tabular}{@{}c@{}}%
5\\1\\1
\end{tabular}\endgroup%
\kern3pt%
\begingroup \smaller\smaller\smaller\begin{tabular}{@{}c@{}}%
9\\0\\2
\end{tabular}\endgroup%
\kern3pt%
\begingroup \smaller\smaller\smaller\begin{tabular}{@{}c@{}}%
30\\-4\\6
\end{tabular}\endgroup%
\kern3pt%
\begingroup \smaller\smaller\smaller\begin{tabular}{@{}c@{}}%
30\\-7\\5
\end{tabular}\endgroup%
\kern3pt%
\begingroup \smaller\smaller\smaller\begin{tabular}{@{}c@{}}%
9\\-3\\1
\end{tabular}\endgroup%
\kern3pt%
\begingroup \smaller\smaller\smaller\begin{tabular}{@{}c@{}}%
30\\-11\\1
\end{tabular}\endgroup%
{$\left.\llap{\phantom{%
\begingroup \smaller\smaller\smaller\begin{tabular}{@{}c@{}}%
0\\0\\0
\end{tabular}\endgroup%
}}\!\right]$}%
}%
\ifdim\wd\matricesbox>\halfwidth\myboxwidth=\hsize\else\myboxwidth=\halfwidth\fi
\vbox{%
\ifdim\myboxwidth=\hsize
\setbox\onelinebox=\hbox{%
\vbox{\hbox{%
$\Pi_{17,18}$ spans $L_{16.13}$%
}\hbox{%
$322232|23222322\slashthree22\rtimes D_{2}$%
}%
}%
\hfill\copy\matricesbox
}%
\ifdim\wd\onelinebox>\myboxwidth
\hbox to \myboxwidth{%
$\Pi_{17,18}$ spans $L_{16.13}$%
\hfil
$322232|23222322\slashthree22\rtimes D_{2}$%
}%
\box\matricesbox
\else
\hbox to \myboxwidth{%
\unhbox\onelinebox
}%
\fi
\else
\hbox to \myboxwidth{%
$\Pi_{17,18}$ spans $L_{16.13}$%
\hfil}%
\hbox to \myboxwidth{%
$322232|23222322\slashthree22\rtimes D_{2}$%
\hfil}%
\box\matricesbox
\fi
}%
\hfill\discretionary{}{}{}%
\setbox\matricesbox=\hbox{%
{$\left[\!\llap{\phantom{%
\begingroup \smaller\smaller\smaller\begin{tabular}{@{}c@{}}%
\phantom{0}\\\phantom{0}\\\phantom{0}
\end{tabular}\endgroup%
}}\right.$}%
\begingroup \smaller\smaller\smaller\begin{tabular}{@{}c@{}}%
-1\\\phantom{0}\\\phantom{0}
\end{tabular}\endgroup%
\kern3pt%
\begingroup \smaller\smaller\smaller\begin{tabular}{@{}c@{}}%
\phantom{0}\\45/2\\\phantom{0}
\end{tabular}\endgroup%
\kern3pt%
\begingroup \smaller\smaller\smaller\begin{tabular}{@{}c@{}}%
\phantom{0}\\\phantom{0}\\15/2
\end{tabular}\endgroup%
{$\left.\llap{\phantom{%
\begingroup \smaller\smaller\smaller\begin{tabular}{@{}c@{}}%
\phantom{0}\\\phantom{0}\\\phantom{0}
\end{tabular}\endgroup%
}}\!\right]$}%
{$\left[\!\llap{\phantom{%
\begingroup \smaller\smaller\smaller\begin{tabular}{@{}c@{}}%
0\\0\\0
\end{tabular}\endgroup%
}}\right.$}%
\begingroup \smaller\smaller\smaller\begin{tabular}{@{}c@{}}%
90\\19\\3
\end{tabular}\endgroup%
\kern3pt%
\begingroup \smaller\smaller\smaller\begin{tabular}{@{}c@{}}%
5\\1\\1
\end{tabular}\endgroup%
\kern3pt%
\begingroup \smaller\smaller\smaller\begin{tabular}{@{}c@{}}%
90\\11\\27
\end{tabular}\endgroup%
\kern3pt%
\begingroup \smaller\smaller\smaller\begin{tabular}{@{}c@{}}%
90\\8\\30
\end{tabular}\endgroup%
\kern3pt%
\begingroup \smaller\smaller\smaller\begin{tabular}{@{}c@{}}%
5\\0\\2
\end{tabular}\endgroup%
\kern3pt%
\begingroup \smaller\smaller\smaller\begin{tabular}{@{}c@{}}%
9\\-1\\3
\end{tabular}\endgroup%
\kern3pt%
\begingroup \smaller\smaller\smaller\begin{tabular}{@{}c@{}}%
30\\-5\\7
\end{tabular}\endgroup%
\kern3pt%
\begingroup \smaller\smaller\smaller\begin{tabular}{@{}c@{}}%
30\\-6\\4
\end{tabular}\endgroup%
\kern3pt%
\begingroup \smaller\smaller\smaller\begin{tabular}{@{}c@{}}%
9\\-2\\0
\end{tabular}\endgroup%
{$\left.\llap{\phantom{%
\begingroup \smaller\smaller\smaller\begin{tabular}{@{}c@{}}%
0\\0\\0
\end{tabular}\endgroup%
}}\!\right]$}%
}%
\ifdim\wd\matricesbox>\halfwidth\myboxwidth=\hsize\else\myboxwidth=\halfwidth\fi
\vbox{%
\ifdim\myboxwidth=\hsize
\setbox\onelinebox=\hbox{%
\vbox{\hbox{%
$\Pi_{17,19}$ spans $L_{16.13}$%
}\hbox{%
$3222322\slashthree22322232|2\rtimes D_{2}$%
}%
}%
\hfill\copy\matricesbox
}%
\ifdim\wd\onelinebox>\myboxwidth
\hbox to \myboxwidth{%
$\Pi_{17,19}$ spans $L_{16.13}$%
\hfil
$3222322\slashthree22322232|2\rtimes D_{2}$%
}%
\box\matricesbox
\else
\hbox to \myboxwidth{%
\unhbox\onelinebox
}%
\fi
\else
\hbox to \myboxwidth{%
$\Pi_{17,19}$ spans $L_{16.13}$%
\hfil}%
\hbox to \myboxwidth{%
$3222322\slashthree22322232|2\rtimes D_{2}$%
\hfil}%
\box\matricesbox
\fi
}%
\hfill\discretionary{}{}{}%
\setbox\matricesbox=\hbox{%
{$\left[\!\llap{\phantom{%
\begingroup \smaller\smaller\smaller\begin{tabular}{@{}c@{}}%
\phantom{0}\\\phantom{0}\\\phantom{0}
\end{tabular}\endgroup%
}}\right.$}%
\begingroup \smaller\smaller\smaller\begin{tabular}{@{}c@{}}%
-1\\\phantom{0}\\\phantom{0}
\end{tabular}\endgroup%
\kern3pt%
\begingroup \smaller\smaller\smaller\begin{tabular}{@{}c@{}}%
\phantom{0}\\15/2\\\phantom{0}
\end{tabular}\endgroup%
\kern3pt%
\begingroup \smaller\smaller\smaller\begin{tabular}{@{}c@{}}%
\phantom{0}\\\phantom{0}\\45/2
\end{tabular}\endgroup%
{$\left.\llap{\phantom{%
\begingroup \smaller\smaller\smaller\begin{tabular}{@{}c@{}}%
\phantom{0}\\\phantom{0}\\\phantom{0}
\end{tabular}\endgroup%
}}\!\right]$}%
{$\left[\!\llap{\phantom{%
\begingroup \smaller\smaller\smaller\begin{tabular}{@{}c@{}}%
0\\0\\0
\end{tabular}\endgroup%
}}\right.$}%
\begingroup \smaller\smaller\smaller\begin{tabular}{@{}c@{}}%
30\\11\\1
\end{tabular}\endgroup%
\kern3pt%
\begingroup \smaller\smaller\smaller\begin{tabular}{@{}c@{}}%
9\\3\\1
\end{tabular}\endgroup%
\kern3pt%
\begingroup \smaller\smaller\smaller\begin{tabular}{@{}c@{}}%
5\\1\\1
\end{tabular}\endgroup%
\kern3pt%
\begingroup \smaller\smaller\smaller\begin{tabular}{@{}c@{}}%
90\\3\\19
\end{tabular}\endgroup%
\kern3pt%
\begingroup \smaller\smaller\smaller\begin{tabular}{@{}c@{}}%
90\\-3\\19
\end{tabular}\endgroup%
\kern3pt%
\begingroup \smaller\smaller\smaller\begin{tabular}{@{}c@{}}%
5\\-1\\1
\end{tabular}\endgroup%
\kern3pt%
\begingroup \smaller\smaller\smaller\begin{tabular}{@{}c@{}}%
90\\-27\\11
\end{tabular}\endgroup%
\kern3pt%
\begingroup \smaller\smaller\smaller\begin{tabular}{@{}c@{}}%
90\\-30\\8
\end{tabular}\endgroup%
\kern3pt%
\begingroup \smaller\smaller\smaller\begin{tabular}{@{}c@{}}%
5\\-2\\0
\end{tabular}\endgroup%
{$\left.\llap{\phantom{%
\begingroup \smaller\smaller\smaller\begin{tabular}{@{}c@{}}%
0\\0\\0
\end{tabular}\endgroup%
}}\!\right]$}%
}%
\ifdim\wd\matricesbox>\halfwidth\myboxwidth=\hsize\else\myboxwidth=\halfwidth\fi
\vbox{%
\ifdim\myboxwidth=\hsize
\setbox\onelinebox=\hbox{%
\vbox{\hbox{%
$\Pi_{17,20}$ spans $L_{16.13}$%
}\hbox{%
$\slashthree22232232|23223222\rtimes D_{2}$%
}%
}%
\hfill\copy\matricesbox
}%
\ifdim\wd\onelinebox>\myboxwidth
\hbox to \myboxwidth{%
$\Pi_{17,20}$ spans $L_{16.13}$%
\hfil
$\slashthree22232232|23223222\rtimes D_{2}$%
}%
\box\matricesbox
\else
\hbox to \myboxwidth{%
\unhbox\onelinebox
}%
\fi
\else
\hbox to \myboxwidth{%
$\Pi_{17,20}$ spans $L_{16.13}$%
\hfil}%
\hbox to \myboxwidth{%
$\slashthree22232232|23223222\rtimes D_{2}$%
\hfil}%
\box\matricesbox
\fi
}%
\hfill\discretionary{}{}{}%
\setbox\matricesbox=\hbox{%
{$\left[\!\llap{\phantom{%
\begingroup \smaller\smaller\smaller\begin{tabular}{@{}c@{}}%
\phantom{0}\\\phantom{0}\\\phantom{0}
\end{tabular}\endgroup%
}}\right.$}%
\begingroup \smaller\smaller\smaller\begin{tabular}{@{}c@{}}%
-1\\\phantom{0}\\\phantom{0}
\end{tabular}\endgroup%
\kern3pt%
\begingroup \smaller\smaller\smaller\begin{tabular}{@{}c@{}}%
\phantom{0}\\21/2\\\phantom{0}
\end{tabular}\endgroup%
\kern3pt%
\begingroup \smaller\smaller\smaller\begin{tabular}{@{}c@{}}%
\phantom{0}\\\phantom{0}\\7/2
\end{tabular}\endgroup%
{$\left.\llap{\phantom{%
\begingroup \smaller\smaller\smaller\begin{tabular}{@{}c@{}}%
\phantom{0}\\\phantom{0}\\\phantom{0}
\end{tabular}\endgroup%
}}\!\right]$}%
{$\left[\!\llap{\phantom{%
\begingroup \smaller\smaller\smaller\begin{tabular}{@{}c@{}}%
0\\0\\0
\end{tabular}\endgroup%
}}\right.$}%
\begingroup \smaller\smaller\smaller\begin{tabular}{@{}c@{}}%
6\\-2\\0
\end{tabular}\endgroup%
\kern3pt%
\begingroup \smaller\smaller\smaller\begin{tabular}{@{}c@{}}%
7\\-2\\2
\end{tabular}\endgroup%
\kern3pt%
\begingroup \smaller\smaller\smaller\begin{tabular}{@{}c@{}}%
42\\-8\\18
\end{tabular}\endgroup%
\kern3pt%
\begingroup \smaller\smaller\smaller\begin{tabular}{@{}c@{}}%
42\\-5\\21
\end{tabular}\endgroup%
\kern3pt%
\begingroup \smaller\smaller\smaller\begin{tabular}{@{}c@{}}%
7\\0\\4
\end{tabular}\endgroup%
\kern3pt%
\begingroup \smaller\smaller\smaller\begin{tabular}{@{}c@{}}%
42\\5\\21
\end{tabular}\endgroup%
\kern3pt%
\begingroup \smaller\smaller\smaller\begin{tabular}{@{}c@{}}%
42\\8\\18
\end{tabular}\endgroup%
\kern3pt%
\begingroup \smaller\smaller\smaller\begin{tabular}{@{}c@{}}%
7\\2\\2
\end{tabular}\endgroup%
\kern3pt%
\begingroup \smaller\smaller\smaller\begin{tabular}{@{}c@{}}%
42\\13\\3
\end{tabular}\endgroup%
{$\left.\llap{\phantom{%
\begingroup \smaller\smaller\smaller\begin{tabular}{@{}c@{}}%
0\\0\\0
\end{tabular}\endgroup%
}}\!\right]$}%
}%
\ifdim\wd\matricesbox>\halfwidth\myboxwidth=\hsize\else\myboxwidth=\halfwidth\fi
\vbox{%
\ifdim\myboxwidth=\hsize
\setbox\onelinebox=\hbox{%
\vbox{\hbox{%
$\Pi_{17,21}$ spans $L_{22.4}$%
}\hbox{%
$322|22322322\slashthree22322\rtimes D_{2}$%
}%
}%
\hfill\copy\matricesbox
}%
\ifdim\wd\onelinebox>\myboxwidth
\hbox to \myboxwidth{%
$\Pi_{17,21}$ spans $L_{22.4}$%
\hfil
$322|22322322\slashthree22322\rtimes D_{2}$%
}%
\box\matricesbox
\else
\hbox to \myboxwidth{%
\unhbox\onelinebox
}%
\fi
\else
\hbox to \myboxwidth{%
$\Pi_{17,21}$ spans $L_{22.4}$%
\hfil}%
\hbox to \myboxwidth{%
$322|22322322\slashthree22322\rtimes D_{2}$%
\hfil}%
\box\matricesbox
\fi
}%
\hfill\discretionary{}{}{}%
\setbox\matricesbox=\hbox{%
{$\left[\!\llap{\phantom{%
\begingroup \smaller\smaller\smaller\begin{tabular}{@{}c@{}}%
\phantom{0}\\\phantom{0}\\\phantom{0}
\end{tabular}\endgroup%
}}\right.$}%
\begingroup \smaller\smaller\smaller\begin{tabular}{@{}c@{}}%
-1\\\phantom{0}\\\phantom{0}
\end{tabular}\endgroup%
\kern3pt%
\begingroup \smaller\smaller\smaller\begin{tabular}{@{}c@{}}%
\phantom{0}\\45/2\\\phantom{0}
\end{tabular}\endgroup%
\kern3pt%
\begingroup \smaller\smaller\smaller\begin{tabular}{@{}c@{}}%
\phantom{0}\\\phantom{0}\\15/2
\end{tabular}\endgroup%
{$\left.\llap{\phantom{%
\begingroup \smaller\smaller\smaller\begin{tabular}{@{}c@{}}%
\phantom{0}\\\phantom{0}\\\phantom{0}
\end{tabular}\endgroup%
}}\!\right]$}%
{$\left[\!\llap{\phantom{%
\begingroup \smaller\smaller\smaller\begin{tabular}{@{}c@{}}%
0\\0\\0
\end{tabular}\endgroup%
}}\right.$}%
\begingroup \smaller\smaller\smaller\begin{tabular}{@{}c@{}}%
9\\2\\0
\end{tabular}\endgroup%
\kern3pt%
\begingroup \smaller\smaller\smaller\begin{tabular}{@{}c@{}}%
5\\1\\-1
\end{tabular}\endgroup%
\kern3pt%
\begingroup \smaller\smaller\smaller\begin{tabular}{@{}c@{}}%
90\\11\\-27
\end{tabular}\endgroup%
\kern3pt%
\begingroup \smaller\smaller\smaller\begin{tabular}{@{}c@{}}%
90\\8\\-30
\end{tabular}\endgroup%
\kern3pt%
\begingroup \smaller\smaller\smaller\begin{tabular}{@{}c@{}}%
5\\0\\-2
\end{tabular}\endgroup%
\kern3pt%
\begingroup \smaller\smaller\smaller\begin{tabular}{@{}c@{}}%
90\\-8\\-30
\end{tabular}\endgroup%
\kern3pt%
\begingroup \smaller\smaller\smaller\begin{tabular}{@{}c@{}}%
90\\-11\\-27
\end{tabular}\endgroup%
\kern3pt%
\begingroup \smaller\smaller\smaller\begin{tabular}{@{}c@{}}%
5\\-1\\-1
\end{tabular}\endgroup%
\kern3pt%
\begingroup \smaller\smaller\smaller\begin{tabular}{@{}c@{}}%
90\\-19\\-3
\end{tabular}\endgroup%
{$\left.\llap{\phantom{%
\begingroup \smaller\smaller\smaller\begin{tabular}{@{}c@{}}%
0\\0\\0
\end{tabular}\endgroup%
}}\!\right]$}%
}%
\ifdim\wd\matricesbox>\halfwidth\myboxwidth=\hsize\else\myboxwidth=\halfwidth\fi
\vbox{%
\ifdim\myboxwidth=\hsize
\setbox\onelinebox=\hbox{%
\vbox{\hbox{%
$\Pi_{17,22}$ spans $L_{16.13}$%
}\hbox{%
$322|22322322\slashthree22322\rtimes D_{2}$%
}%
}%
\hfill\copy\matricesbox
}%
\ifdim\wd\onelinebox>\myboxwidth
\hbox to \myboxwidth{%
$\Pi_{17,22}$ spans $L_{16.13}$%
\hfil
$322|22322322\slashthree22322\rtimes D_{2}$%
}%
\box\matricesbox
\else
\hbox to \myboxwidth{%
\unhbox\onelinebox
}%
\fi
\else
\hbox to \myboxwidth{%
$\Pi_{17,22}$ spans $L_{16.13}$%
\hfil}%
\hbox to \myboxwidth{%
$322|22322322\slashthree22322\rtimes D_{2}$%
\hfil}%
\box\matricesbox
\fi
}%
\hfill\discretionary{}{}{}%
\setbox\matricesbox=\hbox{%
{$\left[\!\llap{\phantom{%
\begingroup \smaller\smaller\smaller% [inline block 89: 24 envs, 2643 chars -> data_tex | \begin{tabular}{@{}c@{}}% \phantom{0}\\\phantom{0}\\\phantom{0}...]
\endgroup%
}}\!\right]$}%
}%
\ifdim\wd\matricesbox>\halfwidth\myboxwidth=\hsize\else\myboxwidth=\halfwidth\fi
\vbox{%
\ifdim\myboxwidth=\hsize
\setbox\onelinebox=\hbox{%
\vbox{\hbox{%
$\Pi_{17,23}$ spans $L_{16.13}$%
}\hbox{%
$22222232236363632$%
}%
}%
\hfill\copy\matricesbox
}%
\ifdim\wd\onelinebox>\myboxwidth
\hbox to \myboxwidth{%
$\Pi_{17,23}$ spans $L_{16.13}$%
\hfil
$22222232236363632$%
}%
\box\matricesbox
\else
\hbox to \myboxwidth{%
\unhbox\onelinebox
}%
\fi
\else
\hbox to \myboxwidth{%
$\Pi_{17,23}$ spans $L_{16.13}$%
\hfil}%
\hbox to \myboxwidth{%
$22222232236363632$%
\hfil}%
\box\matricesbox
\fi
}%
\hfill\discretionary{}{}{}%
\setbox\matricesbox=\hbox{%
{$\left[\!\llap{\phantom{%
\begingroup \smaller\smaller\smaller% [inline block 90: 24 envs, 2645 chars -> data_tex | \begin{tabular}{@{}c@{}}% \phantom{0}\\\phantom{0}\\\phantom{0}...]
\endgroup%
}}\!\right]$}%
}%
\ifdim\wd\matricesbox>\halfwidth\myboxwidth=\hsize\else\myboxwidth=\halfwidth\fi
\vbox{%
\ifdim\myboxwidth=\hsize
\setbox\onelinebox=\hbox{%
\vbox{\hbox{%
$\Pi_{17,24}$ spans $L_{16.13}$%
}\hbox{%
$22222236322363632$%
}%
}%
\hfill\copy\matricesbox
}%
\ifdim\wd\onelinebox>\myboxwidth
\hbox to \myboxwidth{%
$\Pi_{17,24}$ spans $L_{16.13}$%
\hfil
$22222236322363632$%
}%
\box\matricesbox
\else
\hbox to \myboxwidth{%
\unhbox\onelinebox
}%
\fi
\else
\hbox to \myboxwidth{%
$\Pi_{17,24}$ spans $L_{16.13}$%
\hfil}%
\hbox to \myboxwidth{%
$22222236322363632$%
\hfil}%
\box\matricesbox
\fi
}%
\hfill\discretionary{}{}{}%
\setbox\matricesbox=\hbox{%
{$\left[\!\llap{\phantom{%
\begingroup \smaller\smaller\smaller% [inline block 91: 24 envs, 2643 chars -> data_tex | \begin{tabular}{@{}c@{}}% \phantom{0}\\\phantom{0}\\\phantom{0}...]
\endgroup%
}}\!\right]$}%
}%
\ifdim\wd\matricesbox>\halfwidth\myboxwidth=\hsize\else\myboxwidth=\halfwidth\fi
\vbox{%
\ifdim\myboxwidth=\hsize
\setbox\onelinebox=\hbox{%
\vbox{\hbox{%
$\Pi_{17,25}$ spans $L_{16.13}$%
}\hbox{%
$22222236363223632$%
}%
}%
\hfill\copy\matricesbox
}%
\ifdim\wd\onelinebox>\myboxwidth
\hbox to \myboxwidth{%
$\Pi_{17,25}$ spans $L_{16.13}$%
\hfil
$22222236363223632$%
}%
\box\matricesbox
\else
\hbox to \myboxwidth{%
\unhbox\onelinebox
}%
\fi
\else
\hbox to \myboxwidth{%
$\Pi_{17,25}$ spans $L_{16.13}$%
\hfil}%
\hbox to \myboxwidth{%
$22222236363223632$%
\hfil}%
\box\matricesbox
\fi
}%
\hfill\discretionary{}{}{}%
\setbox\matricesbox=\hbox{%
{$\left[\!\llap{\phantom{%
\begingroup \smaller\smaller\smaller% [inline block 92: 24 envs, 2646 chars -> data_tex | \begin{tabular}{@{}c@{}}% \phantom{0}\\\phantom{0}\\\phantom{0}...]
\endgroup%
}}\!\right]$}%
}%
\ifdim\wd\matricesbox>\halfwidth\myboxwidth=\hsize\else\myboxwidth=\halfwidth\fi
\vbox{%
\ifdim\myboxwidth=\hsize
\setbox\onelinebox=\hbox{%
\vbox{\hbox{%
$\Pi_{17,26}$ spans $L_{16.13}$%
}\hbox{%
$22222236363632232$%
}%
}%
\hfill\copy\matricesbox
}%
\ifdim\wd\onelinebox>\myboxwidth
\hbox to \myboxwidth{%
$\Pi_{17,26}$ spans $L_{16.13}$%
\hfil
$22222236363632232$%
}%
\box\matricesbox
\else
\hbox to \myboxwidth{%
\unhbox\onelinebox
}%
\fi
\else
\hbox to \myboxwidth{%
$\Pi_{17,26}$ spans $L_{16.13}$%
\hfil}%
\hbox to \myboxwidth{%
$22222236363632232$%
\hfil}%
\box\matricesbox
\fi
}%
\hfill\discretionary{}{}{}%
\setbox\matricesbox=\hbox{%
{$\left[\!\llap{\phantom{%
\begingroup \smaller\smaller\smaller% [inline block 93: 24 envs, 2646 chars -> data_tex | \begin{tabular}{@{}c@{}}% \phantom{0}\\\phantom{0}\\\phantom{0}...]
\endgroup%
}}\!\right]$}%
}%
\ifdim\wd\matricesbox>\halfwidth\myboxwidth=\hsize\else\myboxwidth=\halfwidth\fi
\vbox{%
\ifdim\myboxwidth=\hsize
\setbox\onelinebox=\hbox{%
\vbox{\hbox{%
$\Pi_{17,27}$ spans $L_{16.13}$%
}\hbox{%
$22222322236363632$%
}%
}%
\hfill\copy\matricesbox
}%
\ifdim\wd\onelinebox>\myboxwidth
\hbox to \myboxwidth{%
$\Pi_{17,27}$ spans $L_{16.13}$%
\hfil
$22222322236363632$%
}%
\box\matricesbox
\else
\hbox to \myboxwidth{%
\unhbox\onelinebox
}%
\fi
\else
\hbox to \myboxwidth{%
$\Pi_{17,27}$ spans $L_{16.13}$%
\hfil}%
\hbox to \myboxwidth{%
$22222322236363632$%
\hfil}%
\box\matricesbox
\fi
}%
\hfill\discretionary{}{}{}%
\setbox\matricesbox=\hbox{%
{$\left[\!\llap{\phantom{%
\begingroup \smaller\smaller\smaller% [inline block 94: 24 envs, 2648 chars -> data_tex | \begin{tabular}{@{}c@{}}% \phantom{0}\\\phantom{0}\\\phantom{0}...]
\endgroup%
}}\!\right]$}%
}%
\ifdim\wd\matricesbox>\halfwidth\myboxwidth=\hsize\else\myboxwidth=\halfwidth\fi
\vbox{%
\ifdim\myboxwidth=\hsize
\setbox\onelinebox=\hbox{%
\vbox{\hbox{%
$\Pi_{17,28}$ spans $L_{16.13}$%
}\hbox{%
$22222322322363632$%
}%
}%
\hfill\copy\matricesbox
}%
\ifdim\wd\onelinebox>\myboxwidth
\hbox to \myboxwidth{%
$\Pi_{17,28}$ spans $L_{16.13}$%
\hfil
$22222322322363632$%
}%
\box\matricesbox
\else
\hbox to \myboxwidth{%
\unhbox\onelinebox
}%
\fi
\else
\hbox to \myboxwidth{%
$\Pi_{17,28}$ spans $L_{16.13}$%
\hfil}%
\hbox to \myboxwidth{%
$22222322322363632$%
\hfil}%
\box\matricesbox
\fi
}%
\hfill\discretionary{}{}{}%
\setbox\matricesbox=\hbox{%
{$\left[\!\llap{\phantom{%
\begingroup \smaller\smaller\smaller% [inline block 95: 24 envs, 2646 chars -> data_tex | \begin{tabular}{@{}c@{}}% \phantom{0}\\\phantom{0}\\\phantom{0}...]
\endgroup%
}}\!\right]$}%
}%
\ifdim\wd\matricesbox>\halfwidth\myboxwidth=\hsize\else\myboxwidth=\halfwidth\fi
\vbox{%
\ifdim\myboxwidth=\hsize
\setbox\onelinebox=\hbox{%
\vbox{\hbox{%
$\Pi_{17,29}$ spans $L_{16.13}$%
}\hbox{%
$22222322363223632$%
}%
}%
\hfill\copy\matricesbox
}%
\ifdim\wd\onelinebox>\myboxwidth
\hbox to \myboxwidth{%
$\Pi_{17,29}$ spans $L_{16.13}$%
\hfil
$22222322363223632$%
}%
\box\matricesbox
\else
\hbox to \myboxwidth{%
\unhbox\onelinebox
}%
\fi
\else
\hbox to \myboxwidth{%
$\Pi_{17,29}$ spans $L_{16.13}$%
\hfil}%
\hbox to \myboxwidth{%
$22222322363223632$%
\hfil}%
\box\matricesbox
\fi
}%
\hfill\discretionary{}{}{}%
\setbox\matricesbox=\hbox{%
{$\left[\!\llap{\phantom{%
\begingroup \smaller\smaller\smaller% [inline block 96: 24 envs, 2647 chars -> data_tex | \begin{tabular}{@{}c@{}}% \phantom{0}\\\phantom{0}\\\phantom{0}...]
\endgroup%
}}\!\right]$}%
}%
\ifdim\wd\matricesbox>\halfwidth\myboxwidth=\hsize\else\myboxwidth=\halfwidth\fi
\vbox{%
\ifdim\myboxwidth=\hsize
\setbox\onelinebox=\hbox{%
\vbox{\hbox{%
$\Pi_{17,30}$ spans $L_{16.13}$%
}\hbox{%
$22222363222363632$%
}%
}%
\hfill\copy\matricesbox
}%
\ifdim\wd\onelinebox>\myboxwidth
\hbox to \myboxwidth{%
$\Pi_{17,30}$ spans $L_{16.13}$%
\hfil
$22222363222363632$%
}%
\box\matricesbox
\else
\hbox to \myboxwidth{%
\unhbox\onelinebox
}%
\fi
\else
\hbox to \myboxwidth{%
$\Pi_{17,30}$ spans $L_{16.13}$%
\hfil}%
\hbox to \myboxwidth{%
$22222363222363632$%
\hfil}%
\box\matricesbox
\fi
}%
\hfill\discretionary{}{}{}%
\setbox\matricesbox=\hbox{%
{$\left[\!\llap{\phantom{%
\begingroup \smaller\smaller\smaller% [inline block 97: 24 envs, 2643 chars -> data_tex | \begin{tabular}{@{}c@{}}% \phantom{0}\\\phantom{0}\\\phantom{0}...]
\endgroup%
}}\!\right]$}%
}%
\ifdim\wd\matricesbox>\halfwidth\myboxwidth=\hsize\else\myboxwidth=\halfwidth\fi
\vbox{%
\ifdim\myboxwidth=\hsize
\setbox\onelinebox=\hbox{%
\vbox{\hbox{%
$\Pi_{17,31}$ spans $L_{16.13}$%
}\hbox{%
$36322222232223636$%
}%
}%
\hfill\copy\matricesbox
}%
\ifdim\wd\onelinebox>\myboxwidth
\hbox to \myboxwidth{%
$\Pi_{17,31}$ spans $L_{16.13}$%
\hfil
$36322222232223636$%
}%
\box\matricesbox
\else
\hbox to \myboxwidth{%
\unhbox\onelinebox
}%
\fi
\else
\hbox to \myboxwidth{%
$\Pi_{17,31}$ spans $L_{16.13}$%
\hfil}%
\hbox to \myboxwidth{%
$36322222232223636$%
\hfil}%
\box\matricesbox
\fi
}%
\hfill\discretionary{}{}{}%
\setbox\matricesbox=\hbox{%
{$\left[\!\llap{\phantom{%
\begingroup \smaller\smaller\smaller% [inline block 98: 24 envs, 2642 chars -> data_tex | \begin{tabular}{@{}c@{}}% \phantom{0}\\\phantom{0}\\\phantom{0}...]
\endgroup%
}}\!\right]$}%
}%
\ifdim\wd\matricesbox>\halfwidth\myboxwidth=\hsize\else\myboxwidth=\halfwidth\fi
\vbox{%
\ifdim\myboxwidth=\hsize
\setbox\onelinebox=\hbox{%
\vbox{\hbox{%
$\Pi_{17,32}$ spans $L_{16.13}$%
}\hbox{%
$36322222232232236$%
}%
}%
\hfill\copy\matricesbox
}%
\ifdim\wd\onelinebox>\myboxwidth
\hbox to \myboxwidth{%
$\Pi_{17,32}$ spans $L_{16.13}$%
\hfil
$36322222232232236$%
}%
\box\matricesbox
\else
\hbox to \myboxwidth{%
\unhbox\onelinebox
}%
\fi
\else
\hbox to \myboxwidth{%
$\Pi_{17,32}$ spans $L_{16.13}$%
\hfil}%
\hbox to \myboxwidth{%
$36322222232232236$%
\hfil}%
\box\matricesbox
\fi
}%
\hfill\discretionary{}{}{}%
\setbox\matricesbox=\hbox{%
{$\left[\!\llap{\phantom{%
\begingroup \smaller\smaller\smaller% [inline block 99: 24 envs, 2645 chars -> data_tex | \begin{tabular}{@{}c@{}}% \phantom{0}\\\phantom{0}\\\phantom{0}...]
\endgroup%
}}\!\right]$}%
}%
\ifdim\wd\matricesbox>\halfwidth\myboxwidth=\hsize\else\myboxwidth=\halfwidth\fi
\vbox{%
\ifdim\myboxwidth=\hsize
\setbox\onelinebox=\hbox{%
\vbox{\hbox{%
$\Pi_{17,33}$ spans $L_{16.13}$%
}\hbox{%
$36322222232236322$%
}%
}%
\hfill\copy\matricesbox
}%
\ifdim\wd\onelinebox>\myboxwidth
\hbox to \myboxwidth{%
$\Pi_{17,33}$ spans $L_{16.13}$%
\hfil
$36322222232236322$%
}%
\box\matricesbox
\else
\hbox to \myboxwidth{%
\unhbox\onelinebox
}%
\fi
\else
\hbox to \myboxwidth{%
$\Pi_{17,33}$ spans $L_{16.13}$%
\hfil}%
\hbox to \myboxwidth{%
$36322222232236322$%
\hfil}%
\box\matricesbox
\fi
}%
\hfill\discretionary{}{}{}%
\setbox\matricesbox=\hbox{%
{$\left[\!\llap{\phantom{%
\begingroup \smaller\smaller\smaller% [inline block 100: 24 envs, 2645 chars -> data_tex | \begin{tabular}{@{}c@{}}% \phantom{0}\\\phantom{0}\\\phantom{0}...]
\endgroup%
}}\!\right]$}%
}%
\ifdim\wd\matricesbox>\halfwidth\myboxwidth=\hsize\else\myboxwidth=\halfwidth\fi
\vbox{%
\ifdim\myboxwidth=\hsize
\setbox\onelinebox=\hbox{%
\vbox{\hbox{%
$\Pi_{17,34}$ spans $L_{16.13}$%
}\hbox{%
$36322222236322236$%
}%
}%
\hfill\copy\matricesbox
}%
\ifdim\wd\onelinebox>\myboxwidth
\hbox to \myboxwidth{%
$\Pi_{17,34}$ spans $L_{16.13}$%
\hfil
$36322222236322236$%
}%
\box\matricesbox
\else
\hbox to \myboxwidth{%
\unhbox\onelinebox
}%
\fi
\else
\hbox to \myboxwidth{%
$\Pi_{17,34}$ spans $L_{16.13}$%
\hfil}%
\hbox to \myboxwidth{%
$36322222236322236$%
\hfil}%
\box\matricesbox
\fi
}%
\hfill\discretionary{}{}{}%
\setbox\matricesbox=\hbox{%
{$\left[\!\llap{\phantom{%
\begingroup \smaller\smaller\smaller% [inline block 101: 24 envs, 2646 chars -> data_tex | \begin{tabular}{@{}c@{}}% \phantom{0}\\\phantom{0}\\\phantom{0}...]
\endgroup%
}}\!\right]$}%
}%
\ifdim\wd\matricesbox>\halfwidth\myboxwidth=\hsize\else\myboxwidth=\halfwidth\fi
\vbox{%
\ifdim\myboxwidth=\hsize
\setbox\onelinebox=\hbox{%
\vbox{\hbox{%
$\Pi_{17,35}$ spans $L_{16.13}$%
}\hbox{%
$36322222322223636$%
}%
}%
\hfill\copy\matricesbox
}%
\ifdim\wd\onelinebox>\myboxwidth
\hbox to \myboxwidth{%
$\Pi_{17,35}$ spans $L_{16.13}$%
\hfil
$36322222322223636$%
}%
\box\matricesbox
\else
\hbox to \myboxwidth{%
\unhbox\onelinebox
}%
\fi
\else
\hbox to \myboxwidth{%
$\Pi_{17,35}$ spans $L_{16.13}$%
\hfil}%
\hbox to \myboxwidth{%
$36322222322223636$%
\hfil}%
\box\matricesbox
\fi
}%
\hfill\discretionary{}{}{}%
\setbox\matricesbox=\hbox{%
{$\left[\!\llap{\phantom{%
\begingroup \smaller\smaller\smaller% [inline block 102: 24 envs, 2645 chars -> data_tex | \begin{tabular}{@{}c@{}}% \phantom{0}\\\phantom{0}\\\phantom{0}...]
\endgroup%
}}\!\right]$}%
}%
\ifdim\wd\matricesbox>\halfwidth\myboxwidth=\hsize\else\myboxwidth=\halfwidth\fi
\vbox{%
\ifdim\myboxwidth=\hsize
\setbox\onelinebox=\hbox{%
\vbox{\hbox{%
$\Pi_{17,36}$ spans $L_{16.13}$%
}\hbox{%
$36322222322232236$%
}%
}%
\hfill\copy\matricesbox
}%
\ifdim\wd\onelinebox>\myboxwidth
\hbox to \myboxwidth{%
$\Pi_{17,36}$ spans $L_{16.13}$%
\hfil
$36322222322232236$%
}%
\box\matricesbox
\else
\hbox to \myboxwidth{%
\unhbox\onelinebox
}%
\fi
\else
\hbox to \myboxwidth{%
$\Pi_{17,36}$ spans $L_{16.13}$%
\hfil}%
\hbox to \myboxwidth{%
$36322222322232236$%
\hfil}%
\box\matricesbox
\fi
}%
\hfill\discretionary{}{}{}%
\setbox\matricesbox=\hbox{%
{$\left[\!\llap{\phantom{%
\begingroup \smaller\smaller\smaller% [inline block 103: 24 envs, 2648 chars -> data_tex | \begin{tabular}{@{}c@{}}% \phantom{0}\\\phantom{0}\\\phantom{0}...]
\endgroup%
}}\!\right]$}%
}%
\ifdim\wd\matricesbox>\halfwidth\myboxwidth=\hsize\else\myboxwidth=\halfwidth\fi
\vbox{%
\ifdim\myboxwidth=\hsize
\setbox\onelinebox=\hbox{%
\vbox{\hbox{%
$\Pi_{17,37}$ spans $L_{16.13}$%
}\hbox{%
$36322222322236322$%
}%
}%
\hfill\copy\matricesbox
}%
\ifdim\wd\onelinebox>\myboxwidth
\hbox to \myboxwidth{%
$\Pi_{17,37}$ spans $L_{16.13}$%
\hfil
$36322222322236322$%
}%
\box\matricesbox
\else
\hbox to \myboxwidth{%
\unhbox\onelinebox
}%
\fi
\else
\hbox to \myboxwidth{%
$\Pi_{17,37}$ spans $L_{16.13}$%
\hfil}%
\hbox to \myboxwidth{%
$36322222322236322$%
\hfil}%
\box\matricesbox
\fi
}%
\hfill\discretionary{}{}{}%
\setbox\matricesbox=\hbox{%
{$\left[\!\llap{\phantom{%
\begingroup \smaller\smaller\smaller% [inline block 104: 24 envs, 2648 chars -> data_tex | \begin{tabular}{@{}c@{}}% \phantom{0}\\\phantom{0}\\\phantom{0}...]
\endgroup%
}}\!\right]$}%
}%
\ifdim\wd\matricesbox>\halfwidth\myboxwidth=\hsize\else\myboxwidth=\halfwidth\fi
\vbox{%
\ifdim\myboxwidth=\hsize
\setbox\onelinebox=\hbox{%
\vbox{\hbox{%
$\Pi_{17,38}$ spans $L_{16.13}$%
}\hbox{%
$36322222322322236$%
}%
}%
\hfill\copy\matricesbox
}%
\ifdim\wd\onelinebox>\myboxwidth
\hbox to \myboxwidth{%
$\Pi_{17,38}$ spans $L_{16.13}$%
\hfil
$36322222322322236$%
}%
\box\matricesbox
\else
\hbox to \myboxwidth{%
\unhbox\onelinebox
}%
\fi
\else
\hbox to \myboxwidth{%
$\Pi_{17,38}$ spans $L_{16.13}$%
\hfil}%
\hbox to \myboxwidth{%
$36322222322322236$%
\hfil}%
\box\matricesbox
\fi
}%
\hfill\discretionary{}{}{}%
\setbox\matricesbox=\hbox{%
{$\left[\!\llap{\phantom{%
\begingroup \smaller\smaller\smaller% [inline block 105: 24 envs, 2650 chars -> data_tex | \begin{tabular}{@{}c@{}}% \phantom{0}\\\phantom{0}\\\phantom{0}...]
\endgroup%
}}\!\right]$}%
}%
\ifdim\wd\matricesbox>\halfwidth\myboxwidth=\hsize\else\myboxwidth=\halfwidth\fi
\vbox{%
\ifdim\myboxwidth=\hsize
\setbox\onelinebox=\hbox{%
\vbox{\hbox{%
$\Pi_{17,39}$ spans $L_{16.13}$%
}\hbox{%
$36322222322322322$%
}%
}%
\hfill\copy\matricesbox
}%
\ifdim\wd\onelinebox>\myboxwidth
\hbox to \myboxwidth{%
$\Pi_{17,39}$ spans $L_{16.13}$%
\hfil
$36322222322322322$%
}%
\box\matricesbox
\else
\hbox to \myboxwidth{%
\unhbox\onelinebox
}%
\fi
\else
\hbox to \myboxwidth{%
$\Pi_{17,39}$ spans $L_{16.13}$%
\hfil}%
\hbox to \myboxwidth{%
$36322222322322322$%
\hfil}%
\box\matricesbox
\fi
}%
\hfill\discretionary{}{}{}%
\setbox\matricesbox=\hbox{%
{$\left[\!\llap{\phantom{%
\begingroup \smaller\smaller\smaller% [inline block 106: 24 envs, 2648 chars -> data_tex | \begin{tabular}{@{}c@{}}% \phantom{0}\\\phantom{0}\\\phantom{0}...]
\endgroup%
}}\!\right]$}%
}%
\ifdim\wd\matricesbox>\halfwidth\myboxwidth=\hsize\else\myboxwidth=\halfwidth\fi
\vbox{%
\ifdim\myboxwidth=\hsize
\setbox\onelinebox=\hbox{%
\vbox{\hbox{%
$\Pi_{17,40}$ spans $L_{16.13}$%
}\hbox{%
$36322222322363222$%
}%
}%
\hfill\copy\matricesbox
}%
\ifdim\wd\onelinebox>\myboxwidth
\hbox to \myboxwidth{%
$\Pi_{17,40}$ spans $L_{16.13}$%
\hfil
$36322222322363222$%
}%
\box\matricesbox
\else
\hbox to \myboxwidth{%
\unhbox\onelinebox
}%
\fi
\else
\hbox to \myboxwidth{%
$\Pi_{17,40}$ spans $L_{16.13}$%
\hfil}%
\hbox to \myboxwidth{%
$36322222322363222$%
\hfil}%
\box\matricesbox
\fi
}%
\hfill\discretionary{}{}{}%
\setbox\matricesbox=\hbox{%
{$\left[\!\llap{\phantom{%
\begingroup \smaller\smaller\smaller% [inline block 107: 24 envs, 2645 chars -> data_tex | \begin{tabular}{@{}c@{}}% \phantom{0}\\\phantom{0}\\\phantom{0}...]
\endgroup%
}}\!\right]$}%
}%
\ifdim\wd\matricesbox>\halfwidth\myboxwidth=\hsize\else\myboxwidth=\halfwidth\fi
\vbox{%
\ifdim\myboxwidth=\hsize
\setbox\onelinebox=\hbox{%
\vbox{\hbox{%
$\Pi_{17,41}$ spans $L_{16.13}$%
}\hbox{%
$36322222363222236$%
}%
}%
\hfill\copy\matricesbox
}%
\ifdim\wd\onelinebox>\myboxwidth
\hbox to \myboxwidth{%
$\Pi_{17,41}$ spans $L_{16.13}$%
\hfil
$36322222363222236$%
}%
\box\matricesbox
\else
\hbox to \myboxwidth{%
\unhbox\onelinebox
}%
\fi
\else
\hbox to \myboxwidth{%
$\Pi_{17,41}$ spans $L_{16.13}$%
\hfil}%
\hbox to \myboxwidth{%
$36322222363222236$%
\hfil}%
\box\matricesbox
\fi
}%
\hfill\discretionary{}{}{}%
\setbox\matricesbox=\hbox{%
{$\left[\!\llap{\phantom{%
\begingroup \smaller\smaller\smaller% [inline block 108: 24 envs, 2647 chars -> data_tex | \begin{tabular}{@{}c@{}}% \phantom{0}\\\phantom{0}\\\phantom{0}...]
\endgroup%
}}\!\right]$}%
}%
\ifdim\wd\matricesbox>\halfwidth\myboxwidth=\hsize\else\myboxwidth=\halfwidth\fi
\vbox{%
\ifdim\myboxwidth=\hsize
\setbox\onelinebox=\hbox{%
\vbox{\hbox{%
$\Pi_{17,42}$ spans $L_{16.13}$%
}\hbox{%
$36322222363222322$%
}%
}%
\hfill\copy\matricesbox
}%
\ifdim\wd\onelinebox>\myboxwidth
\hbox to \myboxwidth{%
$\Pi_{17,42}$ spans $L_{16.13}$%
\hfil
$36322222363222322$%
}%
\box\matricesbox
\else
\hbox to \myboxwidth{%
\unhbox\onelinebox
}%
\fi
\else
\hbox to \myboxwidth{%
$\Pi_{17,42}$ spans $L_{16.13}$%
\hfil}%
\hbox to \myboxwidth{%
$36322222363222322$%
\hfil}%
\box\matricesbox
\fi
}%
\hfill\discretionary{}{}{}%
\setbox\matricesbox=\hbox{%
{$\left[\!\llap{\phantom{%
\begingroup \smaller\smaller\smaller% [inline block 109: 24 envs, 2645 chars -> data_tex | \begin{tabular}{@{}c@{}}% \phantom{0}\\\phantom{0}\\\phantom{0}...]
\endgroup%
}}\!\right]$}%
}%
\ifdim\wd\matricesbox>\halfwidth\myboxwidth=\hsize\else\myboxwidth=\halfwidth\fi
\vbox{%
\ifdim\myboxwidth=\hsize
\setbox\onelinebox=\hbox{%
\vbox{\hbox{%
$\Pi_{17,43}$ spans $L_{16.13}$%
}\hbox{%
$36322222363223222$%
}%
}%
\hfill\copy\matricesbox
}%
\ifdim\wd\onelinebox>\myboxwidth
\hbox to \myboxwidth{%
$\Pi_{17,43}$ spans $L_{16.13}$%
\hfil
$36322222363223222$%
}%
\box\matricesbox
\else
\hbox to \myboxwidth{%
\unhbox\onelinebox
}%
\fi
\else
\hbox to \myboxwidth{%
$\Pi_{17,43}$ spans $L_{16.13}$%
\hfil}%
\hbox to \myboxwidth{%
$36322222363223222$%
\hfil}%
\box\matricesbox
\fi
}%
\hfill\discretionary{}{}{}%
\setbox\matricesbox=\hbox{%
{$\left[\!\llap{\phantom{%
\begingroup \smaller\smaller\smaller% [inline block 110: 24 envs, 2647 chars -> data_tex | \begin{tabular}{@{}c@{}}% \phantom{0}\\\phantom{0}\\\phantom{0}...]
\endgroup%
}}\!\right]$}%
}%
\ifdim\wd\matricesbox>\halfwidth\myboxwidth=\hsize\else\myboxwidth=\halfwidth\fi
\vbox{%
\ifdim\myboxwidth=\hsize
\setbox\onelinebox=\hbox{%
\vbox{\hbox{%
$\Pi_{17,44}$ spans $L_{16.13}$%
}\hbox{%
$36322222363632222$%
}%
}%
\hfill\copy\matricesbox
}%
\ifdim\wd\onelinebox>\myboxwidth
\hbox to \myboxwidth{%
$\Pi_{17,44}$ spans $L_{16.13}$%
\hfil
$36322222363632222$%
}%
\box\matricesbox
\else
\hbox to \myboxwidth{%
\unhbox\onelinebox
}%
\fi
\else
\hbox to \myboxwidth{%
$\Pi_{17,44}$ spans $L_{16.13}$%
\hfil}%
\hbox to \myboxwidth{%
$36322222363632222$%
\hfil}%
\box\matricesbox
\fi
}%
\hfill\discretionary{}{}{}%
\setbox\matricesbox=\hbox{%
{$\left[\!\llap{\phantom{%
\begingroup \smaller\smaller\smaller% [inline block 111: 24 envs, 2643 chars -> data_tex | \begin{tabular}{@{}c@{}}% \phantom{0}\\\phantom{0}\\\phantom{0}...]
\endgroup%
}}\!\right]$}%
}%
\ifdim\wd\matricesbox>\halfwidth\myboxwidth=\hsize\else\myboxwidth=\halfwidth\fi
\vbox{%
\ifdim\myboxwidth=\hsize
\setbox\onelinebox=\hbox{%
\vbox{\hbox{%
$\Pi_{17,45}$ spans $L_{16.13}$%
}\hbox{%
$36322223222223636$%
}%
}%
\hfill\copy\matricesbox
}%
\ifdim\wd\onelinebox>\myboxwidth
\hbox to \myboxwidth{%
$\Pi_{17,45}$ spans $L_{16.13}$%
\hfil
$36322223222223636$%
}%
\box\matricesbox
\else
\hbox to \myboxwidth{%
\unhbox\onelinebox
}%
\fi
\else
\hbox to \myboxwidth{%
$\Pi_{17,45}$ spans $L_{16.13}$%
\hfil}%
\hbox to \myboxwidth{%
$36322223222223636$%
\hfil}%
\box\matricesbox
\fi
}%
\hfill\discretionary{}{}{}%
\setbox\matricesbox=\hbox{%
{$\left[\!\llap{\phantom{%
\begingroup \smaller\smaller\smaller% [inline block 112: 24 envs, 2642 chars -> data_tex | \begin{tabular}{@{}c@{}}% \phantom{0}\\\phantom{0}\\\phantom{0}...]
\endgroup%
}}\!\right]$}%
}%
\ifdim\wd\matricesbox>\halfwidth\myboxwidth=\hsize\else\myboxwidth=\halfwidth\fi
\vbox{%
\ifdim\myboxwidth=\hsize
\setbox\onelinebox=\hbox{%
\vbox{\hbox{%
$\Pi_{17,46}$ spans $L_{16.13}$%
}\hbox{%
$36322223222232236$%
}%
}%
\hfill\copy\matricesbox
}%
\ifdim\wd\onelinebox>\myboxwidth
\hbox to \myboxwidth{%
$\Pi_{17,46}$ spans $L_{16.13}$%
\hfil
$36322223222232236$%
}%
\box\matricesbox
\else
\hbox to \myboxwidth{%
\unhbox\onelinebox
}%
\fi
\else
\hbox to \myboxwidth{%
$\Pi_{17,46}$ spans $L_{16.13}$%
\hfil}%
\hbox to \myboxwidth{%
$36322223222232236$%
\hfil}%
\box\matricesbox
\fi
}%
\hfill\discretionary{}{}{}%
\setbox\matricesbox=\hbox{%
{$\left[\!\llap{\phantom{%
\begingroup \smaller\smaller\smaller% [inline block 113: 24 envs, 2645 chars -> data_tex | \begin{tabular}{@{}c@{}}% \phantom{0}\\\phantom{0}\\\phantom{0}...]
\endgroup%
}}\!\right]$}%
}%
\ifdim\wd\matricesbox>\halfwidth\myboxwidth=\hsize\else\myboxwidth=\halfwidth\fi
\vbox{%
\ifdim\myboxwidth=\hsize
\setbox\onelinebox=\hbox{%
\vbox{\hbox{%
$\Pi_{17,47}$ spans $L_{16.13}$%
}\hbox{%
$36322223222322236$%
}%
}%
\hfill\copy\matricesbox
}%
\ifdim\wd\onelinebox>\myboxwidth
\hbox to \myboxwidth{%
$\Pi_{17,47}$ spans $L_{16.13}$%
\hfil
$36322223222322236$%
}%
\box\matricesbox
\else
\hbox to \myboxwidth{%
\unhbox\onelinebox
}%
\fi
\else
\hbox to \myboxwidth{%
$\Pi_{17,47}$ spans $L_{16.13}$%
\hfil}%
\hbox to \myboxwidth{%
$36322223222322236$%
\hfil}%
\box\matricesbox
\fi
}%
\hfill\discretionary{}{}{}%
\setbox\matricesbox=\hbox{%
{$\left[\!\llap{\phantom{%
\begingroup \smaller\smaller\smaller% [inline block 114: 24 envs, 2648 chars -> data_tex | \begin{tabular}{@{}c@{}}% \phantom{0}\\\phantom{0}\\\phantom{0}...]
\endgroup%
}}\!\right]$}%
}%
\ifdim\wd\matricesbox>\halfwidth\myboxwidth=\hsize\else\myboxwidth=\halfwidth\fi
\vbox{%
\ifdim\myboxwidth=\hsize
\setbox\onelinebox=\hbox{%
\vbox{\hbox{%
$\Pi_{17,48}$ spans $L_{16.13}$%
}\hbox{%
$36322223222322322$%
}%
}%
\hfill\copy\matricesbox
}%
\ifdim\wd\onelinebox>\myboxwidth
\hbox to \myboxwidth{%
$\Pi_{17,48}$ spans $L_{16.13}$%
\hfil
$36322223222322322$%
}%
\box\matricesbox
\else
\hbox to \myboxwidth{%
\unhbox\onelinebox
}%
\fi
\else
\hbox to \myboxwidth{%
$\Pi_{17,48}$ spans $L_{16.13}$%
\hfil}%
\hbox to \myboxwidth{%
$36322223222322322$%
\hfil}%
\box\matricesbox
\fi
}%
\hfill\discretionary{}{}{}%
\setbox\matricesbox=\hbox{%
{$\left[\!\llap{\phantom{%
\begingroup \smaller\smaller\smaller% [inline block 115: 24 envs, 2646 chars -> data_tex | \begin{tabular}{@{}c@{}}% \phantom{0}\\\phantom{0}\\\phantom{0}...]
\endgroup%
}}\!\right]$}%
}%
\ifdim\wd\matricesbox>\halfwidth\myboxwidth=\hsize\else\myboxwidth=\halfwidth\fi
\vbox{%
\ifdim\myboxwidth=\hsize
\setbox\onelinebox=\hbox{%
\vbox{\hbox{%
$\Pi_{17,49}$ spans $L_{16.13}$%
}\hbox{%
$36322223222363222$%
}%
}%
\hfill\copy\matricesbox
}%
\ifdim\wd\onelinebox>\myboxwidth
\hbox to \myboxwidth{%
$\Pi_{17,49}$ spans $L_{16.13}$%
\hfil
$36322223222363222$%
}%
\box\matricesbox
\else
\hbox to \myboxwidth{%
\unhbox\onelinebox
}%
\fi
\else
\hbox to \myboxwidth{%
$\Pi_{17,49}$ spans $L_{16.13}$%
\hfil}%
\hbox to \myboxwidth{%
$36322223222363222$%
\hfil}%
\box\matricesbox
\fi
}%
\hfill\discretionary{}{}{}%
\setbox\matricesbox=\hbox{%
{$\left[\!\llap{\phantom{%
\begingroup \smaller\smaller\smaller% [inline block 116: 24 envs, 2645 chars -> data_tex | \begin{tabular}{@{}c@{}}% \phantom{0}\\\phantom{0}\\\phantom{0}...]
\endgroup%
}}\!\right]$}%
}%
\ifdim\wd\matricesbox>\halfwidth\myboxwidth=\hsize\else\myboxwidth=\halfwidth\fi
\vbox{%
\ifdim\myboxwidth=\hsize
\setbox\onelinebox=\hbox{%
\vbox{\hbox{%
$\Pi_{17,50}$ spans $L_{16.13}$%
}\hbox{%
$36322223223222322$%
}%
}%
\hfill\copy\matricesbox
}%
\ifdim\wd\onelinebox>\myboxwidth
\hbox to \myboxwidth{%
$\Pi_{17,50}$ spans $L_{16.13}$%
\hfil
$36322223223222322$%
}%
\box\matricesbox
\else
\hbox to \myboxwidth{%
\unhbox\onelinebox
}%
\fi
\else
\hbox to \myboxwidth{%
$\Pi_{17,50}$ spans $L_{16.13}$%
\hfil}%
\hbox to \myboxwidth{%
$36322223223222322$%
\hfil}%
\box\matricesbox
\fi
}%
\hfill\discretionary{}{}{}%
\setbox\matricesbox=\hbox{%
{$\left[\!\llap{\phantom{%
\begingroup \smaller\smaller\smaller% [inline block 117: 24 envs, 2643 chars -> data_tex | \begin{tabular}{@{}c@{}}% \phantom{0}\\\phantom{0}\\\phantom{0}...]
\endgroup%
}}\!\right]$}%
}%
\ifdim\wd\matricesbox>\halfwidth\myboxwidth=\hsize\else\myboxwidth=\halfwidth\fi
\vbox{%
\ifdim\myboxwidth=\hsize
\setbox\onelinebox=\hbox{%
\vbox{\hbox{%
$\Pi_{17,51}$ spans $L_{16.13}$%
}\hbox{%
$36322223223223222$%
}%
}%
\hfill\copy\matricesbox
}%
\ifdim\wd\onelinebox>\myboxwidth
\hbox to \myboxwidth{%
$\Pi_{17,51}$ spans $L_{16.13}$%
\hfil
$36322223223223222$%
}%
\box\matricesbox
\else
\hbox to \myboxwidth{%
\unhbox\onelinebox
}%
\fi
\else
\hbox to \myboxwidth{%
$\Pi_{17,51}$ spans $L_{16.13}$%
\hfil}%
\hbox to \myboxwidth{%
$36322223223223222$%
\hfil}%
\box\matricesbox
\fi
}%
\hfill\discretionary{}{}{}%
\setbox\matricesbox=\hbox{%
{$\left[\!\llap{\phantom{%
\begingroup \smaller\smaller\smaller% [inline block 118: 24 envs, 2645 chars -> data_tex | \begin{tabular}{@{}c@{}}% \phantom{0}\\\phantom{0}\\\phantom{0}...]
\endgroup%
}}\!\right]$}%
}%
\ifdim\wd\matricesbox>\halfwidth\myboxwidth=\hsize\else\myboxwidth=\halfwidth\fi
\vbox{%
\ifdim\myboxwidth=\hsize
\setbox\onelinebox=\hbox{%
\vbox{\hbox{%
$\Pi_{17,52}$ spans $L_{16.13}$%
}\hbox{%
$36322223223632222$%
}%
}%
\hfill\copy\matricesbox
}%
\ifdim\wd\onelinebox>\myboxwidth
\hbox to \myboxwidth{%
$\Pi_{17,52}$ spans $L_{16.13}$%
\hfil
$36322223223632222$%
}%
\box\matricesbox
\else
\hbox to \myboxwidth{%
\unhbox\onelinebox
}%
\fi
\else
\hbox to \myboxwidth{%
$\Pi_{17,52}$ spans $L_{16.13}$%
\hfil}%
\hbox to \myboxwidth{%
$36322223223632222$%
\hfil}%
\box\matricesbox
\fi
}%
\hfill\discretionary{}{}{}%
\setbox\matricesbox=\hbox{%
{$\left[\!\llap{\phantom{%
\begingroup \smaller\smaller\smaller% [inline block 119: 24 envs, 2648 chars -> data_tex | \begin{tabular}{@{}c@{}}% \phantom{0}\\\phantom{0}\\\phantom{0}...]
\endgroup%
}}\!\right]$}%
}%
\ifdim\wd\matricesbox>\halfwidth\myboxwidth=\hsize\else\myboxwidth=\halfwidth\fi
\vbox{%
\ifdim\myboxwidth=\hsize
\setbox\onelinebox=\hbox{%
\vbox{\hbox{%
$\Pi_{17,53}$ spans $L_{16.13}$%
}\hbox{%
$36322223632222322$%
}%
}%
\hfill\copy\matricesbox
}%
\ifdim\wd\onelinebox>\myboxwidth
\hbox to \myboxwidth{%
$\Pi_{17,53}$ spans $L_{16.13}$%
\hfil
$36322223632222322$%
}%
\box\matricesbox
\else
\hbox to \myboxwidth{%
\unhbox\onelinebox
}%
\fi
\else
\hbox to \myboxwidth{%
$\Pi_{17,53}$ spans $L_{16.13}$%
\hfil}%
\hbox to \myboxwidth{%
$36322223632222322$%
\hfil}%
\box\matricesbox
\fi
}%
\hfill\discretionary{}{}{}%
\setbox\matricesbox=\hbox{%
{$\left[\!\llap{\phantom{%
\begingroup \smaller\smaller\smaller% [inline block 120: 24 envs, 2643 chars -> data_tex | \begin{tabular}{@{}c@{}}% \phantom{0}\\\phantom{0}\\\phantom{0}...]
\endgroup%
}}\!\right]$}%
}%
\ifdim\wd\matricesbox>\halfwidth\myboxwidth=\hsize\else\myboxwidth=\halfwidth\fi
\vbox{%
\ifdim\myboxwidth=\hsize
\setbox\onelinebox=\hbox{%
\vbox{\hbox{%
$\Pi_{17,54}$ spans $L_{16.13}$%
}\hbox{%
$36322232222232236$%
}%
}%
\hfill\copy\matricesbox
}%
\ifdim\wd\onelinebox>\myboxwidth
\hbox to \myboxwidth{%
$\Pi_{17,54}$ spans $L_{16.13}$%
\hfil
$36322232222232236$%
}%
\box\matricesbox
\else
\hbox to \myboxwidth{%
\unhbox\onelinebox
}%
\fi
\else
\hbox to \myboxwidth{%
$\Pi_{17,54}$ spans $L_{16.13}$%
\hfil}%
\hbox to \myboxwidth{%
$36322232222232236$%
\hfil}%
\box\matricesbox
\fi
}%
\hfill\discretionary{}{}{}%
\setbox\matricesbox=\hbox{%
{$\left[\!\llap{\phantom{%
\begingroup \smaller\smaller\smaller% [inline block 121: 24 envs, 2650 chars -> data_tex | \begin{tabular}{@{}c@{}}% \phantom{0}\\\phantom{0}\\\phantom{0}...]
\endgroup%
}}\!\right]$}%
}%
\ifdim\wd\matricesbox>\halfwidth\myboxwidth=\hsize\else\myboxwidth=\halfwidth\fi
\vbox{%
\ifdim\myboxwidth=\hsize
\setbox\onelinebox=\hbox{%
\vbox{\hbox{%
$\Pi_{17,55}$ spans $L_{16.13}$%
}\hbox{%
$36322232222322322$%
}%
}%
\hfill\copy\matricesbox
}%
\ifdim\wd\onelinebox>\myboxwidth
\hbox to \myboxwidth{%
$\Pi_{17,55}$ spans $L_{16.13}$%
\hfil
$36322232222322322$%
}%
\box\matricesbox
\else
\hbox to \myboxwidth{%
\unhbox\onelinebox
}%
\fi
\else
\hbox to \myboxwidth{%
$\Pi_{17,55}$ spans $L_{16.13}$%
\hfil}%
\hbox to \myboxwidth{%
$36322232222322322$%
\hfil}%
\box\matricesbox
\fi
}%
\hfill\discretionary{}{}{}%
\setbox\matricesbox=\hbox{%
{$\left[\!\llap{\phantom{%
\begingroup \smaller\smaller\smaller% [inline block 122: 24 envs, 2648 chars -> data_tex | \begin{tabular}{@{}c@{}}% \phantom{0}\\\phantom{0}\\\phantom{0}...]
\endgroup%
}}\!\right]$}%
}%
\ifdim\wd\matricesbox>\halfwidth\myboxwidth=\hsize\else\myboxwidth=\halfwidth\fi
\vbox{%
\ifdim\myboxwidth=\hsize
\setbox\onelinebox=\hbox{%
\vbox{\hbox{%
$\Pi_{17,56}$ spans $L_{16.13}$%
}\hbox{%
$36322232222363222$%
}%
}%
\hfill\copy\matricesbox
}%
\ifdim\wd\onelinebox>\myboxwidth
\hbox to \myboxwidth{%
$\Pi_{17,56}$ spans $L_{16.13}$%
\hfil
$36322232222363222$%
}%
\box\matricesbox
\else
\hbox to \myboxwidth{%
\unhbox\onelinebox
}%
\fi
\else
\hbox to \myboxwidth{%
$\Pi_{17,56}$ spans $L_{16.13}$%
\hfil}%
\hbox to \myboxwidth{%
$36322232222363222$%
\hfil}%
\box\matricesbox
\fi
}%
\hfill\discretionary{}{}{}%
\setbox\matricesbox=\hbox{%
{$\left[\!\llap{\phantom{%
\begingroup \smaller\smaller\smaller% [inline block 123: 24 envs, 2647 chars -> data_tex | \begin{tabular}{@{}c@{}}% \phantom{0}\\\phantom{0}\\\phantom{0}...]
\endgroup%
}}\!\right]$}%
}%
\ifdim\wd\matricesbox>\halfwidth\myboxwidth=\hsize\else\myboxwidth=\halfwidth\fi
\vbox{%
\ifdim\myboxwidth=\hsize
\setbox\onelinebox=\hbox{%
\vbox{\hbox{%
$\Pi_{17,57}$ spans $L_{16.13}$%
}\hbox{%
$36322232223222322$%
}%
}%
\hfill\copy\matricesbox
}%
\ifdim\wd\onelinebox>\myboxwidth
\hbox to \myboxwidth{%
$\Pi_{17,57}$ spans $L_{16.13}$%
\hfil
$36322232223222322$%
}%
\box\matricesbox
\else
\hbox to \myboxwidth{%
\unhbox\onelinebox
}%
\fi
\else
\hbox to \myboxwidth{%
$\Pi_{17,57}$ spans $L_{16.13}$%
\hfil}%
\hbox to \myboxwidth{%
$36322232223222322$%
\hfil}%
\box\matricesbox
\fi
}%
\hfill\discretionary{}{}{}%
\setbox\matricesbox=\hbox{%
{$\left[\!\llap{\phantom{%
\begingroup \smaller\smaller\smaller% [inline block 124: 24 envs, 2645 chars -> data_tex | \begin{tabular}{@{}c@{}}% \phantom{0}\\\phantom{0}\\\phantom{0}...]
\endgroup%
}}\!\right]$}%
}%
\ifdim\wd\matricesbox>\halfwidth\myboxwidth=\hsize\else\myboxwidth=\halfwidth\fi
\vbox{%
\ifdim\myboxwidth=\hsize
\setbox\onelinebox=\hbox{%
\vbox{\hbox{%
$\Pi_{17,58}$ spans $L_{16.13}$%
}\hbox{%
$36322232223223222$%
}%
}%
\hfill\copy\matricesbox
}%
\ifdim\wd\onelinebox>\myboxwidth
\hbox to \myboxwidth{%
$\Pi_{17,58}$ spans $L_{16.13}$%
\hfil
$36322232223223222$%
}%
\box\matricesbox
\else
\hbox to \myboxwidth{%
\unhbox\onelinebox
}%
\fi
\else
\hbox to \myboxwidth{%
$\Pi_{17,58}$ spans $L_{16.13}$%
\hfil}%
\hbox to \myboxwidth{%
$36322232223223222$%
\hfil}%
\box\matricesbox
\fi
}%
\hfill\discretionary{}{}{}%
\setbox\matricesbox=\hbox{%
{$\left[\!\llap{\phantom{%
\begingroup \smaller\smaller\smaller% [inline block 125: 24 envs, 2650 chars -> data_tex | \begin{tabular}{@{}c@{}}% \phantom{0}\\\phantom{0}\\\phantom{0}...]
\endgroup%
}}\!\right]$}%
}%
\ifdim\wd\matricesbox>\halfwidth\myboxwidth=\hsize\else\myboxwidth=\halfwidth\fi
\vbox{%
\ifdim\myboxwidth=\hsize
\setbox\onelinebox=\hbox{%
\vbox{\hbox{%
$\Pi_{17,59}$ spans $L_{16.13}$%
}\hbox{%
$36322232232222322$%
}%
}%
\hfill\copy\matricesbox
}%
\ifdim\wd\onelinebox>\myboxwidth
\hbox to \myboxwidth{%
$\Pi_{17,59}$ spans $L_{16.13}$%
\hfil
$36322232232222322$%
}%
\box\matricesbox
\else
\hbox to \myboxwidth{%
\unhbox\onelinebox
}%
\fi
\else
\hbox to \myboxwidth{%
$\Pi_{17,59}$ spans $L_{16.13}$%
\hfil}%
\hbox to \myboxwidth{%
$36322232232222322$%
\hfil}%
\box\matricesbox
\fi
}%
\hfill\discretionary{}{}{}%
\setbox\matricesbox=\hbox{%
{$\left[\!\llap{\phantom{%
\begingroup \smaller\smaller\smaller% [inline block 126: 24 envs, 2648 chars -> data_tex | \begin{tabular}{@{}c@{}}% \phantom{0}\\\phantom{0}\\\phantom{0}...]
\endgroup%
}}\!\right]$}%
}%
\ifdim\wd\matricesbox>\halfwidth\myboxwidth=\hsize\else\myboxwidth=\halfwidth\fi
\vbox{%
\ifdim\myboxwidth=\hsize
\setbox\onelinebox=\hbox{%
\vbox{\hbox{%
$\Pi_{17,60}$ spans $L_{16.13}$%
}\hbox{%
$36322232232223222$%
}%
}%
\hfill\copy\matricesbox
}%
\ifdim\wd\onelinebox>\myboxwidth
\hbox to \myboxwidth{%
$\Pi_{17,60}$ spans $L_{16.13}$%
\hfil
$36322232232223222$%
}%
\box\matricesbox
\else
\hbox to \myboxwidth{%
\unhbox\onelinebox
}%
\fi
\else
\hbox to \myboxwidth{%
$\Pi_{17,60}$ spans $L_{16.13}$%
\hfil}%
\hbox to \myboxwidth{%
$36322232232223222$%
\hfil}%
\box\matricesbox
\fi
}%
\hfill\discretionary{}{}{}%
\setbox\matricesbox=\hbox{%
{$\left[\!\llap{\phantom{%
\begingroup \smaller\smaller\smaller% [inline block 127: 24 envs, 2650 chars -> data_tex | \begin{tabular}{@{}c@{}}% \phantom{0}\\\phantom{0}\\\phantom{0}...]
\endgroup%
}}\!\right]$}%
}%
\ifdim\wd\matricesbox>\halfwidth\myboxwidth=\hsize\else\myboxwidth=\halfwidth\fi
\vbox{%
\ifdim\myboxwidth=\hsize
\setbox\onelinebox=\hbox{%
\vbox{\hbox{%
$\Pi_{17,61}$ spans $L_{16.13}$%
}\hbox{%
$36322232232232222$%
}%
}%
\hfill\copy\matricesbox
}%
\ifdim\wd\onelinebox>\myboxwidth
\hbox to \myboxwidth{%
$\Pi_{17,61}$ spans $L_{16.13}$%
\hfil
$36322232232232222$%
}%
\box\matricesbox
\else
\hbox to \myboxwidth{%
\unhbox\onelinebox
}%
\fi
\else
\hbox to \myboxwidth{%
$\Pi_{17,61}$ spans $L_{16.13}$%
\hfil}%
\hbox to \myboxwidth{%
$36322232232232222$%
\hfil}%
\box\matricesbox
\fi
}%
\hfill\discretionary{}{}{}%
\setbox\matricesbox=\hbox{%
{$\left[\!\llap{\phantom{%
\begingroup \smaller\smaller\smaller% [inline block 128: 24 envs, 2645 chars -> data_tex | \begin{tabular}{@{}c@{}}% \phantom{0}\\\phantom{0}\\\phantom{0}...]
\endgroup%
}}\!\right]$}%
}%
\ifdim\wd\matricesbox>\halfwidth\myboxwidth=\hsize\else\myboxwidth=\halfwidth\fi
\vbox{%
\ifdim\myboxwidth=\hsize
\setbox\onelinebox=\hbox{%
\vbox{\hbox{%
$\Pi_{17,62}$ spans $L_{16.13}$%
}\hbox{%
$36322236322322222$%
}%
}%
\hfill\copy\matricesbox
}%
\ifdim\wd\onelinebox>\myboxwidth
\hbox to \myboxwidth{%
$\Pi_{17,62}$ spans $L_{16.13}$%
\hfil
$36322236322322222$%
}%
\box\matricesbox
\else
\hbox to \myboxwidth{%
\unhbox\onelinebox
}%
\fi
\else
\hbox to \myboxwidth{%
$\Pi_{17,62}$ spans $L_{16.13}$%
\hfil}%
\hbox to \myboxwidth{%
$36322236322322222$%
\hfil}%
\box\matricesbox
\fi
}%
\hfill\discretionary{}{}{}%
\setbox\matricesbox=\hbox{%
{$\left[\!\llap{\phantom{%
\begingroup \smaller\smaller\smaller% [inline block 129: 24 envs, 2649 chars -> data_tex | \begin{tabular}{@{}c@{}}% \phantom{0}\\\phantom{0}\\\phantom{0}...]
\endgroup%
}}\!\right]$}%
}%
\ifdim\wd\matricesbox>\halfwidth\myboxwidth=\hsize\else\myboxwidth=\halfwidth\fi
\vbox{%
\ifdim\myboxwidth=\hsize
\setbox\onelinebox=\hbox{%
\vbox{\hbox{%
$\Pi_{17,63}$ spans $L_{16.13}$%
}\hbox{%
$36322322222322322$%
}%
}%
\hfill\copy\matricesbox
}%
\ifdim\wd\onelinebox>\myboxwidth
\hbox to \myboxwidth{%
$\Pi_{17,63}$ spans $L_{16.13}$%
\hfil
$36322322222322322$%
}%
\box\matricesbox
\else
\hbox to \myboxwidth{%
\unhbox\onelinebox
}%
\fi
\else
\hbox to \myboxwidth{%
$\Pi_{17,63}$ spans $L_{16.13}$%
\hfil}%
\hbox to \myboxwidth{%
$36322322222322322$%
\hfil}%
\box\matricesbox
\fi
}%
\hfill\discretionary{}{}{}%
\setbox\matricesbox=\hbox{%
{$\left[\!\llap{\phantom{%
\begingroup \smaller\smaller\smaller% [inline block 130: 24 envs, 2646 chars -> data_tex | \begin{tabular}{@{}c@{}}% \phantom{0}\\\phantom{0}\\\phantom{0}...]
\endgroup%
}}\!\right]$}%
}%
\ifdim\wd\matricesbox>\halfwidth\myboxwidth=\hsize\else\myboxwidth=\halfwidth\fi
\vbox{%
\ifdim\myboxwidth=\hsize
\setbox\onelinebox=\hbox{%
\vbox{\hbox{%
$\Pi_{17,64}$ spans $L_{16.13}$%
}\hbox{%
$36322322223222322$%
}%
}%
\hfill\copy\matricesbox
}%
\ifdim\wd\onelinebox>\myboxwidth
\hbox to \myboxwidth{%
$\Pi_{17,64}$ spans $L_{16.13}$%
\hfil
$36322322223222322$%
}%
\box\matricesbox
\else
\hbox to \myboxwidth{%
\unhbox\onelinebox
}%
\fi
\else
\hbox to \myboxwidth{%
$\Pi_{17,64}$ spans $L_{16.13}$%
\hfil}%
\hbox to \myboxwidth{%
$36322322223222322$%
\hfil}%
\box\matricesbox
\fi
}%
\hfill\discretionary{}{}{}%
\setbox\matricesbox=\hbox{%
{$\left[\!\llap{\phantom{%
\begingroup \smaller\smaller\smaller% [inline block 131: 24 envs, 2644 chars -> data_tex | \begin{tabular}{@{}c@{}}% \phantom{0}\\\phantom{0}\\\phantom{0}...]
\endgroup%
}}\!\right]$}%
}%
\ifdim\wd\matricesbox>\halfwidth\myboxwidth=\hsize\else\myboxwidth=\halfwidth\fi
\vbox{%
\ifdim\myboxwidth=\hsize
\setbox\onelinebox=\hbox{%
\vbox{\hbox{%
$\Pi_{17,65}$ spans $L_{16.13}$%
}\hbox{%
$36322322223223222$%
}%
}%
\hfill\copy\matricesbox
}%
\ifdim\wd\onelinebox>\myboxwidth
\hbox to \myboxwidth{%
$\Pi_{17,65}$ spans $L_{16.13}$%
\hfil
$36322322223223222$%
}%
\box\matricesbox
\else
\hbox to \myboxwidth{%
\unhbox\onelinebox
}%
\fi
\else
\hbox to \myboxwidth{%
$\Pi_{17,65}$ spans $L_{16.13}$%
\hfil}%
\hbox to \myboxwidth{%
$36322322223223222$%
\hfil}%
\box\matricesbox
\fi
}%
\hfill\discretionary{}{}{}%
\setbox\matricesbox=\hbox{%
{$\left[\!\llap{\phantom{%
\begingroup \smaller\smaller\smaller% [inline block 132: 24 envs, 2649 chars -> data_tex | \begin{tabular}{@{}c@{}}% \phantom{0}\\\phantom{0}\\\phantom{0}...]
\endgroup%
}}\!\right]$}%
}%
\ifdim\wd\matricesbox>\halfwidth\myboxwidth=\hsize\else\myboxwidth=\halfwidth\fi
\vbox{%
\ifdim\myboxwidth=\hsize
\setbox\onelinebox=\hbox{%
\vbox{\hbox{%
$\Pi_{17,66}$ spans $L_{16.13}$%
}\hbox{%
$36322322232222322$%
}%
}%
\hfill\copy\matricesbox
}%
\ifdim\wd\onelinebox>\myboxwidth
\hbox to \myboxwidth{%
$\Pi_{17,66}$ spans $L_{16.13}$%
\hfil
$36322322232222322$%
}%
\box\matricesbox
\else
\hbox to \myboxwidth{%
\unhbox\onelinebox
}%
\fi
\else
\hbox to \myboxwidth{%
$\Pi_{17,66}$ spans $L_{16.13}$%
\hfil}%
\hbox to \myboxwidth{%
$36322322232222322$%
\hfil}%
\box\matricesbox
\fi
}%
\hfill\discretionary{}{}{}%
\setbox\matricesbox=\hbox{%
{$\left[\!\llap{\phantom{%
\begingroup \smaller\smaller\smaller% [inline block 133: 24 envs, 2647 chars -> data_tex | \begin{tabular}{@{}c@{}}% \phantom{0}\\\phantom{0}\\\phantom{0}...]
\endgroup%
}}\!\right]$}%
}%
\ifdim\wd\matricesbox>\halfwidth\myboxwidth=\hsize\else\myboxwidth=\halfwidth\fi
\vbox{%
\ifdim\myboxwidth=\hsize
\setbox\onelinebox=\hbox{%
\vbox{\hbox{%
$\Pi_{17,67}$ spans $L_{16.13}$%
}\hbox{%
$36322322232223222$%
}%
}%
\hfill\copy\matricesbox
}%
\ifdim\wd\onelinebox>\myboxwidth
\hbox to \myboxwidth{%
$\Pi_{17,67}$ spans $L_{16.13}$%
\hfil
$36322322232223222$%
}%
\box\matricesbox
\else
\hbox to \myboxwidth{%
\unhbox\onelinebox
}%
\fi
\else
\hbox to \myboxwidth{%
$\Pi_{17,67}$ spans $L_{16.13}$%
\hfil}%
\hbox to \myboxwidth{%
$36322322232223222$%
\hfil}%
\box\matricesbox
\fi
}%
\hfill\discretionary{}{}{}%
\setbox\matricesbox=\hbox{%
{$\left[\!\llap{\phantom{%
\begingroup \smaller\smaller\smaller% [inline block 134: 24 envs, 2649 chars -> data_tex | \begin{tabular}{@{}c@{}}% \phantom{0}\\\phantom{0}\\\phantom{0}...]
\endgroup%
}}\!\right]$}%
}%
\ifdim\wd\matricesbox>\halfwidth\myboxwidth=\hsize\else\myboxwidth=\halfwidth\fi
\vbox{%
\ifdim\myboxwidth=\hsize
\setbox\onelinebox=\hbox{%
\vbox{\hbox{%
$\Pi_{17,68}$ spans $L_{16.13}$%
}\hbox{%
$36322322232232222$%
}%
}%
\hfill\copy\matricesbox
}%
\ifdim\wd\onelinebox>\myboxwidth
\hbox to \myboxwidth{%
$\Pi_{17,68}$ spans $L_{16.13}$%
\hfil
$36322322232232222$%
}%
\box\matricesbox
\else
\hbox to \myboxwidth{%
\unhbox\onelinebox
}%
\fi
\else
\hbox to \myboxwidth{%
$\Pi_{17,68}$ spans $L_{16.13}$%
\hfil}%
\hbox to \myboxwidth{%
$36322322232232222$%
\hfil}%
\box\matricesbox
\fi
}%
\hfill\discretionary{}{}{}%
\setbox\matricesbox=\hbox{%
{$\left[\!\llap{\phantom{%
\begingroup \smaller\smaller\smaller% [inline block 135: 24 envs, 2647 chars -> data_tex | \begin{tabular}{@{}c@{}}% \phantom{0}\\\phantom{0}\\\phantom{0}...]
\endgroup%
}}\!\right]$}%
}%
\ifdim\wd\matricesbox>\halfwidth\myboxwidth=\hsize\else\myboxwidth=\halfwidth\fi
\vbox{%
\ifdim\myboxwidth=\hsize
\setbox\onelinebox=\hbox{%
\vbox{\hbox{%
$\Pi_{17,69}$ spans $L_{16.13}$%
}\hbox{%
$36322322322222322$%
}%
}%
\hfill\copy\matricesbox
}%
\ifdim\wd\onelinebox>\myboxwidth
\hbox to \myboxwidth{%
$\Pi_{17,69}$ spans $L_{16.13}$%
\hfil
$36322322322222322$%
}%
\box\matricesbox
\else
\hbox to \myboxwidth{%
\unhbox\onelinebox
}%
\fi
\else
\hbox to \myboxwidth{%
$\Pi_{17,69}$ spans $L_{16.13}$%
\hfil}%
\hbox to \myboxwidth{%
$36322322322222322$%
\hfil}%
\box\matricesbox
\fi
}%
\hfill\discretionary{}{}{}%
\setbox\matricesbox=\hbox{%
{$\left[\!\llap{\phantom{%
\begingroup \smaller\smaller\smaller% [inline block 136: 24 envs, 2645 chars -> data_tex | \begin{tabular}{@{}c@{}}% \phantom{0}\\\phantom{0}\\\phantom{0}...]
\endgroup%
}}\!\right]$}%
}%
\ifdim\wd\matricesbox>\halfwidth\myboxwidth=\hsize\else\myboxwidth=\halfwidth\fi
\vbox{%
\ifdim\myboxwidth=\hsize
\setbox\onelinebox=\hbox{%
\vbox{\hbox{%
$\Pi_{17,70}$ spans $L_{16.13}$%
}\hbox{%
$36322322322223222$%
}%
}%
\hfill\copy\matricesbox
}%
\ifdim\wd\onelinebox>\myboxwidth
\hbox to \myboxwidth{%
$\Pi_{17,70}$ spans $L_{16.13}$%
\hfil
$36322322322223222$%
}%
\box\matricesbox
\else
\hbox to \myboxwidth{%
\unhbox\onelinebox
}%
\fi
\else
\hbox to \myboxwidth{%
$\Pi_{17,70}$ spans $L_{16.13}$%
\hfil}%
\hbox to \myboxwidth{%
$36322322322223222$%
\hfil}%
\box\matricesbox
\fi
}%
\hfill\discretionary{}{}{}%
\setbox\matricesbox=\hbox{%
{$\left[\!\llap{\phantom{%
\begingroup \smaller\smaller\smaller% [inline block 137: 24 envs, 2647 chars -> data_tex | \begin{tabular}{@{}c@{}}% \phantom{0}\\\phantom{0}\\\phantom{0}...]
\endgroup%
}}\!\right]$}%
}%
\ifdim\wd\matricesbox>\halfwidth\myboxwidth=\hsize\else\myboxwidth=\halfwidth\fi
\vbox{%
\ifdim\myboxwidth=\hsize
\setbox\onelinebox=\hbox{%
\vbox{\hbox{%
$\Pi_{17,71}$ spans $L_{16.13}$%
}\hbox{%
$36322322322232222$%
}%
}%
\hfill\copy\matricesbox
}%
\ifdim\wd\onelinebox>\myboxwidth
\hbox to \myboxwidth{%
$\Pi_{17,71}$ spans $L_{16.13}$%
\hfil
$36322322322232222$%
}%
\box\matricesbox
\else
\hbox to \myboxwidth{%
\unhbox\onelinebox
}%
\fi
\else
\hbox to \myboxwidth{%
$\Pi_{17,71}$ spans $L_{16.13}$%
\hfil}%
\hbox to \myboxwidth{%
$36322322322232222$%
\hfil}%
\box\matricesbox
\fi
}%
\hfill\discretionary{}{}{}%
\setbox\matricesbox=\hbox{%
{$\left[\!\llap{\phantom{%
\begingroup \smaller\smaller\smaller% [inline block 138: 24 envs, 2644 chars -> data_tex | \begin{tabular}{@{}c@{}}% \phantom{0}\\\phantom{0}\\\phantom{0}...]
\endgroup%
}}\!\right]$}%
}%
\ifdim\wd\matricesbox>\halfwidth\myboxwidth=\hsize\else\myboxwidth=\halfwidth\fi
\vbox{%
\ifdim\myboxwidth=\hsize
\setbox\onelinebox=\hbox{%
\vbox{\hbox{%
$\Pi_{17,72}$ spans $L_{16.13}$%
}\hbox{%
$36322322322322222$%
}%
}%
\hfill\copy\matricesbox
}%
\ifdim\wd\onelinebox>\myboxwidth
\hbox to \myboxwidth{%
$\Pi_{17,72}$ spans $L_{16.13}$%
\hfil
$36322322322322222$%
}%
\box\matricesbox
\else
\hbox to \myboxwidth{%
\unhbox\onelinebox
}%
\fi
\else
\hbox to \myboxwidth{%
$\Pi_{17,72}$ spans $L_{16.13}$%
\hfil}%
\hbox to \myboxwidth{%
$36322322322322222$%
\hfil}%
\box\matricesbox
\fi
}%
\hfill\discretionary{}{}{}%
\setbox\matricesbox=\hbox{%
{$\left[\!\llap{\phantom{%
\begingroup \smaller\smaller\smaller% [inline block 139: 24 envs, 2647 chars -> data_tex | \begin{tabular}{@{}c@{}}% \phantom{0}\\\phantom{0}\\\phantom{0}...]
\endgroup%
}}\!\right]$}%
}%
\ifdim\wd\matricesbox>\halfwidth\myboxwidth=\hsize\else\myboxwidth=\halfwidth\fi
\vbox{%
\ifdim\myboxwidth=\hsize
\setbox\onelinebox=\hbox{%
\vbox{\hbox{%
$\Pi_{17,73}$ spans $L_{16.13}$%
}\hbox{%
$36322363222223222$%
}%
}%
\hfill\copy\matricesbox
}%
\ifdim\wd\onelinebox>\myboxwidth
\hbox to \myboxwidth{%
$\Pi_{17,73}$ spans $L_{16.13}$%
\hfil
$36322363222223222$%
}%
\box\matricesbox
\else
\hbox to \myboxwidth{%
\unhbox\onelinebox
}%
\fi
\else
\hbox to \myboxwidth{%
$\Pi_{17,73}$ spans $L_{16.13}$%
\hfil}%
\hbox to \myboxwidth{%
$36322363222223222$%
\hfil}%
\box\matricesbox
\fi
}%
\hfill\discretionary{}{}{}%
\setbox\matricesbox=\hbox{%
{$\left[\!\llap{\phantom{%
\begingroup \smaller\smaller\smaller% [inline block 140: 24 envs, 2649 chars -> data_tex | \begin{tabular}{@{}c@{}}% \phantom{0}\\\phantom{0}\\\phantom{0}...]
\endgroup%
}}\!\right]$}%
}%
\ifdim\wd\matricesbox>\halfwidth\myboxwidth=\hsize\else\myboxwidth=\halfwidth\fi
\vbox{%
\ifdim\myboxwidth=\hsize
\setbox\onelinebox=\hbox{%
\vbox{\hbox{%
$\Pi_{17,74}$ spans $L_{16.13}$%
}\hbox{%
$36363222223222322$%
}%
}%
\hfill\copy\matricesbox
}%
\ifdim\wd\onelinebox>\myboxwidth
\hbox to \myboxwidth{%
$\Pi_{17,74}$ spans $L_{16.13}$%
\hfil
$36363222223222322$%
}%
\box\matricesbox
\else
\hbox to \myboxwidth{%
\unhbox\onelinebox
}%
\fi
\else
\hbox to \myboxwidth{%
$\Pi_{17,74}$ spans $L_{16.13}$%
\hfil}%
\hbox to \myboxwidth{%
$36363222223222322$%
\hfil}%
\box\matricesbox
\fi
}%
\hfill\discretionary{}{}{}%
\setbox\matricesbox=\hbox{%
{$\left[\!\llap{\phantom{%
\begingroup \smaller\smaller\smaller% [inline block 141: 24 envs, 2647 chars -> data_tex | \begin{tabular}{@{}c@{}}% \phantom{0}\\\phantom{0}\\\phantom{0}...]
\endgroup%
}}\!\right]$}%
}%
\ifdim\wd\matricesbox>\halfwidth\myboxwidth=\hsize\else\myboxwidth=\halfwidth\fi
\vbox{%
\ifdim\myboxwidth=\hsize
\setbox\onelinebox=\hbox{%
\vbox{\hbox{%
$\Pi_{17,75}$ spans $L_{16.13}$%
}\hbox{%
$36363222223223222$%
}%
}%
\hfill\copy\matricesbox
}%
\ifdim\wd\onelinebox>\myboxwidth
\hbox to \myboxwidth{%
$\Pi_{17,75}$ spans $L_{16.13}$%
\hfil
$36363222223223222$%
}%
\box\matricesbox
\else
\hbox to \myboxwidth{%
\unhbox\onelinebox
}%
\fi
\else
\hbox to \myboxwidth{%
$\Pi_{17,75}$ spans $L_{16.13}$%
\hfil}%
\hbox to \myboxwidth{%
$36363222223223222$%
\hfil}%
\box\matricesbox
\fi
}%
\hfill\discretionary{}{}{}%
\setbox\matricesbox=\hbox{%
{$\left[\!\llap{\phantom{%
\begingroup \smaller\smaller\smaller% [inline block 142: 24 envs, 2652 chars -> data_tex | \begin{tabular}{@{}c@{}}% \phantom{0}\\\phantom{0}\\\phantom{0}...]
\endgroup%
}}\!\right]$}%
}%
\ifdim\wd\matricesbox>\halfwidth\myboxwidth=\hsize\else\myboxwidth=\halfwidth\fi
\vbox{%
\ifdim\myboxwidth=\hsize
\setbox\onelinebox=\hbox{%
\vbox{\hbox{%
$\Pi_{17,76}$ spans $L_{16.13}$%
}\hbox{%
$36363222232222322$%
}%
}%
\hfill\copy\matricesbox
}%
\ifdim\wd\onelinebox>\myboxwidth
\hbox to \myboxwidth{%
$\Pi_{17,76}$ spans $L_{16.13}$%
\hfil
$36363222232222322$%
}%
\box\matricesbox
\else
\hbox to \myboxwidth{%
\unhbox\onelinebox
}%
\fi
\else
\hbox to \myboxwidth{%
$\Pi_{17,76}$ spans $L_{16.13}$%
\hfil}%
\hbox to \myboxwidth{%
$36363222232222322$%
\hfil}%
\box\matricesbox
\fi
}%
\hfill\discretionary{}{}{}%
\setbox\matricesbox=\hbox{%
{$\left[\!\llap{\phantom{%
\begingroup \smaller\smaller\smaller% [inline block 143: 24 envs, 2650 chars -> data_tex | \begin{tabular}{@{}c@{}}% \phantom{0}\\\phantom{0}\\\phantom{0}...]
\endgroup%
}}\!\right]$}%
}%
\ifdim\wd\matricesbox>\halfwidth\myboxwidth=\hsize\else\myboxwidth=\halfwidth\fi
\vbox{%
\ifdim\myboxwidth=\hsize
\setbox\onelinebox=\hbox{%
\vbox{\hbox{%
$\Pi_{17,77}$ spans $L_{16.13}$%
}\hbox{%
$36363222232223222$%
}%
}%
\hfill\copy\matricesbox
}%
\ifdim\wd\onelinebox>\myboxwidth
\hbox to \myboxwidth{%
$\Pi_{17,77}$ spans $L_{16.13}$%
\hfil
$36363222232223222$%
}%
\box\matricesbox
\else
\hbox to \myboxwidth{%
\unhbox\onelinebox
}%
\fi
\else
\hbox to \myboxwidth{%
$\Pi_{17,77}$ spans $L_{16.13}$%
\hfil}%
\hbox to \myboxwidth{%
$36363222232223222$%
\hfil}%
\box\matricesbox
\fi
}%
\hfill\discretionary{}{}{}%
\setbox\matricesbox=\hbox{%
{$\left[\!\llap{\phantom{%
\begingroup \smaller\smaller\smaller% [inline block 144: 24 envs, 2650 chars -> data_tex | \begin{tabular}{@{}c@{}}% \phantom{0}\\\phantom{0}\\\phantom{0}...]
\endgroup%
}}\!\right]$}%
}%
\ifdim\wd\matricesbox>\halfwidth\myboxwidth=\hsize\else\myboxwidth=\halfwidth\fi
\vbox{%
\ifdim\myboxwidth=\hsize
\setbox\onelinebox=\hbox{%
\vbox{\hbox{%
$\Pi_{17,78}$ spans $L_{16.13}$%
}\hbox{%
$36363222322222322$%
}%
}%
\hfill\copy\matricesbox
}%
\ifdim\wd\onelinebox>\myboxwidth
\hbox to \myboxwidth{%
$\Pi_{17,78}$ spans $L_{16.13}$%
\hfil
$36363222322222322$%
}%
\box\matricesbox
\else
\hbox to \myboxwidth{%
\unhbox\onelinebox
}%
\fi
\else
\hbox to \myboxwidth{%
$\Pi_{17,78}$ spans $L_{16.13}$%
\hfil}%
\hbox to \myboxwidth{%
$36363222322222322$%
\hfil}%
\box\matricesbox
\fi
}%
\hfill\discretionary{}{}{}%
\setbox\matricesbox=\hbox{%
{$\left[\!\llap{\phantom{%
\begingroup \smaller\smaller\smaller% [inline block 145: 24 envs, 2661 chars -> data_tex | \begin{tabular}{@{}c@{}}% \phantom{0}\\\phantom{0}\\\phantom{0}...]
\endgroup%
}}\!\right]$}%
}%
\ifdim\wd\matricesbox>\halfwidth\myboxwidth=\hsize\else\myboxwidth=\halfwidth\fi
\vbox{%
\ifdim\myboxwidth=\hsize
\setbox\onelinebox=\hbox{%
\vbox{\hbox{%
$\Pi_{17,79}$ spans $L_{142.20}$%
}\hbox{%
$\infty422\infty224\infty4224\infty422$%
}%
}%
\hfill\copy\matricesbox
}%
\ifdim\wd\onelinebox>\myboxwidth
\hbox to \myboxwidth{%
$\Pi_{17,79}$ spans $L_{142.20}$%
\hfil
$\infty422\infty224\infty4224\infty422$%
}%
\box\matricesbox
\else
\hbox to \myboxwidth{%
\unhbox\onelinebox
}%
\fi
\else
\hbox to \myboxwidth{%
$\Pi_{17,79}$ spans $L_{142.20}$%
\hfil}%
\hbox to \myboxwidth{%
$\infty422\infty224\infty4224\infty422$%
\hfil}%
\box\matricesbox
\fi
}%
\hfill\discretionary{}{}{}%
\setbox\matricesbox=\hbox{%
{$\left[\!\llap{\phantom{%
\begingroup \smaller\smaller\smaller% [inline block 146: 24 envs, 2660 chars -> data_tex | \begin{tabular}{@{}c@{}}% \phantom{0}\\\phantom{0}\\\phantom{0}...]
\endgroup%
}}\!\right]$}%
}%
\ifdim\wd\matricesbox>\halfwidth\myboxwidth=\hsize\else\myboxwidth=\halfwidth\fi
\vbox{%
\ifdim\myboxwidth=\hsize
\setbox\onelinebox=\hbox{%
\vbox{\hbox{%
$\Pi_{17,80}$ spans $L_{142.20}$%
}\hbox{%
$\infty422\infty422\infty4224\infty422$%
}%
}%
\hfill\copy\matricesbox
}%
\ifdim\wd\onelinebox>\myboxwidth
\hbox to \myboxwidth{%
$\Pi_{17,80}$ spans $L_{142.20}$%
\hfil
$\infty422\infty422\infty4224\infty422$%
}%
\box\matricesbox
\else
\hbox to \myboxwidth{%
\unhbox\onelinebox
}%
\fi
\else
\hbox to \myboxwidth{%
$\Pi_{17,80}$ spans $L_{142.20}$%
\hfil}%
\hbox to \myboxwidth{%
$\infty422\infty422\infty4224\infty422$%
\hfil}%
\box\matricesbox
\fi
}%
\hfill\discretionary{}{}{}%
\setbox\matricesbox=\hbox{%
{$\left[\!\llap{\phantom{%
\begingroup \smaller\smaller\smaller% [inline block 147: 24 envs, 2660 chars -> data_tex | \begin{tabular}{@{}c@{}}% \phantom{0}\\\phantom{0}\\\phantom{0}...]
\endgroup%
}}\!\right]$}%
}%
\ifdim\wd\matricesbox>\halfwidth\myboxwidth=\hsize\else\myboxwidth=\halfwidth\fi
\vbox{%
\ifdim\myboxwidth=\hsize
\setbox\onelinebox=\hbox{%
\vbox{\hbox{%
$\Pi_{17,81}$ spans $L_{142.20}$%
}\hbox{%
$\infty422\infty4224\infty224\infty422$%
}%
}%
\hfill\copy\matricesbox
}%
\ifdim\wd\onelinebox>\myboxwidth
\hbox to \myboxwidth{%
$\Pi_{17,81}$ spans $L_{142.20}$%
\hfil
$\infty422\infty4224\infty224\infty422$%
}%
\box\matricesbox
\else
\hbox to \myboxwidth{%
\unhbox\onelinebox
}%
\fi
\else
\hbox to \myboxwidth{%
$\Pi_{17,81}$ spans $L_{142.20}$%
\hfil}%
\hbox to \myboxwidth{%
$\infty422\infty4224\infty224\infty422$%
\hfil}%
\box\matricesbox
\fi
}%
\hfill\discretionary{}{}{}%
\setbox\matricesbox=\hbox{%
{$\left[\!\llap{\phantom{%
\begingroup \smaller\smaller\smaller% [inline block 148: 24 envs, 2661 chars -> data_tex | \begin{tabular}{@{}c@{}}% \phantom{0}\\\phantom{0}\\\phantom{0}...]
\endgroup%
}}\!\right]$}%
}%
\ifdim\wd\matricesbox>\halfwidth\myboxwidth=\hsize\else\myboxwidth=\halfwidth\fi
\vbox{%
\ifdim\myboxwidth=\hsize
\setbox\onelinebox=\hbox{%
\vbox{\hbox{%
$\Pi_{17,82}$ spans $L_{142.20}$%
}\hbox{%
$\infty422\infty4224\infty4224\infty22$%
}%
}%
\hfill\copy\matricesbox
}%
\ifdim\wd\onelinebox>\myboxwidth
\hbox to \myboxwidth{%
$\Pi_{17,82}$ spans $L_{142.20}$%
\hfil
$\infty422\infty4224\infty4224\infty22$%
}%
\box\matricesbox
\else
\hbox to \myboxwidth{%
\unhbox\onelinebox
}%
\fi
\else
\hbox to \myboxwidth{%
$\Pi_{17,82}$ spans $L_{142.20}$%
\hfil}%
\hbox to \myboxwidth{%
$\infty422\infty4224\infty4224\infty22$%
\hfil}%
\box\matricesbox
\fi
}%
\hfill\discretionary{}{}{}%
\setbox\matricesbox=\hbox{%
{$\left[\!\llap{\phantom{%
\begingroup \smaller\smaller\smaller% [inline block 149: 24 envs, 2660 chars -> data_tex | \begin{tabular}{@{}c@{}}% \phantom{0}\\\phantom{0}\\\phantom{0}...]
\endgroup%
}}\!\right]$}%
}%
\ifdim\wd\matricesbox>\halfwidth\myboxwidth=\hsize\else\myboxwidth=\halfwidth\fi
\vbox{%
\ifdim\myboxwidth=\hsize
\setbox\onelinebox=\hbox{%
\vbox{\hbox{%
$\Pi_{17,83}$ spans $L_{142.20}$%
}\hbox{%
$\infty4224\infty22\infty4224\infty422$%
}%
}%
\hfill\copy\matricesbox
}%
\ifdim\wd\onelinebox>\myboxwidth
\hbox to \myboxwidth{%
$\Pi_{17,83}$ spans $L_{142.20}$%
\hfil
$\infty4224\infty22\infty4224\infty422$%
}%
\box\matricesbox
\else
\hbox to \myboxwidth{%
\unhbox\onelinebox
}%
\fi
\else
\hbox to \myboxwidth{%
$\Pi_{17,83}$ spans $L_{142.20}$%
\hfil}%
\hbox to \myboxwidth{%
$\infty4224\infty22\infty4224\infty422$%
\hfil}%
\box\matricesbox
\fi
}%
\hfill\discretionary{}{}{}%
\setbox\matricesbox=\hbox{%
{$\left[\!\llap{\phantom{%
\begingroup \smaller\smaller\smaller% [inline block 150: 24 envs, 2659 chars -> data_tex | \begin{tabular}{@{}c@{}}% \phantom{0}\\\phantom{0}\\\phantom{0}...]
\endgroup%
}}\!\right]$}%
}%
\ifdim\wd\matricesbox>\halfwidth\myboxwidth=\hsize\else\myboxwidth=\halfwidth\fi
\vbox{%
\ifdim\myboxwidth=\hsize
\setbox\onelinebox=\hbox{%
\vbox{\hbox{%
$\Pi_{17,84}$ spans $L_{142.20}$%
}\hbox{%
$\infty4224\infty422\infty224\infty422$%
}%
}%
\hfill\copy\matricesbox
}%
\ifdim\wd\onelinebox>\myboxwidth
\hbox to \myboxwidth{%
$\Pi_{17,84}$ spans $L_{142.20}$%
\hfil
$\infty4224\infty422\infty224\infty422$%
}%
\box\matricesbox
\else
\hbox to \myboxwidth{%
\unhbox\onelinebox
}%
\fi
\else
\hbox to \myboxwidth{%
$\Pi_{17,84}$ spans $L_{142.20}$%
\hfil}%
\hbox to \myboxwidth{%
$\infty4224\infty422\infty224\infty422$%
\hfil}%
\box\matricesbox
\fi
}%
\hfill\discretionary{}{}{}%
\setbox\matricesbox=\hbox{%
{$\left[\!\llap{\phantom{%
\begingroup \smaller\smaller\smaller% [inline block 151: 24 envs, 2661 chars -> data_tex | \begin{tabular}{@{}c@{}}% \phantom{0}\\\phantom{0}\\\phantom{0}...]
\endgroup%
}}\!\right]$}%
}%
\ifdim\wd\matricesbox>\halfwidth\myboxwidth=\hsize\else\myboxwidth=\halfwidth\fi
\vbox{%
\ifdim\myboxwidth=\hsize
\setbox\onelinebox=\hbox{%
\vbox{\hbox{%
$\Pi_{17,85}$ spans $L_{142.20}$%
}\hbox{%
$\infty4224\infty4224\infty422\infty22$%
}%
}%
\hfill\copy\matricesbox
}%
\ifdim\wd\onelinebox>\myboxwidth
\hbox to \myboxwidth{%
$\Pi_{17,85}$ spans $L_{142.20}$%
\hfil
$\infty4224\infty4224\infty422\infty22$%
}%
\box\matricesbox
\else
\hbox to \myboxwidth{%
\unhbox\onelinebox
}%
\fi
\else
\hbox to \myboxwidth{%
$\Pi_{17,85}$ spans $L_{142.20}$%
\hfil}%
\hbox to \myboxwidth{%
$\infty4224\infty4224\infty422\infty22$%
\hfil}%
\box\matricesbox
\fi
}%
\hfill\discretionary{}{}{}%

\vskip2pt\hrule\vskip2pt

\leavevmode\setbox\matricesbox=\hbox{%
{$\left[\!\llap{\phantom{%
\begingroup \smaller\smaller\smaller\begin{tabular}{@{}c@{}}%
\phantom{0}\\\phantom{0}\\\phantom{0}\\\phantom{0}
\end{tabular}\endgroup%
}}\right.$}%
\begingroup \smaller\smaller\smaller\begin{tabular}{@{}c@{}}%
-5\\\phantom{0}\\\phantom{0}\\\phantom{0}
\end{tabular}\endgroup%
\kern3pt%
\begingroup \smaller\smaller\smaller\begin{tabular}{@{}c@{}}%
\phantom{0}\\6\\\phantom{0}\\\phantom{0}
\end{tabular}\endgroup%
\kern3pt%
\begingroup \smaller\smaller\smaller\begin{tabular}{@{}c@{}}%
\phantom{0}\\\phantom{0}\\6\\\phantom{0}
\end{tabular}\endgroup%
\kern3pt%
\begingroup \smaller\smaller\smaller\begin{tabular}{@{}c@{}}%
\phantom{0}\\\phantom{0}\\\phantom{0}\\6
\end{tabular}\endgroup%
{$\left.\llap{\phantom{%
\begingroup \smaller\smaller\smaller\begin{tabular}{@{}c@{}}%
\phantom{0}\\\phantom{0}\\\phantom{0}\\\phantom{0}
\end{tabular}\endgroup%
}}\!\right]$}%
{$\left[\!\llap{\phantom{%
\begingroup \smaller\smaller\smaller\begin{tabular}{@{}c@{}}%
0\\0\\0\\0
\end{tabular}\endgroup%
}}\right.$}%
\begingroup \smaller\smaller\smaller\begin{tabular}{@{}c@{}}%
4\\3\\-1\\-2
\end{tabular}\endgroup%
\kern3pt%
\begingroup \smaller\smaller\smaller\begin{tabular}{@{}c@{}}%
3\\2\\0\\-2
\end{tabular}\endgroup%
{$\left.\llap{\phantom{%
\begingroup \smaller\smaller\smaller\begin{tabular}{@{}c@{}}%
0\\0\\0\\0
\end{tabular}\endgroup%
}}\!\right]$}%
}%
\ifdim\wd\matricesbox>\halfwidth\myboxwidth=\hsize\else\myboxwidth=\halfwidth\fi
\vbox{%
\ifdim\myboxwidth=\hsize
\setbox\onelinebox=\hbox{%
\vbox{\hbox{%
$\Pi_{18,1}$ spans $L_{251.3}$%
}\hbox{%
$\slashthree2|2\slashthree2|2\slashthree2|2\slashthree2|2\slashthree2|2\slashthree2|2\rtimes D_{12}$%
}%
}%
\hfill\copy\matricesbox
}%
\ifdim\wd\onelinebox>\myboxwidth
\hbox to \myboxwidth{%
$\Pi_{18,1}$ spans $L_{251.3}$%
\hfil
$\slashthree2|2\slashthree2|2\slashthree2|2\slashthree2|2\slashthree2|2\slashthree2|2\rtimes D_{12}$%
}%
\box\matricesbox
\else
\hbox to \myboxwidth{%
\unhbox\onelinebox
}%
\fi
\else
\hbox to \myboxwidth{%
$\Pi_{18,1}$ spans $L_{251.3}$%
\hfil}%
\hbox to \myboxwidth{%
$\slashthree2|2\slashthree2|2\slashthree2|2\slashthree2|2\slashthree2|2\slashthree2|2\rtimes D_{12}$%
\hfil}%
\box\matricesbox
\fi
}%
\hfill\discretionary{}{}{}%
\setbox\matricesbox=\hbox{%
{$\left[\!\llap{\phantom{%
\begingroup \smaller\smaller\smaller\begin{tabular}{@{}c@{}}%
\phantom{0}\\\phantom{0}\\\phantom{0}\\\phantom{0}
\end{tabular}\endgroup%
}}\right.$}%
\begingroup \smaller\smaller\smaller\begin{tabular}{@{}c@{}}%
-3\\\phantom{0}\\\phantom{0}\\\phantom{0}
\end{tabular}\endgroup%
\kern3pt%
\begingroup \smaller\smaller\smaller\begin{tabular}{@{}c@{}}%
\phantom{0}\\5\\\phantom{0}\\\phantom{0}
\end{tabular}\endgroup%
\kern3pt%
\begingroup \smaller\smaller\smaller\begin{tabular}{@{}c@{}}%
\phantom{0}\\\phantom{0}\\5\\\phantom{0}
\end{tabular}\endgroup%
\kern3pt%
\begingroup \smaller\smaller\smaller\begin{tabular}{@{}c@{}}%
\phantom{0}\\\phantom{0}\\\phantom{0}\\5
\end{tabular}\endgroup%
{$\left.\llap{\phantom{%
\begingroup \smaller\smaller\smaller\begin{tabular}{@{}c@{}}%
\phantom{0}\\\phantom{0}\\\phantom{0}\\\phantom{0}
\end{tabular}\endgroup%
}}\!\right]$}%
{$\left[\!\llap{\phantom{%
\begingroup \smaller\smaller\smaller\begin{tabular}{@{}c@{}}%
0\\0\\0\\0
\end{tabular}\endgroup%
}}\right.$}%
\begingroup \smaller\smaller\smaller\begin{tabular}{@{}c@{}}%
10\\-5\\-1\\6
\end{tabular}\endgroup%
\kern3pt%
\begingroup \smaller\smaller\smaller\begin{tabular}{@{}c@{}}%
3\\-1\\-1\\2
\end{tabular}\endgroup%
{$\left.\llap{\phantom{%
\begingroup \smaller\smaller\smaller\begin{tabular}{@{}c@{}}%
0\\0\\0\\0
\end{tabular}\endgroup%
}}\!\right]$}%
}%
\ifdim\wd\matricesbox>\halfwidth\myboxwidth=\hsize\else\myboxwidth=\halfwidth\fi
\vbox{%
\ifdim\myboxwidth=\hsize
\setbox\onelinebox=\hbox{%
\vbox{\hbox{%
$\Pi_{18,2}$ spans $L_{16.9}$%
}\hbox{%
$\slashthree2|2\slashthree2|2\slashthree2|2\slashthree2|2\slashthree2|2\slashthree2|2\rtimes D_{12}$%
}%
}%
\hfill\copy\matricesbox
}%
\ifdim\wd\onelinebox>\myboxwidth
\hbox to \myboxwidth{%
$\Pi_{18,2}$ spans $L_{16.9}$%
\hfil
$\slashthree2|2\slashthree2|2\slashthree2|2\slashthree2|2\slashthree2|2\slashthree2|2\rtimes D_{12}$%
}%
\box\matricesbox
\else
\hbox to \myboxwidth{%
\unhbox\onelinebox
}%
\fi
\else
\hbox to \myboxwidth{%
$\Pi_{18,2}$ spans $L_{16.9}$%
\hfil}%
\hbox to \myboxwidth{%
$\slashthree2|2\slashthree2|2\slashthree2|2\slashthree2|2\slashthree2|2\slashthree2|2\rtimes D_{12}$%
\hfil}%
\box\matricesbox
\fi
}%
\hfill\discretionary{}{}{}%
\setbox\matricesbox=\hbox{%
{$\left[\!\llap{\phantom{%
\begingroup \smaller\smaller\smaller\begin{tabular}{@{}c@{}}%
\phantom{0}\\\phantom{0}\\\phantom{0}\\\phantom{0}
\end{tabular}\endgroup%
}}\right.$}%
\begingroup \smaller\smaller\smaller\begin{tabular}{@{}c@{}}%
-7\\\phantom{0}\\\phantom{0}\\\phantom{0}
\end{tabular}\endgroup%
\kern3pt%
\begingroup \smaller\smaller\smaller\begin{tabular}{@{}c@{}}%
\phantom{0}\\1\\\phantom{0}\\\phantom{0}
\end{tabular}\endgroup%
\kern3pt%
\begingroup \smaller\smaller\smaller\begin{tabular}{@{}c@{}}%
\phantom{0}\\\phantom{0}\\1\\\phantom{0}
\end{tabular}\endgroup%
\kern3pt%
\begingroup \smaller\smaller\smaller\begin{tabular}{@{}c@{}}%
\phantom{0}\\\phantom{0}\\\phantom{0}\\1
\end{tabular}\endgroup%
{$\left.\llap{\phantom{%
\begingroup \smaller\smaller\smaller\begin{tabular}{@{}c@{}}%
\phantom{0}\\\phantom{0}\\\phantom{0}\\\phantom{0}
\end{tabular}\endgroup%
}}\!\right]$}%
{$\left[\!\llap{\phantom{%
\begingroup \smaller\smaller\smaller\begin{tabular}{@{}c@{}}%
0\\0\\0\\0
\end{tabular}\endgroup%
}}\right.$}%
\begingroup \smaller\smaller\smaller\begin{tabular}{@{}c@{}}%
6\\8\\-13\\5
\end{tabular}\endgroup%
\kern3pt%
\begingroup \smaller\smaller\smaller\begin{tabular}{@{}c@{}}%
1\\2\\-2\\0
\end{tabular}\endgroup%
{$\left.\llap{\phantom{%
\begingroup \smaller\smaller\smaller\begin{tabular}{@{}c@{}}%
0\\0\\0\\0
\end{tabular}\endgroup%
}}\!\right]$}%
}%
\ifdim\wd\matricesbox>\halfwidth\myboxwidth=\hsize\else\myboxwidth=\halfwidth\fi
\vbox{%
\ifdim\myboxwidth=\hsize
\setbox\onelinebox=\hbox{%
\vbox{\hbox{%
$\Pi_{18,3}$ spans $L_{22.2}$%
}\hbox{%
$\slashthree2|2\slashthree2|2\slashthree2|2\slashthree2|2\slashthree2|2\slashthree2|2\rtimes D_{12}$%
}%
}%
\hfill\copy\matricesbox
}%
\ifdim\wd\onelinebox>\myboxwidth
\hbox to \myboxwidth{%
$\Pi_{18,3}$ spans $L_{22.2}$%
\hfil
$\slashthree2|2\slashthree2|2\slashthree2|2\slashthree2|2\slashthree2|2\slashthree2|2\rtimes D_{12}$%
}%
\box\matricesbox
\else
\hbox to \myboxwidth{%
\unhbox\onelinebox
}%
\fi
\else
\hbox to \myboxwidth{%
$\Pi_{18,3}$ spans $L_{22.2}$%
\hfil}%
\hbox to \myboxwidth{%
$\slashthree2|2\slashthree2|2\slashthree2|2\slashthree2|2\slashthree2|2\slashthree2|2\rtimes D_{12}$%
\hfil}%
\box\matricesbox
\fi
}%
\hfill\discretionary{}{}{}%
\setbox\matricesbox=\hbox{%
{$\left[\!\llap{\phantom{%
\begingroup \smaller\smaller\smaller\begin{tabular}{@{}c@{}}%
\phantom{0}\\\phantom{0}\\\phantom{0}\\\phantom{0}
\end{tabular}\endgroup%
}}\right.$}%
\begingroup \smaller\smaller\smaller\begin{tabular}{@{}c@{}}%
-5\\\phantom{0}\\\phantom{0}\\\phantom{0}
\end{tabular}\endgroup%
\kern3pt%
\begingroup \smaller\smaller\smaller\begin{tabular}{@{}c@{}}%
\phantom{0}\\3\\\phantom{0}\\\phantom{0}
\end{tabular}\endgroup%
\kern3pt%
\begingroup \smaller\smaller\smaller\begin{tabular}{@{}c@{}}%
\phantom{0}\\\phantom{0}\\3\\\phantom{0}
\end{tabular}\endgroup%
\kern3pt%
\begingroup \smaller\smaller\smaller\begin{tabular}{@{}c@{}}%
\phantom{0}\\\phantom{0}\\\phantom{0}\\3
\end{tabular}\endgroup%
{$\left.\llap{\phantom{%
\begingroup \smaller\smaller\smaller\begin{tabular}{@{}c@{}}%
\phantom{0}\\\phantom{0}\\\phantom{0}\\\phantom{0}
\end{tabular}\endgroup%
}}\!\right]$}%
{$\left[\!\llap{\phantom{%
\begingroup \smaller\smaller\smaller\begin{tabular}{@{}c@{}}%
0\\0\\0\\0
\end{tabular}\endgroup%
}}\right.$}%
\begingroup \smaller\smaller\smaller\begin{tabular}{@{}c@{}}%
18\\11\\-19\\8
\end{tabular}\endgroup%
\kern3pt%
\begingroup \smaller\smaller\smaller\begin{tabular}{@{}c@{}}%
1\\1\\-1\\0
\end{tabular}\endgroup%
{$\left.\llap{\phantom{%
\begingroup \smaller\smaller\smaller\begin{tabular}{@{}c@{}}%
0\\0\\0\\0
\end{tabular}\endgroup%
}}\!\right]$}%
}%
\ifdim\wd\matricesbox>\halfwidth\myboxwidth=\hsize\else\myboxwidth=\halfwidth\fi
\vbox{%
\ifdim\myboxwidth=\hsize
\setbox\onelinebox=\hbox{%
\vbox{\hbox{%
$\Pi_{18,4}$ spans $L_{16.7}$%
}\hbox{%
$\slashthree2|2\slashthree2|2\slashthree2|2\slashthree2|2\slashthree2|2\slashthree2|2\rtimes D_{12}$%
}%
}%
\hfill\copy\matricesbox
}%
\ifdim\wd\onelinebox>\myboxwidth
\hbox to \myboxwidth{%
$\Pi_{18,4}$ spans $L_{16.7}$%
\hfil
$\slashthree2|2\slashthree2|2\slashthree2|2\slashthree2|2\slashthree2|2\slashthree2|2\rtimes D_{12}$%
}%
\box\matricesbox
\else
\hbox to \myboxwidth{%
\unhbox\onelinebox
}%
\fi
\else
\hbox to \myboxwidth{%
$\Pi_{18,4}$ spans $L_{16.7}$%
\hfil}%
\hbox to \myboxwidth{%
$\slashthree2|2\slashthree2|2\slashthree2|2\slashthree2|2\slashthree2|2\slashthree2|2\rtimes D_{12}$%
\hfil}%
\box\matricesbox
\fi
}%
\hfill\discretionary{}{}{}%
\setbox\matricesbox=\hbox{%
{$\left[\!\llap{\phantom{%
\begingroup \smaller\smaller\smaller\begin{tabular}{@{}c@{}}%
\phantom{0}\\\phantom{0}\\\phantom{0}
\end{tabular}\endgroup%
}}\right.$}%
\begingroup \smaller\smaller\smaller\begin{tabular}{@{}c@{}}%
-1\\\phantom{0}\\\phantom{0}
\end{tabular}\endgroup%
\kern3pt%
\begingroup \smaller\smaller\smaller\begin{tabular}{@{}c@{}}%
\phantom{0}\\45/2\\\phantom{0}
\end{tabular}\endgroup%
\kern3pt%
\begingroup \smaller\smaller\smaller\begin{tabular}{@{}c@{}}%
\phantom{0}\\\phantom{0}\\15/2
\end{tabular}\endgroup%
{$\left.\llap{\phantom{%
\begingroup \smaller\smaller\smaller\begin{tabular}{@{}c@{}}%
\phantom{0}\\\phantom{0}\\\phantom{0}
\end{tabular}\endgroup%
}}\!\right]$}%
{$\left[\!\llap{\phantom{%
\begingroup \smaller\smaller\smaller\begin{tabular}{@{}c@{}}%
0\\0\\0
\end{tabular}\endgroup%
}}\right.$}%
\begingroup \smaller\smaller\smaller\begin{tabular}{@{}c@{}}%
90\\19\\3
\end{tabular}\endgroup%
\kern3pt%
\begingroup \smaller\smaller\smaller\begin{tabular}{@{}c@{}}%
30\\6\\4
\end{tabular}\endgroup%
\kern3pt%
\begingroup \smaller\smaller\smaller\begin{tabular}{@{}c@{}}%
30\\5\\7
\end{tabular}\endgroup%
\kern3pt%
\begingroup \smaller\smaller\smaller\begin{tabular}{@{}c@{}}%
9\\1\\3
\end{tabular}\endgroup%
\kern3pt%
\begingroup \smaller\smaller\smaller\begin{tabular}{@{}c@{}}%
5\\0\\2
\end{tabular}\endgroup%
{$\left.\llap{\phantom{%
\begingroup \smaller\smaller\smaller\begin{tabular}{@{}c@{}}%
0\\0\\0
\end{tabular}\endgroup%
}}\!\right]$}%
}%
\ifdim\wd\matricesbox>\halfwidth\myboxwidth=\hsize\else\myboxwidth=\halfwidth\fi
\vbox{%
\ifdim\myboxwidth=\hsize
\setbox\onelinebox=\hbox{%
\vbox{\hbox{%
$\Pi_{18,5}$ spans $L_{16.13}$%
}\hbox{%
$\slashthree6322|2236\slashthree6322|2236\rtimes D_{4}$%
}%
}%
\hfill\copy\matricesbox
}%
\ifdim\wd\onelinebox>\myboxwidth
\hbox to \myboxwidth{%
$\Pi_{18,5}$ spans $L_{16.13}$%
\hfil
$\slashthree6322|2236\slashthree6322|2236\rtimes D_{4}$%
}%
\box\matricesbox
\else
\hbox to \myboxwidth{%
\unhbox\onelinebox
}%
\fi
\else
\hbox to \myboxwidth{%
$\Pi_{18,5}$ spans $L_{16.13}$%
\hfil}%
\hbox to \myboxwidth{%
$\slashthree6322|2236\slashthree6322|2236\rtimes D_{4}$%
\hfil}%
\box\matricesbox
\fi
}%
\hfill\discretionary{}{}{}%
\setbox\matricesbox=\hbox{%
{$\left[\!\llap{\phantom{%
\begingroup \smaller\smaller\smaller\begin{tabular}{@{}c@{}}%
\phantom{0}\\\phantom{0}\\\phantom{0}
\end{tabular}\endgroup%
}}\right.$}%
\begingroup \smaller\smaller\smaller\begin{tabular}{@{}c@{}}%
-1\\\phantom{0}\\\phantom{0}
\end{tabular}\endgroup%
\kern3pt%
\begingroup \smaller\smaller\smaller\begin{tabular}{@{}c@{}}%
\phantom{0}\\15/2\\\phantom{0}
\end{tabular}\endgroup%
\kern3pt%
\begingroup \smaller\smaller\smaller\begin{tabular}{@{}c@{}}%
\phantom{0}\\\phantom{0}\\45/2
\end{tabular}\endgroup%
{$\left.\llap{\phantom{%
\begingroup \smaller\smaller\smaller\begin{tabular}{@{}c@{}}%
\phantom{0}\\\phantom{0}\\\phantom{0}
\end{tabular}\endgroup%
}}\!\right]$}%
{$\left[\!\llap{\phantom{%
\begingroup \smaller\smaller\smaller\begin{tabular}{@{}c@{}}%
0\\0\\0
\end{tabular}\endgroup%
}}\right.$}%
\begingroup \smaller\smaller\smaller\begin{tabular}{@{}c@{}}%
30\\-11\\-1
\end{tabular}\endgroup%
\kern3pt%
\begingroup \smaller\smaller\smaller\begin{tabular}{@{}c@{}}%
90\\-30\\-8
\end{tabular}\endgroup%
\kern3pt%
\begingroup \smaller\smaller\smaller\begin{tabular}{@{}c@{}}%
90\\-27\\-11
\end{tabular}\endgroup%
\kern3pt%
\begingroup \smaller\smaller\smaller\begin{tabular}{@{}c@{}}%
5\\-1\\-1
\end{tabular}\endgroup%
\kern3pt%
\begingroup \smaller\smaller\smaller\begin{tabular}{@{}c@{}}%
9\\0\\-2
\end{tabular}\endgroup%
{$\left.\llap{\phantom{%
\begingroup \smaller\smaller\smaller\begin{tabular}{@{}c@{}}%
0\\0\\0
\end{tabular}\endgroup%
}}\!\right]$}%
}%
\ifdim\wd\matricesbox>\halfwidth\myboxwidth=\hsize\else\myboxwidth=\halfwidth\fi
\vbox{%
\ifdim\myboxwidth=\hsize
\setbox\onelinebox=\hbox{%
\vbox{\hbox{%
$\Pi_{18,6}$ spans $L_{16.13}$%
}\hbox{%
$36\slashthree6322|2236\slashthree6322|22\rtimes D_{4}$%
}%
}%
\hfill\copy\matricesbox
}%
\ifdim\wd\onelinebox>\myboxwidth
\hbox to \myboxwidth{%
$\Pi_{18,6}$ spans $L_{16.13}$%
\hfil
$36\slashthree6322|2236\slashthree6322|22\rtimes D_{4}$%
}%
\box\matricesbox
\else
\hbox to \myboxwidth{%
\unhbox\onelinebox
}%
\fi
\else
\hbox to \myboxwidth{%
$\Pi_{18,6}$ spans $L_{16.13}$%
\hfil}%
\hbox to \myboxwidth{%
$36\slashthree6322|2236\slashthree6322|22\rtimes D_{4}$%
\hfil}%
\box\matricesbox
\fi
}%
\hfill\discretionary{}{}{}%
\setbox\matricesbox=\hbox{%
{$\left[\!\llap{\phantom{%
\begingroup \smaller\smaller\smaller\begin{tabular}{@{}c@{}}%
\phantom{0}\\\phantom{0}\\\phantom{0}\\\phantom{0}
\end{tabular}\endgroup%
}}\right.$}%
\begingroup \smaller\smaller\smaller\begin{tabular}{@{}c@{}}%
-1\\\phantom{0}\\\phantom{0}\\\phantom{0}
\end{tabular}\endgroup%
\kern3pt%
\begingroup \smaller\smaller\smaller\begin{tabular}{@{}c@{}}%
\phantom{0}\\15\\\phantom{0}\\\phantom{0}
\end{tabular}\endgroup%
\kern3pt%
\begingroup \smaller\smaller\smaller\begin{tabular}{@{}c@{}}%
\phantom{0}\\\phantom{0}\\15\\\phantom{0}
\end{tabular}\endgroup%
\kern3pt%
\begingroup \smaller\smaller\smaller\begin{tabular}{@{}c@{}}%
\phantom{0}\\\phantom{0}\\\phantom{0}\\15
\end{tabular}\endgroup%
{$\left.\llap{\phantom{%
\begingroup \smaller\smaller\smaller\begin{tabular}{@{}c@{}}%
\phantom{0}\\\phantom{0}\\\phantom{0}\\\phantom{0}
\end{tabular}\endgroup%
}}\!\right]$}%
{$\left[\!\llap{\phantom{%
\begingroup \smaller\smaller\smaller\begin{tabular}{@{}c@{}}%
0\\0\\0\\0
\end{tabular}\endgroup%
}}\right.$}%
\begingroup \smaller\smaller\smaller\begin{tabular}{@{}c@{}}%
90\\19\\-11\\-8
\end{tabular}\endgroup%
\kern3pt%
\begingroup \smaller\smaller\smaller\begin{tabular}{@{}c@{}}%
90\\19\\-8\\-11
\end{tabular}\endgroup%
\kern3pt%
\begingroup \smaller\smaller\smaller\begin{tabular}{@{}c@{}}%
30\\6\\-1\\-5
\end{tabular}\endgroup%
\kern3pt%
\begingroup \smaller\smaller\smaller\begin{tabular}{@{}c@{}}%
30\\5\\1\\-6
\end{tabular}\endgroup%
\kern3pt%
\begingroup \smaller\smaller\smaller\begin{tabular}{@{}c@{}}%
9\\1\\1\\-2
\end{tabular}\endgroup%
\kern3pt%
\begingroup \smaller\smaller\smaller\begin{tabular}{@{}c@{}}%
5\\0\\1\\-1
\end{tabular}\endgroup%
{$\left.\llap{\phantom{%
\begingroup \smaller\smaller\smaller\begin{tabular}{@{}c@{}}%
0\\0\\0\\0
\end{tabular}\endgroup%
}}\!\right]$}%
}%
\ifdim\wd\matricesbox>\halfwidth\myboxwidth=\hsize\else\myboxwidth=\halfwidth\fi
\vbox{%
\ifdim\myboxwidth=\hsize
\setbox\onelinebox=\hbox{%
\vbox{\hbox{%
$\Pi_{18,7}$ spans $L_{16.13}$%
}\hbox{%
$363222363222363222\rtimes C_{3}$%
}%
}%
\hfill\copy\matricesbox
}%
\ifdim\wd\onelinebox>\myboxwidth
\hbox to \myboxwidth{%
$\Pi_{18,7}$ spans $L_{16.13}$%
\hfil
$363222363222363222\rtimes C_{3}$%
}%
\box\matricesbox
\else
\hbox to \myboxwidth{%
\unhbox\onelinebox
}%
\fi
\else
\hbox to \myboxwidth{%
$\Pi_{18,7}$ spans $L_{16.13}$%
\hfil}%
\hbox to \myboxwidth{%
$363222363222363222\rtimes C_{3}$%
\hfil}%
\box\matricesbox
\fi
}%
\hfill\discretionary{}{}{}%
\setbox\matricesbox=\hbox{%
{$\left[\!\llap{\phantom{%
\begingroup \smaller\smaller\smaller\begin{tabular}{@{}c@{}}%
\phantom{0}\\\phantom{0}\\\phantom{0}
\end{tabular}\endgroup%
}}\right.$}%
\begingroup \smaller\smaller\smaller\begin{tabular}{@{}c@{}}%
-1\\\phantom{0}\\\phantom{0}
\end{tabular}\endgroup%
\kern3pt%
\begingroup \smaller\smaller\smaller\begin{tabular}{@{}c@{}}%
\phantom{0}\\15/2\\\phantom{0}
\end{tabular}\endgroup%
\kern3pt%
\begingroup \smaller\smaller\smaller\begin{tabular}{@{}c@{}}%
\phantom{0}\\\phantom{0}\\45/2
\end{tabular}\endgroup%
{$\left.\llap{\phantom{%
\begingroup \smaller\smaller\smaller\begin{tabular}{@{}c@{}}%
\phantom{0}\\\phantom{0}\\\phantom{0}
\end{tabular}\endgroup%
}}\!\right]$}%
{$\left[\!\llap{\phantom{%
\begingroup \smaller\smaller\smaller\begin{tabular}{@{}c@{}}%
0\\0\\0
\end{tabular}\endgroup%
}}\right.$}%
\begingroup \smaller\smaller\smaller\begin{tabular}{@{}c@{}}%
5\\-2\\0
\end{tabular}\endgroup%
\kern3pt%
\begingroup \smaller\smaller\smaller\begin{tabular}{@{}c@{}}%
9\\-3\\1
\end{tabular}\endgroup%
\kern3pt%
\begingroup \smaller\smaller\smaller\begin{tabular}{@{}c@{}}%
5\\-1\\1
\end{tabular}\endgroup%
\kern3pt%
\begingroup \smaller\smaller\smaller\begin{tabular}{@{}c@{}}%
90\\-3\\19
\end{tabular}\endgroup%
\kern3pt%
\begingroup \smaller\smaller\smaller\begin{tabular}{@{}c@{}}%
90\\3\\19
\end{tabular}\endgroup%
\kern3pt%
\begingroup \smaller\smaller\smaller\begin{tabular}{@{}c@{}}%
30\\4\\6
\end{tabular}\endgroup%
\kern3pt%
\begingroup \smaller\smaller\smaller\begin{tabular}{@{}c@{}}%
30\\7\\5
\end{tabular}\endgroup%
\kern3pt%
\begingroup \smaller\smaller\smaller\begin{tabular}{@{}c@{}}%
90\\27\\11
\end{tabular}\endgroup%
\kern3pt%
\begingroup \smaller\smaller\smaller\begin{tabular}{@{}c@{}}%
90\\30\\8
\end{tabular}\endgroup%
\kern3pt%
\begingroup \smaller\smaller\smaller\begin{tabular}{@{}c@{}}%
5\\2\\0
\end{tabular}\endgroup%
{$\left.\llap{\phantom{%
\begingroup \smaller\smaller\smaller\begin{tabular}{@{}c@{}}%
0\\0\\0
\end{tabular}\endgroup%
}}\!\right]$}%
}%
\ifdim\wd\matricesbox>\halfwidth\myboxwidth=\hsize\else\myboxwidth=\halfwidth\fi
\vbox{%
\ifdim\myboxwidth=\hsize
\setbox\onelinebox=\hbox{%
\vbox{\hbox{%
$\Pi_{18,8}$ spans $L_{16.13}$%
}\hbox{%
$22|222363632|2363632\rtimes D_{2}$%
}%
}%
\hfill\copy\matricesbox
}%
\ifdim\wd\onelinebox>\myboxwidth
\hbox to \myboxwidth{%
$\Pi_{18,8}$ spans $L_{16.13}$%
\hfil
$22|222363632|2363632\rtimes D_{2}$%
}%
\box\matricesbox
\else
\hbox to \myboxwidth{%
\unhbox\onelinebox
}%
\fi
\else
\hbox to \myboxwidth{%
$\Pi_{18,8}$ spans $L_{16.13}$%
\hfil}%
\hbox to \myboxwidth{%
$22|222363632|2363632\rtimes D_{2}$%
\hfil}%
\box\matricesbox
\fi
}%
\hfill\discretionary{}{}{}%
\setbox\matricesbox=\hbox{%
{$\left[\!\llap{\phantom{%
\begingroup \smaller\smaller\smaller\begin{tabular}{@{}c@{}}%
\phantom{0}\\\phantom{0}\\\phantom{0}
\end{tabular}\endgroup%
}}\right.$}%
\begingroup \smaller\smaller\smaller\begin{tabular}{@{}c@{}}%
-1\\\phantom{0}\\\phantom{0}
\end{tabular}\endgroup%
\kern3pt%
\begingroup \smaller\smaller\smaller\begin{tabular}{@{}c@{}}%
\phantom{0}\\45/2\\\phantom{0}
\end{tabular}\endgroup%
\kern3pt%
\begingroup \smaller\smaller\smaller\begin{tabular}{@{}c@{}}%
\phantom{0}\\\phantom{0}\\15/2
\end{tabular}\endgroup%
{$\left.\llap{\phantom{%
\begingroup \smaller\smaller\smaller\begin{tabular}{@{}c@{}}%
\phantom{0}\\\phantom{0}\\\phantom{0}
\end{tabular}\endgroup%
}}\!\right]$}%
{$\left[\!\llap{\phantom{%
\begingroup \smaller\smaller\smaller\begin{tabular}{@{}c@{}}%
0\\0\\0
\end{tabular}\endgroup%
}}\right.$}%
\begingroup \smaller\smaller\smaller\begin{tabular}{@{}c@{}}%
9\\2\\0
\end{tabular}\endgroup%
\kern3pt%
\begingroup \smaller\smaller\smaller\begin{tabular}{@{}c@{}}%
5\\1\\1
\end{tabular}\endgroup%
\kern3pt%
\begingroup \smaller\smaller\smaller\begin{tabular}{@{}c@{}}%
9\\1\\3
\end{tabular}\endgroup%
\kern3pt%
\begingroup \smaller\smaller\smaller\begin{tabular}{@{}c@{}}%
30\\1\\11
\end{tabular}\endgroup%
\kern3pt%
\begingroup \smaller\smaller\smaller\begin{tabular}{@{}c@{}}%
30\\-1\\11
\end{tabular}\endgroup%
\kern3pt%
\begingroup \smaller\smaller\smaller\begin{tabular}{@{}c@{}}%
90\\-8\\30
\end{tabular}\endgroup%
\kern3pt%
\begingroup \smaller\smaller\smaller\begin{tabular}{@{}c@{}}%
90\\-11\\27
\end{tabular}\endgroup%
\kern3pt%
\begingroup \smaller\smaller\smaller\begin{tabular}{@{}c@{}}%
30\\-5\\7
\end{tabular}\endgroup%
\kern3pt%
\begingroup \smaller\smaller\smaller\begin{tabular}{@{}c@{}}%
30\\-6\\4
\end{tabular}\endgroup%
\kern3pt%
\begingroup \smaller\smaller\smaller\begin{tabular}{@{}c@{}}%
9\\-2\\0
\end{tabular}\endgroup%
{$\left.\llap{\phantom{%
\begingroup \smaller\smaller\smaller\begin{tabular}{@{}c@{}}%
0\\0\\0
\end{tabular}\endgroup%
}}\!\right]$}%
}%
\ifdim\wd\matricesbox>\halfwidth\myboxwidth=\hsize\else\myboxwidth=\halfwidth\fi
\vbox{%
\ifdim\myboxwidth=\hsize
\setbox\onelinebox=\hbox{%
\vbox{\hbox{%
$\Pi_{18,9}$ spans $L_{16.13}$%
}\hbox{%
$363222|222363632|236\rtimes D_{2}$%
}%
}%
\hfill\copy\matricesbox
}%
\ifdim\wd\onelinebox>\myboxwidth
\hbox to \myboxwidth{%
$\Pi_{18,9}$ spans $L_{16.13}$%
\hfil
$363222|222363632|236\rtimes D_{2}$%
}%
\box\matricesbox
\else
\hbox to \myboxwidth{%
\unhbox\onelinebox
}%
\fi
\else
\hbox to \myboxwidth{%
$\Pi_{18,9}$ spans $L_{16.13}$%
\hfil}%
\hbox to \myboxwidth{%
$363222|222363632|236\rtimes D_{2}$%
\hfil}%
\box\matricesbox
\fi
}%
\hfill\discretionary{}{}{}%
\setbox\matricesbox=\hbox{%
{$\left[\!\llap{\phantom{%
\begingroup \smaller\smaller\smaller\begin{tabular}{@{}c@{}}%
\phantom{0}\\\phantom{0}\\\phantom{0}
\end{tabular}\endgroup%
}}\right.$}%
\begingroup \smaller\smaller\smaller\begin{tabular}{@{}c@{}}%
-1\\\phantom{0}\\\phantom{0}
\end{tabular}\endgroup%
\kern3pt%
\begingroup \smaller\smaller\smaller\begin{tabular}{@{}c@{}}%
\phantom{0}\\15/2\\\phantom{0}
\end{tabular}\endgroup%
\kern3pt%
\begingroup \smaller\smaller\smaller\begin{tabular}{@{}c@{}}%
\phantom{0}\\\phantom{0}\\45/2
\end{tabular}\endgroup%
{$\left.\llap{\phantom{%
\begingroup \smaller\smaller\smaller\begin{tabular}{@{}c@{}}%
\phantom{0}\\\phantom{0}\\\phantom{0}
\end{tabular}\endgroup%
}}\!\right]$}%
{$\left[\!\llap{\phantom{%
\begingroup \smaller\smaller\smaller\begin{tabular}{@{}c@{}}%
0\\0\\0
\end{tabular}\endgroup%
}}\right.$}%
\begingroup \smaller\smaller\smaller\begin{tabular}{@{}c@{}}%
30\\11\\-1
\end{tabular}\endgroup%
\kern3pt%
\begingroup \smaller\smaller\smaller\begin{tabular}{@{}c@{}}%
9\\3\\-1
\end{tabular}\endgroup%
\kern3pt%
\begingroup \smaller\smaller\smaller\begin{tabular}{@{}c@{}}%
5\\1\\-1
\end{tabular}\endgroup%
\kern3pt%
\begingroup \smaller\smaller\smaller\begin{tabular}{@{}c@{}}%
9\\0\\-2
\end{tabular}\endgroup%
\kern3pt%
\begingroup \smaller\smaller\smaller\begin{tabular}{@{}c@{}}%
30\\-4\\-6
\end{tabular}\endgroup%
\kern3pt%
\begingroup \smaller\smaller\smaller\begin{tabular}{@{}c@{}}%
30\\-7\\-5
\end{tabular}\endgroup%
\kern3pt%
\begingroup \smaller\smaller\smaller\begin{tabular}{@{}c@{}}%
90\\-27\\-11
\end{tabular}\endgroup%
\kern3pt%
\begingroup \smaller\smaller\smaller\begin{tabular}{@{}c@{}}%
90\\-30\\-8
\end{tabular}\endgroup%
\kern3pt%
\begingroup \smaller\smaller\smaller\begin{tabular}{@{}c@{}}%
30\\-11\\-1
\end{tabular}\endgroup%
{$\left.\llap{\phantom{%
\begingroup \smaller\smaller\smaller\begin{tabular}{@{}c@{}}%
0\\0\\0
\end{tabular}\endgroup%
}}\!\right]$}%
}%
\ifdim\wd\matricesbox>\halfwidth\myboxwidth=\hsize\else\myboxwidth=\halfwidth\fi
\vbox{%
\ifdim\myboxwidth=\hsize
\setbox\onelinebox=\hbox{%
\vbox{\hbox{%
$\Pi_{18,10}$ spans $L_{16.13}$%
}\hbox{%
$3632222\slashthree22223636\slashthree6\rtimes D_{2}$%
}%
}%
\hfill\copy\matricesbox
}%
\ifdim\wd\onelinebox>\myboxwidth
\hbox to \myboxwidth{%
$\Pi_{18,10}$ spans $L_{16.13}$%
\hfil
$3632222\slashthree22223636\slashthree6\rtimes D_{2}$%
}%
\box\matricesbox
\else
\hbox to \myboxwidth{%
\unhbox\onelinebox
}%
\fi
\else
\hbox to \myboxwidth{%
$\Pi_{18,10}$ spans $L_{16.13}$%
\hfil}%
\hbox to \myboxwidth{%
$3632222\slashthree22223636\slashthree6\rtimes D_{2}$%
\hfil}%
\box\matricesbox
\fi
}%
\hfill\discretionary{}{}{}%
\setbox\matricesbox=\hbox{%
{$\left[\!\llap{\phantom{%
\begingroup \smaller\smaller\smaller\begin{tabular}{@{}c@{}}%
\phantom{0}\\\phantom{0}\\\phantom{0}
\end{tabular}\endgroup%
}}\right.$}%
\begingroup \smaller\smaller\smaller\begin{tabular}{@{}c@{}}%
-1\\\phantom{0}\\\phantom{0}
\end{tabular}\endgroup%
\kern3pt%
\begingroup \smaller\smaller\smaller\begin{tabular}{@{}c@{}}%
\phantom{0}\\15/2\\\phantom{0}
\end{tabular}\endgroup%
\kern3pt%
\begingroup \smaller\smaller\smaller\begin{tabular}{@{}c@{}}%
\phantom{0}\\\phantom{0}\\45/2
\end{tabular}\endgroup%
{$\left.\llap{\phantom{%
\begingroup \smaller\smaller\smaller\begin{tabular}{@{}c@{}}%
\phantom{0}\\\phantom{0}\\\phantom{0}
\end{tabular}\endgroup%
}}\!\right]$}%
{$\left[\!\llap{\phantom{%
\begingroup \smaller\smaller\smaller\begin{tabular}{@{}c@{}}%
0\\0\\0
\end{tabular}\endgroup%
}}\right.$}%
\begingroup \smaller\smaller\smaller\begin{tabular}{@{}c@{}}%
5\\-2\\0
\end{tabular}\endgroup%
\kern3pt%
\begingroup \smaller\smaller\smaller\begin{tabular}{@{}c@{}}%
9\\-3\\-1
\end{tabular}\endgroup%
\kern3pt%
\begingroup \smaller\smaller\smaller\begin{tabular}{@{}c@{}}%
30\\-7\\-5
\end{tabular}\endgroup%
\kern3pt%
\begingroup \smaller\smaller\smaller\begin{tabular}{@{}c@{}}%
30\\-4\\-6
\end{tabular}\endgroup%
\kern3pt%
\begingroup \smaller\smaller\smaller\begin{tabular}{@{}c@{}}%
90\\-3\\-19
\end{tabular}\endgroup%
\kern3pt%
\begingroup \smaller\smaller\smaller\begin{tabular}{@{}c@{}}%
90\\3\\-19
\end{tabular}\endgroup%
\kern3pt%
\begingroup \smaller\smaller\smaller\begin{tabular}{@{}c@{}}%
5\\1\\-1
\end{tabular}\endgroup%
\kern3pt%
\begingroup \smaller\smaller\smaller\begin{tabular}{@{}c@{}}%
90\\27\\-11
\end{tabular}\endgroup%
\kern3pt%
\begingroup \smaller\smaller\smaller\begin{tabular}{@{}c@{}}%
90\\30\\-8
\end{tabular}\endgroup%
\kern3pt%
\begingroup \smaller\smaller\smaller\begin{tabular}{@{}c@{}}%
5\\2\\0
\end{tabular}\endgroup%
{$\left.\llap{\phantom{%
\begingroup \smaller\smaller\smaller\begin{tabular}{@{}c@{}}%
0\\0\\0
\end{tabular}\endgroup%
}}\!\right]$}%
}%
\ifdim\wd\matricesbox>\halfwidth\myboxwidth=\hsize\else\myboxwidth=\halfwidth\fi
\vbox{%
\ifdim\myboxwidth=\hsize
\setbox\onelinebox=\hbox{%
\vbox{\hbox{%
$\Pi_{18,11}$ spans $L_{16.13}$%
}\hbox{%
$36322|223632232|2322\rtimes D_{2}$%
}%
}%
\hfill\copy\matricesbox
}%
\ifdim\wd\onelinebox>\myboxwidth
\hbox to \myboxwidth{%
$\Pi_{18,11}$ spans $L_{16.13}$%
\hfil
$36322|223632232|2322\rtimes D_{2}$%
}%
\box\matricesbox
\else
\hbox to \myboxwidth{%
\unhbox\onelinebox
}%
\fi
\else
\hbox to \myboxwidth{%
$\Pi_{18,11}$ spans $L_{16.13}$%
\hfil}%
\hbox to \myboxwidth{%
$36322|223632232|2322\rtimes D_{2}$%
\hfil}%
\box\matricesbox
\fi
}%
\hfill\discretionary{}{}{}%
\setbox\matricesbox=\hbox{%
{$\left[\!\llap{\phantom{%
\begingroup \smaller\smaller\smaller\begin{tabular}{@{}c@{}}%
\phantom{0}\\\phantom{0}\\\phantom{0}
\end{tabular}\endgroup%
}}\right.$}%
\begingroup \smaller\smaller\smaller\begin{tabular}{@{}c@{}}%
-1\\\phantom{0}\\\phantom{0}
\end{tabular}\endgroup%
\kern3pt%
\begingroup \smaller\smaller\smaller\begin{tabular}{@{}c@{}}%
\phantom{0}\\45/2\\\phantom{0}
\end{tabular}\endgroup%
\kern3pt%
\begingroup \smaller\smaller\smaller\begin{tabular}{@{}c@{}}%
\phantom{0}\\\phantom{0}\\15/2
\end{tabular}\endgroup%
{$\left.\llap{\phantom{%
\begingroup \smaller\smaller\smaller\begin{tabular}{@{}c@{}}%
\phantom{0}\\\phantom{0}\\\phantom{0}
\end{tabular}\endgroup%
}}\!\right]$}%
{$\left[\!\llap{\phantom{%
\begingroup \smaller\smaller\smaller\begin{tabular}{@{}c@{}}%
0\\0\\0
\end{tabular}\endgroup%
}}\right.$}%
\begingroup \smaller\smaller\smaller\begin{tabular}{@{}c@{}}%
90\\19\\3
\end{tabular}\endgroup%
\kern3pt%
\begingroup \smaller\smaller\smaller\begin{tabular}{@{}c@{}}%
5\\1\\1
\end{tabular}\endgroup%
\kern3pt%
\begingroup \smaller\smaller\smaller\begin{tabular}{@{}c@{}}%
9\\1\\3
\end{tabular}\endgroup%
\kern3pt%
\begingroup \smaller\smaller\smaller\begin{tabular}{@{}c@{}}%
30\\1\\11
\end{tabular}\endgroup%
\kern3pt%
\begingroup \smaller\smaller\smaller\begin{tabular}{@{}c@{}}%
30\\-1\\11
\end{tabular}\endgroup%
\kern3pt%
\begingroup \smaller\smaller\smaller\begin{tabular}{@{}c@{}}%
90\\-8\\30
\end{tabular}\endgroup%
\kern3pt%
\begingroup \smaller\smaller\smaller\begin{tabular}{@{}c@{}}%
90\\-11\\27
\end{tabular}\endgroup%
\kern3pt%
\begingroup \smaller\smaller\smaller\begin{tabular}{@{}c@{}}%
5\\-1\\1
\end{tabular}\endgroup%
\kern3pt%
\begingroup \smaller\smaller\smaller\begin{tabular}{@{}c@{}}%
90\\-19\\3
\end{tabular}\endgroup%
{$\left.\llap{\phantom{%
\begingroup \smaller\smaller\smaller\begin{tabular}{@{}c@{}}%
0\\0\\0
\end{tabular}\endgroup%
}}\!\right]$}%
}%
\ifdim\wd\matricesbox>\halfwidth\myboxwidth=\hsize\else\myboxwidth=\halfwidth\fi
\vbox{%
\ifdim\myboxwidth=\hsize
\setbox\onelinebox=\hbox{%
\vbox{\hbox{%
$\Pi_{18,12}$ spans $L_{16.13}$%
}\hbox{%
$363222\slashthree22236322\slashthree22\rtimes D_{2}$%
}%
}%
\hfill\copy\matricesbox
}%
\ifdim\wd\onelinebox>\myboxwidth
\hbox to \myboxwidth{%
$\Pi_{18,12}$ spans $L_{16.13}$%
\hfil
$363222\slashthree22236322\slashthree22\rtimes D_{2}$%
}%
\box\matricesbox
\else
\hbox to \myboxwidth{%
\unhbox\onelinebox
}%
\fi
\else
\hbox to \myboxwidth{%
$\Pi_{18,12}$ spans $L_{16.13}$%
\hfil}%
\hbox to \myboxwidth{%
$363222\slashthree22236322\slashthree22\rtimes D_{2}$%
\hfil}%
\box\matricesbox
\fi
}%
\hfill\discretionary{}{}{}%
\setbox\matricesbox=\hbox{%
{$\left[\!\llap{\phantom{%
\begingroup \smaller\smaller\smaller\begin{tabular}{@{}c@{}}%
\phantom{0}\\\phantom{0}\\\phantom{0}
\end{tabular}\endgroup%
}}\right.$}%
\begingroup \smaller\smaller\smaller\begin{tabular}{@{}c@{}}%
-1\\\phantom{0}\\\phantom{0}
\end{tabular}\endgroup%
\kern3pt%
\begingroup \smaller\smaller\smaller\begin{tabular}{@{}c@{}}%
\phantom{0}\\15/2\\\phantom{0}
\end{tabular}\endgroup%
\kern3pt%
\begingroup \smaller\smaller\smaller\begin{tabular}{@{}c@{}}%
\phantom{0}\\\phantom{0}\\45/2
\end{tabular}\endgroup%
{$\left.\llap{\phantom{%
\begingroup \smaller\smaller\smaller\begin{tabular}{@{}c@{}}%
\phantom{0}\\\phantom{0}\\\phantom{0}
\end{tabular}\endgroup%
}}\!\right]$}%
{$\left[\!\llap{\phantom{%
\begingroup \smaller\smaller\smaller\begin{tabular}{@{}c@{}}%
0\\0\\0
\end{tabular}\endgroup%
}}\right.$}%
\begingroup \smaller\smaller\smaller\begin{tabular}{@{}c@{}}%
5\\2\\0
\end{tabular}\endgroup%
\kern3pt%
\begingroup \smaller\smaller\smaller\begin{tabular}{@{}c@{}}%
90\\30\\8
\end{tabular}\endgroup%
\kern3pt%
\begingroup \smaller\smaller\smaller\begin{tabular}{@{}c@{}}%
90\\27\\11
\end{tabular}\endgroup%
\kern3pt%
\begingroup \smaller\smaller\smaller\begin{tabular}{@{}c@{}}%
5\\1\\1
\end{tabular}\endgroup%
\kern3pt%
\begingroup \smaller\smaller\smaller\begin{tabular}{@{}c@{}}%
9\\0\\2
\end{tabular}\endgroup%
\kern3pt%
\begingroup \smaller\smaller\smaller\begin{tabular}{@{}c@{}}%
30\\-4\\6
\end{tabular}\endgroup%
\kern3pt%
\begingroup \smaller\smaller\smaller\begin{tabular}{@{}c@{}}%
30\\-7\\5
\end{tabular}\endgroup%
\kern3pt%
\begingroup \smaller\smaller\smaller\begin{tabular}{@{}c@{}}%
90\\-27\\11
\end{tabular}\endgroup%
\kern3pt%
\begingroup \smaller\smaller\smaller\begin{tabular}{@{}c@{}}%
90\\-30\\8
\end{tabular}\endgroup%
\kern3pt%
\begingroup \smaller\smaller\smaller\begin{tabular}{@{}c@{}}%
5\\-2\\0
\end{tabular}\endgroup%
{$\left.\llap{\phantom{%
\begingroup \smaller\smaller\smaller\begin{tabular}{@{}c@{}}%
0\\0\\0
\end{tabular}\endgroup%
}}\!\right]$}%
}%
\ifdim\wd\matricesbox>\halfwidth\myboxwidth=\hsize\else\myboxwidth=\halfwidth\fi
\vbox{%
\ifdim\myboxwidth=\hsize
\setbox\onelinebox=\hbox{%
\vbox{\hbox{%
$\Pi_{18,13}$ spans $L_{16.13}$%
}\hbox{%
$36322232|232223632|2\rtimes D_{2}$%
}%
}%
\hfill\copy\matricesbox
}%
\ifdim\wd\onelinebox>\myboxwidth
\hbox to \myboxwidth{%
$\Pi_{18,13}$ spans $L_{16.13}$%
\hfil
$36322232|232223632|2\rtimes D_{2}$%
}%
\box\matricesbox
\else
\hbox to \myboxwidth{%
\unhbox\onelinebox
}%
\fi
\else
\hbox to \myboxwidth{%
$\Pi_{18,13}$ spans $L_{16.13}$%
\hfil}%
\hbox to \myboxwidth{%
$36322232|232223632|2\rtimes D_{2}$%
\hfil}%
\box\matricesbox
\fi
}%
\hfill\discretionary{}{}{}%
\setbox\matricesbox=\hbox{%
{$\left[\!\llap{\phantom{%
\begingroup \smaller\smaller\smaller\begin{tabular}{@{}c@{}}%
\phantom{0}\\\phantom{0}\\\phantom{0}
\end{tabular}\endgroup%
}}\right.$}%
\begingroup \smaller\smaller\smaller\begin{tabular}{@{}c@{}}%
-1\\\phantom{0}\\\phantom{0}
\end{tabular}\endgroup%
\kern3pt%
\begingroup \smaller\smaller\smaller\begin{tabular}{@{}c@{}}%
\phantom{0}\\45/2\\\phantom{0}
\end{tabular}\endgroup%
\kern3pt%
\begingroup \smaller\smaller\smaller\begin{tabular}{@{}c@{}}%
\phantom{0}\\\phantom{0}\\15/2
\end{tabular}\endgroup%
{$\left.\llap{\phantom{%
\begingroup \smaller\smaller\smaller\begin{tabular}{@{}c@{}}%
\phantom{0}\\\phantom{0}\\\phantom{0}
\end{tabular}\endgroup%
}}\!\right]$}%
{$\left[\!\llap{\phantom{%
\begingroup \smaller\smaller\smaller\begin{tabular}{@{}c@{}}%
0\\0\\0
\end{tabular}\endgroup%
}}\right.$}%
\begingroup \smaller\smaller\smaller\begin{tabular}{@{}c@{}}%
90\\19\\3
\end{tabular}\endgroup%
\kern3pt%
\begingroup \smaller\smaller\smaller\begin{tabular}{@{}c@{}}%
30\\6\\4
\end{tabular}\endgroup%
\kern3pt%
\begingroup \smaller\smaller\smaller\begin{tabular}{@{}c@{}}%
30\\5\\7
\end{tabular}\endgroup%
\kern3pt%
\begingroup \smaller\smaller\smaller\begin{tabular}{@{}c@{}}%
9\\1\\3
\end{tabular}\endgroup%
\kern3pt%
\begingroup \smaller\smaller\smaller\begin{tabular}{@{}c@{}}%
5\\0\\2
\end{tabular}\endgroup%
\kern3pt%
\begingroup \smaller\smaller\smaller\begin{tabular}{@{}c@{}}%
90\\-8\\30
\end{tabular}\endgroup%
\kern3pt%
\begingroup \smaller\smaller\smaller\begin{tabular}{@{}c@{}}%
90\\-11\\27
\end{tabular}\endgroup%
\kern3pt%
\begingroup \smaller\smaller\smaller\begin{tabular}{@{}c@{}}%
5\\-1\\1
\end{tabular}\endgroup%
\kern3pt%
\begingroup \smaller\smaller\smaller\begin{tabular}{@{}c@{}}%
90\\-19\\3
\end{tabular}\endgroup%
{$\left.\llap{\phantom{%
\begingroup \smaller\smaller\smaller\begin{tabular}{@{}c@{}}%
0\\0\\0
\end{tabular}\endgroup%
}}\!\right]$}%
}%
\ifdim\wd\matricesbox>\halfwidth\myboxwidth=\hsize\else\myboxwidth=\halfwidth\fi
\vbox{%
\ifdim\myboxwidth=\hsize
\setbox\onelinebox=\hbox{%
\vbox{\hbox{%
$\Pi_{18,14}$ spans $L_{16.13}$%
}\hbox{%
$\slashthree63222322\slashthree22322236\rtimes D_{2}$%
}%
}%
\hfill\copy\matricesbox
}%
\ifdim\wd\onelinebox>\myboxwidth
\hbox to \myboxwidth{%
$\Pi_{18,14}$ spans $L_{16.13}$%
\hfil
$\slashthree63222322\slashthree22322236\rtimes D_{2}$%
}%
\box\matricesbox
\else
\hbox to \myboxwidth{%
\unhbox\onelinebox
}%
\fi
\else
\hbox to \myboxwidth{%
$\Pi_{18,14}$ spans $L_{16.13}$%
\hfil}%
\hbox to \myboxwidth{%
$\slashthree63222322\slashthree22322236\rtimes D_{2}$%
\hfil}%
\box\matricesbox
\fi
}%
\hfill\discretionary{}{}{}%
\setbox\matricesbox=\hbox{%
{$\left[\!\llap{\phantom{%
\begingroup \smaller\smaller\smaller\begin{tabular}{@{}c@{}}%
\phantom{0}\\\phantom{0}\\\phantom{0}
\end{tabular}\endgroup%
}}\right.$}%
\begingroup \smaller\smaller\smaller\begin{tabular}{@{}c@{}}%
-1\\\phantom{0}\\\phantom{0}
\end{tabular}\endgroup%
\kern3pt%
\begingroup \smaller\smaller\smaller\begin{tabular}{@{}c@{}}%
\phantom{0}\\15/2\\\phantom{0}
\end{tabular}\endgroup%
\kern3pt%
\begingroup \smaller\smaller\smaller\begin{tabular}{@{}c@{}}%
\phantom{0}\\\phantom{0}\\45/2
\end{tabular}\endgroup%
{$\left.\llap{\phantom{%
\begingroup \smaller\smaller\smaller\begin{tabular}{@{}c@{}}%
\phantom{0}\\\phantom{0}\\\phantom{0}
\end{tabular}\endgroup%
}}\!\right]$}%
{$\left[\!\llap{\phantom{%
\begingroup \smaller\smaller\smaller\begin{tabular}{@{}c@{}}%
0\\0\\0
\end{tabular}\endgroup%
}}\right.$}%
\begingroup \smaller\smaller\smaller\begin{tabular}{@{}c@{}}%
5\\2\\0
\end{tabular}\endgroup%
\kern3pt%
\begingroup \smaller\smaller\smaller\begin{tabular}{@{}c@{}}%
9\\3\\1
\end{tabular}\endgroup%
\kern3pt%
\begingroup \smaller\smaller\smaller\begin{tabular}{@{}c@{}}%
30\\7\\5
\end{tabular}\endgroup%
\kern3pt%
\begingroup \smaller\smaller\smaller\begin{tabular}{@{}c@{}}%
30\\4\\6
\end{tabular}\endgroup%
\kern3pt%
\begingroup \smaller\smaller\smaller\begin{tabular}{@{}c@{}}%
9\\0\\2
\end{tabular}\endgroup%
\kern3pt%
\begingroup \smaller\smaller\smaller\begin{tabular}{@{}c@{}}%
30\\-4\\6
\end{tabular}\endgroup%
\kern3pt%
\begingroup \smaller\smaller\smaller\begin{tabular}{@{}c@{}}%
30\\-7\\5
\end{tabular}\endgroup%
\kern3pt%
\begingroup \smaller\smaller\smaller\begin{tabular}{@{}c@{}}%
90\\-27\\11
\end{tabular}\endgroup%
\kern3pt%
\begingroup \smaller\smaller\smaller\begin{tabular}{@{}c@{}}%
90\\-30\\8
\end{tabular}\endgroup%
\kern3pt%
\begingroup \smaller\smaller\smaller\begin{tabular}{@{}c@{}}%
5\\-2\\0
\end{tabular}\endgroup%
{$\left.\llap{\phantom{%
\begingroup \smaller\smaller\smaller\begin{tabular}{@{}c@{}}%
0\\0\\0
\end{tabular}\endgroup%
}}\!\right]$}%
}%
\ifdim\wd\matricesbox>\halfwidth\myboxwidth=\hsize\else\myboxwidth=\halfwidth\fi
\vbox{%
\ifdim\myboxwidth=\hsize
\setbox\onelinebox=\hbox{%
\vbox{\hbox{%
$\Pi_{18,15}$ spans $L_{16.13}$%
}\hbox{%
$36322322|223223632|2\rtimes D_{2}$%
}%
}%
\hfill\copy\matricesbox
}%
\ifdim\wd\onelinebox>\myboxwidth
\hbox to \myboxwidth{%
$\Pi_{18,15}$ spans $L_{16.13}$%
\hfil
$36322322|223223632|2\rtimes D_{2}$%
}%
\box\matricesbox
\else
\hbox to \myboxwidth{%
\unhbox\onelinebox
}%
\fi
\else
\hbox to \myboxwidth{%
$\Pi_{18,15}$ spans $L_{16.13}$%
\hfil}%
\hbox to \myboxwidth{%
$36322322|223223632|2\rtimes D_{2}$%
\hfil}%
\box\matricesbox
\fi
}%
\hfill\discretionary{}{}{}%
\setbox\matricesbox=\hbox{%
{$\left[\!\llap{\phantom{%
\begingroup \smaller\smaller\smaller\begin{tabular}{@{}c@{}}%
\phantom{0}\\\phantom{0}\\\phantom{0}
\end{tabular}\endgroup%
}}\right.$}%
\begingroup \smaller\smaller\smaller\begin{tabular}{@{}c@{}}%
-1\\\phantom{0}\\\phantom{0}
\end{tabular}\endgroup%
\kern3pt%
\begingroup \smaller\smaller\smaller\begin{tabular}{@{}c@{}}%
\phantom{0}\\45/2\\\phantom{0}
\end{tabular}\endgroup%
\kern3pt%
\begingroup \smaller\smaller\smaller\begin{tabular}{@{}c@{}}%
\phantom{0}\\\phantom{0}\\15/2
\end{tabular}\endgroup%
{$\left.\llap{\phantom{%
\begingroup \smaller\smaller\smaller\begin{tabular}{@{}c@{}}%
\phantom{0}\\\phantom{0}\\\phantom{0}
\end{tabular}\endgroup%
}}\!\right]$}%
{$\left[\!\llap{\phantom{%
\begingroup \smaller\smaller\smaller\begin{tabular}{@{}c@{}}%
0\\0\\0
\end{tabular}\endgroup%
}}\right.$}%
\begingroup \smaller\smaller\smaller\begin{tabular}{@{}c@{}}%
90\\19\\3
\end{tabular}\endgroup%
\kern3pt%
\begingroup \smaller\smaller\smaller\begin{tabular}{@{}c@{}}%
30\\6\\4
\end{tabular}\endgroup%
\kern3pt%
\begingroup \smaller\smaller\smaller\begin{tabular}{@{}c@{}}%
30\\5\\7
\end{tabular}\endgroup%
\kern3pt%
\begingroup \smaller\smaller\smaller\begin{tabular}{@{}c@{}}%
9\\1\\3
\end{tabular}\endgroup%
\kern3pt%
\begingroup \smaller\smaller\smaller\begin{tabular}{@{}c@{}}%
30\\1\\11
\end{tabular}\endgroup%
\kern3pt%
\begingroup \smaller\smaller\smaller\begin{tabular}{@{}c@{}}%
30\\-1\\11
\end{tabular}\endgroup%
\kern3pt%
\begingroup \smaller\smaller\smaller\begin{tabular}{@{}c@{}}%
9\\-1\\3
\end{tabular}\endgroup%
\kern3pt%
\begingroup \smaller\smaller\smaller\begin{tabular}{@{}c@{}}%
5\\-1\\1
\end{tabular}\endgroup%
\kern3pt%
\begingroup \smaller\smaller\smaller\begin{tabular}{@{}c@{}}%
90\\-19\\3
\end{tabular}\endgroup%
{$\left.\llap{\phantom{%
\begingroup \smaller\smaller\smaller\begin{tabular}{@{}c@{}}%
0\\0\\0
\end{tabular}\endgroup%
}}\!\right]$}%
}%
\ifdim\wd\matricesbox>\halfwidth\myboxwidth=\hsize\else\myboxwidth=\halfwidth\fi
\vbox{%
\ifdim\myboxwidth=\hsize
\setbox\onelinebox=\hbox{%
\vbox{\hbox{%
$\Pi_{18,16}$ spans $L_{16.13}$%
}\hbox{%
$\slashthree63223222\slashthree22232236\rtimes D_{2}$%
}%
}%
\hfill\copy\matricesbox
}%
\ifdim\wd\onelinebox>\myboxwidth
\hbox to \myboxwidth{%
$\Pi_{18,16}$ spans $L_{16.13}$%
\hfil
$\slashthree63223222\slashthree22232236\rtimes D_{2}$%
}%
\box\matricesbox
\else
\hbox to \myboxwidth{%
\unhbox\onelinebox
}%
\fi
\else
\hbox to \myboxwidth{%
$\Pi_{18,16}$ spans $L_{16.13}$%
\hfil}%
\hbox to \myboxwidth{%
$\slashthree63223222\slashthree22232236\rtimes D_{2}$%
\hfil}%
\box\matricesbox
\fi
}%
\hfill\discretionary{}{}{}%
\setbox\matricesbox=\hbox{%
{$\left[\!\llap{\phantom{%
\begingroup \smaller\smaller\smaller\begin{tabular}{@{}c@{}}%
\phantom{0}\\\phantom{0}\\\phantom{0}
\end{tabular}\endgroup%
}}\right.$}%
\begingroup \smaller\smaller\smaller\begin{tabular}{@{}c@{}}%
-1\\\phantom{0}\\\phantom{0}
\end{tabular}\endgroup%
\kern3pt%
\begingroup \smaller\smaller\smaller\begin{tabular}{@{}c@{}}%
\phantom{0}\\45/2\\\phantom{0}
\end{tabular}\endgroup%
\kern3pt%
\begingroup \smaller\smaller\smaller\begin{tabular}{@{}c@{}}%
\phantom{0}\\\phantom{0}\\15/2
\end{tabular}\endgroup%
{$\left.\llap{\phantom{%
\begingroup \smaller\smaller\smaller\begin{tabular}{@{}c@{}}%
\phantom{0}\\\phantom{0}\\\phantom{0}
\end{tabular}\endgroup%
}}\!\right]$}%
{$\left[\!\llap{\phantom{%
\begingroup \smaller\smaller\smaller\begin{tabular}{@{}c@{}}%
0\\0\\0
\end{tabular}\endgroup%
}}\right.$}%
\begingroup \smaller\smaller\smaller\begin{tabular}{@{}c@{}}%
9\\2\\0
\end{tabular}\endgroup%
\kern3pt%
\begingroup \smaller\smaller\smaller\begin{tabular}{@{}c@{}}%
30\\6\\4
\end{tabular}\endgroup%
\kern3pt%
\begingroup \smaller\smaller\smaller\begin{tabular}{@{}c@{}}%
30\\5\\7
\end{tabular}\endgroup%
\kern3pt%
\begingroup \smaller\smaller\smaller\begin{tabular}{@{}c@{}}%
9\\1\\3
\end{tabular}\endgroup%
\kern3pt%
\begingroup \smaller\smaller\smaller\begin{tabular}{@{}c@{}}%
30\\1\\11
\end{tabular}\endgroup%
\kern3pt%
\begingroup \smaller\smaller\smaller\begin{tabular}{@{}c@{}}%
30\\-1\\11
\end{tabular}\endgroup%
\kern3pt%
\begingroup \smaller\smaller\smaller\begin{tabular}{@{}c@{}}%
90\\-8\\30
\end{tabular}\endgroup%
\kern3pt%
\begingroup \smaller\smaller\smaller\begin{tabular}{@{}c@{}}%
90\\-11\\27
\end{tabular}\endgroup%
\kern3pt%
\begingroup \smaller\smaller\smaller\begin{tabular}{@{}c@{}}%
5\\-1\\1
\end{tabular}\endgroup%
\kern3pt%
\begingroup \smaller\smaller\smaller\begin{tabular}{@{}c@{}}%
9\\-2\\0
\end{tabular}\endgroup%
{$\left.\llap{\phantom{%
\begingroup \smaller\smaller\smaller\begin{tabular}{@{}c@{}}%
0\\0\\0
\end{tabular}\endgroup%
}}\!\right]$}%
}%
\ifdim\wd\matricesbox>\halfwidth\myboxwidth=\hsize\else\myboxwidth=\halfwidth\fi
\vbox{%
\ifdim\myboxwidth=\hsize
\setbox\onelinebox=\hbox{%
\vbox{\hbox{%
$\Pi_{18,17}$ spans $L_{16.13}$%
}\hbox{%
$3632232|232236322|22\rtimes D_{2}$%
}%
}%
\hfill\copy\matricesbox
}%
\ifdim\wd\onelinebox>\myboxwidth
\hbox to \myboxwidth{%
$\Pi_{18,17}$ spans $L_{16.13}$%
\hfil
$3632232|232236322|22\rtimes D_{2}$%
}%
\box\matricesbox
\else
\hbox to \myboxwidth{%
\unhbox\onelinebox
}%
\fi
\else
\hbox to \myboxwidth{%
$\Pi_{18,17}$ spans $L_{16.13}$%
\hfil}%
\hbox to \myboxwidth{%
$3632232|232236322|22\rtimes D_{2}$%
\hfil}%
\box\matricesbox
\fi
}%
\hfill\discretionary{}{}{}%
\setbox\matricesbox=\hbox{%
{$\left[\!\llap{\phantom{%
\begingroup \smaller\smaller\smaller\begin{tabular}{@{}c@{}}%
\phantom{0}\\\phantom{0}\\\phantom{0}
\end{tabular}\endgroup%
}}\right.$}%
\begingroup \smaller\smaller\smaller\begin{tabular}{@{}c@{}}%
-1\\\phantom{0}\\\phantom{0}
\end{tabular}\endgroup%
\kern3pt%
\begingroup \smaller\smaller\smaller\begin{tabular}{@{}c@{}}%
\phantom{0}\\15/2\\\phantom{0}
\end{tabular}\endgroup%
\kern3pt%
\begingroup \smaller\smaller\smaller\begin{tabular}{@{}c@{}}%
\phantom{0}\\\phantom{0}\\45/2
\end{tabular}\endgroup%
{$\left.\llap{\phantom{%
\begingroup \smaller\smaller\smaller\begin{tabular}{@{}c@{}}%
\phantom{0}\\\phantom{0}\\\phantom{0}
\end{tabular}\endgroup%
}}\!\right]$}%
{$\left[\!\llap{\phantom{%
\begingroup \smaller\smaller\smaller\begin{tabular}{@{}c@{}}%
0\\0\\0
\end{tabular}\endgroup%
}}\right.$}%
\begingroup \smaller\smaller\smaller\begin{tabular}{@{}c@{}}%
30\\-11\\-1
\end{tabular}\endgroup%
\kern3pt%
\begingroup \smaller\smaller\smaller\begin{tabular}{@{}c@{}}%
9\\-3\\-1
\end{tabular}\endgroup%
\kern3pt%
\begingroup \smaller\smaller\smaller\begin{tabular}{@{}c@{}}%
30\\-7\\-5
\end{tabular}\endgroup%
\kern3pt%
\begingroup \smaller\smaller\smaller\begin{tabular}{@{}c@{}}%
30\\-4\\-6
\end{tabular}\endgroup%
\kern3pt%
\begingroup \smaller\smaller\smaller\begin{tabular}{@{}c@{}}%
90\\-3\\-19
\end{tabular}\endgroup%
\kern3pt%
\begingroup \smaller\smaller\smaller\begin{tabular}{@{}c@{}}%
90\\3\\-19
\end{tabular}\endgroup%
\kern3pt%
\begingroup \smaller\smaller\smaller\begin{tabular}{@{}c@{}}%
5\\1\\-1
\end{tabular}\endgroup%
\kern3pt%
\begingroup \smaller\smaller\smaller\begin{tabular}{@{}c@{}}%
9\\3\\-1
\end{tabular}\endgroup%
\kern3pt%
\begingroup \smaller\smaller\smaller\begin{tabular}{@{}c@{}}%
30\\11\\-1
\end{tabular}\endgroup%
{$\left.\llap{\phantom{%
\begingroup \smaller\smaller\smaller\begin{tabular}{@{}c@{}}%
0\\0\\0
\end{tabular}\endgroup%
}}\!\right]$}%
}%
\ifdim\wd\matricesbox>\halfwidth\myboxwidth=\hsize\else\myboxwidth=\halfwidth\fi
\vbox{%
\ifdim\myboxwidth=\hsize
\setbox\onelinebox=\hbox{%
\vbox{\hbox{%
$\Pi_{18,18}$ spans $L_{16.13}$%
}\hbox{%
$36322\slashthree22363222\slashthree222\rtimes D_{2}$%
}%
}%
\hfill\copy\matricesbox
}%
\ifdim\wd\onelinebox>\myboxwidth
\hbox to \myboxwidth{%
$\Pi_{18,18}$ spans $L_{16.13}$%
\hfil
$36322\slashthree22363222\slashthree222\rtimes D_{2}$%
}%
\box\matricesbox
\else
\hbox to \myboxwidth{%
\unhbox\onelinebox
}%
\fi
\else
\hbox to \myboxwidth{%
$\Pi_{18,18}$ spans $L_{16.13}$%
\hfil}%
\hbox to \myboxwidth{%
$36322\slashthree22363222\slashthree222\rtimes D_{2}$%
\hfil}%
\box\matricesbox
\fi
}%
\hfill\discretionary{}{}{}%
\setbox\matricesbox=\hbox{%
{$\left[\!\llap{\phantom{%
\begingroup \smaller\smaller\smaller\begin{tabular}{@{}c@{}}%
\phantom{0}\\\phantom{0}\\\phantom{0}
\end{tabular}\endgroup%
}}\right.$}%
\begingroup \smaller\smaller\smaller\begin{tabular}{@{}c@{}}%
-1\\\phantom{0}\\\phantom{0}
\end{tabular}\endgroup%
\kern3pt%
\begingroup \smaller\smaller\smaller\begin{tabular}{@{}c@{}}%
\phantom{0}\\45/2\\\phantom{0}
\end{tabular}\endgroup%
\kern3pt%
\begingroup \smaller\smaller\smaller\begin{tabular}{@{}c@{}}%
\phantom{0}\\\phantom{0}\\15/2
\end{tabular}\endgroup%
{$\left.\llap{\phantom{%
\begingroup \smaller\smaller\smaller\begin{tabular}{@{}c@{}}%
\phantom{0}\\\phantom{0}\\\phantom{0}
\end{tabular}\endgroup%
}}\!\right]$}%
{$\left[\!\llap{\phantom{%
\begingroup \smaller\smaller\smaller\begin{tabular}{@{}c@{}}%
0\\0\\0
\end{tabular}\endgroup%
}}\right.$}%
\begingroup \smaller\smaller\smaller\begin{tabular}{@{}c@{}}%
9\\-2\\0
\end{tabular}\endgroup%
\kern3pt%
\begingroup \smaller\smaller\smaller\begin{tabular}{@{}c@{}}%
30\\-6\\4
\end{tabular}\endgroup%
\kern3pt%
\begingroup \smaller\smaller\smaller\begin{tabular}{@{}c@{}}%
30\\-5\\7
\end{tabular}\endgroup%
\kern3pt%
\begingroup \smaller\smaller\smaller\begin{tabular}{@{}c@{}}%
90\\-11\\27
\end{tabular}\endgroup%
\kern3pt%
\begingroup \smaller\smaller\smaller\begin{tabular}{@{}c@{}}%
90\\-8\\30
\end{tabular}\endgroup%
\kern3pt%
\begingroup \smaller\smaller\smaller\begin{tabular}{@{}c@{}}%
5\\0\\2
\end{tabular}\endgroup%
\kern3pt%
\begingroup \smaller\smaller\smaller\begin{tabular}{@{}c@{}}%
9\\1\\3
\end{tabular}\endgroup%
\kern3pt%
\begingroup \smaller\smaller\smaller\begin{tabular}{@{}c@{}}%
30\\5\\7
\end{tabular}\endgroup%
\kern3pt%
\begingroup \smaller\smaller\smaller\begin{tabular}{@{}c@{}}%
30\\6\\4
\end{tabular}\endgroup%
\kern3pt%
\begingroup \smaller\smaller\smaller\begin{tabular}{@{}c@{}}%
9\\2\\0
\end{tabular}\endgroup%
{$\left.\llap{\phantom{%
\begingroup \smaller\smaller\smaller\begin{tabular}{@{}c@{}}%
0\\0\\0
\end{tabular}\endgroup%
}}\!\right]$}%
}%
\ifdim\wd\matricesbox>\halfwidth\myboxwidth=\hsize\else\myboxwidth=\halfwidth\fi
\vbox{%
\ifdim\myboxwidth=\hsize
\setbox\onelinebox=\hbox{%
\vbox{\hbox{%
$\Pi_{18,19}$ spans $L_{16.13}$%
}\hbox{%
$3632|236322232|23222\rtimes D_{2}$%
}%
}%
\hfill\copy\matricesbox
}%
\ifdim\wd\onelinebox>\myboxwidth
\hbox to \myboxwidth{%
$\Pi_{18,19}$ spans $L_{16.13}$%
\hfil
$3632|236322232|23222\rtimes D_{2}$%
}%
\box\matricesbox
\else
\hbox to \myboxwidth{%
\unhbox\onelinebox
}%
\fi
\else
\hbox to \myboxwidth{%
$\Pi_{18,19}$ spans $L_{16.13}$%
\hfil}%
\hbox to \myboxwidth{%
$3632|236322232|23222\rtimes D_{2}$%
\hfil}%
\box\matricesbox
\fi
}%
\hfill\discretionary{}{}{}%
\setbox\matricesbox=\hbox{%
{$\left[\!\llap{\phantom{%
\begingroup \smaller\smaller\smaller\begin{tabular}{@{}c@{}}%
\phantom{0}\\\phantom{0}\\\phantom{0}
\end{tabular}\endgroup%
}}\right.$}%
\begingroup \smaller\smaller\smaller\begin{tabular}{@{}c@{}}%
-1\\\phantom{0}\\\phantom{0}
\end{tabular}\endgroup%
\kern3pt%
\begingroup \smaller\smaller\smaller\begin{tabular}{@{}c@{}}%
\phantom{0}\\45/2\\\phantom{0}
\end{tabular}\endgroup%
\kern3pt%
\begingroup \smaller\smaller\smaller\begin{tabular}{@{}c@{}}%
\phantom{0}\\\phantom{0}\\15/2
\end{tabular}\endgroup%
{$\left.\llap{\phantom{%
\begingroup \smaller\smaller\smaller\begin{tabular}{@{}c@{}}%
\phantom{0}\\\phantom{0}\\\phantom{0}
\end{tabular}\endgroup%
}}\!\right]$}%
{$\left[\!\llap{\phantom{%
\begingroup \smaller\smaller\smaller\begin{tabular}{@{}c@{}}%
0\\0\\0
\end{tabular}\endgroup%
}}\right.$}%
\begingroup \smaller\smaller\smaller\begin{tabular}{@{}c@{}}%
9\\-2\\0
\end{tabular}\endgroup%
\kern3pt%
\begingroup \smaller\smaller\smaller\begin{tabular}{@{}c@{}}%
30\\-6\\4
\end{tabular}\endgroup%
\kern3pt%
\begingroup \smaller\smaller\smaller\begin{tabular}{@{}c@{}}%
30\\-5\\7
\end{tabular}\endgroup%
\kern3pt%
\begingroup \smaller\smaller\smaller\begin{tabular}{@{}c@{}}%
90\\-11\\27
\end{tabular}\endgroup%
\kern3pt%
\begingroup \smaller\smaller\smaller\begin{tabular}{@{}c@{}}%
90\\-8\\30
\end{tabular}\endgroup%
\kern3pt%
\begingroup \smaller\smaller\smaller\begin{tabular}{@{}c@{}}%
5\\0\\2
\end{tabular}\endgroup%
\kern3pt%
\begingroup \smaller\smaller\smaller\begin{tabular}{@{}c@{}}%
90\\8\\30
\end{tabular}\endgroup%
\kern3pt%
\begingroup \smaller\smaller\smaller\begin{tabular}{@{}c@{}}%
90\\11\\27
\end{tabular}\endgroup%
\kern3pt%
\begingroup \smaller\smaller\smaller\begin{tabular}{@{}c@{}}%
5\\1\\1
\end{tabular}\endgroup%
\kern3pt%
\begingroup \smaller\smaller\smaller\begin{tabular}{@{}c@{}}%
9\\2\\0
\end{tabular}\endgroup%
{$\left.\llap{\phantom{%
\begingroup \smaller\smaller\smaller\begin{tabular}{@{}c@{}}%
0\\0\\0
\end{tabular}\endgroup%
}}\!\right]$}%
}%
\ifdim\wd\matricesbox>\halfwidth\myboxwidth=\hsize\else\myboxwidth=\halfwidth\fi
\vbox{%
\ifdim\myboxwidth=\hsize
\setbox\onelinebox=\hbox{%
\vbox{\hbox{%
$\Pi_{18,20}$ spans $L_{16.13}$%
}\hbox{%
$3632|236322322|22322\rtimes D_{2}$%
}%
}%
\hfill\copy\matricesbox
}%
\ifdim\wd\onelinebox>\myboxwidth
\hbox to \myboxwidth{%
$\Pi_{18,20}$ spans $L_{16.13}$%
\hfil
$3632|236322322|22322\rtimes D_{2}$%
}%
\box\matricesbox
\else
\hbox to \myboxwidth{%
\unhbox\onelinebox
}%
\fi
\else
\hbox to \myboxwidth{%
$\Pi_{18,20}$ spans $L_{16.13}$%
\hfil}%
\hbox to \myboxwidth{%
$3632|236322322|22322\rtimes D_{2}$%
\hfil}%
\box\matricesbox
\fi
}%
\hfill\discretionary{}{}{}%
\setbox\matricesbox=\hbox{%
{$\left[\!\llap{\phantom{%
\begingroup \smaller\smaller\smaller\begin{tabular}{@{}c@{}}%
\phantom{0}\\\phantom{0}\\\phantom{0}
\end{tabular}\endgroup%
}}\right.$}%
\begingroup \smaller\smaller\smaller\begin{tabular}{@{}c@{}}%
-1\\\phantom{0}\\\phantom{0}
\end{tabular}\endgroup%
\kern3pt%
\begingroup \smaller\smaller\smaller\begin{tabular}{@{}c@{}}%
\phantom{0}\\45/2\\\phantom{0}
\end{tabular}\endgroup%
\kern3pt%
\begingroup \smaller\smaller\smaller\begin{tabular}{@{}c@{}}%
\phantom{0}\\\phantom{0}\\15/2
\end{tabular}\endgroup%
{$\left.\llap{\phantom{%
\begingroup \smaller\smaller\smaller\begin{tabular}{@{}c@{}}%
\phantom{0}\\\phantom{0}\\\phantom{0}
\end{tabular}\endgroup%
}}\!\right]$}%
{$\left[\!\llap{\phantom{%
\begingroup \smaller\smaller\smaller\begin{tabular}{@{}c@{}}%
0\\0\\0
\end{tabular}\endgroup%
}}\right.$}%
\begingroup \smaller\smaller\smaller\begin{tabular}{@{}c@{}}%
90\\19\\3
\end{tabular}\endgroup%
\kern3pt%
\begingroup \smaller\smaller\smaller\begin{tabular}{@{}c@{}}%
30\\6\\4
\end{tabular}\endgroup%
\kern3pt%
\begingroup \smaller\smaller\smaller\begin{tabular}{@{}c@{}}%
30\\5\\7
\end{tabular}\endgroup%
\kern3pt%
\begingroup \smaller\smaller\smaller\begin{tabular}{@{}c@{}}%
90\\11\\27
\end{tabular}\endgroup%
\kern3pt%
\begingroup \smaller\smaller\smaller\begin{tabular}{@{}c@{}}%
90\\8\\30
\end{tabular}\endgroup%
\kern3pt%
\begingroup \smaller\smaller\smaller\begin{tabular}{@{}c@{}}%
5\\0\\2
\end{tabular}\endgroup%
\kern3pt%
\begingroup \smaller\smaller\smaller\begin{tabular}{@{}c@{}}%
9\\-1\\3
\end{tabular}\endgroup%
\kern3pt%
\begingroup \smaller\smaller\smaller\begin{tabular}{@{}c@{}}%
5\\-1\\1
\end{tabular}\endgroup%
\kern3pt%
\begingroup \smaller\smaller\smaller\begin{tabular}{@{}c@{}}%
90\\-19\\3
\end{tabular}\endgroup%
{$\left.\llap{\phantom{%
\begingroup \smaller\smaller\smaller\begin{tabular}{@{}c@{}}%
0\\0\\0
\end{tabular}\endgroup%
}}\!\right]$}%
}%
\ifdim\wd\matricesbox>\halfwidth\myboxwidth=\hsize\else\myboxwidth=\halfwidth\fi
\vbox{%
\ifdim\myboxwidth=\hsize
\setbox\onelinebox=\hbox{%
\vbox{\hbox{%
$\Pi_{18,21}$ spans $L_{16.13}$%
}\hbox{%
$\slashthree63632222\slashthree22223636\rtimes D_{2}$%
}%
}%
\hfill\copy\matricesbox
}%
\ifdim\wd\onelinebox>\myboxwidth
\hbox to \myboxwidth{%
$\Pi_{18,21}$ spans $L_{16.13}$%
\hfil
$\slashthree63632222\slashthree22223636\rtimes D_{2}$%
}%
\box\matricesbox
\else
\hbox to \myboxwidth{%
\unhbox\onelinebox
}%
\fi
\else
\hbox to \myboxwidth{%
$\Pi_{18,21}$ spans $L_{16.13}$%
\hfil}%
\hbox to \myboxwidth{%
$\slashthree63632222\slashthree22223636\rtimes D_{2}$%
\hfil}%
\box\matricesbox
\fi
}%
\hfill\discretionary{}{}{}%
\setbox\matricesbox=\hbox{%
{$\left[\!\llap{\phantom{%
\begingroup \smaller\smaller\smaller\begin{tabular}{@{}c@{}}%
\phantom{0}\\\phantom{0}\\\phantom{0}
\end{tabular}\endgroup%
}}\right.$}%
\begingroup \smaller\smaller\smaller\begin{tabular}{@{}c@{}}%
-1\\\phantom{0}\\\phantom{0}
\end{tabular}\endgroup%
\kern3pt%
\begingroup \smaller\smaller\smaller\begin{tabular}{@{}c@{}}%
\phantom{0}\\15/2\\\phantom{0}
\end{tabular}\endgroup%
\kern3pt%
\begingroup \smaller\smaller\smaller\begin{tabular}{@{}c@{}}%
\phantom{0}\\\phantom{0}\\45/2
\end{tabular}\endgroup%
{$\left.\llap{\phantom{%
\begingroup \smaller\smaller\smaller\begin{tabular}{@{}c@{}}%
\phantom{0}\\\phantom{0}\\\phantom{0}
\end{tabular}\endgroup%
}}\!\right]$}%
{$\left[\!\llap{\phantom{%
\begingroup \smaller\smaller\smaller\begin{tabular}{@{}c@{}}%
0\\0\\0
\end{tabular}\endgroup%
}}\right.$}%
\begingroup \smaller\smaller\smaller\begin{tabular}{@{}c@{}}%
30\\-11\\-1
\end{tabular}\endgroup%
\kern3pt%
\begingroup \smaller\smaller\smaller\begin{tabular}{@{}c@{}}%
90\\-30\\-8
\end{tabular}\endgroup%
\kern3pt%
\begingroup \smaller\smaller\smaller\begin{tabular}{@{}c@{}}%
90\\-27\\-11
\end{tabular}\endgroup%
\kern3pt%
\begingroup \smaller\smaller\smaller\begin{tabular}{@{}c@{}}%
5\\-1\\-1
\end{tabular}\endgroup%
\kern3pt%
\begingroup \smaller\smaller\smaller\begin{tabular}{@{}c@{}}%
9\\0\\-2
\end{tabular}\endgroup%
\kern3pt%
\begingroup \smaller\smaller\smaller\begin{tabular}{@{}c@{}}%
30\\4\\-6
\end{tabular}\endgroup%
\kern3pt%
\begingroup \smaller\smaller\smaller\begin{tabular}{@{}c@{}}%
30\\7\\-5
\end{tabular}\endgroup%
\kern3pt%
\begingroup \smaller\smaller\smaller\begin{tabular}{@{}c@{}}%
9\\3\\-1
\end{tabular}\endgroup%
\kern3pt%
\begingroup \smaller\smaller\smaller\begin{tabular}{@{}c@{}}%
30\\11\\-1
\end{tabular}\endgroup%
{$\left.\llap{\phantom{%
\begingroup \smaller\smaller\smaller\begin{tabular}{@{}c@{}}%
0\\0\\0
\end{tabular}\endgroup%
}}\!\right]$}%
}%
\ifdim\wd\matricesbox>\halfwidth\myboxwidth=\hsize\else\myboxwidth=\halfwidth\fi
\vbox{%
\ifdim\myboxwidth=\hsize
\setbox\onelinebox=\hbox{%
\vbox{\hbox{%
$\Pi_{18,22}$ spans $L_{16.13}$%
}\hbox{%
$36\slashthree63222322\slashthree223222\rtimes D_{2}$%
}%
}%
\hfill\copy\matricesbox
}%
\ifdim\wd\onelinebox>\myboxwidth
\hbox to \myboxwidth{%
$\Pi_{18,22}$ spans $L_{16.13}$%
\hfil
$36\slashthree63222322\slashthree223222\rtimes D_{2}$%
}%
\box\matricesbox
\else
\hbox to \myboxwidth{%
\unhbox\onelinebox
}%
\fi
\else
\hbox to \myboxwidth{%
$\Pi_{18,22}$ spans $L_{16.13}$%
\hfil}%
\hbox to \myboxwidth{%
$36\slashthree63222322\slashthree223222\rtimes D_{2}$%
\hfil}%
\box\matricesbox
\fi
}%
\hfill\discretionary{}{}{}%
\setbox\matricesbox=\hbox{%
{$\left[\!\llap{\phantom{%
\begingroup \smaller\smaller\smaller\begin{tabular}{@{}c@{}}%
\phantom{0}\\\phantom{0}\\\phantom{0}
\end{tabular}\endgroup%
}}\right.$}%
\begingroup \smaller\smaller\smaller\begin{tabular}{@{}c@{}}%
-1\\\phantom{0}\\\phantom{0}
\end{tabular}\endgroup%
\kern3pt%
\begingroup \smaller\smaller\smaller\begin{tabular}{@{}c@{}}%
\phantom{0}\\15/2\\\phantom{0}
\end{tabular}\endgroup%
\kern3pt%
\begingroup \smaller\smaller\smaller\begin{tabular}{@{}c@{}}%
\phantom{0}\\\phantom{0}\\45/2
\end{tabular}\endgroup%
{$\left.\llap{\phantom{%
\begingroup \smaller\smaller\smaller\begin{tabular}{@{}c@{}}%
\phantom{0}\\\phantom{0}\\\phantom{0}
\end{tabular}\endgroup%
}}\!\right]$}%
{$\left[\!\llap{\phantom{%
\begingroup \smaller\smaller\smaller\begin{tabular}{@{}c@{}}%
0\\0\\0
\end{tabular}\endgroup%
}}\right.$}%
\begingroup \smaller\smaller\smaller\begin{tabular}{@{}c@{}}%
30\\-11\\-1
\end{tabular}\endgroup%
\kern3pt%
\begingroup \smaller\smaller\smaller\begin{tabular}{@{}c@{}}%
90\\-30\\-8
\end{tabular}\endgroup%
\kern3pt%
\begingroup \smaller\smaller\smaller\begin{tabular}{@{}c@{}}%
90\\-27\\-11
\end{tabular}\endgroup%
\kern3pt%
\begingroup \smaller\smaller\smaller\begin{tabular}{@{}c@{}}%
5\\-1\\-1
\end{tabular}\endgroup%
\kern3pt%
\begingroup \smaller\smaller\smaller\begin{tabular}{@{}c@{}}%
90\\-3\\-19
\end{tabular}\endgroup%
\kern3pt%
\begingroup \smaller\smaller\smaller\begin{tabular}{@{}c@{}}%
90\\3\\-19
\end{tabular}\endgroup%
\kern3pt%
\begingroup \smaller\smaller\smaller\begin{tabular}{@{}c@{}}%
5\\1\\-1
\end{tabular}\endgroup%
\kern3pt%
\begingroup \smaller\smaller\smaller\begin{tabular}{@{}c@{}}%
9\\3\\-1
\end{tabular}\endgroup%
\kern3pt%
\begingroup \smaller\smaller\smaller\begin{tabular}{@{}c@{}}%
30\\11\\-1
\end{tabular}\endgroup%
{$\left.\llap{\phantom{%
\begingroup \smaller\smaller\smaller\begin{tabular}{@{}c@{}}%
0\\0\\0
\end{tabular}\endgroup%
}}\!\right]$}%
}%
\ifdim\wd\matricesbox>\halfwidth\myboxwidth=\hsize\else\myboxwidth=\halfwidth\fi
\vbox{%
\ifdim\myboxwidth=\hsize
\setbox\onelinebox=\hbox{%
\vbox{\hbox{%
$\Pi_{18,23}$ spans $L_{16.13}$%
}\hbox{%
$36\slashthree63223222\slashthree222322\rtimes D_{2}$%
}%
}%
\hfill\copy\matricesbox
}%
\ifdim\wd\onelinebox>\myboxwidth
\hbox to \myboxwidth{%
$\Pi_{18,23}$ spans $L_{16.13}$%
\hfil
$36\slashthree63223222\slashthree222322\rtimes D_{2}$%
}%
\box\matricesbox
\else
\hbox to \myboxwidth{%
\unhbox\onelinebox
}%
\fi
\else
\hbox to \myboxwidth{%
$\Pi_{18,23}$ spans $L_{16.13}$%
\hfil}%
\hbox to \myboxwidth{%
$36\slashthree63223222\slashthree222322\rtimes D_{2}$%
\hfil}%
\box\matricesbox
\fi
}%
\hfill\discretionary{}{}{}%
\setbox\matricesbox=\hbox{%
{$\left[\!\llap{\phantom{%
\begingroup \smaller\smaller\smaller\begin{tabular}{@{}c@{}}%
\phantom{0}\\\phantom{0}\\\phantom{0}
\end{tabular}\endgroup%
}}\right.$}%
\begingroup \smaller\smaller\smaller\begin{tabular}{@{}c@{}}%
-1\\\phantom{0}\\\phantom{0}
\end{tabular}\endgroup%
\kern3pt%
\begingroup \smaller\smaller\smaller\begin{tabular}{@{}c@{}}%
\phantom{0}\\18\\\phantom{0}
\end{tabular}\endgroup%
\kern3pt%
\begingroup \smaller\smaller\smaller\begin{tabular}{@{}c@{}}%
\phantom{0}\\\phantom{0}\\18
\end{tabular}\endgroup%
{$\left.\llap{\phantom{%
\begingroup \smaller\smaller\smaller\begin{tabular}{@{}c@{}}%
\phantom{0}\\\phantom{0}\\\phantom{0}
\end{tabular}\endgroup%
}}\!\right]$}%
{$\left[\!\llap{\phantom{%
\begingroup \smaller\smaller\smaller\begin{tabular}{@{}c@{}}%
0\\0\\0
\end{tabular}\endgroup%
}}\right.$}%
\begingroup \smaller\smaller\smaller\begin{tabular}{@{}c@{}}%
8\\2\\0
\end{tabular}\endgroup%
\kern3pt%
\begingroup \smaller\smaller\smaller\begin{tabular}{@{}c@{}}%
72\\16\\-6
\end{tabular}\endgroup%
\kern3pt%
\begingroup \smaller\smaller\smaller\begin{tabular}{@{}c@{}}%
36\\7\\-5
\end{tabular}\endgroup%
\kern3pt%
\begingroup \smaller\smaller\smaller\begin{tabular}{@{}c@{}}%
9\\1\\-2
\end{tabular}\endgroup%
\kern3pt%
\begingroup \smaller\smaller\smaller\begin{tabular}{@{}c@{}}%
8\\0\\-2
\end{tabular}\endgroup%
\kern3pt%
\begingroup \smaller\smaller\smaller\begin{tabular}{@{}c@{}}%
72\\-6\\-16
\end{tabular}\endgroup%
\kern3pt%
\begingroup \smaller\smaller\smaller\begin{tabular}{@{}c@{}}%
36\\-5\\-7
\end{tabular}\endgroup%
\kern3pt%
\begingroup \smaller\smaller\smaller\begin{tabular}{@{}c@{}}%
36\\-7\\-5
\end{tabular}\endgroup%
\kern3pt%
\begingroup \smaller\smaller\smaller\begin{tabular}{@{}c@{}}%
72\\-16\\-6
\end{tabular}\endgroup%
\kern3pt%
\begingroup \smaller\smaller\smaller\begin{tabular}{@{}c@{}}%
8\\-2\\0
\end{tabular}\endgroup%
{$\left.\llap{\phantom{%
\begingroup \smaller\smaller\smaller\begin{tabular}{@{}c@{}}%
0\\0\\0
\end{tabular}\endgroup%
}}\!\right]$}%
}%
\ifdim\wd\matricesbox>\halfwidth\myboxwidth=\hsize\else\myboxwidth=\halfwidth\fi
\vbox{%
\ifdim\myboxwidth=\hsize
\setbox\onelinebox=\hbox{%
\vbox{\hbox{%
$\Pi_{18,24}$ spans $L_{142.20}$%
}\hbox{%
$\infty42|24\infty224\infty42|24\infty422\rtimes D_{2}$%
}%
}%
\hfill\copy\matricesbox
}%
\ifdim\wd\onelinebox>\myboxwidth
\hbox to \myboxwidth{%
$\Pi_{18,24}$ spans $L_{142.20}$%
\hfil
$\infty42|24\infty224\infty42|24\infty422\rtimes D_{2}$%
}%
\box\matricesbox
\else
\hbox to \myboxwidth{%
\unhbox\onelinebox
}%
\fi
\else
\hbox to \myboxwidth{%
$\Pi_{18,24}$ spans $L_{142.20}$%
\hfil}%
\hbox to \myboxwidth{%
$\infty42|24\infty224\infty42|24\infty422\rtimes D_{2}$%
\hfil}%
\box\matricesbox
\fi
}%
\hfill\discretionary{}{}{}%
\setbox\matricesbox=\hbox{%
{$\left[\!\llap{\phantom{%
\begingroup \smaller\smaller\smaller\begin{tabular}{@{}c@{}}%
\phantom{0}\\\phantom{0}\\\phantom{0}
\end{tabular}\endgroup%
}}\right.$}%
\begingroup \smaller\smaller\smaller\begin{tabular}{@{}c@{}}%
-1\\\phantom{0}\\\phantom{0}
\end{tabular}\endgroup%
\kern3pt%
\begingroup \smaller\smaller\smaller\begin{tabular}{@{}c@{}}%
\phantom{0}\\9\\\phantom{0}
\end{tabular}\endgroup%
\kern3pt%
\begingroup \smaller\smaller\smaller\begin{tabular}{@{}c@{}}%
\phantom{0}\\\phantom{0}\\9
\end{tabular}\endgroup%
{$\left.\llap{\phantom{%
\begingroup \smaller\smaller\smaller\begin{tabular}{@{}c@{}}%
\phantom{0}\\\phantom{0}\\\phantom{0}
\end{tabular}\endgroup%
}}\!\right]$}%
{$\left[\!\llap{\phantom{%
\begingroup \smaller\smaller\smaller\begin{tabular}{@{}c@{}}%
0\\0\\0
\end{tabular}\endgroup%
}}\right.$}%
\begingroup \smaller\smaller\smaller\begin{tabular}{@{}c@{}}%
36\\12\\-2
\end{tabular}\endgroup%
\kern3pt%
\begingroup \smaller\smaller\smaller\begin{tabular}{@{}c@{}}%
72\\22\\-10
\end{tabular}\endgroup%
\kern3pt%
\begingroup \smaller\smaller\smaller\begin{tabular}{@{}c@{}}%
8\\2\\-2
\end{tabular}\endgroup%
\kern3pt%
\begingroup \smaller\smaller\smaller\begin{tabular}{@{}c@{}}%
72\\10\\-22
\end{tabular}\endgroup%
\kern3pt%
\begingroup \smaller\smaller\smaller\begin{tabular}{@{}c@{}}%
36\\2\\-12
\end{tabular}\endgroup%
\kern3pt%
\begingroup \smaller\smaller\smaller\begin{tabular}{@{}c@{}}%
9\\-1\\-3
\end{tabular}\endgroup%
\kern3pt%
\begingroup \smaller\smaller\smaller\begin{tabular}{@{}c@{}}%
8\\-2\\-2
\end{tabular}\endgroup%
\kern3pt%
\begingroup \smaller\smaller\smaller\begin{tabular}{@{}c@{}}%
72\\-22\\-10
\end{tabular}\endgroup%
\kern3pt%
\begingroup \smaller\smaller\smaller\begin{tabular}{@{}c@{}}%
36\\-12\\-2
\end{tabular}\endgroup%
{$\left.\llap{\phantom{%
\begingroup \smaller\smaller\smaller\begin{tabular}{@{}c@{}}%
0\\0\\0
\end{tabular}\endgroup%
}}\!\right]$}%
}%
\ifdim\wd\matricesbox>\halfwidth\myboxwidth=\hsize\else\myboxwidth=\halfwidth\fi
\vbox{%
\ifdim\myboxwidth=\hsize
\setbox\onelinebox=\hbox{%
\vbox{\hbox{%
$\Pi_{18,25}$ spans $L_{142.20}$%
}\hbox{%
$\infty4224\slashinfty4224\infty224\slashinfty422\rtimes D_{2}$%
}%
}%
\hfill\copy\matricesbox
}%
\ifdim\wd\onelinebox>\myboxwidth
\hbox to \myboxwidth{%
$\Pi_{18,25}$ spans $L_{142.20}$%
\hfil
$\infty4224\slashinfty4224\infty224\slashinfty422\rtimes D_{2}$%
}%
\box\matricesbox
\else
\hbox to \myboxwidth{%
\unhbox\onelinebox
}%
\fi
\else
\hbox to \myboxwidth{%
$\Pi_{18,25}$ spans $L_{142.20}$%
\hfil}%
\hbox to \myboxwidth{%
$\infty4224\slashinfty4224\infty224\slashinfty422\rtimes D_{2}$%
\hfil}%
\box\matricesbox
\fi
}%
\hfill\discretionary{}{}{}%
\setbox\matricesbox=\hbox{%
{$\left[\!\llap{\phantom{%
\begingroup \smaller\smaller\smaller\begin{tabular}{@{}c@{}}%
\phantom{0}\\\phantom{0}\\\phantom{0}
\end{tabular}\endgroup%
}}\right.$}%
\begingroup \smaller\smaller\smaller\begin{tabular}{@{}c@{}}%
-1\\\phantom{0}\\\phantom{0}
\end{tabular}\endgroup%
\kern3pt%
\begingroup \smaller\smaller\smaller\begin{tabular}{@{}c@{}}%
\phantom{0}\\18\\\phantom{0}
\end{tabular}\endgroup%
\kern3pt%
\begingroup \smaller\smaller\smaller\begin{tabular}{@{}c@{}}%
\phantom{0}\\\phantom{0}\\18
\end{tabular}\endgroup%
{$\left.\llap{\phantom{%
\begingroup \smaller\smaller\smaller\begin{tabular}{@{}c@{}}%
\phantom{0}\\\phantom{0}\\\phantom{0}
\end{tabular}\endgroup%
}}\!\right]$}%
{$\left[\!\llap{\phantom{%
\begingroup \smaller\smaller\smaller\begin{tabular}{@{}c@{}}%
0\\0\\0
\end{tabular}\endgroup%
}}\right.$}%
\begingroup \smaller\smaller\smaller\begin{tabular}{@{}c@{}}%
8\\2\\0
\end{tabular}\endgroup%
\kern3pt%
\begingroup \smaller\smaller\smaller\begin{tabular}{@{}c@{}}%
72\\16\\6
\end{tabular}\endgroup%
\kern3pt%
\begingroup \smaller\smaller\smaller\begin{tabular}{@{}c@{}}%
36\\7\\5
\end{tabular}\endgroup%
\kern3pt%
\begingroup \smaller\smaller\smaller\begin{tabular}{@{}c@{}}%
36\\5\\7
\end{tabular}\endgroup%
\kern3pt%
\begingroup \smaller\smaller\smaller\begin{tabular}{@{}c@{}}%
72\\6\\16
\end{tabular}\endgroup%
\kern3pt%
\begingroup \smaller\smaller\smaller\begin{tabular}{@{}c@{}}%
8\\0\\2
\end{tabular}\endgroup%
\kern3pt%
\begingroup \smaller\smaller\smaller\begin{tabular}{@{}c@{}}%
72\\-6\\16
\end{tabular}\endgroup%
\kern3pt%
\begingroup \smaller\smaller\smaller\begin{tabular}{@{}c@{}}%
36\\-5\\7
\end{tabular}\endgroup%
\kern3pt%
\begingroup \smaller\smaller\smaller\begin{tabular}{@{}c@{}}%
9\\-2\\1
\end{tabular}\endgroup%
\kern3pt%
\begingroup \smaller\smaller\smaller\begin{tabular}{@{}c@{}}%
8\\-2\\0
\end{tabular}\endgroup%
{$\left.\llap{\phantom{%
\begingroup \smaller\smaller\smaller\begin{tabular}{@{}c@{}}%
0\\0\\0
\end{tabular}\endgroup%
}}\!\right]$}%
}%
\ifdim\wd\matricesbox>\halfwidth\myboxwidth=\hsize\else\myboxwidth=\halfwidth\fi
\vbox{%
\ifdim\myboxwidth=\hsize
\setbox\onelinebox=\hbox{%
\vbox{\hbox{%
$\Pi_{18,26}$ spans $L_{142.20}$%
}\hbox{%
$\infty4224\infty42|24\infty4224\infty2|2\rtimes D_{2}$%
}%
}%
\hfill\copy\matricesbox
}%
\ifdim\wd\onelinebox>\myboxwidth
\hbox to \myboxwidth{%
$\Pi_{18,26}$ spans $L_{142.20}$%
\hfil
$\infty4224\infty42|24\infty4224\infty2|2\rtimes D_{2}$%
}%
\box\matricesbox
\else
\hbox to \myboxwidth{%
\unhbox\onelinebox
}%
\fi
\else
\hbox to \myboxwidth{%
$\Pi_{18,26}$ spans $L_{142.20}$%
\hfil}%
\hbox to \myboxwidth{%
$\infty4224\infty42|24\infty4224\infty2|2\rtimes D_{2}$%
\hfil}%
\box\matricesbox
\fi
}%
\hfill\discretionary{}{}{}%
\setbox\matricesbox=\hbox{%
{$\left[\!\llap{\phantom{%
\begingroup \smaller\smaller\smaller\begin{tabular}{@{}c@{}}%
\phantom{0}\\\phantom{0}\\\phantom{0}
\end{tabular}\endgroup%
}}\right.$}%
\begingroup \smaller\smaller\smaller\begin{tabular}{@{}c@{}}%
-1\\\phantom{0}\\\phantom{0}
\end{tabular}\endgroup%
\kern3pt%
\begingroup \smaller\smaller\smaller\begin{tabular}{@{}c@{}}%
\phantom{0}\\9\\\phantom{0}
\end{tabular}\endgroup%
\kern3pt%
\begingroup \smaller\smaller\smaller\begin{tabular}{@{}c@{}}%
\phantom{0}\\\phantom{0}\\9
\end{tabular}\endgroup%
{$\left.\llap{\phantom{%
\begingroup \smaller\smaller\smaller\begin{tabular}{@{}c@{}}%
\phantom{0}\\\phantom{0}\\\phantom{0}
\end{tabular}\endgroup%
}}\!\right]$}%
{$\left[\!\llap{\phantom{%
\begingroup \smaller\smaller\smaller\begin{tabular}{@{}c@{}}%
0\\0\\0
\end{tabular}\endgroup%
}}\right.$}%
\begingroup \smaller\smaller\smaller\begin{tabular}{@{}c@{}}%
9\\3\\-1
\end{tabular}\endgroup%
\kern3pt%
\begingroup \smaller\smaller\smaller\begin{tabular}{@{}c@{}}%
8\\2\\-2
\end{tabular}\endgroup%
\kern3pt%
\begingroup \smaller\smaller\smaller\begin{tabular}{@{}c@{}}%
72\\10\\-22
\end{tabular}\endgroup%
\kern3pt%
\begingroup \smaller\smaller\smaller\begin{tabular}{@{}c@{}}%
36\\2\\-12
\end{tabular}\endgroup%
\kern3pt%
\begingroup \smaller\smaller\smaller\begin{tabular}{@{}c@{}}%
36\\-2\\-12
\end{tabular}\endgroup%
\kern3pt%
\begingroup \smaller\smaller\smaller\begin{tabular}{@{}c@{}}%
72\\-10\\-22
\end{tabular}\endgroup%
\kern3pt%
\begingroup \smaller\smaller\smaller\begin{tabular}{@{}c@{}}%
8\\-2\\-2
\end{tabular}\endgroup%
\kern3pt%
\begingroup \smaller\smaller\smaller\begin{tabular}{@{}c@{}}%
72\\-22\\-10
\end{tabular}\endgroup%
\kern3pt%
\begingroup \smaller\smaller\smaller\begin{tabular}{@{}c@{}}%
36\\-12\\-2
\end{tabular}\endgroup%
{$\left.\llap{\phantom{%
\begingroup \smaller\smaller\smaller\begin{tabular}{@{}c@{}}%
0\\0\\0
\end{tabular}\endgroup%
}}\!\right]$}%
}%
\ifdim\wd\matricesbox>\halfwidth\myboxwidth=\hsize\else\myboxwidth=\halfwidth\fi
\vbox{%
\ifdim\myboxwidth=\hsize
\setbox\onelinebox=\hbox{%
\vbox{\hbox{%
$\Pi_{18,27}$ spans $L_{142.20}$%
}\hbox{%
$422\slashinfty224\infty4224\slashinfty4224\infty\rtimes D_{2}$%
}%
}%
\hfill\copy\matricesbox
}%
\ifdim\wd\onelinebox>\myboxwidth
\hbox to \myboxwidth{%
$\Pi_{18,27}$ spans $L_{142.20}$%
\hfil
$422\slashinfty224\infty4224\slashinfty4224\infty\rtimes D_{2}$%
}%
\box\matricesbox
\else
\hbox to \myboxwidth{%
\unhbox\onelinebox
}%
\fi
\else
\hbox to \myboxwidth{%
$\Pi_{18,27}$ spans $L_{142.20}$%
\hfil}%
\hbox to \myboxwidth{%
$422\slashinfty224\infty4224\slashinfty4224\infty\rtimes D_{2}$%
\hfil}%
\box\matricesbox
\fi
}%
\hfill\discretionary{}{}{}%
\setbox\matricesbox=\hbox{%
{$\left[\!\llap{\phantom{%
\begingroup \smaller\smaller\smaller\begin{tabular}{@{}c@{}}%
\phantom{0}\\\phantom{0}\\\phantom{0}
\end{tabular}\endgroup%
}}\right.$}%
\begingroup \smaller\smaller\smaller\begin{tabular}{@{}c@{}}%
-1\\\phantom{0}\\\phantom{0}
\end{tabular}\endgroup%
\kern3pt%
\begingroup \smaller\smaller\smaller\begin{tabular}{@{}c@{}}%
\phantom{0}\\30\\-15
\end{tabular}\endgroup%
\kern3pt%
\begingroup \smaller\smaller\smaller\begin{tabular}{@{}c@{}}%
\phantom{0}\\-15\\30
\end{tabular}\endgroup%
{$\left.\llap{\phantom{%
\begingroup \smaller\smaller\smaller\begin{tabular}{@{}c@{}}%
\phantom{0}\\\phantom{0}\\\phantom{0}
\end{tabular}\endgroup%
}}\!\right]$}%
{$\left[\!\llap{\phantom{%
\begingroup \smaller\smaller\smaller\begin{tabular}{@{}c@{}}%
0\\0\\0
\end{tabular}\endgroup%
}}\right.$}%
\begingroup \smaller\smaller\smaller\begin{tabular}{@{}c@{}}%
90\\-8\\-19
\end{tabular}\endgroup%
\kern3pt%
\begingroup \smaller\smaller\smaller\begin{tabular}{@{}c@{}}%
90\\-11\\-19
\end{tabular}\endgroup%
\kern3pt%
\begingroup \smaller\smaller\smaller\begin{tabular}{@{}c@{}}%
30\\-5\\-6
\end{tabular}\endgroup%
\kern3pt%
\begingroup \smaller\smaller\smaller\begin{tabular}{@{}c@{}}%
30\\-6\\-5
\end{tabular}\endgroup%
\kern3pt%
\begingroup \smaller\smaller\smaller\begin{tabular}{@{}c@{}}%
9\\-2\\-1
\end{tabular}\endgroup%
\kern3pt%
\begingroup \smaller\smaller\smaller\begin{tabular}{@{}c@{}}%
5\\-1\\0
\end{tabular}\endgroup%
\kern3pt%
\begingroup \smaller\smaller\smaller\begin{tabular}{@{}c@{}}%
90\\-11\\8
\end{tabular}\endgroup%
\kern3pt%
\begingroup \smaller\smaller\smaller\begin{tabular}{@{}c@{}}%
90\\-8\\11
\end{tabular}\endgroup%
\kern3pt%
\begingroup \smaller\smaller\smaller\begin{tabular}{@{}c@{}}%
5\\0\\1
\end{tabular}\endgroup%
{$\left.\llap{\phantom{%
\begingroup \smaller\smaller\smaller\begin{tabular}{@{}c@{}}%
0\\0\\0
\end{tabular}\endgroup%
}}\!\right]$}%
}%
\ifdim\wd\matricesbox>\halfwidth\myboxwidth=\hsize\else\myboxwidth=\halfwidth\fi
\vbox{%
\ifdim\myboxwidth=\hsize
\setbox\onelinebox=\hbox{%
\vbox{\hbox{%
$\Pi_{18,28}$ spans $L_{16.13}$%
}\hbox{%
$363222322363222322\rtimes C_{2}$%
}%
}%
\hfill\copy\matricesbox
}%
\ifdim\wd\onelinebox>\myboxwidth
\hbox to \myboxwidth{%
$\Pi_{18,28}$ spans $L_{16.13}$%
\hfil
$363222322363222322\rtimes C_{2}$%
}%
\box\matricesbox
\else
\hbox to \myboxwidth{%
\unhbox\onelinebox
}%
\fi
\else
\hbox to \myboxwidth{%
$\Pi_{18,28}$ spans $L_{16.13}$%
\hfil}%
\hbox to \myboxwidth{%
$363222322363222322\rtimes C_{2}$%
\hfil}%
\box\matricesbox
\fi
}%
\hfill\discretionary{}{}{}%
\setbox\matricesbox=\hbox{%
{$\left[\!\llap{\phantom{%
\begingroup \smaller\smaller\smaller\begin{tabular}{@{}c@{}}%
\phantom{0}\\\phantom{0}\\\phantom{0}
\end{tabular}\endgroup%
}}\right.$}%
\begingroup \smaller\smaller\smaller\begin{tabular}{@{}c@{}}%
-1\\\phantom{0}\\\phantom{0}
\end{tabular}\endgroup%
\kern3pt%
\begingroup \smaller\smaller\smaller\begin{tabular}{@{}c@{}}%
\phantom{0}\\30\\-15
\end{tabular}\endgroup%
\kern3pt%
\begingroup \smaller\smaller\smaller\begin{tabular}{@{}c@{}}%
\phantom{0}\\-15\\30
\end{tabular}\endgroup%
{$\left.\llap{\phantom{%
\begingroup \smaller\smaller\smaller\begin{tabular}{@{}c@{}}%
\phantom{0}\\\phantom{0}\\\phantom{0}
\end{tabular}\endgroup%
}}\!\right]$}%
{$\left[\!\llap{\phantom{%
\begingroup \smaller\smaller\smaller\begin{tabular}{@{}c@{}}%
0\\0\\0
\end{tabular}\endgroup%
}}\right.$}%
\begingroup \smaller\smaller\smaller\begin{tabular}{@{}c@{}}%
90\\-8\\-19
\end{tabular}\endgroup%
\kern3pt%
\begingroup \smaller\smaller\smaller\begin{tabular}{@{}c@{}}%
90\\-11\\-19
\end{tabular}\endgroup%
\kern3pt%
\begingroup \smaller\smaller\smaller\begin{tabular}{@{}c@{}}%
30\\-5\\-6
\end{tabular}\endgroup%
\kern3pt%
\begingroup \smaller\smaller\smaller\begin{tabular}{@{}c@{}}%
30\\-6\\-5
\end{tabular}\endgroup%
\kern3pt%
\begingroup \smaller\smaller\smaller\begin{tabular}{@{}c@{}}%
9\\-2\\-1
\end{tabular}\endgroup%
\kern3pt%
\begingroup \smaller\smaller\smaller\begin{tabular}{@{}c@{}}%
30\\-6\\-1
\end{tabular}\endgroup%
\kern3pt%
\begingroup \smaller\smaller\smaller\begin{tabular}{@{}c@{}}%
30\\-5\\1
\end{tabular}\endgroup%
\kern3pt%
\begingroup \smaller\smaller\smaller\begin{tabular}{@{}c@{}}%
9\\-1\\1
\end{tabular}\endgroup%
\kern3pt%
\begingroup \smaller\smaller\smaller\begin{tabular}{@{}c@{}}%
5\\0\\1
\end{tabular}\endgroup%
{$\left.\llap{\phantom{%
\begingroup \smaller\smaller\smaller\begin{tabular}{@{}c@{}}%
0\\0\\0
\end{tabular}\endgroup%
}}\!\right]$}%
}%
\ifdim\wd\matricesbox>\halfwidth\myboxwidth=\hsize\else\myboxwidth=\halfwidth\fi
\vbox{%
\ifdim\myboxwidth=\hsize
\setbox\onelinebox=\hbox{%
\vbox{\hbox{%
$\Pi_{18,29}$ spans $L_{16.13}$%
}\hbox{%
$363223222363223222\rtimes C_{2}$%
}%
}%
\hfill\copy\matricesbox
}%
\ifdim\wd\onelinebox>\myboxwidth
\hbox to \myboxwidth{%
$\Pi_{18,29}$ spans $L_{16.13}$%
\hfil
$363223222363223222\rtimes C_{2}$%
}%
\box\matricesbox
\else
\hbox to \myboxwidth{%
\unhbox\onelinebox
}%
\fi
\else
\hbox to \myboxwidth{%
$\Pi_{18,29}$ spans $L_{16.13}$%
\hfil}%
\hbox to \myboxwidth{%
$363223222363223222\rtimes C_{2}$%
\hfil}%
\box\matricesbox
\fi
}%
\hfill\discretionary{}{}{}%
\setbox\matricesbox=\hbox{%
{$\left[\!\llap{\phantom{%
\begingroup \smaller\smaller\smaller\begin{tabular}{@{}c@{}}%
\phantom{0}\\\phantom{0}\\\phantom{0}
\end{tabular}\endgroup%
}}\right.$}%
\begingroup \smaller\smaller\smaller\begin{tabular}{@{}c@{}}%
-1\\\phantom{0}\\\phantom{0}
\end{tabular}\endgroup%
\kern3pt%
\begingroup \smaller\smaller\smaller\begin{tabular}{@{}c@{}}%
\phantom{0}\\18\\\phantom{0}
\end{tabular}\endgroup%
\kern3pt%
\begingroup \smaller\smaller\smaller\begin{tabular}{@{}c@{}}%
\phantom{0}\\\phantom{0}\\18
\end{tabular}\endgroup%
{$\left.\llap{\phantom{%
\begingroup \smaller\smaller\smaller\begin{tabular}{@{}c@{}}%
\phantom{0}\\\phantom{0}\\\phantom{0}
\end{tabular}\endgroup%
}}\!\right]$}%
{$\left[\!\llap{\phantom{%
\begingroup \smaller\smaller\smaller\begin{tabular}{@{}c@{}}%
0\\0\\0
\end{tabular}\endgroup%
}}\right.$}%
\begingroup \smaller\smaller\smaller\begin{tabular}{@{}c@{}}%
9\\1\\2
\end{tabular}\endgroup%
\kern3pt%
\begingroup \smaller\smaller\smaller\begin{tabular}{@{}c@{}}%
36\\7\\5
\end{tabular}\endgroup%
\kern3pt%
\begingroup \smaller\smaller\smaller\begin{tabular}{@{}c@{}}%
72\\16\\6
\end{tabular}\endgroup%
\kern3pt%
\begingroup \smaller\smaller\smaller\begin{tabular}{@{}c@{}}%
8\\2\\0
\end{tabular}\endgroup%
\kern3pt%
\begingroup \smaller\smaller\smaller\begin{tabular}{@{}c@{}}%
72\\16\\-6
\end{tabular}\endgroup%
\kern3pt%
\begingroup \smaller\smaller\smaller\begin{tabular}{@{}c@{}}%
36\\7\\-5
\end{tabular}\endgroup%
\kern3pt%
\begingroup \smaller\smaller\smaller\begin{tabular}{@{}c@{}}%
36\\5\\-7
\end{tabular}\endgroup%
\kern3pt%
\begingroup \smaller\smaller\smaller\begin{tabular}{@{}c@{}}%
72\\6\\-16
\end{tabular}\endgroup%
\kern3pt%
\begingroup \smaller\smaller\smaller\begin{tabular}{@{}c@{}}%
8\\0\\-2
\end{tabular}\endgroup%
{$\left.\llap{\phantom{%
\begingroup \smaller\smaller\smaller\begin{tabular}{@{}c@{}}%
0\\0\\0
\end{tabular}\endgroup%
}}\!\right]$}%
}%
\ifdim\wd\matricesbox>\halfwidth\myboxwidth=\hsize\else\myboxwidth=\halfwidth\fi
\vbox{%
\ifdim\myboxwidth=\hsize
\setbox\onelinebox=\hbox{%
\vbox{\hbox{%
$\Pi_{18,30}$ spans $L_{142.20}$%
}\hbox{%
$\infty4224\infty422\infty4224\infty422\rtimes C_{2}$%
}%
}%
\hfill\copy\matricesbox
}%
\ifdim\wd\onelinebox>\myboxwidth
\hbox to \myboxwidth{%
$\Pi_{18,30}$ spans $L_{142.20}$%
\hfil
$\infty4224\infty422\infty4224\infty422\rtimes C_{2}$%
}%
\box\matricesbox
\else
\hbox to \myboxwidth{%
\unhbox\onelinebox
}%
\fi
\else
\hbox to \myboxwidth{%
$\Pi_{18,30}$ spans $L_{142.20}$%
\hfil}%
\hbox to \myboxwidth{%
$\infty4224\infty422\infty4224\infty422\rtimes C_{2}$%
\hfil}%
\box\matricesbox
\fi
}%
\hfill\discretionary{}{}{}%
\setbox\matricesbox=\hbox{%
{$\left[\!\llap{\phantom{%
\begingroup \smaller\smaller\smaller% [inline block 152: 25 envs, 2752 chars -> data_tex | \begin{tabular}{@{}c@{}}% \phantom{0}\\\phantom{0}\\\phantom{0}...]
\endgroup%
}}\!\right]$}%
}%
\ifdim\wd\matricesbox>\halfwidth\myboxwidth=\hsize\else\myboxwidth=\halfwidth\fi
\vbox{%
\ifdim\myboxwidth=\hsize
\setbox\onelinebox=\hbox{%
\vbox{\hbox{%
$\Pi_{18,31}$ spans $L_{16.13}$%
}\hbox{%
$222222363636363632$%
}%
}%
\hfill\copy\matricesbox
}%
\ifdim\wd\onelinebox>\myboxwidth
\hbox to \myboxwidth{%
$\Pi_{18,31}$ spans $L_{16.13}$%
\hfil
$222222363636363632$%
}%
\box\matricesbox
\else
\hbox to \myboxwidth{%
\unhbox\onelinebox
}%
\fi
\else
\hbox to \myboxwidth{%
$\Pi_{18,31}$ spans $L_{16.13}$%
\hfil}%
\hbox to \myboxwidth{%
$222222363636363632$%
\hfil}%
\box\matricesbox
\fi
}%
\hfill\discretionary{}{}{}%
\setbox\matricesbox=\hbox{%
{$\left[\!\llap{\phantom{%
\begingroup \smaller\smaller\smaller% [inline block 153: 25 envs, 2755 chars -> data_tex | \begin{tabular}{@{}c@{}}% \phantom{0}\\\phantom{0}\\\phantom{0}...]
\endgroup%
}}\!\right]$}%
}%
\ifdim\wd\matricesbox>\halfwidth\myboxwidth=\hsize\else\myboxwidth=\halfwidth\fi
\vbox{%
\ifdim\myboxwidth=\hsize
\setbox\onelinebox=\hbox{%
\vbox{\hbox{%
$\Pi_{18,32}$ spans $L_{16.13}$%
}\hbox{%
$222223223636363632$%
}%
}%
\hfill\copy\matricesbox
}%
\ifdim\wd\onelinebox>\myboxwidth
\hbox to \myboxwidth{%
$\Pi_{18,32}$ spans $L_{16.13}$%
\hfil
$222223223636363632$%
}%
\box\matricesbox
\else
\hbox to \myboxwidth{%
\unhbox\onelinebox
}%
\fi
\else
\hbox to \myboxwidth{%
$\Pi_{18,32}$ spans $L_{16.13}$%
\hfil}%
\hbox to \myboxwidth{%
$222223223636363632$%
\hfil}%
\box\matricesbox
\fi
}%
\hfill\discretionary{}{}{}%
\setbox\matricesbox=\hbox{%
{$\left[\!\llap{\phantom{%
\begingroup \smaller\smaller\smaller% [inline block 154: 25 envs, 2754 chars -> data_tex | \begin{tabular}{@{}c@{}}% \phantom{0}\\\phantom{0}\\\phantom{0}...]
\endgroup%
}}\!\right]$}%
}%
\ifdim\wd\matricesbox>\halfwidth\myboxwidth=\hsize\else\myboxwidth=\halfwidth\fi
\vbox{%
\ifdim\myboxwidth=\hsize
\setbox\onelinebox=\hbox{%
\vbox{\hbox{%
$\Pi_{18,33}$ spans $L_{16.13}$%
}\hbox{%
$222223632236363632$%
}%
}%
\hfill\copy\matricesbox
}%
\ifdim\wd\onelinebox>\myboxwidth
\hbox to \myboxwidth{%
$\Pi_{18,33}$ spans $L_{16.13}$%
\hfil
$222223632236363632$%
}%
\box\matricesbox
\else
\hbox to \myboxwidth{%
\unhbox\onelinebox
}%
\fi
\else
\hbox to \myboxwidth{%
$\Pi_{18,33}$ spans $L_{16.13}$%
\hfil}%
\hbox to \myboxwidth{%
$222223632236363632$%
\hfil}%
\box\matricesbox
\fi
}%
\hfill\discretionary{}{}{}%
\setbox\matricesbox=\hbox{%
{$\left[\!\llap{\phantom{%
\begingroup \smaller\smaller\smaller% [inline block 155: 25 envs, 2751 chars -> data_tex | \begin{tabular}{@{}c@{}}% \phantom{0}\\\phantom{0}\\\phantom{0}...]
\endgroup%
}}\!\right]$}%
}%
\ifdim\wd\matricesbox>\halfwidth\myboxwidth=\hsize\else\myboxwidth=\halfwidth\fi
\vbox{%
\ifdim\myboxwidth=\hsize
\setbox\onelinebox=\hbox{%
\vbox{\hbox{%
$\Pi_{18,34}$ spans $L_{16.13}$%
}\hbox{%
$363222222322363636$%
}%
}%
\hfill\copy\matricesbox
}%
\ifdim\wd\onelinebox>\myboxwidth
\hbox to \myboxwidth{%
$\Pi_{18,34}$ spans $L_{16.13}$%
\hfil
$363222222322363636$%
}%
\box\matricesbox
\else
\hbox to \myboxwidth{%
\unhbox\onelinebox
}%
\fi
\else
\hbox to \myboxwidth{%
$\Pi_{18,34}$ spans $L_{16.13}$%
\hfil}%
\hbox to \myboxwidth{%
$363222222322363636$%
\hfil}%
\box\matricesbox
\fi
}%
\hfill\discretionary{}{}{}%
\setbox\matricesbox=\hbox{%
{$\left[\!\llap{\phantom{%
\begingroup \smaller\smaller\smaller% [inline block 156: 25 envs, 2754 chars -> data_tex | \begin{tabular}{@{}c@{}}% \phantom{0}\\\phantom{0}\\\phantom{0}...]
\endgroup%
}}\!\right]$}%
}%
\ifdim\wd\matricesbox>\halfwidth\myboxwidth=\hsize\else\myboxwidth=\halfwidth\fi
\vbox{%
\ifdim\myboxwidth=\hsize
\setbox\onelinebox=\hbox{%
\vbox{\hbox{%
$\Pi_{18,35}$ spans $L_{16.13}$%
}\hbox{%
$363222222363223636$%
}%
}%
\hfill\copy\matricesbox
}%
\ifdim\wd\onelinebox>\myboxwidth
\hbox to \myboxwidth{%
$\Pi_{18,35}$ spans $L_{16.13}$%
\hfil
$363222222363223636$%
}%
\box\matricesbox
\else
\hbox to \myboxwidth{%
\unhbox\onelinebox
}%
\fi
\else
\hbox to \myboxwidth{%
$\Pi_{18,35}$ spans $L_{16.13}$%
\hfil}%
\hbox to \myboxwidth{%
$363222222363223636$%
\hfil}%
\box\matricesbox
\fi
}%
\hfill\discretionary{}{}{}%
\setbox\matricesbox=\hbox{%
{$\left[\!\llap{\phantom{%
\begingroup \smaller\smaller\smaller% [inline block 157: 25 envs, 2754 chars -> data_tex | \begin{tabular}{@{}c@{}}% \phantom{0}\\\phantom{0}\\\phantom{0}...]
\endgroup%
}}\!\right]$}%
}%
\ifdim\wd\matricesbox>\halfwidth\myboxwidth=\hsize\else\myboxwidth=\halfwidth\fi
\vbox{%
\ifdim\myboxwidth=\hsize
\setbox\onelinebox=\hbox{%
\vbox{\hbox{%
$\Pi_{18,36}$ spans $L_{16.13}$%
}\hbox{%
$363222223222363636$%
}%
}%
\hfill\copy\matricesbox
}%
\ifdim\wd\onelinebox>\myboxwidth
\hbox to \myboxwidth{%
$\Pi_{18,36}$ spans $L_{16.13}$%
\hfil
$363222223222363636$%
}%
\box\matricesbox
\else
\hbox to \myboxwidth{%
\unhbox\onelinebox
}%
\fi
\else
\hbox to \myboxwidth{%
$\Pi_{18,36}$ spans $L_{16.13}$%
\hfil}%
\hbox to \myboxwidth{%
$363222223222363636$%
\hfil}%
\box\matricesbox
\fi
}%
\hfill\discretionary{}{}{}%
\setbox\matricesbox=\hbox{%
{$\left[\!\llap{\phantom{%
\begingroup \smaller\smaller\smaller% [inline block 158: 25 envs, 2757 chars -> data_tex | \begin{tabular}{@{}c@{}}% \phantom{0}\\\phantom{0}\\\phantom{0}...]
\endgroup%
}}\!\right]$}%
}%
\ifdim\wd\matricesbox>\halfwidth\myboxwidth=\hsize\else\myboxwidth=\halfwidth\fi
\vbox{%
\ifdim\myboxwidth=\hsize
\setbox\onelinebox=\hbox{%
\vbox{\hbox{%
$\Pi_{18,37}$ spans $L_{16.13}$%
}\hbox{%
$363222223223223636$%
}%
}%
\hfill\copy\matricesbox
}%
\ifdim\wd\onelinebox>\myboxwidth
\hbox to \myboxwidth{%
$\Pi_{18,37}$ spans $L_{16.13}$%
\hfil
$363222223223223636$%
}%
\box\matricesbox
\else
\hbox to \myboxwidth{%
\unhbox\onelinebox
}%
\fi
\else
\hbox to \myboxwidth{%
$\Pi_{18,37}$ spans $L_{16.13}$%
\hfil}%
\hbox to \myboxwidth{%
$363222223223223636$%
\hfil}%
\box\matricesbox
\fi
}%
\hfill\discretionary{}{}{}%
\setbox\matricesbox=\hbox{%
{$\left[\!\llap{\phantom{%
\begingroup \smaller\smaller\smaller% [inline block 159: 25 envs, 2756 chars -> data_tex | \begin{tabular}{@{}c@{}}% \phantom{0}\\\phantom{0}\\\phantom{0}...]
\endgroup%
}}\!\right]$}%
}%
\ifdim\wd\matricesbox>\halfwidth\myboxwidth=\hsize\else\myboxwidth=\halfwidth\fi
\vbox{%
\ifdim\myboxwidth=\hsize
\setbox\onelinebox=\hbox{%
\vbox{\hbox{%
$\Pi_{18,38}$ spans $L_{16.13}$%
}\hbox{%
$363222223223632236$%
}%
}%
\hfill\copy\matricesbox
}%
\ifdim\wd\onelinebox>\myboxwidth
\hbox to \myboxwidth{%
$\Pi_{18,38}$ spans $L_{16.13}$%
\hfil
$363222223223632236$%
}%
\box\matricesbox
\else
\hbox to \myboxwidth{%
\unhbox\onelinebox
}%
\fi
\else
\hbox to \myboxwidth{%
$\Pi_{18,38}$ spans $L_{16.13}$%
\hfil}%
\hbox to \myboxwidth{%
$363222223223632236$%
\hfil}%
\box\matricesbox
\fi
}%
\hfill\discretionary{}{}{}%
\setbox\matricesbox=\hbox{%
{$\left[\!\llap{\phantom{%
\begingroup \smaller\smaller\smaller% [inline block 160: 25 envs, 2757 chars -> data_tex | \begin{tabular}{@{}c@{}}% \phantom{0}\\\phantom{0}\\\phantom{0}...]
\endgroup%
}}\!\right]$}%
}%
\ifdim\wd\matricesbox>\halfwidth\myboxwidth=\hsize\else\myboxwidth=\halfwidth\fi
\vbox{%
\ifdim\myboxwidth=\hsize
\setbox\onelinebox=\hbox{%
\vbox{\hbox{%
$\Pi_{18,39}$ spans $L_{16.13}$%
}\hbox{%
$363222223223636322$%
}%
}%
\hfill\copy\matricesbox
}%
\ifdim\wd\onelinebox>\myboxwidth
\hbox to \myboxwidth{%
$\Pi_{18,39}$ spans $L_{16.13}$%
\hfil
$363222223223636322$%
}%
\box\matricesbox
\else
\hbox to \myboxwidth{%
\unhbox\onelinebox
}%
\fi
\else
\hbox to \myboxwidth{%
$\Pi_{18,39}$ spans $L_{16.13}$%
\hfil}%
\hbox to \myboxwidth{%
$363222223223636322$%
\hfil}%
\box\matricesbox
\fi
}%
\hfill\discretionary{}{}{}%
\setbox\matricesbox=\hbox{%
{$\left[\!\llap{\phantom{%
\begingroup \smaller\smaller\smaller% [inline block 161: 25 envs, 2754 chars -> data_tex | \begin{tabular}{@{}c@{}}% \phantom{0}\\\phantom{0}\\\phantom{0}...]
\endgroup%
}}\!\right]$}%
}%
\ifdim\wd\matricesbox>\halfwidth\myboxwidth=\hsize\else\myboxwidth=\halfwidth\fi
\vbox{%
\ifdim\myboxwidth=\hsize
\setbox\onelinebox=\hbox{%
\vbox{\hbox{%
$\Pi_{18,40}$ spans $L_{16.13}$%
}\hbox{%
$363222223632223636$%
}%
}%
\hfill\copy\matricesbox
}%
\ifdim\wd\onelinebox>\myboxwidth
\hbox to \myboxwidth{%
$\Pi_{18,40}$ spans $L_{16.13}$%
\hfil
$363222223632223636$%
}%
\box\matricesbox
\else
\hbox to \myboxwidth{%
\unhbox\onelinebox
}%
\fi
\else
\hbox to \myboxwidth{%
$\Pi_{18,40}$ spans $L_{16.13}$%
\hfil}%
\hbox to \myboxwidth{%
$363222223632223636$%
\hfil}%
\box\matricesbox
\fi
}%
\hfill\discretionary{}{}{}%
\setbox\matricesbox=\hbox{%
{$\left[\!\llap{\phantom{%
\begingroup \smaller\smaller\smaller% [inline block 162: 25 envs, 2753 chars -> data_tex | \begin{tabular}{@{}c@{}}% \phantom{0}\\\phantom{0}\\\phantom{0}...]
\endgroup%
}}\!\right]$}%
}%
\ifdim\wd\matricesbox>\halfwidth\myboxwidth=\hsize\else\myboxwidth=\halfwidth\fi
\vbox{%
\ifdim\myboxwidth=\hsize
\setbox\onelinebox=\hbox{%
\vbox{\hbox{%
$\Pi_{18,41}$ spans $L_{16.13}$%
}\hbox{%
$363222223632232236$%
}%
}%
\hfill\copy\matricesbox
}%
\ifdim\wd\onelinebox>\myboxwidth
\hbox to \myboxwidth{%
$\Pi_{18,41}$ spans $L_{16.13}$%
\hfil
$363222223632232236$%
}%
\box\matricesbox
\else
\hbox to \myboxwidth{%
\unhbox\onelinebox
}%
\fi
\else
\hbox to \myboxwidth{%
$\Pi_{18,41}$ spans $L_{16.13}$%
\hfil}%
\hbox to \myboxwidth{%
$363222223632232236$%
\hfil}%
\box\matricesbox
\fi
}%
\hfill\discretionary{}{}{}%
\setbox\matricesbox=\hbox{%
{$\left[\!\llap{\phantom{%
\begingroup \smaller\smaller\smaller% [inline block 163: 25 envs, 2754 chars -> data_tex | \begin{tabular}{@{}c@{}}% \phantom{0}\\\phantom{0}\\\phantom{0}...]
\endgroup%
}}\!\right]$}%
}%
\ifdim\wd\matricesbox>\halfwidth\myboxwidth=\hsize\else\myboxwidth=\halfwidth\fi
\vbox{%
\ifdim\myboxwidth=\hsize
\setbox\onelinebox=\hbox{%
\vbox{\hbox{%
$\Pi_{18,42}$ spans $L_{16.13}$%
}\hbox{%
$363222223632236322$%
}%
}%
\hfill\copy\matricesbox
}%
\ifdim\wd\onelinebox>\myboxwidth
\hbox to \myboxwidth{%
$\Pi_{18,42}$ spans $L_{16.13}$%
\hfil
$363222223632236322$%
}%
\box\matricesbox
\else
\hbox to \myboxwidth{%
\unhbox\onelinebox
}%
\fi
\else
\hbox to \myboxwidth{%
$\Pi_{18,42}$ spans $L_{16.13}$%
\hfil}%
\hbox to \myboxwidth{%
$363222223632236322$%
\hfil}%
\box\matricesbox
\fi
}%
\hfill\discretionary{}{}{}%
\setbox\matricesbox=\hbox{%
{$\left[\!\llap{\phantom{%
\begingroup \smaller\smaller\smaller% [inline block 164: 25 envs, 2756 chars -> data_tex | \begin{tabular}{@{}c@{}}% \phantom{0}\\\phantom{0}\\\phantom{0}...]
\endgroup%
}}\!\right]$}%
}%
\ifdim\wd\matricesbox>\halfwidth\myboxwidth=\hsize\else\myboxwidth=\halfwidth\fi
\vbox{%
\ifdim\myboxwidth=\hsize
\setbox\onelinebox=\hbox{%
\vbox{\hbox{%
$\Pi_{18,43}$ spans $L_{16.13}$%
}\hbox{%
$363222223636322236$%
}%
}%
\hfill\copy\matricesbox
}%
\ifdim\wd\onelinebox>\myboxwidth
\hbox to \myboxwidth{%
$\Pi_{18,43}$ spans $L_{16.13}$%
\hfil
$363222223636322236$%
}%
\box\matricesbox
\else
\hbox to \myboxwidth{%
\unhbox\onelinebox
}%
\fi
\else
\hbox to \myboxwidth{%
$\Pi_{18,43}$ spans $L_{16.13}$%
\hfil}%
\hbox to \myboxwidth{%
$363222223636322236$%
\hfil}%
\box\matricesbox
\fi
}%
\hfill\discretionary{}{}{}%
\setbox\matricesbox=\hbox{%
{$\left[\!\llap{\phantom{%
\begingroup \smaller\smaller\smaller% [inline block 165: 25 envs, 2756 chars -> data_tex | \begin{tabular}{@{}c@{}}% \phantom{0}\\\phantom{0}\\\phantom{0}...]
\endgroup%
}}\!\right]$}%
}%
\ifdim\wd\matricesbox>\halfwidth\myboxwidth=\hsize\else\myboxwidth=\halfwidth\fi
\vbox{%
\ifdim\myboxwidth=\hsize
\setbox\onelinebox=\hbox{%
\vbox{\hbox{%
$\Pi_{18,44}$ spans $L_{16.13}$%
}\hbox{%
$363222223636322322$%
}%
}%
\hfill\copy\matricesbox
}%
\ifdim\wd\onelinebox>\myboxwidth
\hbox to \myboxwidth{%
$\Pi_{18,44}$ spans $L_{16.13}$%
\hfil
$363222223636322322$%
}%
\box\matricesbox
\else
\hbox to \myboxwidth{%
\unhbox\onelinebox
}%
\fi
\else
\hbox to \myboxwidth{%
$\Pi_{18,44}$ spans $L_{16.13}$%
\hfil}%
\hbox to \myboxwidth{%
$363222223636322322$%
\hfil}%
\box\matricesbox
\fi
}%
\hfill\discretionary{}{}{}%
\setbox\matricesbox=\hbox{%
{$\left[\!\llap{\phantom{%
\begingroup \smaller\smaller\smaller% [inline block 166: 25 envs, 2754 chars -> data_tex | \begin{tabular}{@{}c@{}}% \phantom{0}\\\phantom{0}\\\phantom{0}...]
\endgroup%
}}\!\right]$}%
}%
\ifdim\wd\matricesbox>\halfwidth\myboxwidth=\hsize\else\myboxwidth=\halfwidth\fi
\vbox{%
\ifdim\myboxwidth=\hsize
\setbox\onelinebox=\hbox{%
\vbox{\hbox{%
$\Pi_{18,45}$ spans $L_{16.13}$%
}\hbox{%
$363222223636363222$%
}%
}%
\hfill\copy\matricesbox
}%
\ifdim\wd\onelinebox>\myboxwidth
\hbox to \myboxwidth{%
$\Pi_{18,45}$ spans $L_{16.13}$%
\hfil
$363222223636363222$%
}%
\box\matricesbox
\else
\hbox to \myboxwidth{%
\unhbox\onelinebox
}%
\fi
\else
\hbox to \myboxwidth{%
$\Pi_{18,45}$ spans $L_{16.13}$%
\hfil}%
\hbox to \myboxwidth{%
$363222223636363222$%
\hfil}%
\box\matricesbox
\fi
}%
\hfill\discretionary{}{}{}%
\setbox\matricesbox=\hbox{%
{$\left[\!\llap{\phantom{%
\begingroup \smaller\smaller\smaller% [inline block 167: 25 envs, 2754 chars -> data_tex | \begin{tabular}{@{}c@{}}% \phantom{0}\\\phantom{0}\\\phantom{0}...]
\endgroup%
}}\!\right]$}%
}%
\ifdim\wd\matricesbox>\halfwidth\myboxwidth=\hsize\else\myboxwidth=\halfwidth\fi
\vbox{%
\ifdim\myboxwidth=\hsize
\setbox\onelinebox=\hbox{%
\vbox{\hbox{%
$\Pi_{18,46}$ spans $L_{16.13}$%
}\hbox{%
$363222232223223636$%
}%
}%
\hfill\copy\matricesbox
}%
\ifdim\wd\onelinebox>\myboxwidth
\hbox to \myboxwidth{%
$\Pi_{18,46}$ spans $L_{16.13}$%
\hfil
$363222232223223636$%
}%
\box\matricesbox
\else
\hbox to \myboxwidth{%
\unhbox\onelinebox
}%
\fi
\else
\hbox to \myboxwidth{%
$\Pi_{18,46}$ spans $L_{16.13}$%
\hfil}%
\hbox to \myboxwidth{%
$363222232223223636$%
\hfil}%
\box\matricesbox
\fi
}%
\hfill\discretionary{}{}{}%
\setbox\matricesbox=\hbox{%
{$\left[\!\llap{\phantom{%
\begingroup \smaller\smaller\smaller% [inline block 168: 25 envs, 2753 chars -> data_tex | \begin{tabular}{@{}c@{}}% \phantom{0}\\\phantom{0}\\\phantom{0}...]
\endgroup%
}}\!\right]$}%
}%
\ifdim\wd\matricesbox>\halfwidth\myboxwidth=\hsize\else\myboxwidth=\halfwidth\fi
\vbox{%
\ifdim\myboxwidth=\hsize
\setbox\onelinebox=\hbox{%
\vbox{\hbox{%
$\Pi_{18,47}$ spans $L_{16.13}$%
}\hbox{%
$363222232223632236$%
}%
}%
\hfill\copy\matricesbox
}%
\ifdim\wd\onelinebox>\myboxwidth
\hbox to \myboxwidth{%
$\Pi_{18,47}$ spans $L_{16.13}$%
\hfil
$363222232223632236$%
}%
\box\matricesbox
\else
\hbox to \myboxwidth{%
\unhbox\onelinebox
}%
\fi
\else
\hbox to \myboxwidth{%
$\Pi_{18,47}$ spans $L_{16.13}$%
\hfil}%
\hbox to \myboxwidth{%
$363222232223632236$%
\hfil}%
\box\matricesbox
\fi
}%
\hfill\discretionary{}{}{}%
\setbox\matricesbox=\hbox{%
{$\left[\!\llap{\phantom{%
\begingroup \smaller\smaller\smaller% [inline block 169: 25 envs, 2755 chars -> data_tex | \begin{tabular}{@{}c@{}}% \phantom{0}\\\phantom{0}\\\phantom{0}...]
\endgroup%
}}\!\right]$}%
}%
\ifdim\wd\matricesbox>\halfwidth\myboxwidth=\hsize\else\myboxwidth=\halfwidth\fi
\vbox{%
\ifdim\myboxwidth=\hsize
\setbox\onelinebox=\hbox{%
\vbox{\hbox{%
$\Pi_{18,48}$ spans $L_{16.13}$%
}\hbox{%
$363222232223636322$%
}%
}%
\hfill\copy\matricesbox
}%
\ifdim\wd\onelinebox>\myboxwidth
\hbox to \myboxwidth{%
$\Pi_{18,48}$ spans $L_{16.13}$%
\hfil
$363222232223636322$%
}%
\box\matricesbox
\else
\hbox to \myboxwidth{%
\unhbox\onelinebox
}%
\fi
\else
\hbox to \myboxwidth{%
$\Pi_{18,48}$ spans $L_{16.13}$%
\hfil}%
\hbox to \myboxwidth{%
$363222232223636322$%
\hfil}%
\box\matricesbox
\fi
}%
\hfill\discretionary{}{}{}%
\setbox\matricesbox=\hbox{%
{$\left[\!\llap{\phantom{%
\begingroup \smaller\smaller\smaller% [inline block 170: 25 envs, 2751 chars -> data_tex | \begin{tabular}{@{}c@{}}% \phantom{0}\\\phantom{0}\\\phantom{0}...]
\endgroup%
}}\!\right]$}%
}%
\ifdim\wd\matricesbox>\halfwidth\myboxwidth=\hsize\else\myboxwidth=\halfwidth\fi
\vbox{%
\ifdim\myboxwidth=\hsize
\setbox\onelinebox=\hbox{%
\vbox{\hbox{%
$\Pi_{18,49}$ spans $L_{16.13}$%
}\hbox{%
$363222232232223636$%
}%
}%
\hfill\copy\matricesbox
}%
\ifdim\wd\onelinebox>\myboxwidth
\hbox to \myboxwidth{%
$\Pi_{18,49}$ spans $L_{16.13}$%
\hfil
$363222232232223636$%
}%
\box\matricesbox
\else
\hbox to \myboxwidth{%
\unhbox\onelinebox
}%
\fi
\else
\hbox to \myboxwidth{%
$\Pi_{18,49}$ spans $L_{16.13}$%
\hfil}%
\hbox to \myboxwidth{%
$363222232232223636$%
\hfil}%
\box\matricesbox
\fi
}%
\hfill\discretionary{}{}{}%
\setbox\matricesbox=\hbox{%
{$\left[\!\llap{\phantom{%
\begingroup \smaller\smaller\smaller% [inline block 171: 25 envs, 2750 chars -> data_tex | \begin{tabular}{@{}c@{}}% \phantom{0}\\\phantom{0}\\\phantom{0}...]
\endgroup%
}}\!\right]$}%
}%
\ifdim\wd\matricesbox>\halfwidth\myboxwidth=\hsize\else\myboxwidth=\halfwidth\fi
\vbox{%
\ifdim\myboxwidth=\hsize
\setbox\onelinebox=\hbox{%
\vbox{\hbox{%
$\Pi_{18,50}$ spans $L_{16.13}$%
}\hbox{%
$363222232232232236$%
}%
}%
\hfill\copy\matricesbox
}%
\ifdim\wd\onelinebox>\myboxwidth
\hbox to \myboxwidth{%
$\Pi_{18,50}$ spans $L_{16.13}$%
\hfil
$363222232232232236$%
}%
\box\matricesbox
\else
\hbox to \myboxwidth{%
\unhbox\onelinebox
}%
\fi
\else
\hbox to \myboxwidth{%
$\Pi_{18,50}$ spans $L_{16.13}$%
\hfil}%
\hbox to \myboxwidth{%
$363222232232232236$%
\hfil}%
\box\matricesbox
\fi
}%
\hfill\discretionary{}{}{}%
\setbox\matricesbox=\hbox{%
{$\left[\!\llap{\phantom{%
\begingroup \smaller\smaller\smaller% [inline block 172: 25 envs, 2752 chars -> data_tex | \begin{tabular}{@{}c@{}}% \phantom{0}\\\phantom{0}\\\phantom{0}...]
\endgroup%
}}\!\right]$}%
}%
\ifdim\wd\matricesbox>\halfwidth\myboxwidth=\hsize\else\myboxwidth=\halfwidth\fi
\vbox{%
\ifdim\myboxwidth=\hsize
\setbox\onelinebox=\hbox{%
\vbox{\hbox{%
$\Pi_{18,51}$ spans $L_{16.13}$%
}\hbox{%
$363222232232236322$%
}%
}%
\hfill\copy\matricesbox
}%
\ifdim\wd\onelinebox>\myboxwidth
\hbox to \myboxwidth{%
$\Pi_{18,51}$ spans $L_{16.13}$%
\hfil
$363222232232236322$%
}%
\box\matricesbox
\else
\hbox to \myboxwidth{%
\unhbox\onelinebox
}%
\fi
\else
\hbox to \myboxwidth{%
$\Pi_{18,51}$ spans $L_{16.13}$%
\hfil}%
\hbox to \myboxwidth{%
$363222232232236322$%
\hfil}%
\box\matricesbox
\fi
}%
\hfill\discretionary{}{}{}%
\setbox\matricesbox=\hbox{%
{$\left[\!\llap{\phantom{%
\begingroup \smaller\smaller\smaller% [inline block 173: 25 envs, 2753 chars -> data_tex | \begin{tabular}{@{}c@{}}% \phantom{0}\\\phantom{0}\\\phantom{0}...]
\endgroup%
}}\!\right]$}%
}%
\ifdim\wd\matricesbox>\halfwidth\myboxwidth=\hsize\else\myboxwidth=\halfwidth\fi
\vbox{%
\ifdim\myboxwidth=\hsize
\setbox\onelinebox=\hbox{%
\vbox{\hbox{%
$\Pi_{18,52}$ spans $L_{16.13}$%
}\hbox{%
$363222232236322236$%
}%
}%
\hfill\copy\matricesbox
}%
\ifdim\wd\onelinebox>\myboxwidth
\hbox to \myboxwidth{%
$\Pi_{18,52}$ spans $L_{16.13}$%
\hfil
$363222232236322236$%
}%
\box\matricesbox
\else
\hbox to \myboxwidth{%
\unhbox\onelinebox
}%
\fi
\else
\hbox to \myboxwidth{%
$\Pi_{18,52}$ spans $L_{16.13}$%
\hfil}%
\hbox to \myboxwidth{%
$363222232236322236$%
\hfil}%
\box\matricesbox
\fi
}%
\hfill\discretionary{}{}{}%
\setbox\matricesbox=\hbox{%
{$\left[\!\llap{\phantom{%
\begingroup \smaller\smaller\smaller% [inline block 174: 25 envs, 2754 chars -> data_tex | \begin{tabular}{@{}c@{}}% \phantom{0}\\\phantom{0}\\\phantom{0}...]
\endgroup%
}}\!\right]$}%
}%
\ifdim\wd\matricesbox>\halfwidth\myboxwidth=\hsize\else\myboxwidth=\halfwidth\fi
\vbox{%
\ifdim\myboxwidth=\hsize
\setbox\onelinebox=\hbox{%
\vbox{\hbox{%
$\Pi_{18,53}$ spans $L_{16.13}$%
}\hbox{%
$363222232236322322$%
}%
}%
\hfill\copy\matricesbox
}%
\ifdim\wd\onelinebox>\myboxwidth
\hbox to \myboxwidth{%
$\Pi_{18,53}$ spans $L_{16.13}$%
\hfil
$363222232236322322$%
}%
\box\matricesbox
\else
\hbox to \myboxwidth{%
\unhbox\onelinebox
}%
\fi
\else
\hbox to \myboxwidth{%
$\Pi_{18,53}$ spans $L_{16.13}$%
\hfil}%
\hbox to \myboxwidth{%
$363222232236322322$%
\hfil}%
\box\matricesbox
\fi
}%
\hfill\discretionary{}{}{}%
\setbox\matricesbox=\hbox{%
{$\left[\!\llap{\phantom{%
\begingroup \smaller\smaller\smaller% [inline block 175: 25 envs, 2752 chars -> data_tex | \begin{tabular}{@{}c@{}}% \phantom{0}\\\phantom{0}\\\phantom{0}...]
\endgroup%
}}\!\right]$}%
}%
\ifdim\wd\matricesbox>\halfwidth\myboxwidth=\hsize\else\myboxwidth=\halfwidth\fi
\vbox{%
\ifdim\myboxwidth=\hsize
\setbox\onelinebox=\hbox{%
\vbox{\hbox{%
$\Pi_{18,54}$ spans $L_{16.13}$%
}\hbox{%
$363222232236363222$%
}%
}%
\hfill\copy\matricesbox
}%
\ifdim\wd\onelinebox>\myboxwidth
\hbox to \myboxwidth{%
$\Pi_{18,54}$ spans $L_{16.13}$%
\hfil
$363222232236363222$%
}%
\box\matricesbox
\else
\hbox to \myboxwidth{%
\unhbox\onelinebox
}%
\fi
\else
\hbox to \myboxwidth{%
$\Pi_{18,54}$ spans $L_{16.13}$%
\hfil}%
\hbox to \myboxwidth{%
$363222232236363222$%
\hfil}%
\box\matricesbox
\fi
}%
\hfill\discretionary{}{}{}%
\setbox\matricesbox=\hbox{%
{$\left[\!\llap{\phantom{%
\begingroup \smaller\smaller\smaller% [inline block 176: 25 envs, 2754 chars -> data_tex | \begin{tabular}{@{}c@{}}% \phantom{0}\\\phantom{0}\\\phantom{0}...]
\endgroup%
}}\!\right]$}%
}%
\ifdim\wd\matricesbox>\halfwidth\myboxwidth=\hsize\else\myboxwidth=\halfwidth\fi
\vbox{%
\ifdim\myboxwidth=\hsize
\setbox\onelinebox=\hbox{%
\vbox{\hbox{%
$\Pi_{18,55}$ spans $L_{16.13}$%
}\hbox{%
$363222236322223636$%
}%
}%
\hfill\copy\matricesbox
}%
\ifdim\wd\onelinebox>\myboxwidth
\hbox to \myboxwidth{%
$\Pi_{18,55}$ spans $L_{16.13}$%
\hfil
$363222236322223636$%
}%
\box\matricesbox
\else
\hbox to \myboxwidth{%
\unhbox\onelinebox
}%
\fi
\else
\hbox to \myboxwidth{%
$\Pi_{18,55}$ spans $L_{16.13}$%
\hfil}%
\hbox to \myboxwidth{%
$363222236322223636$%
\hfil}%
\box\matricesbox
\fi
}%
\hfill\discretionary{}{}{}%
\setbox\matricesbox=\hbox{%
{$\left[\!\llap{\phantom{%
\begingroup \smaller\smaller\smaller% [inline block 177: 25 envs, 2753 chars -> data_tex | \begin{tabular}{@{}c@{}}% \phantom{0}\\\phantom{0}\\\phantom{0}...]
\endgroup%
}}\!\right]$}%
}%
\ifdim\wd\matricesbox>\halfwidth\myboxwidth=\hsize\else\myboxwidth=\halfwidth\fi
\vbox{%
\ifdim\myboxwidth=\hsize
\setbox\onelinebox=\hbox{%
\vbox{\hbox{%
$\Pi_{18,56}$ spans $L_{16.13}$%
}\hbox{%
$363222236322232236$%
}%
}%
\hfill\copy\matricesbox
}%
\ifdim\wd\onelinebox>\myboxwidth
\hbox to \myboxwidth{%
$\Pi_{18,56}$ spans $L_{16.13}$%
\hfil
$363222236322232236$%
}%
\box\matricesbox
\else
\hbox to \myboxwidth{%
\unhbox\onelinebox
}%
\fi
\else
\hbox to \myboxwidth{%
$\Pi_{18,56}$ spans $L_{16.13}$%
\hfil}%
\hbox to \myboxwidth{%
$363222236322232236$%
\hfil}%
\box\matricesbox
\fi
}%
\hfill\discretionary{}{}{}%
\setbox\matricesbox=\hbox{%
{$\left[\!\llap{\phantom{%
\begingroup \smaller\smaller\smaller% [inline block 178: 25 envs, 2755 chars -> data_tex | \begin{tabular}{@{}c@{}}% \phantom{0}\\\phantom{0}\\\phantom{0}...]
\endgroup%
}}\!\right]$}%
}%
\ifdim\wd\matricesbox>\halfwidth\myboxwidth=\hsize\else\myboxwidth=\halfwidth\fi
\vbox{%
\ifdim\myboxwidth=\hsize
\setbox\onelinebox=\hbox{%
\vbox{\hbox{%
$\Pi_{18,57}$ spans $L_{16.13}$%
}\hbox{%
$363222236322236322$%
}%
}%
\hfill\copy\matricesbox
}%
\ifdim\wd\onelinebox>\myboxwidth
\hbox to \myboxwidth{%
$\Pi_{18,57}$ spans $L_{16.13}$%
\hfil
$363222236322236322$%
}%
\box\matricesbox
\else
\hbox to \myboxwidth{%
\unhbox\onelinebox
}%
\fi
\else
\hbox to \myboxwidth{%
$\Pi_{18,57}$ spans $L_{16.13}$%
\hfil}%
\hbox to \myboxwidth{%
$363222236322236322$%
\hfil}%
\box\matricesbox
\fi
}%
\hfill\discretionary{}{}{}%
\setbox\matricesbox=\hbox{%
{$\left[\!\llap{\phantom{%
\begingroup \smaller\smaller\smaller% [inline block 179: 25 envs, 2756 chars -> data_tex | \begin{tabular}{@{}c@{}}% \phantom{0}\\\phantom{0}\\\phantom{0}...]
\endgroup%
}}\!\right]$}%
}%
\ifdim\wd\matricesbox>\halfwidth\myboxwidth=\hsize\else\myboxwidth=\halfwidth\fi
\vbox{%
\ifdim\myboxwidth=\hsize
\setbox\onelinebox=\hbox{%
\vbox{\hbox{%
$\Pi_{18,58}$ spans $L_{16.13}$%
}\hbox{%
$363222236322322236$%
}%
}%
\hfill\copy\matricesbox
}%
\ifdim\wd\onelinebox>\myboxwidth
\hbox to \myboxwidth{%
$\Pi_{18,58}$ spans $L_{16.13}$%
\hfil
$363222236322322236$%
}%
\box\matricesbox
\else
\hbox to \myboxwidth{%
\unhbox\onelinebox
}%
\fi
\else
\hbox to \myboxwidth{%
$\Pi_{18,58}$ spans $L_{16.13}$%
\hfil}%
\hbox to \myboxwidth{%
$363222236322322236$%
\hfil}%
\box\matricesbox
\fi
}%
\hfill\discretionary{}{}{}%
\setbox\matricesbox=\hbox{%
{$\left[\!\llap{\phantom{%
\begingroup \smaller\smaller\smaller% [inline block 180: 25 envs, 2755 chars -> data_tex | \begin{tabular}{@{}c@{}}% \phantom{0}\\\phantom{0}\\\phantom{0}...]
\endgroup%
}}\!\right]$}%
}%
\ifdim\wd\matricesbox>\halfwidth\myboxwidth=\hsize\else\myboxwidth=\halfwidth\fi
\vbox{%
\ifdim\myboxwidth=\hsize
\setbox\onelinebox=\hbox{%
\vbox{\hbox{%
$\Pi_{18,59}$ spans $L_{16.13}$%
}\hbox{%
$363222322223223636$%
}%
}%
\hfill\copy\matricesbox
}%
\ifdim\wd\onelinebox>\myboxwidth
\hbox to \myboxwidth{%
$\Pi_{18,59}$ spans $L_{16.13}$%
\hfil
$363222322223223636$%
}%
\box\matricesbox
\else
\hbox to \myboxwidth{%
\unhbox\onelinebox
}%
\fi
\else
\hbox to \myboxwidth{%
$\Pi_{18,59}$ spans $L_{16.13}$%
\hfil}%
\hbox to \myboxwidth{%
$363222322223223636$%
\hfil}%
\box\matricesbox
\fi
}%
\hfill\discretionary{}{}{}%
\setbox\matricesbox=\hbox{%
{$\left[\!\llap{\phantom{%
\begingroup \smaller\smaller\smaller% [inline block 181: 25 envs, 2754 chars -> data_tex | \begin{tabular}{@{}c@{}}% \phantom{0}\\\phantom{0}\\\phantom{0}...]
\endgroup%
}}\!\right]$}%
}%
\ifdim\wd\matricesbox>\halfwidth\myboxwidth=\hsize\else\myboxwidth=\halfwidth\fi
\vbox{%
\ifdim\myboxwidth=\hsize
\setbox\onelinebox=\hbox{%
\vbox{\hbox{%
$\Pi_{18,60}$ spans $L_{16.13}$%
}\hbox{%
$363222322223632236$%
}%
}%
\hfill\copy\matricesbox
}%
\ifdim\wd\onelinebox>\myboxwidth
\hbox to \myboxwidth{%
$\Pi_{18,60}$ spans $L_{16.13}$%
\hfil
$363222322223632236$%
}%
\box\matricesbox
\else
\hbox to \myboxwidth{%
\unhbox\onelinebox
}%
\fi
\else
\hbox to \myboxwidth{%
$\Pi_{18,60}$ spans $L_{16.13}$%
\hfil}%
\hbox to \myboxwidth{%
$363222322223632236$%
\hfil}%
\box\matricesbox
\fi
}%
\hfill\discretionary{}{}{}%
\setbox\matricesbox=\hbox{%
{$\left[\!\llap{\phantom{%
\begingroup \smaller\smaller\smaller% [inline block 182: 25 envs, 2757 chars -> data_tex | \begin{tabular}{@{}c@{}}% \phantom{0}\\\phantom{0}\\\phantom{0}...]
\endgroup%
}}\!\right]$}%
}%
\ifdim\wd\matricesbox>\halfwidth\myboxwidth=\hsize\else\myboxwidth=\halfwidth\fi
\vbox{%
\ifdim\myboxwidth=\hsize
\setbox\onelinebox=\hbox{%
\vbox{\hbox{%
$\Pi_{18,61}$ spans $L_{16.13}$%
}\hbox{%
$363222322223636322$%
}%
}%
\hfill\copy\matricesbox
}%
\ifdim\wd\onelinebox>\myboxwidth
\hbox to \myboxwidth{%
$\Pi_{18,61}$ spans $L_{16.13}$%
\hfil
$363222322223636322$%
}%
\box\matricesbox
\else
\hbox to \myboxwidth{%
\unhbox\onelinebox
}%
\fi
\else
\hbox to \myboxwidth{%
$\Pi_{18,61}$ spans $L_{16.13}$%
\hfil}%
\hbox to \myboxwidth{%
$363222322223636322$%
\hfil}%
\box\matricesbox
\fi
}%
\hfill\discretionary{}{}{}%
\setbox\matricesbox=\hbox{%
{$\left[\!\llap{\phantom{%
\begingroup \smaller\smaller\smaller% [inline block 183: 25 envs, 2752 chars -> data_tex | \begin{tabular}{@{}c@{}}% \phantom{0}\\\phantom{0}\\\phantom{0}...]
\endgroup%
}}\!\right]$}%
}%
\ifdim\wd\matricesbox>\halfwidth\myboxwidth=\hsize\else\myboxwidth=\halfwidth\fi
\vbox{%
\ifdim\myboxwidth=\hsize
\setbox\onelinebox=\hbox{%
\vbox{\hbox{%
$\Pi_{18,62}$ spans $L_{16.13}$%
}\hbox{%
$363222322232223636$%
}%
}%
\hfill\copy\matricesbox
}%
\ifdim\wd\onelinebox>\myboxwidth
\hbox to \myboxwidth{%
$\Pi_{18,62}$ spans $L_{16.13}$%
\hfil
$363222322232223636$%
}%
\box\matricesbox
\else
\hbox to \myboxwidth{%
\unhbox\onelinebox
}%
\fi
\else
\hbox to \myboxwidth{%
$\Pi_{18,62}$ spans $L_{16.13}$%
\hfil}%
\hbox to \myboxwidth{%
$363222322232223636$%
\hfil}%
\box\matricesbox
\fi
}%
\hfill\discretionary{}{}{}%
\setbox\matricesbox=\hbox{%
{$\left[\!\llap{\phantom{%
\begingroup \smaller\smaller\smaller% [inline block 184: 25 envs, 2751 chars -> data_tex | \begin{tabular}{@{}c@{}}% \phantom{0}\\\phantom{0}\\\phantom{0}...]
\endgroup%
}}\!\right]$}%
}%
\ifdim\wd\matricesbox>\halfwidth\myboxwidth=\hsize\else\myboxwidth=\halfwidth\fi
\vbox{%
\ifdim\myboxwidth=\hsize
\setbox\onelinebox=\hbox{%
\vbox{\hbox{%
$\Pi_{18,63}$ spans $L_{16.13}$%
}\hbox{%
$363222322232232236$%
}%
}%
\hfill\copy\matricesbox
}%
\ifdim\wd\onelinebox>\myboxwidth
\hbox to \myboxwidth{%
$\Pi_{18,63}$ spans $L_{16.13}$%
\hfil
$363222322232232236$%
}%
\box\matricesbox
\else
\hbox to \myboxwidth{%
\unhbox\onelinebox
}%
\fi
\else
\hbox to \myboxwidth{%
$\Pi_{18,63}$ spans $L_{16.13}$%
\hfil}%
\hbox to \myboxwidth{%
$363222322232232236$%
\hfil}%
\box\matricesbox
\fi
}%
\hfill\discretionary{}{}{}%
\setbox\matricesbox=\hbox{%
{$\left[\!\llap{\phantom{%
\begingroup \smaller\smaller\smaller% [inline block 185: 25 envs, 2754 chars -> data_tex | \begin{tabular}{@{}c@{}}% \phantom{0}\\\phantom{0}\\\phantom{0}...]
\endgroup%
}}\!\right]$}%
}%
\ifdim\wd\matricesbox>\halfwidth\myboxwidth=\hsize\else\myboxwidth=\halfwidth\fi
\vbox{%
\ifdim\myboxwidth=\hsize
\setbox\onelinebox=\hbox{%
\vbox{\hbox{%
$\Pi_{18,64}$ spans $L_{16.13}$%
}\hbox{%
$363222322232236322$%
}%
}%
\hfill\copy\matricesbox
}%
\ifdim\wd\onelinebox>\myboxwidth
\hbox to \myboxwidth{%
$\Pi_{18,64}$ spans $L_{16.13}$%
\hfil
$363222322232236322$%
}%
\box\matricesbox
\else
\hbox to \myboxwidth{%
\unhbox\onelinebox
}%
\fi
\else
\hbox to \myboxwidth{%
$\Pi_{18,64}$ spans $L_{16.13}$%
\hfil}%
\hbox to \myboxwidth{%
$363222322232236322$%
\hfil}%
\box\matricesbox
\fi
}%
\hfill\discretionary{}{}{}%
\setbox\matricesbox=\hbox{%
{$\left[\!\llap{\phantom{%
\begingroup \smaller\smaller\smaller% [inline block 186: 25 envs, 2754 chars -> data_tex | \begin{tabular}{@{}c@{}}% \phantom{0}\\\phantom{0}\\\phantom{0}...]
\endgroup%
}}\!\right]$}%
}%
\ifdim\wd\matricesbox>\halfwidth\myboxwidth=\hsize\else\myboxwidth=\halfwidth\fi
\vbox{%
\ifdim\myboxwidth=\hsize
\setbox\onelinebox=\hbox{%
\vbox{\hbox{%
$\Pi_{18,65}$ spans $L_{16.13}$%
}\hbox{%
$363222322236322236$%
}%
}%
\hfill\copy\matricesbox
}%
\ifdim\wd\onelinebox>\myboxwidth
\hbox to \myboxwidth{%
$\Pi_{18,65}$ spans $L_{16.13}$%
\hfil
$363222322236322236$%
}%
\box\matricesbox
\else
\hbox to \myboxwidth{%
\unhbox\onelinebox
}%
\fi
\else
\hbox to \myboxwidth{%
$\Pi_{18,65}$ spans $L_{16.13}$%
\hfil}%
\hbox to \myboxwidth{%
$363222322236322236$%
\hfil}%
\box\matricesbox
\fi
}%
\hfill\discretionary{}{}{}%
\setbox\matricesbox=\hbox{%
{$\left[\!\llap{\phantom{%
\begingroup \smaller\smaller\smaller% [inline block 187: 25 envs, 2755 chars -> data_tex | \begin{tabular}{@{}c@{}}% \phantom{0}\\\phantom{0}\\\phantom{0}...]
\endgroup%
}}\!\right]$}%
}%
\ifdim\wd\matricesbox>\halfwidth\myboxwidth=\hsize\else\myboxwidth=\halfwidth\fi
\vbox{%
\ifdim\myboxwidth=\hsize
\setbox\onelinebox=\hbox{%
\vbox{\hbox{%
$\Pi_{18,66}$ spans $L_{16.13}$%
}\hbox{%
$363222322322223636$%
}%
}%
\hfill\copy\matricesbox
}%
\ifdim\wd\onelinebox>\myboxwidth
\hbox to \myboxwidth{%
$\Pi_{18,66}$ spans $L_{16.13}$%
\hfil
$363222322322223636$%
}%
\box\matricesbox
\else
\hbox to \myboxwidth{%
\unhbox\onelinebox
}%
\fi
\else
\hbox to \myboxwidth{%
$\Pi_{18,66}$ spans $L_{16.13}$%
\hfil}%
\hbox to \myboxwidth{%
$363222322322223636$%
\hfil}%
\box\matricesbox
\fi
}%
\hfill\discretionary{}{}{}%
\setbox\matricesbox=\hbox{%
{$\left[\!\llap{\phantom{%
\begingroup \smaller\smaller\smaller% [inline block 188: 25 envs, 2754 chars -> data_tex | \begin{tabular}{@{}c@{}}% \phantom{0}\\\phantom{0}\\\phantom{0}...]
\endgroup%
}}\!\right]$}%
}%
\ifdim\wd\matricesbox>\halfwidth\myboxwidth=\hsize\else\myboxwidth=\halfwidth\fi
\vbox{%
\ifdim\myboxwidth=\hsize
\setbox\onelinebox=\hbox{%
\vbox{\hbox{%
$\Pi_{18,67}$ spans $L_{16.13}$%
}\hbox{%
$363222322322232236$%
}%
}%
\hfill\copy\matricesbox
}%
\ifdim\wd\onelinebox>\myboxwidth
\hbox to \myboxwidth{%
$\Pi_{18,67}$ spans $L_{16.13}$%
\hfil
$363222322322232236$%
}%
\box\matricesbox
\else
\hbox to \myboxwidth{%
\unhbox\onelinebox
}%
\fi
\else
\hbox to \myboxwidth{%
$\Pi_{18,67}$ spans $L_{16.13}$%
\hfil}%
\hbox to \myboxwidth{%
$363222322322232236$%
\hfil}%
\box\matricesbox
\fi
}%
\hfill\discretionary{}{}{}%
\setbox\matricesbox=\hbox{%
{$\left[\!\llap{\phantom{%
\begingroup \smaller\smaller\smaller% [inline block 189: 25 envs, 2759 chars -> data_tex | \begin{tabular}{@{}c@{}}% \phantom{0}\\\phantom{0}\\\phantom{0}...]
\endgroup%
}}\!\right]$}%
}%
\ifdim\wd\matricesbox>\halfwidth\myboxwidth=\hsize\else\myboxwidth=\halfwidth\fi
\vbox{%
\ifdim\myboxwidth=\hsize
\setbox\onelinebox=\hbox{%
\vbox{\hbox{%
$\Pi_{18,68}$ spans $L_{16.13}$%
}\hbox{%
$363222322322322322$%
}%
}%
\hfill\copy\matricesbox
}%
\ifdim\wd\onelinebox>\myboxwidth
\hbox to \myboxwidth{%
$\Pi_{18,68}$ spans $L_{16.13}$%
\hfil
$363222322322322322$%
}%
\box\matricesbox
\else
\hbox to \myboxwidth{%
\unhbox\onelinebox
}%
\fi
\else
\hbox to \myboxwidth{%
$\Pi_{18,68}$ spans $L_{16.13}$%
\hfil}%
\hbox to \myboxwidth{%
$363222322322322322$%
\hfil}%
\box\matricesbox
\fi
}%
\hfill\discretionary{}{}{}%
\setbox\matricesbox=\hbox{%
{$\left[\!\llap{\phantom{%
\begingroup \smaller\smaller\smaller% [inline block 190: 25 envs, 2757 chars -> data_tex | \begin{tabular}{@{}c@{}}% \phantom{0}\\\phantom{0}\\\phantom{0}...]
\endgroup%
}}\!\right]$}%
}%
\ifdim\wd\matricesbox>\halfwidth\myboxwidth=\hsize\else\myboxwidth=\halfwidth\fi
\vbox{%
\ifdim\myboxwidth=\hsize
\setbox\onelinebox=\hbox{%
\vbox{\hbox{%
$\Pi_{18,69}$ spans $L_{16.13}$%
}\hbox{%
$363222322322363222$%
}%
}%
\hfill\copy\matricesbox
}%
\ifdim\wd\onelinebox>\myboxwidth
\hbox to \myboxwidth{%
$\Pi_{18,69}$ spans $L_{16.13}$%
\hfil
$363222322322363222$%
}%
\box\matricesbox
\else
\hbox to \myboxwidth{%
\unhbox\onelinebox
}%
\fi
\else
\hbox to \myboxwidth{%
$\Pi_{18,69}$ spans $L_{16.13}$%
\hfil}%
\hbox to \myboxwidth{%
$363222322322363222$%
\hfil}%
\box\matricesbox
\fi
}%
\hfill\discretionary{}{}{}%
\setbox\matricesbox=\hbox{%
{$\left[\!\llap{\phantom{%
\begingroup \smaller\smaller\smaller% [inline block 191: 25 envs, 2754 chars -> data_tex | \begin{tabular}{@{}c@{}}% \phantom{0}\\\phantom{0}\\\phantom{0}...]
\endgroup%
}}\!\right]$}%
}%
\ifdim\wd\matricesbox>\halfwidth\myboxwidth=\hsize\else\myboxwidth=\halfwidth\fi
\vbox{%
\ifdim\myboxwidth=\hsize
\setbox\onelinebox=\hbox{%
\vbox{\hbox{%
$\Pi_{18,70}$ spans $L_{16.13}$%
}\hbox{%
$363222322363223222$%
}%
}%
\hfill\copy\matricesbox
}%
\ifdim\wd\onelinebox>\myboxwidth
\hbox to \myboxwidth{%
$\Pi_{18,70}$ spans $L_{16.13}$%
\hfil
$363222322363223222$%
}%
\box\matricesbox
\else
\hbox to \myboxwidth{%
\unhbox\onelinebox
}%
\fi
\else
\hbox to \myboxwidth{%
$\Pi_{18,70}$ spans $L_{16.13}$%
\hfil}%
\hbox to \myboxwidth{%
$363222322363223222$%
\hfil}%
\box\matricesbox
\fi
}%
\hfill\discretionary{}{}{}%
\setbox\matricesbox=\hbox{%
{$\left[\!\llap{\phantom{%
\begingroup \smaller\smaller\smaller% [inline block 192: 25 envs, 2756 chars -> data_tex | \begin{tabular}{@{}c@{}}% \phantom{0}\\\phantom{0}\\\phantom{0}...]
\endgroup%
}}\!\right]$}%
}%
\ifdim\wd\matricesbox>\halfwidth\myboxwidth=\hsize\else\myboxwidth=\halfwidth\fi
\vbox{%
\ifdim\myboxwidth=\hsize
\setbox\onelinebox=\hbox{%
\vbox{\hbox{%
$\Pi_{18,71}$ spans $L_{16.13}$%
}\hbox{%
$363222322363632222$%
}%
}%
\hfill\copy\matricesbox
}%
\ifdim\wd\onelinebox>\myboxwidth
\hbox to \myboxwidth{%
$\Pi_{18,71}$ spans $L_{16.13}$%
\hfil
$363222322363632222$%
}%
\box\matricesbox
\else
\hbox to \myboxwidth{%
\unhbox\onelinebox
}%
\fi
\else
\hbox to \myboxwidth{%
$\Pi_{18,71}$ spans $L_{16.13}$%
\hfil}%
\hbox to \myboxwidth{%
$363222322363632222$%
\hfil}%
\box\matricesbox
\fi
}%
\hfill\discretionary{}{}{}%
\setbox\matricesbox=\hbox{%
{$\left[\!\llap{\phantom{%
\begingroup \smaller\smaller\smaller% [inline block 193: 25 envs, 2754 chars -> data_tex | \begin{tabular}{@{}c@{}}% \phantom{0}\\\phantom{0}\\\phantom{0}...]
\endgroup%
}}\!\right]$}%
}%
\ifdim\wd\matricesbox>\halfwidth\myboxwidth=\hsize\else\myboxwidth=\halfwidth\fi
\vbox{%
\ifdim\myboxwidth=\hsize
\setbox\onelinebox=\hbox{%
\vbox{\hbox{%
$\Pi_{18,72}$ spans $L_{16.13}$%
}\hbox{%
$363222363223222322$%
}%
}%
\hfill\copy\matricesbox
}%
\ifdim\wd\onelinebox>\myboxwidth
\hbox to \myboxwidth{%
$\Pi_{18,72}$ spans $L_{16.13}$%
\hfil
$363222363223222322$%
}%
\box\matricesbox
\else
\hbox to \myboxwidth{%
\unhbox\onelinebox
}%
\fi
\else
\hbox to \myboxwidth{%
$\Pi_{18,72}$ spans $L_{16.13}$%
\hfil}%
\hbox to \myboxwidth{%
$363222363223222322$%
\hfil}%
\box\matricesbox
\fi
}%
\hfill\discretionary{}{}{}%
\setbox\matricesbox=\hbox{%
{$\left[\!\llap{\phantom{%
\begingroup \smaller\smaller\smaller% [inline block 194: 25 envs, 2752 chars -> data_tex | \begin{tabular}{@{}c@{}}% \phantom{0}\\\phantom{0}\\\phantom{0}...]
\endgroup%
}}\!\right]$}%
}%
\ifdim\wd\matricesbox>\halfwidth\myboxwidth=\hsize\else\myboxwidth=\halfwidth\fi
\vbox{%
\ifdim\myboxwidth=\hsize
\setbox\onelinebox=\hbox{%
\vbox{\hbox{%
$\Pi_{18,73}$ spans $L_{16.13}$%
}\hbox{%
$363222363223223222$%
}%
}%
\hfill\copy\matricesbox
}%
\ifdim\wd\onelinebox>\myboxwidth
\hbox to \myboxwidth{%
$\Pi_{18,73}$ spans $L_{16.13}$%
\hfil
$363222363223223222$%
}%
\box\matricesbox
\else
\hbox to \myboxwidth{%
\unhbox\onelinebox
}%
\fi
\else
\hbox to \myboxwidth{%
$\Pi_{18,73}$ spans $L_{16.13}$%
\hfil}%
\hbox to \myboxwidth{%
$363222363223223222$%
\hfil}%
\box\matricesbox
\fi
}%
\hfill\discretionary{}{}{}%
\setbox\matricesbox=\hbox{%
{$\left[\!\llap{\phantom{%
\begingroup \smaller\smaller\smaller% [inline block 195: 25 envs, 2754 chars -> data_tex | \begin{tabular}{@{}c@{}}% \phantom{0}\\\phantom{0}\\\phantom{0}...]
\endgroup%
}}\!\right]$}%
}%
\ifdim\wd\matricesbox>\halfwidth\myboxwidth=\hsize\else\myboxwidth=\halfwidth\fi
\vbox{%
\ifdim\myboxwidth=\hsize
\setbox\onelinebox=\hbox{%
\vbox{\hbox{%
$\Pi_{18,74}$ spans $L_{16.13}$%
}\hbox{%
$363222363223632222$%
}%
}%
\hfill\copy\matricesbox
}%
\ifdim\wd\onelinebox>\myboxwidth
\hbox to \myboxwidth{%
$\Pi_{18,74}$ spans $L_{16.13}$%
\hfil
$363222363223632222$%
}%
\box\matricesbox
\else
\hbox to \myboxwidth{%
\unhbox\onelinebox
}%
\fi
\else
\hbox to \myboxwidth{%
$\Pi_{18,74}$ spans $L_{16.13}$%
\hfil}%
\hbox to \myboxwidth{%
$363222363223632222$%
\hfil}%
\box\matricesbox
\fi
}%
\hfill\discretionary{}{}{}%
\setbox\matricesbox=\hbox{%
{$\left[\!\llap{\phantom{%
\begingroup \smaller\smaller\smaller% [inline block 196: 25 envs, 2757 chars -> data_tex | \begin{tabular}{@{}c@{}}% \phantom{0}\\\phantom{0}\\\phantom{0}...]
\endgroup%
}}\!\right]$}%
}%
\ifdim\wd\matricesbox>\halfwidth\myboxwidth=\hsize\else\myboxwidth=\halfwidth\fi
\vbox{%
\ifdim\myboxwidth=\hsize
\setbox\onelinebox=\hbox{%
\vbox{\hbox{%
$\Pi_{18,75}$ spans $L_{16.13}$%
}\hbox{%
$363222363632222322$%
}%
}%
\hfill\copy\matricesbox
}%
\ifdim\wd\onelinebox>\myboxwidth
\hbox to \myboxwidth{%
$\Pi_{18,75}$ spans $L_{16.13}$%
\hfil
$363222363632222322$%
}%
\box\matricesbox
\else
\hbox to \myboxwidth{%
\unhbox\onelinebox
}%
\fi
\else
\hbox to \myboxwidth{%
$\Pi_{18,75}$ spans $L_{16.13}$%
\hfil}%
\hbox to \myboxwidth{%
$363222363632222322$%
\hfil}%
\box\matricesbox
\fi
}%
\hfill\discretionary{}{}{}%
\setbox\matricesbox=\hbox{%
{$\left[\!\llap{\phantom{%
\begingroup \smaller\smaller\smaller% [inline block 197: 25 envs, 2755 chars -> data_tex | \begin{tabular}{@{}c@{}}% \phantom{0}\\\phantom{0}\\\phantom{0}...]
\endgroup%
}}\!\right]$}%
}%
\ifdim\wd\matricesbox>\halfwidth\myboxwidth=\hsize\else\myboxwidth=\halfwidth\fi
\vbox{%
\ifdim\myboxwidth=\hsize
\setbox\onelinebox=\hbox{%
\vbox{\hbox{%
$\Pi_{18,76}$ spans $L_{16.13}$%
}\hbox{%
$363222363632223222$%
}%
}%
\hfill\copy\matricesbox
}%
\ifdim\wd\onelinebox>\myboxwidth
\hbox to \myboxwidth{%
$\Pi_{18,76}$ spans $L_{16.13}$%
\hfil
$363222363632223222$%
}%
\box\matricesbox
\else
\hbox to \myboxwidth{%
\unhbox\onelinebox
}%
\fi
\else
\hbox to \myboxwidth{%
$\Pi_{18,76}$ spans $L_{16.13}$%
\hfil}%
\hbox to \myboxwidth{%
$363222363632223222$%
\hfil}%
\box\matricesbox
\fi
}%
\hfill\discretionary{}{}{}%
\setbox\matricesbox=\hbox{%
{$\left[\!\llap{\phantom{%
\begingroup \smaller\smaller\smaller% [inline block 198: 25 envs, 2757 chars -> data_tex | \begin{tabular}{@{}c@{}}% \phantom{0}\\\phantom{0}\\\phantom{0}...]
\endgroup%
}}\!\right]$}%
}%
\ifdim\wd\matricesbox>\halfwidth\myboxwidth=\hsize\else\myboxwidth=\halfwidth\fi
\vbox{%
\ifdim\myboxwidth=\hsize
\setbox\onelinebox=\hbox{%
\vbox{\hbox{%
$\Pi_{18,77}$ spans $L_{16.13}$%
}\hbox{%
$363222363632232222$%
}%
}%
\hfill\copy\matricesbox
}%
\ifdim\wd\onelinebox>\myboxwidth
\hbox to \myboxwidth{%
$\Pi_{18,77}$ spans $L_{16.13}$%
\hfil
$363222363632232222$%
}%
\box\matricesbox
\else
\hbox to \myboxwidth{%
\unhbox\onelinebox
}%
\fi
\else
\hbox to \myboxwidth{%
$\Pi_{18,77}$ spans $L_{16.13}$%
\hfil}%
\hbox to \myboxwidth{%
$363222363632232222$%
\hfil}%
\box\matricesbox
\fi
}%
\hfill\discretionary{}{}{}%
\setbox\matricesbox=\hbox{%
{$\left[\!\llap{\phantom{%
\begingroup \smaller\smaller\smaller% [inline block 199: 25 envs, 2753 chars -> data_tex | \begin{tabular}{@{}c@{}}% \phantom{0}\\\phantom{0}\\\phantom{0}...]
\endgroup%
}}\!\right]$}%
}%
\ifdim\wd\matricesbox>\halfwidth\myboxwidth=\hsize\else\myboxwidth=\halfwidth\fi
\vbox{%
\ifdim\myboxwidth=\hsize
\setbox\onelinebox=\hbox{%
\vbox{\hbox{%
$\Pi_{18,78}$ spans $L_{16.13}$%
}\hbox{%
$363223222223223636$%
}%
}%
\hfill\copy\matricesbox
}%
\ifdim\wd\onelinebox>\myboxwidth
\hbox to \myboxwidth{%
$\Pi_{18,78}$ spans $L_{16.13}$%
\hfil
$363223222223223636$%
}%
\box\matricesbox
\else
\hbox to \myboxwidth{%
\unhbox\onelinebox
}%
\fi
\else
\hbox to \myboxwidth{%
$\Pi_{18,78}$ spans $L_{16.13}$%
\hfil}%
\hbox to \myboxwidth{%
$363223222223223636$%
\hfil}%
\box\matricesbox
\fi
}%
\hfill\discretionary{}{}{}%
\setbox\matricesbox=\hbox{%
{$\left[\!\llap{\phantom{%
\begingroup \smaller\smaller\smaller% [inline block 200: 25 envs, 2752 chars -> data_tex | \begin{tabular}{@{}c@{}}% \phantom{0}\\\phantom{0}\\\phantom{0}...]
\endgroup%
}}\!\right]$}%
}%
\ifdim\wd\matricesbox>\halfwidth\myboxwidth=\hsize\else\myboxwidth=\halfwidth\fi
\vbox{%
\ifdim\myboxwidth=\hsize
\setbox\onelinebox=\hbox{%
\vbox{\hbox{%
$\Pi_{18,79}$ spans $L_{16.13}$%
}\hbox{%
$363223222223632236$%
}%
}%
\hfill\copy\matricesbox
}%
\ifdim\wd\onelinebox>\myboxwidth
\hbox to \myboxwidth{%
$\Pi_{18,79}$ spans $L_{16.13}$%
\hfil
$363223222223632236$%
}%
\box\matricesbox
\else
\hbox to \myboxwidth{%
\unhbox\onelinebox
}%
\fi
\else
\hbox to \myboxwidth{%
$\Pi_{18,79}$ spans $L_{16.13}$%
\hfil}%
\hbox to \myboxwidth{%
$363223222223632236$%
\hfil}%
\box\matricesbox
\fi
}%
\hfill\discretionary{}{}{}%
\setbox\matricesbox=\hbox{%
{$\left[\!\llap{\phantom{%
\begingroup \smaller\smaller\smaller% [inline block 201: 25 envs, 2756 chars -> data_tex | \begin{tabular}{@{}c@{}}% \phantom{0}\\\phantom{0}\\\phantom{0}...]
\endgroup%
}}\!\right]$}%
}%
\ifdim\wd\matricesbox>\halfwidth\myboxwidth=\hsize\else\myboxwidth=\halfwidth\fi
\vbox{%
\ifdim\myboxwidth=\hsize
\setbox\onelinebox=\hbox{%
\vbox{\hbox{%
$\Pi_{18,80}$ spans $L_{16.13}$%
}\hbox{%
$363223222223636322$%
}%
}%
\hfill\copy\matricesbox
}%
\ifdim\wd\onelinebox>\myboxwidth
\hbox to \myboxwidth{%
$\Pi_{18,80}$ spans $L_{16.13}$%
\hfil
$363223222223636322$%
}%
\box\matricesbox
\else
\hbox to \myboxwidth{%
\unhbox\onelinebox
}%
\fi
\else
\hbox to \myboxwidth{%
$\Pi_{18,80}$ spans $L_{16.13}$%
\hfil}%
\hbox to \myboxwidth{%
$363223222223636322$%
\hfil}%
\box\matricesbox
\fi
}%
\hfill\discretionary{}{}{}%
\setbox\matricesbox=\hbox{%
{$\left[\!\llap{\phantom{%
\begingroup \smaller\smaller\smaller% [inline block 202: 25 envs, 2750 chars -> data_tex | \begin{tabular}{@{}c@{}}% \phantom{0}\\\phantom{0}\\\phantom{0}...]
\endgroup%
}}\!\right]$}%
}%
\ifdim\wd\matricesbox>\halfwidth\myboxwidth=\hsize\else\myboxwidth=\halfwidth\fi
\vbox{%
\ifdim\myboxwidth=\hsize
\setbox\onelinebox=\hbox{%
\vbox{\hbox{%
$\Pi_{18,81}$ spans $L_{16.13}$%
}\hbox{%
$363223222232223636$%
}%
}%
\hfill\copy\matricesbox
}%
\ifdim\wd\onelinebox>\myboxwidth
\hbox to \myboxwidth{%
$\Pi_{18,81}$ spans $L_{16.13}$%
\hfil
$363223222232223636$%
}%
\box\matricesbox
\else
\hbox to \myboxwidth{%
\unhbox\onelinebox
}%
\fi
\else
\hbox to \myboxwidth{%
$\Pi_{18,81}$ spans $L_{16.13}$%
\hfil}%
\hbox to \myboxwidth{%
$363223222232223636$%
\hfil}%
\box\matricesbox
\fi
}%
\hfill\discretionary{}{}{}%
\setbox\matricesbox=\hbox{%
{$\left[\!\llap{\phantom{%
\begingroup \smaller\smaller\smaller% [inline block 203: 25 envs, 2749 chars -> data_tex | \begin{tabular}{@{}c@{}}% \phantom{0}\\\phantom{0}\\\phantom{0}...]
\endgroup%
}}\!\right]$}%
}%
\ifdim\wd\matricesbox>\halfwidth\myboxwidth=\hsize\else\myboxwidth=\halfwidth\fi
\vbox{%
\ifdim\myboxwidth=\hsize
\setbox\onelinebox=\hbox{%
\vbox{\hbox{%
$\Pi_{18,82}$ spans $L_{16.13}$%
}\hbox{%
$363223222232232236$%
}%
}%
\hfill\copy\matricesbox
}%
\ifdim\wd\onelinebox>\myboxwidth
\hbox to \myboxwidth{%
$\Pi_{18,82}$ spans $L_{16.13}$%
\hfil
$363223222232232236$%
}%
\box\matricesbox
\else
\hbox to \myboxwidth{%
\unhbox\onelinebox
}%
\fi
\else
\hbox to \myboxwidth{%
$\Pi_{18,82}$ spans $L_{16.13}$%
\hfil}%
\hbox to \myboxwidth{%
$363223222232232236$%
\hfil}%
\box\matricesbox
\fi
}%
\hfill\discretionary{}{}{}%
\setbox\matricesbox=\hbox{%
{$\left[\!\llap{\phantom{%
\begingroup \smaller\smaller\smaller% [inline block 204: 25 envs, 2753 chars -> data_tex | \begin{tabular}{@{}c@{}}% \phantom{0}\\\phantom{0}\\\phantom{0}...]
\endgroup%
}}\!\right]$}%
}%
\ifdim\wd\matricesbox>\halfwidth\myboxwidth=\hsize\else\myboxwidth=\halfwidth\fi
\vbox{%
\ifdim\myboxwidth=\hsize
\setbox\onelinebox=\hbox{%
\vbox{\hbox{%
$\Pi_{18,83}$ spans $L_{16.13}$%
}\hbox{%
$363223222322223636$%
}%
}%
\hfill\copy\matricesbox
}%
\ifdim\wd\onelinebox>\myboxwidth
\hbox to \myboxwidth{%
$\Pi_{18,83}$ spans $L_{16.13}$%
\hfil
$363223222322223636$%
}%
\box\matricesbox
\else
\hbox to \myboxwidth{%
\unhbox\onelinebox
}%
\fi
\else
\hbox to \myboxwidth{%
$\Pi_{18,83}$ spans $L_{16.13}$%
\hfil}%
\hbox to \myboxwidth{%
$363223222322223636$%
\hfil}%
\box\matricesbox
\fi
}%
\hfill\discretionary{}{}{}%
\setbox\matricesbox=\hbox{%
{$\left[\!\llap{\phantom{%
\begingroup \smaller\smaller\smaller% [inline block 205: 25 envs, 2758 chars -> data_tex | \begin{tabular}{@{}c@{}}% \phantom{0}\\\phantom{0}\\\phantom{0}...]
\endgroup%
}}\!\right]$}%
}%
\ifdim\wd\matricesbox>\halfwidth\myboxwidth=\hsize\else\myboxwidth=\halfwidth\fi
\vbox{%
\ifdim\myboxwidth=\hsize
\setbox\onelinebox=\hbox{%
\vbox{\hbox{%
$\Pi_{18,84}$ spans $L_{16.13}$%
}\hbox{%
$363223222322322322$%
}%
}%
\hfill\copy\matricesbox
}%
\ifdim\wd\onelinebox>\myboxwidth
\hbox to \myboxwidth{%
$\Pi_{18,84}$ spans $L_{16.13}$%
\hfil
$363223222322322322$%
}%
\box\matricesbox
\else
\hbox to \myboxwidth{%
\unhbox\onelinebox
}%
\fi
\else
\hbox to \myboxwidth{%
$\Pi_{18,84}$ spans $L_{16.13}$%
\hfil}%
\hbox to \myboxwidth{%
$363223222322322322$%
\hfil}%
\box\matricesbox
\fi
}%
\hfill\discretionary{}{}{}%
\setbox\matricesbox=\hbox{%
{$\left[\!\llap{\phantom{%
\begingroup \smaller\smaller\smaller% [inline block 206: 25 envs, 2755 chars -> data_tex | \begin{tabular}{@{}c@{}}% \phantom{0}\\\phantom{0}\\\phantom{0}...]
\endgroup%
}}\!\right]$}%
}%
\ifdim\wd\matricesbox>\halfwidth\myboxwidth=\hsize\else\myboxwidth=\halfwidth\fi
\vbox{%
\ifdim\myboxwidth=\hsize
\setbox\onelinebox=\hbox{%
\vbox{\hbox{%
$\Pi_{18,85}$ spans $L_{16.13}$%
}\hbox{%
$363223222363632222$%
}%
}%
\hfill\copy\matricesbox
}%
\ifdim\wd\onelinebox>\myboxwidth
\hbox to \myboxwidth{%
$\Pi_{18,85}$ spans $L_{16.13}$%
\hfil
$363223222363632222$%
}%
\box\matricesbox
\else
\hbox to \myboxwidth{%
\unhbox\onelinebox
}%
\fi
\else
\hbox to \myboxwidth{%
$\Pi_{18,85}$ spans $L_{16.13}$%
\hfil}%
\hbox to \myboxwidth{%
$363223222363632222$%
\hfil}%
\box\matricesbox
\fi
}%
\hfill\discretionary{}{}{}%
\setbox\matricesbox=\hbox{%
{$\left[\!\llap{\phantom{%
\begingroup \smaller\smaller\smaller% [inline block 207: 25 envs, 2750 chars -> data_tex | \begin{tabular}{@{}c@{}}% \phantom{0}\\\phantom{0}\\\phantom{0}...]
\endgroup%
}}\!\right]$}%
}%
\ifdim\wd\matricesbox>\halfwidth\myboxwidth=\hsize\else\myboxwidth=\halfwidth\fi
\vbox{%
\ifdim\myboxwidth=\hsize
\setbox\onelinebox=\hbox{%
\vbox{\hbox{%
$\Pi_{18,86}$ spans $L_{16.13}$%
}\hbox{%
$363223223222223636$%
}%
}%
\hfill\copy\matricesbox
}%
\ifdim\wd\onelinebox>\myboxwidth
\hbox to \myboxwidth{%
$\Pi_{18,86}$ spans $L_{16.13}$%
\hfil
$363223223222223636$%
}%
\box\matricesbox
\else
\hbox to \myboxwidth{%
\unhbox\onelinebox
}%
\fi
\else
\hbox to \myboxwidth{%
$\Pi_{18,86}$ spans $L_{16.13}$%
\hfil}%
\hbox to \myboxwidth{%
$363223223222223636$%
\hfil}%
\box\matricesbox
\fi
}%
\hfill\discretionary{}{}{}%
\setbox\matricesbox=\hbox{%
{$\left[\!\llap{\phantom{%
\begingroup \smaller\smaller\smaller% [inline block 208: 25 envs, 2756 chars -> data_tex | \begin{tabular}{@{}c@{}}% \phantom{0}\\\phantom{0}\\\phantom{0}...]
\endgroup%
}}\!\right]$}%
}%
\ifdim\wd\matricesbox>\halfwidth\myboxwidth=\hsize\else\myboxwidth=\halfwidth\fi
\vbox{%
\ifdim\myboxwidth=\hsize
\setbox\onelinebox=\hbox{%
\vbox{\hbox{%
$\Pi_{18,87}$ spans $L_{16.13}$%
}\hbox{%
$363223223222322322$%
}%
}%
\hfill\copy\matricesbox
}%
\ifdim\wd\onelinebox>\myboxwidth
\hbox to \myboxwidth{%
$\Pi_{18,87}$ spans $L_{16.13}$%
\hfil
$363223223222322322$%
}%
\box\matricesbox
\else
\hbox to \myboxwidth{%
\unhbox\onelinebox
}%
\fi
\else
\hbox to \myboxwidth{%
$\Pi_{18,87}$ spans $L_{16.13}$%
\hfil}%
\hbox to \myboxwidth{%
$363223223222322322$%
\hfil}%
\box\matricesbox
\fi
}%
\hfill\discretionary{}{}{}%
\setbox\matricesbox=\hbox{%
{$\left[\!\llap{\phantom{%
\begingroup \smaller\smaller\smaller% [inline block 209: 25 envs, 2753 chars -> data_tex | \begin{tabular}{@{}c@{}}% \phantom{0}\\\phantom{0}\\\phantom{0}...]
\endgroup%
}}\!\right]$}%
}%
\ifdim\wd\matricesbox>\halfwidth\myboxwidth=\hsize\else\myboxwidth=\halfwidth\fi
\vbox{%
\ifdim\myboxwidth=\hsize
\setbox\onelinebox=\hbox{%
\vbox{\hbox{%
$\Pi_{18,88}$ spans $L_{16.13}$%
}\hbox{%
$363223223223222322$%
}%
}%
\hfill\copy\matricesbox
}%
\ifdim\wd\onelinebox>\myboxwidth
\hbox to \myboxwidth{%
$\Pi_{18,88}$ spans $L_{16.13}$%
\hfil
$363223223223222322$%
}%
\box\matricesbox
\else
\hbox to \myboxwidth{%
\unhbox\onelinebox
}%
\fi
\else
\hbox to \myboxwidth{%
$\Pi_{18,88}$ spans $L_{16.13}$%
\hfil}%
\hbox to \myboxwidth{%
$363223223223222322$%
\hfil}%
\box\matricesbox
\fi
}%
\hfill\discretionary{}{}{}%
\setbox\matricesbox=\hbox{%
{$\left[\!\llap{\phantom{%
\begingroup \smaller\smaller\smaller% [inline block 210: 25 envs, 2751 chars -> data_tex | \begin{tabular}{@{}c@{}}% \phantom{0}\\\phantom{0}\\\phantom{0}...]
\endgroup%
}}\!\right]$}%
}%
\ifdim\wd\matricesbox>\halfwidth\myboxwidth=\hsize\else\myboxwidth=\halfwidth\fi
\vbox{%
\ifdim\myboxwidth=\hsize
\setbox\onelinebox=\hbox{%
\vbox{\hbox{%
$\Pi_{18,89}$ spans $L_{16.13}$%
}\hbox{%
$363223223223223222$%
}%
}%
\hfill\copy\matricesbox
}%
\ifdim\wd\onelinebox>\myboxwidth
\hbox to \myboxwidth{%
$\Pi_{18,89}$ spans $L_{16.13}$%
\hfil
$363223223223223222$%
}%
\box\matricesbox
\else
\hbox to \myboxwidth{%
\unhbox\onelinebox
}%
\fi
\else
\hbox to \myboxwidth{%
$\Pi_{18,89}$ spans $L_{16.13}$%
\hfil}%
\hbox to \myboxwidth{%
$363223223223223222$%
\hfil}%
\box\matricesbox
\fi
}%
\hfill\discretionary{}{}{}%
\setbox\matricesbox=\hbox{%
{$\left[\!\llap{\phantom{%
\begingroup \smaller\smaller\smaller% [inline block 211: 25 envs, 2756 chars -> data_tex | \begin{tabular}{@{}c@{}}% \phantom{0}\\\phantom{0}\\\phantom{0}...]
\endgroup%
}}\!\right]$}%
}%
\ifdim\wd\matricesbox>\halfwidth\myboxwidth=\hsize\else\myboxwidth=\halfwidth\fi
\vbox{%
\ifdim\myboxwidth=\hsize
\setbox\onelinebox=\hbox{%
\vbox{\hbox{%
$\Pi_{18,90}$ spans $L_{16.13}$%
}\hbox{%
$363223223632222322$%
}%
}%
\hfill\copy\matricesbox
}%
\ifdim\wd\onelinebox>\myboxwidth
\hbox to \myboxwidth{%
$\Pi_{18,90}$ spans $L_{16.13}$%
\hfil
$363223223632222322$%
}%
\box\matricesbox
\else
\hbox to \myboxwidth{%
\unhbox\onelinebox
}%
\fi
\else
\hbox to \myboxwidth{%
$\Pi_{18,90}$ spans $L_{16.13}$%
\hfil}%
\hbox to \myboxwidth{%
$363223223632222322$%
\hfil}%
\box\matricesbox
\fi
}%
\hfill\discretionary{}{}{}%
\setbox\matricesbox=\hbox{%
{$\left[\!\llap{\phantom{%
\begingroup \smaller\smaller\smaller% [inline block 212: 25 envs, 2758 chars -> data_tex | \begin{tabular}{@{}c@{}}% \phantom{0}\\\phantom{0}\\\phantom{0}...]
\endgroup%
}}\!\right]$}%
}%
\ifdim\wd\matricesbox>\halfwidth\myboxwidth=\hsize\else\myboxwidth=\halfwidth\fi
\vbox{%
\ifdim\myboxwidth=\hsize
\setbox\onelinebox=\hbox{%
\vbox{\hbox{%
$\Pi_{18,91}$ spans $L_{16.13}$%
}\hbox{%
$363223632222322322$%
}%
}%
\hfill\copy\matricesbox
}%
\ifdim\wd\onelinebox>\myboxwidth
\hbox to \myboxwidth{%
$\Pi_{18,91}$ spans $L_{16.13}$%
\hfil
$363223632222322322$%
}%
\box\matricesbox
\else
\hbox to \myboxwidth{%
\unhbox\onelinebox
}%
\fi
\else
\hbox to \myboxwidth{%
$\Pi_{18,91}$ spans $L_{16.13}$%
\hfil}%
\hbox to \myboxwidth{%
$363223632222322322$%
\hfil}%
\box\matricesbox
\fi
}%
\hfill\discretionary{}{}{}%
\setbox\matricesbox=\hbox{%
{$\left[\!\llap{\phantom{%
\begingroup \smaller\smaller\smaller% [inline block 213: 25 envs, 2755 chars -> data_tex | \begin{tabular}{@{}c@{}}% \phantom{0}\\\phantom{0}\\\phantom{0}...]
\endgroup%
}}\!\right]$}%
}%
\ifdim\wd\matricesbox>\halfwidth\myboxwidth=\hsize\else\myboxwidth=\halfwidth\fi
\vbox{%
\ifdim\myboxwidth=\hsize
\setbox\onelinebox=\hbox{%
\vbox{\hbox{%
$\Pi_{18,92}$ spans $L_{16.13}$%
}\hbox{%
$363223632223222322$%
}%
}%
\hfill\copy\matricesbox
}%
\ifdim\wd\onelinebox>\myboxwidth
\hbox to \myboxwidth{%
$\Pi_{18,92}$ spans $L_{16.13}$%
\hfil
$363223632223222322$%
}%
\box\matricesbox
\else
\hbox to \myboxwidth{%
\unhbox\onelinebox
}%
\fi
\else
\hbox to \myboxwidth{%
$\Pi_{18,92}$ spans $L_{16.13}$%
\hfil}%
\hbox to \myboxwidth{%
$363223632223222322$%
\hfil}%
\box\matricesbox
\fi
}%
\hfill\discretionary{}{}{}%
\setbox\matricesbox=\hbox{%
{$\left[\!\llap{\phantom{%
\begingroup \smaller\smaller\smaller% [inline block 214: 25 envs, 2752 chars -> data_tex | \begin{tabular}{@{}c@{}}% \phantom{0}\\\phantom{0}\\\phantom{0}...]
\endgroup%
}}\!\right]$}%
}%
\ifdim\wd\matricesbox>\halfwidth\myboxwidth=\hsize\else\myboxwidth=\halfwidth\fi
\vbox{%
\ifdim\myboxwidth=\hsize
\setbox\onelinebox=\hbox{%
\vbox{\hbox{%
$\Pi_{18,93}$ spans $L_{16.13}$%
}\hbox{%
$363632222232223636$%
}%
}%
\hfill\copy\matricesbox
}%
\ifdim\wd\onelinebox>\myboxwidth
\hbox to \myboxwidth{%
$\Pi_{18,93}$ spans $L_{16.13}$%
\hfil
$363632222232223636$%
}%
\box\matricesbox
\else
\hbox to \myboxwidth{%
\unhbox\onelinebox
}%
\fi
\else
\hbox to \myboxwidth{%
$\Pi_{18,93}$ spans $L_{16.13}$%
\hfil}%
\hbox to \myboxwidth{%
$363632222232223636$%
\hfil}%
\box\matricesbox
\fi
}%
\hfill\discretionary{}{}{}%
\setbox\matricesbox=\hbox{%
{$\left[\!\llap{\phantom{%
\begingroup \smaller\smaller\smaller% [inline block 215: 25 envs, 2761 chars -> data_tex | \begin{tabular}{@{}c@{}}% \phantom{0}\\\phantom{0}\\\phantom{0}...]
\endgroup%
}}\!\right]$}%
}%
\ifdim\wd\matricesbox>\halfwidth\myboxwidth=\hsize\else\myboxwidth=\halfwidth\fi
\vbox{%
\ifdim\myboxwidth=\hsize
\setbox\onelinebox=\hbox{%
\vbox{\hbox{%
$\Pi_{18,94}$ spans $L_{16.13}$%
}\hbox{%
$363632222322322322$%
}%
}%
\hfill\copy\matricesbox
}%
\ifdim\wd\onelinebox>\myboxwidth
\hbox to \myboxwidth{%
$\Pi_{18,94}$ spans $L_{16.13}$%
\hfil
$363632222322322322$%
}%
\box\matricesbox
\else
\hbox to \myboxwidth{%
\unhbox\onelinebox
}%
\fi
\else
\hbox to \myboxwidth{%
$\Pi_{18,94}$ spans $L_{16.13}$%
\hfil}%
\hbox to \myboxwidth{%
$363632222322322322$%
\hfil}%
\box\matricesbox
\fi
}%
\hfill\discretionary{}{}{}%
\setbox\matricesbox=\hbox{%
{$\left[\!\llap{\phantom{%
\begingroup \smaller\smaller\smaller% [inline block 216: 25 envs, 2759 chars -> data_tex | \begin{tabular}{@{}c@{}}% \phantom{0}\\\phantom{0}\\\phantom{0}...]
\endgroup%
}}\!\right]$}%
}%
\ifdim\wd\matricesbox>\halfwidth\myboxwidth=\hsize\else\myboxwidth=\halfwidth\fi
\vbox{%
\ifdim\myboxwidth=\hsize
\setbox\onelinebox=\hbox{%
\vbox{\hbox{%
$\Pi_{18,95}$ spans $L_{16.13}$%
}\hbox{%
$363632223222322322$%
}%
}%
\hfill\copy\matricesbox
}%
\ifdim\wd\onelinebox>\myboxwidth
\hbox to \myboxwidth{%
$\Pi_{18,95}$ spans $L_{16.13}$%
\hfil
$363632223222322322$%
}%
\box\matricesbox
\else
\hbox to \myboxwidth{%
\unhbox\onelinebox
}%
\fi
\else
\hbox to \myboxwidth{%
$\Pi_{18,95}$ spans $L_{16.13}$%
\hfil}%
\hbox to \myboxwidth{%
$363632223222322322$%
\hfil}%
\box\matricesbox
\fi
}%
\hfill\discretionary{}{}{}%
\setbox\matricesbox=\hbox{%
{$\left[\!\llap{\phantom{%
\begingroup \smaller\smaller\smaller% [inline block 217: 25 envs, 2756 chars -> data_tex | \begin{tabular}{@{}c@{}}% \phantom{0}\\\phantom{0}\\\phantom{0}...]
\endgroup%
}}\!\right]$}%
}%
\ifdim\wd\matricesbox>\halfwidth\myboxwidth=\hsize\else\myboxwidth=\halfwidth\fi
\vbox{%
\ifdim\myboxwidth=\hsize
\setbox\onelinebox=\hbox{%
\vbox{\hbox{%
$\Pi_{18,96}$ spans $L_{16.13}$%
}\hbox{%
$363632223223222322$%
}%
}%
\hfill\copy\matricesbox
}%
\ifdim\wd\onelinebox>\myboxwidth
\hbox to \myboxwidth{%
$\Pi_{18,96}$ spans $L_{16.13}$%
\hfil
$363632223223222322$%
}%
\box\matricesbox
\else
\hbox to \myboxwidth{%
\unhbox\onelinebox
}%
\fi
\else
\hbox to \myboxwidth{%
$\Pi_{18,96}$ spans $L_{16.13}$%
\hfil}%
\hbox to \myboxwidth{%
$363632223223222322$%
\hfil}%
\box\matricesbox
\fi
}%
\hfill\discretionary{}{}{}%
\setbox\matricesbox=\hbox{%
{$\left[\!\llap{\phantom{%
\begingroup \smaller\smaller\smaller% [inline block 218: 25 envs, 2761 chars -> data_tex | \begin{tabular}{@{}c@{}}% \phantom{0}\\\phantom{0}\\\phantom{0}...]
\endgroup%
}}\!\right]$}%
}%
\ifdim\wd\matricesbox>\halfwidth\myboxwidth=\hsize\else\myboxwidth=\halfwidth\fi
\vbox{%
\ifdim\myboxwidth=\hsize
\setbox\onelinebox=\hbox{%
\vbox{\hbox{%
$\Pi_{18,97}$ spans $L_{16.13}$%
}\hbox{%
$363632232222322322$%
}%
}%
\hfill\copy\matricesbox
}%
\ifdim\wd\onelinebox>\myboxwidth
\hbox to \myboxwidth{%
$\Pi_{18,97}$ spans $L_{16.13}$%
\hfil
$363632232222322322$%
}%
\box\matricesbox
\else
\hbox to \myboxwidth{%
\unhbox\onelinebox
}%
\fi
\else
\hbox to \myboxwidth{%
$\Pi_{18,97}$ spans $L_{16.13}$%
\hfil}%
\hbox to \myboxwidth{%
$363632232222322322$%
\hfil}%
\box\matricesbox
\fi
}%
\hfill\discretionary{}{}{}%
\setbox\matricesbox=\hbox{%
{$\left[\!\llap{\phantom{%
\begingroup \smaller\smaller\smaller% [inline block 219: 25 envs, 2769 chars -> data_tex | \begin{tabular}{@{}c@{}}% \phantom{0}\\\phantom{0}\\\phantom{0}...]
\endgroup%
}}\!\right]$}%
}%
\ifdim\wd\matricesbox>\halfwidth\myboxwidth=\hsize\else\myboxwidth=\halfwidth\fi
\vbox{%
\ifdim\myboxwidth=\hsize
\setbox\onelinebox=\hbox{%
\vbox{\hbox{%
$\Pi_{18,98}$ spans $L_{142.20}$%
}\hbox{%
$\infty422\infty4224\infty4224\infty422$%
}%
}%
\hfill\copy\matricesbox
}%
\ifdim\wd\onelinebox>\myboxwidth
\hbox to \myboxwidth{%
$\Pi_{18,98}$ spans $L_{142.20}$%
\hfil
$\infty422\infty4224\infty4224\infty422$%
}%
\box\matricesbox
\else
\hbox to \myboxwidth{%
\unhbox\onelinebox
}%
\fi
\else
\hbox to \myboxwidth{%
$\Pi_{18,98}$ spans $L_{142.20}$%
\hfil}%
\hbox to \myboxwidth{%
$\infty422\infty4224\infty4224\infty422$%
\hfil}%
\box\matricesbox
\fi
}%
\hfill\discretionary{}{}{}%

\vskip2pt\hrule\vskip2pt

\leavevmode\setbox\matricesbox=\hbox{%
{$\left[\!\llap{\phantom{%
\begingroup \smaller\smaller\smaller\begin{tabular}{@{}c@{}}%
\phantom{0}\\\phantom{0}\\\phantom{0}
\end{tabular}\endgroup%
}}\right.$}%
\begingroup \smaller\smaller\smaller\begin{tabular}{@{}c@{}}%
-1\\\phantom{0}\\\phantom{0}
\end{tabular}\endgroup%
\kern3pt%
\begingroup \smaller\smaller\smaller\begin{tabular}{@{}c@{}}%
\phantom{0}\\15/2\\\phantom{0}
\end{tabular}\endgroup%
\kern3pt%
\begingroup \smaller\smaller\smaller\begin{tabular}{@{}c@{}}%
\phantom{0}\\\phantom{0}\\45/2
\end{tabular}\endgroup%
{$\left.\llap{\phantom{%
\begingroup \smaller\smaller\smaller\begin{tabular}{@{}c@{}}%
\phantom{0}\\\phantom{0}\\\phantom{0}
\end{tabular}\endgroup%
}}\!\right]$}%
{$\left[\!\llap{\phantom{%
\begingroup \smaller\smaller\smaller\begin{tabular}{@{}c@{}}%
0\\0\\0
\end{tabular}\endgroup%
}}\right.$}%
\begingroup \smaller\smaller\smaller\begin{tabular}{@{}c@{}}%
5\\-2\\0
\end{tabular}\endgroup%
\kern3pt%
\begingroup \smaller\smaller\smaller\begin{tabular}{@{}c@{}}%
9\\-3\\1
\end{tabular}\endgroup%
\kern3pt%
\begingroup \smaller\smaller\smaller\begin{tabular}{@{}c@{}}%
5\\-1\\1
\end{tabular}\endgroup%
\kern3pt%
\begingroup \smaller\smaller\smaller\begin{tabular}{@{}c@{}}%
90\\-3\\19
\end{tabular}\endgroup%
\kern3pt%
\begingroup \smaller\smaller\smaller\begin{tabular}{@{}c@{}}%
90\\3\\19
\end{tabular}\endgroup%
\kern3pt%
\begingroup \smaller\smaller\smaller\begin{tabular}{@{}c@{}}%
30\\4\\6
\end{tabular}\endgroup%
\kern3pt%
\begingroup \smaller\smaller\smaller\begin{tabular}{@{}c@{}}%
30\\7\\5
\end{tabular}\endgroup%
\kern3pt%
\begingroup \smaller\smaller\smaller\begin{tabular}{@{}c@{}}%
90\\27\\11
\end{tabular}\endgroup%
\kern3pt%
\begingroup \smaller\smaller\smaller\begin{tabular}{@{}c@{}}%
90\\30\\8
\end{tabular}\endgroup%
\kern3pt%
\begingroup \smaller\smaller\smaller\begin{tabular}{@{}c@{}}%
30\\11\\1
\end{tabular}\endgroup%
{$\left.\llap{\phantom{%
\begingroup \smaller\smaller\smaller\begin{tabular}{@{}c@{}}%
0\\0\\0
\end{tabular}\endgroup%
}}\!\right]$}%
}%
\ifdim\wd\matricesbox>\halfwidth\myboxwidth=\hsize\else\myboxwidth=\halfwidth\fi
\vbox{%
\ifdim\myboxwidth=\hsize
\setbox\onelinebox=\hbox{%
\vbox{\hbox{%
$\Pi_{19,1}$ spans $L_{16.13}$%
}\hbox{%
$22|222363636\slashthree6363632\rtimes D_{2}$%
}%
}%
\hfill\copy\matricesbox
}%
\ifdim\wd\onelinebox>\myboxwidth
\hbox to \myboxwidth{%
$\Pi_{19,1}$ spans $L_{16.13}$%
\hfil
$22|222363636\slashthree6363632\rtimes D_{2}$%
}%
\box\matricesbox
\else
\hbox to \myboxwidth{%
\unhbox\onelinebox
}%
\fi
\else
\hbox to \myboxwidth{%
$\Pi_{19,1}$ spans $L_{16.13}$%
\hfil}%
\hbox to \myboxwidth{%
$22|222363636\slashthree6363632\rtimes D_{2}$%
\hfil}%
\box\matricesbox
\fi
}%
\hfill\discretionary{}{}{}%
\setbox\matricesbox=\hbox{%
{$\left[\!\llap{\phantom{%
\begingroup \smaller\smaller\smaller\begin{tabular}{@{}c@{}}%
\phantom{0}\\\phantom{0}\\\phantom{0}
\end{tabular}\endgroup%
}}\right.$}%
\begingroup \smaller\smaller\smaller\begin{tabular}{@{}c@{}}%
-1\\\phantom{0}\\\phantom{0}
\end{tabular}\endgroup%
\kern3pt%
\begingroup \smaller\smaller\smaller\begin{tabular}{@{}c@{}}%
\phantom{0}\\45/2\\\phantom{0}
\end{tabular}\endgroup%
\kern3pt%
\begingroup \smaller\smaller\smaller\begin{tabular}{@{}c@{}}%
\phantom{0}\\\phantom{0}\\15/2
\end{tabular}\endgroup%
{$\left.\llap{\phantom{%
\begingroup \smaller\smaller\smaller\begin{tabular}{@{}c@{}}%
\phantom{0}\\\phantom{0}\\\phantom{0}
\end{tabular}\endgroup%
}}\!\right]$}%
{$\left[\!\llap{\phantom{%
\begingroup \smaller\smaller\smaller\begin{tabular}{@{}c@{}}%
0\\0\\0
\end{tabular}\endgroup%
}}\right.$}%
\begingroup \smaller\smaller\smaller\begin{tabular}{@{}c@{}}%
9\\2\\0
\end{tabular}\endgroup%
\kern3pt%
\begingroup \smaller\smaller\smaller\begin{tabular}{@{}c@{}}%
5\\1\\1
\end{tabular}\endgroup%
\kern3pt%
\begingroup \smaller\smaller\smaller\begin{tabular}{@{}c@{}}%
9\\1\\3
\end{tabular}\endgroup%
\kern3pt%
\begingroup \smaller\smaller\smaller\begin{tabular}{@{}c@{}}%
30\\1\\11
\end{tabular}\endgroup%
\kern3pt%
\begingroup \smaller\smaller\smaller\begin{tabular}{@{}c@{}}%
30\\-1\\11
\end{tabular}\endgroup%
\kern3pt%
\begingroup \smaller\smaller\smaller\begin{tabular}{@{}c@{}}%
90\\-8\\30
\end{tabular}\endgroup%
\kern3pt%
\begingroup \smaller\smaller\smaller\begin{tabular}{@{}c@{}}%
90\\-11\\27
\end{tabular}\endgroup%
\kern3pt%
\begingroup \smaller\smaller\smaller\begin{tabular}{@{}c@{}}%
30\\-5\\7
\end{tabular}\endgroup%
\kern3pt%
\begingroup \smaller\smaller\smaller\begin{tabular}{@{}c@{}}%
30\\-6\\4
\end{tabular}\endgroup%
\kern3pt%
\begingroup \smaller\smaller\smaller\begin{tabular}{@{}c@{}}%
90\\-19\\3
\end{tabular}\endgroup%
{$\left.\llap{\phantom{%
\begingroup \smaller\smaller\smaller\begin{tabular}{@{}c@{}}%
0\\0\\0
\end{tabular}\endgroup%
}}\!\right]$}%
}%
\ifdim\wd\matricesbox>\halfwidth\myboxwidth=\hsize\else\myboxwidth=\halfwidth\fi
\vbox{%
\ifdim\myboxwidth=\hsize
\setbox\onelinebox=\hbox{%
\vbox{\hbox{%
$\Pi_{19,2}$ spans $L_{16.13}$%
}\hbox{%
$363222|222363636\slashthree636\rtimes D_{2}$%
}%
}%
\hfill\copy\matricesbox
}%
\ifdim\wd\onelinebox>\myboxwidth
\hbox to \myboxwidth{%
$\Pi_{19,2}$ spans $L_{16.13}$%
\hfil
$363222|222363636\slashthree636\rtimes D_{2}$%
}%
\box\matricesbox
\else
\hbox to \myboxwidth{%
\unhbox\onelinebox
}%
\fi
\else
\hbox to \myboxwidth{%
$\Pi_{19,2}$ spans $L_{16.13}$%
\hfil}%
\hbox to \myboxwidth{%
$363222|222363636\slashthree636\rtimes D_{2}$%
\hfil}%
\box\matricesbox
\fi
}%
\hfill\discretionary{}{}{}%
\setbox\matricesbox=\hbox{%
{$\left[\!\llap{\phantom{%
\begingroup \smaller\smaller\smaller\begin{tabular}{@{}c@{}}%
\phantom{0}\\\phantom{0}\\\phantom{0}
\end{tabular}\endgroup%
}}\right.$}%
\begingroup \smaller\smaller\smaller\begin{tabular}{@{}c@{}}%
-1\\\phantom{0}\\\phantom{0}
\end{tabular}\endgroup%
\kern3pt%
\begingroup \smaller\smaller\smaller\begin{tabular}{@{}c@{}}%
\phantom{0}\\15/2\\\phantom{0}
\end{tabular}\endgroup%
\kern3pt%
\begingroup \smaller\smaller\smaller\begin{tabular}{@{}c@{}}%
\phantom{0}\\\phantom{0}\\45/2
\end{tabular}\endgroup%
{$\left.\llap{\phantom{%
\begingroup \smaller\smaller\smaller\begin{tabular}{@{}c@{}}%
\phantom{0}\\\phantom{0}\\\phantom{0}
\end{tabular}\endgroup%
}}\!\right]$}%
{$\left[\!\llap{\phantom{%
\begingroup \smaller\smaller\smaller\begin{tabular}{@{}c@{}}%
0\\0\\0
\end{tabular}\endgroup%
}}\right.$}%
\begingroup \smaller\smaller\smaller\begin{tabular}{@{}c@{}}%
5\\-2\\0
\end{tabular}\endgroup%
\kern3pt%
\begingroup \smaller\smaller\smaller\begin{tabular}{@{}c@{}}%
9\\-3\\-1
\end{tabular}\endgroup%
\kern3pt%
\begingroup \smaller\smaller\smaller\begin{tabular}{@{}c@{}}%
30\\-7\\-5
\end{tabular}\endgroup%
\kern3pt%
\begingroup \smaller\smaller\smaller\begin{tabular}{@{}c@{}}%
30\\-4\\-6
\end{tabular}\endgroup%
\kern3pt%
\begingroup \smaller\smaller\smaller\begin{tabular}{@{}c@{}}%
90\\-3\\-19
\end{tabular}\endgroup%
\kern3pt%
\begingroup \smaller\smaller\smaller\begin{tabular}{@{}c@{}}%
90\\3\\-19
\end{tabular}\endgroup%
\kern3pt%
\begingroup \smaller\smaller\smaller\begin{tabular}{@{}c@{}}%
5\\1\\-1
\end{tabular}\endgroup%
\kern3pt%
\begingroup \smaller\smaller\smaller\begin{tabular}{@{}c@{}}%
90\\27\\-11
\end{tabular}\endgroup%
\kern3pt%
\begingroup \smaller\smaller\smaller\begin{tabular}{@{}c@{}}%
90\\30\\-8
\end{tabular}\endgroup%
\kern3pt%
\begingroup \smaller\smaller\smaller\begin{tabular}{@{}c@{}}%
30\\11\\-1
\end{tabular}\endgroup%
{$\left.\llap{\phantom{%
\begingroup \smaller\smaller\smaller\begin{tabular}{@{}c@{}}%
0\\0\\0
\end{tabular}\endgroup%
}}\!\right]$}%
}%
\ifdim\wd\matricesbox>\halfwidth\myboxwidth=\hsize\else\myboxwidth=\halfwidth\fi
\vbox{%
\ifdim\myboxwidth=\hsize
\setbox\onelinebox=\hbox{%
\vbox{\hbox{%
$\Pi_{19,3}$ spans $L_{16.13}$%
}\hbox{%
$36322|223632236\slashthree6322\rtimes D_{2}$%
}%
}%
\hfill\copy\matricesbox
}%
\ifdim\wd\onelinebox>\myboxwidth
\hbox to \myboxwidth{%
$\Pi_{19,3}$ spans $L_{16.13}$%
\hfil
$36322|223632236\slashthree6322\rtimes D_{2}$%
}%
\box\matricesbox
\else
\hbox to \myboxwidth{%
\unhbox\onelinebox
}%
\fi
\else
\hbox to \myboxwidth{%
$\Pi_{19,3}$ spans $L_{16.13}$%
\hfil}%
\hbox to \myboxwidth{%
$36322|223632236\slashthree6322\rtimes D_{2}$%
\hfil}%
\box\matricesbox
\fi
}%
\hfill\discretionary{}{}{}%
\setbox\matricesbox=\hbox{%
{$\left[\!\llap{\phantom{%
\begingroup \smaller\smaller\smaller\begin{tabular}{@{}c@{}}%
\phantom{0}\\\phantom{0}\\\phantom{0}
\end{tabular}\endgroup%
}}\right.$}%
\begingroup \smaller\smaller\smaller\begin{tabular}{@{}c@{}}%
-1\\\phantom{0}\\\phantom{0}
\end{tabular}\endgroup%
\kern3pt%
\begingroup \smaller\smaller\smaller\begin{tabular}{@{}c@{}}%
\phantom{0}\\15/2\\\phantom{0}
\end{tabular}\endgroup%
\kern3pt%
\begingroup \smaller\smaller\smaller\begin{tabular}{@{}c@{}}%
\phantom{0}\\\phantom{0}\\45/2
\end{tabular}\endgroup%
{$\left.\llap{\phantom{%
\begingroup \smaller\smaller\smaller\begin{tabular}{@{}c@{}}%
\phantom{0}\\\phantom{0}\\\phantom{0}
\end{tabular}\endgroup%
}}\!\right]$}%
{$\left[\!\llap{\phantom{%
\begingroup \smaller\smaller\smaller\begin{tabular}{@{}c@{}}%
0\\0\\0
\end{tabular}\endgroup%
}}\right.$}%
\begingroup \smaller\smaller\smaller\begin{tabular}{@{}c@{}}%
5\\2\\0
\end{tabular}\endgroup%
\kern3pt%
\begingroup \smaller\smaller\smaller\begin{tabular}{@{}c@{}}%
9\\3\\-1
\end{tabular}\endgroup%
\kern3pt%
\begingroup \smaller\smaller\smaller\begin{tabular}{@{}c@{}}%
30\\7\\-5
\end{tabular}\endgroup%
\kern3pt%
\begingroup \smaller\smaller\smaller\begin{tabular}{@{}c@{}}%
30\\4\\-6
\end{tabular}\endgroup%
\kern3pt%
\begingroup \smaller\smaller\smaller\begin{tabular}{@{}c@{}}%
90\\3\\-19
\end{tabular}\endgroup%
\kern3pt%
\begingroup \smaller\smaller\smaller\begin{tabular}{@{}c@{}}%
90\\-3\\-19
\end{tabular}\endgroup%
\kern3pt%
\begingroup \smaller\smaller\smaller\begin{tabular}{@{}c@{}}%
30\\-4\\-6
\end{tabular}\endgroup%
\kern3pt%
\begingroup \smaller\smaller\smaller\begin{tabular}{@{}c@{}}%
30\\-7\\-5
\end{tabular}\endgroup%
\kern3pt%
\begingroup \smaller\smaller\smaller\begin{tabular}{@{}c@{}}%
9\\-3\\-1
\end{tabular}\endgroup%
\kern3pt%
\begingroup \smaller\smaller\smaller\begin{tabular}{@{}c@{}}%
30\\-11\\-1
\end{tabular}\endgroup%
{$\left.\llap{\phantom{%
\begingroup \smaller\smaller\smaller\begin{tabular}{@{}c@{}}%
0\\0\\0
\end{tabular}\endgroup%
}}\!\right]$}%
}%
\ifdim\wd\matricesbox>\halfwidth\myboxwidth=\hsize\else\myboxwidth=\halfwidth\fi
\vbox{%
\ifdim\myboxwidth=\hsize
\setbox\onelinebox=\hbox{%
\vbox{\hbox{%
$\Pi_{19,4}$ spans $L_{16.13}$%
}\hbox{%
$36322|223636322\slashthree2236\rtimes D_{2}$%
}%
}%
\hfill\copy\matricesbox
}%
\ifdim\wd\onelinebox>\myboxwidth
\hbox to \myboxwidth{%
$\Pi_{19,4}$ spans $L_{16.13}$%
\hfil
$36322|223636322\slashthree2236\rtimes D_{2}$%
}%
\box\matricesbox
\else
\hbox to \myboxwidth{%
\unhbox\onelinebox
}%
\fi
\else
\hbox to \myboxwidth{%
$\Pi_{19,4}$ spans $L_{16.13}$%
\hfil}%
\hbox to \myboxwidth{%
$36322|223636322\slashthree2236\rtimes D_{2}$%
\hfil}%
\box\matricesbox
\fi
}%
\hfill\discretionary{}{}{}%
\setbox\matricesbox=\hbox{%
{$\left[\!\llap{\phantom{%
\begingroup \smaller\smaller\smaller\begin{tabular}{@{}c@{}}%
\phantom{0}\\\phantom{0}\\\phantom{0}
\end{tabular}\endgroup%
}}\right.$}%
\begingroup \smaller\smaller\smaller\begin{tabular}{@{}c@{}}%
-1\\\phantom{0}\\\phantom{0}
\end{tabular}\endgroup%
\kern3pt%
\begingroup \smaller\smaller\smaller\begin{tabular}{@{}c@{}}%
\phantom{0}\\45/2\\\phantom{0}
\end{tabular}\endgroup%
\kern3pt%
\begingroup \smaller\smaller\smaller\begin{tabular}{@{}c@{}}%
\phantom{0}\\\phantom{0}\\15/2
\end{tabular}\endgroup%
{$\left.\llap{\phantom{%
\begingroup \smaller\smaller\smaller\begin{tabular}{@{}c@{}}%
\phantom{0}\\\phantom{0}\\\phantom{0}
\end{tabular}\endgroup%
}}\!\right]$}%
{$\left[\!\llap{\phantom{%
\begingroup \smaller\smaller\smaller\begin{tabular}{@{}c@{}}%
0\\0\\0
\end{tabular}\endgroup%
}}\right.$}%
\begingroup \smaller\smaller\smaller\begin{tabular}{@{}c@{}}%
90\\19\\3
\end{tabular}\endgroup%
\kern3pt%
\begingroup \smaller\smaller\smaller\begin{tabular}{@{}c@{}}%
5\\1\\1
\end{tabular}\endgroup%
\kern3pt%
\begingroup \smaller\smaller\smaller\begin{tabular}{@{}c@{}}%
9\\1\\3
\end{tabular}\endgroup%
\kern3pt%
\begingroup \smaller\smaller\smaller\begin{tabular}{@{}c@{}}%
30\\1\\11
\end{tabular}\endgroup%
\kern3pt%
\begingroup \smaller\smaller\smaller\begin{tabular}{@{}c@{}}%
30\\-1\\11
\end{tabular}\endgroup%
\kern3pt%
\begingroup \smaller\smaller\smaller\begin{tabular}{@{}c@{}}%
90\\-8\\30
\end{tabular}\endgroup%
\kern3pt%
\begingroup \smaller\smaller\smaller\begin{tabular}{@{}c@{}}%
90\\-11\\27
\end{tabular}\endgroup%
\kern3pt%
\begingroup \smaller\smaller\smaller\begin{tabular}{@{}c@{}}%
30\\-5\\7
\end{tabular}\endgroup%
\kern3pt%
\begingroup \smaller\smaller\smaller\begin{tabular}{@{}c@{}}%
30\\-6\\4
\end{tabular}\endgroup%
\kern3pt%
\begingroup \smaller\smaller\smaller\begin{tabular}{@{}c@{}}%
9\\-2\\0
\end{tabular}\endgroup%
{$\left.\llap{\phantom{%
\begingroup \smaller\smaller\smaller\begin{tabular}{@{}c@{}}%
0\\0\\0
\end{tabular}\endgroup%
}}\!\right]$}%
}%
\ifdim\wd\matricesbox>\halfwidth\myboxwidth=\hsize\else\myboxwidth=\halfwidth\fi
\vbox{%
\ifdim\myboxwidth=\hsize
\setbox\onelinebox=\hbox{%
\vbox{\hbox{%
$\Pi_{19,5}$ spans $L_{16.13}$%
}\hbox{%
$363222\slashthree222363632|236\rtimes D_{2}$%
}%
}%
\hfill\copy\matricesbox
}%
\ifdim\wd\onelinebox>\myboxwidth
\hbox to \myboxwidth{%
$\Pi_{19,5}$ spans $L_{16.13}$%
\hfil
$363222\slashthree222363632|236\rtimes D_{2}$%
}%
\box\matricesbox
\else
\hbox to \myboxwidth{%
\unhbox\onelinebox
}%
\fi
\else
\hbox to \myboxwidth{%
$\Pi_{19,5}$ spans $L_{16.13}$%
\hfil}%
\hbox to \myboxwidth{%
$363222\slashthree222363632|236\rtimes D_{2}$%
\hfil}%
\box\matricesbox
\fi
}%
\hfill\discretionary{}{}{}%
\setbox\matricesbox=\hbox{%
{$\left[\!\llap{\phantom{%
\begingroup \smaller\smaller\smaller\begin{tabular}{@{}c@{}}%
\phantom{0}\\\phantom{0}\\\phantom{0}
\end{tabular}\endgroup%
}}\right.$}%
\begingroup \smaller\smaller\smaller\begin{tabular}{@{}c@{}}%
-1\\\phantom{0}\\\phantom{0}
\end{tabular}\endgroup%
\kern3pt%
\begingroup \smaller\smaller\smaller\begin{tabular}{@{}c@{}}%
\phantom{0}\\15/2\\\phantom{0}
\end{tabular}\endgroup%
\kern3pt%
\begingroup \smaller\smaller\smaller\begin{tabular}{@{}c@{}}%
\phantom{0}\\\phantom{0}\\45/2
\end{tabular}\endgroup%
{$\left.\llap{\phantom{%
\begingroup \smaller\smaller\smaller\begin{tabular}{@{}c@{}}%
\phantom{0}\\\phantom{0}\\\phantom{0}
\end{tabular}\endgroup%
}}\!\right]$}%
{$\left[\!\llap{\phantom{%
\begingroup \smaller\smaller\smaller\begin{tabular}{@{}c@{}}%
0\\0\\0
\end{tabular}\endgroup%
}}\right.$}%
\begingroup \smaller\smaller\smaller\begin{tabular}{@{}c@{}}%
5\\2\\0
\end{tabular}\endgroup%
\kern3pt%
\begingroup \smaller\smaller\smaller\begin{tabular}{@{}c@{}}%
90\\30\\-8
\end{tabular}\endgroup%
\kern3pt%
\begingroup \smaller\smaller\smaller\begin{tabular}{@{}c@{}}%
90\\27\\-11
\end{tabular}\endgroup%
\kern3pt%
\begingroup \smaller\smaller\smaller\begin{tabular}{@{}c@{}}%
5\\1\\-1
\end{tabular}\endgroup%
\kern3pt%
\begingroup \smaller\smaller\smaller\begin{tabular}{@{}c@{}}%
9\\0\\-2
\end{tabular}\endgroup%
\kern3pt%
\begingroup \smaller\smaller\smaller\begin{tabular}{@{}c@{}}%
30\\-4\\-6
\end{tabular}\endgroup%
\kern3pt%
\begingroup \smaller\smaller\smaller\begin{tabular}{@{}c@{}}%
30\\-7\\-5
\end{tabular}\endgroup%
\kern3pt%
\begingroup \smaller\smaller\smaller\begin{tabular}{@{}c@{}}%
90\\-27\\-11
\end{tabular}\endgroup%
\kern3pt%
\begingroup \smaller\smaller\smaller\begin{tabular}{@{}c@{}}%
90\\-30\\-8
\end{tabular}\endgroup%
\kern3pt%
\begingroup \smaller\smaller\smaller\begin{tabular}{@{}c@{}}%
30\\-11\\-1
\end{tabular}\endgroup%
{$\left.\llap{\phantom{%
\begingroup \smaller\smaller\smaller\begin{tabular}{@{}c@{}}%
0\\0\\0
\end{tabular}\endgroup%
}}\!\right]$}%
}%
\ifdim\wd\matricesbox>\halfwidth\myboxwidth=\hsize\else\myboxwidth=\halfwidth\fi
\vbox{%
\ifdim\myboxwidth=\hsize
\setbox\onelinebox=\hbox{%
\vbox{\hbox{%
$\Pi_{19,6}$ spans $L_{16.13}$%
}\hbox{%
$36322232|232223636\slashthree6\rtimes D_{2}$%
}%
}%
\hfill\copy\matricesbox
}%
\ifdim\wd\onelinebox>\myboxwidth
\hbox to \myboxwidth{%
$\Pi_{19,6}$ spans $L_{16.13}$%
\hfil
$36322232|232223636\slashthree6\rtimes D_{2}$%
}%
\box\matricesbox
\else
\hbox to \myboxwidth{%
\unhbox\onelinebox
}%
\fi
\else
\hbox to \myboxwidth{%
$\Pi_{19,6}$ spans $L_{16.13}$%
\hfil}%
\hbox to \myboxwidth{%
$36322232|232223636\slashthree6\rtimes D_{2}$%
\hfil}%
\box\matricesbox
\fi
}%
\hfill\discretionary{}{}{}%
\setbox\matricesbox=\hbox{%
{$\left[\!\llap{\phantom{%
\begingroup \smaller\smaller\smaller\begin{tabular}{@{}c@{}}%
\phantom{0}\\\phantom{0}\\\phantom{0}
\end{tabular}\endgroup%
}}\right.$}%
\begingroup \smaller\smaller\smaller\begin{tabular}{@{}c@{}}%
-1\\\phantom{0}\\\phantom{0}
\end{tabular}\endgroup%
\kern3pt%
\begingroup \smaller\smaller\smaller\begin{tabular}{@{}c@{}}%
\phantom{0}\\45/2\\\phantom{0}
\end{tabular}\endgroup%
\kern3pt%
\begingroup \smaller\smaller\smaller\begin{tabular}{@{}c@{}}%
\phantom{0}\\\phantom{0}\\15/2
\end{tabular}\endgroup%
{$\left.\llap{\phantom{%
\begingroup \smaller\smaller\smaller\begin{tabular}{@{}c@{}}%
\phantom{0}\\\phantom{0}\\\phantom{0}
\end{tabular}\endgroup%
}}\!\right]$}%
{$\left[\!\llap{\phantom{%
\begingroup \smaller\smaller\smaller\begin{tabular}{@{}c@{}}%
0\\0\\0
\end{tabular}\endgroup%
}}\right.$}%
\begingroup \smaller\smaller\smaller\begin{tabular}{@{}c@{}}%
90\\19\\3
\end{tabular}\endgroup%
\kern3pt%
\begingroup \smaller\smaller\smaller\begin{tabular}{@{}c@{}}%
30\\6\\4
\end{tabular}\endgroup%
\kern3pt%
\begingroup \smaller\smaller\smaller\begin{tabular}{@{}c@{}}%
30\\5\\7
\end{tabular}\endgroup%
\kern3pt%
\begingroup \smaller\smaller\smaller\begin{tabular}{@{}c@{}}%
9\\1\\3
\end{tabular}\endgroup%
\kern3pt%
\begingroup \smaller\smaller\smaller\begin{tabular}{@{}c@{}}%
5\\0\\2
\end{tabular}\endgroup%
\kern3pt%
\begingroup \smaller\smaller\smaller\begin{tabular}{@{}c@{}}%
90\\-8\\30
\end{tabular}\endgroup%
\kern3pt%
\begingroup \smaller\smaller\smaller\begin{tabular}{@{}c@{}}%
90\\-11\\27
\end{tabular}\endgroup%
\kern3pt%
\begingroup \smaller\smaller\smaller\begin{tabular}{@{}c@{}}%
30\\-5\\7
\end{tabular}\endgroup%
\kern3pt%
\begingroup \smaller\smaller\smaller\begin{tabular}{@{}c@{}}%
30\\-6\\4
\end{tabular}\endgroup%
\kern3pt%
\begingroup \smaller\smaller\smaller\begin{tabular}{@{}c@{}}%
9\\-2\\0
\end{tabular}\endgroup%
{$\left.\llap{\phantom{%
\begingroup \smaller\smaller\smaller\begin{tabular}{@{}c@{}}%
0\\0\\0
\end{tabular}\endgroup%
}}\!\right]$}%
}%
\ifdim\wd\matricesbox>\halfwidth\myboxwidth=\hsize\else\myboxwidth=\halfwidth\fi
\vbox{%
\ifdim\myboxwidth=\hsize
\setbox\onelinebox=\hbox{%
\vbox{\hbox{%
$\Pi_{19,7}$ spans $L_{16.13}$%
}\hbox{%
$\slashthree632223632|236322236\rtimes D_{2}$%
}%
}%
\hfill\copy\matricesbox
}%
\ifdim\wd\onelinebox>\myboxwidth
\hbox to \myboxwidth{%
$\Pi_{19,7}$ spans $L_{16.13}$%
\hfil
$\slashthree632223632|236322236\rtimes D_{2}$%
}%
\box\matricesbox
\else
\hbox to \myboxwidth{%
\unhbox\onelinebox
}%
\fi
\else
\hbox to \myboxwidth{%
$\Pi_{19,7}$ spans $L_{16.13}$%
\hfil}%
\hbox to \myboxwidth{%
$\slashthree632223632|236322236\rtimes D_{2}$%
\hfil}%
\box\matricesbox
\fi
}%
\hfill\discretionary{}{}{}%
\setbox\matricesbox=\hbox{%
{$\left[\!\llap{\phantom{%
\begingroup \smaller\smaller\smaller\begin{tabular}{@{}c@{}}%
\phantom{0}\\\phantom{0}\\\phantom{0}
\end{tabular}\endgroup%
}}\right.$}%
\begingroup \smaller\smaller\smaller\begin{tabular}{@{}c@{}}%
-1\\\phantom{0}\\\phantom{0}
\end{tabular}\endgroup%
\kern3pt%
\begingroup \smaller\smaller\smaller\begin{tabular}{@{}c@{}}%
\phantom{0}\\15/2\\\phantom{0}
\end{tabular}\endgroup%
\kern3pt%
\begingroup \smaller\smaller\smaller\begin{tabular}{@{}c@{}}%
\phantom{0}\\\phantom{0}\\45/2
\end{tabular}\endgroup%
{$\left.\llap{\phantom{%
\begingroup \smaller\smaller\smaller\begin{tabular}{@{}c@{}}%
\phantom{0}\\\phantom{0}\\\phantom{0}
\end{tabular}\endgroup%
}}\!\right]$}%
{$\left[\!\llap{\phantom{%
\begingroup \smaller\smaller\smaller\begin{tabular}{@{}c@{}}%
0\\0\\0
\end{tabular}\endgroup%
}}\right.$}%
\begingroup \smaller\smaller\smaller\begin{tabular}{@{}c@{}}%
30\\11\\1
\end{tabular}\endgroup%
\kern3pt%
\begingroup \smaller\smaller\smaller\begin{tabular}{@{}c@{}}%
90\\30\\8
\end{tabular}\endgroup%
\kern3pt%
\begingroup \smaller\smaller\smaller\begin{tabular}{@{}c@{}}%
90\\27\\11
\end{tabular}\endgroup%
\kern3pt%
\begingroup \smaller\smaller\smaller\begin{tabular}{@{}c@{}}%
5\\1\\1
\end{tabular}\endgroup%
\kern3pt%
\begingroup \smaller\smaller\smaller\begin{tabular}{@{}c@{}}%
9\\0\\2
\end{tabular}\endgroup%
\kern3pt%
\begingroup \smaller\smaller\smaller\begin{tabular}{@{}c@{}}%
30\\-4\\6
\end{tabular}\endgroup%
\kern3pt%
\begingroup \smaller\smaller\smaller\begin{tabular}{@{}c@{}}%
30\\-7\\5
\end{tabular}\endgroup%
\kern3pt%
\begingroup \smaller\smaller\smaller\begin{tabular}{@{}c@{}}%
90\\-27\\11
\end{tabular}\endgroup%
\kern3pt%
\begingroup \smaller\smaller\smaller\begin{tabular}{@{}c@{}}%
90\\-30\\8
\end{tabular}\endgroup%
\kern3pt%
\begingroup \smaller\smaller\smaller\begin{tabular}{@{}c@{}}%
5\\-2\\0
\end{tabular}\endgroup%
{$\left.\llap{\phantom{%
\begingroup \smaller\smaller\smaller\begin{tabular}{@{}c@{}}%
0\\0\\0
\end{tabular}\endgroup%
}}\!\right]$}%
}%
\ifdim\wd\matricesbox>\halfwidth\myboxwidth=\hsize\else\myboxwidth=\halfwidth\fi
\vbox{%
\ifdim\myboxwidth=\hsize
\setbox\onelinebox=\hbox{%
\vbox{\hbox{%
$\Pi_{19,8}$ spans $L_{16.13}$%
}\hbox{%
$36322236\slashthree632223632|2\rtimes D_{2}$%
}%
}%
\hfill\copy\matricesbox
}%
\ifdim\wd\onelinebox>\myboxwidth
\hbox to \myboxwidth{%
$\Pi_{19,8}$ spans $L_{16.13}$%
\hfil
$36322236\slashthree632223632|2\rtimes D_{2}$%
}%
\box\matricesbox
\else
\hbox to \myboxwidth{%
\unhbox\onelinebox
}%
\fi
\else
\hbox to \myboxwidth{%
$\Pi_{19,8}$ spans $L_{16.13}$%
\hfil}%
\hbox to \myboxwidth{%
$36322236\slashthree632223632|2\rtimes D_{2}$%
\hfil}%
\box\matricesbox
\fi
}%
\hfill\discretionary{}{}{}%
\setbox\matricesbox=\hbox{%
{$\left[\!\llap{\phantom{%
\begingroup \smaller\smaller\smaller\begin{tabular}{@{}c@{}}%
\phantom{0}\\\phantom{0}\\\phantom{0}
\end{tabular}\endgroup%
}}\right.$}%
\begingroup \smaller\smaller\smaller\begin{tabular}{@{}c@{}}%
-1\\\phantom{0}\\\phantom{0}
\end{tabular}\endgroup%
\kern3pt%
\begingroup \smaller\smaller\smaller\begin{tabular}{@{}c@{}}%
\phantom{0}\\15/2\\\phantom{0}
\end{tabular}\endgroup%
\kern3pt%
\begingroup \smaller\smaller\smaller\begin{tabular}{@{}c@{}}%
\phantom{0}\\\phantom{0}\\45/2
\end{tabular}\endgroup%
{$\left.\llap{\phantom{%
\begingroup \smaller\smaller\smaller\begin{tabular}{@{}c@{}}%
\phantom{0}\\\phantom{0}\\\phantom{0}
\end{tabular}\endgroup%
}}\!\right]$}%
{$\left[\!\llap{\phantom{%
\begingroup \smaller\smaller\smaller\begin{tabular}{@{}c@{}}%
0\\0\\0
\end{tabular}\endgroup%
}}\right.$}%
\begingroup \smaller\smaller\smaller\begin{tabular}{@{}c@{}}%
5\\2\\0
\end{tabular}\endgroup%
\kern3pt%
\begingroup \smaller\smaller\smaller\begin{tabular}{@{}c@{}}%
9\\3\\-1
\end{tabular}\endgroup%
\kern3pt%
\begingroup \smaller\smaller\smaller\begin{tabular}{@{}c@{}}%
30\\7\\-5
\end{tabular}\endgroup%
\kern3pt%
\begingroup \smaller\smaller\smaller\begin{tabular}{@{}c@{}}%
30\\4\\-6
\end{tabular}\endgroup%
\kern3pt%
\begingroup \smaller\smaller\smaller\begin{tabular}{@{}c@{}}%
9\\0\\-2
\end{tabular}\endgroup%
\kern3pt%
\begingroup \smaller\smaller\smaller\begin{tabular}{@{}c@{}}%
30\\-4\\-6
\end{tabular}\endgroup%
\kern3pt%
\begingroup \smaller\smaller\smaller\begin{tabular}{@{}c@{}}%
30\\-7\\-5
\end{tabular}\endgroup%
\kern3pt%
\begingroup \smaller\smaller\smaller\begin{tabular}{@{}c@{}}%
90\\-27\\-11
\end{tabular}\endgroup%
\kern3pt%
\begingroup \smaller\smaller\smaller\begin{tabular}{@{}c@{}}%
90\\-30\\-8
\end{tabular}\endgroup%
\kern3pt%
\begingroup \smaller\smaller\smaller\begin{tabular}{@{}c@{}}%
30\\-11\\-1
\end{tabular}\endgroup%
{$\left.\llap{\phantom{%
\begingroup \smaller\smaller\smaller\begin{tabular}{@{}c@{}}%
0\\0\\0
\end{tabular}\endgroup%
}}\!\right]$}%
}%
\ifdim\wd\matricesbox>\halfwidth\myboxwidth=\hsize\else\myboxwidth=\halfwidth\fi
\vbox{%
\ifdim\myboxwidth=\hsize
\setbox\onelinebox=\hbox{%
\vbox{\hbox{%
$\Pi_{19,9}$ spans $L_{16.13}$%
}\hbox{%
$36322322|223223636\slashthree6\rtimes D_{2}$%
}%
}%
\hfill\copy\matricesbox
}%
\ifdim\wd\onelinebox>\myboxwidth
\hbox to \myboxwidth{%
$\Pi_{19,9}$ spans $L_{16.13}$%
\hfil
$36322322|223223636\slashthree6\rtimes D_{2}$%
}%
\box\matricesbox
\else
\hbox to \myboxwidth{%
\unhbox\onelinebox
}%
\fi
\else
\hbox to \myboxwidth{%
$\Pi_{19,9}$ spans $L_{16.13}$%
\hfil}%
\hbox to \myboxwidth{%
$36322322|223223636\slashthree6\rtimes D_{2}$%
\hfil}%
\box\matricesbox
\fi
}%
\hfill\discretionary{}{}{}%
\setbox\matricesbox=\hbox{%
{$\left[\!\llap{\phantom{%
\begingroup \smaller\smaller\smaller\begin{tabular}{@{}c@{}}%
\phantom{0}\\\phantom{0}\\\phantom{0}
\end{tabular}\endgroup%
}}\right.$}%
\begingroup \smaller\smaller\smaller\begin{tabular}{@{}c@{}}%
-3\\\phantom{0}\\\phantom{0}
\end{tabular}\endgroup%
\kern3pt%
\begingroup \smaller\smaller\smaller\begin{tabular}{@{}c@{}}%
\phantom{0}\\15/2\\\phantom{0}
\end{tabular}\endgroup%
\kern3pt%
\begingroup \smaller\smaller\smaller\begin{tabular}{@{}c@{}}%
\phantom{0}\\\phantom{0}\\5/2
\end{tabular}\endgroup%
{$\left.\llap{\phantom{%
\begingroup \smaller\smaller\smaller\begin{tabular}{@{}c@{}}%
\phantom{0}\\\phantom{0}\\\phantom{0}
\end{tabular}\endgroup%
}}\!\right]$}%
{$\left[\!\llap{\phantom{%
\begingroup \smaller\smaller\smaller\begin{tabular}{@{}c@{}}%
0\\0\\0
\end{tabular}\endgroup%
}}\right.$}%
\begingroup \smaller\smaller\smaller\begin{tabular}{@{}c@{}}%
30\\-19\\3
\end{tabular}\endgroup%
\kern3pt%
\begingroup \smaller\smaller\smaller\begin{tabular}{@{}c@{}}%
10\\-6\\4
\end{tabular}\endgroup%
\kern3pt%
\begingroup \smaller\smaller\smaller\begin{tabular}{@{}c@{}}%
10\\-5\\7
\end{tabular}\endgroup%
\kern3pt%
\begingroup \smaller\smaller\smaller\begin{tabular}{@{}c@{}}%
3\\-1\\3
\end{tabular}\endgroup%
\kern3pt%
\begingroup \smaller\smaller\smaller\begin{tabular}{@{}c@{}}%
10\\-1\\11
\end{tabular}\endgroup%
\kern3pt%
\begingroup \smaller\smaller\smaller\begin{tabular}{@{}c@{}}%
10\\1\\11
\end{tabular}\endgroup%
\kern3pt%
\begingroup \smaller\smaller\smaller\begin{tabular}{@{}c@{}}%
3\\1\\3
\end{tabular}\endgroup%
\kern3pt%
\begingroup \smaller\smaller\smaller\begin{tabular}{@{}c@{}}%
10\\5\\7
\end{tabular}\endgroup%
\kern3pt%
\begingroup \smaller\smaller\smaller\begin{tabular}{@{}c@{}}%
10\\6\\4
\end{tabular}\endgroup%
\kern3pt%
\begingroup \smaller\smaller\smaller\begin{tabular}{@{}c@{}}%
3\\2\\0
\end{tabular}\endgroup%
{$\left.\llap{\phantom{%
\begingroup \smaller\smaller\smaller\begin{tabular}{@{}c@{}}%
0\\0\\0
\end{tabular}\endgroup%
}}\!\right]$}%
}%
\ifdim\wd\matricesbox>\halfwidth\myboxwidth=\hsize\else\myboxwidth=\halfwidth\fi
\vbox{%
\ifdim\myboxwidth=\hsize
\setbox\onelinebox=\hbox{%
\vbox{\hbox{%
$\Pi_{19,10}$ spans $L_{16.9}$%
}\hbox{%
$\slashthree632232232|232232236\rtimes D_{2}$%
}%
}%
\hfill\copy\matricesbox
}%
\ifdim\wd\onelinebox>\myboxwidth
\hbox to \myboxwidth{%
$\Pi_{19,10}$ spans $L_{16.9}$%
\hfil
$\slashthree632232232|232232236\rtimes D_{2}$%
}%
\box\matricesbox
\else
\hbox to \myboxwidth{%
\unhbox\onelinebox
}%
\fi
\else
\hbox to \myboxwidth{%
$\Pi_{19,10}$ spans $L_{16.9}$%
\hfil}%
\hbox to \myboxwidth{%
$\slashthree632232232|232232236\rtimes D_{2}$%
\hfil}%
\box\matricesbox
\fi
}%
\hfill\discretionary{}{}{}%
\setbox\matricesbox=\hbox{%
{$\left[\!\llap{\phantom{%
\begingroup \smaller\smaller\smaller\begin{tabular}{@{}c@{}}%
\phantom{0}\\\phantom{0}\\\phantom{0}
\end{tabular}\endgroup%
}}\right.$}%
\begingroup \smaller\smaller\smaller\begin{tabular}{@{}c@{}}%
-1\\\phantom{0}\\\phantom{0}
\end{tabular}\endgroup%
\kern3pt%
\begingroup \smaller\smaller\smaller\begin{tabular}{@{}c@{}}%
\phantom{0}\\15/2\\\phantom{0}
\end{tabular}\endgroup%
\kern3pt%
\begingroup \smaller\smaller\smaller\begin{tabular}{@{}c@{}}%
\phantom{0}\\\phantom{0}\\45/2
\end{tabular}\endgroup%
{$\left.\llap{\phantom{%
\begingroup \smaller\smaller\smaller\begin{tabular}{@{}c@{}}%
\phantom{0}\\\phantom{0}\\\phantom{0}
\end{tabular}\endgroup%
}}\!\right]$}%
{$\left[\!\llap{\phantom{%
\begingroup \smaller\smaller\smaller\begin{tabular}{@{}c@{}}%
0\\0\\0
\end{tabular}\endgroup%
}}\right.$}%
\begingroup \smaller\smaller\smaller\begin{tabular}{@{}c@{}}%
30\\11\\1
\end{tabular}\endgroup%
\kern3pt%
\begingroup \smaller\smaller\smaller\begin{tabular}{@{}c@{}}%
9\\3\\1
\end{tabular}\endgroup%
\kern3pt%
\begingroup \smaller\smaller\smaller\begin{tabular}{@{}c@{}}%
30\\7\\5
\end{tabular}\endgroup%
\kern3pt%
\begingroup \smaller\smaller\smaller\begin{tabular}{@{}c@{}}%
30\\4\\6
\end{tabular}\endgroup%
\kern3pt%
\begingroup \smaller\smaller\smaller\begin{tabular}{@{}c@{}}%
9\\0\\2
\end{tabular}\endgroup%
\kern3pt%
\begingroup \smaller\smaller\smaller\begin{tabular}{@{}c@{}}%
30\\-4\\6
\end{tabular}\endgroup%
\kern3pt%
\begingroup \smaller\smaller\smaller\begin{tabular}{@{}c@{}}%
30\\-7\\5
\end{tabular}\endgroup%
\kern3pt%
\begingroup \smaller\smaller\smaller\begin{tabular}{@{}c@{}}%
90\\-27\\11
\end{tabular}\endgroup%
\kern3pt%
\begingroup \smaller\smaller\smaller\begin{tabular}{@{}c@{}}%
90\\-30\\8
\end{tabular}\endgroup%
\kern3pt%
\begingroup \smaller\smaller\smaller\begin{tabular}{@{}c@{}}%
5\\-2\\0
\end{tabular}\endgroup%
{$\left.\llap{\phantom{%
\begingroup \smaller\smaller\smaller\begin{tabular}{@{}c@{}}%
0\\0\\0
\end{tabular}\endgroup%
}}\!\right]$}%
}%
\ifdim\wd\matricesbox>\halfwidth\myboxwidth=\hsize\else\myboxwidth=\halfwidth\fi
\vbox{%
\ifdim\myboxwidth=\hsize
\setbox\onelinebox=\hbox{%
\vbox{\hbox{%
$\Pi_{19,11}$ spans $L_{16.13}$%
}\hbox{%
$36322322\slashthree223223632|2\rtimes D_{2}$%
}%
}%
\hfill\copy\matricesbox
}%
\ifdim\wd\onelinebox>\myboxwidth
\hbox to \myboxwidth{%
$\Pi_{19,11}$ spans $L_{16.13}$%
\hfil
$36322322\slashthree223223632|2\rtimes D_{2}$%
}%
\box\matricesbox
\else
\hbox to \myboxwidth{%
\unhbox\onelinebox
}%
\fi
\else
\hbox to \myboxwidth{%
$\Pi_{19,11}$ spans $L_{16.13}$%
\hfil}%
\hbox to \myboxwidth{%
$36322322\slashthree223223632|2\rtimes D_{2}$%
\hfil}%
\box\matricesbox
\fi
}%
\hfill\discretionary{}{}{}%
\setbox\matricesbox=\hbox{%
{$\left[\!\llap{\phantom{%
\begingroup \smaller\smaller\smaller\begin{tabular}{@{}c@{}}%
\phantom{0}\\\phantom{0}\\\phantom{0}
\end{tabular}\endgroup%
}}\right.$}%
\begingroup \smaller\smaller\smaller\begin{tabular}{@{}c@{}}%
-1\\\phantom{0}\\\phantom{0}
\end{tabular}\endgroup%
\kern3pt%
\begingroup \smaller\smaller\smaller\begin{tabular}{@{}c@{}}%
\phantom{0}\\45/2\\\phantom{0}
\end{tabular}\endgroup%
\kern3pt%
\begingroup \smaller\smaller\smaller\begin{tabular}{@{}c@{}}%
\phantom{0}\\\phantom{0}\\15/2
\end{tabular}\endgroup%
{$\left.\llap{\phantom{%
\begingroup \smaller\smaller\smaller\begin{tabular}{@{}c@{}}%
\phantom{0}\\\phantom{0}\\\phantom{0}
\end{tabular}\endgroup%
}}\!\right]$}%
{$\left[\!\llap{\phantom{%
\begingroup \smaller\smaller\smaller\begin{tabular}{@{}c@{}}%
0\\0\\0
\end{tabular}\endgroup%
}}\right.$}%
\begingroup \smaller\smaller\smaller\begin{tabular}{@{}c@{}}%
9\\2\\0
\end{tabular}\endgroup%
\kern3pt%
\begingroup \smaller\smaller\smaller\begin{tabular}{@{}c@{}}%
30\\6\\4
\end{tabular}\endgroup%
\kern3pt%
\begingroup \smaller\smaller\smaller\begin{tabular}{@{}c@{}}%
30\\5\\7
\end{tabular}\endgroup%
\kern3pt%
\begingroup \smaller\smaller\smaller\begin{tabular}{@{}c@{}}%
9\\1\\3
\end{tabular}\endgroup%
\kern3pt%
\begingroup \smaller\smaller\smaller\begin{tabular}{@{}c@{}}%
30\\1\\11
\end{tabular}\endgroup%
\kern3pt%
\begingroup \smaller\smaller\smaller\begin{tabular}{@{}c@{}}%
30\\-1\\11
\end{tabular}\endgroup%
\kern3pt%
\begingroup \smaller\smaller\smaller\begin{tabular}{@{}c@{}}%
90\\-8\\30
\end{tabular}\endgroup%
\kern3pt%
\begingroup \smaller\smaller\smaller\begin{tabular}{@{}c@{}}%
90\\-11\\27
\end{tabular}\endgroup%
\kern3pt%
\begingroup \smaller\smaller\smaller\begin{tabular}{@{}c@{}}%
5\\-1\\1
\end{tabular}\endgroup%
\kern3pt%
\begingroup \smaller\smaller\smaller\begin{tabular}{@{}c@{}}%
90\\-19\\3
\end{tabular}\endgroup%
{$\left.\llap{\phantom{%
\begingroup \smaller\smaller\smaller\begin{tabular}{@{}c@{}}%
0\\0\\0
\end{tabular}\endgroup%
}}\!\right]$}%
}%
\ifdim\wd\matricesbox>\halfwidth\myboxwidth=\hsize\else\myboxwidth=\halfwidth\fi
\vbox{%
\ifdim\myboxwidth=\hsize
\setbox\onelinebox=\hbox{%
\vbox{\hbox{%
$\Pi_{19,12}$ spans $L_{16.13}$%
}\hbox{%
$3632232|232236322\slashthree22\rtimes D_{2}$%
}%
}%
\hfill\copy\matricesbox
}%
\ifdim\wd\onelinebox>\myboxwidth
\hbox to \myboxwidth{%
$\Pi_{19,12}$ spans $L_{16.13}$%
\hfil
$3632232|232236322\slashthree22\rtimes D_{2}$%
}%
\box\matricesbox
\else
\hbox to \myboxwidth{%
\unhbox\onelinebox
}%
\fi
\else
\hbox to \myboxwidth{%
$\Pi_{19,12}$ spans $L_{16.13}$%
\hfil}%
\hbox to \myboxwidth{%
$3632232|232236322\slashthree22\rtimes D_{2}$%
\hfil}%
\box\matricesbox
\fi
}%
\hfill\discretionary{}{}{}%
\setbox\matricesbox=\hbox{%
{$\left[\!\llap{\phantom{%
\begingroup \smaller\smaller\smaller\begin{tabular}{@{}c@{}}%
\phantom{0}\\\phantom{0}\\\phantom{0}
\end{tabular}\endgroup%
}}\right.$}%
\begingroup \smaller\smaller\smaller\begin{tabular}{@{}c@{}}%
-1\\\phantom{0}\\\phantom{0}
\end{tabular}\endgroup%
\kern3pt%
\begingroup \smaller\smaller\smaller\begin{tabular}{@{}c@{}}%
\phantom{0}\\15/2\\\phantom{0}
\end{tabular}\endgroup%
\kern3pt%
\begingroup \smaller\smaller\smaller\begin{tabular}{@{}c@{}}%
\phantom{0}\\\phantom{0}\\45/2
\end{tabular}\endgroup%
{$\left.\llap{\phantom{%
\begingroup \smaller\smaller\smaller\begin{tabular}{@{}c@{}}%
\phantom{0}\\\phantom{0}\\\phantom{0}
\end{tabular}\endgroup%
}}\!\right]$}%
{$\left[\!\llap{\phantom{%
\begingroup \smaller\smaller\smaller\begin{tabular}{@{}c@{}}%
0\\0\\0
\end{tabular}\endgroup%
}}\right.$}%
\begingroup \smaller\smaller\smaller\begin{tabular}{@{}c@{}}%
30\\-11\\-1
\end{tabular}\endgroup%
\kern3pt%
\begingroup \smaller\smaller\smaller\begin{tabular}{@{}c@{}}%
9\\-3\\-1
\end{tabular}\endgroup%
\kern3pt%
\begingroup \smaller\smaller\smaller\begin{tabular}{@{}c@{}}%
30\\-7\\-5
\end{tabular}\endgroup%
\kern3pt%
\begingroup \smaller\smaller\smaller\begin{tabular}{@{}c@{}}%
30\\-4\\-6
\end{tabular}\endgroup%
\kern3pt%
\begingroup \smaller\smaller\smaller\begin{tabular}{@{}c@{}}%
90\\-3\\-19
\end{tabular}\endgroup%
\kern3pt%
\begingroup \smaller\smaller\smaller\begin{tabular}{@{}c@{}}%
90\\3\\-19
\end{tabular}\endgroup%
\kern3pt%
\begingroup \smaller\smaller\smaller\begin{tabular}{@{}c@{}}%
5\\1\\-1
\end{tabular}\endgroup%
\kern3pt%
\begingroup \smaller\smaller\smaller\begin{tabular}{@{}c@{}}%
90\\27\\-11
\end{tabular}\endgroup%
\kern3pt%
\begingroup \smaller\smaller\smaller\begin{tabular}{@{}c@{}}%
90\\30\\-8
\end{tabular}\endgroup%
\kern3pt%
\begingroup \smaller\smaller\smaller\begin{tabular}{@{}c@{}}%
5\\2\\0
\end{tabular}\endgroup%
{$\left.\llap{\phantom{%
\begingroup \smaller\smaller\smaller\begin{tabular}{@{}c@{}}%
0\\0\\0
\end{tabular}\endgroup%
}}\!\right]$}%
}%
\ifdim\wd\matricesbox>\halfwidth\myboxwidth=\hsize\else\myboxwidth=\halfwidth\fi
\vbox{%
\ifdim\myboxwidth=\hsize
\setbox\onelinebox=\hbox{%
\vbox{\hbox{%
$\Pi_{19,13}$ spans $L_{16.13}$%
}\hbox{%
$36322\slashthree223632232|2322\rtimes D_{2}$%
}%
}%
\hfill\copy\matricesbox
}%
\ifdim\wd\onelinebox>\myboxwidth
\hbox to \myboxwidth{%
$\Pi_{19,13}$ spans $L_{16.13}$%
\hfil
$36322\slashthree223632232|2322\rtimes D_{2}$%
}%
\box\matricesbox
\else
\hbox to \myboxwidth{%
\unhbox\onelinebox
}%
\fi
\else
\hbox to \myboxwidth{%
$\Pi_{19,13}$ spans $L_{16.13}$%
\hfil}%
\hbox to \myboxwidth{%
$36322\slashthree223632232|2322\rtimes D_{2}$%
\hfil}%
\box\matricesbox
\fi
}%
\hfill\discretionary{}{}{}%
\setbox\matricesbox=\hbox{%
{$\left[\!\llap{\phantom{%
\begingroup \smaller\smaller\smaller\begin{tabular}{@{}c@{}}%
\phantom{0}\\\phantom{0}\\\phantom{0}
\end{tabular}\endgroup%
}}\right.$}%
\begingroup \smaller\smaller\smaller\begin{tabular}{@{}c@{}}%
-1\\\phantom{0}\\\phantom{0}
\end{tabular}\endgroup%
\kern3pt%
\begingroup \smaller\smaller\smaller\begin{tabular}{@{}c@{}}%
\phantom{0}\\45/2\\\phantom{0}
\end{tabular}\endgroup%
\kern3pt%
\begingroup \smaller\smaller\smaller\begin{tabular}{@{}c@{}}%
\phantom{0}\\\phantom{0}\\15/2
\end{tabular}\endgroup%
{$\left.\llap{\phantom{%
\begingroup \smaller\smaller\smaller\begin{tabular}{@{}c@{}}%
\phantom{0}\\\phantom{0}\\\phantom{0}
\end{tabular}\endgroup%
}}\!\right]$}%
{$\left[\!\llap{\phantom{%
\begingroup \smaller\smaller\smaller\begin{tabular}{@{}c@{}}%
0\\0\\0
\end{tabular}\endgroup%
}}\right.$}%
\begingroup \smaller\smaller\smaller\begin{tabular}{@{}c@{}}%
90\\19\\3
\end{tabular}\endgroup%
\kern3pt%
\begingroup \smaller\smaller\smaller\begin{tabular}{@{}c@{}}%
30\\6\\4
\end{tabular}\endgroup%
\kern3pt%
\begingroup \smaller\smaller\smaller\begin{tabular}{@{}c@{}}%
30\\5\\7
\end{tabular}\endgroup%
\kern3pt%
\begingroup \smaller\smaller\smaller\begin{tabular}{@{}c@{}}%
9\\1\\3
\end{tabular}\endgroup%
\kern3pt%
\begingroup \smaller\smaller\smaller\begin{tabular}{@{}c@{}}%
30\\1\\11
\end{tabular}\endgroup%
\kern3pt%
\begingroup \smaller\smaller\smaller\begin{tabular}{@{}c@{}}%
30\\-1\\11
\end{tabular}\endgroup%
\kern3pt%
\begingroup \smaller\smaller\smaller\begin{tabular}{@{}c@{}}%
90\\-8\\30
\end{tabular}\endgroup%
\kern3pt%
\begingroup \smaller\smaller\smaller\begin{tabular}{@{}c@{}}%
90\\-11\\27
\end{tabular}\endgroup%
\kern3pt%
\begingroup \smaller\smaller\smaller\begin{tabular}{@{}c@{}}%
5\\-1\\1
\end{tabular}\endgroup%
\kern3pt%
\begingroup \smaller\smaller\smaller\begin{tabular}{@{}c@{}}%
9\\-2\\0
\end{tabular}\endgroup%
{$\left.\llap{\phantom{%
\begingroup \smaller\smaller\smaller\begin{tabular}{@{}c@{}}%
0\\0\\0
\end{tabular}\endgroup%
}}\!\right]$}%
}%
\ifdim\wd\matricesbox>\halfwidth\myboxwidth=\hsize\else\myboxwidth=\halfwidth\fi
\vbox{%
\ifdim\myboxwidth=\hsize
\setbox\onelinebox=\hbox{%
\vbox{\hbox{%
$\Pi_{19,14}$ spans $L_{16.13}$%
}\hbox{%
$\slashthree632236322|223632236\rtimes D_{2}$%
}%
}%
\hfill\copy\matricesbox
}%
\ifdim\wd\onelinebox>\myboxwidth
\hbox to \myboxwidth{%
$\Pi_{19,14}$ spans $L_{16.13}$%
\hfil
$\slashthree632236322|223632236\rtimes D_{2}$%
}%
\box\matricesbox
\else
\hbox to \myboxwidth{%
\unhbox\onelinebox
}%
\fi
\else
\hbox to \myboxwidth{%
$\Pi_{19,14}$ spans $L_{16.13}$%
\hfil}%
\hbox to \myboxwidth{%
$\slashthree632236322|223632236\rtimes D_{2}$%
\hfil}%
\box\matricesbox
\fi
}%
\hfill\discretionary{}{}{}%
\setbox\matricesbox=\hbox{%
{$\left[\!\llap{\phantom{%
\begingroup \smaller\smaller\smaller\begin{tabular}{@{}c@{}}%
\phantom{0}\\\phantom{0}\\\phantom{0}
\end{tabular}\endgroup%
}}\right.$}%
\begingroup \smaller\smaller\smaller\begin{tabular}{@{}c@{}}%
-1\\\phantom{0}\\\phantom{0}
\end{tabular}\endgroup%
\kern3pt%
\begingroup \smaller\smaller\smaller\begin{tabular}{@{}c@{}}%
\phantom{0}\\45/2\\\phantom{0}
\end{tabular}\endgroup%
\kern3pt%
\begingroup \smaller\smaller\smaller\begin{tabular}{@{}c@{}}%
\phantom{0}\\\phantom{0}\\15/2
\end{tabular}\endgroup%
{$\left.\llap{\phantom{%
\begingroup \smaller\smaller\smaller\begin{tabular}{@{}c@{}}%
\phantom{0}\\\phantom{0}\\\phantom{0}
\end{tabular}\endgroup%
}}\!\right]$}%
{$\left[\!\llap{\phantom{%
\begingroup \smaller\smaller\smaller\begin{tabular}{@{}c@{}}%
0\\0\\0
\end{tabular}\endgroup%
}}\right.$}%
\begingroup \smaller\smaller\smaller\begin{tabular}{@{}c@{}}%
9\\-2\\0
\end{tabular}\endgroup%
\kern3pt%
\begingroup \smaller\smaller\smaller\begin{tabular}{@{}c@{}}%
30\\-6\\4
\end{tabular}\endgroup%
\kern3pt%
\begingroup \smaller\smaller\smaller\begin{tabular}{@{}c@{}}%
30\\-5\\7
\end{tabular}\endgroup%
\kern3pt%
\begingroup \smaller\smaller\smaller\begin{tabular}{@{}c@{}}%
90\\-11\\27
\end{tabular}\endgroup%
\kern3pt%
\begingroup \smaller\smaller\smaller\begin{tabular}{@{}c@{}}%
90\\-8\\30
\end{tabular}\endgroup%
\kern3pt%
\begingroup \smaller\smaller\smaller\begin{tabular}{@{}c@{}}%
5\\0\\2
\end{tabular}\endgroup%
\kern3pt%
\begingroup \smaller\smaller\smaller\begin{tabular}{@{}c@{}}%
90\\8\\30
\end{tabular}\endgroup%
\kern3pt%
\begingroup \smaller\smaller\smaller\begin{tabular}{@{}c@{}}%
90\\11\\27
\end{tabular}\endgroup%
\kern3pt%
\begingroup \smaller\smaller\smaller\begin{tabular}{@{}c@{}}%
5\\1\\1
\end{tabular}\endgroup%
\kern3pt%
\begingroup \smaller\smaller\smaller\begin{tabular}{@{}c@{}}%
90\\19\\3
\end{tabular}\endgroup%
{$\left.\llap{\phantom{%
\begingroup \smaller\smaller\smaller\begin{tabular}{@{}c@{}}%
0\\0\\0
\end{tabular}\endgroup%
}}\!\right]$}%
}%
\ifdim\wd\matricesbox>\halfwidth\myboxwidth=\hsize\else\myboxwidth=\halfwidth\fi
\vbox{%
\ifdim\myboxwidth=\hsize
\setbox\onelinebox=\hbox{%
\vbox{\hbox{%
$\Pi_{19,15}$ spans $L_{16.13}$%
}\hbox{%
$3632|236322322\slashthree22322\rtimes D_{2}$%
}%
}%
\hfill\copy\matricesbox
}%
\ifdim\wd\onelinebox>\myboxwidth
\hbox to \myboxwidth{%
$\Pi_{19,15}$ spans $L_{16.13}$%
\hfil
$3632|236322322\slashthree22322\rtimes D_{2}$%
}%
\box\matricesbox
\else
\hbox to \myboxwidth{%
\unhbox\onelinebox
}%
\fi
\else
\hbox to \myboxwidth{%
$\Pi_{19,15}$ spans $L_{16.13}$%
\hfil}%
\hbox to \myboxwidth{%
$3632|236322322\slashthree22322\rtimes D_{2}$%
\hfil}%
\box\matricesbox
\fi
}%
\hfill\discretionary{}{}{}%
\setbox\matricesbox=\hbox{%
{$\left[\!\llap{\phantom{%
\begingroup \smaller\smaller\smaller\begin{tabular}{@{}c@{}}%
\phantom{0}\\\phantom{0}\\\phantom{0}
\end{tabular}\endgroup%
}}\right.$}%
\begingroup \smaller\smaller\smaller\begin{tabular}{@{}c@{}}%
-1\\\phantom{0}\\\phantom{0}
\end{tabular}\endgroup%
\kern3pt%
\begingroup \smaller\smaller\smaller\begin{tabular}{@{}c@{}}%
\phantom{0}\\45/2\\\phantom{0}
\end{tabular}\endgroup%
\kern3pt%
\begingroup \smaller\smaller\smaller\begin{tabular}{@{}c@{}}%
\phantom{0}\\\phantom{0}\\15/2
\end{tabular}\endgroup%
{$\left.\llap{\phantom{%
\begingroup \smaller\smaller\smaller\begin{tabular}{@{}c@{}}%
\phantom{0}\\\phantom{0}\\\phantom{0}
\end{tabular}\endgroup%
}}\!\right]$}%
{$\left[\!\llap{\phantom{%
\begingroup \smaller\smaller\smaller\begin{tabular}{@{}c@{}}%
0\\0\\0
\end{tabular}\endgroup%
}}\right.$}%
\begingroup \smaller\smaller\smaller\begin{tabular}{@{}c@{}}%
9\\2\\0
\end{tabular}\endgroup%
\kern3pt%
\begingroup \smaller\smaller\smaller\begin{tabular}{@{}c@{}}%
5\\1\\1
\end{tabular}\endgroup%
\kern3pt%
\begingroup \smaller\smaller\smaller\begin{tabular}{@{}c@{}}%
90\\11\\27
\end{tabular}\endgroup%
\kern3pt%
\begingroup \smaller\smaller\smaller\begin{tabular}{@{}c@{}}%
90\\8\\30
\end{tabular}\endgroup%
\kern3pt%
\begingroup \smaller\smaller\smaller\begin{tabular}{@{}c@{}}%
30\\1\\11
\end{tabular}\endgroup%
\kern3pt%
\begingroup \smaller\smaller\smaller\begin{tabular}{@{}c@{}}%
30\\-1\\11
\end{tabular}\endgroup%
\kern3pt%
\begingroup \smaller\smaller\smaller\begin{tabular}{@{}c@{}}%
90\\-8\\30
\end{tabular}\endgroup%
\kern3pt%
\begingroup \smaller\smaller\smaller\begin{tabular}{@{}c@{}}%
90\\-11\\27
\end{tabular}\endgroup%
\kern3pt%
\begingroup \smaller\smaller\smaller\begin{tabular}{@{}c@{}}%
5\\-1\\1
\end{tabular}\endgroup%
\kern3pt%
\begingroup \smaller\smaller\smaller\begin{tabular}{@{}c@{}}%
90\\-19\\3
\end{tabular}\endgroup%
{$\left.\llap{\phantom{%
\begingroup \smaller\smaller\smaller\begin{tabular}{@{}c@{}}%
0\\0\\0
\end{tabular}\endgroup%
}}\!\right]$}%
}%
\ifdim\wd\matricesbox>\halfwidth\myboxwidth=\hsize\else\myboxwidth=\halfwidth\fi
\vbox{%
\ifdim\myboxwidth=\hsize
\setbox\onelinebox=\hbox{%
\vbox{\hbox{%
$\Pi_{19,16}$ spans $L_{16.13}$%
}\hbox{%
$3636322|223636322\slashthree22\rtimes D_{2}$%
}%
}%
\hfill\copy\matricesbox
}%
\ifdim\wd\onelinebox>\myboxwidth
\hbox to \myboxwidth{%
$\Pi_{19,16}$ spans $L_{16.13}$%
\hfil
$3636322|223636322\slashthree22\rtimes D_{2}$%
}%
\box\matricesbox
\else
\hbox to \myboxwidth{%
\unhbox\onelinebox
}%
\fi
\else
\hbox to \myboxwidth{%
$\Pi_{19,16}$ spans $L_{16.13}$%
\hfil}%
\hbox to \myboxwidth{%
$3636322|223636322\slashthree22\rtimes D_{2}$%
\hfil}%
\box\matricesbox
\fi
}%
\hfill\discretionary{}{}{}%
\setbox\matricesbox=\hbox{%
{$\left[\!\llap{\phantom{%
\begingroup \smaller\smaller\smaller\begin{tabular}{@{}c@{}}%
\phantom{0}\\\phantom{0}\\\phantom{0}
\end{tabular}\endgroup%
}}\right.$}%
\begingroup \smaller\smaller\smaller\begin{tabular}{@{}c@{}}%
-1\\\phantom{0}\\\phantom{0}
\end{tabular}\endgroup%
\kern3pt%
\begingroup \smaller\smaller\smaller\begin{tabular}{@{}c@{}}%
\phantom{0}\\15/2\\\phantom{0}
\end{tabular}\endgroup%
\kern3pt%
\begingroup \smaller\smaller\smaller\begin{tabular}{@{}c@{}}%
\phantom{0}\\\phantom{0}\\45/2
\end{tabular}\endgroup%
{$\left.\llap{\phantom{%
\begingroup \smaller\smaller\smaller\begin{tabular}{@{}c@{}}%
\phantom{0}\\\phantom{0}\\\phantom{0}
\end{tabular}\endgroup%
}}\!\right]$}%
{$\left[\!\llap{\phantom{%
\begingroup \smaller\smaller\smaller\begin{tabular}{@{}c@{}}%
0\\0\\0
\end{tabular}\endgroup%
}}\right.$}%
\begingroup \smaller\smaller\smaller\begin{tabular}{@{}c@{}}%
30\\11\\1
\end{tabular}\endgroup%
\kern3pt%
\begingroup \smaller\smaller\smaller\begin{tabular}{@{}c@{}}%
9\\3\\1
\end{tabular}\endgroup%
\kern3pt%
\begingroup \smaller\smaller\smaller\begin{tabular}{@{}c@{}}%
5\\1\\1
\end{tabular}\endgroup%
\kern3pt%
\begingroup \smaller\smaller\smaller\begin{tabular}{@{}c@{}}%
90\\3\\19
\end{tabular}\endgroup%
\kern3pt%
\begingroup \smaller\smaller\smaller\begin{tabular}{@{}c@{}}%
90\\-3\\19
\end{tabular}\endgroup%
\kern3pt%
\begingroup \smaller\smaller\smaller\begin{tabular}{@{}c@{}}%
30\\-4\\6
\end{tabular}\endgroup%
\kern3pt%
\begingroup \smaller\smaller\smaller\begin{tabular}{@{}c@{}}%
30\\-7\\5
\end{tabular}\endgroup%
\kern3pt%
\begingroup \smaller\smaller\smaller\begin{tabular}{@{}c@{}}%
90\\-27\\11
\end{tabular}\endgroup%
\kern3pt%
\begingroup \smaller\smaller\smaller\begin{tabular}{@{}c@{}}%
90\\-30\\8
\end{tabular}\endgroup%
\kern3pt%
\begingroup \smaller\smaller\smaller\begin{tabular}{@{}c@{}}%
5\\-2\\0
\end{tabular}\endgroup%
{$\left.\llap{\phantom{%
\begingroup \smaller\smaller\smaller\begin{tabular}{@{}c@{}}%
0\\0\\0
\end{tabular}\endgroup%
}}\!\right]$}%
}%
\ifdim\wd\matricesbox>\halfwidth\myboxwidth=\hsize\else\myboxwidth=\halfwidth\fi
\vbox{%
\ifdim\myboxwidth=\hsize
\setbox\onelinebox=\hbox{%
\vbox{\hbox{%
$\Pi_{19,17}$ spans $L_{16.13}$%
}\hbox{%
$36363222\slashthree222363632|2\rtimes D_{2}$%
}%
}%
\hfill\copy\matricesbox
}%
\ifdim\wd\onelinebox>\myboxwidth
\hbox to \myboxwidth{%
$\Pi_{19,17}$ spans $L_{16.13}$%
\hfil
$36363222\slashthree222363632|2\rtimes D_{2}$%
}%
\box\matricesbox
\else
\hbox to \myboxwidth{%
\unhbox\onelinebox
}%
\fi
\else
\hbox to \myboxwidth{%
$\Pi_{19,17}$ spans $L_{16.13}$%
\hfil}%
\hbox to \myboxwidth{%
$36363222\slashthree222363632|2\rtimes D_{2}$%
\hfil}%
\box\matricesbox
\fi
}%
\hfill\discretionary{}{}{}%
\setbox\matricesbox=\hbox{%
{$\left[\!\llap{\phantom{%
\begingroup \smaller\smaller\smaller\begin{tabular}{@{}c@{}}%
\phantom{0}\\\phantom{0}\\\phantom{0}
\end{tabular}\endgroup%
}}\right.$}%
\begingroup \smaller\smaller\smaller\begin{tabular}{@{}c@{}}%
-1\\\phantom{0}\\\phantom{0}
\end{tabular}\endgroup%
\kern3pt%
\begingroup \smaller\smaller\smaller\begin{tabular}{@{}c@{}}%
\phantom{0}\\45/2\\\phantom{0}
\end{tabular}\endgroup%
\kern3pt%
\begingroup \smaller\smaller\smaller\begin{tabular}{@{}c@{}}%
\phantom{0}\\\phantom{0}\\15/2
\end{tabular}\endgroup%
{$\left.\llap{\phantom{%
\begingroup \smaller\smaller\smaller\begin{tabular}{@{}c@{}}%
\phantom{0}\\\phantom{0}\\\phantom{0}
\end{tabular}\endgroup%
}}\!\right]$}%
{$\left[\!\llap{\phantom{%
\begingroup \smaller\smaller\smaller\begin{tabular}{@{}c@{}}%
0\\0\\0
\end{tabular}\endgroup%
}}\right.$}%
\begingroup \smaller\smaller\smaller\begin{tabular}{@{}c@{}}%
90\\19\\3
\end{tabular}\endgroup%
\kern3pt%
\begingroup \smaller\smaller\smaller\begin{tabular}{@{}c@{}}%
30\\6\\4
\end{tabular}\endgroup%
\kern3pt%
\begingroup \smaller\smaller\smaller\begin{tabular}{@{}c@{}}%
30\\5\\7
\end{tabular}\endgroup%
\kern3pt%
\begingroup \smaller\smaller\smaller\begin{tabular}{@{}c@{}}%
90\\11\\27
\end{tabular}\endgroup%
\kern3pt%
\begingroup \smaller\smaller\smaller\begin{tabular}{@{}c@{}}%
90\\8\\30
\end{tabular}\endgroup%
\kern3pt%
\begingroup \smaller\smaller\smaller\begin{tabular}{@{}c@{}}%
5\\0\\2
\end{tabular}\endgroup%
\kern3pt%
\begingroup \smaller\smaller\smaller\begin{tabular}{@{}c@{}}%
9\\-1\\3
\end{tabular}\endgroup%
\kern3pt%
\begingroup \smaller\smaller\smaller\begin{tabular}{@{}c@{}}%
30\\-5\\7
\end{tabular}\endgroup%
\kern3pt%
\begingroup \smaller\smaller\smaller\begin{tabular}{@{}c@{}}%
30\\-6\\4
\end{tabular}\endgroup%
\kern3pt%
\begingroup \smaller\smaller\smaller\begin{tabular}{@{}c@{}}%
9\\-2\\0
\end{tabular}\endgroup%
{$\left.\llap{\phantom{%
\begingroup \smaller\smaller\smaller\begin{tabular}{@{}c@{}}%
0\\0\\0
\end{tabular}\endgroup%
}}\!\right]$}%
}%
\ifdim\wd\matricesbox>\halfwidth\myboxwidth=\hsize\else\myboxwidth=\halfwidth\fi
\vbox{%
\ifdim\myboxwidth=\hsize
\setbox\onelinebox=\hbox{%
\vbox{\hbox{%
$\Pi_{19,18}$ spans $L_{16.13}$%
}\hbox{%
$\slashthree636322232|232223636\rtimes D_{2}$%
}%
}%
\hfill\copy\matricesbox
}%
\ifdim\wd\onelinebox>\myboxwidth
\hbox to \myboxwidth{%
$\Pi_{19,18}$ spans $L_{16.13}$%
\hfil
$\slashthree636322232|232223636\rtimes D_{2}$%
}%
\box\matricesbox
\else
\hbox to \myboxwidth{%
\unhbox\onelinebox
}%
\fi
\else
\hbox to \myboxwidth{%
$\Pi_{19,18}$ spans $L_{16.13}$%
\hfil}%
\hbox to \myboxwidth{%
$\slashthree636322232|232223636\rtimes D_{2}$%
\hfil}%
\box\matricesbox
\fi
}%
\hfill\discretionary{}{}{}%
\setbox\matricesbox=\hbox{%
{$\left[\!\llap{\phantom{%
\begingroup \smaller\smaller\smaller\begin{tabular}{@{}c@{}}%
\phantom{0}\\\phantom{0}\\\phantom{0}
\end{tabular}\endgroup%
}}\right.$}%
\begingroup \smaller\smaller\smaller\begin{tabular}{@{}c@{}}%
-1\\\phantom{0}\\\phantom{0}
\end{tabular}\endgroup%
\kern3pt%
\begingroup \smaller\smaller\smaller\begin{tabular}{@{}c@{}}%
\phantom{0}\\45/2\\\phantom{0}
\end{tabular}\endgroup%
\kern3pt%
\begingroup \smaller\smaller\smaller\begin{tabular}{@{}c@{}}%
\phantom{0}\\\phantom{0}\\15/2
\end{tabular}\endgroup%
{$\left.\llap{\phantom{%
\begingroup \smaller\smaller\smaller\begin{tabular}{@{}c@{}}%
\phantom{0}\\\phantom{0}\\\phantom{0}
\end{tabular}\endgroup%
}}\!\right]$}%
{$\left[\!\llap{\phantom{%
\begingroup \smaller\smaller\smaller\begin{tabular}{@{}c@{}}%
0\\0\\0
\end{tabular}\endgroup%
}}\right.$}%
\begingroup \smaller\smaller\smaller\begin{tabular}{@{}c@{}}%
90\\19\\3
\end{tabular}\endgroup%
\kern3pt%
\begingroup \smaller\smaller\smaller\begin{tabular}{@{}c@{}}%
30\\6\\4
\end{tabular}\endgroup%
\kern3pt%
\begingroup \smaller\smaller\smaller\begin{tabular}{@{}c@{}}%
30\\5\\7
\end{tabular}\endgroup%
\kern3pt%
\begingroup \smaller\smaller\smaller\begin{tabular}{@{}c@{}}%
90\\11\\27
\end{tabular}\endgroup%
\kern3pt%
\begingroup \smaller\smaller\smaller\begin{tabular}{@{}c@{}}%
90\\8\\30
\end{tabular}\endgroup%
\kern3pt%
\begingroup \smaller\smaller\smaller\begin{tabular}{@{}c@{}}%
5\\0\\2
\end{tabular}\endgroup%
\kern3pt%
\begingroup \smaller\smaller\smaller\begin{tabular}{@{}c@{}}%
90\\-8\\30
\end{tabular}\endgroup%
\kern3pt%
\begingroup \smaller\smaller\smaller\begin{tabular}{@{}c@{}}%
90\\-11\\27
\end{tabular}\endgroup%
\kern3pt%
\begingroup \smaller\smaller\smaller\begin{tabular}{@{}c@{}}%
5\\-1\\1
\end{tabular}\endgroup%
\kern3pt%
\begingroup \smaller\smaller\smaller\begin{tabular}{@{}c@{}}%
9\\-2\\0
\end{tabular}\endgroup%
{$\left.\llap{\phantom{%
\begingroup \smaller\smaller\smaller\begin{tabular}{@{}c@{}}%
0\\0\\0
\end{tabular}\endgroup%
}}\!\right]$}%
}%
\ifdim\wd\matricesbox>\halfwidth\myboxwidth=\hsize\else\myboxwidth=\halfwidth\fi
\vbox{%
\ifdim\myboxwidth=\hsize
\setbox\onelinebox=\hbox{%
\vbox{\hbox{%
$\Pi_{19,19}$ spans $L_{16.13}$%
}\hbox{%
$\slashthree636322322|223223636\rtimes D_{2}$%
}%
}%
\hfill\copy\matricesbox
}%
\ifdim\wd\onelinebox>\myboxwidth
\hbox to \myboxwidth{%
$\Pi_{19,19}$ spans $L_{16.13}$%
\hfil
$\slashthree636322322|223223636\rtimes D_{2}$%
}%
\box\matricesbox
\else
\hbox to \myboxwidth{%
\unhbox\onelinebox
}%
\fi
\else
\hbox to \myboxwidth{%
$\Pi_{19,19}$ spans $L_{16.13}$%
\hfil}%
\hbox to \myboxwidth{%
$\slashthree636322322|223223636\rtimes D_{2}$%
\hfil}%
\box\matricesbox
\fi
}%
\hfill\discretionary{}{}{}%
\setbox\matricesbox=\hbox{%
{$\left[\!\llap{\phantom{%
\begingroup \smaller\smaller\smaller\begin{tabular}{@{}c@{}}%
\phantom{0}\\\phantom{0}\\\phantom{0}
\end{tabular}\endgroup%
}}\right.$}%
\begingroup \smaller\smaller\smaller\begin{tabular}{@{}c@{}}%
-5\\\phantom{0}\\\phantom{0}
\end{tabular}\endgroup%
\kern3pt%
\begingroup \smaller\smaller\smaller\begin{tabular}{@{}c@{}}%
\phantom{0}\\3/2\\\phantom{0}
\end{tabular}\endgroup%
\kern3pt%
\begingroup \smaller\smaller\smaller\begin{tabular}{@{}c@{}}%
\phantom{0}\\\phantom{0}\\9/2
\end{tabular}\endgroup%
{$\left.\llap{\phantom{%
\begingroup \smaller\smaller\smaller\begin{tabular}{@{}c@{}}%
\phantom{0}\\\phantom{0}\\\phantom{0}
\end{tabular}\endgroup%
}}\!\right]$}%
{$\left[\!\llap{\phantom{%
\begingroup \smaller\smaller\smaller\begin{tabular}{@{}c@{}}%
0\\0\\0
\end{tabular}\endgroup%
}}\right.$}%
\begingroup \smaller\smaller\smaller\begin{tabular}{@{}c@{}}%
6\\11\\-1
\end{tabular}\endgroup%
\kern3pt%
\begingroup \smaller\smaller\smaller\begin{tabular}{@{}c@{}}%
18\\30\\-8
\end{tabular}\endgroup%
\kern3pt%
\begingroup \smaller\smaller\smaller\begin{tabular}{@{}c@{}}%
18\\27\\-11
\end{tabular}\endgroup%
\kern3pt%
\begingroup \smaller\smaller\smaller\begin{tabular}{@{}c@{}}%
1\\1\\-1
\end{tabular}\endgroup%
\kern3pt%
\begingroup \smaller\smaller\smaller\begin{tabular}{@{}c@{}}%
18\\3\\-19
\end{tabular}\endgroup%
\kern3pt%
\begingroup \smaller\smaller\smaller\begin{tabular}{@{}c@{}}%
18\\-3\\-19
\end{tabular}\endgroup%
\kern3pt%
\begingroup \smaller\smaller\smaller\begin{tabular}{@{}c@{}}%
1\\-1\\-1
\end{tabular}\endgroup%
\kern3pt%
\begingroup \smaller\smaller\smaller\begin{tabular}{@{}c@{}}%
18\\-27\\-11
\end{tabular}\endgroup%
\kern3pt%
\begingroup \smaller\smaller\smaller\begin{tabular}{@{}c@{}}%
18\\-30\\-8
\end{tabular}\endgroup%
\kern3pt%
\begingroup \smaller\smaller\smaller\begin{tabular}{@{}c@{}}%
1\\-2\\0
\end{tabular}\endgroup%
{$\left.\llap{\phantom{%
\begingroup \smaller\smaller\smaller\begin{tabular}{@{}c@{}}%
0\\0\\0
\end{tabular}\endgroup%
}}\!\right]$}%
}%
\ifdim\wd\matricesbox>\halfwidth\myboxwidth=\hsize\else\myboxwidth=\halfwidth\fi
\vbox{%
\ifdim\myboxwidth=\hsize
\setbox\onelinebox=\hbox{%
\vbox{\hbox{%
$\Pi_{19,20}$ spans $L_{16.7}$%
}\hbox{%
$36\slashthree632232232|2322322\rtimes D_{2}$%
}%
}%
\hfill\copy\matricesbox
}%
\ifdim\wd\onelinebox>\myboxwidth
\hbox to \myboxwidth{%
$\Pi_{19,20}$ spans $L_{16.7}$%
\hfil
$36\slashthree632232232|2322322\rtimes D_{2}$%
}%
\box\matricesbox
\else
\hbox to \myboxwidth{%
\unhbox\onelinebox
}%
\fi
\else
\hbox to \myboxwidth{%
$\Pi_{19,20}$ spans $L_{16.7}$%
\hfil}%
\hbox to \myboxwidth{%
$36\slashthree632232232|2322322\rtimes D_{2}$%
\hfil}%
\box\matricesbox
\fi
}%
\hfill\discretionary{}{}{}%
\setbox\matricesbox=\hbox{%
{$\left[\!\llap{\phantom{%
\begingroup \smaller\smaller\smaller% [inline block 220: 26 envs, 2865 chars -> data_tex | \begin{tabular}{@{}c@{}}% \phantom{0}\\\phantom{0}\\\phantom{0}...]
\endgroup%
}}\!\right]$}%
}%
\ifdim\wd\matricesbox>\halfwidth\myboxwidth=\hsize\else\myboxwidth=\halfwidth\fi
\vbox{%
\ifdim\myboxwidth=\hsize
\setbox\onelinebox=\hbox{%
\vbox{\hbox{%
$\Pi_{19,21}$ spans $L_{16.13}$%
}\hbox{%
$3632222232236363636$%
}%
}%
\hfill\copy\matricesbox
}%
\ifdim\wd\onelinebox>\myboxwidth
\hbox to \myboxwidth{%
$\Pi_{19,21}$ spans $L_{16.13}$%
\hfil
$3632222232236363636$%
}%
\box\matricesbox
\else
\hbox to \myboxwidth{%
\unhbox\onelinebox
}%
\fi
\else
\hbox to \myboxwidth{%
$\Pi_{19,21}$ spans $L_{16.13}$%
\hfil}%
\hbox to \myboxwidth{%
$3632222232236363636$%
\hfil}%
\box\matricesbox
\fi
}%
\hfill\discretionary{}{}{}%
\setbox\matricesbox=\hbox{%
{$\left[\!\llap{\phantom{%
\begingroup \smaller\smaller\smaller% [inline block 221: 26 envs, 2862 chars -> data_tex | \begin{tabular}{@{}c@{}}% \phantom{0}\\\phantom{0}\\\phantom{0}...]
\endgroup%
}}\!\right]$}%
}%
\ifdim\wd\matricesbox>\halfwidth\myboxwidth=\hsize\else\myboxwidth=\halfwidth\fi
\vbox{%
\ifdim\myboxwidth=\hsize
\setbox\onelinebox=\hbox{%
\vbox{\hbox{%
$\Pi_{19,22}$ spans $L_{16.13}$%
}\hbox{%
$3632222236322363636$%
}%
}%
\hfill\copy\matricesbox
}%
\ifdim\wd\onelinebox>\myboxwidth
\hbox to \myboxwidth{%
$\Pi_{19,22}$ spans $L_{16.13}$%
\hfil
$3632222236322363636$%
}%
\box\matricesbox
\else
\hbox to \myboxwidth{%
\unhbox\onelinebox
}%
\fi
\else
\hbox to \myboxwidth{%
$\Pi_{19,22}$ spans $L_{16.13}$%
\hfil}%
\hbox to \myboxwidth{%
$3632222236322363636$%
\hfil}%
\box\matricesbox
\fi
}%
\hfill\discretionary{}{}{}%
\setbox\matricesbox=\hbox{%
{$\left[\!\llap{\phantom{%
\begingroup \smaller\smaller\smaller% [inline block 222: 26 envs, 2865 chars -> data_tex | \begin{tabular}{@{}c@{}}% \phantom{0}\\\phantom{0}\\\phantom{0}...]
\endgroup%
}}\!\right]$}%
}%
\ifdim\wd\matricesbox>\halfwidth\myboxwidth=\hsize\else\myboxwidth=\halfwidth\fi
\vbox{%
\ifdim\myboxwidth=\hsize
\setbox\onelinebox=\hbox{%
\vbox{\hbox{%
$\Pi_{19,23}$ spans $L_{16.13}$%
}\hbox{%
$3632222236363223636$%
}%
}%
\hfill\copy\matricesbox
}%
\ifdim\wd\onelinebox>\myboxwidth
\hbox to \myboxwidth{%
$\Pi_{19,23}$ spans $L_{16.13}$%
\hfil
$3632222236363223636$%
}%
\box\matricesbox
\else
\hbox to \myboxwidth{%
\unhbox\onelinebox
}%
\fi
\else
\hbox to \myboxwidth{%
$\Pi_{19,23}$ spans $L_{16.13}$%
\hfil}%
\hbox to \myboxwidth{%
$3632222236363223636$%
\hfil}%
\box\matricesbox
\fi
}%
\hfill\discretionary{}{}{}%
\setbox\matricesbox=\hbox{%
{$\left[\!\llap{\phantom{%
\begingroup \smaller\smaller\smaller% [inline block 223: 26 envs, 2864 chars -> data_tex | \begin{tabular}{@{}c@{}}% \phantom{0}\\\phantom{0}\\\phantom{0}...]
\endgroup%
}}\!\right]$}%
}%
\ifdim\wd\matricesbox>\halfwidth\myboxwidth=\hsize\else\myboxwidth=\halfwidth\fi
\vbox{%
\ifdim\myboxwidth=\hsize
\setbox\onelinebox=\hbox{%
\vbox{\hbox{%
$\Pi_{19,24}$ spans $L_{16.13}$%
}\hbox{%
$3632222236363632236$%
}%
}%
\hfill\copy\matricesbox
}%
\ifdim\wd\onelinebox>\myboxwidth
\hbox to \myboxwidth{%
$\Pi_{19,24}$ spans $L_{16.13}$%
\hfil
$3632222236363632236$%
}%
\box\matricesbox
\else
\hbox to \myboxwidth{%
\unhbox\onelinebox
}%
\fi
\else
\hbox to \myboxwidth{%
$\Pi_{19,24}$ spans $L_{16.13}$%
\hfil}%
\hbox to \myboxwidth{%
$3632222236363632236$%
\hfil}%
\box\matricesbox
\fi
}%
\hfill\discretionary{}{}{}%
\setbox\matricesbox=\hbox{%
{$\left[\!\llap{\phantom{%
\begingroup \smaller\smaller\smaller% [inline block 224: 26 envs, 2863 chars -> data_tex | \begin{tabular}{@{}c@{}}% \phantom{0}\\\phantom{0}\\\phantom{0}...]
\endgroup%
}}\!\right]$}%
}%
\ifdim\wd\matricesbox>\halfwidth\myboxwidth=\hsize\else\myboxwidth=\halfwidth\fi
\vbox{%
\ifdim\myboxwidth=\hsize
\setbox\onelinebox=\hbox{%
\vbox{\hbox{%
$\Pi_{19,25}$ spans $L_{16.13}$%
}\hbox{%
$3632222236363636322$%
}%
}%
\hfill\copy\matricesbox
}%
\ifdim\wd\onelinebox>\myboxwidth
\hbox to \myboxwidth{%
$\Pi_{19,25}$ spans $L_{16.13}$%
\hfil
$3632222236363636322$%
}%
\box\matricesbox
\else
\hbox to \myboxwidth{%
\unhbox\onelinebox
}%
\fi
\else
\hbox to \myboxwidth{%
$\Pi_{19,25}$ spans $L_{16.13}$%
\hfil}%
\hbox to \myboxwidth{%
$3632222236363636322$%
\hfil}%
\box\matricesbox
\fi
}%
\hfill\discretionary{}{}{}%
\setbox\matricesbox=\hbox{%
{$\left[\!\llap{\phantom{%
\begingroup \smaller\smaller\smaller% [inline block 225: 26 envs, 2862 chars -> data_tex | \begin{tabular}{@{}c@{}}% \phantom{0}\\\phantom{0}\\\phantom{0}...]
\endgroup%
}}\!\right]$}%
}%
\ifdim\wd\matricesbox>\halfwidth\myboxwidth=\hsize\else\myboxwidth=\halfwidth\fi
\vbox{%
\ifdim\myboxwidth=\hsize
\setbox\onelinebox=\hbox{%
\vbox{\hbox{%
$\Pi_{19,26}$ spans $L_{16.13}$%
}\hbox{%
$3632222322236363636$%
}%
}%
\hfill\copy\matricesbox
}%
\ifdim\wd\onelinebox>\myboxwidth
\hbox to \myboxwidth{%
$\Pi_{19,26}$ spans $L_{16.13}$%
\hfil
$3632222322236363636$%
}%
\box\matricesbox
\else
\hbox to \myboxwidth{%
\unhbox\onelinebox
}%
\fi
\else
\hbox to \myboxwidth{%
$\Pi_{19,26}$ spans $L_{16.13}$%
\hfil}%
\hbox to \myboxwidth{%
$3632222322236363636$%
\hfil}%
\box\matricesbox
\fi
}%
\hfill\discretionary{}{}{}%
\setbox\matricesbox=\hbox{%
{$\left[\!\llap{\phantom{%
\begingroup \smaller\smaller\smaller% [inline block 226: 26 envs, 2859 chars -> data_tex | \begin{tabular}{@{}c@{}}% \phantom{0}\\\phantom{0}\\\phantom{0}...]
\endgroup%
}}\!\right]$}%
}%
\ifdim\wd\matricesbox>\halfwidth\myboxwidth=\hsize\else\myboxwidth=\halfwidth\fi
\vbox{%
\ifdim\myboxwidth=\hsize
\setbox\onelinebox=\hbox{%
\vbox{\hbox{%
$\Pi_{19,27}$ spans $L_{16.13}$%
}\hbox{%
$3632222322322363636$%
}%
}%
\hfill\copy\matricesbox
}%
\ifdim\wd\onelinebox>\myboxwidth
\hbox to \myboxwidth{%
$\Pi_{19,27}$ spans $L_{16.13}$%
\hfil
$3632222322322363636$%
}%
\box\matricesbox
\else
\hbox to \myboxwidth{%
\unhbox\onelinebox
}%
\fi
\else
\hbox to \myboxwidth{%
$\Pi_{19,27}$ spans $L_{16.13}$%
\hfil}%
\hbox to \myboxwidth{%
$3632222322322363636$%
\hfil}%
\box\matricesbox
\fi
}%
\hfill\discretionary{}{}{}%
\setbox\matricesbox=\hbox{%
{$\left[\!\llap{\phantom{%
\begingroup \smaller\smaller\smaller% [inline block 227: 26 envs, 2862 chars -> data_tex | \begin{tabular}{@{}c@{}}% \phantom{0}\\\phantom{0}\\\phantom{0}...]
\endgroup%
}}\!\right]$}%
}%
\ifdim\wd\matricesbox>\halfwidth\myboxwidth=\hsize\else\myboxwidth=\halfwidth\fi
\vbox{%
\ifdim\myboxwidth=\hsize
\setbox\onelinebox=\hbox{%
\vbox{\hbox{%
$\Pi_{19,28}$ spans $L_{16.13}$%
}\hbox{%
$3632222322363223636$%
}%
}%
\hfill\copy\matricesbox
}%
\ifdim\wd\onelinebox>\myboxwidth
\hbox to \myboxwidth{%
$\Pi_{19,28}$ spans $L_{16.13}$%
\hfil
$3632222322363223636$%
}%
\box\matricesbox
\else
\hbox to \myboxwidth{%
\unhbox\onelinebox
}%
\fi
\else
\hbox to \myboxwidth{%
$\Pi_{19,28}$ spans $L_{16.13}$%
\hfil}%
\hbox to \myboxwidth{%
$3632222322363223636$%
\hfil}%
\box\matricesbox
\fi
}%
\hfill\discretionary{}{}{}%
\setbox\matricesbox=\hbox{%
{$\left[\!\llap{\phantom{%
\begingroup \smaller\smaller\smaller% [inline block 228: 26 envs, 2861 chars -> data_tex | \begin{tabular}{@{}c@{}}% \phantom{0}\\\phantom{0}\\\phantom{0}...]
\endgroup%
}}\!\right]$}%
}%
\ifdim\wd\matricesbox>\halfwidth\myboxwidth=\hsize\else\myboxwidth=\halfwidth\fi
\vbox{%
\ifdim\myboxwidth=\hsize
\setbox\onelinebox=\hbox{%
\vbox{\hbox{%
$\Pi_{19,29}$ spans $L_{16.13}$%
}\hbox{%
$3632222322363632236$%
}%
}%
\hfill\copy\matricesbox
}%
\ifdim\wd\onelinebox>\myboxwidth
\hbox to \myboxwidth{%
$\Pi_{19,29}$ spans $L_{16.13}$%
\hfil
$3632222322363632236$%
}%
\box\matricesbox
\else
\hbox to \myboxwidth{%
\unhbox\onelinebox
}%
\fi
\else
\hbox to \myboxwidth{%
$\Pi_{19,29}$ spans $L_{16.13}$%
\hfil}%
\hbox to \myboxwidth{%
$3632222322363632236$%
\hfil}%
\box\matricesbox
\fi
}%
\hfill\discretionary{}{}{}%
\setbox\matricesbox=\hbox{%
{$\left[\!\llap{\phantom{%
\begingroup \smaller\smaller\smaller% [inline block 229: 26 envs, 2861 chars -> data_tex | \begin{tabular}{@{}c@{}}% \phantom{0}\\\phantom{0}\\\phantom{0}...]
\endgroup%
}}\!\right]$}%
}%
\ifdim\wd\matricesbox>\halfwidth\myboxwidth=\hsize\else\myboxwidth=\halfwidth\fi
\vbox{%
\ifdim\myboxwidth=\hsize
\setbox\onelinebox=\hbox{%
\vbox{\hbox{%
$\Pi_{19,30}$ spans $L_{16.13}$%
}\hbox{%
$3632222322363636322$%
}%
}%
\hfill\copy\matricesbox
}%
\ifdim\wd\onelinebox>\myboxwidth
\hbox to \myboxwidth{%
$\Pi_{19,30}$ spans $L_{16.13}$%
\hfil
$3632222322363636322$%
}%
\box\matricesbox
\else
\hbox to \myboxwidth{%
\unhbox\onelinebox
}%
\fi
\else
\hbox to \myboxwidth{%
$\Pi_{19,30}$ spans $L_{16.13}$%
\hfil}%
\hbox to \myboxwidth{%
$3632222322363636322$%
\hfil}%
\box\matricesbox
\fi
}%
\hfill\discretionary{}{}{}%
\setbox\matricesbox=\hbox{%
{$\left[\!\llap{\phantom{%
\begingroup \smaller\smaller\smaller% [inline block 230: 26 envs, 2862 chars -> data_tex | \begin{tabular}{@{}c@{}}% \phantom{0}\\\phantom{0}\\\phantom{0}...]
\endgroup%
}}\!\right]$}%
}%
\ifdim\wd\matricesbox>\halfwidth\myboxwidth=\hsize\else\myboxwidth=\halfwidth\fi
\vbox{%
\ifdim\myboxwidth=\hsize
\setbox\onelinebox=\hbox{%
\vbox{\hbox{%
$\Pi_{19,31}$ spans $L_{16.13}$%
}\hbox{%
$3632222363222363636$%
}%
}%
\hfill\copy\matricesbox
}%
\ifdim\wd\onelinebox>\myboxwidth
\hbox to \myboxwidth{%
$\Pi_{19,31}$ spans $L_{16.13}$%
\hfil
$3632222363222363636$%
}%
\box\matricesbox
\else
\hbox to \myboxwidth{%
\unhbox\onelinebox
}%
\fi
\else
\hbox to \myboxwidth{%
$\Pi_{19,31}$ spans $L_{16.13}$%
\hfil}%
\hbox to \myboxwidth{%
$3632222363222363636$%
\hfil}%
\box\matricesbox
\fi
}%
\hfill\discretionary{}{}{}%
\setbox\matricesbox=\hbox{%
{$\left[\!\llap{\phantom{%
\begingroup \smaller\smaller\smaller% [inline block 231: 26 envs, 2865 chars -> data_tex | \begin{tabular}{@{}c@{}}% \phantom{0}\\\phantom{0}\\\phantom{0}...]
\endgroup%
}}\!\right]$}%
}%
\ifdim\wd\matricesbox>\halfwidth\myboxwidth=\hsize\else\myboxwidth=\halfwidth\fi
\vbox{%
\ifdim\myboxwidth=\hsize
\setbox\onelinebox=\hbox{%
\vbox{\hbox{%
$\Pi_{19,32}$ spans $L_{16.13}$%
}\hbox{%
$3632222363223223636$%
}%
}%
\hfill\copy\matricesbox
}%
\ifdim\wd\onelinebox>\myboxwidth
\hbox to \myboxwidth{%
$\Pi_{19,32}$ spans $L_{16.13}$%
\hfil
$3632222363223223636$%
}%
\box\matricesbox
\else
\hbox to \myboxwidth{%
\unhbox\onelinebox
}%
\fi
\else
\hbox to \myboxwidth{%
$\Pi_{19,32}$ spans $L_{16.13}$%
\hfil}%
\hbox to \myboxwidth{%
$3632222363223223636$%
\hfil}%
\box\matricesbox
\fi
}%
\hfill\discretionary{}{}{}%
\setbox\matricesbox=\hbox{%
{$\left[\!\llap{\phantom{%
\begingroup \smaller\smaller\smaller% [inline block 232: 26 envs, 2864 chars -> data_tex | \begin{tabular}{@{}c@{}}% \phantom{0}\\\phantom{0}\\\phantom{0}...]
\endgroup%
}}\!\right]$}%
}%
\ifdim\wd\matricesbox>\halfwidth\myboxwidth=\hsize\else\myboxwidth=\halfwidth\fi
\vbox{%
\ifdim\myboxwidth=\hsize
\setbox\onelinebox=\hbox{%
\vbox{\hbox{%
$\Pi_{19,33}$ spans $L_{16.13}$%
}\hbox{%
$3632222363223632236$%
}%
}%
\hfill\copy\matricesbox
}%
\ifdim\wd\onelinebox>\myboxwidth
\hbox to \myboxwidth{%
$\Pi_{19,33}$ spans $L_{16.13}$%
\hfil
$3632222363223632236$%
}%
\box\matricesbox
\else
\hbox to \myboxwidth{%
\unhbox\onelinebox
}%
\fi
\else
\hbox to \myboxwidth{%
$\Pi_{19,33}$ spans $L_{16.13}$%
\hfil}%
\hbox to \myboxwidth{%
$3632222363223632236$%
\hfil}%
\box\matricesbox
\fi
}%
\hfill\discretionary{}{}{}%
\setbox\matricesbox=\hbox{%
{$\left[\!\llap{\phantom{%
\begingroup \smaller\smaller\smaller% [inline block 233: 26 envs, 2862 chars -> data_tex | \begin{tabular}{@{}c@{}}% \phantom{0}\\\phantom{0}\\\phantom{0}...]
\endgroup%
}}\!\right]$}%
}%
\ifdim\wd\matricesbox>\halfwidth\myboxwidth=\hsize\else\myboxwidth=\halfwidth\fi
\vbox{%
\ifdim\myboxwidth=\hsize
\setbox\onelinebox=\hbox{%
\vbox{\hbox{%
$\Pi_{19,34}$ spans $L_{16.13}$%
}\hbox{%
$3632222363632223636$%
}%
}%
\hfill\copy\matricesbox
}%
\ifdim\wd\onelinebox>\myboxwidth
\hbox to \myboxwidth{%
$\Pi_{19,34}$ spans $L_{16.13}$%
\hfil
$3632222363632223636$%
}%
\box\matricesbox
\else
\hbox to \myboxwidth{%
\unhbox\onelinebox
}%
\fi
\else
\hbox to \myboxwidth{%
$\Pi_{19,34}$ spans $L_{16.13}$%
\hfil}%
\hbox to \myboxwidth{%
$3632222363632223636$%
\hfil}%
\box\matricesbox
\fi
}%
\hfill\discretionary{}{}{}%
\setbox\matricesbox=\hbox{%
{$\left[\!\llap{\phantom{%
\begingroup \smaller\smaller\smaller% [inline block 234: 26 envs, 2863 chars -> data_tex | \begin{tabular}{@{}c@{}}% \phantom{0}\\\phantom{0}\\\phantom{0}...]
\endgroup%
}}\!\right]$}%
}%
\ifdim\wd\matricesbox>\halfwidth\myboxwidth=\hsize\else\myboxwidth=\halfwidth\fi
\vbox{%
\ifdim\myboxwidth=\hsize
\setbox\onelinebox=\hbox{%
\vbox{\hbox{%
$\Pi_{19,35}$ spans $L_{16.13}$%
}\hbox{%
$3632223222236363636$%
}%
}%
\hfill\copy\matricesbox
}%
\ifdim\wd\onelinebox>\myboxwidth
\hbox to \myboxwidth{%
$\Pi_{19,35}$ spans $L_{16.13}$%
\hfil
$3632223222236363636$%
}%
\box\matricesbox
\else
\hbox to \myboxwidth{%
\unhbox\onelinebox
}%
\fi
\else
\hbox to \myboxwidth{%
$\Pi_{19,35}$ spans $L_{16.13}$%
\hfil}%
\hbox to \myboxwidth{%
$3632223222236363636$%
\hfil}%
\box\matricesbox
\fi
}%
\hfill\discretionary{}{}{}%
\setbox\matricesbox=\hbox{%
{$\left[\!\llap{\phantom{%
\begingroup \smaller\smaller\smaller% [inline block 235: 26 envs, 2860 chars -> data_tex | \begin{tabular}{@{}c@{}}% \phantom{0}\\\phantom{0}\\\phantom{0}...]
\endgroup%
}}\!\right]$}%
}%
\ifdim\wd\matricesbox>\halfwidth\myboxwidth=\hsize\else\myboxwidth=\halfwidth\fi
\vbox{%
\ifdim\myboxwidth=\hsize
\setbox\onelinebox=\hbox{%
\vbox{\hbox{%
$\Pi_{19,36}$ spans $L_{16.13}$%
}\hbox{%
$3632223222322363636$%
}%
}%
\hfill\copy\matricesbox
}%
\ifdim\wd\onelinebox>\myboxwidth
\hbox to \myboxwidth{%
$\Pi_{19,36}$ spans $L_{16.13}$%
\hfil
$3632223222322363636$%
}%
\box\matricesbox
\else
\hbox to \myboxwidth{%
\unhbox\onelinebox
}%
\fi
\else
\hbox to \myboxwidth{%
$\Pi_{19,36}$ spans $L_{16.13}$%
\hfil}%
\hbox to \myboxwidth{%
$3632223222322363636$%
\hfil}%
\box\matricesbox
\fi
}%
\hfill\discretionary{}{}{}%
\setbox\matricesbox=\hbox{%
{$\left[\!\llap{\phantom{%
\begingroup \smaller\smaller\smaller% [inline block 236: 26 envs, 2863 chars -> data_tex | \begin{tabular}{@{}c@{}}% \phantom{0}\\\phantom{0}\\\phantom{0}...]
\endgroup%
}}\!\right]$}%
}%
\ifdim\wd\matricesbox>\halfwidth\myboxwidth=\hsize\else\myboxwidth=\halfwidth\fi
\vbox{%
\ifdim\myboxwidth=\hsize
\setbox\onelinebox=\hbox{%
\vbox{\hbox{%
$\Pi_{19,37}$ spans $L_{16.13}$%
}\hbox{%
$3632223222363223636$%
}%
}%
\hfill\copy\matricesbox
}%
\ifdim\wd\onelinebox>\myboxwidth
\hbox to \myboxwidth{%
$\Pi_{19,37}$ spans $L_{16.13}$%
\hfil
$3632223222363223636$%
}%
\box\matricesbox
\else
\hbox to \myboxwidth{%
\unhbox\onelinebox
}%
\fi
\else
\hbox to \myboxwidth{%
$\Pi_{19,37}$ spans $L_{16.13}$%
\hfil}%
\hbox to \myboxwidth{%
$3632223222363223636$%
\hfil}%
\box\matricesbox
\fi
}%
\hfill\discretionary{}{}{}%
\setbox\matricesbox=\hbox{%
{$\left[\!\llap{\phantom{%
\begingroup \smaller\smaller\smaller% [inline block 237: 26 envs, 2866 chars -> data_tex | \begin{tabular}{@{}c@{}}% \phantom{0}\\\phantom{0}\\\phantom{0}...]
\endgroup%
}}\!\right]$}%
}%
\ifdim\wd\matricesbox>\halfwidth\myboxwidth=\hsize\else\myboxwidth=\halfwidth\fi
\vbox{%
\ifdim\myboxwidth=\hsize
\setbox\onelinebox=\hbox{%
\vbox{\hbox{%
$\Pi_{19,38}$ spans $L_{16.13}$%
}\hbox{%
$3632223223223223636$%
}%
}%
\hfill\copy\matricesbox
}%
\ifdim\wd\onelinebox>\myboxwidth
\hbox to \myboxwidth{%
$\Pi_{19,38}$ spans $L_{16.13}$%
\hfil
$3632223223223223636$%
}%
\box\matricesbox
\else
\hbox to \myboxwidth{%
\unhbox\onelinebox
}%
\fi
\else
\hbox to \myboxwidth{%
$\Pi_{19,38}$ spans $L_{16.13}$%
\hfil}%
\hbox to \myboxwidth{%
$3632223223223223636$%
\hfil}%
\box\matricesbox
\fi
}%
\hfill\discretionary{}{}{}%
\setbox\matricesbox=\hbox{%
{$\left[\!\llap{\phantom{%
\begingroup \smaller\smaller\smaller% [inline block 238: 26 envs, 2865 chars -> data_tex | \begin{tabular}{@{}c@{}}% \phantom{0}\\\phantom{0}\\\phantom{0}...]
\endgroup%
}}\!\right]$}%
}%
\ifdim\wd\matricesbox>\halfwidth\myboxwidth=\hsize\else\myboxwidth=\halfwidth\fi
\vbox{%
\ifdim\myboxwidth=\hsize
\setbox\onelinebox=\hbox{%
\vbox{\hbox{%
$\Pi_{19,39}$ spans $L_{16.13}$%
}\hbox{%
$3632223223223632236$%
}%
}%
\hfill\copy\matricesbox
}%
\ifdim\wd\onelinebox>\myboxwidth
\hbox to \myboxwidth{%
$\Pi_{19,39}$ spans $L_{16.13}$%
\hfil
$3632223223223632236$%
}%
\box\matricesbox
\else
\hbox to \myboxwidth{%
\unhbox\onelinebox
}%
\fi
\else
\hbox to \myboxwidth{%
$\Pi_{19,39}$ spans $L_{16.13}$%
\hfil}%
\hbox to \myboxwidth{%
$3632223223223632236$%
\hfil}%
\box\matricesbox
\fi
}%
\hfill\discretionary{}{}{}%
\setbox\matricesbox=\hbox{%
{$\left[\!\llap{\phantom{%
\begingroup \smaller\smaller\smaller% [inline block 239: 26 envs, 2866 chars -> data_tex | \begin{tabular}{@{}c@{}}% \phantom{0}\\\phantom{0}\\\phantom{0}...]
\endgroup%
}}\!\right]$}%
}%
\ifdim\wd\matricesbox>\halfwidth\myboxwidth=\hsize\else\myboxwidth=\halfwidth\fi
\vbox{%
\ifdim\myboxwidth=\hsize
\setbox\onelinebox=\hbox{%
\vbox{\hbox{%
$\Pi_{19,40}$ spans $L_{16.13}$%
}\hbox{%
$3632223223223636322$%
}%
}%
\hfill\copy\matricesbox
}%
\ifdim\wd\onelinebox>\myboxwidth
\hbox to \myboxwidth{%
$\Pi_{19,40}$ spans $L_{16.13}$%
\hfil
$3632223223223636322$%
}%
\box\matricesbox
\else
\hbox to \myboxwidth{%
\unhbox\onelinebox
}%
\fi
\else
\hbox to \myboxwidth{%
$\Pi_{19,40}$ spans $L_{16.13}$%
\hfil}%
\hbox to \myboxwidth{%
$3632223223223636322$%
\hfil}%
\box\matricesbox
\fi
}%
\hfill\discretionary{}{}{}%
\setbox\matricesbox=\hbox{%
{$\left[\!\llap{\phantom{%
\begingroup \smaller\smaller\smaller% [inline block 240: 26 envs, 2863 chars -> data_tex | \begin{tabular}{@{}c@{}}% \phantom{0}\\\phantom{0}\\\phantom{0}...]
\endgroup%
}}\!\right]$}%
}%
\ifdim\wd\matricesbox>\halfwidth\myboxwidth=\hsize\else\myboxwidth=\halfwidth\fi
\vbox{%
\ifdim\myboxwidth=\hsize
\setbox\onelinebox=\hbox{%
\vbox{\hbox{%
$\Pi_{19,41}$ spans $L_{16.13}$%
}\hbox{%
$3632223223632223636$%
}%
}%
\hfill\copy\matricesbox
}%
\ifdim\wd\onelinebox>\myboxwidth
\hbox to \myboxwidth{%
$\Pi_{19,41}$ spans $L_{16.13}$%
\hfil
$3632223223632223636$%
}%
\box\matricesbox
\else
\hbox to \myboxwidth{%
\unhbox\onelinebox
}%
\fi
\else
\hbox to \myboxwidth{%
$\Pi_{19,41}$ spans $L_{16.13}$%
\hfil}%
\hbox to \myboxwidth{%
$3632223223632223636$%
\hfil}%
\box\matricesbox
\fi
}%
\hfill\discretionary{}{}{}%
\setbox\matricesbox=\hbox{%
{$\left[\!\llap{\phantom{%
\begingroup \smaller\smaller\smaller% [inline block 241: 26 envs, 2862 chars -> data_tex | \begin{tabular}{@{}c@{}}% \phantom{0}\\\phantom{0}\\\phantom{0}...]
\endgroup%
}}\!\right]$}%
}%
\ifdim\wd\matricesbox>\halfwidth\myboxwidth=\hsize\else\myboxwidth=\halfwidth\fi
\vbox{%
\ifdim\myboxwidth=\hsize
\setbox\onelinebox=\hbox{%
\vbox{\hbox{%
$\Pi_{19,42}$ spans $L_{16.13}$%
}\hbox{%
$3632223223632232236$%
}%
}%
\hfill\copy\matricesbox
}%
\ifdim\wd\onelinebox>\myboxwidth
\hbox to \myboxwidth{%
$\Pi_{19,42}$ spans $L_{16.13}$%
\hfil
$3632223223632232236$%
}%
\box\matricesbox
\else
\hbox to \myboxwidth{%
\unhbox\onelinebox
}%
\fi
\else
\hbox to \myboxwidth{%
$\Pi_{19,42}$ spans $L_{16.13}$%
\hfil}%
\hbox to \myboxwidth{%
$3632223223632232236$%
\hfil}%
\box\matricesbox
\fi
}%
\hfill\discretionary{}{}{}%
\setbox\matricesbox=\hbox{%
{$\left[\!\llap{\phantom{%
\begingroup \smaller\smaller\smaller% [inline block 242: 26 envs, 2863 chars -> data_tex | \begin{tabular}{@{}c@{}}% \phantom{0}\\\phantom{0}\\\phantom{0}...]
\endgroup%
}}\!\right]$}%
}%
\ifdim\wd\matricesbox>\halfwidth\myboxwidth=\hsize\else\myboxwidth=\halfwidth\fi
\vbox{%
\ifdim\myboxwidth=\hsize
\setbox\onelinebox=\hbox{%
\vbox{\hbox{%
$\Pi_{19,43}$ spans $L_{16.13}$%
}\hbox{%
$3632223223632236322$%
}%
}%
\hfill\copy\matricesbox
}%
\ifdim\wd\onelinebox>\myboxwidth
\hbox to \myboxwidth{%
$\Pi_{19,43}$ spans $L_{16.13}$%
\hfil
$3632223223632236322$%
}%
\box\matricesbox
\else
\hbox to \myboxwidth{%
\unhbox\onelinebox
}%
\fi
\else
\hbox to \myboxwidth{%
$\Pi_{19,43}$ spans $L_{16.13}$%
\hfil}%
\hbox to \myboxwidth{%
$3632223223632236322$%
\hfil}%
\box\matricesbox
\fi
}%
\hfill\discretionary{}{}{}%
\setbox\matricesbox=\hbox{%
{$\left[\!\llap{\phantom{%
\begingroup \smaller\smaller\smaller% [inline block 243: 26 envs, 2865 chars -> data_tex | \begin{tabular}{@{}c@{}}% \phantom{0}\\\phantom{0}\\\phantom{0}...]
\endgroup%
}}\!\right]$}%
}%
\ifdim\wd\matricesbox>\halfwidth\myboxwidth=\hsize\else\myboxwidth=\halfwidth\fi
\vbox{%
\ifdim\myboxwidth=\hsize
\setbox\onelinebox=\hbox{%
\vbox{\hbox{%
$\Pi_{19,44}$ spans $L_{16.13}$%
}\hbox{%
$3632223223636322236$%
}%
}%
\hfill\copy\matricesbox
}%
\ifdim\wd\onelinebox>\myboxwidth
\hbox to \myboxwidth{%
$\Pi_{19,44}$ spans $L_{16.13}$%
\hfil
$3632223223636322236$%
}%
\box\matricesbox
\else
\hbox to \myboxwidth{%
\unhbox\onelinebox
}%
\fi
\else
\hbox to \myboxwidth{%
$\Pi_{19,44}$ spans $L_{16.13}$%
\hfil}%
\hbox to \myboxwidth{%
$3632223223636322236$%
\hfil}%
\box\matricesbox
\fi
}%
\hfill\discretionary{}{}{}%
\setbox\matricesbox=\hbox{%
{$\left[\!\llap{\phantom{%
\begingroup \smaller\smaller\smaller% [inline block 244: 26 envs, 2865 chars -> data_tex | \begin{tabular}{@{}c@{}}% \phantom{0}\\\phantom{0}\\\phantom{0}...]
\endgroup%
}}\!\right]$}%
}%
\ifdim\wd\matricesbox>\halfwidth\myboxwidth=\hsize\else\myboxwidth=\halfwidth\fi
\vbox{%
\ifdim\myboxwidth=\hsize
\setbox\onelinebox=\hbox{%
\vbox{\hbox{%
$\Pi_{19,45}$ spans $L_{16.13}$%
}\hbox{%
$3632223223636322322$%
}%
}%
\hfill\copy\matricesbox
}%
\ifdim\wd\onelinebox>\myboxwidth
\hbox to \myboxwidth{%
$\Pi_{19,45}$ spans $L_{16.13}$%
\hfil
$3632223223636322322$%
}%
\box\matricesbox
\else
\hbox to \myboxwidth{%
\unhbox\onelinebox
}%
\fi
\else
\hbox to \myboxwidth{%
$\Pi_{19,45}$ spans $L_{16.13}$%
\hfil}%
\hbox to \myboxwidth{%
$3632223223636322322$%
\hfil}%
\box\matricesbox
\fi
}%
\hfill\discretionary{}{}{}%
\setbox\matricesbox=\hbox{%
{$\left[\!\llap{\phantom{%
\begingroup \smaller\smaller\smaller% [inline block 245: 26 envs, 2863 chars -> data_tex | \begin{tabular}{@{}c@{}}% \phantom{0}\\\phantom{0}\\\phantom{0}...]
\endgroup%
}}\!\right]$}%
}%
\ifdim\wd\matricesbox>\halfwidth\myboxwidth=\hsize\else\myboxwidth=\halfwidth\fi
\vbox{%
\ifdim\myboxwidth=\hsize
\setbox\onelinebox=\hbox{%
\vbox{\hbox{%
$\Pi_{19,46}$ spans $L_{16.13}$%
}\hbox{%
$3632223223636363222$%
}%
}%
\hfill\copy\matricesbox
}%
\ifdim\wd\onelinebox>\myboxwidth
\hbox to \myboxwidth{%
$\Pi_{19,46}$ spans $L_{16.13}$%
\hfil
$3632223223636363222$%
}%
\box\matricesbox
\else
\hbox to \myboxwidth{%
\unhbox\onelinebox
}%
\fi
\else
\hbox to \myboxwidth{%
$\Pi_{19,46}$ spans $L_{16.13}$%
\hfil}%
\hbox to \myboxwidth{%
$3632223223636363222$%
\hfil}%
\box\matricesbox
\fi
}%
\hfill\discretionary{}{}{}%
\setbox\matricesbox=\hbox{%
{$\left[\!\llap{\phantom{%
\begingroup \smaller\smaller\smaller% [inline block 246: 26 envs, 2862 chars -> data_tex | \begin{tabular}{@{}c@{}}% \phantom{0}\\\phantom{0}\\\phantom{0}...]
\endgroup%
}}\!\right]$}%
}%
\ifdim\wd\matricesbox>\halfwidth\myboxwidth=\hsize\else\myboxwidth=\halfwidth\fi
\vbox{%
\ifdim\myboxwidth=\hsize
\setbox\onelinebox=\hbox{%
\vbox{\hbox{%
$\Pi_{19,47}$ spans $L_{16.13}$%
}\hbox{%
$3632223632223632236$%
}%
}%
\hfill\copy\matricesbox
}%
\ifdim\wd\onelinebox>\myboxwidth
\hbox to \myboxwidth{%
$\Pi_{19,47}$ spans $L_{16.13}$%
\hfil
$3632223632223632236$%
}%
\box\matricesbox
\else
\hbox to \myboxwidth{%
\unhbox\onelinebox
}%
\fi
\else
\hbox to \myboxwidth{%
$\Pi_{19,47}$ spans $L_{16.13}$%
\hfil}%
\hbox to \myboxwidth{%
$3632223632223632236$%
\hfil}%
\box\matricesbox
\fi
}%
\hfill\discretionary{}{}{}%
\setbox\matricesbox=\hbox{%
{$\left[\!\llap{\phantom{%
\begingroup \smaller\smaller\smaller% [inline block 247: 26 envs, 2864 chars -> data_tex | \begin{tabular}{@{}c@{}}% \phantom{0}\\\phantom{0}\\\phantom{0}...]
\endgroup%
}}\!\right]$}%
}%
\ifdim\wd\matricesbox>\halfwidth\myboxwidth=\hsize\else\myboxwidth=\halfwidth\fi
\vbox{%
\ifdim\myboxwidth=\hsize
\setbox\onelinebox=\hbox{%
\vbox{\hbox{%
$\Pi_{19,48}$ spans $L_{16.13}$%
}\hbox{%
$3632223632223636322$%
}%
}%
\hfill\copy\matricesbox
}%
\ifdim\wd\onelinebox>\myboxwidth
\hbox to \myboxwidth{%
$\Pi_{19,48}$ spans $L_{16.13}$%
\hfil
$3632223632223636322$%
}%
\box\matricesbox
\else
\hbox to \myboxwidth{%
\unhbox\onelinebox
}%
\fi
\else
\hbox to \myboxwidth{%
$\Pi_{19,48}$ spans $L_{16.13}$%
\hfil}%
\hbox to \myboxwidth{%
$3632223632223636322$%
\hfil}%
\box\matricesbox
\fi
}%
\hfill\discretionary{}{}{}%
\setbox\matricesbox=\hbox{%
{$\left[\!\llap{\phantom{%
\begingroup \smaller\smaller\smaller% [inline block 248: 26 envs, 2860 chars -> data_tex | \begin{tabular}{@{}c@{}}% \phantom{0}\\\phantom{0}\\\phantom{0}...]
\endgroup%
}}\!\right]$}%
}%
\ifdim\wd\matricesbox>\halfwidth\myboxwidth=\hsize\else\myboxwidth=\halfwidth\fi
\vbox{%
\ifdim\myboxwidth=\hsize
\setbox\onelinebox=\hbox{%
\vbox{\hbox{%
$\Pi_{19,49}$ spans $L_{16.13}$%
}\hbox{%
$3632223632232223636$%
}%
}%
\hfill\copy\matricesbox
}%
\ifdim\wd\onelinebox>\myboxwidth
\hbox to \myboxwidth{%
$\Pi_{19,49}$ spans $L_{16.13}$%
\hfil
$3632223632232223636$%
}%
\box\matricesbox
\else
\hbox to \myboxwidth{%
\unhbox\onelinebox
}%
\fi
\else
\hbox to \myboxwidth{%
$\Pi_{19,49}$ spans $L_{16.13}$%
\hfil}%
\hbox to \myboxwidth{%
$3632223632232223636$%
\hfil}%
\box\matricesbox
\fi
}%
\hfill\discretionary{}{}{}%
\setbox\matricesbox=\hbox{%
{$\left[\!\llap{\phantom{%
\begingroup \smaller\smaller\smaller% [inline block 249: 26 envs, 2859 chars -> data_tex | \begin{tabular}{@{}c@{}}% \phantom{0}\\\phantom{0}\\\phantom{0}...]
\endgroup%
}}\!\right]$}%
}%
\ifdim\wd\matricesbox>\halfwidth\myboxwidth=\hsize\else\myboxwidth=\halfwidth\fi
\vbox{%
\ifdim\myboxwidth=\hsize
\setbox\onelinebox=\hbox{%
\vbox{\hbox{%
$\Pi_{19,50}$ spans $L_{16.13}$%
}\hbox{%
$3632223632232232236$%
}%
}%
\hfill\copy\matricesbox
}%
\ifdim\wd\onelinebox>\myboxwidth
\hbox to \myboxwidth{%
$\Pi_{19,50}$ spans $L_{16.13}$%
\hfil
$3632223632232232236$%
}%
\box\matricesbox
\else
\hbox to \myboxwidth{%
\unhbox\onelinebox
}%
\fi
\else
\hbox to \myboxwidth{%
$\Pi_{19,50}$ spans $L_{16.13}$%
\hfil}%
\hbox to \myboxwidth{%
$3632223632232232236$%
\hfil}%
\box\matricesbox
\fi
}%
\hfill\discretionary{}{}{}%
\setbox\matricesbox=\hbox{%
{$\left[\!\llap{\phantom{%
\begingroup \smaller\smaller\smaller% [inline block 250: 26 envs, 2861 chars -> data_tex | \begin{tabular}{@{}c@{}}% \phantom{0}\\\phantom{0}\\\phantom{0}...]
\endgroup%
}}\!\right]$}%
}%
\ifdim\wd\matricesbox>\halfwidth\myboxwidth=\hsize\else\myboxwidth=\halfwidth\fi
\vbox{%
\ifdim\myboxwidth=\hsize
\setbox\onelinebox=\hbox{%
\vbox{\hbox{%
$\Pi_{19,51}$ spans $L_{16.13}$%
}\hbox{%
$3632223632232236322$%
}%
}%
\hfill\copy\matricesbox
}%
\ifdim\wd\onelinebox>\myboxwidth
\hbox to \myboxwidth{%
$\Pi_{19,51}$ spans $L_{16.13}$%
\hfil
$3632223632232236322$%
}%
\box\matricesbox
\else
\hbox to \myboxwidth{%
\unhbox\onelinebox
}%
\fi
\else
\hbox to \myboxwidth{%
$\Pi_{19,51}$ spans $L_{16.13}$%
\hfil}%
\hbox to \myboxwidth{%
$3632223632232236322$%
\hfil}%
\box\matricesbox
\fi
}%
\hfill\discretionary{}{}{}%
\setbox\matricesbox=\hbox{%
{$\left[\!\llap{\phantom{%
\begingroup \smaller\smaller\smaller% [inline block 251: 26 envs, 2863 chars -> data_tex | \begin{tabular}{@{}c@{}}% \phantom{0}\\\phantom{0}\\\phantom{0}...]
\endgroup%
}}\!\right]$}%
}%
\ifdim\wd\matricesbox>\halfwidth\myboxwidth=\hsize\else\myboxwidth=\halfwidth\fi
\vbox{%
\ifdim\myboxwidth=\hsize
\setbox\onelinebox=\hbox{%
\vbox{\hbox{%
$\Pi_{19,52}$ spans $L_{16.13}$%
}\hbox{%
$3632223632236322322$%
}%
}%
\hfill\copy\matricesbox
}%
\ifdim\wd\onelinebox>\myboxwidth
\hbox to \myboxwidth{%
$\Pi_{19,52}$ spans $L_{16.13}$%
\hfil
$3632223632236322322$%
}%
\box\matricesbox
\else
\hbox to \myboxwidth{%
\unhbox\onelinebox
}%
\fi
\else
\hbox to \myboxwidth{%
$\Pi_{19,52}$ spans $L_{16.13}$%
\hfil}%
\hbox to \myboxwidth{%
$3632223632236322322$%
\hfil}%
\box\matricesbox
\fi
}%
\hfill\discretionary{}{}{}%
\setbox\matricesbox=\hbox{%
{$\left[\!\llap{\phantom{%
\begingroup \smaller\smaller\smaller% [inline block 252: 26 envs, 2863 chars -> data_tex | \begin{tabular}{@{}c@{}}% \phantom{0}\\\phantom{0}\\\phantom{0}...]
\endgroup%
}}\!\right]$}%
}%
\ifdim\wd\matricesbox>\halfwidth\myboxwidth=\hsize\else\myboxwidth=\halfwidth\fi
\vbox{%
\ifdim\myboxwidth=\hsize
\setbox\onelinebox=\hbox{%
\vbox{\hbox{%
$\Pi_{19,53}$ spans $L_{16.13}$%
}\hbox{%
$3632223636322223636$%
}%
}%
\hfill\copy\matricesbox
}%
\ifdim\wd\onelinebox>\myboxwidth
\hbox to \myboxwidth{%
$\Pi_{19,53}$ spans $L_{16.13}$%
\hfil
$3632223636322223636$%
}%
\box\matricesbox
\else
\hbox to \myboxwidth{%
\unhbox\onelinebox
}%
\fi
\else
\hbox to \myboxwidth{%
$\Pi_{19,53}$ spans $L_{16.13}$%
\hfil}%
\hbox to \myboxwidth{%
$3632223636322223636$%
\hfil}%
\box\matricesbox
\fi
}%
\hfill\discretionary{}{}{}%
\setbox\matricesbox=\hbox{%
{$\left[\!\llap{\phantom{%
\begingroup \smaller\smaller\smaller% [inline block 253: 26 envs, 2862 chars -> data_tex | \begin{tabular}{@{}c@{}}% \phantom{0}\\\phantom{0}\\\phantom{0}...]
\endgroup%
}}\!\right]$}%
}%
\ifdim\wd\matricesbox>\halfwidth\myboxwidth=\hsize\else\myboxwidth=\halfwidth\fi
\vbox{%
\ifdim\myboxwidth=\hsize
\setbox\onelinebox=\hbox{%
\vbox{\hbox{%
$\Pi_{19,54}$ spans $L_{16.13}$%
}\hbox{%
$3632223636322232236$%
}%
}%
\hfill\copy\matricesbox
}%
\ifdim\wd\onelinebox>\myboxwidth
\hbox to \myboxwidth{%
$\Pi_{19,54}$ spans $L_{16.13}$%
\hfil
$3632223636322232236$%
}%
\box\matricesbox
\else
\hbox to \myboxwidth{%
\unhbox\onelinebox
}%
\fi
\else
\hbox to \myboxwidth{%
$\Pi_{19,54}$ spans $L_{16.13}$%
\hfil}%
\hbox to \myboxwidth{%
$3632223636322232236$%
\hfil}%
\box\matricesbox
\fi
}%
\hfill\discretionary{}{}{}%
\setbox\matricesbox=\hbox{%
{$\left[\!\llap{\phantom{%
\begingroup \smaller\smaller\smaller% [inline block 254: 26 envs, 2866 chars -> data_tex | \begin{tabular}{@{}c@{}}% \phantom{0}\\\phantom{0}\\\phantom{0}...]
\endgroup%
}}\!\right]$}%
}%
\ifdim\wd\matricesbox>\halfwidth\myboxwidth=\hsize\else\myboxwidth=\halfwidth\fi
\vbox{%
\ifdim\myboxwidth=\hsize
\setbox\onelinebox=\hbox{%
\vbox{\hbox{%
$\Pi_{19,55}$ spans $L_{16.13}$%
}\hbox{%
$3632223636322322322$%
}%
}%
\hfill\copy\matricesbox
}%
\ifdim\wd\onelinebox>\myboxwidth
\hbox to \myboxwidth{%
$\Pi_{19,55}$ spans $L_{16.13}$%
\hfil
$3632223636322322322$%
}%
\box\matricesbox
\else
\hbox to \myboxwidth{%
\unhbox\onelinebox
}%
\fi
\else
\hbox to \myboxwidth{%
$\Pi_{19,55}$ spans $L_{16.13}$%
\hfil}%
\hbox to \myboxwidth{%
$3632223636322322322$%
\hfil}%
\box\matricesbox
\fi
}%
\hfill\discretionary{}{}{}%
\setbox\matricesbox=\hbox{%
{$\left[\!\llap{\phantom{%
\begingroup \smaller\smaller\smaller% [inline block 255: 26 envs, 2861 chars -> data_tex | \begin{tabular}{@{}c@{}}% \phantom{0}\\\phantom{0}\\\phantom{0}...]
\endgroup%
}}\!\right]$}%
}%
\ifdim\wd\matricesbox>\halfwidth\myboxwidth=\hsize\else\myboxwidth=\halfwidth\fi
\vbox{%
\ifdim\myboxwidth=\hsize
\setbox\onelinebox=\hbox{%
\vbox{\hbox{%
$\Pi_{19,56}$ spans $L_{16.13}$%
}\hbox{%
$3632223636363223222$%
}%
}%
\hfill\copy\matricesbox
}%
\ifdim\wd\onelinebox>\myboxwidth
\hbox to \myboxwidth{%
$\Pi_{19,56}$ spans $L_{16.13}$%
\hfil
$3632223636363223222$%
}%
\box\matricesbox
\else
\hbox to \myboxwidth{%
\unhbox\onelinebox
}%
\fi
\else
\hbox to \myboxwidth{%
$\Pi_{19,56}$ spans $L_{16.13}$%
\hfil}%
\hbox to \myboxwidth{%
$3632223636363223222$%
\hfil}%
\box\matricesbox
\fi
}%
\hfill\discretionary{}{}{}%
\setbox\matricesbox=\hbox{%
{$\left[\!\llap{\phantom{%
\begingroup \smaller\smaller\smaller% [inline block 256: 26 envs, 2863 chars -> data_tex | \begin{tabular}{@{}c@{}}% \phantom{0}\\\phantom{0}\\\phantom{0}...]
\endgroup%
}}\!\right]$}%
}%
\ifdim\wd\matricesbox>\halfwidth\myboxwidth=\hsize\else\myboxwidth=\halfwidth\fi
\vbox{%
\ifdim\myboxwidth=\hsize
\setbox\onelinebox=\hbox{%
\vbox{\hbox{%
$\Pi_{19,57}$ spans $L_{16.13}$%
}\hbox{%
$3632223636363632222$%
}%
}%
\hfill\copy\matricesbox
}%
\ifdim\wd\onelinebox>\myboxwidth
\hbox to \myboxwidth{%
$\Pi_{19,57}$ spans $L_{16.13}$%
\hfil
$3632223636363632222$%
}%
\box\matricesbox
\else
\hbox to \myboxwidth{%
\unhbox\onelinebox
}%
\fi
\else
\hbox to \myboxwidth{%
$\Pi_{19,57}$ spans $L_{16.13}$%
\hfil}%
\hbox to \myboxwidth{%
$3632223636363632222$%
\hfil}%
\box\matricesbox
\fi
}%
\hfill\discretionary{}{}{}%
\setbox\matricesbox=\hbox{%
{$\left[\!\llap{\phantom{%
\begingroup \smaller\smaller\smaller% [inline block 257: 26 envs, 2861 chars -> data_tex | \begin{tabular}{@{}c@{}}% \phantom{0}\\\phantom{0}\\\phantom{0}...]
\endgroup%
}}\!\right]$}%
}%
\ifdim\wd\matricesbox>\halfwidth\myboxwidth=\hsize\else\myboxwidth=\halfwidth\fi
\vbox{%
\ifdim\myboxwidth=\hsize
\setbox\onelinebox=\hbox{%
\vbox{\hbox{%
$\Pi_{19,58}$ spans $L_{16.13}$%
}\hbox{%
$3632232222236363636$%
}%
}%
\hfill\copy\matricesbox
}%
\ifdim\wd\onelinebox>\myboxwidth
\hbox to \myboxwidth{%
$\Pi_{19,58}$ spans $L_{16.13}$%
\hfil
$3632232222236363636$%
}%
\box\matricesbox
\else
\hbox to \myboxwidth{%
\unhbox\onelinebox
}%
\fi
\else
\hbox to \myboxwidth{%
$\Pi_{19,58}$ spans $L_{16.13}$%
\hfil}%
\hbox to \myboxwidth{%
$3632232222236363636$%
\hfil}%
\box\matricesbox
\fi
}%
\hfill\discretionary{}{}{}%
\setbox\matricesbox=\hbox{%
{$\left[\!\llap{\phantom{%
\begingroup \smaller\smaller\smaller% [inline block 258: 26 envs, 2864 chars -> data_tex | \begin{tabular}{@{}c@{}}% \phantom{0}\\\phantom{0}\\\phantom{0}...]
\endgroup%
}}\!\right]$}%
}%
\ifdim\wd\matricesbox>\halfwidth\myboxwidth=\hsize\else\myboxwidth=\halfwidth\fi
\vbox{%
\ifdim\myboxwidth=\hsize
\setbox\onelinebox=\hbox{%
\vbox{\hbox{%
$\Pi_{19,59}$ spans $L_{16.13}$%
}\hbox{%
$3632232223223223636$%
}%
}%
\hfill\copy\matricesbox
}%
\ifdim\wd\onelinebox>\myboxwidth
\hbox to \myboxwidth{%
$\Pi_{19,59}$ spans $L_{16.13}$%
\hfil
$3632232223223223636$%
}%
\box\matricesbox
\else
\hbox to \myboxwidth{%
\unhbox\onelinebox
}%
\fi
\else
\hbox to \myboxwidth{%
$\Pi_{19,59}$ spans $L_{16.13}$%
\hfil}%
\hbox to \myboxwidth{%
$3632232223223223636$%
\hfil}%
\box\matricesbox
\fi
}%
\hfill\discretionary{}{}{}%
\setbox\matricesbox=\hbox{%
{$\left[\!\llap{\phantom{%
\begingroup \smaller\smaller\smaller% [inline block 259: 26 envs, 2863 chars -> data_tex | \begin{tabular}{@{}c@{}}% \phantom{0}\\\phantom{0}\\\phantom{0}...]
\endgroup%
}}\!\right]$}%
}%
\ifdim\wd\matricesbox>\halfwidth\myboxwidth=\hsize\else\myboxwidth=\halfwidth\fi
\vbox{%
\ifdim\myboxwidth=\hsize
\setbox\onelinebox=\hbox{%
\vbox{\hbox{%
$\Pi_{19,60}$ spans $L_{16.13}$%
}\hbox{%
$3632232223223632236$%
}%
}%
\hfill\copy\matricesbox
}%
\ifdim\wd\onelinebox>\myboxwidth
\hbox to \myboxwidth{%
$\Pi_{19,60}$ spans $L_{16.13}$%
\hfil
$3632232223223632236$%
}%
\box\matricesbox
\else
\hbox to \myboxwidth{%
\unhbox\onelinebox
}%
\fi
\else
\hbox to \myboxwidth{%
$\Pi_{19,60}$ spans $L_{16.13}$%
\hfil}%
\hbox to \myboxwidth{%
$3632232223223632236$%
\hfil}%
\box\matricesbox
\fi
}%
\hfill\discretionary{}{}{}%
\setbox\matricesbox=\hbox{%
{$\left[\!\llap{\phantom{%
\begingroup \smaller\smaller\smaller% [inline block 260: 26 envs, 2865 chars -> data_tex | \begin{tabular}{@{}c@{}}% \phantom{0}\\\phantom{0}\\\phantom{0}...]
\endgroup%
}}\!\right]$}%
}%
\ifdim\wd\matricesbox>\halfwidth\myboxwidth=\hsize\else\myboxwidth=\halfwidth\fi
\vbox{%
\ifdim\myboxwidth=\hsize
\setbox\onelinebox=\hbox{%
\vbox{\hbox{%
$\Pi_{19,61}$ spans $L_{16.13}$%
}\hbox{%
$3632232223223636322$%
}%
}%
\hfill\copy\matricesbox
}%
\ifdim\wd\onelinebox>\myboxwidth
\hbox to \myboxwidth{%
$\Pi_{19,61}$ spans $L_{16.13}$%
\hfil
$3632232223223636322$%
}%
\box\matricesbox
\else
\hbox to \myboxwidth{%
\unhbox\onelinebox
}%
\fi
\else
\hbox to \myboxwidth{%
$\Pi_{19,61}$ spans $L_{16.13}$%
\hfil}%
\hbox to \myboxwidth{%
$3632232223223636322$%
\hfil}%
\box\matricesbox
\fi
}%
\hfill\discretionary{}{}{}%
\setbox\matricesbox=\hbox{%
{$\left[\!\llap{\phantom{%
\begingroup \smaller\smaller\smaller% [inline block 261: 26 envs, 2860 chars -> data_tex | \begin{tabular}{@{}c@{}}% \phantom{0}\\\phantom{0}\\\phantom{0}...]
\endgroup%
}}\!\right]$}%
}%
\ifdim\wd\matricesbox>\halfwidth\myboxwidth=\hsize\else\myboxwidth=\halfwidth\fi
\vbox{%
\ifdim\myboxwidth=\hsize
\setbox\onelinebox=\hbox{%
\vbox{\hbox{%
$\Pi_{19,62}$ spans $L_{16.13}$%
}\hbox{%
$3632232223632232236$%
}%
}%
\hfill\copy\matricesbox
}%
\ifdim\wd\onelinebox>\myboxwidth
\hbox to \myboxwidth{%
$\Pi_{19,62}$ spans $L_{16.13}$%
\hfil
$3632232223632232236$%
}%
\box\matricesbox
\else
\hbox to \myboxwidth{%
\unhbox\onelinebox
}%
\fi
\else
\hbox to \myboxwidth{%
$\Pi_{19,62}$ spans $L_{16.13}$%
\hfil}%
\hbox to \myboxwidth{%
$3632232223632232236$%
\hfil}%
\box\matricesbox
\fi
}%
\hfill\discretionary{}{}{}%
\setbox\matricesbox=\hbox{%
{$\left[\!\llap{\phantom{%
\begingroup \smaller\smaller\smaller% [inline block 262: 26 envs, 2862 chars -> data_tex | \begin{tabular}{@{}c@{}}% \phantom{0}\\\phantom{0}\\\phantom{0}...]
\endgroup%
}}\!\right]$}%
}%
\ifdim\wd\matricesbox>\halfwidth\myboxwidth=\hsize\else\myboxwidth=\halfwidth\fi
\vbox{%
\ifdim\myboxwidth=\hsize
\setbox\onelinebox=\hbox{%
\vbox{\hbox{%
$\Pi_{19,63}$ spans $L_{16.13}$%
}\hbox{%
$3632232223632236322$%
}%
}%
\hfill\copy\matricesbox
}%
\ifdim\wd\onelinebox>\myboxwidth
\hbox to \myboxwidth{%
$\Pi_{19,63}$ spans $L_{16.13}$%
\hfil
$3632232223632236322$%
}%
\box\matricesbox
\else
\hbox to \myboxwidth{%
\unhbox\onelinebox
}%
\fi
\else
\hbox to \myboxwidth{%
$\Pi_{19,63}$ spans $L_{16.13}$%
\hfil}%
\hbox to \myboxwidth{%
$3632232223632236322$%
\hfil}%
\box\matricesbox
\fi
}%
\hfill\discretionary{}{}{}%
\setbox\matricesbox=\hbox{%
{$\left[\!\llap{\phantom{%
\begingroup \smaller\smaller\smaller% [inline block 263: 26 envs, 2864 chars -> data_tex | \begin{tabular}{@{}c@{}}% \phantom{0}\\\phantom{0}\\\phantom{0}...]
\endgroup%
}}\!\right]$}%
}%
\ifdim\wd\matricesbox>\halfwidth\myboxwidth=\hsize\else\myboxwidth=\halfwidth\fi
\vbox{%
\ifdim\myboxwidth=\hsize
\setbox\onelinebox=\hbox{%
\vbox{\hbox{%
$\Pi_{19,64}$ spans $L_{16.13}$%
}\hbox{%
$3632232223636322322$%
}%
}%
\hfill\copy\matricesbox
}%
\ifdim\wd\onelinebox>\myboxwidth
\hbox to \myboxwidth{%
$\Pi_{19,64}$ spans $L_{16.13}$%
\hfil
$3632232223636322322$%
}%
\box\matricesbox
\else
\hbox to \myboxwidth{%
\unhbox\onelinebox
}%
\fi
\else
\hbox to \myboxwidth{%
$\Pi_{19,64}$ spans $L_{16.13}$%
\hfil}%
\hbox to \myboxwidth{%
$3632232223636322322$%
\hfil}%
\box\matricesbox
\fi
}%
\hfill\discretionary{}{}{}%
\setbox\matricesbox=\hbox{%
{$\left[\!\llap{\phantom{%
\begingroup \smaller\smaller\smaller% [inline block 264: 26 envs, 2861 chars -> data_tex | \begin{tabular}{@{}c@{}}% \phantom{0}\\\phantom{0}\\\phantom{0}...]
\endgroup%
}}\!\right]$}%
}%
\ifdim\wd\matricesbox>\halfwidth\myboxwidth=\hsize\else\myboxwidth=\halfwidth\fi
\vbox{%
\ifdim\myboxwidth=\hsize
\setbox\onelinebox=\hbox{%
\vbox{\hbox{%
$\Pi_{19,65}$ spans $L_{16.13}$%
}\hbox{%
$3632232232223223636$%
}%
}%
\hfill\copy\matricesbox
}%
\ifdim\wd\onelinebox>\myboxwidth
\hbox to \myboxwidth{%
$\Pi_{19,65}$ spans $L_{16.13}$%
\hfil
$3632232232223223636$%
}%
\box\matricesbox
\else
\hbox to \myboxwidth{%
\unhbox\onelinebox
}%
\fi
\else
\hbox to \myboxwidth{%
$\Pi_{19,65}$ spans $L_{16.13}$%
\hfil}%
\hbox to \myboxwidth{%
$3632232232223223636$%
\hfil}%
\box\matricesbox
\fi
}%
\hfill\discretionary{}{}{}%
\setbox\matricesbox=\hbox{%
{$\left[\!\llap{\phantom{%
\begingroup \smaller\smaller\smaller% [inline block 265: 26 envs, 2860 chars -> data_tex | \begin{tabular}{@{}c@{}}% \phantom{0}\\\phantom{0}\\\phantom{0}...]
\endgroup%
}}\!\right]$}%
}%
\ifdim\wd\matricesbox>\halfwidth\myboxwidth=\hsize\else\myboxwidth=\halfwidth\fi
\vbox{%
\ifdim\myboxwidth=\hsize
\setbox\onelinebox=\hbox{%
\vbox{\hbox{%
$\Pi_{19,66}$ spans $L_{16.13}$%
}\hbox{%
$3632232232223632236$%
}%
}%
\hfill\copy\matricesbox
}%
\ifdim\wd\onelinebox>\myboxwidth
\hbox to \myboxwidth{%
$\Pi_{19,66}$ spans $L_{16.13}$%
\hfil
$3632232232223632236$%
}%
\box\matricesbox
\else
\hbox to \myboxwidth{%
\unhbox\onelinebox
}%
\fi
\else
\hbox to \myboxwidth{%
$\Pi_{19,66}$ spans $L_{16.13}$%
\hfil}%
\hbox to \myboxwidth{%
$3632232232223632236$%
\hfil}%
\box\matricesbox
\fi
}%
\hfill\discretionary{}{}{}%
\setbox\matricesbox=\hbox{%
{$\left[\!\llap{\phantom{%
\begingroup \smaller\smaller\smaller% [inline block 266: 26 envs, 2863 chars -> data_tex | \begin{tabular}{@{}c@{}}% \phantom{0}\\\phantom{0}\\\phantom{0}...]
\endgroup%
}}\!\right]$}%
}%
\ifdim\wd\matricesbox>\halfwidth\myboxwidth=\hsize\else\myboxwidth=\halfwidth\fi
\vbox{%
\ifdim\myboxwidth=\hsize
\setbox\onelinebox=\hbox{%
\vbox{\hbox{%
$\Pi_{19,67}$ spans $L_{16.13}$%
}\hbox{%
$3632232232223636322$%
}%
}%
\hfill\copy\matricesbox
}%
\ifdim\wd\onelinebox>\myboxwidth
\hbox to \myboxwidth{%
$\Pi_{19,67}$ spans $L_{16.13}$%
\hfil
$3632232232223636322$%
}%
\box\matricesbox
\else
\hbox to \myboxwidth{%
\unhbox\onelinebox
}%
\fi
\else
\hbox to \myboxwidth{%
$\Pi_{19,67}$ spans $L_{16.13}$%
\hfil}%
\hbox to \myboxwidth{%
$3632232232223636322$%
\hfil}%
\box\matricesbox
\fi
}%
\hfill\discretionary{}{}{}%
\setbox\matricesbox=\hbox{%
{$\left[\!\llap{\phantom{%
\begingroup \smaller\smaller\smaller% [inline block 267: 26 envs, 2858 chars -> data_tex | \begin{tabular}{@{}c@{}}% \phantom{0}\\\phantom{0}\\\phantom{0}...]
\endgroup%
}}\!\right]$}%
}%
\ifdim\wd\matricesbox>\halfwidth\myboxwidth=\hsize\else\myboxwidth=\halfwidth\fi
\vbox{%
\ifdim\myboxwidth=\hsize
\setbox\onelinebox=\hbox{%
\vbox{\hbox{%
$\Pi_{19,68}$ spans $L_{16.13}$%
}\hbox{%
$3632232232232223636$%
}%
}%
\hfill\copy\matricesbox
}%
\ifdim\wd\onelinebox>\myboxwidth
\hbox to \myboxwidth{%
$\Pi_{19,68}$ spans $L_{16.13}$%
\hfil
$3632232232232223636$%
}%
\box\matricesbox
\else
\hbox to \myboxwidth{%
\unhbox\onelinebox
}%
\fi
\else
\hbox to \myboxwidth{%
$\Pi_{19,68}$ spans $L_{16.13}$%
\hfil}%
\hbox to \myboxwidth{%
$3632232232232223636$%
\hfil}%
\box\matricesbox
\fi
}%
\hfill\discretionary{}{}{}%
\setbox\matricesbox=\hbox{%
{$\left[\!\llap{\phantom{%
\begingroup \smaller\smaller\smaller% [inline block 268: 26 envs, 2861 chars -> data_tex | \begin{tabular}{@{}c@{}}% \phantom{0}\\\phantom{0}\\\phantom{0}...]
\endgroup%
}}\!\right]$}%
}%
\ifdim\wd\matricesbox>\halfwidth\myboxwidth=\hsize\else\myboxwidth=\halfwidth\fi
\vbox{%
\ifdim\myboxwidth=\hsize
\setbox\onelinebox=\hbox{%
\vbox{\hbox{%
$\Pi_{19,69}$ spans $L_{16.13}$%
}\hbox{%
$3632232236322223636$%
}%
}%
\hfill\copy\matricesbox
}%
\ifdim\wd\onelinebox>\myboxwidth
\hbox to \myboxwidth{%
$\Pi_{19,69}$ spans $L_{16.13}$%
\hfil
$3632232236322223636$%
}%
\box\matricesbox
\else
\hbox to \myboxwidth{%
\unhbox\onelinebox
}%
\fi
\else
\hbox to \myboxwidth{%
$\Pi_{19,69}$ spans $L_{16.13}$%
\hfil}%
\hbox to \myboxwidth{%
$3632232236322223636$%
\hfil}%
\box\matricesbox
\fi
}%
\hfill\discretionary{}{}{}%
\setbox\matricesbox=\hbox{%
{$\left[\!\llap{\phantom{%
\begingroup \smaller\smaller\smaller% [inline block 269: 26 envs, 2862 chars -> data_tex | \begin{tabular}{@{}c@{}}% \phantom{0}\\\phantom{0}\\\phantom{0}...]
\endgroup%
}}\!\right]$}%
}%
\ifdim\wd\matricesbox>\halfwidth\myboxwidth=\hsize\else\myboxwidth=\halfwidth\fi
\vbox{%
\ifdim\myboxwidth=\hsize
\setbox\onelinebox=\hbox{%
\vbox{\hbox{%
$\Pi_{19,70}$ spans $L_{16.13}$%
}\hbox{%
$3632236322223223636$%
}%
}%
\hfill\copy\matricesbox
}%
\ifdim\wd\onelinebox>\myboxwidth
\hbox to \myboxwidth{%
$\Pi_{19,70}$ spans $L_{16.13}$%
\hfil
$3632236322223223636$%
}%
\box\matricesbox
\else
\hbox to \myboxwidth{%
\unhbox\onelinebox
}%
\fi
\else
\hbox to \myboxwidth{%
$\Pi_{19,70}$ spans $L_{16.13}$%
\hfil}%
\hbox to \myboxwidth{%
$3632236322223223636$%
\hfil}%
\box\matricesbox
\fi
}%
\hfill\discretionary{}{}{}%
\setbox\matricesbox=\hbox{%
{$\left[\!\llap{\phantom{%
\begingroup \smaller\smaller\smaller% [inline block 270: 26 envs, 2865 chars -> data_tex | \begin{tabular}{@{}c@{}}% \phantom{0}\\\phantom{0}\\\phantom{0}...]
\endgroup%
}}\!\right]$}%
}%
\ifdim\wd\matricesbox>\halfwidth\myboxwidth=\hsize\else\myboxwidth=\halfwidth\fi
\vbox{%
\ifdim\myboxwidth=\hsize
\setbox\onelinebox=\hbox{%
\vbox{\hbox{%
$\Pi_{19,71}$ spans $L_{16.13}$%
}\hbox{%
$3632236322223636322$%
}%
}%
\hfill\copy\matricesbox
}%
\ifdim\wd\onelinebox>\myboxwidth
\hbox to \myboxwidth{%
$\Pi_{19,71}$ spans $L_{16.13}$%
\hfil
$3632236322223636322$%
}%
\box\matricesbox
\else
\hbox to \myboxwidth{%
\unhbox\onelinebox
}%
\fi
\else
\hbox to \myboxwidth{%
$\Pi_{19,71}$ spans $L_{16.13}$%
\hfil}%
\hbox to \myboxwidth{%
$3632236322223636322$%
\hfil}%
\box\matricesbox
\fi
}%
\hfill\discretionary{}{}{}%
\setbox\matricesbox=\hbox{%
{$\left[\!\llap{\phantom{%
\begingroup \smaller\smaller\smaller% [inline block 271: 26 envs, 2859 chars -> data_tex | \begin{tabular}{@{}c@{}}% \phantom{0}\\\phantom{0}\\\phantom{0}...]
\endgroup%
}}\!\right]$}%
}%
\ifdim\wd\matricesbox>\halfwidth\myboxwidth=\hsize\else\myboxwidth=\halfwidth\fi
\vbox{%
\ifdim\myboxwidth=\hsize
\setbox\onelinebox=\hbox{%
\vbox{\hbox{%
$\Pi_{19,72}$ spans $L_{16.13}$%
}\hbox{%
$3632236322232223636$%
}%
}%
\hfill\copy\matricesbox
}%
\ifdim\wd\onelinebox>\myboxwidth
\hbox to \myboxwidth{%
$\Pi_{19,72}$ spans $L_{16.13}$%
\hfil
$3632236322232223636$%
}%
\box\matricesbox
\else
\hbox to \myboxwidth{%
\unhbox\onelinebox
}%
\fi
\else
\hbox to \myboxwidth{%
$\Pi_{19,72}$ spans $L_{16.13}$%
\hfil}%
\hbox to \myboxwidth{%
$3632236322232223636$%
\hfil}%
\box\matricesbox
\fi
}%
\hfill\discretionary{}{}{}%
\setbox\matricesbox=\hbox{%
{$\left[\!\llap{\phantom{%
\begingroup \smaller\smaller\smaller% [inline block 272: 26 envs, 2862 chars -> data_tex | \begin{tabular}{@{}c@{}}% \phantom{0}\\\phantom{0}\\\phantom{0}...]
\endgroup%
}}\!\right]$}%
}%
\ifdim\wd\matricesbox>\halfwidth\myboxwidth=\hsize\else\myboxwidth=\halfwidth\fi
\vbox{%
\ifdim\myboxwidth=\hsize
\setbox\onelinebox=\hbox{%
\vbox{\hbox{%
$\Pi_{19,73}$ spans $L_{16.13}$%
}\hbox{%
$3632236322322223636$%
}%
}%
\hfill\copy\matricesbox
}%
\ifdim\wd\onelinebox>\myboxwidth
\hbox to \myboxwidth{%
$\Pi_{19,73}$ spans $L_{16.13}$%
\hfil
$3632236322322223636$%
}%
\box\matricesbox
\else
\hbox to \myboxwidth{%
\unhbox\onelinebox
}%
\fi
\else
\hbox to \myboxwidth{%
$\Pi_{19,73}$ spans $L_{16.13}$%
\hfil}%
\hbox to \myboxwidth{%
$3632236322322223636$%
\hfil}%
\box\matricesbox
\fi
}%
\hfill\discretionary{}{}{}%
\setbox\matricesbox=\hbox{%
{$\left[\!\llap{\phantom{%
\begingroup \smaller\smaller\smaller% [inline block 273: 26 envs, 2866 chars -> data_tex | \begin{tabular}{@{}c@{}}% \phantom{0}\\\phantom{0}\\\phantom{0}...]
\endgroup%
}}\!\right]$}%
}%
\ifdim\wd\matricesbox>\halfwidth\myboxwidth=\hsize\else\myboxwidth=\halfwidth\fi
\vbox{%
\ifdim\myboxwidth=\hsize
\setbox\onelinebox=\hbox{%
\vbox{\hbox{%
$\Pi_{19,74}$ spans $L_{16.13}$%
}\hbox{%
$3636322223223223636$%
}%
}%
\hfill\copy\matricesbox
}%
\ifdim\wd\onelinebox>\myboxwidth
\hbox to \myboxwidth{%
$\Pi_{19,74}$ spans $L_{16.13}$%
\hfil
$3636322223223223636$%
}%
\box\matricesbox
\else
\hbox to \myboxwidth{%
\unhbox\onelinebox
}%
\fi
\else
\hbox to \myboxwidth{%
$\Pi_{19,74}$ spans $L_{16.13}$%
\hfil}%
\hbox to \myboxwidth{%
$3636322223223223636$%
\hfil}%
\box\matricesbox
\fi
}%
\hfill\discretionary{}{}{}%
\setbox\matricesbox=\hbox{%
{$\left[\!\llap{\phantom{%
\begingroup \smaller\smaller\smaller% [inline block 274: 26 envs, 2868 chars -> data_tex | \begin{tabular}{@{}c@{}}% \phantom{0}\\\phantom{0}\\\phantom{0}...]
\endgroup%
}}\!\right]$}%
}%
\ifdim\wd\matricesbox>\halfwidth\myboxwidth=\hsize\else\myboxwidth=\halfwidth\fi
\vbox{%
\ifdim\myboxwidth=\hsize
\setbox\onelinebox=\hbox{%
\vbox{\hbox{%
$\Pi_{19,75}$ spans $L_{16.13}$%
}\hbox{%
$3636322223223636322$%
}%
}%
\hfill\copy\matricesbox
}%
\ifdim\wd\onelinebox>\myboxwidth
\hbox to \myboxwidth{%
$\Pi_{19,75}$ spans $L_{16.13}$%
\hfil
$3636322223223636322$%
}%
\box\matricesbox
\else
\hbox to \myboxwidth{%
\unhbox\onelinebox
}%
\fi
\else
\hbox to \myboxwidth{%
$\Pi_{19,75}$ spans $L_{16.13}$%
\hfil}%
\hbox to \myboxwidth{%
$3636322223223636322$%
\hfil}%
\box\matricesbox
\fi
}%
\hfill\discretionary{}{}{}%
\setbox\matricesbox=\hbox{%
{$\left[\!\llap{\phantom{%
\begingroup \smaller\smaller\smaller% [inline block 275: 26 envs, 2863 chars -> data_tex | \begin{tabular}{@{}c@{}}% \phantom{0}\\\phantom{0}\\\phantom{0}...]
\endgroup%
}}\!\right]$}%
}%
\ifdim\wd\matricesbox>\halfwidth\myboxwidth=\hsize\else\myboxwidth=\halfwidth\fi
\vbox{%
\ifdim\myboxwidth=\hsize
\setbox\onelinebox=\hbox{%
\vbox{\hbox{%
$\Pi_{19,76}$ spans $L_{16.13}$%
}\hbox{%
$3636322232223223636$%
}%
}%
\hfill\copy\matricesbox
}%
\ifdim\wd\onelinebox>\myboxwidth
\hbox to \myboxwidth{%
$\Pi_{19,76}$ spans $L_{16.13}$%
\hfil
$3636322232223223636$%
}%
\box\matricesbox
\else
\hbox to \myboxwidth{%
\unhbox\onelinebox
}%
\fi
\else
\hbox to \myboxwidth{%
$\Pi_{19,76}$ spans $L_{16.13}$%
\hfil}%
\hbox to \myboxwidth{%
$3636322232223223636$%
\hfil}%
\box\matricesbox
\fi
}%
\hfill\discretionary{}{}{}%
\setbox\matricesbox=\hbox{%
{$\left[\!\llap{\phantom{%
\begingroup \smaller\smaller\smaller% [inline block 276: 26 envs, 2877 chars -> data_tex | \begin{tabular}{@{}c@{}}% \phantom{0}\\\phantom{0}\\\phantom{0}...]
\endgroup%
}}\!\right]$}%
}%
\ifdim\wd\matricesbox>\halfwidth\myboxwidth=\hsize\else\myboxwidth=\halfwidth\fi
\vbox{%
\ifdim\myboxwidth=\hsize
\setbox\onelinebox=\hbox{%
\vbox{\hbox{%
$\Pi_{19,77}$ spans $L_{142.20}$%
}\hbox{%
$\infty4224\infty4224\infty4224\infty422$%
}%
}%
\hfill\copy\matricesbox
}%
\ifdim\wd\onelinebox>\myboxwidth
\hbox to \myboxwidth{%
$\Pi_{19,77}$ spans $L_{142.20}$%
\hfil
$\infty4224\infty4224\infty4224\infty422$%
}%
\box\matricesbox
\else
\hbox to \myboxwidth{%
\unhbox\onelinebox
}%
\fi
\else
\hbox to \myboxwidth{%
$\Pi_{19,77}$ spans $L_{142.20}$%
\hfil}%
\hbox to \myboxwidth{%
$\infty4224\infty4224\infty4224\infty422$%
\hfil}%
\box\matricesbox
\fi
}%
\hfill\discretionary{}{}{}%
\setbox\matricesbox=\hbox{%
{$\left[\!\llap{\phantom{%
\begingroup \smaller\smaller\smaller% [inline block 277: 26 envs, 2840 chars -> data_tex | \begin{tabular}{@{}c@{}}% \phantom{0}\\\phantom{0}\\\phantom{0}...]
\endgroup%
}}\!\right]$}%
}%
\ifdim\wd\matricesbox>\halfwidth\myboxwidth=\hsize\else\myboxwidth=\halfwidth\fi
\vbox{%
\ifdim\myboxwidth=\hsize
\setbox\onelinebox=\hbox{%
\vbox{\hbox{%
$\Pi_{19,78}$ spans $L_{251.3}$%
}\hbox{%
$3223223223223222622$%
}%
}%
\hfill\copy\matricesbox
}%
\ifdim\wd\onelinebox>\myboxwidth
\hbox to \myboxwidth{%
$\Pi_{19,78}$ spans $L_{251.3}$%
\hfil
$3223223223223222622$%
}%
\box\matricesbox
\else
\hbox to \myboxwidth{%
\unhbox\onelinebox
}%
\fi
\else
\hbox to \myboxwidth{%
$\Pi_{19,78}$ spans $L_{251.3}$%
\hfil}%
\hbox to \myboxwidth{%
$3223223223223222622$%
\hfil}%
\box\matricesbox
\fi
}%
\hfill\discretionary{}{}{}%

\vskip2pt\hrule\vskip2pt

\leavevmode\setbox\matricesbox=\hbox{%
{$\left[\!\llap{\phantom{%
\begingroup \smaller\smaller\smaller\begin{tabular}{@{}c@{}}%
\phantom{0}\\\phantom{0}\\\phantom{0}
\end{tabular}\endgroup%
}}\right.$}%
\begingroup \smaller\smaller\smaller\begin{tabular}{@{}c@{}}%
-4\\\phantom{0}\\\phantom{0}
\end{tabular}\endgroup%
\kern3pt%
\begingroup \smaller\smaller\smaller\begin{tabular}{@{}c@{}}%
\phantom{0}\\9\\\phantom{0}
\end{tabular}\endgroup%
\kern3pt%
\begingroup \smaller\smaller\smaller\begin{tabular}{@{}c@{}}%
\phantom{0}\\\phantom{0}\\9
\end{tabular}\endgroup%
{$\left.\llap{\phantom{%
\begingroup \smaller\smaller\smaller\begin{tabular}{@{}c@{}}%
\phantom{0}\\\phantom{0}\\\phantom{0}
\end{tabular}\endgroup%
}}\!\right]$}%
{$\left[\!\llap{\phantom{%
\begingroup \smaller\smaller\smaller\begin{tabular}{@{}c@{}}%
0\\0\\0
\end{tabular}\endgroup%
}}\right.$}%
\begingroup \smaller\smaller\smaller\begin{tabular}{@{}c@{}}%
2\\1\\-1
\end{tabular}\endgroup%
\kern3pt%
\begingroup \smaller\smaller\smaller\begin{tabular}{@{}c@{}}%
18\\11\\-5
\end{tabular}\endgroup%
\kern3pt%
\begingroup \smaller\smaller\smaller\begin{tabular}{@{}c@{}}%
9\\6\\-1
\end{tabular}\endgroup%
{$\left.\llap{\phantom{%
\begingroup \smaller\smaller\smaller\begin{tabular}{@{}c@{}}%
0\\0\\0
\end{tabular}\endgroup%
}}\!\right]$}%
}%
\ifdim\wd\matricesbox>\halfwidth\myboxwidth=\hsize\else\myboxwidth=\halfwidth\fi
\vbox{%
\ifdim\myboxwidth=\hsize
\setbox\onelinebox=\hbox{%
\vbox{\hbox{%
$\Pi_{20,1}$ spans $L_{216.2}$%
}\hbox{%
$42|24\slashinfty42|24\slashinfty42|24\slashinfty42|24\slashinfty\rtimes D_{8}$%
}%
}%
\hfill\copy\matricesbox
}%
\ifdim\wd\onelinebox>\myboxwidth
\hbox to \myboxwidth{%
$\Pi_{20,1}$ spans $L_{216.2}$%
\hfil
$42|24\slashinfty42|24\slashinfty42|24\slashinfty42|24\slashinfty\rtimes D_{8}$%
}%
\box\matricesbox
\else
\hbox to \myboxwidth{%
\unhbox\onelinebox
}%
\fi
\else
\hbox to \myboxwidth{%
$\Pi_{20,1}$ spans $L_{216.2}$%
\hfil}%
\hbox to \myboxwidth{%
$42|24\slashinfty42|24\slashinfty42|24\slashinfty42|24\slashinfty\rtimes D_{8}$%
\hfil}%
\box\matricesbox
\fi
}%
\hfill\discretionary{}{}{}%
\setbox\matricesbox=\hbox{%
{$\left[\!\llap{\phantom{%
\begingroup \smaller\smaller\smaller\begin{tabular}{@{}c@{}}%
\phantom{0}\\\phantom{0}\\\phantom{0}
\end{tabular}\endgroup%
}}\right.$}%
\begingroup \smaller\smaller\smaller\begin{tabular}{@{}c@{}}%
-3\\\phantom{0}\\\phantom{0}
\end{tabular}\endgroup%
\kern3pt%
\begingroup \smaller\smaller\smaller\begin{tabular}{@{}c@{}}%
\phantom{0}\\15/2\\\phantom{0}
\end{tabular}\endgroup%
\kern3pt%
\begingroup \smaller\smaller\smaller\begin{tabular}{@{}c@{}}%
\phantom{0}\\\phantom{0}\\5/2
\end{tabular}\endgroup%
{$\left.\llap{\phantom{%
\begingroup \smaller\smaller\smaller\begin{tabular}{@{}c@{}}%
\phantom{0}\\\phantom{0}\\\phantom{0}
\end{tabular}\endgroup%
}}\!\right]$}%
{$\left[\!\llap{\phantom{%
\begingroup \smaller\smaller\smaller\begin{tabular}{@{}c@{}}%
0\\0\\0
\end{tabular}\endgroup%
}}\right.$}%
\begingroup \smaller\smaller\smaller\begin{tabular}{@{}c@{}}%
30\\-19\\3
\end{tabular}\endgroup%
\kern3pt%
\begingroup \smaller\smaller\smaller\begin{tabular}{@{}c@{}}%
10\\-6\\4
\end{tabular}\endgroup%
\kern3pt%
\begingroup \smaller\smaller\smaller\begin{tabular}{@{}c@{}}%
10\\-5\\7
\end{tabular}\endgroup%
\kern3pt%
\begingroup \smaller\smaller\smaller\begin{tabular}{@{}c@{}}%
3\\-1\\3
\end{tabular}\endgroup%
\kern3pt%
\begingroup \smaller\smaller\smaller\begin{tabular}{@{}c@{}}%
10\\-1\\11
\end{tabular}\endgroup%
{$\left.\llap{\phantom{%
\begingroup \smaller\smaller\smaller\begin{tabular}{@{}c@{}}%
0\\0\\0
\end{tabular}\endgroup%
}}\!\right]$}%
}%
\ifdim\wd\matricesbox>\halfwidth\myboxwidth=\hsize\else\myboxwidth=\halfwidth\fi
\vbox{%
\ifdim\myboxwidth=\hsize
\setbox\onelinebox=\hbox{%
\vbox{\hbox{%
$\Pi_{20,2}$ spans $L_{16.9}$%
}\hbox{%
$\slashthree6322\slashthree2236\slashthree6322\slashthree2236\rtimes D_{4}$%
}%
}%
\hfill\copy\matricesbox
}%
\ifdim\wd\onelinebox>\myboxwidth
\hbox to \myboxwidth{%
$\Pi_{20,2}$ spans $L_{16.9}$%
\hfil
$\slashthree6322\slashthree2236\slashthree6322\slashthree2236\rtimes D_{4}$%
}%
\box\matricesbox
\else
\hbox to \myboxwidth{%
\unhbox\onelinebox
}%
\fi
\else
\hbox to \myboxwidth{%
$\Pi_{20,2}$ spans $L_{16.9}$%
\hfil}%
\hbox to \myboxwidth{%
$\slashthree6322\slashthree2236\slashthree6322\slashthree2236\rtimes D_{4}$%
\hfil}%
\box\matricesbox
\fi
}%
\hfill\discretionary{}{}{}%
\setbox\matricesbox=\hbox{%
{$\left[\!\llap{\phantom{%
\begingroup \smaller\smaller\smaller\begin{tabular}{@{}c@{}}%
\phantom{0}\\\phantom{0}\\\phantom{0}
\end{tabular}\endgroup%
}}\right.$}%
\begingroup \smaller\smaller\smaller\begin{tabular}{@{}c@{}}%
-1\\\phantom{0}\\\phantom{0}
\end{tabular}\endgroup%
\kern3pt%
\begingroup \smaller\smaller\smaller\begin{tabular}{@{}c@{}}%
\phantom{0}\\45/2\\\phantom{0}
\end{tabular}\endgroup%
\kern3pt%
\begingroup \smaller\smaller\smaller\begin{tabular}{@{}c@{}}%
\phantom{0}\\\phantom{0}\\15/2
\end{tabular}\endgroup%
{$\left.\llap{\phantom{%
\begingroup \smaller\smaller\smaller\begin{tabular}{@{}c@{}}%
\phantom{0}\\\phantom{0}\\\phantom{0}
\end{tabular}\endgroup%
}}\!\right]$}%
{$\left[\!\llap{\phantom{%
\begingroup \smaller\smaller\smaller\begin{tabular}{@{}c@{}}%
0\\0\\0
\end{tabular}\endgroup%
}}\right.$}%
\begingroup \smaller\smaller\smaller\begin{tabular}{@{}c@{}}%
9\\-2\\0
\end{tabular}\endgroup%
\kern3pt%
\begingroup \smaller\smaller\smaller\begin{tabular}{@{}c@{}}%
30\\-6\\4
\end{tabular}\endgroup%
\kern3pt%
\begingroup \smaller\smaller\smaller\begin{tabular}{@{}c@{}}%
30\\-5\\7
\end{tabular}\endgroup%
\kern3pt%
\begingroup \smaller\smaller\smaller\begin{tabular}{@{}c@{}}%
90\\-11\\27
\end{tabular}\endgroup%
\kern3pt%
\begingroup \smaller\smaller\smaller\begin{tabular}{@{}c@{}}%
90\\-8\\30
\end{tabular}\endgroup%
\kern3pt%
\begingroup \smaller\smaller\smaller\begin{tabular}{@{}c@{}}%
5\\0\\2
\end{tabular}\endgroup%
{$\left.\llap{\phantom{%
\begingroup \smaller\smaller\smaller\begin{tabular}{@{}c@{}}%
0\\0\\0
\end{tabular}\endgroup%
}}\!\right]$}%
}%
\ifdim\wd\matricesbox>\halfwidth\myboxwidth=\hsize\else\myboxwidth=\halfwidth\fi
\vbox{%
\ifdim\myboxwidth=\hsize
\setbox\onelinebox=\hbox{%
\vbox{\hbox{%
$\Pi_{20,3}$ spans $L_{16.13}$%
}\hbox{%
$3632|23632|23632|23632|2\rtimes D_{4}$%
}%
}%
\hfill\copy\matricesbox
}%
\ifdim\wd\onelinebox>\myboxwidth
\hbox to \myboxwidth{%
$\Pi_{20,3}$ spans $L_{16.13}$%
\hfil
$3632|23632|23632|23632|2\rtimes D_{4}$%
}%
\box\matricesbox
\else
\hbox to \myboxwidth{%
\unhbox\onelinebox
}%
\fi
\else
\hbox to \myboxwidth{%
$\Pi_{20,3}$ spans $L_{16.13}$%
\hfil}%
\hbox to \myboxwidth{%
$3632|23632|23632|23632|2\rtimes D_{4}$%
\hfil}%
\box\matricesbox
\fi
}%
\hfill\discretionary{}{}{}%
\setbox\matricesbox=\hbox{%
{$\left[\!\llap{\phantom{%
\begingroup \smaller\smaller\smaller\begin{tabular}{@{}c@{}}%
\phantom{0}\\\phantom{0}\\\phantom{0}
\end{tabular}\endgroup%
}}\right.$}%
\begingroup \smaller\smaller\smaller\begin{tabular}{@{}c@{}}%
-5\\\phantom{0}\\\phantom{0}
\end{tabular}\endgroup%
\kern3pt%
\begingroup \smaller\smaller\smaller\begin{tabular}{@{}c@{}}%
\phantom{0}\\3/2\\\phantom{0}
\end{tabular}\endgroup%
\kern3pt%
\begingroup \smaller\smaller\smaller\begin{tabular}{@{}c@{}}%
\phantom{0}\\\phantom{0}\\9/2
\end{tabular}\endgroup%
{$\left.\llap{\phantom{%
\begingroup \smaller\smaller\smaller\begin{tabular}{@{}c@{}}%
\phantom{0}\\\phantom{0}\\\phantom{0}
\end{tabular}\endgroup%
}}\!\right]$}%
{$\left[\!\llap{\phantom{%
\begingroup \smaller\smaller\smaller\begin{tabular}{@{}c@{}}%
0\\0\\0
\end{tabular}\endgroup%
}}\right.$}%
\begingroup \smaller\smaller\smaller\begin{tabular}{@{}c@{}}%
6\\11\\-1
\end{tabular}\endgroup%
\kern3pt%
\begingroup \smaller\smaller\smaller\begin{tabular}{@{}c@{}}%
18\\30\\-8
\end{tabular}\endgroup%
\kern3pt%
\begingroup \smaller\smaller\smaller\begin{tabular}{@{}c@{}}%
18\\27\\-11
\end{tabular}\endgroup%
\kern3pt%
\begingroup \smaller\smaller\smaller\begin{tabular}{@{}c@{}}%
1\\1\\-1
\end{tabular}\endgroup%
\kern3pt%
\begingroup \smaller\smaller\smaller\begin{tabular}{@{}c@{}}%
18\\3\\-19
\end{tabular}\endgroup%
{$\left.\llap{\phantom{%
\begingroup \smaller\smaller\smaller\begin{tabular}{@{}c@{}}%
0\\0\\0
\end{tabular}\endgroup%
}}\!\right]$}%
}%
\ifdim\wd\matricesbox>\halfwidth\myboxwidth=\hsize\else\myboxwidth=\halfwidth\fi
\vbox{%
\ifdim\myboxwidth=\hsize
\setbox\onelinebox=\hbox{%
\vbox{\hbox{%
$\Pi_{20,4}$ spans $L_{16.7}$%
}\hbox{%
$36\slashthree6322\slashthree2236\slashthree6322\slashthree22\rtimes D_{4}$%
}%
}%
\hfill\copy\matricesbox
}%
\ifdim\wd\onelinebox>\myboxwidth
\hbox to \myboxwidth{%
$\Pi_{20,4}$ spans $L_{16.7}$%
\hfil
$36\slashthree6322\slashthree2236\slashthree6322\slashthree22\rtimes D_{4}$%
}%
\box\matricesbox
\else
\hbox to \myboxwidth{%
\unhbox\onelinebox
}%
\fi
\else
\hbox to \myboxwidth{%
$\Pi_{20,4}$ spans $L_{16.7}$%
\hfil}%
\hbox to \myboxwidth{%
$36\slashthree6322\slashthree2236\slashthree6322\slashthree22\rtimes D_{4}$%
\hfil}%
\box\matricesbox
\fi
}%
\hfill\discretionary{}{}{}%
\setbox\matricesbox=\hbox{%
{$\left[\!\llap{\phantom{%
\begingroup \smaller\smaller\smaller\begin{tabular}{@{}c@{}}%
\phantom{0}\\\phantom{0}\\\phantom{0}
\end{tabular}\endgroup%
}}\right.$}%
\begingroup \smaller\smaller\smaller\begin{tabular}{@{}c@{}}%
-1\\\phantom{0}\\\phantom{0}
\end{tabular}\endgroup%
\kern3pt%
\begingroup \smaller\smaller\smaller\begin{tabular}{@{}c@{}}%
\phantom{0}\\45/2\\\phantom{0}
\end{tabular}\endgroup%
\kern3pt%
\begingroup \smaller\smaller\smaller\begin{tabular}{@{}c@{}}%
\phantom{0}\\\phantom{0}\\15/2
\end{tabular}\endgroup%
{$\left.\llap{\phantom{%
\begingroup \smaller\smaller\smaller\begin{tabular}{@{}c@{}}%
\phantom{0}\\\phantom{0}\\\phantom{0}
\end{tabular}\endgroup%
}}\!\right]$}%
{$\left[\!\llap{\phantom{%
\begingroup \smaller\smaller\smaller\begin{tabular}{@{}c@{}}%
0\\0\\0
\end{tabular}\endgroup%
}}\right.$}%
\begingroup \smaller\smaller\smaller\begin{tabular}{@{}c@{}}%
90\\19\\3
\end{tabular}\endgroup%
\kern3pt%
\begingroup \smaller\smaller\smaller\begin{tabular}{@{}c@{}}%
5\\1\\1
\end{tabular}\endgroup%
\kern3pt%
\begingroup \smaller\smaller\smaller\begin{tabular}{@{}c@{}}%
9\\1\\3
\end{tabular}\endgroup%
\kern3pt%
\begingroup \smaller\smaller\smaller\begin{tabular}{@{}c@{}}%
30\\1\\11
\end{tabular}\endgroup%
\kern3pt%
\begingroup \smaller\smaller\smaller\begin{tabular}{@{}c@{}}%
30\\-1\\11
\end{tabular}\endgroup%
\kern3pt%
\begingroup \smaller\smaller\smaller\begin{tabular}{@{}c@{}}%
90\\-8\\30
\end{tabular}\endgroup%
\kern3pt%
\begingroup \smaller\smaller\smaller\begin{tabular}{@{}c@{}}%
90\\-11\\27
\end{tabular}\endgroup%
\kern3pt%
\begingroup \smaller\smaller\smaller\begin{tabular}{@{}c@{}}%
30\\-5\\7
\end{tabular}\endgroup%
\kern3pt%
\begingroup \smaller\smaller\smaller\begin{tabular}{@{}c@{}}%
30\\-6\\4
\end{tabular}\endgroup%
\kern3pt%
\begingroup \smaller\smaller\smaller\begin{tabular}{@{}c@{}}%
90\\-19\\3
\end{tabular}\endgroup%
{$\left.\llap{\phantom{%
\begingroup \smaller\smaller\smaller\begin{tabular}{@{}c@{}}%
0\\0\\0
\end{tabular}\endgroup%
}}\!\right]$}%
}%
\ifdim\wd\matricesbox>\halfwidth\myboxwidth=\hsize\else\myboxwidth=\halfwidth\fi
\vbox{%
\ifdim\myboxwidth=\hsize
\setbox\onelinebox=\hbox{%
\vbox{\hbox{%
$\Pi_{20,5}$ spans $L_{16.13}$%
}\hbox{%
$363222\slashthree222363636\slashthree636\rtimes D_{2}$%
}%
}%
\hfill\copy\matricesbox
}%
\ifdim\wd\onelinebox>\myboxwidth
\hbox to \myboxwidth{%
$\Pi_{20,5}$ spans $L_{16.13}$%
\hfil
$363222\slashthree222363636\slashthree636\rtimes D_{2}$%
}%
\box\matricesbox
\else
\hbox to \myboxwidth{%
\unhbox\onelinebox
}%
\fi
\else
\hbox to \myboxwidth{%
$\Pi_{20,5}$ spans $L_{16.13}$%
\hfil}%
\hbox to \myboxwidth{%
$363222\slashthree222363636\slashthree636\rtimes D_{2}$%
\hfil}%
\box\matricesbox
\fi
}%
\hfill\discretionary{}{}{}%
\setbox\matricesbox=\hbox{%
{$\left[\!\llap{\phantom{%
\begingroup \smaller\smaller\smaller\begin{tabular}{@{}c@{}}%
\phantom{0}\\\phantom{0}\\\phantom{0}
\end{tabular}\endgroup%
}}\right.$}%
\begingroup \smaller\smaller\smaller\begin{tabular}{@{}c@{}}%
-1\\\phantom{0}\\\phantom{0}
\end{tabular}\endgroup%
\kern3pt%
\begingroup \smaller\smaller\smaller\begin{tabular}{@{}c@{}}%
\phantom{0}\\15/2\\\phantom{0}
\end{tabular}\endgroup%
\kern3pt%
\begingroup \smaller\smaller\smaller\begin{tabular}{@{}c@{}}%
\phantom{0}\\\phantom{0}\\45/2
\end{tabular}\endgroup%
{$\left.\llap{\phantom{%
\begingroup \smaller\smaller\smaller\begin{tabular}{@{}c@{}}%
\phantom{0}\\\phantom{0}\\\phantom{0}
\end{tabular}\endgroup%
}}\!\right]$}%
{$\left[\!\llap{\phantom{%
\begingroup \smaller\smaller\smaller\begin{tabular}{@{}c@{}}%
0\\0\\0
\end{tabular}\endgroup%
}}\right.$}%
\begingroup \smaller\smaller\smaller\begin{tabular}{@{}c@{}}%
30\\11\\-1
\end{tabular}\endgroup%
\kern3pt%
\begingroup \smaller\smaller\smaller\begin{tabular}{@{}c@{}}%
90\\30\\-8
\end{tabular}\endgroup%
\kern3pt%
\begingroup \smaller\smaller\smaller\begin{tabular}{@{}c@{}}%
90\\27\\-11
\end{tabular}\endgroup%
\kern3pt%
\begingroup \smaller\smaller\smaller\begin{tabular}{@{}c@{}}%
5\\1\\-1
\end{tabular}\endgroup%
\kern3pt%
\begingroup \smaller\smaller\smaller\begin{tabular}{@{}c@{}}%
9\\0\\-2
\end{tabular}\endgroup%
\kern3pt%
\begingroup \smaller\smaller\smaller\begin{tabular}{@{}c@{}}%
30\\-4\\-6
\end{tabular}\endgroup%
\kern3pt%
\begingroup \smaller\smaller\smaller\begin{tabular}{@{}c@{}}%
30\\-7\\-5
\end{tabular}\endgroup%
\kern3pt%
\begingroup \smaller\smaller\smaller\begin{tabular}{@{}c@{}}%
90\\-27\\-11
\end{tabular}\endgroup%
\kern3pt%
\begingroup \smaller\smaller\smaller\begin{tabular}{@{}c@{}}%
90\\-30\\-8
\end{tabular}\endgroup%
\kern3pt%
\begingroup \smaller\smaller\smaller\begin{tabular}{@{}c@{}}%
30\\-11\\-1
\end{tabular}\endgroup%
{$\left.\llap{\phantom{%
\begingroup \smaller\smaller\smaller\begin{tabular}{@{}c@{}}%
0\\0\\0
\end{tabular}\endgroup%
}}\!\right]$}%
}%
\ifdim\wd\matricesbox>\halfwidth\myboxwidth=\hsize\else\myboxwidth=\halfwidth\fi
\vbox{%
\ifdim\myboxwidth=\hsize
\setbox\onelinebox=\hbox{%
\vbox{\hbox{%
$\Pi_{20,6}$ spans $L_{16.13}$%
}\hbox{%
$36322236\slashthree632223636\slashthree6\rtimes D_{2}$%
}%
}%
\hfill\copy\matricesbox
}%
\ifdim\wd\onelinebox>\myboxwidth
\hbox to \myboxwidth{%
$\Pi_{20,6}$ spans $L_{16.13}$%
\hfil
$36322236\slashthree632223636\slashthree6\rtimes D_{2}$%
}%
\box\matricesbox
\else
\hbox to \myboxwidth{%
\unhbox\onelinebox
}%
\fi
\else
\hbox to \myboxwidth{%
$\Pi_{20,6}$ spans $L_{16.13}$%
\hfil}%
\hbox to \myboxwidth{%
$36322236\slashthree632223636\slashthree6\rtimes D_{2}$%
\hfil}%
\box\matricesbox
\fi
}%
\hfill\discretionary{}{}{}%
\setbox\matricesbox=\hbox{%
{$\left[\!\llap{\phantom{%
\begingroup \smaller\smaller\smaller\begin{tabular}{@{}c@{}}%
\phantom{0}\\\phantom{0}\\\phantom{0}
\end{tabular}\endgroup%
}}\right.$}%
\begingroup \smaller\smaller\smaller\begin{tabular}{@{}c@{}}%
-1\\\phantom{0}\\\phantom{0}
\end{tabular}\endgroup%
\kern3pt%
\begingroup \smaller\smaller\smaller\begin{tabular}{@{}c@{}}%
\phantom{0}\\45/2\\\phantom{0}
\end{tabular}\endgroup%
\kern3pt%
\begingroup \smaller\smaller\smaller\begin{tabular}{@{}c@{}}%
\phantom{0}\\\phantom{0}\\15/2
\end{tabular}\endgroup%
{$\left.\llap{\phantom{%
\begingroup \smaller\smaller\smaller\begin{tabular}{@{}c@{}}%
\phantom{0}\\\phantom{0}\\\phantom{0}
\end{tabular}\endgroup%
}}\!\right]$}%
{$\left[\!\llap{\phantom{%
\begingroup \smaller\smaller\smaller\begin{tabular}{@{}c@{}}%
0\\0\\0
\end{tabular}\endgroup%
}}\right.$}%
\begingroup \smaller\smaller\smaller\begin{tabular}{@{}c@{}}%
90\\19\\3
\end{tabular}\endgroup%
\kern3pt%
\begingroup \smaller\smaller\smaller\begin{tabular}{@{}c@{}}%
30\\6\\4
\end{tabular}\endgroup%
\kern3pt%
\begingroup \smaller\smaller\smaller\begin{tabular}{@{}c@{}}%
30\\5\\7
\end{tabular}\endgroup%
\kern3pt%
\begingroup \smaller\smaller\smaller\begin{tabular}{@{}c@{}}%
9\\1\\3
\end{tabular}\endgroup%
\kern3pt%
\begingroup \smaller\smaller\smaller\begin{tabular}{@{}c@{}}%
5\\0\\2
\end{tabular}\endgroup%
\kern3pt%
\begingroup \smaller\smaller\smaller\begin{tabular}{@{}c@{}}%
90\\-8\\30
\end{tabular}\endgroup%
\kern3pt%
\begingroup \smaller\smaller\smaller\begin{tabular}{@{}c@{}}%
90\\-11\\27
\end{tabular}\endgroup%
\kern3pt%
\begingroup \smaller\smaller\smaller\begin{tabular}{@{}c@{}}%
30\\-5\\7
\end{tabular}\endgroup%
\kern3pt%
\begingroup \smaller\smaller\smaller\begin{tabular}{@{}c@{}}%
30\\-6\\4
\end{tabular}\endgroup%
\kern3pt%
\begingroup \smaller\smaller\smaller\begin{tabular}{@{}c@{}}%
90\\-19\\3
\end{tabular}\endgroup%
{$\left.\llap{\phantom{%
\begingroup \smaller\smaller\smaller\begin{tabular}{@{}c@{}}%
0\\0\\0
\end{tabular}\endgroup%
}}\!\right]$}%
}%
\ifdim\wd\matricesbox>\halfwidth\myboxwidth=\hsize\else\myboxwidth=\halfwidth\fi
\vbox{%
\ifdim\myboxwidth=\hsize
\setbox\onelinebox=\hbox{%
\vbox{\hbox{%
$\Pi_{20,7}$ spans $L_{16.13}$%
}\hbox{%
$\slashthree632223636\slashthree636322236\rtimes D_{2}$%
}%
}%
\hfill\copy\matricesbox
}%
\ifdim\wd\onelinebox>\myboxwidth
\hbox to \myboxwidth{%
$\Pi_{20,7}$ spans $L_{16.13}$%
\hfil
$\slashthree632223636\slashthree636322236\rtimes D_{2}$%
}%
\box\matricesbox
\else
\hbox to \myboxwidth{%
\unhbox\onelinebox
}%
\fi
\else
\hbox to \myboxwidth{%
$\Pi_{20,7}$ spans $L_{16.13}$%
\hfil}%
\hbox to \myboxwidth{%
$\slashthree632223636\slashthree636322236\rtimes D_{2}$%
\hfil}%
\box\matricesbox
\fi
}%
\hfill\discretionary{}{}{}%
\setbox\matricesbox=\hbox{%
{$\left[\!\llap{\phantom{%
\begingroup \smaller\smaller\smaller\begin{tabular}{@{}c@{}}%
\phantom{0}\\\phantom{0}\\\phantom{0}
\end{tabular}\endgroup%
}}\right.$}%
\begingroup \smaller\smaller\smaller\begin{tabular}{@{}c@{}}%
-3\\\phantom{0}\\\phantom{0}
\end{tabular}\endgroup%
\kern3pt%
\begingroup \smaller\smaller\smaller\begin{tabular}{@{}c@{}}%
\phantom{0}\\5/2\\\phantom{0}
\end{tabular}\endgroup%
\kern3pt%
\begingroup \smaller\smaller\smaller\begin{tabular}{@{}c@{}}%
\phantom{0}\\\phantom{0}\\15/2
\end{tabular}\endgroup%
{$\left.\llap{\phantom{%
\begingroup \smaller\smaller\smaller\begin{tabular}{@{}c@{}}%
\phantom{0}\\\phantom{0}\\\phantom{0}
\end{tabular}\endgroup%
}}\!\right]$}%
{$\left[\!\llap{\phantom{%
\begingroup \smaller\smaller\smaller\begin{tabular}{@{}c@{}}%
0\\0\\0
\end{tabular}\endgroup%
}}\right.$}%
\begingroup \smaller\smaller\smaller\begin{tabular}{@{}c@{}}%
10\\-11\\1
\end{tabular}\endgroup%
\kern3pt%
\begingroup \smaller\smaller\smaller\begin{tabular}{@{}c@{}}%
3\\-3\\1
\end{tabular}\endgroup%
\kern3pt%
\begingroup \smaller\smaller\smaller\begin{tabular}{@{}c@{}}%
10\\-7\\5
\end{tabular}\endgroup%
\kern3pt%
\begingroup \smaller\smaller\smaller\begin{tabular}{@{}c@{}}%
10\\-4\\6
\end{tabular}\endgroup%
\kern3pt%
\begingroup \smaller\smaller\smaller\begin{tabular}{@{}c@{}}%
3\\0\\2
\end{tabular}\endgroup%
\kern3pt%
\begingroup \smaller\smaller\smaller\begin{tabular}{@{}c@{}}%
10\\4\\6
\end{tabular}\endgroup%
\kern3pt%
\begingroup \smaller\smaller\smaller\begin{tabular}{@{}c@{}}%
10\\7\\5
\end{tabular}\endgroup%
\kern3pt%
\begingroup \smaller\smaller\smaller\begin{tabular}{@{}c@{}}%
30\\27\\11
\end{tabular}\endgroup%
\kern3pt%
\begingroup \smaller\smaller\smaller\begin{tabular}{@{}c@{}}%
30\\30\\8
\end{tabular}\endgroup%
\kern3pt%
\begingroup \smaller\smaller\smaller\begin{tabular}{@{}c@{}}%
10\\11\\1
\end{tabular}\endgroup%
{$\left.\llap{\phantom{%
\begingroup \smaller\smaller\smaller\begin{tabular}{@{}c@{}}%
0\\0\\0
\end{tabular}\endgroup%
}}\!\right]$}%
}%
\ifdim\wd\matricesbox>\halfwidth\myboxwidth=\hsize\else\myboxwidth=\halfwidth\fi
\vbox{%
\ifdim\myboxwidth=\hsize
\setbox\onelinebox=\hbox{%
\vbox{\hbox{%
$\Pi_{20,8}$ spans $L_{16.9}$%
}\hbox{%
$36322322\slashthree223223636\slashthree6\rtimes D_{2}$%
}%
}%
\hfill\copy\matricesbox
}%
\ifdim\wd\onelinebox>\myboxwidth
\hbox to \myboxwidth{%
$\Pi_{20,8}$ spans $L_{16.9}$%
\hfil
$36322322\slashthree223223636\slashthree6\rtimes D_{2}$%
}%
\box\matricesbox
\else
\hbox to \myboxwidth{%
\unhbox\onelinebox
}%
\fi
\else
\hbox to \myboxwidth{%
$\Pi_{20,8}$ spans $L_{16.9}$%
\hfil}%
\hbox to \myboxwidth{%
$36322322\slashthree223223636\slashthree6\rtimes D_{2}$%
\hfil}%
\box\matricesbox
\fi
}%
\hfill\discretionary{}{}{}%
\setbox\matricesbox=\hbox{%
{$\left[\!\llap{\phantom{%
\begingroup \smaller\smaller\smaller\begin{tabular}{@{}c@{}}%
\phantom{0}\\\phantom{0}\\\phantom{0}
\end{tabular}\endgroup%
}}\right.$}%
\begingroup \smaller\smaller\smaller\begin{tabular}{@{}c@{}}%
-1\\\phantom{0}\\\phantom{0}
\end{tabular}\endgroup%
\kern3pt%
\begingroup \smaller\smaller\smaller\begin{tabular}{@{}c@{}}%
\phantom{0}\\15/2\\\phantom{0}
\end{tabular}\endgroup%
\kern3pt%
\begingroup \smaller\smaller\smaller\begin{tabular}{@{}c@{}}%
\phantom{0}\\\phantom{0}\\45/2
\end{tabular}\endgroup%
{$\left.\llap{\phantom{%
\begingroup \smaller\smaller\smaller\begin{tabular}{@{}c@{}}%
\phantom{0}\\\phantom{0}\\\phantom{0}
\end{tabular}\endgroup%
}}\!\right]$}%
{$\left[\!\llap{\phantom{%
\begingroup \smaller\smaller\smaller\begin{tabular}{@{}c@{}}%
0\\0\\0
\end{tabular}\endgroup%
}}\right.$}%
\begingroup \smaller\smaller\smaller\begin{tabular}{@{}c@{}}%
30\\-11\\-1
\end{tabular}\endgroup%
\kern3pt%
\begingroup \smaller\smaller\smaller\begin{tabular}{@{}c@{}}%
9\\-3\\-1
\end{tabular}\endgroup%
\kern3pt%
\begingroup \smaller\smaller\smaller\begin{tabular}{@{}c@{}}%
30\\-7\\-5
\end{tabular}\endgroup%
\kern3pt%
\begingroup \smaller\smaller\smaller\begin{tabular}{@{}c@{}}%
30\\-4\\-6
\end{tabular}\endgroup%
\kern3pt%
\begingroup \smaller\smaller\smaller\begin{tabular}{@{}c@{}}%
90\\-3\\-19
\end{tabular}\endgroup%
\kern3pt%
\begingroup \smaller\smaller\smaller\begin{tabular}{@{}c@{}}%
90\\3\\-19
\end{tabular}\endgroup%
\kern3pt%
\begingroup \smaller\smaller\smaller\begin{tabular}{@{}c@{}}%
5\\1\\-1
\end{tabular}\endgroup%
\kern3pt%
\begingroup \smaller\smaller\smaller\begin{tabular}{@{}c@{}}%
90\\27\\-11
\end{tabular}\endgroup%
\kern3pt%
\begingroup \smaller\smaller\smaller\begin{tabular}{@{}c@{}}%
90\\30\\-8
\end{tabular}\endgroup%
\kern3pt%
\begingroup \smaller\smaller\smaller\begin{tabular}{@{}c@{}}%
30\\11\\-1
\end{tabular}\endgroup%
{$\left.\llap{\phantom{%
\begingroup \smaller\smaller\smaller\begin{tabular}{@{}c@{}}%
0\\0\\0
\end{tabular}\endgroup%
}}\!\right]$}%
}%
\ifdim\wd\matricesbox>\halfwidth\myboxwidth=\hsize\else\myboxwidth=\halfwidth\fi
\vbox{%
\ifdim\myboxwidth=\hsize
\setbox\onelinebox=\hbox{%
\vbox{\hbox{%
$\Pi_{20,9}$ spans $L_{16.13}$%
}\hbox{%
$36322\slashthree223632236\slashthree6322\rtimes D_{2}$%
}%
}%
\hfill\copy\matricesbox
}%
\ifdim\wd\onelinebox>\myboxwidth
\hbox to \myboxwidth{%
$\Pi_{20,9}$ spans $L_{16.13}$%
\hfil
$36322\slashthree223632236\slashthree6322\rtimes D_{2}$%
}%
\box\matricesbox
\else
\hbox to \myboxwidth{%
\unhbox\onelinebox
}%
\fi
\else
\hbox to \myboxwidth{%
$\Pi_{20,9}$ spans $L_{16.13}$%
\hfil}%
\hbox to \myboxwidth{%
$36322\slashthree223632236\slashthree6322\rtimes D_{2}$%
\hfil}%
\box\matricesbox
\fi
}%
\hfill\discretionary{}{}{}%
\setbox\matricesbox=\hbox{%
{$\left[\!\llap{\phantom{%
\begingroup \smaller\smaller\smaller\begin{tabular}{@{}c@{}}%
\phantom{0}\\\phantom{0}\\\phantom{0}
\end{tabular}\endgroup%
}}\right.$}%
\begingroup \smaller\smaller\smaller\begin{tabular}{@{}c@{}}%
-1\\\phantom{0}\\\phantom{0}
\end{tabular}\endgroup%
\kern3pt%
\begingroup \smaller\smaller\smaller\begin{tabular}{@{}c@{}}%
\phantom{0}\\45/2\\\phantom{0}
\end{tabular}\endgroup%
\kern3pt%
\begingroup \smaller\smaller\smaller\begin{tabular}{@{}c@{}}%
\phantom{0}\\\phantom{0}\\15/2
\end{tabular}\endgroup%
{$\left.\llap{\phantom{%
\begingroup \smaller\smaller\smaller\begin{tabular}{@{}c@{}}%
\phantom{0}\\\phantom{0}\\\phantom{0}
\end{tabular}\endgroup%
}}\!\right]$}%
{$\left[\!\llap{\phantom{%
\begingroup \smaller\smaller\smaller\begin{tabular}{@{}c@{}}%
0\\0\\0
\end{tabular}\endgroup%
}}\right.$}%
\begingroup \smaller\smaller\smaller\begin{tabular}{@{}c@{}}%
90\\19\\3
\end{tabular}\endgroup%
\kern3pt%
\begingroup \smaller\smaller\smaller\begin{tabular}{@{}c@{}}%
30\\6\\4
\end{tabular}\endgroup%
\kern3pt%
\begingroup \smaller\smaller\smaller\begin{tabular}{@{}c@{}}%
30\\5\\7
\end{tabular}\endgroup%
\kern3pt%
\begingroup \smaller\smaller\smaller\begin{tabular}{@{}c@{}}%
9\\1\\3
\end{tabular}\endgroup%
\kern3pt%
\begingroup \smaller\smaller\smaller\begin{tabular}{@{}c@{}}%
30\\1\\11
\end{tabular}\endgroup%
\kern3pt%
\begingroup \smaller\smaller\smaller\begin{tabular}{@{}c@{}}%
30\\-1\\11
\end{tabular}\endgroup%
\kern3pt%
\begingroup \smaller\smaller\smaller\begin{tabular}{@{}c@{}}%
90\\-8\\30
\end{tabular}\endgroup%
\kern3pt%
\begingroup \smaller\smaller\smaller\begin{tabular}{@{}c@{}}%
90\\-11\\27
\end{tabular}\endgroup%
\kern3pt%
\begingroup \smaller\smaller\smaller\begin{tabular}{@{}c@{}}%
5\\-1\\1
\end{tabular}\endgroup%
\kern3pt%
\begingroup \smaller\smaller\smaller\begin{tabular}{@{}c@{}}%
90\\-19\\3
\end{tabular}\endgroup%
{$\left.\llap{\phantom{%
\begingroup \smaller\smaller\smaller\begin{tabular}{@{}c@{}}%
0\\0\\0
\end{tabular}\endgroup%
}}\!\right]$}%
}%
\ifdim\wd\matricesbox>\halfwidth\myboxwidth=\hsize\else\myboxwidth=\halfwidth\fi
\vbox{%
\ifdim\myboxwidth=\hsize
\setbox\onelinebox=\hbox{%
\vbox{\hbox{%
$\Pi_{20,10}$ spans $L_{16.13}$%
}\hbox{%
$\slashthree632236322\slashthree223632236\rtimes D_{2}$%
}%
}%
\hfill\copy\matricesbox
}%
\ifdim\wd\onelinebox>\myboxwidth
\hbox to \myboxwidth{%
$\Pi_{20,10}$ spans $L_{16.13}$%
\hfil
$\slashthree632236322\slashthree223632236\rtimes D_{2}$%
}%
\box\matricesbox
\else
\hbox to \myboxwidth{%
\unhbox\onelinebox
}%
\fi
\else
\hbox to \myboxwidth{%
$\Pi_{20,10}$ spans $L_{16.13}$%
\hfil}%
\hbox to \myboxwidth{%
$\slashthree632236322\slashthree223632236\rtimes D_{2}$%
\hfil}%
\box\matricesbox
\fi
}%
\hfill\discretionary{}{}{}%
\setbox\matricesbox=\hbox{%
{$\left[\!\llap{\phantom{%
\begingroup \smaller\smaller\smaller% [inline block 278: 18 envs, 2032 chars -> data_tex | \begin{tabular}{@{}c@{}}% \phantom{0}\\\phantom{0}\\\phantom{0}...]
\endgroup%
}}\!\right]$}%
}%
\ifdim\wd\matricesbox>\halfwidth\myboxwidth=\hsize\else\myboxwidth=\halfwidth\fi
\vbox{%
\ifdim\myboxwidth=\hsize
\setbox\onelinebox=\hbox{%
\vbox{\hbox{%
$\Pi_{20,11}$ spans $L_{16.13}$%
}\hbox{%
$3632|2363636322|223636\rtimes D_{2}$%
}%
}%
\hfill\copy\matricesbox
}%
\ifdim\wd\onelinebox>\myboxwidth
\hbox to \myboxwidth{%
$\Pi_{20,11}$ spans $L_{16.13}$%
\hfil
$3632|2363636322|223636\rtimes D_{2}$%
}%
\box\matricesbox
\else
\hbox to \myboxwidth{%
\unhbox\onelinebox
}%
\fi
\else
\hbox to \myboxwidth{%
$\Pi_{20,11}$ spans $L_{16.13}$%
\hfil}%
\hbox to \myboxwidth{%
$3632|2363636322|223636\rtimes D_{2}$%
\hfil}%
\box\matricesbox
\fi
}%
\hfill\discretionary{}{}{}%
\setbox\matricesbox=\hbox{%
{$\left[\!\llap{\phantom{%
\begingroup \smaller\smaller\smaller\begin{tabular}{@{}c@{}}%
\phantom{0}\\\phantom{0}\\\phantom{0}
\end{tabular}\endgroup%
}}\right.$}%
\begingroup \smaller\smaller\smaller\begin{tabular}{@{}c@{}}%
-5\\\phantom{0}\\\phantom{0}
\end{tabular}\endgroup%
\kern3pt%
\begingroup \smaller\smaller\smaller\begin{tabular}{@{}c@{}}%
\phantom{0}\\9/2\\\phantom{0}
\end{tabular}\endgroup%
\kern3pt%
\begingroup \smaller\smaller\smaller\begin{tabular}{@{}c@{}}%
\phantom{0}\\\phantom{0}\\3/2
\end{tabular}\endgroup%
{$\left.\llap{\phantom{%
\begingroup \smaller\smaller\smaller\begin{tabular}{@{}c@{}}%
\phantom{0}\\\phantom{0}\\\phantom{0}
\end{tabular}\endgroup%
}}\!\right]$}%
{$\left[\!\llap{\phantom{%
\begingroup \smaller\smaller\smaller\begin{tabular}{@{}c@{}}%
0\\0\\0
\end{tabular}\endgroup%
}}\right.$}%
\begingroup \smaller\smaller\smaller\begin{tabular}{@{}c@{}}%
18\\19\\3
\end{tabular}\endgroup%
\kern3pt%
\begingroup \smaller\smaller\smaller\begin{tabular}{@{}c@{}}%
6\\6\\4
\end{tabular}\endgroup%
\kern3pt%
\begingroup \smaller\smaller\smaller\begin{tabular}{@{}c@{}}%
6\\5\\7
\end{tabular}\endgroup%
\kern3pt%
\begingroup \smaller\smaller\smaller\begin{tabular}{@{}c@{}}%
18\\11\\27
\end{tabular}\endgroup%
\kern3pt%
\begingroup \smaller\smaller\smaller\begin{tabular}{@{}c@{}}%
18\\8\\30
\end{tabular}\endgroup%
\kern3pt%
\begingroup \smaller\smaller\smaller\begin{tabular}{@{}c@{}}%
1\\0\\2
\end{tabular}\endgroup%
\kern3pt%
\begingroup \smaller\smaller\smaller\begin{tabular}{@{}c@{}}%
18\\-8\\30
\end{tabular}\endgroup%
\kern3pt%
\begingroup \smaller\smaller\smaller\begin{tabular}{@{}c@{}}%
18\\-11\\27
\end{tabular}\endgroup%
\kern3pt%
\begingroup \smaller\smaller\smaller\begin{tabular}{@{}c@{}}%
1\\-1\\1
\end{tabular}\endgroup%
\kern3pt%
\begingroup \smaller\smaller\smaller\begin{tabular}{@{}c@{}}%
18\\-19\\3
\end{tabular}\endgroup%
{$\left.\llap{\phantom{%
\begingroup \smaller\smaller\smaller\begin{tabular}{@{}c@{}}%
0\\0\\0
\end{tabular}\endgroup%
}}\!\right]$}%
}%
\ifdim\wd\matricesbox>\halfwidth\myboxwidth=\hsize\else\myboxwidth=\halfwidth\fi
\vbox{%
\ifdim\myboxwidth=\hsize
\setbox\onelinebox=\hbox{%
\vbox{\hbox{%
$\Pi_{20,12}$ spans $L_{16.7}$%
}\hbox{%
$\slashthree636322322\slashthree223223636\rtimes D_{2}$%
}%
}%
\hfill\copy\matricesbox
}%
\ifdim\wd\onelinebox>\myboxwidth
\hbox to \myboxwidth{%
$\Pi_{20,12}$ spans $L_{16.7}$%
\hfil
$\slashthree636322322\slashthree223223636\rtimes D_{2}$%
}%
\box\matricesbox
\else
\hbox to \myboxwidth{%
\unhbox\onelinebox
}%
\fi
\else
\hbox to \myboxwidth{%
$\Pi_{20,12}$ spans $L_{16.7}$%
\hfil}%
\hbox to \myboxwidth{%
$\slashthree636322322\slashthree223223636\rtimes D_{2}$%
\hfil}%
\box\matricesbox
\fi
}%
\hfill\discretionary{}{}{}%
\setbox\matricesbox=\hbox{%
{$\left[\!\llap{\phantom{%
\begingroup \smaller\smaller\smaller% [inline block 279: 18 envs, 2036 chars -> data_tex | \begin{tabular}{@{}c@{}}% \phantom{0}\\\phantom{0}\\\phantom{0}...]
\endgroup%
}}\!\right]$}%
}%
\ifdim\wd\matricesbox>\halfwidth\myboxwidth=\hsize\else\myboxwidth=\halfwidth\fi
\vbox{%
\ifdim\myboxwidth=\hsize
\setbox\onelinebox=\hbox{%
\vbox{\hbox{%
$\Pi_{20,13}$ spans $L_{16.7}$%
}\hbox{%
$363632232|2322363632|2\rtimes D_{2}$%
}%
}%
\hfill\copy\matricesbox
}%
\ifdim\wd\onelinebox>\myboxwidth
\hbox to \myboxwidth{%
$\Pi_{20,13}$ spans $L_{16.7}$%
\hfil
$363632232|2322363632|2\rtimes D_{2}$%
}%
\box\matricesbox
\else
\hbox to \myboxwidth{%
\unhbox\onelinebox
}%
\fi
\else
\hbox to \myboxwidth{%
$\Pi_{20,13}$ spans $L_{16.7}$%
\hfil}%
\hbox to \myboxwidth{%
$363632232|2322363632|2\rtimes D_{2}$%
\hfil}%
\box\matricesbox
\fi
}%
\hfill\discretionary{}{}{}%
\setbox\matricesbox=\hbox{%
{$\left[\!\llap{\phantom{%
\begingroup \smaller\smaller\smaller% [inline block 280: 18 envs, 2013 chars -> data_tex | \begin{tabular}{@{}c@{}}% \phantom{0}\\\phantom{0}\\\phantom{0}...]
\endgroup%
}}\!\right]$}%
}%
\ifdim\wd\matricesbox>\halfwidth\myboxwidth=\hsize\else\myboxwidth=\halfwidth\fi
\vbox{%
\ifdim\myboxwidth=\hsize
\setbox\onelinebox=\hbox{%
\vbox{\hbox{%
$\Pi_{20,14}$ spans $L_{251.3}$%
}\hbox{%
$32232|2322322262|26222\rtimes D_{2}$%
}%
}%
\hfill\copy\matricesbox
}%
\ifdim\wd\onelinebox>\myboxwidth
\hbox to \myboxwidth{%
$\Pi_{20,14}$ spans $L_{251.3}$%
\hfil
$32232|2322322262|26222\rtimes D_{2}$%
}%
\box\matricesbox
\else
\hbox to \myboxwidth{%
\unhbox\onelinebox
}%
\fi
\else
\hbox to \myboxwidth{%
$\Pi_{20,14}$ spans $L_{251.3}$%
\hfil}%
\hbox to \myboxwidth{%
$32232|2322322262|26222\rtimes D_{2}$%
\hfil}%
\box\matricesbox
\fi
}%
\hfill\discretionary{}{}{}%
\setbox\matricesbox=\hbox{%
{$\left[\!\llap{\phantom{%
\begingroup \smaller\smaller\smaller% [inline block 281: 18 envs, 2014 chars -> data_tex | \begin{tabular}{@{}c@{}}% \phantom{0}\\\phantom{0}\\\phantom{0}...]
\endgroup%
}}\!\right]$}%
}%
\ifdim\wd\matricesbox>\halfwidth\myboxwidth=\hsize\else\myboxwidth=\halfwidth\fi
\vbox{%
\ifdim\myboxwidth=\hsize
\setbox\onelinebox=\hbox{%
\vbox{\hbox{%
$\Pi_{20,15}$ spans $L_{251.3}$%
}\hbox{%
$32232|2322322622|22622\rtimes D_{2}$%
}%
}%
\hfill\copy\matricesbox
}%
\ifdim\wd\onelinebox>\myboxwidth
\hbox to \myboxwidth{%
$\Pi_{20,15}$ spans $L_{251.3}$%
\hfil
$32232|2322322622|22622\rtimes D_{2}$%
}%
\box\matricesbox
\else
\hbox to \myboxwidth{%
\unhbox\onelinebox
}%
\fi
\else
\hbox to \myboxwidth{%
$\Pi_{20,15}$ spans $L_{251.3}$%
\hfil}%
\hbox to \myboxwidth{%
$32232|2322322622|22622\rtimes D_{2}$%
\hfil}%
\box\matricesbox
\fi
}%
\hfill\discretionary{}{}{}%
\setbox\matricesbox=\hbox{%
{$\left[\!\llap{\phantom{%
\begingroup \smaller\smaller\smaller\begin{tabular}{@{}c@{}}%
\phantom{0}\\\phantom{0}\\\phantom{0}
\end{tabular}\endgroup%
}}\right.$}%
\begingroup \smaller\smaller\smaller\begin{tabular}{@{}c@{}}%
-5\\\phantom{0}\\\phantom{0}
\end{tabular}\endgroup%
\kern3pt%
\begingroup \smaller\smaller\smaller\begin{tabular}{@{}c@{}}%
\phantom{0}\\9\\\phantom{0}
\end{tabular}\endgroup%
\kern3pt%
\begingroup \smaller\smaller\smaller\begin{tabular}{@{}c@{}}%
\phantom{0}\\\phantom{0}\\3
\end{tabular}\endgroup%
{$\left.\llap{\phantom{%
\begingroup \smaller\smaller\smaller\begin{tabular}{@{}c@{}}%
\phantom{0}\\\phantom{0}\\\phantom{0}
\end{tabular}\endgroup%
}}\!\right]$}%
{$\left[\!\llap{\phantom{%
\begingroup \smaller\smaller\smaller\begin{tabular}{@{}c@{}}%
0\\0\\0
\end{tabular}\endgroup%
}}\right.$}%
\begingroup \smaller\smaller\smaller\begin{tabular}{@{}c@{}}%
4\\-3\\1
\end{tabular}\endgroup%
\kern3pt%
\begingroup \smaller\smaller\smaller\begin{tabular}{@{}c@{}}%
3\\-2\\2
\end{tabular}\endgroup%
\kern3pt%
\begingroup \smaller\smaller\smaller\begin{tabular}{@{}c@{}}%
4\\-2\\4
\end{tabular}\endgroup%
\kern3pt%
\begingroup \smaller\smaller\smaller\begin{tabular}{@{}c@{}}%
4\\-1\\5
\end{tabular}\endgroup%
\kern3pt%
\begingroup \smaller\smaller\smaller\begin{tabular}{@{}c@{}}%
3\\0\\4
\end{tabular}\endgroup%
\kern3pt%
\begingroup \smaller\smaller\smaller\begin{tabular}{@{}c@{}}%
4\\1\\5
\end{tabular}\endgroup%
\kern3pt%
\begingroup \smaller\smaller\smaller\begin{tabular}{@{}c@{}}%
12\\5\\13
\end{tabular}\endgroup%
\kern3pt%
\begingroup \smaller\smaller\smaller\begin{tabular}{@{}c@{}}%
36\\19\\33
\end{tabular}\endgroup%
\kern3pt%
\begingroup \smaller\smaller\smaller\begin{tabular}{@{}c@{}}%
3\\2\\2
\end{tabular}\endgroup%
\kern3pt%
\begingroup \smaller\smaller\smaller\begin{tabular}{@{}c@{}}%
4\\3\\1
\end{tabular}\endgroup%
{$\left.\llap{\phantom{%
\begingroup \smaller\smaller\smaller\begin{tabular}{@{}c@{}}%
0\\0\\0
\end{tabular}\endgroup%
}}\!\right]$}%
}%
\ifdim\wd\matricesbox>\halfwidth\myboxwidth=\hsize\else\myboxwidth=\halfwidth\fi
\vbox{%
\ifdim\myboxwidth=\hsize
\setbox\onelinebox=\hbox{%
\vbox{\hbox{%
$\Pi_{20,16}$ spans $L_{251.3}$%
}\hbox{%
$322\slashthree223222622\slashthree226222\rtimes D_{2}$%
}%
}%
\hfill\copy\matricesbox
}%
\ifdim\wd\onelinebox>\myboxwidth
\hbox to \myboxwidth{%
$\Pi_{20,16}$ spans $L_{251.3}$%
\hfil
$322\slashthree223222622\slashthree226222\rtimes D_{2}$%
}%
\box\matricesbox
\else
\hbox to \myboxwidth{%
\unhbox\onelinebox
}%
\fi
\else
\hbox to \myboxwidth{%
$\Pi_{20,16}$ spans $L_{251.3}$%
\hfil}%
\hbox to \myboxwidth{%
$322\slashthree223222622\slashthree226222\rtimes D_{2}$%
\hfil}%
\box\matricesbox
\fi
}%
\hfill\discretionary{}{}{}%
\setbox\matricesbox=\hbox{%
{$\left[\!\llap{\phantom{%
\begingroup \smaller\smaller\smaller\begin{tabular}{@{}c@{}}%
\phantom{0}\\\phantom{0}\\\phantom{0}
\end{tabular}\endgroup%
}}\right.$}%
\begingroup \smaller\smaller\smaller\begin{tabular}{@{}c@{}}%
-5\\\phantom{0}\\\phantom{0}
\end{tabular}\endgroup%
\kern3pt%
\begingroup \smaller\smaller\smaller\begin{tabular}{@{}c@{}}%
\phantom{0}\\9\\\phantom{0}
\end{tabular}\endgroup%
\kern3pt%
\begingroup \smaller\smaller\smaller\begin{tabular}{@{}c@{}}%
\phantom{0}\\\phantom{0}\\3
\end{tabular}\endgroup%
{$\left.\llap{\phantom{%
\begingroup \smaller\smaller\smaller\begin{tabular}{@{}c@{}}%
\phantom{0}\\\phantom{0}\\\phantom{0}
\end{tabular}\endgroup%
}}\!\right]$}%
{$\left[\!\llap{\phantom{%
\begingroup \smaller\smaller\smaller\begin{tabular}{@{}c@{}}%
0\\0\\0
\end{tabular}\endgroup%
}}\right.$}%
\begingroup \smaller\smaller\smaller\begin{tabular}{@{}c@{}}%
4\\-3\\1
\end{tabular}\endgroup%
\kern3pt%
\begingroup \smaller\smaller\smaller\begin{tabular}{@{}c@{}}%
3\\-2\\2
\end{tabular}\endgroup%
\kern3pt%
\begingroup \smaller\smaller\smaller\begin{tabular}{@{}c@{}}%
4\\-2\\4
\end{tabular}\endgroup%
\kern3pt%
\begingroup \smaller\smaller\smaller\begin{tabular}{@{}c@{}}%
4\\-1\\5
\end{tabular}\endgroup%
\kern3pt%
\begingroup \smaller\smaller\smaller\begin{tabular}{@{}c@{}}%
3\\0\\4
\end{tabular}\endgroup%
\kern3pt%
\begingroup \smaller\smaller\smaller\begin{tabular}{@{}c@{}}%
36\\7\\45
\end{tabular}\endgroup%
\kern3pt%
\begingroup \smaller\smaller\smaller\begin{tabular}{@{}c@{}}%
12\\4\\14
\end{tabular}\endgroup%
\kern3pt%
\begingroup \smaller\smaller\smaller\begin{tabular}{@{}c@{}}%
4\\2\\4
\end{tabular}\endgroup%
\kern3pt%
\begingroup \smaller\smaller\smaller\begin{tabular}{@{}c@{}}%
3\\2\\2
\end{tabular}\endgroup%
\kern3pt%
\begingroup \smaller\smaller\smaller\begin{tabular}{@{}c@{}}%
4\\3\\1
\end{tabular}\endgroup%
{$\left.\llap{\phantom{%
\begingroup \smaller\smaller\smaller\begin{tabular}{@{}c@{}}%
0\\0\\0
\end{tabular}\endgroup%
}}\!\right]$}%
}%
\ifdim\wd\matricesbox>\halfwidth\myboxwidth=\hsize\else\myboxwidth=\halfwidth\fi
\vbox{%
\ifdim\myboxwidth=\hsize
\setbox\onelinebox=\hbox{%
\vbox{\hbox{%
$\Pi_{20,17}$ spans $L_{251.3}$%
}\hbox{%
$322\slashthree223226222\slashthree222622\rtimes D_{2}$%
}%
}%
\hfill\copy\matricesbox
}%
\ifdim\wd\onelinebox>\myboxwidth
\hbox to \myboxwidth{%
$\Pi_{20,17}$ spans $L_{251.3}$%
\hfil
$322\slashthree223226222\slashthree222622\rtimes D_{2}$%
}%
\box\matricesbox
\else
\hbox to \myboxwidth{%
\unhbox\onelinebox
}%
\fi
\else
\hbox to \myboxwidth{%
$\Pi_{20,17}$ spans $L_{251.3}$%
\hfil}%
\hbox to \myboxwidth{%
$322\slashthree223226222\slashthree222622\rtimes D_{2}$%
\hfil}%
\box\matricesbox
\fi
}%
\hfill\discretionary{}{}{}%
\setbox\matricesbox=\hbox{%
{$\left[\!\llap{\phantom{%
\begingroup \smaller\smaller\smaller% [inline block 282: 18 envs, 2022 chars -> data_tex | \begin{tabular}{@{}c@{}}% \phantom{0}\\\phantom{0}\\\phantom{0}...]
\endgroup%
}}\!\right]$}%
}%
\ifdim\wd\matricesbox>\halfwidth\myboxwidth=\hsize\else\myboxwidth=\halfwidth\fi
\vbox{%
\ifdim\myboxwidth=\hsize
\setbox\onelinebox=\hbox{%
\vbox{\hbox{%
$\Pi_{20,18}$ spans $L_{251.3}$%
}\hbox{%
$32|2322262232|23226222\rtimes D_{2}$%
}%
}%
\hfill\copy\matricesbox
}%
\ifdim\wd\onelinebox>\myboxwidth
\hbox to \myboxwidth{%
$\Pi_{20,18}$ spans $L_{251.3}$%
\hfil
$32|2322262232|23226222\rtimes D_{2}$%
}%
\box\matricesbox
\else
\hbox to \myboxwidth{%
\unhbox\onelinebox
}%
\fi
\else
\hbox to \myboxwidth{%
$\Pi_{20,18}$ spans $L_{251.3}$%
\hfil}%
\hbox to \myboxwidth{%
$32|2322262232|23226222\rtimes D_{2}$%
\hfil}%
\box\matricesbox
\fi
}%
\hfill\discretionary{}{}{}%
\setbox\matricesbox=\hbox{%
{$\left[\!\llap{\phantom{%
\begingroup \smaller\smaller\smaller% [inline block 283: 18 envs, 2038 chars -> data_tex | \begin{tabular}{@{}c@{}}% \phantom{0}\\\phantom{0}\\\phantom{0}...]
\endgroup%
}}\!\right]$}%
}%
\ifdim\wd\matricesbox>\halfwidth\myboxwidth=\hsize\else\myboxwidth=\halfwidth\fi
\vbox{%
\ifdim\myboxwidth=\hsize
\setbox\onelinebox=\hbox{%
\vbox{\hbox{%
$\Pi_{20,19}$ spans $L_{16.13}$%
}\hbox{%
$36322|2236363632|23636\rtimes D_{2}$%
}%
}%
\hfill\copy\matricesbox
}%
\ifdim\wd\onelinebox>\myboxwidth
\hbox to \myboxwidth{%
$\Pi_{20,19}$ spans $L_{16.13}$%
\hfil
$36322|2236363632|23636\rtimes D_{2}$%
}%
\box\matricesbox
\else
\hbox to \myboxwidth{%
\unhbox\onelinebox
}%
\fi
\else
\hbox to \myboxwidth{%
$\Pi_{20,19}$ spans $L_{16.13}$%
\hfil}%
\hbox to \myboxwidth{%
$36322|2236363632|23636\rtimes D_{2}$%
\hfil}%
\box\matricesbox
\fi
}%
\hfill\discretionary{}{}{}%
\setbox\matricesbox=\hbox{%
{$\left[\!\llap{\phantom{%
\begingroup \smaller\smaller\smaller% [inline block 284: 18 envs, 2036 chars -> data_tex | \begin{tabular}{@{}c@{}}% \phantom{0}\\\phantom{0}\\\phantom{0}...]
\endgroup%
}}\!\right]$}%
}%
\ifdim\wd\matricesbox>\halfwidth\myboxwidth=\hsize\else\myboxwidth=\halfwidth\fi
\vbox{%
\ifdim\myboxwidth=\hsize
\setbox\onelinebox=\hbox{%
\vbox{\hbox{%
$\Pi_{20,20}$ spans $L_{16.9}$%
}\hbox{%
$3632232|2322363632|236\rtimes D_{2}$%
}%
}%
\hfill\copy\matricesbox
}%
\ifdim\wd\onelinebox>\myboxwidth
\hbox to \myboxwidth{%
$\Pi_{20,20}$ spans $L_{16.9}$%
\hfil
$3632232|2322363632|236\rtimes D_{2}$%
}%
\box\matricesbox
\else
\hbox to \myboxwidth{%
\unhbox\onelinebox
}%
\fi
\else
\hbox to \myboxwidth{%
$\Pi_{20,20}$ spans $L_{16.9}$%
\hfil}%
\hbox to \myboxwidth{%
$3632232|2322363632|236\rtimes D_{2}$%
\hfil}%
\box\matricesbox
\fi
}%
\hfill\discretionary{}{}{}%
\setbox\matricesbox=\hbox{%
{$\left[\!\llap{\phantom{%
\begingroup \smaller\smaller\smaller\begin{tabular}{@{}c@{}}%
\phantom{0}\\\phantom{0}\\\phantom{0}
\end{tabular}\endgroup%
}}\right.$}%
\begingroup \smaller\smaller\smaller\begin{tabular}{@{}c@{}}%
-1\\\phantom{0}\\\phantom{0}
\end{tabular}\endgroup%
\kern3pt%
\begingroup \smaller\smaller\smaller\begin{tabular}{@{}c@{}}%
\phantom{0}\\15/2\\\phantom{0}
\end{tabular}\endgroup%
\kern3pt%
\begingroup \smaller\smaller\smaller\begin{tabular}{@{}c@{}}%
\phantom{0}\\\phantom{0}\\45/2
\end{tabular}\endgroup%
{$\left.\llap{\phantom{%
\begingroup \smaller\smaller\smaller\begin{tabular}{@{}c@{}}%
\phantom{0}\\\phantom{0}\\\phantom{0}
\end{tabular}\endgroup%
}}\!\right]$}%
{$\left[\!\llap{\phantom{%
\begingroup \smaller\smaller\smaller\begin{tabular}{@{}c@{}}%
0\\0\\0
\end{tabular}\endgroup%
}}\right.$}%
\begingroup \smaller\smaller\smaller\begin{tabular}{@{}c@{}}%
30\\11\\-1
\end{tabular}\endgroup%
\kern3pt%
\begingroup \smaller\smaller\smaller\begin{tabular}{@{}c@{}}%
9\\3\\-1
\end{tabular}\endgroup%
\kern3pt%
\begingroup \smaller\smaller\smaller\begin{tabular}{@{}c@{}}%
5\\1\\-1
\end{tabular}\endgroup%
\kern3pt%
\begingroup \smaller\smaller\smaller\begin{tabular}{@{}c@{}}%
90\\3\\-19
\end{tabular}\endgroup%
\kern3pt%
\begingroup \smaller\smaller\smaller\begin{tabular}{@{}c@{}}%
90\\-3\\-19
\end{tabular}\endgroup%
\kern3pt%
\begingroup \smaller\smaller\smaller\begin{tabular}{@{}c@{}}%
30\\-4\\-6
\end{tabular}\endgroup%
\kern3pt%
\begingroup \smaller\smaller\smaller\begin{tabular}{@{}c@{}}%
30\\-7\\-5
\end{tabular}\endgroup%
\kern3pt%
\begingroup \smaller\smaller\smaller\begin{tabular}{@{}c@{}}%
90\\-27\\-11
\end{tabular}\endgroup%
\kern3pt%
\begingroup \smaller\smaller\smaller\begin{tabular}{@{}c@{}}%
90\\-30\\-8
\end{tabular}\endgroup%
\kern3pt%
\begingroup \smaller\smaller\smaller\begin{tabular}{@{}c@{}}%
30\\-11\\-1
\end{tabular}\endgroup%
{$\left.\llap{\phantom{%
\begingroup \smaller\smaller\smaller\begin{tabular}{@{}c@{}}%
0\\0\\0
\end{tabular}\endgroup%
}}\!\right]$}%
}%
\ifdim\wd\matricesbox>\halfwidth\myboxwidth=\hsize\else\myboxwidth=\halfwidth\fi
\vbox{%
\ifdim\myboxwidth=\hsize
\setbox\onelinebox=\hbox{%
\vbox{\hbox{%
$\Pi_{20,21}$ spans $L_{16.13}$%
}\hbox{%
$36363222\slashthree222363636\slashthree6\rtimes D_{2}$%
}%
}%
\hfill\copy\matricesbox
}%
\ifdim\wd\onelinebox>\myboxwidth
\hbox to \myboxwidth{%
$\Pi_{20,21}$ spans $L_{16.13}$%
\hfil
$36363222\slashthree222363636\slashthree6\rtimes D_{2}$%
}%
\box\matricesbox
\else
\hbox to \myboxwidth{%
\unhbox\onelinebox
}%
\fi
\else
\hbox to \myboxwidth{%
$\Pi_{20,21}$ spans $L_{16.13}$%
\hfil}%
\hbox to \myboxwidth{%
$36363222\slashthree222363636\slashthree6\rtimes D_{2}$%
\hfil}%
\box\matricesbox
\fi
}%
\hfill\discretionary{}{}{}%
\setbox\matricesbox=\hbox{%
{$\left[\!\llap{\phantom{%
\begingroup \smaller\smaller\smaller\begin{tabular}{@{}c@{}}%
\phantom{0}\\\phantom{0}\\\phantom{0}
\end{tabular}\endgroup%
}}\right.$}%
\begingroup \smaller\smaller\smaller\begin{tabular}{@{}c@{}}%
-1\\\phantom{0}\\\phantom{0}
\end{tabular}\endgroup%
\kern3pt%
\begingroup \smaller\smaller\smaller\begin{tabular}{@{}c@{}}%
\phantom{0}\\30\\-15
\end{tabular}\endgroup%
\kern3pt%
\begingroup \smaller\smaller\smaller\begin{tabular}{@{}c@{}}%
\phantom{0}\\-15\\30
\end{tabular}\endgroup%
{$\left.\llap{\phantom{%
\begingroup \smaller\smaller\smaller\begin{tabular}{@{}c@{}}%
\phantom{0}\\\phantom{0}\\\phantom{0}
\end{tabular}\endgroup%
}}\!\right]$}%
{$\left[\!\llap{\phantom{%
\begingroup \smaller\smaller\smaller\begin{tabular}{@{}c@{}}%
0\\0\\0
\end{tabular}\endgroup%
}}\right.$}%
\begingroup \smaller\smaller\smaller\begin{tabular}{@{}c@{}}%
90\\8\\19
\end{tabular}\endgroup%
\kern3pt%
\begingroup \smaller\smaller\smaller\begin{tabular}{@{}c@{}}%
90\\11\\19
\end{tabular}\endgroup%
\kern3pt%
\begingroup \smaller\smaller\smaller\begin{tabular}{@{}c@{}}%
30\\5\\6
\end{tabular}\endgroup%
\kern3pt%
\begingroup \smaller\smaller\smaller\begin{tabular}{@{}c@{}}%
30\\6\\5
\end{tabular}\endgroup%
\kern3pt%
\begingroup \smaller\smaller\smaller\begin{tabular}{@{}c@{}}%
9\\2\\1
\end{tabular}\endgroup%
\kern3pt%
\begingroup \smaller\smaller\smaller\begin{tabular}{@{}c@{}}%
5\\1\\0
\end{tabular}\endgroup%
\kern3pt%
\begingroup \smaller\smaller\smaller\begin{tabular}{@{}c@{}}%
90\\11\\-8
\end{tabular}\endgroup%
\kern3pt%
\begingroup \smaller\smaller\smaller\begin{tabular}{@{}c@{}}%
90\\8\\-11
\end{tabular}\endgroup%
\kern3pt%
\begingroup \smaller\smaller\smaller\begin{tabular}{@{}c@{}}%
30\\1\\-5
\end{tabular}\endgroup%
\kern3pt%
\begingroup \smaller\smaller\smaller\begin{tabular}{@{}c@{}}%
30\\-1\\-6
\end{tabular}\endgroup%
{$\left.\llap{\phantom{%
\begingroup \smaller\smaller\smaller\begin{tabular}{@{}c@{}}%
0\\0\\0
\end{tabular}\endgroup%
}}\!\right]$}%
}%
\ifdim\wd\matricesbox>\halfwidth\myboxwidth=\hsize\else\myboxwidth=\halfwidth\fi
\vbox{%
\ifdim\myboxwidth=\hsize
\setbox\onelinebox=\hbox{%
\vbox{\hbox{%
$\Pi_{20,22}$ spans $L_{16.13}$%
}\hbox{%
$36322236363632223636\rtimes C_{2}$%
}%
}%
\hfill\copy\matricesbox
}%
\ifdim\wd\onelinebox>\myboxwidth
\hbox to \myboxwidth{%
$\Pi_{20,22}$ spans $L_{16.13}$%
\hfil
$36322236363632223636\rtimes C_{2}$%
}%
\box\matricesbox
\else
\hbox to \myboxwidth{%
\unhbox\onelinebox
}%
\fi
\else
\hbox to \myboxwidth{%
$\Pi_{20,22}$ spans $L_{16.13}$%
\hfil}%
\hbox to \myboxwidth{%
$36322236363632223636\rtimes C_{2}$%
\hfil}%
\box\matricesbox
\fi
}%
\hfill\discretionary{}{}{}%
\setbox\matricesbox=\hbox{%
{$\left[\!\llap{\phantom{%
\begingroup \smaller\smaller\smaller\begin{tabular}{@{}c@{}}%
\phantom{0}\\\phantom{0}\\\phantom{0}
\end{tabular}\endgroup%
}}\right.$}%
\begingroup \smaller\smaller\smaller\begin{tabular}{@{}c@{}}%
-5\\\phantom{0}\\\phantom{0}
\end{tabular}\endgroup%
\kern3pt%
\begingroup \smaller\smaller\smaller\begin{tabular}{@{}c@{}}%
\phantom{0}\\12\\-6
\end{tabular}\endgroup%
\kern3pt%
\begingroup \smaller\smaller\smaller\begin{tabular}{@{}c@{}}%
\phantom{0}\\-6\\12
\end{tabular}\endgroup%
{$\left.\llap{\phantom{%
\begingroup \smaller\smaller\smaller\begin{tabular}{@{}c@{}}%
\phantom{0}\\\phantom{0}\\\phantom{0}
\end{tabular}\endgroup%
}}\!\right]$}%
{$\left[\!\llap{\phantom{%
\begingroup \smaller\smaller\smaller\begin{tabular}{@{}c@{}}%
0\\0\\0
\end{tabular}\endgroup%
}}\right.$}%
\begingroup \smaller\smaller\smaller\begin{tabular}{@{}c@{}}%
4\\-1\\-3
\end{tabular}\endgroup%
\kern3pt%
\begingroup \smaller\smaller\smaller\begin{tabular}{@{}c@{}}%
4\\-2\\-3
\end{tabular}\endgroup%
\kern3pt%
\begingroup \smaller\smaller\smaller\begin{tabular}{@{}c@{}}%
3\\-2\\-2
\end{tabular}\endgroup%
\kern3pt%
\begingroup \smaller\smaller\smaller\begin{tabular}{@{}c@{}}%
4\\-3\\-2
\end{tabular}\endgroup%
\kern3pt%
\begingroup \smaller\smaller\smaller\begin{tabular}{@{}c@{}}%
4\\-3\\-1
\end{tabular}\endgroup%
\kern3pt%
\begingroup \smaller\smaller\smaller\begin{tabular}{@{}c@{}}%
3\\-2\\0
\end{tabular}\endgroup%
\kern3pt%
\begingroup \smaller\smaller\smaller\begin{tabular}{@{}c@{}}%
4\\-2\\1
\end{tabular}\endgroup%
\kern3pt%
\begingroup \smaller\smaller\smaller\begin{tabular}{@{}c@{}}%
12\\-4\\5
\end{tabular}\endgroup%
\kern3pt%
\begingroup \smaller\smaller\smaller\begin{tabular}{@{}c@{}}%
36\\-7\\19
\end{tabular}\endgroup%
\kern3pt%
\begingroup \smaller\smaller\smaller\begin{tabular}{@{}c@{}}%
3\\0\\2
\end{tabular}\endgroup%
{$\left.\llap{\phantom{%
\begingroup \smaller\smaller\smaller\begin{tabular}{@{}c@{}}%
0\\0\\0
\end{tabular}\endgroup%
}}\!\right]$}%
}%
\ifdim\wd\matricesbox>\halfwidth\myboxwidth=\hsize\else\myboxwidth=\halfwidth\fi
\vbox{%
\ifdim\myboxwidth=\hsize
\setbox\onelinebox=\hbox{%
\vbox{\hbox{%
$\Pi_{20,23}$ spans $L_{251.3}$%
}\hbox{%
$32232226223223222622\rtimes C_{2}$%
}%
}%
\hfill\copy\matricesbox
}%
\ifdim\wd\onelinebox>\myboxwidth
\hbox to \myboxwidth{%
$\Pi_{20,23}$ spans $L_{251.3}$%
\hfil
$32232226223223222622\rtimes C_{2}$%
}%
\box\matricesbox
\else
\hbox to \myboxwidth{%
\unhbox\onelinebox
}%
\fi
\else
\hbox to \myboxwidth{%
$\Pi_{20,23}$ spans $L_{251.3}$%
\hfil}%
\hbox to \myboxwidth{%
$32232226223223222622\rtimes C_{2}$%
\hfil}%
\box\matricesbox
\fi
}%
\hfill\discretionary{}{}{}%
\setbox\matricesbox=\hbox{%
{$\left[\!\llap{\phantom{%
\begingroup \smaller\smaller\smaller% [inline block 285: 27 envs, 2973 chars -> data_tex | \begin{tabular}{@{}c@{}}% \phantom{0}\\\phantom{0}\\\phantom{0}...]
\endgroup%
}}\!\right]$}%
}%
\ifdim\wd\matricesbox>\halfwidth\myboxwidth=\hsize\else\myboxwidth=\halfwidth\fi
\vbox{%
\ifdim\myboxwidth=\hsize
\setbox\onelinebox=\hbox{%
\vbox{\hbox{%
$\Pi_{20,24}$ spans $L_{16.13}$%
}\hbox{%
$36322222363636363636$%
}%
}%
\hfill\copy\matricesbox
}%
\ifdim\wd\onelinebox>\myboxwidth
\hbox to \myboxwidth{%
$\Pi_{20,24}$ spans $L_{16.13}$%
\hfil
$36322222363636363636$%
}%
\box\matricesbox
\else
\hbox to \myboxwidth{%
\unhbox\onelinebox
}%
\fi
\else
\hbox to \myboxwidth{%
$\Pi_{20,24}$ spans $L_{16.13}$%
\hfil}%
\hbox to \myboxwidth{%
$36322222363636363636$%
\hfil}%
\box\matricesbox
\fi
}%
\hfill\discretionary{}{}{}%
\setbox\matricesbox=\hbox{%
{$\left[\!\llap{\phantom{%
\begingroup \smaller\smaller\smaller% [inline block 286: 27 envs, 2970 chars -> data_tex | \begin{tabular}{@{}c@{}}% \phantom{0}\\\phantom{0}\\\phantom{0}...]
\endgroup%
}}\!\right]$}%
}%
\ifdim\wd\matricesbox>\halfwidth\myboxwidth=\hsize\else\myboxwidth=\halfwidth\fi
\vbox{%
\ifdim\myboxwidth=\hsize
\setbox\onelinebox=\hbox{%
\vbox{\hbox{%
$\Pi_{20,25}$ spans $L_{16.13}$%
}\hbox{%
$36322223223636363636$%
}%
}%
\hfill\copy\matricesbox
}%
\ifdim\wd\onelinebox>\myboxwidth
\hbox to \myboxwidth{%
$\Pi_{20,25}$ spans $L_{16.13}$%
\hfil
$36322223223636363636$%
}%
\box\matricesbox
\else
\hbox to \myboxwidth{%
\unhbox\onelinebox
}%
\fi
\else
\hbox to \myboxwidth{%
$\Pi_{20,25}$ spans $L_{16.13}$%
\hfil}%
\hbox to \myboxwidth{%
$36322223223636363636$%
\hfil}%
\box\matricesbox
\fi
}%
\hfill\discretionary{}{}{}%
\setbox\matricesbox=\hbox{%
{$\left[\!\llap{\phantom{%
\begingroup \smaller\smaller\smaller% [inline block 287: 27 envs, 2973 chars -> data_tex | \begin{tabular}{@{}c@{}}% \phantom{0}\\\phantom{0}\\\phantom{0}...]
\endgroup%
}}\!\right]$}%
}%
\ifdim\wd\matricesbox>\halfwidth\myboxwidth=\hsize\else\myboxwidth=\halfwidth\fi
\vbox{%
\ifdim\myboxwidth=\hsize
\setbox\onelinebox=\hbox{%
\vbox{\hbox{%
$\Pi_{20,26}$ spans $L_{16.13}$%
}\hbox{%
$36322223632236363636$%
}%
}%
\hfill\copy\matricesbox
}%
\ifdim\wd\onelinebox>\myboxwidth
\hbox to \myboxwidth{%
$\Pi_{20,26}$ spans $L_{16.13}$%
\hfil
$36322223632236363636$%
}%
\box\matricesbox
\else
\hbox to \myboxwidth{%
\unhbox\onelinebox
}%
\fi
\else
\hbox to \myboxwidth{%
$\Pi_{20,26}$ spans $L_{16.13}$%
\hfil}%
\hbox to \myboxwidth{%
$36322223632236363636$%
\hfil}%
\box\matricesbox
\fi
}%
\hfill\discretionary{}{}{}%
\setbox\matricesbox=\hbox{%
{$\left[\!\llap{\phantom{%
\begingroup \smaller\smaller\smaller% [inline block 288: 27 envs, 2970 chars -> data_tex | \begin{tabular}{@{}c@{}}% \phantom{0}\\\phantom{0}\\\phantom{0}...]
\endgroup%
}}\!\right]$}%
}%
\ifdim\wd\matricesbox>\halfwidth\myboxwidth=\hsize\else\myboxwidth=\halfwidth\fi
\vbox{%
\ifdim\myboxwidth=\hsize
\setbox\onelinebox=\hbox{%
\vbox{\hbox{%
$\Pi_{20,27}$ spans $L_{16.13}$%
}\hbox{%
$36322223636322363636$%
}%
}%
\hfill\copy\matricesbox
}%
\ifdim\wd\onelinebox>\myboxwidth
\hbox to \myboxwidth{%
$\Pi_{20,27}$ spans $L_{16.13}$%
\hfil
$36322223636322363636$%
}%
\box\matricesbox
\else
\hbox to \myboxwidth{%
\unhbox\onelinebox
}%
\fi
\else
\hbox to \myboxwidth{%
$\Pi_{20,27}$ spans $L_{16.13}$%
\hfil}%
\hbox to \myboxwidth{%
$36322223636322363636$%
\hfil}%
\box\matricesbox
\fi
}%
\hfill\discretionary{}{}{}%
\setbox\matricesbox=\hbox{%
{$\left[\!\llap{\phantom{%
\begingroup \smaller\smaller\smaller% [inline block 289: 27 envs, 2974 chars -> data_tex | \begin{tabular}{@{}c@{}}% \phantom{0}\\\phantom{0}\\\phantom{0}...]
\endgroup%
}}\!\right]$}%
}%
\ifdim\wd\matricesbox>\halfwidth\myboxwidth=\hsize\else\myboxwidth=\halfwidth\fi
\vbox{%
\ifdim\myboxwidth=\hsize
\setbox\onelinebox=\hbox{%
\vbox{\hbox{%
$\Pi_{20,28}$ spans $L_{16.13}$%
}\hbox{%
$36322232232236363636$%
}%
}%
\hfill\copy\matricesbox
}%
\ifdim\wd\onelinebox>\myboxwidth
\hbox to \myboxwidth{%
$\Pi_{20,28}$ spans $L_{16.13}$%
\hfil
$36322232232236363636$%
}%
\box\matricesbox
\else
\hbox to \myboxwidth{%
\unhbox\onelinebox
}%
\fi
\else
\hbox to \myboxwidth{%
$\Pi_{20,28}$ spans $L_{16.13}$%
\hfil}%
\hbox to \myboxwidth{%
$36322232232236363636$%
\hfil}%
\box\matricesbox
\fi
}%
\hfill\discretionary{}{}{}%
\setbox\matricesbox=\hbox{%
{$\left[\!\llap{\phantom{%
\begingroup \smaller\smaller\smaller% [inline block 290: 27 envs, 2971 chars -> data_tex | \begin{tabular}{@{}c@{}}% \phantom{0}\\\phantom{0}\\\phantom{0}...]
\endgroup%
}}\!\right]$}%
}%
\ifdim\wd\matricesbox>\halfwidth\myboxwidth=\hsize\else\myboxwidth=\halfwidth\fi
\vbox{%
\ifdim\myboxwidth=\hsize
\setbox\onelinebox=\hbox{%
\vbox{\hbox{%
$\Pi_{20,29}$ spans $L_{16.13}$%
}\hbox{%
$36322232236322363636$%
}%
}%
\hfill\copy\matricesbox
}%
\ifdim\wd\onelinebox>\myboxwidth
\hbox to \myboxwidth{%
$\Pi_{20,29}$ spans $L_{16.13}$%
\hfil
$36322232236322363636$%
}%
\box\matricesbox
\else
\hbox to \myboxwidth{%
\unhbox\onelinebox
}%
\fi
\else
\hbox to \myboxwidth{%
$\Pi_{20,29}$ spans $L_{16.13}$%
\hfil}%
\hbox to \myboxwidth{%
$36322232236322363636$%
\hfil}%
\box\matricesbox
\fi
}%
\hfill\discretionary{}{}{}%
\setbox\matricesbox=\hbox{%
{$\left[\!\llap{\phantom{%
\begingroup \smaller\smaller\smaller% [inline block 291: 27 envs, 2974 chars -> data_tex | \begin{tabular}{@{}c@{}}% \phantom{0}\\\phantom{0}\\\phantom{0}...]
\endgroup%
}}\!\right]$}%
}%
\ifdim\wd\matricesbox>\halfwidth\myboxwidth=\hsize\else\myboxwidth=\halfwidth\fi
\vbox{%
\ifdim\myboxwidth=\hsize
\setbox\onelinebox=\hbox{%
\vbox{\hbox{%
$\Pi_{20,30}$ spans $L_{16.13}$%
}\hbox{%
$36322232236363223636$%
}%
}%
\hfill\copy\matricesbox
}%
\ifdim\wd\onelinebox>\myboxwidth
\hbox to \myboxwidth{%
$\Pi_{20,30}$ spans $L_{16.13}$%
\hfil
$36322232236363223636$%
}%
\box\matricesbox
\else
\hbox to \myboxwidth{%
\unhbox\onelinebox
}%
\fi
\else
\hbox to \myboxwidth{%
$\Pi_{20,30}$ spans $L_{16.13}$%
\hfil}%
\hbox to \myboxwidth{%
$36322232236363223636$%
\hfil}%
\box\matricesbox
\fi
}%
\hfill\discretionary{}{}{}%
\setbox\matricesbox=\hbox{%
{$\left[\!\llap{\phantom{%
\begingroup \smaller\smaller\smaller% [inline block 292: 27 envs, 2973 chars -> data_tex | \begin{tabular}{@{}c@{}}% \phantom{0}\\\phantom{0}\\\phantom{0}...]
\endgroup%
}}\!\right]$}%
}%
\ifdim\wd\matricesbox>\halfwidth\myboxwidth=\hsize\else\myboxwidth=\halfwidth\fi
\vbox{%
\ifdim\myboxwidth=\hsize
\setbox\onelinebox=\hbox{%
\vbox{\hbox{%
$\Pi_{20,31}$ spans $L_{16.13}$%
}\hbox{%
$36322232236363632236$%
}%
}%
\hfill\copy\matricesbox
}%
\ifdim\wd\onelinebox>\myboxwidth
\hbox to \myboxwidth{%
$\Pi_{20,31}$ spans $L_{16.13}$%
\hfil
$36322232236363632236$%
}%
\box\matricesbox
\else
\hbox to \myboxwidth{%
\unhbox\onelinebox
}%
\fi
\else
\hbox to \myboxwidth{%
$\Pi_{20,31}$ spans $L_{16.13}$%
\hfil}%
\hbox to \myboxwidth{%
$36322232236363632236$%
\hfil}%
\box\matricesbox
\fi
}%
\hfill\discretionary{}{}{}%
\setbox\matricesbox=\hbox{%
{$\left[\!\llap{\phantom{%
\begingroup \smaller\smaller\smaller% [inline block 293: 27 envs, 2972 chars -> data_tex | \begin{tabular}{@{}c@{}}% \phantom{0}\\\phantom{0}\\\phantom{0}...]
\endgroup%
}}\!\right]$}%
}%
\ifdim\wd\matricesbox>\halfwidth\myboxwidth=\hsize\else\myboxwidth=\halfwidth\fi
\vbox{%
\ifdim\myboxwidth=\hsize
\setbox\onelinebox=\hbox{%
\vbox{\hbox{%
$\Pi_{20,32}$ spans $L_{16.13}$%
}\hbox{%
$36322232236363636322$%
}%
}%
\hfill\copy\matricesbox
}%
\ifdim\wd\onelinebox>\myboxwidth
\hbox to \myboxwidth{%
$\Pi_{20,32}$ spans $L_{16.13}$%
\hfil
$36322232236363636322$%
}%
\box\matricesbox
\else
\hbox to \myboxwidth{%
\unhbox\onelinebox
}%
\fi
\else
\hbox to \myboxwidth{%
$\Pi_{20,32}$ spans $L_{16.13}$%
\hfil}%
\hbox to \myboxwidth{%
$36322232236363636322$%
\hfil}%
\box\matricesbox
\fi
}%
\hfill\discretionary{}{}{}%
\setbox\matricesbox=\hbox{%
{$\left[\!\llap{\phantom{%
\begingroup \smaller\smaller\smaller% [inline block 294: 27 envs, 2971 chars -> data_tex | \begin{tabular}{@{}c@{}}% \phantom{0}\\\phantom{0}\\\phantom{0}...]
\endgroup%
}}\!\right]$}%
}%
\ifdim\wd\matricesbox>\halfwidth\myboxwidth=\hsize\else\myboxwidth=\halfwidth\fi
\vbox{%
\ifdim\myboxwidth=\hsize
\setbox\onelinebox=\hbox{%
\vbox{\hbox{%
$\Pi_{20,33}$ spans $L_{16.13}$%
}\hbox{%
$36322236322236363636$%
}%
}%
\hfill\copy\matricesbox
}%
\ifdim\wd\onelinebox>\myboxwidth
\hbox to \myboxwidth{%
$\Pi_{20,33}$ spans $L_{16.13}$%
\hfil
$36322236322236363636$%
}%
\box\matricesbox
\else
\hbox to \myboxwidth{%
\unhbox\onelinebox
}%
\fi
\else
\hbox to \myboxwidth{%
$\Pi_{20,33}$ spans $L_{16.13}$%
\hfil}%
\hbox to \myboxwidth{%
$36322236322236363636$%
\hfil}%
\box\matricesbox
\fi
}%
\hfill\discretionary{}{}{}%
\setbox\matricesbox=\hbox{%
{$\left[\!\llap{\phantom{%
\begingroup \smaller\smaller\smaller% [inline block 295: 27 envs, 2968 chars -> data_tex | \begin{tabular}{@{}c@{}}% \phantom{0}\\\phantom{0}\\\phantom{0}...]
\endgroup%
}}\!\right]$}%
}%
\ifdim\wd\matricesbox>\halfwidth\myboxwidth=\hsize\else\myboxwidth=\halfwidth\fi
\vbox{%
\ifdim\myboxwidth=\hsize
\setbox\onelinebox=\hbox{%
\vbox{\hbox{%
$\Pi_{20,34}$ spans $L_{16.13}$%
}\hbox{%
$36322236322322363636$%
}%
}%
\hfill\copy\matricesbox
}%
\ifdim\wd\onelinebox>\myboxwidth
\hbox to \myboxwidth{%
$\Pi_{20,34}$ spans $L_{16.13}$%
\hfil
$36322236322322363636$%
}%
\box\matricesbox
\else
\hbox to \myboxwidth{%
\unhbox\onelinebox
}%
\fi
\else
\hbox to \myboxwidth{%
$\Pi_{20,34}$ spans $L_{16.13}$%
\hfil}%
\hbox to \myboxwidth{%
$36322236322322363636$%
\hfil}%
\box\matricesbox
\fi
}%
\hfill\discretionary{}{}{}%
\setbox\matricesbox=\hbox{%
{$\left[\!\llap{\phantom{%
\begingroup \smaller\smaller\smaller% [inline block 296: 27 envs, 2971 chars -> data_tex | \begin{tabular}{@{}c@{}}% \phantom{0}\\\phantom{0}\\\phantom{0}...]
\endgroup%
}}\!\right]$}%
}%
\ifdim\wd\matricesbox>\halfwidth\myboxwidth=\hsize\else\myboxwidth=\halfwidth\fi
\vbox{%
\ifdim\myboxwidth=\hsize
\setbox\onelinebox=\hbox{%
\vbox{\hbox{%
$\Pi_{20,35}$ spans $L_{16.13}$%
}\hbox{%
$36322236322363223636$%
}%
}%
\hfill\copy\matricesbox
}%
\ifdim\wd\onelinebox>\myboxwidth
\hbox to \myboxwidth{%
$\Pi_{20,35}$ spans $L_{16.13}$%
\hfil
$36322236322363223636$%
}%
\box\matricesbox
\else
\hbox to \myboxwidth{%
\unhbox\onelinebox
}%
\fi
\else
\hbox to \myboxwidth{%
$\Pi_{20,35}$ spans $L_{16.13}$%
\hfil}%
\hbox to \myboxwidth{%
$36322236322363223636$%
\hfil}%
\box\matricesbox
\fi
}%
\hfill\discretionary{}{}{}%
\setbox\matricesbox=\hbox{%
{$\left[\!\llap{\phantom{%
\begingroup \smaller\smaller\smaller% [inline block 297: 27 envs, 2970 chars -> data_tex | \begin{tabular}{@{}c@{}}% \phantom{0}\\\phantom{0}\\\phantom{0}...]
\endgroup%
}}\!\right]$}%
}%
\ifdim\wd\matricesbox>\halfwidth\myboxwidth=\hsize\else\myboxwidth=\halfwidth\fi
\vbox{%
\ifdim\myboxwidth=\hsize
\setbox\onelinebox=\hbox{%
\vbox{\hbox{%
$\Pi_{20,36}$ spans $L_{16.13}$%
}\hbox{%
$36322236322363632236$%
}%
}%
\hfill\copy\matricesbox
}%
\ifdim\wd\onelinebox>\myboxwidth
\hbox to \myboxwidth{%
$\Pi_{20,36}$ spans $L_{16.13}$%
\hfil
$36322236322363632236$%
}%
\box\matricesbox
\else
\hbox to \myboxwidth{%
\unhbox\onelinebox
}%
\fi
\else
\hbox to \myboxwidth{%
$\Pi_{20,36}$ spans $L_{16.13}$%
\hfil}%
\hbox to \myboxwidth{%
$36322236322363632236$%
\hfil}%
\box\matricesbox
\fi
}%
\hfill\discretionary{}{}{}%
\setbox\matricesbox=\hbox{%
{$\left[\!\llap{\phantom{%
\begingroup \smaller\smaller\smaller% [inline block 298: 27 envs, 2970 chars -> data_tex | \begin{tabular}{@{}c@{}}% \phantom{0}\\\phantom{0}\\\phantom{0}...]
\endgroup%
}}\!\right]$}%
}%
\ifdim\wd\matricesbox>\halfwidth\myboxwidth=\hsize\else\myboxwidth=\halfwidth\fi
\vbox{%
\ifdim\myboxwidth=\hsize
\setbox\onelinebox=\hbox{%
\vbox{\hbox{%
$\Pi_{20,37}$ spans $L_{16.13}$%
}\hbox{%
$36322236322363636322$%
}%
}%
\hfill\copy\matricesbox
}%
\ifdim\wd\onelinebox>\myboxwidth
\hbox to \myboxwidth{%
$\Pi_{20,37}$ spans $L_{16.13}$%
\hfil
$36322236322363636322$%
}%
\box\matricesbox
\else
\hbox to \myboxwidth{%
\unhbox\onelinebox
}%
\fi
\else
\hbox to \myboxwidth{%
$\Pi_{20,37}$ spans $L_{16.13}$%
\hfil}%
\hbox to \myboxwidth{%
$36322236322363636322$%
\hfil}%
\box\matricesbox
\fi
}%
\hfill\discretionary{}{}{}%
\setbox\matricesbox=\hbox{%
{$\left[\!\llap{\phantom{%
\begingroup \smaller\smaller\smaller% [inline block 299: 27 envs, 2974 chars -> data_tex | \begin{tabular}{@{}c@{}}% \phantom{0}\\\phantom{0}\\\phantom{0}...]
\endgroup%
}}\!\right]$}%
}%
\ifdim\wd\matricesbox>\halfwidth\myboxwidth=\hsize\else\myboxwidth=\halfwidth\fi
\vbox{%
\ifdim\myboxwidth=\hsize
\setbox\onelinebox=\hbox{%
\vbox{\hbox{%
$\Pi_{20,38}$ spans $L_{16.13}$%
}\hbox{%
$36322236363223223636$%
}%
}%
\hfill\copy\matricesbox
}%
\ifdim\wd\onelinebox>\myboxwidth
\hbox to \myboxwidth{%
$\Pi_{20,38}$ spans $L_{16.13}$%
\hfil
$36322236363223223636$%
}%
\box\matricesbox
\else
\hbox to \myboxwidth{%
\unhbox\onelinebox
}%
\fi
\else
\hbox to \myboxwidth{%
$\Pi_{20,38}$ spans $L_{16.13}$%
\hfil}%
\hbox to \myboxwidth{%
$36322236363223223636$%
\hfil}%
\box\matricesbox
\fi
}%
\hfill\discretionary{}{}{}%
\setbox\matricesbox=\hbox{%
{$\left[\!\llap{\phantom{%
\begingroup \smaller\smaller\smaller% [inline block 300: 27 envs, 2973 chars -> data_tex | \begin{tabular}{@{}c@{}}% \phantom{0}\\\phantom{0}\\\phantom{0}...]
\endgroup%
}}\!\right]$}%
}%
\ifdim\wd\matricesbox>\halfwidth\myboxwidth=\hsize\else\myboxwidth=\halfwidth\fi
\vbox{%
\ifdim\myboxwidth=\hsize
\setbox\onelinebox=\hbox{%
\vbox{\hbox{%
$\Pi_{20,39}$ spans $L_{16.13}$%
}\hbox{%
$36322236363223632236$%
}%
}%
\hfill\copy\matricesbox
}%
\ifdim\wd\onelinebox>\myboxwidth
\hbox to \myboxwidth{%
$\Pi_{20,39}$ spans $L_{16.13}$%
\hfil
$36322236363223632236$%
}%
\box\matricesbox
\else
\hbox to \myboxwidth{%
\unhbox\onelinebox
}%
\fi
\else
\hbox to \myboxwidth{%
$\Pi_{20,39}$ spans $L_{16.13}$%
\hfil}%
\hbox to \myboxwidth{%
$36322236363223632236$%
\hfil}%
\box\matricesbox
\fi
}%
\hfill\discretionary{}{}{}%
\setbox\matricesbox=\hbox{%
{$\left[\!\llap{\phantom{%
\begingroup \smaller\smaller\smaller% [inline block 301: 27 envs, 2973 chars -> data_tex | \begin{tabular}{@{}c@{}}% \phantom{0}\\\phantom{0}\\\phantom{0}...]
\endgroup%
}}\!\right]$}%
}%
\ifdim\wd\matricesbox>\halfwidth\myboxwidth=\hsize\else\myboxwidth=\halfwidth\fi
\vbox{%
\ifdim\myboxwidth=\hsize
\setbox\onelinebox=\hbox{%
\vbox{\hbox{%
$\Pi_{20,40}$ spans $L_{16.13}$%
}\hbox{%
$36322236363223636322$%
}%
}%
\hfill\copy\matricesbox
}%
\ifdim\wd\onelinebox>\myboxwidth
\hbox to \myboxwidth{%
$\Pi_{20,40}$ spans $L_{16.13}$%
\hfil
$36322236363223636322$%
}%
\box\matricesbox
\else
\hbox to \myboxwidth{%
\unhbox\onelinebox
}%
\fi
\else
\hbox to \myboxwidth{%
$\Pi_{20,40}$ spans $L_{16.13}$%
\hfil}%
\hbox to \myboxwidth{%
$36322236363223636322$%
\hfil}%
\box\matricesbox
\fi
}%
\hfill\discretionary{}{}{}%
\setbox\matricesbox=\hbox{%
{$\left[\!\llap{\phantom{%
\begingroup \smaller\smaller\smaller% [inline block 302: 27 envs, 2970 chars -> data_tex | \begin{tabular}{@{}c@{}}% \phantom{0}\\\phantom{0}\\\phantom{0}...]
\endgroup%
}}\!\right]$}%
}%
\ifdim\wd\matricesbox>\halfwidth\myboxwidth=\hsize\else\myboxwidth=\halfwidth\fi
\vbox{%
\ifdim\myboxwidth=\hsize
\setbox\onelinebox=\hbox{%
\vbox{\hbox{%
$\Pi_{20,41}$ spans $L_{16.13}$%
}\hbox{%
$36322236363632232236$%
}%
}%
\hfill\copy\matricesbox
}%
\ifdim\wd\onelinebox>\myboxwidth
\hbox to \myboxwidth{%
$\Pi_{20,41}$ spans $L_{16.13}$%
\hfil
$36322236363632232236$%
}%
\box\matricesbox
\else
\hbox to \myboxwidth{%
\unhbox\onelinebox
}%
\fi
\else
\hbox to \myboxwidth{%
$\Pi_{20,41}$ spans $L_{16.13}$%
\hfil}%
\hbox to \myboxwidth{%
$36322236363632232236$%
\hfil}%
\box\matricesbox
\fi
}%
\hfill\discretionary{}{}{}%
\setbox\matricesbox=\hbox{%
{$\left[\!\llap{\phantom{%
\begingroup \smaller\smaller\smaller% [inline block 303: 27 envs, 2970 chars -> data_tex | \begin{tabular}{@{}c@{}}% \phantom{0}\\\phantom{0}\\\phantom{0}...]
\endgroup%
}}\!\right]$}%
}%
\ifdim\wd\matricesbox>\halfwidth\myboxwidth=\hsize\else\myboxwidth=\halfwidth\fi
\vbox{%
\ifdim\myboxwidth=\hsize
\setbox\onelinebox=\hbox{%
\vbox{\hbox{%
$\Pi_{20,42}$ spans $L_{16.13}$%
}\hbox{%
$36322236363632236322$%
}%
}%
\hfill\copy\matricesbox
}%
\ifdim\wd\onelinebox>\myboxwidth
\hbox to \myboxwidth{%
$\Pi_{20,42}$ spans $L_{16.13}$%
\hfil
$36322236363632236322$%
}%
\box\matricesbox
\else
\hbox to \myboxwidth{%
\unhbox\onelinebox
}%
\fi
\else
\hbox to \myboxwidth{%
$\Pi_{20,42}$ spans $L_{16.13}$%
\hfil}%
\hbox to \myboxwidth{%
$36322236363632236322$%
\hfil}%
\box\matricesbox
\fi
}%
\hfill\discretionary{}{}{}%
\setbox\matricesbox=\hbox{%
{$\left[\!\llap{\phantom{%
\begingroup \smaller\smaller\smaller% [inline block 304: 27 envs, 2972 chars -> data_tex | \begin{tabular}{@{}c@{}}% \phantom{0}\\\phantom{0}\\\phantom{0}...]
\endgroup%
}}\!\right]$}%
}%
\ifdim\wd\matricesbox>\halfwidth\myboxwidth=\hsize\else\myboxwidth=\halfwidth\fi
\vbox{%
\ifdim\myboxwidth=\hsize
\setbox\onelinebox=\hbox{%
\vbox{\hbox{%
$\Pi_{20,43}$ spans $L_{16.13}$%
}\hbox{%
$36322236363636322322$%
}%
}%
\hfill\copy\matricesbox
}%
\ifdim\wd\onelinebox>\myboxwidth
\hbox to \myboxwidth{%
$\Pi_{20,43}$ spans $L_{16.13}$%
\hfil
$36322236363636322322$%
}%
\box\matricesbox
\else
\hbox to \myboxwidth{%
\unhbox\onelinebox
}%
\fi
\else
\hbox to \myboxwidth{%
$\Pi_{20,43}$ spans $L_{16.13}$%
\hfil}%
\hbox to \myboxwidth{%
$36322236363636322322$%
\hfil}%
\box\matricesbox
\fi
}%
\hfill\discretionary{}{}{}%
\setbox\matricesbox=\hbox{%
{$\left[\!\llap{\phantom{%
\begingroup \smaller\smaller\smaller% [inline block 305: 27 envs, 2972 chars -> data_tex | \begin{tabular}{@{}c@{}}% \phantom{0}\\\phantom{0}\\\phantom{0}...]
\endgroup%
}}\!\right]$}%
}%
\ifdim\wd\matricesbox>\halfwidth\myboxwidth=\hsize\else\myboxwidth=\halfwidth\fi
\vbox{%
\ifdim\myboxwidth=\hsize
\setbox\onelinebox=\hbox{%
\vbox{\hbox{%
$\Pi_{20,44}$ spans $L_{16.13}$%
}\hbox{%
$36322322232236363636$%
}%
}%
\hfill\copy\matricesbox
}%
\ifdim\wd\onelinebox>\myboxwidth
\hbox to \myboxwidth{%
$\Pi_{20,44}$ spans $L_{16.13}$%
\hfil
$36322322232236363636$%
}%
\box\matricesbox
\else
\hbox to \myboxwidth{%
\unhbox\onelinebox
}%
\fi
\else
\hbox to \myboxwidth{%
$\Pi_{20,44}$ spans $L_{16.13}$%
\hfil}%
\hbox to \myboxwidth{%
$36322322232236363636$%
\hfil}%
\box\matricesbox
\fi
}%
\hfill\discretionary{}{}{}%
\setbox\matricesbox=\hbox{%
{$\left[\!\llap{\phantom{%
\begingroup \smaller\smaller\smaller% [inline block 306: 27 envs, 2969 chars -> data_tex | \begin{tabular}{@{}c@{}}% \phantom{0}\\\phantom{0}\\\phantom{0}...]
\endgroup%
}}\!\right]$}%
}%
\ifdim\wd\matricesbox>\halfwidth\myboxwidth=\hsize\else\myboxwidth=\halfwidth\fi
\vbox{%
\ifdim\myboxwidth=\hsize
\setbox\onelinebox=\hbox{%
\vbox{\hbox{%
$\Pi_{20,45}$ spans $L_{16.13}$%
}\hbox{%
$36322322236322363636$%
}%
}%
\hfill\copy\matricesbox
}%
\ifdim\wd\onelinebox>\myboxwidth
\hbox to \myboxwidth{%
$\Pi_{20,45}$ spans $L_{16.13}$%
\hfil
$36322322236322363636$%
}%
\box\matricesbox
\else
\hbox to \myboxwidth{%
\unhbox\onelinebox
}%
\fi
\else
\hbox to \myboxwidth{%
$\Pi_{20,45}$ spans $L_{16.13}$%
\hfil}%
\hbox to \myboxwidth{%
$36322322236322363636$%
\hfil}%
\box\matricesbox
\fi
}%
\hfill\discretionary{}{}{}%
\setbox\matricesbox=\hbox{%
{$\left[\!\llap{\phantom{%
\begingroup \smaller\smaller\smaller% [inline block 307: 27 envs, 2972 chars -> data_tex | \begin{tabular}{@{}c@{}}% \phantom{0}\\\phantom{0}\\\phantom{0}...]
\endgroup%
}}\!\right]$}%
}%
\ifdim\wd\matricesbox>\halfwidth\myboxwidth=\hsize\else\myboxwidth=\halfwidth\fi
\vbox{%
\ifdim\myboxwidth=\hsize
\setbox\onelinebox=\hbox{%
\vbox{\hbox{%
$\Pi_{20,46}$ spans $L_{16.13}$%
}\hbox{%
$36322322236363223636$%
}%
}%
\hfill\copy\matricesbox
}%
\ifdim\wd\onelinebox>\myboxwidth
\hbox to \myboxwidth{%
$\Pi_{20,46}$ spans $L_{16.13}$%
\hfil
$36322322236363223636$%
}%
\box\matricesbox
\else
\hbox to \myboxwidth{%
\unhbox\onelinebox
}%
\fi
\else
\hbox to \myboxwidth{%
$\Pi_{20,46}$ spans $L_{16.13}$%
\hfil}%
\hbox to \myboxwidth{%
$36322322236363223636$%
\hfil}%
\box\matricesbox
\fi
}%
\hfill\discretionary{}{}{}%
\setbox\matricesbox=\hbox{%
{$\left[\!\llap{\phantom{%
\begingroup \smaller\smaller\smaller% [inline block 308: 27 envs, 2971 chars -> data_tex | \begin{tabular}{@{}c@{}}% \phantom{0}\\\phantom{0}\\\phantom{0}...]
\endgroup%
}}\!\right]$}%
}%
\ifdim\wd\matricesbox>\halfwidth\myboxwidth=\hsize\else\myboxwidth=\halfwidth\fi
\vbox{%
\ifdim\myboxwidth=\hsize
\setbox\onelinebox=\hbox{%
\vbox{\hbox{%
$\Pi_{20,47}$ spans $L_{16.13}$%
}\hbox{%
$36322322236363632236$%
}%
}%
\hfill\copy\matricesbox
}%
\ifdim\wd\onelinebox>\myboxwidth
\hbox to \myboxwidth{%
$\Pi_{20,47}$ spans $L_{16.13}$%
\hfil
$36322322236363632236$%
}%
\box\matricesbox
\else
\hbox to \myboxwidth{%
\unhbox\onelinebox
}%
\fi
\else
\hbox to \myboxwidth{%
$\Pi_{20,47}$ spans $L_{16.13}$%
\hfil}%
\hbox to \myboxwidth{%
$36322322236363632236$%
\hfil}%
\box\matricesbox
\fi
}%
\hfill\discretionary{}{}{}%
\setbox\matricesbox=\hbox{%
{$\left[\!\llap{\phantom{%
\begingroup \smaller\smaller\smaller% [inline block 309: 27 envs, 2971 chars -> data_tex | \begin{tabular}{@{}c@{}}% \phantom{0}\\\phantom{0}\\\phantom{0}...]
\endgroup%
}}\!\right]$}%
}%
\ifdim\wd\matricesbox>\halfwidth\myboxwidth=\hsize\else\myboxwidth=\halfwidth\fi
\vbox{%
\ifdim\myboxwidth=\hsize
\setbox\onelinebox=\hbox{%
\vbox{\hbox{%
$\Pi_{20,48}$ spans $L_{16.13}$%
}\hbox{%
$36322322236363636322$%
}%
}%
\hfill\copy\matricesbox
}%
\ifdim\wd\onelinebox>\myboxwidth
\hbox to \myboxwidth{%
$\Pi_{20,48}$ spans $L_{16.13}$%
\hfil
$36322322236363636322$%
}%
\box\matricesbox
\else
\hbox to \myboxwidth{%
\unhbox\onelinebox
}%
\fi
\else
\hbox to \myboxwidth{%
$\Pi_{20,48}$ spans $L_{16.13}$%
\hfil}%
\hbox to \myboxwidth{%
$36322322236363636322$%
\hfil}%
\box\matricesbox
\fi
}%
\hfill\discretionary{}{}{}%
\setbox\matricesbox=\hbox{%
{$\left[\!\llap{\phantom{%
\begingroup \smaller\smaller\smaller% [inline block 310: 27 envs, 2969 chars -> data_tex | \begin{tabular}{@{}c@{}}% \phantom{0}\\\phantom{0}\\\phantom{0}...]
\endgroup%
}}\!\right]$}%
}%
\ifdim\wd\matricesbox>\halfwidth\myboxwidth=\hsize\else\myboxwidth=\halfwidth\fi
\vbox{%
\ifdim\myboxwidth=\hsize
\setbox\onelinebox=\hbox{%
\vbox{\hbox{%
$\Pi_{20,49}$ spans $L_{16.13}$%
}\hbox{%
$36322322322236363636$%
}%
}%
\hfill\copy\matricesbox
}%
\ifdim\wd\onelinebox>\myboxwidth
\hbox to \myboxwidth{%
$\Pi_{20,49}$ spans $L_{16.13}$%
\hfil
$36322322322236363636$%
}%
\box\matricesbox
\else
\hbox to \myboxwidth{%
\unhbox\onelinebox
}%
\fi
\else
\hbox to \myboxwidth{%
$\Pi_{20,49}$ spans $L_{16.13}$%
\hfil}%
\hbox to \myboxwidth{%
$36322322322236363636$%
\hfil}%
\box\matricesbox
\fi
}%
\hfill\discretionary{}{}{}%
\setbox\matricesbox=\hbox{%
{$\left[\!\llap{\phantom{%
\begingroup \smaller\smaller\smaller% [inline block 311: 27 envs, 2969 chars -> data_tex | \begin{tabular}{@{}c@{}}% \phantom{0}\\\phantom{0}\\\phantom{0}...]
\endgroup%
}}\!\right]$}%
}%
\ifdim\wd\matricesbox>\halfwidth\myboxwidth=\hsize\else\myboxwidth=\halfwidth\fi
\vbox{%
\ifdim\myboxwidth=\hsize
\setbox\onelinebox=\hbox{%
\vbox{\hbox{%
$\Pi_{20,50}$ spans $L_{16.13}$%
}\hbox{%
$36322322322363223636$%
}%
}%
\hfill\copy\matricesbox
}%
\ifdim\wd\onelinebox>\myboxwidth
\hbox to \myboxwidth{%
$\Pi_{20,50}$ spans $L_{16.13}$%
\hfil
$36322322322363223636$%
}%
\box\matricesbox
\else
\hbox to \myboxwidth{%
\unhbox\onelinebox
}%
\fi
\else
\hbox to \myboxwidth{%
$\Pi_{20,50}$ spans $L_{16.13}$%
\hfil}%
\hbox to \myboxwidth{%
$36322322322363223636$%
\hfil}%
\box\matricesbox
\fi
}%
\hfill\discretionary{}{}{}%
\setbox\matricesbox=\hbox{%
{$\left[\!\llap{\phantom{%
\begingroup \smaller\smaller\smaller% [inline block 312: 27 envs, 2972 chars -> data_tex | \begin{tabular}{@{}c@{}}% \phantom{0}\\\phantom{0}\\\phantom{0}...]
\endgroup%
}}\!\right]$}%
}%
\ifdim\wd\matricesbox>\halfwidth\myboxwidth=\hsize\else\myboxwidth=\halfwidth\fi
\vbox{%
\ifdim\myboxwidth=\hsize
\setbox\onelinebox=\hbox{%
\vbox{\hbox{%
$\Pi_{20,51}$ spans $L_{16.13}$%
}\hbox{%
$36322322363223223636$%
}%
}%
\hfill\copy\matricesbox
}%
\ifdim\wd\onelinebox>\myboxwidth
\hbox to \myboxwidth{%
$\Pi_{20,51}$ spans $L_{16.13}$%
\hfil
$36322322363223223636$%
}%
\box\matricesbox
\else
\hbox to \myboxwidth{%
\unhbox\onelinebox
}%
\fi
\else
\hbox to \myboxwidth{%
$\Pi_{20,51}$ spans $L_{16.13}$%
\hfil}%
\hbox to \myboxwidth{%
$36322322363223223636$%
\hfil}%
\box\matricesbox
\fi
}%
\hfill\discretionary{}{}{}%
\setbox\matricesbox=\hbox{%
{$\left[\!\llap{\phantom{%
\begingroup \smaller\smaller\smaller% [inline block 313: 27 envs, 2971 chars -> data_tex | \begin{tabular}{@{}c@{}}% \phantom{0}\\\phantom{0}\\\phantom{0}...]
\endgroup%
}}\!\right]$}%
}%
\ifdim\wd\matricesbox>\halfwidth\myboxwidth=\hsize\else\myboxwidth=\halfwidth\fi
\vbox{%
\ifdim\myboxwidth=\hsize
\setbox\onelinebox=\hbox{%
\vbox{\hbox{%
$\Pi_{20,52}$ spans $L_{16.13}$%
}\hbox{%
$36322322363223632236$%
}%
}%
\hfill\copy\matricesbox
}%
\ifdim\wd\onelinebox>\myboxwidth
\hbox to \myboxwidth{%
$\Pi_{20,52}$ spans $L_{16.13}$%
\hfil
$36322322363223632236$%
}%
\box\matricesbox
\else
\hbox to \myboxwidth{%
\unhbox\onelinebox
}%
\fi
\else
\hbox to \myboxwidth{%
$\Pi_{20,52}$ spans $L_{16.13}$%
\hfil}%
\hbox to \myboxwidth{%
$36322322363223632236$%
\hfil}%
\box\matricesbox
\fi
}%
\hfill\discretionary{}{}{}%
\setbox\matricesbox=\hbox{%
{$\left[\!\llap{\phantom{%
\begingroup \smaller\smaller\smaller% [inline block 314: 27 envs, 2970 chars -> data_tex | \begin{tabular}{@{}c@{}}% \phantom{0}\\\phantom{0}\\\phantom{0}...]
\endgroup%
}}\!\right]$}%
}%
\ifdim\wd\matricesbox>\halfwidth\myboxwidth=\hsize\else\myboxwidth=\halfwidth\fi
\vbox{%
\ifdim\myboxwidth=\hsize
\setbox\onelinebox=\hbox{%
\vbox{\hbox{%
$\Pi_{20,53}$ spans $L_{16.13}$%
}\hbox{%
$36322363222236363636$%
}%
}%
\hfill\copy\matricesbox
}%
\ifdim\wd\onelinebox>\myboxwidth
\hbox to \myboxwidth{%
$\Pi_{20,53}$ spans $L_{16.13}$%
\hfil
$36322363222236363636$%
}%
\box\matricesbox
\else
\hbox to \myboxwidth{%
\unhbox\onelinebox
}%
\fi
\else
\hbox to \myboxwidth{%
$\Pi_{20,53}$ spans $L_{16.13}$%
\hfil}%
\hbox to \myboxwidth{%
$36322363222236363636$%
\hfil}%
\box\matricesbox
\fi
}%
\hfill\discretionary{}{}{}%
\setbox\matricesbox=\hbox{%
{$\left[\!\llap{\phantom{%
\begingroup \smaller\smaller\smaller% [inline block 315: 27 envs, 2973 chars -> data_tex | \begin{tabular}{@{}c@{}}% \phantom{0}\\\phantom{0}\\\phantom{0}...]
\endgroup%
}}\!\right]$}%
}%
\ifdim\wd\matricesbox>\halfwidth\myboxwidth=\hsize\else\myboxwidth=\halfwidth\fi
\vbox{%
\ifdim\myboxwidth=\hsize
\setbox\onelinebox=\hbox{%
\vbox{\hbox{%
$\Pi_{20,54}$ spans $L_{16.13}$%
}\hbox{%
$36322363223223223636$%
}%
}%
\hfill\copy\matricesbox
}%
\ifdim\wd\onelinebox>\myboxwidth
\hbox to \myboxwidth{%
$\Pi_{20,54}$ spans $L_{16.13}$%
\hfil
$36322363223223223636$%
}%
\box\matricesbox
\else
\hbox to \myboxwidth{%
\unhbox\onelinebox
}%
\fi
\else
\hbox to \myboxwidth{%
$\Pi_{20,54}$ spans $L_{16.13}$%
\hfil}%
\hbox to \myboxwidth{%
$36322363223223223636$%
\hfil}%
\box\matricesbox
\fi
}%
\hfill\discretionary{}{}{}%
\setbox\matricesbox=\hbox{%
{$\left[\!\llap{\phantom{%
\begingroup \smaller\smaller\smaller% [inline block 316: 27 envs, 2974 chars -> data_tex | \begin{tabular}{@{}c@{}}% \phantom{0}\\\phantom{0}\\\phantom{0}...]
\endgroup%
}}\!\right]$}%
}%
\ifdim\wd\matricesbox>\halfwidth\myboxwidth=\hsize\else\myboxwidth=\halfwidth\fi
\vbox{%
\ifdim\myboxwidth=\hsize
\setbox\onelinebox=\hbox{%
\vbox{\hbox{%
$\Pi_{20,55}$ spans $L_{16.13}$%
}\hbox{%
$36322363223223636322$%
}%
}%
\hfill\copy\matricesbox
}%
\ifdim\wd\onelinebox>\myboxwidth
\hbox to \myboxwidth{%
$\Pi_{20,55}$ spans $L_{16.13}$%
\hfil
$36322363223223636322$%
}%
\box\matricesbox
\else
\hbox to \myboxwidth{%
\unhbox\onelinebox
}%
\fi
\else
\hbox to \myboxwidth{%
$\Pi_{20,55}$ spans $L_{16.13}$%
\hfil}%
\hbox to \myboxwidth{%
$36322363223223636322$%
\hfil}%
\box\matricesbox
\fi
}%
\hfill\discretionary{}{}{}%
\setbox\matricesbox=\hbox{%
{$\left[\!\llap{\phantom{%
\begingroup \smaller\smaller\smaller% [inline block 317: 27 envs, 2974 chars -> data_tex | \begin{tabular}{@{}c@{}}% \phantom{0}\\\phantom{0}\\\phantom{0}...]
\endgroup%
}}\!\right]$}%
}%
\ifdim\wd\matricesbox>\halfwidth\myboxwidth=\hsize\else\myboxwidth=\halfwidth\fi
\vbox{%
\ifdim\myboxwidth=\hsize
\setbox\onelinebox=\hbox{%
\vbox{\hbox{%
$\Pi_{20,56}$ spans $L_{16.13}$%
}\hbox{%
$36363222232236363636$%
}%
}%
\hfill\copy\matricesbox
}%
\ifdim\wd\onelinebox>\myboxwidth
\hbox to \myboxwidth{%
$\Pi_{20,56}$ spans $L_{16.13}$%
\hfil
$36363222232236363636$%
}%
\box\matricesbox
\else
\hbox to \myboxwidth{%
\unhbox\onelinebox
}%
\fi
\else
\hbox to \myboxwidth{%
$\Pi_{20,56}$ spans $L_{16.13}$%
\hfil}%
\hbox to \myboxwidth{%
$36363222232236363636$%
\hfil}%
\box\matricesbox
\fi
}%
\hfill\discretionary{}{}{}%
\setbox\matricesbox=\hbox{%
{$\left[\!\llap{\phantom{%
\begingroup \smaller\smaller\smaller% [inline block 318: 27 envs, 2974 chars -> data_tex | \begin{tabular}{@{}c@{}}% \phantom{0}\\\phantom{0}\\\phantom{0}...]
\endgroup%
}}\!\right]$}%
}%
\ifdim\wd\matricesbox>\halfwidth\myboxwidth=\hsize\else\myboxwidth=\halfwidth\fi
\vbox{%
\ifdim\myboxwidth=\hsize
\setbox\onelinebox=\hbox{%
\vbox{\hbox{%
$\Pi_{20,57}$ spans $L_{16.13}$%
}\hbox{%
$36363222236363223636$%
}%
}%
\hfill\copy\matricesbox
}%
\ifdim\wd\onelinebox>\myboxwidth
\hbox to \myboxwidth{%
$\Pi_{20,57}$ spans $L_{16.13}$%
\hfil
$36363222236363223636$%
}%
\box\matricesbox
\else
\hbox to \myboxwidth{%
\unhbox\onelinebox
}%
\fi
\else
\hbox to \myboxwidth{%
$\Pi_{20,57}$ spans $L_{16.13}$%
\hfil}%
\hbox to \myboxwidth{%
$36363222236363223636$%
\hfil}%
\box\matricesbox
\fi
}%
\hfill\discretionary{}{}{}%
\setbox\matricesbox=\hbox{%
{$\left[\!\llap{\phantom{%
\begingroup \smaller\smaller\smaller% [inline block 319: 27 envs, 2947 chars -> data_tex | \begin{tabular}{@{}c@{}}% \phantom{0}\\\phantom{0}\\\phantom{0}...]
\endgroup%
}}\!\right]$}%
}%
\ifdim\wd\matricesbox>\halfwidth\myboxwidth=\hsize\else\myboxwidth=\halfwidth\fi
\vbox{%
\ifdim\myboxwidth=\hsize
\setbox\onelinebox=\hbox{%
\vbox{\hbox{%
$\Pi_{20,58}$ spans $L_{251.3}$%
}\hbox{%
$32232232232226222622$%
}%
}%
\hfill\copy\matricesbox
}%
\ifdim\wd\onelinebox>\myboxwidth
\hbox to \myboxwidth{%
$\Pi_{20,58}$ spans $L_{251.3}$%
\hfil
$32232232232226222622$%
}%
\box\matricesbox
\else
\hbox to \myboxwidth{%
\unhbox\onelinebox
}%
\fi
\else
\hbox to \myboxwidth{%
$\Pi_{20,58}$ spans $L_{251.3}$%
\hfil}%
\hbox to \myboxwidth{%
$32232232232226222622$%
\hfil}%
\box\matricesbox
\fi
}%
\hfill\discretionary{}{}{}%
\setbox\matricesbox=\hbox{%
{$\left[\!\llap{\phantom{%
\begingroup \smaller\smaller\smaller% [inline block 320: 27 envs, 2948 chars -> data_tex | \begin{tabular}{@{}c@{}}% \phantom{0}\\\phantom{0}\\\phantom{0}...]
\endgroup%
}}\!\right]$}%
}%
\ifdim\wd\matricesbox>\halfwidth\myboxwidth=\hsize\else\myboxwidth=\halfwidth\fi
\vbox{%
\ifdim\myboxwidth=\hsize
\setbox\onelinebox=\hbox{%
\vbox{\hbox{%
$\Pi_{20,59}$ spans $L_{251.3}$%
}\hbox{%
$32232232226223222622$%
}%
}%
\hfill\copy\matricesbox
}%
\ifdim\wd\onelinebox>\myboxwidth
\hbox to \myboxwidth{%
$\Pi_{20,59}$ spans $L_{251.3}$%
\hfil
$32232232226223222622$%
}%
\box\matricesbox
\else
\hbox to \myboxwidth{%
\unhbox\onelinebox
}%
\fi
\else
\hbox to \myboxwidth{%
$\Pi_{20,59}$ spans $L_{251.3}$%
\hfil}%
\hbox to \myboxwidth{%
$32232232226223222622$%
\hfil}%
\box\matricesbox
\fi
}%
\hfill\discretionary{}{}{}%

\vskip2pt\hrule\vskip2pt

\leavevmode\setbox\matricesbox=\hbox{%
{$\left[\!\llap{\phantom{%
\begingroup \smaller\smaller\smaller\begin{tabular}{@{}c@{}}%
\phantom{0}\\\phantom{0}\\\phantom{0}\\\phantom{0}
\end{tabular}\endgroup%
}}\right.$}%
\begingroup \smaller\smaller\smaller\begin{tabular}{@{}c@{}}%
-3\\\phantom{0}\\\phantom{0}\\\phantom{0}
\end{tabular}\endgroup%
\kern3pt%
\begingroup \smaller\smaller\smaller\begin{tabular}{@{}c@{}}%
\phantom{0}\\5\\\phantom{0}\\\phantom{0}
\end{tabular}\endgroup%
\kern3pt%
\begingroup \smaller\smaller\smaller\begin{tabular}{@{}c@{}}%
\phantom{0}\\\phantom{0}\\5\\\phantom{0}
\end{tabular}\endgroup%
\kern3pt%
\begingroup \smaller\smaller\smaller\begin{tabular}{@{}c@{}}%
\phantom{0}\\\phantom{0}\\\phantom{0}\\5
\end{tabular}\endgroup%
{$\left.\llap{\phantom{%
\begingroup \smaller\smaller\smaller\begin{tabular}{@{}c@{}}%
\phantom{0}\\\phantom{0}\\\phantom{0}\\\phantom{0}
\end{tabular}\endgroup%
}}\!\right]$}%
{$\left[\!\llap{\phantom{%
\begingroup \smaller\smaller\smaller\begin{tabular}{@{}c@{}}%
0\\0\\0\\0
\end{tabular}\endgroup%
}}\right.$}%
\begingroup \smaller\smaller\smaller\begin{tabular}{@{}c@{}}%
30\\19\\-8\\-11
\end{tabular}\endgroup%
\kern3pt%
\begingroup \smaller\smaller\smaller\begin{tabular}{@{}c@{}}%
10\\6\\-1\\-5
\end{tabular}\endgroup%
\kern3pt%
\begingroup \smaller\smaller\smaller\begin{tabular}{@{}c@{}}%
10\\5\\1\\-6
\end{tabular}\endgroup%
\kern3pt%
\begingroup \smaller\smaller\smaller\begin{tabular}{@{}c@{}}%
3\\1\\1\\-2
\end{tabular}\endgroup%
{$\left.\llap{\phantom{%
\begingroup \smaller\smaller\smaller\begin{tabular}{@{}c@{}}%
0\\0\\0\\0
\end{tabular}\endgroup%
}}\!\right]$}%
}%
\ifdim\wd\matricesbox>\halfwidth\myboxwidth=\hsize\else\myboxwidth=\halfwidth\fi
\vbox{%
\ifdim\myboxwidth=\hsize
\setbox\onelinebox=\hbox{%
\vbox{\hbox{%
$\Pi_{21,1}$ spans $L_{16.9}$%
}\hbox{%
$\slashthree632|236\slashthree632|236\slashthree632|236\rtimes D_{6}$%
}%
}%
\hfill\copy\matricesbox
}%
\ifdim\wd\onelinebox>\myboxwidth
\hbox to \myboxwidth{%
$\Pi_{21,1}$ spans $L_{16.9}$%
\hfil
$\slashthree632|236\slashthree632|236\slashthree632|236\rtimes D_{6}$%
}%
\box\matricesbox
\else
\hbox to \myboxwidth{%
\unhbox\onelinebox
}%
\fi
\else
\hbox to \myboxwidth{%
$\Pi_{21,1}$ spans $L_{16.9}$%
\hfil}%
\hbox to \myboxwidth{%
$\slashthree632|236\slashthree632|236\slashthree632|236\rtimes D_{6}$%
\hfil}%
\box\matricesbox
\fi
}%
\hfill\discretionary{}{}{}%
\setbox\matricesbox=\hbox{%
{$\left[\!\llap{\phantom{%
\begingroup \smaller\smaller\smaller\begin{tabular}{@{}c@{}}%
\phantom{0}\\\phantom{0}\\\phantom{0}\\\phantom{0}
\end{tabular}\endgroup%
}}\right.$}%
\begingroup \smaller\smaller\smaller\begin{tabular}{@{}c@{}}%
-5\\\phantom{0}\\\phantom{0}\\\phantom{0}
\end{tabular}\endgroup%
\kern3pt%
\begingroup \smaller\smaller\smaller\begin{tabular}{@{}c@{}}%
\phantom{0}\\1\\\phantom{0}\\\phantom{0}
\end{tabular}\endgroup%
\kern3pt%
\begingroup \smaller\smaller\smaller\begin{tabular}{@{}c@{}}%
\phantom{0}\\\phantom{0}\\1\\\phantom{0}
\end{tabular}\endgroup%
\kern3pt%
\begingroup \smaller\smaller\smaller\begin{tabular}{@{}c@{}}%
\phantom{0}\\\phantom{0}\\\phantom{0}\\1
\end{tabular}\endgroup%
{$\left.\llap{\phantom{%
\begingroup \smaller\smaller\smaller\begin{tabular}{@{}c@{}}%
\phantom{0}\\\phantom{0}\\\phantom{0}\\\phantom{0}
\end{tabular}\endgroup%
}}\!\right]$}%
{$\left[\!\llap{\phantom{%
\begingroup \smaller\smaller\smaller\begin{tabular}{@{}c@{}}%
0\\0\\0\\0
\end{tabular}\endgroup%
}}\right.$}%
\begingroup \smaller\smaller\smaller\begin{tabular}{@{}c@{}}%
6\\-11\\4\\7
\end{tabular}\endgroup%
\kern3pt%
\begingroup \smaller\smaller\smaller\begin{tabular}{@{}c@{}}%
18\\-30\\3\\27
\end{tabular}\endgroup%
\kern3pt%
\begingroup \smaller\smaller\smaller\begin{tabular}{@{}c@{}}%
18\\-27\\-3\\30
\end{tabular}\endgroup%
\kern3pt%
\begingroup \smaller\smaller\smaller\begin{tabular}{@{}c@{}}%
1\\-1\\-1\\2
\end{tabular}\endgroup%
{$\left.\llap{\phantom{%
\begingroup \smaller\smaller\smaller\begin{tabular}{@{}c@{}}%
0\\0\\0\\0
\end{tabular}\endgroup%
}}\!\right]$}%
}%
\ifdim\wd\matricesbox>\halfwidth\myboxwidth=\hsize\else\myboxwidth=\halfwidth\fi
\vbox{%
\ifdim\myboxwidth=\hsize
\setbox\onelinebox=\hbox{%
\vbox{\hbox{%
$\Pi_{21,2}$ spans $L_{16.7}$%
}\hbox{%
$36\slashthree632|236\slashthree632|236\slashthree632|2\rtimes D_{6}$%
}%
}%
\hfill\copy\matricesbox
}%
\ifdim\wd\onelinebox>\myboxwidth
\hbox to \myboxwidth{%
$\Pi_{21,2}$ spans $L_{16.7}$%
\hfil
$36\slashthree632|236\slashthree632|236\slashthree632|2\rtimes D_{6}$%
}%
\box\matricesbox
\else
\hbox to \myboxwidth{%
\unhbox\onelinebox
}%
\fi
\else
\hbox to \myboxwidth{%
$\Pi_{21,2}$ spans $L_{16.7}$%
\hfil}%
\hbox to \myboxwidth{%
$36\slashthree632|236\slashthree632|236\slashthree632|2\rtimes D_{6}$%
\hfil}%
\box\matricesbox
\fi
}%
\hfill\discretionary{}{}{}%
\setbox\matricesbox=\hbox{%
{$\left[\!\llap{\phantom{%
\begingroup \smaller\smaller\smaller\begin{tabular}{@{}c@{}}%
\phantom{0}\\\phantom{0}\\\phantom{0}\\\phantom{0}
\end{tabular}\endgroup%
}}\right.$}%
\begingroup \smaller\smaller\smaller\begin{tabular}{@{}c@{}}%
-5\\\phantom{0}\\\phantom{0}\\\phantom{0}
\end{tabular}\endgroup%
\kern3pt%
\begingroup \smaller\smaller\smaller\begin{tabular}{@{}c@{}}%
\phantom{0}\\6\\\phantom{0}\\\phantom{0}
\end{tabular}\endgroup%
\kern3pt%
\begingroup \smaller\smaller\smaller\begin{tabular}{@{}c@{}}%
\phantom{0}\\\phantom{0}\\6\\\phantom{0}
\end{tabular}\endgroup%
\kern3pt%
\begingroup \smaller\smaller\smaller\begin{tabular}{@{}c@{}}%
\phantom{0}\\\phantom{0}\\\phantom{0}\\6
\end{tabular}\endgroup%
{$\left.\llap{\phantom{%
\begingroup \smaller\smaller\smaller\begin{tabular}{@{}c@{}}%
\phantom{0}\\\phantom{0}\\\phantom{0}\\\phantom{0}
\end{tabular}\endgroup%
}}\!\right]$}%
{$\left[\!\llap{\phantom{%
\begingroup \smaller\smaller\smaller\begin{tabular}{@{}c@{}}%
0\\0\\0\\0
\end{tabular}\endgroup%
}}\right.$}%
\begingroup \smaller\smaller\smaller\begin{tabular}{@{}c@{}}%
4\\3\\-2\\-1
\end{tabular}\endgroup%
\kern3pt%
\begingroup \smaller\smaller\smaller\begin{tabular}{@{}c@{}}%
4\\3\\-1\\-2
\end{tabular}\endgroup%
\kern3pt%
\begingroup \smaller\smaller\smaller\begin{tabular}{@{}c@{}}%
3\\2\\0\\-2
\end{tabular}\endgroup%
\kern3pt%
\begingroup \smaller\smaller\smaller\begin{tabular}{@{}c@{}}%
4\\2\\1\\-3
\end{tabular}\endgroup%
\kern3pt%
\begingroup \smaller\smaller\smaller\begin{tabular}{@{}c@{}}%
12\\4\\5\\-9
\end{tabular}\endgroup%
\kern3pt%
\begingroup \smaller\smaller\smaller\begin{tabular}{@{}c@{}}%
36\\7\\19\\-26
\end{tabular}\endgroup%
\kern3pt%
\begingroup \smaller\smaller\smaller\begin{tabular}{@{}c@{}}%
3\\0\\2\\-2
\end{tabular}\endgroup%
{$\left.\llap{\phantom{%
\begingroup \smaller\smaller\smaller\begin{tabular}{@{}c@{}}%
0\\0\\0\\0
\end{tabular}\endgroup%
}}\!\right]$}%
}%
\ifdim\wd\matricesbox>\halfwidth\myboxwidth=\hsize\else\myboxwidth=\halfwidth\fi
\vbox{%
\ifdim\myboxwidth=\hsize
\setbox\onelinebox=\hbox{%
\vbox{\hbox{%
$\Pi_{21,3}$ spans $L_{251.3}$%
}\hbox{%
$322262232226223222622\rtimes C_{3}$%
}%
}%
\hfill\copy\matricesbox
}%
\ifdim\wd\onelinebox>\myboxwidth
\hbox to \myboxwidth{%
$\Pi_{21,3}$ spans $L_{251.3}$%
\hfil
$322262232226223222622\rtimes C_{3}$%
}%
\box\matricesbox
\else
\hbox to \myboxwidth{%
\unhbox\onelinebox
}%
\fi
\else
\hbox to \myboxwidth{%
$\Pi_{21,3}$ spans $L_{251.3}$%
\hfil}%
\hbox to \myboxwidth{%
$322262232226223222622\rtimes C_{3}$%
\hfil}%
\box\matricesbox
\fi
}%
\hfill\discretionary{}{}{}%
\setbox\matricesbox=\hbox{%
{$\left[\!\llap{\phantom{%
\begingroup \smaller\smaller\smaller% [inline block 321: 18 envs, 2041 chars -> data_tex | \begin{tabular}{@{}c@{}}% \phantom{0}\\\phantom{0}\\\phantom{0}...]
\endgroup%
}}\!\right]$}%
}%
\ifdim\wd\matricesbox>\halfwidth\myboxwidth=\hsize\else\myboxwidth=\halfwidth\fi
\vbox{%
\ifdim\myboxwidth=\hsize
\setbox\onelinebox=\hbox{%
\vbox{\hbox{%
$\Pi_{21,4}$ spans $L_{16.13}$%
}\hbox{%
$36322|2236363636\slashthree63636\rtimes D_{2}$%
}%
}%
\hfill\copy\matricesbox
}%
\ifdim\wd\onelinebox>\myboxwidth
\hbox to \myboxwidth{%
$\Pi_{21,4}$ spans $L_{16.13}$%
\hfil
$36322|2236363636\slashthree63636\rtimes D_{2}$%
}%
\box\matricesbox
\else
\hbox to \myboxwidth{%
\unhbox\onelinebox
}%
\fi
\else
\hbox to \myboxwidth{%
$\Pi_{21,4}$ spans $L_{16.13}$%
\hfil}%
\hbox to \myboxwidth{%
$36322|2236363636\slashthree63636\rtimes D_{2}$%
\hfil}%
\box\matricesbox
\fi
}%
\hfill\discretionary{}{}{}%
\setbox\matricesbox=\hbox{%
{$\left[\!\llap{\phantom{%
\begingroup \smaller\smaller\smaller% [inline block 322: 18 envs, 2039 chars -> data_tex | \begin{tabular}{@{}c@{}}% \phantom{0}\\\phantom{0}\\\phantom{0}...]
\endgroup%
}}\!\right]$}%
}%
\ifdim\wd\matricesbox>\halfwidth\myboxwidth=\hsize\else\myboxwidth=\halfwidth\fi
\vbox{%
\ifdim\myboxwidth=\hsize
\setbox\onelinebox=\hbox{%
\vbox{\hbox{%
$\Pi_{21,5}$ spans $L_{16.9}$%
}\hbox{%
$3632232|2322363636\slashthree636\rtimes D_{2}$%
}%
}%
\hfill\copy\matricesbox
}%
\ifdim\wd\onelinebox>\myboxwidth
\hbox to \myboxwidth{%
$\Pi_{21,5}$ spans $L_{16.9}$%
\hfil
$3632232|2322363636\slashthree636\rtimes D_{2}$%
}%
\box\matricesbox
\else
\hbox to \myboxwidth{%
\unhbox\onelinebox
}%
\fi
\else
\hbox to \myboxwidth{%
$\Pi_{21,5}$ spans $L_{16.9}$%
\hfil}%
\hbox to \myboxwidth{%
$3632232|2322363636\slashthree636\rtimes D_{2}$%
\hfil}%
\box\matricesbox
\fi
}%
\hfill\discretionary{}{}{}%
\setbox\matricesbox=\hbox{%
{$\left[\!\llap{\phantom{%
\begingroup \smaller\smaller\smaller% [inline block 323: 18 envs, 2041 chars -> data_tex | \begin{tabular}{@{}c@{}}% \phantom{0}\\\phantom{0}\\\phantom{0}...]
\endgroup%
}}\!\right]$}%
}%
\ifdim\wd\matricesbox>\halfwidth\myboxwidth=\hsize\else\myboxwidth=\halfwidth\fi
\vbox{%
\ifdim\myboxwidth=\hsize
\setbox\onelinebox=\hbox{%
\vbox{\hbox{%
$\Pi_{21,6}$ spans $L_{16.13}$%
}\hbox{%
$36322\slashthree2236363632|23636\rtimes D_{2}$%
}%
}%
\hfill\copy\matricesbox
}%
\ifdim\wd\onelinebox>\myboxwidth
\hbox to \myboxwidth{%
$\Pi_{21,6}$ spans $L_{16.13}$%
\hfil
$36322\slashthree2236363632|23636\rtimes D_{2}$%
}%
\box\matricesbox
\else
\hbox to \myboxwidth{%
\unhbox\onelinebox
}%
\fi
\else
\hbox to \myboxwidth{%
$\Pi_{21,6}$ spans $L_{16.13}$%
\hfil}%
\hbox to \myboxwidth{%
$36322\slashthree2236363632|23636\rtimes D_{2}$%
\hfil}%
\box\matricesbox
\fi
}%
\hfill\discretionary{}{}{}%
\setbox\matricesbox=\hbox{%
{$\left[\!\llap{\phantom{%
\begingroup \smaller\smaller\smaller% [inline block 324: 18 envs, 2041 chars -> data_tex | \begin{tabular}{@{}c@{}}% \phantom{0}\\\phantom{0}\\\phantom{0}...]
\endgroup%
}}\!\right]$}%
}%
\ifdim\wd\matricesbox>\halfwidth\myboxwidth=\hsize\else\myboxwidth=\halfwidth\fi
\vbox{%
\ifdim\myboxwidth=\hsize
\setbox\onelinebox=\hbox{%
\vbox{\hbox{%
$\Pi_{21,7}$ spans $L_{16.13}$%
}\hbox{%
$363223632|2363223636\slashthree6\rtimes D_{2}$%
}%
}%
\hfill\copy\matricesbox
}%
\ifdim\wd\onelinebox>\myboxwidth
\hbox to \myboxwidth{%
$\Pi_{21,7}$ spans $L_{16.13}$%
\hfil
$363223632|2363223636\slashthree6\rtimes D_{2}$%
}%
\box\matricesbox
\else
\hbox to \myboxwidth{%
\unhbox\onelinebox
}%
\fi
\else
\hbox to \myboxwidth{%
$\Pi_{21,7}$ spans $L_{16.13}$%
\hfil}%
\hbox to \myboxwidth{%
$363223632|2363223636\slashthree6\rtimes D_{2}$%
\hfil}%
\box\matricesbox
\fi
}%
\hfill\discretionary{}{}{}%
\setbox\matricesbox=\hbox{%
{$\left[\!\llap{\phantom{%
\begingroup \smaller\smaller\smaller% [inline block 325: 18 envs, 2031 chars -> data_tex | \begin{tabular}{@{}c@{}}% \phantom{0}\\\phantom{0}\\\phantom{0}...]
\endgroup%
}}\!\right]$}%
}%
\ifdim\wd\matricesbox>\halfwidth\myboxwidth=\hsize\else\myboxwidth=\halfwidth\fi
\vbox{%
\ifdim\myboxwidth=\hsize
\setbox\onelinebox=\hbox{%
\vbox{\hbox{%
$\Pi_{21,8}$ spans $L_{16.13}$%
}\hbox{%
$3632|2363223636\slashthree636322\rtimes D_{2}$%
}%
}%
\hfill\copy\matricesbox
}%
\ifdim\wd\onelinebox>\myboxwidth
\hbox to \myboxwidth{%
$\Pi_{21,8}$ spans $L_{16.13}$%
\hfil
$3632|2363223636\slashthree636322\rtimes D_{2}$%
}%
\box\matricesbox
\else
\hbox to \myboxwidth{%
\unhbox\onelinebox
}%
\fi
\else
\hbox to \myboxwidth{%
$\Pi_{21,8}$ spans $L_{16.13}$%
\hfil}%
\hbox to \myboxwidth{%
$3632|2363223636\slashthree636322\rtimes D_{2}$%
\hfil}%
\box\matricesbox
\fi
}%
\hfill\discretionary{}{}{}%
\setbox\matricesbox=\hbox{%
{$\left[\!\llap{\phantom{%
\begingroup \smaller\smaller\smaller% [inline block 326: 18 envs, 2034 chars -> data_tex | \begin{tabular}{@{}c@{}}% \phantom{0}\\\phantom{0}\\\phantom{0}...]
\endgroup%
}}\!\right]$}%
}%
\ifdim\wd\matricesbox>\halfwidth\myboxwidth=\hsize\else\myboxwidth=\halfwidth\fi
\vbox{%
\ifdim\myboxwidth=\hsize
\setbox\onelinebox=\hbox{%
\vbox{\hbox{%
$\Pi_{21,9}$ spans $L_{16.13}$%
}\hbox{%
$3632|2363636322\slashthree223636\rtimes D_{2}$%
}%
}%
\hfill\copy\matricesbox
}%
\ifdim\wd\onelinebox>\myboxwidth
\hbox to \myboxwidth{%
$\Pi_{21,9}$ spans $L_{16.13}$%
\hfil
$3632|2363636322\slashthree223636\rtimes D_{2}$%
}%
\box\matricesbox
\else
\hbox to \myboxwidth{%
\unhbox\onelinebox
}%
\fi
\else
\hbox to \myboxwidth{%
$\Pi_{21,9}$ spans $L_{16.13}$%
\hfil}%
\hbox to \myboxwidth{%
$3632|2363636322\slashthree223636\rtimes D_{2}$%
\hfil}%
\box\matricesbox
\fi
}%
\hfill\discretionary{}{}{}%
\setbox\matricesbox=\hbox{%
{$\left[\!\llap{\phantom{%
\begingroup \smaller\smaller\smaller% [inline block 327: 18 envs, 2034 chars -> data_tex | \begin{tabular}{@{}c@{}}% \phantom{0}\\\phantom{0}\\\phantom{0}...]
\endgroup%
}}\!\right]$}%
}%
\ifdim\wd\matricesbox>\halfwidth\myboxwidth=\hsize\else\myboxwidth=\halfwidth\fi
\vbox{%
\ifdim\myboxwidth=\hsize
\setbox\onelinebox=\hbox{%
\vbox{\hbox{%
$\Pi_{21,10}$ spans $L_{16.13}$%
}\hbox{%
$3636322|2236363636\slashthree636\rtimes D_{2}$%
}%
}%
\hfill\copy\matricesbox
}%
\ifdim\wd\onelinebox>\myboxwidth
\hbox to \myboxwidth{%
$\Pi_{21,10}$ spans $L_{16.13}$%
\hfil
$3636322|2236363636\slashthree636\rtimes D_{2}$%
}%
\box\matricesbox
\else
\hbox to \myboxwidth{%
\unhbox\onelinebox
}%
\fi
\else
\hbox to \myboxwidth{%
$\Pi_{21,10}$ spans $L_{16.13}$%
\hfil}%
\hbox to \myboxwidth{%
$3636322|2236363636\slashthree636\rtimes D_{2}$%
\hfil}%
\box\matricesbox
\fi
}%
\hfill\discretionary{}{}{}%
\setbox\matricesbox=\hbox{%
{$\left[\!\llap{\phantom{%
\begingroup \smaller\smaller\smaller% [inline block 328: 18 envs, 2029 chars -> data_tex | \begin{tabular}{@{}c@{}}% \phantom{0}\\\phantom{0}\\\phantom{0}...]
\endgroup%
}}\!\right]$}%
}%
\ifdim\wd\matricesbox>\halfwidth\myboxwidth=\hsize\else\myboxwidth=\halfwidth\fi
\vbox{%
\ifdim\myboxwidth=\hsize
\setbox\onelinebox=\hbox{%
\vbox{\hbox{%
$\Pi_{21,11}$ spans $L_{16.7}$%
}\hbox{%
$363632232|2322363636\slashthree6\rtimes D_{2}$%
}%
}%
\hfill\copy\matricesbox
}%
\ifdim\wd\onelinebox>\myboxwidth
\hbox to \myboxwidth{%
$\Pi_{21,11}$ spans $L_{16.7}$%
\hfil
$363632232|2322363636\slashthree6\rtimes D_{2}$%
}%
\box\matricesbox
\else
\hbox to \myboxwidth{%
\unhbox\onelinebox
}%
\fi
\else
\hbox to \myboxwidth{%
$\Pi_{21,11}$ spans $L_{16.7}$%
\hfil}%
\hbox to \myboxwidth{%
$363632232|2322363636\slashthree6\rtimes D_{2}$%
\hfil}%
\box\matricesbox
\fi
}%
\hfill\discretionary{}{}{}%
\setbox\matricesbox=\hbox{%
{$\left[\!\llap{\phantom{%
\begingroup \smaller\smaller\smaller% [inline block 329: 28 envs, 3082 chars -> data_tex | \begin{tabular}{@{}c@{}}% \phantom{0}\\\phantom{0}\\\phantom{0}...]
\endgroup%
}}\!\right]$}%
}%
\ifdim\wd\matricesbox>\halfwidth\myboxwidth=\hsize\else\myboxwidth=\halfwidth\fi
\vbox{%
\ifdim\myboxwidth=\hsize
\setbox\onelinebox=\hbox{%
\vbox{\hbox{%
$\Pi_{21,12}$ spans $L_{16.13}$%
}\hbox{%
$363222322363636363636$%
}%
}%
\hfill\copy\matricesbox
}%
\ifdim\wd\onelinebox>\myboxwidth
\hbox to \myboxwidth{%
$\Pi_{21,12}$ spans $L_{16.13}$%
\hfil
$363222322363636363636$%
}%
\box\matricesbox
\else
\hbox to \myboxwidth{%
\unhbox\onelinebox
}%
\fi
\else
\hbox to \myboxwidth{%
$\Pi_{21,12}$ spans $L_{16.13}$%
\hfil}%
\hbox to \myboxwidth{%
$363222322363636363636$%
\hfil}%
\box\matricesbox
\fi
}%
\hfill\discretionary{}{}{}%
\setbox\matricesbox=\hbox{%
{$\left[\!\llap{\phantom{%
\begingroup \smaller\smaller\smaller% [inline block 330: 28 envs, 3079 chars -> data_tex | \begin{tabular}{@{}c@{}}% \phantom{0}\\\phantom{0}\\\phantom{0}...]
\endgroup%
}}\!\right]$}%
}%
\ifdim\wd\matricesbox>\halfwidth\myboxwidth=\hsize\else\myboxwidth=\halfwidth\fi
\vbox{%
\ifdim\myboxwidth=\hsize
\setbox\onelinebox=\hbox{%
\vbox{\hbox{%
$\Pi_{21,13}$ spans $L_{16.13}$%
}\hbox{%
$363222363223636363636$%
}%
}%
\hfill\copy\matricesbox
}%
\ifdim\wd\onelinebox>\myboxwidth
\hbox to \myboxwidth{%
$\Pi_{21,13}$ spans $L_{16.13}$%
\hfil
$363222363223636363636$%
}%
\box\matricesbox
\else
\hbox to \myboxwidth{%
\unhbox\onelinebox
}%
\fi
\else
\hbox to \myboxwidth{%
$\Pi_{21,13}$ spans $L_{16.13}$%
\hfil}%
\hbox to \myboxwidth{%
$363222363223636363636$%
\hfil}%
\box\matricesbox
\fi
}%
\hfill\discretionary{}{}{}%
\setbox\matricesbox=\hbox{%
{$\left[\!\llap{\phantom{%
\begingroup \smaller\smaller\smaller% [inline block 331: 28 envs, 3082 chars -> data_tex | \begin{tabular}{@{}c@{}}% \phantom{0}\\\phantom{0}\\\phantom{0}...]
\endgroup%
}}\!\right]$}%
}%
\ifdim\wd\matricesbox>\halfwidth\myboxwidth=\hsize\else\myboxwidth=\halfwidth\fi
\vbox{%
\ifdim\myboxwidth=\hsize
\setbox\onelinebox=\hbox{%
\vbox{\hbox{%
$\Pi_{21,14}$ spans $L_{16.13}$%
}\hbox{%
$363222363632236363636$%
}%
}%
\hfill\copy\matricesbox
}%
\ifdim\wd\onelinebox>\myboxwidth
\hbox to \myboxwidth{%
$\Pi_{21,14}$ spans $L_{16.13}$%
\hfil
$363222363632236363636$%
}%
\box\matricesbox
\else
\hbox to \myboxwidth{%
\unhbox\onelinebox
}%
\fi
\else
\hbox to \myboxwidth{%
$\Pi_{21,14}$ spans $L_{16.13}$%
\hfil}%
\hbox to \myboxwidth{%
$363222363632236363636$%
\hfil}%
\box\matricesbox
\fi
}%
\hfill\discretionary{}{}{}%
\setbox\matricesbox=\hbox{%
{$\left[\!\llap{\phantom{%
\begingroup \smaller\smaller\smaller% [inline block 332: 28 envs, 3079 chars -> data_tex | \begin{tabular}{@{}c@{}}% \phantom{0}\\\phantom{0}\\\phantom{0}...]
\endgroup%
}}\!\right]$}%
}%
\ifdim\wd\matricesbox>\halfwidth\myboxwidth=\hsize\else\myboxwidth=\halfwidth\fi
\vbox{%
\ifdim\myboxwidth=\hsize
\setbox\onelinebox=\hbox{%
\vbox{\hbox{%
$\Pi_{21,15}$ spans $L_{16.13}$%
}\hbox{%
$363222363636322363636$%
}%
}%
\hfill\copy\matricesbox
}%
\ifdim\wd\onelinebox>\myboxwidth
\hbox to \myboxwidth{%
$\Pi_{21,15}$ spans $L_{16.13}$%
\hfil
$363222363636322363636$%
}%
\box\matricesbox
\else
\hbox to \myboxwidth{%
\unhbox\onelinebox
}%
\fi
\else
\hbox to \myboxwidth{%
$\Pi_{21,15}$ spans $L_{16.13}$%
\hfil}%
\hbox to \myboxwidth{%
$363222363636322363636$%
\hfil}%
\box\matricesbox
\fi
}%
\hfill\discretionary{}{}{}%
\setbox\matricesbox=\hbox{%
{$\left[\!\llap{\phantom{%
\begingroup \smaller\smaller\smaller% [inline block 333: 28 envs, 3082 chars -> data_tex | \begin{tabular}{@{}c@{}}% \phantom{0}\\\phantom{0}\\\phantom{0}...]
\endgroup%
}}\!\right]$}%
}%
\ifdim\wd\matricesbox>\halfwidth\myboxwidth=\hsize\else\myboxwidth=\halfwidth\fi
\vbox{%
\ifdim\myboxwidth=\hsize
\setbox\onelinebox=\hbox{%
\vbox{\hbox{%
$\Pi_{21,16}$ spans $L_{16.13}$%
}\hbox{%
$363222363636363223636$%
}%
}%
\hfill\copy\matricesbox
}%
\ifdim\wd\onelinebox>\myboxwidth
\hbox to \myboxwidth{%
$\Pi_{21,16}$ spans $L_{16.13}$%
\hfil
$363222363636363223636$%
}%
\box\matricesbox
\else
\hbox to \myboxwidth{%
\unhbox\onelinebox
}%
\fi
\else
\hbox to \myboxwidth{%
$\Pi_{21,16}$ spans $L_{16.13}$%
\hfil}%
\hbox to \myboxwidth{%
$363222363636363223636$%
\hfil}%
\box\matricesbox
\fi
}%
\hfill\discretionary{}{}{}%
\setbox\matricesbox=\hbox{%
{$\left[\!\llap{\phantom{%
\begingroup \smaller\smaller\smaller% [inline block 334: 28 envs, 3081 chars -> data_tex | \begin{tabular}{@{}c@{}}% \phantom{0}\\\phantom{0}\\\phantom{0}...]
\endgroup%
}}\!\right]$}%
}%
\ifdim\wd\matricesbox>\halfwidth\myboxwidth=\hsize\else\myboxwidth=\halfwidth\fi
\vbox{%
\ifdim\myboxwidth=\hsize
\setbox\onelinebox=\hbox{%
\vbox{\hbox{%
$\Pi_{21,17}$ spans $L_{16.13}$%
}\hbox{%
$363222363636363632236$%
}%
}%
\hfill\copy\matricesbox
}%
\ifdim\wd\onelinebox>\myboxwidth
\hbox to \myboxwidth{%
$\Pi_{21,17}$ spans $L_{16.13}$%
\hfil
$363222363636363632236$%
}%
\box\matricesbox
\else
\hbox to \myboxwidth{%
\unhbox\onelinebox
}%
\fi
\else
\hbox to \myboxwidth{%
$\Pi_{21,17}$ spans $L_{16.13}$%
\hfil}%
\hbox to \myboxwidth{%
$363222363636363632236$%
\hfil}%
\box\matricesbox
\fi
}%
\hfill\discretionary{}{}{}%
\setbox\matricesbox=\hbox{%
{$\left[\!\llap{\phantom{%
\begingroup \smaller\smaller\smaller% [inline block 335: 28 envs, 3079 chars -> data_tex | \begin{tabular}{@{}c@{}}% \phantom{0}\\\phantom{0}\\\phantom{0}...]
\endgroup%
}}\!\right]$}%
}%
\ifdim\wd\matricesbox>\halfwidth\myboxwidth=\hsize\else\myboxwidth=\halfwidth\fi
\vbox{%
\ifdim\myboxwidth=\hsize
\setbox\onelinebox=\hbox{%
\vbox{\hbox{%
$\Pi_{21,18}$ spans $L_{16.13}$%
}\hbox{%
$363222363636363636322$%
}%
}%
\hfill\copy\matricesbox
}%
\ifdim\wd\onelinebox>\myboxwidth
\hbox to \myboxwidth{%
$\Pi_{21,18}$ spans $L_{16.13}$%
\hfil
$363222363636363636322$%
}%
\box\matricesbox
\else
\hbox to \myboxwidth{%
\unhbox\onelinebox
}%
\fi
\else
\hbox to \myboxwidth{%
$\Pi_{21,18}$ spans $L_{16.13}$%
\hfil}%
\hbox to \myboxwidth{%
$363222363636363636322$%
\hfil}%
\box\matricesbox
\fi
}%
\hfill\discretionary{}{}{}%
\setbox\matricesbox=\hbox{%
{$\left[\!\llap{\phantom{%
\begingroup \smaller\smaller\smaller% [inline block 336: 28 envs, 3080 chars -> data_tex | \begin{tabular}{@{}c@{}}% \phantom{0}\\\phantom{0}\\\phantom{0}...]
\endgroup%
}}\!\right]$}%
}%
\ifdim\wd\matricesbox>\halfwidth\myboxwidth=\hsize\else\myboxwidth=\halfwidth\fi
\vbox{%
\ifdim\myboxwidth=\hsize
\setbox\onelinebox=\hbox{%
\vbox{\hbox{%
$\Pi_{21,19}$ spans $L_{16.13}$%
}\hbox{%
$363223222363636363636$%
}%
}%
\hfill\copy\matricesbox
}%
\ifdim\wd\onelinebox>\myboxwidth
\hbox to \myboxwidth{%
$\Pi_{21,19}$ spans $L_{16.13}$%
\hfil
$363223222363636363636$%
}%
\box\matricesbox
\else
\hbox to \myboxwidth{%
\unhbox\onelinebox
}%
\fi
\else
\hbox to \myboxwidth{%
$\Pi_{21,19}$ spans $L_{16.13}$%
\hfil}%
\hbox to \myboxwidth{%
$363223222363636363636$%
\hfil}%
\box\matricesbox
\fi
}%
\hfill\discretionary{}{}{}%
\setbox\matricesbox=\hbox{%
{$\left[\!\llap{\phantom{%
\begingroup \smaller\smaller\smaller% [inline block 337: 28 envs, 3080 chars -> data_tex | \begin{tabular}{@{}c@{}}% \phantom{0}\\\phantom{0}\\\phantom{0}...]
\endgroup%
}}\!\right]$}%
}%
\ifdim\wd\matricesbox>\halfwidth\myboxwidth=\hsize\else\myboxwidth=\halfwidth\fi
\vbox{%
\ifdim\myboxwidth=\hsize
\setbox\onelinebox=\hbox{%
\vbox{\hbox{%
$\Pi_{21,20}$ spans $L_{16.13}$%
}\hbox{%
$363223223632236363636$%
}%
}%
\hfill\copy\matricesbox
}%
\ifdim\wd\onelinebox>\myboxwidth
\hbox to \myboxwidth{%
$\Pi_{21,20}$ spans $L_{16.13}$%
\hfil
$363223223632236363636$%
}%
\box\matricesbox
\else
\hbox to \myboxwidth{%
\unhbox\onelinebox
}%
\fi
\else
\hbox to \myboxwidth{%
$\Pi_{21,20}$ spans $L_{16.13}$%
\hfil}%
\hbox to \myboxwidth{%
$363223223632236363636$%
\hfil}%
\box\matricesbox
\fi
}%
\hfill\discretionary{}{}{}%
\setbox\matricesbox=\hbox{%
{$\left[\!\llap{\phantom{%
\begingroup \smaller\smaller\smaller% [inline block 338: 28 envs, 3075 chars -> data_tex | \begin{tabular}{@{}c@{}}% \phantom{0}\\\phantom{0}\\\phantom{0}...]
\endgroup%
}}\!\right]$}%
}%
\ifdim\wd\matricesbox>\halfwidth\myboxwidth=\hsize\else\myboxwidth=\halfwidth\fi
\vbox{%
\ifdim\myboxwidth=\hsize
\setbox\onelinebox=\hbox{%
\vbox{\hbox{%
$\Pi_{21,21}$ spans $L_{16.9}$%
}\hbox{%
$363223223636322363636$%
}%
}%
\hfill\copy\matricesbox
}%
\ifdim\wd\onelinebox>\myboxwidth
\hbox to \myboxwidth{%
$\Pi_{21,21}$ spans $L_{16.9}$%
\hfil
$363223223636322363636$%
}%
\box\matricesbox
\else
\hbox to \myboxwidth{%
\unhbox\onelinebox
}%
\fi
\else
\hbox to \myboxwidth{%
$\Pi_{21,21}$ spans $L_{16.9}$%
\hfil}%
\hbox to \myboxwidth{%
$363223223636322363636$%
\hfil}%
\box\matricesbox
\fi
}%
\hfill\discretionary{}{}{}%
\setbox\matricesbox=\hbox{%
{$\left[\!\llap{\phantom{%
\begingroup \smaller\smaller\smaller% [inline block 339: 28 envs, 3081 chars -> data_tex | \begin{tabular}{@{}c@{}}% \phantom{0}\\\phantom{0}\\\phantom{0}...]
\endgroup%
}}\!\right]$}%
}%
\ifdim\wd\matricesbox>\halfwidth\myboxwidth=\hsize\else\myboxwidth=\halfwidth\fi
\vbox{%
\ifdim\myboxwidth=\hsize
\setbox\onelinebox=\hbox{%
\vbox{\hbox{%
$\Pi_{21,22}$ spans $L_{16.13}$%
}\hbox{%
$363223632232236363636$%
}%
}%
\hfill\copy\matricesbox
}%
\ifdim\wd\onelinebox>\myboxwidth
\hbox to \myboxwidth{%
$\Pi_{21,22}$ spans $L_{16.13}$%
\hfil
$363223632232236363636$%
}%
\box\matricesbox
\else
\hbox to \myboxwidth{%
\unhbox\onelinebox
}%
\fi
\else
\hbox to \myboxwidth{%
$\Pi_{21,22}$ spans $L_{16.13}$%
\hfil}%
\hbox to \myboxwidth{%
$363223632232236363636$%
\hfil}%
\box\matricesbox
\fi
}%
\hfill\discretionary{}{}{}%
\setbox\matricesbox=\hbox{%
{$\left[\!\llap{\phantom{%
\begingroup \smaller\smaller\smaller% [inline block 340: 28 envs, 3081 chars -> data_tex | \begin{tabular}{@{}c@{}}% \phantom{0}\\\phantom{0}\\\phantom{0}...]
\endgroup%
}}\!\right]$}%
}%
\ifdim\wd\matricesbox>\halfwidth\myboxwidth=\hsize\else\myboxwidth=\halfwidth\fi
\vbox{%
\ifdim\myboxwidth=\hsize
\setbox\onelinebox=\hbox{%
\vbox{\hbox{%
$\Pi_{21,23}$ spans $L_{16.13}$%
}\hbox{%
$363223632236363223636$%
}%
}%
\hfill\copy\matricesbox
}%
\ifdim\wd\onelinebox>\myboxwidth
\hbox to \myboxwidth{%
$\Pi_{21,23}$ spans $L_{16.13}$%
\hfil
$363223632236363223636$%
}%
\box\matricesbox
\else
\hbox to \myboxwidth{%
\unhbox\onelinebox
}%
\fi
\else
\hbox to \myboxwidth{%
$\Pi_{21,23}$ spans $L_{16.13}$%
\hfil}%
\hbox to \myboxwidth{%
$363223632236363223636$%
\hfil}%
\box\matricesbox
\fi
}%
\hfill\discretionary{}{}{}%
\setbox\matricesbox=\hbox{%
{$\left[\!\llap{\phantom{%
\begingroup \smaller\smaller\smaller% [inline block 341: 28 envs, 3080 chars -> data_tex | \begin{tabular}{@{}c@{}}% \phantom{0}\\\phantom{0}\\\phantom{0}...]
\endgroup%
}}\!\right]$}%
}%
\ifdim\wd\matricesbox>\halfwidth\myboxwidth=\hsize\else\myboxwidth=\halfwidth\fi
\vbox{%
\ifdim\myboxwidth=\hsize
\setbox\onelinebox=\hbox{%
\vbox{\hbox{%
$\Pi_{21,24}$ spans $L_{16.13}$%
}\hbox{%
$363223632236363632236$%
}%
}%
\hfill\copy\matricesbox
}%
\ifdim\wd\onelinebox>\myboxwidth
\hbox to \myboxwidth{%
$\Pi_{21,24}$ spans $L_{16.13}$%
\hfil
$363223632236363632236$%
}%
\box\matricesbox
\else
\hbox to \myboxwidth{%
\unhbox\onelinebox
}%
\fi
\else
\hbox to \myboxwidth{%
$\Pi_{21,24}$ spans $L_{16.13}$%
\hfil}%
\hbox to \myboxwidth{%
$363223632236363632236$%
\hfil}%
\box\matricesbox
\fi
}%
\hfill\discretionary{}{}{}%
\setbox\matricesbox=\hbox{%
{$\left[\!\llap{\phantom{%
\begingroup \smaller\smaller\smaller% [inline block 342: 28 envs, 3073 chars -> data_tex | \begin{tabular}{@{}c@{}}% \phantom{0}\\\phantom{0}\\\phantom{0}...]
\endgroup%
}}\!\right]$}%
}%
\ifdim\wd\matricesbox>\halfwidth\myboxwidth=\hsize\else\myboxwidth=\halfwidth\fi
\vbox{%
\ifdim\myboxwidth=\hsize
\setbox\onelinebox=\hbox{%
\vbox{\hbox{%
$\Pi_{21,25}$ spans $L_{16.7}$%
}\hbox{%
$363632232236363223636$%
}%
}%
\hfill\copy\matricesbox
}%
\ifdim\wd\onelinebox>\myboxwidth
\hbox to \myboxwidth{%
$\Pi_{21,25}$ spans $L_{16.7}$%
\hfil
$363632232236363223636$%
}%
\box\matricesbox
\else
\hbox to \myboxwidth{%
\unhbox\onelinebox
}%
\fi
\else
\hbox to \myboxwidth{%
$\Pi_{21,25}$ spans $L_{16.7}$%
\hfil}%
\hbox to \myboxwidth{%
$363632232236363223636$%
\hfil}%
\box\matricesbox
\fi
}%
\hfill\discretionary{}{}{}%
\setbox\matricesbox=\hbox{%
{$\left[\!\llap{\phantom{%
\begingroup \smaller\smaller\smaller% [inline block 343: 28 envs, 3055 chars -> data_tex | \begin{tabular}{@{}c@{}}% \phantom{0}\\\phantom{0}\\\phantom{0}...]
\endgroup%
}}\!\right]$}%
}%
\ifdim\wd\matricesbox>\halfwidth\myboxwidth=\hsize\else\myboxwidth=\halfwidth\fi
\vbox{%
\ifdim\myboxwidth=\hsize
\setbox\onelinebox=\hbox{%
\vbox{\hbox{%
$\Pi_{21,26}$ spans $L_{251.3}$%
}\hbox{%
$322322322262226222622$%
}%
}%
\hfill\copy\matricesbox
}%
\ifdim\wd\onelinebox>\myboxwidth
\hbox to \myboxwidth{%
$\Pi_{21,26}$ spans $L_{251.3}$%
\hfil
$322322322262226222622$%
}%
\box\matricesbox
\else
\hbox to \myboxwidth{%
\unhbox\onelinebox
}%
\fi
\else
\hbox to \myboxwidth{%
$\Pi_{21,26}$ spans $L_{251.3}$%
\hfil}%
\hbox to \myboxwidth{%
$322322322262226222622$%
\hfil}%
\box\matricesbox
\fi
}%
\hfill\discretionary{}{}{}%
\setbox\matricesbox=\hbox{%
{$\left[\!\llap{\phantom{%
\begingroup \smaller\smaller\smaller% [inline block 344: 28 envs, 3055 chars -> data_tex | \begin{tabular}{@{}c@{}}% \phantom{0}\\\phantom{0}\\\phantom{0}...]
\endgroup%
}}\!\right]$}%
}%
\ifdim\wd\matricesbox>\halfwidth\myboxwidth=\hsize\else\myboxwidth=\halfwidth\fi
\vbox{%
\ifdim\myboxwidth=\hsize
\setbox\onelinebox=\hbox{%
\vbox{\hbox{%
$\Pi_{21,27}$ spans $L_{251.3}$%
}\hbox{%
$322322322262226226222$%
}%
}%
\hfill\copy\matricesbox
}%
\ifdim\wd\onelinebox>\myboxwidth
\hbox to \myboxwidth{%
$\Pi_{21,27}$ spans $L_{251.3}$%
\hfil
$322322322262226226222$%
}%
\box\matricesbox
\else
\hbox to \myboxwidth{%
\unhbox\onelinebox
}%
\fi
\else
\hbox to \myboxwidth{%
$\Pi_{21,27}$ spans $L_{251.3}$%
\hfil}%
\hbox to \myboxwidth{%
$322322322262226226222$%
\hfil}%
\box\matricesbox
\fi
}%
\hfill\discretionary{}{}{}%
\setbox\matricesbox=\hbox{%
{$\left[\!\llap{\phantom{%
\begingroup \smaller\smaller\smaller% [inline block 345: 28 envs, 3056 chars -> data_tex | \begin{tabular}{@{}c@{}}% \phantom{0}\\\phantom{0}\\\phantom{0}...]
\endgroup%
}}\!\right]$}%
}%
\ifdim\wd\matricesbox>\halfwidth\myboxwidth=\hsize\else\myboxwidth=\halfwidth\fi
\vbox{%
\ifdim\myboxwidth=\hsize
\setbox\onelinebox=\hbox{%
\vbox{\hbox{%
$\Pi_{21,28}$ spans $L_{251.3}$%
}\hbox{%
$322322322262262222622$%
}%
}%
\hfill\copy\matricesbox
}%
\ifdim\wd\onelinebox>\myboxwidth
\hbox to \myboxwidth{%
$\Pi_{21,28}$ spans $L_{251.3}$%
\hfil
$322322322262262222622$%
}%
\box\matricesbox
\else
\hbox to \myboxwidth{%
\unhbox\onelinebox
}%
\fi
\else
\hbox to \myboxwidth{%
$\Pi_{21,28}$ spans $L_{251.3}$%
\hfil}%
\hbox to \myboxwidth{%
$322322322262262222622$%
\hfil}%
\box\matricesbox
\fi
}%
\hfill\discretionary{}{}{}%
\setbox\matricesbox=\hbox{%
{$\left[\!\llap{\phantom{%
\begingroup \smaller\smaller\smaller% [inline block 346: 28 envs, 3054 chars -> data_tex | \begin{tabular}{@{}c@{}}% \phantom{0}\\\phantom{0}\\\phantom{0}...]
\endgroup%
}}\!\right]$}%
}%
\ifdim\wd\matricesbox>\halfwidth\myboxwidth=\hsize\else\myboxwidth=\halfwidth\fi
\vbox{%
\ifdim\myboxwidth=\hsize
\setbox\onelinebox=\hbox{%
\vbox{\hbox{%
$\Pi_{21,29}$ spans $L_{251.3}$%
}\hbox{%
$322322322622226222622$%
}%
}%
\hfill\copy\matricesbox
}%
\ifdim\wd\onelinebox>\myboxwidth
\hbox to \myboxwidth{%
$\Pi_{21,29}$ spans $L_{251.3}$%
\hfil
$322322322622226222622$%
}%
\box\matricesbox
\else
\hbox to \myboxwidth{%
\unhbox\onelinebox
}%
\fi
\else
\hbox to \myboxwidth{%
$\Pi_{21,29}$ spans $L_{251.3}$%
\hfil}%
\hbox to \myboxwidth{%
$322322322622226222622$%
\hfil}%
\box\matricesbox
\fi
}%
\hfill\discretionary{}{}{}%
\setbox\matricesbox=\hbox{%
{$\left[\!\llap{\phantom{%
\begingroup \smaller\smaller\smaller% [inline block 347: 28 envs, 3055 chars -> data_tex | \begin{tabular}{@{}c@{}}% \phantom{0}\\\phantom{0}\\\phantom{0}...]
\endgroup%
}}\!\right]$}%
}%
\ifdim\wd\matricesbox>\halfwidth\myboxwidth=\hsize\else\myboxwidth=\halfwidth\fi
\vbox{%
\ifdim\myboxwidth=\hsize
\setbox\onelinebox=\hbox{%
\vbox{\hbox{%
$\Pi_{21,30}$ spans $L_{251.3}$%
}\hbox{%
$322322262232226222622$%
}%
}%
\hfill\copy\matricesbox
}%
\ifdim\wd\onelinebox>\myboxwidth
\hbox to \myboxwidth{%
$\Pi_{21,30}$ spans $L_{251.3}$%
\hfil
$322322262232226222622$%
}%
\box\matricesbox
\else
\hbox to \myboxwidth{%
\unhbox\onelinebox
}%
\fi
\else
\hbox to \myboxwidth{%
$\Pi_{21,30}$ spans $L_{251.3}$%
\hfil}%
\hbox to \myboxwidth{%
$322322262232226222622$%
\hfil}%
\box\matricesbox
\fi
}%
\hfill\discretionary{}{}{}%
\setbox\matricesbox=\hbox{%
{$\left[\!\llap{\phantom{%
\begingroup \smaller\smaller\smaller% [inline block 348: 28 envs, 3055 chars -> data_tex | \begin{tabular}{@{}c@{}}% \phantom{0}\\\phantom{0}\\\phantom{0}...]
\endgroup%
}}\!\right]$}%
}%
\ifdim\wd\matricesbox>\halfwidth\myboxwidth=\hsize\else\myboxwidth=\halfwidth\fi
\vbox{%
\ifdim\myboxwidth=\hsize
\setbox\onelinebox=\hbox{%
\vbox{\hbox{%
$\Pi_{21,31}$ spans $L_{251.3}$%
}\hbox{%
$322322262232226226222$%
}%
}%
\hfill\copy\matricesbox
}%
\ifdim\wd\onelinebox>\myboxwidth
\hbox to \myboxwidth{%
$\Pi_{21,31}$ spans $L_{251.3}$%
\hfil
$322322262232226226222$%
}%
\box\matricesbox
\else
\hbox to \myboxwidth{%
\unhbox\onelinebox
}%
\fi
\else
\hbox to \myboxwidth{%
$\Pi_{21,31}$ spans $L_{251.3}$%
\hfil}%
\hbox to \myboxwidth{%
$322322262232226226222$%
\hfil}%
\box\matricesbox
\fi
}%
\hfill\discretionary{}{}{}%
\setbox\matricesbox=\hbox{%
{$\left[\!\llap{\phantom{%
\begingroup \smaller\smaller\smaller% [inline block 349: 28 envs, 3056 chars -> data_tex | \begin{tabular}{@{}c@{}}% \phantom{0}\\\phantom{0}\\\phantom{0}...]
\endgroup%
}}\!\right]$}%
}%
\ifdim\wd\matricesbox>\halfwidth\myboxwidth=\hsize\else\myboxwidth=\halfwidth\fi
\vbox{%
\ifdim\myboxwidth=\hsize
\setbox\onelinebox=\hbox{%
\vbox{\hbox{%
$\Pi_{21,32}$ spans $L_{251.3}$%
}\hbox{%
$322322262232262222622$%
}%
}%
\hfill\copy\matricesbox
}%
\ifdim\wd\onelinebox>\myboxwidth
\hbox to \myboxwidth{%
$\Pi_{21,32}$ spans $L_{251.3}$%
\hfil
$322322262232262222622$%
}%
\box\matricesbox
\else
\hbox to \myboxwidth{%
\unhbox\onelinebox
}%
\fi
\else
\hbox to \myboxwidth{%
$\Pi_{21,32}$ spans $L_{251.3}$%
\hfil}%
\hbox to \myboxwidth{%
$322322262232262222622$%
\hfil}%
\box\matricesbox
\fi
}%
\hfill\discretionary{}{}{}%
\setbox\matricesbox=\hbox{%
{$\left[\!\llap{\phantom{%
\begingroup \smaller\smaller\smaller% [inline block 350: 28 envs, 3056 chars -> data_tex | \begin{tabular}{@{}c@{}}% \phantom{0}\\\phantom{0}\\\phantom{0}...]
\endgroup%
}}\!\right]$}%
}%
\ifdim\wd\matricesbox>\halfwidth\myboxwidth=\hsize\else\myboxwidth=\halfwidth\fi
\vbox{%
\ifdim\myboxwidth=\hsize
\setbox\onelinebox=\hbox{%
\vbox{\hbox{%
$\Pi_{21,33}$ spans $L_{251.3}$%
}\hbox{%
$322322262232262226222$%
}%
}%
\hfill\copy\matricesbox
}%
\ifdim\wd\onelinebox>\myboxwidth
\hbox to \myboxwidth{%
$\Pi_{21,33}$ spans $L_{251.3}$%
\hfil
$322322262232262226222$%
}%
\box\matricesbox
\else
\hbox to \myboxwidth{%
\unhbox\onelinebox
}%
\fi
\else
\hbox to \myboxwidth{%
$\Pi_{21,33}$ spans $L_{251.3}$%
\hfil}%
\hbox to \myboxwidth{%
$322322262232262226222$%
\hfil}%
\box\matricesbox
\fi
}%
\hfill\discretionary{}{}{}%
\setbox\matricesbox=\hbox{%
{$\left[\!\llap{\phantom{%
\begingroup \smaller\smaller\smaller% [inline block 351: 28 envs, 3056 chars -> data_tex | \begin{tabular}{@{}c@{}}% \phantom{0}\\\phantom{0}\\\phantom{0}...]
\endgroup%
}}\!\right]$}%
}%
\ifdim\wd\matricesbox>\halfwidth\myboxwidth=\hsize\else\myboxwidth=\halfwidth\fi
\vbox{%
\ifdim\myboxwidth=\hsize
\setbox\onelinebox=\hbox{%
\vbox{\hbox{%
$\Pi_{21,34}$ spans $L_{251.3}$%
}\hbox{%
$322322262226223222622$%
}%
}%
\hfill\copy\matricesbox
}%
\ifdim\wd\onelinebox>\myboxwidth
\hbox to \myboxwidth{%
$\Pi_{21,34}$ spans $L_{251.3}$%
\hfil
$322322262226223222622$%
}%
\box\matricesbox
\else
\hbox to \myboxwidth{%
\unhbox\onelinebox
}%
\fi
\else
\hbox to \myboxwidth{%
$\Pi_{21,34}$ spans $L_{251.3}$%
\hfil}%
\hbox to \myboxwidth{%
$322322262226223222622$%
\hfil}%
\box\matricesbox
\fi
}%
\hfill\discretionary{}{}{}%
\setbox\matricesbox=\hbox{%
{$\left[\!\llap{\phantom{%
\begingroup \smaller\smaller\smaller% [inline block 352: 28 envs, 3055 chars -> data_tex | \begin{tabular}{@{}c@{}}% \phantom{0}\\\phantom{0}\\\phantom{0}...]
\endgroup%
}}\!\right]$}%
}%
\ifdim\wd\matricesbox>\halfwidth\myboxwidth=\hsize\else\myboxwidth=\halfwidth\fi
\vbox{%
\ifdim\myboxwidth=\hsize
\setbox\onelinebox=\hbox{%
\vbox{\hbox{%
$\Pi_{21,35}$ spans $L_{251.3}$%
}\hbox{%
$322322262262223222622$%
}%
}%
\hfill\copy\matricesbox
}%
\ifdim\wd\onelinebox>\myboxwidth
\hbox to \myboxwidth{%
$\Pi_{21,35}$ spans $L_{251.3}$%
\hfil
$322322262262223222622$%
}%
\box\matricesbox
\else
\hbox to \myboxwidth{%
\unhbox\onelinebox
}%
\fi
\else
\hbox to \myboxwidth{%
$\Pi_{21,35}$ spans $L_{251.3}$%
\hfil}%
\hbox to \myboxwidth{%
$322322262262223222622$%
\hfil}%
\box\matricesbox
\fi
}%
\hfill\discretionary{}{}{}%
\setbox\matricesbox=\hbox{%
{$\left[\!\llap{\phantom{%
\begingroup \smaller\smaller\smaller% [inline block 353: 28 envs, 3055 chars -> data_tex | \begin{tabular}{@{}c@{}}% \phantom{0}\\\phantom{0}\\\phantom{0}...]
\endgroup%
}}\!\right]$}%
}%
\ifdim\wd\matricesbox>\halfwidth\myboxwidth=\hsize\else\myboxwidth=\halfwidth\fi
\vbox{%
\ifdim\myboxwidth=\hsize
\setbox\onelinebox=\hbox{%
\vbox{\hbox{%
$\Pi_{21,36}$ spans $L_{251.3}$%
}\hbox{%
$322322622232226222622$%
}%
}%
\hfill\copy\matricesbox
}%
\ifdim\wd\onelinebox>\myboxwidth
\hbox to \myboxwidth{%
$\Pi_{21,36}$ spans $L_{251.3}$%
\hfil
$322322622232226222622$%
}%
\box\matricesbox
\else
\hbox to \myboxwidth{%
\unhbox\onelinebox
}%
\fi
\else
\hbox to \myboxwidth{%
$\Pi_{21,36}$ spans $L_{251.3}$%
\hfil}%
\hbox to \myboxwidth{%
$322322622232226222622$%
\hfil}%
\box\matricesbox
\fi
}%
\hfill\discretionary{}{}{}%
\setbox\matricesbox=\hbox{%
{$\left[\!\llap{\phantom{%
\begingroup \smaller\smaller\smaller% [inline block 354: 28 envs, 3056 chars -> data_tex | \begin{tabular}{@{}c@{}}% \phantom{0}\\\phantom{0}\\\phantom{0}...]
\endgroup%
}}\!\right]$}%
}%
\ifdim\wd\matricesbox>\halfwidth\myboxwidth=\hsize\else\myboxwidth=\halfwidth\fi
\vbox{%
\ifdim\myboxwidth=\hsize
\setbox\onelinebox=\hbox{%
\vbox{\hbox{%
$\Pi_{21,37}$ spans $L_{251.3}$%
}\hbox{%
$322322622232262222622$%
}%
}%
\hfill\copy\matricesbox
}%
\ifdim\wd\onelinebox>\myboxwidth
\hbox to \myboxwidth{%
$\Pi_{21,37}$ spans $L_{251.3}$%
\hfil
$322322622232262222622$%
}%
\box\matricesbox
\else
\hbox to \myboxwidth{%
\unhbox\onelinebox
}%
\fi
\else
\hbox to \myboxwidth{%
$\Pi_{21,37}$ spans $L_{251.3}$%
\hfil}%
\hbox to \myboxwidth{%
$322322622232262222622$%
\hfil}%
\box\matricesbox
\fi
}%
\hfill\discretionary{}{}{}%
\setbox\matricesbox=\hbox{%
{$\left[\!\llap{\phantom{%
\begingroup \smaller\smaller\smaller% [inline block 355: 28 envs, 3057 chars -> data_tex | \begin{tabular}{@{}c@{}}% \phantom{0}\\\phantom{0}\\\phantom{0}...]
\endgroup%
}}\!\right]$}%
}%
\ifdim\wd\matricesbox>\halfwidth\myboxwidth=\hsize\else\myboxwidth=\halfwidth\fi
\vbox{%
\ifdim\myboxwidth=\hsize
\setbox\onelinebox=\hbox{%
\vbox{\hbox{%
$\Pi_{21,38}$ spans $L_{251.3}$%
}\hbox{%
$322262232226223226222$%
}%
}%
\hfill\copy\matricesbox
}%
\ifdim\wd\onelinebox>\myboxwidth
\hbox to \myboxwidth{%
$\Pi_{21,38}$ spans $L_{251.3}$%
\hfil
$322262232226223226222$%
}%
\box\matricesbox
\else
\hbox to \myboxwidth{%
\unhbox\onelinebox
}%
\fi
\else
\hbox to \myboxwidth{%
$\Pi_{21,38}$ spans $L_{251.3}$%
\hfil}%
\hbox to \myboxwidth{%
$322262232226223226222$%
\hfil}%
\box\matricesbox
\fi
}%
\hfill\discretionary{}{}{}%

\vskip2pt\hrule\vskip2pt

\leavevmode\setbox\matricesbox=\hbox{%
{$\left[\!\llap{\phantom{%
\begingroup \smaller\smaller\smaller\begin{tabular}{@{}c@{}}%
\phantom{0}\\\phantom{0}\\\phantom{0}
\end{tabular}\endgroup%
}}\right.$}%
\begingroup \smaller\smaller\smaller\begin{tabular}{@{}c@{}}%
-3\\\phantom{0}\\\phantom{0}
\end{tabular}\endgroup%
\kern3pt%
\begingroup \smaller\smaller\smaller\begin{tabular}{@{}c@{}}%
\phantom{0}\\15/2\\\phantom{0}
\end{tabular}\endgroup%
\kern3pt%
\begingroup \smaller\smaller\smaller\begin{tabular}{@{}c@{}}%
\phantom{0}\\\phantom{0}\\5/2
\end{tabular}\endgroup%
{$\left.\llap{\phantom{%
\begingroup \smaller\smaller\smaller\begin{tabular}{@{}c@{}}%
\phantom{0}\\\phantom{0}\\\phantom{0}
\end{tabular}\endgroup%
}}\!\right]$}%
{$\left[\!\llap{\phantom{%
\begingroup \smaller\smaller\smaller\begin{tabular}{@{}c@{}}%
0\\0\\0
\end{tabular}\endgroup%
}}\right.$}%
\begingroup \smaller\smaller\smaller\begin{tabular}{@{}c@{}}%
3\\2\\0
\end{tabular}\endgroup%
\kern3pt%
\begingroup \smaller\smaller\smaller\begin{tabular}{@{}c@{}}%
10\\6\\-4
\end{tabular}\endgroup%
\kern3pt%
\begingroup \smaller\smaller\smaller\begin{tabular}{@{}c@{}}%
10\\5\\-7
\end{tabular}\endgroup%
\kern3pt%
\begingroup \smaller\smaller\smaller\begin{tabular}{@{}c@{}}%
30\\11\\-27
\end{tabular}\endgroup%
\kern3pt%
\begingroup \smaller\smaller\smaller\begin{tabular}{@{}c@{}}%
30\\8\\-30
\end{tabular}\endgroup%
\kern3pt%
\begingroup \smaller\smaller\smaller\begin{tabular}{@{}c@{}}%
10\\1\\-11
\end{tabular}\endgroup%
{$\left.\llap{\phantom{%
\begingroup \smaller\smaller\smaller\begin{tabular}{@{}c@{}}%
0\\0\\0
\end{tabular}\endgroup%
}}\!\right]$}%
}%
\ifdim\wd\matricesbox>\halfwidth\myboxwidth=\hsize\else\myboxwidth=\halfwidth\fi
\vbox{%
\ifdim\myboxwidth=\hsize
\setbox\onelinebox=\hbox{%
\vbox{\hbox{%
$\Pi_{22,1}$ spans $L_{16.9}$%
}\hbox{%
$3632|23636\slashthree63632|23636\slashthree6\rtimes D_{4}$%
}%
}%
\hfill\copy\matricesbox
}%
\ifdim\wd\onelinebox>\myboxwidth
\hbox to \myboxwidth{%
$\Pi_{22,1}$ spans $L_{16.9}$%
\hfil
$3632|23636\slashthree63632|23636\slashthree6\rtimes D_{4}$%
}%
\box\matricesbox
\else
\hbox to \myboxwidth{%
\unhbox\onelinebox
}%
\fi
\else
\hbox to \myboxwidth{%
$\Pi_{22,1}$ spans $L_{16.9}$%
\hfil}%
\hbox to \myboxwidth{%
$3632|23636\slashthree63632|23636\slashthree6\rtimes D_{4}$%
\hfil}%
\box\matricesbox
\fi
}%
\hfill\discretionary{}{}{}%
\setbox\matricesbox=\hbox{%
{$\left[\!\llap{\phantom{%
\begingroup \smaller\smaller\smaller\begin{tabular}{@{}c@{}}%
\phantom{0}\\\phantom{0}\\\phantom{0}
\end{tabular}\endgroup%
}}\right.$}%
\begingroup \smaller\smaller\smaller\begin{tabular}{@{}c@{}}%
-5\\\phantom{0}\\\phantom{0}
\end{tabular}\endgroup%
\kern3pt%
\begingroup \smaller\smaller\smaller\begin{tabular}{@{}c@{}}%
\phantom{0}\\9/2\\\phantom{0}
\end{tabular}\endgroup%
\kern3pt%
\begingroup \smaller\smaller\smaller\begin{tabular}{@{}c@{}}%
\phantom{0}\\\phantom{0}\\3/2
\end{tabular}\endgroup%
{$\left.\llap{\phantom{%
\begingroup \smaller\smaller\smaller\begin{tabular}{@{}c@{}}%
\phantom{0}\\\phantom{0}\\\phantom{0}
\end{tabular}\endgroup%
}}\!\right]$}%
{$\left[\!\llap{\phantom{%
\begingroup \smaller\smaller\smaller\begin{tabular}{@{}c@{}}%
0\\0\\0
\end{tabular}\endgroup%
}}\right.$}%
\begingroup \smaller\smaller\smaller\begin{tabular}{@{}c@{}}%
18\\19\\3
\end{tabular}\endgroup%
\kern3pt%
\begingroup \smaller\smaller\smaller\begin{tabular}{@{}c@{}}%
6\\6\\4
\end{tabular}\endgroup%
\kern3pt%
\begingroup \smaller\smaller\smaller\begin{tabular}{@{}c@{}}%
6\\5\\7
\end{tabular}\endgroup%
\kern3pt%
\begingroup \smaller\smaller\smaller\begin{tabular}{@{}c@{}}%
18\\11\\27
\end{tabular}\endgroup%
\kern3pt%
\begingroup \smaller\smaller\smaller\begin{tabular}{@{}c@{}}%
18\\8\\30
\end{tabular}\endgroup%
\kern3pt%
\begingroup \smaller\smaller\smaller\begin{tabular}{@{}c@{}}%
1\\0\\2
\end{tabular}\endgroup%
{$\left.\llap{\phantom{%
\begingroup \smaller\smaller\smaller\begin{tabular}{@{}c@{}}%
0\\0\\0
\end{tabular}\endgroup%
}}\!\right]$}%
}%
\ifdim\wd\matricesbox>\halfwidth\myboxwidth=\hsize\else\myboxwidth=\halfwidth\fi
\vbox{%
\ifdim\myboxwidth=\hsize
\setbox\onelinebox=\hbox{%
\vbox{\hbox{%
$\Pi_{22,2}$ spans $L_{16.7}$%
}\hbox{%
$\slashthree63632|23636\slashthree63632|23636\rtimes D_{4}$%
}%
}%
\hfill\copy\matricesbox
}%
\ifdim\wd\onelinebox>\myboxwidth
\hbox to \myboxwidth{%
$\Pi_{22,2}$ spans $L_{16.7}$%
\hfil
$\slashthree63632|23636\slashthree63632|23636\rtimes D_{4}$%
}%
\box\matricesbox
\else
\hbox to \myboxwidth{%
\unhbox\onelinebox
}%
\fi
\else
\hbox to \myboxwidth{%
$\Pi_{22,2}$ spans $L_{16.7}$%
\hfil}%
\hbox to \myboxwidth{%
$\slashthree63632|23636\slashthree63632|23636\rtimes D_{4}$%
\hfil}%
\box\matricesbox
\fi
}%
\hfill\discretionary{}{}{}%
\setbox\matricesbox=\hbox{%
{$\left[\!\llap{\phantom{%
\begingroup \smaller\smaller\smaller\begin{tabular}{@{}c@{}}%
\phantom{0}\\\phantom{0}\\\phantom{0}
\end{tabular}\endgroup%
}}\right.$}%
\begingroup \smaller\smaller\smaller\begin{tabular}{@{}c@{}}%
-5\\\phantom{0}\\\phantom{0}
\end{tabular}\endgroup%
\kern3pt%
\begingroup \smaller\smaller\smaller\begin{tabular}{@{}c@{}}%
\phantom{0}\\9\\\phantom{0}
\end{tabular}\endgroup%
\kern3pt%
\begingroup \smaller\smaller\smaller\begin{tabular}{@{}c@{}}%
\phantom{0}\\\phantom{0}\\3
\end{tabular}\endgroup%
{$\left.\llap{\phantom{%
\begingroup \smaller\smaller\smaller\begin{tabular}{@{}c@{}}%
\phantom{0}\\\phantom{0}\\\phantom{0}
\end{tabular}\endgroup%
}}\!\right]$}%
{$\left[\!\llap{\phantom{%
\begingroup \smaller\smaller\smaller\begin{tabular}{@{}c@{}}%
0\\0\\0
\end{tabular}\endgroup%
}}\right.$}%
\begingroup \smaller\smaller\smaller\begin{tabular}{@{}c@{}}%
4\\3\\-1
\end{tabular}\endgroup%
\kern3pt%
\begingroup \smaller\smaller\smaller\begin{tabular}{@{}c@{}}%
3\\2\\-2
\end{tabular}\endgroup%
\kern3pt%
\begingroup \smaller\smaller\smaller\begin{tabular}{@{}c@{}}%
4\\2\\-4
\end{tabular}\endgroup%
\kern3pt%
\begingroup \smaller\smaller\smaller\begin{tabular}{@{}c@{}}%
12\\4\\-14
\end{tabular}\endgroup%
\kern3pt%
\begingroup \smaller\smaller\smaller\begin{tabular}{@{}c@{}}%
36\\7\\-45
\end{tabular}\endgroup%
\kern3pt%
\begingroup \smaller\smaller\smaller\begin{tabular}{@{}c@{}}%
3\\0\\-4
\end{tabular}\endgroup%
{$\left.\llap{\phantom{%
\begingroup \smaller\smaller\smaller\begin{tabular}{@{}c@{}}%
0\\0\\0
\end{tabular}\endgroup%
}}\!\right]$}%
}%
\ifdim\wd\matricesbox>\halfwidth\myboxwidth=\hsize\else\myboxwidth=\halfwidth\fi
\vbox{%
\ifdim\myboxwidth=\hsize
\setbox\onelinebox=\hbox{%
\vbox{\hbox{%
$\Pi_{22,3}$ spans $L_{251.3}$%
}\hbox{%
$\slashthree22262|26222\slashthree22262|26222\rtimes D_{4}$%
}%
}%
\hfill\copy\matricesbox
}%
\ifdim\wd\onelinebox>\myboxwidth
\hbox to \myboxwidth{%
$\Pi_{22,3}$ spans $L_{251.3}$%
\hfil
$\slashthree22262|26222\slashthree22262|26222\rtimes D_{4}$%
}%
\box\matricesbox
\else
\hbox to \myboxwidth{%
\unhbox\onelinebox
}%
\fi
\else
\hbox to \myboxwidth{%
$\Pi_{22,3}$ spans $L_{251.3}$%
\hfil}%
\hbox to \myboxwidth{%
$\slashthree22262|26222\slashthree22262|26222\rtimes D_{4}$%
\hfil}%
\box\matricesbox
\fi
}%
\hfill\discretionary{}{}{}%
\setbox\matricesbox=\hbox{%
{$\left[\!\llap{\phantom{%
\begingroup \smaller\smaller\smaller\begin{tabular}{@{}c@{}}%
\phantom{0}\\\phantom{0}\\\phantom{0}
\end{tabular}\endgroup%
}}\right.$}%
\begingroup \smaller\smaller\smaller\begin{tabular}{@{}c@{}}%
-5\\\phantom{0}\\\phantom{0}
\end{tabular}\endgroup%
\kern3pt%
\begingroup \smaller\smaller\smaller\begin{tabular}{@{}c@{}}%
\phantom{0}\\9\\\phantom{0}
\end{tabular}\endgroup%
\kern3pt%
\begingroup \smaller\smaller\smaller\begin{tabular}{@{}c@{}}%
\phantom{0}\\\phantom{0}\\3
\end{tabular}\endgroup%
{$\left.\llap{\phantom{%
\begingroup \smaller\smaller\smaller\begin{tabular}{@{}c@{}}%
\phantom{0}\\\phantom{0}\\\phantom{0}
\end{tabular}\endgroup%
}}\!\right]$}%
{$\left[\!\llap{\phantom{%
\begingroup \smaller\smaller\smaller\begin{tabular}{@{}c@{}}%
0\\0\\0
\end{tabular}\endgroup%
}}\right.$}%
\begingroup \smaller\smaller\smaller\begin{tabular}{@{}c@{}}%
4\\3\\-1
\end{tabular}\endgroup%
\kern3pt%
\begingroup \smaller\smaller\smaller\begin{tabular}{@{}c@{}}%
3\\2\\-2
\end{tabular}\endgroup%
\kern3pt%
\begingroup \smaller\smaller\smaller\begin{tabular}{@{}c@{}}%
36\\19\\-33
\end{tabular}\endgroup%
\kern3pt%
\begingroup \smaller\smaller\smaller\begin{tabular}{@{}c@{}}%
12\\5\\-13
\end{tabular}\endgroup%
\kern3pt%
\begingroup \smaller\smaller\smaller\begin{tabular}{@{}c@{}}%
4\\1\\-5
\end{tabular}\endgroup%
\kern3pt%
\begingroup \smaller\smaller\smaller\begin{tabular}{@{}c@{}}%
3\\0\\-4
\end{tabular}\endgroup%
{$\left.\llap{\phantom{%
\begingroup \smaller\smaller\smaller\begin{tabular}{@{}c@{}}%
0\\0\\0
\end{tabular}\endgroup%
}}\!\right]$}%
}%
\ifdim\wd\matricesbox>\halfwidth\myboxwidth=\hsize\else\myboxwidth=\halfwidth\fi
\vbox{%
\ifdim\myboxwidth=\hsize
\setbox\onelinebox=\hbox{%
\vbox{\hbox{%
$\Pi_{22,4}$ spans $L_{251.3}$%
}\hbox{%
$\slashthree22622|22622\slashthree22622|22622\rtimes D_{4}$%
}%
}%
\hfill\copy\matricesbox
}%
\ifdim\wd\onelinebox>\myboxwidth
\hbox to \myboxwidth{%
$\Pi_{22,4}$ spans $L_{251.3}$%
\hfil
$\slashthree22622|22622\slashthree22622|22622\rtimes D_{4}$%
}%
\box\matricesbox
\else
\hbox to \myboxwidth{%
\unhbox\onelinebox
}%
\fi
\else
\hbox to \myboxwidth{%
$\Pi_{22,4}$ spans $L_{251.3}$%
\hfil}%
\hbox to \myboxwidth{%
$\slashthree22622|22622\slashthree22622|22622\rtimes D_{4}$%
\hfil}%
\box\matricesbox
\fi
}%
\hfill\discretionary{}{}{}%
\setbox\matricesbox=\hbox{%
{$\left[\!\llap{\phantom{%
\begingroup \smaller\smaller\smaller% [inline block 356: 18 envs, 2043 chars -> data_tex | \begin{tabular}{@{}c@{}}% \phantom{0}\\\phantom{0}\\\phantom{0}...]
\endgroup%
}}\!\right]$}%
}%
\ifdim\wd\matricesbox>\halfwidth\myboxwidth=\hsize\else\myboxwidth=\halfwidth\fi
\vbox{%
\ifdim\myboxwidth=\hsize
\setbox\onelinebox=\hbox{%
\vbox{\hbox{%
$\Pi_{22,5}$ spans $L_{16.9}$%
}\hbox{%
$36322\slashthree2236363636\slashthree63636\rtimes D_{2}$%
}%
}%
\hfill\copy\matricesbox
}%
\ifdim\wd\onelinebox>\myboxwidth
\hbox to \myboxwidth{%
$\Pi_{22,5}$ spans $L_{16.9}$%
\hfil
$36322\slashthree2236363636\slashthree63636\rtimes D_{2}$%
}%
\box\matricesbox
\else
\hbox to \myboxwidth{%
\unhbox\onelinebox
}%
\fi
\else
\hbox to \myboxwidth{%
$\Pi_{22,5}$ spans $L_{16.9}$%
\hfil}%
\hbox to \myboxwidth{%
$36322\slashthree2236363636\slashthree63636\rtimes D_{2}$%
\hfil}%
\box\matricesbox
\fi
}%
\hfill\discretionary{}{}{}%
\setbox\matricesbox=\hbox{%
{$\left[\!\llap{\phantom{%
\begingroup \smaller\smaller\smaller% [inline block 357: 18 envs, 2042 chars -> data_tex | \begin{tabular}{@{}c@{}}% \phantom{0}\\\phantom{0}\\\phantom{0}...]
\endgroup%
}}\!\right]$}%
}%
\ifdim\wd\matricesbox>\halfwidth\myboxwidth=\hsize\else\myboxwidth=\halfwidth\fi
\vbox{%
\ifdim\myboxwidth=\hsize
\setbox\onelinebox=\hbox{%
\vbox{\hbox{%
$\Pi_{22,6}$ spans $L_{16.9}$%
}\hbox{%
$3632236\slashthree6322363636\slashthree636\rtimes D_{2}$%
}%
}%
\hfill\copy\matricesbox
}%
\ifdim\wd\onelinebox>\myboxwidth
\hbox to \myboxwidth{%
$\Pi_{22,6}$ spans $L_{16.9}$%
\hfil
$3632236\slashthree6322363636\slashthree636\rtimes D_{2}$%
}%
\box\matricesbox
\else
\hbox to \myboxwidth{%
\unhbox\onelinebox
}%
\fi
\else
\hbox to \myboxwidth{%
$\Pi_{22,6}$ spans $L_{16.9}$%
\hfil}%
\hbox to \myboxwidth{%
$3632236\slashthree6322363636\slashthree636\rtimes D_{2}$%
\hfil}%
\box\matricesbox
\fi
}%
\hfill\discretionary{}{}{}%
\setbox\matricesbox=\hbox{%
{$\left[\!\llap{\phantom{%
\begingroup \smaller\smaller\smaller% [inline block 358: 18 envs, 2030 chars -> data_tex | \begin{tabular}{@{}c@{}}% \phantom{0}\\\phantom{0}\\\phantom{0}...]
\endgroup%
}}\!\right]$}%
}%
\ifdim\wd\matricesbox>\halfwidth\myboxwidth=\hsize\else\myboxwidth=\halfwidth\fi
\vbox{%
\ifdim\myboxwidth=\hsize
\setbox\onelinebox=\hbox{%
\vbox{\hbox{%
$\Pi_{22,7}$ spans $L_{16.7}$%
}\hbox{%
$3636322\slashthree2236363636\slashthree636\rtimes D_{2}$%
}%
}%
\hfill\copy\matricesbox
}%
\ifdim\wd\onelinebox>\myboxwidth
\hbox to \myboxwidth{%
$\Pi_{22,7}$ spans $L_{16.7}$%
\hfil
$3636322\slashthree2236363636\slashthree636\rtimes D_{2}$%
}%
\box\matricesbox
\else
\hbox to \myboxwidth{%
\unhbox\onelinebox
}%
\fi
\else
\hbox to \myboxwidth{%
$\Pi_{22,7}$ spans $L_{16.7}$%
\hfil}%
\hbox to \myboxwidth{%
$3636322\slashthree2236363636\slashthree636\rtimes D_{2}$%
\hfil}%
\box\matricesbox
\fi
}%
\hfill\discretionary{}{}{}%
\setbox\matricesbox=\hbox{%
{$\left[\!\llap{\phantom{%
\begingroup \smaller\smaller\smaller% [inline block 359: 18 envs, 2030 chars -> data_tex | \begin{tabular}{@{}c@{}}% \phantom{0}\\\phantom{0}\\\phantom{0}...]
\endgroup%
}}\!\right]$}%
}%
\ifdim\wd\matricesbox>\halfwidth\myboxwidth=\hsize\else\myboxwidth=\halfwidth\fi
\vbox{%
\ifdim\myboxwidth=\hsize
\setbox\onelinebox=\hbox{%
\vbox{\hbox{%
$\Pi_{22,8}$ spans $L_{16.7}$%
}\hbox{%
$363632236\slashthree6322363636\slashthree6\rtimes D_{2}$%
}%
}%
\hfill\copy\matricesbox
}%
\ifdim\wd\onelinebox>\myboxwidth
\hbox to \myboxwidth{%
$\Pi_{22,8}$ spans $L_{16.7}$%
\hfil
$363632236\slashthree6322363636\slashthree6\rtimes D_{2}$%
}%
\box\matricesbox
\else
\hbox to \myboxwidth{%
\unhbox\onelinebox
}%
\fi
\else
\hbox to \myboxwidth{%
$\Pi_{22,8}$ spans $L_{16.7}$%
\hfil}%
\hbox to \myboxwidth{%
$363632236\slashthree6322363636\slashthree6\rtimes D_{2}$%
\hfil}%
\box\matricesbox
\fi
}%
\hfill\discretionary{}{}{}%
\setbox\matricesbox=\hbox{%
{$\left[\!\llap{\phantom{%
\begingroup \smaller\smaller\smaller% [inline block 360: 19 envs, 2131 chars -> data_tex | \begin{tabular}{@{}c@{}}% \phantom{0}\\\phantom{0}\\\phantom{0}...]
\endgroup%
}}\!\right]$}%
}%
\ifdim\wd\matricesbox>\halfwidth\myboxwidth=\hsize\else\myboxwidth=\halfwidth\fi
\vbox{%
\ifdim\myboxwidth=\hsize
\setbox\onelinebox=\hbox{%
\vbox{\hbox{%
$\Pi_{22,9}$ spans $L_{251.3}$%
}\hbox{%
$32|23222622262|262226222\rtimes D_{2}$%
}%
}%
\hfill\copy\matricesbox
}%
\ifdim\wd\onelinebox>\myboxwidth
\hbox to \myboxwidth{%
$\Pi_{22,9}$ spans $L_{251.3}$%
\hfil
$32|23222622262|262226222\rtimes D_{2}$%
}%
\box\matricesbox
\else
\hbox to \myboxwidth{%
\unhbox\onelinebox
}%
\fi
\else
\hbox to \myboxwidth{%
$\Pi_{22,9}$ spans $L_{251.3}$%
\hfil}%
\hbox to \myboxwidth{%
$32|23222622262|262226222\rtimes D_{2}$%
\hfil}%
\box\matricesbox
\fi
}%
\hfill\discretionary{}{}{}%
\setbox\matricesbox=\hbox{%
{$\left[\!\llap{\phantom{%
\begingroup \smaller\smaller\smaller% [inline block 361: 19 envs, 2132 chars -> data_tex | \begin{tabular}{@{}c@{}}% \phantom{0}\\\phantom{0}\\\phantom{0}...]
\endgroup%
}}\!\right]$}%
}%
\ifdim\wd\matricesbox>\halfwidth\myboxwidth=\hsize\else\myboxwidth=\halfwidth\fi
\vbox{%
\ifdim\myboxwidth=\hsize
\setbox\onelinebox=\hbox{%
\vbox{\hbox{%
$\Pi_{22,10}$ spans $L_{251.3}$%
}\hbox{%
$32|23222622622|226226222\rtimes D_{2}$%
}%
}%
\hfill\copy\matricesbox
}%
\ifdim\wd\onelinebox>\myboxwidth
\hbox to \myboxwidth{%
$\Pi_{22,10}$ spans $L_{251.3}$%
\hfil
$32|23222622622|226226222\rtimes D_{2}$%
}%
\box\matricesbox
\else
\hbox to \myboxwidth{%
\unhbox\onelinebox
}%
\fi
\else
\hbox to \myboxwidth{%
$\Pi_{22,10}$ spans $L_{251.3}$%
\hfil}%
\hbox to \myboxwidth{%
$32|23222622622|226226222\rtimes D_{2}$%
\hfil}%
\box\matricesbox
\fi
}%
\hfill\discretionary{}{}{}%
\setbox\matricesbox=\hbox{%
{$\left[\!\llap{\phantom{%
\begingroup \smaller\smaller\smaller% [inline block 362: 19 envs, 2132 chars -> data_tex | \begin{tabular}{@{}c@{}}% \phantom{0}\\\phantom{0}\\\phantom{0}...]
\endgroup%
}}\!\right]$}%
}%
\ifdim\wd\matricesbox>\halfwidth\myboxwidth=\hsize\else\myboxwidth=\halfwidth\fi
\vbox{%
\ifdim\myboxwidth=\hsize
\setbox\onelinebox=\hbox{%
\vbox{\hbox{%
$\Pi_{22,11}$ spans $L_{251.3}$%
}\hbox{%
$32|23226222262|262222622\rtimes D_{2}$%
}%
}%
\hfill\copy\matricesbox
}%
\ifdim\wd\onelinebox>\myboxwidth
\hbox to \myboxwidth{%
$\Pi_{22,11}$ spans $L_{251.3}$%
\hfil
$32|23226222262|262222622\rtimes D_{2}$%
}%
\box\matricesbox
\else
\hbox to \myboxwidth{%
\unhbox\onelinebox
}%
\fi
\else
\hbox to \myboxwidth{%
$\Pi_{22,11}$ spans $L_{251.3}$%
\hfil}%
\hbox to \myboxwidth{%
$32|23226222262|262222622\rtimes D_{2}$%
\hfil}%
\box\matricesbox
\fi
}%
\hfill\discretionary{}{}{}%
\setbox\matricesbox=\hbox{%
{$\left[\!\llap{\phantom{%
\begingroup \smaller\smaller\smaller% [inline block 363: 19 envs, 2133 chars -> data_tex | \begin{tabular}{@{}c@{}}% \phantom{0}\\\phantom{0}\\\phantom{0}...]
\endgroup%
}}\!\right]$}%
}%
\ifdim\wd\matricesbox>\halfwidth\myboxwidth=\hsize\else\myboxwidth=\halfwidth\fi
\vbox{%
\ifdim\myboxwidth=\hsize
\setbox\onelinebox=\hbox{%
\vbox{\hbox{%
$\Pi_{22,12}$ spans $L_{251.3}$%
}\hbox{%
$32|23226222622|226222622\rtimes D_{2}$%
}%
}%
\hfill\copy\matricesbox
}%
\ifdim\wd\onelinebox>\myboxwidth
\hbox to \myboxwidth{%
$\Pi_{22,12}$ spans $L_{251.3}$%
\hfil
$32|23226222622|226222622\rtimes D_{2}$%
}%
\box\matricesbox
\else
\hbox to \myboxwidth{%
\unhbox\onelinebox
}%
\fi
\else
\hbox to \myboxwidth{%
$\Pi_{22,12}$ spans $L_{251.3}$%
\hfil}%
\hbox to \myboxwidth{%
$32|23226222622|226222622\rtimes D_{2}$%
\hfil}%
\box\matricesbox
\fi
}%
\hfill\discretionary{}{}{}%
\setbox\matricesbox=\hbox{%
{$\left[\!\llap{\phantom{%
\begingroup \smaller\smaller\smaller% [inline block 364: 18 envs, 2029 chars -> data_tex | \begin{tabular}{@{}c@{}}% \phantom{0}\\\phantom{0}\\\phantom{0}...]
\endgroup%
}}\!\right]$}%
}%
\ifdim\wd\matricesbox>\halfwidth\myboxwidth=\hsize\else\myboxwidth=\halfwidth\fi
\vbox{%
\ifdim\myboxwidth=\hsize
\setbox\onelinebox=\hbox{%
\vbox{\hbox{%
$\Pi_{22,13}$ spans $L_{251.3}$%
}\hbox{%
$\slashthree2226222622\slashthree2262226222\rtimes D_{2}$%
}%
}%
\hfill\copy\matricesbox
}%
\ifdim\wd\onelinebox>\myboxwidth
\hbox to \myboxwidth{%
$\Pi_{22,13}$ spans $L_{251.3}$%
\hfil
$\slashthree2226222622\slashthree2262226222\rtimes D_{2}$%
}%
\box\matricesbox
\else
\hbox to \myboxwidth{%
\unhbox\onelinebox
}%
\fi
\else
\hbox to \myboxwidth{%
$\Pi_{22,13}$ spans $L_{251.3}$%
\hfil}%
\hbox to \myboxwidth{%
$\slashthree2226222622\slashthree2262226222\rtimes D_{2}$%
\hfil}%
\box\matricesbox
\fi
}%
\hfill\discretionary{}{}{}%
\setbox\matricesbox=\hbox{%
{$\left[\!\llap{\phantom{%
\begingroup \smaller\smaller\smaller% [inline block 365: 19 envs, 2123 chars -> data_tex | \begin{tabular}{@{}c@{}}% \phantom{0}\\\phantom{0}\\\phantom{0}...]
\endgroup%
}}\!\right]$}%
}%
\ifdim\wd\matricesbox>\halfwidth\myboxwidth=\hsize\else\myboxwidth=\halfwidth\fi
\vbox{%
\ifdim\myboxwidth=\hsize
\setbox\onelinebox=\hbox{%
\vbox{\hbox{%
$\Pi_{22,14}$ spans $L_{251.3}$%
}\hbox{%
$322262|26222322622|22622\rtimes D_{2}$%
}%
}%
\hfill\copy\matricesbox
}%
\ifdim\wd\onelinebox>\myboxwidth
\hbox to \myboxwidth{%
$\Pi_{22,14}$ spans $L_{251.3}$%
\hfil
$322262|26222322622|22622\rtimes D_{2}$%
}%
\box\matricesbox
\else
\hbox to \myboxwidth{%
\unhbox\onelinebox
}%
\fi
\else
\hbox to \myboxwidth{%
$\Pi_{22,14}$ spans $L_{251.3}$%
\hfil}%
\hbox to \myboxwidth{%
$322262|26222322622|22622\rtimes D_{2}$%
\hfil}%
\box\matricesbox
\fi
}%
\hfill\discretionary{}{}{}%
\setbox\matricesbox=\hbox{%
{$\left[\!\llap{\phantom{%
\begingroup \smaller\smaller\smaller% [inline block 366: 18 envs, 2010 chars -> data_tex | \begin{tabular}{@{}c@{}}% \phantom{0}\\\phantom{0}\\\phantom{0}...]
\endgroup%
}}\!\right]$}%
}%
\ifdim\wd\matricesbox>\halfwidth\myboxwidth=\hsize\else\myboxwidth=\halfwidth\fi
\vbox{%
\ifdim\myboxwidth=\hsize
\setbox\onelinebox=\hbox{%
\vbox{\hbox{%
$\Pi_{22,15}$ spans $L_{251.3}$%
}\hbox{%
$3222622262232226222622\rtimes C_{2}$%
}%
}%
\hfill\copy\matricesbox
}%
\ifdim\wd\onelinebox>\myboxwidth
\hbox to \myboxwidth{%
$\Pi_{22,15}$ spans $L_{251.3}$%
\hfil
$3222622262232226222622\rtimes C_{2}$%
}%
\box\matricesbox
\else
\hbox to \myboxwidth{%
\unhbox\onelinebox
}%
\fi
\else
\hbox to \myboxwidth{%
$\Pi_{22,15}$ spans $L_{251.3}$%
\hfil}%
\hbox to \myboxwidth{%
$3222622262232226222622\rtimes C_{2}$%
\hfil}%
\box\matricesbox
\fi
}%
\hfill\discretionary{}{}{}%
\setbox\matricesbox=\hbox{%
{$\left[\!\llap{\phantom{%
\begingroup \smaller\smaller\smaller% [inline block 367: 29 envs, 3190 chars -> data_tex | \begin{tabular}{@{}c@{}}% \phantom{0}\\\phantom{0}\\\phantom{0}...]
\endgroup%
}}\!\right]$}%
}%
\ifdim\wd\matricesbox>\halfwidth\myboxwidth=\hsize\else\myboxwidth=\halfwidth\fi
\vbox{%
\ifdim\myboxwidth=\hsize
\setbox\onelinebox=\hbox{%
\vbox{\hbox{%
$\Pi_{22,16}$ spans $L_{16.13}$%
}\hbox{%
$3632223636363636363636$%
}%
}%
\hfill\copy\matricesbox
}%
\ifdim\wd\onelinebox>\myboxwidth
\hbox to \myboxwidth{%
$\Pi_{22,16}$ spans $L_{16.13}$%
\hfil
$3632223636363636363636$%
}%
\box\matricesbox
\else
\hbox to \myboxwidth{%
\unhbox\onelinebox
}%
\fi
\else
\hbox to \myboxwidth{%
$\Pi_{22,16}$ spans $L_{16.13}$%
\hfil}%
\hbox to \myboxwidth{%
$3632223636363636363636$%
\hfil}%
\box\matricesbox
\fi
}%
\hfill\discretionary{}{}{}%
\setbox\matricesbox=\hbox{%
{$\left[\!\llap{\phantom{%
\begingroup \smaller\smaller\smaller% [inline block 368: 29 envs, 3189 chars -> data_tex | \begin{tabular}{@{}c@{}}% \phantom{0}\\\phantom{0}\\\phantom{0}...]
\endgroup%
}}\!\right]$}%
}%
\ifdim\wd\matricesbox>\halfwidth\myboxwidth=\hsize\else\myboxwidth=\halfwidth\fi
\vbox{%
\ifdim\myboxwidth=\hsize
\setbox\onelinebox=\hbox{%
\vbox{\hbox{%
$\Pi_{22,17}$ spans $L_{16.13}$%
}\hbox{%
$3632236322363636363636$%
}%
}%
\hfill\copy\matricesbox
}%
\ifdim\wd\onelinebox>\myboxwidth
\hbox to \myboxwidth{%
$\Pi_{22,17}$ spans $L_{16.13}$%
\hfil
$3632236322363636363636$%
}%
\box\matricesbox
\else
\hbox to \myboxwidth{%
\unhbox\onelinebox
}%
\fi
\else
\hbox to \myboxwidth{%
$\Pi_{22,17}$ spans $L_{16.13}$%
\hfil}%
\hbox to \myboxwidth{%
$3632236322363636363636$%
\hfil}%
\box\matricesbox
\fi
}%
\hfill\discretionary{}{}{}%
\setbox\matricesbox=\hbox{%
{$\left[\!\llap{\phantom{%
\begingroup \smaller\smaller\smaller% [inline block 369: 29 envs, 3189 chars -> data_tex | \begin{tabular}{@{}c@{}}% \phantom{0}\\\phantom{0}\\\phantom{0}...]
\endgroup%
}}\!\right]$}%
}%
\ifdim\wd\matricesbox>\halfwidth\myboxwidth=\hsize\else\myboxwidth=\halfwidth\fi
\vbox{%
\ifdim\myboxwidth=\hsize
\setbox\onelinebox=\hbox{%
\vbox{\hbox{%
$\Pi_{22,18}$ spans $L_{16.13}$%
}\hbox{%
$3632236363632236363636$%
}%
}%
\hfill\copy\matricesbox
}%
\ifdim\wd\onelinebox>\myboxwidth
\hbox to \myboxwidth{%
$\Pi_{22,18}$ spans $L_{16.13}$%
\hfil
$3632236363632236363636$%
}%
\box\matricesbox
\else
\hbox to \myboxwidth{%
\unhbox\onelinebox
}%
\fi
\else
\hbox to \myboxwidth{%
$\Pi_{22,18}$ spans $L_{16.13}$%
\hfil}%
\hbox to \myboxwidth{%
$3632236363632236363636$%
\hfil}%
\box\matricesbox
\fi
}%
\hfill\discretionary{}{}{}%
\setbox\matricesbox=\hbox{%
{$\left[\!\llap{\phantom{%
\begingroup \smaller\smaller\smaller% [inline block 370: 29 envs, 3163 chars -> data_tex | \begin{tabular}{@{}c@{}}% \phantom{0}\\\phantom{0}\\\phantom{0}...]
\endgroup%
}}\!\right]$}%
}%
\ifdim\wd\matricesbox>\halfwidth\myboxwidth=\hsize\else\myboxwidth=\halfwidth\fi
\vbox{%
\ifdim\myboxwidth=\hsize
\setbox\onelinebox=\hbox{%
\vbox{\hbox{%
$\Pi_{22,19}$ spans $L_{251.3}$%
}\hbox{%
$3223222622262226222622$%
}%
}%
\hfill\copy\matricesbox
}%
\ifdim\wd\onelinebox>\myboxwidth
\hbox to \myboxwidth{%
$\Pi_{22,19}$ spans $L_{251.3}$%
\hfil
$3223222622262226222622$%
}%
\box\matricesbox
\else
\hbox to \myboxwidth{%
\unhbox\onelinebox
}%
\fi
\else
\hbox to \myboxwidth{%
$\Pi_{22,19}$ spans $L_{251.3}$%
\hfil}%
\hbox to \myboxwidth{%
$3223222622262226222622$%
\hfil}%
\box\matricesbox
\fi
}%
\hfill\discretionary{}{}{}%
\setbox\matricesbox=\hbox{%
{$\left[\!\llap{\phantom{%
\begingroup \smaller\smaller\smaller% [inline block 371: 29 envs, 3163 chars -> data_tex | \begin{tabular}{@{}c@{}}% \phantom{0}\\\phantom{0}\\\phantom{0}...]
\endgroup%
}}\!\right]$}%
}%
\ifdim\wd\matricesbox>\halfwidth\myboxwidth=\hsize\else\myboxwidth=\halfwidth\fi
\vbox{%
\ifdim\myboxwidth=\hsize
\setbox\onelinebox=\hbox{%
\vbox{\hbox{%
$\Pi_{22,20}$ spans $L_{251.3}$%
}\hbox{%
$3223222622262226226222$%
}%
}%
\hfill\copy\matricesbox
}%
\ifdim\wd\onelinebox>\myboxwidth
\hbox to \myboxwidth{%
$\Pi_{22,20}$ spans $L_{251.3}$%
\hfil
$3223222622262226226222$%
}%
\box\matricesbox
\else
\hbox to \myboxwidth{%
\unhbox\onelinebox
}%
\fi
\else
\hbox to \myboxwidth{%
$\Pi_{22,20}$ spans $L_{251.3}$%
\hfil}%
\hbox to \myboxwidth{%
$3223222622262226226222$%
\hfil}%
\box\matricesbox
\fi
}%
\hfill\discretionary{}{}{}%
\setbox\matricesbox=\hbox{%
{$\left[\!\llap{\phantom{%
\begingroup \smaller\smaller\smaller% [inline block 372: 29 envs, 3164 chars -> data_tex | \begin{tabular}{@{}c@{}}% \phantom{0}\\\phantom{0}\\\phantom{0}...]
\endgroup%
}}\!\right]$}%
}%
\ifdim\wd\matricesbox>\halfwidth\myboxwidth=\hsize\else\myboxwidth=\halfwidth\fi
\vbox{%
\ifdim\myboxwidth=\hsize
\setbox\onelinebox=\hbox{%
\vbox{\hbox{%
$\Pi_{22,21}$ spans $L_{251.3}$%
}\hbox{%
$3223222622262262222622$%
}%
}%
\hfill\copy\matricesbox
}%
\ifdim\wd\onelinebox>\myboxwidth
\hbox to \myboxwidth{%
$\Pi_{22,21}$ spans $L_{251.3}$%
\hfil
$3223222622262262222622$%
}%
\box\matricesbox
\else
\hbox to \myboxwidth{%
\unhbox\onelinebox
}%
\fi
\else
\hbox to \myboxwidth{%
$\Pi_{22,21}$ spans $L_{251.3}$%
\hfil}%
\hbox to \myboxwidth{%
$3223222622262262222622$%
\hfil}%
\box\matricesbox
\fi
}%
\hfill\discretionary{}{}{}%
\setbox\matricesbox=\hbox{%
{$\left[\!\llap{\phantom{%
\begingroup \smaller\smaller\smaller% [inline block 373: 29 envs, 3162 chars -> data_tex | \begin{tabular}{@{}c@{}}% \phantom{0}\\\phantom{0}\\\phantom{0}...]
\endgroup%
}}\!\right]$}%
}%
\ifdim\wd\matricesbox>\halfwidth\myboxwidth=\hsize\else\myboxwidth=\halfwidth\fi
\vbox{%
\ifdim\myboxwidth=\hsize
\setbox\onelinebox=\hbox{%
\vbox{\hbox{%
$\Pi_{22,22}$ spans $L_{251.3}$%
}\hbox{%
$3223222622622226222622$%
}%
}%
\hfill\copy\matricesbox
}%
\ifdim\wd\onelinebox>\myboxwidth
\hbox to \myboxwidth{%
$\Pi_{22,22}$ spans $L_{251.3}$%
\hfil
$3223222622622226222622$%
}%
\box\matricesbox
\else
\hbox to \myboxwidth{%
\unhbox\onelinebox
}%
\fi
\else
\hbox to \myboxwidth{%
$\Pi_{22,22}$ spans $L_{251.3}$%
\hfil}%
\hbox to \myboxwidth{%
$3223222622622226222622$%
\hfil}%
\box\matricesbox
\fi
}%
\hfill\discretionary{}{}{}%
\setbox\matricesbox=\hbox{%
{$\left[\!\llap{\phantom{%
\begingroup \smaller\smaller\smaller% [inline block 374: 29 envs, 3163 chars -> data_tex | \begin{tabular}{@{}c@{}}% \phantom{0}\\\phantom{0}\\\phantom{0}...]
\endgroup%
}}\!\right]$}%
}%
\ifdim\wd\matricesbox>\halfwidth\myboxwidth=\hsize\else\myboxwidth=\halfwidth\fi
\vbox{%
\ifdim\myboxwidth=\hsize
\setbox\onelinebox=\hbox{%
\vbox{\hbox{%
$\Pi_{22,23}$ spans $L_{251.3}$%
}\hbox{%
$3223222622622262222622$%
}%
}%
\hfill\copy\matricesbox
}%
\ifdim\wd\onelinebox>\myboxwidth
\hbox to \myboxwidth{%
$\Pi_{22,23}$ spans $L_{251.3}$%
\hfil
$3223222622622262222622$%
}%
\box\matricesbox
\else
\hbox to \myboxwidth{%
\unhbox\onelinebox
}%
\fi
\else
\hbox to \myboxwidth{%
$\Pi_{22,23}$ spans $L_{251.3}$%
\hfil}%
\hbox to \myboxwidth{%
$3223222622622262222622$%
\hfil}%
\box\matricesbox
\fi
}%
\hfill\discretionary{}{}{}%
\setbox\matricesbox=\hbox{%
{$\left[\!\llap{\phantom{%
\begingroup \smaller\smaller\smaller% [inline block 375: 29 envs, 3163 chars -> data_tex | \begin{tabular}{@{}c@{}}% \phantom{0}\\\phantom{0}\\\phantom{0}...]
\endgroup%
}}\!\right]$}%
}%
\ifdim\wd\matricesbox>\halfwidth\myboxwidth=\hsize\else\myboxwidth=\halfwidth\fi
\vbox{%
\ifdim\myboxwidth=\hsize
\setbox\onelinebox=\hbox{%
\vbox{\hbox{%
$\Pi_{22,24}$ spans $L_{251.3}$%
}\hbox{%
$3223226222262226222622$%
}%
}%
\hfill\copy\matricesbox
}%
\ifdim\wd\onelinebox>\myboxwidth
\hbox to \myboxwidth{%
$\Pi_{22,24}$ spans $L_{251.3}$%
\hfil
$3223226222262226222622$%
}%
\box\matricesbox
\else
\hbox to \myboxwidth{%
\unhbox\onelinebox
}%
\fi
\else
\hbox to \myboxwidth{%
$\Pi_{22,24}$ spans $L_{251.3}$%
\hfil}%
\hbox to \myboxwidth{%
$3223226222262226222622$%
\hfil}%
\box\matricesbox
\fi
}%
\hfill\discretionary{}{}{}%
\setbox\matricesbox=\hbox{%
{$\left[\!\llap{\phantom{%
\begingroup \smaller\smaller\smaller% [inline block 376: 29 envs, 3164 chars -> data_tex | \begin{tabular}{@{}c@{}}% \phantom{0}\\\phantom{0}\\\phantom{0}...]
\endgroup%
}}\!\right]$}%
}%
\ifdim\wd\matricesbox>\halfwidth\myboxwidth=\hsize\else\myboxwidth=\halfwidth\fi
\vbox{%
\ifdim\myboxwidth=\hsize
\setbox\onelinebox=\hbox{%
\vbox{\hbox{%
$\Pi_{22,25}$ spans $L_{251.3}$%
}\hbox{%
$3222622322262226222622$%
}%
}%
\hfill\copy\matricesbox
}%
\ifdim\wd\onelinebox>\myboxwidth
\hbox to \myboxwidth{%
$\Pi_{22,25}$ spans $L_{251.3}$%
\hfil
$3222622322262226222622$%
}%
\box\matricesbox
\else
\hbox to \myboxwidth{%
\unhbox\onelinebox
}%
\fi
\else
\hbox to \myboxwidth{%
$\Pi_{22,25}$ spans $L_{251.3}$%
\hfil}%
\hbox to \myboxwidth{%
$3222622322262226222622$%
\hfil}%
\box\matricesbox
\fi
}%
\hfill\discretionary{}{}{}%
\setbox\matricesbox=\hbox{%
{$\left[\!\llap{\phantom{%
\begingroup \smaller\smaller\smaller% [inline block 377: 29 envs, 3164 chars -> data_tex | \begin{tabular}{@{}c@{}}% \phantom{0}\\\phantom{0}\\\phantom{0}...]
\endgroup%
}}\!\right]$}%
}%
\ifdim\wd\matricesbox>\halfwidth\myboxwidth=\hsize\else\myboxwidth=\halfwidth\fi
\vbox{%
\ifdim\myboxwidth=\hsize
\setbox\onelinebox=\hbox{%
\vbox{\hbox{%
$\Pi_{22,26}$ spans $L_{251.3}$%
}\hbox{%
$3222622322262226226222$%
}%
}%
\hfill\copy\matricesbox
}%
\ifdim\wd\onelinebox>\myboxwidth
\hbox to \myboxwidth{%
$\Pi_{22,26}$ spans $L_{251.3}$%
\hfil
$3222622322262226226222$%
}%
\box\matricesbox
\else
\hbox to \myboxwidth{%
\unhbox\onelinebox
}%
\fi
\else
\hbox to \myboxwidth{%
$\Pi_{22,26}$ spans $L_{251.3}$%
\hfil}%
\hbox to \myboxwidth{%
$3222622322262226226222$%
\hfil}%
\box\matricesbox
\fi
}%
\hfill\discretionary{}{}{}%
\setbox\matricesbox=\hbox{%
{$\left[\!\llap{\phantom{%
\begingroup \smaller\smaller\smaller% [inline block 378: 29 envs, 3165 chars -> data_tex | \begin{tabular}{@{}c@{}}% \phantom{0}\\\phantom{0}\\\phantom{0}...]
\endgroup%
}}\!\right]$}%
}%
\ifdim\wd\matricesbox>\halfwidth\myboxwidth=\hsize\else\myboxwidth=\halfwidth\fi
\vbox{%
\ifdim\myboxwidth=\hsize
\setbox\onelinebox=\hbox{%
\vbox{\hbox{%
$\Pi_{22,27}$ spans $L_{251.3}$%
}\hbox{%
$3222622322262262222622$%
}%
}%
\hfill\copy\matricesbox
}%
\ifdim\wd\onelinebox>\myboxwidth
\hbox to \myboxwidth{%
$\Pi_{22,27}$ spans $L_{251.3}$%
\hfil
$3222622322262262222622$%
}%
\box\matricesbox
\else
\hbox to \myboxwidth{%
\unhbox\onelinebox
}%
\fi
\else
\hbox to \myboxwidth{%
$\Pi_{22,27}$ spans $L_{251.3}$%
\hfil}%
\hbox to \myboxwidth{%
$3222622322262262222622$%
\hfil}%
\box\matricesbox
\fi
}%
\hfill\discretionary{}{}{}%
\setbox\matricesbox=\hbox{%
{$\left[\!\llap{\phantom{%
\begingroup \smaller\smaller\smaller% [inline block 379: 29 envs, 3165 chars -> data_tex | \begin{tabular}{@{}c@{}}% \phantom{0}\\\phantom{0}\\\phantom{0}...]
\endgroup%
}}\!\right]$}%
}%
\ifdim\wd\matricesbox>\halfwidth\myboxwidth=\hsize\else\myboxwidth=\halfwidth\fi
\vbox{%
\ifdim\myboxwidth=\hsize
\setbox\onelinebox=\hbox{%
\vbox{\hbox{%
$\Pi_{22,28}$ spans $L_{251.3}$%
}\hbox{%
$3222622322262262226222$%
}%
}%
\hfill\copy\matricesbox
}%
\ifdim\wd\onelinebox>\myboxwidth
\hbox to \myboxwidth{%
$\Pi_{22,28}$ spans $L_{251.3}$%
\hfil
$3222622322262262226222$%
}%
\box\matricesbox
\else
\hbox to \myboxwidth{%
\unhbox\onelinebox
}%
\fi
\else
\hbox to \myboxwidth{%
$\Pi_{22,28}$ spans $L_{251.3}$%
\hfil}%
\hbox to \myboxwidth{%
$3222622322262262226222$%
\hfil}%
\box\matricesbox
\fi
}%
\hfill\discretionary{}{}{}%
\setbox\matricesbox=\hbox{%
{$\left[\!\llap{\phantom{%
\begingroup \smaller\smaller\smaller% [inline block 380: 29 envs, 3163 chars -> data_tex | \begin{tabular}{@{}c@{}}% \phantom{0}\\\phantom{0}\\\phantom{0}...]
\endgroup%
}}\!\right]$}%
}%
\ifdim\wd\matricesbox>\halfwidth\myboxwidth=\hsize\else\myboxwidth=\halfwidth\fi
\vbox{%
\ifdim\myboxwidth=\hsize
\setbox\onelinebox=\hbox{%
\vbox{\hbox{%
$\Pi_{22,29}$ spans $L_{251.3}$%
}\hbox{%
$3222622322622226222622$%
}%
}%
\hfill\copy\matricesbox
}%
\ifdim\wd\onelinebox>\myboxwidth
\hbox to \myboxwidth{%
$\Pi_{22,29}$ spans $L_{251.3}$%
\hfil
$3222622322622226222622$%
}%
\box\matricesbox
\else
\hbox to \myboxwidth{%
\unhbox\onelinebox
}%
\fi
\else
\hbox to \myboxwidth{%
$\Pi_{22,29}$ spans $L_{251.3}$%
\hfil}%
\hbox to \myboxwidth{%
$3222622322622226222622$%
\hfil}%
\box\matricesbox
\fi
}%
\hfill\discretionary{}{}{}%
\setbox\matricesbox=\hbox{%
{$\left[\!\llap{\phantom{%
\begingroup \smaller\smaller\smaller% [inline block 381: 29 envs, 3163 chars -> data_tex | \begin{tabular}{@{}c@{}}% \phantom{0}\\\phantom{0}\\\phantom{0}...]
\endgroup%
}}\!\right]$}%
}%
\ifdim\wd\matricesbox>\halfwidth\myboxwidth=\hsize\else\myboxwidth=\halfwidth\fi
\vbox{%
\ifdim\myboxwidth=\hsize
\setbox\onelinebox=\hbox{%
\vbox{\hbox{%
$\Pi_{22,30}$ spans $L_{251.3}$%
}\hbox{%
$3222622322622226226222$%
}%
}%
\hfill\copy\matricesbox
}%
\ifdim\wd\onelinebox>\myboxwidth
\hbox to \myboxwidth{%
$\Pi_{22,30}$ spans $L_{251.3}$%
\hfil
$3222622322622226226222$%
}%
\box\matricesbox
\else
\hbox to \myboxwidth{%
\unhbox\onelinebox
}%
\fi
\else
\hbox to \myboxwidth{%
$\Pi_{22,30}$ spans $L_{251.3}$%
\hfil}%
\hbox to \myboxwidth{%
$3222622322622226226222$%
\hfil}%
\box\matricesbox
\fi
}%
\hfill\discretionary{}{}{}%
\setbox\matricesbox=\hbox{%
{$\left[\!\llap{\phantom{%
\begingroup \smaller\smaller\smaller% [inline block 382: 29 envs, 3164 chars -> data_tex | \begin{tabular}{@{}c@{}}% \phantom{0}\\\phantom{0}\\\phantom{0}...]
\endgroup%
}}\!\right]$}%
}%
\ifdim\wd\matricesbox>\halfwidth\myboxwidth=\hsize\else\myboxwidth=\halfwidth\fi
\vbox{%
\ifdim\myboxwidth=\hsize
\setbox\onelinebox=\hbox{%
\vbox{\hbox{%
$\Pi_{22,31}$ spans $L_{251.3}$%
}\hbox{%
$3222622322622262222622$%
}%
}%
\hfill\copy\matricesbox
}%
\ifdim\wd\onelinebox>\myboxwidth
\hbox to \myboxwidth{%
$\Pi_{22,31}$ spans $L_{251.3}$%
\hfil
$3222622322622262222622$%
}%
\box\matricesbox
\else
\hbox to \myboxwidth{%
\unhbox\onelinebox
}%
\fi
\else
\hbox to \myboxwidth{%
$\Pi_{22,31}$ spans $L_{251.3}$%
\hfil}%
\hbox to \myboxwidth{%
$3222622322622262222622$%
\hfil}%
\box\matricesbox
\fi
}%
\hfill\discretionary{}{}{}%
\setbox\matricesbox=\hbox{%
{$\left[\!\llap{\phantom{%
\begingroup \smaller\smaller\smaller% [inline block 383: 29 envs, 3164 chars -> data_tex | \begin{tabular}{@{}c@{}}% \phantom{0}\\\phantom{0}\\\phantom{0}...]
\endgroup%
}}\!\right]$}%
}%
\ifdim\wd\matricesbox>\halfwidth\myboxwidth=\hsize\else\myboxwidth=\halfwidth\fi
\vbox{%
\ifdim\myboxwidth=\hsize
\setbox\onelinebox=\hbox{%
\vbox{\hbox{%
$\Pi_{22,32}$ spans $L_{251.3}$%
}\hbox{%
$3222622322622262226222$%
}%
}%
\hfill\copy\matricesbox
}%
\ifdim\wd\onelinebox>\myboxwidth
\hbox to \myboxwidth{%
$\Pi_{22,32}$ spans $L_{251.3}$%
\hfil
$3222622322622262226222$%
}%
\box\matricesbox
\else
\hbox to \myboxwidth{%
\unhbox\onelinebox
}%
\fi
\else
\hbox to \myboxwidth{%
$\Pi_{22,32}$ spans $L_{251.3}$%
\hfil}%
\hbox to \myboxwidth{%
$3222622322622262226222$%
\hfil}%
\box\matricesbox
\fi
}%
\hfill\discretionary{}{}{}%
\setbox\matricesbox=\hbox{%
{$\left[\!\llap{\phantom{%
\begingroup \smaller\smaller\smaller% [inline block 384: 29 envs, 3164 chars -> data_tex | \begin{tabular}{@{}c@{}}% \phantom{0}\\\phantom{0}\\\phantom{0}...]
\endgroup%
}}\!\right]$}%
}%
\ifdim\wd\matricesbox>\halfwidth\myboxwidth=\hsize\else\myboxwidth=\halfwidth\fi
\vbox{%
\ifdim\myboxwidth=\hsize
\setbox\onelinebox=\hbox{%
\vbox{\hbox{%
$\Pi_{22,33}$ spans $L_{251.3}$%
}\hbox{%
$3222622262232226226222$%
}%
}%
\hfill\copy\matricesbox
}%
\ifdim\wd\onelinebox>\myboxwidth
\hbox to \myboxwidth{%
$\Pi_{22,33}$ spans $L_{251.3}$%
\hfil
$3222622262232226226222$%
}%
\box\matricesbox
\else
\hbox to \myboxwidth{%
\unhbox\onelinebox
}%
\fi
\else
\hbox to \myboxwidth{%
$\Pi_{22,33}$ spans $L_{251.3}$%
\hfil}%
\hbox to \myboxwidth{%
$3222622262232226226222$%
\hfil}%
\box\matricesbox
\fi
}%
\hfill\discretionary{}{}{}%
\setbox\matricesbox=\hbox{%
{$\left[\!\llap{\phantom{%
\begingroup \smaller\smaller\smaller% [inline block 385: 29 envs, 3165 chars -> data_tex | \begin{tabular}{@{}c@{}}% \phantom{0}\\\phantom{0}\\\phantom{0}...]
\endgroup%
}}\!\right]$}%
}%
\ifdim\wd\matricesbox>\halfwidth\myboxwidth=\hsize\else\myboxwidth=\halfwidth\fi
\vbox{%
\ifdim\myboxwidth=\hsize
\setbox\onelinebox=\hbox{%
\vbox{\hbox{%
$\Pi_{22,34}$ spans $L_{251.3}$%
}\hbox{%
$3222622262232262222622$%
}%
}%
\hfill\copy\matricesbox
}%
\ifdim\wd\onelinebox>\myboxwidth
\hbox to \myboxwidth{%
$\Pi_{22,34}$ spans $L_{251.3}$%
\hfil
$3222622262232262222622$%
}%
\box\matricesbox
\else
\hbox to \myboxwidth{%
\unhbox\onelinebox
}%
\fi
\else
\hbox to \myboxwidth{%
$\Pi_{22,34}$ spans $L_{251.3}$%
\hfil}%
\hbox to \myboxwidth{%
$3222622262232262222622$%
\hfil}%
\box\matricesbox
\fi
}%
\hfill\discretionary{}{}{}%

\vskip2pt\hrule\vskip2pt

\leavevmode\setbox\matricesbox=\hbox{%
{$\left[\!\llap{\phantom{%
\begingroup \smaller\smaller\smaller% [inline block 386: 19 envs, 2150 chars -> data_tex | \begin{tabular}{@{}c@{}}% \phantom{0}\\\phantom{0}\\\phantom{0}...]
\endgroup%
}}\!\right]$}%
}%
\ifdim\wd\matricesbox>\halfwidth\myboxwidth=\hsize\else\myboxwidth=\halfwidth\fi
\vbox{%
\ifdim\myboxwidth=\hsize
\setbox\onelinebox=\hbox{%
\vbox{\hbox{%
$\Pi_{23,1}$ spans $L_{16.9}$%
}\hbox{%
$3632|23636363636\slashthree6363636\rtimes D_{2}$%
}%
}%
\hfill\copy\matricesbox
}%
\ifdim\wd\onelinebox>\myboxwidth
\hbox to \myboxwidth{%
$\Pi_{23,1}$ spans $L_{16.9}$%
\hfil
$3632|23636363636\slashthree6363636\rtimes D_{2}$%
}%
\box\matricesbox
\else
\hbox to \myboxwidth{%
\unhbox\onelinebox
}%
\fi
\else
\hbox to \myboxwidth{%
$\Pi_{23,1}$ spans $L_{16.9}$%
\hfil}%
\hbox to \myboxwidth{%
$3632|23636363636\slashthree6363636\rtimes D_{2}$%
\hfil}%
\box\matricesbox
\fi
}%
\hfill\discretionary{}{}{}%
\setbox\matricesbox=\hbox{%
{$\left[\!\llap{\phantom{%
\begingroup \smaller\smaller\smaller% [inline block 387: 19 envs, 2144 chars -> data_tex | \begin{tabular}{@{}c@{}}% \phantom{0}\\\phantom{0}\\\phantom{0}...]
\endgroup%
}}\!\right]$}%
}%
\ifdim\wd\matricesbox>\halfwidth\myboxwidth=\hsize\else\myboxwidth=\halfwidth\fi
\vbox{%
\ifdim\myboxwidth=\hsize
\setbox\onelinebox=\hbox{%
\vbox{\hbox{%
$\Pi_{23,2}$ spans $L_{16.7}$%
}\hbox{%
$363632|23636363636\slashthree63636\rtimes D_{2}$%
}%
}%
\hfill\copy\matricesbox
}%
\ifdim\wd\onelinebox>\myboxwidth
\hbox to \myboxwidth{%
$\Pi_{23,2}$ spans $L_{16.7}$%
\hfil
$363632|23636363636\slashthree63636\rtimes D_{2}$%
}%
\box\matricesbox
\else
\hbox to \myboxwidth{%
\unhbox\onelinebox
}%
\fi
\else
\hbox to \myboxwidth{%
$\Pi_{23,2}$ spans $L_{16.7}$%
\hfil}%
\hbox to \myboxwidth{%
$363632|23636363636\slashthree63636\rtimes D_{2}$%
\hfil}%
\box\matricesbox
\fi
}%
\hfill\discretionary{}{}{}%
\setbox\matricesbox=\hbox{%
{$\left[\!\llap{\phantom{%
\begingroup \smaller\smaller\smaller% [inline block 388: 30 envs, 3272 chars -> data_tex | \begin{tabular}{@{}c@{}}% \phantom{0}\\\phantom{0}\\\phantom{0}...]
\endgroup%
}}\!\right]$}%
}%
\ifdim\wd\matricesbox>\halfwidth\myboxwidth=\hsize\else\myboxwidth=\halfwidth\fi
\vbox{%
\ifdim\myboxwidth=\hsize
\setbox\onelinebox=\hbox{%
\vbox{\hbox{%
$\Pi_{23,3}$ spans $L_{251.3}$%
}\hbox{%
$32226222622262226222622$%
}%
}%
\hfill\copy\matricesbox
}%
\ifdim\wd\onelinebox>\myboxwidth
\hbox to \myboxwidth{%
$\Pi_{23,3}$ spans $L_{251.3}$%
\hfil
$32226222622262226222622$%
}%
\box\matricesbox
\else
\hbox to \myboxwidth{%
\unhbox\onelinebox
}%
\fi
\else
\hbox to \myboxwidth{%
$\Pi_{23,3}$ spans $L_{251.3}$%
\hfil}%
\hbox to \myboxwidth{%
$32226222622262226222622$%
\hfil}%
\box\matricesbox
\fi
}%
\hfill\discretionary{}{}{}%
\setbox\matricesbox=\hbox{%
{$\left[\!\llap{\phantom{%
\begingroup \smaller\smaller\smaller% [inline block 389: 30 envs, 3272 chars -> data_tex | \begin{tabular}{@{}c@{}}% \phantom{0}\\\phantom{0}\\\phantom{0}...]
\endgroup%
}}\!\right]$}%
}%
\ifdim\wd\matricesbox>\halfwidth\myboxwidth=\hsize\else\myboxwidth=\halfwidth\fi
\vbox{%
\ifdim\myboxwidth=\hsize
\setbox\onelinebox=\hbox{%
\vbox{\hbox{%
$\Pi_{23,4}$ spans $L_{251.3}$%
}\hbox{%
$32226222622262226226222$%
}%
}%
\hfill\copy\matricesbox
}%
\ifdim\wd\onelinebox>\myboxwidth
\hbox to \myboxwidth{%
$\Pi_{23,4}$ spans $L_{251.3}$%
\hfil
$32226222622262226226222$%
}%
\box\matricesbox
\else
\hbox to \myboxwidth{%
\unhbox\onelinebox
}%
\fi
\else
\hbox to \myboxwidth{%
$\Pi_{23,4}$ spans $L_{251.3}$%
\hfil}%
\hbox to \myboxwidth{%
$32226222622262226226222$%
\hfil}%
\box\matricesbox
\fi
}%
\hfill\discretionary{}{}{}%
\setbox\matricesbox=\hbox{%
{$\left[\!\llap{\phantom{%
\begingroup \smaller\smaller\smaller% [inline block 390: 30 envs, 3273 chars -> data_tex | \begin{tabular}{@{}c@{}}% \phantom{0}\\\phantom{0}\\\phantom{0}...]
\endgroup%
}}\!\right]$}%
}%
\ifdim\wd\matricesbox>\halfwidth\myboxwidth=\hsize\else\myboxwidth=\halfwidth\fi
\vbox{%
\ifdim\myboxwidth=\hsize
\setbox\onelinebox=\hbox{%
\vbox{\hbox{%
$\Pi_{23,5}$ spans $L_{251.3}$%
}\hbox{%
$32226222622262262222622$%
}%
}%
\hfill\copy\matricesbox
}%
\ifdim\wd\onelinebox>\myboxwidth
\hbox to \myboxwidth{%
$\Pi_{23,5}$ spans $L_{251.3}$%
\hfil
$32226222622262262222622$%
}%
\box\matricesbox
\else
\hbox to \myboxwidth{%
\unhbox\onelinebox
}%
\fi
\else
\hbox to \myboxwidth{%
$\Pi_{23,5}$ spans $L_{251.3}$%
\hfil}%
\hbox to \myboxwidth{%
$32226222622262262222622$%
\hfil}%
\box\matricesbox
\fi
}%
\hfill\discretionary{}{}{}%
\setbox\matricesbox=\hbox{%
{$\left[\!\llap{\phantom{%
\begingroup \smaller\smaller\smaller% [inline block 391: 30 envs, 3273 chars -> data_tex | \begin{tabular}{@{}c@{}}% \phantom{0}\\\phantom{0}\\\phantom{0}...]
\endgroup%
}}\!\right]$}%
}%
\ifdim\wd\matricesbox>\halfwidth\myboxwidth=\hsize\else\myboxwidth=\halfwidth\fi
\vbox{%
\ifdim\myboxwidth=\hsize
\setbox\onelinebox=\hbox{%
\vbox{\hbox{%
$\Pi_{23,6}$ spans $L_{251.3}$%
}\hbox{%
$32226222622262262226222$%
}%
}%
\hfill\copy\matricesbox
}%
\ifdim\wd\onelinebox>\myboxwidth
\hbox to \myboxwidth{%
$\Pi_{23,6}$ spans $L_{251.3}$%
\hfil
$32226222622262262226222$%
}%
\box\matricesbox
\else
\hbox to \myboxwidth{%
\unhbox\onelinebox
}%
\fi
\else
\hbox to \myboxwidth{%
$\Pi_{23,6}$ spans $L_{251.3}$%
\hfil}%
\hbox to \myboxwidth{%
$32226222622262262226222$%
\hfil}%
\box\matricesbox
\fi
}%
\hfill\discretionary{}{}{}%
\setbox\matricesbox=\hbox{%
{$\left[\!\llap{\phantom{%
\begingroup \smaller\smaller\smaller% [inline block 392: 30 envs, 3271 chars -> data_tex | \begin{tabular}{@{}c@{}}% \phantom{0}\\\phantom{0}\\\phantom{0}...]
\endgroup%
}}\!\right]$}%
}%
\ifdim\wd\matricesbox>\halfwidth\myboxwidth=\hsize\else\myboxwidth=\halfwidth\fi
\vbox{%
\ifdim\myboxwidth=\hsize
\setbox\onelinebox=\hbox{%
\vbox{\hbox{%
$\Pi_{23,7}$ spans $L_{251.3}$%
}\hbox{%
$32226222622622226222622$%
}%
}%
\hfill\copy\matricesbox
}%
\ifdim\wd\onelinebox>\myboxwidth
\hbox to \myboxwidth{%
$\Pi_{23,7}$ spans $L_{251.3}$%
\hfil
$32226222622622226222622$%
}%
\box\matricesbox
\else
\hbox to \myboxwidth{%
\unhbox\onelinebox
}%
\fi
\else
\hbox to \myboxwidth{%
$\Pi_{23,7}$ spans $L_{251.3}$%
\hfil}%
\hbox to \myboxwidth{%
$32226222622622226222622$%
\hfil}%
\box\matricesbox
\fi
}%
\hfill\discretionary{}{}{}%
\setbox\matricesbox=\hbox{%
{$\left[\!\llap{\phantom{%
\begingroup \smaller\smaller\smaller% [inline block 393: 30 envs, 3271 chars -> data_tex | \begin{tabular}{@{}c@{}}% \phantom{0}\\\phantom{0}\\\phantom{0}...]
\endgroup%
}}\!\right]$}%
}%
\ifdim\wd\matricesbox>\halfwidth\myboxwidth=\hsize\else\myboxwidth=\halfwidth\fi
\vbox{%
\ifdim\myboxwidth=\hsize
\setbox\onelinebox=\hbox{%
\vbox{\hbox{%
$\Pi_{23,8}$ spans $L_{251.3}$%
}\hbox{%
$32226222622622226226222$%
}%
}%
\hfill\copy\matricesbox
}%
\ifdim\wd\onelinebox>\myboxwidth
\hbox to \myboxwidth{%
$\Pi_{23,8}$ spans $L_{251.3}$%
\hfil
$32226222622622226226222$%
}%
\box\matricesbox
\else
\hbox to \myboxwidth{%
\unhbox\onelinebox
}%
\fi
\else
\hbox to \myboxwidth{%
$\Pi_{23,8}$ spans $L_{251.3}$%
\hfil}%
\hbox to \myboxwidth{%
$32226222622622226226222$%
\hfil}%
\box\matricesbox
\fi
}%
\hfill\discretionary{}{}{}%
\setbox\matricesbox=\hbox{%
{$\left[\!\llap{\phantom{%
\begingroup \smaller\smaller\smaller% [inline block 394: 30 envs, 3272 chars -> data_tex | \begin{tabular}{@{}c@{}}% \phantom{0}\\\phantom{0}\\\phantom{0}...]
\endgroup%
}}\!\right]$}%
}%
\ifdim\wd\matricesbox>\halfwidth\myboxwidth=\hsize\else\myboxwidth=\halfwidth\fi
\vbox{%
\ifdim\myboxwidth=\hsize
\setbox\onelinebox=\hbox{%
\vbox{\hbox{%
$\Pi_{23,9}$ spans $L_{251.3}$%
}\hbox{%
$32226222622622262222622$%
}%
}%
\hfill\copy\matricesbox
}%
\ifdim\wd\onelinebox>\myboxwidth
\hbox to \myboxwidth{%
$\Pi_{23,9}$ spans $L_{251.3}$%
\hfil
$32226222622622262222622$%
}%
\box\matricesbox
\else
\hbox to \myboxwidth{%
\unhbox\onelinebox
}%
\fi
\else
\hbox to \myboxwidth{%
$\Pi_{23,9}$ spans $L_{251.3}$%
\hfil}%
\hbox to \myboxwidth{%
$32226222622622262222622$%
\hfil}%
\box\matricesbox
\fi
}%
\hfill\discretionary{}{}{}%
\setbox\matricesbox=\hbox{%
{$\left[\!\llap{\phantom{%
\begingroup \smaller\smaller\smaller% [inline block 395: 30 envs, 3272 chars -> data_tex | \begin{tabular}{@{}c@{}}% \phantom{0}\\\phantom{0}\\\phantom{0}...]
\endgroup%
}}\!\right]$}%
}%
\ifdim\wd\matricesbox>\halfwidth\myboxwidth=\hsize\else\myboxwidth=\halfwidth\fi
\vbox{%
\ifdim\myboxwidth=\hsize
\setbox\onelinebox=\hbox{%
\vbox{\hbox{%
$\Pi_{23,10}$ spans $L_{251.3}$%
}\hbox{%
$32226226222262226222622$%
}%
}%
\hfill\copy\matricesbox
}%
\ifdim\wd\onelinebox>\myboxwidth
\hbox to \myboxwidth{%
$\Pi_{23,10}$ spans $L_{251.3}$%
\hfil
$32226226222262226222622$%
}%
\box\matricesbox
\else
\hbox to \myboxwidth{%
\unhbox\onelinebox
}%
\fi
\else
\hbox to \myboxwidth{%
$\Pi_{23,10}$ spans $L_{251.3}$%
\hfil}%
\hbox to \myboxwidth{%
$32226226222262226222622$%
\hfil}%
\box\matricesbox
\fi
}%
\hfill\discretionary{}{}{}%
\setbox\matricesbox=\hbox{%
{$\left[\!\llap{\phantom{%
\begingroup \smaller\smaller\smaller% [inline block 396: 30 envs, 3272 chars -> data_tex | \begin{tabular}{@{}c@{}}% \phantom{0}\\\phantom{0}\\\phantom{0}...]
\endgroup%
}}\!\right]$}%
}%
\ifdim\wd\matricesbox>\halfwidth\myboxwidth=\hsize\else\myboxwidth=\halfwidth\fi
\vbox{%
\ifdim\myboxwidth=\hsize
\setbox\onelinebox=\hbox{%
\vbox{\hbox{%
$\Pi_{23,11}$ spans $L_{251.3}$%
}\hbox{%
$32226226222262226226222$%
}%
}%
\hfill\copy\matricesbox
}%
\ifdim\wd\onelinebox>\myboxwidth
\hbox to \myboxwidth{%
$\Pi_{23,11}$ spans $L_{251.3}$%
\hfil
$32226226222262226226222$%
}%
\box\matricesbox
\else
\hbox to \myboxwidth{%
\unhbox\onelinebox
}%
\fi
\else
\hbox to \myboxwidth{%
$\Pi_{23,11}$ spans $L_{251.3}$%
\hfil}%
\hbox to \myboxwidth{%
$32226226222262226226222$%
\hfil}%
\box\matricesbox
\fi
}%
\hfill\discretionary{}{}{}%
\setbox\matricesbox=\hbox{%
{$\left[\!\llap{\phantom{%
\begingroup \smaller\smaller\smaller% [inline block 397: 30 envs, 3273 chars -> data_tex | \begin{tabular}{@{}c@{}}% \phantom{0}\\\phantom{0}\\\phantom{0}...]
\endgroup%
}}\!\right]$}%
}%
\ifdim\wd\matricesbox>\halfwidth\myboxwidth=\hsize\else\myboxwidth=\halfwidth\fi
\vbox{%
\ifdim\myboxwidth=\hsize
\setbox\onelinebox=\hbox{%
\vbox{\hbox{%
$\Pi_{23,12}$ spans $L_{251.3}$%
}\hbox{%
$32226226222262262222622$%
}%
}%
\hfill\copy\matricesbox
}%
\ifdim\wd\onelinebox>\myboxwidth
\hbox to \myboxwidth{%
$\Pi_{23,12}$ spans $L_{251.3}$%
\hfil
$32226226222262262222622$%
}%
\box\matricesbox
\else
\hbox to \myboxwidth{%
\unhbox\onelinebox
}%
\fi
\else
\hbox to \myboxwidth{%
$\Pi_{23,12}$ spans $L_{251.3}$%
\hfil}%
\hbox to \myboxwidth{%
$32226226222262262222622$%
\hfil}%
\box\matricesbox
\fi
}%
\hfill\discretionary{}{}{}%
\setbox\matricesbox=\hbox{%
{$\left[\!\llap{\phantom{%
\begingroup \smaller\smaller\smaller% [inline block 398: 30 envs, 3271 chars -> data_tex | \begin{tabular}{@{}c@{}}% \phantom{0}\\\phantom{0}\\\phantom{0}...]
\endgroup%
}}\!\right]$}%
}%
\ifdim\wd\matricesbox>\halfwidth\myboxwidth=\hsize\else\myboxwidth=\halfwidth\fi
\vbox{%
\ifdim\myboxwidth=\hsize
\setbox\onelinebox=\hbox{%
\vbox{\hbox{%
$\Pi_{23,13}$ spans $L_{251.3}$%
}\hbox{%
$32226226222622226222622$%
}%
}%
\hfill\copy\matricesbox
}%
\ifdim\wd\onelinebox>\myboxwidth
\hbox to \myboxwidth{%
$\Pi_{23,13}$ spans $L_{251.3}$%
\hfil
$32226226222622226222622$%
}%
\box\matricesbox
\else
\hbox to \myboxwidth{%
\unhbox\onelinebox
}%
\fi
\else
\hbox to \myboxwidth{%
$\Pi_{23,13}$ spans $L_{251.3}$%
\hfil}%
\hbox to \myboxwidth{%
$32226226222622226222622$%
\hfil}%
\box\matricesbox
\fi
}%
\hfill\discretionary{}{}{}%
\setbox\matricesbox=\hbox{%
{$\left[\!\llap{\phantom{%
\begingroup \smaller\smaller\smaller% [inline block 399: 30 envs, 3272 chars -> data_tex | \begin{tabular}{@{}c@{}}% \phantom{0}\\\phantom{0}\\\phantom{0}...]
\endgroup%
}}\!\right]$}%
}%
\ifdim\wd\matricesbox>\halfwidth\myboxwidth=\hsize\else\myboxwidth=\halfwidth\fi
\vbox{%
\ifdim\myboxwidth=\hsize
\setbox\onelinebox=\hbox{%
\vbox{\hbox{%
$\Pi_{23,14}$ spans $L_{251.3}$%
}\hbox{%
$32226226222622262222622$%
}%
}%
\hfill\copy\matricesbox
}%
\ifdim\wd\onelinebox>\myboxwidth
\hbox to \myboxwidth{%
$\Pi_{23,14}$ spans $L_{251.3}$%
\hfil
$32226226222622262222622$%
}%
\box\matricesbox
\else
\hbox to \myboxwidth{%
\unhbox\onelinebox
}%
\fi
\else
\hbox to \myboxwidth{%
$\Pi_{23,14}$ spans $L_{251.3}$%
\hfil}%
\hbox to \myboxwidth{%
$32226226222622262222622$%
\hfil}%
\box\matricesbox
\fi
}%
\hfill\discretionary{}{}{}%
\setbox\matricesbox=\hbox{%
{$\left[\!\llap{\phantom{%
\begingroup \smaller\smaller\smaller% [inline block 400: 30 envs, 3273 chars -> data_tex | \begin{tabular}{@{}c@{}}% \phantom{0}\\\phantom{0}\\\phantom{0}...]
\endgroup%
}}\!\right]$}%
}%
\ifdim\wd\matricesbox>\halfwidth\myboxwidth=\hsize\else\myboxwidth=\halfwidth\fi
\vbox{%
\ifdim\myboxwidth=\hsize
\setbox\onelinebox=\hbox{%
\vbox{\hbox{%
$\Pi_{23,15}$ spans $L_{251.3}$%
}\hbox{%
$32262222622262226222622$%
}%
}%
\hfill\copy\matricesbox
}%
\ifdim\wd\onelinebox>\myboxwidth
\hbox to \myboxwidth{%
$\Pi_{23,15}$ spans $L_{251.3}$%
\hfil
$32262222622262226222622$%
}%
\box\matricesbox
\else
\hbox to \myboxwidth{%
\unhbox\onelinebox
}%
\fi
\else
\hbox to \myboxwidth{%
$\Pi_{23,15}$ spans $L_{251.3}$%
\hfil}%
\hbox to \myboxwidth{%
$32262222622262226222622$%
\hfil}%
\box\matricesbox
\fi
}%
\hfill\discretionary{}{}{}%
\setbox\matricesbox=\hbox{%
{$\left[\!\llap{\phantom{%
\begingroup \smaller\smaller\smaller% [inline block 401: 30 envs, 3274 chars -> data_tex | \begin{tabular}{@{}c@{}}% \phantom{0}\\\phantom{0}\\\phantom{0}...]
\endgroup%
}}\!\right]$}%
}%
\ifdim\wd\matricesbox>\halfwidth\myboxwidth=\hsize\else\myboxwidth=\halfwidth\fi
\vbox{%
\ifdim\myboxwidth=\hsize
\setbox\onelinebox=\hbox{%
\vbox{\hbox{%
$\Pi_{23,16}$ spans $L_{251.3}$%
}\hbox{%
$32262222622262262222622$%
}%
}%
\hfill\copy\matricesbox
}%
\ifdim\wd\onelinebox>\myboxwidth
\hbox to \myboxwidth{%
$\Pi_{23,16}$ spans $L_{251.3}$%
\hfil
$32262222622262262222622$%
}%
\box\matricesbox
\else
\hbox to \myboxwidth{%
\unhbox\onelinebox
}%
\fi
\else
\hbox to \myboxwidth{%
$\Pi_{23,16}$ spans $L_{251.3}$%
\hfil}%
\hbox to \myboxwidth{%
$32262222622262262222622$%
\hfil}%
\box\matricesbox
\fi
}%
\hfill\discretionary{}{}{}%
\setbox\matricesbox=\hbox{%
{$\left[\!\llap{\phantom{%
\begingroup \smaller\smaller\smaller% [inline block 402: 30 envs, 3272 chars -> data_tex | \begin{tabular}{@{}c@{}}% \phantom{0}\\\phantom{0}\\\phantom{0}...]
\endgroup%
}}\!\right]$}%
}%
\ifdim\wd\matricesbox>\halfwidth\myboxwidth=\hsize\else\myboxwidth=\halfwidth\fi
\vbox{%
\ifdim\myboxwidth=\hsize
\setbox\onelinebox=\hbox{%
\vbox{\hbox{%
$\Pi_{23,17}$ spans $L_{251.3}$%
}\hbox{%
$32262222622622226222622$%
}%
}%
\hfill\copy\matricesbox
}%
\ifdim\wd\onelinebox>\myboxwidth
\hbox to \myboxwidth{%
$\Pi_{23,17}$ spans $L_{251.3}$%
\hfil
$32262222622622226222622$%
}%
\box\matricesbox
\else
\hbox to \myboxwidth{%
\unhbox\onelinebox
}%
\fi
\else
\hbox to \myboxwidth{%
$\Pi_{23,17}$ spans $L_{251.3}$%
\hfil}%
\hbox to \myboxwidth{%
$32262222622622226222622$%
\hfil}%
\box\matricesbox
\fi
}%
\hfill\discretionary{}{}{}%
\setbox\matricesbox=\hbox{%
{$\left[\!\llap{\phantom{%
\begingroup \smaller\smaller\smaller% [inline block 403: 30 envs, 3273 chars -> data_tex | \begin{tabular}{@{}c@{}}% \phantom{0}\\\phantom{0}\\\phantom{0}...]
\endgroup%
}}\!\right]$}%
}%
\ifdim\wd\matricesbox>\halfwidth\myboxwidth=\hsize\else\myboxwidth=\halfwidth\fi
\vbox{%
\ifdim\myboxwidth=\hsize
\setbox\onelinebox=\hbox{%
\vbox{\hbox{%
$\Pi_{23,18}$ spans $L_{251.3}$%
}\hbox{%
$32262226222262226222622$%
}%
}%
\hfill\copy\matricesbox
}%
\ifdim\wd\onelinebox>\myboxwidth
\hbox to \myboxwidth{%
$\Pi_{23,18}$ spans $L_{251.3}$%
\hfil
$32262226222262226222622$%
}%
\box\matricesbox
\else
\hbox to \myboxwidth{%
\unhbox\onelinebox
}%
\fi
\else
\hbox to \myboxwidth{%
$\Pi_{23,18}$ spans $L_{251.3}$%
\hfil}%
\hbox to \myboxwidth{%
$32262226222262226222622$%
\hfil}%
\box\matricesbox
\fi
}%
\hfill\discretionary{}{}{}%

\vskip2pt\hrule\vskip2pt

\leavevmode\setbox\matricesbox=\hbox{%
{$\left[\!\llap{\phantom{%
\begingroup \smaller\smaller\smaller\begin{tabular}{@{}c@{}}%
\phantom{0}\\\phantom{0}\\\phantom{0}\\\phantom{0}
\end{tabular}\endgroup%
}}\right.$}%
\begingroup \smaller\smaller\smaller\begin{tabular}{@{}c@{}}%
-15\\\phantom{0}\\\phantom{0}\\\phantom{0}
\end{tabular}\endgroup%
\kern3pt%
\begingroup \smaller\smaller\smaller\begin{tabular}{@{}c@{}}%
\phantom{0}\\1\\\phantom{0}\\\phantom{0}
\end{tabular}\endgroup%
\kern3pt%
\begingroup \smaller\smaller\smaller\begin{tabular}{@{}c@{}}%
\phantom{0}\\\phantom{0}\\1\\\phantom{0}
\end{tabular}\endgroup%
\kern3pt%
\begingroup \smaller\smaller\smaller\begin{tabular}{@{}c@{}}%
\phantom{0}\\\phantom{0}\\\phantom{0}\\1
\end{tabular}\endgroup%
{$\left.\llap{\phantom{%
\begingroup \smaller\smaller\smaller\begin{tabular}{@{}c@{}}%
\phantom{0}\\\phantom{0}\\\phantom{0}\\\phantom{0}
\end{tabular}\endgroup%
}}\!\right]$}%
{$\left[\!\llap{\phantom{%
\begingroup \smaller\smaller\smaller\begin{tabular}{@{}c@{}}%
0\\0\\0\\0
\end{tabular}\endgroup%
}}\right.$}%
\begingroup \smaller\smaller\smaller\begin{tabular}{@{}c@{}}%
6\\19\\-8\\-11
\end{tabular}\endgroup%
\kern3pt%
\begingroup \smaller\smaller\smaller\begin{tabular}{@{}c@{}}%
2\\6\\-1\\-5
\end{tabular}\endgroup%
{$\left.\llap{\phantom{%
\begingroup \smaller\smaller\smaller\begin{tabular}{@{}c@{}}%
0\\0\\0\\0
\end{tabular}\endgroup%
}}\!\right]$}%
}%
\ifdim\wd\matricesbox>\halfwidth\myboxwidth=\hsize\else\myboxwidth=\halfwidth\fi
\vbox{%
\ifdim\myboxwidth=\hsize
\setbox\onelinebox=\hbox{%
\vbox{\hbox{%
$\Pi_{24,1}$ spans $L_{16.8}$%
}\hbox{%
$\slashthree6\slashthree6\slashthree6\slashthree6\slashthree6\slashthree6\slashthree6\slashthree6\slashthree6\slashthree6\slashthree6\slashthree6\rtimes D_{12}$%
}%
}%
\hfill\copy\matricesbox
}%
\ifdim\wd\onelinebox>\myboxwidth
\hbox to \myboxwidth{%
$\Pi_{24,1}$ spans $L_{16.8}$%
\hfil
$\slashthree6\slashthree6\slashthree6\slashthree6\slashthree6\slashthree6\slashthree6\slashthree6\slashthree6\slashthree6\slashthree6\slashthree6\rtimes D_{12}$%
}%
\box\matricesbox
\else
\hbox to \myboxwidth{%
\unhbox\onelinebox
}%
\fi
\else
\hbox to \myboxwidth{%
$\Pi_{24,1}$ spans $L_{16.8}$%
\hfil}%
\hbox to \myboxwidth{%
$\slashthree6\slashthree6\slashthree6\slashthree6\slashthree6\slashthree6\slashthree6\slashthree6\slashthree6\slashthree6\slashthree6\slashthree6\rtimes D_{12}$%
\hfil}%
\box\matricesbox
\fi
}%
\hfill\discretionary{}{}{}%
\setbox\matricesbox=\hbox{%
{$\left[\!\llap{\phantom{%
\begingroup \smaller\smaller\smaller\begin{tabular}{@{}c@{}}%
\phantom{0}\\\phantom{0}\\\phantom{0}\\\phantom{0}
\end{tabular}\endgroup%
}}\right.$}%
\begingroup \smaller\smaller\smaller\begin{tabular}{@{}c@{}}%
-5\\\phantom{0}\\\phantom{0}\\\phantom{0}
\end{tabular}\endgroup%
\kern3pt%
\begingroup \smaller\smaller\smaller\begin{tabular}{@{}c@{}}%
\phantom{0}\\2\\\phantom{0}\\\phantom{0}
\end{tabular}\endgroup%
\kern3pt%
\begingroup \smaller\smaller\smaller\begin{tabular}{@{}c@{}}%
\phantom{0}\\\phantom{0}\\2\\\phantom{0}
\end{tabular}\endgroup%
\kern3pt%
\begingroup \smaller\smaller\smaller\begin{tabular}{@{}c@{}}%
\phantom{0}\\\phantom{0}\\\phantom{0}\\2
\end{tabular}\endgroup%
{$\left.\llap{\phantom{%
\begingroup \smaller\smaller\smaller\begin{tabular}{@{}c@{}}%
\phantom{0}\\\phantom{0}\\\phantom{0}\\\phantom{0}
\end{tabular}\endgroup%
}}\!\right]$}%
{$\left[\!\llap{\phantom{%
\begingroup \smaller\smaller\smaller\begin{tabular}{@{}c@{}}%
0\\0\\0\\0
\end{tabular}\endgroup%
}}\right.$}%
\begingroup \smaller\smaller\smaller\begin{tabular}{@{}c@{}}%
3\\-4\\2\\2
\end{tabular}\endgroup%
\kern3pt%
\begingroup \smaller\smaller\smaller\begin{tabular}{@{}c@{}}%
36\\-45\\12\\33
\end{tabular}\endgroup%
\kern3pt%
\begingroup \smaller\smaller\smaller\begin{tabular}{@{}c@{}}%
12\\-14\\1\\13
\end{tabular}\endgroup%
\kern3pt%
\begingroup \smaller\smaller\smaller\begin{tabular}{@{}c@{}}%
4\\-4\\-1\\5
\end{tabular}\endgroup%
\kern3pt%
\begingroup \smaller\smaller\smaller\begin{tabular}{@{}c@{}}%
3\\-2\\-2\\4
\end{tabular}\endgroup%
{$\left.\llap{\phantom{%
\begingroup \smaller\smaller\smaller\begin{tabular}{@{}c@{}}%
0\\0\\0\\0
\end{tabular}\endgroup%
}}\!\right]$}%
}%
\ifdim\wd\matricesbox>\halfwidth\myboxwidth=\hsize\else\myboxwidth=\halfwidth\fi
\vbox{%
\ifdim\myboxwidth=\hsize
\setbox\onelinebox=\hbox{%
\vbox{\hbox{%
$\Pi_{24,2}$ spans $L_{251.3}$%
}\hbox{%
$62|2622|2262|2622|2262|2622|22\rtimes D_{6}$%
}%
}%
\hfill\copy\matricesbox
}%
\ifdim\wd\onelinebox>\myboxwidth
\hbox to \myboxwidth{%
$\Pi_{24,2}$ spans $L_{251.3}$%
\hfil
$62|2622|2262|2622|2262|2622|22\rtimes D_{6}$%
}%
\box\matricesbox
\else
\hbox to \myboxwidth{%
\unhbox\onelinebox
}%
\fi
\else
\hbox to \myboxwidth{%
$\Pi_{24,2}$ spans $L_{251.3}$%
\hfil}%
\hbox to \myboxwidth{%
$62|2622|2262|2622|2262|2622|22\rtimes D_{6}$%
\hfil}%
\box\matricesbox
\fi
}%
\hfill\discretionary{}{}{}%
\setbox\matricesbox=\hbox{%
{$\left[\!\llap{\phantom{%
\begingroup \smaller\smaller\smaller\begin{tabular}{@{}c@{}}%
\phantom{0}\\\phantom{0}\\\phantom{0}\\\phantom{0}
\end{tabular}\endgroup%
}}\right.$}%
\begingroup \smaller\smaller\smaller\begin{tabular}{@{}c@{}}%
-5\\\phantom{0}\\\phantom{0}\\\phantom{0}
\end{tabular}\endgroup%
\kern3pt%
\begingroup \smaller\smaller\smaller\begin{tabular}{@{}c@{}}%
\phantom{0}\\6\\\phantom{0}\\\phantom{0}
\end{tabular}\endgroup%
\kern3pt%
\begingroup \smaller\smaller\smaller\begin{tabular}{@{}c@{}}%
\phantom{0}\\\phantom{0}\\6\\\phantom{0}
\end{tabular}\endgroup%
\kern3pt%
\begingroup \smaller\smaller\smaller\begin{tabular}{@{}c@{}}%
\phantom{0}\\\phantom{0}\\\phantom{0}\\6
\end{tabular}\endgroup%
{$\left.\llap{\phantom{%
\begingroup \smaller\smaller\smaller\begin{tabular}{@{}c@{}}%
\phantom{0}\\\phantom{0}\\\phantom{0}\\\phantom{0}
\end{tabular}\endgroup%
}}\!\right]$}%
{$\left[\!\llap{\phantom{%
\begingroup \smaller\smaller\smaller\begin{tabular}{@{}c@{}}%
0\\0\\0\\0
\end{tabular}\endgroup%
}}\right.$}%
\begingroup \smaller\smaller\smaller\begin{tabular}{@{}c@{}}%
12\\-5\\9\\-4
\end{tabular}\endgroup%
\kern3pt%
\begingroup \smaller\smaller\smaller\begin{tabular}{@{}c@{}}%
36\\-19\\26\\-7
\end{tabular}\endgroup%
\kern3pt%
\begingroup \smaller\smaller\smaller\begin{tabular}{@{}c@{}}%
3\\-2\\2\\0
\end{tabular}\endgroup%
\kern3pt%
\begingroup \smaller\smaller\smaller\begin{tabular}{@{}c@{}}%
4\\-3\\2\\1
\end{tabular}\endgroup%
{$\left.\llap{\phantom{%
\begingroup \smaller\smaller\smaller\begin{tabular}{@{}c@{}}%
0\\0\\0\\0
\end{tabular}\endgroup%
}}\!\right]$}%
}%
\ifdim\wd\matricesbox>\halfwidth\myboxwidth=\hsize\else\myboxwidth=\halfwidth\fi
\vbox{%
\ifdim\myboxwidth=\hsize
\setbox\onelinebox=\hbox{%
\vbox{\hbox{%
$\Pi_{24,3}$ spans $L_{251.3}$%
}\hbox{%
$622262226222622262226222\rtimes C_{6}$%
}%
}%
\hfill\copy\matricesbox
}%
\ifdim\wd\onelinebox>\myboxwidth
\hbox to \myboxwidth{%
$\Pi_{24,3}$ spans $L_{251.3}$%
\hfil
$622262226222622262226222\rtimes C_{6}$%
}%
\box\matricesbox
\else
\hbox to \myboxwidth{%
\unhbox\onelinebox
}%
\fi
\else
\hbox to \myboxwidth{%
$\Pi_{24,3}$ spans $L_{251.3}$%
\hfil}%
\hbox to \myboxwidth{%
$622262226222622262226222\rtimes C_{6}$%
\hfil}%
\box\matricesbox
\fi
}%
\hfill\discretionary{}{}{}%
\setbox\matricesbox=\hbox{%
{$\left[\!\llap{\phantom{%
\begingroup \smaller\smaller\smaller% [inline block 404: 20 envs, 2242 chars -> data_tex | \begin{tabular}{@{}c@{}}% \phantom{0}\\\phantom{0}\\\phantom{0}...]
\endgroup%
}}\!\right]$}%
}%
\ifdim\wd\matricesbox>\halfwidth\myboxwidth=\hsize\else\myboxwidth=\halfwidth\fi
\vbox{%
\ifdim\myboxwidth=\hsize
\setbox\onelinebox=\hbox{%
\vbox{\hbox{%
$\Pi_{24,4}$ spans $L_{251.3}$%
}\hbox{%
$6222622262|262226222622|22\rtimes D_{2}$%
}%
}%
\hfill\copy\matricesbox
}%
\ifdim\wd\onelinebox>\myboxwidth
\hbox to \myboxwidth{%
$\Pi_{24,4}$ spans $L_{251.3}$%
\hfil
$6222622262|262226222622|22\rtimes D_{2}$%
}%
\box\matricesbox
\else
\hbox to \myboxwidth{%
\unhbox\onelinebox
}%
\fi
\else
\hbox to \myboxwidth{%
$\Pi_{24,4}$ spans $L_{251.3}$%
\hfil}%
\hbox to \myboxwidth{%
$6222622262|262226222622|22\rtimes D_{2}$%
\hfil}%
\box\matricesbox
\fi
}%
\hfill\discretionary{}{}{}%
\setbox\matricesbox=\hbox{%
{$\left[\!\llap{\phantom{%
\begingroup \smaller\smaller\smaller% [inline block 405: 20 envs, 2243 chars -> data_tex | \begin{tabular}{@{}c@{}}% \phantom{0}\\\phantom{0}\\\phantom{0}...]
\endgroup%
}}\!\right]$}%
}%
\ifdim\wd\matricesbox>\halfwidth\myboxwidth=\hsize\else\myboxwidth=\halfwidth\fi
\vbox{%
\ifdim\myboxwidth=\hsize
\setbox\onelinebox=\hbox{%
\vbox{\hbox{%
$\Pi_{24,5}$ spans $L_{251.3}$%
}\hbox{%
$6222622622|226226222622|22\rtimes D_{2}$%
}%
}%
\hfill\copy\matricesbox
}%
\ifdim\wd\onelinebox>\myboxwidth
\hbox to \myboxwidth{%
$\Pi_{24,5}$ spans $L_{251.3}$%
\hfil
$6222622622|226226222622|22\rtimes D_{2}$%
}%
\box\matricesbox
\else
\hbox to \myboxwidth{%
\unhbox\onelinebox
}%
\fi
\else
\hbox to \myboxwidth{%
$\Pi_{24,5}$ spans $L_{251.3}$%
\hfil}%
\hbox to \myboxwidth{%
$6222622622|226226222622|22\rtimes D_{2}$%
\hfil}%
\box\matricesbox
\fi
}%
\hfill\discretionary{}{}{}%
\setbox\matricesbox=\hbox{%
{$\left[\!\llap{\phantom{%
\begingroup \smaller\smaller\smaller% [inline block 406: 20 envs, 2242 chars -> data_tex | \begin{tabular}{@{}c@{}}% \phantom{0}\\\phantom{0}\\\phantom{0}...]
\endgroup%
}}\!\right]$}%
}%
\ifdim\wd\matricesbox>\halfwidth\myboxwidth=\hsize\else\myboxwidth=\halfwidth\fi
\vbox{%
\ifdim\myboxwidth=\hsize
\setbox\onelinebox=\hbox{%
\vbox{\hbox{%
$\Pi_{24,6}$ spans $L_{251.3}$%
}\hbox{%
$622262|262226222262|262222\rtimes D_{2}$%
}%
}%
\hfill\copy\matricesbox
}%
\ifdim\wd\onelinebox>\myboxwidth
\hbox to \myboxwidth{%
$\Pi_{24,6}$ spans $L_{251.3}$%
\hfil
$622262|262226222262|262222\rtimes D_{2}$%
}%
\box\matricesbox
\else
\hbox to \myboxwidth{%
\unhbox\onelinebox
}%
\fi
\else
\hbox to \myboxwidth{%
$\Pi_{24,6}$ spans $L_{251.3}$%
\hfil}%
\hbox to \myboxwidth{%
$622262|262226222262|262222\rtimes D_{2}$%
\hfil}%
\box\matricesbox
\fi
}%
\hfill\discretionary{}{}{}%
\setbox\matricesbox=\hbox{%
{$\left[\!\llap{\phantom{%
\begingroup \smaller\smaller\smaller% [inline block 407: 19 envs, 2122 chars -> data_tex | \begin{tabular}{@{}c@{}}% \phantom{0}\\\phantom{0}\\\phantom{0}...]
\endgroup%
}}\!\right]$}%
}%
\ifdim\wd\matricesbox>\halfwidth\myboxwidth=\hsize\else\myboxwidth=\halfwidth\fi
\vbox{%
\ifdim\myboxwidth=\hsize
\setbox\onelinebox=\hbox{%
\vbox{\hbox{%
$\Pi_{24,7}$ spans $L_{251.3}$%
}\hbox{%
$622262262222622262262222\rtimes C_{2}$%
}%
}%
\hfill\copy\matricesbox
}%
\ifdim\wd\onelinebox>\myboxwidth
\hbox to \myboxwidth{%
$\Pi_{24,7}$ spans $L_{251.3}$%
\hfil
$622262262222622262262222\rtimes C_{2}$%
}%
\box\matricesbox
\else
\hbox to \myboxwidth{%
\unhbox\onelinebox
}%
\fi
\else
\hbox to \myboxwidth{%
$\Pi_{24,7}$ spans $L_{251.3}$%
\hfil}%
\hbox to \myboxwidth{%
$622262262222622262262222\rtimes C_{2}$%
\hfil}%
\box\matricesbox
\fi
}%
\hfill\discretionary{}{}{}%
\setbox\matricesbox=\hbox{%
{$\left[\!\llap{\phantom{%
\begingroup \smaller\smaller\smaller% [inline block 408: 31 envs, 3383 chars -> data_tex | \begin{tabular}{@{}c@{}}% \phantom{0}\\\phantom{0}\\\phantom{0}...]
\endgroup%
}}\!\right]$}%
}%
\ifdim\wd\matricesbox>\halfwidth\myboxwidth=\hsize\else\myboxwidth=\halfwidth\fi
\vbox{%
\ifdim\myboxwidth=\hsize
\setbox\onelinebox=\hbox{%
\vbox{\hbox{%
$\Pi_{24,8}$ spans $L_{251.3}$%
}\hbox{%
$622262226222622262262222$%
}%
}%
\hfill\copy\matricesbox
}%
\ifdim\wd\onelinebox>\myboxwidth
\hbox to \myboxwidth{%
$\Pi_{24,8}$ spans $L_{251.3}$%
\hfil
$622262226222622262262222$%
}%
\box\matricesbox
\else
\hbox to \myboxwidth{%
\unhbox\onelinebox
}%
\fi
\else
\hbox to \myboxwidth{%
$\Pi_{24,8}$ spans $L_{251.3}$%
\hfil}%
\hbox to \myboxwidth{%
$622262226222622262262222$%
\hfil}%
\box\matricesbox
\fi
}%
\hfill\discretionary{}{}{}%
\setbox\matricesbox=\hbox{%
{$\left[\!\llap{\phantom{%
\begingroup \smaller\smaller\smaller% [inline block 409: 31 envs, 3382 chars -> data_tex | \begin{tabular}{@{}c@{}}% \phantom{0}\\\phantom{0}\\\phantom{0}...]
\endgroup%
}}\!\right]$}%
}%
\ifdim\wd\matricesbox>\halfwidth\myboxwidth=\hsize\else\myboxwidth=\halfwidth\fi
\vbox{%
\ifdim\myboxwidth=\hsize
\setbox\onelinebox=\hbox{%
\vbox{\hbox{%
$\Pi_{24,9}$ spans $L_{251.3}$%
}\hbox{%
$622262226222622622262222$%
}%
}%
\hfill\copy\matricesbox
}%
\ifdim\wd\onelinebox>\myboxwidth
\hbox to \myboxwidth{%
$\Pi_{24,9}$ spans $L_{251.3}$%
\hfil
$622262226222622622262222$%
}%
\box\matricesbox
\else
\hbox to \myboxwidth{%
\unhbox\onelinebox
}%
\fi
\else
\hbox to \myboxwidth{%
$\Pi_{24,9}$ spans $L_{251.3}$%
\hfil}%
\hbox to \myboxwidth{%
$622262226222622622262222$%
\hfil}%
\box\matricesbox
\fi
}%
\hfill\discretionary{}{}{}%
\setbox\matricesbox=\hbox{%
{$\left[\!\llap{\phantom{%
\begingroup \smaller\smaller\smaller% [inline block 410: 31 envs, 3383 chars -> data_tex | \begin{tabular}{@{}c@{}}% \phantom{0}\\\phantom{0}\\\phantom{0}...]
\endgroup%
}}\!\right]$}%
}%
\ifdim\wd\matricesbox>\halfwidth\myboxwidth=\hsize\else\myboxwidth=\halfwidth\fi
\vbox{%
\ifdim\myboxwidth=\hsize
\setbox\onelinebox=\hbox{%
\vbox{\hbox{%
$\Pi_{24,10}$ spans $L_{251.3}$%
}\hbox{%
$622262226226222262262222$%
}%
}%
\hfill\copy\matricesbox
}%
\ifdim\wd\onelinebox>\myboxwidth
\hbox to \myboxwidth{%
$\Pi_{24,10}$ spans $L_{251.3}$%
\hfil
$622262226226222262262222$%
}%
\box\matricesbox
\else
\hbox to \myboxwidth{%
\unhbox\onelinebox
}%
\fi
\else
\hbox to \myboxwidth{%
$\Pi_{24,10}$ spans $L_{251.3}$%
\hfil}%
\hbox to \myboxwidth{%
$622262226226222262262222$%
\hfil}%
\box\matricesbox
\fi
}%
\hfill\discretionary{}{}{}%

}{%
\leavevmode
\setbox\matricesbox=\hbox{%
{$\left[\!\llap{\phantom{%
\begingroup \smaller\smaller\smaller\begin{tabular}{@{}c@{}}%
\phantom{0}\\\phantom{0}\\\phantom{0}
\end{tabular}\endgroup%
}}\right.$}%
\begingroup \smaller\smaller\smaller\begin{tabular}{@{}c@{}}%
-1/4\\\phantom{0}\\\phantom{0}
\end{tabular}\endgroup%
\kern3pt%
\begingroup \smaller\smaller\smaller\begin{tabular}{@{}c@{}}%
\phantom{0}\\3\\\phantom{0}
\end{tabular}\endgroup%
\kern3pt%
\begingroup \smaller\smaller\smaller\begin{tabular}{@{}c@{}}%
\phantom{0}\\\phantom{0}\\3
\end{tabular}\endgroup%
{$\left.\llap{\phantom{%
\begingroup \smaller\smaller\smaller\begin{tabular}{@{}c@{}}%
\phantom{0}\\\phantom{0}\\\phantom{0}
\end{tabular}\endgroup%
}}\!\right]$}%
{$\left[\!\llap{\phantom{%
\begingroup \smaller\smaller\smaller\begin{tabular}{@{}c@{}}%
0\\0\\0
\end{tabular}\endgroup%
}}\right.$}%
\begingroup \smaller\smaller\smaller\begin{tabular}{@{}c@{}}%
2\\1\\0
\end{tabular}\endgroup%
{$\left.\llap{\phantom{%
\begingroup \smaller\smaller\smaller\begin{tabular}{@{}c@{}}%
0\\0\\0
\end{tabular}\endgroup%
}}\!\right]$}%
}%
\ifdim\wd\matricesbox>\halfwidth\myboxwidth=\hsize\else\myboxwidth=\halfwidth\fi
\vbox{%
\ifdim\myboxwidth=\hsize
\setbox\onelinebox=\hbox{%
\vbox{\hbox{%
$\Pi_{4,1}=B_2=\hbox{GN}_{29}$ spans $L_{3.4}$%
}\hbox{%
$|\slashthree|\slashthree|\slashthree|\slashthree\rtimes D_{8}$%
}%
}%
\hfill\copy\matricesbox
}%
\ifdim\wd\onelinebox>\myboxwidth
\hbox to \myboxwidth{%
$\Pi_{4,1}=B_2=\hbox{GN}_{29}$ spans $L_{3.4}$%
\hfil
$|\slashthree|\slashthree|\slashthree|\slashthree\rtimes D_{8}$%
}%
\box\matricesbox
\else
\hbox to \myboxwidth{%
\unhbox\onelinebox
}%
\fi
\else
\hbox to \myboxwidth{%
$\Pi_{4,1}=B_2=\hbox{GN}_{29}$ spans $L_{3.4}$%
\hfil}%
\hbox to \myboxwidth{%
$|\slashthree|\slashthree|\slashthree|\slashthree\rtimes D_{8}$%
\hfil}%
\box\matricesbox
\fi
}%
\hfill\discretionary{}{}{}%
\setbox\matricesbox=\hbox{%
{$\left[\!\llap{\phantom{%
\begingroup \smaller\smaller\smaller\begin{tabular}{@{}c@{}}%
\phantom{0}\\\phantom{0}\\\phantom{0}
\end{tabular}\endgroup%
}}\right.$}%
\begingroup \smaller\smaller\smaller\begin{tabular}{@{}c@{}}%
-1\\\phantom{0}\\\phantom{0}
\end{tabular}\endgroup%
\kern3pt%
\begingroup \smaller\smaller\smaller\begin{tabular}{@{}c@{}}%
\phantom{0}\\2\\\phantom{0}
\end{tabular}\endgroup%
\kern3pt%
\begingroup \smaller\smaller\smaller\begin{tabular}{@{}c@{}}%
\phantom{0}\\\phantom{0}\\2
\end{tabular}\endgroup%
{$\left.\llap{\phantom{%
\begingroup \smaller\smaller\smaller\begin{tabular}{@{}c@{}}%
\phantom{0}\\\phantom{0}\\\phantom{0}
\end{tabular}\endgroup%
}}\!\right]$}%
{$\left[\!\llap{\phantom{%
\begingroup \smaller\smaller\smaller\begin{tabular}{@{}c@{}}%
0\\0\\0
\end{tabular}\endgroup%
}}\right.$}%
\begingroup \smaller\smaller\smaller\begin{tabular}{@{}c@{}}%
1\\0\\-1
\end{tabular}\endgroup%
{$\left.\llap{\phantom{%
\begingroup \smaller\smaller\smaller\begin{tabular}{@{}c@{}}%
0\\0\\0
\end{tabular}\endgroup%
}}\!\right]$}%
}%
\ifdim\wd\matricesbox>\halfwidth\myboxwidth=\hsize\else\myboxwidth=\halfwidth\fi
\vbox{%
\ifdim\myboxwidth=\hsize
\setbox\onelinebox=\hbox{%
\vbox{\hbox{%
$\Pi_{4,2}=A_{2,II}=\hbox{GN}_{35}$ spans $L_{1.9}$%
}\hbox{%
$|\slashinfty|\slashinfty|\slashinfty|\slashinfty\rtimes D_{8}$ (shared)%
}%
}%
\hfill\copy\matricesbox
}%
\ifdim\wd\onelinebox>\myboxwidth
\hbox to \myboxwidth{%
$\Pi_{4,2}=A_{2,II}=\hbox{GN}_{35}$ spans $L_{1.9}$%
\hfil
$|\slashinfty|\slashinfty|\slashinfty|\slashinfty\rtimes D_{8}$ (shared)%
}%
\box\matricesbox
\else
\hbox to \myboxwidth{%
\unhbox\onelinebox
}%
\fi
\else
\hbox to \myboxwidth{%
$\Pi_{4,2}=A_{2,II}=\hbox{GN}_{35}$ spans $L_{1.9}$%
\hfil}%
\hbox to \myboxwidth{%
$|\slashinfty|\slashinfty|\slashinfty|\slashinfty\rtimes D_{8}$ (shared)%
\hfil}%
\box\matricesbox
\fi
}%
\hfill\discretionary{}{}{}%
\setbox\matricesbox=\hbox{%
{$\left[\!\llap{\phantom{%
\begingroup \smaller\smaller\smaller\begin{tabular}{@{}c@{}}%
\phantom{0}\\\phantom{0}\\\phantom{0}
\end{tabular}\endgroup%
}}\right.$}%
\begingroup \smaller\smaller\smaller\begin{tabular}{@{}c@{}}%
-3/8\\\phantom{0}\\\phantom{0}
\end{tabular}\endgroup%
\kern3pt%
\begingroup \smaller\smaller\smaller\begin{tabular}{@{}c@{}}%
\phantom{0}\\2\\\phantom{0}
\end{tabular}\endgroup%
\kern3pt%
\begingroup \smaller\smaller\smaller\begin{tabular}{@{}c@{}}%
\phantom{0}\\\phantom{0}\\3/2
\end{tabular}\endgroup%
{$\left.\llap{\phantom{%
\begingroup \smaller\smaller\smaller\begin{tabular}{@{}c@{}}%
\phantom{0}\\\phantom{0}\\\phantom{0}
\end{tabular}\endgroup%
}}\!\right]$}%
{$\left[\!\llap{\phantom{%
\begingroup \smaller\smaller\smaller\begin{tabular}{@{}c@{}}%
0\\0\\0
\end{tabular}\endgroup%
}}\right.$}%
\begingroup \smaller\smaller\smaller\begin{tabular}{@{}c@{}}%
2\\-1\\1
\end{tabular}\endgroup%
{$\left.\llap{\phantom{%
\begingroup \smaller\smaller\smaller\begin{tabular}{@{}c@{}}%
0\\0\\0
\end{tabular}\endgroup%
}}\!\right]$}%
}%
\ifdim\wd\matricesbox>\halfwidth\myboxwidth=\hsize\else\myboxwidth=\halfwidth\fi
\vbox{%
\ifdim\myboxwidth=\hsize
\setbox\onelinebox=\hbox{%
\vbox{\hbox{%
$\Pi_{4,3}=A_{3,I}=\hbox{GN}_{33}$ spans $L_{7.9}$%
}\hbox{%
$\slashthree\slashinfty\slashthree\slashinfty\rtimes D_{4}$%
}%
}%
\hfill\copy\matricesbox
}%
\ifdim\wd\onelinebox>\myboxwidth
\hbox to \myboxwidth{%
$\Pi_{4,3}=A_{3,I}=\hbox{GN}_{33}$ spans $L_{7.9}$%
\hfil
$\slashthree\slashinfty\slashthree\slashinfty\rtimes D_{4}$%
}%
\box\matricesbox
\else
\hbox to \myboxwidth{%
\unhbox\onelinebox
}%
\fi
\else
\hbox to \myboxwidth{%
$\Pi_{4,3}=A_{3,I}=\hbox{GN}_{33}$ spans $L_{7.9}$%
\hfil}%
\hbox to \myboxwidth{%
$\slashthree\slashinfty\slashthree\slashinfty\rtimes D_{4}$%
\hfil}%
\box\matricesbox
\fi
}%
\hfill\discretionary{}{}{}%
\setbox\matricesbox=\hbox{%
{$\left[\!\llap{\phantom{%
\begingroup \smaller\smaller\smaller\begin{tabular}{@{}c@{}}%
\phantom{0}\\\phantom{0}\\\phantom{0}
\end{tabular}\endgroup%
}}\right.$}%
\begingroup \smaller\smaller\smaller\begin{tabular}{@{}c@{}}%
-1/2\\\phantom{0}\\\phantom{0}
\end{tabular}\endgroup%
\kern3pt%
\begingroup \smaller\smaller\smaller\begin{tabular}{@{}c@{}}%
\phantom{0}\\3/2\\\phantom{0}
\end{tabular}\endgroup%
\kern3pt%
\begingroup \smaller\smaller\smaller\begin{tabular}{@{}c@{}}%
\phantom{0}\\\phantom{0}\\4
\end{tabular}\endgroup%
{$\left.\llap{\phantom{%
\begingroup \smaller\smaller\smaller\begin{tabular}{@{}c@{}}%
\phantom{0}\\\phantom{0}\\\phantom{0}
\end{tabular}\endgroup%
}}\!\right]$}%
{$\left[\!\llap{\phantom{%
\begingroup \smaller\smaller\smaller\begin{tabular}{@{}c@{}}%
0\\0\\0
\end{tabular}\endgroup%
}}\right.$}%
\begingroup \smaller\smaller\smaller\begin{tabular}{@{}c@{}}%
1\\-1\\0
\end{tabular}\endgroup%
\kern3pt%
\begingroup \smaller\smaller\smaller\begin{tabular}{@{}c@{}}%
2\\0\\-1
\end{tabular}\endgroup%
{$\left.\llap{\phantom{%
\begingroup \smaller\smaller\smaller\begin{tabular}{@{}c@{}}%
0\\0\\0
\end{tabular}\endgroup%
}}\!\right]$}%
}%
\ifdim\wd\matricesbox>\halfwidth\myboxwidth=\hsize\else\myboxwidth=\halfwidth\fi
\vbox{%
\ifdim\myboxwidth=\hsize
\setbox\onelinebox=\hbox{%
\vbox{\hbox{%
$\Pi_{4,4}$ spans $L_{123.4}$%
}\hbox{%
$|4|4|4|4\rtimes D_{4}$%
}%
}%
\hfill\copy\matricesbox
}%
\ifdim\wd\onelinebox>\myboxwidth
\hbox to \myboxwidth{%
$\Pi_{4,4}$ spans $L_{123.4}$%
\hfil
$|4|4|4|4\rtimes D_{4}$%
}%
\box\matricesbox
\else
\hbox to \myboxwidth{%
\unhbox\onelinebox
}%
\fi
\else
\hbox to \myboxwidth{%
$\Pi_{4,4}$ spans $L_{123.4}$%
\hfil}%
\hbox to \myboxwidth{%
$|4|4|4|4\rtimes D_{4}$%
\hfil}%
\box\matricesbox
\fi
}%
\hfill\discretionary{}{}{}%
\setbox\matricesbox=\hbox{%
{$\left[\!\llap{\phantom{%
\begingroup \smaller\smaller\smaller\begin{tabular}{@{}c@{}}%
\phantom{0}\\\phantom{0}\\\phantom{0}
\end{tabular}\endgroup%
}}\right.$}%
\begingroup \smaller\smaller\smaller\begin{tabular}{@{}c@{}}%
-1/4\\\phantom{0}\\\phantom{0}
\end{tabular}\endgroup%
\kern3pt%
\begingroup \smaller\smaller\smaller\begin{tabular}{@{}c@{}}%
\phantom{0}\\3\\\phantom{0}
\end{tabular}\endgroup%
\kern3pt%
\begingroup \smaller\smaller\smaller\begin{tabular}{@{}c@{}}%
\phantom{0}\\\phantom{0}\\15
\end{tabular}\endgroup%
{$\left.\llap{\phantom{%
\begingroup \smaller\smaller\smaller\begin{tabular}{@{}c@{}}%
\phantom{0}\\\phantom{0}\\\phantom{0}
\end{tabular}\endgroup%
}}\!\right]$}%
{$\left[\!\llap{\phantom{%
\begingroup \smaller\smaller\smaller\begin{tabular}{@{}c@{}}%
0\\0\\0
\end{tabular}\endgroup%
}}\right.$}%
\begingroup \smaller\smaller\smaller\begin{tabular}{@{}c@{}}%
2\\1\\0
\end{tabular}\endgroup%
\kern3pt%
\begingroup \smaller\smaller\smaller\begin{tabular}{@{}c@{}}%
6\\0\\1
\end{tabular}\endgroup%
{$\left.\llap{\phantom{%
\begingroup \smaller\smaller\smaller\begin{tabular}{@{}c@{}}%
0\\0\\0
\end{tabular}\endgroup%
}}\!\right]$}%
}%
\ifdim\wd\matricesbox>\halfwidth\myboxwidth=\hsize\else\myboxwidth=\halfwidth\fi
\vbox{%
\ifdim\myboxwidth=\hsize
\setbox\onelinebox=\hbox{%
\vbox{\hbox{%
$\Pi_{4,5}$ spans $L_{16.8}$%
}\hbox{%
$|6|6|6|6\rtimes D_{4}$%
}%
}%
\hfill\copy\matricesbox
}%
\ifdim\wd\onelinebox>\myboxwidth
\hbox to \myboxwidth{%
$\Pi_{4,5}$ spans $L_{16.8}$%
\hfil
$|6|6|6|6\rtimes D_{4}$%
}%
\box\matricesbox
\else
\hbox to \myboxwidth{%
\unhbox\onelinebox
}%
\fi
\else
\hbox to \myboxwidth{%
$\Pi_{4,5}$ spans $L_{16.8}$%
\hfil}%
\hbox to \myboxwidth{%
$|6|6|6|6\rtimes D_{4}$%
\hfil}%
\box\matricesbox
\fi
}%
\hfill\discretionary{}{}{}%
\setbox\matricesbox=\hbox{%
{$\left[\!\llap{\phantom{%
\begingroup \smaller\smaller\smaller\begin{tabular}{@{}c@{}}%
\phantom{0}\\\phantom{0}\\\phantom{0}
\end{tabular}\endgroup%
}}\right.$}%
\begingroup \smaller\smaller\smaller\begin{tabular}{@{}c@{}}%
-1/2\\\phantom{0}\\\phantom{0}
\end{tabular}\endgroup%
\kern3pt%
\begingroup \smaller\smaller\smaller\begin{tabular}{@{}c@{}}%
\phantom{0}\\3/2\\\phantom{0}
\end{tabular}\endgroup%
\kern3pt%
\begingroup \smaller\smaller\smaller\begin{tabular}{@{}c@{}}%
\phantom{0}\\\phantom{0}\\12
\end{tabular}\endgroup%
{$\left.\llap{\phantom{%
\begingroup \smaller\smaller\smaller\begin{tabular}{@{}c@{}}%
\phantom{0}\\\phantom{0}\\\phantom{0}
\end{tabular}\endgroup%
}}\!\right]$}%
{$\left[\!\llap{\phantom{%
\begingroup \smaller\smaller\smaller\begin{tabular}{@{}c@{}}%
0\\0\\0
\end{tabular}\endgroup%
}}\right.$}%
\begingroup \smaller\smaller\smaller\begin{tabular}{@{}c@{}}%
1\\-1\\0
\end{tabular}\endgroup%
\kern3pt%
\begingroup \smaller\smaller\smaller\begin{tabular}{@{}c@{}}%
4\\0\\-1
\end{tabular}\endgroup%
{$\left.\llap{\phantom{%
\begingroup \smaller\smaller\smaller\begin{tabular}{@{}c@{}}%
0\\0\\0
\end{tabular}\endgroup%
}}\!\right]$}%
}%
\ifdim\wd\matricesbox>\halfwidth\myboxwidth=\hsize\else\myboxwidth=\halfwidth\fi
\vbox{%
\ifdim\myboxwidth=\hsize
\setbox\onelinebox=\hbox{%
\vbox{\hbox{%
$\Pi_{4,6}=\hbox{GN}_{30}$ spans $L_{4.17}$%
}\hbox{%
$|\infty|\infty|\infty|\infty\rtimes D_{4}$%
}%
}%
\hfill\copy\matricesbox
}%
\ifdim\wd\onelinebox>\myboxwidth
\hbox to \myboxwidth{%
$\Pi_{4,6}=\hbox{GN}_{30}$ spans $L_{4.17}$%
\hfil
$|\infty|\infty|\infty|\infty\rtimes D_{4}$%
}%
\box\matricesbox
\else
\hbox to \myboxwidth{%
\unhbox\onelinebox
}%
\fi
\else
\hbox to \myboxwidth{%
$\Pi_{4,6}=\hbox{GN}_{30}$ spans $L_{4.17}$%
\hfil}%
\hbox to \myboxwidth{%
$|\infty|\infty|\infty|\infty\rtimes D_{4}$%
\hfil}%
\box\matricesbox
\fi
}%
\hfill\discretionary{}{}{}%
\setbox\matricesbox=\hbox{%
{$\left[\!\llap{\phantom{%
\begingroup \smaller\smaller\smaller\begin{tabular}{@{}c@{}}%
\phantom{0}\\\phantom{0}\\\phantom{0}
\end{tabular}\endgroup%
}}\right.$}%
\begingroup \smaller\smaller\smaller\begin{tabular}{@{}c@{}}%
-1/8\\\phantom{0}\\\phantom{0}
\end{tabular}\endgroup%
\kern3pt%
\begingroup \smaller\smaller\smaller\begin{tabular}{@{}c@{}}%
\phantom{0}\\1\\\phantom{0}
\end{tabular}\endgroup%
\kern3pt%
\begingroup \smaller\smaller\smaller\begin{tabular}{@{}c@{}}%
\phantom{0}\\\phantom{0}\\3/2
\end{tabular}\endgroup%
{$\left.\llap{\phantom{%
\begingroup \smaller\smaller\smaller\begin{tabular}{@{}c@{}}%
\phantom{0}\\\phantom{0}\\\phantom{0}
\end{tabular}\endgroup%
}}\!\right]$}%
{$\left[\!\llap{\phantom{%
\begingroup \smaller\smaller\smaller\begin{tabular}{@{}c@{}}%
0\\0\\0
\end{tabular}\endgroup%
}}\right.$}%
\begingroup \smaller\smaller\smaller\begin{tabular}{@{}c@{}}%
2\\-1\\-1
\end{tabular}\endgroup%
{$\left.\llap{\phantom{%
\begingroup \smaller\smaller\smaller\begin{tabular}{@{}c@{}}%
0\\0\\0
\end{tabular}\endgroup%
}}\!\right]$}%
}%
\ifdim\wd\matricesbox>\halfwidth\myboxwidth=\hsize\else\myboxwidth=\halfwidth\fi
\vbox{%
\ifdim\myboxwidth=\hsize
\setbox\onelinebox=\hbox{%
\vbox{\hbox{%
$\Pi_{4,7}=B_1=\hbox{GN}_{21}$ spans $L_{3.1}$%
}\hbox{%
$\slashthree\slashtwo\slashthree\slashtwo\rtimes D_{4}$ (shared)%
}%
}%
\hfill\copy\matricesbox
}%
\ifdim\wd\onelinebox>\myboxwidth
\hbox to \myboxwidth{%
$\Pi_{4,7}=B_1=\hbox{GN}_{21}$ spans $L_{3.1}$%
\hfil
$\slashthree\slashtwo\slashthree\slashtwo\rtimes D_{4}$ (shared)%
}%
\box\matricesbox
\else
\hbox to \myboxwidth{%
\unhbox\onelinebox
}%
\fi
\else
\hbox to \myboxwidth{%
$\Pi_{4,7}=B_1=\hbox{GN}_{21}$ spans $L_{3.1}$%
\hfil}%
\hbox to \myboxwidth{%
$\slashthree\slashtwo\slashthree\slashtwo\rtimes D_{4}$ (shared)%
\hfil}%
\box\matricesbox
\fi
}%
\hfill\discretionary{}{}{}%
\setbox\matricesbox=\hbox{%
{$\left[\!\llap{\phantom{%
\begingroup \smaller\smaller\smaller\begin{tabular}{@{}c@{}}%
\phantom{0}\\\phantom{0}\\\phantom{0}
\end{tabular}\endgroup%
}}\right.$}%
\begingroup \smaller\smaller\smaller\begin{tabular}{@{}c@{}}%
-1/2\\\phantom{0}\\\phantom{0}
\end{tabular}\endgroup%
\kern3pt%
\begingroup \smaller\smaller\smaller\begin{tabular}{@{}c@{}}%
\phantom{0}\\1/2\\\phantom{0}
\end{tabular}\endgroup%
\kern3pt%
\begingroup \smaller\smaller\smaller\begin{tabular}{@{}c@{}}%
\phantom{0}\\\phantom{0}\\1
\end{tabular}\endgroup%
{$\left.\llap{\phantom{%
\begingroup \smaller\smaller\smaller\begin{tabular}{@{}c@{}}%
\phantom{0}\\\phantom{0}\\\phantom{0}
\end{tabular}\endgroup%
}}\!\right]$}%
{$\left[\!\llap{\phantom{%
\begingroup \smaller\smaller\smaller\begin{tabular}{@{}c@{}}%
0\\0\\0
\end{tabular}\endgroup%
}}\right.$}%
\begingroup \smaller\smaller\smaller\begin{tabular}{@{}c@{}}%
1\\-1\\-1
\end{tabular}\endgroup%
{$\left.\llap{\phantom{%
\begingroup \smaller\smaller\smaller\begin{tabular}{@{}c@{}}%
0\\0\\0
\end{tabular}\endgroup%
}}\!\right]$}%
}%
\ifdim\wd\matricesbox>\halfwidth\myboxwidth=\hsize\else\myboxwidth=\halfwidth\fi
\vbox{%
\ifdim\myboxwidth=\hsize
\setbox\onelinebox=\hbox{%
\vbox{\hbox{%
$\Pi_{4,8}=A_{2,I}=\hbox{GN}_{27}$ spans $L_{1.6}$%
}\hbox{%
$\slashinfty\slashtwo\slashinfty\slashtwo\rtimes D_{4}$ (shared)%
}%
}%
\hfill\copy\matricesbox
}%
\ifdim\wd\onelinebox>\myboxwidth
\hbox to \myboxwidth{%
$\Pi_{4,8}=A_{2,I}=\hbox{GN}_{27}$ spans $L_{1.6}$%
\hfil
$\slashinfty\slashtwo\slashinfty\slashtwo\rtimes D_{4}$ (shared)%
}%
\box\matricesbox
\else
\hbox to \myboxwidth{%
\unhbox\onelinebox
}%
\fi
\else
\hbox to \myboxwidth{%
$\Pi_{4,8}=A_{2,I}=\hbox{GN}_{27}$ spans $L_{1.6}$%
\hfil}%
\hbox to \myboxwidth{%
$\slashinfty\slashtwo\slashinfty\slashtwo\rtimes D_{4}$ (shared)%
\hfil}%
\box\matricesbox
\fi
}%
\hfill\discretionary{}{}{}%
\setbox\matricesbox=\hbox{%
{$\left[\!\llap{\phantom{%
\begingroup \smaller\smaller\smaller\begin{tabular}{@{}c@{}}%
\phantom{0}\\\phantom{0}\\\phantom{0}
\end{tabular}\endgroup%
}}\right.$}%
\begingroup \smaller\smaller\smaller\begin{tabular}{@{}c@{}}%
-1/7\\\phantom{0}\\\phantom{0}
\end{tabular}\endgroup%
\kern3pt%
\begingroup \smaller\smaller\smaller\begin{tabular}{@{}c@{}}%
\phantom{0}\\1/14\\\phantom{0}
\end{tabular}\endgroup%
\kern3pt%
\begingroup \smaller\smaller\smaller\begin{tabular}{@{}c@{}}%
\phantom{0}\\\phantom{0}\\5/2
\end{tabular}\endgroup%
{$\left.\llap{\phantom{%
\begingroup \smaller\smaller\smaller\begin{tabular}{@{}c@{}}%
\phantom{0}\\\phantom{0}\\\phantom{0}
\end{tabular}\endgroup%
}}\!\right]$}%
{$\left[\!\llap{\phantom{%
\begingroup \smaller\smaller\smaller\begin{tabular}{@{}c@{}}%
0\\0\\0
\end{tabular}\endgroup%
}}\right.$}%
\begingroup \smaller\smaller\smaller\begin{tabular}{@{}c@{}}%
2\\-6\\0
\end{tabular}\endgroup%
\kern3pt%
\begingroup \smaller\smaller\smaller\begin{tabular}{@{}c@{}}%
2\\1\\1
\end{tabular}\endgroup%
\kern3pt%
\begingroup \smaller\smaller\smaller\begin{tabular}{@{}c@{}}%
1\\4\\0
\end{tabular}\endgroup%
{$\left.\llap{\phantom{%
\begingroup \smaller\smaller\smaller\begin{tabular}{@{}c@{}}%
0\\0\\0
\end{tabular}\endgroup%
}}\!\right]$}%
}%
\ifdim\wd\matricesbox>\halfwidth\myboxwidth=\hsize\else\myboxwidth=\halfwidth\fi
\vbox{%
\ifdim\myboxwidth=\hsize
\setbox\onelinebox=\hbox{%
\vbox{\hbox{%
$\Pi_{4,9}$ spans $L_{6.2}$%
}\hbox{%
$3|32|2\rtimes D_{2}$%
}%
}%
\hfill\copy\matricesbox
}%
\ifdim\wd\onelinebox>\myboxwidth
\hbox to \myboxwidth{%
$\Pi_{4,9}$ spans $L_{6.2}$%
\hfil
$3|32|2\rtimes D_{2}$%
}%
\box\matricesbox
\else
\hbox to \myboxwidth{%
\unhbox\onelinebox
}%
\fi
\else
\hbox to \myboxwidth{%
$\Pi_{4,9}$ spans $L_{6.2}$%
\hfil}%
\hbox to \myboxwidth{%
$3|32|2\rtimes D_{2}$%
\hfil}%
\box\matricesbox
\fi
}%
\hfill\discretionary{}{}{}%
\setbox\matricesbox=\hbox{%
{$\left[\!\llap{\phantom{%
\begingroup \smaller\smaller\smaller\begin{tabular}{@{}c@{}}%
\phantom{0}\\\phantom{0}\\\phantom{0}
\end{tabular}\endgroup%
}}\right.$}%
\begingroup \smaller\smaller\smaller\begin{tabular}{@{}c@{}}%
-6/25\\\phantom{0}\\\phantom{0}
\end{tabular}\endgroup%
\kern3pt%
\begingroup \smaller\smaller\smaller\begin{tabular}{@{}c@{}}%
\phantom{0}\\3/50\\\phantom{0}
\end{tabular}\endgroup%
\kern3pt%
\begingroup \smaller\smaller\smaller\begin{tabular}{@{}c@{}}%
\phantom{0}\\\phantom{0}\\1/2
\end{tabular}\endgroup%
{$\left.\llap{\phantom{%
\begingroup \smaller\smaller\smaller\begin{tabular}{@{}c@{}}%
\phantom{0}\\\phantom{0}\\\phantom{0}
\end{tabular}\endgroup%
}}\!\right]$}%
{$\left[\!\llap{\phantom{%
\begingroup \smaller\smaller\smaller\begin{tabular}{@{}c@{}}%
0\\0\\0
\end{tabular}\endgroup%
}}\right.$}%
\begingroup \smaller\smaller\smaller\begin{tabular}{@{}c@{}}%
6\\13\\3
\end{tabular}\endgroup%
\kern3pt%
\begingroup \smaller\smaller\smaller\begin{tabular}{@{}c@{}}%
2\\-4\\2
\end{tabular}\endgroup%
{$\left.\llap{\phantom{%
\begingroup \smaller\smaller\smaller\begin{tabular}{@{}c@{}}%
0\\0\\0
\end{tabular}\endgroup%
}}\!\right]$}%
}%
\ifdim\wd\matricesbox>\halfwidth\myboxwidth=\hsize\else\myboxwidth=\halfwidth\fi
\vbox{%
\ifdim\myboxwidth=\hsize
\setbox\onelinebox=\hbox{%
\vbox{\hbox{%
$\Pi_{4,10}$ spans $L_{7.9}$%
}\hbox{%
$\slashthree6\slashinfty6\rtimes D_{2}$ (shared)%
}%
}%
\hfill\copy\matricesbox
}%
\ifdim\wd\onelinebox>\myboxwidth
\hbox to \myboxwidth{%
$\Pi_{4,10}$ spans $L_{7.9}$%
\hfil
$\slashthree6\slashinfty6\rtimes D_{2}$ (shared)%
}%
\box\matricesbox
\else
\hbox to \myboxwidth{%
\unhbox\onelinebox
}%
\fi
\else
\hbox to \myboxwidth{%
$\Pi_{4,10}$ spans $L_{7.9}$%
\hfil}%
\hbox to \myboxwidth{%
$\slashthree6\slashinfty6\rtimes D_{2}$ (shared)%
\hfil}%
\box\matricesbox
\fi
}%
\hfill\discretionary{}{}{}%
\setbox\matricesbox=\hbox{%
{$\left[\!\llap{\phantom{%
\begingroup \smaller\smaller\smaller\begin{tabular}{@{}c@{}}%
\phantom{0}\\\phantom{0}\\\phantom{0}
\end{tabular}\endgroup%
}}\right.$}%
\begingroup \smaller\smaller\smaller\begin{tabular}{@{}c@{}}%
-2/9\\\phantom{0}\\\phantom{0}
\end{tabular}\endgroup%
\kern3pt%
\begingroup \smaller\smaller\smaller\begin{tabular}{@{}c@{}}%
\phantom{0}\\1/18\\\phantom{0}
\end{tabular}\endgroup%
\kern3pt%
\begingroup \smaller\smaller\smaller\begin{tabular}{@{}c@{}}%
\phantom{0}\\\phantom{0}\\3/2
\end{tabular}\endgroup%
{$\left.\llap{\phantom{%
\begingroup \smaller\smaller\smaller\begin{tabular}{@{}c@{}}%
\phantom{0}\\\phantom{0}\\\phantom{0}
\end{tabular}\endgroup%
}}\!\right]$}%
{$\left[\!\llap{\phantom{%
\begingroup \smaller\smaller\smaller\begin{tabular}{@{}c@{}}%
0\\0\\0
\end{tabular}\endgroup%
}}\right.$}%
\begingroup \smaller\smaller\smaller\begin{tabular}{@{}c@{}}%
2\\-5\\1
\end{tabular}\endgroup%
\kern3pt%
\begingroup \smaller\smaller\smaller\begin{tabular}{@{}c@{}}%
6\\12\\2
\end{tabular}\endgroup%
{$\left.\llap{\phantom{%
\begingroup \smaller\smaller\smaller\begin{tabular}{@{}c@{}}%
0\\0\\0
\end{tabular}\endgroup%
}}\!\right]$}%
}%
\ifdim\wd\matricesbox>\halfwidth\myboxwidth=\hsize\else\myboxwidth=\halfwidth\fi
\vbox{%
\ifdim\myboxwidth=\hsize
\setbox\onelinebox=\hbox{%
\vbox{\hbox{%
$\Pi_{4,11}$ spans $L_{7.13}$%
}\hbox{%
$\slashthree6\slashinfty6\rtimes D_{2}$ (shared)%
}%
}%
\hfill\copy\matricesbox
}%
\ifdim\wd\onelinebox>\myboxwidth
\hbox to \myboxwidth{%
$\Pi_{4,11}$ spans $L_{7.13}$%
\hfil
$\slashthree6\slashinfty6\rtimes D_{2}$ (shared)%
}%
\box\matricesbox
\else
\hbox to \myboxwidth{%
\unhbox\onelinebox
}%
\fi
\else
\hbox to \myboxwidth{%
$\Pi_{4,11}$ spans $L_{7.13}$%
\hfil}%
\hbox to \myboxwidth{%
$\slashthree6\slashinfty6\rtimes D_{2}$ (shared)%
\hfil}%
\box\matricesbox
\fi
}%
\hfill\discretionary{}{}{}%
\setbox\matricesbox=\hbox{%
{$\left[\!\llap{\phantom{%
\begingroup \smaller\smaller\smaller\begin{tabular}{@{}c@{}}%
\phantom{0}\\\phantom{0}\\\phantom{0}
\end{tabular}\endgroup%
}}\right.$}%
\begingroup \smaller\smaller\smaller\begin{tabular}{@{}c@{}}%
-1/20\\\phantom{0}\\\phantom{0}
\end{tabular}\endgroup%
\kern3pt%
\begingroup \smaller\smaller\smaller\begin{tabular}{@{}c@{}}%
\phantom{0}\\3/10\\\phantom{0}
\end{tabular}\endgroup%
\kern3pt%
\begingroup \smaller\smaller\smaller\begin{tabular}{@{}c@{}}%
\phantom{0}\\\phantom{0}\\15/2
\end{tabular}\endgroup%
{$\left.\llap{\phantom{%
\begingroup \smaller\smaller\smaller\begin{tabular}{@{}c@{}}%
\phantom{0}\\\phantom{0}\\\phantom{0}
\end{tabular}\endgroup%
}}\!\right]$}%
{$\left[\!\llap{\phantom{%
\begingroup \smaller\smaller\smaller\begin{tabular}{@{}c@{}}%
0\\0\\0
\end{tabular}\endgroup%
}}\right.$}%
\begingroup \smaller\smaller\smaller\begin{tabular}{@{}c@{}}%
12\\8\\0
\end{tabular}\endgroup%
\kern3pt%
\begingroup \smaller\smaller\smaller\begin{tabular}{@{}c@{}}%
6\\-1\\1
\end{tabular}\endgroup%
\kern3pt%
\begingroup \smaller\smaller\smaller\begin{tabular}{@{}c@{}}%
4\\-4\\0
\end{tabular}\endgroup%
{$\left.\llap{\phantom{%
\begingroup \smaller\smaller\smaller\begin{tabular}{@{}c@{}}%
0\\0\\0
\end{tabular}\endgroup%
}}\!\right]$}%
}%
\ifdim\wd\matricesbox>\halfwidth\myboxwidth=\hsize\else\myboxwidth=\halfwidth\fi
\vbox{%
\ifdim\myboxwidth=\hsize
\setbox\onelinebox=\hbox{%
\vbox{\hbox{%
$\Pi_{4,12}$ spans $L_{19.5}$%
}\hbox{%
$4|42|2\rtimes D_{2}$ (shared)%
}%
}%
\hfill\copy\matricesbox
}%
\ifdim\wd\onelinebox>\myboxwidth
\hbox to \myboxwidth{%
$\Pi_{4,12}$ spans $L_{19.5}$%
\hfil
$4|42|2\rtimes D_{2}$ (shared)%
}%
\box\matricesbox
\else
\hbox to \myboxwidth{%
\unhbox\onelinebox
}%
\fi
\else
\hbox to \myboxwidth{%
$\Pi_{4,12}$ spans $L_{19.5}$%
\hfil}%
\hbox to \myboxwidth{%
$4|42|2\rtimes D_{2}$ (shared)%
\hfil}%
\box\matricesbox
\fi
}%
\hfill\discretionary{}{}{}%
\setbox\matricesbox=\hbox{%
{$\left[\!\llap{\phantom{%
\begingroup \smaller\smaller\smaller\begin{tabular}{@{}c@{}}%
\phantom{0}\\\phantom{0}\\\phantom{0}
\end{tabular}\endgroup%
}}\right.$}%
\begingroup \smaller\smaller\smaller\begin{tabular}{@{}c@{}}%
-2/5\\\phantom{0}\\\phantom{0}
\end{tabular}\endgroup%
\kern3pt%
\begingroup \smaller\smaller\smaller\begin{tabular}{@{}c@{}}%
\phantom{0}\\1/10\\\phantom{0}
\end{tabular}\endgroup%
\kern3pt%
\begingroup \smaller\smaller\smaller\begin{tabular}{@{}c@{}}%
\phantom{0}\\\phantom{0}\\1/2
\end{tabular}\endgroup%
{$\left.\llap{\phantom{%
\begingroup \smaller\smaller\smaller\begin{tabular}{@{}c@{}}%
\phantom{0}\\\phantom{0}\\\phantom{0}
\end{tabular}\endgroup%
}}\!\right]$}%
{$\left[\!\llap{\phantom{%
\begingroup \smaller\smaller\smaller\begin{tabular}{@{}c@{}}%
0\\0\\0
\end{tabular}\endgroup%
}}\right.$}%
\begingroup \smaller\smaller\smaller\begin{tabular}{@{}c@{}}%
2\\4\\-2
\end{tabular}\endgroup%
\kern3pt%
\begingroup \smaller\smaller\smaller\begin{tabular}{@{}c@{}}%
1\\-3\\-1
\end{tabular}\endgroup%
{$\left.\llap{\phantom{%
\begingroup \smaller\smaller\smaller\begin{tabular}{@{}c@{}}%
0\\0\\0
\end{tabular}\endgroup%
}}\!\right]$}%
}%
\ifdim\wd\matricesbox>\halfwidth\myboxwidth=\hsize\else\myboxwidth=\halfwidth\fi
\vbox{%
\ifdim\myboxwidth=\hsize
\setbox\onelinebox=\hbox{%
\vbox{\hbox{%
$\Pi_{4,13}$ spans $L_{1.3}$%
}\hbox{%
$4\slashinfty4\slashtwo\rtimes D_{2}$ (shared)%
}%
}%
\hfill\copy\matricesbox
}%
\ifdim\wd\onelinebox>\myboxwidth
\hbox to \myboxwidth{%
$\Pi_{4,13}$ spans $L_{1.3}$%
\hfil
$4\slashinfty4\slashtwo\rtimes D_{2}$ (shared)%
}%
\box\matricesbox
\else
\hbox to \myboxwidth{%
\unhbox\onelinebox
}%
\fi
\else
\hbox to \myboxwidth{%
$\Pi_{4,13}$ spans $L_{1.3}$%
\hfil}%
\hbox to \myboxwidth{%
$4\slashinfty4\slashtwo\rtimes D_{2}$ (shared)%
\hfil}%
\box\matricesbox
\fi
}%
\hfill\discretionary{}{}{}%
\setbox\matricesbox=\hbox{%
{$\left[\!\llap{\phantom{%
\begingroup \smaller\smaller\smaller\begin{tabular}{@{}c@{}}%
\phantom{0}\\\phantom{0}\\\phantom{0}
\end{tabular}\endgroup%
}}\right.$}%
\begingroup \smaller\smaller\smaller\begin{tabular}{@{}c@{}}%
-1/13\\\phantom{0}\\\phantom{0}
\end{tabular}\endgroup%
\kern3pt%
\begingroup \smaller\smaller\smaller\begin{tabular}{@{}c@{}}%
\phantom{0}\\3/13\\\phantom{0}
\end{tabular}\endgroup%
\kern3pt%
\begingroup \smaller\smaller\smaller\begin{tabular}{@{}c@{}}%
\phantom{0}\\\phantom{0}\\5
\end{tabular}\endgroup%
{$\left.\llap{\phantom{%
\begingroup \smaller\smaller\smaller\begin{tabular}{@{}c@{}}%
\phantom{0}\\\phantom{0}\\\phantom{0}
\end{tabular}\endgroup%
}}\!\right]$}%
{$\left[\!\llap{\phantom{%
\begingroup \smaller\smaller\smaller\begin{tabular}{@{}c@{}}%
0\\0\\0
\end{tabular}\endgroup%
}}\right.$}%
\begingroup \smaller\smaller\smaller\begin{tabular}{@{}c@{}}%
12\\10\\0
\end{tabular}\endgroup%
\kern3pt%
\begingroup \smaller\smaller\smaller\begin{tabular}{@{}c@{}}%
4\\-1\\1
\end{tabular}\endgroup%
\kern3pt%
\begingroup \smaller\smaller\smaller\begin{tabular}{@{}c@{}}%
3\\-4\\0
\end{tabular}\endgroup%
{$\left.\llap{\phantom{%
\begingroup \smaller\smaller\smaller\begin{tabular}{@{}c@{}}%
0\\0\\0
\end{tabular}\endgroup%
}}\!\right]$}%
}%
\ifdim\wd\matricesbox>\halfwidth\myboxwidth=\hsize\else\myboxwidth=\halfwidth\fi
\vbox{%
\ifdim\myboxwidth=\hsize
\setbox\onelinebox=\hbox{%
\vbox{\hbox{%
$\Pi_{4,14}$ spans $L_{31.3}$%
}\hbox{%
$6|62|2\rtimes D_{2}$%
}%
}%
\hfill\copy\matricesbox
}%
\ifdim\wd\onelinebox>\myboxwidth
\hbox to \myboxwidth{%
$\Pi_{4,14}$ spans $L_{31.3}$%
\hfil
$6|62|2\rtimes D_{2}$%
}%
\box\matricesbox
\else
\hbox to \myboxwidth{%
\unhbox\onelinebox
}%
\fi
\else
\hbox to \myboxwidth{%
$\Pi_{4,14}$ spans $L_{31.3}$%
\hfil}%
\hbox to \myboxwidth{%
$6|62|2\rtimes D_{2}$%
\hfil}%
\box\matricesbox
\fi
}%
\hfill\discretionary{}{}{}%
\setbox\matricesbox=\hbox{%
{$\left[\!\llap{\phantom{%
\begingroup \smaller\smaller\smaller\begin{tabular}{@{}c@{}}%
\phantom{0}\\\phantom{0}\\\phantom{0}
\end{tabular}\endgroup%
}}\right.$}%
\begingroup \smaller\smaller\smaller\begin{tabular}{@{}c@{}}%
-1/7\\\phantom{0}\\\phantom{0}
\end{tabular}\endgroup%
\kern3pt%
\begingroup \smaller\smaller\smaller\begin{tabular}{@{}c@{}}%
\phantom{0}\\9/14\\\phantom{0}
\end{tabular}\endgroup%
\kern3pt%
\begingroup \smaller\smaller\smaller\begin{tabular}{@{}c@{}}%
\phantom{0}\\\phantom{0}\\21/2
\end{tabular}\endgroup%
{$\left.\llap{\phantom{%
\begingroup \smaller\smaller\smaller\begin{tabular}{@{}c@{}}%
\phantom{0}\\\phantom{0}\\\phantom{0}
\end{tabular}\endgroup%
}}\!\right]$}%
{$\left[\!\llap{\phantom{%
\begingroup \smaller\smaller\smaller\begin{tabular}{@{}c@{}}%
0\\0\\0
\end{tabular}\endgroup%
}}\right.$}%
\begingroup \smaller\smaller\smaller\begin{tabular}{@{}c@{}}%
18\\10\\0
\end{tabular}\endgroup%
\kern3pt%
\begingroup \smaller\smaller\smaller\begin{tabular}{@{}c@{}}%
6\\1\\-1
\end{tabular}\endgroup%
\kern3pt%
\begingroup \smaller\smaller\smaller\begin{tabular}{@{}c@{}}%
2\\-2\\0
\end{tabular}\endgroup%
{$\left.\llap{\phantom{%
\begingroup \smaller\smaller\smaller\begin{tabular}{@{}c@{}}%
0\\0\\0
\end{tabular}\endgroup%
}}\!\right]$}%
}%
\ifdim\wd\matricesbox>\halfwidth\myboxwidth=\hsize\else\myboxwidth=\halfwidth\fi
\vbox{%
\ifdim\myboxwidth=\hsize
\setbox\onelinebox=\hbox{%
\vbox{\hbox{%
$\Pi_{4,15}$ spans $L_{228.1}$%
}\hbox{%
$6|66|6\rtimes D_{2}$%
}%
}%
\hfill\copy\matricesbox
}%
\ifdim\wd\onelinebox>\myboxwidth
\hbox to \myboxwidth{%
$\Pi_{4,15}$ spans $L_{228.1}$%
\hfil
$6|66|6\rtimes D_{2}$%
}%
\box\matricesbox
\else
\hbox to \myboxwidth{%
\unhbox\onelinebox
}%
\fi
\else
\hbox to \myboxwidth{%
$\Pi_{4,15}$ spans $L_{228.1}$%
\hfil}%
\hbox to \myboxwidth{%
$6|66|6\rtimes D_{2}$%
\hfil}%
\box\matricesbox
\fi
}%
\hfill\discretionary{}{}{}%
\setbox\matricesbox=\hbox{%
{$\left[\!\llap{\phantom{%
\begingroup \smaller\smaller\smaller\begin{tabular}{@{}c@{}}%
\phantom{0}\\\phantom{0}\\\phantom{0}
\end{tabular}\endgroup%
}}\right.$}%
\begingroup \smaller\smaller\smaller\begin{tabular}{@{}c@{}}%
-1/32\\\phantom{0}\\\phantom{0}
\end{tabular}\endgroup%
\kern3pt%
\begingroup \smaller\smaller\smaller\begin{tabular}{@{}c@{}}%
\phantom{0}\\5/8\\\phantom{0}
\end{tabular}\endgroup%
\kern3pt%
\begingroup \smaller\smaller\smaller\begin{tabular}{@{}c@{}}%
\phantom{0}\\\phantom{0}\\25/2
\end{tabular}\endgroup%
{$\left.\llap{\phantom{%
\begingroup \smaller\smaller\smaller\begin{tabular}{@{}c@{}}%
\phantom{0}\\\phantom{0}\\\phantom{0}
\end{tabular}\endgroup%
}}\!\right]$}%
{$\left[\!\llap{\phantom{%
\begingroup \smaller\smaller\smaller\begin{tabular}{@{}c@{}}%
0\\0\\0
\end{tabular}\endgroup%
}}\right.$}%
\begingroup \smaller\smaller\smaller\begin{tabular}{@{}c@{}}%
40\\-12\\0
\end{tabular}\endgroup%
\kern3pt%
\begingroup \smaller\smaller\smaller\begin{tabular}{@{}c@{}}%
10\\1\\-1
\end{tabular}\endgroup%
\kern3pt%
\begingroup \smaller\smaller\smaller\begin{tabular}{@{}c@{}}%
8\\4\\0
\end{tabular}\endgroup%
{$\left.\llap{\phantom{%
\begingroup \smaller\smaller\smaller\begin{tabular}{@{}c@{}}%
0\\0\\0
\end{tabular}\endgroup%
}}\!\right]$}%
}%
\ifdim\wd\matricesbox>\halfwidth\myboxwidth=\hsize\else\myboxwidth=\halfwidth\fi
\vbox{%
\ifdim\myboxwidth=\hsize
\setbox\onelinebox=\hbox{%
\vbox{\hbox{%
$\Pi_{4,16}$ spans $L_{10.1}$%
}\hbox{%
$\infty|\infty2|2\rtimes D_{2}$%
}%
}%
\hfill\copy\matricesbox
}%
\ifdim\wd\onelinebox>\myboxwidth
\hbox to \myboxwidth{%
$\Pi_{4,16}$ spans $L_{10.1}$%
\hfil
$\infty|\infty2|2\rtimes D_{2}$%
}%
\box\matricesbox
\else
\hbox to \myboxwidth{%
\unhbox\onelinebox
}%
\fi
\else
\hbox to \myboxwidth{%
$\Pi_{4,16}$ spans $L_{10.1}$%
\hfil}%
\hbox to \myboxwidth{%
$\infty|\infty2|2\rtimes D_{2}$%
\hfil}%
\box\matricesbox
\fi
}%
\hfill\discretionary{}{}{}%
\setbox\matricesbox=\hbox{%
{$\left[\!\llap{\phantom{%
\begingroup \smaller\smaller\smaller\begin{tabular}{@{}c@{}}%
\phantom{0}\\\phantom{0}\\\phantom{0}
\end{tabular}\endgroup%
}}\right.$}%
\begingroup \smaller\smaller\smaller\begin{tabular}{@{}c@{}}%
-1/2\\\phantom{0}\\\phantom{0}
\end{tabular}\endgroup%
\kern3pt%
\begingroup \smaller\smaller\smaller\begin{tabular}{@{}c@{}}%
\phantom{0}\\1/2\\\phantom{0}
\end{tabular}\endgroup%
\kern3pt%
\begingroup \smaller\smaller\smaller\begin{tabular}{@{}c@{}}%
\phantom{0}\\\phantom{0}\\1
\end{tabular}\endgroup%
{$\left.\llap{\phantom{%
\begingroup \smaller\smaller\smaller\begin{tabular}{@{}c@{}}%
\phantom{0}\\\phantom{0}\\\phantom{0}
\end{tabular}\endgroup%
}}\!\right]$}%
{$\left[\!\llap{\phantom{%
\begingroup \smaller\smaller\smaller\begin{tabular}{@{}c@{}}%
0\\0\\0
\end{tabular}\endgroup%
}}\right.$}%
\begingroup \smaller\smaller\smaller\begin{tabular}{@{}c@{}}%
4\\4\\2
\end{tabular}\endgroup%
\kern3pt%
\begingroup \smaller\smaller\smaller\begin{tabular}{@{}c@{}}%
1\\-1\\1
\end{tabular}\endgroup%
{$\left.\llap{\phantom{%
\begingroup \smaller\smaller\smaller\begin{tabular}{@{}c@{}}%
0\\0\\0
\end{tabular}\endgroup%
}}\!\right]$}%
}%
\ifdim\wd\matricesbox>\halfwidth\myboxwidth=\hsize\else\myboxwidth=\halfwidth\fi
\vbox{%
\ifdim\myboxwidth=\hsize
\setbox\onelinebox=\hbox{%
\vbox{\hbox{%
$\Pi_{4,17}=\hbox{GN}_{31}$ spans $L_{140.4}$%
}\hbox{%
$\infty\slashinfty\infty\slashinfty\rtimes D_{2}$ (shared)%
}%
}%
\hfill\copy\matricesbox
}%
\ifdim\wd\onelinebox>\myboxwidth
\hbox to \myboxwidth{%
$\Pi_{4,17}=\hbox{GN}_{31}$ spans $L_{140.4}$%
\hfil
$\infty\slashinfty\infty\slashinfty\rtimes D_{2}$ (shared)%
}%
\box\matricesbox
\else
\hbox to \myboxwidth{%
\unhbox\onelinebox
}%
\fi
\else
\hbox to \myboxwidth{%
$\Pi_{4,17}=\hbox{GN}_{31}$ spans $L_{140.4}$%
\hfil}%
\hbox to \myboxwidth{%
$\infty\slashinfty\infty\slashinfty\rtimes D_{2}$ (shared)%
\hfil}%
\box\matricesbox
\fi
}%
\hfill\discretionary{}{}{}%
\setbox\matricesbox=\hbox{%
{$\left[\!\llap{\phantom{%
\begingroup \smaller\smaller\smaller\begin{tabular}{@{}c@{}}%
\phantom{0}\\\phantom{0}\\\phantom{0}
\end{tabular}\endgroup%
}}\right.$}%
\begingroup \smaller\smaller\smaller\begin{tabular}{@{}c@{}}%
-1/5\\\phantom{0}\\\phantom{0}
\end{tabular}\endgroup%
\kern3pt%
\begingroup \smaller\smaller\smaller\begin{tabular}{@{}c@{}}%
\phantom{0}\\3/10\\\phantom{0}
\end{tabular}\endgroup%
\kern3pt%
\begingroup \smaller\smaller\smaller\begin{tabular}{@{}c@{}}%
\phantom{0}\\\phantom{0}\\5/2
\end{tabular}\endgroup%
{$\left.\llap{\phantom{%
\begingroup \smaller\smaller\smaller\begin{tabular}{@{}c@{}}%
\phantom{0}\\\phantom{0}\\\phantom{0}
\end{tabular}\endgroup%
}}\!\right]$}%
{$\left[\!\llap{\phantom{%
\begingroup \smaller\smaller\smaller\begin{tabular}{@{}c@{}}%
0\\0\\0
\end{tabular}\endgroup%
}}\right.$}%
\begingroup \smaller\smaller\smaller\begin{tabular}{@{}c@{}}%
1\\-2\\0
\end{tabular}\endgroup%
\kern3pt%
\begingroup \smaller\smaller\smaller\begin{tabular}{@{}c@{}}%
2\\1\\-1
\end{tabular}\endgroup%
\kern3pt%
\begingroup \smaller\smaller\smaller\begin{tabular}{@{}c@{}}%
3\\4\\0
\end{tabular}\endgroup%
{$\left.\llap{\phantom{%
\begingroup \smaller\smaller\smaller\begin{tabular}{@{}c@{}}%
0\\0\\0
\end{tabular}\endgroup%
}}\!\right]$}%
}%
\ifdim\wd\matricesbox>\halfwidth\myboxwidth=\hsize\else\myboxwidth=\halfwidth\fi
\vbox{%
\ifdim\myboxwidth=\hsize
\setbox\onelinebox=\hbox{%
\vbox{\hbox{%
$\Pi_{4,18}$ spans $L_{19.2}$%
}\hbox{%
$4|42|2\rtimes D_{2}$ (shared)%
}%
}%
\hfill\copy\matricesbox
}%
\ifdim\wd\onelinebox>\myboxwidth
\hbox to \myboxwidth{%
$\Pi_{4,18}$ spans $L_{19.2}$%
\hfil
$4|42|2\rtimes D_{2}$ (shared)%
}%
\box\matricesbox
\else
\hbox to \myboxwidth{%
\unhbox\onelinebox
}%
\fi
\else
\hbox to \myboxwidth{%
$\Pi_{4,18}$ spans $L_{19.2}$%
\hfil}%
\hbox to \myboxwidth{%
$4|42|2\rtimes D_{2}$ (shared)%
\hfil}%
\box\matricesbox
\fi
}%
\hfill\discretionary{}{}{}%
\setbox\matricesbox=\hbox{%
{$\left[\!\llap{\phantom{%
\begingroup \smaller\smaller\smaller\begin{tabular}{@{}c@{}}%
\phantom{0}\\\phantom{0}\\\phantom{0}
\end{tabular}\endgroup%
}}\right.$}%
\begingroup \smaller\smaller\smaller\begin{tabular}{@{}c@{}}%
-1/7\\\phantom{0}\\\phantom{0}
\end{tabular}\endgroup%
\kern3pt%
\begingroup \smaller\smaller\smaller\begin{tabular}{@{}c@{}}%
\phantom{0}\\2/7\\\phantom{0}
\end{tabular}\endgroup%
\kern3pt%
\begingroup \smaller\smaller\smaller\begin{tabular}{@{}c@{}}%
\phantom{0}\\\phantom{0}\\6
\end{tabular}\endgroup%
{$\left.\llap{\phantom{%
\begingroup \smaller\smaller\smaller\begin{tabular}{@{}c@{}}%
\phantom{0}\\\phantom{0}\\\phantom{0}
\end{tabular}\endgroup%
}}\!\right]$}%
{$\left[\!\llap{\phantom{%
\begingroup \smaller\smaller\smaller\begin{tabular}{@{}c@{}}%
0\\0\\0
\end{tabular}\endgroup%
}}\right.$}%
\begingroup \smaller\smaller\smaller\begin{tabular}{@{}c@{}}%
2\\-3\\0
\end{tabular}\endgroup%
\kern3pt%
\begingroup \smaller\smaller\smaller\begin{tabular}{@{}c@{}}%
4\\1\\1
\end{tabular}\endgroup%
\kern3pt%
\begingroup \smaller\smaller\smaller\begin{tabular}{@{}c@{}}%
1\\2\\0
\end{tabular}\endgroup%
{$\left.\llap{\phantom{%
\begingroup \smaller\smaller\smaller\begin{tabular}{@{}c@{}}%
0\\0\\0
\end{tabular}\endgroup%
}}\!\right]$}%
}%
\ifdim\wd\matricesbox>\halfwidth\myboxwidth=\hsize\else\myboxwidth=\halfwidth\fi
\vbox{%
\ifdim\myboxwidth=\hsize
\setbox\onelinebox=\hbox{%
\vbox{\hbox{%
$\Pi_{4,19}$ spans $L_{123.5}$%
}\hbox{%
$4|42|2\rtimes D_{2}$ (shared)%
}%
}%
\hfill\copy\matricesbox
}%
\ifdim\wd\onelinebox>\myboxwidth
\hbox to \myboxwidth{%
$\Pi_{4,19}$ spans $L_{123.5}$%
\hfil
$4|42|2\rtimes D_{2}$ (shared)%
}%
\box\matricesbox
\else
\hbox to \myboxwidth{%
\unhbox\onelinebox
}%
\fi
\else
\hbox to \myboxwidth{%
$\Pi_{4,19}$ spans $L_{123.5}$%
\hfil}%
\hbox to \myboxwidth{%
$4|42|2\rtimes D_{2}$ (shared)%
\hfil}%
\box\matricesbox
\fi
}%
\hfill\discretionary{}{}{}%
\setbox\matricesbox=\hbox{%
{$\left[\!\llap{\phantom{%
\begingroup \smaller\smaller\smaller\begin{tabular}{@{}c@{}}%
\phantom{0}\\\phantom{0}\\\phantom{0}
\end{tabular}\endgroup%
}}\right.$}%
\begingroup \smaller\smaller\smaller\begin{tabular}{@{}c@{}}%
-4/9\\\phantom{0}\\\phantom{0}
\end{tabular}\endgroup%
\kern3pt%
\begingroup \smaller\smaller\smaller\begin{tabular}{@{}c@{}}%
\phantom{0}\\1/9\\\phantom{0}
\end{tabular}\endgroup%
\kern3pt%
\begingroup \smaller\smaller\smaller\begin{tabular}{@{}c@{}}%
\phantom{0}\\\phantom{0}\\1
\end{tabular}\endgroup%
{$\left.\llap{\phantom{%
\begingroup \smaller\smaller\smaller\begin{tabular}{@{}c@{}}%
\phantom{0}\\\phantom{0}\\\phantom{0}
\end{tabular}\endgroup%
}}\!\right]$}%
{$\left[\!\llap{\phantom{%
\begingroup \smaller\smaller\smaller\begin{tabular}{@{}c@{}}%
0\\0\\0
\end{tabular}\endgroup%
}}\right.$}%
\begingroup \smaller\smaller\smaller\begin{tabular}{@{}c@{}}%
1\\2\\1
\end{tabular}\endgroup%
\kern3pt%
\begingroup \smaller\smaller\smaller\begin{tabular}{@{}c@{}}%
2\\-5\\1
\end{tabular}\endgroup%
{$\left.\llap{\phantom{%
\begingroup \smaller\smaller\smaller\begin{tabular}{@{}c@{}}%
0\\0\\0
\end{tabular}\endgroup%
}}\!\right]$}%
}%
\ifdim\wd\matricesbox>\halfwidth\myboxwidth=\hsize\else\myboxwidth=\halfwidth\fi
\vbox{%
\ifdim\myboxwidth=\hsize
\setbox\onelinebox=\hbox{%
\vbox{\hbox{%
$\Pi_{4,20}$ spans $L_{145.1}$%
}\hbox{%
$4\slashinfty4\slashtwo\rtimes D_{2}$ (shared)%
}%
}%
\hfill\copy\matricesbox
}%
\ifdim\wd\onelinebox>\myboxwidth
\hbox to \myboxwidth{%
$\Pi_{4,20}$ spans $L_{145.1}$%
\hfil
$4\slashinfty4\slashtwo\rtimes D_{2}$ (shared)%
}%
\box\matricesbox
\else
\hbox to \myboxwidth{%
\unhbox\onelinebox
}%
\fi
\else
\hbox to \myboxwidth{%
$\Pi_{4,20}$ spans $L_{145.1}$%
\hfil}%
\hbox to \myboxwidth{%
$4\slashinfty4\slashtwo\rtimes D_{2}$ (shared)%
\hfil}%
\box\matricesbox
\fi
}%
\hfill\discretionary{}{}{}%
\setbox\matricesbox=\hbox{%
{$\left[\!\llap{\phantom{%
\begingroup \smaller\smaller\smaller\begin{tabular}{@{}c@{}}%
\phantom{0}\\\phantom{0}\\\phantom{0}
\end{tabular}\endgroup%
}}\right.$}%
\begingroup \smaller\smaller\smaller\begin{tabular}{@{}c@{}}%
-1/5\\\phantom{0}\\\phantom{0}
\end{tabular}\endgroup%
\kern3pt%
\begingroup \smaller\smaller\smaller\begin{tabular}{@{}c@{}}%
\phantom{0}\\3/10\\\phantom{0}
\end{tabular}\endgroup%
\kern3pt%
\begingroup \smaller\smaller\smaller\begin{tabular}{@{}c@{}}%
\phantom{0}\\\phantom{0}\\9/2
\end{tabular}\endgroup%
{$\left.\llap{\phantom{%
\begingroup \smaller\smaller\smaller\begin{tabular}{@{}c@{}}%
\phantom{0}\\\phantom{0}\\\phantom{0}
\end{tabular}\endgroup%
}}\!\right]$}%
{$\left[\!\llap{\phantom{%
\begingroup \smaller\smaller\smaller\begin{tabular}{@{}c@{}}%
0\\0\\0
\end{tabular}\endgroup%
}}\right.$}%
\begingroup \smaller\smaller\smaller\begin{tabular}{@{}c@{}}%
3\\4\\0
\end{tabular}\endgroup%
\kern3pt%
\begingroup \smaller\smaller\smaller\begin{tabular}{@{}c@{}}%
3\\-1\\1
\end{tabular}\endgroup%
\kern3pt%
\begingroup \smaller\smaller\smaller\begin{tabular}{@{}c@{}}%
1\\-2\\0
\end{tabular}\endgroup%
{$\left.\llap{\phantom{%
\begingroup \smaller\smaller\smaller\begin{tabular}{@{}c@{}}%
0\\0\\0
\end{tabular}\endgroup%
}}\!\right]$}%
}%
\ifdim\wd\matricesbox>\halfwidth\myboxwidth=\hsize\else\myboxwidth=\halfwidth\fi
\vbox{%
\ifdim\myboxwidth=\hsize
\setbox\onelinebox=\hbox{%
\vbox{\hbox{%
$\Pi_{4,21}$ spans $L_{4.10}$%
}\hbox{%
$\infty|\infty2|2\rtimes D_{2}$ (shared)%
}%
}%
\hfill\copy\matricesbox
}%
\ifdim\wd\onelinebox>\myboxwidth
\hbox to \myboxwidth{%
$\Pi_{4,21}$ spans $L_{4.10}$%
\hfil
$\infty|\infty2|2\rtimes D_{2}$ (shared)%
}%
\box\matricesbox
\else
\hbox to \myboxwidth{%
\unhbox\onelinebox
}%
\fi
\else
\hbox to \myboxwidth{%
$\Pi_{4,21}$ spans $L_{4.10}$%
\hfil}%
\hbox to \myboxwidth{%
$\infty|\infty2|2\rtimes D_{2}$ (shared)%
\hfil}%
\box\matricesbox
\fi
}%
\hfill\discretionary{}{}{}%
\setbox\matricesbox=\hbox{%
{$\left[\!\llap{\phantom{%
\begingroup \smaller\smaller\smaller\begin{tabular}{@{}c@{}}%
\phantom{0}\\\phantom{0}\\\phantom{0}
\end{tabular}\endgroup%
}}\right.$}%
\begingroup \smaller\smaller\smaller\begin{tabular}{@{}c@{}}%
-1/7\\\phantom{0}\\\phantom{0}
\end{tabular}\endgroup%
\kern3pt%
\begingroup \smaller\smaller\smaller\begin{tabular}{@{}c@{}}%
\phantom{0}\\1/14\\\phantom{0}
\end{tabular}\endgroup%
\kern3pt%
\begingroup \smaller\smaller\smaller\begin{tabular}{@{}c@{}}%
\phantom{0}\\\phantom{0}\\21/2
\end{tabular}\endgroup%
{$\left.\llap{\phantom{%
\begingroup \smaller\smaller\smaller\begin{tabular}{@{}c@{}}%
\phantom{0}\\\phantom{0}\\\phantom{0}
\end{tabular}\endgroup%
}}\!\right]$}%
{$\left[\!\llap{\phantom{%
\begingroup \smaller\smaller\smaller\begin{tabular}{@{}c@{}}%
0\\0\\0
\end{tabular}\endgroup%
}}\right.$}%
\begingroup \smaller\smaller\smaller\begin{tabular}{@{}c@{}}%
2\\6\\0
\end{tabular}\endgroup%
\kern3pt%
\begingroup \smaller\smaller\smaller\begin{tabular}{@{}c@{}}%
6\\-3\\1
\end{tabular}\endgroup%
\kern3pt%
\begingroup \smaller\smaller\smaller\begin{tabular}{@{}c@{}}%
1\\-4\\0
\end{tabular}\endgroup%
{$\left.\llap{\phantom{%
\begingroup \smaller\smaller\smaller\begin{tabular}{@{}c@{}}%
0\\0\\0
\end{tabular}\endgroup%
}}\!\right]$}%
}%
\ifdim\wd\matricesbox>\halfwidth\myboxwidth=\hsize\else\myboxwidth=\halfwidth\fi
\vbox{%
\ifdim\myboxwidth=\hsize
\setbox\onelinebox=\hbox{%
\vbox{\hbox{%
$\Pi_{4,22}$ spans $L_{22.2}$%
}\hbox{%
$6|62|2\rtimes D_{2}$%
}%
}%
\hfill\copy\matricesbox
}%
\ifdim\wd\onelinebox>\myboxwidth
\hbox to \myboxwidth{%
$\Pi_{4,22}$ spans $L_{22.2}$%
\hfil
$6|62|2\rtimes D_{2}$%
}%
\box\matricesbox
\else
\hbox to \myboxwidth{%
\unhbox\onelinebox
}%
\fi
\else
\hbox to \myboxwidth{%
$\Pi_{4,22}$ spans $L_{22.2}$%
\hfil}%
\hbox to \myboxwidth{%
$6|62|2\rtimes D_{2}$%
\hfil}%
\box\matricesbox
\fi
}%
\hfill\discretionary{}{}{}%
\setbox\matricesbox=\hbox{%
{$\left[\!\llap{\phantom{%
\begingroup \smaller\smaller\smaller\begin{tabular}{@{}c@{}}%
\phantom{0}\\\phantom{0}\\\phantom{0}
\end{tabular}\endgroup%
}}\right.$}%
\begingroup \smaller\smaller\smaller\begin{tabular}{@{}c@{}}%
-1/40\\\phantom{0}\\\phantom{0}
\end{tabular}\endgroup%
\kern3pt%
\begingroup \smaller\smaller\smaller\begin{tabular}{@{}c@{}}%
\phantom{0}\\21/10\\\phantom{0}
\end{tabular}\endgroup%
\kern3pt%
\begingroup \smaller\smaller\smaller\begin{tabular}{@{}c@{}}%
\phantom{0}\\\phantom{0}\\84
\end{tabular}\endgroup%
{$\left.\llap{\phantom{%
\begingroup \smaller\smaller\smaller\begin{tabular}{@{}c@{}}%
\phantom{0}\\\phantom{0}\\\phantom{0}
\end{tabular}\endgroup%
}}\!\right]$}%
{$\left[\!\llap{\phantom{%
\begingroup \smaller\smaller\smaller\begin{tabular}{@{}c@{}}%
0\\0\\0
\end{tabular}\endgroup%
}}\right.$}%
\begingroup \smaller\smaller\smaller\begin{tabular}{@{}c@{}}%
14\\-3\\0
\end{tabular}\endgroup%
\kern3pt%
\begingroup \smaller\smaller\smaller\begin{tabular}{@{}c@{}}%
42\\1\\1
\end{tabular}\endgroup%
\kern3pt%
\begingroup \smaller\smaller\smaller\begin{tabular}{@{}c@{}}%
2\\1\\0
\end{tabular}\endgroup%
{$\left.\llap{\phantom{%
\begingroup \smaller\smaller\smaller\begin{tabular}{@{}c@{}}%
0\\0\\0
\end{tabular}\endgroup%
}}\!\right]$}%
}%
\ifdim\wd\matricesbox>\halfwidth\myboxwidth=\hsize\else\myboxwidth=\halfwidth\fi
\vbox{%
\ifdim\myboxwidth=\hsize
\setbox\onelinebox=\hbox{%
\vbox{\hbox{%
$\Pi_{4,23}$ spans $L_{22.10}$%
}\hbox{%
$6|62|2\rtimes D_{2}$%
}%
}%
\hfill\copy\matricesbox
}%
\ifdim\wd\onelinebox>\myboxwidth
\hbox to \myboxwidth{%
$\Pi_{4,23}$ spans $L_{22.10}$%
\hfil
$6|62|2\rtimes D_{2}$%
}%
\box\matricesbox
\else
\hbox to \myboxwidth{%
\unhbox\onelinebox
}%
\fi
\else
\hbox to \myboxwidth{%
$\Pi_{4,23}$ spans $L_{22.10}$%
\hfil}%
\hbox to \myboxwidth{%
$6|62|2\rtimes D_{2}$%
\hfil}%
\box\matricesbox
\fi
}%
\hfill\discretionary{}{}{}%
\setbox\matricesbox=\hbox{%
{$\left[\!\llap{\phantom{%
\begingroup \smaller\smaller\smaller\begin{tabular}{@{}c@{}}%
\phantom{0}\\\phantom{0}\\\phantom{0}
\end{tabular}\endgroup%
}}\right.$}%
\begingroup \smaller\smaller\smaller\begin{tabular}{@{}c@{}}%
-1/5\\\phantom{0}\\\phantom{0}
\end{tabular}\endgroup%
\kern3pt%
\begingroup \smaller\smaller\smaller\begin{tabular}{@{}c@{}}%
\phantom{0}\\6/5\\\phantom{0}
\end{tabular}\endgroup%
\kern3pt%
\begingroup \smaller\smaller\smaller\begin{tabular}{@{}c@{}}%
\phantom{0}\\\phantom{0}\\6
\end{tabular}\endgroup%
{$\left.\llap{\phantom{%
\begingroup \smaller\smaller\smaller\begin{tabular}{@{}c@{}}%
\phantom{0}\\\phantom{0}\\\phantom{0}
\end{tabular}\endgroup%
}}\!\right]$}%
{$\left[\!\llap{\phantom{%
\begingroup \smaller\smaller\smaller\begin{tabular}{@{}c@{}}%
0\\0\\0
\end{tabular}\endgroup%
}}\right.$}%
\begingroup \smaller\smaller\smaller\begin{tabular}{@{}c@{}}%
1\\-1\\0
\end{tabular}\endgroup%
\kern3pt%
\begingroup \smaller\smaller\smaller\begin{tabular}{@{}c@{}}%
4\\1\\1
\end{tabular}\endgroup%
\kern3pt%
\begingroup \smaller\smaller\smaller\begin{tabular}{@{}c@{}}%
3\\2\\0
\end{tabular}\endgroup%
{$\left.\llap{\phantom{%
\begingroup \smaller\smaller\smaller\begin{tabular}{@{}c@{}}%
0\\0\\0
\end{tabular}\endgroup%
}}\!\right]$}%
}%
\ifdim\wd\matricesbox>\halfwidth\myboxwidth=\hsize\else\myboxwidth=\halfwidth\fi
\vbox{%
\ifdim\myboxwidth=\hsize
\setbox\onelinebox=\hbox{%
\vbox{\hbox{%
$\Pi_{4,24}$ spans $L_{4.18}$%
}\hbox{%
$\infty|\infty2|2\rtimes D_{2}$ (shared)%
}%
}%
\hfill\copy\matricesbox
}%
\ifdim\wd\onelinebox>\myboxwidth
\hbox to \myboxwidth{%
$\Pi_{4,24}$ spans $L_{4.18}$%
\hfil
$\infty|\infty2|2\rtimes D_{2}$ (shared)%
}%
\box\matricesbox
\else
\hbox to \myboxwidth{%
\unhbox\onelinebox
}%
\fi
\else
\hbox to \myboxwidth{%
$\Pi_{4,24}$ spans $L_{4.18}$%
\hfil}%
\hbox to \myboxwidth{%
$\infty|\infty2|2\rtimes D_{2}$ (shared)%
\hfil}%
\box\matricesbox
\fi
}%
\hfill\discretionary{}{}{}%
\setbox\matricesbox=\hbox{%
{$\left[\!\llap{\phantom{%
\begingroup \smaller\smaller\smaller\begin{tabular}{@{}c@{}}%
\phantom{0}\\\phantom{0}\\\phantom{0}
\end{tabular}\endgroup%
}}\right.$}%
\begingroup \smaller\smaller\smaller\begin{tabular}{@{}c@{}}%
-1/7\\\phantom{0}\\\phantom{0}
\end{tabular}\endgroup%
\kern3pt%
\begingroup \smaller\smaller\smaller\begin{tabular}{@{}c@{}}%
\phantom{0}\\2/7\\\phantom{0}
\end{tabular}\endgroup%
\kern3pt%
\begingroup \smaller\smaller\smaller\begin{tabular}{@{}c@{}}%
\phantom{0}\\\phantom{0}\\16
\end{tabular}\endgroup%
{$\left.\llap{\phantom{%
\begingroup \smaller\smaller\smaller\begin{tabular}{@{}c@{}}%
\phantom{0}\\\phantom{0}\\\phantom{0}
\end{tabular}\endgroup%
}}\!\right]$}%
{$\left[\!\llap{\phantom{%
\begingroup \smaller\smaller\smaller\begin{tabular}{@{}c@{}}%
0\\0\\0
\end{tabular}\endgroup%
}}\right.$}%
\begingroup \smaller\smaller\smaller\begin{tabular}{@{}c@{}}%
2\\-3\\0
\end{tabular}\endgroup%
\kern3pt%
\begingroup \smaller\smaller\smaller\begin{tabular}{@{}c@{}}%
8\\2\\1
\end{tabular}\endgroup%
\kern3pt%
\begingroup \smaller\smaller\smaller\begin{tabular}{@{}c@{}}%
1\\2\\0
\end{tabular}\endgroup%
{$\left.\llap{\phantom{%
\begingroup \smaller\smaller\smaller\begin{tabular}{@{}c@{}}%
0\\0\\0
\end{tabular}\endgroup%
}}\!\right]$}%
}%
\ifdim\wd\matricesbox>\halfwidth\myboxwidth=\hsize\else\myboxwidth=\halfwidth\fi
\vbox{%
\ifdim\myboxwidth=\hsize
\setbox\onelinebox=\hbox{%
\vbox{\hbox{%
$\Pi_{4,25}$ spans $L_{141.11}$%
}\hbox{%
$\infty|\infty2|2\rtimes D_{2}$ (shared)%
}%
}%
\hfill\copy\matricesbox
}%
\ifdim\wd\onelinebox>\myboxwidth
\hbox to \myboxwidth{%
$\Pi_{4,25}$ spans $L_{141.11}$%
\hfil
$\infty|\infty2|2\rtimes D_{2}$ (shared)%
}%
\box\matricesbox
\else
\hbox to \myboxwidth{%
\unhbox\onelinebox
}%
\fi
\else
\hbox to \myboxwidth{%
$\Pi_{4,25}$ spans $L_{141.11}$%
\hfil}%
\hbox to \myboxwidth{%
$\infty|\infty2|2\rtimes D_{2}$ (shared)%
\hfil}%
\box\matricesbox
\fi
}%
\hfill\discretionary{}{}{}%
\setbox\matricesbox=\hbox{%
{$\left[\!\llap{\phantom{%
\begingroup \smaller\smaller\smaller\begin{tabular}{@{}c@{}}%
\phantom{0}\\\phantom{0}\\\phantom{0}
\end{tabular}\endgroup%
}}\right.$}%
\begingroup \smaller\smaller\smaller\begin{tabular}{@{}c@{}}%
-1/40\\\phantom{0}\\\phantom{0}
\end{tabular}\endgroup%
\kern3pt%
\begingroup \smaller\smaller\smaller\begin{tabular}{@{}c@{}}%
\phantom{0}\\3/5\\\phantom{0}
\end{tabular}\endgroup%
\kern3pt%
\begingroup \smaller\smaller\smaller\begin{tabular}{@{}c@{}}%
\phantom{0}\\\phantom{0}\\3/2
\end{tabular}\endgroup%
{$\left.\llap{\phantom{%
\begingroup \smaller\smaller\smaller\begin{tabular}{@{}c@{}}%
\phantom{0}\\\phantom{0}\\\phantom{0}
\end{tabular}\endgroup%
}}\!\right]$}%
{$\left[\!\llap{\phantom{%
\begingroup \smaller\smaller\smaller\begin{tabular}{@{}c@{}}%
0\\0\\0
\end{tabular}\endgroup%
}}\right.$}%
\begingroup \smaller\smaller\smaller\begin{tabular}{@{}c@{}}%
2\\-1\\1
\end{tabular}\endgroup%
\kern3pt%
\begingroup \smaller\smaller\smaller\begin{tabular}{@{}c@{}}%
12\\4\\2
\end{tabular}\endgroup%
{$\left.\llap{\phantom{%
\begingroup \smaller\smaller\smaller\begin{tabular}{@{}c@{}}%
0\\0\\0
\end{tabular}\endgroup%
}}\!\right]$}%
}%
\ifdim\wd\matricesbox>\halfwidth\myboxwidth=\hsize\else\myboxwidth=\halfwidth\fi
\vbox{%
\ifdim\myboxwidth=\hsize
\setbox\onelinebox=\hbox{%
\vbox{\hbox{%
$\Pi_{4,26}$ spans $L_{3.4}$%
}\hbox{%
$\slashthree2\slashtwo2\rtimes D_{2}$ (shared)%
}%
}%
\hfill\copy\matricesbox
}%
\ifdim\wd\onelinebox>\myboxwidth
\hbox to \myboxwidth{%
$\Pi_{4,26}$ spans $L_{3.4}$%
\hfil
$\slashthree2\slashtwo2\rtimes D_{2}$ (shared)%
}%
\box\matricesbox
\else
\hbox to \myboxwidth{%
\unhbox\onelinebox
}%
\fi
\else
\hbox to \myboxwidth{%
$\Pi_{4,26}$ spans $L_{3.4}$%
\hfil}%
\hbox to \myboxwidth{%
$\slashthree2\slashtwo2\rtimes D_{2}$ (shared)%
\hfil}%
\box\matricesbox
\fi
}%
\hfill\discretionary{}{}{}%
\setbox\matricesbox=\hbox{%
{$\left[\!\llap{\phantom{%
\begingroup \smaller\smaller\smaller\begin{tabular}{@{}c@{}}%
\phantom{0}\\\phantom{0}\\\phantom{0}
\end{tabular}\endgroup%
}}\right.$}%
\begingroup \smaller\smaller\smaller\begin{tabular}{@{}c@{}}%
-1/28\\\phantom{0}\\\phantom{0}
\end{tabular}\endgroup%
\kern3pt%
\begingroup \smaller\smaller\smaller\begin{tabular}{@{}c@{}}%
\phantom{0}\\9/14\\\phantom{0}
\end{tabular}\endgroup%
\kern3pt%
\begingroup \smaller\smaller\smaller\begin{tabular}{@{}c@{}}%
\phantom{0}\\\phantom{0}\\3/2
\end{tabular}\endgroup%
{$\left.\llap{\phantom{%
\begingroup \smaller\smaller\smaller\begin{tabular}{@{}c@{}}%
\phantom{0}\\\phantom{0}\\\phantom{0}
\end{tabular}\endgroup%
}}\!\right]$}%
{$\left[\!\llap{\phantom{%
\begingroup \smaller\smaller\smaller\begin{tabular}{@{}c@{}}%
0\\0\\0
\end{tabular}\endgroup%
}}\right.$}%
\begingroup \smaller\smaller\smaller\begin{tabular}{@{}c@{}}%
2\\1\\1
\end{tabular}\endgroup%
\kern3pt%
\begingroup \smaller\smaller\smaller\begin{tabular}{@{}c@{}}%
18\\-5\\3
\end{tabular}\endgroup%
{$\left.\llap{\phantom{%
\begingroup \smaller\smaller\smaller\begin{tabular}{@{}c@{}}%
0\\0\\0
\end{tabular}\endgroup%
}}\!\right]$}%
}%
\ifdim\wd\matricesbox>\halfwidth\myboxwidth=\hsize\else\myboxwidth=\halfwidth\fi
\vbox{%
\ifdim\myboxwidth=\hsize
\setbox\onelinebox=\hbox{%
\vbox{\hbox{%
$\Pi_{4,27}=\hbox{GN}_{8}$ spans $L_{155.1}$%
}\hbox{%
$\slashthree2\slashthree2\rtimes D_{2}$ (shared)%
}%
}%
\hfill\copy\matricesbox
}%
\ifdim\wd\onelinebox>\myboxwidth
\hbox to \myboxwidth{%
$\Pi_{4,27}=\hbox{GN}_{8}$ spans $L_{155.1}$%
\hfil
$\slashthree2\slashthree2\rtimes D_{2}$ (shared)%
}%
\box\matricesbox
\else
\hbox to \myboxwidth{%
\unhbox\onelinebox
}%
\fi
\else
\hbox to \myboxwidth{%
$\Pi_{4,27}=\hbox{GN}_{8}$ spans $L_{155.1}$%
\hfil}%
\hbox to \myboxwidth{%
$\slashthree2\slashthree2\rtimes D_{2}$ (shared)%
\hfil}%
\box\matricesbox
\fi
}%
\hfill\discretionary{}{}{}%
\setbox\matricesbox=\hbox{%
{$\left[\!\llap{\phantom{%
\begingroup \smaller\smaller\smaller\begin{tabular}{@{}c@{}}%
\phantom{0}\\\phantom{0}\\\phantom{0}
\end{tabular}\endgroup%
}}\right.$}%
\begingroup \smaller\smaller\smaller\begin{tabular}{@{}c@{}}%
-1/24\\\phantom{0}\\\phantom{0}
\end{tabular}\endgroup%
\kern3pt%
\begingroup \smaller\smaller\smaller\begin{tabular}{@{}c@{}}%
\phantom{0}\\2/3\\\phantom{0}
\end{tabular}\endgroup%
\kern3pt%
\begingroup \smaller\smaller\smaller\begin{tabular}{@{}c@{}}%
\phantom{0}\\\phantom{0}\\3/2
\end{tabular}\endgroup%
{$\left.\llap{\phantom{%
\begingroup \smaller\smaller\smaller\begin{tabular}{@{}c@{}}%
\phantom{0}\\\phantom{0}\\\phantom{0}
\end{tabular}\endgroup%
}}\!\right]$}%
{$\left[\!\llap{\phantom{%
\begingroup \smaller\smaller\smaller\begin{tabular}{@{}c@{}}%
0\\0\\0
\end{tabular}\endgroup%
}}\right.$}%
\begingroup \smaller\smaller\smaller\begin{tabular}{@{}c@{}}%
2\\-1\\1
\end{tabular}\endgroup%
\kern3pt%
\begingroup \smaller\smaller\smaller\begin{tabular}{@{}c@{}}%
24\\6\\4
\end{tabular}\endgroup%
{$\left.\llap{\phantom{%
\begingroup \smaller\smaller\smaller\begin{tabular}{@{}c@{}}%
0\\0\\0
\end{tabular}\endgroup%
}}\!\right]$}%
}%
\ifdim\wd\matricesbox>\halfwidth\myboxwidth=\hsize\else\myboxwidth=\halfwidth\fi
\vbox{%
\ifdim\myboxwidth=\hsize
\setbox\onelinebox=\hbox{%
\vbox{\hbox{%
$\Pi_{4,28}$ spans $L_{7.13}$%
}\hbox{%
$\slashthree2\slashinfty2\rtimes D_{2}$ (shared)%
}%
}%
\hfill\copy\matricesbox
}%
\ifdim\wd\onelinebox>\myboxwidth
\hbox to \myboxwidth{%
$\Pi_{4,28}$ spans $L_{7.13}$%
\hfil
$\slashthree2\slashinfty2\rtimes D_{2}$ (shared)%
}%
\box\matricesbox
\else
\hbox to \myboxwidth{%
\unhbox\onelinebox
}%
\fi
\else
\hbox to \myboxwidth{%
$\Pi_{4,28}$ spans $L_{7.13}$%
\hfil}%
\hbox to \myboxwidth{%
$\slashthree2\slashinfty2\rtimes D_{2}$ (shared)%
\hfil}%
\box\matricesbox
\fi
}%
\hfill\discretionary{}{}{}%
\setbox\matricesbox=\hbox{%
{$\left[\!\llap{\phantom{%
\begingroup \smaller\smaller\smaller\begin{tabular}{@{}c@{}}%
\phantom{0}\\\phantom{0}\\\phantom{0}
\end{tabular}\endgroup%
}}\right.$}%
\begingroup \smaller\smaller\smaller\begin{tabular}{@{}c@{}}%
-1/19\\\phantom{0}\\\phantom{0}
\end{tabular}\endgroup%
\kern3pt%
\begingroup \smaller\smaller\smaller\begin{tabular}{@{}c@{}}%
\phantom{0}\\3/38\\\phantom{0}
\end{tabular}\endgroup%
\kern3pt%
\begingroup \smaller\smaller\smaller\begin{tabular}{@{}c@{}}%
\phantom{0}\\\phantom{0}\\3/2
\end{tabular}\endgroup%
{$\left.\llap{\phantom{%
\begingroup \smaller\smaller\smaller\begin{tabular}{@{}c@{}}%
\phantom{0}\\\phantom{0}\\\phantom{0}
\end{tabular}\endgroup%
}}\!\right]$}%
{$\left[\!\llap{\phantom{%
\begingroup \smaller\smaller\smaller\begin{tabular}{@{}c@{}}%
0\\0\\0
\end{tabular}\endgroup%
}}\right.$}%
\begingroup \smaller\smaller\smaller\begin{tabular}{@{}c@{}}%
2\\3\\-1
\end{tabular}\endgroup%
\kern3pt%
\begingroup \smaller\smaller\smaller\begin{tabular}{@{}c@{}}%
3\\-5\\-1
\end{tabular}\endgroup%
{$\left.\llap{\phantom{%
\begingroup \smaller\smaller\smaller\begin{tabular}{@{}c@{}}%
0\\0\\0
\end{tabular}\endgroup%
}}\!\right]$}%
}%
\ifdim\wd\matricesbox>\halfwidth\myboxwidth=\hsize\else\myboxwidth=\halfwidth\fi
\vbox{%
\ifdim\myboxwidth=\hsize
\setbox\onelinebox=\hbox{%
\vbox{\hbox{%
$\Pi_{4,29}$ spans $L_{3.3}$%
}\hbox{%
$\slashthree2\slashtwo2\rtimes D_{2}$ (shared)%
}%
}%
\hfill\copy\matricesbox
}%
\ifdim\wd\onelinebox>\myboxwidth
\hbox to \myboxwidth{%
$\Pi_{4,29}$ spans $L_{3.3}$%
\hfil
$\slashthree2\slashtwo2\rtimes D_{2}$ (shared)%
}%
\box\matricesbox
\else
\hbox to \myboxwidth{%
\unhbox\onelinebox
}%
\fi
\else
\hbox to \myboxwidth{%
$\Pi_{4,29}$ spans $L_{3.3}$%
\hfil}%
\hbox to \myboxwidth{%
$\slashthree2\slashtwo2\rtimes D_{2}$ (shared)%
\hfil}%
\box\matricesbox
\fi
}%
\hfill\discretionary{}{}{}%
\setbox\matricesbox=\hbox{%
{$\left[\!\llap{\phantom{%
\begingroup \smaller\smaller\smaller\begin{tabular}{@{}c@{}}%
\phantom{0}\\\phantom{0}\\\phantom{0}
\end{tabular}\endgroup%
}}\right.$}%
\begingroup \smaller\smaller\smaller\begin{tabular}{@{}c@{}}%
-1/10\\\phantom{0}\\\phantom{0}
\end{tabular}\endgroup%
\kern3pt%
\begingroup \smaller\smaller\smaller\begin{tabular}{@{}c@{}}%
\phantom{0}\\9/10\\\phantom{0}
\end{tabular}\endgroup%
\kern3pt%
\begingroup \smaller\smaller\smaller\begin{tabular}{@{}c@{}}%
\phantom{0}\\\phantom{0}\\3/2
\end{tabular}\endgroup%
{$\left.\llap{\phantom{%
\begingroup \smaller\smaller\smaller\begin{tabular}{@{}c@{}}%
\phantom{0}\\\phantom{0}\\\phantom{0}
\end{tabular}\endgroup%
}}\!\right]$}%
{$\left[\!\llap{\phantom{%
\begingroup \smaller\smaller\smaller\begin{tabular}{@{}c@{}}%
0\\0\\0
\end{tabular}\endgroup%
}}\right.$}%
\begingroup \smaller\smaller\smaller\begin{tabular}{@{}c@{}}%
2\\1\\1
\end{tabular}\endgroup%
\kern3pt%
\begingroup \smaller\smaller\smaller\begin{tabular}{@{}c@{}}%
6\\-2\\2
\end{tabular}\endgroup%
{$\left.\llap{\phantom{%
\begingroup \smaller\smaller\smaller\begin{tabular}{@{}c@{}}%
0\\0\\0
\end{tabular}\endgroup%
}}\!\right]$}%
}%
\ifdim\wd\matricesbox>\halfwidth\myboxwidth=\hsize\else\myboxwidth=\halfwidth\fi
\vbox{%
\ifdim\myboxwidth=\hsize
\setbox\onelinebox=\hbox{%
\vbox{\hbox{%
$\Pi_{4,30}$ spans $L_{7.8}$%
}\hbox{%
$\slashthree2\slashinfty2\rtimes D_{2}$ (shared)%
}%
}%
\hfill\copy\matricesbox
}%
\ifdim\wd\onelinebox>\myboxwidth
\hbox to \myboxwidth{%
$\Pi_{4,30}$ spans $L_{7.8}$%
\hfil
$\slashthree2\slashinfty2\rtimes D_{2}$ (shared)%
}%
\box\matricesbox
\else
\hbox to \myboxwidth{%
\unhbox\onelinebox
}%
\fi
\else
\hbox to \myboxwidth{%
$\Pi_{4,30}$ spans $L_{7.8}$%
\hfil}%
\hbox to \myboxwidth{%
$\slashthree2\slashinfty2\rtimes D_{2}$ (shared)%
\hfil}%
\box\matricesbox
\fi
}%
\hfill\discretionary{}{}{}%
\setbox\matricesbox=\hbox{%
{$\left[\!\llap{\phantom{%
\begingroup \smaller\smaller\smaller\begin{tabular}{@{}c@{}}%
\phantom{0}\\\phantom{0}\\\phantom{0}
\end{tabular}\endgroup%
}}\right.$}%
\begingroup \smaller\smaller\smaller\begin{tabular}{@{}c@{}}%
-1/40\\\phantom{0}\\\phantom{0}
\end{tabular}\endgroup%
\kern3pt%
\begingroup \smaller\smaller\smaller\begin{tabular}{@{}c@{}}%
\phantom{0}\\12/5\\\phantom{0}
\end{tabular}\endgroup%
\kern3pt%
\begingroup \smaller\smaller\smaller\begin{tabular}{@{}c@{}}%
\phantom{0}\\\phantom{0}\\1/2
\end{tabular}\endgroup%
{$\left.\llap{\phantom{%
\begingroup \smaller\smaller\smaller\begin{tabular}{@{}c@{}}%
\phantom{0}\\\phantom{0}\\\phantom{0}
\end{tabular}\endgroup%
}}\!\right]$}%
{$\left[\!\llap{\phantom{%
\begingroup \smaller\smaller\smaller\begin{tabular}{@{}c@{}}%
0\\0\\0
\end{tabular}\endgroup%
}}\right.$}%
\begingroup \smaller\smaller\smaller\begin{tabular}{@{}c@{}}%
6\\-1\\3
\end{tabular}\endgroup%
\kern3pt%
\begingroup \smaller\smaller\smaller\begin{tabular}{@{}c@{}}%
4\\1\\2
\end{tabular}\endgroup%
{$\left.\llap{\phantom{%
\begingroup \smaller\smaller\smaller\begin{tabular}{@{}c@{}}%
0\\0\\0
\end{tabular}\endgroup%
}}\!\right]$}%
}%
\ifdim\wd\matricesbox>\halfwidth\myboxwidth=\hsize\else\myboxwidth=\halfwidth\fi
\vbox{%
\ifdim\myboxwidth=\hsize
\setbox\onelinebox=\hbox{%
\vbox{\hbox{%
$\Pi_{4,31}$ spans $L_{3.1}$%
}\hbox{%
$\slashthree2\slashtwo2\rtimes D_{2}$ (shared)%
}%
}%
\hfill\copy\matricesbox
}%
\ifdim\wd\onelinebox>\myboxwidth
\hbox to \myboxwidth{%
$\Pi_{4,31}$ spans $L_{3.1}$%
\hfil
$\slashthree2\slashtwo2\rtimes D_{2}$ (shared)%
}%
\box\matricesbox
\else
\hbox to \myboxwidth{%
\unhbox\onelinebox
}%
\fi
\else
\hbox to \myboxwidth{%
$\Pi_{4,31}$ spans $L_{3.1}$%
\hfil}%
\hbox to \myboxwidth{%
$\slashthree2\slashtwo2\rtimes D_{2}$ (shared)%
\hfil}%
\box\matricesbox
\fi
}%
\hfill\discretionary{}{}{}%
\setbox\matricesbox=\hbox{%
{$\left[\!\llap{\phantom{%
\begingroup \smaller\smaller\smaller\begin{tabular}{@{}c@{}}%
\phantom{0}\\\phantom{0}\\\phantom{0}
\end{tabular}\endgroup%
}}\right.$}%
\begingroup \smaller\smaller\smaller\begin{tabular}{@{}c@{}}%
-3/56\\\phantom{0}\\\phantom{0}
\end{tabular}\endgroup%
\kern3pt%
\begingroup \smaller\smaller\smaller\begin{tabular}{@{}c@{}}%
\phantom{0}\\24/7\\\phantom{0}
\end{tabular}\endgroup%
\kern3pt%
\begingroup \smaller\smaller\smaller\begin{tabular}{@{}c@{}}%
\phantom{0}\\\phantom{0}\\1/2
\end{tabular}\endgroup%
{$\left.\llap{\phantom{%
\begingroup \smaller\smaller\smaller\begin{tabular}{@{}c@{}}%
\phantom{0}\\\phantom{0}\\\phantom{0}
\end{tabular}\endgroup%
}}\!\right]$}%
{$\left[\!\llap{\phantom{%
\begingroup \smaller\smaller\smaller\begin{tabular}{@{}c@{}}%
0\\0\\0
\end{tabular}\endgroup%
}}\right.$}%
\begingroup \smaller\smaller\smaller\begin{tabular}{@{}c@{}}%
6\\-1\\-3
\end{tabular}\endgroup%
\kern3pt%
\begingroup \smaller\smaller\smaller\begin{tabular}{@{}c@{}}%
8\\1\\-4
\end{tabular}\endgroup%
{$\left.\llap{\phantom{%
\begingroup \smaller\smaller\smaller\begin{tabular}{@{}c@{}}%
0\\0\\0
\end{tabular}\endgroup%
}}\!\right]$}%
}%
\ifdim\wd\matricesbox>\halfwidth\myboxwidth=\hsize\else\myboxwidth=\halfwidth\fi
\vbox{%
\ifdim\myboxwidth=\hsize
\setbox\onelinebox=\hbox{%
\vbox{\hbox{%
$\Pi_{4,32}$ spans $L_{7.9}$%
}\hbox{%
$\slashthree2\slashinfty2\rtimes D_{2}$ (shared)%
}%
}%
\hfill\copy\matricesbox
}%
\ifdim\wd\onelinebox>\myboxwidth
\hbox to \myboxwidth{%
$\Pi_{4,32}$ spans $L_{7.9}$%
\hfil
$\slashthree2\slashinfty2\rtimes D_{2}$ (shared)%
}%
\box\matricesbox
\else
\hbox to \myboxwidth{%
\unhbox\onelinebox
}%
\fi
\else
\hbox to \myboxwidth{%
$\Pi_{4,32}$ spans $L_{7.9}$%
\hfil}%
\hbox to \myboxwidth{%
$\slashthree2\slashinfty2\rtimes D_{2}$ (shared)%
\hfil}%
\box\matricesbox
\fi
}%
\hfill\discretionary{}{}{}%
\setbox\matricesbox=\hbox{%
{$\left[\!\llap{\phantom{%
\begingroup \smaller\smaller\smaller\begin{tabular}{@{}c@{}}%
\phantom{0}\\\phantom{0}\\\phantom{0}
\end{tabular}\endgroup%
}}\right.$}%
\begingroup \smaller\smaller\smaller\begin{tabular}{@{}c@{}}%
-1/9\\\phantom{0}\\\phantom{0}
\end{tabular}\endgroup%
\kern3pt%
\begingroup \smaller\smaller\smaller\begin{tabular}{@{}c@{}}%
\phantom{0}\\1/9\\\phantom{0}
\end{tabular}\endgroup%
\kern3pt%
\begingroup \smaller\smaller\smaller\begin{tabular}{@{}c@{}}%
\phantom{0}\\\phantom{0}\\3
\end{tabular}\endgroup%
{$\left.\llap{\phantom{%
\begingroup \smaller\smaller\smaller\begin{tabular}{@{}c@{}}%
\phantom{0}\\\phantom{0}\\\phantom{0}
\end{tabular}\endgroup%
}}\!\right]$}%
{$\left[\!\llap{\phantom{%
\begingroup \smaller\smaller\smaller\begin{tabular}{@{}c@{}}%
0\\0\\0
\end{tabular}\endgroup%
}}\right.$}%
\begingroup \smaller\smaller\smaller\begin{tabular}{@{}c@{}}%
4\\-5\\-1
\end{tabular}\endgroup%
\kern3pt%
\begingroup \smaller\smaller\smaller\begin{tabular}{@{}c@{}}%
3\\3\\-1
\end{tabular}\endgroup%
{$\left.\llap{\phantom{%
\begingroup \smaller\smaller\smaller\begin{tabular}{@{}c@{}}%
0\\0\\0
\end{tabular}\endgroup%
}}\!\right]$}%
}%
\ifdim\wd\matricesbox>\halfwidth\myboxwidth=\hsize\else\myboxwidth=\halfwidth\fi
\vbox{%
\ifdim\myboxwidth=\hsize
\setbox\onelinebox=\hbox{%
\vbox{\hbox{%
$\Pi_{4,33}$ spans $L_{7.11}$%
}\hbox{%
$\slashthree2\slashinfty2\rtimes D_{2}$ (shared)%
}%
}%
\hfill\copy\matricesbox
}%
\ifdim\wd\onelinebox>\myboxwidth
\hbox to \myboxwidth{%
$\Pi_{4,33}$ spans $L_{7.11}$%
\hfil
$\slashthree2\slashinfty2\rtimes D_{2}$ (shared)%
}%
\box\matricesbox
\else
\hbox to \myboxwidth{%
\unhbox\onelinebox
}%
\fi
\else
\hbox to \myboxwidth{%
$\Pi_{4,33}$ spans $L_{7.11}$%
\hfil}%
\hbox to \myboxwidth{%
$\slashthree2\slashinfty2\rtimes D_{2}$ (shared)%
\hfil}%
\box\matricesbox
\fi
}%
\hfill\discretionary{}{}{}%
\setbox\matricesbox=\hbox{%
{$\left[\!\llap{\phantom{%
\begingroup \smaller\smaller\smaller\begin{tabular}{@{}c@{}}%
\phantom{0}\\\phantom{0}\\\phantom{0}
\end{tabular}\endgroup%
}}\right.$}%
\begingroup \smaller\smaller\smaller\begin{tabular}{@{}c@{}}%
-1/25\\\phantom{0}\\\phantom{0}
\end{tabular}\endgroup%
\kern3pt%
\begingroup \smaller\smaller\smaller\begin{tabular}{@{}c@{}}%
\phantom{0}\\3/50\\\phantom{0}
\end{tabular}\endgroup%
\kern3pt%
\begingroup \smaller\smaller\smaller\begin{tabular}{@{}c@{}}%
\phantom{0}\\\phantom{0}\\1/2
\end{tabular}\endgroup%
{$\left.\llap{\phantom{%
\begingroup \smaller\smaller\smaller\begin{tabular}{@{}c@{}}%
\phantom{0}\\\phantom{0}\\\phantom{0}
\end{tabular}\endgroup%
}}\!\right]$}%
{$\left[\!\llap{\phantom{%
\begingroup \smaller\smaller\smaller\begin{tabular}{@{}c@{}}%
0\\0\\0
\end{tabular}\endgroup%
}}\right.$}%
\begingroup \smaller\smaller\smaller\begin{tabular}{@{}c@{}}%
6\\7\\3
\end{tabular}\endgroup%
\kern3pt%
\begingroup \smaller\smaller\smaller\begin{tabular}{@{}c@{}}%
1\\-3\\1
\end{tabular}\endgroup%
{$\left.\llap{\phantom{%
\begingroup \smaller\smaller\smaller\begin{tabular}{@{}c@{}}%
0\\0\\0
\end{tabular}\endgroup%
}}\!\right]$}%
}%
\ifdim\wd\matricesbox>\halfwidth\myboxwidth=\hsize\else\myboxwidth=\halfwidth\fi
\vbox{%
\ifdim\myboxwidth=\hsize
\setbox\onelinebox=\hbox{%
\vbox{\hbox{%
$\Pi_{4,34}$ spans $L_{3.2}$%
}\hbox{%
$\slashthree2\slashtwo2\rtimes D_{2}$ (shared)%
}%
}%
\hfill\copy\matricesbox
}%
\ifdim\wd\onelinebox>\myboxwidth
\hbox to \myboxwidth{%
$\Pi_{4,34}$ spans $L_{3.2}$%
\hfil
$\slashthree2\slashtwo2\rtimes D_{2}$ (shared)%
}%
\box\matricesbox
\else
\hbox to \myboxwidth{%
\unhbox\onelinebox
}%
\fi
\else
\hbox to \myboxwidth{%
$\Pi_{4,34}$ spans $L_{3.2}$%
\hfil}%
\hbox to \myboxwidth{%
$\slashthree2\slashtwo2\rtimes D_{2}$ (shared)%
\hfil}%
\box\matricesbox
\fi
}%
\hfill\discretionary{}{}{}%
\setbox\matricesbox=\hbox{%
{$\left[\!\llap{\phantom{%
\begingroup \smaller\smaller\smaller\begin{tabular}{@{}c@{}}%
\phantom{0}\\\phantom{0}\\\phantom{0}
\end{tabular}\endgroup%
}}\right.$}%
\begingroup \smaller\smaller\smaller\begin{tabular}{@{}c@{}}%
-3/26\\\phantom{0}\\\phantom{0}
\end{tabular}\endgroup%
\kern3pt%
\begingroup \smaller\smaller\smaller\begin{tabular}{@{}c@{}}%
\phantom{0}\\3/26\\\phantom{0}
\end{tabular}\endgroup%
\kern3pt%
\begingroup \smaller\smaller\smaller\begin{tabular}{@{}c@{}}%
\phantom{0}\\\phantom{0}\\1/2
\end{tabular}\endgroup%
{$\left.\llap{\phantom{%
\begingroup \smaller\smaller\smaller\begin{tabular}{@{}c@{}}%
\phantom{0}\\\phantom{0}\\\phantom{0}
\end{tabular}\endgroup%
}}\!\right]$}%
{$\left[\!\llap{\phantom{%
\begingroup \smaller\smaller\smaller\begin{tabular}{@{}c@{}}%
0\\0\\0
\end{tabular}\endgroup%
}}\right.$}%
\begingroup \smaller\smaller\smaller\begin{tabular}{@{}c@{}}%
6\\-7\\3
\end{tabular}\endgroup%
\kern3pt%
\begingroup \smaller\smaller\smaller\begin{tabular}{@{}c@{}}%
2\\2\\2
\end{tabular}\endgroup%
{$\left.\llap{\phantom{%
\begingroup \smaller\smaller\smaller\begin{tabular}{@{}c@{}}%
0\\0\\0
\end{tabular}\endgroup%
}}\!\right]$}%
}%
\ifdim\wd\matricesbox>\halfwidth\myboxwidth=\hsize\else\myboxwidth=\halfwidth\fi
\vbox{%
\ifdim\myboxwidth=\hsize
\setbox\onelinebox=\hbox{%
\vbox{\hbox{%
$\Pi_{4,35}$ spans $L_{7.6}$%
}\hbox{%
$\slashthree2\slashinfty2\rtimes D_{2}$ (shared)%
}%
}%
\hfill\copy\matricesbox
}%
\ifdim\wd\onelinebox>\myboxwidth
\hbox to \myboxwidth{%
$\Pi_{4,35}$ spans $L_{7.6}$%
\hfil
$\slashthree2\slashinfty2\rtimes D_{2}$ (shared)%
}%
\box\matricesbox
\else
\hbox to \myboxwidth{%
\unhbox\onelinebox
}%
\fi
\else
\hbox to \myboxwidth{%
$\Pi_{4,35}$ spans $L_{7.6}$%
\hfil}%
\hbox to \myboxwidth{%
$\slashthree2\slashinfty2\rtimes D_{2}$ (shared)%
\hfil}%
\box\matricesbox
\fi
}%
\hfill\discretionary{}{}{}%
\setbox\matricesbox=\hbox{%
{$\left[\!\llap{\phantom{%
\begingroup \smaller\smaller\smaller\begin{tabular}{@{}c@{}}%
\phantom{0}\\\phantom{0}\\\phantom{0}
\end{tabular}\endgroup%
}}\right.$}%
\begingroup \smaller\smaller\smaller\begin{tabular}{@{}c@{}}%
-3/25\\\phantom{0}\\\phantom{0}
\end{tabular}\endgroup%
\kern3pt%
\begingroup \smaller\smaller\smaller\begin{tabular}{@{}c@{}}%
\phantom{0}\\3/25\\\phantom{0}
\end{tabular}\endgroup%
\kern3pt%
\begingroup \smaller\smaller\smaller\begin{tabular}{@{}c@{}}%
\phantom{0}\\\phantom{0}\\1
\end{tabular}\endgroup%
{$\left.\llap{\phantom{%
\begingroup \smaller\smaller\smaller\begin{tabular}{@{}c@{}}%
\phantom{0}\\\phantom{0}\\\phantom{0}
\end{tabular}\endgroup%
}}\!\right]$}%
{$\left[\!\llap{\phantom{%
\begingroup \smaller\smaller\smaller\begin{tabular}{@{}c@{}}%
0\\0\\0
\end{tabular}\endgroup%
}}\right.$}%
\begingroup \smaller\smaller\smaller\begin{tabular}{@{}c@{}}%
12\\13\\-3
\end{tabular}\endgroup%
\kern3pt%
\begingroup \smaller\smaller\smaller\begin{tabular}{@{}c@{}}%
1\\-1\\-1
\end{tabular}\endgroup%
{$\left.\llap{\phantom{%
\begingroup \smaller\smaller\smaller\begin{tabular}{@{}c@{}}%
0\\0\\0
\end{tabular}\endgroup%
}}\!\right]$}%
}%
\ifdim\wd\matricesbox>\halfwidth\myboxwidth=\hsize\else\myboxwidth=\halfwidth\fi
\vbox{%
\ifdim\myboxwidth=\hsize
\setbox\onelinebox=\hbox{%
\vbox{\hbox{%
$\Pi_{4,36}=A_{3,II,\onebar}$ spans $L_{7.7}$%
}\hbox{%
$\slashthree2\slashinfty2\rtimes D_{2}$ (shared)%
}%
}%
\hfill\copy\matricesbox
}%
\ifdim\wd\onelinebox>\myboxwidth
\hbox to \myboxwidth{%
$\Pi_{4,36}=A_{3,II,\onebar}$ spans $L_{7.7}$%
\hfil
$\slashthree2\slashinfty2\rtimes D_{2}$ (shared)%
}%
\box\matricesbox
\else
\hbox to \myboxwidth{%
\unhbox\onelinebox
}%
\fi
\else
\hbox to \myboxwidth{%
$\Pi_{4,36}=A_{3,II,\onebar}$ spans $L_{7.7}$%
\hfil}%
\hbox to \myboxwidth{%
$\slashthree2\slashinfty2\rtimes D_{2}$ (shared)%
\hfil}%
\box\matricesbox
\fi
}%
\hfill\discretionary{}{}{}%
\setbox\matricesbox=\hbox{%
{$\left[\!\llap{\phantom{%
\begingroup \smaller\smaller\smaller\begin{tabular}{@{}c@{}}%
\phantom{0}\\\phantom{0}\\\phantom{0}
\end{tabular}\endgroup%
}}\right.$}%
\begingroup \smaller\smaller\smaller\begin{tabular}{@{}c@{}}%
-4/17\\\phantom{0}\\\phantom{0}
\end{tabular}\endgroup%
\kern3pt%
\begingroup \smaller\smaller\smaller\begin{tabular}{@{}c@{}}%
\phantom{0}\\1/34\\\phantom{0}
\end{tabular}\endgroup%
\kern3pt%
\begingroup \smaller\smaller\smaller\begin{tabular}{@{}c@{}}%
\phantom{0}\\\phantom{0}\\1/2
\end{tabular}\endgroup%
{$\left.\llap{\phantom{%
\begingroup \smaller\smaller\smaller\begin{tabular}{@{}c@{}}%
\phantom{0}\\\phantom{0}\\\phantom{0}
\end{tabular}\endgroup%
}}\!\right]$}%
{$\left[\!\llap{\phantom{%
\begingroup \smaller\smaller\smaller\begin{tabular}{@{}c@{}}%
0\\0\\0
\end{tabular}\endgroup%
}}\right.$}%
\begingroup \smaller\smaller\smaller\begin{tabular}{@{}c@{}}%
4\\14\\-2
\end{tabular}\endgroup%
\kern3pt%
\begingroup \smaller\smaller\smaller\begin{tabular}{@{}c@{}}%
1\\-5\\-1
\end{tabular}\endgroup%
{$\left.\llap{\phantom{%
\begingroup \smaller\smaller\smaller\begin{tabular}{@{}c@{}}%
0\\0\\0
\end{tabular}\endgroup%
}}\!\right]$}%
}%
\ifdim\wd\matricesbox>\halfwidth\myboxwidth=\hsize\else\myboxwidth=\halfwidth\fi
\vbox{%
\ifdim\myboxwidth=\hsize
\setbox\onelinebox=\hbox{%
\vbox{\hbox{%
$\Pi_{4,37}=\hbox{GN}_{20}$ spans $L_{145.1}$%
}\hbox{%
$\infty\slashtwo\infty\slashtwo\rtimes D_{2}$ (shared)%
}%
}%
\hfill\copy\matricesbox
}%
\ifdim\wd\onelinebox>\myboxwidth
\hbox to \myboxwidth{%
$\Pi_{4,37}=\hbox{GN}_{20}$ spans $L_{145.1}$%
\hfil
$\infty\slashtwo\infty\slashtwo\rtimes D_{2}$ (shared)%
}%
\box\matricesbox
\else
\hbox to \myboxwidth{%
\unhbox\onelinebox
}%
\fi
\else
\hbox to \myboxwidth{%
$\Pi_{4,37}=\hbox{GN}_{20}$ spans $L_{145.1}$%
\hfil}%
\hbox to \myboxwidth{%
$\infty\slashtwo\infty\slashtwo\rtimes D_{2}$ (shared)%
\hfil}%
\box\matricesbox
\fi
}%
\hfill\discretionary{}{}{}%
\setbox\matricesbox=\hbox{%
{$\left[\!\llap{\phantom{%
\begingroup \smaller\smaller\smaller\begin{tabular}{@{}c@{}}%
\phantom{0}\\\phantom{0}\\\phantom{0}
\end{tabular}\endgroup%
}}\right.$}%
\begingroup \smaller\smaller\smaller\begin{tabular}{@{}c@{}}%
-1/9\\\phantom{0}\\\phantom{0}
\end{tabular}\endgroup%
\kern3pt%
\begingroup \smaller\smaller\smaller\begin{tabular}{@{}c@{}}%
\phantom{0}\\1/9\\\phantom{0}
\end{tabular}\endgroup%
\kern3pt%
\begingroup \smaller\smaller\smaller\begin{tabular}{@{}c@{}}%
\phantom{0}\\\phantom{0}\\1
\end{tabular}\endgroup%
{$\left.\llap{\phantom{%
\begingroup \smaller\smaller\smaller\begin{tabular}{@{}c@{}}%
\phantom{0}\\\phantom{0}\\\phantom{0}
\end{tabular}\endgroup%
}}\!\right]$}%
{$\left[\!\llap{\phantom{%
\begingroup \smaller\smaller\smaller\begin{tabular}{@{}c@{}}%
0\\0\\0
\end{tabular}\endgroup%
}}\right.$}%
\begingroup \smaller\smaller\smaller\begin{tabular}{@{}c@{}}%
1\\-1\\1
\end{tabular}\endgroup%
\kern3pt%
\begingroup \smaller\smaller\smaller\begin{tabular}{@{}c@{}}%
8\\10\\2
\end{tabular}\endgroup%
{$\left.\llap{\phantom{%
\begingroup \smaller\smaller\smaller\begin{tabular}{@{}c@{}}%
0\\0\\0
\end{tabular}\endgroup%
}}\!\right]$}%
}%
\ifdim\wd\matricesbox>\halfwidth\myboxwidth=\hsize\else\myboxwidth=\halfwidth\fi
\vbox{%
\ifdim\myboxwidth=\hsize
\setbox\onelinebox=\hbox{%
\vbox{\hbox{%
$\Pi_{4,38}=A_{2,II,\onebar}$ spans $L_{1.9}$%
}\hbox{%
$\slashinfty2\slashtwo2\rtimes D_{2}$ (shared)%
}%
}%
\hfill\copy\matricesbox
}%
\ifdim\wd\onelinebox>\myboxwidth
\hbox to \myboxwidth{%
$\Pi_{4,38}=A_{2,II,\onebar}$ spans $L_{1.9}$%
\hfil
$\slashinfty2\slashtwo2\rtimes D_{2}$ (shared)%
}%
\box\matricesbox
\else
\hbox to \myboxwidth{%
\unhbox\onelinebox
}%
\fi
\else
\hbox to \myboxwidth{%
$\Pi_{4,38}=A_{2,II,\onebar}$ spans $L_{1.9}$%
\hfil}%
\hbox to \myboxwidth{%
$\slashinfty2\slashtwo2\rtimes D_{2}$ (shared)%
\hfil}%
\box\matricesbox
\fi
}%
\hfill\discretionary{}{}{}%
\setbox\matricesbox=\hbox{%
{$\left[\!\llap{\phantom{%
\begingroup \smaller\smaller\smaller\begin{tabular}{@{}c@{}}%
\phantom{0}\\\phantom{0}\\\phantom{0}
\end{tabular}\endgroup%
}}\right.$}%
\begingroup \smaller\smaller\smaller\begin{tabular}{@{}c@{}}%
-1/8\\\phantom{0}\\\phantom{0}
\end{tabular}\endgroup%
\kern3pt%
\begingroup \smaller\smaller\smaller\begin{tabular}{@{}c@{}}%
\phantom{0}\\1/8\\\phantom{0}
\end{tabular}\endgroup%
\kern3pt%
\begingroup \smaller\smaller\smaller\begin{tabular}{@{}c@{}}%
\phantom{0}\\\phantom{0}\\1
\end{tabular}\endgroup%
{$\left.\llap{\phantom{%
\begingroup \smaller\smaller\smaller\begin{tabular}{@{}c@{}}%
\phantom{0}\\\phantom{0}\\\phantom{0}
\end{tabular}\endgroup%
}}\!\right]$}%
{$\left[\!\llap{\phantom{%
\begingroup \smaller\smaller\smaller\begin{tabular}{@{}c@{}}%
0\\0\\0
\end{tabular}\endgroup%
}}\right.$}%
\begingroup \smaller\smaller\smaller\begin{tabular}{@{}c@{}}%
1\\-1\\1
\end{tabular}\endgroup%
\kern3pt%
\begingroup \smaller\smaller\smaller\begin{tabular}{@{}c@{}}%
16\\16\\4
\end{tabular}\endgroup%
{$\left.\llap{\phantom{%
\begingroup \smaller\smaller\smaller\begin{tabular}{@{}c@{}}%
0\\0\\0
\end{tabular}\endgroup%
}}\!\right]$}%
}%
\ifdim\wd\matricesbox>\halfwidth\myboxwidth=\hsize\else\myboxwidth=\halfwidth\fi
\vbox{%
\ifdim\myboxwidth=\hsize
\setbox\onelinebox=\hbox{%
\vbox{\hbox{%
$\Pi_{4,39}=A_{4,II,\onebar}=\hbox{GN}_{15}$ spans $L_{140.4}$%
}\hbox{%
$\slashinfty2\slashinfty2\rtimes D_{2}$ (shared)%
}%
}%
\hfill\copy\matricesbox
}%
\ifdim\wd\onelinebox>\myboxwidth
\hbox to \myboxwidth{%
$\Pi_{4,39}=A_{4,II,\onebar}=\hbox{GN}_{15}$ spans $L_{140.4}$%
\hfil
$\slashinfty2\slashinfty2\rtimes D_{2}$ (shared)%
}%
\box\matricesbox
\else
\hbox to \myboxwidth{%
\unhbox\onelinebox
}%
\fi
\else
\hbox to \myboxwidth{%
$\Pi_{4,39}=A_{4,II,\onebar}=\hbox{GN}_{15}$ spans $L_{140.4}$%
\hfil}%
\hbox to \myboxwidth{%
$\slashinfty2\slashinfty2\rtimes D_{2}$ (shared)%
\hfil}%
\box\matricesbox
\fi
}%
\hfill\discretionary{}{}{}%
\setbox\matricesbox=\hbox{%
{$\left[\!\llap{\phantom{%
\begingroup \smaller\smaller\smaller\begin{tabular}{@{}c@{}}%
\phantom{0}\\\phantom{0}\\\phantom{0}
\end{tabular}\endgroup%
}}\right.$}%
\begingroup \smaller\smaller\smaller\begin{tabular}{@{}c@{}}%
-1/5\\\phantom{0}\\\phantom{0}
\end{tabular}\endgroup%
\kern3pt%
\begingroup \smaller\smaller\smaller\begin{tabular}{@{}c@{}}%
\phantom{0}\\1/5\\\phantom{0}
\end{tabular}\endgroup%
\kern3pt%
\begingroup \smaller\smaller\smaller\begin{tabular}{@{}c@{}}%
\phantom{0}\\\phantom{0}\\1
\end{tabular}\endgroup%
{$\left.\llap{\phantom{%
\begingroup \smaller\smaller\smaller\begin{tabular}{@{}c@{}}%
\phantom{0}\\\phantom{0}\\\phantom{0}
\end{tabular}\endgroup%
}}\!\right]$}%
{$\left[\!\llap{\phantom{%
\begingroup \smaller\smaller\smaller\begin{tabular}{@{}c@{}}%
0\\0\\0
\end{tabular}\endgroup%
}}\right.$}%
\begingroup \smaller\smaller\smaller\begin{tabular}{@{}c@{}}%
1\\1\\-1
\end{tabular}\endgroup%
\kern3pt%
\begingroup \smaller\smaller\smaller\begin{tabular}{@{}c@{}}%
2\\-3\\-1
\end{tabular}\endgroup%
{$\left.\llap{\phantom{%
\begingroup \smaller\smaller\smaller\begin{tabular}{@{}c@{}}%
0\\0\\0
\end{tabular}\endgroup%
}}\!\right]$}%
}%
\ifdim\wd\matricesbox>\halfwidth\myboxwidth=\hsize\else\myboxwidth=\halfwidth\fi
\vbox{%
\ifdim\myboxwidth=\hsize
\setbox\onelinebox=\hbox{%
\vbox{\hbox{%
$\Pi_{4,40}$ spans $L_{1.7}$%
}\hbox{%
$\slashinfty2\slashtwo2\rtimes D_{2}$ (shared)%
}%
}%
\hfill\copy\matricesbox
}%
\ifdim\wd\onelinebox>\myboxwidth
\hbox to \myboxwidth{%
$\Pi_{4,40}$ spans $L_{1.7}$%
\hfil
$\slashinfty2\slashtwo2\rtimes D_{2}$ (shared)%
}%
\box\matricesbox
\else
\hbox to \myboxwidth{%
\unhbox\onelinebox
}%
\fi
\else
\hbox to \myboxwidth{%
$\Pi_{4,40}$ spans $L_{1.7}$%
\hfil}%
\hbox to \myboxwidth{%
$\slashinfty2\slashtwo2\rtimes D_{2}$ (shared)%
\hfil}%
\box\matricesbox
\fi
}%
\hfill\discretionary{}{}{}%
\setbox\matricesbox=\hbox{%
{$\left[\!\llap{\phantom{%
\begingroup \smaller\smaller\smaller\begin{tabular}{@{}c@{}}%
\phantom{0}\\\phantom{0}\\\phantom{0}
\end{tabular}\endgroup%
}}\right.$}%
\begingroup \smaller\smaller\smaller\begin{tabular}{@{}c@{}}%
-1/4\\\phantom{0}\\\phantom{0}
\end{tabular}\endgroup%
\kern3pt%
\begingroup \smaller\smaller\smaller\begin{tabular}{@{}c@{}}%
\phantom{0}\\1/4\\\phantom{0}
\end{tabular}\endgroup%
\kern3pt%
\begingroup \smaller\smaller\smaller\begin{tabular}{@{}c@{}}%
\phantom{0}\\\phantom{0}\\1
\end{tabular}\endgroup%
{$\left.\llap{\phantom{%
\begingroup \smaller\smaller\smaller\begin{tabular}{@{}c@{}}%
\phantom{0}\\\phantom{0}\\\phantom{0}
\end{tabular}\endgroup%
}}\!\right]$}%
{$\left[\!\llap{\phantom{%
\begingroup \smaller\smaller\smaller\begin{tabular}{@{}c@{}}%
0\\0\\0
\end{tabular}\endgroup%
}}\right.$}%
\begingroup \smaller\smaller\smaller\begin{tabular}{@{}c@{}}%
1\\-1\\1
\end{tabular}\endgroup%
\kern3pt%
\begingroup \smaller\smaller\smaller\begin{tabular}{@{}c@{}}%
4\\4\\2
\end{tabular}\endgroup%
{$\left.\llap{\phantom{%
\begingroup \smaller\smaller\smaller\begin{tabular}{@{}c@{}}%
0\\0\\0
\end{tabular}\endgroup%
}}\!\right]$}%
}%
\ifdim\wd\matricesbox>\halfwidth\myboxwidth=\hsize\else\myboxwidth=\halfwidth\fi
\vbox{%
\ifdim\myboxwidth=\hsize
\setbox\onelinebox=\hbox{%
\vbox{\hbox{%
$\Pi_{4,41}=\hbox{GN}_{23}$ spans $L_{140.3}$%
}\hbox{%
$\slashinfty2\slashinfty2\rtimes D_{2}$ (shared)%
}%
}%
\hfill\copy\matricesbox
}%
\ifdim\wd\onelinebox>\myboxwidth
\hbox to \myboxwidth{%
$\Pi_{4,41}=\hbox{GN}_{23}$ spans $L_{140.3}$%
\hfil
$\slashinfty2\slashinfty2\rtimes D_{2}$ (shared)%
}%
\box\matricesbox
\else
\hbox to \myboxwidth{%
\unhbox\onelinebox
}%
\fi
\else
\hbox to \myboxwidth{%
$\Pi_{4,41}=\hbox{GN}_{23}$ spans $L_{140.3}$%
\hfil}%
\hbox to \myboxwidth{%
$\slashinfty2\slashinfty2\rtimes D_{2}$ (shared)%
\hfil}%
\box\matricesbox
\fi
}%
\hfill\discretionary{}{}{}%
\setbox\matricesbox=\hbox{%
{$\left[\!\llap{\phantom{%
\begingroup \smaller\smaller\smaller\begin{tabular}{@{}c@{}}%
\phantom{0}\\\phantom{0}\\\phantom{0}
\end{tabular}\endgroup%
}}\right.$}%
\begingroup \smaller\smaller\smaller\begin{tabular}{@{}c@{}}%
-1/6\\\phantom{0}\\\phantom{0}
\end{tabular}\endgroup%
\kern3pt%
\begingroup \smaller\smaller\smaller\begin{tabular}{@{}c@{}}%
\phantom{0}\\2/3\\\phantom{0}
\end{tabular}\endgroup%
\kern3pt%
\begingroup \smaller\smaller\smaller\begin{tabular}{@{}c@{}}%
\phantom{0}\\\phantom{0}\\1/2
\end{tabular}\endgroup%
{$\left.\llap{\phantom{%
\begingroup \smaller\smaller\smaller\begin{tabular}{@{}c@{}}%
\phantom{0}\\\phantom{0}\\\phantom{0}
\end{tabular}\endgroup%
}}\!\right]$}%
{$\left[\!\llap{\phantom{%
\begingroup \smaller\smaller\smaller\begin{tabular}{@{}c@{}}%
0\\0\\0
\end{tabular}\endgroup%
}}\right.$}%
\begingroup \smaller\smaller\smaller\begin{tabular}{@{}c@{}}%
2\\1\\2
\end{tabular}\endgroup%
\kern3pt%
\begingroup \smaller\smaller\smaller\begin{tabular}{@{}c@{}}%
1\\-1\\1
\end{tabular}\endgroup%
{$\left.\llap{\phantom{%
\begingroup \smaller\smaller\smaller\begin{tabular}{@{}c@{}}%
0\\0\\0
\end{tabular}\endgroup%
}}\!\right]$}%
}%
\ifdim\wd\matricesbox>\halfwidth\myboxwidth=\hsize\else\myboxwidth=\halfwidth\fi
\vbox{%
\ifdim\myboxwidth=\hsize
\setbox\onelinebox=\hbox{%
\vbox{\hbox{%
$\Pi_{4,42}$ spans $L_{1.5}$%
}\hbox{%
$\slashinfty2\slashtwo2\rtimes D_{2}$ (shared)%
}%
}%
\hfill\copy\matricesbox
}%
\ifdim\wd\onelinebox>\myboxwidth
\hbox to \myboxwidth{%
$\Pi_{4,42}$ spans $L_{1.5}$%
\hfil
$\slashinfty2\slashtwo2\rtimes D_{2}$ (shared)%
}%
\box\matricesbox
\else
\hbox to \myboxwidth{%
\unhbox\onelinebox
}%
\fi
\else
\hbox to \myboxwidth{%
$\Pi_{4,42}$ spans $L_{1.5}$%
\hfil}%
\hbox to \myboxwidth{%
$\slashinfty2\slashtwo2\rtimes D_{2}$ (shared)%
\hfil}%
\box\matricesbox
\fi
}%
\hfill\discretionary{}{}{}%
\setbox\matricesbox=\hbox{%
{$\left[\!\llap{\phantom{%
\begingroup \smaller\smaller\smaller\begin{tabular}{@{}c@{}}%
\phantom{0}\\\phantom{0}\\\phantom{0}
\end{tabular}\endgroup%
}}\right.$}%
\begingroup \smaller\smaller\smaller\begin{tabular}{@{}c@{}}%
-1/16\\\phantom{0}\\\phantom{0}
\end{tabular}\endgroup%
\kern3pt%
\begingroup \smaller\smaller\smaller\begin{tabular}{@{}c@{}}%
\phantom{0}\\1/16\\\phantom{0}
\end{tabular}\endgroup%
\kern3pt%
\begingroup \smaller\smaller\smaller\begin{tabular}{@{}c@{}}%
\phantom{0}\\\phantom{0}\\1/2
\end{tabular}\endgroup%
{$\left.\llap{\phantom{%
\begingroup \smaller\smaller\smaller\begin{tabular}{@{}c@{}}%
\phantom{0}\\\phantom{0}\\\phantom{0}
\end{tabular}\endgroup%
}}\!\right]$}%
{$\left[\!\llap{\phantom{%
\begingroup \smaller\smaller\smaller\begin{tabular}{@{}c@{}}%
0\\0\\0
\end{tabular}\endgroup%
}}\right.$}%
\begingroup \smaller\smaller\smaller\begin{tabular}{@{}c@{}}%
8\\-8\\4
\end{tabular}\endgroup%
\kern3pt%
\begingroup \smaller\smaller\smaller\begin{tabular}{@{}c@{}}%
1\\3\\1
\end{tabular}\endgroup%
{$\left.\llap{\phantom{%
\begingroup \smaller\smaller\smaller\begin{tabular}{@{}c@{}}%
0\\0\\0
\end{tabular}\endgroup%
}}\!\right]$}%
}%
\ifdim\wd\matricesbox>\halfwidth\myboxwidth=\hsize\else\myboxwidth=\halfwidth\fi
\vbox{%
\ifdim\myboxwidth=\hsize
\setbox\onelinebox=\hbox{%
\vbox{\hbox{%
$\Pi_{4,43}$ spans $L_{1.3}$%
}\hbox{%
$\slashinfty2\slashtwo2\rtimes D_{2}$ (shared)%
}%
}%
\hfill\copy\matricesbox
}%
\ifdim\wd\onelinebox>\myboxwidth
\hbox to \myboxwidth{%
$\Pi_{4,43}$ spans $L_{1.3}$%
\hfil
$\slashinfty2\slashtwo2\rtimes D_{2}$ (shared)%
}%
\box\matricesbox
\else
\hbox to \myboxwidth{%
\unhbox\onelinebox
}%
\fi
\else
\hbox to \myboxwidth{%
$\Pi_{4,43}$ spans $L_{1.3}$%
\hfil}%
\hbox to \myboxwidth{%
$\slashinfty2\slashtwo2\rtimes D_{2}$ (shared)%
\hfil}%
\box\matricesbox
\fi
}%
\hfill\discretionary{}{}{}%
\setbox\matricesbox=\hbox{%
{$\left[\!\llap{\phantom{%
\begingroup \smaller\smaller\smaller\begin{tabular}{@{}c@{}}%
\phantom{0}\\\phantom{0}\\\phantom{0}
\end{tabular}\endgroup%
}}\right.$}%
\begingroup \smaller\smaller\smaller\begin{tabular}{@{}c@{}}%
-1/8\\\phantom{0}\\\phantom{0}
\end{tabular}\endgroup%
\kern3pt%
\begingroup \smaller\smaller\smaller\begin{tabular}{@{}c@{}}%
\phantom{0}\\5/2\\-1
\end{tabular}\endgroup%
\kern3pt%
\begingroup \smaller\smaller\smaller\begin{tabular}{@{}c@{}}%
\phantom{0}\\-1\\6
\end{tabular}\endgroup%
{$\left.\llap{\phantom{%
\begingroup \smaller\smaller\smaller\begin{tabular}{@{}c@{}}%
\phantom{0}\\\phantom{0}\\\phantom{0}
\end{tabular}\endgroup%
}}\!\right]$}%
{$\left[\!\llap{\phantom{%
\begingroup \smaller\smaller\smaller\begin{tabular}{@{}c@{}}%
0\\0\\0
\end{tabular}\endgroup%
}}\right.$}%
\begingroup \smaller\smaller\smaller\begin{tabular}{@{}c@{}}%
2\\1\\0
\end{tabular}\endgroup%
\kern3pt%
\begingroup \smaller\smaller\smaller\begin{tabular}{@{}c@{}}%
4\\0\\1
\end{tabular}\endgroup%
{$\left.\llap{\phantom{%
\begingroup \smaller\smaller\smaller\begin{tabular}{@{}c@{}}%
0\\0\\0
\end{tabular}\endgroup%
}}\!\right]$}%
}%
\ifdim\wd\matricesbox>\halfwidth\myboxwidth=\hsize\else\myboxwidth=\halfwidth\fi
\vbox{%
\ifdim\myboxwidth=\hsize
\setbox\onelinebox=\hbox{%
\vbox{\hbox{%
$\Pi_{4,44}$ spans $L_{8.1}$%
}\hbox{%
$4242\rtimes C_{2}$%
}%
}%
\hfill\copy\matricesbox
}%
\ifdim\wd\onelinebox>\myboxwidth
\hbox to \myboxwidth{%
$\Pi_{4,44}$ spans $L_{8.1}$%
\hfil
$4242\rtimes C_{2}$%
}%
\box\matricesbox
\else
\hbox to \myboxwidth{%
\unhbox\onelinebox
}%
\fi
\else
\hbox to \myboxwidth{%
$\Pi_{4,44}$ spans $L_{8.1}$%
\hfil}%
\hbox to \myboxwidth{%
$4242\rtimes C_{2}$%
\hfil}%
\box\matricesbox
\fi
}%
\hfill\discretionary{}{}{}%
\setbox\matricesbox=\hbox{%
{$\left[\!\llap{\phantom{%
\begingroup \smaller\smaller\smaller\begin{tabular}{@{}c@{}}%
\phantom{0}\\\phantom{0}\\\phantom{0}
\end{tabular}\endgroup%
}}\right.$}%
\begingroup \smaller\smaller\smaller\begin{tabular}{@{}c@{}}%
-1/8\\\phantom{0}\\\phantom{0}
\end{tabular}\endgroup%
\kern3pt%
\begingroup \smaller\smaller\smaller\begin{tabular}{@{}c@{}}%
\phantom{0}\\5/2\\-1
\end{tabular}\endgroup%
\kern3pt%
\begingroup \smaller\smaller\smaller\begin{tabular}{@{}c@{}}%
\phantom{0}\\-1\\10
\end{tabular}\endgroup%
{$\left.\llap{\phantom{%
\begingroup \smaller\smaller\smaller\begin{tabular}{@{}c@{}}%
\phantom{0}\\\phantom{0}\\\phantom{0}
\end{tabular}\endgroup%
}}\!\right]$}%
{$\left[\!\llap{\phantom{%
\begingroup \smaller\smaller\smaller\begin{tabular}{@{}c@{}}%
0\\0\\0
\end{tabular}\endgroup%
}}\right.$}%
\begingroup \smaller\smaller\smaller\begin{tabular}{@{}c@{}}%
2\\1\\0
\end{tabular}\endgroup%
\kern3pt%
\begingroup \smaller\smaller\smaller\begin{tabular}{@{}c@{}}%
6\\-1\\-1
\end{tabular}\endgroup%
{$\left.\llap{\phantom{%
\begingroup \smaller\smaller\smaller\begin{tabular}{@{}c@{}}%
0\\0\\0
\end{tabular}\endgroup%
}}\!\right]$}%
}%
\ifdim\wd\matricesbox>\halfwidth\myboxwidth=\hsize\else\myboxwidth=\halfwidth\fi
\vbox{%
\ifdim\myboxwidth=\hsize
\setbox\onelinebox=\hbox{%
\vbox{\hbox{%
$\Pi_{4,45}$ spans $L_{168.2}$%
}\hbox{%
$6262\rtimes C_{2}$%
}%
}%
\hfill\copy\matricesbox
}%
\ifdim\wd\onelinebox>\myboxwidth
\hbox to \myboxwidth{%
$\Pi_{4,45}$ spans $L_{168.2}$%
\hfil
$6262\rtimes C_{2}$%
}%
\box\matricesbox
\else
\hbox to \myboxwidth{%
\unhbox\onelinebox
}%
\fi
\else
\hbox to \myboxwidth{%
$\Pi_{4,45}$ spans $L_{168.2}$%
\hfil}%
\hbox to \myboxwidth{%
$6262\rtimes C_{2}$%
\hfil}%
\box\matricesbox
\fi
}%
\hfill\discretionary{}{}{}%
\setbox\matricesbox=\hbox{%
{$\left[\!\llap{\phantom{%
\begingroup \smaller\smaller\smaller\begin{tabular}{@{}c@{}}%
\phantom{0}\\\phantom{0}\\\phantom{0}
\end{tabular}\endgroup%
}}\right.$}%
\begingroup \smaller\smaller\smaller\begin{tabular}{@{}c@{}}%
-1/8\\\phantom{0}\\\phantom{0}
\end{tabular}\endgroup%
\kern3pt%
\begingroup \smaller\smaller\smaller\begin{tabular}{@{}c@{}}%
\phantom{0}\\5/2\\-1/2
\end{tabular}\endgroup%
\kern3pt%
\begingroup \smaller\smaller\smaller\begin{tabular}{@{}c@{}}%
\phantom{0}\\-1/2\\29/2
\end{tabular}\endgroup%
{$\left.\llap{\phantom{%
\begingroup \smaller\smaller\smaller\begin{tabular}{@{}c@{}}%
\phantom{0}\\\phantom{0}\\\phantom{0}
\end{tabular}\endgroup%
}}\!\right]$}%
{$\left[\!\llap{\phantom{%
\begingroup \smaller\smaller\smaller\begin{tabular}{@{}c@{}}%
0\\0\\0
\end{tabular}\endgroup%
}}\right.$}%
\begingroup \smaller\smaller\smaller\begin{tabular}{@{}c@{}}%
2\\1\\0
\end{tabular}\endgroup%
\kern3pt%
\begingroup \smaller\smaller\smaller\begin{tabular}{@{}c@{}}%
8\\-1\\-1
\end{tabular}\endgroup%
{$\left.\llap{\phantom{%
\begingroup \smaller\smaller\smaller\begin{tabular}{@{}c@{}}%
0\\0\\0
\end{tabular}\endgroup%
}}\!\right]$}%
}%
\ifdim\wd\matricesbox>\halfwidth\myboxwidth=\hsize\else\myboxwidth=\halfwidth\fi
\vbox{%
\ifdim\myboxwidth=\hsize
\setbox\onelinebox=\hbox{%
\vbox{\hbox{%
$\Pi_{4,46}=\hbox{GN}_{22}$ spans $L_{148.2}$%
}\hbox{%
$\infty2\infty2\rtimes C_{2}$ (shared)%
}%
}%
\hfill\copy\matricesbox
}%
\ifdim\wd\onelinebox>\myboxwidth
\hbox to \myboxwidth{%
$\Pi_{4,46}=\hbox{GN}_{22}$ spans $L_{148.2}$%
\hfil
$\infty2\infty2\rtimes C_{2}$ (shared)%
}%
\box\matricesbox
\else
\hbox to \myboxwidth{%
\unhbox\onelinebox
}%
\fi
\else
\hbox to \myboxwidth{%
$\Pi_{4,46}=\hbox{GN}_{22}$ spans $L_{148.2}$%
\hfil}%
\hbox to \myboxwidth{%
$\infty2\infty2\rtimes C_{2}$ (shared)%
\hfil}%
\box\matricesbox
\fi
}%
\hfill\discretionary{}{}{}%
\setbox\matricesbox=\hbox{%
{$\left[\!\llap{\phantom{%
\begingroup \smaller\smaller\smaller\begin{tabular}{@{}c@{}}%
\phantom{0}\\\phantom{0}\\\phantom{0}
\end{tabular}\endgroup%
}}\right.$}%
\begingroup \smaller\smaller\smaller\begin{tabular}{@{}c@{}}%
-1/28\\\phantom{0}\\\phantom{0}
\end{tabular}\endgroup%
\kern3pt%
\begingroup \smaller\smaller\smaller\begin{tabular}{@{}c@{}}%
\phantom{0}\\15/7\\-6/7
\end{tabular}\endgroup%
\kern3pt%
\begingroup \smaller\smaller\smaller\begin{tabular}{@{}c@{}}%
\phantom{0}\\-6/7\\15/7
\end{tabular}\endgroup%
{$\left.\llap{\phantom{%
\begingroup \smaller\smaller\smaller\begin{tabular}{@{}c@{}}%
\phantom{0}\\\phantom{0}\\\phantom{0}
\end{tabular}\endgroup%
}}\!\right]$}%
{$\left[\!\llap{\phantom{%
\begingroup \smaller\smaller\smaller\begin{tabular}{@{}c@{}}%
0\\0\\0
\end{tabular}\endgroup%
}}\right.$}%
\begingroup \smaller\smaller\smaller\begin{tabular}{@{}c@{}}%
18\\-1\\-4
\end{tabular}\endgroup%
\kern3pt%
\begingroup \smaller\smaller\smaller\begin{tabular}{@{}c@{}}%
18\\-4\\-1
\end{tabular}\endgroup%
\kern3pt%
\begingroup \smaller\smaller\smaller\begin{tabular}{@{}c@{}}%
6\\1\\2
\end{tabular}\endgroup%
\kern3pt%
\begingroup \smaller\smaller\smaller\begin{tabular}{@{}c@{}}%
2\\1\\0
\end{tabular}\endgroup%
{$\left.\llap{\phantom{%
\begingroup \smaller\smaller\smaller\begin{tabular}{@{}c@{}}%
0\\0\\0
\end{tabular}\endgroup%
}}\!\right]$}%
}%
\ifdim\wd\matricesbox>\halfwidth\myboxwidth=\hsize\else\myboxwidth=\halfwidth\fi
\vbox{%
\ifdim\myboxwidth=\hsize
\setbox\onelinebox=\hbox{%
\vbox{\hbox{%
$\Pi_{4,47}$ spans $L_{155.1}$%
}\hbox{%
$3622$ (shared)%
}%
}%
\hfill\copy\matricesbox
}%
\ifdim\wd\onelinebox>\myboxwidth
\hbox to \myboxwidth{%
$\Pi_{4,47}$ spans $L_{155.1}$%
\hfil
$3622$ (shared)%
}%
\box\matricesbox
\else
\hbox to \myboxwidth{%
\unhbox\onelinebox
}%
\fi
\else
\hbox to \myboxwidth{%
$\Pi_{4,47}$ spans $L_{155.1}$%
\hfil}%
\hbox to \myboxwidth{%
$3622$ (shared)%
\hfil}%
\box\matricesbox
\fi
}%
\hfill\discretionary{}{}{}%
\setbox\matricesbox=\hbox{%
{$\left[\!\llap{\phantom{%
\begingroup \smaller\smaller\smaller\begin{tabular}{@{}c@{}}%
\phantom{0}\\\phantom{0}\\\phantom{0}
\end{tabular}\endgroup%
}}\right.$}%
\begingroup \smaller\smaller\smaller\begin{tabular}{@{}c@{}}%
-3/25\\\phantom{0}\\\phantom{0}
\end{tabular}\endgroup%
\kern3pt%
\begingroup \smaller\smaller\smaller\begin{tabular}{@{}c@{}}%
\phantom{0}\\12/25\\-6/25
\end{tabular}\endgroup%
\kern3pt%
\begingroup \smaller\smaller\smaller\begin{tabular}{@{}c@{}}%
\phantom{0}\\-6/25\\28/25
\end{tabular}\endgroup%
{$\left.\llap{\phantom{%
\begingroup \smaller\smaller\smaller\begin{tabular}{@{}c@{}}%
\phantom{0}\\\phantom{0}\\\phantom{0}
\end{tabular}\endgroup%
}}\!\right]$}%
{$\left[\!\llap{\phantom{%
\begingroup \smaller\smaller\smaller\begin{tabular}{@{}c@{}}%
0\\0\\0
\end{tabular}\endgroup%
}}\right.$}%
\begingroup \smaller\smaller\smaller\begin{tabular}{@{}c@{}}%
12\\5\\-3
\end{tabular}\endgroup%
\kern3pt%
\begingroup \smaller\smaller\smaller\begin{tabular}{@{}c@{}}%
12\\8\\3
\end{tabular}\endgroup%
\kern3pt%
\begingroup \smaller\smaller\smaller\begin{tabular}{@{}c@{}}%
4\\-1\\2
\end{tabular}\endgroup%
\kern3pt%
\begingroup \smaller\smaller\smaller\begin{tabular}{@{}c@{}}%
1\\-1\\-1
\end{tabular}\endgroup%
{$\left.\llap{\phantom{%
\begingroup \smaller\smaller\smaller\begin{tabular}{@{}c@{}}%
0\\0\\0
\end{tabular}\endgroup%
}}\!\right]$}%
}%
\ifdim\wd\matricesbox>\halfwidth\myboxwidth=\hsize\else\myboxwidth=\halfwidth\fi
\vbox{%
\ifdim\myboxwidth=\hsize
\setbox\onelinebox=\hbox{%
\vbox{\hbox{%
$\Pi_{4,48}$ spans $L_{221.3}$%
}\hbox{%
$36\infty2$ (shared)%
}%
}%
\hfill\copy\matricesbox
}%
\ifdim\wd\onelinebox>\myboxwidth
\hbox to \myboxwidth{%
$\Pi_{4,48}$ spans $L_{221.3}$%
\hfil
$36\infty2$ (shared)%
}%
\box\matricesbox
\else
\hbox to \myboxwidth{%
\unhbox\onelinebox
}%
\fi
\else
\hbox to \myboxwidth{%
$\Pi_{4,48}$ spans $L_{221.3}$%
\hfil}%
\hbox to \myboxwidth{%
$36\infty2$ (shared)%
\hfil}%
\box\matricesbox
\fi
}%
\hfill\discretionary{}{}{}%
\setbox\matricesbox=\hbox{%
{$\left[\!\llap{\phantom{%
\begingroup \smaller\smaller\smaller\begin{tabular}{@{}c@{}}%
\phantom{0}\\\phantom{0}\\\phantom{0}
\end{tabular}\endgroup%
}}\right.$}%
\begingroup \smaller\smaller\smaller\begin{tabular}{@{}c@{}}%
-1/8\\\phantom{0}\\\phantom{0}
\end{tabular}\endgroup%
\kern3pt%
\begingroup \smaller\smaller\smaller\begin{tabular}{@{}c@{}}%
\phantom{0}\\5/2\\-1/2
\end{tabular}\endgroup%
\kern3pt%
\begingroup \smaller\smaller\smaller\begin{tabular}{@{}c@{}}%
\phantom{0}\\-1/2\\5/2
\end{tabular}\endgroup%
{$\left.\llap{\phantom{%
\begingroup \smaller\smaller\smaller\begin{tabular}{@{}c@{}}%
\phantom{0}\\\phantom{0}\\\phantom{0}
\end{tabular}\endgroup%
}}\!\right]$}%
{$\left[\!\llap{\phantom{%
\begingroup \smaller\smaller\smaller\begin{tabular}{@{}c@{}}%
0\\0\\0
\end{tabular}\endgroup%
}}\right.$}%
\begingroup \smaller\smaller\smaller\begin{tabular}{@{}c@{}}%
2\\0\\1
\end{tabular}\endgroup%
\kern3pt%
\begingroup \smaller\smaller\smaller\begin{tabular}{@{}c@{}}%
2\\1\\0
\end{tabular}\endgroup%
\kern3pt%
\begingroup \smaller\smaller\smaller\begin{tabular}{@{}c@{}}%
6\\-1\\-2
\end{tabular}\endgroup%
\kern3pt%
\begingroup \smaller\smaller\smaller\begin{tabular}{@{}c@{}}%
2\\-1\\0
\end{tabular}\endgroup%
{$\left.\llap{\phantom{%
\begingroup \smaller\smaller\smaller\begin{tabular}{@{}c@{}}%
0\\0\\0
\end{tabular}\endgroup%
}}\!\right]$}%
}%
\ifdim\wd\matricesbox>\halfwidth\myboxwidth=\hsize\else\myboxwidth=\halfwidth\fi
\vbox{%
\ifdim\myboxwidth=\hsize
\setbox\onelinebox=\hbox{%
\vbox{\hbox{%
$\Pi_{4,49}$ spans $L_{3.1}$%
}\hbox{%
$3622$ (shared)%
}%
}%
\hfill\copy\matricesbox
}%
\ifdim\wd\onelinebox>\myboxwidth
\hbox to \myboxwidth{%
$\Pi_{4,49}$ spans $L_{3.1}$%
\hfil
$3622$ (shared)%
}%
\box\matricesbox
\else
\hbox to \myboxwidth{%
\unhbox\onelinebox
}%
\fi
\else
\hbox to \myboxwidth{%
$\Pi_{4,49}$ spans $L_{3.1}$%
\hfil}%
\hbox to \myboxwidth{%
$3622$ (shared)%
\hfil}%
\box\matricesbox
\fi
}%
\hfill\discretionary{}{}{}%
\setbox\matricesbox=\hbox{%
{$\left[\!\llap{\phantom{%
\begingroup \smaller\smaller\smaller\begin{tabular}{@{}c@{}}%
\phantom{0}\\\phantom{0}\\\phantom{0}
\end{tabular}\endgroup%
}}\right.$}%
\begingroup \smaller\smaller\smaller\begin{tabular}{@{}c@{}}%
-1/9\\\phantom{0}\\\phantom{0}
\end{tabular}\endgroup%
\kern3pt%
\begingroup \smaller\smaller\smaller\begin{tabular}{@{}c@{}}%
\phantom{0}\\4/9\\-2/9
\end{tabular}\endgroup%
\kern3pt%
\begingroup \smaller\smaller\smaller\begin{tabular}{@{}c@{}}%
\phantom{0}\\-2/9\\28/9
\end{tabular}\endgroup%
{$\left.\llap{\phantom{%
\begingroup \smaller\smaller\smaller\begin{tabular}{@{}c@{}}%
\phantom{0}\\\phantom{0}\\\phantom{0}
\end{tabular}\endgroup%
}}\!\right]$}%
{$\left[\!\llap{\phantom{%
\begingroup \smaller\smaller\smaller\begin{tabular}{@{}c@{}}%
0\\0\\0
\end{tabular}\endgroup%
}}\right.$}%
\begingroup \smaller\smaller\smaller\begin{tabular}{@{}c@{}}%
4\\-3\\-1
\end{tabular}\endgroup%
\kern3pt%
\begingroup \smaller\smaller\smaller\begin{tabular}{@{}c@{}}%
4\\-2\\1
\end{tabular}\endgroup%
\kern3pt%
\begingroup \smaller\smaller\smaller\begin{tabular}{@{}c@{}}%
12\\7\\2
\end{tabular}\endgroup%
\kern3pt%
\begingroup \smaller\smaller\smaller\begin{tabular}{@{}c@{}}%
3\\1\\-1
\end{tabular}\endgroup%
{$\left.\llap{\phantom{%
\begingroup \smaller\smaller\smaller\begin{tabular}{@{}c@{}}%
0\\0\\0
\end{tabular}\endgroup%
}}\!\right]$}%
}%
\ifdim\wd\matricesbox>\halfwidth\myboxwidth=\hsize\else\myboxwidth=\halfwidth\fi
\vbox{%
\ifdim\myboxwidth=\hsize
\setbox\onelinebox=\hbox{%
\vbox{\hbox{%
$\Pi_{4,50}$ spans $L_{7.11}$%
}\hbox{%
$36\infty2$ (shared)%
}%
}%
\hfill\copy\matricesbox
}%
\ifdim\wd\onelinebox>\myboxwidth
\hbox to \myboxwidth{%
$\Pi_{4,50}$ spans $L_{7.11}$%
\hfil
$36\infty2$ (shared)%
}%
\box\matricesbox
\else
\hbox to \myboxwidth{%
\unhbox\onelinebox
}%
\fi
\else
\hbox to \myboxwidth{%
$\Pi_{4,50}$ spans $L_{7.11}$%
\hfil}%
\hbox to \myboxwidth{%
$36\infty2$ (shared)%
\hfil}%
\box\matricesbox
\fi
}%
\hfill\discretionary{}{}{}%
\setbox\matricesbox=\hbox{%
{$\left[\!\llap{\phantom{%
\begingroup \smaller\smaller\smaller\begin{tabular}{@{}c@{}}%
\phantom{0}\\\phantom{0}\\\phantom{0}
\end{tabular}\endgroup%
}}\right.$}%
\begingroup \smaller\smaller\smaller\begin{tabular}{@{}c@{}}%
-1/5\\\phantom{0}\\\phantom{0}
\end{tabular}\endgroup%
\kern3pt%
\begingroup \smaller\smaller\smaller\begin{tabular}{@{}c@{}}%
\phantom{0}\\4/5\\-2/5
\end{tabular}\endgroup%
\kern3pt%
\begingroup \smaller\smaller\smaller\begin{tabular}{@{}c@{}}%
\phantom{0}\\-2/5\\6/5
\end{tabular}\endgroup%
{$\left.\llap{\phantom{%
\begingroup \smaller\smaller\smaller\begin{tabular}{@{}c@{}}%
\phantom{0}\\\phantom{0}\\\phantom{0}
\end{tabular}\endgroup%
}}\!\right]$}%
{$\left[\!\llap{\phantom{%
\begingroup \smaller\smaller\smaller\begin{tabular}{@{}c@{}}%
0\\0\\0
\end{tabular}\endgroup%
}}\right.$}%
\begingroup \smaller\smaller\smaller\begin{tabular}{@{}c@{}}%
2\\-1\\1
\end{tabular}\endgroup%
\kern3pt%
\begingroup \smaller\smaller\smaller\begin{tabular}{@{}c@{}}%
4\\3\\2
\end{tabular}\endgroup%
\kern3pt%
\begingroup \smaller\smaller\smaller\begin{tabular}{@{}c@{}}%
1\\0\\-1
\end{tabular}\endgroup%
\kern3pt%
\begingroup \smaller\smaller\smaller\begin{tabular}{@{}c@{}}%
2\\-2\\-1
\end{tabular}\endgroup%
{$\left.\llap{\phantom{%
\begingroup \smaller\smaller\smaller\begin{tabular}{@{}c@{}}%
0\\0\\0
\end{tabular}\endgroup%
}}\!\right]$}%
}%
\ifdim\wd\matricesbox>\halfwidth\myboxwidth=\hsize\else\myboxwidth=\halfwidth\fi
\vbox{%
\ifdim\myboxwidth=\hsize
\setbox\onelinebox=\hbox{%
\vbox{\hbox{%
$\Pi_{4,51}$ spans $L_{1.7}$%
}\hbox{%
$4\infty22$ (shared)%
}%
}%
\hfill\copy\matricesbox
}%
\ifdim\wd\onelinebox>\myboxwidth
\hbox to \myboxwidth{%
$\Pi_{4,51}$ spans $L_{1.7}$%
\hfil
$4\infty22$ (shared)%
}%
\box\matricesbox
\else
\hbox to \myboxwidth{%
\unhbox\onelinebox
}%
\fi
\else
\hbox to \myboxwidth{%
$\Pi_{4,51}$ spans $L_{1.7}$%
\hfil}%
\hbox to \myboxwidth{%
$4\infty22$ (shared)%
\hfil}%
\box\matricesbox
\fi
}%
\hfill\discretionary{}{}{}%
\setbox\matricesbox=\hbox{%
{$\left[\!\llap{\phantom{%
\begingroup \smaller\smaller\smaller\begin{tabular}{@{}c@{}}%
\phantom{0}\\\phantom{0}\\\phantom{0}
\end{tabular}\endgroup%
}}\right.$}%
\begingroup \smaller\smaller\smaller\begin{tabular}{@{}c@{}}%
-3/49\\\phantom{0}\\\phantom{0}
\end{tabular}\endgroup%
\kern3pt%
\begingroup \smaller\smaller\smaller\begin{tabular}{@{}c@{}}%
\phantom{0}\\20/49\\-4/49
\end{tabular}\endgroup%
\kern3pt%
\begingroup \smaller\smaller\smaller\begin{tabular}{@{}c@{}}%
\phantom{0}\\-4/49\\40/49
\end{tabular}\endgroup%
{$\left.\llap{\phantom{%
\begingroup \smaller\smaller\smaller\begin{tabular}{@{}c@{}}%
\phantom{0}\\\phantom{0}\\\phantom{0}
\end{tabular}\endgroup%
}}\!\right]$}%
{$\left[\!\llap{\phantom{%
\begingroup \smaller\smaller\smaller\begin{tabular}{@{}c@{}}%
0\\0\\0
\end{tabular}\endgroup%
}}\right.$}%
\begingroup \smaller\smaller\smaller\begin{tabular}{@{}c@{}}%
4\\3\\-1
\end{tabular}\endgroup%
\kern3pt%
\begingroup \smaller\smaller\smaller\begin{tabular}{@{}c@{}}%
8\\-4\\-3
\end{tabular}\endgroup%
\kern3pt%
\begingroup \smaller\smaller\smaller\begin{tabular}{@{}c@{}}%
16\\-6\\4
\end{tabular}\endgroup%
\kern3pt%
\begingroup \smaller\smaller\smaller\begin{tabular}{@{}c@{}}%
1\\1\\1
\end{tabular}\endgroup%
{$\left.\llap{\phantom{%
\begingroup \smaller\smaller\smaller\begin{tabular}{@{}c@{}}%
0\\0\\0
\end{tabular}\endgroup%
}}\!\right]$}%
}%
\ifdim\wd\matricesbox>\halfwidth\myboxwidth=\hsize\else\myboxwidth=\halfwidth\fi
\vbox{%
\ifdim\myboxwidth=\hsize
\setbox\onelinebox=\hbox{%
\vbox{\hbox{%
$\Pi_{4,52}$ spans $L_{123.9}$%
}\hbox{%
$4422$ (shared)%
}%
}%
\hfill\copy\matricesbox
}%
\ifdim\wd\onelinebox>\myboxwidth
\hbox to \myboxwidth{%
$\Pi_{4,52}$ spans $L_{123.9}$%
\hfil
$4422$ (shared)%
}%
\box\matricesbox
\else
\hbox to \myboxwidth{%
\unhbox\onelinebox
}%
\fi
\else
\hbox to \myboxwidth{%
$\Pi_{4,52}$ spans $L_{123.9}$%
\hfil}%
\hbox to \myboxwidth{%
$4422$ (shared)%
\hfil}%
\box\matricesbox
\fi
}%
\hfill\discretionary{}{}{}%
\setbox\matricesbox=\hbox{%
{$\left[\!\llap{\phantom{%
\begingroup \smaller\smaller\smaller\begin{tabular}{@{}c@{}}%
\phantom{0}\\\phantom{0}\\\phantom{0}
\end{tabular}\endgroup%
}}\right.$}%
\begingroup \smaller\smaller\smaller\begin{tabular}{@{}c@{}}%
-1/8\\\phantom{0}\\\phantom{0}
\end{tabular}\endgroup%
\kern3pt%
\begingroup \smaller\smaller\smaller\begin{tabular}{@{}c@{}}%
\phantom{0}\\9/8\\-3/8
\end{tabular}\endgroup%
\kern3pt%
\begingroup \smaller\smaller\smaller\begin{tabular}{@{}c@{}}%
\phantom{0}\\-3/8\\17/8
\end{tabular}\endgroup%
{$\left.\llap{\phantom{%
\begingroup \smaller\smaller\smaller\begin{tabular}{@{}c@{}}%
\phantom{0}\\\phantom{0}\\\phantom{0}
\end{tabular}\endgroup%
}}\!\right]$}%
{$\left[\!\llap{\phantom{%
\begingroup \smaller\smaller\smaller\begin{tabular}{@{}c@{}}%
0\\0\\0
\end{tabular}\endgroup%
}}\right.$}%
\begingroup \smaller\smaller\smaller\begin{tabular}{@{}c@{}}%
1\\-1\\0
\end{tabular}\endgroup%
\kern3pt%
\begingroup \smaller\smaller\smaller\begin{tabular}{@{}c@{}}%
2\\1\\1
\end{tabular}\endgroup%
\kern3pt%
\begingroup \smaller\smaller\smaller\begin{tabular}{@{}c@{}}%
8\\2\\-2
\end{tabular}\endgroup%
\kern3pt%
\begingroup \smaller\smaller\smaller\begin{tabular}{@{}c@{}}%
9\\-2\\-3
\end{tabular}\endgroup%
{$\left.\llap{\phantom{%
\begingroup \smaller\smaller\smaller\begin{tabular}{@{}c@{}}%
0\\0\\0
\end{tabular}\endgroup%
}}\!\right]$}%
}%
\ifdim\wd\matricesbox>\halfwidth\myboxwidth=\hsize\else\myboxwidth=\halfwidth\fi
\vbox{%
\ifdim\myboxwidth=\hsize
\setbox\onelinebox=\hbox{%
\vbox{\hbox{%
$\Pi_{4,53}$ spans $L_{142.3}$%
}\hbox{%
$4\infty22$ (shared)%
}%
}%
\hfill\copy\matricesbox
}%
\ifdim\wd\onelinebox>\myboxwidth
\hbox to \myboxwidth{%
$\Pi_{4,53}$ spans $L_{142.3}$%
\hfil
$4\infty22$ (shared)%
}%
\box\matricesbox
\else
\hbox to \myboxwidth{%
\unhbox\onelinebox
}%
\fi
\else
\hbox to \myboxwidth{%
$\Pi_{4,53}$ spans $L_{142.3}$%
\hfil}%
\hbox to \myboxwidth{%
$4\infty22$ (shared)%
\hfil}%
\box\matricesbox
\fi
}%
\hfill\discretionary{}{}{}%
\setbox\matricesbox=\hbox{%
{$\left[\!\llap{\phantom{%
\begingroup \smaller\smaller\smaller\begin{tabular}{@{}c@{}}%
\phantom{0}\\\phantom{0}\\\phantom{0}
\end{tabular}\endgroup%
}}\right.$}%
\begingroup \smaller\smaller\smaller\begin{tabular}{@{}c@{}}%
-1/9\\\phantom{0}\\\phantom{0}
\end{tabular}\endgroup%
\kern3pt%
\begingroup \smaller\smaller\smaller\begin{tabular}{@{}c@{}}%
\phantom{0}\\4/9\\-2/9
\end{tabular}\endgroup%
\kern3pt%
\begingroup \smaller\smaller\smaller\begin{tabular}{@{}c@{}}%
\phantom{0}\\-2/9\\28/9
\end{tabular}\endgroup%
{$\left.\llap{\phantom{%
\begingroup \smaller\smaller\smaller\begin{tabular}{@{}c@{}}%
\phantom{0}\\\phantom{0}\\\phantom{0}
\end{tabular}\endgroup%
}}\!\right]$}%
{$\left[\!\llap{\phantom{%
\begingroup \smaller\smaller\smaller\begin{tabular}{@{}c@{}}%
0\\0\\0
\end{tabular}\endgroup%
}}\right.$}%
\begingroup \smaller\smaller\smaller\begin{tabular}{@{}c@{}}%
4\\-2\\1
\end{tabular}\endgroup%
\kern3pt%
\begingroup \smaller\smaller\smaller\begin{tabular}{@{}c@{}}%
12\\7\\2
\end{tabular}\endgroup%
\kern3pt%
\begingroup \smaller\smaller\smaller\begin{tabular}{@{}c@{}}%
3\\1\\-1
\end{tabular}\endgroup%
\kern3pt%
\begingroup \smaller\smaller\smaller\begin{tabular}{@{}c@{}}%
12\\-8\\-1
\end{tabular}\endgroup%
{$\left.\llap{\phantom{%
\begingroup \smaller\smaller\smaller\begin{tabular}{@{}c@{}}%
0\\0\\0
\end{tabular}\endgroup%
}}\!\right]$}%
}%
\ifdim\wd\matricesbox>\halfwidth\myboxwidth=\hsize\else\myboxwidth=\halfwidth\fi
\vbox{%
\ifdim\myboxwidth=\hsize
\setbox\onelinebox=\hbox{%
\vbox{\hbox{%
$\Pi_{4,54}$ spans $L_{7.11}$%
}\hbox{%
$6\infty\infty2$%
}%
}%
\hfill\copy\matricesbox
}%
\ifdim\wd\onelinebox>\myboxwidth
\hbox to \myboxwidth{%
$\Pi_{4,54}$ spans $L_{7.11}$%
\hfil
$6\infty\infty2$%
}%
\box\matricesbox
\else
\hbox to \myboxwidth{%
\unhbox\onelinebox
}%
\fi
\else
\hbox to \myboxwidth{%
$\Pi_{4,54}$ spans $L_{7.11}$%
\hfil}%
\hbox to \myboxwidth{%
$6\infty\infty2$%
\hfil}%
\box\matricesbox
\fi
}%
\hfill\discretionary{}{}{}%
\setbox\matricesbox=\hbox{%
{$\left[\!\llap{\phantom{%
\begingroup \smaller\smaller\smaller\begin{tabular}{@{}c@{}}%
\phantom{0}\\\phantom{0}\\\phantom{0}
\end{tabular}\endgroup%
}}\right.$}%
\begingroup \smaller\smaller\smaller\begin{tabular}{@{}c@{}}%
-1/2\\\phantom{0}\\\phantom{0}
\end{tabular}\endgroup%
\kern3pt%
\begingroup \smaller\smaller\smaller\begin{tabular}{@{}c@{}}%
\phantom{0}\\3/2\\-1/2
\end{tabular}\endgroup%
\kern3pt%
\begingroup \smaller\smaller\smaller\begin{tabular}{@{}c@{}}%
\phantom{0}\\-1/2\\3/2
\end{tabular}\endgroup%
{$\left.\llap{\phantom{%
\begingroup \smaller\smaller\smaller\begin{tabular}{@{}c@{}}%
\phantom{0}\\\phantom{0}\\\phantom{0}
\end{tabular}\endgroup%
}}\!\right]$}%
{$\left[\!\llap{\phantom{%
\begingroup \smaller\smaller\smaller\begin{tabular}{@{}c@{}}%
0\\0\\0
\end{tabular}\endgroup%
}}\right.$}%
\begingroup \smaller\smaller\smaller\begin{tabular}{@{}c@{}}%
1\\-1\\0
\end{tabular}\endgroup%
\kern3pt%
\begingroup \smaller\smaller\smaller\begin{tabular}{@{}c@{}}%
4\\1\\3
\end{tabular}\endgroup%
\kern3pt%
\begingroup \smaller\smaller\smaller\begin{tabular}{@{}c@{}}%
1\\1\\0
\end{tabular}\endgroup%
\kern3pt%
\begingroup \smaller\smaller\smaller\begin{tabular}{@{}c@{}}%
1\\0\\-1
\end{tabular}\endgroup%
{$\left.\llap{\phantom{%
\begingroup \smaller\smaller\smaller\begin{tabular}{@{}c@{}}%
0\\0\\0
\end{tabular}\endgroup%
}}\!\right]$}%
}%
\ifdim\wd\matricesbox>\halfwidth\myboxwidth=\hsize\else\myboxwidth=\halfwidth\fi
\vbox{%
\ifdim\myboxwidth=\hsize
\setbox\onelinebox=\hbox{%
\vbox{\hbox{%
$\Pi_{4,55}=\hbox{GN}_{28}$ spans $L_{1.6}$%
}\hbox{%
$\infty\infty2\infty$ (shared)%
}%
}%
\hfill\copy\matricesbox
}%
\ifdim\wd\onelinebox>\myboxwidth
\hbox to \myboxwidth{%
$\Pi_{4,55}=\hbox{GN}_{28}$ spans $L_{1.6}$%
\hfil
$\infty\infty2\infty$ (shared)%
}%
\box\matricesbox
\else
\hbox to \myboxwidth{%
\unhbox\onelinebox
}%
\fi
\else
\hbox to \myboxwidth{%
$\Pi_{4,55}=\hbox{GN}_{28}$ spans $L_{1.6}$%
\hfil}%
\hbox to \myboxwidth{%
$\infty\infty2\infty$ (shared)%
\hfil}%
\box\matricesbox
\fi
}%
\hfill\discretionary{}{}{}%
\setbox\matricesbox=\hbox{%
{$\left[\!\llap{\phantom{%
\begingroup \smaller\smaller\smaller\begin{tabular}{@{}c@{}}%
\phantom{0}\\\phantom{0}\\\phantom{0}
\end{tabular}\endgroup%
}}\right.$}%
\begingroup \smaller\smaller\smaller\begin{tabular}{@{}c@{}}%
-1/9\\\phantom{0}\\\phantom{0}
\end{tabular}\endgroup%
\kern3pt%
\begingroup \smaller\smaller\smaller\begin{tabular}{@{}c@{}}%
\phantom{0}\\4/9\\-2/9
\end{tabular}\endgroup%
\kern3pt%
\begingroup \smaller\smaller\smaller\begin{tabular}{@{}c@{}}%
\phantom{0}\\-2/9\\10/9
\end{tabular}\endgroup%
{$\left.\llap{\phantom{%
\begingroup \smaller\smaller\smaller\begin{tabular}{@{}c@{}}%
\phantom{0}\\\phantom{0}\\\phantom{0}
\end{tabular}\endgroup%
}}\!\right]$}%
{$\left[\!\llap{\phantom{%
\begingroup \smaller\smaller\smaller\begin{tabular}{@{}c@{}}%
0\\0\\0
\end{tabular}\endgroup%
}}\right.$}%
\begingroup \smaller\smaller\smaller\begin{tabular}{@{}c@{}}%
8\\6\\2
\end{tabular}\endgroup%
\kern3pt%
\begingroup \smaller\smaller\smaller\begin{tabular}{@{}c@{}}%
4\\-1\\2
\end{tabular}\endgroup%
\kern3pt%
\begingroup \smaller\smaller\smaller\begin{tabular}{@{}c@{}}%
1\\-1\\-1
\end{tabular}\endgroup%
\kern3pt%
\begingroup \smaller\smaller\smaller\begin{tabular}{@{}c@{}}%
8\\4\\-2
\end{tabular}\endgroup%
{$\left.\llap{\phantom{%
\begingroup \smaller\smaller\smaller\begin{tabular}{@{}c@{}}%
0\\0\\0
\end{tabular}\endgroup%
}}\!\right]$}%
}%
\ifdim\wd\matricesbox>\halfwidth\myboxwidth=\hsize\else\myboxwidth=\halfwidth\fi
\vbox{%
\ifdim\myboxwidth=\hsize
\setbox\onelinebox=\hbox{%
\vbox{\hbox{%
$\Pi_{4,56}$ spans $L_{1.9}$%
}\hbox{%
$4\infty22$ (shared)%
}%
}%
\hfill\copy\matricesbox
}%
\ifdim\wd\onelinebox>\myboxwidth
\hbox to \myboxwidth{%
$\Pi_{4,56}$ spans $L_{1.9}$%
\hfil
$4\infty22$ (shared)%
}%
\box\matricesbox
\else
\hbox to \myboxwidth{%
\unhbox\onelinebox
}%
\fi
\else
\hbox to \myboxwidth{%
$\Pi_{4,56}$ spans $L_{1.9}$%
\hfil}%
\hbox to \myboxwidth{%
$4\infty22$ (shared)%
\hfil}%
\box\matricesbox
\fi
}%
\hfill\discretionary{}{}{}%
\setbox\matricesbox=\hbox{%
{$\left[\!\llap{\phantom{%
\begingroup \smaller\smaller\smaller\begin{tabular}{@{}c@{}}%
\phantom{0}\\\phantom{0}\\\phantom{0}
\end{tabular}\endgroup%
}}\right.$}%
\begingroup \smaller\smaller\smaller\begin{tabular}{@{}c@{}}%
-1/12\\\phantom{0}\\\phantom{0}
\end{tabular}\endgroup%
\kern3pt%
\begingroup \smaller\smaller\smaller\begin{tabular}{@{}c@{}}%
\phantom{0}\\13/12\\-5/12
\end{tabular}\endgroup%
\kern3pt%
\begingroup \smaller\smaller\smaller\begin{tabular}{@{}c@{}}%
\phantom{0}\\-5/12\\13/12
\end{tabular}\endgroup%
{$\left.\llap{\phantom{%
\begingroup \smaller\smaller\smaller\begin{tabular}{@{}c@{}}%
\phantom{0}\\\phantom{0}\\\phantom{0}
\end{tabular}\endgroup%
}}\!\right]$}%
{$\left[\!\llap{\phantom{%
\begingroup \smaller\smaller\smaller\begin{tabular}{@{}c@{}}%
0\\0\\0
\end{tabular}\endgroup%
}}\right.$}%
\begingroup \smaller\smaller\smaller\begin{tabular}{@{}c@{}}%
3\\-2\\-1
\end{tabular}\endgroup%
\kern3pt%
\begingroup \smaller\smaller\smaller\begin{tabular}{@{}c@{}}%
12\\1\\5
\end{tabular}\endgroup%
\kern3pt%
\begingroup \smaller\smaller\smaller\begin{tabular}{@{}c@{}}%
12\\5\\1
\end{tabular}\endgroup%
\kern3pt%
\begingroup \smaller\smaller\smaller\begin{tabular}{@{}c@{}}%
1\\0\\-1
\end{tabular}\endgroup%
{$\left.\llap{\phantom{%
\begingroup \smaller\smaller\smaller\begin{tabular}{@{}c@{}}%
0\\0\\0
\end{tabular}\endgroup%
}}\!\right]$}%
}%
\ifdim\wd\matricesbox>\halfwidth\myboxwidth=\hsize\else\myboxwidth=\halfwidth\fi
\vbox{%
\ifdim\myboxwidth=\hsize
\setbox\onelinebox=\hbox{%
\vbox{\hbox{%
$\Pi_{4,57}$ spans $L_{144.8}$%
}\hbox{%
$\infty\infty22$ (shared)%
}%
}%
\hfill\copy\matricesbox
}%
\ifdim\wd\onelinebox>\myboxwidth
\hbox to \myboxwidth{%
$\Pi_{4,57}$ spans $L_{144.8}$%
\hfil
$\infty\infty22$ (shared)%
}%
\box\matricesbox
\else
\hbox to \myboxwidth{%
\unhbox\onelinebox
}%
\fi
\else
\hbox to \myboxwidth{%
$\Pi_{4,57}$ spans $L_{144.8}$%
\hfil}%
\hbox to \myboxwidth{%
$\infty\infty22$ (shared)%
\hfil}%
\box\matricesbox
\fi
}%
\hfill\discretionary{}{}{}%
\setbox\matricesbox=\hbox{%
{$\left[\!\llap{\phantom{%
\begingroup \smaller\smaller\smaller\begin{tabular}{@{}c@{}}%
\phantom{0}\\\phantom{0}\\\phantom{0}
\end{tabular}\endgroup%
}}\right.$}%
\begingroup \smaller\smaller\smaller\begin{tabular}{@{}c@{}}%
-1/4\\\phantom{0}\\\phantom{0}
\end{tabular}\endgroup%
\kern3pt%
\begingroup \smaller\smaller\smaller\begin{tabular}{@{}c@{}}%
\phantom{0}\\1\\-1/2
\end{tabular}\endgroup%
\kern3pt%
\begingroup \smaller\smaller\smaller\begin{tabular}{@{}c@{}}%
\phantom{0}\\-1/2\\5/4
\end{tabular}\endgroup%
{$\left.\llap{\phantom{%
\begingroup \smaller\smaller\smaller\begin{tabular}{@{}c@{}}%
\phantom{0}\\\phantom{0}\\\phantom{0}
\end{tabular}\endgroup%
}}\!\right]$}%
{$\left[\!\llap{\phantom{%
\begingroup \smaller\smaller\smaller\begin{tabular}{@{}c@{}}%
0\\0\\0
\end{tabular}\endgroup%
}}\right.$}%
\begingroup \smaller\smaller\smaller\begin{tabular}{@{}c@{}}%
4\\3\\2
\end{tabular}\endgroup%
\kern3pt%
\begingroup \smaller\smaller\smaller\begin{tabular}{@{}c@{}}%
4\\-1\\2
\end{tabular}\endgroup%
\kern3pt%
\begingroup \smaller\smaller\smaller\begin{tabular}{@{}c@{}}%
1\\-1\\-1
\end{tabular}\endgroup%
\kern3pt%
\begingroup \smaller\smaller\smaller\begin{tabular}{@{}c@{}}%
4\\1\\-2
\end{tabular}\endgroup%
{$\left.\llap{\phantom{%
\begingroup \smaller\smaller\smaller\begin{tabular}{@{}c@{}}%
0\\0\\0
\end{tabular}\endgroup%
}}\!\right]$}%
}%
\ifdim\wd\matricesbox>\halfwidth\myboxwidth=\hsize\else\myboxwidth=\halfwidth\fi
\vbox{%
\ifdim\myboxwidth=\hsize
\setbox\onelinebox=\hbox{%
\vbox{\hbox{%
$\Pi_{4,58}=\hbox{GN}_{24}$ spans $L_{140.3}$%
}\hbox{%
$\infty\infty2\infty$ (shared)%
}%
}%
\hfill\copy\matricesbox
}%
\ifdim\wd\onelinebox>\myboxwidth
\hbox to \myboxwidth{%
$\Pi_{4,58}=\hbox{GN}_{24}$ spans $L_{140.3}$%
\hfil
$\infty\infty2\infty$ (shared)%
}%
\box\matricesbox
\else
\hbox to \myboxwidth{%
\unhbox\onelinebox
}%
\fi
\else
\hbox to \myboxwidth{%
$\Pi_{4,58}=\hbox{GN}_{24}$ spans $L_{140.3}$%
\hfil}%
\hbox to \myboxwidth{%
$\infty\infty2\infty$ (shared)%
\hfil}%
\box\matricesbox
\fi
}%
\hfill\discretionary{}{}{}%
\setbox\matricesbox=\hbox{%
{$\left[\!\llap{\phantom{%
\begingroup \smaller\smaller\smaller\begin{tabular}{@{}c@{}}%
\phantom{0}\\\phantom{0}\\\phantom{0}
\end{tabular}\endgroup%
}}\right.$}%
\begingroup \smaller\smaller\smaller\begin{tabular}{@{}c@{}}%
-1/8\\\phantom{0}\\\phantom{0}
\end{tabular}\endgroup%
\kern3pt%
\begingroup \smaller\smaller\smaller\begin{tabular}{@{}c@{}}%
\phantom{0}\\1/2\\-1/4
\end{tabular}\endgroup%
\kern3pt%
\begingroup \smaller\smaller\smaller\begin{tabular}{@{}c@{}}%
\phantom{0}\\-1/4\\9/8
\end{tabular}\endgroup%
{$\left.\llap{\phantom{%
\begingroup \smaller\smaller\smaller\begin{tabular}{@{}c@{}}%
\phantom{0}\\\phantom{0}\\\phantom{0}
\end{tabular}\endgroup%
}}\!\right]$}%
{$\left[\!\llap{\phantom{%
\begingroup \smaller\smaller\smaller\begin{tabular}{@{}c@{}}%
0\\0\\0
\end{tabular}\endgroup%
}}\right.$}%
\begingroup \smaller\smaller\smaller\begin{tabular}{@{}c@{}}%
16\\6\\-4
\end{tabular}\endgroup%
\kern3pt%
\begingroup \smaller\smaller\smaller\begin{tabular}{@{}c@{}}%
4\\-3\\-2
\end{tabular}\endgroup%
\kern3pt%
\begingroup \smaller\smaller\smaller\begin{tabular}{@{}c@{}}%
1\\0\\1
\end{tabular}\endgroup%
\kern3pt%
\begingroup \smaller\smaller\smaller\begin{tabular}{@{}c@{}}%
16\\10\\4
\end{tabular}\endgroup%
{$\left.\llap{\phantom{%
\begingroup \smaller\smaller\smaller\begin{tabular}{@{}c@{}}%
0\\0\\0
\end{tabular}\endgroup%
}}\!\right]$}%
}%
\ifdim\wd\matricesbox>\halfwidth\myboxwidth=\hsize\else\myboxwidth=\halfwidth\fi
\vbox{%
\ifdim\myboxwidth=\hsize
\setbox\onelinebox=\hbox{%
\vbox{\hbox{%
$\Pi_{4,59}=\hbox{GN}_{16}$ spans $L_{140.4}$%
}\hbox{%
$\infty\infty2\infty$ (shared)%
}%
}%
\hfill\copy\matricesbox
}%
\ifdim\wd\onelinebox>\myboxwidth
\hbox to \myboxwidth{%
$\Pi_{4,59}=\hbox{GN}_{16}$ spans $L_{140.4}$%
\hfil
$\infty\infty2\infty$ (shared)%
}%
\box\matricesbox
\else
\hbox to \myboxwidth{%
\unhbox\onelinebox
}%
\fi
\else
\hbox to \myboxwidth{%
$\Pi_{4,59}=\hbox{GN}_{16}$ spans $L_{140.4}$%
\hfil}%
\hbox to \myboxwidth{%
$\infty\infty2\infty$ (shared)%
\hfil}%
\box\matricesbox
\fi
}%
\hfill\discretionary{}{}{}%
\setbox\matricesbox=\hbox{%
{$\left[\!\llap{\phantom{%
\begingroup \smaller\smaller\smaller\begin{tabular}{@{}c@{}}%
\phantom{0}\\\phantom{0}\\\phantom{0}
\end{tabular}\endgroup%
}}\right.$}%
\begingroup \smaller\smaller\smaller\begin{tabular}{@{}c@{}}%
-1/8\\\phantom{0}\\\phantom{0}
\end{tabular}\endgroup%
\kern3pt%
\begingroup \smaller\smaller\smaller\begin{tabular}{@{}c@{}}%
\phantom{0}\\9/8\\-3/8
\end{tabular}\endgroup%
\kern3pt%
\begingroup \smaller\smaller\smaller\begin{tabular}{@{}c@{}}%
\phantom{0}\\-3/8\\33/8
\end{tabular}\endgroup%
{$\left.\llap{\phantom{%
\begingroup \smaller\smaller\smaller\begin{tabular}{@{}c@{}}%
\phantom{0}\\\phantom{0}\\\phantom{0}
\end{tabular}\endgroup%
}}\!\right]$}%
{$\left[\!\llap{\phantom{%
\begingroup \smaller\smaller\smaller\begin{tabular}{@{}c@{}}%
0\\0\\0
\end{tabular}\endgroup%
}}\right.$}%
\begingroup \smaller\smaller\smaller\begin{tabular}{@{}c@{}}%
1\\1\\0
\end{tabular}\endgroup%
\kern3pt%
\begingroup \smaller\smaller\smaller\begin{tabular}{@{}c@{}}%
4\\-1\\1
\end{tabular}\endgroup%
\kern3pt%
\begingroup \smaller\smaller\smaller\begin{tabular}{@{}c@{}}%
16\\-6\\-2
\end{tabular}\endgroup%
\kern3pt%
\begingroup \smaller\smaller\smaller\begin{tabular}{@{}c@{}}%
3\\0\\-1
\end{tabular}\endgroup%
{$\left.\llap{\phantom{%
\begingroup \smaller\smaller\smaller\begin{tabular}{@{}c@{}}%
0\\0\\0
\end{tabular}\endgroup%
}}\!\right]$}%
}%
\ifdim\wd\matricesbox>\halfwidth\myboxwidth=\hsize\else\myboxwidth=\halfwidth\fi
\vbox{%
\ifdim\myboxwidth=\hsize
\setbox\onelinebox=\hbox{%
\vbox{\hbox{%
$\Pi_{4,60}$ spans $L_{144.5}$%
}\hbox{%
$\infty\infty22$ (shared)%
}%
}%
\hfill\copy\matricesbox
}%
\ifdim\wd\onelinebox>\myboxwidth
\hbox to \myboxwidth{%
$\Pi_{4,60}$ spans $L_{144.5}$%
\hfil
$\infty\infty22$ (shared)%
}%
\box\matricesbox
\else
\hbox to \myboxwidth{%
\unhbox\onelinebox
}%
\fi
\else
\hbox to \myboxwidth{%
$\Pi_{4,60}$ spans $L_{144.5}$%
\hfil}%
\hbox to \myboxwidth{%
$\infty\infty22$ (shared)%
\hfil}%
\box\matricesbox
\fi
}%
\hfill\discretionary{}{}{}%
\setbox\matricesbox=\hbox{%
{$\left[\!\llap{\phantom{%
\begingroup \smaller\smaller\smaller\begin{tabular}{@{}c@{}}%
\phantom{0}\\\phantom{0}\\\phantom{0}
\end{tabular}\endgroup%
}}\right.$}%
\begingroup \smaller\smaller\smaller\begin{tabular}{@{}c@{}}%
-1/9\\\phantom{0}\\\phantom{0}
\end{tabular}\endgroup%
\kern3pt%
\begingroup \smaller\smaller\smaller\begin{tabular}{@{}c@{}}%
\phantom{0}\\4/9\\-2/9
\end{tabular}\endgroup%
\kern3pt%
\begingroup \smaller\smaller\smaller\begin{tabular}{@{}c@{}}%
\phantom{0}\\-2/9\\28/9
\end{tabular}\endgroup%
{$\left.\llap{\phantom{%
\begingroup \smaller\smaller\smaller\begin{tabular}{@{}c@{}}%
\phantom{0}\\\phantom{0}\\\phantom{0}
\end{tabular}\endgroup%
}}\!\right]$}%
{$\left[\!\llap{\phantom{%
\begingroup \smaller\smaller\smaller\begin{tabular}{@{}c@{}}%
0\\0\\0
\end{tabular}\endgroup%
}}\right.$}%
\begingroup \smaller\smaller\smaller\begin{tabular}{@{}c@{}}%
3\\1\\-1
\end{tabular}\endgroup%
\kern3pt%
\begingroup \smaller\smaller\smaller\begin{tabular}{@{}c@{}}%
3\\2\\1
\end{tabular}\endgroup%
\kern3pt%
\begingroup \smaller\smaller\smaller\begin{tabular}{@{}c@{}}%
12\\-7\\1
\end{tabular}\endgroup%
\kern3pt%
\begingroup \smaller\smaller\smaller\begin{tabular}{@{}c@{}}%
4\\-3\\-1
\end{tabular}\endgroup%
{$\left.\llap{\phantom{%
\begingroup \smaller\smaller\smaller\begin{tabular}{@{}c@{}}%
0\\0\\0
\end{tabular}\endgroup%
}}\!\right]$}%
}%
\ifdim\wd\matricesbox>\halfwidth\myboxwidth=\hsize\else\myboxwidth=\halfwidth\fi
\vbox{%
\ifdim\myboxwidth=\hsize
\setbox\onelinebox=\hbox{%
\vbox{\hbox{%
$\Pi_{4,61}$ spans $L_{7.11}$%
}\hbox{%
$\infty\infty22$ (shared)%
}%
}%
\hfill\copy\matricesbox
}%
\ifdim\wd\onelinebox>\myboxwidth
\hbox to \myboxwidth{%
$\Pi_{4,61}$ spans $L_{7.11}$%
\hfil
$\infty\infty22$ (shared)%
}%
\box\matricesbox
\else
\hbox to \myboxwidth{%
\unhbox\onelinebox
}%
\fi
\else
\hbox to \myboxwidth{%
$\Pi_{4,61}$ spans $L_{7.11}$%
\hfil}%
\hbox to \myboxwidth{%
$\infty\infty22$ (shared)%
\hfil}%
\box\matricesbox
\fi
}%
\hfill\discretionary{}{}{}%
\setbox\matricesbox=\hbox{%
{$\left[\!\llap{\phantom{%
\begingroup \smaller\smaller\smaller\begin{tabular}{@{}c@{}}%
\phantom{0}\\\phantom{0}\\\phantom{0}
\end{tabular}\endgroup%
}}\right.$}%
\begingroup \smaller\smaller\smaller\begin{tabular}{@{}c@{}}%
-1/16\\\phantom{0}\\\phantom{0}
\end{tabular}\endgroup%
\kern3pt%
\begingroup \smaller\smaller\smaller\begin{tabular}{@{}c@{}}%
\phantom{0}\\9/16\\-1/4
\end{tabular}\endgroup%
\kern3pt%
\begingroup \smaller\smaller\smaller\begin{tabular}{@{}c@{}}%
\phantom{0}\\-1/4\\1
\end{tabular}\endgroup%
{$\left.\llap{\phantom{%
\begingroup \smaller\smaller\smaller\begin{tabular}{@{}c@{}}%
\phantom{0}\\\phantom{0}\\\phantom{0}
\end{tabular}\endgroup%
}}\!\right]$}%
{$\left[\!\llap{\phantom{%
\begingroup \smaller\smaller\smaller\begin{tabular}{@{}c@{}}%
0\\0\\0
\end{tabular}\endgroup%
}}\right.$}%
\begingroup \smaller\smaller\smaller\begin{tabular}{@{}c@{}}%
8\\-4\\1
\end{tabular}\endgroup%
\kern3pt%
\begingroup \smaller\smaller\smaller\begin{tabular}{@{}c@{}}%
8\\4\\3
\end{tabular}\endgroup%
\kern3pt%
\begingroup \smaller\smaller\smaller\begin{tabular}{@{}c@{}}%
32\\8\\-6
\end{tabular}\endgroup%
\kern3pt%
\begingroup \smaller\smaller\smaller\begin{tabular}{@{}c@{}}%
1\\-1\\-1
\end{tabular}\endgroup%
{$\left.\llap{\phantom{%
\begingroup \smaller\smaller\smaller\begin{tabular}{@{}c@{}}%
0\\0\\0
\end{tabular}\endgroup%
}}\!\right]$}%
}%
\ifdim\wd\matricesbox>\halfwidth\myboxwidth=\hsize\else\myboxwidth=\halfwidth\fi
\vbox{%
\ifdim\myboxwidth=\hsize
\setbox\onelinebox=\hbox{%
\vbox{\hbox{%
$\Pi_{4,62}$ spans $L_{141.10}$%
}\hbox{%
$\infty\infty22$ (shared)%
}%
}%
\hfill\copy\matricesbox
}%
\ifdim\wd\onelinebox>\myboxwidth
\hbox to \myboxwidth{%
$\Pi_{4,62}$ spans $L_{141.10}$%
\hfil
$\infty\infty22$ (shared)%
}%
\box\matricesbox
\else
\hbox to \myboxwidth{%
\unhbox\onelinebox
}%
\fi
\else
\hbox to \myboxwidth{%
$\Pi_{4,62}$ spans $L_{141.10}$%
\hfil}%
\hbox to \myboxwidth{%
$\infty\infty22$ (shared)%
\hfil}%
\box\matricesbox
\fi
}%
\hfill\discretionary{}{}{}%
\setbox\matricesbox=\hbox{%
{$\left[\!\llap{\phantom{%
\begingroup \smaller\smaller\smaller\begin{tabular}{@{}c@{}}%
\phantom{0}\\\phantom{0}\\\phantom{0}
\end{tabular}\endgroup%
}}\right.$}%
\begingroup \smaller\smaller\smaller\begin{tabular}{@{}c@{}}%
-1/68\\\phantom{0}\\\phantom{0}
\end{tabular}\endgroup%
\kern3pt%
\begingroup \smaller\smaller\smaller\begin{tabular}{@{}c@{}}%
\phantom{0}\\15/17\\-5/17
\end{tabular}\endgroup%
\kern3pt%
\begingroup \smaller\smaller\smaller\begin{tabular}{@{}c@{}}%
\phantom{0}\\-5/17\\30/17
\end{tabular}\endgroup%
{$\left.\llap{\phantom{%
\begingroup \smaller\smaller\smaller\begin{tabular}{@{}c@{}}%
\phantom{0}\\\phantom{0}\\\phantom{0}
\end{tabular}\endgroup%
}}\!\right]$}%
{$\left[\!\llap{\phantom{%
\begingroup \smaller\smaller\smaller\begin{tabular}{@{}c@{}}%
0\\0\\0
\end{tabular}\endgroup%
}}\right.$}%
\begingroup \smaller\smaller\smaller\begin{tabular}{@{}c@{}}%
10\\3\\-1
\end{tabular}\endgroup%
\kern3pt%
\begingroup \smaller\smaller\smaller\begin{tabular}{@{}c@{}}%
10\\-3\\-2
\end{tabular}\endgroup%
\kern3pt%
\begingroup \smaller\smaller\smaller\begin{tabular}{@{}c@{}}%
20\\-4\\2
\end{tabular}\endgroup%
\kern3pt%
\begingroup \smaller\smaller\smaller\begin{tabular}{@{}c@{}}%
2\\1\\1
\end{tabular}\endgroup%
{$\left.\llap{\phantom{%
\begingroup \smaller\smaller\smaller\begin{tabular}{@{}c@{}}%
0\\0\\0
\end{tabular}\endgroup%
}}\!\right]$}%
}%
\ifdim\wd\matricesbox>\halfwidth\myboxwidth=\hsize\else\myboxwidth=\halfwidth\fi
\vbox{%
\ifdim\myboxwidth=\hsize
\setbox\onelinebox=\hbox{%
\vbox{\hbox{%
$\Pi_{4,63}$ spans $L_{6.5}$%
}\hbox{%
$3222$%
}%
}%
\hfill\copy\matricesbox
}%
\ifdim\wd\onelinebox>\myboxwidth
\hbox to \myboxwidth{%
$\Pi_{4,63}$ spans $L_{6.5}$%
\hfil
$3222$%
}%
\box\matricesbox
\else
\hbox to \myboxwidth{%
\unhbox\onelinebox
}%
\fi
\else
\hbox to \myboxwidth{%
$\Pi_{4,63}$ spans $L_{6.5}$%
\hfil}%
\hbox to \myboxwidth{%
$3222$%
\hfil}%
\box\matricesbox
\fi
}%
\hfill\discretionary{}{}{}%
\setbox\matricesbox=\hbox{%
{$\left[\!\llap{\phantom{%
\begingroup \smaller\smaller\smaller\begin{tabular}{@{}c@{}}%
\phantom{0}\\\phantom{0}\\\phantom{0}
\end{tabular}\endgroup%
}}\right.$}%
\begingroup \smaller\smaller\smaller\begin{tabular}{@{}c@{}}%
-1/40\\\phantom{0}\\\phantom{0}
\end{tabular}\endgroup%
\kern3pt%
\begingroup \smaller\smaller\smaller\begin{tabular}{@{}c@{}}%
\phantom{0}\\21/10\\-3/10
\end{tabular}\endgroup%
\kern3pt%
\begingroup \smaller\smaller\smaller\begin{tabular}{@{}c@{}}%
\phantom{0}\\-3/10\\29/10
\end{tabular}\endgroup%
{$\left.\llap{\phantom{%
\begingroup \smaller\smaller\smaller\begin{tabular}{@{}c@{}}%
\phantom{0}\\\phantom{0}\\\phantom{0}
\end{tabular}\endgroup%
}}\!\right]$}%
{$\left[\!\llap{\phantom{%
\begingroup \smaller\smaller\smaller\begin{tabular}{@{}c@{}}%
0\\0\\0
\end{tabular}\endgroup%
}}\right.$}%
\begingroup \smaller\smaller\smaller\begin{tabular}{@{}c@{}}%
2\\1\\0
\end{tabular}\endgroup%
\kern3pt%
\begingroup \smaller\smaller\smaller\begin{tabular}{@{}c@{}}%
4\\-1\\-1
\end{tabular}\endgroup%
\kern3pt%
\begingroup \smaller\smaller\smaller\begin{tabular}{@{}c@{}}%
10\\-2\\1
\end{tabular}\endgroup%
\kern3pt%
\begingroup \smaller\smaller\smaller\begin{tabular}{@{}c@{}}%
48\\2\\6
\end{tabular}\endgroup%
{$\left.\llap{\phantom{%
\begingroup \smaller\smaller\smaller\begin{tabular}{@{}c@{}}%
0\\0\\0
\end{tabular}\endgroup%
}}\!\right]$}%
}%
\ifdim\wd\matricesbox>\halfwidth\myboxwidth=\hsize\else\myboxwidth=\halfwidth\fi
\vbox{%
\ifdim\myboxwidth=\hsize
\setbox\onelinebox=\hbox{%
\vbox{\hbox{%
$\Pi_{4,64}$ spans $L_{19.1}$%
}\hbox{%
$4222$ (shared)%
}%
}%
\hfill\copy\matricesbox
}%
\ifdim\wd\onelinebox>\myboxwidth
\hbox to \myboxwidth{%
$\Pi_{4,64}$ spans $L_{19.1}$%
\hfil
$4222$ (shared)%
}%
\box\matricesbox
\else
\hbox to \myboxwidth{%
\unhbox\onelinebox
}%
\fi
\else
\hbox to \myboxwidth{%
$\Pi_{4,64}$ spans $L_{19.1}$%
\hfil}%
\hbox to \myboxwidth{%
$4222$ (shared)%
\hfil}%
\box\matricesbox
\fi
}%
\hfill\discretionary{}{}{}%
\setbox\matricesbox=\hbox{%
{$\left[\!\llap{\phantom{%
\begingroup \smaller\smaller\smaller\begin{tabular}{@{}c@{}}%
\phantom{0}\\\phantom{0}\\\phantom{0}
\end{tabular}\endgroup%
}}\right.$}%
\begingroup \smaller\smaller\smaller\begin{tabular}{@{}c@{}}%
-1/14\\\phantom{0}\\\phantom{0}
\end{tabular}\endgroup%
\kern3pt%
\begingroup \smaller\smaller\smaller\begin{tabular}{@{}c@{}}%
\phantom{0}\\4/7\\-1/7
\end{tabular}\endgroup%
\kern3pt%
\begingroup \smaller\smaller\smaller\begin{tabular}{@{}c@{}}%
\phantom{0}\\-1/7\\11/14
\end{tabular}\endgroup%
{$\left.\llap{\phantom{%
\begingroup \smaller\smaller\smaller\begin{tabular}{@{}c@{}}%
\phantom{0}\\\phantom{0}\\\phantom{0}
\end{tabular}\endgroup%
}}\!\right]$}%
{$\left[\!\llap{\phantom{%
\begingroup \smaller\smaller\smaller\begin{tabular}{@{}c@{}}%
0\\0\\0
\end{tabular}\endgroup%
}}\right.$}%
\begingroup \smaller\smaller\smaller\begin{tabular}{@{}c@{}}%
1\\1\\1
\end{tabular}\endgroup%
\kern3pt%
\begingroup \smaller\smaller\smaller\begin{tabular}{@{}c@{}}%
2\\-2\\0
\end{tabular}\endgroup%
\kern3pt%
\begingroup \smaller\smaller\smaller\begin{tabular}{@{}c@{}}%
8\\-2\\-4
\end{tabular}\endgroup%
\kern3pt%
\begingroup \smaller\smaller\smaller\begin{tabular}{@{}c@{}}%
3\\2\\-1
\end{tabular}\endgroup%
{$\left.\llap{\phantom{%
\begingroup \smaller\smaller\smaller\begin{tabular}{@{}c@{}}%
0\\0\\0
\end{tabular}\endgroup%
}}\!\right]$}%
}%
\ifdim\wd\matricesbox>\halfwidth\myboxwidth=\hsize\else\myboxwidth=\halfwidth\fi
\vbox{%
\ifdim\myboxwidth=\hsize
\setbox\onelinebox=\hbox{%
\vbox{\hbox{%
$\Pi_{4,65}$ spans $L_{123.3}$%
}\hbox{%
$4222$ (shared)%
}%
}%
\hfill\copy\matricesbox
}%
\ifdim\wd\onelinebox>\myboxwidth
\hbox to \myboxwidth{%
$\Pi_{4,65}$ spans $L_{123.3}$%
\hfil
$4222$ (shared)%
}%
\box\matricesbox
\else
\hbox to \myboxwidth{%
\unhbox\onelinebox
}%
\fi
\else
\hbox to \myboxwidth{%
$\Pi_{4,65}$ spans $L_{123.3}$%
\hfil}%
\hbox to \myboxwidth{%
$4222$ (shared)%
\hfil}%
\box\matricesbox
\fi
}%
\hfill\discretionary{}{}{}%
\setbox\matricesbox=\hbox{%
{$\left[\!\llap{\phantom{%
\begingroup \smaller\smaller\smaller\begin{tabular}{@{}c@{}}%
\phantom{0}\\\phantom{0}\\\phantom{0}
\end{tabular}\endgroup%
}}\right.$}%
\begingroup \smaller\smaller\smaller\begin{tabular}{@{}c@{}}%
-1/10\\\phantom{0}\\\phantom{0}
\end{tabular}\endgroup%
\kern3pt%
\begingroup \smaller\smaller\smaller\begin{tabular}{@{}c@{}}%
\phantom{0}\\11/10\\-3/10
\end{tabular}\endgroup%
\kern3pt%
\begingroup \smaller\smaller\smaller\begin{tabular}{@{}c@{}}%
\phantom{0}\\-3/10\\19/10
\end{tabular}\endgroup%
{$\left.\llap{\phantom{%
\begingroup \smaller\smaller\smaller\begin{tabular}{@{}c@{}}%
\phantom{0}\\\phantom{0}\\\phantom{0}
\end{tabular}\endgroup%
}}\!\right]$}%
{$\left[\!\llap{\phantom{%
\begingroup \smaller\smaller\smaller\begin{tabular}{@{}c@{}}%
0\\0\\0
\end{tabular}\endgroup%
}}\right.$}%
\begingroup \smaller\smaller\smaller\begin{tabular}{@{}c@{}}%
1\\-1\\0
\end{tabular}\endgroup%
\kern3pt%
\begingroup \smaller\smaller\smaller\begin{tabular}{@{}c@{}}%
2\\1\\1
\end{tabular}\endgroup%
\kern3pt%
\begingroup \smaller\smaller\smaller\begin{tabular}{@{}c@{}}%
20\\7\\-1
\end{tabular}\endgroup%
\kern3pt%
\begingroup \smaller\smaller\smaller\begin{tabular}{@{}c@{}}%
5\\-1\\-2
\end{tabular}\endgroup%
{$\left.\llap{\phantom{%
\begingroup \smaller\smaller\smaller\begin{tabular}{@{}c@{}}%
0\\0\\0
\end{tabular}\endgroup%
}}\!\right]$}%
}%
\ifdim\wd\matricesbox>\halfwidth\myboxwidth=\hsize\else\myboxwidth=\halfwidth\fi
\vbox{%
\ifdim\myboxwidth=\hsize
\setbox\onelinebox=\hbox{%
\vbox{\hbox{%
$\Pi_{4,66}$ spans $L_{5.6}$%
}\hbox{%
$42\infty2$%
}%
}%
\hfill\copy\matricesbox
}%
\ifdim\wd\onelinebox>\myboxwidth
\hbox to \myboxwidth{%
$\Pi_{4,66}$ spans $L_{5.6}$%
\hfil
$42\infty2$%
}%
\box\matricesbox
\else
\hbox to \myboxwidth{%
\unhbox\onelinebox
}%
\fi
\else
\hbox to \myboxwidth{%
$\Pi_{4,66}$ spans $L_{5.6}$%
\hfil}%
\hbox to \myboxwidth{%
$42\infty2$%
\hfil}%
\box\matricesbox
\fi
}%
\hfill\discretionary{}{}{}%
\setbox\matricesbox=\hbox{%
{$\left[\!\llap{\phantom{%
\begingroup \smaller\smaller\smaller\begin{tabular}{@{}c@{}}%
\phantom{0}\\\phantom{0}\\\phantom{0}
\end{tabular}\endgroup%
}}\right.$}%
\begingroup \smaller\smaller\smaller\begin{tabular}{@{}c@{}}%
-1/20\\\phantom{0}\\\phantom{0}
\end{tabular}\endgroup%
\kern3pt%
\begingroup \smaller\smaller\smaller\begin{tabular}{@{}c@{}}%
\phantom{0}\\21/20\\-9/20
\end{tabular}\endgroup%
\kern3pt%
\begingroup \smaller\smaller\smaller\begin{tabular}{@{}c@{}}%
\phantom{0}\\-9/20\\21/20
\end{tabular}\endgroup%
{$\left.\llap{\phantom{%
\begingroup \smaller\smaller\smaller\begin{tabular}{@{}c@{}}%
\phantom{0}\\\phantom{0}\\\phantom{0}
\end{tabular}\endgroup%
}}\!\right]$}%
{$\left[\!\llap{\phantom{%
\begingroup \smaller\smaller\smaller\begin{tabular}{@{}c@{}}%
0\\0\\0
\end{tabular}\endgroup%
}}\right.$}%
\begingroup \smaller\smaller\smaller\begin{tabular}{@{}c@{}}%
3\\1\\2
\end{tabular}\endgroup%
\kern3pt%
\begingroup \smaller\smaller\smaller\begin{tabular}{@{}c@{}}%
6\\-3\\-1
\end{tabular}\endgroup%
\kern3pt%
\begingroup \smaller\smaller\smaller\begin{tabular}{@{}c@{}}%
6\\-1\\-3
\end{tabular}\endgroup%
\kern3pt%
\begingroup \smaller\smaller\smaller\begin{tabular}{@{}c@{}}%
1\\1\\0
\end{tabular}\endgroup%
{$\left.\llap{\phantom{%
\begingroup \smaller\smaller\smaller\begin{tabular}{@{}c@{}}%
0\\0\\0
\end{tabular}\endgroup%
}}\!\right]$}%
}%
\ifdim\wd\matricesbox>\halfwidth\myboxwidth=\hsize\else\myboxwidth=\halfwidth\fi
\vbox{%
\ifdim\myboxwidth=\hsize
\setbox\onelinebox=\hbox{%
\vbox{\hbox{%
$\Pi_{4,67}$ spans $L_{123.6}$%
}\hbox{%
$4222$ (shared)%
}%
}%
\hfill\copy\matricesbox
}%
\ifdim\wd\onelinebox>\myboxwidth
\hbox to \myboxwidth{%
$\Pi_{4,67}$ spans $L_{123.6}$%
\hfil
$4222$ (shared)%
}%
\box\matricesbox
\else
\hbox to \myboxwidth{%
\unhbox\onelinebox
}%
\fi
\else
\hbox to \myboxwidth{%
$\Pi_{4,67}$ spans $L_{123.6}$%
\hfil}%
\hbox to \myboxwidth{%
$4222$ (shared)%
\hfil}%
\box\matricesbox
\fi
}%
\hfill\discretionary{}{}{}%
\setbox\matricesbox=\hbox{%
{$\left[\!\llap{\phantom{%
\begingroup \smaller\smaller\smaller\begin{tabular}{@{}c@{}}%
\phantom{0}\\\phantom{0}\\\phantom{0}
\end{tabular}\endgroup%
}}\right.$}%
\begingroup \smaller\smaller\smaller\begin{tabular}{@{}c@{}}%
-1/19\\\phantom{0}\\\phantom{0}
\end{tabular}\endgroup%
\kern3pt%
\begingroup \smaller\smaller\smaller\begin{tabular}{@{}c@{}}%
\phantom{0}\\6/19\\-3/19
\end{tabular}\endgroup%
\kern3pt%
\begingroup \smaller\smaller\smaller\begin{tabular}{@{}c@{}}%
\phantom{0}\\-3/19\\30/19
\end{tabular}\endgroup%
{$\left.\llap{\phantom{%
\begingroup \smaller\smaller\smaller\begin{tabular}{@{}c@{}}%
\phantom{0}\\\phantom{0}\\\phantom{0}
\end{tabular}\endgroup%
}}\!\right]$}%
{$\left[\!\llap{\phantom{%
\begingroup \smaller\smaller\smaller\begin{tabular}{@{}c@{}}%
0\\0\\0
\end{tabular}\endgroup%
}}\right.$}%
\begingroup \smaller\smaller\smaller\begin{tabular}{@{}c@{}}%
6\\4\\-1
\end{tabular}\endgroup%
\kern3pt%
\begingroup \smaller\smaller\smaller\begin{tabular}{@{}c@{}}%
3\\-3\\-1
\end{tabular}\endgroup%
\kern3pt%
\begingroup \smaller\smaller\smaller\begin{tabular}{@{}c@{}}%
3\\-2\\1
\end{tabular}\endgroup%
\kern3pt%
\begingroup \smaller\smaller\smaller\begin{tabular}{@{}c@{}}%
2\\2\\1
\end{tabular}\endgroup%
{$\left.\llap{\phantom{%
\begingroup \smaller\smaller\smaller\begin{tabular}{@{}c@{}}%
0\\0\\0
\end{tabular}\endgroup%
}}\!\right]$}%
}%
\ifdim\wd\matricesbox>\halfwidth\myboxwidth=\hsize\else\myboxwidth=\halfwidth\fi
\vbox{%
\ifdim\myboxwidth=\hsize
\setbox\onelinebox=\hbox{%
\vbox{\hbox{%
$\Pi_{4,68}$ spans $L_{3.3}$%
}\hbox{%
$4222$ (shared)%
}%
}%
\hfill\copy\matricesbox
}%
\ifdim\wd\onelinebox>\myboxwidth
\hbox to \myboxwidth{%
$\Pi_{4,68}$ spans $L_{3.3}$%
\hfil
$4222$ (shared)%
}%
\box\matricesbox
\else
\hbox to \myboxwidth{%
\unhbox\onelinebox
}%
\fi
\else
\hbox to \myboxwidth{%
$\Pi_{4,68}$ spans $L_{3.3}$%
\hfil}%
\hbox to \myboxwidth{%
$4222$ (shared)%
\hfil}%
\box\matricesbox
\fi
}%
\hfill\discretionary{}{}{}%
\setbox\matricesbox=\hbox{%
{$\left[\!\llap{\phantom{%
\begingroup \smaller\smaller\smaller\begin{tabular}{@{}c@{}}%
\phantom{0}\\\phantom{0}\\\phantom{0}
\end{tabular}\endgroup%
}}\right.$}%
\begingroup \smaller\smaller\smaller\begin{tabular}{@{}c@{}}%
-1/184\\\phantom{0}\\\phantom{0}
\end{tabular}\endgroup%
\kern3pt%
\begingroup \smaller\smaller\smaller\begin{tabular}{@{}c@{}}%
\phantom{0}\\165/46\\-75/46
\end{tabular}\endgroup%
\kern3pt%
\begingroup \smaller\smaller\smaller\begin{tabular}{@{}c@{}}%
\phantom{0}\\-75/46\\285/46
\end{tabular}\endgroup%
{$\left.\llap{\phantom{%
\begingroup \smaller\smaller\smaller\begin{tabular}{@{}c@{}}%
\phantom{0}\\\phantom{0}\\\phantom{0}
\end{tabular}\endgroup%
}}\!\right]$}%
{$\left[\!\llap{\phantom{%
\begingroup \smaller\smaller\smaller\begin{tabular}{@{}c@{}}%
0\\0\\0
\end{tabular}\endgroup%
}}\right.$}%
\begingroup \smaller\smaller\smaller\begin{tabular}{@{}c@{}}%
30\\1\\-2
\end{tabular}\endgroup%
\kern3pt%
\begingroup \smaller\smaller\smaller\begin{tabular}{@{}c@{}}%
60\\-5\\-1
\end{tabular}\endgroup%
\kern3pt%
\begingroup \smaller\smaller\smaller\begin{tabular}{@{}c@{}}%
6\\0\\1
\end{tabular}\endgroup%
\kern3pt%
\begingroup \smaller\smaller\smaller\begin{tabular}{@{}c@{}}%
80\\6\\2
\end{tabular}\endgroup%
{$\left.\llap{\phantom{%
\begingroup \smaller\smaller\smaller\begin{tabular}{@{}c@{}}%
0\\0\\0
\end{tabular}\endgroup%
}}\!\right]$}%
}%
\ifdim\wd\matricesbox>\halfwidth\myboxwidth=\hsize\else\myboxwidth=\halfwidth\fi
\vbox{%
\ifdim\myboxwidth=\hsize
\setbox\onelinebox=\hbox{%
\vbox{\hbox{%
$\Pi_{4,69}$ spans $L_{19.10}$%
}\hbox{%
$4222$ (shared)%
}%
}%
\hfill\copy\matricesbox
}%
\ifdim\wd\onelinebox>\myboxwidth
\hbox to \myboxwidth{%
$\Pi_{4,69}$ spans $L_{19.10}$%
\hfil
$4222$ (shared)%
}%
\box\matricesbox
\else
\hbox to \myboxwidth{%
\unhbox\onelinebox
}%
\fi
\else
\hbox to \myboxwidth{%
$\Pi_{4,69}$ spans $L_{19.10}$%
\hfil}%
\hbox to \myboxwidth{%
$4222$ (shared)%
\hfil}%
\box\matricesbox
\fi
}%
\hfill\discretionary{}{}{}%
\setbox\matricesbox=\hbox{%
{$\left[\!\llap{\phantom{%
\begingroup \smaller\smaller\smaller\begin{tabular}{@{}c@{}}%
\phantom{0}\\\phantom{0}\\\phantom{0}
\end{tabular}\endgroup%
}}\right.$}%
\begingroup \smaller\smaller\smaller\begin{tabular}{@{}c@{}}%
-1/49\\\phantom{0}\\\phantom{0}
\end{tabular}\endgroup%
\kern3pt%
\begingroup \smaller\smaller\smaller\begin{tabular}{@{}c@{}}%
\phantom{0}\\30/49\\-6/49
\end{tabular}\endgroup%
\kern3pt%
\begingroup \smaller\smaller\smaller\begin{tabular}{@{}c@{}}%
\phantom{0}\\-6/49\\60/49
\end{tabular}\endgroup%
{$\left.\llap{\phantom{%
\begingroup \smaller\smaller\smaller\begin{tabular}{@{}c@{}}%
\phantom{0}\\\phantom{0}\\\phantom{0}
\end{tabular}\endgroup%
}}\!\right]$}%
{$\left[\!\llap{\phantom{%
\begingroup \smaller\smaller\smaller\begin{tabular}{@{}c@{}}%
0\\0\\0
\end{tabular}\endgroup%
}}\right.$}%
\begingroup \smaller\smaller\smaller\begin{tabular}{@{}c@{}}%
24\\6\\4
\end{tabular}\endgroup%
\kern3pt%
\begingroup \smaller\smaller\smaller\begin{tabular}{@{}c@{}}%
12\\2\\-3
\end{tabular}\endgroup%
\kern3pt%
\begingroup \smaller\smaller\smaller\begin{tabular}{@{}c@{}}%
3\\-2\\-1
\end{tabular}\endgroup%
\kern3pt%
\begingroup \smaller\smaller\smaller\begin{tabular}{@{}c@{}}%
2\\-1\\1
\end{tabular}\endgroup%
{$\left.\llap{\phantom{%
\begingroup \smaller\smaller\smaller\begin{tabular}{@{}c@{}}%
0\\0\\0
\end{tabular}\endgroup%
}}\!\right]$}%
}%
\ifdim\wd\matricesbox>\halfwidth\myboxwidth=\hsize\else\myboxwidth=\halfwidth\fi
\vbox{%
\ifdim\myboxwidth=\hsize
\setbox\onelinebox=\hbox{%
\vbox{\hbox{%
$\Pi_{4,70}$ spans $L_{123.8}$%
}\hbox{%
$4222$ (shared)%
}%
}%
\hfill\copy\matricesbox
}%
\ifdim\wd\onelinebox>\myboxwidth
\hbox to \myboxwidth{%
$\Pi_{4,70}$ spans $L_{123.8}$%
\hfil
$4222$ (shared)%
}%
\box\matricesbox
\else
\hbox to \myboxwidth{%
\unhbox\onelinebox
}%
\fi
\else
\hbox to \myboxwidth{%
$\Pi_{4,70}$ spans $L_{123.8}$%
\hfil}%
\hbox to \myboxwidth{%
$4222$ (shared)%
\hfil}%
\box\matricesbox
\fi
}%
\hfill\discretionary{}{}{}%
\setbox\matricesbox=\hbox{%
{$\left[\!\llap{\phantom{%
\begingroup \smaller\smaller\smaller\begin{tabular}{@{}c@{}}%
\phantom{0}\\\phantom{0}\\\phantom{0}
\end{tabular}\endgroup%
}}\right.$}%
\begingroup \smaller\smaller\smaller\begin{tabular}{@{}c@{}}%
-1/28\\\phantom{0}\\\phantom{0}
\end{tabular}\endgroup%
\kern3pt%
\begingroup \smaller\smaller\smaller\begin{tabular}{@{}c@{}}%
\phantom{0}\\15/7\\-3/7
\end{tabular}\endgroup%
\kern3pt%
\begingroup \smaller\smaller\smaller\begin{tabular}{@{}c@{}}%
\phantom{0}\\-3/7\\30/7
\end{tabular}\endgroup%
{$\left.\llap{\phantom{%
\begingroup \smaller\smaller\smaller\begin{tabular}{@{}c@{}}%
\phantom{0}\\\phantom{0}\\\phantom{0}
\end{tabular}\endgroup%
}}\!\right]$}%
{$\left[\!\llap{\phantom{%
\begingroup \smaller\smaller\smaller\begin{tabular}{@{}c@{}}%
0\\0\\0
\end{tabular}\endgroup%
}}\right.$}%
\begingroup \smaller\smaller\smaller\begin{tabular}{@{}c@{}}%
2\\1\\0
\end{tabular}\endgroup%
\kern3pt%
\begingroup \smaller\smaller\smaller\begin{tabular}{@{}c@{}}%
6\\-1\\1
\end{tabular}\endgroup%
\kern3pt%
\begingroup \smaller\smaller\smaller\begin{tabular}{@{}c@{}}%
14\\-3\\-1
\end{tabular}\endgroup%
\kern3pt%
\begingroup \smaller\smaller\smaller\begin{tabular}{@{}c@{}}%
12\\0\\-2
\end{tabular}\endgroup%
{$\left.\llap{\phantom{%
\begingroup \smaller\smaller\smaller\begin{tabular}{@{}c@{}}%
0\\0\\0
\end{tabular}\endgroup%
}}\!\right]$}%
}%
\ifdim\wd\matricesbox>\halfwidth\myboxwidth=\hsize\else\myboxwidth=\halfwidth\fi
\vbox{%
\ifdim\myboxwidth=\hsize
\setbox\onelinebox=\hbox{%
\vbox{\hbox{%
$\Pi_{4,71}$ spans $L_{22.5}$%
}\hbox{%
$6222$ (shared)%
}%
}%
\hfill\copy\matricesbox
}%
\ifdim\wd\onelinebox>\myboxwidth
\hbox to \myboxwidth{%
$\Pi_{4,71}$ spans $L_{22.5}$%
\hfil
$6222$ (shared)%
}%
\box\matricesbox
\else
\hbox to \myboxwidth{%
\unhbox\onelinebox
}%
\fi
\else
\hbox to \myboxwidth{%
$\Pi_{4,71}$ spans $L_{22.5}$%
\hfil}%
\hbox to \myboxwidth{%
$6222$ (shared)%
\hfil}%
\box\matricesbox
\fi
}%
\hfill\discretionary{}{}{}%
\setbox\matricesbox=\hbox{%
{$\left[\!\llap{\phantom{%
\begingroup \smaller\smaller\smaller\begin{tabular}{@{}c@{}}%
\phantom{0}\\\phantom{0}\\\phantom{0}
\end{tabular}\endgroup%
}}\right.$}%
\begingroup \smaller\smaller\smaller\begin{tabular}{@{}c@{}}%
-1/24\\\phantom{0}\\\phantom{0}
\end{tabular}\endgroup%
\kern3pt%
\begingroup \smaller\smaller\smaller\begin{tabular}{@{}c@{}}%
\phantom{0}\\13/6\\-1/3
\end{tabular}\endgroup%
\kern3pt%
\begingroup \smaller\smaller\smaller\begin{tabular}{@{}c@{}}%
\phantom{0}\\-1/3\\14/3
\end{tabular}\endgroup%
{$\left.\llap{\phantom{%
\begingroup \smaller\smaller\smaller\begin{tabular}{@{}c@{}}%
\phantom{0}\\\phantom{0}\\\phantom{0}
\end{tabular}\endgroup%
}}\!\right]$}%
{$\left[\!\llap{\phantom{%
\begingroup \smaller\smaller\smaller\begin{tabular}{@{}c@{}}%
0\\0\\0
\end{tabular}\endgroup%
}}\right.$}%
\begingroup \smaller\smaller\smaller\begin{tabular}{@{}c@{}}%
2\\1\\0
\end{tabular}\endgroup%
\kern3pt%
\begingroup \smaller\smaller\smaller\begin{tabular}{@{}c@{}}%
6\\-1\\1
\end{tabular}\endgroup%
\kern3pt%
\begingroup \smaller\smaller\smaller\begin{tabular}{@{}c@{}}%
20\\-4\\-1
\end{tabular}\endgroup%
\kern3pt%
\begingroup \smaller\smaller\smaller\begin{tabular}{@{}c@{}}%
4\\0\\-1
\end{tabular}\endgroup%
{$\left.\llap{\phantom{%
\begingroup \smaller\smaller\smaller\begin{tabular}{@{}c@{}}%
0\\0\\0
\end{tabular}\endgroup%
}}\!\right]$}%
}%
\ifdim\wd\matricesbox>\halfwidth\myboxwidth=\hsize\else\myboxwidth=\halfwidth\fi
\vbox{%
\ifdim\myboxwidth=\hsize
\setbox\onelinebox=\hbox{%
\vbox{\hbox{%
$\Pi_{4,72}$ spans $L_{16.3}$%
}\hbox{%
$6222$ (shared)%
}%
}%
\hfill\copy\matricesbox
}%
\ifdim\wd\onelinebox>\myboxwidth
\hbox to \myboxwidth{%
$\Pi_{4,72}$ spans $L_{16.3}$%
\hfil
$6222$ (shared)%
}%
\box\matricesbox
\else
\hbox to \myboxwidth{%
\unhbox\onelinebox
}%
\fi
\else
\hbox to \myboxwidth{%
$\Pi_{4,72}$ spans $L_{16.3}$%
\hfil}%
\hbox to \myboxwidth{%
$6222$ (shared)%
\hfil}%
\box\matricesbox
\fi
}%
\hfill\discretionary{}{}{}%
\setbox\matricesbox=\hbox{%
{$\left[\!\llap{\phantom{%
\begingroup \smaller\smaller\smaller\begin{tabular}{@{}c@{}}%
\phantom{0}\\\phantom{0}\\\phantom{0}
\end{tabular}\endgroup%
}}\right.$}%
\begingroup \smaller\smaller\smaller\begin{tabular}{@{}c@{}}%
-1/104\\\phantom{0}\\\phantom{0}
\end{tabular}\endgroup%
\kern3pt%
\begingroup \smaller\smaller\smaller\begin{tabular}{@{}c@{}}%
\phantom{0}\\165/26\\-15/26
\end{tabular}\endgroup%
\kern3pt%
\begingroup \smaller\smaller\smaller\begin{tabular}{@{}c@{}}%
\phantom{0}\\-15/26\\285/26
\end{tabular}\endgroup%
{$\left.\llap{\phantom{%
\begingroup \smaller\smaller\smaller\begin{tabular}{@{}c@{}}%
\phantom{0}\\\phantom{0}\\\phantom{0}
\end{tabular}\endgroup%
}}\!\right]$}%
{$\left[\!\llap{\phantom{%
\begingroup \smaller\smaller\smaller\begin{tabular}{@{}c@{}}%
0\\0\\0
\end{tabular}\endgroup%
}}\right.$}%
\begingroup \smaller\smaller\smaller\begin{tabular}{@{}c@{}}%
10\\0\\1
\end{tabular}\endgroup%
\kern3pt%
\begingroup \smaller\smaller\smaller\begin{tabular}{@{}c@{}}%
30\\2\\-1
\end{tabular}\endgroup%
\kern3pt%
\begingroup \smaller\smaller\smaller\begin{tabular}{@{}c@{}}%
160\\-2\\-6
\end{tabular}\endgroup%
\kern3pt%
\begingroup \smaller\smaller\smaller\begin{tabular}{@{}c@{}}%
6\\-1\\0
\end{tabular}\endgroup%
{$\left.\llap{\phantom{%
\begingroup \smaller\smaller\smaller\begin{tabular}{@{}c@{}}%
0\\0\\0
\end{tabular}\endgroup%
}}\!\right]$}%
}%
\ifdim\wd\matricesbox>\halfwidth\myboxwidth=\hsize\else\myboxwidth=\halfwidth\fi
\vbox{%
\ifdim\myboxwidth=\hsize
\setbox\onelinebox=\hbox{%
\vbox{\hbox{%
$\Pi_{4,73}$ spans $L_{31.13}$%
}\hbox{%
$6222$ (shared)%
}%
}%
\hfill\copy\matricesbox
}%
\ifdim\wd\onelinebox>\myboxwidth
\hbox to \myboxwidth{%
$\Pi_{4,73}$ spans $L_{31.13}$%
\hfil
$6222$ (shared)%
}%
\box\matricesbox
\else
\hbox to \myboxwidth{%
\unhbox\onelinebox
}%
\fi
\else
\hbox to \myboxwidth{%
$\Pi_{4,73}$ spans $L_{31.13}$%
\hfil}%
\hbox to \myboxwidth{%
$6222$ (shared)%
\hfil}%
\box\matricesbox
\fi
}%
\hfill\discretionary{}{}{}%
\setbox\matricesbox=\hbox{%
{$\left[\!\llap{\phantom{%
\begingroup \smaller\smaller\smaller\begin{tabular}{@{}c@{}}%
\phantom{0}\\\phantom{0}\\\phantom{0}
\end{tabular}\endgroup%
}}\right.$}%
\begingroup \smaller\smaller\smaller\begin{tabular}{@{}c@{}}%
-1/92\\\phantom{0}\\\phantom{0}
\end{tabular}\endgroup%
\kern3pt%
\begingroup \smaller\smaller\smaller\begin{tabular}{@{}c@{}}%
\phantom{0}\\35/23\\-7/23
\end{tabular}\endgroup%
\kern3pt%
\begingroup \smaller\smaller\smaller\begin{tabular}{@{}c@{}}%
\phantom{0}\\-7/23\\98/23
\end{tabular}\endgroup%
{$\left.\llap{\phantom{%
\begingroup \smaller\smaller\smaller\begin{tabular}{@{}c@{}}%
\phantom{0}\\\phantom{0}\\\phantom{0}
\end{tabular}\endgroup%
}}\!\right]$}%
{$\left[\!\llap{\phantom{%
\begingroup \smaller\smaller\smaller\begin{tabular}{@{}c@{}}%
0\\0\\0
\end{tabular}\endgroup%
}}\right.$}%
\begingroup \smaller\smaller\smaller\begin{tabular}{@{}c@{}}%
42\\-5\\2
\end{tabular}\endgroup%
\kern3pt%
\begingroup \smaller\smaller\smaller\begin{tabular}{@{}c@{}}%
14\\3\\1
\end{tabular}\endgroup%
\kern3pt%
\begingroup \smaller\smaller\smaller\begin{tabular}{@{}c@{}}%
6\\1\\-1
\end{tabular}\endgroup%
\kern3pt%
\begingroup \smaller\smaller\smaller\begin{tabular}{@{}c@{}}%
28\\-4\\-2
\end{tabular}\endgroup%
{$\left.\llap{\phantom{%
\begingroup \smaller\smaller\smaller\begin{tabular}{@{}c@{}}%
0\\0\\0
\end{tabular}\endgroup%
}}\!\right]$}%
}%
\ifdim\wd\matricesbox>\halfwidth\myboxwidth=\hsize\else\myboxwidth=\halfwidth\fi
\vbox{%
\ifdim\myboxwidth=\hsize
\setbox\onelinebox=\hbox{%
\vbox{\hbox{%
$\Pi_{4,74}$ spans $L_{22.8}$%
}\hbox{%
$6222$ (shared)%
}%
}%
\hfill\copy\matricesbox
}%
\ifdim\wd\onelinebox>\myboxwidth
\hbox to \myboxwidth{%
$\Pi_{4,74}$ spans $L_{22.8}$%
\hfil
$6222$ (shared)%
}%
\box\matricesbox
\else
\hbox to \myboxwidth{%
\unhbox\onelinebox
}%
\fi
\else
\hbox to \myboxwidth{%
$\Pi_{4,74}$ spans $L_{22.8}$%
\hfil}%
\hbox to \myboxwidth{%
$6222$ (shared)%
\hfil}%
\box\matricesbox
\fi
}%
\hfill\discretionary{}{}{}%
\setbox\matricesbox=\hbox{%
{$\left[\!\llap{\phantom{%
\begingroup \smaller\smaller\smaller\begin{tabular}{@{}c@{}}%
\phantom{0}\\\phantom{0}\\\phantom{0}
\end{tabular}\endgroup%
}}\right.$}%
\begingroup \smaller\smaller\smaller\begin{tabular}{@{}c@{}}%
-1/184\\\phantom{0}\\\phantom{0}
\end{tabular}\endgroup%
\kern3pt%
\begingroup \smaller\smaller\smaller\begin{tabular}{@{}c@{}}%
\phantom{0}\\165/46\\-45/46
\end{tabular}\endgroup%
\kern3pt%
\begingroup \smaller\smaller\smaller\begin{tabular}{@{}c@{}}%
\phantom{0}\\-45/46\\765/46
\end{tabular}\endgroup%
{$\left.\llap{\phantom{%
\begingroup \smaller\smaller\smaller\begin{tabular}{@{}c@{}}%
\phantom{0}\\\phantom{0}\\\phantom{0}
\end{tabular}\endgroup%
}}\!\right]$}%
{$\left[\!\llap{\phantom{%
\begingroup \smaller\smaller\smaller\begin{tabular}{@{}c@{}}%
0\\0\\0
\end{tabular}\endgroup%
}}\right.$}%
\begingroup \smaller\smaller\smaller\begin{tabular}{@{}c@{}}%
90\\-6\\-1
\end{tabular}\endgroup%
\kern3pt%
\begingroup \smaller\smaller\smaller\begin{tabular}{@{}c@{}}%
30\\2\\-1
\end{tabular}\endgroup%
\kern3pt%
\begingroup \smaller\smaller\smaller\begin{tabular}{@{}c@{}}%
36\\3\\1
\end{tabular}\endgroup%
\kern3pt%
\begingroup \smaller\smaller\smaller\begin{tabular}{@{}c@{}}%
20\\-1\\1
\end{tabular}\endgroup%
{$\left.\llap{\phantom{%
\begingroup \smaller\smaller\smaller\begin{tabular}{@{}c@{}}%
0\\0\\0
\end{tabular}\endgroup%
}}\!\right]$}%
}%
\ifdim\wd\matricesbox>\halfwidth\myboxwidth=\hsize\else\myboxwidth=\halfwidth\fi
\vbox{%
\ifdim\myboxwidth=\hsize
\setbox\onelinebox=\hbox{%
\vbox{\hbox{%
$\Pi_{4,75}$ spans $L_{16.20}$%
}\hbox{%
$6222$ (shared)%
}%
}%
\hfill\copy\matricesbox
}%
\ifdim\wd\onelinebox>\myboxwidth
\hbox to \myboxwidth{%
$\Pi_{4,75}$ spans $L_{16.20}$%
\hfil
$6222$ (shared)%
}%
\box\matricesbox
\else
\hbox to \myboxwidth{%
\unhbox\onelinebox
}%
\fi
\else
\hbox to \myboxwidth{%
$\Pi_{4,75}$ spans $L_{16.20}$%
\hfil}%
\hbox to \myboxwidth{%
$6222$ (shared)%
\hfil}%
\box\matricesbox
\fi
}%
\hfill\discretionary{}{}{}%
\setbox\matricesbox=\hbox{%
{$\left[\!\llap{\phantom{%
\begingroup \smaller\smaller\smaller\begin{tabular}{@{}c@{}}%
\phantom{0}\\\phantom{0}\\\phantom{0}
\end{tabular}\endgroup%
}}\right.$}%
\begingroup \smaller\smaller\smaller\begin{tabular}{@{}c@{}}%
-1/58\\\phantom{0}\\\phantom{0}
\end{tabular}\endgroup%
\kern3pt%
\begingroup \smaller\smaller\smaller\begin{tabular}{@{}c@{}}%
\phantom{0}\\40/29\\-15/29
\end{tabular}\endgroup%
\kern3pt%
\begingroup \smaller\smaller\smaller\begin{tabular}{@{}c@{}}%
\phantom{0}\\-15/29\\60/29
\end{tabular}\endgroup%
{$\left.\llap{\phantom{%
\begingroup \smaller\smaller\smaller\begin{tabular}{@{}c@{}}%
\phantom{0}\\\phantom{0}\\\phantom{0}
\end{tabular}\endgroup%
}}\!\right]$}%
{$\left[\!\llap{\phantom{%
\begingroup \smaller\smaller\smaller\begin{tabular}{@{}c@{}}%
0\\0\\0
\end{tabular}\endgroup%
}}\right.$}%
\begingroup \smaller\smaller\smaller\begin{tabular}{@{}c@{}}%
30\\6\\1
\end{tabular}\endgroup%
\kern3pt%
\begingroup \smaller\smaller\smaller\begin{tabular}{@{}c@{}}%
10\\-1\\2
\end{tabular}\endgroup%
\kern3pt%
\begingroup \smaller\smaller\smaller\begin{tabular}{@{}c@{}}%
30\\-6\\-2
\end{tabular}\endgroup%
\kern3pt%
\begingroup \smaller\smaller\smaller\begin{tabular}{@{}c@{}}%
2\\0\\-1
\end{tabular}\endgroup%
{$\left.\llap{\phantom{%
\begingroup \smaller\smaller\smaller\begin{tabular}{@{}c@{}}%
0\\0\\0
\end{tabular}\endgroup%
}}\!\right]$}%
}%
\ifdim\wd\matricesbox>\halfwidth\myboxwidth=\hsize\else\myboxwidth=\halfwidth\fi
\vbox{%
\ifdim\myboxwidth=\hsize
\setbox\onelinebox=\hbox{%
\vbox{\hbox{%
$\Pi_{4,76}$ spans $L_{31.5}$%
}\hbox{%
$6222$ (shared)%
}%
}%
\hfill\copy\matricesbox
}%
\ifdim\wd\onelinebox>\myboxwidth
\hbox to \myboxwidth{%
$\Pi_{4,76}$ spans $L_{31.5}$%
\hfil
$6222$ (shared)%
}%
\box\matricesbox
\else
\hbox to \myboxwidth{%
\unhbox\onelinebox
}%
\fi
\else
\hbox to \myboxwidth{%
$\Pi_{4,76}$ spans $L_{31.5}$%
\hfil}%
\hbox to \myboxwidth{%
$6222$ (shared)%
\hfil}%
\box\matricesbox
\fi
}%
\hfill\discretionary{}{}{}%
\setbox\matricesbox=\hbox{%
{$\left[\!\llap{\phantom{%
\begingroup \smaller\smaller\smaller\begin{tabular}{@{}c@{}}%
\phantom{0}\\\phantom{0}\\\phantom{0}
\end{tabular}\endgroup%
}}\right.$}%
\begingroup \smaller\smaller\smaller\begin{tabular}{@{}c@{}}%
-1/121\\\phantom{0}\\\phantom{0}
\end{tabular}\endgroup%
\kern3pt%
\begingroup \smaller\smaller\smaller\begin{tabular}{@{}c@{}}%
\phantom{0}\\210/121\\-15/121
\end{tabular}\endgroup%
\kern3pt%
\begingroup \smaller\smaller\smaller\begin{tabular}{@{}c@{}}%
\phantom{0}\\-15/121\\390/121
\end{tabular}\endgroup%
{$\left.\llap{\phantom{%
\begingroup \smaller\smaller\smaller\begin{tabular}{@{}c@{}}%
\phantom{0}\\\phantom{0}\\\phantom{0}
\end{tabular}\endgroup%
}}\!\right]$}%
{$\left[\!\llap{\phantom{%
\begingroup \smaller\smaller\smaller\begin{tabular}{@{}c@{}}%
0\\0\\0
\end{tabular}\endgroup%
}}\right.$}%
\begingroup \smaller\smaller\smaller\begin{tabular}{@{}c@{}}%
90\\7\\5
\end{tabular}\endgroup%
\kern3pt%
\begingroup \smaller\smaller\smaller\begin{tabular}{@{}c@{}}%
30\\2\\-3
\end{tabular}\endgroup%
\kern3pt%
\begingroup \smaller\smaller\smaller\begin{tabular}{@{}c@{}}%
9\\-2\\-1
\end{tabular}\endgroup%
\kern3pt%
\begingroup \smaller\smaller\smaller\begin{tabular}{@{}c@{}}%
5\\-1\\1
\end{tabular}\endgroup%
{$\left.\llap{\phantom{%
\begingroup \smaller\smaller\smaller\begin{tabular}{@{}c@{}}%
0\\0\\0
\end{tabular}\endgroup%
}}\!\right]$}%
}%
\ifdim\wd\matricesbox>\halfwidth\myboxwidth=\hsize\else\myboxwidth=\halfwidth\fi
\vbox{%
\ifdim\myboxwidth=\hsize
\setbox\onelinebox=\hbox{%
\vbox{\hbox{%
$\Pi_{4,77}$ spans $L_{16.13}$%
}\hbox{%
$6222$ (shared)%
}%
}%
\hfill\copy\matricesbox
}%
\ifdim\wd\onelinebox>\myboxwidth
\hbox to \myboxwidth{%
$\Pi_{4,77}$ spans $L_{16.13}$%
\hfil
$6222$ (shared)%
}%
\box\matricesbox
\else
\hbox to \myboxwidth{%
\unhbox\onelinebox
}%
\fi
\else
\hbox to \myboxwidth{%
$\Pi_{4,77}$ spans $L_{16.13}$%
\hfil}%
\hbox to \myboxwidth{%
$6222$ (shared)%
\hfil}%
\box\matricesbox
\fi
}%
\hfill\discretionary{}{}{}%
\setbox\matricesbox=\hbox{%
{$\left[\!\llap{\phantom{%
\begingroup \smaller\smaller\smaller\begin{tabular}{@{}c@{}}%
\phantom{0}\\\phantom{0}\\\phantom{0}
\end{tabular}\endgroup%
}}\right.$}%
\begingroup \smaller\smaller\smaller\begin{tabular}{@{}c@{}}%
-1/14\\\phantom{0}\\\phantom{0}
\end{tabular}\endgroup%
\kern3pt%
\begingroup \smaller\smaller\smaller\begin{tabular}{@{}c@{}}%
\phantom{0}\\15/14\\-3/7
\end{tabular}\endgroup%
\kern3pt%
\begingroup \smaller\smaller\smaller\begin{tabular}{@{}c@{}}%
\phantom{0}\\-3/7\\18/7
\end{tabular}\endgroup%
{$\left.\llap{\phantom{%
\begingroup \smaller\smaller\smaller\begin{tabular}{@{}c@{}}%
\phantom{0}\\\phantom{0}\\\phantom{0}
\end{tabular}\endgroup%
}}\!\right]$}%
{$\left[\!\llap{\phantom{%
\begingroup \smaller\smaller\smaller\begin{tabular}{@{}c@{}}%
0\\0\\0
\end{tabular}\endgroup%
}}\right.$}%
\begingroup \smaller\smaller\smaller\begin{tabular}{@{}c@{}}%
1\\1\\0
\end{tabular}\endgroup%
\kern3pt%
\begingroup \smaller\smaller\smaller\begin{tabular}{@{}c@{}}%
4\\-2\\-1
\end{tabular}\endgroup%
\kern3pt%
\begingroup \smaller\smaller\smaller\begin{tabular}{@{}c@{}}%
6\\-2\\1
\end{tabular}\endgroup%
\kern3pt%
\begingroup \smaller\smaller\smaller\begin{tabular}{@{}c@{}}%
12\\2\\3
\end{tabular}\endgroup%
{$\left.\llap{\phantom{%
\begingroup \smaller\smaller\smaller\begin{tabular}{@{}c@{}}%
0\\0\\0
\end{tabular}\endgroup%
}}\!\right]$}%
}%
\ifdim\wd\matricesbox>\halfwidth\myboxwidth=\hsize\else\myboxwidth=\halfwidth\fi
\vbox{%
\ifdim\myboxwidth=\hsize
\setbox\onelinebox=\hbox{%
\vbox{\hbox{%
$\Pi_{4,78}$ spans $L_{4.17}$%
}\hbox{%
$\infty222$ (shared)%
}%
}%
\hfill\copy\matricesbox
}%
\ifdim\wd\onelinebox>\myboxwidth
\hbox to \myboxwidth{%
$\Pi_{4,78}$ spans $L_{4.17}$%
\hfil
$\infty222$ (shared)%
}%
\box\matricesbox
\else
\hbox to \myboxwidth{%
\unhbox\onelinebox
}%
\fi
\else
\hbox to \myboxwidth{%
$\Pi_{4,78}$ spans $L_{4.17}$%
\hfil}%
\hbox to \myboxwidth{%
$\infty222$ (shared)%
\hfil}%
\box\matricesbox
\fi
}%
\hfill\discretionary{}{}{}%
\setbox\matricesbox=\hbox{%
{$\left[\!\llap{\phantom{%
\begingroup \smaller\smaller\smaller\begin{tabular}{@{}c@{}}%
\phantom{0}\\\phantom{0}\\\phantom{0}
\end{tabular}\endgroup%
}}\right.$}%
\begingroup \smaller\smaller\smaller\begin{tabular}{@{}c@{}}%
-1/24\\\phantom{0}\\\phantom{0}
\end{tabular}\endgroup%
\kern3pt%
\begingroup \smaller\smaller\smaller\begin{tabular}{@{}c@{}}%
\phantom{0}\\13/6\\-1
\end{tabular}\endgroup%
\kern3pt%
\begingroup \smaller\smaller\smaller\begin{tabular}{@{}c@{}}%
\phantom{0}\\-1\\6
\end{tabular}\endgroup%
{$\left.\llap{\phantom{%
\begingroup \smaller\smaller\smaller\begin{tabular}{@{}c@{}}%
\phantom{0}\\\phantom{0}\\\phantom{0}
\end{tabular}\endgroup%
}}\!\right]$}%
{$\left[\!\llap{\phantom{%
\begingroup \smaller\smaller\smaller\begin{tabular}{@{}c@{}}%
0\\0\\0
\end{tabular}\endgroup%
}}\right.$}%
\begingroup \smaller\smaller\smaller\begin{tabular}{@{}c@{}}%
2\\1\\0
\end{tabular}\endgroup%
\kern3pt%
\begingroup \smaller\smaller\smaller\begin{tabular}{@{}c@{}}%
8\\-2\\-1
\end{tabular}\endgroup%
\kern3pt%
\begingroup \smaller\smaller\smaller\begin{tabular}{@{}c@{}}%
18\\-3\\1
\end{tabular}\endgroup%
\kern3pt%
\begingroup \smaller\smaller\smaller\begin{tabular}{@{}c@{}}%
72\\6\\7
\end{tabular}\endgroup%
{$\left.\llap{\phantom{%
\begingroup \smaller\smaller\smaller\begin{tabular}{@{}c@{}}%
0\\0\\0
\end{tabular}\endgroup%
}}\!\right]$}%
}%
\ifdim\wd\matricesbox>\halfwidth\myboxwidth=\hsize\else\myboxwidth=\halfwidth\fi
\vbox{%
\ifdim\myboxwidth=\hsize
\setbox\onelinebox=\hbox{%
\vbox{\hbox{%
$\Pi_{4,79}=\hbox{GN}_{9}$ spans $L_{148.13}$%
}\hbox{%
$\infty2\infty2$ (shared)%
}%
}%
\hfill\copy\matricesbox
}%
\ifdim\wd\onelinebox>\myboxwidth
\hbox to \myboxwidth{%
$\Pi_{4,79}=\hbox{GN}_{9}$ spans $L_{148.13}$%
\hfil
$\infty2\infty2$ (shared)%
}%
\box\matricesbox
\else
\hbox to \myboxwidth{%
\unhbox\onelinebox
}%
\fi
\else
\hbox to \myboxwidth{%
$\Pi_{4,79}=\hbox{GN}_{9}$ spans $L_{148.13}$%
\hfil}%
\hbox to \myboxwidth{%
$\infty2\infty2$ (shared)%
\hfil}%
\box\matricesbox
\fi
}%
\hfill\discretionary{}{}{}%
\setbox\matricesbox=\hbox{%
{$\left[\!\llap{\phantom{%
\begingroup \smaller\smaller\smaller\begin{tabular}{@{}c@{}}%
\phantom{0}\\\phantom{0}\\\phantom{0}
\end{tabular}\endgroup%
}}\right.$}%
\begingroup \smaller\smaller\smaller\begin{tabular}{@{}c@{}}%
-1/16\\\phantom{0}\\\phantom{0}
\end{tabular}\endgroup%
\kern3pt%
\begingroup \smaller\smaller\smaller\begin{tabular}{@{}c@{}}%
\phantom{0}\\9/16\\-3/16
\end{tabular}\endgroup%
\kern3pt%
\begingroup \smaller\smaller\smaller\begin{tabular}{@{}c@{}}%
\phantom{0}\\-3/16\\33/16
\end{tabular}\endgroup%
{$\left.\llap{\phantom{%
\begingroup \smaller\smaller\smaller\begin{tabular}{@{}c@{}}%
\phantom{0}\\\phantom{0}\\\phantom{0}
\end{tabular}\endgroup%
}}\!\right]$}%
{$\left[\!\llap{\phantom{%
\begingroup \smaller\smaller\smaller\begin{tabular}{@{}c@{}}%
0\\0\\0
\end{tabular}\endgroup%
}}\right.$}%
\begingroup \smaller\smaller\smaller\begin{tabular}{@{}c@{}}%
2\\1\\1
\end{tabular}\endgroup%
\kern3pt%
\begingroup \smaller\smaller\smaller\begin{tabular}{@{}c@{}}%
8\\2\\-2
\end{tabular}\endgroup%
\kern3pt%
\begingroup \smaller\smaller\smaller\begin{tabular}{@{}c@{}}%
3\\-2\\-1
\end{tabular}\endgroup%
\kern3pt%
\begingroup \smaller\smaller\smaller\begin{tabular}{@{}c@{}}%
6\\-3\\1
\end{tabular}\endgroup%
{$\left.\llap{\phantom{%
\begingroup \smaller\smaller\smaller\begin{tabular}{@{}c@{}}%
0\\0\\0
\end{tabular}\endgroup%
}}\!\right]$}%
}%
\ifdim\wd\matricesbox>\halfwidth\myboxwidth=\hsize\else\myboxwidth=\halfwidth\fi
\vbox{%
\ifdim\myboxwidth=\hsize
\setbox\onelinebox=\hbox{%
\vbox{\hbox{%
$\Pi_{4,80}$ spans $L_{4.15}$%
}\hbox{%
$\infty222$ (shared)%
}%
}%
\hfill\copy\matricesbox
}%
\ifdim\wd\onelinebox>\myboxwidth
\hbox to \myboxwidth{%
$\Pi_{4,80}$ spans $L_{4.15}$%
\hfil
$\infty222$ (shared)%
}%
\box\matricesbox
\else
\hbox to \myboxwidth{%
\unhbox\onelinebox
}%
\fi
\else
\hbox to \myboxwidth{%
$\Pi_{4,80}$ spans $L_{4.15}$%
\hfil}%
\hbox to \myboxwidth{%
$\infty222$ (shared)%
\hfil}%
\box\matricesbox
\fi
}%
\hfill\discretionary{}{}{}%
\setbox\matricesbox=\hbox{%
{$\left[\!\llap{\phantom{%
\begingroup \smaller\smaller\smaller\begin{tabular}{@{}c@{}}%
\phantom{0}\\\phantom{0}\\\phantom{0}
\end{tabular}\endgroup%
}}\right.$}%
\begingroup \smaller\smaller\smaller\begin{tabular}{@{}c@{}}%
-1/13\\\phantom{0}\\\phantom{0}
\end{tabular}\endgroup%
\kern3pt%
\begingroup \smaller\smaller\smaller\begin{tabular}{@{}c@{}}%
\phantom{0}\\10/13\\-4/13
\end{tabular}\endgroup%
\kern3pt%
\begingroup \smaller\smaller\smaller\begin{tabular}{@{}c@{}}%
\phantom{0}\\-4/13\\12/13
\end{tabular}\endgroup%
{$\left.\llap{\phantom{%
\begingroup \smaller\smaller\smaller\begin{tabular}{@{}c@{}}%
\phantom{0}\\\phantom{0}\\\phantom{0}
\end{tabular}\endgroup%
}}\!\right]$}%
{$\left[\!\llap{\phantom{%
\begingroup \smaller\smaller\smaller\begin{tabular}{@{}c@{}}%
0\\0\\0
\end{tabular}\endgroup%
}}\right.$}%
\begingroup \smaller\smaller\smaller\begin{tabular}{@{}c@{}}%
2\\-1\\1
\end{tabular}\endgroup%
\kern3pt%
\begingroup \smaller\smaller\smaller\begin{tabular}{@{}c@{}}%
8\\-2\\-4
\end{tabular}\endgroup%
\kern3pt%
\begingroup \smaller\smaller\smaller\begin{tabular}{@{}c@{}}%
4\\2\\-1
\end{tabular}\endgroup%
\kern3pt%
\begingroup \smaller\smaller\smaller\begin{tabular}{@{}c@{}}%
1\\1\\1
\end{tabular}\endgroup%
{$\left.\llap{\phantom{%
\begingroup \smaller\smaller\smaller\begin{tabular}{@{}c@{}}%
0\\0\\0
\end{tabular}\endgroup%
}}\!\right]$}%
}%
\ifdim\wd\matricesbox>\halfwidth\myboxwidth=\hsize\else\myboxwidth=\halfwidth\fi
\vbox{%
\ifdim\myboxwidth=\hsize
\setbox\onelinebox=\hbox{%
\vbox{\hbox{%
$\Pi_{4,81}$ spans $L_{141.7}$%
}\hbox{%
$\infty222$ (shared)%
}%
}%
\hfill\copy\matricesbox
}%
\ifdim\wd\onelinebox>\myboxwidth
\hbox to \myboxwidth{%
$\Pi_{4,81}$ spans $L_{141.7}$%
\hfil
$\infty222$ (shared)%
}%
\box\matricesbox
\else
\hbox to \myboxwidth{%
\unhbox\onelinebox
}%
\fi
\else
\hbox to \myboxwidth{%
$\Pi_{4,81}$ spans $L_{141.7}$%
\hfil}%
\hbox to \myboxwidth{%
$\infty222$ (shared)%
\hfil}%
\box\matricesbox
\fi
}%
\hfill\discretionary{}{}{}%
\setbox\matricesbox=\hbox{%
{$\left[\!\llap{\phantom{%
\begingroup \smaller\smaller\smaller\begin{tabular}{@{}c@{}}%
\phantom{0}\\\phantom{0}\\\phantom{0}
\end{tabular}\endgroup%
}}\right.$}%
\begingroup \smaller\smaller\smaller\begin{tabular}{@{}c@{}}%
-1/18\\\phantom{0}\\\phantom{0}
\end{tabular}\endgroup%
\kern3pt%
\begingroup \smaller\smaller\smaller\begin{tabular}{@{}c@{}}%
\phantom{0}\\7/18\\-1/18
\end{tabular}\endgroup%
\kern3pt%
\begingroup \smaller\smaller\smaller\begin{tabular}{@{}c@{}}%
\phantom{0}\\-1/18\\31/18
\end{tabular}\endgroup%
{$\left.\llap{\phantom{%
\begingroup \smaller\smaller\smaller\begin{tabular}{@{}c@{}}%
\phantom{0}\\\phantom{0}\\\phantom{0}
\end{tabular}\endgroup%
}}\!\right]$}%
{$\left[\!\llap{\phantom{%
\begingroup \smaller\smaller\smaller\begin{tabular}{@{}c@{}}%
0\\0\\0
\end{tabular}\endgroup%
}}\right.$}%
\begingroup \smaller\smaller\smaller\begin{tabular}{@{}c@{}}%
3\\-2\\1
\end{tabular}\endgroup%
\kern3pt%
\begingroup \smaller\smaller\smaller\begin{tabular}{@{}c@{}}%
12\\7\\1
\end{tabular}\endgroup%
\kern3pt%
\begingroup \smaller\smaller\smaller\begin{tabular}{@{}c@{}}%
2\\1\\-1
\end{tabular}\endgroup%
\kern3pt%
\begingroup \smaller\smaller\smaller\begin{tabular}{@{}c@{}}%
4\\-3\\-1
\end{tabular}\endgroup%
{$\left.\llap{\phantom{%
\begingroup \smaller\smaller\smaller\begin{tabular}{@{}c@{}}%
0\\0\\0
\end{tabular}\endgroup%
}}\!\right]$}%
}%
\ifdim\wd\matricesbox>\halfwidth\myboxwidth=\hsize\else\myboxwidth=\halfwidth\fi
\vbox{%
\ifdim\myboxwidth=\hsize
\setbox\onelinebox=\hbox{%
\vbox{\hbox{%
$\Pi_{4,82}$ spans $L_{4.6}$%
}\hbox{%
$\infty222$ (shared)%
}%
}%
\hfill\copy\matricesbox
}%
\ifdim\wd\onelinebox>\myboxwidth
\hbox to \myboxwidth{%
$\Pi_{4,82}$ spans $L_{4.6}$%
\hfil
$\infty222$ (shared)%
}%
\box\matricesbox
\else
\hbox to \myboxwidth{%
\unhbox\onelinebox
}%
\fi
\else
\hbox to \myboxwidth{%
$\Pi_{4,82}$ spans $L_{4.6}$%
\hfil}%
\hbox to \myboxwidth{%
$\infty222$ (shared)%
\hfil}%
\box\matricesbox
\fi
}%
\hfill\discretionary{}{}{}%
\setbox\matricesbox=\hbox{%
{$\left[\!\llap{\phantom{%
\begingroup \smaller\smaller\smaller\begin{tabular}{@{}c@{}}%
\phantom{0}\\\phantom{0}\\\phantom{0}
\end{tabular}\endgroup%
}}\right.$}%
\begingroup \smaller\smaller\smaller\begin{tabular}{@{}c@{}}%
-1/56\\\phantom{0}\\\phantom{0}
\end{tabular}\endgroup%
\kern3pt%
\begingroup \smaller\smaller\smaller\begin{tabular}{@{}c@{}}%
\phantom{0}\\3/2\\-1/2
\end{tabular}\endgroup%
\kern3pt%
\begingroup \smaller\smaller\smaller\begin{tabular}{@{}c@{}}%
\phantom{0}\\-1/2\\29/14
\end{tabular}\endgroup%
{$\left.\llap{\phantom{%
\begingroup \smaller\smaller\smaller\begin{tabular}{@{}c@{}}%
\phantom{0}\\\phantom{0}\\\phantom{0}
\end{tabular}\endgroup%
}}\!\right]$}%
{$\left[\!\llap{\phantom{%
\begingroup \smaller\smaller\smaller\begin{tabular}{@{}c@{}}%
0\\0\\0
\end{tabular}\endgroup%
}}\right.$}%
\begingroup \smaller\smaller\smaller\begin{tabular}{@{}c@{}}%
10\\-1\\2
\end{tabular}\endgroup%
\kern3pt%
\begingroup \smaller\smaller\smaller\begin{tabular}{@{}c@{}}%
40\\7\\1
\end{tabular}\endgroup%
\kern3pt%
\begingroup \smaller\smaller\smaller\begin{tabular}{@{}c@{}}%
2\\0\\-1
\end{tabular}\endgroup%
\kern3pt%
\begingroup \smaller\smaller\smaller\begin{tabular}{@{}c@{}}%
32\\-6\\-2
\end{tabular}\endgroup%
{$\left.\llap{\phantom{%
\begingroup \smaller\smaller\smaller\begin{tabular}{@{}c@{}}%
0\\0\\0
\end{tabular}\endgroup%
}}\!\right]$}%
}%
\ifdim\wd\matricesbox>\halfwidth\myboxwidth=\hsize\else\myboxwidth=\halfwidth\fi
\vbox{%
\ifdim\myboxwidth=\hsize
\setbox\onelinebox=\hbox{%
\vbox{\hbox{%
$\Pi_{4,83}$ spans $L_{10.3}$%
}\hbox{%
$\infty222$ (shared)%
}%
}%
\hfill\copy\matricesbox
}%
\ifdim\wd\onelinebox>\myboxwidth
\hbox to \myboxwidth{%
$\Pi_{4,83}$ spans $L_{10.3}$%
\hfil
$\infty222$ (shared)%
}%
\box\matricesbox
\else
\hbox to \myboxwidth{%
\unhbox\onelinebox
}%
\fi
\else
\hbox to \myboxwidth{%
$\Pi_{4,83}$ spans $L_{10.3}$%
\hfil}%
\hbox to \myboxwidth{%
$\infty222$ (shared)%
\hfil}%
\box\matricesbox
\fi
}%
\hfill\discretionary{}{}{}%
\setbox\matricesbox=\hbox{%
{$\left[\!\llap{\phantom{%
\begingroup \smaller\smaller\smaller\begin{tabular}{@{}c@{}}%
\phantom{0}\\\phantom{0}\\\phantom{0}
\end{tabular}\endgroup%
}}\right.$}%
\begingroup \smaller\smaller\smaller\begin{tabular}{@{}c@{}}%
-1/24\\\phantom{0}\\\phantom{0}
\end{tabular}\endgroup%
\kern3pt%
\begingroup \smaller\smaller\smaller\begin{tabular}{@{}c@{}}%
\phantom{0}\\3/8\\\phantom{0}
\end{tabular}\endgroup%
\kern3pt%
\begingroup \smaller\smaller\smaller\begin{tabular}{@{}c@{}}%
\phantom{0}\\\phantom{0}\\2/3
\end{tabular}\endgroup%
{$\left.\llap{\phantom{%
\begingroup \smaller\smaller\smaller\begin{tabular}{@{}c@{}}%
\phantom{0}\\\phantom{0}\\\phantom{0}
\end{tabular}\endgroup%
}}\!\right]$}%
{$\left[\!\llap{\phantom{%
\begingroup \smaller\smaller\smaller\begin{tabular}{@{}c@{}}%
0\\0\\0
\end{tabular}\endgroup%
}}\right.$}%
\begingroup \smaller\smaller\smaller\begin{tabular}{@{}c@{}}%
6\\-2\\-3
\end{tabular}\endgroup%
\kern3pt%
\begingroup \smaller\smaller\smaller\begin{tabular}{@{}c@{}}%
24\\-8\\6
\end{tabular}\endgroup%
\kern3pt%
\begingroup \smaller\smaller\smaller\begin{tabular}{@{}c@{}}%
1\\1\\1
\end{tabular}\endgroup%
\kern3pt%
\begingroup \smaller\smaller\smaller\begin{tabular}{@{}c@{}}%
2\\2\\-1
\end{tabular}\endgroup%
{$\left.\llap{\phantom{%
\begingroup \smaller\smaller\smaller\begin{tabular}{@{}c@{}}%
0\\0\\0
\end{tabular}\endgroup%
}}\!\right]$}%
}%
\ifdim\wd\matricesbox>\halfwidth\myboxwidth=\hsize\else\myboxwidth=\halfwidth\fi
\vbox{%
\ifdim\myboxwidth=\hsize
\setbox\onelinebox=\hbox{%
\vbox{\hbox{%
$\Pi_{4,84}$ spans $L_{4.5}$%
}\hbox{%
$\infty222$ (shared)%
}%
}%
\hfill\copy\matricesbox
}%
\ifdim\wd\onelinebox>\myboxwidth
\hbox to \myboxwidth{%
$\Pi_{4,84}$ spans $L_{4.5}$%
\hfil
$\infty222$ (shared)%
}%
\box\matricesbox
\else
\hbox to \myboxwidth{%
\unhbox\onelinebox
}%
\fi
\else
\hbox to \myboxwidth{%
$\Pi_{4,84}$ spans $L_{4.5}$%
\hfil}%
\hbox to \myboxwidth{%
$\infty222$ (shared)%
\hfil}%
\box\matricesbox
\fi
}%
\hfill\discretionary{}{}{}%
\setbox\matricesbox=\hbox{%
{$\left[\!\llap{\phantom{%
\begingroup \smaller\smaller\smaller\begin{tabular}{@{}c@{}}%
\phantom{0}\\\phantom{0}\\\phantom{0}
\end{tabular}\endgroup%
}}\right.$}%
\begingroup \smaller\smaller\smaller\begin{tabular}{@{}c@{}}%
-1/13\\\phantom{0}\\\phantom{0}
\end{tabular}\endgroup%
\kern3pt%
\begingroup \smaller\smaller\smaller\begin{tabular}{@{}c@{}}%
\phantom{0}\\12/13\\-3/13
\end{tabular}\endgroup%
\kern3pt%
\begingroup \smaller\smaller\smaller\begin{tabular}{@{}c@{}}%
\phantom{0}\\-3/13\\30/13
\end{tabular}\endgroup%
{$\left.\llap{\phantom{%
\begingroup \smaller\smaller\smaller\begin{tabular}{@{}c@{}}%
\phantom{0}\\\phantom{0}\\\phantom{0}
\end{tabular}\endgroup%
}}\!\right]$}%
{$\left[\!\llap{\phantom{%
\begingroup \smaller\smaller\smaller\begin{tabular}{@{}c@{}}%
0\\0\\0
\end{tabular}\endgroup%
}}\right.$}%
\begingroup \smaller\smaller\smaller\begin{tabular}{@{}c@{}}%
3\\1\\-1
\end{tabular}\endgroup%
\kern3pt%
\begingroup \smaller\smaller\smaller\begin{tabular}{@{}c@{}}%
3\\-2\\0
\end{tabular}\endgroup%
\kern3pt%
\begingroup \smaller\smaller\smaller\begin{tabular}{@{}c@{}}%
2\\0\\1
\end{tabular}\endgroup%
\kern3pt%
\begingroup \smaller\smaller\smaller\begin{tabular}{@{}c@{}}%
9\\4\\1
\end{tabular}\endgroup%
{$\left.\llap{\phantom{%
\begingroup \smaller\smaller\smaller\begin{tabular}{@{}c@{}}%
0\\0\\0
\end{tabular}\endgroup%
}}\!\right]$}%
}%
\ifdim\wd\matricesbox>\halfwidth\myboxwidth=\hsize\else\myboxwidth=\halfwidth\fi
\vbox{%
\ifdim\myboxwidth=\hsize
\setbox\onelinebox=\hbox{%
\vbox{\hbox{%
$\Pi_{4,85}$ spans $L_{4.16}$%
}\hbox{%
$\infty222$ (shared)%
}%
}%
\hfill\copy\matricesbox
}%
\ifdim\wd\onelinebox>\myboxwidth
\hbox to \myboxwidth{%
$\Pi_{4,85}$ spans $L_{4.16}$%
\hfil
$\infty222$ (shared)%
}%
\box\matricesbox
\else
\hbox to \myboxwidth{%
\unhbox\onelinebox
}%
\fi
\else
\hbox to \myboxwidth{%
$\Pi_{4,85}$ spans $L_{4.16}$%
\hfil}%
\hbox to \myboxwidth{%
$\infty222$ (shared)%
\hfil}%
\box\matricesbox
\fi
}%
\hfill\discretionary{}{}{}%
\setbox\matricesbox=\hbox{%
{$\left[\!\llap{\phantom{%
\begingroup \smaller\smaller\smaller\begin{tabular}{@{}c@{}}%
\phantom{0}\\\phantom{0}\\\phantom{0}
\end{tabular}\endgroup%
}}\right.$}%
\begingroup \smaller\smaller\smaller\begin{tabular}{@{}c@{}}%
-1/38\\\phantom{0}\\\phantom{0}
\end{tabular}\endgroup%
\kern3pt%
\begingroup \smaller\smaller\smaller\begin{tabular}{@{}c@{}}%
\phantom{0}\\40/19\\-5/19
\end{tabular}\endgroup%
\kern3pt%
\begingroup \smaller\smaller\smaller\begin{tabular}{@{}c@{}}%
\phantom{0}\\-5/19\\60/19
\end{tabular}\endgroup%
{$\left.\llap{\phantom{%
\begingroup \smaller\smaller\smaller\begin{tabular}{@{}c@{}}%
\phantom{0}\\\phantom{0}\\\phantom{0}
\end{tabular}\endgroup%
}}\!\right]$}%
{$\left[\!\llap{\phantom{%
\begingroup \smaller\smaller\smaller\begin{tabular}{@{}c@{}}%
0\\0\\0
\end{tabular}\endgroup%
}}\right.$}%
\begingroup \smaller\smaller\smaller\begin{tabular}{@{}c@{}}%
10\\2\\-1
\end{tabular}\endgroup%
\kern3pt%
\begingroup \smaller\smaller\smaller\begin{tabular}{@{}c@{}}%
10\\0\\2
\end{tabular}\endgroup%
\kern3pt%
\begingroup \smaller\smaller\smaller\begin{tabular}{@{}c@{}}%
2\\-1\\0
\end{tabular}\endgroup%
\kern3pt%
\begingroup \smaller\smaller\smaller\begin{tabular}{@{}c@{}}%
50\\-2\\-6
\end{tabular}\endgroup%
{$\left.\llap{\phantom{%
\begingroup \smaller\smaller\smaller\begin{tabular}{@{}c@{}}%
0\\0\\0
\end{tabular}\endgroup%
}}\!\right]$}%
}%
\ifdim\wd\matricesbox>\halfwidth\myboxwidth=\hsize\else\myboxwidth=\halfwidth\fi
\vbox{%
\ifdim\myboxwidth=\hsize
\setbox\onelinebox=\hbox{%
\vbox{\hbox{%
$\Pi_{4,86}$ spans $L_{10.9}$%
}\hbox{%
$\infty222$ (shared)%
}%
}%
\hfill\copy\matricesbox
}%
\ifdim\wd\onelinebox>\myboxwidth
\hbox to \myboxwidth{%
$\Pi_{4,86}$ spans $L_{10.9}$%
\hfil
$\infty222$ (shared)%
}%
\box\matricesbox
\else
\hbox to \myboxwidth{%
\unhbox\onelinebox
}%
\fi
\else
\hbox to \myboxwidth{%
$\Pi_{4,86}$ spans $L_{10.9}$%
\hfil}%
\hbox to \myboxwidth{%
$\infty222$ (shared)%
\hfil}%
\box\matricesbox
\fi
}%
\hfill\discretionary{}{}{}%
\setbox\matricesbox=\hbox{%
{$\left[\!\llap{\phantom{%
\begingroup \smaller\smaller\smaller\begin{tabular}{@{}c@{}}%
\phantom{0}\\\phantom{0}\\\phantom{0}
\end{tabular}\endgroup%
}}\right.$}%
\begingroup \smaller\smaller\smaller\begin{tabular}{@{}c@{}}%
-1/18\\\phantom{0}\\\phantom{0}
\end{tabular}\endgroup%
\kern3pt%
\begingroup \smaller\smaller\smaller\begin{tabular}{@{}c@{}}%
\phantom{0}\\7/18\\-1/9
\end{tabular}\endgroup%
\kern3pt%
\begingroup \smaller\smaller\smaller\begin{tabular}{@{}c@{}}%
\phantom{0}\\-1/9\\8/9
\end{tabular}\endgroup%
{$\left.\llap{\phantom{%
\begingroup \smaller\smaller\smaller\begin{tabular}{@{}c@{}}%
\phantom{0}\\\phantom{0}\\\phantom{0}
\end{tabular}\endgroup%
}}\!\right]$}%
{$\left[\!\llap{\phantom{%
\begingroup \smaller\smaller\smaller\begin{tabular}{@{}c@{}}%
0\\0\\0
\end{tabular}\endgroup%
}}\right.$}%
\begingroup \smaller\smaller\smaller\begin{tabular}{@{}c@{}}%
6\\4\\2
\end{tabular}\endgroup%
\kern3pt%
\begingroup \smaller\smaller\smaller\begin{tabular}{@{}c@{}}%
6\\-4\\1
\end{tabular}\endgroup%
\kern3pt%
\begingroup \smaller\smaller\smaller\begin{tabular}{@{}c@{}}%
1\\-1\\-1
\end{tabular}\endgroup%
\kern3pt%
\begingroup \smaller\smaller\smaller\begin{tabular}{@{}c@{}}%
8\\4\\-2
\end{tabular}\endgroup%
{$\left.\llap{\phantom{%
\begingroup \smaller\smaller\smaller\begin{tabular}{@{}c@{}}%
0\\0\\0
\end{tabular}\endgroup%
}}\!\right]$}%
}%
\ifdim\wd\matricesbox>\halfwidth\myboxwidth=\hsize\else\myboxwidth=\halfwidth\fi
\vbox{%
\ifdim\myboxwidth=\hsize
\setbox\onelinebox=\hbox{%
\vbox{\hbox{%
$\Pi_{4,87}$ spans $L_{4.5}$%
}\hbox{%
$\infty222$ (shared)%
}%
}%
\hfill\copy\matricesbox
}%
\ifdim\wd\onelinebox>\myboxwidth
\hbox to \myboxwidth{%
$\Pi_{4,87}$ spans $L_{4.5}$%
\hfil
$\infty222$ (shared)%
}%
\box\matricesbox
\else
\hbox to \myboxwidth{%
\unhbox\onelinebox
}%
\fi
\else
\hbox to \myboxwidth{%
$\Pi_{4,87}$ spans $L_{4.5}$%
\hfil}%
\hbox to \myboxwidth{%
$\infty222$ (shared)%
\hfil}%
\box\matricesbox
\fi
}%
\hfill\discretionary{}{}{}%
}
\end{document}